\documentclass{memo-l}
\usepackage{tikz-cd}
\usepackage{amsmath}
\usepackage{fourier}
\usepackage{amssymb}
\usepackage{amscd}
\usepackage{amsthm}
\usepackage[centertags]{amsmath}
\usepackage{amsfonts}
\usepackage{newlfont}
\usepackage{graphicx}
\usepackage{amsfonts, amssymb}
\usepackage{mathrsfs}
\usepackage{latexsym}
\usepackage{tikz}
\usepackage{verbatim}
\usepackage[all]{xy}
\usepackage{enumitem}
\usepackage[colorlinks=true,linkcolor=colorref,citecolor=colorcita,urlcolor=colorweb]{hyperref}
\definecolor{colorcita}{RGB}{21,86,130}
\definecolor{colorref}{RGB}{5,10,177}
\definecolor{colorweb}{RGB}{177,6,38}

\numberwithin{section}{chapter}
\numberwithin{subsection}{section}

\newtheorem{theorem}{Theorem}[section]

\newtheorem{proposition}[theorem]{Proposition}
\newtheorem{corollary}[theorem]{Corollary}
\newtheorem{lemma}[theorem]{Lemma}

\theoremstyle{definition}
\newtheorem{remark}[theorem]{Remark}
\newtheorem{example}[theorem]{Example}

\theoremstyle{remark}

\newcommand{\dis}{\displaystyle}

\newcommand{\Bb}{\mathcal B}
\newcommand{\Jj}{\mathcal J}

\newcommand{\PP}{\mathcal P}

\newcommand{\NN}{\mathbb N}
\newcommand{\EE}{\mathbb E}
\newcommand{\zN}{\mathbb N}

\newcommand{\CC}{\mathbb C}
\newcommand{\DD}{\mathbb D}
\newcommand{\RR}{\mathbb R}
\newcommand{\jj}{\mathbf{j}}
\newcommand{\ii}{\mathbf{i}}

\usepackage{mathtools}

\DeclareMathOperator{\mon}{mon}
\DeclareMathOperator{\card}{card}
\DeclareMathOperator{\re}{Re}

\DeclareMathOperator{\id}{\mathrm{id}}
\DeclarePairedDelimiter\floor{\lfloor}{\rfloor}

\newcommand{\bi}{\mathbf i}
\newcommand{\bj}{\mathbf j}

\newcommand{\chimon}{\chi_{\mon}}

\newcommand{\Pp}{\mathcal{P}}

\newcommand\restrict[1]{\raisebox{-.5ex}{$\vert$}_{#1}}

\newcommand{\sinc}{\operatorname{sinc}}

\DeclareMathOperator{\sign}{sign}

\def\N{\mathbb{ N}}
\def\R{\mathbb{ R}}

\newcommand{\ccc}{\mathbf c}

 \textwidth=17.4cm \textheight=23cm \hoffset=-20.5mm \voffset=-5mm
 \parskip 7.2pt

\begin{document}
\title{Projection constants for spaces of multivariate 
polynomials}

\author{A. Defant, D. Galicer, M. Mansilla, M. Masty{\l}o and S. Muro}

\date{}

\thanks{The research of the fourth author was supported by the National Science Centre (NCN), Poland, Grant no. 2019/33/B/ST1/00165; the second, third and fifth author were supported by CONICET-PIP 11220200102336 and PICT 2018-4250. The research of the fifth author is additionally supported by ING-586-UNR.}

\begin{abstract}
\smallskip

\noindent 
The general problem we address  is to develop new methods in the study of projection constants of Banach spaces of multivariate polynomials. The relative projection constant $\boldsymbol{\lambda}(X,Y)$ of a~subspace $X$ of a~Banach $Y$  is the smallest norm among all possible projections on $Y$ onto $X$, and  the projection constant $\boldsymbol{\lambda}(X)$ is the supremum of all relative projection constants of $X$ taken with respect to all possible super spaces $Y$. This is one of the most significant notions of modern Banach space theory  and one that has been intensively studied since the birth of abstract operator theory. We focus on  projection constants of Banach spaces of multivariate polynomials formed either by trigonometric polynomials $f(g)=\sum_{\gamma \in E} \widehat{f}(\gamma) \gamma(g)$ defined on a~given compact topological group $G$, which have Fourier coefficients $\widehat{f}(\gamma)$ supported in an a~priori given finite set $E$  of characters; or analytic polynomials $P(z)=\sum_{\alpha\in J}c_\alpha(P)\,z^\alpha$, which are  defined on a~fixed Banach space $X_n = (\mathbb{C}^n, \|\cdot\|)$ and have monomial coefficients $c_\alpha(P)$
supported in an a priori given finite set $J \subset \mathbb{N}_0^n$ of multi indices. Depending on the underlying structure (of the group, Banach space or index set), the primary  goal is to prove precise formulas or asymptotically optimal estimates. Our general setting of multivariate polynomials is flexible enough to handle a~wide variety of Banach spaces of polynomials, including analytic polynomials on polydiscs, Dirichlet polynomials on the complex plane, and polynomials on Boolean cubes $\{-1,+1\}^n$. Moreover, we  combine our techniques with the theory of harmonic polynomials to get an explicit formula for the projection constant of  the trace class on the $n$-dimensional complex Hilbert space together with their precise asymptotic  whenever $n$ tends to infinity.
The modern theory of projection constants is intimately linked with various important invariants of local Banach space theory.  The methods developed here enable us to prove new estimates for particularly important invariants such as the unconditional basis constant and the Gordon-Lewis constant for Banach spaces of multivariate polynomials. Techniques from different areas of analysis are used -- including local Banach space theory, probability theory, harmonic and complex analysis, analytic number theory, interpolation theory and combinatorics. Our work is divided into
ten chapters which are presented independently but interact closely with one another.
\end{abstract}

\subjclass[2020]{Primary: 46B06, 46B07, 46G25. Secondary: 06E30, 30B50, 47B10, 43A15}

\keywords{Projection constant, spaces of polynomials, local invariants in Banach spaces}
\maketitle

\tableofcontents

\chapter*{Introduction}
\label{Introduction}

The study of complemented subspaces $X$ of a  Banach space $Y$ and their projection constants  has a~long history going  back to the beginning of operator theory in Banach spaces. At first recall that Banach in his famous treatise \cite{Banach} of 1932, asked whether every '$B$-space' (Banach space) has an uncomplemented subspace. The first example of such an uncomplemented subspace was given in 1933 by Banach and Mazur. They proved in their joint article \cite{Banach-Mazur} that any copy of $L_1[0,1]$ in $C[0, 1]$ is uncomplemented in $C[0,1]$. In 1934 Fichtenholz and Kantorovitch \cite{fichtenholz1934operations} showed that $C[0, 1]$ is uncomplemented in $L_\infty[0, 1]$. Later more concrete spaces with uncomplemented subspaces were given by Murray \cite{Murray} and  Sobczyk \cite{Sobczyk}. Regarding the problem of the existence of non-trivial complemented subspaces in  Banach spaces, let us recall the famous result due to  Lindenstrauss and Tzafriri \cite{LT1}, which says that an infinite dimensional~Banach space $X$ is isomorphic to a~Hilbert space if and only if any closed subspace of $X$ is complemented in $X$. In general, the description of the complemented subspaces of an infinite dimensional Banach space, which is not isomorphic to a~Hilbert space, is a~difficult problem. We point out that the first complete characterization of all complemented subspaces of $c_0$ and $\ell_p$ with $1\leq p <\infty$ with $p\neq 2$ was obtained by Pe{\l}czy\'nski \cite{pelczynski1960projections}; for the space $\ell_{\infty}$ such a~description was given by Lindenstrauss \cite{Lindenstrauss}.
 In these cases, the results state that every  complemented subspace of $\ell_p$ (resp. $c_0$) is isomorphic to $\ell_p$ (resp. $c_0$). Within the study of projection constants of Banach spaces, it  is particularly important to know the nature of the complemented subspaces of $C(K)$-spaces.
We point out that even in the case of $C[0, 1]$ a complete  description of all complemented subspaces is unknown. Let us mention the result due to Rosenthal which states that every complemented subspace of $C[0, 1]$ with a~non-separable dual is isomorphic to $C[0, 1]$ (see \cite{Rosenthal1972}, and also the  interesting survey by Rosenthal \cite [Chapter 5] {Rosenthal2003}).

In connection with Banach's question an important notion was introduced, which has roots in Murray's article \cite{Murray}. If $X$ is
a~complemented subspace of a~Banach space $Y$, then the relative projection constant of $X$ in $Y$ is defined by
\[
\boldsymbol{\lambda}(X, Y) = \inf\big\{\|P\|: \,\, P\in L(Y, X),\,\, P|_{X} = \id_X\big\}\,,
\]
where $\id_X$ is the identity operator on $X$.
The following straightforward result shows the intimate link  between projection constants and extensions of linear operators: For every Banach space $Y$ and its subspace $X$ one has
\begin{align*}
\boldsymbol{\lambda}(X, Y) = \inf\big\{c>0: \,\,\text{$\forall\, T \in \mathcal{L}(X, Z)$ \,\, $\exists$\,\, an extension\,
$\widetilde{T}\in \mathcal{L}(Y, Z)$\, with $\|\widetilde{T}\| \leq c\,\|T\|$}\big\}\,.
\end{align*}
It is worth noting that projections play an important role in approximation theory. In fact, if $Y$ is a~Banach space and $P\colon Y \to Y$ a~projection onto a subspace $X$, then the error $\|y - Py\|_Y$ of approximation of an element $y\in Y$ by $Py$ satisfies
\[
\|y- Py\|_Y \leq \|\id_Y - P\| \, \text{dist}(y, X) \leq (1 + \|P\|)\, \text{dist}(y, X)\,,
\]
where $\text{dist}(y, X) =\inf\{\|y-x\|_Y \,: \, x\in X\}$. This estimate motivates the problem of minimizing $\|P\|$, and any
projection  $P_0\colon Y \to Y$ onto $X$ such that $\|P_0\| = \boldsymbol{\lambda}(X, Y)$, is said to be a minimal projection of $Y$ onto $X$.

The (absolute) projection constant of $X$ is given by
\[
\boldsymbol{\lambda}(X) := \sup \,\,\boldsymbol{\lambda}(I(X),Y)\,,
\]
where the supremum is taken over all Banach spaces $Y$ and isometric embeddings $I\colon X \to Y$.

Any Banach space $X$ can be embedded isometrically into $\ell_\infty(S)$, where  $S$ is some  nonempty set which in general depends on $X$. For  every separable $X$, we may choose $S=\mathbb{N}$, and  if $X$ is finite dimensional, then there clearly is 
a~finite set~$S$. Throughout the paper, we  use the fact that, if $S$ is a~nonempty set for which the Banach space $X$ is  isometrically isomorphic to a~subspace $Z$ of $\ell_{\infty}(S)$, then 
\[
\boldsymbol{\lambda}(X) = \boldsymbol{\lambda}(Z, \ell_\infty(S))\,.
\]
Indeed, this is due to the fact that $\ell_{\infty}(S)$ is $1$-injective.
So finding $\boldsymbol{\lambda}(X)$ is equivalent to finding the norm of a~minimal projection from $\ell_\infty(S)$ onto an isometric copy of
$X$ in $\ell_\infty(S)$. However, this is a~non-trivial problem in general.

\noindent {\bf Objects of desire.}
Since ever polynomials play an outstanding role in various branches of mathematics, and this development continues unabated in numerous
modern developments in mathematics -- among others in approximation theory, analytic number theory, harmonic analysis, differential
geometry, learning theory, quantum information or coding theory.

We also point out that  in recent years  multivariate polynomials play an important role in combinatorics.  
See the  excellent book  \cite{Guth2015} by Guth  which carefully explains  grounbreaking 
progress in combinatorial geometry coming from an unexpected connection of  polynomials with  algebraic 
geometry. We note here that Guth and Katz in \cite{GuthKatz} introduce
so-called polynomial partitioning techniques 
to solve almost completely the Erd\"os distinct distances problem. They prove that the number of distinct 
distances determined by $n$ points in the plane is at least $O\big(\frac{n}{\log n}\big)$. 
Among others this technique is used 
in \cite{Guth2016, Guth2018} to get improvements  of the known restriction estimates for the paraboloid 
in high dimensions related to Stein’s famous restriction conjecture in Fourier analysis.

As indicated by the title of this work, we try to study projection constants of spaces of   multivariate polynomials. What do we mean
by Banach spaces of multivariate polynomials? In fact we basically look at two types of such spaces.

The first type consists of linear spaces of trigonometric polynomials defined on a compact topological group $G$, which all have Fourier
coefficients supported in an a priori given finite set $E$  of characters in the dual group $\widehat{G}$. That is, polynomials of the form
\[
P(g) = \sum_{\gamma \in E} \widehat{P}(\gamma) \gamma(g) , \,\quad  g \in G\,.
\]
The space of all such polynomials equipped with the uniform norm on $G$ forms a~finite dimensional Banach space and is denoted by
$$\text{Trig}_E(G)\,.$$ 
To see a concrete example, look at the $n$-dimensional circle group (= torus) $\mathbb{T}^n$. In this case
every character in $\widehat{\mathbb{T}^n}$ may be identified  with a unique  multi index $\alpha = (\alpha_1, \ldots, \alpha_n)\in \mathbb{Z}^n$,
and consequently every polynomial $P \in \text{Trig}_{\mathbb{Z}^n}(\mathbb{T}^n)$ has the form
\[
P(z) = \sum_{\alpha \in \mathbb{Z}^n} \widehat{P}(\alpha) z^\alpha,\, \quad  z \in \mathbb{T}^{n}\,.
\]
If we restrict to the 'index set' $E$ of all multi indices $ \alpha \in \mathbb{N}_0^n$ such that $|\alpha| = \sum \alpha_j \leq m$,
then 
$$\text{Trig}_{\leq m}(\mathbb{T}^n)$$ 
consists of all so-called analytic trigonometric polynomials of degree $\leq m$. Why analytic?
Recall that $\mathbb{T}^n$ is the distinguished boundary of the $n$-dimensional polydisc $\mathbb{D}^n$. Then it is obvious that the
Banach space $\text{Trig}_{\leq m}(\mathbb{T}^n)$ by the maximum modulus theorem identifies  isometrically and coefficient preserving
with the linear space of all analytic polynomials
\[
P(z) = \sum_{ \alpha \in \mathbb{N}_0^n: |\alpha| \leq m} \frac{\partial^\alpha P(0)}{\alpha!} \,z^\alpha, \,\quad  z \in \mathbb{C}^n
\]
endowed with supremum norm taken on $\mathbb{D}^n$. In other terms,
\[
\text{Trig}_{\leq m}(\mathbb{T}^n) = \mathcal{P}_{\leq m} (\ell_\infty^n)\,,
\]
where the latter symbol stands for all  polynomials $P\colon \ell_\infty^n \to \mathbb{C}$ of degree at most $m$ and the norm is the
supremum norm with respect to the unit ball of $\ell_\infty^n$.

This, in a generic way, leads us  to the second type of finite dimensional Banach spaces of multivariate polynomials we are interested in. Given a finite index set $J \subset \mathbb{N}_0^n$ and a~Banach space $X_n = (\mathbb{C}^n,\|\cdot\|)$
with open unit ball $B_{X_n}$, we consider the~Banach space
$$\mathcal{P}_J(X_n)$$ of all polynomials $P\colon X_n \to \mathbb{C}$ 'supported on $J$' -- that is polynomials of the form
\[
P(z) =  \sum_{ \alpha \in J} c_\alpha(P) z^\alpha, \,\quad  z \in \mathbb{C}^n\,,
\]
equipped with the norm
\[
\|P\|_{B_{X_n}} := \sup\big\{|P(z)|\,: \, z\in B_{X_n}\big\}\,.
\]
We will see that this abstract approach  to  spaces of polynomials supported on certain index sets incorporates  many apparently unrelated
settings -- like e.g., Dirichlet polynomials $\sum_{n} a_n e^{-\lambda_n s}$ with respect to arbitrary frequencies $\lambda=(\lambda_n)$ or functions
$f\colon \{-1, 1\}^N \to \mathbb{R}$ on $N$-dimensional Boolean cubes under the restriction of various different complexity measures.

\smallskip

\noindent {\bf Aim.}
Our primary goal is to provide concrete formulas and asymptotically correct  estimates for the projection constants  $\boldsymbol{\lambda}\big(\text{Trig}_E(G)\big)$
as well as $\boldsymbol{\lambda}\big(\mathcal{P}_J(X_n)\big)$. Doing this, we  as a by-product obtain a number of independently interesting results -- in particular, results on the unconditional basis constants of $\text{Trig}_E(G)$
and $\mathcal{P}_J(X_n)$.

More precisely, this aim forces us to use techniques from different areas of analysis -- among others from harmonic analysis, probability theory, complex analysis, analytic number theory, local Banach space theory (in particular, the metric theory of tensor products and operator ideals) and interpolation theory.
In fact, we relate the projection constants of various Banach spaces of multivariate polynomials with (a priori unrelated) characteristics like e.g.,
Sidon constants, $\Lambda(2)$-sets, unconditional basis constants, Gordon-Lewis constants, concavity and convexity constants, or multi dimensional Bohr radii.

Describing our goal in more detail, we  mention three examples that in fact served as a significant inspiration for our work. To our best knowledge,
 these three
asymptotically optimal estimates for projection constants of multivariate polynomials 
 seem to be the so far only known  overall results we aim for.

The first one is a theorem of  Lozinski and Kharshiladze (see \cite[IIIB. Theorem 22]{wojtaszczyk1996banach} and \cite{natanson1961constructive}),
which shows a  precise estimate of the projection constant of $\text{Trig}_{\leq m}(\mathbb{T})$, the space of trigonometric polynomials on
$\mathbb{T}$ of degree at most $m$ equipped with the sup-norm:
\begin{equation} \label{LoKa}
\boldsymbol{\lambda}\big(\text{Trig}_{\leq m}(\mathbb{T})\big)= \frac{4}{\pi^2} \log(m+1) + o(1).
\end{equation}

The second result is a~beautiful  formula from the eighties due to  Ryll and Wojtaszczyk~\cite{ryll1983homogeneous} (see also \cite[IIIB. Theorem 15]{wojtaszczyk1996banach}) giving the exact value of the projection constant of the space $\mathcal{P}_{m}(\ell^n_2)$ of all $m$-homogeneous
polynomials on the $n$-dimensional complex Hilbert space $\ell_2^n$:
\begin{equation} \label{RWRW}
    \boldsymbol{\lambda}\big(\mathcal{P}_{m}(\ell^n_2)\big) = \frac{\Gamma(n+m) \Gamma(1+\frac{m}{2})}{\Gamma(1+m)\Gamma(n+\frac{m}{2})}.
\end{equation}
This result in fact was the key step to answer some questions of the time within the theory of the Hardy spaces  and Bloch spaces on the
complex ball.

Motivated by the study of multivariate variants of Bohr's famous power series theorem,  the first named author together with Frerick
\cite{defant2011bohr} (see also \cite[Section 22.2]{defant2019libro}) provided (implicitly) the following asymptotically sharp estimate
\begin{equation}\label{eq:proj const pols on lp}
\boldsymbol{\lambda}\big(\mathcal P_m(\ell_r^n)\big)  \sim_{C^m} \left( 1+\frac{n}{m} \right)^{m\left(1-\frac{1}{\min\{r,2\}}\right)}
\end{equation}
of the projection constant of $\mathcal P_m(\ell_r^n)\,, \,\,1 \leq r \leq \infty$,
where $\sim_{C^m}$ stands for equivalence up to $C^m$ with a constant $C$ only depending on $r$.

Before we give a brief outlook on the main results we obtained, let us continue with a short history on a few selected highlights raised
since the days of Banach and Mazur in connection with projection constants of
Banach spaces.

\smallskip

\noindent {\bf Some  history.}
Today Banach space theory offers a~rich toolbox which under certain topological or geometrical assumptions on a~given Banach space $Y$
allows to construct non-complemented subspaces of this space -- even non-complemented subspaces with additional properties. For example,
any Banach space $Y$ with the Dunford-Pettis property (that is, a~space such that every weakly compact operator from $Y$ into any Banach space maps weakly compact sets onto compact sets) does not contain any infinite dimensional complemented reflexive subspace. The classical examples of Banach spaces with the Dunford-Pettis property are $C(K)$-spaces and $L_1(\mu)$-spaces (see \cite{Diestel}).  Another important class of Banach spaces are Grothendieck spaces. A Banach space $X$ is a Grothendieck space whenever  weak$^{*}$-convergence and weak-convergence of sequences  in the dual space $X^{*}$ coincide.  Grothendieck in his seminal paper \cite{Grothendieck1953} proved that for any nonempty set $S$ the space of bounded functions on $S$, and more generally, the space $C(K)$ of continuous functions on a~compact, extremally disconnected Hausdorff space, have
this property. Other examples of Grothendieck spaces include the
Hardy space $H^{\infty}(\mathbb{D})$ (see \cite{Bourgain1983}) and von Neumann algebras (see \cite[Corollary 7]{Pfitzner1994}).
It is well-known that separable Grothendieck spaces are reflexive, and that the class of all Grothendieck spaces  inherits quotients. As a~consequence, separable non-reflexive subspaces of Grothenedick spaces are non-complemented. Examples of Grothendieck spaces with the Dunford-Pettis are $\ell_\infty(S)$-spaces for any nonempty set $S$. In particular, this result recovers the well-known and important fact that $C[0, 1]$ (resp. $c_0$) is not complemented in $\ell_\infty[0,1]$ (resp. $\ell_\infty$).
Observe that from the above discussion, it follows immediately that if $X$ is a~reflexive or separable Banach space with  $\lambda(X)<\infty$, then $X$ is finite dimensional.

We once again mention  the complemented subspace theorem due to Lindenstrauss and Tzafriri \cite{LT71}, which states that a~Banach space $X$ is isomorphic to a~Hilbert space if and only if any closed subspace of $X$ is complemented in $X$. A much more recent highlight of modern Banach theory is the solution to a famous problem asking whether every infinite dimensional Banach space $X$ admits a~non-trivial projection $P$, that is, some  projection $P$ such  that $P(X) $ and $X/P(X)$ are both infinite dimensional. It is an implausible deep result due to Gowers and Maurey in \cite{gowers1993unconditional} and \cite{gowers1997banach} showing that there is an example of a~reflexive Banach space $X$ which is hereditarily indecomposable. This means that every closed subspace of $X$ is indecomposable (i.e., there are no infinite dimensional closed subspaces $M$ and $N$ for which
$M \cap N=\{0\}$ and $M+N$ is closed). In particular, this space does not admit a~non-trivial projection, so it solves the mentioned problem.

The Hahn-Banach theorem shows that all $1$-dimensional subspaces of a given Banach space are $1$-complemented. As proved by Gr\"unbaum
\cite{grunbaum}  the projection constant of a~$2$-dimensional real Banach space, whose unit ball is the regular hexagon, equals  $\frac{4}{3}$,
and he  conjectured that any $2$-dimensional real Banach space admits a~projection constant of at most $\frac{4}{3}$. A positive answer
to this long standing conjecture was given by  Chalmers and Lewicki in \cite{ChL}. In the case of finite dimensional spaces, the property of
having norm-one projections is very rare. We note that Bohnenblust \cite{Bohnenblust} proved that there are subspaces of $\ell_p^n$, which do
not have any non-trivial $1$-complemented subspace. It is perhaps worth noting the following general result proved by Boszny and Garay \cite{bosznay1986norms}:
For any $\varepsilon \in (0, 1)$ and  any finite dimensional real Banach space $(X, \|\cdot\|)$ there exists a~norm $\|\cdot\|^{*}$ such that
\[
(1-\varepsilon)\|x\| \leq \|x\|^{*} \leq (1 + \varepsilon) \|x\|, \quad\, x\in X
\]
and $(X, \|\cdot\|^{*})$ does not have any non-trivial $1$-complemented subspace.

Finite-rank projections naturally appear in the study of Banach spaces with basis. Recall that a Banach spaces $X$ has a~basis if and only if there is
a~bounded sequence $\{P_n\} \subset L(X)$  of finite-rank projections on $X$ with $\text{rank}(P_n) = n$ such that $P_{n-1}(X) \subset P_n(X)$
for each $n>1$ and $\bigcup_n P_n(X)$ is dense in $X$. Clearly, every sequence $\{e_n\} \subset P_n(X) \cap \text{ker} P_{n-1}$ with $e_n\neq 0$
forms a~basis of $X$.

General bounds for projection constants of various finite dimensional Banach spaces were studied by many authors. The most fundamental general
upper bound is due to Kadets and Snobar \cite{kadecsnobar}: For every $n$-dimensional Banach space $X_n$ one has
\begin{equation} \label{kadets1}
\boldsymbol{\lambda}(X_n) \leq \sqrt{n}\,.
\end{equation}
In contrast, K\"onig and Lewis \cite{koniglewis} showed that for any Banach space $X_n$ of dimension $n \ge 2$ the strict
inequality $\boldsymbol{\lambda}(X_n) < \sqrt{n}$ holds, and this estimate was improved by Lewis \cite{lewis} showing
\[
\boldsymbol{\lambda}(X_n) \leq \sqrt{n}\,\left(1 - \frac{1}{n^2} \bigg(\frac{1}{5}\bigg)^{2n + 11}\right)\,.
\]
Regarding the Kadets-Snobar estimate~\eqref{kadets1}, we also mention a~remarkable result of\,Pisier \cite[Corollary 10.8]{pisier1986factorization}, who 
constructed an infinite dimensional Banach space $X$ such that for some  $\delta \in (0, 1)$ and all  finite rank projections 
$P\colon X \to X$
\[
\|P\| \geq \delta \sqrt{\text{rank}\,P}\,.
\]
K\"onig \cite{konig1985spaces} proved that the Kadets-Snobar estimate~\eqref{kadets1} is asymptotically  best possible. More precisely, he proved that
there exists a sequence $(X_{n_k})_{k=1}^{\infty}$ of finite-dimensional real Banach spaces such that $\text{dim}(X_{n_k})
= n_k$, where $n_k\to \infty$ as $k\to \infty$, and
\[
\lim_{k\to \infty} \frac{\boldsymbol{\lambda}(X_{n_k})}{\sqrt{n_k}} = 1\,.
\]
Another  surprising result in this direction is due to  Figiel, Lindenstrauss and Milman \cite{FLM}. It guarantees  that there is a~universal constant
$c\in (0, 1)$ such that every $n$-dimensional Banach space $E$ contains some subspace $E_0$ such that $\boldsymbol{\lambda}(E_0) \geq c\,\sqrt{n}$.

Nevertheless, it is often very hard to compute the projection constant $\boldsymbol{\lambda}(X_n)$ of a~concrete $n$-dimensional
Banach space $X$.

The exact values of $\boldsymbol{\lambda}(\ell_2^n)$ and $\boldsymbol{\lambda}(\ell_1^n)$ were computed by Gr\"unbaum
\cite{grunbaum} and Rutovitz \cite{rutovitz}: In the complex case
\begin{align}\label{grunbuschC-A}
\boldsymbol{\lambda}\big(\ell_2^n(\mathbb{C})\big)  = n \int_{\mathbb{S}_n(\mathbb{C})} |x_1|\,d\sigma
= \frac{\sqrt{\pi}}{2}   \frac{n!}{\Gamma(n + \frac{1}{2})}\,,
\end{align}
where $d\sigma$ stands for the normalized surface measure on the sphere  $\mathbb{S}_n(\mathbb{C})$
in $\mathbb{C}^n$, and
\begin{align}\label{grunbuschC-B}
\boldsymbol{\lambda}\big(\ell_1^n(\mathbb{C})\big)  = \int_{\mathbb{T}^n} \Big|\sum_{k=1}^{n} z_k\Big|\, dz  
= \int_{0}^{\infty} \frac{1 -J_0(t)^n}{t^2} dt\,,
\end{align}
where $dz$ denotes the normalized Lebesgue measure on the distinguished boundary $\mathbb{T}^n$ in $\mathbb{C}^n$
and $J_0$ is the zero Bessel function defined by
$
J_0(t) = \frac{1}{2\pi} \int_{0}^{\infty} \cos( t \cos \varphi) d \varphi\,.
$
The corresponding real constants are different:
\begin{align}\label{grunbuschR}
& \boldsymbol{\lambda}\big(\ell_2^n(\mathbb{R})\big)  =  n \int_{\mathbb{S}_n(\mathbb{R})} |x_1|\,d\sigma
= \frac{2}{\sqrt{\pi}}   \frac{\Gamma(\frac{n+2}{2})}{\Gamma(\frac{n+1}{2})} \\
& \boldsymbol{\lambda}\big(\ell_1^n(\mathbb{R})
\big)  =
\begin{cases}
\boldsymbol{\lambda}\big(\ell_2^n(\mathbb{R})\big),  &  \text{$n$ odd}\\[2mm]
\boldsymbol{\lambda}(\ell_2^{n-1}(\mathbb{R})),  &  \text{$n$ even}\,.\\[2mm]
\end{cases}
\end{align}
Gordon \cite{gordon} and Garling-Gordon \cite{garlinggordon} determined the asymptotic growth of
$\boldsymbol{\lambda}\big(\ell_p^n\big)$ for $1<p<\infty$ with $p\neq 1,2,\infty$:
\begin{equation} \label{gordongarling}
\boldsymbol{\lambda}\big(\ell^n_{p}\big) \sim n^{\min\big\{\frac{1}{2}, \frac{1}{p} \big\}}\,.
\end{equation}
K\"onig, Sch\"utt and Tomczak-Jagermann \cite{konig1999projection} proved that for  $1 \le p \le 2$
\begin{align}\label{koenigschuetttomczak}
  \lim_{n\to \infty} \frac{\boldsymbol{\lambda}\big(\ell_p^n\big)}{\sqrt{n}} = \gamma\,,
\end{align}
where $\gamma = \sqrt{\frac{2}{\pi}}$ in the real  and $\gamma= \frac{\sqrt{\pi}}{2}$ in the complex case.
We refer to \cite{lewickimastylo} for extensions of these results within the setting of some classes of rearrangement invariant
Banach sequence spaces including Orlicz spaces. For an extensive treatment on all of this we  refer  to the excellent  monograph
\cite{tomczak1989banach}
of Tomczak-Jaegermann .

\smallskip

\noindent{\bf Outlook.}
We now give a brief description of some of the main results.  Our work is divided into ten chapters. On the one hand, all chapters
are of independent interest and up to some point even self contained -- on the other hand, they interact with each other, and our hope is that all contributions together
result in an interesting overall picture on projection constants of Banach spaces of  multivariate polynomials.

\noindent {\bf Chapter~\ref{chapter: general preliminaries}.}
We start with some  general preliminaries which are used all over the text.
Those preliminaries that are not in constant use, are only introduced locally.

\noindent {\bf Chapter~\ref{trigo}.}
We consider the space $\text{Trig}_E(G)$ of trigonometric polynomials over a compact abelian  group $G$,
which all have  Fourier transforms supported on a fixed set $E$ of characters. Following a classical
averaging technique developed by Rudin, we provide  (what we call) an  integral formula:
\[
\boldsymbol{\lambda}\big(\text{Trig}_E(G)\big)
= \int_G \, \Big|\sum_{\gamma \in E} \gamma (x)\Big|\,d\mathrm{m}(x)\,,
\]
where $m$ stands for the Haar measure on $G$.
As a first consequence, we  prove a couple of variants of the
 Lozinski-Kharshiladze  result from~\eqref{LoKa} for spaces of trigonometric polynomials on
 products of compact abelian groups, including the $n$-dimensional torus $\mathbb{T}^n$.
This chapter paves the way for later applications to many more concrete  spaces of polynomials (as e.g., spaces of Dirichlet polynomials, trace class operators or functions on  Boolean cubes $\{-1,1\}^{N}$).

\noindent {\bf Chapter~\ref{eukli}.}
Continuing Rudin's ideas from the preceding chapter, we mainly focus on spaces of multivariate polynomials on finite dimensional
Euclidean spaces. Given some finite index sets $J\subset  \mathbb{N}_0^n $, we  prove an exact integral formula for $\boldsymbol{\lambda}\big(\mathcal{P}_J(\ell_2^n)\big)$ 
under the assumption  that  $\mathcal{P}_{J} (\ell_2^n)$ is invariant under the action of the unitary group  $\mathcal{U}_n$
(all unitary $n \times n$ matrices):
\[
\boldsymbol{\lambda} \big(\mathcal{P}_{J}(\ell_2^n)\big)
= \sup_{z \in \mathbb{S}_n}
\int_{\mathbb{S}_n} \bigg| \sum_{k=1}^{m} \,\, 
\frac{(n-1 + k)!}{(n-1)!\,k!}
\sum_{\substack{\alpha \in J\\|\alpha|=k }} \frac{k!}{\alpha !} z^{\alpha} \overline{\xi}^{\alpha}
\bigg|\,d\sigma(\xi)\,,
\]
where $m= \max_{\alpha \in J}|\alpha|$
and $\sigma$ stands for the normalized Lebesgue
measure on the unit sphere $\mathbb{S}_n$ in $\ell_2^n$. Among others, we this way in the special case of $m$-homogeneous polynomials recover and extend the formula~\eqref{RWRW} of Ryll and Wojtaszczyk.

\noindent {\bf Chapter~\ref{Trace class operators}.}
We  study the space of harmonic polynomials on the group $\mathcal U_n$ of unitary $n \times n$ matrices. We show that this space of
polynomials is dense in the space $C(\mathcal U_n)$. Combining previous ideas (in particular, Rudin's averaging technique), we show an
integral formula for the projection constant of the space $\mathcal S_1(n)$, the trace class on the $n$-dimensional complex Hilbert space; namely
\[
\boldsymbol{\lambda}\big(\mathcal S_1(n)\big) = n \int_{\mathcal U_n} \vert \text{tr}(V) \vert \,d\mu \,,
\]
where $\mu$ stands for the Haar
probability measure on the group $\mathcal U_n$. This in fact is a non-commutative analogue of \eqref{grunbuschC-B}. Using a probabilistic
approach (and the  so-called Weingarten calculus), we get the limit formula
\[
\dis\lim_{n\to \infty} 
\frac{\boldsymbol{\lambda}\big(\mathcal S_1(n)\big)}{n}
= \frac{\sqrt{\pi}}{2}\,,
\]
which for $p=1$ is a  non-commutative analog of~\eqref{koenigschuetttomczak}.

\noindent {\bf Chapter~\ref{tensor-pro-me}.}
We show that for a large variety of index sets $J\subset  \mathbb{N}_0^n $ of degree at most $m$ and for a rather
large class of Banach spaces $X_n = (\mathbb C^n, \|\cdot\|)$, the projection constant of $\mathcal{P}_{J}(X_n)$,
up to a constant $C(m)$ only depending on $m$
and not on the dimension $n$, equals the $m$th power of
the  projection constant of the dual of $X_n$, that is,
\[
\boldsymbol{\lambda}\big(\mathcal{P}_{J}(X_n)\big)
\sim_{C(m)} \boldsymbol{\lambda}(X^\ast_n)^{m}\,.
\]
On this way, we collect independently interesting information on $\boldsymbol{\lambda}\big(\mathcal{P}_{J}(X_n)\big)$.
Among others, using factorization and polarization, we connect the projection constant of $\mathcal{P}_{J}(X_n)$ with the projection constant of its 'homogeneous building blocks'
$\mathcal{P}_{J_m}(X_n)$.
The main proofs are based on the theory of (multilinear) tensor products and operator ideal norms.

\noindent {\bf Chapter~\ref{Unconditionality}.}
We use the concepts of unconditionality and probability in Banach spaces as a  systematic tool to find lower bounds
for projection constants of spaces of multivariate polynomials. Doing this, we  relate the projection constant of  $\mathcal{P}_J(X_n)$
  with various important invariants
of local Banach space theory -- among others, the Gordon-Lewis constant ${\mbox{gl}}\big( \mathcal{P}_{J}(X_n)\big)$ and
the unconditional basis constant $\boldsymbol{\chimon}\big( \mathcal{P}_{J}(X_n)\big)$ of the monomial basis $(z^\alpha)_{\alpha \in J }$ of $\mathcal{P}_{J}(X_n)$. 
To see a very first sample, we mention that for every Banach space $X_n = (\mathbb C^n, \|\cdot\|)$
and index sets 
$J\subset  \Lambda(m,n) = \{\alpha \in \mathbb{N}_0^n \colon |\alpha|=m\} $ 
\[
   \boldsymbol{\chimon}\big( \mathcal{P}_{I}(X_n)\big)
 \,\le \,e 2^m \, \|\mathbf{Q}_{\Lambda(m,n),I}\|\,\,
 \boldsymbol{\lambda}\big( \mathcal{P}_{J}(X_n)\big)\,,
\]
where $I = \{\alpha \in \Lambda(m+1,n)\colon \exists \beta \in \Lambda(1,n) \textrm{ such that }
\alpha-\beta\in J\}$
and
$\mathbf{Q}_{\Lambda(m+1,n),I}: \mathcal{P}_{\Lambda(m+1,n)}(X_n) \to \mathcal{P}_{I}(X_n)$ is
 the canonical projection which annihilates monomial  coefficients with indices outside of $I$. The
 main advantage then is that we may use probabilistic estimates to find lower bounds of  
$ \boldsymbol{\chimon}\big( \mathcal{P}_{I}(X_n)\big)$
which in turn give lower bounds for 
$\boldsymbol{\lambda}\big( \mathcal{P}_{J}(X_n)\big)$.
Under further convexity assumptions on the underlying Banach spaces $X_n$ all this, for a large variety of Banach spaces $X_n$ and index sets $J$, leads to asymptotically
optimal estimates for $\boldsymbol{\lambda}\big(\mathcal{P}_J(X_n)\big)$ .
We in particular focus on the dependence of  $\boldsymbol{\lambda}\big(\mathcal{P}_{J}(X_n)\big)$  simultaneously on the degree $m$  and the dimension $n$, and prove several asymptotic results as these parameters go to infinite.

\noindent
{\bf Chapter~\ref{Part: Dirichlet polynomials and polynomials on the Boolean cube}.}
Given a frequency sequence $\lambda=(\lambda_n)$ and a finite subset  $J \subset \mathbb{N}$, we define the Banach space $\mathcal{H}_{\infty}^{J}(\lambda)$ of all Dirichlet polynomials supported on $J$. That is, all finite Dirichlet
polynomials
$D(s) = \sum_{n \in J} a_n e^{-\lambda_n s}$ with
complex coefficients $a_n$ and  the complex variable $s$, endowed  with the supremum norm of $D$ as a function on the right half plane in $\mathbb{C}$.
Based on  harmonic analysis on  so-called $\lambda$-Dirichlet groups, we get  the  integral formula
\[
\boldsymbol{\lambda}\big(\mathcal{H}_\infty^{J}(\lambda\big) ~ = ~ \dis\lim_{T \to \infty} \frac{1}{2T} \int_{-T}^T \Big|\sum_{n \in J} e^{-i\lambda_n t}\Big|\,dt\,,
\]
which we then apply to various concrete frequencies $\lambda$ and index sets $J$. Combining with a recent deep result of Harper from probabilistic analytic number theory, we for the space $\mathcal{H}_\infty^{\leq x}$ of all ordinary Dirichlet polynomials $D(s) = \sum_{n \leq x} a_n n^{-s}$ of length $x$ prove   the following asymptotically correct order of the projection constant:
\[
\boldsymbol{\lambda}\big(\mathcal{H}_\infty^{\leq x}\big) \sim \frac{\sqrt{x}}{(\log \log x)^{\frac{1}{4}}}\,.
\]

\noindent
{\bf Chapter~\ref{Polynomials on the Boolean cube}.}
Every real-valued function $f$ on the $N$-dimen\-sional Boolean cube $\{-1,1\}^{N}$
has  a
Fourier-Walsh expansion $f(x) = \sum_{S\subset \{1,\ldots,N\}}\widehat{f}(S) \prod_{k \in S} x_k \,.$
Given a~set $\mathcal{S}$ of subsets in  $\{1,\ldots,N\}$,  we study the  projection constant of $\mathcal{B}^N_{\mathcal{S}}$, denoting the Banach space of all functions $f: \{-1,1\}^{N} \to \mathbb{R}$ with  Fourier-Walsh expansions supported on $\mathcal{S}$ together with the supremum norm on $\{-1,1\}^{N}$. Based on an integral formula for $\boldsymbol{\lambda}\big(\mathcal{B}_{\mathcal{S}}^N\big)$, we outline how   to compute the exact asymptotic behaviour of $\boldsymbol{\lambda}\big(\mathcal{B}_{\{ S \colon  |S|= d\}}^N\big)$ as well as $\boldsymbol{\lambda}(\mathcal{B}_{\{ S \colon  |S|\leq  d\}}^N)$ for $d\in \mathbb{N}$ as $N \to \infty$.
As an example we mention the following asymptotic formula:
\[
\dis \lim_{N \to \infty} \frac{\boldsymbol{\lambda}\big(\Bb_{\{ S \colon  |S|=  2\}}^N\big) }{N} = \dis \lim_{N \to \infty} \frac{\boldsymbol{\lambda}\big(\Bb_{\{ S \colon  |S| \leq   2\}}^N\big) }{N} =  \sqrt{\frac{2}{\pi e}}.
\]
Another  by-product of our techniques are  independently interesting estimates for the Sidon constants of collections of Fourier-Walsh monomials $x^S, S \in \mathcal{S}$.

\noindent
{\bf Chapter~\ref{Part: Polynomial projection constants}.
}
We invent a   new tool  to estimate  the projection constant of  spaces $\mathcal{P}_J(X_n)$ of polynomials, which are defined on a Banach lattice $X_n = (\mathbb{C}^n, \|\cdot\|)$ and supported on an index set $J \subset \mathbb{N}_0^n$.
The so-called  polynomial projection constant
 of $\mathcal P_J(X_n)$ is given by 
\[
\widehat{\boldsymbol{\lambda}}\big(\mathcal P_J(X_n)\big)=\sup_{z \in B_{X_n}} \sum_{\alpha \in J} c_{X_n}(\alpha) \vert z^{\alpha}\vert\,,
\]
where $c_{X_n}(\alpha)$ 
is the reciprocal of the norm of the monomial $z^{\alpha}$ in $\mathcal P_J(X_n)$.
This  quantity leads to the  upper  bound 
\[
\boldsymbol{\lambda}\big(\mathcal{P}_{J}(X_n)\big) 
\leq \widehat{\boldsymbol{\lambda}}\big(\mathcal{P}_{J}(X_n)\big)
\]
of the  projection constant of $\mathcal P_{J}(X_n)$, which in many concrete situations is  good enough.
In fact, it turns out that the polynomial projection constant  as a  substitute of the projection constant
of $\mathcal{P}_J(X_n)$  allows to extend estimates like \eqref{eq:proj const pols on lp}
in a~systematic and comfortable way to wider ranges of spaces $X_n$  and index sets $J$. Underpinned by abstract theory, we examine this new invariant by primarily looking at concrete examples.

\noindent
{\bf Chapter~\ref{Part: Polynomials on Lorentz sequences spaces}.
}
We  focus on  polynomials, which are  defined on finite dimensional Lorentz sequence spaces $\ell_{r,s}^n$ and supported on a given finite index set $J$ of multi indices. Our aim is to obtain asymptotically  correct estimates  for  the projection constant and the unconditional basis constant of the  spaces $\mathcal P_J(\ell_{r,s}^n))$, which depend both, on the dimension $n$ and on the structure of $J$.
A natural question one may ask  is, whether  estimates like~\eqref{eq:proj const pols on lp} for spaces of $m$-homogeneous polynomials on  $\ell_r^n$, remain valid for $\ell_{r,s}^n$ and more general index sets $J \subset \NN_0^{(\mathbb{N})}$ regardless of the second parameter $s$
 -- that is,
\[
\boldsymbol{\lambda}\big(\mathcal{P}_{J_{\leq m}}(\ell_{r,s}^n)\big) 
\sim_{C^m} \left( 1+\frac{n}{m} \right)^{m\left(1-\frac{1}{\min\{r,2\}}\right)} \,.
\]
We see that if we restrict ourselves to tetrahedral indices, then this is in fact true. But it turns out that under  certain technical restrictions, this  even holds  in the general case. To do so we
have to overcome subtle  
technical difficulties, which are far more involved than those used  in the case $r=s$ and 
 require most  of the previously provided techniques .

\noindent
{\bf Chapter~\ref{Part: Bohr radii}.
}
Based on the result collected in the previous chapters, we  study  multivariate  Bohr radii.
 More precisely, given   a  Banach  sequence lattice $X$ and some  index set $J \subset \NN_0^{(\mathbb{N})}$, we denote by
 $H_{\infty}^{J}(B_{X_n})$ the Banach space of all bounded holomorphic functions $f: B_{X_n} \to \mathbb{C}$ with monomial coefficients
$\partial^\alpha f(0)/\alpha!$ supported on $J$, where $X_n = (\CC^n, \| \cdot \|)$ stands for the $n$th sections  of $X$. Extending the usual definitions, the Bohr radius of $B_{X_n}$ with respect to $J$ is then given by
\begin{equation*}
    K(B_{X_n},J) := \sup \Big\{ 0 < r < 1 \colon \sup_{z \in r  B_{X_n}}  \sum_{\alpha \in J} \Big|\frac{\partial^\alpha f(0)}{\alpha!} z^\alpha\Big| \le \|f\|_\infty \,\,\, \text{for all $f \in H^J_\infty (B_{X_n})$} \Big\}\,.
       \end{equation*}
       We recover and extend various results from the literature on  Bohr radii in high dimensions. Following  previous themes, we in particular study
       the behavior of such Bohr radii versus the unconditional basis and projection constants of  characteristic 'homogeneous building blocks'
       of $ H^J_\infty (B_{X_n})$,  but also versus the convexity/concavity of the underlying Banach sequence space $X$.
       Among others, we isolate a class of Banach sequence lattices $X$ and index sets $J$ such that
       $$
              \dis\lim_{n \to \infty} 
       \frac{K(B_{X_n},J)}{\sqrt{\log n/n}}  = 1\,,
             $$
            and we
 provide the asymptotically correct decay of the Bohr radii $K(B_{\ell_{r,s}^n},J)$ for a wide range of values of $1 \leq r, s \leq \infty$
and  index sets $J$. Compared to the case $r=s$, this seems far more challenging.

\bigskip

\chapter{General preliminaries} \label{chapter: general preliminaries}

\subsection{Comparison of sequences and index sets}\label{index-sets}
Given two sequences $(a_n)$ and $(b_n)$ of non-negative real numbers we write $a_n \prec b_n$, if there is a constant
$c>0$ such that $a_n \leq c\,b_n$ for all $n\in \mathbb{N}$, while $a_n \sim b_n$ means that $a_n \prec b_n$ and
$b_n \prec a_n$ holds. In the case that an extra parameter $m$ is also involved, for two sequences of non-negative real
numbers $(a_{n,m})$ and $(b_{n,m})$, we write $a_{n,m} \prec_{C(m)} b_{n,m}$ whenever there is a constant $C(m)>0$
(which depends exclusively on $m$ and not on $n$) such that $a_{n,m} \leq C(m) b_{n,m}$ for all $n,m \in \mathbb{N}$.
We use the notation $a_{n,m} \sim_{C(m)} b_{n,m}$ if $a_{n,m} \prec_{C(m)} b_{n,m}$ and $b_{n,m} \succ_{C(m)} a_{n,m}$.
We also write $a_{n,m} \prec_{C^m} b_{n,m}$ when there is a hypercontractive comparision, i.e., there is an absolute
constant $C>0$ such that $a_{n,m} \leq C^m b_{n,m}$ for all $n,m \in \mathbb{N}$. Of course, if $a_{n,m} \prec_{C^m}
b_{n,m}$ and $b_{n,m} \succ_{C(m)} a_{n,m}$ we simply write $a_{n,m} \sim_{C^m} b_{n,m}$.

\smallskip

We write $\mathbb{N}_0^{(\mathbb{N})}$ for all sequences $\alpha \in \mathbb{N}_0^{\mathbb{N}}$ which have finite support, and  as usual we call all these
 sequences multi indices.
 Obviously,
\[
\mathbb{N}_0^{(\mathbb{N})} = \bigcup_{n \in \mathbb{N}} \mathbb{N}_0^n\,,
\]
where  $\mathbb{N}_0^n$ is  interpreted as a subset of $\mathbb{N}_0^{\mathbb{N}}$; we say  that   $\mathbb{N}_0^n$  forms  all multi indices of length $n$.
 For each multi index $\alpha = (\alpha_i) \in \mathbb{N}_0^{(\mathbb{N})}$, we call $$|\alpha | = \sum \alpha_i$$ the order of $\alpha$, and define the two important index sets
\begin{align*}
\Lambda(m,n) = \{ \alpha \in \mathbb{N}_0^n \colon |\alpha | = m \}
\,\,\, \,\,\,\text{ and } \,\,\,\,\,\,
\Lambda(\leq m,n) = \{ \alpha \in \mathbb{N}_0^n\colon |\alpha | \leq  m\}\,.
\end{align*}
The following  formula and estimate
\begin{equation} \label{cardi}
    |\Lambda(m,n)| = \dbinom{n+m-1}{m}\leq e^m \Big( 1 + \frac{n}{m}\Big)^m
\end{equation}
for the cardinality of $\Lambda(m,n)$ is crucial for our purposes.
Given an index set $J \subset \mathbb{N}_0^{(\mathbb{N})}$ and $m,n \in \mathbb{N}$, we call
\[
J^n:= J \cap \mathbb{N}_0^n
\]
the $n$-dimensional section of $J$, and
\[
J_m = J \cap \Lambda(m,n)
\]
its $m$-homogeneous part.
By
\[
J_{\leq d} = \big\{ \alpha \in J \colon |\alpha| \leq d \big\}
\]
we denote the index set of all multi indices in $J$ of degree $\leq d$.
For any finite index set $J \subset \mathbb{N}_0^{(\mathbb{N})}$, we define the degree of $J$ by
$\text{deg} \; J = \max\{|\alpha|\,:\, \alpha\in J\}\,.$
An index set  $J \subset \mathbb{N}_0^{(\mathbb{N})}$ is said to have degree at most $d$ whenever $J = J_{\leq d}$, and $m$-homogeneous whenever $J = J_{m}$.

A multi index $\alpha = (\alpha_i)\in \mathbb{N}_0^{(\mathbb{N})}$ is called  tetrahedral whenever
each entry $\alpha_i$ is either $0$ or $1$. We denote the index set of all   tetrahedral multi indices by $\Lambda_T$, and put for each $m \leq n$
\begin{align*}
\Lambda_T(m,n) = \big\{ \alpha \in \Lambda_T^n \colon |\alpha | = m \big\}
\,\,\, \,\,\,\text{ and } \,\,\,\,\,\,
\Lambda_T(\leq m,n) = \{ \alpha \in \Lambda_T^n \colon |\alpha | \leq\,m\}\,.
\end{align*}
Moreover, $J$ is called  tetrahedral, if all indices in $J$ are tetrahedral, i.e., $J \subset \Lambda_T$.
Given  $J \subset  \Lambda(m,n)$, we  define the index set
\[
J^\flat \subset \Lambda(m-1,n)\,,
\]
which consists of all $\alpha \in \Lambda(m-1,n)$ for which there is $1 \leq k \leq n$ and $\beta \in J$ such
$\beta_i = \alpha_i$ for all $1 \leq i\neq  k\leq n$ and $\beta_k = \alpha_k +1$. We call this set $J^\flat$ the  reduced set of $J$.

At a few occasions it will turn out to be convenient to use an equivalent description of $\Lambda(m,n)$. We write
\begin{align*}
& \mathcal{M}(m,n) =\{1, \ldots, n\}^{m}\,,
\\
&
\mathcal{J}(m,n) = \big\{\bj = (j_1, \ldots, j_m) \in \mathcal{M}(m, n)\colon \, j_1 \leq \ldots \leq j_m\big\}\,.
\end{align*}
Then there obviously
is a~canonical bijection between $\mathcal{J}(m,n)$ and $\Lambda(m,n)$. Indeed,
assign to $\bj \in \mathcal{J}(m,n) $ the multi index $\alpha \in \Lambda(m,n)$
given by $\alpha_r = |\{k \colon \bj_k = r\}|),\, 1 \leq r \leq n$, and conversely for each $\alpha \in \Lambda(m,n)$
the index  $\bj \in \mathcal{J}(m,n)$, where $j_1 =\ldots =j_{\alpha_1} =1$, $j_{\alpha_1 +1} =\ldots =j_{\alpha_1+\alpha_2} =2,\, \dots$

On $\mathcal{M}(m,n)$ we consider the equivalence relation: $\bi  \sim \bj$ if there is a permutation
$\sigma$ on $\{1, \ldots,m\}$ such that $(i_1, \ldots, i_k) = (i_{\sigma(1)}, \ldots, i_{\sigma(m)})$. The equivalence
class of $\bi \in \mathcal{M}(m,n)$ is denoted by $[\bi]$, and its cardinality  by $|[\bi]|$.  We have that
\begin{equation} \label{scholz}
    |[\alpha]| :=  |[\bj]| =\frac{m!}{\alpha!}\,,
\end{equation}
provided that $\bj$ is associated with $\alpha$\,.
Note that, given an index set  $J~\subset~\mathcal{J}(m,n)~=~\Lambda(m,n)$, the reduced set $J^\flat$
 in terms of the $\bj$-mode  reads as follows
\[
J^\flat = \big\{ \bj \in \mathcal{J}(m-1,n)
\colon \exists 1 \leq k \leq n \,\, \text{  such that }\,\,\, (\bj,k)_\ast \in J \big\}\,,
\]
where we associate to each $\bi \in \mathcal{M}(m,n)$  the unique element $\bi_\ast\in \mathcal{J}(m,n)$ for which $\bi \in [\bi_\ast]$.

Finally, we use the fundamental theorem
of arithmetic in order to define index sets in  $\mathbb{N}_0^{(\mathbb{N})}$ generated by subsets of $\mathbb{N}$. To do so,
recall  that each $n \in \mathbb{N}$ has a unique prime number decomposition, i.e., there is a~unique $\alpha \in  \mathbb{N}_0^{(\mathbb{N})}$ such that $n = \mathfrak{p}^\alpha$, where $\mathfrak{p} = (\mathfrak{p}_1, \mathfrak{p}_2, \ldots)$ stands for the sequence of primes. This shows that each set  $N$ of natural numbers defines an  index set $J(N) = \big\{\alpha \in \mathbb{N}_0^{(\mathbb{N})}\colon \mathfrak{p}^\alpha \in N\big\}$ of multi indices, and vice versa. If we, for  $x \geq 1$, consider the set $N(x)~=~\big\{n~\in~\mathbb{N}\colon~1~\leq~n ~\leq~x\big\}$, then the index set 
\[
\Delta(x) = \big\{\alpha \in \mathbb{N}_0^{\pi(x)}\colon 1 \leq \mathfrak{p}^\alpha \leq x\,\big\}
\]
will turn out to be of particular interests; here  as usual $\pi(x)$  counts all primes $\mathfrak{p}_k \leq x$. Given $m \in \mathbb{N}$, the $m$-homogeneous part of this set is denoted by
\[
\Delta(x,m) = \big\{\alpha \in \mathbb{N}_0^{\pi(x)}\colon 1 \leq \mathfrak{p}^\alpha \leq x,\, |\alpha| = m\,\big\}\,,
\]
so the special case $\Delta(x,1)$ is simply given by the set of all primes $\mathfrak{p}_1, \cdots, \mathfrak{p}_{\pi(x)}$.

\smallskip

\subsection{Banach spaces and (quasi-)Banach lattices}
\label{Banach spaces and (Quasi-)Banach lattices}
We use standard notation from (local) Banach space theory as e.g., used in the monographs \cite{ diestel1995absolutely, LT1, pisier1986factorization, tomczak1989banach, wojtaszczyk1996banach}. If not indicated differently, we consider complex Banach
spaces. Given two Banach spaces $X$ and $Y$, we denote by $\mathcal{L}(X, Y)$ the space of all (bounded) linear
operators $T\colon X\to Y$ with the usual operator norm. The dual Banach space of $X$ is denoted by $X^\ast$, and
the canonical embedding of $X$ into its bidual $X^{\ast \ast}$ by $\kappa_X$. The symbol $B_X$ (resp.  $\overline{B}_X$) denotes the open (resp. closed) unit
ball of $X$.

 If $X$ and $Y$ are isomorphic spaces, that
is, there is an invertible operator from $X$ onto $Y$, we write $X\simeq Y$. We use the notion   $X\equiv Y$ whenever $X$ and $Y$
may be identified isometrically. To indicate injective bounded mappings $T: X \to Y$ we sometimes write
$T\colon X \hookrightarrow Y$.

For isomorphic Banach spaces $X$ and $Y$, the Banach-Mazur  distance between $X$ and $Y$ is defined to be
\[
d(X, Y):=\inf \big\{\|T\|\,\|T^{-1}\|: \,\, \text{$T$ an isomorphism of $X$ onto $Y$} \big\}\,.
\]
If $X$ and $Y$ are not isomorphic, we  let
$d(X, Y)= +\infty$.

\smallskip

For the theory of Banach lattices we refer to  \cite{LT1,LT}.
We mainly focus on (quasi)-Banach (function) lattices over measure spaces. To recall the definition, let $(\Omega, \mathcal{A}, \mu)$ be a~complete $\sigma$-finite measure space,  and the space
$L^0(\mu):= L^0(\Omega, \mathcal{A}, \mu)$ of all (equivalence classes of) $\mathcal{A}$-measurable real-valued functions defined on $\Omega$,
equipped with the topology of convergence in measure on sets of the finite $\mu$-measure. For every $f\in L^0(\mu)$, the distribution function $d_{|f|} \colon [0, \infty) \to [0,\infty]$ of $|f|$ is given by 
$d_{|f|}(t) := \mu(\{x \in \Omega: \, |f(\omega)| > t\})$ for all $t\in [0, \infty)$.

A (quasi)-Banach space $X$ which is a~monotone ideal of $L^0(\mu)$ is said to be a (quasi)-Banach (function) lattice over  $(\Omega, \mathcal{A}, \mu)$ (over $\Omega$ for short).
Recall that $X$ is an ideal of $L^0(\mu)$ if for any $y \in X$ and  $x \in L^0(\mu)$ such that $|x(t)| \leq |y(t)|$ for $\mu$-almost all $t\in \Omega$,
we have  $x\in X$ and $\|x\|_X \leq \|y\|_X$. By a~complex (quasi)-Banach lattice over $(\Omega, \mathcal{A}, \mu)$, we mean a~natural
complexification of a~real (quasi)-Banach lattice $X$ equipped with the norm $X\ni x \mapsto \||x|\|_X$. 
A (quasi)-Banach lattice $X$ over $(\Omega, \mathcal{A}, \mu)$ is called symmetric (or, rearrangement invariant), 
if [$f\in L^0(\mu)$, $g\in X$ and $d_{|f|}(t)= d_{|g|}(t)$ for all $t\geq 0$] implies that [$f \in X$ and $\|f\|_X = \|g\|_X$]. For the 
theory of symmetric Banach lattices, we refer to the books \cite{BS} and \cite{LT}.

For any Banach function lattice $X$ over $\Omega$, we define the K\"othe dual space $X'$ of $X$ by
\[
X' := \Big\{x\in L^0(\mu): \, \int_\Omega |x y|\,d\mu < \infty  \,\,\, \mbox{for all \,\, $y \in X$} \Big\}\,.
\]
Note also $X'$ becomes a~maximal Banach function lattice if we equip it with the norm
\[
\|x\|_{X'} := \sup\bigg\{\int_\Omega |x y|\,d\mu: \,\|y\|_X \leq 1\bigg\}\,.
\]
A~Banach lattice $X$ is said to be  maximal (or $X$ has the Fatou property) whenever the closed unit ball of $X$ is closed in $L^0(\mu)$ (equivalently, $X \equiv X''$).

Let $X$ and $Y$ be any pair of (quasi)-Banach function lattices over a~measure space $(\Omega, \mathcal{A}, \mu)$. We define the pointwise product $X\circ Y$ of $X$ and $Y$ by 
\[
X\circ Y : =\big\{z\in L^0(\mu): \,\, z = x y, \,\, x\in X, \,\, y\in Y\big\}\,,
\]
equipped with the (quasi)-norm
\[
\|z\|_{X\circ Y}:= \inf \big\{\|x\|_X \|y\|_{Y}: \, z = x y,\,\, x\in X,\,\, y\in Y\big\}\,.
\]
Moreover, we define the space $M(X, Y)$ of all pointwise multipliers to be the~(quasi)-Banach  function lattice of all $x\in~L^0(\mu)$ such that $xy\in Y$ for all $y\in X$, equipped with the (quasi)-norm
\[
\|x\|_{M(X, Y)} := \sup \big\{\|x y\|_{Y}:\, \|y\|_X \leq 1\big\}\,.
\]
Note that $M(X, Y)$ is a~Banach function lattice whenever $Y$ is a~Banach space, and that $M(X, Y)$ can be trivial. 

Important geometrical notions from the theory of Banach lattices will be used.  A~Banach function lattice $X$ over a~measure space $\Omega$ is said to be $p$-convex, $1 \leq p \leq \infty$, respectively $q$-concave,
$1 \leq q \leq \infty$, if there is a~constant $C>0$ such that for every choice of finitely many $x_1,\ldots, x_N \in~X$
\begin{align*}
\Big\|\Big(\sum_{k=1}^{N} |x_k|^p \Big)^{1/p}\Big\|_X \le C \, \Big (\sum^{N}_{k=1}\|x_k\|_X^p \Big)^{1/p}\,,
\end{align*}
resp.,
\begin{align*}
\Big (\sum^{N}_{k=1}\|x_k\|_X^q \Big) ^{1/q}
\le C \, \Big\|\Big(\sum_{k=1}^{N}|x_k|^q \Big)^{1/q}\Big\|_{X}
\end{align*}
(with the usual modification whenever $p= \infty$ or $q= \infty$). We define the $p$-convexity constant $M^{(p)}(X)$ (resp.
$q$-concavity constant $M_{(p)}(X)$) to be the least constant $C>0$ satisfying the above inequality. In case that $X$ is not $p$-convex (resp. not $q$-concave), then we write $M^{(p)}(X) = \infty$ (resp. $M_{(q)}(X)~=~\infty$).
Clearly, every Banach function lattice $X$ is $1$-convex and $\infty$-concave (with constants $1$). We also note that, if $r < p < s$ and  $X$ is $p$-convex (resp. $p$-concave), then $X$ is $r$-convex (resp. $s$-concave) with $M^{(r)}(X) \le M^{(p)}(X)$ (resp. $M_{(s)}(X) \le M_{(p)}(X)$). For details we refer to \cite[Proposition 1.d.5]{LT1}. 

We point out that if in the definition  of $p$-convexity, respectively $q$-concavity, of a~Banach lattice  $X$ the corresponding inequality holds only for vectors $x_1, \ldots, x_n$ with pairwise disjoint supports,  then $X$ is said to satisfy an upper $p$-estimate, respectively a~lower $q$-estimate. 

It is obvious that a~Banach lattice satisfies an upper, respectively lower $p$-estimate, whenever it is 
$p$-convex, respectively $p$-concave. The converse is not true in general. However the following result 
is true:  If a~Banach lattice satisfies an upper, respectively lower $r$-estimate, for some $1<r<\infty$, 
then it is $p$-convex, respectively $q$-concave, for every $1 < p < r < q <\infty$ (see 
\cite[Theorem 1.f.7]{LT1}). 

\smallskip

(Quasi)-Banach sequence lattices $X$ are of special interest for our purposes. We recall that a (quasi-)Banach function lattice over
the counting measure space $(\Omega, 2^{\Omega}, \mu)$ is called a~(quasi-)Banach sequence lattice over $\Omega$. We are mainly interested in the case $\Omega:=\{1, \ldots, n\}$ and $\Omega:=\mathbb{N}$.
The fundamental function of a~(quasi)-Banach sequence lattice $X$ is defined by
\[
\varphi_X(k):= \Big\|\sum_{j=1}^k e_j\Big\|_X, \quad\, k\in \mathbb{\Omega}\,.
\]
A (quasi)-Banach sequence lattice $X$ is said to be symmetric whenever we have that $(x_{\sigma(n)}) \in X$ for every $x = (x_n)\in X$ and every permutation $\sigma: \mathbb{N} \to \mathbb{N}$.

Notice that if $X$ is a~separable Banach sequence lattice over $\mathbb{N}$, then the Banach dual $X^{*}$ can be identified in a~natural way with the K\"othe dual $X'$ of $X$
\begin{align*}
X' = \bigg\{(x_k): \, \sum_{k=1}^{\infty}\, |x_k y_k| < \infty \,\, \mbox{for all \,\, $(y_k) \in X$} \bigg\}
\end{align*}
equipped with the norm
\[
\|(x_k)\|_{X'} = \sup\bigg\{\sum_{k=1}^{\infty}\,|x_k y_k|: \,\, \|(y_k)\|_X \leq 1 \bigg\}\,.
\]
If $X$ is a~complex (quasi-)Banach sequence lattice over $\mathbb{N}$ and $n \in \mathbb{N}$, then
\[
\|z\|:= \Big\|\sum_{k=1}^n z_k e_k\Big\|_{X}, \quad\, z=(z_1,\ldots, z_n)\in \mathbb{C}^n
\]
defines  a~lattice (quasi-)norm on $\mathbb{C}^n$, where $e_k$ stands for $k$th standard unit vectors of $c_0$.
This defines the (quasi-)Banach sequence lattice
\[
X_n = (\mathbb{C}^n, \|\cdot\|)\,,
\]
which we call the $n$th section of $X$.
Note that the collection $\{e_k\}_{k=1}^n$ of the unit basis vectors forms  a~$1$-unconditional basis of $X_n$.

We refer to  Section~\ref{bounds} for all needed information on the Banach sequence lattices of Lorentz and Marcinkiewicz spaces, Nakano spaces, and mixed $\ell_p$-spaces -- and in particular to 
Chapter~\ref{Part: Polynomials on Lorentz sequences spaces} were we collect information on the   convexity/concavity of these spaces.

\smallskip

\subsection{Polynomials}
Let $J \subset \mathbb{N}_0^n$ be finite subset. By $\mathcal{P}(\mathbb{C}^n)$ we denote the vector space of all polynomials given by 
\[
\sum_{\alpha\in J}c_\alpha(P)\,z^\alpha, \quad\, z=(z_1, \ldots, z_n)\in \mathbb{C}^n\,,
\]
where $c_\alpha(P)\in \mathbb{C}^n$ for each $\alpha \in J$. In what follows, for each $m \in \mathbb{N}$, we let $\mathcal{P}_{\leq m}(\mathbb{C}^n)$ to be the subspaces of all such  polynomials $P$ of degree $\text{deg}\,P = \{|\alpha|\colon \alpha\in J \} \leq m$,
and $\mathcal{P}_{m }(\mathbb{C}^n)$ for all $m$-homoge\-neous polynomials. More generally, for any fixed (nonempty)  finite index set $J \subset \mathbb{N}_0^n$, we define $\mathcal{P}_J(\mathbb{C}^n)$
to be the subspace of all $P \in \mathcal{P}(\mathbb{C}^n)$ for which $c_{\alpha}(P) = 0$ for all $\alpha \notin J$.
Given a finite index set $J \subset \mathbb{N}_0^n$ and a~Banach space $X_n = (\mathbb{C}^n,\|\cdot\|)$,
we equipp $\mathcal{P}(\mathbb{C}^n)$ with the norm 
\[
\|P\|_{B_{X_n}} := \sup_{z\in B_{X_n}} |P(z)|\,,
\]
 and then we denote
the resulting Banach space by
\[
\mathcal{P}_J(X_n)\,.
\]
We will also use this concept within a slightly more general context. A~bounded subset $K\subset \mathbb{C}^n$ is said
to be  admissible, if for each $P\in \mathcal{P}_J(\mathbb{C}^n)$ with $\sup_{z\in K}|P(z)| = 0$
we have  $P=0$.  Clearly,  $K \subset \mathbb{C}^n$ is  admissible  whenever the functional $\|\cdot\|_K$
on $\mathcal{P}_J(\mathbb{C}^n)$ given by
$
\|P\|_K  =\text{sup}_{z\in K} |P(z)|
$
is a norm on $\mathcal{P}_J(\mathbb{C}^n)$. Note that for any non-trivial
$P\in \mathcal{P}_J(\mathbb{C}^n)$, the function $z\mapsto |P(z)|$ is  subharmonic, and hence
by the maximum principle $\text{sup}_{z\in K}|P(z)|$ is not attained at any  interior
point of $K$.

Let $X_n:= (\mathbb{C}^n, \|\cdot\|)$ be a~quasi-Banach space. Then it is easy to check that the  closed unit ball $\overline{B}_{X_n}$
as well as the unit sphere $S_{X_n}$ are admissible, and moreover
\[
\sup_{z\in \overline{B}_{X_n}} |P(z)| = \sup_{z\in S_{X_n}}|P(z)|, \quad\, P\in \mathcal{P}_{J}(\mathbb{C}^n)\,.
\]
If, in addition, the quasi-norm $\|\cdot\| \colon X_n \to [0, \infty)$ is continuous, then the open unit ball
$B_{X_n}$  is an admissible set, and the preceding two suprema coincides with $\sup_{z\in B_{X_n}} |P(z)|$
(note that for the space $\mathcal{P}_m(X_n)$ of   $m$-homogeneous polynomials  the coincidence of these 
suprema  easily follow  without using the maximum principle for subharmonic functions).

We  also need a few facts on polynomials defined on arbitrary Banach spaces $X$ (and not only $X_n:= (\mathbb{C}^n, \|\cdot\|)$).
For all relevant information we refer to  \cite{defant1992tensor,defant2019libro,dineen1999complex,floret1997natural}.

Let $X$ be a   Banach space $X$ over the field $\mathbb{K}$,  where $\mathbb{K} = \mathbb{R}$ or  $\mathbb{C}$,
and $m \in \mathbb{N}$. A mapping $P\colon X\to \mathbb{K}$ is a (bounded) $m$-homogeneous  polynomial if there exists 
a~(bounded)  $m$-linear form $T\colon X\times\cdots \times X \to \mathbb{K}$  such that $P(z)=T(z,\dots,z)$ for all $z\in X$. 
In fact, given a  (bounded) $m$-homogeneous  polynomial $P$ on $X$, by the so-called polarization formula

\begin{equation}\label{formula de polarizacion}
T(z^{(1)}, \dots, z^{(m)}) = \frac{1}{m!2^m} \sum_{\varepsilon_i = \pm 1} \varepsilon_1 \dots \varepsilon_m P\left(\sum_{i=1}^m \varepsilon_i z^{(i)}\right),
\end{equation}
there is a unique symmetric (bounded) $m$-linear form $T$ with the property that $P(z)=T(z,\dots,z)$ for all $z\in X$. We as usual denote this unique form by $\overset{\vee}{P}=T$. The space of all bounded $m$-linear forms over $X$ is denoted by $\mathcal L_m(X)$, whereas we write $\mathcal{P}_m(X)$ for all bounded $m$-homogeneous polynomials on $X$. Endowed with the norms
\[
\Vert T \Vert = \sup_{z^{(i)} \in B_{X}} \vert T(z^{(1)}, \dots, z^{(m)}) \vert \,\,\,\,\,\,\,\,\,\,\text{ resp.} \,\,\,\,\,\,\,\,\,\, \Vert P \Vert = \sup_{z \in B_{X}} \vert P(z) \vert \,,
\]
each gets a  Banach space. 
Moreover, we need the notion of a polynomial of degree $\leq m$ on a given Banach spaces $X$. By  $\mathcal{P}_{\leq m}(X)$ we denote the linear space of all
functions $P: X \to \mathbb{K}$ having the  form
$P = P_0 + \sum_{k=1}^m P_k$, where $P_k \in \mathcal{P}_k(X), \, 1 \leq k \leq m$ and $P_0 \in \mathbb{C}$. As before, $\mathcal{P}_{\leq m}(X)$ together with the supremum norm (of $P$ on $B_X$) leads to a  
Banach space.

We recall the notion of so-called polarization constants, which appear naturally  when relating homogeneous polynomials with their associated symmetric multilinear forms.
From  \eqref{formula de polarizacion} we easily obtain the  following polarization inequality
\begin{equation}\label{ec polarizacion}
\Vert \overset{\vee}{P} \Vert \leq \frac{m^m}{m!} \Vert P \Vert\,,\,\,\,\,\,\,\,\,P \in \mathcal P_m(X)\,.
\end{equation}
In this context it is natural to define, given  $m\in \mathbb{N}$ and an arbitrary  Banach space $X$, the  $m$-polarization constant
\begin{equation} \label{def k-constante}
\ccc(m,X) := \inf\Big\{C>0: \,\, \Vert \overset{\vee}{P} \Vert \leq C \Vert P \Vert, \; \mbox{for all } P \in \mathcal P_m(X) \Big\}.
\end{equation}
Obviously,
\begin{equation}
\label{projl1}
 1 \leq \ccc(m,X) \leq \frac{m^m}{m!} < e^m   \,.
\end{equation}
Since  $\ccc(m,\ell_1) = \frac{m^m}{m!} $,
here the  upper bound  $\frac{m^m}{m!}$ is  best possible; more precisely, 
given $\varepsilon>0$ there is $n\in \mathbb{N}$ and an $m$-homogeneous polynomial $P \in \mathcal P_m(\ell_1^n)$ satisfying  $\Vert \overset{\vee}{P} \Vert \geq (1-\varepsilon) \frac{m^m}{m!}$ (see for example \cite{H}).
On the other hand, it is known that $\ccc(m,H)=1$ for each Hilbert space $H$ and $\ccc(m,\ell_\infty) < \frac{m^m}{m!}$. 

Given  a finite dimensional Banach space $X$, we at a few occasions  need to represent  $\mathcal L_m(X)$ and $\mathcal P_m(X)$ as injective tensor products.
Recall that in this case the canonical identities
\[
\mathcal{L}_m(X) = \otimes_\varepsilon^m X^\ast 
\,\,\,\,\,\,\,\,\,\,\, \text{ and } \,\,\,\,\,\,\,\,\,\,\,
\mathcal{P}_m(X) = \otimes_{\varepsilon_s}^m X^\ast
\]
hold algebraically as well as isometrically, where $\otimes_\varepsilon^m$ stands for the $m$th full injective tensor product
and $\otimes_{\varepsilon_s}^m$ for the $m$th symmetric  injective tensor product (see, e.g., \cite{floret1997natural}). 
We also   need the  $m$th full projective  tensor product $\otimes_\pi^m$ (resp., the $m$th symmetric  projective tensor product $\otimes_{\pi_s}^m$),  which is dual to  $\otimes_\varepsilon^m$ (resp. $\otimes_{\varepsilon_s}^m$).

\smallskip

\chapter{Trigonometric polynomials on abelian groups
\label{trigo}}

The basic aim of this chapter is to further develop a cycle of ideas that will be  applied 
to prove asymptotically best possible estimates for the projection constant of several spaces of
multivariate
polynomials:  trigonometric polynomials in Section \ref{The Lozinski and Kharishiladze revisited},  polynomials 
on Euclidean spaces
in Chapter~\ref{eukli},
 traces class   
operators  in 
Chapter~\ref{Trace class operators},
Dirichlet polynomials in Chapter \ref{Part: Dirichlet polynomials and polynomials on the Boolean cube}, and polynomials on 
Boolean cubes in Chapter \ref{Polynomials on the Boolean cube}.

It is well-known, that if $X$
is finite dimensional Banach space,
then finding the projection constant $\boldsymbol{\lambda}(X)$ is equivalent to the problem of calculating the norm of a projection which is minimal among all projections  from a
$C(K)$ onto (a subspace isometrically isomorphic to) $X$.

There is a large amount of literature, mainly coming from approximation theory, which
studies classical problems on the convergence of Fourier series or of trigonometric interpolation in different situations. In this context, a classical 
approach  to show  non-convergence is to prove that a certain  projection (which is usually  orthogonal on an appropriate Hilbert space)
  has minimal norm,  and then to estimate its norm. 
  
  A tool, whose origin can be traced back to   Faber's article  \cite{faber1914interpolatorische} from 1914 (and then developed in \cite{berman1952class,fejer1930,marcinkiewicz1937quelques,
lozinski1948class}, followed by many others) consists in obtaining  a projection $P$ on $C(\mathbb{T})$ onto the subspace of all 
trigonometric
polynomials of degree $\le m$
with minimal norm, by averaging
 compositions of $P$ with translation  operators
 on $C(\mathbb{T})$. That is, if 
 \begin{equation*}
     \text{$T_s(f)=f(\cdot+s)$
 for $ s \in \mathbb{T}$ and  $f \in C(\mathbb{T})$,}
 \end{equation*}
  then the formula
 \begin{equation*}
     Qf=\frac1{2\pi}\int_{-\pi}^\pi T_{-s}PT_sf\,ds, \quad\, f \in C(\mathbb{T})
 \end{equation*}
defines a~projection 
on
$C(\mathbb{T})$ onto the subspace of all 
trigonometric polynomials of degree less or equal than $m$
with minimal norm, which turns out to  coincide with the 
so-called Fourier projection. This average property of the Fourier projection is sometimes called the Berman-Marcinkiewicz identity.

This averaging technique was further developed by Rudin~\cite{rudin1962projections}, who presented it  in a very general context.
In the setting of finite dimensional spaces of multivariate polynomials, Rudin's result, which we recall in Theorem~\ref{rudy}, has become
one of the main tools used to get estimates for projection constants. 

Based on this theorem, we prove an integral formula for finite dimensional spaces of trigonometric polynomials on a compact topological group
(Theorem~\ref{C(G)proj}), which are supported on a finite set of characters of the
dual group - thus recovering and extending some of the classic results. Later in this chapter 
(Section~\ref{The Lozinski and Kharishiladze revisited}) we apply this formula to obtain multivariate variants of the Lozinski-Kharishiladze theorem from \eqref{LoKa}. 
As mentioned above, we leave further applications of this formula to  subsequent chapters.

\section{Basics on topological groups}
\label{basicsbasics}

 As usual a~group $G$ equipped with a~topology $\tau$ is said to be
a~topological group whenever the mapping $(G, \tau) \times (G, \tau) \ni (x, y) \to x y^{-1} \in (G, \tau)$ is continuous.
From here on $G$ is assumed to be a compact group, that is, its topology is compact. In this case, $G$ defines a~natural set of maps
$\{L_g\}_{g\in G}$ and $\{R_g\}_{g\in G}$ on  $C(G)$, the complex-valued continuous functions on $G$, given
for all $g, h\in G$ by
\[
L_gf(h):= f(gh), \quad\, \, \, \,\text{and\, $R_gf(h) = f(hg)$}, \quad\, f\in C(G)\,.
\]
It is well-known that for every compact group $G$ there exists a~unique Borel~probability measure $\mathrm{m}$ which is left
invariant, that is,
\[
\int_G f\,d\mathrm{m} = \int_G (L_gf)\,d\mathrm{m}, \quad\, g\in G, \, \, \, f\in C(G)\,.
\]
This $\mathrm{m}$ is called the Haar measure of $G$. If in addition $\mathrm{m}$ is also right invariant:
\[
\int_G f\,d\mathrm{m} = \int_G (R_gf)\,d\mathrm{m}, \quad\, g\in G, \, \, \, f\in C(G)\,,
\]
then the compact group $G$ is called unimodular. Examples of unimodular groups are compact groups in which every one point set is closed.

Let $\mathrm{m}$ be the normalized Haar measure on $G$, and $\widehat{G}$ as usual the dual group of $G$ (i.e., the set of all continuous characters on $G$). For any $f\in L^1(G):=L^1(G, \mathrm{m})$, the Fourier transform of $f$
is given by
\[
\widehat{f}(\gamma):= \int_G\, f(x)\,\overline{\gamma(x)} \,d\mathrm{m}(x), \quad\, \gamma \in \widehat{G}\,.
\]
Recall that $L_1(G)$ forms a~commutative Banach algebra, whenever it carries  the convolution $f_1\ast f_2$ as its multiplication, that is, for $\mathrm{m}$-almost every $x\in G$
\[
(f_1\ast f_2)(x) := \int_G f_1(xy^{-1})f_2(y)\,d\mathrm{m}(y)\,.
\]
We need  two  fundamental consequences of the Peter-Weyl theorem (see, e.g., \cite[Theorem 1.3.3]{queffelec2013diophantine}: For
any compact abelian group $G$, the space $\text{Trig}(G)$ is dense in the space $C(G)$ of complex-valued functions
(see \cite[Theorem 1.3.4]{queffelec2013diophantine}), and  the dual group $\widehat{G}$ is an orthonormal basis of the Hilbert space
$L^2(G, \mathrm{m})$ (see \cite[Theorem 1.3.6]{queffelec2013diophantine}).

We  repeat that all concrete  groups we are interested  in  fact are compact, and we are going to explain their definitions locally. In particular, we consider the  $n$-dimensional torus $\mathbb{T}^n$ (Section~\ref{Polynomials on products of groups}), the $n$-dimensional Boolean cube $\{1,-1\}^n$ (Chapter~\ref{Polynomials on the Boolean cube}),  and the (non)-abelian unitary group $\mathcal{U}_n$ of all unitaries
on the $n$-dimensional Hilbert space (Chapter~\ref{Trace class operators}).

For any compact group $G$ and any  nonempty finite set $E\subset \widehat{G}$ we denote $\text{span}\,E$  by
$$\text{Trig}_E(G)\,,$$
 the
space of all trigonometric polynomials supported on $E$. Note that each such polynomial has the form
\[
P(g) \,= \,\sum_{\gamma \in E} \widehat{f}(\gamma)\, \gamma(g)\,, \quad g \in G\,.
\]
In the case $E=\widehat{G}$, the space $\text{Trig}_{\widehat{G}}(G)$ of all trigonometric polynomials
on $G$ is denoted in short by $\text{Trig}(G)$.

\smallskip

\section{Rudin's averaging technique} \label{Rudin's averaging technique}

As mentioned, one of the main tools we intend to use is a method due to Rudin (see the forthcoming Theorem~\ref{rudy}). 
Roughly speaking, under certain assumptions, there is a somewhat universal  averaging technique to construct new projections 
with additional properties from an a~priori given projection. 

This method is intimately  connected with  topological groups  acting  as bounded  linear operators on a~given Banach space $X$. 
More precisely, let $X$ be a Banach space and  $G$  a~topological group, and suppose that there is a mapping
\[
T\colon G \to \mathcal{L}(X)\,, \, \,\, \,g \mapsto T_g
\]
such that
\[
T_e = I_X, \quad\, T_{gh} = T_g T_h, \quad\,  g, \, \, h\in G
\]
and all mappings
\begin{equation}\label{con(i)}
 g\ni G \mapsto T_g(x) \in X,  \quad\,   \, \, x\in X
\end{equation}
are continuous. Then $G$ is said to act on $X$ through $T$ (or simply, $G$ acts on $X$). If in addition all operators $T_g, g\in G$ are isometries, then we say that $G$ acts isometrically on $X$. We say that $S \in \mathcal{L}(X)$ commutes with the action of $G$ on $X$ through $T$ whenever $S$ commutes with  all $T_h,\, h\in G$.

\smallskip

\begin{theorem} \label{rudy}
Let $X$ be a~Banach space, $Y$ a~complemented subspace of $X$, and \,$\mathbf{Q} \colon X \to X$ a~projection onto $Y$. Suppose that  $G$ is a compact group with Haar measure $m$, which   acts on $X$ through $T$ such that $Y$ is invariant under the action of $G$, that is,
\begin{equation} \label{invariant}
T_g(Y) \subset Y, \quad\,g\in G\,.
\end{equation}
Then $\mathbf{P}\colon X \to X$ given by
\begin{equation}\label{equation rudy}
\mathbf{P}(x):= \int_{G} T_{g^{-1}}\mathbf{Q}T_g(x)\,d\mathrm{m}(g), \quad\, x\in X\,,
\end{equation}
is a~projection onto $Y$ which commutes with the action  of $G$ on $X$, i.e., $T_g\mathbf{P} = \mathbf{P}T_g$ for all
$g\in G$, and satisfies
\[
\|\mathbf{P}\| \,\leq\, \|\mathbf{Q}\|\,\,\,\sup_{g\in G}\|T_g\|^2\,.
\]
Moreover, if there is a unique projection  on $X$ onto $Y$  that commutes
with the  action of $G$ on $X$, then $\mathbf{P}$ given in \eqref{equation rudy} is minimal, i.e.,
\[
\boldsymbol{\lambda}(Y;X) = \|\mathbf{P}\|\,.
\]
\end{theorem}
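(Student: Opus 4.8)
The plan is to verify the four assertions of Theorem~\ref{rudy} in turn, each resting on the invariance hypothesis \eqref{invariant} together with the group-action axioms and the basic properties of the Haar measure $\mathrm{m}$. Throughout I would use that $\mathrm{m}$ is a probability measure (so $\mathbf{P}$ is well defined as a Bochner integral of a bounded, strongly continuous integrand, using \eqref{con(i)}) and that $\mathrm{m}$ is left invariant; for a compact group this gives the standard substitution rules $\int_G \phi(hg)\,d\mathrm{m}(g) = \int_G \phi(g)\,d\mathrm{m}(g)$ and, since a compact group is unimodular, also $\int_G \phi(gh)\,d\mathrm{m}(g) = \int_G \phi(g)\,d\mathrm{m}(g)$ and $\int_G \phi(g^{-1})\,d\mathrm{m}(g) = \int_G \phi(g)\,d\mathrm{m}(g)$.

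First I would check that $\mathbf{P}$ maps $X$ into $Y$ and restricts to the identity on $Y$. For the range: fix $x \in X$; since $T_g(x) \in X$, $\mathbf{Q}T_g(x) \in Y$, and by \eqref{invariant} applied to $g^{-1}$ we get $T_{g^{-1}}\mathbf{Q}T_g(x) \in Y$; as $Y$ is closed and the integrand is $Y$-valued and Bochner integrable, $\mathbf{P}(x) \in Y$. For the identity on $Y$: if $x \in Y$, then by \eqref{invariant} $T_g(x) \in Y$, so $\mathbf{Q}T_g(x) = T_g(x)$ because $\mathbf{Q}$ is a projection onto $Y$; hence $T_{g^{-1}}\mathbf{Q}T_g(x) = T_{g^{-1}}T_g(x) = T_e(x) = x$ for every $g$, and integrating over the probability measure $\mathrm{m}$ gives $\mathbf{P}(x) = x$. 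Together these show $\mathbf{P}$ is a projection onto $Y$. The norm bound is immediate: $\|\mathbf{P}(x)\| \le \int_G \|T_{g^{-1}}\|\,\|\mathbf{Q}\|\,\|T_g\|\,\|x\|\,d\mathrm{m}(g) \le \|\mathbf{Q}\|\,(\sup_{g}\|T_g\|)^2\,\|x\|$.

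Next I would establish commutation with the action, $T_h\mathbf{P} = \mathbf{P}T_h$ for all $h \in G$. Using linearity and continuity of $T_h$ to pull it inside the Bochner integral, $T_h\mathbf{P}(x) = \int_G T_h T_{g^{-1}}\mathbf{Q}T_g(x)\,d\mathrm{m}(g) = \int_G T_{hg^{-1}}\mathbf{Q}T_g(x)\,d\mathrm{m}(g)$. The substitution $g \mapsto hg$ (invariance of $\mathrm{m}$) turns this into $\int_G T_{g^{-1}}\mathbf{Q}T_{hg}(x)\,d\mathrm{m}(g) = \int_G T_{g^{-1}}\mathbf{Q}T_g(T_h x)\,d\mathrm{m}(g) = \mathbf{P}(T_h x)$. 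Finally, for the minimality statement I would argue that $\mathbf{P}$ is \emph{some} projection onto $Y$ commuting with the action, that $\|\mathbf{P}\| \ge \boldsymbol{\lambda}(Y;X)$ trivially, and that for the reverse inequality one takes any projection $\mathbf{Q}_0$ onto $Y$ with $\|\mathbf{Q}_0\|$ close to $\boldsymbol{\lambda}(Y;X)$, forms its average $\mathbf{P}_0$ by \eqref{equation rudy}; since $T_g$ is assumed isometric in this part (that is the implicit standing assumption when ``the'' projection commuting with the action is unique — otherwise the $\sup\|T_g\|^2$ factor would not allow the conclusion), $\|\mathbf{P}_0\| \le \|\mathbf{Q}_0\|$, and $\mathbf{P}_0$ commutes with the action, so by uniqueness $\mathbf{P}_0 = \mathbf{P}$, giving $\|\mathbf{P}\| = \|\mathbf{P}_0\| \le \|\mathbf{Q}_0\| \to \boldsymbol{\lambda}(Y;X)$.

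The only genuinely delicate points are the measure-theoretic justifications: that $g \mapsto T_{g^{-1}}\mathbf{Q}T_g(x)$ is Bochner $\mathrm{m}$-measurable and essentially bounded (this follows from strong continuity \eqref{con(i)}, continuity of $g \mapsto g^{-1}$, and uniform boundedness of $\{T_g\}$, which in the compact-group/strongly-continuous setting is automatic by the uniform boundedness principle), that a Bochner integral of a function with values in the closed subspace $Y$ lies in $Y$, and that bounded operators commute with Bochner integrals. I expect the main obstacle, or at least the point requiring the most care in writing, to be pinning down exactly which hypothesis underwrites the final minimality claim: the estimate $\|\mathbf{P}\|\le\|\mathbf{Q}\|\sup_g\|T_g\|^2$ alone does not give $\|\mathbf{P}_0\|\le\|\mathbf{Q}_0\|$ unless the action is isometric, so the clean statement $\boldsymbol{\lambda}(Y;X)=\|\mathbf{P}\|$ should be read under the assumption that $G$ acts isometrically (as it does in every application in the paper), and I would make this explicit in the proof.
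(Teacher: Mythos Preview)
Your approach matches the paper's essentially line for line, but there is a slip in the commutation step that fails for non-abelian $G$ (and the theorem is later applied to $G=\mathcal{U}_n$ and $G=\mathcal{U}_n\times\mathcal{U}_n$). You claim that the substitution $g\mapsto hg$ turns $\int_G T_{hg^{-1}}\mathbf{Q}T_g(x)\,d\mathrm m(g)$ into $\int_G T_{g^{-1}}\mathbf{Q}T_{hg}(x)\,d\mathrm m(g)$, and then that this equals $\int_G T_{g^{-1}}\mathbf{Q}T_g(T_hx)\,d\mathrm m(g)$. The last equality asserts $T_{hg}=T_gT_h$, which contradicts the action axiom $T_{gh}=T_gT_h$ unless $G$ is abelian. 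The fix is to use the substitution $g\mapsto gh$ (right invariance, available since a compact group is unimodular): then $T_{h(gh)^{-1}}=T_{hh^{-1}g^{-1}}=T_{g^{-1}}$ and $T_{gh}=T_gT_h$, so one obtains $\int_G T_{g^{-1}}\mathbf{Q}T_{gh}(x)\,d\mathrm m(g)=\mathbf{P}(T_hx)$ as desired. The paper sidesteps the issue by computing $T_{h^{-1}}\mathbf{P}T_h$ directly, which collapses to $\int_G T_{(gh)^{-1}}\mathbf{Q}T_{gh}\,d\mathrm m(g)=\mathbf{P}$ via the same right-translation.

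Your observation that the minimality clause implicitly requires the action to be isometric is correct, and the paper's proof inserts exactly that hypothesis at that point.
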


This result (in one way or the other) found various applications in the literature (see, e.g., \cite{konig1995projections,
konig1999projection, light1986minimal,  schuettkwapien,rieffel2006lipschitz, rudin1962projections,  rudin1986new}.
In the case of the circle group, it can be traced back to Faber's   article  \cite{faber1914interpolatorische}. For the sake of completeness 
we include a~proof, which very much inspired by \cite{rudin1962projections} (and  also \cite[Theorem III.B.13]{wojtaszczyk1996banach}).

\begin{proof}
Note first, that, by the Banach-Steinhaus theorem,  $\sup_{g\in G} \|T_g\|<\infty$. Then, given $x \in X$, the mapping
$g\ni G \mapsto T_{g^{-1}}\mathbf{Q}T_g \in \mathcal{L}(X)$ is bounded and by $\eqref{con(i)}$ measurable
(being almost everywhere separably valued and weakly measurable), and hence Bochner integrable.
Consequently, $\mathbf{P}$ defines an operator on $X$.
 Moreover, from  \eqref{invariant} we deduce
for all $x\in X$ and for all $y\in Y$ that
\[
T_{g^{-1}}\mathbf{Q}T_g(x) \in Y, \quad\, T_{g^{-1}}\mathbf{Q}T_g(y) = y\,,
\]
implying that  $\mathbf{P}$ is a~projection from $X$ onto $Y$. The hypothesis that $G$ acts on $X$ (through $T$) yields 
for all $h\in G$
\[
T_{h^{-1}}\mathbf{P}T_h = \int_G T_{h^{-1}}T_{g^{-1}}\mathbf{Q}T_g T_h\,d\mathrm{m}(g)
= \int_G T_{(gh)^{-1}}\mathbf{Q}T_{gh}\,d\mathrm{m}(g) = \mathbf{P}\,,
\]
so $\mathbf{P}$ commutes with the action of $G$ on $X$.
Since for all $x\in X$
\[
\|\mathbf{P}x\|_X \leq \int_G \|T_{g^{-1}}\|\,\|\mathbf{Q}\,\|\|T_g\|\,\|x\|_X\,d\mathrm{m}(g)
\leq \|\mathbf{Q}\|\,\sup_{g\in G}\|T_g\|^{2}\,\|x\|_X\,,
\]
the required estimate for  $\|\mathbf{P}\|$ follows. Finally, if we assume that if there is a unique projection  on $X$ onto $Y$  that commutes
with the  action of $G$ on $X$, every projection $\mathbf{Q}$ on $X$ onto $Y$ induces the same projection $\mathbf{P}$.
If moreover
$G$ acts isometrically on $X$,
then $\|\mathbf{P}\| \,\leq\, \|\mathbf{Q}\|$ for all projections $\mathbf{Q}$ on $X$ onto $Y$, i.e.,
$\|\mathbf{P}\| \leq  \boldsymbol{\lambda}(Y;X)$. This completes the proof.
\end{proof}

\smallskip

\section{Integral formula} \label{An integral formula}

We start with a general result, which we are going to apply  to multivariate trigonometric polynomials (Section~\ref{Polynomials 
on products of groups}), Dirichlet polynomials (Chapter~\ref{Part: Dirichlet polynomials and polynomials on the Boolean cube}), 
and polynomials on the Boolean cube (Chapter~\ref{Polynomials on the Boolean cube}). In particular, we give variants of the  well-known result due to Lozinski and Kharishiladze from \eqref{LoKa} for the circle group.

As mentioned in  the introduction, the proof of the following theorem is based on Rudin's Theorem~\ref{rudy}
on the averaging of projections (it should also be compared with \cite{lambert1969minimum,lambert1970minimum, dreseler1974charshiladze,dreseler1975convergence}).

\medskip

\begin{theorem}\label{C(G)proj}
Let $G$ be a compact abelian group and  $E:=\{\gamma_1,\ldots, \gamma_N\}\subset \widehat{G}$  a~finite set of
characters. Then $P\colon C(G) \to C(G)$, given by $Pf = \sum_{j=1}^N \widehat{f}(\gamma_j) \gamma_j$ for
all $f\in C(G)$, is a~projection onto $\text{Trig}_E(G)$ such that
\[
\boldsymbol{\lambda}\big(\text{Trig}_E(G)\big) = \big\|P\colon C(G)\to C(G)\big\|
= \int_G \Big|\sum_{j=1}^N \gamma_j(x)\Big|\,d\mathrm{m}(x)\,.
\]
\end{theorem}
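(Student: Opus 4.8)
The plan is to apply Rudin's averaging theorem (Theorem~\ref{rudy}) with $X=C(G)$, $Y=\text{Trig}_E(G)$, the compact abelian group $G$ acting on $C(G)$ by translation, $T_g f = L_g f = f(g\,\cdot\,)$, and $\mathbf Q = P$ the Fourier projection $f\mapsto \sum_{j=1}^N \widehat f(\gamma_j)\gamma_j$. First I would verify the hypotheses of Theorem~\ref{rudy}: that $P$ is indeed a projection of $C(G)$ onto $\text{Trig}_E(G)$ (well-definedness uses that $\text{Trig}(G)$ is dense in $C(G)$, so $\widehat f(\gamma_j)$ makes sense and $Pf\in C(G)$; idempotence and range are immediate from orthonormality of characters), that $G$ acts on $C(G)$ isometrically through translations (continuity of $g\mapsto L_g f$ for fixed $f$ is the standard uniform-continuity argument for $f\in C(G)$, $G$ compact, and each $L_g$ is an isometric algebra automorphism), and that $\text{Trig}_E(G)$ is translation-invariant, since $L_g\gamma = \gamma(g)\gamma$ for a character $\gamma$. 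Theorem~\ref{rudy} then produces the averaged projection $\mathbf P = \int_G L_{g^{-1}} P L_g\,d\mathrm m(g)$ with $\|\mathbf P\|\le \|P\|$.

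The next step is the key computational point: $\mathbf P = P$. Because $L_g \gamma_j = \gamma_j(g)\gamma_j$ and $\widehat{L_g f}(\gamma) = \gamma(g)^{-1}\widehat f(\gamma)$ (a change of variables in the Haar integral defining the Fourier coefficient, using left-invariance of $\mathrm m$), a direct computation gives $L_{g^{-1}} P L_g f = \sum_j \gamma_j(g)^{-1}\widehat{(L_g f)}(\gamma_j)\,\gamma_j$... wait, more carefully: $P(L_g f) = \sum_j \widehat{L_g f}(\gamma_j)\gamma_j = \sum_j \overline{\gamma_j(g)}\,\widehat f(\gamma_j)\gamma_j$, and then $L_{g^{-1}}$ applied to $\gamma_j$ multiplies by $\gamma_j(g^{-1}) = \overline{\gamma_j(g)}^{-1}=\gamma_j(g)$, so each term becomes $\widehat f(\gamma_j)\gamma_j$ again, independent of $g$. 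Hence the integrand is constant in $g$ and $\mathbf P f = Pf$. Now invoke the uniqueness clause of Theorem~\ref{rudy}: any projection onto $\text{Trig}_E(G)$ commuting with all translations must, when fed a character $\gamma_j$, return $\gamma_j$, and when fed $\gamma\notin E$ it must return a translation-invariant (under all $L_g$) element, hence $0$ by looking at Fourier support; so the commuting projection is unique and equals $P$. Therefore $P$ is minimal and $\boldsymbol\lambda(\text{Trig}_E(G)) = \boldsymbol\lambda(\text{Trig}_E(G);C(G)) = \|P\colon C(G)\to C(G)\|$, where the first equality uses that $C(G)$ is a $C(K)$-space and the standard fact recalled in the introduction that the absolute projection constant of a finite-dimensional space equals its relative constant inside any $C(K)$ containing it isometrically (and $\text{Trig}_E(G)\subset C(G)$ isometrically).

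It remains to compute $\|P\colon C(G)\to C(G)\|$. Writing $k(x) = \sum_{j=1}^N \gamma_j(x)$, one checks that $Pf = f * k$ (convolution), since $(f*k)(x) = \int_G f(y)k(y^{-1}x)\,d\mathrm m(y) = \sum_j \gamma_j(x)\int_G f(y)\overline{\gamma_j(y)}\,d\mathrm m(y) = \sum_j \widehat f(\gamma_j)\gamma_j(x)$, using $k(y^{-1}x)=\sum_j\gamma_j(y^{-1})\gamma_j(x)=\sum_j\overline{\gamma_j(y)}\gamma_j(x)$. The norm of the convolution operator $f\mapsto f*k$ on $C(G)$ is exactly $\|k\|_{L^1(G)} = \int_G |\sum_j \gamma_j(x)|\,d\mathrm m(x)$: the upper bound $\|f*k\|_\infty\le \|f\|_\infty\|k\|_1$ is immediate, and for the lower bound one takes, for fixed $x$, a function $f\in C(G)$ with $\|f\|_\infty\le1$ that approximates $\overline{k(y^{-1}x)}/|k(y^{-1}x)|$ in $L^1$ with respect to the measure $|k(y^{-1}x)|\,d\mathrm m(y)$, making $(f*k)(x)$ close to $\int_G|k(y^{-1}x)|\,d\mathrm m(y)=\|k\|_1$ by translation-invariance of $\mathrm m$; since $C(G)$ is dense in $L^1$, this is routine. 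Combining the three steps gives the asserted formula.

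I expect the main obstacle to be the uniqueness of the translation-commuting projection onto $\text{Trig}_E(G)$, which is what upgrades the inequality $\|\mathbf P\|\le\|P\|$ to the exact value of the projection constant; the verification that $\mathbf P=P$ and the convolution norm computation are essentially bookkeeping with characters and Haar measure, and the passage from the relative constant in $C(G)$ to the absolute constant is already isolated as a standard fact in the introduction. A secondary point requiring a little care is the commutation argument for uniqueness: one should phrase it as "any $S\in\mathcal L(C(G))$ commuting with all $L_g$ is a Fourier multiplier", so that a projection onto $\text{Trig}_E(G)$ commuting with translations is forced to be $P$; this can be done by testing $S$ against characters and using density of $\text{Trig}(G)$ in $C(G)$.
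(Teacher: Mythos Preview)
Your proposal is correct and follows essentially the same route as the paper: verify the hypotheses of Rudin's averaging theorem for the translation action on $C(G)$, establish that $P$ is the \emph{unique} translation-commuting projection onto $\text{Trig}_E(G)$ (hence minimal by the uniqueness clause), and compute $\|P\|$ as the $L^1$-norm of the kernel $k=\sum_j\gamma_j$ via the convolution representation $Pf=f*k$. One small slip: for $\gamma\notin E$ the image $S\gamma$ under a commuting projection is not ``translation-invariant'' but rather a joint eigenvector, $L_g(S\gamma)=\gamma(g)\,S\gamma$; combined with $S\gamma\in\text{span}\{\gamma_1,\dots,\gamma_N\}$ this forces $S\gamma=0$, which is exactly the Fourier-multiplier argument you correctly identify in your final paragraph and which the paper carries out in the same way.
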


\begin{proof}Note first  that $G$ in a natural way acts on $C(G)$ (in the sense of Section~\ref{Rudin's averaging technique}), 
where the action is given by the mapping $T: G \to \mathcal{L}(C(G)), \,g \mapsto T_g$ with
\[
T_gf(h):= f(gh), \quad\, f\in C(G), \,\, h\in G\,.
\]
We claim that $P \colon C(G) \to C(G)$ is the unique projection onto $\text{Trig}_E(G)$ that commutes with all translation operators
$T_g$, $g\in G$. To see this, assume that $Q\colon C(G) \to  C(G)$ onto $\text{Trig}_E(G)$ is a~projection that commutes with all
translation operators. Then for all $\gamma, \gamma'\in \widehat{G}$ one has
\[
\widehat{T_gQ\gamma}(\gamma') = \widehat{QT_g\gamma}(\gamma')\,.
\]
It is easy to check that $\widehat{T_gQ\gamma}(\gamma') = \gamma'(g)\, \widehat{Q\gamma}(\gamma')$ and
$\widehat{QT_g\gamma}(\gamma')= \gamma(g)\, \widehat{Q\gamma}(\gamma')$. In consequence, we get
\[
\gamma'(g) \widehat{Q\gamma}(\gamma') = \gamma(g) \widehat{Q\gamma}(\gamma'), \quad\, g\in G\,.
\]
This implies that, for all $\gamma, \gamma'\in \widehat{G}$ with $\gamma \neq \gamma'$, we have
$\widehat{Q\gamma}(\gamma') = 0$. Combining with the Peter-Weyl theorem that $\widehat{G}$ forms an orthonormal basis in the Hilbert space $L^2(G)$, we conclude that
\[
Q\gamma = \sum_{\gamma' \in \widehat{G}} \widehat{Q\gamma}(\gamma')\,\gamma', \quad\, \gamma\in \widehat{G}\,,
\]
and consequently, that for every character $\gamma \in \widehat{G}$ there is a~scalar $c_\gamma$ such that $Q\gamma = c_{\gamma}\gamma$.

Since $Q$ is a~projection onto $\text{Trig}(G)$, $c_\gamma = 0$ for all $\gamma \in\widehat{G}\setminus E$, and $c_{\gamma} = 1$
for all $\gamma \in E$. In consequence, $Q\gamma = \gamma$ for all $\gamma \in E$, and $Q\gamma = 0$ for all
$\gamma  \in \widehat{G} \setminus E$. Hence the projection $Q$, restricted to the algebra $\text{Trig}(G)$
of all trigonometric polynomials on $G$, has the representation
\begin{align}\label{representation of proj onto Trig}
Qf = \sum_{j=1}^N \widehat{f}(\gamma_j) \gamma_j, \quad\, f\in \text{Trig}(G)\,.
\end{align}
Consequently, we conclude from the density of $\text{Trig}(G)$ in  $C(G)$ that the above formula holds
for all $f\in C(G)$. Hence $Q=P$. This proves the claim.

Now observe that $G$ acts isometrically on $X$, i.e., all mappings  $T_g\colon C(G) \to C(G)$ are isometries on  $C(G)$. Since for all $g\in G$
\[
T_g\gamma  = \gamma(g)\gamma, \quad\, \gamma \in \widehat{G}\,,
\]
it follows that $T_g(\text{Trig}_E(G)) \subset \text{Trig}_E(G)$ for all $g\in G$. But then we are ready  to apply Rudin's theorem~\ref{rudy} showing  that
$P$ is a minimal projection, that is,
\[
\boldsymbol{\lambda}\big(\text{Trig}_E(G)\big) = \big\|P\colon C(G)\to C(G)\big\|\,.
\]
Finally, it remains to prove the integral formula for the norm of  $P$. Since $f \ast \gamma=\widehat{f}(\gamma)\gamma$
for all $f\in L^1(G)$ and $\gamma \in \widehat{G}$, we get by  \eqref{representation of proj onto Trig},
\[
Pf= f \ast \Big(\sum_{j=1}^N \gamma_j\Big), \quad\, f\in C(G)\,.
\]
Clearly, $\sum_{j=1}^N \gamma_j \in C(G)$, so it can readily be shown by direct computation that
\[
\|P \colon C(G) \to C(G)\| = \int_G \, \Big|\sum_{j=1}^N \gamma_j(x)\Big|\,d\mathrm{m}(x)\,,
\]
and this completes the proof.
\end{proof}

\medskip

In order to show a very first application of Theorem \ref{C(G)proj}, we need some further notation and preliminaries.
Recall that Rudin in his  classical paper \cite{rudin1960trigonometric} from  1960 introduced the notion of $\Lambda(p)$-sets 
within the setting of Fourier analysis on the circle group $\mathbb{T}$. In modern language, if $G$ is a~compact abelian group 
(with Haar measure $\mathrm{m}$) and $p\in (1, \infty)$, then the~subset ${E} \subset \widehat{G}$ is said to be a~$\Lambda(p)$-set whenever there exists a~constant $C>0$ such that, for every trigonometric polynomial $P \in \text{Trig}_E(G)$, one has
\begin{equation}\label{lambdap}
\|P\|_{L_p(G)} \leq C \|P\|_{L_1(G)}\,.
\end{equation}
In this case, the least such constant  is called the $\Lambda(p)$-constant of $E$, and denoted by $C_p=C_p(E)$. Let us here 
remark that for  $p>2$ the validity of \eqref{lambdap}  is equivalent to the existence of a~constant $A_p \ge 0$ such that
\[
\|P\|_{L_p(G)} \leq A_p \|P\|_{L_2(G)}, \quad\, P\in \text{Trig}_E(G)\,.
\]
The following almost immediate consequence of Theorem \ref{C(G)proj} shows that $\Lambda(2)$-sets are of particular
importance for our purposes -- see, e.g., Corollary~\ref{manydimensionsB},
 Corollary~\ref{b2dirichelet}, or Corollary~\ref{kiel}.

\begin{corollary} \label{corB2}
Let $G$ be a compact abelian group. Then, for any set finite set $E= \{\gamma_1,\ldots, \gamma_N\}\subset \widehat{G}$
of different characters with $\Lambda(2)$ constant $C_2$, we have
\[
C_2^{-1} \sqrt{N} \leq \boldsymbol{\lambda}\big(\text{Trig}_E(G)\big) \leq \sqrt{N}\,.
\]
\end{corollary}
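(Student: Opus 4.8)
The plan is to combine the integral formula from Theorem~\ref{C(G)proj} with the Cauchy--Schwarz inequality and the $\Lambda(2)$ defining inequality. By Theorem~\ref{C(G)proj} we have
\[
\boldsymbol{\lambda}\big(\text{Trig}_E(G)\big) = \int_G \Big|\sum_{j=1}^N \gamma_j(x)\Big|\,d\mathrm{m}(x) = \big\|P_E\big\|_{L_1(G)}\,,
\]
where $P_E := \sum_{j=1}^N \gamma_j \in \text{Trig}_E(G)$.

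\textbf{Upper bound.} First I would apply Cauchy--Schwarz (Jensen) to pass from the $L_1$-norm to the $L_2$-norm of $P_E$: since $\mathrm{m}$ is a probability measure,
\[
\int_G \Big|\sum_{j=1}^N \gamma_j(x)\Big|\,d\mathrm{m}(x) \leq \Big(\int_G \Big|\sum_{j=1}^N \gamma_j(x)\Big|^2\,d\mathrm{m}(x)\Big)^{1/2} = \|P_E\|_{L_2(G)}\,.
\]
Then, because the characters $\gamma_1,\ldots,\gamma_N$ are distinct, they form an orthonormal system in $L^2(G,\mathrm{m})$ by the Peter--Weyl theorem (recalled in Section~\ref{basicsbasics}), so $\|P_E\|_{L_2(G)}^2 = N$. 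This gives $\boldsymbol{\lambda}\big(\text{Trig}_E(G)\big) \leq \sqrt{N}$. (Alternatively the upper bound is immediate from the Kadets--Snobar estimate~\eqref{kadets1}, since $\dim \text{Trig}_E(G) = N$, but the direct argument is cleaner here.)

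\textbf{Lower bound.} Here I would invoke the $\Lambda(2)$ hypothesis applied to the specific polynomial $P_E \in \text{Trig}_E(G)$: by \eqref{lambdap} with $p=2$,
\[
\sqrt{N} = \|P_E\|_{L_2(G)} \leq C_2\,\|P_E\|_{L_1(G)} = C_2\,\boldsymbol{\lambda}\big(\text{Trig}_E(G)\big)\,,
\]
which rearranges to $C_2^{-1}\sqrt{N} \leq \boldsymbol{\lambda}\big(\text{Trig}_E(G)\big)$. This completes both inequalities. There is no real obstacle in this proof --- the only point requiring a moment's care is the identification $\|P_E\|_{L_2(G)} = \sqrt{N}$, which uses orthonormality of distinct characters (hence the hypothesis that the $\gamma_j$ are \emph{different}), and the observation that the integral formula of Theorem~\ref{C(G)proj} is exactly the $L_1$-norm of $P_E$, so the $\Lambda(2)$ inequality can be applied to it verbatim.
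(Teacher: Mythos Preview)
Your proof is correct and follows essentially the same approach as the paper: both invoke the integral formula of Theorem~\ref{C(G)proj} to identify $\boldsymbol{\lambda}(\text{Trig}_E(G))$ with $\|P_E\|_{L_1(G)}$, compute $\|P_E\|_{L_2(G)}=\sqrt{N}$ via orthonormality of distinct characters, and then obtain the two inequalities from Cauchy--Schwarz (upper bound) and the $\Lambda(2)$ defining inequality (lower bound). Your write-up is in fact more explicit than the paper's, which leaves the two steps implicit after displaying the $L_2$ computation.
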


\begin{proof}
Let $\mathrm{m}$ be the Haar measure on $G$. Then from Theorem \ref{C(G)proj}, we get 
\[
\boldsymbol{\lambda}\big(\text{Trig}_E(G)\big)= \int_G |\lambda_1(x) + \ldots + \lambda_N(x)|\,d\mathrm{m}(x)\,.
\]
Since $\widehat{G}$ is an orthonormal basis in $L_2(G, \mathrm{m})$,
\[
\bigg(\int_G |\lambda_1(x) + \ldots + \lambda_N(x)|^2\,d\mathrm{m}(x)\bigg)^{\frac{1}{2}} = \sqrt{N}\,,
\]
and this completes the proof.
\end{proof}

In combination with the preceding corollary we need the following example.

\begin{example}
Let $G$ be a compact abelian group.
  Following \cite{grahamhare2013}, a~set $E \subset \widehat{G}$ is said to be a~$B_2$-set, whenever $\gamma_1 \gamma_2
= \gamma_3 \gamma_4$ for all $\gamma_1, \ldots, \gamma_4 \in E$ if and only if $\{\gamma_3, \gamma_4\}$
is a permutation of $\{\gamma_1, \gamma_2\}$. It is worth noting that $B_2$-sets are $\Lambda(2)$-sets (see
\cite[Proposition 6.3.11]{grahamhare2013}).
\end{example}

 Since we  need this result in Chapter~\ref{Part: Dirichlet polynomials and polynomials on the Boolean cube}, we include 
 its standard proof.

\begin{lemma} \label{b2}
Let $G$ be a compact abelian group  and  $E\subset \widehat{G}$ a $B_2$-set.
Then the following estimate holds
\[
\|P\|_{L_4(G)} \leq 2^{\frac{1}{4}}\,\|P\|_{L_2(G)}, \quad\, P\in \text{Trig}_E(G)\,.
\]
As a~consequence, $E$ is $\Lambda(2)$-set with $C_2\leq \sqrt{2}$.
\end{lemma}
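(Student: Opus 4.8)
The plan is to compute the $L_4$-norm of $P$ directly by expanding $|P|^4 = P^2 \overline{P}^2$ and integrating, using orthonormality of characters together with the $B_2$-condition to control how many quadruples survive. First I would write $P = \sum_{\gamma \in E} \widehat{P}(\gamma)\gamma$ and observe that
\[
P(x)^2 = \sum_{\gamma_1,\gamma_2 \in E} \widehat{P}(\gamma_1)\widehat{P}(\gamma_2)\,(\gamma_1\gamma_2)(x)\,,
\]
so that $\|P\|_{L_4(G)}^4 = \|P^2\|_{L_2(G)}^2 = \int_G |P(x)^2|^2\,d\mathrm{m}(x)$. Since $\widehat{G}$ is an orthonormal system in $L_2(G,\mathrm{m})$ (Peter--Weyl), grouping the terms of $P^2$ according to the value of the product character $\gamma_1\gamma_2$ gives
\[
\|P\|_{L_4(G)}^4 = \sum_{\eta \in \widehat{G}} \Big| \sum_{\substack{\gamma_1,\gamma_2 \in E \\ \gamma_1\gamma_2 = \eta}} \widehat{P}(\gamma_1)\widehat{P}(\gamma_2) \Big|^2\,.
\]

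Next I would invoke the $B_2$-hypothesis: for a fixed $\eta$, the pairs $(\gamma_1,\gamma_2) \in E \times E$ with $\gamma_1\gamma_2 = \eta$ form at most one unordered pair $\{\gamma,\gamma'\}$ (since $\gamma_1\gamma_2 = \gamma_3\gamma_4$ forces $\{\gamma_3,\gamma_4\}$ to be a permutation of $\{\gamma_1,\gamma_2\}$), hence at most two ordered pairs. Therefore each inner sum has at most two terms, and by the elementary inequality $|a+b|^2 \le 2(|a|^2+|b|^2)$,
\[
\Big| \sum_{\gamma_1\gamma_2 = \eta} \widehat{P}(\gamma_1)\widehat{P}(\gamma_2) \Big|^2 \le 2 \sum_{\gamma_1\gamma_2 = \eta} |\widehat{P}(\gamma_1)|^2 |\widehat{P}(\gamma_2)|^2\,.
\]
Summing over $\eta \in \widehat{G}$ and noting that every ordered pair $(\gamma_1,\gamma_2) \in E \times E$ contributes to exactly one $\eta$, the right-hand side becomes $2 \big(\sum_{\gamma \in E} |\widehat{P}(\gamma)|^2\big)^2 = 2 \|P\|_{L_2(G)}^4$, using Parseval. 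Taking fourth roots yields $\|P\|_{L_4(G)} \le 2^{1/4}\|P\|_{L_2(G)}$.

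For the consequence, I would combine this with a standard interpolation: from $\|P\|_{L_2}^2 = \int |P|^{2/3}|P|^{4/3} \le \|P\|_{L_1}^{2/3}\|P\|_{L_4}^{4/3}$ (H\"older with exponents $3$ and $3/2$ applied to $|P|^{2/3}$ and $|P|^{4/3}$), substitute the bound $\|P\|_{L_4} \le 2^{1/4}\|P\|_{L_2}$ to get $\|P\|_{L_2}^2 \le \|P\|_{L_1}^{2/3} \cdot 2^{1/3}\|P\|_{L_2}^{4/3}$, i.e.\ $\|P\|_{L_2}^{2/3} \le 2^{1/3}\|P\|_{L_1}^{2/3}$, so $\|P\|_{L_2} \le \sqrt{2}\,\|P\|_{L_1}$, which is exactly the $\Lambda(2)$-estimate with $C_2 \le \sqrt{2}$. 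The only mildly delicate point is the bookkeeping in the $B_2$-step — making sure that "at most one unordered pair" is used correctly to bound the inner sum by two terms and that the double counting over $\eta$ exactly reassembles $\|P\|_{L_2}^4$ — but this is routine once the expansion is set up. There is no serious obstacle here; the lemma is essentially the classical observation that $B_2$ (Sidon) sets are $\Lambda(4)$ with the optimal constant, recast in the compact abelian group language.
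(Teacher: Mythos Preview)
Your proof is correct and follows essentially the same route as the paper: expand $\|P\|_{L_4}^4$ using orthonormality of characters, invoke the $B_2$-condition to see that only the quadruples with $\{\gamma_1,\gamma_2\}=\{\gamma_3,\gamma_4\}$ survive (giving the factor $2$), and then run the identical H\"older interpolation $\|P\|_2 \le \|P\|_1^{1/3}\|P\|_4^{2/3}$ for the $\Lambda(2)$-consequence. The only cosmetic difference is that you organize the expansion via $\|P^2\|_{L_2}^2$ and group by the product character $\eta$, whereas the paper writes out the quadruple sum directly and evaluates each character integral; the combinatorics and constants are identical.
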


\begin{proof}
Fix some $P= \sum_{j=1}^N c_j \gamma_j\in \text{Trig}_E(G)$. Then
\[
\int_G\bigg|\sum_{j=1}^N c_j \gamma_j(x)\bigg|^4 \,d\mathrm{m}(x) = \sum_{i,j, k, \ell=1}^N c_i \overline{c_j} c_k
\overline{c_{\ell}}\, \int_G \gamma_i(x)\,\overline{\gamma_j(x)}\,\gamma_k(x)\,\overline{\gamma_{\ell}(x)}\,d\mathrm{m}(x)\,,
\]
where $\mathrm{m}$ again denotes the Haar measure on $G$.
Now we use the  obvious fact that  for every  $\gamma\in \widehat{G}$
\[
\text{$\int_G \gamma(x)\,d\mathrm{m}(x)= 0$ \quad if and only if \quad
$\gamma \neq 1$.}
\]
 Thus, we conclude that
\[
\int_G \gamma_i(x) \overline{\gamma_j(x)}\,\gamma_k(x)\,\overline{\gamma_{\ell}(x)}\,d\mathrm{m}(x) \neq 0
\]
if and only if $\gamma_i\,\overline{\gamma_j}\,\gamma_k\,\overline{\gamma_{\ell}} = 1$, or equivalently  $\gamma_i \gamma_k
= \gamma_j \gamma_{\ell}$. Since $E$ is $B_2$-set, it follows that $i=j$ and $k=\ell$, or $i=\ell$ and $k=j$ and hence
\[
\int_G \gamma_i(x)\,\overline{\gamma_j(x)}\,\gamma_k(x)\, \overline{\gamma_{\ell}(x)}\,d\mathrm{m}(x)
= \int_G |\gamma_i(x)\gamma_j(x)|^2\,d\mathrm{m}(x) =1\,.
\]
Combining the the above facts, we obtain
\begin{align*}
\int_G\bigg|\sum_{j=1}^N c_j \gamma_j(x)\bigg|^4 \,d\mathrm{m}(x) & = 2 \sum_{j \neq k}|c_i c_j|^2
+ \sum_{i=1}^N |c_j|^4 \leq 2 \sum_{j, k=1}|c_j c_k|^2 \\
&= 2 \bigg(\int_G \bigg|\sum_{j=1}^N c_j \gamma_j(x)\bigg|^2 \,d\mathrm{m}(x)\bigg)^2\,,
\end{align*}
which yields the required first estimate. For the second claim note that  $|P|^2 = |P|^{\frac{2}{3}} |P|^{\frac{4}{3}}$. 
Hence by H\"older's inequality  
\[
\|P\|_2 \leq \|P\|_1^{\frac{1}{3}}\,\|P\|_4^{\frac{2}{3}}
\leq 2^{\frac{1}{6}}\,\|P\|_1^{\frac{1}{3}}\,\|P\|_2^{\frac{2}{3}}\,,
\]
and so $\|P\|_2 \leq \sqrt{2}\,\|P\|_1$. This completes the proof.
\end{proof}

\smallskip

\begin{remark} \label{remark graham}
We observe that \cite[Proposition 6.3.11]{grahamhare2013}, also implies the following: If $E\subset \widehat{G}$ is
such that there exists $N\ge 2$ such that for every $\gamma\in \widehat{G}$ there at most $N$ pairs of the form
$(\gamma_{i_1},\gamma_{i_2})\subset E\times E$ with $\gamma_{i_1}\gamma_{i_2}=\gamma$, then  $E$ is $\Lambda(2)$-set
with $C_2\leq \sqrt{N}$.
\end{remark}

\medskip

\section{The Lozinski-Kharishiladze result revisited} \label{The Lozinski and Kharishiladze revisited}

The aim is to prove various multivariate  variants of the Lozinski-Kharshiladze result from \eqref{LoKa}. 
 This is done by concrete applications of \,Theorem \ref{rudy}, which are mostly routine. We point out
that these results are the key for further results to obtain formulas and estimates for the projection constants 
of spaces of Dirichlet polynomials and Boolean functions (see Chapter \ref{Part: Dirichlet polynomials and polynomials on the Boolean cube} and Chapter \ref{Polynomials on the Boolean cube}).

More precisely, in this section we derive formulas and asymptotically optimal estimates for the projection constants 
of spaces of trigonometric polynomials on products of compact abelian groups, which  have Fourier expansions supported 
on an a~priori fixed  set of characters. Coming back to our original motivation, the Lozinski-Kharshiladze result 
\eqref{LoKa}, we finally specify our results to trigonometric and analytic trigonometric  polynomials on the 
multidimensional circle group.

\subsection{Polynomials on products of groups} \label{Polynomials on products of groups}
We need to introduce further notation.
 Given compact abelian groups $G_1, \ldots, G_n$, each with the Haar measure $\mathrm{m}_j, 1\leq j\leq n$, we denote  by
$G:= G_1 \times \cdots \times G_n$ the product of these groups endowed  its natural  product operation and product topology . Given an $n$-tuple
of characters $(\gamma_1, \ldots, \gamma_n) \in \widehat{G_1} \times \cdots \times \widehat{G_n}$ and
$\alpha = (\alpha_1, \ldots, \alpha_n)\in \mathbb{Z}^n$, we write $\gamma^\alpha$ for the~character in $\widehat{G}$
given by
\[
\gamma^\alpha(x_1,\ldots, x_n) := \gamma_1(x_1)^{\alpha_1}\cdots \gamma_n(x_n)^{\alpha_n}, \quad\, (x_1, \ldots, x_n)\in G\,.
\]
Based on this notation, we formulate our first result.

\smallskip

\begin{proposition}
\label{product}
Given  $m, n \in \mathbb{N} $, let $E:= \big\{\gamma^{\alpha} \colon \alpha = (\alpha_1, \ldots, \alpha_n) \in \mathbb{Z}^n;\, |\alpha_j| \leq m,
\,\,1\leq j \leq n\big\}$. Then
\[
\boldsymbol{\lambda}\big(\text{Trig}_{{E}}(G)\big) =
\prod_{j=1}^n \int_{G_j} \Big|\sum_{k \in \mathbb{Z}:|k|\leq m} \gamma_j(x_j)^k\Big|\,d{\text{\emph{m}}}_j(x_j)\,.
\]
\end{proposition}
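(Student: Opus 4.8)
The result is an immediate application of the integral formula in Theorem~\ref{C(G)proj} together with the fact that the Haar measure on a finite product of compact abelian groups is the product of the Haar measures, and that characters of the product group factor as products of characters of the factors. First I would invoke Theorem~\ref{C(G)proj} with the group $G = G_1 \times \cdots \times G_n$ and the finite set $E$ of characters described in the statement, to obtain
\[
\boldsymbol{\lambda}\big(\text{Trig}_{E}(G)\big) = \int_G \Big|\sum_{\gamma^\alpha \in E} \gamma^\alpha(x)\Big|\,d\mathrm{m}(x)\,,
\]
where $\mathrm{m} = \mathrm{m}_1 \otimes \cdots \otimes \mathrm{m}_n$ is the Haar measure on $G$ (this is the standard fact that the product measure is translation invariant, hence by uniqueness equals the Haar measure of the product group).

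\textbf{Factoring the character sum.} The key observation is that the index set $E = \{\gamma^\alpha : |\alpha_j| \le m \text{ for } 1 \le j \le n\}$ is exactly a product set: writing $\alpha = (\alpha_1,\ldots,\alpha_n)$ with each $\alpha_j$ ranging independently over $\{k \in \mathbb{Z} : |k| \le m\}$, and using $\gamma^\alpha(x_1,\ldots,x_n) = \prod_{j=1}^n \gamma_j(x_j)^{\alpha_j}$, one gets
\[
\sum_{\gamma^\alpha \in E} \gamma^\alpha(x_1,\ldots,x_n) = \sum_{|\alpha_1|\le m}\cdots\sum_{|\alpha_n|\le m} \prod_{j=1}^n \gamma_j(x_j)^{\alpha_j} = \prod_{j=1}^n \Big(\sum_{k \in \mathbb{Z}:|k|\le m} \gamma_j(x_j)^k\Big)\,.
\]
Taking absolute values, the modulus of a product is the product of the moduli, so the integrand over $G$ splits as a product of functions each depending on a single variable $x_j$.

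\textbf{Applying Fubini.} Since each factor $x_j \mapsto |\sum_{|k|\le m} \gamma_j(x_j)^k|$ is continuous (a finite sum of characters, composed with modulus) hence bounded and integrable on the compact group $G_j$, Tonelli's theorem applies to the nonnegative integrand, and the integral over the product measure factors:
\[
\int_G \Big|\sum_{\gamma^\alpha \in E}\gamma^\alpha(x)\Big|\,d\mathrm{m}(x) = \prod_{j=1}^n \int_{G_j} \Big|\sum_{k \in \mathbb{Z}:|k|\le m} \gamma_j(x_j)^k\Big|\,d\mathrm{m}_j(x_j)\,.
\]
Combining this chain of equalities yields the claimed formula. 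There is no genuine obstacle here; the only point requiring a word of care is the identification of the Haar measure on $G$ with the product of the $\mathrm{m}_j$ and the observation that $E$ is a genuine product set so that the character sum factors — both of which are routine. The proof is essentially a bookkeeping computation on top of Theorem~\ref{C(G)proj}.
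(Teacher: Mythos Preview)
Your proof is correct and follows essentially the same approach as the paper: apply Theorem~\ref{C(G)proj} to the product group, factor the character sum over the product index set, and conclude by Fubini's theorem. The paper's proof is slightly terser but structurally identical.
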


\begin{proof}
Let $\mu$ be the Haar measure on $G:=G_1 \times \cdots \times G_n$, which is nothing else then the
the product of the Haar measures $\mathrm{m}_j$, and $I_m:= \big\{ \alpha = (\alpha_1, \ldots, \alpha_n) \in \mathbb{Z}^n \colon |\alpha_j| \leq m,
\,\,1\leq j \leq n\big\}$. Then Theorem \ref{C(G)proj} yields
\[
\boldsymbol{\lambda}\big(\text{Trig}_{{E}}(G)\big) = \int_G \Big|\sum_{\alpha\in I_m} \gamma^{\alpha}(x) \Big|\,d\mu(x)\,.
\]
Clearly, for all $x=(x_1,\ldots, x_n) \in G_1 \times \cdots \times G_n$ one has
\[
\sum_{\alpha \in I_m} \gamma^\alpha(x)  = \prod_{j=1}^n \sum_{|k|\leq m} \gamma_j(x_j)^k\,,
\]
and hence
\[
\boldsymbol{\lambda}\big(\text{Trig}_{{E}}(G)\big) = \int_G \prod_{j=1}^n \Big|\sum_{|k|\leq m} \gamma_j(x_j)^k\Big|\,d\mu(x)\,.
\]
Fubini's theorem finishes the argument.
\end{proof}

A similar proof yields to the following.

\begin{corollary}
\label{product corollary}
Let $I:=I_1\times \dots\times I_n\subset \mathbb{Z}^n$, with $J_l$ finite subsets of \, $\mathbb Z$ for $l=1,\dots,n$.
Then, for ${E} := \{\gamma^{\alpha}\}_{\alpha \in I}$ one has
\[
\boldsymbol{\lambda}\big(\text{Trig}_{{E}}(G)\big) = \prod_{l=1}^n \int_{G_l} \Big|\sum_{k\in I_l}
\gamma_l(x_l)^k\Big|\,d{\text{\emph{m}}}_l(x_l)\,.
\]
\end{corollary}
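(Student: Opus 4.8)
The plan is to repeat the proof of Proposition~\ref{product} almost word for word, since the only change is that the symmetric intervals $\{k\in\mathbb{Z}:|k|\le m\}$ are replaced by arbitrary finite sets $I_l\subset\mathbb{Z}$. First I would equip $G=G_1\times\cdots\times G_n$ with its Haar measure $\mu$, which is the product $\mathrm{m}_1\otimes\cdots\otimes\mathrm{m}_n$ of the Haar measures of the factors, and apply Theorem~\ref{C(G)proj} to the finite set of characters $E=\{\gamma^{\alpha}\}_{\alpha\in I}\subset\widehat{G}$, where $I=I_1\times\cdots\times I_n$. This gives
\[
\boldsymbol{\lambda}\big(\text{Trig}_{E}(G)\big)=\int_G\Big|\sum_{\alpha\in I}\gamma^{\alpha}(x)\Big|\,d\mu(x)\,.
\]

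The second step is to exploit the product structure of $I$ together with the multiplicativity $\gamma^{\alpha}(x_1,\dots,x_n)=\prod_{l=1}^{n}\gamma_l(x_l)^{\alpha_l}$: distributing the product over the sum shows that, for every $x=(x_1,\dots,x_n)\in G$,
\[
\sum_{\alpha\in I}\gamma^{\alpha}(x)=\prod_{l=1}^{n}\,\sum_{k\in I_l}\gamma_l(x_l)^{k}\,,
\]
so that the integrand factorizes as $\prod_{l=1}^{n}\big|\sum_{k\in I_l}\gamma_l(x_l)^{k}\big|$. Since each $\sum_{k\in I_l}\gamma_l^{k}$ is a continuous function on $G_l$, Tonelli's theorem (the integrand being nonnegative) applied to the product measure $\mu$ then yields
\[
\boldsymbol{\lambda}\big(\text{Trig}_{E}(G)\big)=\prod_{l=1}^{n}\int_{G_l}\Big|\sum_{k\in I_l}\gamma_l(x_l)^{k}\Big|\,d\mathrm{m}_l(x_l)\,,
\]
which is the asserted formula.

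The hard part will be essentially nonexistent; the only point deserving a comment is the hypothesis of Theorem~\ref{C(G)proj} that $E$ be listed as a set of distinct characters. The product formula presupposes — as is tacitly the case in all the intended applications — that the characters $\gamma^{\alpha}$, $\alpha\in I$, are pairwise distinct, which holds for instance whenever each $\gamma_l$ has infinite order; under this mild assumption the computation above is literally correct, and everything else is exactly as in Proposition~\ref{product}. (If one wishes to allow coincidences among the $\gamma^{\alpha}$, one first passes to the set of distinct values, which leaves $\text{Trig}_{E}(G)$ unchanged, and the formula then holds for that reduced index family.)
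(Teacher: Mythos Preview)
Your proof is correct and is exactly the approach the paper intends: it simply says ``A similar proof yields the following'' after Proposition~\ref{product}, and your argument reproduces that proof with the symmetric intervals replaced by arbitrary finite sets $I_l$, applying Theorem~\ref{C(G)proj}, factorizing the character sum via the product structure of $I$, and concluding with Fubini.
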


\smallskip

\subsection{Polynomials on products of the circle group} \label{Polynomials on polytorii}
As a special case of the preceding results we consider polynomials on on products of the circle group, that is, polynomials on the compact abelian group
$G:= \mathbb{T}^n$.
The Haar measure $\mathrm{m}=:dz$ on $\mathbb{T}^n$ acts on a~Borel function $f\colon \mathbb{T}^n \to \mathbb{C} $ by the formula
\[
\int_{\mathbb{T}^n} f(z)\,dz = \frac{1}{(2\pi)^n} \int_0^{2\pi}\cdots \int_0^{2\pi} f(e^{it_1}, \ldots, e^{it_n})\,dt_1\ldots dt_n\,.
\]
Recall that $\widehat{\mathbb{T}^n} = \mathbb{Z}^n$, where the identification is given by the fact that for every
character $\gamma \in \widehat{\mathbb{T}^n} $ there is a unique multi index
$(\alpha_1, \ldots, \alpha_n) \in \mathbb{Z}^n$ for which  $\gamma(z) = z^{\alpha}$ for every $z= (z_1, \ldots, z_n)\in \mathbb{T}^n$.
Given a finite subset $I \subset \mathbb{Z}^n$, we write  $$\text{Trig}_{I}(\mathbb{T}^n)$$
for the space of all
trigonometric polynomials
$
P(z) = \sum_{\alpha \in I} c_\alpha z^{\alpha}, \quad\, z\in \mathbb{T}^n\,,
$
supported on $I$. Together with the sup norm $\|\cdot\|_{\mathbb{T}^n}$ (also denoted by
$\|\cdot\|_{\infty}$) this space clearly forms a Banach space. In the analytic case $J \subset \mathbb{N}_0^n$
the maximum modulus theorem implies that 
\begin{equation}\label{maxmod}
  \text{Trig}_{J}(\mathbb{T}^n) = \mathcal{P}_{J}(\ell_\infty^n)\,,
\end{equation}
as Banach spaces, where the Fourier and monomial coefficients obviously are preserved.

We start with the  following  immediate consequence of Theorem~\ref{C(G)proj}.
\smallskip

\begin{corollary}  \label{manydimensionsA}
Let $I \subset \mathbb{Z}^n$ be a finite set. Then
\[
\boldsymbol{\lambda}\big(\text{Trig}_{I}(\mathbb{T}^n)\big) = \int_{\mathbb{T}^n} \Big|\sum_{\alpha\in I } z^\alpha\Big|\,dz\,.
\]
\end{corollary}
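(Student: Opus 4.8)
The plan is to deduce Corollary~\ref{manydimensionsA} as a direct specialization of Theorem~\ref{C(G)proj}, with no genuinely new work required. First I would recall that $\mathbb{T}^n$ is a compact abelian group whose Haar measure is the normalized Lebesgue measure $dz$ described just above, and whose dual group is $\widehat{\mathbb{T}^n} = \mathbb{Z}^n$ via the identification $\gamma \leftrightarrow \alpha$ with $\gamma(z) = z^\alpha$. Thus a finite set $I \subset \mathbb{Z}^n$ corresponds to a finite set $E = \{\gamma_\alpha : \alpha \in I\} \subset \widehat{\mathbb{T}^n}$ of characters, and $\text{Trig}_I(\mathbb{T}^n) = \text{Trig}_E(\mathbb{T}^n)$ by definition.

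Next I would simply invoke Theorem~\ref{C(G)proj} with $G = \mathbb{T}^n$: it gives
\[
\boldsymbol{\lambda}\big(\text{Trig}_E(\mathbb{T}^n)\big) = \int_{\mathbb{T}^n} \Big|\sum_{\alpha \in I} \gamma_\alpha(z)\Big|\,dz = \int_{\mathbb{T}^n} \Big|\sum_{\alpha \in I} z^\alpha\Big|\,dz\,,
\]
where the last equality is just the identification of each character $\gamma_\alpha$ with the monomial $z^\alpha$. This is exactly the claimed formula, so the proof is essentially one line once Theorem~\ref{C(G)proj} is in place.

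Since there is no real obstacle here, the only points worth a sentence are bookkeeping: one should note that the characters $z^\alpha$ for distinct $\alpha \in \mathbb{Z}^n$ are genuinely distinct elements of $\widehat{\mathbb{T}^n}$ (so the hypotheses of Theorem~\ref{C(G)proj} on a finite set of characters apply without multiplicity issues), and that the normalized Lebesgue measure $dz$ on $\mathbb{T}^n$ is indeed the Haar measure $\mathrm{m}$ appearing in that theorem. Both facts are standard consequences of the Peter--Weyl theory already cited in Section~\ref{basicsbasics}. I would therefore present the proof as: \emph{Apply Theorem~\ref{C(G)proj} to $G = \mathbb{T}^n$ and the character set $E = \{z \mapsto z^\alpha : \alpha \in I\}$, and use the identification $\widehat{\mathbb{T}^n} = \mathbb{Z}^n$.}
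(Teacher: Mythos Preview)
Your proposal is correct and matches the paper's approach exactly: the paper states this corollary as ``the following immediate consequence of Theorem~\ref{C(G)proj}'' and gives no further proof, so your one-line specialization to $G=\mathbb{T}^n$ via the identification $\widehat{\mathbb{T}^n}=\mathbb{Z}^n$ is precisely what is intended.
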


\smallskip

In the analytic case $J \subset \mathbb{N}_0^n$ we are able to collect more information.  To see this, note first  that by
an inequality due to  Weissler \cite{weissler1980logarithmic}
(see also \cite[Theorem~8.10]{defant2019libro}) for all $P \in \mathcal{P}_{\leq m}(\ell_\infty^n)$
\[
\frac{1}{\sqrt{2}^m}
\Big(\int_{\mathbb{T}^\infty} |P|^2\,dz\Big)^\frac{1}{2}
\leq\int_{\mathbb{T}^\infty} |P|\,dz\,.
\]
In other terms, the set $\{ z^\alpha \in \mathbb{N}_0^n \colon |\alpha| \leq m \}$ (of characters in $\mathbb{Z}^n = \widehat{\mathbb{T}^n}$) forms a $\Lambda(2)$-set with constant $C_2 \leq \sqrt{2^m}$. Then the following result is an immediate consequence of Corollary~\ref{corB2} and  Corollary~\ref{product}.

\begin{corollary}  \label{manydimensionsB}
Let $J \subset \mathbb{N}_0^n$ be a finite set. Then
\[
\boldsymbol{\lambda}\big(\mathcal{P}_{J}(\ell_\infty^n)\big) = \int_{\mathbb{T}^n} \Big|\sum_{\alpha\in J } z^\alpha\Big|\,dz\,,
\]
and if $J$ has degree $m$, then
\[
\frac{1}{\sqrt{2^m}}\sqrt{|J|}  \leq \boldsymbol{\lambda}\big(\mathcal{P}_{J}(\ell_\infty^n)\big) \leq  \sqrt{|J|} .
\]
\end{corollary}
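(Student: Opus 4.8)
The plan is to derive Corollary~\ref{manydimensionsB} as a straightforward combination of the integral formula for trigonometric polynomials (Theorem~\ref{C(G)proj}) with the $\Lambda(2)$-estimate from Corollary~\ref{corB2}. First I would note that the identification \eqref{maxmod}, namely $\text{Trig}_J(\mathbb{T}^n) = \mathcal{P}_J(\ell_\infty^n)$ as Banach spaces (with preservation of coefficients), lets us transfer everything to the trigonometric setting on $G = \mathbb{T}^n$, which is a compact abelian group with $\widehat{\mathbb{T}^n} = \mathbb{Z}^n$ and Haar measure $dz$. Applying Corollary~\ref{manydimensionsA} (equivalently Theorem~\ref{C(G)proj}) directly to the finite set $E = \{z^\alpha \colon \alpha \in J\} \subset \widehat{\mathbb{T}^n}$ yields the claimed integral formula
\[
\boldsymbol{\lambda}\big(\mathcal{P}_{J}(\ell_\infty^n)\big) = \int_{\mathbb{T}^n} \Big|\sum_{\alpha\in J } z^\alpha\Big|\,dz\,.
\]

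For the two-sided estimate, suppose $J$ has degree $m$, i.e.\ $|\alpha| \leq m$ for all $\alpha \in J$. The key input is Weissler's inequality, already recalled in the excerpt: for every $P \in \mathcal{P}_{\leq m}(\ell_\infty^n)$ one has $\sqrt{2}^{-m}\big(\int_{\mathbb{T}^\infty}|P|^2\,dz\big)^{1/2} \leq \int_{\mathbb{T}^\infty}|P|\,dz$, and by restriction of variables the same holds with $\mathbb{T}^n$ in place of $\mathbb{T}^\infty$. As observed right before the statement, this says precisely that $E = \{z^\alpha \colon \alpha \in J\}$ is a $\Lambda(2)$-set for $\mathbb{T}^n$ with constant $C_2 \leq \sqrt{2^m}$, since every $P \in \text{Trig}_J(\mathbb{T}^n)$ lies in $\mathcal{P}_{\leq m}(\ell_\infty^n)$. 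Feeding this into Corollary~\ref{corB2} with $N = |J|$ gives
\[
\frac{1}{\sqrt{2^m}}\sqrt{|J|} \;=\; C_2^{-1}\sqrt{|J|} \;\leq\; \boldsymbol{\lambda}\big(\text{Trig}_J(\mathbb{T}^n)\big) \;\leq\; \sqrt{|J|}\,,
\]
and the identification \eqref{maxmod} turns the middle quantity into $\boldsymbol{\lambda}\big(\mathcal{P}_J(\ell_\infty^n)\big)$, completing the proof.

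There is no genuine obstacle here: the corollary is essentially a bookkeeping exercise that packages Theorem~\ref{C(G)proj}, the maximum modulus identification \eqref{maxmod}, Weissler's hypercontractive inequality, and Corollary~\ref{corB2}. The only point requiring a word of care is checking that Weissler's inequality, stated over $\mathbb{T}^\infty$, descends to $\mathbb{T}^n$ — this is immediate by viewing a polynomial in $n$ variables as one in infinitely many variables depending only on the first $n$ coordinates, so the $L^1$ and $L^2$ norms over $\mathbb{T}^n$ agree with those over $\mathbb{T}^\infty$ by Fubini. The upper bound $\sqrt{|J|}$ is of course the trivial Cauchy--Schwarz bound $\int_{\mathbb{T}^n}|\sum_{\alpha\in J} z^\alpha|\,dz \leq (\int_{\mathbb{T}^n}|\sum_{\alpha\in J} z^\alpha|^2\,dz)^{1/2} = \sqrt{|J|}$, using orthonormality of the characters.
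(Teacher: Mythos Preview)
Your proof is correct and follows exactly the approach the paper intends: the integral formula comes from Corollary~\ref{manydimensionsA} together with the isometric identification \eqref{maxmod}, and the two-sided estimate follows from Weissler's inequality (giving the $\Lambda(2)$-constant $\sqrt{2^m}$) combined with Corollary~\ref{corB2}. The paper presents this corollary as an immediate consequence of these ingredients without writing out the details, so your write-up is if anything more explicit than the original.
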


As announced, we finally come to multivariate variants of the
the Lozinski-Kharshiladze result mentioned in \eqref{LoKa}.

\smallskip

For each $m\in \mathbb{N}_0$,  let $D_m$  be the $m$th Dirichlet kernel $D_m:= \sum_{k=-m}^{m} e_k$, and $L_m$ the $m$th
Lebesgue constant given by
\[
L_m:= \frac{1}{2\pi} \int_0^{2\pi} |D_m(t)|\,dt
= \frac{1}{2\pi} \int_0^{2\pi} \bigg|\frac{\sin (m + \frac{1}{2})t}{\sin\frac{t}{2}}\bigg|\,dt\,.
\]
We recall the well-known standard estimates 
\begin{equation}\label{Lebconst}
\frac{4}{\pi^2} \log(m +1) \,\,< \,\,L_m \,\,<\,\, 
3 + \log m, \quad\, m\in \mathbb{N}\,.
\end{equation}

\smallskip

\begin{corollary}  \label{Lozinski-Kharshiladze}
For $\mathbf{d}=(d_1,\dots,d_n)\in \mathbb{N}^n$ let $I_\mathbf{d}:= \big\{\alpha = (\alpha_1, \ldots, \alpha_n)
\in \mathbb{Z}^n \colon \, |\alpha_j| \leq d_j, \,\,1\leq j \leq n\big\}$. Then
\[
\boldsymbol{\lambda}\big(\text{Trig}_{I_{\mathbf{d}}}(\mathbb{T}^n)\big) = \prod_{j=1}^n L_{d_j}\,.
\]
\end{corollary}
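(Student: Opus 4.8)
The plan is to read this off directly from the product formula of Corollary~\ref{product corollary}, the only genuine work being to recognize the one-dimensional factors as Lebesgue constants. First I would specialize Corollary~\ref{product corollary} to the case $G_l = \mathbb{T}$ for every $l$, taking $\gamma_l$ to be the identity character $z \mapsto z$ of $\mathbb{T}$ (which generates $\widehat{\mathbb{T}} = \mathbb{Z}$) and the finite sets $I_l := \{k \in \mathbb{Z} \colon |k| \leq d_l\}$. With these choices $\gamma^\alpha(z) = z_1^{\alpha_1}\cdots z_n^{\alpha_n} = z^\alpha$ for $z \in \mathbb{T}^n$, so the resulting set of characters $E = \{\gamma^\alpha\}_{\alpha \in I_\mathbf{d}}$ is precisely $\{z^\alpha \colon \alpha \in I_\mathbf{d}\}$, and hence $\text{Trig}_E(\mathbb{T}^n) = \text{Trig}_{I_\mathbf{d}}(\mathbb{T}^n)$. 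Corollary~\ref{product corollary} then gives
\[
\boldsymbol{\lambda}\big(\text{Trig}_{I_\mathbf{d}}(\mathbb{T}^n)\big) = \prod_{l=1}^n \int_{G_l} \Big| \sum_{k \in \mathbb{Z}\colon |k| \leq d_l} \gamma_l(x_l)^k \Big| \, d\mathrm{m}_l(x_l)\,.
\]

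Next I would evaluate each one-dimensional integral. Writing $x_l = e^{it}$, one has $\sum_{|k| \leq d_l} \gamma_l(x_l)^k = \sum_{k=-d_l}^{d_l} e^{ikt} = D_{d_l}(t)$, the $d_l$th Dirichlet kernel, and since $\mathrm{m}_l$ is the normalized Haar measure on $\mathbb{T}$,
\[
\int_{\mathbb{T}} \big| D_{d_l} \big| \, d\mathrm{m}_l = \frac{1}{2\pi} \int_0^{2\pi} |D_{d_l}(t)|\, dt = L_{d_l}\,,
\]
by the very definition of the $d_l$th Lebesgue constant. Multiplying over $l = 1, \ldots, n$ yields $\boldsymbol{\lambda}\big(\text{Trig}_{I_\mathbf{d}}(\mathbb{T}^n)\big) = \prod_{l=1}^n L_{d_l}$, as claimed. (Equivalently, one could start from Corollary~\ref{manydimensionsA}, factor $\sum_{\alpha \in I_\mathbf{d}} z^\alpha = \prod_{j=1}^n \big( \sum_{|k| \leq d_j} z_j^k \big)$, and separate variables by Fubini's theorem; this is exactly the computation already performed in the proof of Proposition~\ref{product}.)

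There is essentially no obstacle here: the statement is a clean specialization of the general product formula, and the only points requiring care are the bookkeeping of the identification $\widehat{\mathbb{T}} = \mathbb{Z}$ (so that the index set $I_\mathbf{d} \subset \mathbb{Z}^n$ corresponds to products of powers of the generating characters) and matching $\int_{\mathbb{T}} |D_{d_l}|\,d\mathrm{m}_l$ against the normalization in the definition of $L_{d_l}$. If desired, the standard estimates~\eqref{Lebconst} can then be invoked to record the asymptotics, which for $n = 1$ recover the Lozinski--Kharshiladze formula~\eqref{LoKa}.
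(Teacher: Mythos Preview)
Your proposal is correct and follows essentially the same route as the paper: apply Corollary~\ref{product corollary} with each $G_l = \mathbb{T}$ and $I_l = \{k : |k| \le d_l\}$, then recognize each one-dimensional integral as $\int_{\mathbb{T}} |D_{d_l}|\,d\mathrm{m}_l = L_{d_l}$. The paper's proof is just a more compressed version of exactly this computation.
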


\begin{proof}
Applying Corollary \ref{product corollary}, we get
\begin{align*}
\boldsymbol{\lambda}\big(\text{Trig}_{I_{\mathbf{d}}}(\mathbb{T}^n)\big) = \prod_{j=1}^n \int_{\mathbb{T}} \Big|\sum_{|\alpha_j|\le d_j } z^{\alpha_j}\Big|\,dz
= \prod_{j=1}^n \int_{\mathbb{T}} |D_{d_j}(z_j)|\,dz_j = \prod_{j=1}^n L_{d_j}\,,
\end{align*}
as required.
\end{proof}

In order to state the 'analytic counterpart' of Corollary~\ref{Lozinski-Kharshiladze}, we define for each $m\in \mathbb{N}_0$,
\[
\text{$D_m^{+}:= \sum_{k=0}^{m} e_k$ \,\,\,\,and \,\,\,\, $L_m^{+}:= \frac{1}{2\pi} \int_0^{2\pi} |D_m^{+}(t)|\,dt$\,.}
\]

In a similar fashion we prove the following result for analytic polynomials.

\medskip

\begin{corollary} \label{LoKha}
For each $m\in \mathbb{N}$ let $\Lambda_m:= \big\{(\alpha_1, \ldots, \alpha_n) \in \mathbb{N}_0^n ; \,\, \alpha_j \leq m,\, \,\,
1\leq j\leq n\big\}$. Then
\begin{itemize}
\item[\rm{(i)}] $\boldsymbol{\lambda}\big(\mathcal{P}_{\Lambda_m}(\ell_\infty^n)\big) = (L_m^{+})^n$
\item[\rm{(ii)}] $\boldsymbol{\lambda}\big(\mathcal{P}_{\Lambda_{2m}}(\ell_\infty^n)\big) = (L_m)^n$
\item[\rm{(iii)}] $(L_m - 1)^n \leq \boldsymbol{\lambda}\big(\mathcal{P}_{\Lambda_{2m+ 1}}(\ell_\infty^n)\big) \leq (L_m + 1)^n$
\item[\rm{(iv)}] $\boldsymbol{\lambda}\big(\mathcal{P}_{\Lambda_m}(\ell_\infty^n)\big) \asymp (1 + \log m)^n$
\end{itemize}
\end{corollary}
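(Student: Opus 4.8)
The plan is to reduce all four statements to the one-dimensional circle group by exploiting that $\Lambda_m=\{0,1,\dots,m\}^n$ is a product index set, and then to read off the resulting one-dimensional integrals in terms of the Dirichlet kernels $D_m,D_m^{+}$ and the Lebesgue constants $L_m,L_m^{+}$.

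For (i), note that $\Lambda_m=I_1\times\dots\times I_n$ with $I_l=\{0,1,\dots,m\}\subset\mathbb N_0$ for every $l$. Since $\Lambda_m\subset\mathbb N_0^n$, the maximum modulus identity \eqref{maxmod} gives $\mathcal P_{\Lambda_m}(\ell_\infty^n)=\mathrm{Trig}_{\Lambda_m}(\mathbb T^n)$, so Corollary~\ref{product corollary} (applied to $G=\mathbb T^n$, with $\gamma_l$ the identity character $z\mapsto z$) yields
\[
\boldsymbol{\lambda}\big(\mathcal P_{\Lambda_m}(\ell_\infty^n)\big)
=\prod_{l=1}^n\int_{\mathbb T}\Big|\sum_{k=0}^m z^k\Big|\,dz=(L_m^{+})^n\,,
\]
which is exactly (i). (Alternatively one can factorize the integral furnished by Corollary~\ref{manydimensionsB} via Fubini.)

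For (ii) and (iii) I would use the elementary identity $\sum_{k=0}^{2m}z^k=z^m\sum_{k=-m}^m z^k=z^m D_m(z)$. On $\mathbb T$ one has $|z^m|=1$, hence $|D_{2m}^{+}(z)|=|D_m(z)|$ there, so $L_{2m}^{+}=L_m$, and (ii) follows from (i) with $m$ replaced by $2m$. For (iii), write $\sum_{k=0}^{2m+1}z^k=z^m D_m(z)+z^{2m+1}$; the triangle inequality gives $|D_m(z)|-1\le|D_{2m+1}^{+}(z)|\le|D_m(z)|+1$ on $\mathbb T$, and integrating over $\mathbb T$ produces $L_m-1\le L_{2m+1}^{+}\le L_m+1$. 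Since $L_m=\|D_m\|_{L_1(\mathbb T)}\ge|\widehat{D_m}(0)|=1$, all three quantities are nonnegative, so raising to the $n$th power preserves the inequalities; together with (i) for $2m+1$ this is (iii).

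Finally, for (iv) I would combine (i) with the standard Lebesgue-constant bounds \eqref{Lebconst}. The identity used above shows $|L_m^{+}-L_{\lfloor m/2\rfloor}|\le 1$ for every $m$ (it is $0$ when $m$ is even), and since $\lfloor m/2\rfloor+1\asymp m$, \eqref{Lebconst} yields $L_m^{+}\asymp 1+\log m$ for all $m\ge 2$, while the case $m=1$ is checked directly ($L_1^{+}=\int_{\mathbb T}|1+z|\,dz=\tfrac{4}{\pi}$). Taking $n$th powers gives (iv). The only point that needs genuine care is arranging a single pair of absolute constants in $L_m^{+}\asymp 1+\log m$ valid for all $m$: the additive $1$ lost in the estimate $L_{2m+1}^{+}\ge L_m-1$, together with the degeneracy of $\log m$ for small $m$, has to be absorbed, which is done using the trivial lower bound $L_m^{+}\ge|\widehat{D_m^{+}}(0)|=1$ on the finite range of $m$ where it matters. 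Apart from this the argument is entirely routine.
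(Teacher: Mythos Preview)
Your proof is correct and follows essentially the same route as the paper's. Both arguments use the product structure of $\Lambda_m$ together with \eqref{maxmod} to reduce (i) to the one-dimensional integral $L_m^+$, then deduce (ii) from the identity $L_{2m}^+=L_m$ (you via the factorization $\sum_{k=0}^{2m}z^k=z^m D_m(z)$, the paper via the closed form $D_m^+(t)=e^{imt/2}\sin((m+\tfrac12)t)/\sin(\tfrac{t}{2})$), obtain (iii) from the triangle inequality $|L_{2m+1}^+-L_m|\le 1$, and combine with \eqref{Lebconst} for (iv). Your handling of nonnegativity in (iii) and of small $m$ in (iv) is slightly more careful than the paper's, but there is no substantive difference in strategy.
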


\begin{proof}
By Proposition \ref{product}, it follows that
\begin{align*}
\boldsymbol{\lambda}\big(\text{Trig}_{\Lambda_m}(\mathbb{T}^n)\big)
=  \prod_{j=1}^n \int_{\mathbb{T}} \Big|\sum_{0\le \alpha_j \le m} z^{\alpha_j}\Big|\,dz
= \prod_{j=1}^n \int_{\mathbb{T}} |D_m^{+}(z_j)|\,dz_j = (L_m^{+})^n\,.
\end{align*}
Clearly, $\mathcal{P}_{\Lambda_m}(\ell_\infty^n)$ is isometrically isomorphic to $\text{Trig}_{\Lambda_m}(\mathbb{T}^n)$
(see again \eqref{maxmod}), so
the proof of (i) is complete.

(ii) It is easy to check that
\[
D_m^{+}(t) = e^{i\frac{mt}{2}}\,\frac{\sin (m + \frac{1}{2})t}{\sin\frac{t}{2}}, \quad\, t \in (0, 2\pi)\,.
\]
This implies that $L_m = \|D_m\|_{L_1(\mathbb{T})} = \|D_{2m}^{+}\|_{L_1(\mathbb{T})}=L_{2m}^+$, so the statement follows
from (i).

(iii) The statement follows by (ii) combined with the obvious estimates
\[
\|D_{2m}^{+}\|_{L_1(\mathbb{T})} - 1 \leq \|D_{2m+1}^{+}\|_{L_1(\mathbb{T})} \leq \|D_{2m}^{+}\|_{L_1(\mathbb{T})} + 1,
\quad\, m\in \mathbb{N}\,.
\]

(iv) Combining the estimates from \eqref{Lebconst} with those from  (ii) and (iii), we get the required
equivalence.
\end{proof}

We conclude with a limit formula.

\begin{corollary}
For each $m\in \mathbb{N}$ let
\[
J_m:= \big\{(\alpha_1, \ldots, \alpha_n) \in \mathbb{Z}_0^n ; \,\, \alpha_j \leq m,\, \,\,
1\leq j\leq n\big\}
\quad \text{and} \quad
\Lambda_m:= \big\{(\alpha_1, \ldots, \alpha_n) \in \mathbb{N}_0^n ; \,\, \alpha_j \leq m,\, \,\,
1\leq j\leq n\big\}\,.
\]
Then \[
\lim_{m\to \infty} \frac{\boldsymbol{\lambda}\big(\text{Trig}_{J_m}(\mathbb{T}^n)\big)}{\log^n m}
=
\lim_{m\to \infty} \frac{\boldsymbol{\lambda}\big(\mathcal{P}_{\Lambda_m}(\ell_\infty^n)\big)}{\log^n m} = \Big(\frac{4}{\pi^2}\Big)^n\,.
\]
\end{corollary}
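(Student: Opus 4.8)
The plan is to reduce everything to the one-dimensional Lebesgue constants $L_m$ and $L_m^{+}$ via the exact product formulas already established, and then to invoke the classical leading-order asymptotics of $L_m$.

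First I would record the two one-dimensional limits
\[
\lim_{m\to\infty}\frac{L_m}{\log m} = \frac{4}{\pi^2} \qquad\text{and}\qquad \lim_{m\to\infty}\frac{L_m^{+}}{\log m} = \frac{4}{\pi^2}\,.
\]
The first is exactly the Lozinski–Kharshiladze asymptotic \eqref{LoKa}: by Corollary~\ref{manydimensionsA} in dimension $n=1$ one has $\boldsymbol{\lambda}\big(\text{Trig}_{\{-m,\dots,m\}}(\mathbb{T})\big) = \int_{\mathbb{T}}|D_m|\,dz = L_m$, so \eqref{LoKa} reads $L_m = \tfrac{4}{\pi^2}\log(m+1) + o(1)$, whence $L_m/\log m \to 4/\pi^2$. (Note the elementary two-sided bound \eqref{Lebconst} only gives the order $L_m \asymp \log m$, with leading constant pinned between $4/\pi^2$ and $1$; the sharp constant genuinely requires \eqref{LoKa}, equivalently the classical evaluation $L_m = \tfrac{4}{\pi^2}\log m + O(1)$.) For the second limit I would reuse the identities obtained inside the proof of Corollary~\ref{LoKha}: $L_m = \|D_{2m}^{+}\|_{L_1(\mathbb{T})} = L_{2m}^{+}$ together with $L_m - 1 \le L_{2m+1}^{+} \le L_m + 1$. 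These give $|L_k^{+} - L_{\lfloor k/2\rfloor}| \le 1$ for every $k$, and since $\log\lfloor k/2\rfloor = \log k + o(\log k)$, combining with the first limit yields $L_k^{+} = \tfrac{4}{\pi^2}\log k + o(\log k)$, i.e.\ $L_k^{+}/\log k \to 4/\pi^2$.

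Next I would invoke the exact product formulas. Applying Corollary~\ref{Lozinski-Kharshiladze} with $\mathbf{d}=(m,\dots,m)$ (so that $J_m = I_{\mathbf{d}}$) gives $\boldsymbol{\lambda}\big(\text{Trig}_{J_m}(\mathbb{T}^n)\big) = L_m^{\,n}$, while Corollary~\ref{LoKha}(i) gives $\boldsymbol{\lambda}\big(\mathcal{P}_{\Lambda_m}(\ell_\infty^n)\big) = (L_m^{+})^{n}$. Dividing by $\log^n m$ and raising the two one-dimensional limits to the $n$th power,
\[
\lim_{m\to\infty}\frac{\boldsymbol{\lambda}\big(\text{Trig}_{J_m}(\mathbb{T}^n)\big)}{\log^n m} = \Big(\lim_{m\to\infty}\frac{L_m}{\log m}\Big)^{n} = \Big(\frac{4}{\pi^2}\Big)^{n},
\]
and identically with $L_m^{+}$ in place of $L_m$ for the analytic side. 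The only delicate point in the whole argument is the input of the correct leading constant $4/\pi^2$ for $L_m$ (and, through the identities above, for $L_m^{+}$); once that is granted, the remainder is pure multiplicativity of the product formulas and passing to $n$th powers in the limit.
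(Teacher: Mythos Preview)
Your proof is correct and follows essentially the same route as the paper: both reduce to the product formulas from Corollaries~\ref{Lozinski-Kharshiladze} and~\ref{LoKha} together with the classical asymptotic $L_m/\log m \to 4/\pi^2$. The only cosmetic difference is that you first establish $L_m^{+}/\log m \to 4/\pi^2$ as a one-dimensional fact (via $|L_k^{+} - L_{\lfloor k/2\rfloor}|\le 1$) and then raise to the $n$th power using part~(i) of Corollary~\ref{LoKha}, whereas the paper carries out the even/odd split directly at the $n$-dimensional level using parts~(ii) and~(iii); the ingredients are identical.
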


\begin{proof}
Recall the well-known formula
\begin{equation}\label{lebformula}
                               \lim_{m \to \infty} \frac{L_m}{\log m} = \frac{4}{\pi^2}\,.
                             \end{equation}
Then the first limit follows from Corollary~\ref{Lozinski-Kharshiladze}.
For the proof of the second formula we distinguish the two sequences 
$\big(\boldsymbol{\lambda}\big(\mathcal{P}_{\Lambda_{2m}}(\ell_\infty^n)\big)/ \log (2m)\big)_m$  
and $\big(\boldsymbol{\lambda}\big(\mathcal{P}_{\Lambda_{2m+1}}(\ell_\infty^n)\big)/ \log (2m+1)\big)_m$. 
Observe that both sequences converge to$\frac{4}{\pi^2}$. Indeed, by  Corollary~\ref{LoKha}(ii) we have that
$
\boldsymbol{\lambda}\big(\mathcal{P}_{\Lambda_{2m}}(\ell_\infty^n)\big) = (L_{m})^n\,,
$
which, using \eqref{lebformula}, gives the claim for the first sequence. The second sequence  is handled the same way using Corollary~\ref{LoKha}(iii).
\end{proof}

\bigskip

\chapter{Polynomials on Hilbert spaces}
\label{eukli}

This chapter is motivated by the~remarkable work \cite{ryll1983homogeneous} of Ryll and Wojtaszczyk. Solving a~problem of S.~Winger, they proved that the inclusion  $H_\infty(B_{\ell_2^n}) \hookrightarrow H_1(B_{\ell_2^n})$ is not compact. As  noted in \cite{ryll1983homogeneous} this result is intimately connected with a~problem posed by Rudin in his monograph \cite{rudin1980}: Does there exist an inner function on the open unit ball of the Hilbert space $\ell_2^n$, $n>1$. For detailed information on all this see \cite{rudin1985} and \cite{wojtaszczyk1996banach}.

Much of this deep cycle of ideas (from complex analysis on $B_{\ell_2^n}$) relies on a~concrete formula for the projection
constant of $\mathcal{P}_{m}(\ell_2^n)$  proved  in \cite{ryll1983homogeneous} (see also \cite[III.B.15]{wojtaszczyk1996banach}
and \cite{rudin1985}), namely that for  each $m,n\in \mathbb{N}$ with $n >1$  one has
\begin{align} \label{fascinating}
\boldsymbol{\lambda}\big(\mathcal{P}_m(\ell_2^n)\big) = \frac{\Gamma(n +m) \Gamma(1 + \frac{m}{2})}{\Gamma(1 + m)\Gamma(n + \frac{m}{2})}\,.
\end{align}
A simple calculus yields
\[
\boldsymbol{\lambda}\big(\mathcal{P}_{m}(\ell_2^n)\big) \leq 2^{n-1}, \quad\, m\in \mathbb{N}\,.
\]
The  case $n=2$ is of particular interest. Indeed in the mentioned paper \cite{ryll1983homogeneous} the authors noticed
the~surprising fact that the sequence $\{X_m\}_{m\geq 1}$ with $X_m:= \mathcal{P}_{m}(\ell_2^2)$ forms the first
known example of finite-dimensional Banach spaces for which $\lim_{m \to \infty} \dim X_m =\infty$ although $\sup_m \boldsymbol{\lambda}(X_m) <~\infty$.
 It is worth noting here that Bourgain \cite{bourgain1989} gave an affirmative solution to a~problem considered in \cite{ryll1983homogeneous},
showing that
\[
\sup_m d\big(X_m, \ell_\infty^{\dim X_m}\big)<~\infty\,.
\]

Motivated by the fascinating formula from \eqref{fascinating}, our main goal in this chapter is to prove, for a certain class of index sets $J \subset \mathbb{N}_0^n$, variants of this formula  for the projection constant of  $\mathcal{P}_{J}(\ell_2^n)$.
As a~by-product, we again recover \eqref{fascinating}.

\smallskip

\section{Orthogonal projection} \label{Orthogonal projection}
We need some more  notation. Given $J \subset \mathbb{N}_0^n$ and $1\leq r <\infty$, we define for every
$Q\in \mathcal{P}_{J}(\mathbb{C}^n)$, the norm
\[
\|Q\|_r := \Big(\int_{\mathbb{S}_n} |Q(\xi)|^r\,d\sigma(\xi)\Big)^{\frac{1}{r}}\,,
\]
where $\sigma$ stands for the rotation-invariant normalized surface measure on $\mathbb{S}_n$.
We abbreviate
\[
\mathcal{P}_{J}^{r}(\mathbb{S}_n):= \big(\mathcal{P}_{J} (\ell_2^n),\|\cdot\|_r\big)\,,
\]
thus
\[
\mathcal{P}_{J}^{\infty}(\mathbb{S}_n) := \mathcal{P}_{J} (\ell_2^n)\,.
\]
Recall that the measure $\sigma$ is  invariant  under the action of the unitary group $\mathcal{U}_n$, that is,
for every Borel function $f\colon \mathbb{S}_n \to \mathbb{C}$ and every $U\in \mathcal{U}_n$ one has
\[
\int_{\mathbb{S}_n} f(U\xi)\,d\sigma(\xi) = \int_{\mathbb{S}_n} f(\xi)\,d\sigma(\xi)\,.
\]
By the rotation invariance under $U\xi = \xi e^{i\theta}$, we get
\[
\int_{\mathbb{S}_n} \xi^{\alpha}\, \overline{\xi^\beta}\,d\sigma(\xi) = 0,
\quad\, \alpha, \beta \in \mathbb{N}_0^n,\,\, \alpha \neq \beta\,.
\]
This in particular shows that the collection $(e_\alpha)_{\alpha \in \mathbb{N}_0^n}$ of all monomials
\[
e_\alpha (z) := z^\alpha, \quad\, z \in \mathbb{S}_n
\]
is an orthogonal family in $L_2(\mathbb{S}_n)$, and consequently the  collection $(f_\alpha)_{\alpha \in \mathbb{N}_0^n}$
of all normalized monomials given by
\[
f_\alpha : = \frac{e_\alpha}{\|e_\alpha\|_2}, \quad\, \alpha \in \mathbb{N}_0^n\,,
\]
forms  an orthonormal system in $L_2(\mathbb{S}_n)$. The following concrete formula for $\|e_\alpha\|_2$ is central
(see, e.g., \cite[Proposition~1.4.9]{rudin1980}):
\begin{equation} \label{normalized}
\|e_\alpha\|_2 = \bigg(\int_{\mathbb{S}_n} |\xi^\alpha|^2\,d\sigma(\xi)\bigg)^{\frac{1}{2}}
= \sqrt{\frac{(n-1)!\,\alpha!}{(n-1 +|\alpha|)!}}, \quad\, \alpha\in \mathbb{N}_0^n\,.
\end{equation}
The orthogonal projection $\textbf{P}_J$ of $L_2(\mathbb{S}_n)$ onto $\mathcal{P}_{J}^{2}(\mathbb{S}_n)$ is given by
\[
\textbf{P}_J(f):=\sum_{\alpha \in J} \langle f, f_\alpha \rangle f_\alpha, \quad\, f\in L_2(\mathbb{S}_n)\,.
\]
\smallskip
As a consequence we see that
\[
\boldsymbol{\lambda} \big(\mathcal{P}_{J} (\ell_2^n)\big) \leq \big\|\textbf{P}_J\colon C(\mathbb{S}_n)  \to \mathcal{P}_{J}(\ell_2^n)\big\|
\leq \big\|\textbf{P}_J\colon L_\infty(\mathbb{S}_n)  \to \mathcal{P}_{J}(\ell_2^n)\big\|\,.
\]
In what follows we are interested in  obtaining more precise information on concrete formulas for $\|\textbf{P}_J\|$,
at least for certain classes of index sets $J$.

We claim that the following integral formula for the projection $\textbf{P}_J$ holds:
\begin{align}
\label{integral}
\textbf{P}_J f(z) =\int_{\mathbb{S}_n} f(\xi)\,\sum_{k=0}^{m} c_k(n) [z, \xi ]_{J,k}\,d\sigma(\xi), \quad\, f \in L_2(\mathbb{S}_n),
\, \,z \in \ell_2^n\,,
\end{align}
where
\[
c_k(n):= \frac{(n-1 + k)!}{(n-1)!\,k!}, \quad\, k \in \N_0\,,
\]
\[
[z, \xi ]_{J,k}  : = \sum_{\substack{\alpha \in J\\|\alpha|=k }} \frac{k!}{\alpha !} z^{\alpha} \overline{\xi}^{\alpha}, \quad\,
z,\,\,\xi \in \mathbb{C}^n\,.
\]
Indeed, by \eqref{normalized} we have
\begin{align*}
\textbf{P}_J f(z) & = \sum_{\alpha \in J} \bigg(\int_{\mathbb{S}_n} f(\xi) \frac{\overline{\xi}^\alpha}{\|e_\alpha\|_2}\, d\sigma(\xi)\bigg)\frac{z^\alpha}{\|e_\alpha\|_2}\, \\
& =\int_{\mathbb{S}_n} f(\xi) \sum_{k=0}^{m} c_k(n) \sum_{\substack{\alpha \in J\\|\alpha|=k }} \frac{k!}{\alpha !} z^{\alpha}
\overline{\xi}^{\alpha}\,\, d\sigma(\xi) = \int_{\mathbb{S}_n} f(\xi)\,\sum_{k=0}^{m} c_k(n) [z, \xi ]_{J,k}\,d\sigma(\xi)\,.
\end{align*}

\smallskip
\noindent
The following simple integral description for the norm of the orthogonal projection $\textbf{P}_J$ as an operator on $L_\infty(\mathbb{S}_n) $
(resp. $C(\mathbb{S}_n)$) is fundamental.

\begin{proposition} \label{formel}
Let $J \subset \N_0^n$ be a finite index set,  and $m= \max_{\alpha \in J}|\alpha|$. Then
\begin{align*}
\big\|\textbf{P}_J\colon L_\infty(\mathbb{S}_n)  \to \mathcal{P}_{J} (\ell_2^n)\big\| & =
\big\|\textbf{P}_J\colon C(\mathbb{S}_n)  \to \mathcal{P}_{J} (\ell_2^n)\big\|\\
& = \sup_{z \in \mathbb{S}_n}
\int_{\mathbb{S}_n} \Big| \sum_{k=1}^{m} c_k(n)\,[z, \xi ]_{J,k} \Big|\,d\sigma(\xi)\,.
\end{align*}
In particular,
\[
\boldsymbol{\lambda}\big(\mathcal{P}_{J} (\ell_2^n)\big)
\leq \sup_{z \in \mathbb{S}_n}
\int_{\mathbb{S}_n} \Big| \sum_{k=1}^{m} c_k(n)\,\, [z, \xi ]_{J,k} \Big|\,d\sigma(\xi)\,.
\]
\end{proposition}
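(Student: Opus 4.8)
The plan is to compute the norm of the orthogonal projection $\textbf{P}_J$ viewed as an operator from $L_\infty(\mathbb{S}_n)$ (and from $C(\mathbb{S}_n)$) onto $\mathcal{P}_J(\ell_2^n)$, using the explicit integral kernel formula~\eqref{integral}. Writing $K(z,\xi) := \sum_{k=0}^m c_k(n)\,[z,\xi]_{J,k}$, we have $\textbf{P}_J f(z) = \int_{\mathbb{S}_n} f(\xi)\,K(z,\xi)\,d\sigma(\xi)$, so $\textbf{P}_J$ is an integral operator with kernel $K$. The standard estimate for such operators gives the upper bound
\[
\big\|\textbf{P}_J\colon L_\infty(\mathbb{S}_n) \to \mathcal{P}_J(\ell_2^n)\big\| \,\le\, \sup_{z \in \mathbb{S}_n} \int_{\mathbb{S}_n} |K(z,\xi)|\,d\sigma(\xi)\,,
\]
since $|\textbf{P}_J f(z)| \le \|f\|_\infty \int_{\mathbb{S}_n} |K(z,\xi)|\,d\sigma(\xi)$ for every $z$, and the supremum over $z \in \overline{B}_{\ell_2^n}$ of $|\textbf{P}_J f(z)|$ is attained on $\mathbb{S}_n$ because $\textbf{P}_J f$ is a polynomial (using the remark on admissible sets from the Polynomials subsection). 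Note also that for $z \in \mathbb{S}_n$ the $k=0$ term $c_0(n)[z,\xi]_{J,0}$ is a constant (equal to $1$ if $0 \in J$ and $0$ otherwise) which contributes to the kernel; but the display in the statement starts the sum at $k=1$, so I should check whether $0 \in J$ is being implicitly excluded or whether the $k=0$ term must be carried — in fact, one simply keeps whichever terms are present, and the sum $\sum_{k=1}^m$ in the statement tacitly assumes $0 \notin J$ (or one absorbs the constant). I would state the formula carrying $\sum_{k=0}^m$ to be safe and match the paper's convention.

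Next I would prove the matching lower bound, which forces the inequality to be an equality and also shows the $C(\mathbb{S}_n)$-norm is no smaller. Fix $z_0 \in \mathbb{S}_n$ achieving (or nearly achieving) the supremum. The goal is to produce $f \in C(\mathbb{S}_n)$ with $\|f\|_\infty \le 1$ and $\textbf{P}_J f(z_0)$ close to $\int_{\mathbb{S}_n} |K(z_0,\xi)|\,d\sigma(\xi)$. The natural choice is $f(\xi) = \overline{K(z_0,\xi)}/|K(z_0,\xi)|$ (the "unimodular phase" of $\bar{K}$), which is in $L_\infty$ with $\|f\|_\infty = 1$ and gives $\textbf{P}_J f(z_0) = \int |K(z_0,\xi)|\,d\sigma(\xi)$ exactly, establishing equality for the $L_\infty$-norm. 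For the $C(\mathbb{S}_n)$-norm one approximates this unimodular function by continuous unimodular (or norm-$\le 1$) functions in $L_1(\sigma)$: since $\xi \mapsto K(z_0,\xi)$ is a (conjugate-analytic) polynomial, its zero set has $\sigma$-measure zero, so $f$ is continuous off a null set and can be approximated in $L_1(\sigma)$ by continuous functions of sup-norm $\le 1$ (e.g. truncate near the zero set and use density of $C$ in $L_1$, then compose with the retraction $w \mapsto w/\max(1,|w|)$). Passing to the limit, $\textbf{P}_J f_j(z_0) \to \int |K(z_0,\xi)|\,d\sigma(\xi)$, so the $C$-norm also equals the right-hand side. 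Sandwiching between the two equal quantities gives the chain of equalities in the Proposition. The final "in particular" statement is then immediate from the inequality $\boldsymbol{\lambda}(\mathcal{P}_J(\ell_2^n)) \le \|\textbf{P}_J \colon L_\infty(\mathbb{S}_n) \to \mathcal{P}_J(\ell_2^n)\|$ recorded just before the Proposition.

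The main obstacle I anticipate is the $C(\mathbb{S}_n)$ part of the lower bound — ensuring that the optimal $L_\infty$ test function (the discontinuous unimodular phase) can be replaced by genuinely continuous functions of norm $\le 1$ without losing in the limit. This is where one needs that $K(z_0,\cdot)$ vanishes only on a $\sigma$-null set (clear, since it is a nonzero real-analytic function on the connected real-analytic manifold $\mathbb{S}_n$, being a conjugate polynomial restricted to the sphere — and it is nonzero as long as $J \ne \emptyset$), plus a routine mollification/truncation argument together with the continuity of $f \mapsto \textbf{P}_J f(z_0)$ as a functional on $L_1(\sigma)$ (which holds because $\xi \mapsto K(z_0,\xi)$ is bounded). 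Everything else — the kernel representation~\eqref{integral}, the reduction of the sup over the ball to the sup over the sphere for polynomials, and the elementary integral-operator norm bound — is either already established in the excerpt or entirely standard, so I would present those steps briskly and devote the bulk of the write-up to the continuous-approximation step.
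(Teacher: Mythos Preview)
Your approach is essentially the same as the paper's: write $\textbf{P}_J$ as an integral operator with kernel $g(z,\xi)=\sum_k c_k(n)[z,\xi]_{J,k}$, get the upper bound trivially, and for the lower bound test against the unimodular phase $f_z(\xi)=\operatorname{sign} g(z,\xi)$. The paper does exactly this and in fact invokes the same discontinuous $f_z$ even for the $C(\mathbb S_n)$-norm, without spelling out the continuous approximation step you plan to include; so your write-up is, if anything, more careful on the one point you flagged as the main obstacle. Your observation about the index range $k=0$ versus $k=1$ is also valid: the integral formula~\eqref{integral} carries $k=0$, and the display in the statement silently drops it.
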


\begin{proof}
Consider the continuous function  $g\colon \mathbb{S}_n \times \mathbb{S}_n \to \mathbb{C}$
given by
\[
g(z,\xi) : = \sum_{k=0}^{m} c_k(n) [z, \xi ]_{J,k}, \quad\, z, \xi \in \mathbb{C}^n\,.
\]
From \eqref{integral}  we deduce that
\[
\big\|\textbf{P}_J\colon  L_\infty(\mathbb{S}_n)  \to \mathcal{P}_{J} (\ell_2^n)\big\|
\leq \sup_{z \in \mathbb{S}_n} \int_{\mathbb{S}_n} |g(z,\xi)|\,d\sigma(\xi)\,.
\]
To see that this estimate is in fact an equality, we for every $z\in \mathbb{S}_n$ take $f_z\in L_\infty(\mathbb{S}_n)$ given by
\[
f_z(\xi):= \sign g(z,\xi)\,.
\]
Note that $\|f_z\|_{\infty}= 1$. In the case of $C(\mathbb{S}_n)$ instead of $L_\infty(\mathbb{S}_n)$, we also have
\begin{align*}
\big\|\textbf{P}_J\colon  C(\mathbb{S}_n)  \to \mathcal{P}_{J} (\ell_2^n)\big\| & =
\sup_{\|f\|_\infty \leq 1}\sup_{z \in \mathbb{S}_n} \Big|\int_{\mathbb{S}_n} f(\xi) g(z, \xi)\,d\sigma(\xi)\Big| \\
& \ge \sup_{z \in \mathbb{S}_n} \Big|\int_{\mathbb{S}_n}  f_z(\xi)g(z, \xi)\,d\sigma(\xi)\Big|
=  \sup_{z \in \mathbb{S}_n}  \int_{\mathbb{S}_n}  |g(z, \xi)|\,d\sigma(\xi)\,. \qedhere
\end{align*}
\end{proof}

\smallskip

\section{Integral formula} \label{Hpq-C^n}

We consider the natural representation of the unitary group $\mathcal{U}_n$ in $L_\infty(\mathbb{S}_n)$ (and so also
in $C(\mathbb{S}_n)$) defined by
\[
T_{U}f : = f\circ U, \quad\, (U, f) \in \mathcal{U}_n \times L_\infty(\mathbb{S}_n)\,.
\]
Note that all these operators $T_U$ define isometries on $L_\infty(\mathbb{S}_n)$ (resp. $C(\mathbb{S}_n)$). In what
follows, we say that an operator $R$ on $L_\infty(\mathbb{S}_n)$ (resp. $C(\mathbb{S}_n)$) commutes with the unitary
group $\mathcal{U}_n$ whenever  $R$ commutes with $T_U$, for all $U \in \mathcal{U}_n$. A~subset $X$ of
$L_\infty(\mathbb{S}_n)$ (resp. $C(\mathbb{S}_n)$) is said to $\mathcal{U}_n$-invariant whenever
$T_{U}f =  f\circ U \in X$ for all $f \in X$ and $U \in \mathcal{U}_n$.

The  main result of this section  shows that for each finite index set $J \subset \N_0^n$ for which
$\mathcal{P}_{J} (\ell_2^n)$ is $\mathcal{U}_n$-invariant, the  orthogonal projection $\textbf{P}_{J}$ in fact is
the minimal projection from $L_\infty(\mathbb{S}_n)$ (resp. $C(\mathbb{S}_n)$) onto $\mathcal{P}_{J}(\ell_2^n)$.

\begin{theorem}
\label{polleqm}
Let $J \subset \N_0^n$  be a finite  index set such that $\mathcal{P}_{J} (\ell_2^n)$ is $\mathcal{U}_n$-invariant.
 Then
\[
\boldsymbol{\lambda}\big(\mathcal{P}_{J}(\ell_2^n), C(\mathbb{S}_n)\big) = \big\|\textbf{P}_{J}:C(\mathbb{S}_n) \to \mathcal{P}_{J}(\ell_2^n)\big\| \,.
\]
In particular,
\[
\boldsymbol{\lambda}\big( \mathcal{P}_{J} (\ell_2^n)\big)
= \sup_{z \in \mathbb{S}_n}
\int_{\mathbb{S}_n} \Big| \sum_{k=1}^{m} c_k(n)\,\, [z, \xi ]_{J,k} \Big|\,d\sigma(\xi)\,,
\vspace{2mm}
\]
where $m= \max_{\alpha \in J}|\alpha|$, and $c_k(n)$ and $[z, \xi ]_{J,k}$ are defined as for   the formula \eqref{integral}.
\end{theorem}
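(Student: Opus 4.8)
The plan is to derive this from Rudin's averaging theorem (Theorem~\ref{rudy}) applied with $X=C(\mathbb{S}_n)$, the subspace $Y=\mathcal{P}_J(\ell_2^n)$ sitting inside $C(\mathbb{S}_n)$ via restriction of polynomials to the sphere (an isometric embedding, since $\sup_{B_{\ell_2^n}}|P|=\sup_{\mathbb{S}_n}|P|$), the compact group $G=\mathcal{U}_n$ with its Haar probability measure, and the action $T_Uf=f\circ U$. First I would record that each $T_U$ is an isometry of $C(\mathbb{S}_n)$, so $\mathcal{U}_n$ acts isometrically, and that the orbit maps $U\mapsto f\circ U$ are norm-continuous (use uniform continuity of $f$ on the compact sphere together with $\sup_{\xi\in\mathbb{S}_n}\|U'\xi-U\xi\|\to 0$ as $U'\to U$); moreover $Y$ is $\mathcal{U}_n$-invariant by the standing hypothesis on $J$, and $\mathbf{P}_J$ is a bounded projection of $C(\mathbb{S}_n)$ onto $Y$ by \eqref{integral} and Proposition~\ref{formel} (here finiteness of $J$ is used). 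So Theorem~\ref{rudy} applies and, for every projection $\mathbf{Q}$ of $C(\mathbb{S}_n)$ onto $Y$, produces a projection commuting with the $\mathcal{U}_n$-action of norm at most $\|\mathbf{Q}\|$.

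The substantive step is the \emph{unique commuting projection} hypothesis of Theorem~\ref{rudy}: I must show that there is exactly one projection of $C(\mathbb{S}_n)$ onto $Y$ commuting with all $T_U$, and that it is $\mathbf{P}_J$. Here I would invoke the classical $\mathcal{U}_n$-harmonic analysis on $\mathbb{S}_n$ (see Rudin~\cite{rudin1980}): the space $\mathcal{D}$ of restrictions to $\mathbb{S}_n$ of polynomials in $z,\bar z$ is a dense $\mathcal{U}_n$-invariant subalgebra of $C(\mathbb{S}_n)$ (Stone--Weierstrass) and decomposes as the algebraic direct sum $\bigoplus_{p,q\ge0}H(p,q)$ of the spaces of harmonic polynomials bihomogeneous of bidegree $(p,q)$, where each $H(p,q)$ is an irreducible $\mathcal{U}_n$-module and the modules $H(p,q)$ are pairwise non-isomorphic; note $H(k,0)=\mathcal{P}_k(\mathbb{C}^n)$. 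Since $\mathcal{P}_J(\ell_2^n)$ is $\mathcal{U}_n$-invariant, so is each of its homogeneous parts $\mathcal{P}_{J_k}(\ell_2^n)$, which is therefore a $\mathcal{U}_n$-submodule of the irreducible module $H(k,0)$, hence $\{0\}$ or all of it; writing $K=\{k:J_k\ne\emptyset\}$ we get $Y=\bigoplus_{k\in K}H(k,0)\subset\mathcal{D}$. Now if $\mathbf{P}$ is any projection of $C(\mathbb{S}_n)$ onto $Y$ commuting with the action, then $\mathbf{P}(\mathcal{D})\subset Y\subset\mathcal{D}$ and $\mathbf{P}|_{\mathcal{D}}$ is a $\mathcal{U}_n$-equivariant endomorphism of $\bigoplus_{p,q}H(p,q)$; by Schur's lemma it preserves each summand and acts on it by a scalar, the scalar being $0$ or $1$ because $\mathbf{P}^2=\mathbf{P}$, equal to $0$ off $\{H(k,0):k\in K\}$ because $\mathrm{range}\,\mathbf{P}\subset Y$, and equal to $1$ on each $H(k,0)$, $k\in K$, because $\mathbf{P}|_Y=\mathrm{id}$. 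Thus $\mathbf{P}$ is determined on $\mathcal{D}$, hence on $C(\mathbb{S}_n)$ by density and continuity. The orthogonal projection $\mathbf{P}_J$ meets all these requirements: it is the $L_2(\mathbb{S}_n)$-orthogonal projection onto the closed $T_U$-invariant subspace $\mathcal{P}_J(\ell_2^n)$, and each $T_U$ is unitary on $L_2(\mathbb{S}_n)$ (as $\sigma$ is $\mathcal{U}_n$-invariant), so $\mathbf{P}_J$ commutes with every $T_U$; hence $\mathbf{P}_J$ is the unique commuting projection.

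Putting the pieces together, Theorem~\ref{rudy} yields $\boldsymbol{\lambda}\big(\mathcal{P}_J(\ell_2^n),C(\mathbb{S}_n)\big)=\|\mathbf{P}_J:C(\mathbb{S}_n)\to\mathcal{P}_J(\ell_2^n)\|$, which is the first claimed identity, and Proposition~\ref{formel} rewrites the right-hand side as the stated supremum of integrals. Finally, for the absolute projection constant I would use that $L_\infty(\mathbb{S}_n)$ is $1$-injective and that $P\mapsto P|_{\mathbb{S}_n}$ embeds $\mathcal{P}_J(\ell_2^n)$ isometrically into it, whence $\boldsymbol{\lambda}\big(\mathcal{P}_J(\ell_2^n)\big)=\boldsymbol{\lambda}\big(\mathcal{P}_J(\ell_2^n),L_\infty(\mathbb{S}_n)\big)\le\|\mathbf{P}_J:L_\infty(\mathbb{S}_n)\to\mathcal{P}_J(\ell_2^n)\|$; by Proposition~\ref{formel} this last norm equals $\|\mathbf{P}_J:C(\mathbb{S}_n)\to\mathcal{P}_J(\ell_2^n)\|=\boldsymbol{\lambda}\big(\mathcal{P}_J(\ell_2^n),C(\mathbb{S}_n)\big)\le\boldsymbol{\lambda}\big(\mathcal{P}_J(\ell_2^n)\big)$, forcing equality everywhere. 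I expect the main obstacle to be the uniqueness step — in particular, assembling the irreducibility and mutual inequivalence of the modules $H(p,q)$ and the density of $\mathcal{D}$ precisely enough to run the Schur-lemma argument at the level of $C(\mathbb{S}_n)$ rather than $L_2(\mathbb{S}_n)$; verifying Rudin's remaining hypotheses and the final bookkeeping are routine.
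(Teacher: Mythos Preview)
Your proposal is correct and follows essentially the same route as the paper: apply Rudin's averaging theorem (Theorem~\ref{rudy}) with $G=\mathcal{U}_n$ acting on $C(\mathbb{S}_n)$, establish that $\mathbf{P}_J$ is the unique projection onto $\mathcal{P}_J(\ell_2^n)$ commuting with the action, and then read off the integral formula from Proposition~\ref{formel}. The only real difference is packaging: for the uniqueness step the paper quotes the Nagel--Rudin result (Lemma~\ref{RudB}) as a black box stating that any commuting operator acts as a scalar on each $\mathfrak{H}_{(p,q)}(\mathbb{S}_n)$, whereas you unpack this via irreducibility and pairwise inequivalence of the $H(p,q)$ together with Schur's lemma---but that is precisely what underlies Lemma~\ref{RudB}. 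Your treatment of the absolute projection constant (sandwiching via the $1$-injectivity of $L_\infty(\mathbb{S}_n)$ and the equality of the $C(\mathbb{S}_n)$- and $L_\infty(\mathbb{S}_n)$-norms of $\mathbf{P}_J$ from Proposition~\ref{formel}) is slightly more explicit than the paper's, but the logic is identical.
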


The structure of the proof is  the same as the one we followed  in the previous section: In a first step we prove that if $\textbf{Q}$
is a projection on $C(\mathbb{S}_n)$ onto $\mathcal{P}_{J}(\ell_2^n)$ that commutes with the unitary group $\mathcal{U}_n$, then
$\textbf{P}_{J} =\textbf{Q}$, and then in a second step, we apply Rudin's technique from  Theorem \ref{rudy}.

The first step towards the proof of Theorem~\ref{polleqm} is isolated in the following proposition.

\begin{proposition} \label{unique}
Let $J \subset \N_0^n $ be a  finite index set, and  $\textbf{Q}$  a~projection on $C(\mathbb{S}_n)$ onto $\mathcal{P}_{J}(\ell_2^n)$,
which commutes with the unitary group $\mathcal{U}_n$. Then $\textbf{P}_{J} = \textbf{Q}$ \, on\,  $C(\mathbb{S}_n)$.
\end{proposition}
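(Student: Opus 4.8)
The plan is to mimic the uniqueness argument from the proof of Theorem~\ref{C(G)proj}, replacing the Peter--Weyl decomposition of $L^2(G)$ by the orthogonal decomposition of $L_2(\mathbb{S}_n)$ into $\mathcal{U}_n$-irreducibles. First I would record the easy framing observations. Since $\textbf{Q}$ commutes with every $T_U$, $U\in\mathcal{U}_n$, its range $\mathcal{P}_{J}(\ell_2^n)$ is $\mathcal{U}_n$-invariant: $T_U\textbf{Q}f=\textbf{Q}T_Uf\in\mathcal{P}_{J}(\ell_2^n)$. As $\mathcal{U}_n$ preserves the degree of a homogeneous polynomial, the degree-$k$ component $\mathcal{P}_{J_k}(\ell_2^n)\subseteq\mathcal{P}_k(\ell_2^n)$ is itself $\mathcal{U}_n$-invariant for each $k$. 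Because $\mathcal{P}_k(\ell_2^n)$ is an irreducible $\mathcal{U}_n$-module (classical, see \cite{rudin1980}), $\mathcal{P}_{J_k}(\ell_2^n)$ is either $\{0\}$ or all of $\mathcal{P}_k(\ell_2^n)$; writing $D:=\{|\alpha|:\alpha\in J\}$, this gives $J=\bigcup_{k\in D}\Lambda(k,n)$ and $\mathcal{P}_{J}(\ell_2^n)=\bigoplus_{k\in D}\mathcal{P}_k(\ell_2^n)$. I would also note that $\textbf{P}_J$ is $\mathcal{U}_n$-equivariant: the measure $\sigma$ is $\mathcal{U}_n$-invariant, so $\mathcal{U}_n$ acts unitarily on $L_2(\mathbb{S}_n)$, and $\textbf{P}_J$ is the orthogonal projection onto the invariant subspace $\mathcal{P}_{J}^2(\mathbb{S}_n)$.

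The core step uses the harmonic decomposition $L_2(\mathbb{S}_n)=\bigoplus_{p,q\ge0}\mathcal{H}(p,q)$ into pairwise inequivalent irreducible $\mathcal{U}_n$-modules, where $\mathcal{H}(p,q)$ denotes the restriction to $\mathbb{S}_n$ of the harmonic polynomials on $\mathbb{C}^n$ that are bihomogeneous of bidegree $(p,q)$ in $(z,\bar z)$; in particular $\mathcal{H}(k,0)=\mathcal{P}_k(\ell_2^n)$, since holomorphic homogeneous polynomials are harmonic (see \cite{rudin1980}). Fix $(p,q)$. As $\mathcal{H}(p,q)$ is irreducible and $\textbf{Q}$ is equivariant, Schur's lemma applied to $\textbf{Q}\restrict{\mathcal{H}(p,q)}\colon\mathcal{H}(p,q)\to\bigoplus_{k\in D}\mathcal{H}(k,0)$ forces this map to vanish unless $q=0$ and $p\in D$, in which case it equals $\lambda_p\,\mathrm{id}_{\mathcal{H}(p,0)}$ for some scalar $\lambda_p$; and since $\textbf{Q}$ is a projection fixing $\mathcal{P}_{J}(\ell_2^n)\supseteq\mathcal{H}(p,0)$, necessarily $\lambda_p=1$. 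Hence on each summand $\mathcal{H}(p,q)$ the operator $\textbf{Q}$ agrees with $\textbf{P}_J$, which by the same description is the identity on $\mathcal{H}(k,0)$ for $k\in D$ and zero on every other summand.

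Finally I would pass from $\bigoplus_{p,q}\mathcal{H}(p,q)$ to all of $C(\mathbb{S}_n)$. The span of the $\mathcal{H}(p,q)$ equals the algebra of restrictions to $\mathbb{S}_n$ of polynomials in $z,\bar z$: since $|z|^2\equiv1$ on $\mathbb{S}_n$, every bihomogeneous polynomial restricts to a finite sum of restricted harmonic polynomials. This algebra contains the constants, separates points, and is closed under conjugation, so it is dense in $C(\mathbb{S}_n)$ by Stone--Weierstrass. Both $\textbf{Q}$ (bounded by hypothesis) and $\textbf{P}_J$ (bounded by Proposition~\ref{formel}, being given by the continuous integral kernel of \eqref{integral}) are continuous on $C(\mathbb{S}_n)$, so agreement on this dense subspace yields $\textbf{Q}=\textbf{P}_J$ on all of $C(\mathbb{S}_n)$.

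The main obstacle I anticipate is not the Schur-lemma-and-density bookkeeping — that is essentially parallel to Theorem~\ref{C(G)proj} — but rather setting up (or correctly citing, presumably from \cite{rudin1980} and the harmonic-polynomial material developed for Chapter~\ref{Trace class operators}) the representation-theoretic facts: the irreducibility of each $\mathcal{P}_k(\ell_2^n)$ and of each $\mathcal{H}(p,q)$ under $\mathcal{U}_n$, their pairwise inequivalence, and the orthogonal decomposition $L_2(\mathbb{S}_n)=\bigoplus_{p,q}\mathcal{H}(p,q)$.
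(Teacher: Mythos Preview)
Your proposal is correct and follows essentially the same route as the paper. The paper packages your Schur-lemma step as a citation of the Nagel--Rudin theorem (stated there as Lemma~\ref{RudB}: any bounded operator on $C(\mathbb{S}_n)$ commuting with $\mathcal{U}_n$ acts as a scalar $c(p,q)$ on each $\mathfrak H_{(p,q)}(\mathbb{S}_n)$), and your Stone--Weierstrass density as Lemma~\ref{RudA}; from these two black boxes the paper's proof is three lines, exactly your argument that the scalars are forced to be $1$ on $\mathfrak H_{(k,0)}$ for $k\in D$ and $0$ elsewhere, followed by density.
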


The proof  is   based on two results (see Lemma~\ref{RudA} and Lemma~\ref{RudB}) carried out carefully in Rudin's monograph \cite{rudin1980};
for the sake of completeness we sketch some  essential details.

We write $\mathfrak P (\mathbb{C}^n)$  for all polynomials $f\colon \mathbb{C}^n \to \mathbb{C}$ of the form
 \begin{align*}
f\left(z\right)=\sum_{(\alpha,\beta) \in I} c_{(\alpha,\beta)}(f)\,\,z^\alpha\overline{z}^\beta,
\end{align*}
where  $I$  is a finite subset of pairs $(\alpha, \beta) \in\mathbb{N}_0^n \times\mathbb{N}_0^n$, and
 $\big(c_{(\alpha,\beta)}(f)\big)_{(\alpha, \beta) \in I}$ are complex coefficients. Observe that for every $(\alpha,\beta) \in I$
\[
c_{(\alpha,\beta)}(f) =  \frac{1}{\alpha! \beta!} \frac{\partial^kf}{\partial z^\alpha \partial\overline{z}^\beta}(0)\,,
\]
which in particular shows that the monomial coefficients $c_{(\alpha,\beta)}(f)$ of each $f \in \mathfrak P (\mathbb{C}^n)$
are unique. A~polynomial $f \in  \mathfrak P (\mathbb{C}^n)$ is said to be harmonic, whenever
\[
\Delta f  := \sum_{i}\frac{\partial^2 f}{\partial z_{i}\partial\overline{z_{i}}}
=
\frac{1}{4}\sum_{i}\,\,\Big(\frac{\partial^2 f}{\partial x_{i}^2  } + \frac{\partial^2 f}{\partial y_{i}^2  }\Big) =0\,,
\]
and we define $\mathfrak{H} \big( \mathbb{C}^n\big)$
to be the subspace of all harmonic polynomials in $\mathfrak P (\mathbb{C}^n)$.

By $\mathfrak P (\mathbb{S}_n)$ we denote the linear space of all restrictions $f|_{\mathbb{S}_n}$ of polynomials$f \in \mathfrak P (\mathbb{S}_n)$ to the unitary  group. A~restrictions of a~harmonic polynomial to $\mathbb{S}_n$ is called a~spherical harmonic, and together with the supremum norm on $\mathbb{S}_n$ the linear space spherical harmonics forms the Banach space  $\mathfrak H (\mathbb{S}_n)$. Moreover, we need  to introduce a~scale of subspaces of
$\mathfrak H (\mathbb{S}_n)$. Given $p,q\in\mathbb N_0$, let
\[
\mathfrak H_{(p,q)}(\mathbb{S}_n)\subset \mathfrak{H}(\mathbb{S}_n)
\]
denote the space of all spherical harmonics which  are $p$-homogeneous in $z=(z_{i})$ and $q$-homo\-geneous in $\overline{z}
=(\overline{z_{i}}),$ i.e., all $f \in  \mathfrak{H}(\mathbb{S}_n)$ such that $c_{(\alpha,\beta)}(f) \neq 0$
only if $|\alpha|= p$ and $|\beta|= q$.  It is clear that
\begin{equation}\label{denso3}
\mathfrak H (\mathbb{S}_n)=\text{span}\big\{\mathfrak H_{(p,q)}(\mathbb{S}_n) \,:\, p,q\in\mathbb N_0\big\}\,,
\end{equation} and  for each $k\ge 1$ the mapping
\[
\mathcal P_k(\ell_2^n) =  \mathfrak H_{(k,0)}(\mathbb{S}_n)\,,\quad  f \mapsto f|_{\mathbb{S}_n}\,,
\]
is an isometric linear bijection.We need the following two important results from \cite{rudin1980}. The first one is taken from  \cite[Corollary of Theorem 12.1.3]{rudin1980} and shows that the collection of all spherical harmonics is dense in $C(\mathbb{S}_n)$.

\begin{lemma}\label{RudA}
$\mathfrak H (\mathbb{S}_n)$ is dense in $C(\mathbb{S}_n)$, and hence also in $L^2(\mathbb{S}_n).$
\end{lemma}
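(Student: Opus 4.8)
\smallskip

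The plan is to reduce the statement to the complex Stone--Weierstrass theorem, once we know that on $\mathbb{S}_n$ every polynomial in $z$ and $\overline z$ coincides with a sum of spherical harmonics; density in $L^2(\mathbb{S}_n)$ is then immediate.

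First I would observe that the space $\mathfrak P(\mathbb{S}_n) := \big\{ f|_{\mathbb{S}_n} : f \in \mathfrak P(\mathbb{C}^n) \big\}$ is a subalgebra of $C(\mathbb{S}_n)$ that contains the constants, separates the points of $\mathbb{S}_n$ (already the coordinate functionals $\xi \mapsto \xi_i$ do), and is invariant under complex conjugation (conjugating $z^\alpha \overline z^\beta$ produces $z^\beta \overline z^\alpha$). Hence the complex Stone--Weierstrass theorem yields that $\mathfrak P(\mathbb{S}_n)$ is dense in $C(\mathbb{S}_n)$.

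The substantive step is to check that $\mathfrak P(\mathbb{S}_n) = \mathfrak H(\mathbb{S}_n)$ as subspaces of $C(\mathbb{S}_n)$; the inclusion $\supseteq$ is trivial, so I only need $\subseteq$, and by linearity it suffices to treat a $(p,q)$-homogeneous polynomial $f \in \mathfrak P(\mathbb{C}^n)$. On the finite-dimensional space of $(p,q)$-homogeneous polynomials in $z, \overline z$ I would introduce the Fischer inner product, with respect to which multiplication by $\sum_i z_i \overline{z_i}$ is a positive multiple of the formal adjoint of the complex Laplacian $\Delta = \sum_i \partial_{z_i} \partial_{\overline{z_i}}$. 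This makes that space the orthogonal sum of $\ker \Delta$ --- the $(p,q)$-homogeneous part of $\mathfrak H(\mathbb{C}^n)$ --- and the range of multiplication by $\sum_i z_i \overline{z_i}$, whose elements are $(p-1,q-1)$-homogeneous. Iterating the splitting one gets $f = \sum_{j \ge 0} \big( \sum_i z_i \overline{z_i} \big)^{j} H_j$ with each $H_j$ harmonic of bidegree $(p-j,q-j)$; restricting to $\mathbb{S}_n$, where $\sum_i |\xi_i|^2 = 1$, gives $f|_{\mathbb{S}_n} = \sum_j H_j|_{\mathbb{S}_n} \in \mathfrak H(\mathbb{S}_n)$ by \eqref{denso3}. (Equivalently, one may simply invoke \cite[Corollary of Theorem~12.1.3]{rudin1980} at this point.)

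Combining the two steps gives that $\mathfrak H(\mathbb{S}_n)$ is dense in $C(\mathbb{S}_n)$. For the last assertion, since $\sigma$ is a finite Borel measure on the compact metric space $\mathbb{S}_n$, the space $C(\mathbb{S}_n)$ is dense in $L^2(\mathbb{S}_n)$ in the $L^2$-norm and $\|g\|_{L^2} \le \|g\|_\infty$ for $g \in C(\mathbb{S}_n)$, so density of $\mathfrak H(\mathbb{S}_n)$ in $L^2(\mathbb{S}_n)$ follows. The only genuinely nonroutine ingredient is the Fischer-inner-product identity exhibiting $\Delta$ and multiplication by $\sum_i z_i \overline{z_i}$ as mutual adjoints, which is exactly what produces the orthogonal decomposition and hence the spherical-harmonic expansion; everything else is a standard approximation argument.
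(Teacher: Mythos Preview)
Your proposal is correct and is precisely the classical argument from \cite[Theorem~12.1.3 and its Corollary]{rudin1980}, which is all the paper invokes here (no proof of Lemma~\ref{RudA} is given in the paper itself). In fact, the paper later reproduces exactly your strategy---Stone--Weierstrass plus the Fischer-inner-product decomposition $\mathfrak P_k = \mathfrak H_k \oplus \mathbf t\cdot \mathfrak P_{k-2}$---when it proves the matrix analogue in Proposition~\ref{harmlemA} and Theorem~\ref{teo: harmonic dense in C(U)}, so your write-up aligns with both the cited source and the paper's own methodology.
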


The second tool is a special case of a more general result of Nagel and Rudin -- presented in detail in \cite[Theorem 12.3.6 and Theorem 12.3.8]{rudin1980}.

\begin{lemma} \label{RudB}
Suppose that $T\colon C(\mathbb{S}_n) \to C(\mathbb{S}_n)$ is a~continuous linear operator, which commutes with the unitary group $\mathcal{U}_n$. Then for every choice of $p,q \in \mathbb{N}_0$ there exist $c(p,q) \in \mathbb{C}$ such that $Tf = c(p,q) f$ for all $f \in \mathfrak H_{(p,q)} (\mathbb{S}_n)$.
\end{lemma}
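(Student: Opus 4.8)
The plan is to deduce the statement from the spherical--harmonic decomposition of $L^2(\mathbb{S}_n)$ combined with Schur's lemma for the compact group $\mathcal{U}_n$. Concretely, I would take from Rudin's monograph \cite{rudin1980} the two representation--theoretic facts that underlie \eqref{denso3} and Lemma~\ref{RudA}: each $\mathfrak{H}_{(p,q)}(\mathbb{S}_n)$ is invariant under the action $T_Uf=f\circ U$ and is \emph{irreducible} as a $\mathcal{U}_n$-module, and for $(p,q)\neq(p',q')$ the modules $\mathfrak{H}_{(p,q)}(\mathbb{S}_n)$ and $\mathfrak{H}_{(p',q')}(\mathbb{S}_n)$ are pairwise orthogonal and pairwise \emph{inequivalent}. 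Together with the density of $\mathfrak{H}(\mathbb{S}_n)$ in $L^2(\mathbb{S}_n)$ this yields the multiplicity--free decomposition $L^2(\mathbb{S}_n)=\bigoplus_{p,q\ge 0}\mathfrak{H}_{(p,q)}(\mathbb{S}_n)$, and hence the consequence I really need: every finite--dimensional $\mathcal{U}_n$-invariant subspace of $C(\mathbb{S}_n)$ is a finite orthogonal sum of some of the spaces $\mathfrak{H}_{(p',q')}(\mathbb{S}_n)$ (decompose it into irreducibles and compare each summand with the orthogonal projections onto the $\mathfrak{H}_{(p',q')}(\mathbb{S}_n)$; by Schur and inequivalence, each summand coincides with one of these spaces).

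With these inputs in hand, fix $p,q$ with $\mathfrak{H}_{(p,q)}(\mathbb{S}_n)\neq\{0\}$ (otherwise there is nothing to prove) and set $W:=T\big(\mathfrak{H}_{(p,q)}(\mathbb{S}_n)\big)$. Since $T$ commutes with every $T_U$ and $\mathfrak{H}_{(p,q)}(\mathbb{S}_n)$ is $\mathcal{U}_n$-invariant, $W$ is a finite--dimensional $\mathcal{U}_n$-invariant subspace of $C(\mathbb{S}_n)$, so by the previous paragraph $W=\bigoplus_{(p',q')\in S}\mathfrak{H}_{(p',q')}(\mathbb{S}_n)$ for some finite set $S$. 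The map $T\colon\mathfrak{H}_{(p,q)}(\mathbb{S}_n)\to W$ is $\mathcal{U}_n$-equivariant, so composing it with the projection onto each summand $\mathfrak{H}_{(p',q')}(\mathbb{S}_n)$ gives an equivariant map between irreducible $\mathcal{U}_n$-modules, which by Schur's lemma vanishes unless $\mathfrak{H}_{(p',q')}(\mathbb{S}_n)\simeq\mathfrak{H}_{(p,q)}(\mathbb{S}_n)$, i.e. unless $(p',q')=(p,q)$. Therefore $W\subseteq\mathfrak{H}_{(p,q)}(\mathbb{S}_n)$, so $T$ restricts to a $\mathcal{U}_n$-equivariant endomorphism of the irreducible module $\mathfrak{H}_{(p,q)}(\mathbb{S}_n)$, which---again by Schur---is a scalar multiple $c(p,q)\,\mathrm{id}$ of the identity; this is precisely the asserted conclusion.

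The heart of the matter, and where essentially all the work lies, is the input I am assuming, namely the irreducibility of the spherical--harmonic modules $\mathfrak{H}_{(p,q)}(\mathbb{S}_n)$ under $\mathcal{U}_n$ and their pairwise inequivalence (equivalently, the precise form of the Peter--Weyl decomposition of $L^2(\mathbb{S}_n)$ as a $\mathcal{U}_n$-module). Since this is developed carefully in \cite[Chapter~12]{rudin1980}, a write-up would only recall it and then run the short Schur--lemma argument above; the one step that deserves an explicit sentence is why the a~priori arbitrary finite--dimensional subspace $W=T(\mathfrak{H}_{(p,q)})$ of $C(\mathbb{S}_n)$ must lie inside $\operatorname{span}\{\mathfrak{H}_{(p',q')}(\mathbb{S}_n)\}$, which is forced by the density in $L^2(\mathbb{S}_n)$ (Lemma~\ref{RudA}) together with the multiplicity--one structure of the decomposition.
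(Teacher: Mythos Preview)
Your proposal is correct and is essentially the standard Schur--lemma argument underlying the Nagel--Rudin result that the paper cites; note that the paper does not give its own proof of this lemma but simply refers to \cite[Theorems~12.3.6 and~12.3.8]{rudin1980}, where the irreducibility and pairwise inequivalence of the $\mathfrak{H}_{(p,q)}(\mathbb{S}_n)$ (together with the resulting multiplicity--free decomposition) are established and then used exactly as you do. Your treatment of the one delicate point---why an arbitrary finite--dimensional invariant subspace of $C(\mathbb{S}_n)$ must equal a finite sum of the $\mathfrak{H}_{(p',q')}$ rather than merely be abstractly isomorphic to one---is handled correctly via the equivariance of the orthogonal projections and multiplicity one.
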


Finally, we present the proof of Proposition~\ref{unique} first, and then second the proof of
Theorem~\ref{polleqm}.

\begin{proof}[Proof of Proposition~$\ref{unique}$]
Indeed,  by Lemma~\ref{RudB}  for  each $p,q\ge0$ there is a~constant $c(p,q)$ such that $\textbf{Q}f=c(p,q)f$ for all each $f \in \mathfrak H_{(p,q)}(\mathbb{S}_n)$. The fact that $\textbf{Q}$ is a~projection implies that $c(p,q)=1$ if $(p,q)=(|\alpha|,0)$ for some $\alpha\in J$, and $c(p,q)=0$ otherwise. Now, by Lemma~\ref{RudA}, we conclude that
$\textbf{Q} = \textbf{P}_J$  on $C(\mathbb{S}_n)$.
\end{proof}

\begin{proof}[Proof of Theorem~$\ref{polleqm}$]
Let $\textbf{P}$ be any projection on $C(\mathbb{S}_n)$ onto $\mathcal{P}_{J}(\ell_2^n)$. We apply Theorem~\ref{rudy}
with $X=C(\mathbb{S}_n),$ $G=\mathcal{U}_n$ and $Y=\mathcal{P}_{J}(\ell_2^n)$ to see that
\begin{equation} \label{tag}
\textbf{Q}f = \int_{\mathcal{U}_n} T_{U} \textbf{P} T_{U^{-1}}(f)\,dU, \quad\, f\in C(\mathbb{S}_n)
\end{equation}
defines  a~projection on $C(\mathbb{S}_n)$ onto $\mathcal{P}_{J}(\ell_2^n)$, which commutes with  the action of $\mathcal{U}_n$
on $C(\mathbb{S}_n)$ (clearly  $dU$ denotes the normalized Haar measure on $\mathcal{U}_n$). But then $\textbf{Q} =\textbf{P}_{J}$
by Proposition~\ref{unique}. Now recall that all operators $T_U$ define isometries on $C(\mathbb{S}_n)$, thus Theorem~\ref{rudy}
also   yields  that $\|\textbf{P}_{J}\| = \|\textbf{Q}\| \leq  \|\textbf{P}\|$. Finally, the
integral formula we claim, follows from Proposition~\ref{formel}.
\end{proof}

We finish with a~slight reformulation of Theorem~\ref{polleqm}.

\begin{corollary}
Let $J \subset \N_0^n $ be a finite index set. Then $\mathcal{P}_{J} (\ell_2^n)$ is $\mathcal{U}_n$-invariant if and only if
$\textbf{P}_J$ commutes with  $\mathcal{U}_n$. Moreover, in this case
\[
\boldsymbol{\lambda}\big(\mathcal{P}_{J}(\ell_2^n), C(\mathbb{S}_n)\big)
= \big\|\textbf{P}_{J}\colon C(\mathbb{S}_n) \to \mathcal{P}_{J}(\ell_2^n)\big\| \,.
\]
\end{corollary}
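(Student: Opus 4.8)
The statement splits into a logical equivalence and a consequence, and the plan is to settle the equivalence first and then obtain the "moreover" part by simply quoting Theorem~\ref{polleqm}. Throughout I would work in the Hilbert space $L^2(\mathbb{S}_n)$, on which every $T_U$, $U \in \mathcal{U}_n$, acts as a \emph{unitary} operator: this is precisely the rotation invariance of $\sigma$ recorded in Section~\ref{Orthogonal projection} (so $\|T_Uf\|_2 = \|f\|_2$ and $T_U T_{U^{-1}} = \id$), together with the fact that $T_{U^{-1}} = T_U^{-1}$. Recall also that by definition $\textbf{P}_J$ is the orthogonal projection of $L^2(\mathbb{S}_n)$ onto the finite-dimensional subspace $\mathcal{P}_{J}^{2}(\mathbb{S}_n)$, which is the same linear space as $\mathcal{P}_{J}(\ell_2^n)$, only carrying a different norm.

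For the direction ``$\mathcal{P}_{J}(\ell_2^n)$ is $\mathcal{U}_n$-invariant $\Rightarrow$ $\textbf{P}_J$ commutes with $\mathcal{U}_n$'': assume $T_U\big(\mathcal{P}_{J}(\ell_2^n)\big) \subset \mathcal{P}_{J}(\ell_2^n)$ for all $U \in \mathcal{U}_n$. Applying this to $U$ and to $U^{-1}$ and using $T_{U^{-1}} = T_U^{-1}$ gives the equality $T_U\big(\mathcal{P}_{J}^{2}(\mathbb{S}_n)\big) = \mathcal{P}_{J}^{2}(\mathbb{S}_n)$. Hence, for each fixed $U$, the operator $T_U\,\textbf{P}_J\,T_U^{-1}$ is again an orthogonal projection (a conjugate of one by a unitary), and its range is $T_U\big(\mathcal{P}_{J}^{2}(\mathbb{S}_n)\big) = \mathcal{P}_{J}^{2}(\mathbb{S}_n)$; by uniqueness of the orthogonal projection onto a given closed subspace, $T_U\,\textbf{P}_J\,T_U^{-1} = \textbf{P}_J$, i.e.\ $\textbf{P}_J$ commutes with $\mathcal{U}_n$. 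The commutation on $C(\mathbb{S}_n)$ (or $L_\infty(\mathbb{S}_n)$) then follows by density of $C(\mathbb{S}_n)$ in $L^2(\mathbb{S}_n)$.

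For the converse ``$\textbf{P}_J$ commutes with $\mathcal{U}_n$ $\Rightarrow$ $\mathcal{P}_{J}(\ell_2^n)$ is $\mathcal{U}_n$-invariant'': let $f \in \mathcal{P}_{J}(\ell_2^n)$, so that $\textbf{P}_J f = f$ since $\textbf{P}_J$ fixes its range. Then for every $U \in \mathcal{U}_n$,
\[
\textbf{P}_J(T_U f) \;=\; T_U(\textbf{P}_J f) \;=\; T_U f \;\in\; \operatorname{ran}\textbf{P}_J \;=\; \mathcal{P}_{J}^{2}(\mathbb{S}_n)\,.
\]
Since a polynomial in $\mathcal{P}_{J}(\mathbb{C}^n)$ is determined by its restriction to the admissible set $\mathbb{S}_n$, this forces the polynomial $z \mapsto f(Uz)$ to have all its monomial coefficients supported on $J$, i.e.\ $T_U f \in \mathcal{P}_{J}(\ell_2^n)$; thus $\mathcal{P}_{J}(\ell_2^n)$ is $\mathcal{U}_n$-invariant.

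Finally, for the ``moreover'' part: under either of the two (now equivalent) hypotheses the space $\mathcal{P}_{J}(\ell_2^n)$ is $\mathcal{U}_n$-invariant, so Theorem~\ref{polleqm} applies verbatim and yields $\boldsymbol{\lambda}\big(\mathcal{P}_{J}(\ell_2^n), C(\mathbb{S}_n)\big) = \|\textbf{P}_{J}\colon C(\mathbb{S}_n) \to \mathcal{P}_{J}(\ell_2^n)\|$. I do not expect a genuine obstacle here; the only points that need a little care are (i) recording explicitly that each $T_U$ is an honest unitary on $L^2(\mathbb{S}_n)$, so that the ``conjugate of an orthogonal projection is an orthogonal projection'' argument is legitimate, and (ii) the (harmless) identification of the Banach space $\mathcal{P}_{J}(\ell_2^n)$ with the linear subspace $\mathcal{P}_{J}^{2}(\mathbb{S}_n)$ of $L^2(\mathbb{S}_n)$, which is where injectivity of restriction to $\mathbb{S}_n$ enters.
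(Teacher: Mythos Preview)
Your argument is correct, and for the converse direction and the ``moreover'' part it coincides with the paper's. The forward implication is where you diverge: the paper argues indirectly, starting from any projection on $C(\mathbb{S}_n)$ onto $\mathcal{P}_J(\ell_2^n)$, averaging it via Rudin's technique (equation~\eqref{tag}) to produce a projection $\mathbf{Q}$ that commutes with $\mathcal{U}_n$, and then invoking the uniqueness result (Proposition~\ref{unique}, which rests on the Nagel--Rudin theorem of Lemma~\ref{RudB}) to conclude $\mathbf{Q} = \mathbf{P}_J$. Your route is more elementary and self-contained: you exploit directly that $T_U$ is unitary on $L^2(\mathbb{S}_n)$, so $T_U\mathbf{P}_J T_U^{-1}$ is again the orthogonal projection onto $T_U(\mathcal{P}_J^2(\mathbb{S}_n)) = \mathcal{P}_J^2(\mathbb{S}_n)$, and uniqueness of orthogonal projections finishes it. This avoids both the averaging machinery and the spherical-harmonic uniqueness lemma for this particular implication. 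One small remark: your density comment is phrased backwards --- you do not need density to pass from $L^2$ to $C(\mathbb{S}_n)$; rather, since $C(\mathbb{S}_n) \subset L^2(\mathbb{S}_n)$ and both $T_U$ and $\mathbf{P}_J$ restrict to operators on $C(\mathbb{S}_n)$, the identity $T_U\mathbf{P}_J = \mathbf{P}_J T_U$ on $L^2$ simply restricts.
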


\begin{proof}
Assume that $\mathcal{P}_{J} (\ell_2^n)$ is $\mathcal{U}_n$-invariant. Since the finite dimensional Banach space
$\mathcal{P}_{J} (\ell_2^n)$ is complemented in $C(\mathcal{U}_n)$, by \eqref{tag} there is a projection $\textbf{Q}$
on $C(\mathcal{U}_n)$ onto $\mathcal{P}_{J} (\ell_2^n)$ which commutes with $\mathcal{U}_n$. But then $\textbf{Q} = \textbf{P}_J$
by Theorem~\ref{polleqm}, so $\textbf{P}_J$ commutes with  $\mathcal{U}_n$. Conversely, if $f \in \mathcal{P}_{J} (\ell_2^n)$,
then  it follows by the commutativity assumption that
\[
 f \circ U = \textbf{P}_J(f) \circ U =   \textbf{P}_J(f \circ U) \in \mathcal{P}_{J} (\ell_2^n), \quad\, U \in \mathcal{U}_{n}\,.
\]
This completes the proof.  
\end{proof}

\smallskip

\section{Unitarily invariant index sets}

A  finite index set
$J \subset \N_0^n $ is called $\mathcal{U}_n$-invariant, whenever for each $k  \in \N_0 $, we have that
\[
J_k=\big\{\alpha \in J \colon |\alpha| =k\big\}
\]
either equals $\Lambda(k,n)$ or is the empty set. In other terms,
$J \subset \N_0^n $ is  $\mathcal{U}_n$-invariant if and only if there are $k_1, \ldots, k_m \in~\N_0$ with
$0 \leq k_1 < \ldots < k_m = \max_{\alpha \in J  } |\alpha|$ such that 
\begin{align}\label{decompo}
J = \bigcup_{\ell =1}^m \Lambda(k_\ell,n)\,.
\end{align}
Moreover, observe that in this case every $P \in \mathcal{P}_J(\ell_2^n)$ has a~unique decomposition
\begin{equation*}
  \text{
$P = \sum_{\ell =1}^m  P_\ell$ \quad
with \quad $P_\ell \in  \mathcal{P}_{k_\ell} (\ell_2^n)$.}
\end{equation*}
Basic examples of $\mathcal{U}_n$-invariant index sets $J \subset \N_0^n $ are $J = \Lambda(m,n)$ and
$J = \Lambda(\leq m,n)$. The following immediate consequence motivated the name
'$\mathcal{U}_n$-invariant'.

\begin{remark}
If $J \subset \N_0^n $ is  $\mathcal{U}_n$-invariant, then $\mathcal{P}_{J} (\ell_2^n)$ is $\mathcal{U}_n$-invariant.
\end{remark}

We come to the main contribution of this section - an integral formula for the projection constant of
$\mathcal{P}_{J}(\ell_2^n)$, whenever $J \subset \N_0^n$ is  $\mathcal{U}_n$-invariant. This  is a~refinement of the
formula given in Theorem~\ref{polleqm}.

\begin{theorem} \label{main}
Let $J \subset \N_0^n, \, n >1 $ be a $\mathcal{U}_n$-invariant index set. Then
\begin{align*}
\boldsymbol{\lambda}\big(\mathcal{P}_{J}(\ell_2^n)\big) & =
(n-1) \int_{\mathbb{D}}\bigg|\sum_{k:J(k)\neq \emptyset}  \frac{(n-1 + k)!}{(n-1)!\,k!}\,z^k\bigg|\,(1- |z|^2)^{n-2}\,dA(z)
\\[2ex]
& = \frac{(n-1)}{\pi} \int_0^1 \Big(\int_0^{2\pi} \Big|\sum_{k:J(k)\neq \emptyset} \frac{(n-1 + k)!}{(n-1)!\,k!}\,r^k e^{ik\theta}\Big|\,d\theta\Big)(1- r^2)^{n-2}\,r\,dr\,,
\end{align*}
where  $A$  denotes the normalized Lebesgue measure on the open unit disc $\mathbb{D}$.
\end{theorem}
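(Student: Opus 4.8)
The plan is to start from the formula in Theorem~\ref{main}'s ancestor, namely Theorem~\ref{polleqm}, which already tells us that for a $\mathcal{U}_n$-invariant index set $J$ one has
\[
\boldsymbol{\lambda}\big(\mathcal{P}_{J}(\ell_2^n)\big)
= \sup_{z \in \mathbb{S}_n}
\int_{\mathbb{S}_n} \Big| \sum_{k=1}^{m} c_k(n)\,[z, \xi ]_{J,k} \Big|\,d\sigma(\xi)\,,
\]
where $c_k(n) = (n-1+k)!/((n-1)!\,k!)$ and $[z,\xi]_{J,k} = \sum_{|\alpha|=k,\,\alpha\in J}\frac{k!}{\alpha!}z^\alpha\overline{\xi}^\alpha$. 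Since $J$ is $\mathcal{U}_n$-invariant, for each $k$ with $J_k \neq \emptyset$ the inner sum runs over all of $\Lambda(k,n)$, and the multinomial theorem gives $[z,\xi]_{J,k} = \langle z,\xi\rangle^k$ (here $\langle z,\xi\rangle = \sum_j z_j\overline{\xi_j}$); when $J_k=\emptyset$ the term is absent. So the integrand becomes $\big|\sum_{k:J_k\neq\emptyset} c_k(n)\langle z,\xi\rangle^k\big|$, which depends on $\xi$ only through the single complex number $w := \langle z,\xi\rangle$.

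First I would fix $z\in\mathbb{S}_n$ and exploit the rotation invariance of $\sigma$ under $\mathcal{U}_n$: after applying a unitary sending $z$ to $e_1=(1,0,\ldots,0)$, the integral becomes $\int_{\mathbb{S}_n} g(\xi_1)\,d\sigma(\xi)$ for $g(w)=\big|\sum_k c_k(n) w^k\big|$. In particular the supremum over $z\in\mathbb{S}_n$ is actually attained at every point (the integrand is constant in $z$), so the $\sup$ disappears. Next I would invoke the standard pushforward formula for the first coordinate of a uniformly random point on the sphere $\mathbb{S}_n\subset\mathbb{C}^n$: the law of $\xi_1$ has density $\tfrac{n-1}{\pi}(1-|w|^2)^{n-2}$ with respect to Lebesgue measure on $\mathbb{D}$ — equivalently $(n-1)(1-|w|^2)^{n-2}\,dA(w)$ with $A$ the normalized Lebesgue measure (this is, e.g., \cite[Proposition~1.4.9]{rudin1980} territory and is consistent with the moment formula \eqref{normalized} the paper already quotes). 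This immediately yields
\[
\boldsymbol{\lambda}\big(\mathcal{P}_{J}(\ell_2^n)\big)
= (n-1)\int_{\mathbb{D}}\Big|\sum_{k:J_k\neq\emptyset}\tfrac{(n-1+k)!}{(n-1)!\,k!}w^k\Big|(1-|w|^2)^{n-2}\,dA(w)\,,
\]
which is the first displayed line. The second line is then just the polar-coordinate rewrite $dA(w) = \tfrac{1}{\pi}r\,dr\,d\theta$ with $w = re^{i\theta}$, pulling the $\theta$-integral inside.

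The main obstacle, such as it is, is not conceptual but bookkeeping: I must be careful about normalizations (which measure carries the factor $\pi$, whether $A$ is normalized or not) so that the constant in front comes out exactly $n-1$ rather than $(n-1)/\pi$ or similar; the statement uses $A$ normalized on $\mathbb{D}$ in the first line but writes $(n-1)/\pi$ with the bare $r\,dr\,d\theta$ in the second, so consistency must be checked. I should also double-check the identification $[z,\xi]_{J,k}=\langle z,\xi\rangle^k$ uses $\mathcal{U}_n$-invariance in an essential way (otherwise only a partial sum of the multinomial expansion appears), and note that the $k=0$ term drops out in Theorem~\ref{polleqm}'s formula since the sum there starts at $k=1$ — but if $0\in J$ this merely contributes a constant inside the modulus, harmless for the final formula which writes the sum over all $k$ with $J_k\neq\emptyset$; one should reconcile these two conventions (they agree because adding the constant $c_0(n)=1$ does not change that the sup is attained and the density computation is unaffected). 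Finally, specializing to $J=\Lambda(m,n)$ recovers \eqref{fascinating}: the integral reduces to $(n-1)c_m(n)\int_{\mathbb{D}}|w|^m(1-|w|^2)^{n-2}dA(w)$, a Beta integral, and a short Gamma-function computation gives $\Gamma(n+m)\Gamma(1+\tfrac m2)/(\Gamma(1+m)\Gamma(n+\tfrac m2))$.
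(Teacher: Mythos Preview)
Your proposal is correct and follows essentially the same route as the paper: start from Theorem~\ref{polleqm}, use $\mathcal{U}_n$-invariance and the multinomial theorem to collapse $[z,\xi]_{J,k}$ to $\langle z,\xi\rangle^k$, then reduce the sphere integral to a disc integral via the distribution of $\langle z,\xi\rangle$. The only cosmetic difference is that the paper invokes the ready-made formula $\int_{\mathbb{S}_n} F(\langle z,\xi\rangle)\,d\sigma(\xi)=(n-1)\int_{\mathbb{D}}F(w)(1-|w|^2)^{n-2}\,dA(w)$ from \cite[1.4.4]{rudin1980} directly (which already absorbs the $\sup_z$ and the normalization), whereas you rederive it by rotating $z$ to $e_1$ and writing down the pushforward density of $\xi_1$.
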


\begin{proof}
From Theorem~\ref{polleqm} we know that
\[
\boldsymbol{\lambda} \big(\mathcal{P}_{J} (\ell_2^n)\big)
= \sup_{z \in \mathbb{S}_n}
\int_{\mathbb{S}_n} \Big| \sum_{k=1}^{m} c_k(n)\,\, [z, \xi ]_{J,k} \Big|\,d\sigma(\xi)\,.
\]
Assuming that $J \subset \N_0^n$ is $\mathcal{U}_n$-invariant, the binomial formula implies that, for every
$z, \xi \in \mathbb{C}^n$ and every $ k \in \N_0$ for which  $J_k \neq \emptyset$, one has
\[
[z, \xi ]_{J,k}  = \sum_{\alpha \in \Lambda(k,n)} \frac{k!}{\alpha !} z^{\alpha} \overline{\xi}^{\alpha}
= \langle z, \xi \rangle^k\,.
\]
We now may apply the standard integral formula (see, e.g., \cite[1.4.4]{rudin1980})
\[
\int_{\mathbb{S}_n} F(\langle z,\xi \rangle)\,d\sigma(\xi) = (n-1)\int_{\mathbb{D}} F(w)\,(1 - |w|^2)^{n-2}\,dA(w),
\quad\, z \in \mathbb{C}^n\,,
\]
which holds for every  continuous function $F\colon \mathbb{D} \to \mathbb{C}$. Indeed, using this result, the
first required equality   follows taking $F(w):= \big|\sum_{k=0}^m c_k(n)\,w^k\big|$ for all $w\in \mathbb{D}$.
Passing to polar coordinates the second one then is an immediate consequence.
\end{proof}

For any $\mathcal{U}_n$-invariant  index set $J \subset \N_0^n,\, n >1 $ and $m= \max_{\alpha \in J}|\alpha|$ we know
by Cauchy inequalities that $\mathcal{P}_{m}(\ell_2^n)$ is a $1$-complemented subspace of $\mathcal{P}_{J}(\ell_2^n)$,
so in particular
\[
\boldsymbol{\lambda}\big(\mathcal{P}_{m}(\ell_2^n)\big) \leq \boldsymbol{\lambda}\big(\mathcal{P}_{J}(\ell_2^n)\big)
\]
(see the forthcoming Proposition~\ref{Cauchy}). The following simple consequence of Theorem~\ref{main} is a~far reaching
converse.

\begin{corollary} \label{limsup}
Let $J \subset \N_0^n,\, n >1 $ be a~$\mathcal{U}_n$-invariant index set and $m= \max_{\alpha \in J}|\alpha|$. Then
\[
\lim_{m\to \infty  } \sup_{n>1} \sqrt[m]{\frac{ \boldsymbol{\lambda}\big(\mathcal{P}_{J}(\ell_2^n)\big)}{\boldsymbol{\lambda}\big(\mathcal{P}_{m}(\ell_2^n)\big)}} =1\,.
\]
\end{corollary}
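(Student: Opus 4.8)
The plan is to sandwich the ratio $\boldsymbol{\lambda}\big(\mathcal{P}_J(\ell_2^n)\big)/\boldsymbol{\lambda}\big(\mathcal{P}_m(\ell_2^n)\big)$ between $1$ and a polynomial in $m$, uniformly in $n$, and then take $m$-th roots. The lower side is immediate: for a $\mathcal{U}_n$-invariant $J$ with $\deg J = m$ the space $\mathcal{P}_m(\ell_2^n)$ is $1$-complemented in $\mathcal{P}_J(\ell_2^n)$ (Proposition~\ref{Cauchy}), so the ratio is $\ge 1$, hence so is its $m$-th root and the supremum over $n>1$; this already gives $\liminf_{m}\ge 1$.

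For the upper side I would apply the triangle inequality inside the integral of Theorem~\ref{main}:
\[
\boldsymbol{\lambda}\big(\mathcal{P}_J(\ell_2^n)\big)
= (n-1)\int_{\mathbb{D}}\Big|\sum_{k:J_k\neq\emptyset} c_k(n)\,z^k\Big|(1-|z|^2)^{n-2}\,dA(z)
\le \sum_{k:J_k\neq\emptyset} c_k(n)\,(n-1)\int_{\mathbb{D}}|z|^k(1-|z|^2)^{n-2}\,dA(z)\,.
\]
Applying Theorem~\ref{main} to the (also $\mathcal{U}_n$-invariant) singleton set $\Lambda(k,n)$ identifies the $k$-th summand on the right precisely as $\boldsymbol{\lambda}\big(\mathcal{P}_k(\ell_2^n)\big)$ — equivalently as the right-hand side of \eqref{fascinating} with $m$ replaced by $k$. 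Hence $\boldsymbol{\lambda}\big(\mathcal{P}_J(\ell_2^n)\big)\le \sum_{k=0}^{m}\boldsymbol{\lambda}\big(\mathcal{P}_k(\ell_2^n)\big)$.

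The remaining point — and the only one that is not a one-line consequence of earlier results — is that $k\mapsto \boldsymbol{\lambda}\big(\mathcal{P}_k(\ell_2^n)\big)$ is nondecreasing for each fixed integer $n\ge2$, which turns the last sum into $\le (m+1)\,\boldsymbol{\lambda}\big(\mathcal{P}_m(\ell_2^n)\big)$. Using \eqref{fascinating}, the logarithmic derivative in the real variable $k$ is
\[
\frac{d}{dk}\log\boldsymbol{\lambda}\big(\mathcal{P}_k(\ell_2^n)\big)
= \big[\psi(n+k)-\psi(1+k)\big] - \tfrac12\big[\psi(n+k/2)-\psi(1+k/2)\big]\,,
\]
with $\psi$ the digamma function; via $\psi(b)-\psi(a)=\int_0^1\frac{t^{a-1}-t^{b-1}}{1-t}\,dt$ the two brackets equal $\sum_{i=0}^{n-2}\frac{1}{k+i+1}$ and $\sum_{i=0}^{n-2}\frac{1}{k+2i+2}$, and the termwise inequality $\frac{1}{k+i+1}\ge\frac{1}{k+2i+2}$ shows the derivative is $\ge 0$. (If one prefers to stay elementary, the two-step ratio $\boldsymbol{\lambda}(\mathcal{P}_{k+2}(\ell_2^n))/\boldsymbol{\lambda}(\mathcal{P}_k(\ell_2^n))=\frac{(n+k+1)(n+k)}{(k+1)(2n+k)}\ge 1$ is immediate from $n(n-1)\ge0$, and the one-step ratio is controlled by Gautschi's inequality for $\Gamma(x+\tfrac12)/\Gamma(x)$ up to a harmless constant; this suffices equally well.)

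Combining the two bounds gives $1\le \boldsymbol{\lambda}\big(\mathcal{P}_J(\ell_2^n)\big)/\boldsymbol{\lambda}\big(\mathcal{P}_m(\ell_2^n)\big)\le m+1$, uniformly in $n>1$ and in the choice of $\mathcal{U}_n$-invariant $J$ of degree $m$. Taking $m$-th roots and then the supremum over $n$, the quantity in question lies in $\big[1,(m+1)^{1/m}\big]$, and since $(m+1)^{1/m}\to1$ the limit equals $1$. I expect the monotonicity-in-$k$ estimate (the digamma computation above) to be the only genuinely non-routine ingredient; everything else follows directly from Theorem~\ref{main} and the $1$-complementation recorded just before the statement.
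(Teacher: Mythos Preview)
Your argument is correct and follows the same skeleton as the paper: the lower bound $\boldsymbol{\lambda}(\mathcal{P}_m(\ell_2^n))\le \boldsymbol{\lambda}(\mathcal{P}_J(\ell_2^n))$ via the $1$-complementation of Proposition~\ref{Cauchy}, and the upper bound $\boldsymbol{\lambda}(\mathcal{P}_J(\ell_2^n))\le\sum_{k:J_k\neq\emptyset}\boldsymbol{\lambda}(\mathcal{P}_k(\ell_2^n))$ from the triangle inequality applied inside the integral of Theorem~\ref{main}.

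The one place where you go beyond the paper is the passage from $\sum_k \boldsymbol{\lambda}(\mathcal{P}_k(\ell_2^n))$ to $(m+1)\,\boldsymbol{\lambda}(\mathcal{P}_m(\ell_2^n))$. The paper instead records $\sum_k \boldsymbol{\lambda}(\mathcal{P}_k(\ell_2^n))\le (m+1)\,\boldsymbol{\lambda}(\mathcal{P}_J(\ell_2^n))$ via the $1$-complementation of each $\mathcal{P}_k$ in $\mathcal{P}_J$; read literally, that chain sandwiches $\sum_k\boldsymbol{\lambda}(\mathcal{P}_k)$ between $\boldsymbol{\lambda}(\mathcal{P}_J)$ and $(m+1)\boldsymbol{\lambda}(\mathcal{P}_J)$ and does not by itself control the ratio to $\boldsymbol{\lambda}(\mathcal{P}_m)$. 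What is implicitly used (and what you make explicit) is that $k\mapsto \boldsymbol{\lambda}(\mathcal{P}_k(\ell_2^n))$ is nondecreasing, so that the maximum over $k$ is attained at $k=m$. Your digamma computation is clean and correct: with $\psi(n+k)-\psi(1+k)=\sum_{i=0}^{n-2}\frac{1}{k+i+1}$ and $\tfrac12\big[\psi(n+k/2)-\psi(1+k/2)\big]=\sum_{i=0}^{n-2}\frac{1}{k+2i+2}$, the termwise comparison gives nonnegativity of the derivative. The elementary two-step check $\boldsymbol{\lambda}(\mathcal{P}_{k+2})/\boldsymbol{\lambda}(\mathcal{P}_k)=\frac{(n+k+1)(n+k)}{(k+1)(2n+k)}\ge1$ (equivalent to $n(n-1)\ge0$) is also valid and already suffices together with the one-step Gautschi bound you mention. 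Either route yields $1\le \boldsymbol{\lambda}(\mathcal{P}_J(\ell_2^n))/\boldsymbol{\lambda}(\mathcal{P}_m(\ell_2^n))\le m+1$ uniformly in $n>1$, and the claimed limit follows.
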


\begin{proof}
The triangle inequality applied to the formula from Theorem~\ref{main} shows that
\[
\boldsymbol{\lambda}\big(\mathcal{P}_{J}(\ell_2^n)\big) \leq
\sum_{k:J(k)\neq \emptyset}  \boldsymbol{\lambda}\big(\mathcal{P}_{k}(\ell_2^n)\big) \leq (m+1)\, \boldsymbol{\lambda}\big(\mathcal{P}_{J}(\ell_2^n)\big)\,,
\]
where we for the last estimate (as before) anticipate Proposition \ref{Cauchy}, which shows that $\mathcal{P}_{k}(\ell_2^n)$
may be viewed as a $1$-complemented subspace of $\mathcal{P}_{J}(\ell_2^n)$.
\end{proof}

\smallskip

\section{Estimates}
Our next aim is to evaluate the  integral from Theorem~\ref{main} in more concrete situations. The first result is due to \cite{ryll1983homogeneous},
a~result which inspired everything done in this section. We as before use  the notation  $$c_k(n):=\frac{(n-1 + k)!}{(n-1)!\,k!}\,,
\quad k,n \in \mathbb{N}$$
from~\eqref{integral} .

\begin{corollary} \label{RW}
For $m,n \in \N$  with $n > 1$ we have
\[
\boldsymbol{\lambda}\big(\mathcal{P}_{m}(\ell_2^n)\big) \,\,=\,\, \frac{\Gamma(n+m) \Gamma(1 + \frac{m}{2})}{\Gamma(1 + m) \Gamma(n + \frac{m}{2})} \,\,\leq \,\,2^{n-1}\,.
\]
\end{corollary}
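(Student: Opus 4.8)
The plan is to specialize Theorem~\ref{main} to the $\mathcal{U}_n$-invariant index set $J=\Lambda(m,n)$ and then evaluate the resulting one-variable integral explicitly. Since $\Lambda(m,n)$ is $\mathcal{U}_n$-invariant (and hence $\mathcal{P}_{\Lambda(m,n)}(\ell_2^n)=\mathcal{P}_m(\ell_2^n)$ is $\mathcal{U}_n$-invariant), Theorem~\ref{main} applies, and the only $k$ with $J_k\neq\emptyset$ is $k=m$; because the coefficient $\tfrac{(n-1+m)!}{(n-1)!\,m!}$ is positive it pulls out of the absolute value, so the formula collapses to
\[
\boldsymbol{\lambda}\big(\mathcal{P}_m(\ell_2^n)\big)=(n-1)\,\frac{(n-1+m)!}{(n-1)!\,m!}\int_{\mathbb{D}}|z|^m(1-|z|^2)^{n-2}\,dA(z)\,.
\]

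Next I would compute the integral. Passing to polar coordinates (recalling that $A$ is the \emph{normalized} Lebesgue measure on $\mathbb{D}$, i.e.\ $dA=\tfrac1\pi\,r\,dr\,d\theta$) and then substituting $t=r^2$ turns it into a Beta integral:
\[
\int_{\mathbb{D}}|z|^m(1-|z|^2)^{n-2}\,dA(z)=2\int_0^1 r^{m+1}(1-r^2)^{n-2}\,dr=\int_0^1 t^{m/2}(1-t)^{n-2}\,dt=\frac{\Gamma\!\big(1+\tfrac m2\big)\,\Gamma(n-1)}{\Gamma\!\big(n+\tfrac m2\big)}\,.
\]
Substituting this back and using $(n-1)\Gamma(n-1)=\Gamma(n)=(n-1)!$, $(n-1+m)!=\Gamma(n+m)$ and $m!=\Gamma(1+m)$, the factors $(n-1)$, $(n-1)!$ and $\Gamma(n-1)$ cancel, leaving exactly the asserted value $\dfrac{\Gamma(n+m)\,\Gamma(1+\tfrac m2)}{\Gamma(1+m)\,\Gamma(n+\tfrac m2)}$.

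Finally, for the bound $\leq 2^{\,n-1}$ I would rewrite this ratio as a finite product. Since $\Gamma(n+m)/\Gamma(1+m)=\prod_{j=1}^{n-1}(m+j)$ and $\Gamma(n+\tfrac m2)/\Gamma(1+\tfrac m2)=\prod_{j=1}^{n-1}(\tfrac m2+j)$, one gets
\[
\boldsymbol{\lambda}\big(\mathcal{P}_m(\ell_2^n)\big)=\prod_{j=1}^{n-1}\frac{m+j}{\tfrac m2+j}=\prod_{j=1}^{n-1}\frac{2(m+j)}{m+2j}\,,
\]
and each factor satisfies $\tfrac{2(m+j)}{m+2j}\leq 2$ (equivalently $2j\geq0$), so the product of $n-1$ such factors is at most $2^{\,n-1}$.

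There is no genuine obstacle here — the statement is essentially the explicit evaluation of the integral in Theorem~\ref{main} — so the only points demanding care are the normalization convention for $A$ and the Gamma-function bookkeeping in the cancellation step; the product reformulation is what makes the estimate $\leq 2^{n-1}$ transparent.
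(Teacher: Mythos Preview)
Your proof is correct and follows essentially the same route as the paper: specialize Theorem~\ref{main} to $J=\Lambda(m,n)$, reduce to a single Beta integral in $t=r^2$, and simplify the Gamma factors. The paper's own proof stops at the equality and leaves the bound $\leq 2^{n-1}$ as an unelaborated ``simple calculus''; your product representation $\prod_{j=1}^{n-1}\frac{2(m+j)}{m+2j}$ with each factor $\leq 2$ makes that step explicit and transparent.
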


\begin{proof}
Using Theorem~\ref{main}, yields
\begin{align*}
\boldsymbol{\lambda}\big(\mathcal{P}_{m}(\ell_2^n)\big)  = (n-1) c_m(n) \int_0^1 (1 - t)^{n-2} t^{\frac{m}{2}}\,dt\,.
\end{align*}
Then by the basic properties of Beta and Gamma functions, we get
\begin{equation} \label{beta}
 (n-1) c_m(n) \int_0^1 (1 - t)^{n-2} t^{\frac{m}{2}}\,dt
= \frac{\Gamma(n+m) \Gamma(1 + \frac{m}{2})}{\Gamma(1 + m) \Gamma(n + \frac{m}{2})}\,,
\end{equation}
and the required formula follows.
\end{proof}

We go on  with further  consequences  of Theorem~\ref{main}, which provide lower and upper estimates of the projection
constant of $\mathcal{P}_{\leq m}(\ell_2^n)$, and more generally $\mathcal{P}_{J}(\ell_2^n)$, where
$J \subset \N_0^n,\, n >1 $ is a $\mathcal{U}_n$-invariant index set.

For the upper bound we need the following simple consequence of the well-known Fej\'er-Riesz inequality: Given
a~polynomial $P(z)= \sum_{k=0}^m a_k z^k, \, z \in \mathbb{C}$ with non-negative coefficients $a_k$, for all
$r\in [0, 1)$
\begin{align} \label{RF}
\frac{1}{2} \int_0^{2\pi} \Big|\sum_{k=0}^m a_k r^k e^{ik\theta}\Big|\,d\theta \geq \sum_{k=0}^m \frac{a_k}{k+1} r^k\,.
\end{align}
Indeed, recall the Fej\'er-Riesz
inequality \cite{fejer1921} (see also \cite[Theorem 3.13]{duren1970}): For every $f \in H_p(\mathbb{D})$ with
$0<p<\infty$
\[
\int_{-1}^1 |f(t)|^p\,dt \leq \frac{1}{2} \int_0^{2\pi} |f(e^{i\theta})|^p\,d\theta\,,
\]
where $f(e^{it})$ is the radial limit of $f$ which exists for almost all $t\in [0, 2\pi)$; moreover,
the constant $1/2$ is best possible.

In order to prove \eqref{RF}, fix  $r\in (0, 1)$, and define $f_r(z):= P(rz), \,z\in \mathbb{D}$. Then
$f_r\in H^{\infty}(\mathbb{D}) \subset H^1(\mathbb{D})$, and we obtain as desired
\begin{equation} \label{polyFR}
\frac{1}{2}\, \int_0^{2\pi} \Big|\sum_{k=0}^m a_k r^k e^{ik\theta}\Big|\,d\theta
\geq \int_0^1 \Big(\sum_{k=0}^m a_k r^k t^k\Big)\,dt = \sum_{k=0}^m \frac{a_k}{k+1} r^{k}\,.
\end{equation}

\begin{corollary}  \label{low}Let $J \subset \N_0^n,\, n >1 $ be a~$\mathcal{U}_n$-invariant index set and
$m= \max_{\alpha \in J}|\alpha|$. Then
\[
\boldsymbol{\lambda}\big(\mathcal{P}_{J}(\ell_2^n)\big) \,\geq \,
\sum_{k:J(k)\neq \emptyset}\frac{1}{k+1} \,\,\frac{\Gamma(n+k) \Gamma(1 + \frac{k}{2})}{\Gamma(1 + k) \Gamma(n  + \frac{k}{2})}\,.
\]
\end{corollary}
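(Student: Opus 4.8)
The plan is to read off the claim, term by term, from the exact integral formula in Theorem~\ref{main} together with the Fej\'er--Riesz consequence~\eqref{RF}. I would start from the radial form of Theorem~\ref{main}: since $J$ is $\mathcal{U}_n$-invariant,
\[
\boldsymbol{\lambda}\big(\mathcal{P}_{J}(\ell_2^n)\big) = \frac{n-1}{\pi}\int_0^1\bigg(\int_0^{2\pi}\Big|\sum_{k:J(k)\neq\emptyset} c_k(n)\,r^k e^{ik\theta}\Big|\,d\theta\bigg)(1-r^2)^{n-2}\,r\,dr .
\]
All coefficients $c_k(n)=\frac{(n-1+k)!}{(n-1)!\,k!}$ are strictly positive, so for each fixed $r\in[0,1)$ the inner angular integral can be bounded below, via~\eqref{RF} applied to the polynomial $w\mapsto\sum_{k:J(k)\neq\emptyset} c_k(n)\,w^k$, by a constant multiple of $\sum_{k:J(k)\neq\emptyset}\frac{c_k(n)}{k+1}\,r^k$.

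Next I would substitute this pointwise estimate back into the formula and, since the sum over $k$ is finite, interchange it with the radial integral, which leaves a lower bound of the shape $\sum_{k:J(k)\neq\emptyset}\frac{c_k(n)}{k+1}\cdot\big(\text{const}\cdot\int_0^1 r^{k+1}(1-r^2)^{n-2}\,dr\big)$. The substitution $t=r^2$ turns $\int_0^1 r^{k+1}(1-r^2)^{n-2}\,dr$ into $\tfrac12\int_0^1 t^{k/2}(1-t)^{n-2}\,dt$, which is --- up to the explicit constants --- precisely the single-exponent integral evaluated in the proof of Corollary~\ref{RW}: there the Beta-function identity~\eqref{beta} gives $(n-1)c_k(n)\int_0^1(1-t)^{n-2}t^{k/2}\,dt=\frac{\Gamma(n+k)\,\Gamma(1+\frac k2)}{\Gamma(1+k)\,\Gamma(n+\frac k2)}=\boldsymbol{\lambda}\big(\mathcal{P}_k(\ell_2^n)\big)$. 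Collecting the constants then yields $\boldsymbol{\lambda}\big(\mathcal{P}_{J}(\ell_2^n)\big)\ge\sum_{k:J(k)\neq\emptyset}\frac{1}{k+1}\,\frac{\Gamma(n+k)\,\Gamma(1+\frac k2)}{\Gamma(1+k)\,\Gamma(n+\frac k2)}$, as asserted.

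There is no real conceptual obstacle here; the work is in the bookkeeping. The two points that need care are (i) verifying the hypothesis of~\eqref{RF} --- which holds because every $c_k(n)$ is positive, so Fej\'er--Riesz applies to the specific polynomial appearing in Theorem~\ref{main} --- and (ii) keeping track of the normalizing factors (the normalized measure $A$ on $\mathbb{D}$, the constant in~\eqref{RF}, and the substitution $t=r^2$) so that the surviving radial integral is correctly matched with~\eqref{beta} rather than recomputed from scratch. As an alternative one may argue directly from the disc form of Theorem~\ref{main}: since $(1-|z|^2)^{n-2}\ge 0$, it suffices to dominate, for each radius $r$, the circular mean of $\big|\sum_k c_k(n) r^k e^{ik\theta}\big|$ by $\sum_k c_k(n)\frac{r^k}{k+1}$ up to the Fej\'er--Riesz constant, and then integrate this pointwise inequality against $(1-r^2)^{n-2}$ in the radial variable.
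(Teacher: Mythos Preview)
Your proposal is correct and follows essentially the same route as the paper: start from the polar form of Theorem~\ref{main}, apply the Fej\'er--Riesz lower bound~\eqref{RF} to the angular integral (the coefficients $c_k(n)$ being positive), interchange the finite sum with the radial integral, substitute $t=r^2$, and identify each remaining term via the Beta-function identity~\eqref{beta}. The paper's proof is exactly this, written out in three lines.
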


\begin{proof}
By Theorem~\ref{main} one has
\begin{align*}
\boldsymbol{\lambda}\big(\mathcal{P}_{J}(\ell_2^n)\big) & =
(n-1) \int_0^1 \Big(\int_0^{2\pi} \Big|\sum_{k:J(k)\neq \emptyset} c_k(n)\,r^k e^{ik\theta}\Big|\,d\theta\Big)(1- r^2)^{n-2}\,r\,dr \\
& \geq 2(n-1)\sum_{k:J(k)\neq \emptyset}\,\frac{c_k(n)}{k+1} \int_0^1 r^{k} (1 - r^2)^{n-2} \,r\,dr \\
& = \sum_{k:J(k)\neq \emptyset}\,\frac{1}{k+1} (n-1)c_k(n) \int_0^1 (1 - t)^{n-2}\,t^{\frac{k}{2}}\,dt\,.
\end{align*}
To finish we use formula \eqref{beta}.
\end{proof}

In this context it  seems interesting to recall the following  inequality due to  Konyagin \cite{konyagin2001translation}
and independently  McGehee, Pigno, and Smith in \cite{mcgehee1981}, which confirms  a~famous  conjecture of Littlewood:
There exists an absolute constant $C>0$ such that, for any collection $n_1, \ldots, n_m \in \mathbb{N}$ with
$n_1 < \ldots < n_m$, we have
\[
\int_0^{2\pi} \bigg |\sum_{k=1}^m e^{in_kt}\Big|\,dt  
\geq C\,\log m\,.
\]
More generally, in \cite{mcgehee1981} the following deep fact was proved, which may replace~\eqref{RF} in the proof of
Corollary~\ref{low}: There exists a~constant $C>0$ such that for any finite choice  $a_1, \ldots, a_m \in \mathbb{C}$ and
$n_1, \ldots, n_m \in \mathbb{N}$ with
$n_1 < \ldots < n_m$, we have
\[
\int_0^{2\pi} \bigg |\sum_{k=1}^m a_k e^{in_kt}\Big|\,dt \geq C \sum_{k=1}^m \frac{|a_k|}{k}\,.
\]

Finally, we turn to the upper bound for $\boldsymbol{\lambda}(\mathcal{P}_{J}(\ell_2^n))$, where $J \subset \N_0^n,\, n >1 $ is again
$\mathcal{U}_n$-invariant.

\begin{corollary}  \label{upper}Let $J \subset \N_0^n,\, n >1 $ be a $\mathcal{U}_n$-invariant index set and
$m= \max_{\alpha \in J}|\alpha|$. Then
\begin{align*}
\boldsymbol{\lambda}\big(\mathcal{P}_{J}(\ell_2^n)\big) \leq C\,\Big(c_m(n) \log(m+2) +  \, \sum_{\substack{0 \leq k <m\\ J(k) \neq \emptyset }}
|c_k(n) - c_{k+1}(n)| \log(k+2)\Big),
\end{align*}
where $C >0$ is an absolute constant. For the particular case $J = \Lambda^{\leq} (m,n)$, we obtain
\[
\boldsymbol{\lambda}\big(\mathcal{P}_{\leq m}(\ell_2^n)\big)
\leq C\,\bigg(\frac{(n-1 + m)!}{(n-1)!\,m!} \log(m+2) \, + \, \sum_{k=0}^{m-1} \frac{(n-1 + k)!}{(n-2)!\,(k+1)!} \log(k+2)\bigg)\,.
\]
\end{corollary}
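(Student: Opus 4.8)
The plan is to start from the integral formula of Theorem~\ref{main}, which expresses $\boldsymbol{\lambda}\big(\mathcal{P}_{J}(\ell_2^n)\big)$ as $\frac{n-1}{\pi}\int_0^1\big(\int_0^{2\pi}|g_r(\theta)|\,d\theta\big)(1-r^2)^{n-2}r\,dr$ with $g_r(\theta)=\sum_{k:\,J(k)\neq\emptyset}c_k(n)r^ke^{ik\theta}$, and first to estimate the inner $L^1$-norm by summation by parts against the one-sided Dirichlet kernels $D_k^{+}=\sum_{l=0}^{k}e_l$. Writing $a_k(r)=c_k(n)r^k$ when $J(k)\neq\emptyset$ and $a_k(r)=0$ otherwise, one has $g_r=\sum_{k=0}^{m-1}\big(a_k(r)-a_{k+1}(r)\big)D_k^{+}+a_m(r)D_m^{+}$; since $\int_0^{2\pi}|D_k^{+}(\theta)|\,d\theta=2\pi L_k^{+}\le C\log(k+2)$ by~\eqref{Lebconst} together with the relations $L_{2m}^{+}=L_m$, $L_{2m}^{+}-1\le L_{2m+1}^{+}\le L_{2m}^{+}+1$ of Corollary~\ref{LoKha}, and since $|a_k(r)-a_{k+1}(r)|\le a_k(r)+a_{k+1}(r)$ (all $a_j\ge0$), after reindexing the shifted sum this gives
\[
\int_0^{2\pi}|g_r(\theta)|\,d\theta\;\le\;C\sum_{k:\,J(k)\neq\emptyset}c_k(n)\,r^k\log(k+2)\,.
\]

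Inserting this into Theorem~\ref{main} and interchanging the finite sum with the $r$-integral, each remaining one-dimensional integral is the Beta integral already evaluated in the proof of Corollary~\ref{RW}: the substitution $t=r^2$ and identity~\eqref{beta} give $(n-1)c_k(n)\int_0^1 r^{k+1}(1-r^2)^{n-2}\,dr=\tfrac12(n-1)c_k(n)B\big(\tfrac k2+1,n-1\big)=\tfrac12\boldsymbol{\lambda}\big(\mathcal{P}_k(\ell_2^n)\big)$. Hence one obtains the intermediate estimate $\boldsymbol{\lambda}\big(\mathcal{P}_{J}(\ell_2^n)\big)\le C\sum_{k:\,J(k)\neq\emptyset}\boldsymbol{\lambda}\big(\mathcal{P}_k(\ell_2^n)\big)\log(k+2)$ with an absolute constant $C$; alternatively, this estimate (even without the harmless $\log$ factors) is immediate from Corollary~\ref{limsup}, which bounds $\boldsymbol{\lambda}(\mathcal{P}_{J}(\ell_2^n))$ by $\sum_{k:\,J(k)\neq\emptyset}\boldsymbol{\lambda}(\mathcal{P}_k(\ell_2^n))$.

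To pass from here to the explicit bound, I would separate the top degree $k=m$, estimating $\boldsymbol{\lambda}(\mathcal{P}_m(\ell_2^n))\le c_m(n)$ — immediate from Corollary~\ref{RW}, since $(n-1)\int_0^1(1-t)^{n-2}t^{m/2}\,dt\le(n-1)\int_0^1(1-t)^{n-2}\,dt=1$ — and, for $k<m$, invoking the sharper elementary estimate
\[
\boldsymbol{\lambda}\big(\mathcal{P}_k(\ell_2^n)\big)=(n-1)c_k(n)\,B\big(\tfrac k2+1,n-1\big)<2\big(c_{k+1}(n)-c_k(n)\big),\qquad k\ge0,\ n\ge2\,,
\]
which amounts to the uniform bound $(k+1)B(\tfrac k2+1,n-1)<2$: writing $B(\tfrac k2+1,n-1)=\frac{(n-2)!}{\prod_{j=1}^{n-1}(\frac k2+j)}$, the product of the factors $\frac{j-1}{\frac k2+j}<1$ for $2\le j\le n-1$ against the leading factor $\frac{k+1}{\frac k2+1}=\frac{2(k+1)}{k+2}<2$ gives the claim, which one combines with the identity $c_{k+1}(n)-c_k(n)=\frac{n-1}{k+1}c_k(n)$ and the monotonicity of $k\mapsto c_k(n)$ (so $c_{k+1}(n)-c_k(n)=|c_k(n)-c_{k+1}(n)|$). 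Multiplying by $\log(k+2)\ge\log2>0$ and summing yields the first displayed inequality of the statement. For the special case $J=\Lambda^{\leq}(m,n)$ one has $J(k)\neq\emptyset$ for all $0\le k\le m$, so the sum runs over all $0\le k\le m-1$, and Pascal's rule gives $c_{k+1}(n)-c_k(n)=\binom{n-1+k}{n-2}=\frac{(n-1+k)!}{(n-2)!\,(k+1)!}$ while $c_m(n)=\frac{(n-1+m)!}{(n-1)!\,m!}$; substituting these produces the second displayed inequality.

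The step I expect to be the crux is the uniform estimate $\boldsymbol{\lambda}(\mathcal{P}_k(\ell_2^n))<2(c_{k+1}(n)-c_k(n))$, equivalently $(k+1)B(\tfrac k2+1,n-1)<2$: this is exactly what licenses replacing the projection constants of the homogeneous building blocks $\mathcal{P}_k(\ell_2^n)$ — given by the opaque $\Gamma$-quotient of Corollary~\ref{RW} — by the explicit binomial differences $c_{k+1}(n)-c_k(n)$ appearing in the statement. The remaining ingredients (the summation-by-parts $L^1$-bound, the Lebesgue-constant estimate, and the Beta-integral evaluations) are routine.
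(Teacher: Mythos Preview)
Your proof is correct but follows a genuinely different route from the paper's. The paper applies Abel summation directly to the \emph{coefficients} $c_{k_\ell}(n)$ (treating the $r^{k_\ell}e^{ik_\ell\theta}$ as a single block), so the differences $|c_{k_\ell}(n)-c_{k_{\ell+1}}(n)|$ appear immediately; it then uses the monotonicity of $r\mapsto\int_0^{2\pi}|f(re^{i\theta})|\,d\theta$ for holomorphic $f$ to strip the $r$ from the resulting Dirichlet kernels, after which the radial integral is just $(n-1)\int_0^1(1-r^2)^{n-2}r\,dr=\tfrac12$. No Beta-function estimate is needed.

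You instead apply summation by parts to $a_k(r)=c_k(n)r^k$, split $|a_k-a_{k+1}|\le a_k+a_{k+1}$ by the triangle inequality, integrate each $r^k$ against the weight to recover the homogeneous projection constants $\boldsymbol{\lambda}(\mathcal P_k(\ell_2^n))$ via~\eqref{beta}, and only then convert these back to the differences $c_{k+1}(n)-c_k(n)$ through the extra inequality $(k+1)B(\tfrac k2+1,n-1)<2$. This detour costs you that additional (correct and nicely proved) estimate, but it also yields the intermediate bound $\boldsymbol{\lambda}(\mathcal P_J(\ell_2^n))\le C\sum_{k:J(k)\neq\emptyset}\boldsymbol{\lambda}(\mathcal P_k(\ell_2^n))\log(k+2)$, which sharpens the triangle-inequality bound of Corollary~\ref{limsup} by making the logarithmic weights explicit. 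The paper's argument is shorter and more direct; yours trades directness for an auxiliary inequality of independent interest.
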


\begin{proof}
According to \eqref{decompo}, we choose a decomposition $J = \bigcup_{\ell =1}^m \Lambda(k_\ell,n)$.
Then by Abel summation, for all $\theta \in [0, 2\pi)$ and for all $r\in (0, 1)$, we get
\begin{align*}
\sum_{\ell =0}^m c_{k_\ell}(n)\,r^{k_\ell} e^{ik_\ell\theta}
& =   c_m(n)\,\Big(\sum_{\ell =0}^m r^{k_\ell} e^{ik_\ell \theta}\Big) \,
+ \, \sum_{\ell =0}^{m-1} \big(c_{k_\ell}(n)- c_{k_{\ell+1}}(n)\big)\,\Big(\sum_{j=0}^{k_\ell} r^j e^{ij\theta}\Big)\,.
\end{align*}
Since for any holomorphic function $f$ on $\mathbb{D}$, the function $[0, 1)
\ni r \mapsto \int_0^{2\pi} |f(re^{i\theta})|\,d\theta$ is non-decreasing, we get
\begin{align*}
\int_0^{2\pi} & \Big| \sum_{\ell =0}^m c_{k_\ell}(n)\,r^{k_\ell} e^{ik_\ell\theta} \Big|\,d\theta  \leq
c_m(n) \int_0^{2\pi} \Big|\sum_{\ell =0}^m  e^{ik_\ell \theta}\Big|\,d\theta\\
& + \sum_{\ell =0}^{m-1} \big|c_{k_\ell}(n)- c_{k_{\ell+1}}(n)\big| \int_0^{2\pi} \Big|\sum_{j=0}^{k_\ell}
e^{ij\theta}\Big|\,d\theta \\
& \leq \gamma \Big(c_m(n) \log(m+2) + \sum_{\ell=0}^{m-1} \big|c_{k_\ell}(n)- c_{k_{\ell+1}}(n)\big| \log(k_\ell + 2)\Big)\,,
\end{align*}
where $\gamma>0$ is a~universal constant. This estimate combined with Theorem~\ref{main} and~\eqref{beta} yields
\begin{align*}
& \boldsymbol{\lambda}\big(\mathcal{P}_{J}(\ell_2^n)\big)  =
\frac{(n-1)}{\pi}\int_0^1 \Big(\int_0^{2\pi} \Big|\sum_{\ell =0}^m c_{k_\ell}(n)\,r^k e^{ik_\ell\theta}\Big|\,d\theta\Big) (1- r^2)^{n-2}\,r\,dr\\
& \leq 2 \gamma \frac{(n-1)}{\pi}\Big(c_m(n) \log(m+2) + \sum_{\ell =0}^{m-1} \big|c_{k_\ell}(n)- c_{k_{\ell+1}}(n)\big| \log(k_\ell + 2)\Big)
\int_0^1 (1-t)^{n-2}\,dt\\
& = \frac{2 \gamma}{\pi} \Big(c_m(n) \log(m+2) + \sum_{k=0}^{m-1} \big|c_{k_\ell}(n)- c_{k_\ell +1}(n)\big| \log(k_\ell + 2)\Big)\,,
\end{align*}
so the desired estimate follows.
\end{proof}

Finishing, we consider the  special case $n=2$.

\begin{corollary}\label{cor: 2 dim proy constant }
There exits a universal positive constant $\gamma$ such that one has
\[
1 + \log m \leq \boldsymbol{\lambda}\big(\mathcal{P}_{\leq m}(\mathbb{S}_2)\big) \leq \gamma m\,(1 + \log m), \quad\, m \in \mathbb{N}\,.
\]
\end{corollary}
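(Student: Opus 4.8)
The plan is to obtain both bounds by specializing the two preceding corollaries to the case $n=2$ and $J=\Lambda(\le m,2)$. This $J$ is $\mathcal{U}_2$-invariant and satisfies $J_k\neq\emptyset$ for every $0\le k\le m$, so Corollary~\ref{low} and Corollary~\ref{upper} both apply. The key observation is that for $n=2$ the factorial/Gamma quotients occurring in those corollaries degenerate to elementary expressions, after which everything reduces to harmonic-sum estimates.

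For the lower bound I would start from Corollary~\ref{low}, which gives
\[
\boldsymbol{\lambda}\big(\mathcal{P}_{\le m}(\ell_2^2)\big)\ \ge\ \sum_{k=0}^{m}\frac{1}{k+1}\,\frac{\Gamma(2+k)\,\Gamma(1+\tfrac k2)}{\Gamma(1+k)\,\Gamma(2+\tfrac k2)}\,.
\]
Using $\Gamma(2+k)=(k+1)\Gamma(1+k)$ and $\Gamma(2+\tfrac k2)=(1+\tfrac k2)\Gamma(1+\tfrac k2)$, the $k$-th summand collapses to $\frac{1}{k+1}\cdot\frac{k+1}{1+k/2}=\frac{2}{k+2}$, whence
\[
\boldsymbol{\lambda}\big(\mathcal{P}_{\le m}(\ell_2^2)\big)\ \ge\ \sum_{k=0}^{m}\frac{2}{k+2}\ =\ 2\sum_{j=2}^{m+2}\frac1j\ =\ 2\big(H_{m+2}-1\big),
\]
where $H_{m+2}$ is the $(m+2)$-th harmonic number. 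Then $H_{m+2}-1=\sum_{j=2}^{m+2}\tfrac1j\ge\int_2^{m+3}\tfrac{dt}{t}=\log\tfrac{m+3}{2}$, so $\boldsymbol{\lambda}\ge 2\log\tfrac{m+3}{2}=\log\tfrac{(m+3)^2}{4}$. Finally AM--GM gives $\tfrac{(m+3)^2}{m}=m+\tfrac9m+6\ge 12>4e$, hence $\tfrac{(m+3)^2}{4}\ge e\,m$ and $\boldsymbol{\lambda}\big(\mathcal{P}_{\le m}(\ell_2^2)\big)\ge 1+\log m$ for every $m\in\mathbb{N}$.

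For the upper bound I would invoke Corollary~\ref{upper} with $n=2$. Here $c_k(2)=\tfrac{(k+1)!}{k!}=k+1$, so $|c_k(2)-c_{k+1}(2)|=1$ for all $k$, and the corollary's particular-case estimate becomes
\[
\boldsymbol{\lambda}\big(\mathcal{P}_{\le m}(\ell_2^2)\big)\ \le\ C\Big((m+1)\log(m+2)+\sum_{k=0}^{m-1}\log(k+2)\Big)
\]
for an absolute constant $C$. The remaining sum equals $\log\big((m+1)!\big)\le\int_1^{m+2}\log t\,dt\le(m+2)\log(m+2)$, so $\boldsymbol{\lambda}\le C(2m+3)\log(m+2)$; since $2m+3\le 5m$ and $\log(m+2)\le\log(3m)=\log 3+\log m\le(\log 3)(1+\log m)$ for $m\ge1$, this yields $\boldsymbol{\lambda}\big(\mathcal{P}_{\le m}(\ell_2^2)\big)\le\gamma\,m(1+\log m)$ with $\gamma:=5C\log 3$.

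None of this is genuinely hard once Corollaries~\ref{low} and~\ref{upper} are available — the argument is essentially bookkeeping plus the Gamma-function simplification special to $n=2$. The only place asking for a touch of care is the lower bound: one must estimate the harmonic sum sharply enough to recover the constant $1$ in front of $\log m$ \emph{uniformly} in $m$ and not merely asymptotically, which is precisely what the crude bound $\tfrac{(m+3)^2}{m}\ge 12>4e$ delivers.
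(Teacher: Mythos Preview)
Your proof is correct and follows the same approach as the paper: lower bound via Corollary~\ref{low}, upper bound via Corollary~\ref{upper}, both specialized to $n=2$. Your Gamma simplification $\frac{1}{k+1}\cdot\frac{\Gamma(2+k)\Gamma(1+k/2)}{\Gamma(1+k)\Gamma(2+k/2)}=\frac{2}{k+2}$ is in fact the exact value (the paper writes $\frac{1}{k+1}$, which is slightly smaller and does \emph{not} quite dominate $1+\log m$ for all $m$; your version does), and your explicit check that $\frac{(m+3)^2}{m}\ge 12>4e$ is exactly the kind of uniform-in-$m$ care needed here.
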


\begin{proof}
Since $\Gamma(x + 1) = x \Gamma(x)$ for all $x>0$, Corollary \ref{low} shows that
\[
\boldsymbol{\lambda}\big(\mathcal{P}_{\leq m}(\mathbb{S}_2)\big) \geq \sum_{k=0}^m \frac{1}{k + 1} \geq 1 + \log m, \quad\, m\in \mathbb{N}\,.
\]
The upper estimate follows from Corollary \ref{upper}
\[
\boldsymbol{\lambda}\big(\mathcal{P}_{\leq m}(\mathbb{S}_2)\big) \prec \bigg (\log (m+2) \,+ \,\sum_{k=0}^{m-1} \log (k+2)\bigg)
\prec  m\,(1 + \log m).\,\,\,\,\qedhere
\]
\end{proof}

\chapter{Trace class operators}\label{Trace class operators}
As usual $\mathcal L(\ell_2^n)$ stands for the $C^\ast$-algebra of all linear operators $u\colon \ell_2^n \to \ell_2^n$ endowed with the operator norm. The space of all  linear operators $u\colon \ell_2^n \to \ell_2^n$ equipped with the Hilbert-Schmidt
norm $\|u\|_2 := \big(\sum_{i,j} |\langle ue_i,e_j\rangle|^2\big)^{\frac{1}{2}}$ is denoted by  $\mathcal{S}_2(n)$, and if we
consider the so-called trace class norm $\|u\|_1 := \||u|^{\frac{1}{2}}\|_2$\,, then we write $\mathcal{S}_1(n)$. Recall that
$\mathcal L(\ell_2^n)$ and $\mathcal{S}_1(n)$ are in trace duality, that is, the mapping
\begin{equation}\label{dualitytr}
\mathcal{S}_1(n) \to \mathcal L(\ell_2^n)^\ast\,,\,\,\,\,\,\,\,\,\,\,\,\, u \mapsto [v \mapsto \textrm{tr}(uv)]
\end{equation}
defines a~linear isometrical isomorphism between Banach spaces $\mathcal{S}_1(n)$ and $L(\ell_2^n)^\ast$.

By \cite[Theorem 5.6]{gordon1974absolutely} it is known that
\begin{equation}\label{asy1}
\boldsymbol{\lambda}\big(\mathcal L(\ell_2^n)\big)=\frac{\pi}{4}n
\,\,\,\,\,\,\,\,\,\,\,\text{and}\,\,\,\,\,\,\,\,\,\,\,
\frac{n}{3}\le\boldsymbol{\lambda}\big(\mathcal S_1(\ell_2^n)\big)\le n\,,
\end{equation}
whereas we may deduce from \eqref{grunbuschC-A} (see also Corollary~\ref{RW}) that
\[
\boldsymbol{\lambda}\big(\mathcal{S}_2(n)\big) \,\,=\,\, \frac{\sqrt{\pi}}{4}\frac{\Gamma(n^2+1) }{ \Gamma(n^2 + \frac{1}{2})}\,\,.
\]
In particular, we  obtain the limits
\begin{equation} \label{missing}
\lim_{n \to \infty} \frac{\boldsymbol{\lambda}\big(\mathcal{S}_2(n)\big)}{n} = \frac{\sqrt{\pi}}{2} \,\,\,\,\,\,\,\,
\text{and}\,\,\,\,\,\,\,\, \lim_{n \to \infty} \frac{\boldsymbol{\lambda}\big(\mathcal L(\ell_2^n)\big)}{n} = \frac{\pi}{4}\,.
\end{equation}

Our  aim in this section is to  show how the ideas of the previous sections can be applied to estimate the projection constant
$\boldsymbol{\lambda}\big(\mathcal{S}_1(n)\big)$ more precisely. The main contribution (from  Section~\ref{trace-uni}) states that
\begin{equation*}
\boldsymbol{\lambda}\big(\mathcal S_1(n)\big)=n\int_{\mathcal{U}_n} |\mathrm{tr}(V)|\,dU\,,
\end{equation*}
where the integration is taken with respect to the Haar measure $dU$ on the unitary group $\mathcal{U}_n$, and using probabilistic
arguments we then may deduce the missing limit from \eqref{missing}:
\begin{equation*}
\lim_n \frac{\boldsymbol{\lambda}\big(\mathcal S_1(n)\big)}{n}=\frac{\sqrt \pi}{2} \,.
\end{equation*}

How is all this related with our main leitmotif of studying projection constants of spaces of polynomials
on finite dimensional Banach spaces supported on certain index sets?

Note first that in view of \eqref{dualitytr} the trace class $\mathcal{S}_1(n)$ may be interpreted as the  space of all
one homogeneous polynomials on $\mathcal L(\ell_2^n)$, that is all  one homogeneous polynomials on  all $n \times n$ matrices.

The idea is to use Rudin's strategy of averaging projections (see Theorem~\ref{rudy}). So we need to define so-called
'unitary harmonics', which (roughly spoken) are harmonic  polynomials in finitely many 'matrix variables' $z$ and $\overline{z}$
from the unitary group $\mathcal{U}_n$. Everything we do is deeply inspired by the theory of spherical harmonics as presented in detail
in \cite{rudin1980} (and sketched in Section~\ref{Hpq-C^n}), that is, the study of harmonic polynomials in finitely many complex variables $z$ and $\overline{z}$ on
the $n$-dimensional sphere $\mathbb{S}_n$.

\smallskip

\section{Unitary harmonics and their density}

In this  section we start to extend parts of the setting of spherical  harmonics on the sphere $\mathbb{S}_n$ (as briefly sketched in  Section~\ref{Hpq-C^n}) to unitary harmonics on the unitary group $\mathcal{U}_n$.

Denote by $M_n(\mathbb{C})$  the space of all $n \times n$-matrices $z = (z_{ij})$
with entries from $\mathbb{C}$. The subset of such matrices $\alpha  = (\alpha_{ij})$ with entries from  $\mathbb{N}_0$ is
denoted by $M_n(\mathbb{N}_0)$. For  $z \in M_n(\mathbb{C})$ and $\alpha  = (\alpha_{ij}) \in  M_n(\mathbb{N}_0)$  we define
\[
z^\alpha = \prod_{i,j=1}^{n} z_{ij}^{\alpha_{ij}}\,.
\]
The  symbol $\mathfrak P (M_n(\mathbb{C}))$ stands for all  polynomials $f: M_n(\mathbb{C}) \to \mathbb{C}$ of the form
 \begin{align*}
f\left(z\right)=\sum_{(\alpha,\beta) \in J} c_{(\alpha,\beta)}(f)\,\,z^\alpha\overline{z}^\beta,
\end{align*}
where  $J$  is a finite subset of pairs
$(\alpha, \beta) \in M_n(\mathbb{N}_0) \times M_n(\mathbb{N}_0)$,
and
$(c_{(\alpha,\beta)}(f))_{(\alpha, \beta) \in J}$ are  complex coefficients. Observe that for every $(\alpha,\beta) \in J$
\[
c_{(\alpha,\beta)}(f) =  \frac{1}{\alpha! \beta!} \frac{\partial^kf}{\partial z^\alpha \partial\overline{z}^\beta}(0)\,,
\]
which in particular shows that the monomial coefficients $c_{(\alpha,\beta)}(f)$ of each $f \in \mathfrak P (M_n(\mathbb{C}))$
are unique. Note that the linear space $\mathfrak P (M_n(\mathbb{C}))$ carries a natural inner product which
for $f,g \in \mathfrak P (M_n(\mathbb{C}))$ is given by
\begin{equation}\label{inner}
\big\langle f,g\big\rangle_{\mathfrak P}:=\sum_{(\alpha,\beta)}\alpha!\,\beta!\,  c_{(\alpha,\beta)}(f)\overline{ c_{(\alpha,\beta)}(g)}\,.
\end{equation}
Every polynomial  $f \in \mathfrak P \big(M_n(\mathbb{C})\big) $ defines the differential operator $f(D)\colon \mathfrak P (M_n(\mathbb{C})) \to \mathfrak P(M_n(\mathbb{C}))$ given by 
\[
f\left(D\right):=\sum_{(\alpha,\beta)}  c_{(\alpha,\beta)}(f)\frac{\partial^k}{\partial z^\alpha \partial\overline{z}^\beta}\,,
\]
and then it is not difficult to see that
\begin{equation}\label{diff}
\big\langle f,g\big\rangle_{\mathfrak P}=\big[f(D)\overline g\big](0)\,.
\end{equation}
Given $k \in \mathbb{N}$, we denote by  
\[
\mathfrak P_k\big(M_n(\mathbb{C})\big)
\]
the subspace of all polynomials $f\in \mathfrak P (M_n(\mathbb{C}))$, which are supported on the index set of all
pairs $(\alpha, \beta)$ for which  $|\alpha|+|\beta|=k$, that is,  $c_{(\alpha,\beta)}(f) = 0$ whenever
$|\alpha|+|\beta|\neq k$. Clearly, all   $ f \in \mathfrak P_k (M_n(\mathbb{C}))$ satisfy that
$f(\lambda z) = \lambda^kf( z)$ for all $\lambda \in \mathbb{R}$ and $z \in M_n(\mathbb{C})$, which is the reason why  we call them  $k$-homogeneous. Moreover,
\begin{equation}\label{denso1}
\mathfrak P ( M_n(\mathbb{C}))=\text{span}\big\{\mathfrak P_k ( M_n(\mathbb{C})) \colon \,k\ge 0\big\}\,.
\end{equation}
The  polynomial $\mathbf t\in \mathfrak P_2\big(M_n(\mathbb{C})\big)$ given by
\[
{\mathbf t}(A):=\mathrm{tr}(A^*A), \quad\, 
A\in M_n(\mathbb{C})\,,
\]
where $\mathrm{tr}:M_n(\mathbb{C}) \to \mathbb{C}$ denotes the trace, is of special importance, since then
\[
\Delta : =\mathbf t (D) = \sum_{i,j}\frac{\partial^2}{\partial z_{ij}\partial\overline{z_{ij}}} :
\,\, \mathfrak P \big(M_n(\mathbb{C})\big) \to \mathfrak P \big(M_n(\mathbb{C})\big)\,.
\]
 is the Laplace operator.
A  polynomial $f \in  \mathfrak P \big(M_n(\mathbb{C})\big)$ is said to be harmonic, whenever  $\Delta f=0$, and we write
\[
\mathfrak{ H} \big( M_n(\mathbb{C})\big)
\]
for the subspace of all harmonic polynomials in $\mathfrak P (M_n(\mathbb{C}))$, and
$\mathfrak H_k ( M_n(\mathbb{C})\big)$
for all  $k$-homogeneous, harmonic polynomials. Obviously,
\begin{equation}\label{denso2}
\mathfrak H (M_n(\mathbb{C}))=\text{span}\big\{\mathfrak H_k (M_n(\mathbb{C})) \,:\, k\in \mathbb N\big\}\,.
\end{equation}

 The group $\mathcal{U}_n$ of all unitary $n\times n$ matrices $U = (u_{ij})_{1 \leq i,j \leq n}$ endowed with the topology induced by  $\mathcal{L}(\ell_2^n)$   forms a~non-abelian
compact group. It  is unimodular,
and we denote the integral of a function $f \in L_2(\mathcal{U}_n)$  mostly by $$\int_{\mathcal{U}_n} f(U) dU\,.$$
Integrals of this type form the so-called  Weingarten calculus,  which is  of outstanding importance in random matrix
theory, mathematical physics, and the theory of quantum information (see e,g. \cite{collins2006integration,kostenberger2021weingarten}).

Basically, we in this chapter only need   the precise values  for two concrete integrals
from the Weingarten calculus. The first one is
\begin{equation}\label{intformA}
\int_{\mathcal{U}_n} u_{i,j} \overline{u_{k,\ell}}dU = \frac{1}{n} \delta_{i,k}  \delta_{j,\ell}
\end{equation}
for all possible $1\leq i,j,k,\ell \leq n$, and the second one
\begin{equation}\label{intformB}
  \int_{\mathcal{U}_n} |\textrm{tr}(AU)|^2 dU = \frac{1}{n} \textrm{tr}(AA^\ast)
\end{equation}
for every $A \in  M_n(\mathbb{C})$ (see, e.g., \cite[p.~16]{cerezo2021cost}, \cite{collins2006integration}, or\cite[Corollary 3.6]{zhang2014matrix}).

By $\mathfrak P (\mathcal U_n)$ we denote the linear space of all restrictions  $f|_{\mathcal{U}_n}$ of polynomials
$f \in \mathfrak P( M_n(\mathbb{C}))$  to the unitary  group. All restrictions of harmonic polynomials on $M_n(\mathbb{C})$
to the unitary group  are denoted~by 
\[
\mathfrak H (\mathcal U_n)\,,
\]
and  such  polynomials we call unitary harmonics.
Similarly, we denote by $\mathfrak P_k (\mathcal U_n)$ and  $\mathfrak H_k(\mathcal U_n)$ the corresponding
spaces of $k$-homogeneous polynomials restricted to $\mathcal U_n$.

\begin{proposition} \label{harmlemA}
Given $k \in \mathbb{N}$,
\[
\mathfrak P_k (M_n(\mathbb{C}))=\mathfrak H_k (M_n(\mathbb{C}))\,\oplus\,\mathbf t\cdot \mathfrak H_{k-2} (M_n(\mathbb{C}))\,\oplus\,\mathbf\,t^2\cdot \mathfrak H_{k-4} (M_n(\mathbb{C}))\,\oplus\,\,\dots\,\,,
\]
where the last term of the sum is the span of $\mathbf t^{k/2}$ for even $k$ and $\mathbf t^{(k-1)/2}\cdot \mathfrak H_1(M_n(\mathbb{C}))   $ for odd $k$; the symbol $\oplus$ denotes the orthogonal sum with respect to the inner product $\big\langle \cdot, \cdot\big\rangle_{\mathfrak P}$ (defined in \eqref{inner}).
\end{proposition}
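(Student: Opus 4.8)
The plan is to prove the matrix analogue of the classical Fischer decomposition, following the scheme used for spherical harmonics on $\mathbb S_n$ (see \cite{rudin1980}). The engine is the \emph{Fischer adjunction}
\[
\big\langle \mathbf t \cdot f, g\big\rangle_{\mathfrak P} = \big\langle f, \Delta g\big\rangle_{\mathfrak P}\,, \qquad f \in \mathfrak P_{k-2}\big(M_n(\mathbb C)\big),\ g \in \mathfrak P_k\big(M_n(\mathbb C)\big)\,,
\]
which I would derive from \eqref{diff} as follows: the assignment $p \mapsto p(D)$ is an algebra homomorphism into the (commutative) algebra of constant coefficient differential operators and $\mathbf t(D) = \Delta$, so $(\mathbf t f)(D) = \Delta \circ f(D) = f(D) \circ \Delta$; since moreover $\Delta$ commutes with complex conjugation, one gets $\big\langle \mathbf t f, g\big\rangle_{\mathfrak P} = \big[\Delta\big(f(D)\overline g\big)\big](0) = \big[f(D)\big(\Delta \overline g\big)\big](0) = \big[f(D)\overline{\Delta g}\big](0) = \big\langle f, \Delta g\big\rangle_{\mathfrak P}$.

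Next I would establish the one step splitting $\mathfrak P_k\big(M_n(\mathbb C)\big) = \mathfrak H_k\big(M_n(\mathbb C)\big) \oplus \mathbf t \cdot \mathfrak P_{k-2}\big(M_n(\mathbb C)\big)$, orthogonal for $\big\langle \cdot,\cdot\big\rangle_{\mathfrak P}$. Here $\big\langle\cdot,\cdot\big\rangle_{\mathfrak P}$ is a genuine (positive definite) inner product on the finite dimensional space $\mathfrak P_k\big(M_n(\mathbb C)\big)$, being a weighted $\ell_2$-type inner product on the monomial coefficients, and $\mathfrak H_k\big(M_n(\mathbb C)\big) = \ker\big(\Delta\colon \mathfrak P_k(M_n(\mathbb C)) \to \mathfrak P_{k-2}(M_n(\mathbb C))\big)$. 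The adjunction gives $\mathfrak H_k \perp \mathbf t \cdot \mathfrak P_{k-2}$ at once; conversely, if $g$ is orthogonal to $\mathbf t \cdot \mathfrak P_{k-2}$, then choosing $f = \Delta g$ yields $\big\langle \Delta g, \Delta g\big\rangle_{\mathfrak P} = \big\langle \mathbf t \Delta g, g\big\rangle_{\mathfrak P} = 0$, hence $\Delta g = 0$. Iterating this splitting, using that multiplication by $\mathbf t$ is injective ($\mathfrak P(M_n(\mathbb C))$ is an integral domain and $\mathbf t \neq 0$) and the base cases $\mathfrak P_0 = \mathfrak H_0 = \mathbb C$ and $\mathfrak P_1 = \mathfrak H_1$ (since $\Delta$ annihilates every polynomial of degree $\le 1$), produces the vector space decomposition $\mathfrak P_k = \bigoplus_{j \ge 0} \mathbf t^j \cdot \mathfrak H_{k-2j}$ with last summand $\mathrm{span}\{\mathbf t^{k/2}\}$ if $k$ is even and $\mathbf t^{(k-1)/2}\cdot \mathfrak H_1$ if $k$ is odd.

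The main obstacle is to upgrade this to \emph{mutual} orthogonality of all the summands $\mathbf t^j\cdot \mathfrak H_{k-2j}$, not merely of consecutive ones. For this I would use the operator $\mathbf t\Delta\colon \mathfrak P_k(M_n(\mathbb C)) \to \mathfrak P_k(M_n(\mathbb C))$, which is self adjoint by two applications of the adjunction and has kernel $\mathfrak H_k$. A short product rule computation gives, for bihomogeneous $v$, the identity $\Delta(\mathbf t v) = (n^2 + \deg v)\,v + \mathbf t\,\Delta v$ (with $\deg v$ the total degree in $z$ and $\overline z$); iterating it shows that for harmonic bihomogeneous $h$ of total degree $k-2j$ the polynomial $\mathbf t^j h$ is an eigenvector of $\mathbf t\Delta$ with eigenvalue $\mu_j = j\,(n^2 + k - j - 1)$, which depends only on $j$. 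Since $\mathfrak H_{k-2j}$ is the direct sum of its bihomogeneous pieces, $\mathbf t^j\cdot \mathfrak H_{k-2j}$ lies entirely in the $\mu_j$ eigenspace of $\mathbf t\Delta$; and $\mu_{j+1} - \mu_j = n^2 + k - 2j - 2 \ge n^2 \ge 1$ for $0 \le j \le \lfloor k/2\rfloor - 1$, so the $\mu_j$ are pairwise distinct. Orthogonality of eigenspaces of a self adjoint operator then forces $\mathbf t^i\cdot \mathfrak H_{k-2i} \perp \mathbf t^j\cdot \mathfrak H_{k-2j}$ for $i \neq j$, which together with the spanning obtained above completes the proof.

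Alternatively, the eigen-identity can be fed straight into the adjunction: applying it $i$ times gives $\big\langle \mathbf t^i a, \mathbf t^j b\big\rangle_{\mathfrak P} = \big\langle a, \Delta^i(\mathbf t^j b)\big\rangle_{\mathfrak P}$, and for harmonic $b$ the recursion forces $\Delta^i(\mathbf t^j b)$ to be a scalar multiple of $\mathbf t^{\,j-i} b \in \mathbf t \cdot \mathfrak P_{k-2i-2}$ whenever $i < j$, which is orthogonal to $a \in \mathfrak H_{k-2i}$ by the one step splitting already established; this makes the self adjointness argument optional. Either way, the statement reduces to the two routine ingredients (the adjunction and the product rule identity for $\Delta(\mathbf t\,\cdot\,)$), the only mild subtlety being the bookkeeping of bidegrees.
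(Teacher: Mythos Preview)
Your proof is correct and follows essentially the same route as the paper: derive the Fischer adjunction $\langle \mathbf t f, g\rangle_{\mathfrak P} = \langle f, \Delta g\rangle_{\mathfrak P}$ from \eqref{diff}, use it to show $(\mathbf t\cdot \mathfrak P_{k-2})^\perp = \mathfrak H_k$ in $\mathfrak P_k$, and then iterate. The paper's proof stops after the one-step splitting and simply writes ``The proof finishes repeating this procedure for $\mathfrak P_{k-2},\mathfrak P_{k-4},\ldots$''; it does not spell out why the summands $\mathbf t^i\cdot\mathfrak H_{k-2i}$ and $\mathbf t^j\cdot\mathfrak H_{k-2j}$ are mutually orthogonal for $|i-j|\ge 2$, even though multiplication by $\mathbf t$ is not an isometry for $\langle\cdot,\cdot\rangle_{\mathfrak P}$.

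You correctly identified this as the only nontrivial point and supplied two clean arguments. Your alternative argument is the one closest to the paper's spirit and is in fact what the iteration tacitly relies on: applying the adjunction $i$ times gives $\langle \mathbf t^i a,\mathbf t^j b\rangle_{\mathfrak P}=\langle a,\Delta^i(\mathbf t^j b)\rangle_{\mathfrak P}$, and for harmonic $b$ the product rule forces $\Delta^i(\mathbf t^j b)\in \mathbf t\cdot\mathfrak P_{k-2i-2}$ whenever $i<j$, which is orthogonal to $a\in\mathfrak H_{k-2i}$ by the one-step splitting already proved. Your eigenvector argument via $\mathbf t\Delta$ (with eigenvalues $\mu_j=j(n^2+k-j-1)$) is a pleasant alternative that gives the same conclusion more structurally; it is not in the paper. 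A minor remark: the bihomogeneity caveat is unnecessary, since for any $v\in\mathfrak P_m$ the Euler identity already gives $\Delta(\mathbf t v)=(n^2+m)v+\mathbf t\,\Delta v$.
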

\begin{proof}
Given $g\in \mathfrak P_{k-2} (M_n(\mathbb{C}))$, let us for $A\in M_n(\mathbb{C})$ define
\[
h(A)=\mathbf t(A)g(A).
\]
Note that, since $\mathbf t(D)=\Delta,$  we  have $h(D)=\Delta \circ g(D)=g(D)\circ \Delta.$
Let $f\in\mathfrak P_k\big(M_n(\mathbb{C})\big)   $, then by \eqref{diff}
\[
\big\langle h,f\big\rangle_{\mathfrak{P}} =\big[h(D)\overline f\big](0) =\big[g(D)\big(\Delta \overline f\big)\big](0) = \big\langle g,\Delta f\big\rangle_{\mathfrak{P}}\,.
\]
Thus, $f\perp \mathbf t g$ for every $g\in \mathfrak P_{k-2}(M_n(\mathbb{C}))   $ if and only if $\Delta f\perp  g$ for every $g\in \mathfrak P_{k-2}(M_n(\mathbb{C}))$ if and only $\Delta f=0$, that is, $f \in \mathfrak H_k(M_n(\mathbb{C}))$, and consequently
\[
\mathfrak P_k(M_n(\mathbb{C}))    =\mathfrak H_k(M_n(\mathbb{C}))    \oplus \mathbf t\cdot\mathfrak P_{k-2}(M_n(\mathbb{C}))\,.
\]
The proof finishes repeating this procedure for $\mathfrak P_{k-2}(M_n(\mathbb{C})),\mathfrak P_{k-4}(M_n(\mathbb{C})), \ldots$
\end{proof}

\begin{corollary} \label{harmlemB}
For each $k,$
\begin{align}\label{eq:span of harmonic pols U_n}
\mathfrak P_k (\mathcal{U}_n)= \text{span}\,\big\{ \mathfrak H_\ell(\mathcal{U}_n) \colon  \ell \leq k \big\}\,.
\end{align}
Consequently, for every $f \in \mathfrak P(M_n(\mathbb{C}))$ there is $g \in \mathfrak H(M_n(\mathbb{C}))$ such that
both functions coincide on $\mathcal{U}_n$.
\end{corollary}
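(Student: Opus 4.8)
The plan is to deduce everything from Proposition~\ref{harmlemA} together with one elementary observation: the quadratic form $\mathbf t$ that is used to build the Laplacian, although it raises homogeneity by two on $M_n(\mathbb{C})$, restricts to a \emph{constant} on the unitary group. Indeed, for $U\in\mathcal U_n$ we have $U^{*}U=I_n$, hence $\mathbf t(U)=\mathrm{tr}(U^{*}U)=\mathrm{tr}(I_n)=n$; consequently, once we pass to functions on $\mathcal U_n$, multiplication by a power $\mathbf t^{\,j}$ is just multiplication by the scalar $n^{j}$.

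First I would restrict the orthogonal decomposition of Proposition~\ref{harmlemA} to $\mathcal U_n$. Each summand $\mathbf t^{\,j}\!\cdot\mathfrak H_{k-2j}(M_n(\mathbb{C}))$ then collapses to $n^{j}\,\mathfrak H_{k-2j}(\mathcal U_n)=\mathfrak H_{k-2j}(\mathcal U_n)$ as a space of functions on $\mathcal U_n$ (the last term, $\mathrm{span}\,\mathbf t^{\,k/2}$ for even $k$, resp.\ $\mathbf t^{\,(k-1)/2}\!\cdot\mathfrak H_1(M_n(\mathbb{C}))$ for odd $k$, restricting to the constants, resp.\ to $\mathfrak H_1(\mathcal U_n)$). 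Summing over $j$ gives $\mathfrak P_k(\mathcal U_n)=\mathrm{span}\{\mathfrak H_\ell(\mathcal U_n)\colon \ell\le k\}$, where the indices $\ell$ that actually occur run through $k,k-2,k-4,\dots$; the reverse inclusion is then immediate, since each such $\mathfrak H_\ell(\mathcal U_n)$ is, up to the scalar $n^{j}$, one of the summands appearing on the right-hand side after restriction. This establishes \eqref{eq:span of harmonic pols U_n}.

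For the final assertion I would take an arbitrary $f\in\mathfrak P(M_n(\mathbb{C}))$, use \eqref{denso1} to split it as $f=\sum_k f_k$ with $f_k\in\mathfrak P_k(M_n(\mathbb{C}))$, and apply the first part to each $f_k$: on $\mathcal U_n$ it agrees with a finite linear combination of restrictions of homogeneous harmonic polynomials, hence with the restriction of a single $g_k\in\mathfrak H(M_n(\mathbb{C}))$ (recall that $\mathfrak H(M_n(\mathbb{C}))$ is a linear space by \eqref{denso2}). Here one also records that $\mathfrak P_0(\mathcal U_n)=\mathbb{C}\subset\mathfrak P_2(\mathcal U_n)$, once more because $\mathbf t\equiv n$ on $\mathcal U_n$, so the constant parts cause no difficulty. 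Then $g:=\sum_k g_k\in\mathfrak H(M_n(\mathbb{C}))$ is harmonic and coincides with $f$ on $\mathcal U_n$, as desired.

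I do not expect a genuine obstacle here: the argument is a bookkeeping exercise once the identity $\mathbf t|_{\mathcal U_n}\equiv n$ is in hand. The only mildly delicate point is keeping track of the indices --- the values $\ell\le k$ that actually appear share the parity of $k$ --- but this is exactly the shape in which the terms come out of Proposition~\ref{harmlemA}, so nothing substantial has to be overcome.
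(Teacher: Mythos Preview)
Your proposal is correct and follows exactly the paper's approach: observe that $\mathbf t\equiv n$ on $\mathcal U_n$, restrict the decomposition of Proposition~\ref{harmlemA}, and then invoke \eqref{denso1} for the second claim. You are in fact slightly more careful than the paper in noting that the indices $\ell$ produced by the decomposition share the parity of $k$; this is the precise content of the identity, and the inclusion $\mathfrak P_k(\mathcal U_n)\subset\mathrm{span}\{\mathfrak H_\ell(\mathcal U_n):\ell\le k\}$ is all that is used downstream.
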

\begin{proof}
Since  $\mathbf t(U)=n$ for every $U\in \mathcal U_n$,
the assertion in \eqref{eq:span of harmonic pols U_n} is a consequence of Proposition~\ref{harmlemA}.
Then the last claim follows from \eqref{denso1}\,.
 \end{proof}

In Lemma~\ref{RudA} we recalled from  \cite{rudin1980} that all  spherical harmonics on $\mathbb{S}_n$ are dense in
$C(\mathbb{S}_n)$. For unitary harmonics the following density theorem  is an analog, which is crucial for our
coming purposes.

\begin{theorem}\label{teo: harmonic dense in C(U)}
$\mathfrak H (\mathcal U_n)$ is dense in $C(\mathcal U_n)$, and hence also in $L^2(\mathcal U_n).$
\end{theorem}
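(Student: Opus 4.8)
The plan is to deduce density of $\mathfrak H(\mathcal U_n)$ in $C(\mathcal U_n)$ by combining the Stone--Weierstrass theorem on the compact group $\mathcal U_n$ with Corollary~\ref{harmlemB}. First I would consider the algebra $\mathfrak P(\mathcal U_n)$ of all restrictions to $\mathcal U_n$ of polynomials $f(z,\overline z) \in \mathfrak P(M_n(\mathbb C))$. This is genuinely a subalgebra of $C(\mathcal U_n)$: it is closed under pointwise products (the product of two polynomials in $z$ and $\overline z$ is again one), it contains the constants, and it is closed under complex conjugation since $\overline{f(z,\overline z)}$ is obtained from $f$ by swapping the roles of the two groups of variables. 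It separates the points of $\mathcal U_n$: the coordinate functions $U \mapsto u_{ij}$ already do this, as two distinct matrices differ in some entry. Hence by the complex Stone--Weierstrass theorem, $\mathfrak P(\mathcal U_n)$ is dense in $C(\mathcal U_n)$ in the uniform norm.

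Next I would invoke Corollary~\ref{harmlemB}, which states that for every $f \in \mathfrak P(M_n(\mathbb C))$ there is $g \in \mathfrak H(M_n(\mathbb C))$ with $f = g$ on $\mathcal U_n$; equivalently, $\mathfrak P(\mathcal U_n) = \mathfrak H(\mathcal U_n)$ as subspaces of $C(\mathcal U_n)$. The point is that every power of the polynomial $\mathbf t$ restricts to a constant on $\mathcal U_n$ (namely $\mathbf t(U) = \operatorname{tr}(U^\ast U) = \operatorname{tr}(\Id) = n$), so in the orthogonal decomposition of Proposition~\ref{harmlemA} the factors $\mathbf t^j$ disappear upon restriction and only the harmonic summands survive. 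Combining this identification with the Stone--Weierstrass density from the previous step immediately gives that $\mathfrak H(\mathcal U_n)$ is dense in $C(\mathcal U_n)$.

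Finally, density in $L^2(\mathcal U_n) = L^2(\mathcal U_n, \mu)$ follows formally: $\mu$ is a finite (probability) measure on the compact space $\mathcal U_n$, so $C(\mathcal U_n)$ is dense in $L^2(\mathcal U_n)$, and $\|\cdot\|_{L^2} \le \|\cdot\|_\infty$ on $C(\mathcal U_n)$; thus a subset dense in $C(\mathcal U_n)$ for the sup-norm is a fortiori dense in $L^2(\mathcal U_n)$.

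The only genuinely substantive input here is Corollary~\ref{harmlemB} (hence Proposition~\ref{harmlemA}), which has already been established in the excerpt, so the main thing to get right is the careful verification that $\mathfrak P(\mathcal U_n)$ satisfies all Stone--Weierstrass hypotheses --- in particular the closure under conjugation, which uses that conjugating a polynomial in $(z,\overline z)$ produces another such polynomial, and point separation via the coordinate functions $u_{ij}$. I do not anticipate any real obstacle beyond bookkeeping.
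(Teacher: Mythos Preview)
Your proposal is correct and follows essentially the same approach as the paper: apply Stone--Weierstrass to $\mathfrak P(\mathcal U_n)$ (using that the coordinate functions separate points and the algebra is closed under conjugation), then invoke Corollary~\ref{harmlemB} to identify $\mathfrak P(\mathcal U_n)$ with $\mathfrak H(\mathcal U_n)$ on the unitary group. You simply spell out more of the routine verifications and the passage to $L^2$; the paper's version is a terse two-line summary of the same argument.
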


\begin{proof}
Note  first that $\mathfrak P (\mathcal U_n)$ is a~subalgebra of $C(\mathcal U_n)$ closed under conjugation, and that the collection of all coordinate functions $z_{ij}$ separates the points of $\mathcal U_n$. Thus, by the Stone-Weierstrass Theorem, $\mathfrak P (\mathcal U_n)$ is dense in $C (\mathcal U_n)$. Then the claim  is an immediate consequence of Corollary~\ref{harmlemB}.
\end{proof}

We need  to introduce another notation. For $p,q\in\mathbb N_0$ let
\[
\mathfrak H_{(p,q)}(\mathcal U_n)\subset \mathfrak{H}(\mathcal U_n)
\]
denote the space of all harmonic polynomials which are $p$-homogeneous in $z=(z_{ij})$ and $q$-homogeneous in $\overline{z}=(\overline{z_{ij}}).$ Clearly, we have
\begin{equation}\label{denso4}
\mathfrak H (\mathcal U_n)=\text{span}\,\big\{\mathfrak H_{(p,q)}(\mathcal U_n) \,:\, p,q\in\mathbb N_0\big\}\,.
\end{equation}

Recall that a polynomial  $f\colon \mathbb{S}_n \to \mathbb{C}$ of the form
\begin{align*}
f\left(z\right)=\sum_{\alpha \in J} c_{\alpha}(f)\,\,z^\alpha\overline{z}^\beta,
\end{align*}
where  $J \subset \mathbb{N}_o^n$  is any finite index set, is called a spherical harmonic, whenever  $\triangle f = 0$.
An important difference between spherical harmonics and unitary harmonics  is that for the case of the sphere the corresponding spaces $\mathfrak{H}_{(p,q)}(\mathbb{S}_n)$ are mutually orthogonal in $L_2(\mathbb{S}_n)$ (see \cite[Theorem 12.2.3]{rudin1980}). But for the subspaces $\mathfrak{H}_{(p,q)}(\mathcal{U}_n)$ of $L^2(\mathcal{U}_n)$ this is no longer true. To see an example take
$f\in \mathfrak{H}_{(1,0)}(\mathcal{U}_n)$ and
$g\in \mathfrak{H}_{(2,1)}(\mathcal{U}_n)$
defined by
$f(U)=u_{1,1}$ and $g(U)=\overline{u_{2,2}}u_{1,2}u_{2,1}$. Then
\begin{equation}\label{no-ortho}
  \big\langle f,g\big\rangle_{L_2}=\int_{\mathcal U_n}u_{1,1}u_{2,2}\overline{u_{1,2}u_{2,1}}dU=-\frac1{(n-1)n(n+1)}
\end{equation}
(see, e.g., \cite[Section 4.2]{hiai2000semicircle}).
On the other hand, using basic properties of the Haar measure  on $\mathcal U_n$, it is not difficult to prove that
\begin{equation*}
\text{ $\mathfrak{H}_{(p,q)}(\mathcal{U}_n)\perp \mathfrak{H}_{(p',q')}(\mathcal{U}_n)$ \,\,\,\, whenever \,\,\,\, $p+q=p'+q'$ }
\end{equation*}
(see \cite[\S 29]{hewitt2013abstract}, or \cite{kostenberger2021weingarten}).
Thus we have  the following interesting remark (not needed for our further purposes).
\begin{remark}\label{prop k-homog rudin 12.2.2}
$\mathfrak H_k(\mathcal{U}_n)= \mathfrak H_{(k,0)}(\mathcal{U}_n)\, \oplus\, \mathfrak H_{(k-1,1)}(\mathcal{U}_n)
\oplus\dots \, \oplus\, \mathfrak H_{(0,k)}(\mathcal{U}_n) $, where $\oplus$ indicates the orthogonal sum in $L_2(\mathcal U_n)$.
\end{remark}
Note also that in contrast to \eqref{no-ortho} we have  $\big\langle f,g\big\rangle_{\mathfrak P}= 0$ , so the Euclidean structure, which $\mathfrak{H}_{(p,q)}(\mathcal{U}_n)$ inherits from $L_2(\mathcal{U}_n)$, is different from that induced by the inner product from \eqref{inner}.

The next proposition  links the theory of homogeneous polynomials on the Banach space $\mathcal L(\ell_2^n)$ with
unitary harmonics.

\begin{proposition}\label{prop: pols in B(H) seen as functions on U(H)}
For any $k\ge1,$ $\mathfrak H_{(k,0)}(\mathcal U_n)$, as a subspace of $C(\mathcal U_n)$, is isometrically isomorphic to $\mathcal P_k(\mathcal L(\ell_2^n))$. More precisely, the restriction map
\[
\mathcal P_k(\mathcal L(\ell_2^n)) \to \mathfrak H_{(k,0)}(\mathcal U_n)\,, \,\, \,\, \,\, \,\,f \mapsto  f|_{\mathcal U_n}
\]
is a surjective isometry.
\end{proposition}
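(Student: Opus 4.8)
The plan is to reduce the statement to the singular value decomposition together with the maximum modulus principle on a polydisc. First observe that the Laplace operator $\Delta=\sum_{i,j}\frac{\partial^2}{\partial z_{ij}\partial\overline{z_{ij}}}$ annihilates every polynomial that depends only on the variables $z_{ij}$ (and not on the $\overline{z_{ij}}$); hence each $f\in\mathcal P_k(M_n(\mathbb{C}))$ is harmonic, and by the very definition of $\mathfrak H_{(p,q)}$ one has $\mathfrak H_{(k,0)}(\mathcal U_n)=\{\,f|_{\mathcal U_n}\colon f\in\mathcal P_k(M_n(\mathbb{C}))\,\}$. Since $\mathcal P_k(M_n(\mathbb{C}))$ and $\mathcal P_k(\mathcal L(\ell_2^n))$ coincide as vector spaces (only the norms differ), the restriction map in the statement is well defined, linear and surjective. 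It then only remains to show that it is isometric; injectivity will follow automatically.

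For the norm identity I would use the following description of the operator-norm unit ball, coming from the singular value decomposition: every $A\in M_n(\mathbb{C})$ can be written as $A=U\,\mathrm{diag}(s_1,\dots,s_n)\,V$ with $U,V\in\mathcal U_n$ and $s_i\ge 0$, and $\|A\|_{\mathcal L(\ell_2^n)}=\max_i s_i$, so $\|A\|\le 1$ is equivalent to all $s_i\in[0,1]$. Fix $f\in\mathcal P_k(\mathcal L(\ell_2^n))$. Since $\mathcal U_n\subset\overline B_{\mathcal L(\ell_2^n)}$ and $f$ is continuous and homogeneous, one inequality is immediate: $\|f|_{\mathcal U_n}\|_{C(\mathcal U_n)}\le\sup_{\|A\|\le 1}|f(A)|=\|f\|_{\mathcal P_k(\mathcal L(\ell_2^n))}$. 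For the reverse inequality, take $A=U\,\mathrm{diag}(s)\,V$ in the closed unit ball and consider $h(\zeta):=f\bigl(U\,\mathrm{diag}(\zeta_1,\dots,\zeta_n)\,V\bigr)$, which is a holomorphic polynomial on $\mathbb{C}^n$ (a composition of linear maps with $f$). Whenever $\zeta\in\mathbb{T}^n$ the matrix $\mathrm{diag}(\zeta)$ is unitary, hence $U\,\mathrm{diag}(\zeta)\,V\in\mathcal U_n$ and therefore $|h(\zeta)|\le\|f|_{\mathcal U_n}\|_{C(\mathcal U_n)}$ on $\mathbb{T}^n$. By the maximum modulus principle for holomorphic functions on the closed polydisc $\overline{\mathbb{D}}^n$, and since $(s_1,\dots,s_n)\in[0,1]^n\subset\overline{\mathbb{D}}^n$, we get $|f(A)|=|h(s)|\le\sup_{\mathbb{T}^n}|h|\le\|f|_{\mathcal U_n}\|_{C(\mathcal U_n)}$. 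Taking the supremum over all $A$ in the unit ball yields $\|f\|_{\mathcal P_k(\mathcal L(\ell_2^n))}\le\|f|_{\mathcal U_n}\|_{C(\mathcal U_n)}$, so the restriction map is an isometry, and in particular injective.

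The argument is essentially routine; the points that require a little care are the identification $\mathfrak H_{(k,0)}(M_n(\mathbb{C}))=\mathcal P_k(M_n(\mathbb{C}))$ via harmonicity of holomorphic polynomials, and the verification that $h$ is genuinely a polynomial so that the polydisc maximum principle is applicable. The main thing to get right is that the singular value decomposition really does exhaust the closed unit ball of $\mathcal L(\ell_2^n)$ with diagonal part in $[0,1]^n$ — this is the one external fact the proof leans on, and it is precisely what lets one pass from a supremum over $\mathcal U_n$ to a supremum over the whole operator-norm ball.
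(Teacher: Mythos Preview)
Your proof is correct. The paper's own proof is a one-liner: it simply invokes a theorem of Nelson (also in Harris) stating that for any function continuous on the closed operator-norm ball and holomorphic on the open ball, the supremum over the ball equals the supremum over $\mathcal U_n$ --- in other words, that $\mathcal U_n$ is the distinguished (Shilov) boundary of $\overline B_{\mathcal L(\ell_2^n)}$. You instead give a self-contained proof of exactly this fact (for polynomials) via the singular value decomposition and the maximum modulus principle on the polydisc; this is in fact essentially how Nelson's theorem is proved. So the underlying idea is the same, but you have unpacked the cited black box into an elementary argument, which makes your proof longer but free of external references.
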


\begin{proof}
The restriction map is obviously well-defined, linear and surjective. To prove  that it is isometric we use a result
of Nelson \cite{nelson1961distinguished} (see also \cite[Theorem 1]{harris1997holomorphic}) showing that for any
complex-valued continuous function $f$ on the closed unit ball of ${\mathcal L(\ell_2^n)}$, which is   holomorphic
on the open unit ball $B_{\mathcal L(\ell_2^n)}$, we have
\begin{equation*}
    \sup_{T \in B_{\mathcal L(\ell_2^n)} }|f(T)|=\sup_{U\in\mathcal U_n}|f(U)| \,. \qedhere
\end{equation*}
\end{proof}

As a by-product, we obtain the following consequence.

\begin{corollary}
For any $k\ge1$
\[
\boldsymbol{\lambda}\big( \mathfrak H_{(k,0)}(\mathcal U_n)  \big) \,\,\,\sim_{C(k)} \,\,n^k\,,
\]
 where and $\mathfrak H_{(k,0)}(\mathcal U_n)$ is considered as a subspace of $C(\mathcal U_n)$.
\end{corollary}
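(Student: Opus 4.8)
The plan is to transport the problem to polynomials on the matrix algebra $\mathcal{L}(\ell_2^n)$ and then to sandwich $\boldsymbol{\lambda}\big(\mathcal{P}_k(\mathcal{L}(\ell_2^n))\big)$ between $\boldsymbol{\lambda}\big(\mathcal{S}_1(n)\big)^k$ and a $k$-dependent multiple of it. First, by Proposition~\ref{prop: pols in B(H) seen as functions on U(H)} the restriction map is a surjective isometry from $\mathcal{P}_k(\mathcal{L}(\ell_2^n))$ onto $\mathfrak{H}_{(k,0)}(\mathcal{U}_n)$ (the latter with the sup norm on $\mathcal{U}_n$); since the absolute projection constant is an isometric invariant, it suffices to show $\boldsymbol{\lambda}\big(\mathcal{P}_k(\mathcal{L}(\ell_2^n))\big) \sim_{C(k)} n^k$. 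By the trace duality~\eqref{dualitytr} one has $\mathcal{L}(\ell_2^n)^\ast = \mathcal{S}_1(n)$, and~\eqref{asy1} gives $n/3 \le \boldsymbol{\lambda}\big(\mathcal{S}_1(n)\big) \le n$, so $\boldsymbol{\lambda}\big(\mathcal{S}_1(n)\big)^k \sim_{C(k)} n^k$; hence the whole claim reduces to
\[
\boldsymbol{\lambda}\big(\mathcal{P}_k(\mathcal{L}(\ell_2^n))\big) \,\sim_{C(k)}\, \boldsymbol{\lambda}\big(\mathcal{L}(\ell_2^n)^\ast\big)^k\,.
\]

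For the upper bound I would argue directly, using only facts from Chapter~\ref{chapter: general preliminaries}. Since $\mathcal{L}(\ell_2^n)$ is finite dimensional, $\mathcal{P}_k(\mathcal{L}(\ell_2^n)) = \otimes_{\varepsilon_s}^k \mathcal{S}_1(n)$ embeds into $\mathcal{L}_k(\mathcal{L}(\ell_2^n)) = \otimes_\varepsilon^k \mathcal{S}_1(n)$ via $P \mapsto \overset{\vee}{P}$, a map of norm $\ccc\big(k,\mathcal{L}(\ell_2^n)\big) < e^k$ by~\eqref{projl1}, with the norm-one left inverse $T \mapsto [z \mapsto T(z,\dots,z)]$. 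Invoking the elementary estimate $\boldsymbol{\lambda}(E) \le \|u\|\,\|v\|\,\boldsymbol{\lambda}(Z)$, valid whenever $u\colon E \to Z$ and $v\colon Z \to E$ satisfy $v u = \id_E$ (put $E$ isometrically into some $\ell_\infty(S)$, extend $u$ to all of $\ell_\infty(S)$ by injectivity, then compose with $v$ and a minimal projection onto $Z$), this yields $\boldsymbol{\lambda}\big(\mathcal{P}_k(\mathcal{L}(\ell_2^n))\big) \le e^k\,\boldsymbol{\lambda}\big(\otimes_\varepsilon^k \mathcal{S}_1(n)\big)$. Finally $\boldsymbol{\lambda}$ is submultiplicative under the injective tensor product: embedding $\mathcal{S}_1(n) \hookrightarrow \ell_\infty^N$ with a minimal projection $\pi$ and using $\otimes_\varepsilon^k \ell_\infty^N = \ell_\infty^{N^k}$, the operator $\pi^{\otimes k}$ is a projection of $\ell_\infty^{N^k}$ onto $\otimes_\varepsilon^k \mathcal{S}_1(n)$ with norm $\|\pi\|^k$, so $\boldsymbol{\lambda}\big(\otimes_\varepsilon^k \mathcal{S}_1(n)\big) \le \boldsymbol{\lambda}\big(\mathcal{S}_1(n)\big)^k \le n^k$. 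Altogether $\boldsymbol{\lambda}\big(\mathcal{P}_k(\mathcal{L}(\ell_2^n))\big) \le e^k n^k$.

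The lower bound is the genuine difficulty and is the step that forces in the heavier machinery. The naive device -- restricting polynomials to a $1$-complemented classical subspace of $\mathcal{L}(\ell_2^n)$ -- only produces order $n^{k/2}$: the diagonal copy of $\ell_\infty^n$ gives $\boldsymbol{\lambda}\big(\mathcal{P}_k(\ell_\infty^n)\big) \ge 2^{-k/2}\sqrt{|\Lambda(k,n)|}$ by Corollary~\ref{manydimensionsB}, a column copy of $\ell_2^n$ gives $\boldsymbol{\lambda}\big(\mathcal{P}_k(\ell_2^n)\big)$ by Corollary~\ref{RW}, and so on, all of order $n^{k/2}$; one must exploit the full $n^2$-dimensional structure of $\mathcal{L}(\ell_2^n)$. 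The clean route is the main result of Chapter~\ref{tensor-pro-me}, $\boldsymbol{\lambda}\big(\mathcal{P}_J(X_n)\big) \sim_{C(m)} \boldsymbol{\lambda}(X_n^\ast)^m$ for index sets $J$ of degree $m$ and a wide class of spaces $X_n = (\mathbb{C}^n,\|\cdot\|)$ containing $\mathcal{L}(\ell_2^n)$; applied with $X_n = \mathcal{L}(\ell_2^n)$, $m = k$, $J = \Lambda(k,n^2)$ it gives $\boldsymbol{\lambda}\big(\mathcal{P}_k(\mathcal{L}(\ell_2^n))\big) \succ_{C(k)} \boldsymbol{\lambda}\big(\mathcal{S}_1(n)\big)^k \succ_{C(k)} n^k$, the last step by~\eqref{asy1}. (One can equally feed the probabilistic lower bounds for unconditional-basis and Gordon--Lewis constants of Chapter~\ref{Unconditionality} into this scheme; in either case the point is that $\mathcal{S}_1(n)$ already has $\boldsymbol{\lambda}$ of the optimal order $n$.) Note that the integral-formula technique of the present chapter does \emph{not} help here: $\mathfrak{H}_{(k,0)}(\mathcal{U}_n) = \mathcal{P}_k(\mathcal{L}(\ell_2^n))$ decomposes into several isotypic pieces (one per partition of $k$) under the bi-translation action of $\mathcal{U}_n \times \mathcal{U}_n$, so there is no unique commuting projection and Theorem~\ref{rudy} cannot be applied as in Theorems~\ref{polleqm} and~\ref{main}. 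Combining the two bounds with Proposition~\ref{prop: pols in B(H) seen as functions on U(H)} completes the proof, the lower bound being the main obstacle.
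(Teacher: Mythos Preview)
Your approach is correct and matches the paper's: reduce to $\mathcal{P}_k(\mathcal{L}(\ell_2^n))$ via Proposition~\ref{prop: pols in B(H) seen as functions on U(H)}, then apply the tensor-product result $\boldsymbol{\lambda}(\mathcal{P}_k(X)) \sim_{C(k)} \boldsymbol{\lambda}(X^\ast)^k$ from Chapter~\ref{tensor-pro-me} (the paper simply cites Corollary~\ref{immediateABC}) combined with $\boldsymbol{\lambda}(\mathcal{S}_1(n)) \sim n$ from~\eqref{asy1}. Your unpacked upper-bound argument is exactly the first half of Theorem~\ref{tensor}, and for the lower bound the operative hypothesis in that theorem is only that $\mathcal{L}(\ell_2^n)$ has \emph{enough symmetries} (which it does, via the bi-unitary action $A\mapsto UAV$), not the Banach-lattice condition appearing in the downstream corollaries.
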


\begin{proof}
As mentioned in \eqref{dualitytr} and  \eqref{asy1} we have
\[
\boldsymbol{\lambda}\big(\mathcal{L}(\ell_2^n)^\ast\big) = \boldsymbol{\lambda}\big (\mathcal{S}_1(n)\big) \sim_C  n\,.
\]
On the other hand, we know from Corollary~\ref{immediateABC} that
\[
\boldsymbol{\lambda}\big(\mathfrak H_{(k,0)}(\mathcal U_n)\big)
=\boldsymbol{\lambda}\big( \mathcal{P}_k(\mathcal{L}(\ell_2^n))\big)
\,\,\,\sim_{C(k)} \,\, \boldsymbol{\lambda}\big(\mathcal{L}(\ell_2^n)^\ast\big)^k\,,
\]
and this completes the proof.
\end{proof}

\smallskip

\section{Reproducing kernels}
For any  subspace $S \subset L_2(\mathcal U_n)$, we denote by $\pi_S$
the orthogonal projection on $L_2(\mathcal U_n)$ onto  $S$. Moreover, every operator  $T: M_n(\mathbb{C}^n) \to M_n(\mathbb{C}^n)$
that leaves $\mathcal U_n$ invariant  (i.e., $T\mathcal U_n\subset \mathcal U_n$), defines a so-called composition operator
\[
C_{T}: L_2(\mathcal U_n) \to L_2(\mathcal U_n)\,,\,\,\,\,\, f \mapsto f \circ T\,.
\]
There are in fact two such operators $T$, leaving $\mathcal U_n$ invariant,  of special interest -- the left and right multiplication
operators $L_V$ and $R_V$ with respect to  $V \in \mathcal U_n $, which for $U \in M_n(\mathbb{C}^n)$ are given by
\[
\text{$L_V (U) = VU$ \, \,\, \, and \, \,\, \, $R_V (U) = UV$ }.
\]
A subspace $S \subset L_2(\mathcal U_n)$ is said to be $\mathcal U_n$-invariant whenever it is invariant under all possible
composition operators $C_{L_V}$ and $C_{R_V}$ with $V \in \mathcal U_n $\,.

In the following we need a technical lemma,  which  mimics the well-known concept of 'reproducing kernels' from Hilbert spaces theory.

\begin{lemma}\label{theorem kernel 12.2.5}
Given a $\mathcal U_n$-invariant subspace $S$ of $ L_2(\mathcal U_n)$ with $\{0\}\ne S\subset C(\mathcal{U}_n)$ and $U\in\mathcal{U}_n$, there exists a unique function $K_U^S\in S\subset L_2(\mathcal U_n)$ such that
for every $f\in L_2(\mathcal{U}_n)$
\begin{itemize}
\item[(i)]\label{12.2.5 (1,3)}  $(\pi_Sf)(U)=\big\langle f, K_U^S\big\rangle_{L_2}
=\int_{\mathcal{U}_n}f(V)\overline{K_U^S(V)}dV=\int_{\mathcal{U}_n}f(V)K_V^S(U)dV \,,$
\end{itemize}
and moreover for every $V\in\mathcal U_n$
\begin{itemize}
\item[(ii)]
\label{12.2.5 (2)} $K_U^S(V)= \big\langle K_U^S, K_V^S\big\rangle_{L_2}=\overline{K_V^S(U)}$\,,
\item[{(iii)}]\label{12.2.5 (4)}
$
K_U^S\circ L_{V^{-1}}=K_{VU}^S=K_V^S\circ R_{U^{-1}}\,,
$
\item[(iv)]\label{12.2.5 (6)} $K_V^S(V)=K_{Id}^S(Id)>0\,.$
\end{itemize}
\end{lemma}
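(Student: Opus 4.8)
The plan is to construct $K_U^S$ as the Riesz representative on the finite-dimensional Hilbert space $S$ of the evaluation functional at $U$, and then to read off properties (i)--(iv) from the $\mathcal{U}_n$-invariance of $S$ together with uniqueness of Riesz representatives. First I would observe that $S$ is finite dimensional: indeed $S\subset \mathfrak{P}(\mathcal{U}_n)$-type spaces in our applications are finite dimensional, but more to the point the hypotheses are used as in Rudin's treatment — one fixes an orthonormal basis $\varphi_1,\dots,\varphi_d$ of $S$ (with respect to $\langle\cdot,\cdot\rangle_{L_2}$) consisting of continuous functions, which is possible since $S\subset C(\mathcal{U}_n)$, and sets
\[
K_U^S(V) := \sum_{j=1}^d \overline{\varphi_j(U)}\,\varphi_j(V)\,,\qquad U,V\in\mathcal{U}_n\,.
\]
Then $K_U^S\in S$ (it is a linear combination of the $\varphi_j$ in the variable $V$), and for any $f\in L_2(\mathcal{U}_n)$ one computes $\langle f, K_U^S\rangle_{L_2} = \sum_j \varphi_j(U)\langle f,\varphi_j\rangle_{L_2} = (\pi_S f)(U)$, since $\pi_S f = \sum_j \langle f,\varphi_j\rangle_{L_2}\varphi_j$. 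The last two equalities in (i) are just unravelling the definition of the inner product and the symmetry $\overline{K_U^S(V)} = \sum_j \varphi_j(U)\overline{\varphi_j(V)} = K_V^S(U)$, which is also exactly statement (ii)'s second equality; the first equality in (ii), $K_U^S(V) = \langle K_U^S, K_V^S\rangle_{L_2}$, follows by applying the reproducing property (i) to $f = K_U^S\in S$ (so $\pi_S f = f$). Uniqueness of $K_U^S$ is immediate: if $\widetilde{K}$ also satisfies (i) then $\langle f, K_U^S - \widetilde{K}\rangle_{L_2}=0$ for all $f\in L_2$, in particular for $f\in S$, forcing $K_U^S = \widetilde{K}$ since both lie in $S$.

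Next I would prove the equivariance property (iii). Fix $V\in\mathcal{U}_n$ and consider the composition operator $C_{L_{V^{-1}}}\colon L_2(\mathcal{U}_n)\to L_2(\mathcal{U}_n)$, $f\mapsto f\circ L_{V^{-1}}$. Because left translation preserves the Haar measure, $C_{L_{V^{-1}}}$ is a unitary operator on $L_2(\mathcal{U}_n)$, and because $S$ is $\mathcal{U}_n$-invariant it commutes with $\pi_S$. Hence for any $f\in L_2(\mathcal{U}_n)$ and $U\in\mathcal{U}_n$,
\[
\big\langle f, K_U^S\circ L_{V^{-1}}\big\rangle_{L_2}
= \big\langle f\circ L_{V}, K_U^S\big\rangle_{L_2}
= (\pi_S(f\circ L_V))(U)
= (\pi_S f)(VU)
= \big\langle f, K_{VU}^S\big\rangle_{L_2}\,,
\]
where I used that $C_{L_V}$ is the adjoint (=inverse) of $C_{L_{V^{-1}}}$, that $\pi_S$ commutes with $C_{L_V}$, and that $(C_{L_V}g)(U) = g(VU)$. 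Since $K_U^S\circ L_{V^{-1}}\in S$ (invariance again) and this holds for all $f$, uniqueness gives $K_U^S\circ L_{V^{-1}} = K_{VU}^S$. The identity $K_{VU}^S = K_V^S\circ R_{U^{-1}}$ is proved the same way, using the right-translation composition operator $C_{R_{U^{-1}}}$, its unitarity, and the commutation with $\pi_S$; one gets $\langle f, K_V^S\circ R_{U^{-1}}\rangle_{L_2} = (\pi_S f)(VU)$ as well.

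Finally (iv): from (iii) with $U$ replaced by $\mathrm{Id}$ we get $K_V^S = K_{\mathrm{Id}}^S\circ L_{V^{-1}}$, hence $K_V^S(V) = K_{\mathrm{Id}}^S(V^{-1}V) = K_{\mathrm{Id}}^S(\mathrm{Id})$, so the quantity $K_V^S(V)$ is independent of $V$. Positivity is seen from $K_{\mathrm{Id}}^S(\mathrm{Id}) = \sum_{j=1}^d |\varphi_j(\mathrm{Id})|^2 \ge 0$, together with the fact that it cannot vanish: if $K_{\mathrm{Id}}^S(\mathrm{Id}) = 0$ then $K_U^S(U)=0$ for every $U$, and then by (ii), $\|K_U^S\|_{L_2}^2 = \langle K_U^S, K_U^S\rangle_{L_2} = K_U^S(U) = 0$ for all $U$; but then the reproducing formula (i) gives $(\pi_S f)(U) = \langle f, K_U^S\rangle_{L_2} = 0$ for all $f$ and $U$, so $\pi_S = 0$, contradicting $S\ne\{0\}$. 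The main obstacle — really the only non-formal point — is making sure the hypotheses $S\subset C(\mathcal{U}_n)$ and $\mathcal{U}_n$-invariance are invoked correctly: the former to guarantee the pointwise evaluations (and hence $K_U^S$) make sense and lie in $C(\mathcal{U}_n)$, and the latter (via the unitarity of the translation composition operators and their commuting with $\pi_S$) to close the uniqueness arguments in (iii). This mirrors \cite[Theorem 12.2.5]{rudin1980} almost verbatim, so no genuinely new difficulty arises.
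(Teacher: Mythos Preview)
Your proposal is correct and follows essentially the same approach as the paper. The only cosmetic difference is that you construct $K_U^S$ explicitly via an orthonormal basis of $S$, while the paper invokes the Riesz representation theorem abstractly for the functional $f\mapsto(\pi_S f)(U)$; your proof of (iii) via unitarity of $C_{L_V}$ and its commutation with $\pi_S$ is exactly the paper's identity $\pi_S(f\circ L_V)=(\pi_S f)\circ L_V$, and your argument for (iv) is in fact slightly more explicit than the paper's about why $K_{\mathrm{Id}}^S(\mathrm{Id})>0$.
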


We note that if $S$ is an arbitrary $\mathcal U_n$-invariant  subspace of $ L_2(\mathcal U_n)$, then the equations in the above
items are still true, but they must be stated almost everywhere.

\begin{proof}
The claim from $(i)$ is an immediate consequence of the Riesz representation theorem applied to the continuous linear functional
$
L_2(\mathcal U_n)  \to  \mathbb{C}\,,\,\,\, f \mapsto (\pi_Sf)(U)\,.
$

\noindent $(ii)$ $K_U^S(V)=\pi_S(K_U^S)(V)=\big\langle K_U^S, K_V^S\big\rangle_{L_2}=\overline{\big\langle K_V^S, K_U^S\big\rangle_{L_2}}=\overline{K_V^S(U)}$
for all $V\in\mathcal U_n$.

\noindent $(iii)$
Fix some $ V\in \mathcal U_n$ and
$f\in L_2(\mathcal U_n)$, and note
first that  $S^\perp$ is also $\mathcal U_n$-invariant. Then
\[
\text{$(Id-\pi_S)(f)\circ L_V \in S^\perp$
\,\,\,\, and \,\,\,\,$f\circ L_V=\pi_S(f)\circ L_V+(Id-\pi_S)(f)\circ L_V$\,,}
\]
and hence
\begin{align}\label{eq: pi_S commutes with L_V}
\pi_S(f\circ L_V)=\pi_S(\pi_S(f)\circ L_V)+\pi_S((Id-\pi_S)(f)\circ L_V)=\pi_S(f)\circ L_V\,.
\end{align}
Then
\[
\big\langle f, K_{VU}^S\big\rangle_{L_2}=\pi_S(f)(VU)=\pi_S(f)\circ L_V(U)=\pi_s(f\circ L_V)(U)\,,
\]
and thus  by $(i)$
\[
\big\langle f, K_{VU}^S\big\rangle_{L_2}= \big\langle f\circ L_V, K_{U}^S\big\rangle_{L_2} = \big\langle C_{L_V}f , K_{U}^S\big\rangle_{L_2}
= \big\langle  f , C_{L_{V^{-1}}} K_{U}^S\big\rangle_{L_2} = \big\langle  f ,  K_{U}^S \circ L_{V^{-1}}\big\rangle_{L_2}\,.
\]
Since $f\in L_2(\mathcal U_n)$ was chosen arbitrarily, we obtain that  $K_{VU}^S=K_{U}^S \circ L_{V^{-1}}.$ The other identity follows similarly.

\noindent
$(iv)$ Let $V\in\mathcal U_n, $ then
\begin{eqnarray*}
K_V^S(V)=\big\langle K_V^S, K_V^S\big\rangle_{L_2}=\big\langle K_{Id}^S\circ L_{V^{-1}}, K_V^S\big\rangle_{L_2}=\big\langle K_{Id}^S, K_V^S\circ L_V\big\rangle_{L_2}=\big\langle K_{Id}^S, K_{Id}^S\big\rangle_{L_2}=K_{Id}^S(Id)>0\,,
\end{eqnarray*}
this concludes the proof.
\end{proof}
\smallskip

\begin{remark}
For each $(p,q)$,  the function
$
f=\|K_{Id}^{\mathfrak H_{(p,q)}}\|_{L^2}^{-2}\,\,K_{Id}^{\mathfrak H_{(p,q)}}  \in \mathfrak H_{(p,q)}(\mathcal U_n)
$
satisfies the properties
\begin{equation}\label{twoconditions}
  \text{$f(Id)=1$ \,\,\,\,\,\,and \,\,\,\,\,\,$ (C_{L_{V^{-1}}}\circ C_{R_V}) f =f$ \quad for every $V\in\mathcal U_n$\,.}
\end{equation}
Indeed, by Theorem \ref{theorem kernel 12.2.5}, $f(Id)=\big\|K_{Id}^{\mathfrak H_{(p,q)}}\big\|_{L^2}^{-2} \,\big\langle K_{Id}^{\mathfrak H_{(p,q)}},K_{Id}^{\mathfrak H_{(p,q)}}\big\rangle_{L_2}=1.$ Also,
\begin{align*}
(C_{L_{V^{-1}}}\circ C_{R_V})f & =\big \|K_{Id}^{\mathfrak H_{(p,q)}}\big\|_{L^2}^{-2} \, C_{L_{V^{-1}}}(K_{Id}^{\mathfrak H_{(p,q)}}\circ {R_V})  \\
& = \big\|K_{Id}^{\mathfrak H_{(p,q)}}\big\|_{L^2}^{-2} \, C_{L_{V^{-1}}}(K_{V^{-1}}^{\mathfrak H_{(p,q)}}) = \big\|K_{Id}^{\mathfrak H_{(p,q)}}\big\|_{L^2}^{-2} \, (K_{V^{-1}}^{\mathfrak H_{(p,q)}}\circ L_{V^{-1}})
=f\,.
\end{align*}
\end{remark}

\smallskip

\section{Minimal projection onto the trace class}
We now  turn our attention to the projection constant of the  trace class $\mathcal S_1(n)$, and start with the following result, which obviously is a special case of Proposition~\ref{prop: pols in B(H) seen as functions on U(H)}.

\begin{proposition}\label{linco}
 $\mathfrak H_{(1,0)}(\mathcal U_n)$, as  a subspace of $C(\mathcal{U}_n)$, is isometrically isomorphic to $\mathcal S_1(n)$. More precisely,
 \[
 \mathcal S_1(n) \to \mathfrak H_{(1,0)}(\mathcal U_n)\,, \,\,\,\,\, A \mapsto [U \mapsto \mathrm{tr}(AU)]
 \]
 is an isometry onto.
\end{proposition}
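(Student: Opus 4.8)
The statement is the $(1,0)$-homogeneous special case of Proposition~\ref{prop: pols in B(H) seen as functions on U(H)}, so in principle one could just invoke that result with $k=1$. But it is worth recording the direct argument, since it is short and transparent. I would first check that the map $A \mapsto [U \mapsto \mathrm{tr}(AU)]$ sends $\mathcal S_1(n)$ into $\mathfrak H_{(1,0)}(\mathcal U_n)$: the function $U \mapsto \mathrm{tr}(AU)$ extends to the linear polynomial $z \mapsto \mathrm{tr}(Az)$ on $M_n(\mathbb C)$, which is trivially $1$-homogeneous in $z$, contains no $\overline z$, and is harmonic because $\Delta = \sum_{i,j}\partial^2/\partial z_{ij}\partial\overline{z_{ij}}$ kills every function holomorphic in the entries. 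Linearity is clear, and surjectivity follows because every element of $\mathfrak H_{(1,0)}(\mathcal U_n)$ is the restriction of a holomorphic linear polynomial $z \mapsto \sum_{i,j} c_{ij} z_{ij} = \mathrm{tr}(Az)$ with $A = (c_{ji})$, hence lies in the image.

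**The isometry.** The heart of the matter is that $\sup_{U \in \mathcal U_n} |\mathrm{tr}(AU)| = \|A\|_{\mathcal S_1(n)}$ for every $A \in M_n(\mathbb C)$. One inclusion is soft: by trace duality \eqref{dualitytr}, $\|A\|_{\mathcal S_1(n)} = \sup\{|\mathrm{tr}(AV)| : V \in \mathcal L(\ell_2^n),\ \|V\|\le 1\}$, and since the unit ball of $\mathcal L(\ell_2^n)$ is the closed convex hull of $\mathcal U_n$ (equivalently, $\|V\| \le 1$ implies $V$ is an average of unitaries, by the Russo–Dye theorem, or more elementarily because the extreme points of the operator-norm ball of $M_n(\mathbb C)$ are exactly the unitaries), the supremum over the ball equals the supremum over $\mathcal U_n$; hence $\sup_{U \in \mathcal U_n} |\mathrm{tr}(AU)| = \|A\|_{\mathcal S_1(n)}$. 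Alternatively, and this is the route the authors take for the general $k$ in Proposition~\ref{prop: pols in B(H) seen as functions on U(H)}, one invokes the result of Nelson (cf.\ \cite{nelson1961distinguished}, \cite{harris1997holomorphic}) that for any continuous function on $\overline{B}_{\mathcal L(\ell_2^n)}$ holomorphic on the interior one has $\sup_{T \in B_{\mathcal L(\ell_2^n)}}|f(T)| = \sup_{U \in \mathcal U_n}|f(U)|$, applied to $f(T) = \mathrm{tr}(AT)$; combined with trace duality this again gives $\|f|_{\mathcal U_n}\|_\infty = \|A\|_{\mathcal S_1(n)}$.

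**The main obstacle.** There is essentially no deep obstacle here — this is a clean identification — but the one point that needs care is the passage from the supremum of $|\mathrm{tr}(AU)|$ over unitaries to the trace-class norm of $A$. If one wants to avoid citing Russo–Dye or Nelson, the cleanest self-contained argument is via the singular value decomposition: write $A = W D$ with $W$ unitary and $D = \mathrm{diag}(s_1,\dots,s_n)$, $s_j \ge 0$ the singular values, so $\|A\|_{\mathcal S_1(n)} = \sum_j s_j$; then for $U \in \mathcal U_n$, $|\mathrm{tr}(AU)| = |\mathrm{tr}(D U W)| \le \sum_j s_j |(UW)_{jj}| \le \sum_j s_j$, with equality when $UW = \mathrm{Id}$, i.e.\ $U = W^{-1}$. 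This gives the identity directly and shows the isometry is onto $\mathfrak H_{(1,0)}(\mathcal U_n)$ equipped with the sup norm inherited from $C(\mathcal U_n)$, completing the proof.
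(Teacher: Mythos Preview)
Your proof is correct and matches the paper's approach exactly: both simply observe that this is the $k=1$ case of Proposition~\ref{prop: pols in B(H) seen as functions on U(H)} combined with the trace duality~\eqref{dualitytr}. One small fix in your supplementary SVD argument: the decomposition $A=WD$ with $W$ unitary and $D$ diagonal need not exist (try $A=\begin{pmatrix}0&1\\0&0\end{pmatrix}$); use the full SVD $A=P\Sigma Q^*$ instead, so that $\mathrm{tr}(AU)=\sum_j s_j(Q^*UP)_{jj}$, and the rest goes through unchanged.
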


We intend to
follow Rudin's strategy from Theorem~\ref{rudy} of averaging projections, in order to prove that the restriction
\begin{equation*}
  \pi_{1,0}|_{C(\mathcal{U}_n)}: C(\mathcal{U}_n)  \to \mathfrak H_{(1,0)}(\mathcal U_n)=\mathcal S_1(n)
\end{equation*}
actually is a minimal projection. To put all of this into more concrete terms we  describe (although this is not needed later) the orthogonal projection
$\pi_{1,0}: L_2(\mathcal{U}_n)  \to L_2(\mathcal{U}_n)$ onto $\mathfrak H_{(1,0)}(\mathcal U_n)=\mathcal S_1(n) $ in terms of an orthonormal system.
For every  choice of $1 \leq i,j \leq n$  define the  functions
\[
e_{ij} \in \mathfrak H_{(1,0)}(\mathcal U_n)\,,\,\,\,\,\,\, e_{ij}(U) = u_{i,j}\,.
\]
Then, by \eqref{intformA},  the collection  of all  normalized functions $\sqrt{n}\,e_{ij}, \, 1 \leq i,j \leq n$ forms an orthonormal
system in $L_2(\mathcal{U}_n)$, and hence an orthonormal basis of  $\mathfrak H_{(1,0)}(\mathcal U_n)=\mathcal S_1(n)$
considered as a subspace of $L_2(\mathcal U_n)$.
Consequently,   for each $f\in L_2(\mathcal U_n)$
\[
\pi_{1,0}(f)=\sum_{1 \leq i,j \leq n}\big\langle f, \sqrt n e_{ij} \big\rangle_{L_2}\, \sqrt n e_{ij}= n \sum_{1 \leq i,j \leq n}\big\langle f,  e_{ij} \big\rangle_{L_2}\,\, e_{ij}\,.
\]

The following result is the main contribution of this section (recall the much stronger result for
spherical harmonics from Theorem~\ref{polleqm}).

\begin{theorem}
\label{mainU}
For each $n \in \mathbb{N}$,
\[
\boldsymbol{\lambda}\big(\mathcal S_1(n), C(\mathcal U_n)\big) = \big\|\pi_{1,0}|_{C(\mathcal{U}_n)}:C(\mathcal U_n) \to \mathcal S_1(n)\big\| \,,
\]
that is, if $\textbf{Q}$ is any  projection on $C(\mathcal U_n)$ onto $\mathcal S_1(n)$, then $\|\textbf{Q}\| \geq
\|\pi_{1,0}\|$.
\end{theorem}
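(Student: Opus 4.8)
The plan is to follow exactly the two-step strategy that worked for spherical harmonics in Theorem~\ref{polleqm}: first establish a uniqueness statement -- any projection on $C(\mathcal U_n)$ onto $\mathcal S_1(n) = \mathfrak H_{(1,0)}(\mathcal U_n)$ that commutes with the natural action of $\mathcal U_n$ (via the left and right multiplication composition operators) must coincide with the orthogonal projection $\pi_{1,0}$ -- and then invoke Rudin's averaging Theorem~\ref{rudy} to conclude minimality. The group acting here is $G = \mathcal U_n \times \mathcal U_n$ (or one may phrase it as $\mathcal U_n$ acting by two-sided translations), which acts isometrically on $C(\mathcal U_n)$ through $T_{(V,W)}f = C_{L_{V^{-1}}}C_{R_W} f = f(V^{-1}\cdot W)$; the subspace $\mathfrak H_{(1,0)}(\mathcal U_n)$ is invariant under this action since each $e_{ij}$ is sent to a linear combination of the $e_{k\ell}$'s (this is just the statement that $(V^{-1}UW)_{ij}$ is linear in the entries $u_{k\ell}$ of $U$, and conjugates/transposes never appear). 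Once uniqueness is in hand, Rudin's theorem produces from an arbitrary projection $\mathbf Q$ the averaged projection $\mathbf P$, which commutes with the action, hence equals $\pi_{1,0}$ by uniqueness; isometry of the $T_g$ then forces $\|\pi_{1,0}\| = \|\mathbf P\| \le \|\mathbf Q\|$.

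The heart of the matter is the uniqueness step. Here I would use the reproducing-kernel machinery from Lemma~\ref{theorem kernel 12.2.5}, or, more directly, a Schur-type orthogonality argument. Suppose $\mathbf Q\colon C(\mathcal U_n)\to C(\mathcal U_n)$ is a projection onto $\mathfrak H_{(1,0)}(\mathcal U_n)$ commuting with all $C_{L_V}$ and $C_{R_W}$. The decomposition of $C(\mathcal U_n)$ coming from the Peter-Weyl theorem for the non-abelian group $\mathcal U_n$ organizes functions by irreducible representations; the coordinate functions $u_{ij}$ span the (matrix coefficients of the) standard representation, which is irreducible, and the key point is that $\mathfrak H_{(1,0)}(\mathcal U_n)$ is precisely the isotypic component of this representation that is $(1,0)$-homogeneous. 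Because $\mathbf Q$ intertwines the two-sided action, Schur's lemma (applied to the standard representation and its multiplicities, exactly as Lemma~\ref{RudB} packaged the Nagel-Rudin result for the sphere) forces $\mathbf Q$ to act as a scalar on each isotypic piece; being a projection with range $\mathfrak H_{(1,0)}$ pins the scalar to $1$ on that piece and $0$ on all others. Then density of $\mathfrak H(\mathcal U_n)$, hence of $\mathfrak P(\mathcal U_n)$, in $C(\mathcal U_n)$ (Theorem~\ref{teo: harmonic dense in C(U)}) upgrades this to all of $C(\mathcal U_n)$, giving $\mathbf Q = \pi_{1,0}$.

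I expect the main obstacle to be isolating cleanly \emph{which} invariance property characterizes $\mathfrak H_{(1,0)}(\mathcal U_n)$ among the pieces. On the sphere the relevant fact (Lemma~\ref{RudB}) is that an operator commuting with $\mathcal U_n$ acts as a scalar on each $\mathfrak H_{(p,q)}(\mathbb S_n)$, and these are mutually orthogonal and irreducible. On $\mathcal U_n$ the situation is subtler: the excerpt has already warned (see \eqref{no-ortho} and Remark~\ref{prop k-homog rudin 12.2.2}) that the $\mathfrak H_{(p,q)}(\mathcal U_n)$ are \emph{not} mutually orthogonal across different total degrees, only within a fixed degree $p+q$. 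So one must be careful that the two-sided invariant subspaces are genuinely the matrix-coefficient blocks of irreducibles of $\mathcal U_n\times\mathcal U_n$, and that $\mathfrak H_{(1,0)}$ is exactly one such block (equivalently, that $\pi_{1,0}$ is the unique $\mathcal U_n$-biinvariant idempotent with this range). I would handle this either by direct computation using the Weingarten/Haar integrals \eqref{intformA}, \eqref{intformB} to verify that $\langle C_{L_V}C_{R_W}e_{ij}, e_{k\ell}\rangle_{L_2}$ has the correct intertwining form, or by citing the Peter-Weyl decomposition for $\mathcal U_n$ together with the irreducibility of the standard representation. With uniqueness secured, the remainder is a routine application of Theorem~\ref{rudy} with $X = C(\mathcal U_n)$, $Y = \mathcal S_1(n)$, and $G = \mathcal U_n\times\mathcal U_n$ acting isometrically.
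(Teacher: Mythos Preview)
Your proposal is correct and matches the paper's overall architecture exactly: uniqueness of the biinvariant projection onto $\mathfrak H_{(1,0)}(\mathcal U_n)$ (Lemma~\ref{prop 12.2.7 for C(U)}) followed by Rudin's averaging Theorem~\ref{rudy} with $G=\mathcal U_n\times\mathcal U_n$ acting isometrically. For the uniqueness step the paper takes the first of your two suggested routes---the reproducing-kernel machinery of Lemma~\ref{theorem kernel 12.2.5}, combined with the characterization (Lemma~\ref{prop 12.2.6}) of conjugation-invariant elements of $\mathfrak H_{(1,0)}$ as scalar multiples of $\mathrm{tr}$---and handles the non-orthogonality issue you flagged by working, for each $(p,q)$, with the auxiliary subspace $S=\{f-\pi_{1,0}f:f\in\mathfrak H_{(p,q)}\}$, which \emph{is} orthogonal to $\mathfrak H_{(1,0)}$; your alternative Peter--Weyl/Schur argument would also work and is arguably cleaner conceptually, but the paper avoids invoking representation theory of $\mathcal U_n$ beyond what is built by hand.
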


We prepare the proof of Theorem~\ref{mainU} with three independently interesting lemmas.

\begin{lemma}\label{prop 12.2.6}
The collection of all functions  $f \in \mathfrak H_{(1,0)}(\mathcal U_n)$, such that $ (C_{L_{V^{-1}}}\circ C_{R_V}) f =f$ for all
$V\in\mathcal{U}_n$, coincides with the set of all  scalar  multiples of the trace functional $\,\mathrm{tr}$. In particular,
if $f(Id)=1$ and $ (C_{L_{V^{-1}}}\circ C_{R_V}) f =f$ for all $V\in\mathcal U_n$, then $f=\frac{1}{n} \mathrm{tr}$.
\end{lemma}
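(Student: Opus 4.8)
The plan is to first translate the statement into the concrete language provided by Proposition~\ref{linco}. That proposition identifies $\mathfrak H_{(1,0)}(\mathcal U_n)$ (as a subspace of $C(\mathcal U_n)$) isometrically with $\mathcal S_1(n)$ via $A \mapsto f_A$, where $f_A(U) = \mathrm{tr}(AU)$. In particular every $f \in \mathfrak H_{(1,0)}(\mathcal U_n)$ is of the form $f = f_A$ for a unique $A \in M_n(\mathbb C)$, and conversely. So it suffices to characterize those $A$ for which $(C_{L_{V^{-1}}}\circ C_{R_V})f_A = f_A$ for every $V \in \mathcal U_n$.

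Next I would simply unravel this invariance condition using the definition of the composition operators and the relations $L_V(U) = VU$, $R_V(U) = UV$. One computes $(C_{L_{V^{-1}}}\circ C_{R_V})f_A(U) = f_A(V^{-1}UV) = \mathrm{tr}(AV^{-1}UV) = \mathrm{tr}(VAV^{-1}U)$, where the last equality uses cyclicity of the trace; this holds for all $U \in M_n(\mathbb C)$. Hence the invariance of $f_A$ is equivalent to $\mathrm{tr}\big((VAV^{-1}-A)U\big) = 0$ for all $U \in \mathcal U_n$ and all $V \in \mathcal U_n$. Since $\mathcal U_n$ linearly spans $M_n(\mathbb C)$ (an arbitrary matrix decomposes into its Hermitian and anti-Hermitian parts, and a Hermitian matrix of norm $\le 1$ is the average of the two unitaries $H \pm i\sqrt{I-H^2}$, after rescaling), and since the trace pairing on $M_n(\mathbb C)$ is nondegenerate, this forces $VAV^{-1} = A$, i.e. $VA = AV$, for every $V \in \mathcal U_n$.

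Finally I would invoke the elementary fact that a matrix commuting with every unitary is a scalar multiple of the identity: again because $\mathcal U_n$ spans $M_n(\mathbb C)$, such an $A$ lies in the center of the algebra $M_n(\mathbb C)$, so $A = c\,I_n$ for some $c \in \mathbb C$. Then $f_A = c\,\mathrm{tr}$. Conversely $c\,\mathrm{tr}$ is visibly invariant, since $\mathrm{tr}(V^{-1}UV) = \mathrm{tr}(U)$. This proves the first assertion. For the ``in particular'' clause, the extra normalization $f(\mathrm{Id}) = 1$ gives $c\cdot n = c\,\mathrm{tr}(I_n) = 1$, hence $c = \tfrac1n$ and $f = \tfrac1n\,\mathrm{tr}$.

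None of these steps is genuinely hard; the only point deserving explicit care is the passage from ``$\mathrm{tr}\big((VAV^{-1}-A)U\big)=0$ for all unitary $U$'' to ``$VAV^{-1}=A$'', which rests on the spanning property of $\mathcal U_n$ in $M_n(\mathbb C)$ together with the nondegeneracy of the trace duality already recorded in~\eqref{dualitytr}.
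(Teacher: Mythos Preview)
Your proof is correct and follows essentially the same route as the paper's: both use Proposition~\ref{linco} to write $f(U)=\mathrm{tr}(AU)$ for some $A\in M_n(\mathbb C)$, translate the invariance condition into $VAV^{-1}=A$ for all $V\in\mathcal U_n$ via cyclicity of the trace and the fact that unitaries span $M_n(\mathbb C)$, and conclude that $A$ is a scalar. Your argument is in fact slightly more explicit about why $\mathcal U_n$ spans $M_n(\mathbb C)$, which the paper simply recalls without justification.
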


\begin{proof}
 Obviously, every multiple of the trace  is invariant under all operators  $C_{L_{V^{-1}}}\circ C_{R_V}$. Assume conversely, that
$f \in \mathfrak H_{(1,0)}(\mathcal U_n)$ has this property. By Proposition~\ref{linco}  $f$ may be seen as a linear functional
on $M_n(\mathbb{C})$. This implies that there exists $A\in M_n(\mathbb{C})$ such that $f(U)=\mathrm{tr}(A^*U)$ for all $U\in M_n(\mathbb{C})$.
Since by assumption $f(V^{-1}UV)=f(U)$ for every $V\in\mathcal U_n,$  and $U\in M_n(\mathbb{C})$, we conclude that
\[
\mathrm{tr}(A^*U)=\mathrm{tr}(A^*V^{-1}UV)=\mathrm{tr}(VA^*V^{-1}U).
\]
Consequently, $A^*=VA^*V^{-1}$ for every $V\in\mathcal U_n$, implying that $A$ commutes with all matrices in $M_n(\mathbb{C})$
(recall that every matrix in $M_n(\mathbb{C})$ is a finite sum of unitaries). This  implies that $A=\lambda Id$ for some
$\lambda \in \mathbb C$, and hence  as desired   $f=\lambda\,\mathrm{tr}$.  The second claim of the lemma is an immediate consequence.
\end{proof}

In order to be able to apply  Rudin's Theorem~\ref{rudy}, we need to  endow  $\mathcal U_n \times \mathcal U_n$
with a special group structure, which allows to represent the resulting group in   $\mathcal{L}(C(\mathcal{U}_n))$.
To do so, consider on $\mathcal U_n \times \mathcal U_n$  the  multiplication
\[
(U_0,V_0)\cdot (U_1,V_1):=(U_1U_0,V_0V_1)\,.
\]
With this multiplication and endowed with the product topology, $\mathcal U_n \times \mathcal U_n$ turns into a compact topological group,
and it may be seen easily  that the Haar measure on $\mathcal U_n \times \mathcal U_n$ is given by the product measure of the Haar measure on $\mathcal U_n $
with itself.

Further, for  any $(U,V) \in \mathcal U_n \times \mathcal U_n $ and  any $f \in L_2(\mathcal U_n)$ we define
\begin{align*}\label{action of U^2}
    \rho_{(U,V)}f : = (C_{L_U}\circ C_{R_V})f = f\circ L_U\circ R_V\,,
\end{align*}
which leads to an action of $\mathcal U_n \times \mathcal U_n$ on $C(\mathcal{U}_n)$
given by
\begin{equation} \label{actU}
  \mathcal U_n \times \mathcal U_n  \to \mathcal{L}\big(C(\mathcal{U}_n)\big)\,,\,\,\, \,\,\,
  (U,V) \mapsto \big[\rho_{(U,V)}: f \mapsto f\circ L_U\circ R_V\big] \,.
\end{equation}
We say that a mapping  $T: S_1 \to S_2 $, where
$S_1$ and $S_2$ both  are $\mathcal{U}_n$-invariant subspaces of $L_2(\mathcal{U}_n)$, commutes with the action of
$\mathcal{U}_n \times \mathcal{U}_n$
on $C(\mathcal{U}_n)$, whenever
\[
\text{$\big(C_{L_U}\circ C_{R_{V}}\big)(Tf)=T\big(( C_{L_U}\circ C_{R_V})f\big)$
\quad for every \quad $(U,V)\in \mathcal U_n \times \mathcal U_n$ and $f\in S_1$\,.}
\]

\begin{lemma}\label{prop 12.2.7 for L2(U)}
Let $S\subset L_2(\mathcal U_n)$ be a $\mathcal U_n$-invariant subspace such that  $S\cap \mathfrak H_{(1,0)}(\mathcal U_n)=\{0\}$,
and let $T~:~S\to \mathfrak H_{(1,0)}(\mathcal U_n)$  be an operator that commutes with the action of $\mathcal U_n \times \mathcal U_n$
on $C(\mathcal{U}_n)$.
Then $T$ is a scalar multiple of   $\pi_{1,0}\restrict{S}$.

Moreover, if $S$ is  orthogonal to $\mathfrak H_{(1,0)}(\mathcal U_n)$ and $T$ is  a projection on $S+\mathfrak H_{(1,0)}(\mathcal U_n)$ onto $\mathfrak H_{(1,0)}(\mathcal U_n)$ that commutes with the action of $\mathcal U_n \times \mathcal U_n$
on $C(\mathcal{U}_n)$,  then $T=\pi_{1,0}\restrict{S+\mathfrak H_{(1,0)}}.$
\end{lemma}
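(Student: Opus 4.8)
The plan is to follow the pattern of the preceding lemmas: first pin $T$ down on a single distinguished, conjugation-invariant element of $S$ — the reproducing kernel $K_{Id}^S$ from Lemma~\ref{theorem kernel 12.2.5} — and then spread the resulting identity over all of $S$ by $\mathcal U_n\times\mathcal U_n$-equivariance. Conceptually this is just Schur's lemma: $S$ is a unitary $\mathcal U_n\times\mathcal U_n$-module, $\mathfrak H_{(1,0)}(\mathcal U_n)=\mathrm{span}\{u_{ij}\}$ is an irreducible one (it consists of the matrix coefficients of the standard representation of $\mathcal U_n$), and by Peter--Weyl it is the \emph{entire} isotypic component of its type in $L_2(\mathcal U_n)$; hence the space of bounded intertwiners $S\to\mathfrak H_{(1,0)}(\mathcal U_n)$ is at most one-dimensional, and it already contains $\pi_{1,0}|_S$. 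The reproducing-kernel argument below is the concrete implementation of this with the tools already developed.

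Concretely, I would first check that $(C_{L_{V^{-1}}}\circ C_{R_V})K_{Id}^S=K_{Id}^S$ for every $V\in\mathcal U_n$: by Lemma~\ref{theorem kernel 12.2.5}(iii), $C_{R_V}K_{Id}^S=K_{Id}^S\circ R_V=K_{V^{-1}}^S$ and then $C_{L_{V^{-1}}}K_{V^{-1}}^S=K_{V^{-1}}^S\circ L_{V^{-1}}=K_{Id}^S$. Since $T$ commutes with the action, $TK_{Id}^S\in\mathfrak H_{(1,0)}(\mathcal U_n)$ is then also fixed by every $C_{L_{V^{-1}}}\circ C_{R_V}$, so Lemma~\ref{prop 12.2.6} gives $TK_{Id}^S=\lambda\,\mathrm{tr}$ for some $\lambda\in\mathbb C$; the same argument applied to $\pi_{1,0}$ (which commutes with all $C_{L_V}$ and $C_{R_V}$, by \eqref{eq: pi_S commutes with L_V} and its right-translation analogue) gives $\pi_{1,0}(K_{Id}^S)=\mu\,\mathrm{tr}$ for some $\mu\in\mathbb C$. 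Now $K_V^S=C_{L_{V^{-1}}}K_{Id}^S$ together with equivariance of $T$ and of $\pi_{1,0}$ yields $TK_V^S=\lambda\,(\mathrm{tr}\circ L_{V^{-1}})$ and $\pi_{1,0}(K_V^S)=\mu\,(\mathrm{tr}\circ L_{V^{-1}})$ for all $V$. By Lemma~\ref{theorem kernel 12.2.5} no nonzero element of $S$ is orthogonal to all the $K_V^S$, so their span is dense in $S$; since $T$ and $\pi_{1,0}|_S$ are bounded, the case $\mu\neq0$ gives $T=(\lambda/\mu)\,\pi_{1,0}|_S$, as wanted.

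The one delicate point — which I expect to be the main obstacle — is the degenerate case $\mu=0$: then the above only says $\pi_{1,0}|_S=0$, i.e.\ $S\perp\mathfrak H_{(1,0)}(\mathcal U_n)$, and one must still show $T=0$ (the only scalar multiple of $\pi_{1,0}|_S=0$). Here $S\perp\mathfrak H_{(1,0)}(\mathcal U_n)$ forces $\overline S\perp\mathfrak H_{(1,0)}(\mathcal U_n)$, hence the isotypic component of the closed submodule $\overline S$ of $L_2(\mathcal U_n)$ of the type of $\mathfrak H_{(1,0)}(\mathcal U_n)$ is $\{0\}$ (otherwise it would be a nonzero submodule of the full isotypic component of that type in $L_2(\mathcal U_n)$, which is exactly $\mathfrak H_{(1,0)}(\mathcal U_n)$ by multiplicity one). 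The adjoint intertwiner $T^{\ast}\colon\mathfrak H_{(1,0)}(\mathcal U_n)\to\overline S$ then maps into that vanishing component, so $T^{\ast}=0$ and $T=0$. This case analysis, together with the soft functional-analytic bookkeeping needed because $S$ need not be closed, is really the only work; everything else is routine.

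Finally, the second assertion follows by combining the first part with the projection hypothesis: if $S\perp\mathfrak H_{(1,0)}(\mathcal U_n)$ and $T$ is a projection on $S+\mathfrak H_{(1,0)}(\mathcal U_n)$ onto $\mathfrak H_{(1,0)}(\mathcal U_n)$ commuting with the action, then $T|_S\colon S\to\mathfrak H_{(1,0)}(\mathcal U_n)$ commutes with the action, so by the first part it is a multiple of $\pi_{1,0}|_S=0$, i.e.\ $T|_S=0$, while $T$ is the identity on $\mathfrak H_{(1,0)}(\mathcal U_n)$. Since $S\perp\mathfrak H_{(1,0)}(\mathcal U_n)$, the orthogonal projection $\pi_{1,0}$ also kills $S$ and is the identity on $\mathfrak H_{(1,0)}(\mathcal U_n)$; hence $T=\pi_{1,0}$ on $S+\mathfrak H_{(1,0)}(\mathcal U_n)$, that is, $T=\pi_{1,0}|_{S+\mathfrak H_{(1,0)}}$.
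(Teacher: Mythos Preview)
Your argument is correct and follows the same underlying idea as the paper (reproducing kernels, conjugation-invariance of $K_{Id}^S$, and Lemma~\ref{prop 12.2.6}), but you take an unnecessary detour. You compare $TK_{Id}^S=\lambda\,\mathrm{tr}$ to $\pi_{1,0}(K_{Id}^S)=\mu\,\mathrm{tr}$, which forces the split into $\mu\neq 0$ and $\mu=0$, and then you invoke Peter--Weyl/Schur to close the degenerate case. The paper avoids all of this: after obtaining $TK_{Id}^S=c\,K_{Id}^H$ (same argument as yours, since $K_{Id}^H$ is itself a multiple of $\mathrm{tr}$), it uses the reproducing formula $h(V)=\int_{\mathcal U_n}h(U)\,K_U^S(V)\,dU$ for $h\in S$, moves $T$ under the integral via $K_U^S=K_{Id}^S\circ L_{U^{-1}}$ and equivariance, and then recognizes the resulting integral $c\int_{\mathcal U_n}h(U)\,K_U^H(V)\,dU$ as $c\,\pi_{1,0}(h)(V)$ by the kernel formula for $H$ in Lemma~\ref{theorem kernel 12.2.5}(i). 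This yields $T=c\,\pi_{1,0}|_S$ directly, with no case analysis and no appeal to Peter--Weyl. Your handling of the $\mu=0$ case is fine, but it is machinery the paper simply does not need.
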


\begin{proof}
We write $H = \mathfrak H_{(1,0)}(\mathcal U_n)$.
 Given $U\in\mathcal U_n$, let us denote by $K_U^H$ the kernel in $H$ and by $K_U^S$  the kernel in $S$. By the assumption on $T$
 and Lemma~\ref{theorem kernel 12.2.5}, (iii) for every $V\in\mathcal U_n$,
\[
\big(C_{L_V}\circ C_{R_{V^{-1}}}\big) (T K_{Id}^S)=T\big( (C_{L_V}\circ C_{R_{V^{-1}}})K_{Id}^S\big) =T( K_{Id}^S)\,.
\]
Then we deduce from Lemma~\ref{prop 12.2.6}  that  $T( K_{Id}^S)=c K_{Id}^H$ for some constant $c \in \mathbb{C}$.
 On the other hand, for every  $h\in S$ and  $V\in \mathcal U_n$  by Lemma~\ref{theorem kernel 12.2.5}, (i)
\[
h(V)=(\pi_Sh)(V)=\int_{\mathcal U_n}h(U)\, \overline { K_{V}^S(U)}dU=\int_{\mathcal U_n}h(U)\,K_{U}^S(V)dU.
\]
Applying  $T$ to this equality, by Lemma~\ref{theorem kernel 12.2.5}, (iii)
and another use of the  assumption on  $T$, for every  $h\in S$ and  $V\in \mathcal U_n$
\begin{align*}
Th(V)  &=\int_{\mathcal U_n}h(U)\, T(K_{U}^S)(V)dU =\int_{\mathcal U_n}h(U)\, T(K_{Id}^S\circ L_{U^{-1}})(V)\,dU \\
&=\int_{\mathcal U_n}h(U)\,T( K_{Id}^S)\circ L_{U^{-1}}(V)dU =c \int_{\mathcal U_n}h(U)\,K_{Id}^H\circ L_{U^{-1}}(V)dU\\
&=c \int_{\mathcal U_n}h(U)\, {K_{U}^H(V)}\,dU=c\,\pi_{1,0}(h)(V)\,,
\end{align*}
which is our first claim. To see the second assertion, note first that by  the first part of the lemma  we have $T\restrict{S}=c\,\pi_{1,0}\restrict {S}$ for some
$c \in \mathbb{C}$. But since by assumption $S\subset H^\perp$, this implies $T\restrict{S}= 0=\pi_{1,0}\restrict {S}$.
 On the other hand,  since $T$ is a projection onto $H$, we see that  $T\restrict{H}=Id_{H}=\pi_{1,0}\restrict {H}$, which finishes
 the proof.
\end{proof}

The third lemma we provide, is in fact the crucial step towards the proof of Theorem~\ref{mainU}
(for spherical harmonics we refer to Proposition~\ref{unique}).

\begin{lemma}\label{prop 12.2.7 for C(U)}
If $T$ is a projection on $C(\mathcal U_n)$ onto $\mathfrak H_{(1,0)}(\mathcal U_n)$ that commutes with the action of
$\mathcal U_n \times \mathcal U_n $
on $C(\mathcal U_n)$,  then $T=\pi_{1,0}\restrict{C(\mathcal U_n)}.$
\end{lemma}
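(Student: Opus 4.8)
The plan is to reduce the statement about $C(\mathcal U_n)$ to the already-proved statement about $L_2(\mathcal U_n)$ (Lemma~\ref{prop 12.2.7 for L2(U)}), using the density of unitary harmonics (Theorem~\ref{teo: harmonic dense in C(U)}) together with the decomposition of $\mathfrak P_k(\mathcal U_n)$ into homogeneous harmonic pieces (Corollary~\ref{harmlemB} and Remark~\ref{prop k-homog rudin 12.2.2}). First I would observe that, since $T$ commutes with the action of $\mathcal U_n\times\mathcal U_n$ and each $\mathfrak H_{(p,q)}(\mathcal U_n)$ is $\mathcal U_n$-invariant (invariant under all $C_{L_V}$ and $C_{R_V}$), the restriction $T\restrict{\mathfrak H_{(p,q)}(\mathcal U_n)}$ maps into $\mathfrak H_{(1,0)}(\mathcal U_n)$ and still commutes with the action. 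For $(p,q)\neq(1,0)$ we have $\mathfrak H_{(p,q)}(\mathcal U_n)\cap\mathfrak H_{(1,0)}(\mathcal U_n)=\{0\}$, so the first part of Lemma~\ref{prop 12.2.7 for L2(U)} applies and gives $T\restrict{\mathfrak H_{(p,q)}}=c_{p,q}\,\pi_{1,0}\restrict{\mathfrak H_{(p,q)}}$ for some scalar $c_{p,q}$.

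Next I would pin down the scalars $c_{p,q}$. Since $T$ is a projection \emph{onto} $\mathfrak H_{(1,0)}(\mathcal U_n)$, on $\mathfrak H_{(1,0)}$ itself it is the identity, which matches $\pi_{1,0}\restrict{\mathfrak H_{(1,0)}}$; so $T$ and $\pi_{1,0}$ agree on $\mathfrak H_{(1,0)}(\mathcal U_n)$. For $(p,q)\neq(1,0)$ I need $c_{p,q}=0$, i.e.\ $\pi_{1,0}$ itself must annihilate $\mathfrak H_{(p,q)}(\mathcal U_n)$ whenever $(p,q)\neq(1,0)$. When $p+q\neq 1$ this is immediate from the orthogonality $\mathfrak H_{(p,q)}(\mathcal U_n)\perp\mathfrak H_{(p',q')}(\mathcal U_n)$ for $p+q\neq p'+q'$, noted after \eqref{no-ortho}; the only remaining case with $p+q=1$ is $(p,q)=(0,1)$, and then $\mathfrak H_{(0,1)}(\mathcal U_n)\perp\mathfrak H_{(1,0)}(\mathcal U_n)$ by Remark~\ref{prop k-homog rudin 12.2.2}. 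Hence $c_{p,q}\pi_{1,0}$ must equal $T$ on a space on which $\pi_{1,0}$ vanishes; but the scalar $c_{p,q}$ produced by Lemma~\ref{prop 12.2.7 for L2(U)} is the \emph{same} scalar governing $T$, so we actually get $T\restrict{\mathfrak H_{(p,q)}}=c_{p,q}\pi_{1,0}\restrict{\mathfrak H_{(p,q)}}=0$, i.e.\ $T$ annihilates every $\mathfrak H_{(p,q)}(\mathcal U_n)$ with $(p,q)\neq(1,0)$, exactly as $\pi_{1,0}$ does. Therefore $T$ and $\pi_{1,0}\restrict{C(\mathcal U_n)}$ agree on $\mathfrak H_{(p,q)}(\mathcal U_n)$ for every pair $(p,q)$.

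Finally I would glue the pieces together: by \eqref{denso4} the linear span of all the $\mathfrak H_{(p,q)}(\mathcal U_n)$ is $\mathfrak H(\mathcal U_n)$, which by Theorem~\ref{teo: harmonic dense in C(U)} is dense in $C(\mathcal U_n)$. Since $T$ and $\pi_{1,0}\restrict{C(\mathcal U_n)}$ are two bounded linear operators on $C(\mathcal U_n)$ that coincide on this dense subspace, they coincide everywhere, so $T=\pi_{1,0}\restrict{C(\mathcal U_n)}$, as claimed.

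I expect the main obstacle to be the bookkeeping in the second paragraph: one must be careful that Lemma~\ref{prop 12.2.7 for L2(U)} is being applied to $\mathcal U_n$-invariant subspaces $S=\mathfrak H_{(p,q)}(\mathcal U_n)$ sitting inside $C(\mathcal U_n)$ (so that the kernel-function machinery of Lemma~\ref{theorem kernel 12.2.5} is available pointwise, not just a.e.), and that the relevant intersection-and-orthogonality hypotheses hold in each case $(p,q)\neq(1,0)$ — in particular the borderline case $(0,1)$, which needs Remark~\ref{prop k-homog rudin 12.2.2} rather than the coarser grading by $p+q$. Once the scalar $c_{p,q}$ is correctly identified as the common scalar, forcing it to be $0$ via $\pi_{1,0}$'s own vanishing, the density argument is routine.
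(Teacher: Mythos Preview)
Your argument has a genuine gap in the second paragraph. You claim that $\pi_{1,0}$ annihilates $\mathfrak H_{(p,q)}(\mathcal U_n)$ for every $(p,q)\neq(1,0)$, and you justify the case $p+q\neq 1$ by citing ``the orthogonality $\mathfrak H_{(p,q)}(\mathcal U_n)\perp\mathfrak H_{(p',q')}(\mathcal U_n)$ for $p+q\neq p'+q'$, noted after \eqref{no-ortho}.'' But the statement after \eqref{no-ortho} says exactly the opposite: orthogonality holds when $p+q=p'+q'$, not when the total degrees differ. In fact \eqref{no-ortho} itself is a counterexample to your claim --- it exhibits $f\in\mathfrak H_{(1,0)}$ and $g\in\mathfrak H_{(2,1)}$ with $\langle f,g\rangle_{L_2}\neq 0$, so $\pi_{1,0}(g)\neq 0$ and $\pi_{1,0}\restrict{\mathfrak H_{(2,1)}}\neq 0$. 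Once this orthogonality fails, the scalar $c_{p,q}$ from Lemma~\ref{prop 12.2.7 for L2(U)} is no longer forced to be anything in particular, and your argument stalls: you would need $c_{p,q}=1$ to match $\pi_{1,0}$, but nothing in the first part of that lemma pins the scalar down.

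The paper's proof circumvents exactly this obstacle. Rather than applying Lemma~\ref{prop 12.2.7 for L2(U)} to $S=\mathfrak H_{(p,q)}$ directly, it sets $S=\{f-\pi_{1,0}f:f\in\mathfrak H_{(p,q)}\}$, which is orthogonal to $\mathfrak H_{(1,0)}$ \emph{by construction} (since $\pi_{1,0}$ is the orthogonal projection). One checks $S$ is $\mathcal U_n$-invariant, then applies the \emph{second} part of Lemma~\ref{prop 12.2.7 for L2(U)} to the restriction of $T$ to $S+\mathfrak H_{(1,0)}$; this gives $T\restrict{S}=0$ without any appeal to orthogonality between different $\mathfrak H_{(p,q)}$. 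Since $T$ is the identity on $\mathfrak H_{(1,0)}$, one then writes $T(f)=T(f-\pi_{1,0}f)+T(\pi_{1,0}f)=0+\pi_{1,0}f$ for $f\in\mathfrak H_{(p,q)}$, and finishes by density as you do.
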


\begin{proof}
By  Theorem~\ref{teo: harmonic dense in C(U)}
(the density theorem), it suffices to show that for each
$(p,q) \in \mathbb{N}_0 \times \mathbb{N}_0 $
\[
T\restrict{\mathfrak H_{(p,q)}} = (\pi_{1,0})\restrict{\mathfrak H_{(p,q)}}\,.
\]
 Given  such pair $(p,q)$, we define the subspace
\[
 S := \big\{f-\pi_{1,0}f\,:\,\,f\in \mathfrak H_{(p,q)}\big\} \subset C(\mathcal U_n).
\]
Then $S$ is  $\mathcal U_n$-invariant; indeed, for each   $f\in{\mathfrak H_{(p,q)}}$
and $U \in \mathcal U_n$,  equation \eqref{eq: pi_S commutes with L_V} implies
\[
(f-\pi_{1,0}f)\circ L_U=f\circ L_U -\pi_{1,0}f\circ L_U= f\circ L_U -\pi_{1,0}(f\circ L_U)\in S\,,
\]
and the  invariance under right multiplication follows similarly.
 Since $S\perp \mathfrak H_{(1,0)}$ and $T$ commutes with the action of $\mathcal U_n \times \mathcal U_n$
 on $C(\mathcal U_n)$,
 Lemma~\ref{prop 12.2.7 for L2(U)} (the second part applied to the restriction  of $T$ to $S + \mathfrak H_{(1,0)}$) shows  that
 $$T\restrict{S + \mathfrak H_{(1,0)}}=  \pi_{1,0}\restrict{S + \mathfrak H_{(1,0)}}\,,$$
  so  in particular
 $
     T\restrict{S}=  \pi_{1,0}\restrict{S} = 0\,.
     $
But then for every $f \in \mathfrak H_{(p,q)}$
\begin{equation*}
   T(f) = T(f-\pi_{1,0}f)+ T(\pi_{1,0}f) =   \pi_{1,0}f\,,
  \end{equation*}
  which completes the argument.
  \end{proof}

Finally, we are ready  to prove the main goal of this section,  Theorem~\ref{mainU}.

\begin{proof}[Proof of Theorem~\ref{mainU}]
Let $\textbf{P}$ be any projection on $C(\mathcal{U}_n)$ onto
$\mathfrak H_{(1,0)}(\mathcal U_n)$. We apply Theorem~\ref{rudy}, with $X=C(\mathcal{U}_n),$ $Y=\mathfrak H_{(1,0)}(\mathcal U_n)$, $G=\mathcal{U}_n \times \mathcal{U}_n$, and  the action   $\rho: G \to \mathcal{L}(X)$ as defined in \eqref{actU}.
Then we have that
\begin{equation*}
\textbf{Q}f = \int_{\mathcal{U}_n \times \mathcal{U}_n} \rho_{(U,V)}\, \textbf{P} \rho_{(U^{-1},V^{-1})}(f)\,dU\,dV, \quad\, f\in C(\mathcal{U}_n)
\end{equation*}
defines  a~projection on $C(\mathcal{U}_n)$ onto $\mathfrak H_{(1,0)}(\mathcal U_n)$,
which commutes with  the action of $\mathcal{U}_n  \times \mathcal{U}_n $
on $C(\mathcal{U}_n)$. Consequently,  we obtain from  Lemma~\ref{prop 12.2.7 for C(U)} that $\textbf{Q} =\pi_{1,0}|_{C(\mathcal{U}_n)}$. Finally, since the operators
$\rho_{(U,V)}$ define isometries on $C(\mathcal U_n)$,  we immediately see that
$\|\pi_{1,0}|_{C(\mathcal{U}_n)}\| = \|\textbf{Q}\| \leq  \|\textbf{P}\|$ (see again Theorem~\ref{rudy}).
Since by Proposition~\ref{linco}, $\mathfrak H_{(1,0)}(\mathcal U_n)=\mathcal S_1(n),$ the proof is finished.
\end{proof}

\smallskip

\section{Integral formula } \label{trace-uni}

We now provide an integral formula for the projection constant of the trace class.

\smallskip

\begin{theorem} \label{traceform}
For every $n,$
\[
\boldsymbol{\lambda}\big(\mathcal S_1(n)\big)=n\int_{\mathcal{U}_n} |\mathrm{tr}(V)|\,dV\,.
\]
\end{theorem}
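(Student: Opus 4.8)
The plan is to derive the integral formula as a direct consequence of the machinery already assembled in this chapter. The starting point is the identification from Proposition~\ref{linco}: the map $A \mapsto [U \mapsto \mathrm{tr}(AU)]$ is an isometry from $\mathcal{S}_1(n)$ onto the subspace $\mathfrak{H}_{(1,0)}(\mathcal{U}_n)$ of $C(\mathcal{U}_n)$, and since $C(\mathcal{U}_n)$ embeds isometrically into $\ell_\infty$ of an appropriate index set (or simply because the projection constant is computed relative to any $C(K)$ superspace containing an isometric copy), we have $\boldsymbol{\lambda}\big(\mathcal{S}_1(n)\big) = \boldsymbol{\lambda}\big(\mathcal{S}_1(n), C(\mathcal{U}_n)\big)$. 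By Theorem~\ref{mainU}, this equals $\big\|\pi_{1,0}|_{C(\mathcal{U}_n)} : C(\mathcal{U}_n) \to \mathcal{S}_1(n)\big\|$, so the whole problem reduces to computing the norm of the orthogonal projection $\pi_{1,0}$ acting from $C(\mathcal{U}_n)$ onto $\mathfrak{H}_{(1,0)}(\mathcal{U}_n)$.

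Next I would write $\pi_{1,0}$ as an integral operator against its reproducing kernel. From the orthonormal basis $\{\sqrt{n}\,e_{ij}\}$ of $\mathfrak{H}_{(1,0)}(\mathcal{U}_n)$ computed earlier via \eqref{intformA}, the reproducing kernel is
\[
K^{\mathfrak{H}_{(1,0)}}_U(V) = n\sum_{1\le i,j\le n} \overline{e_{ij}(U)}\, e_{ij}(V) = n\sum_{i,j} \overline{u_{ij}}\,v_{ij} = n\,\mathrm{tr}(U^\ast V)\,,
\]
so that by Lemma~\ref{theorem kernel 12.2.5}(i),
\[
(\pi_{1,0}f)(U) = \int_{\mathcal{U}_n} f(V)\,\overline{K^{\mathfrak{H}_{(1,0)}}_U(V)}\,dV = n\int_{\mathcal{U}_n} f(V)\,\overline{\mathrm{tr}(U^\ast V)}\,dV\,.
\]
Then, exactly as in the proof of Proposition~\ref{formel} (the $L_\infty$-to-$C$ duality trick with the sign function), one has
\[
\big\|\pi_{1,0}|_{C(\mathcal{U}_n)}\big\| = \sup_{U\in\mathcal{U}_n} \int_{\mathcal{U}_n} \big|K^{\mathfrak{H}_{(1,0)}}_U(V)\big|\,dV = n\sup_{U\in\mathcal{U}_n}\int_{\mathcal{U}_n} |\mathrm{tr}(U^\ast V)|\,dV\,,
\]
where the upper bound is immediate and the reverse inequality comes from testing against $f_U = \overline{\sign}\big(\overline{\mathrm{tr}(U^\ast V)}\big)$ on the $C(\mathcal{U}_n)$ side (the sign function is not continuous, but one approximates it in $L_1$ by continuous functions of norm $\le 1$, using that $\pi_{1,0}$ maps into the finite-dimensional space $C(\mathcal{U}_n)$ where the supremum over $U$ is attained at some fixed point; this is the standard argument behind Proposition~\ref{formel}).

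Finally, the supremum over $U$ is removed by invariance of the Haar measure: for any fixed $U\in\mathcal{U}_n$, the substitution $V \mapsto UV$ gives $\int_{\mathcal{U}_n} |\mathrm{tr}(U^\ast V)|\,dV = \int_{\mathcal{U}_n} |\mathrm{tr}(U^\ast U V)|\,dV = \int_{\mathcal{U}_n} |\mathrm{tr}(V)|\,dV$, which is independent of $U$. Combining the three displays yields
\[
\boldsymbol{\lambda}\big(\mathcal{S}_1(n)\big) = n\int_{\mathcal{U}_n} |\mathrm{tr}(V)|\,dV\,,
\]
as claimed. I expect the only genuinely delicate point to be the lower bound for $\|\pi_{1,0}|_{C(\mathcal{U}_n)}\|$ — justifying that one may replace the discontinuous optimal $L_\infty$ test function by continuous near-optimizers — but this is handled verbatim by the reasoning already used for Proposition~\ref{formel}, so in practice the proof is a short assembly of Theorem~\ref{mainU}, the explicit kernel, and Haar invariance.
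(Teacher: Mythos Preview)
Your proof is correct and follows essentially the same route as the paper: invoke Theorem~\ref{mainU} to reduce to computing $\|\pi_{1,0}|_{C(\mathcal{U}_n)}\|$, express $\pi_{1,0}$ as an integral operator against its reproducing kernel, compute the norm via the $\sign$-function trick, and remove the supremum by Haar invariance. The only cosmetic difference is how the kernel is identified: you read it off directly from the orthonormal basis $\{\sqrt{n}\,e_{ij}\}$ and \eqref{intformA}, whereas the paper first shows $K_{Id}^H = n\cdot\mathrm{tr}$ via Lemma~\ref{prop 12.2.6} (the conjugation-invariance characterization) together with \eqref{intformB}, and then transports this to $K_U^H$ using Lemma~\ref{theorem kernel 12.2.5}(iii). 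Both computations give $K_U^H(V)=n\,\mathrm{tr}(U^\ast V)$ immediately; your route is marginally more direct.
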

\begin{proof}
Again we abbreviate $H = \mathfrak H_{(1,0)}(\mathcal U_n)$.
Given $U\in\mathcal U_n$, let us denote by $K_U^H$ the kernel in $\mathfrak H_{(1,0)}(\mathcal U_n)$. Let us first see that
\begin{align}\label{K_Id formula}
K_{Id}^H=n\cdot  \mathrm{tr}.
\end{align}
Indeed, since
$C_{L_{V^{-1}}}\circ C_{R_V} (K_{Id}^H)=K_{Id}^H$, for every $V\in\mathcal U_n$, we know from Lemma~\ref{prop 12.2.6} that $K_{Id}^H=c\,\mathrm{tr}$ for some constant $c \in \mathbb{C}$. Thus,
\begin{align*}
n&=\mathrm{tr}(Id)=\pi_{1,0}(\mathrm {tr})(Id)  = \big\langle \mathrm {tr},K_{Id}^H\big\rangle_{L_2} = \overline c\int_{\mathcal{U}_n} \mathrm{tr}(V)
\, \overline{\mathrm{tr}(V)}\,dV =\overline{c}\,,
\end{align*}
where the last equality  is a consequence of \eqref{intformB} (put $A= Id$). This proves \eqref{K_Id formula}.
Then, for $f\in L_2(\mathcal U_n)$, by Lemma~\ref{theorem kernel 12.2.5}, (i) and (iii)
\begin{align*}
\pi_{1,0}(f)(U) & = \big\langle f,K_U^H\big\rangle_{L_2} =\big\langle f,K_{Id}^H\circ L_{U^{-1}}\big\rangle_{L_2} \\
& = \int_{\mathcal{U}_n} f(V)\,  \overline{K_{Id}^H(U^{-1}V)}\,dV  =n\int_{\mathcal{U}_n} f(V)\,  \mathrm{tr}(V^*U)\,dV\,.
\end{align*}
As a consequence we by  Theorem~\ref{mainU} obtain
\[
\boldsymbol{\lambda}(\mathcal U_n)=\|\pi_{1,0}\colon L_2(\mathcal{U}_n) \to \mathfrak{H}_{(1.0)}(\mathcal{U}_n)\|\,.
\]
On the other hand, since
\[
\pi_{1,0}(f)(U) =n\int_{\mathcal{U}_n} f(V)\,  \mathrm{tr}(V^*U)\,dV\,,
\]
taking $f_U(V)=\sign(\mathrm{tr}(U^*V))$,  it follows that
\[
\|\pi_{(1,0)}\|=n\sup_{U}\int_{\mathcal{U}_n} |\mathrm{tr}(V^*U)|\,dV\,.
\]
The proof ends by observing that the invariance of the Haar measure implies that for each $U\in\mathcal U_n,$
\begin{equation*}
  \int_{\mathcal{U}_n} |\mathrm{tr}(V^*U)|\,dV=
\int_{\mathcal{U}_n} |\mathrm{tr}(V^*UU^*)|\,dV=\int_{\mathcal{U}_n} |\mathrm{tr}(V)|\,dV.\qedhere
\end{equation*}
\end{proof}

\smallskip

The following result show the precise asymptotic order of $\boldsymbol{\lambda}(\mathcal S_1(n))$ as $n$ goes to infinity.

\begin{theorem}\label{cor: lim proj constant trace}
\[
\lim_{n\to \infty} \, \frac{\boldsymbol{\lambda}\big(\mathcal S_1(n)\big)}{n}=\frac{\sqrt \pi}{2}\,.
\]
\end{theorem}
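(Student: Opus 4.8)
The plan is to combine the exact integral formula $\boldsymbol{\lambda}\big(\mathcal S_1(n)\big)=n\int_{\mathcal U_n}|\mathrm{tr}(V)|\,dV$ from Theorem~\ref{traceform} with an asymptotic analysis of the distribution of the trace of a Haar-random unitary matrix. Thus it suffices to prove that
\[
\lim_{n\to\infty}\int_{\mathcal U_n}|\mathrm{tr}(V)|\,dV=\frac{\sqrt\pi}{2}\,.
\]
The key classical input is the theorem of Diaconis and Shahshahani (and its refinements by Diaconis--Evans): if $V$ is Haar-distributed on $\mathcal U_n$, then $\mathrm{tr}(V)$ converges in distribution, as $n\to\infty$, to a standard complex Gaussian random variable $Z=X+iY$ with $X,Y$ independent real Gaussians of variance $1/2$. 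Granting convergence of the relevant moment, one computes $\mathbb E|Z|=\int_0^\infty r\cdot 2r e^{-r^2}\,dr=\sqrt\pi/2$ (since $|Z|^2$ is exponential of mean $1$, so $|Z|$ has density $2re^{-r^2}$ on $[0,\infty)$), which is exactly the claimed limit.

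First I would record, via the Weingarten formula~\eqref{intformB} with $A=\mathrm{Id}$, that $\int_{\mathcal U_n}|\mathrm{tr}(V)|^2\,dV=\tfrac1n\mathrm{tr}(\mathrm{Id})=1$ for every $n\geq 1$. This uniform $L_2$-bound is crucial: it shows the family $\{|\mathrm{tr}(V)|\}_n$ is bounded in $L_2(\mathcal U_n)$, hence uniformly integrable, so that convergence in distribution of $\mathrm{tr}(V)$ to $Z$ upgrades to convergence of the first absolute moments, $\int_{\mathcal U_n}|\mathrm{tr}(V)|\,dV\to\mathbb E|Z|$. (Concretely: write $|\mathrm{tr}(V)|=|\mathrm{tr}(V)|\mathbf 1_{\{|\mathrm{tr}(V)|\le M\}}+|\mathrm{tr}(V)|\mathbf 1_{\{|\mathrm{tr}(V)|>M\}}$; the first term converges by the portmanteau/bounded-continuous-functions characterization after a routine truncation of $t\mapsto |t|$, and the second is bounded by $\tfrac1M\int|\mathrm{tr}(V)|^2\,dV=\tfrac1M$ uniformly in $n$, which is made small by choosing $M$ large.)

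Then I would assemble the pieces: by Theorem~\ref{traceform}, $\boldsymbol{\lambda}\big(\mathcal S_1(n)\big)/n=\int_{\mathcal U_n}|\mathrm{tr}(V)|\,dV$, and by the two steps above this tends to $\mathbb E|Z|=\sqrt\pi/2$, completing the proof. It is worth noting the pleasant consistency with \eqref{missing}: $\lim_n\boldsymbol{\lambda}(\mathcal S_2(n))/n=\sqrt\pi/2$ as well, and since $\mathcal S_1(n)\hookrightarrow\mathcal S_2(n)$ norm-controlled one gets $\boldsymbol{\lambda}(\mathcal S_1(n))\ge \boldsymbol{\lambda}(\mathcal S_2(n))$-type comparisons in the right direction as a sanity check; this is a non-commutative analogue of \eqref{koenigschuetttomczak} for $p=1$.

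The main obstacle is justifying the convergence $\mathrm{tr}(V)\Rightarrow Z$ and, more delicately, the passage from weak convergence to convergence of the $L_1$-moment. The weak convergence itself can be cited from Diaconis--Shahshahani (via the moment method: all mixed moments of $\mathrm{tr}(V^k)$ stabilize to those of independent complex Gaussians $\sqrt k\,Z_k$), so the real work is the uniform integrability argument, which is precisely where the exact identity $\int_{\mathcal U_n}|\mathrm{tr}(V)|^2\,dV=1$ from \eqref{intformB} does the job cleanly. An alternative, entirely self-contained route that avoids invoking the limiting Gaussian distribution would be to try to evaluate or estimate $\int_{\mathcal U_n}|\mathrm{tr}(V)|\,dV$ directly through the Weingarten calculus or known formulas for moments of $|\mathrm{tr}(V)|$ (e.g. using that $\mathbb E|\mathrm{tr}(V)|^{2k}=k!$ for $n\ge k$, a finite-$n$ exact identity), and then deduce the first-moment limit from these; but splicing in the probabilistic statement is the shortest honest path.
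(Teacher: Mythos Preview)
Your proposal is correct and follows essentially the same approach as the paper: invoke Theorem~\ref{traceform}, cite the Diaconis--Shahshahani/Johansson convergence in distribution of $\mathrm{tr}(V)$ to a standard complex Gaussian, use the exact identity $\int_{\mathcal U_n}|\mathrm{tr}(V)|^2\,dV=1$ from~\eqref{intformB} to obtain uniform integrability, and conclude convergence of first absolute moments. The only cosmetic difference is that the paper phrases the limit as the mean of a Rayleigh variable (i.e., the law of $\sqrt{2}\,|Z|$) rather than computing $\mathbb E|Z|$ directly, which is the same calculation.
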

For the proof of the Theorem \ref{cor: lim proj constant trace}, we  need to recall  some well-known results from probability theory, for more on this see \cite{billingsley2013convergence}. We
are going to  use that, given any sequence of random variables $(Y_n)$ which converges in distribution to another random variable $Y$ (denoted by $Y_n \overset{D}{\longrightarrow} Y$) and any continuous real valued function $f$, we have $f(Y_n) \overset{D}{\longrightarrow} f(Y)$.
Recall also that a sequence of random variables $(Y_n)_{n }$ is called uniformly integrable whenever $$\lim_{a \to  \infty} \sup_{n \ge 1} \int_{|Y_n| \ge a } |Y_n| d P = 0\,.$$
\begin{remark}\label{rem: bounded moment implies unif int}
Notice that whenever $\sup_{n} \mathbb{E}( |Y_n|^{1+\varepsilon}) \le C$ for some $\varepsilon, C>0$, we have that $(Y_n)_{n}$ is uniformly integrable. Indeed,
\[
\lim_{a \to  \infty} \sup_{n \ge 1} \int_{|Y_n| \ge a } |Y_n| d P \le \lim_{a \to  \infty} \frac{1}{a^\varepsilon} C\,.
\]
\end{remark}
Uniform integrability will be useful for us due to the fact (see for example \cite[Theorem 3.5]{billingsley2013convergence})
that  if $(Y_n)_{n}$ is a uniformly integrable sequence of random variables and $Y_n \overset{D}{\longrightarrow} Y$, then $Y$ is integrable and \begin{align}\label{thm: unif int + conv in dist implies conv in mean}
\mathbb{E} (Y_n) \to \mathbb{E}(Y)\,.
\end{align}
It is known that if $(U(n))_n$ is  a sequence  of random unitary matrices that are uniformly Haar distributed, then $(\mathrm{tr}(U(n)))_n$ converges in distribution to the standard Gaussian complex random variable $\gamma$ (that is the vectors $(\sqrt{2}Re[\mathrm{tr}(U(n))]$ and $\sqrt{2}Im[\mathrm{tr}(U(n))])$ converge in distribution to a standard Gaussian random vector), see \cite[Corollary 2.4]{johansson1997random}, \cite{diaconis1994eigenvalues} or \cite[Problem 8.5.5]{pastur2011eigenvalue}. Moreover,  \cite[Theorem 2.6]{johansson1997random}  estimates the speed of this convergence (see also \cite[p.164]{diaconis2003patterns},\cite[Section 4]{meckes2008linear}).

\begin{proof}[Proof of Theorem \ref{cor: lim proj constant trace}]
By the above comments, the sequence $(\sqrt{2}|\,\mathrm{tr}(U(n))|)_n$ of
Random variables on $\mathcal{U}_n$ converges in distribution to a Rayleigh random variable.
Moreover, since it is known that
\[
\mathbb{E}\big(|\mathrm{tr}(U(n))|^2\big)=
\int_{\mathcal U_n} |\mathrm{tr}(V)|^2\,dV  =  1
\]
(see, e.g., \cite[Corollary 3.6]{zhang2014matrix}),  the random variables $|\mathrm{tr}(U(n))|$ are uniformly integrable, and hence
$\big(\mathbb{E}(\sqrt{2}| \,\mathrm{tr}(U(n))|)\big)_n$ converges to the expectation of a Rayleigh random variable (see Remark \ref{rem: bounded moment
implies unif int} and \eqref{thm: unif int + conv in dist implies conv in mean}). Thus 
\[
\lim_{n\to \infty}\, \mathbb{E}\big(\sqrt{2}\,|\mathrm{tr}(U(n))|\big)\to\sqrt{\frac{\pi}{2}}
\quad
\]
yields
\[
\lim_{n\to \infty} \frac{1}{n}\boldsymbol{\lambda}(\mathcal S_1(n)))=
\frac{1}{\sqrt 2}\,\lim_{n\to \infty} \mathbb E\big(\sqrt{2}\,|\mathrm{tr}(U(n)\big)|) = \frac{\sqrt{\pi}}{2}\,.
\]
This completes the proof.
\end{proof}

\bigskip

\chapter{Tensor product methods} \label{tensor-pro-me}

In the Chapters \ref{trigo}, \ref{eukli} and \ref{Trace class operators} we studied the projection constant $\boldsymbol{\lambda}\big(\mathcal{P}_{J}(X_n)\big)$ of spaces of polynomials   supported on some fixed index set~$J$, which allow an additional structure. 
They are all, in a natural way, embedded in some space $C(K)$ of continuous functions on a compact set $K$, for which there is a compact  group $G$ with elements  acting as operators on  $C(K)$ (in short, $G$ acts on $C(K)$).

In this way we considered spaces of
trigonometric polynomials on compact abelian groups $G$ as subspaces of $C(G)$ with the action induced by $G$
itself (in particular for the $n$-dimensional torus $G=\mathbb{T}^n$),  polynomials on finite dimensional Hilbert spaces $\ell_2^n$ as subspaces of $C(\mathbb{S}_n)$
with the action coming from the unitary group $\mathcal{U}_n$, and (certain) spaces of polynomials
on $\mathcal{L}(\ell_2^n)$  as subspaces of $C(\mathcal{U}_n)$, where the action is induced by  the product 
$\mathcal{U}_n\times \mathcal{U}_n $.

In several cases this led to concrete integral formulas for $\boldsymbol{\lambda}\big(\mathcal{P}_{J}(X_n)\big)$, and given an index set of degree $\leq m$ we in turn got quite accurate estimates for the asymptotic increase of these constants in terms
of the dimension $n$ and the degree~$m$. 

In this chapter our focus  is different. Given an arbitrary Banach space $X_n=(\mathbb{C}^n,\|\cdot\|)$, we study 
the projection constant $\boldsymbol{\lambda}\big(\mathcal{P}_{J}(X_n)\big)$ from the point of view of local Banach space theory.

Fixing the degree $m$, we show that for a large variety of index sets $J\subset  \mathbb{N}_0^n $ of degree at most $m$ and for a rather 
large class of Banach spaces $X_n = (\mathbb C^n, \|\cdot\|)$, the projection constant $\boldsymbol{\lambda}\big(\mathcal{P}_{J}(X_n)\big)$, equals the $m$th power of
$\boldsymbol{\lambda}(X_n^\ast)$,
up to a constant only depending on the degree  $m$ and not on the dimension $n$. On this way, we collect independently interesting information on $\boldsymbol{\lambda}\big(\mathcal{P}_{J}(X_n)\big)$. The main proofs are based on the theory of tensor products and operator ideal norms.

\section{Projection constants  and operator ideal norms}
The theory of Banach operator ideals  found significant applications within the study of projection constants. In this section, we gather some
 results which are of particular importance for   our study. 
For further information we refer to
\cite{defant1992tensor, diestel2001operator, diestel1995absolutely, jameson1987summing, pisier1986factorization, tomczak1989banach, wojtaszczyk1996banach}.

\subsection{Preliminaries}
We start recalling the definitions of a couple of classical operator ideal norms together with a few of their basic properties.

Given Banach spaces $X$, $Y$ and $1 \leq p \leq \infty$, an  operator $u\in \mathcal{L}(X,Y)$ is said to be $p$-factorable
whenever  there exist a~measure space $(\Omega, \Sigma, \mu)$ and operators $v \in  \mathcal{L}(X, L_p(\mu))$,
$w \in \mathcal{L}(L_p(\mu), Y^{\ast\ast})$, satisfying the following factorization
\begin{equation*}
  \kappa_Yu\colon X \stackrel{v} \longrightarrow L_p(\mu) \stackrel{w} \longrightarrow Y^{\ast\ast}\,;
\end{equation*}
here, as usual, $\kappa_{Y}\colon Y \to Y^{\ast\ast}$ is the canonical embedding. In this case the $\gamma_p$-norm of the $p$-factorable
operator $u$ is given by
\begin{equation}\label{fac-p}
 \gamma_p(u) = \inf \|v\| \|w\|\,,
\end{equation}
where the infimum is taken over all possible factorizations.

We are  mainly interested  in the  norms  $\gamma_p$ for operators acting between  finite dimensional Banach spaces $X$ and $Y$. In
this case, the infimum in \eqref{fac-p} is realized  considering all possible factorizations of the more simple form
\begin{equation} 
\begin{tikzcd}
X  \arrow[rd, "v"']  \arrow[rr, "u"] &  & Y \,\,,\\
& \ell_p^n   \arrow[ru, "w"'] &
\end{tikzcd}
\end{equation}
where $n$ is arbitrary.

An operator $u\in \mathcal{L}(X,Y)$ is said to be $(q, p)$-summing $(1\leq p\leq q<\infty)$ if there is a~constant $C>0$ such that
for each choice of finitely many $x_1, \ldots, x_N\in X$ one has
\[
\Big(\sum_{j=1}^N \|ux_j\|_Y^q \Big)^{\frac{1}{q}} \leq C
\sup\Big\{\Big(\sum_{j=1}^{N} |x^\ast(x_j)|^p\Big)^{\frac{1}{p}}\,:\,\, \|x^\ast\|_{X^\ast} \leq 1\Big\}\,.
\]
By $\pi_{q, p}(u\colon X\to Y)$ (and $\pi_{q, p}(u)$ for short) we denote the least such $C$ satisfying this inequality. If $q=p$, then
$u$ is called $p$-summing, and we write $\pi_p(u)$ instead of $\pi_{p, p}(u)$.

The class of all $p$-factorable operators as well as the class of all $(q, p)$-summing operators form so-called Banach operator ideals
in the sense of Pietsch. This  in particular means that both norms  $\pmb{A}=\gamma_p$ and $\pmb{A}=\pi_{q, p}$  satisfy the so-called ideal property,
that is $\pmb{A}(uvw) \leq \|u\| \pmb{A}(v) \|w\|$ for every choice of appropriate operators $u$, $v$, $w$ acting between
Banach spaces. Moreover, given two Banach spaces $X,Y$, the linear space $\pmb{\mathcal{A}}(X,Y)$ of all operators 
$u: X \to Y$ with finite $\pmb{A}$-norm, together with the norm $\pmb{A}$ forms a Banach space, which contains all finite rank operators.
In what follows, if $X$ is any finite-dimensional Banach space and $\pmb{A}$ is an ideal norm,  we write
$\pmb{A}(X) := \pmb{A}(\mbox{id}_X\colon X \to X)$.

We recall that $\pi_1$ and $\gamma_\infty$ are in trace duality in the sense that for each operator $u: X \to Y$ between finite
dimensional Banach spaces we have
\begin{equation}
\label{trace}
\gamma_\infty(u) = \sup\big\{|\text{tr}(uv)|\,: \, \, \, \pi_1(v\colon Y\to X) \leq 1\big\}\,.
\end{equation}
The following keystone result of the theory of $2$-summing operators has numerous applications. It states that for every $n$-dimensional
Banach space $X$
\begin{equation}
\label{pi2formula}
\pi_2(X) = \sqrt{n}\,.
\end{equation}
Combined  with Pietsch's famous factorization for $p$-summing operators (for the special case $p=2$) equation \eqref{pi2formula}
in fact yields the  fundamental Kadets-Snobar theorem from~\eqref{kadets1}, as a simple consequence.

A Banach space $X$ has the Gordon-Lewis property if every 1-summing operator $u\colon X \to \ell_2$ is $1$-factorable. In this case, there is a~constant $C>0$ such that for all  1-summing operators $u \colon X \to \ell_2$
\begin{equation}
\label{eq: def G-L}
\gamma_1(u)\leq C\,\pi_1(u)\,,
\end{equation}
and the best such $C$ is called the Gordon-Lewis constant of $X$ and denoted by $\mbox{gl} (X)$ (again we put $\mbox{gl}(X)=\infty$
whenever $X$ does not have the Gordon-Lewis property).

\smallskip

\subsection{Some classical estimates}\label{section: classical estimates}
In this subsection we collect    some  estimates of projection constants in terms of
operator ideal norms and geometric properties in  Banach spaces which will be of relevance later.

The projection constant of a~Banach space $X$ can be formulated in terms of the  $\infty$-factorization norm of the identity
operator $\id_X$.  More precisely, if $X$ is a~Banach space and $X_0$
is any subspace of $L_{\infty}(\mu)$ isometric to $X$, then
\begin{align} \label{gammainfty}
\boldsymbol{\lambda}(X)= \gamma_\infty(\id_X)= \boldsymbol{\lambda}(X_0, L_{\infty}(\mu)).
\end{align}
Thus, for any finite-dimensional Banach space $X$, we have
\begin{equation} \label{formulaX}
\boldsymbol{\lambda}(X) \leq d\big(X, \ell_\infty^{\text{dim}\,X}\big)\,.
\end{equation}

Let us mention that there is an open problem related to the above estimate (see \cite{rutovitz}): Does
there exist a~positive function $\phi$ on $[1, \infty)$ such that, for every finite-dimensional
Banach space $X$ one has
\[
d\big(X, \ell_\infty^{\text{dim}\,X}\big) \leq \phi(\boldsymbol{\lambda}(X))\,?
\]
We note that according to Sch\"utt's result from  \cite{schutt1978projection} there is a~universal constant $c>0$ such that
for every real finite-dimensional symmetric Banach space $X$,
\[
d\big(X, \ell_\infty^{\text{dim}\,X}\big)
\leq c \phi(\lambda(X)) = c\,\big(1 + \log \boldsymbol{\lambda}(X)\big)^{\frac{7}{2}}\boldsymbol{\lambda}(X)\,.
\]

It is worth noting here another estimate due to Lewis \cite{Lewis1975}, and some notation is required to formulate this result. Given a~basis 
$B=\{b_1, \ldots, b_n\}$ for an  $n$-dimensional Banach space $X$ and  $\pi$ a~permutation of $\{1, \ldots, n\}$, let $g_{\pi}$ be the operator on $X$ defined by $g_{\pi}(b_i)= b_{\pi(i)}$ for each $1\leq i \leq n$. The  diagonal symmetry constant of $B$ is given by $\delta(B)=\sup_{\pi}\|g_{\pi}\|$. This constant defines a~corresponding  symmetry parameter for the space $X$ itself given by \[\delta(X)= \inf_B \delta(B)\,,
\]
where the infimum is taken over all bases $B$ for $X$. 
In the remarkable paper \cite{Lewis1975}, Lewis proved that there is  a constant $\gamma >0$ such that for all $n$-dimensional Banach spaces~$X$,
\[
d(X, \ell_{1}^n) \leq \gamma\,\boldsymbol{\lambda}(X^{*})^{2}\,\delta(X)^3\,.
\]
Concerning the above mentioned open problem,
the result of Lewis implies that, given a  class of all finite dimensional spaces $X$, for which uniformly  $\delta(X^{*}) \leq C$ for some $C\ \geq 1$, one has 
\[
d\big(X, \ell_\infty^{\text{dim}\,X}\big)\ \leq \gamma C^{3}\,\boldsymbol{\lambda}(X)^{2}\,.
\]
Note that the proof of the estimate of Lewis is based on Grothendieck's theorem - showing   the constant $\gamma=16 K_G$. For completeness we recall  Grothendieck's theorem: Every operator $u\colon L_1(\mu) \to L_2(\mu)$
is $1$-summing with $\pi_1(u) \leq K_G \|u\|$ (see e.g. \cite[Theorem 1.13]{diestel1995absolutely}), where 
\[
K_G \leq \frac{\pi}{2 \log(1 + \sqrt{2})}
\]
is the Grothendieck constant (a bound proved by  Krivine in  \cite{krivine1978constantes}).

 By \eqref{trace} and \eqref{gammainfty}
for any finite-dimensional Banach space $X$
\[
\boldsymbol{\lambda}(X) = \sup\big\{|\text{tr}(v)|\,:\, \, \pi_1(v\colon X\to X) \leq 1\big\}.
\]
This shows that bounds for the projection constant of a Banach space $X$ are intimately connected to 
bounds for the $1$-summing norm of the identity operator on $X$.
In particular, for any $n$-dimensional Banach space $X$, we have
\begin{equation} \label{formulaB}
\boldsymbol{\lambda}(X)\,\pi_1(X) \geq n\,,
\end{equation}
and if $X$ has enough symmetries (i.e., any operator $u\colon X \to X$ which commutes with all isometries on $X$, is a~multiple of the identity map), then
\begin{equation} \label{formulaA}
\boldsymbol{\lambda}(X)\pi_1(X)= n\,.
\end{equation}

We also note that Grothendieck's theorem in combination with the ideal property of the norm $\pi_1$ gives that, for every $n$-dimensional Banach space $X$ one has (see \cite{gordon1974absolutely} or \cite[10.15, p.~115]{jameson1987summing})

\begin{equation*}
\pi_1(X) \leq  K_{G}\, d(X, \ell_1^n)\,d(X, \ell_2^n)\,.
\end{equation*}
In particular, \eqref{formulaB} leads to
\begin{equation} \label{formulaC}
\frac{1}{K_G}\,\frac{n}{d(X, \ell_1^n)\,d(X, \ell_2^n)} \leq \boldsymbol{\lambda}(X)\,.
\end{equation}

In many concrete situations the projection constant $\boldsymbol{\lambda}(X_n)$ of a finite dimensional Banach
lattice $X_n = (\mathbb{C}^n, \|\cdot\|)$ is closely related with the  fundamental function of $X_n$ given by
\[
\varphi_{X_n}(k) := \Big\| \sum_{j=1}^k e_j\Big\|_{X_n}, \quad 1 \leq k \leq n\,.
\]
Note first that, given a~Banach lattice $X_n = (\mathbb{C}^n, \|\cdot\|)$, we deduce from \eqref{formulaX} that
\begin{equation} \label{Carsten1}
\boldsymbol{\lambda}(X_n) \leq d(\ell^n_\infty, X_n) \leq \varphi_{X_n}(n), \quad\, n\in \mathbb{N}\,.
\end{equation}
provided that
 $\|\id\colon X_n \to \ell_\infty\| \leq 1$, or equivalently  $\|e_k\|_{X_n}\leq 1\,, \, 1 \leq k \leq n$.
Conversely, Sch\"{u}tt proved in \cite{schutt1978projection} that
\begin{equation} \label{schuett}
\varphi_{X_n}(n) \leq \sqrt{2} \, \|\id\colon \ell_2 \to  X_n\| \, \boldsymbol{\lambda}(X_n)\,.
\end{equation}
If the Banach lattice $X_n = (\mathbb{C}^n, \|\cdot\|)$  is symmetric with normalized standard
unit vector basis, then
\[
d(X_n, \ell_1^n) = d(X_{n}', \ell_\infty^n)\, \leq \, \frac{n}{\varphi_{X_n}(n)}\,,
\]
and hence by \eqref{formulaC}, we deduce that
\begin{equation} \label{formulaD}
\frac{1}{K_G}\,\frac{\varphi_{X_n}(n)}{d(X_n, \ell_2^n)} \leq \boldsymbol{\lambda}(X_n)\,.
\end{equation}

Under convexity and concavity assumptions more can be said. Given a Banach sequence lattice $X$ and denoting its $n$th section by
$X_n$, then the following equivalences
\begin{equation} \label{conv-conc}
\boldsymbol{\lambda}(X_n)\sim
\begin{cases}   \
\varphi_X(n)  &  \text{if $X$ is $2$-convex}\\[2mm]
n^{\frac{1}{2}}  &  \text{if $X$ is $2$-concave}\,.\\[2mm]
\end{cases}
\end{equation}
hold with constants only depending on $X$ and not on the dimension~$n$.

Finally, we briefly discuss the importance of the norms  $\pi_2$ and $\pi_{2, 1}$ for the estimation of  projection constants.~Assume that $X$ is an $n$-dimensional Banach space. Since 
any finite rank operator is $2$-summing, $\mathcal{L}(\ell_\infty, X) = \Pi_2(\ell_\infty, X)$. 
Defining 
\[
\boldsymbol{\Delta}_n(X) := \big\|\id\colon \mathcal{L}(\ell_\infty, X) \to \Pi_2(\ell_\infty, X)\big\|\,,
\]
we get
\begin{equation}
\label{pi2lambda}
\frac{\sqrt{n}}{\boldsymbol{\Delta}_n(X)} \, \leq \,\boldsymbol{\lambda}(X)\,.
\end{equation}
Indeed, given $\varepsilon>0$, we  find an isometric embedding $I\colon X \to \ell_\infty$, and a~projection
$P\colon \ell_\infty \to Y:= I(X)$ such that $\|P\| \leq (1 + \varepsilon)\boldsymbol{\lambda}(X)$. Combining  \eqref{pi2formula}
with the ideal properties of all $2$-summing operators, yields
\[
\sqrt{n} = \pi_2(Y)= \pi_2(P\,\id_Y) \leq {\boldsymbol{\Delta}}_n(X)\,\|P\|
\leq (1 + \varepsilon) {\boldsymbol{\Delta}}_n(X)  \, \boldsymbol{\lambda}(X)\,.
\]

We conclude with an application of a~deep theorem proved by Pisier \cite{pisier1986factorization} (see also \cite[Theorem 10.9]{diestel1995absolutely}).
A~special case of this result states (see \cite[Theorem 14.1]{jameson1987summing}): If $K$ is a~compact Hausdorff
space, $X$ a Banach space and $u\colon C(K) \to X$ is $(2, 1)$-summing, then there is a~positive functional $\phi \in C(K)^{\ast}$ with $\|\phi\|=1$ such that
\[
\|uf\|_{X}^2 \leq 2 \,\pi_{2,1}(T) \,\phi(|f|)\,\|f\|_{C(K)}, \quad\, f\in C(K)\,.
\]
This immediately leads to the  following conclusion (see \cite[14.2]{jameson1987summing}) that for every
finite dimensional Banach space $X$,
\[
\pi_1(X) \leq 2 \,\pi_{2,1}(X)^2 \,\boldsymbol{\lambda}(X)^2\,.
\]
In combination with \eqref{formulaB}, we obtain
\[
\frac{n}{2\,\pi_{2,1}(X)^2} \leq \boldsymbol{\lambda}(X)^3\,.
\]

\bigskip

\section{Annihilating coefficients}
\label{Annihilating coefficients}

Let $X_n = (\mathbb C^n, \|\cdot\|)$ be a Banach space.
Then for each pair of  (finite) index sets $I,J \subset  \mathbb{N}_0^n$ with $I \subset J$ we define the projection
\begin{equation} \label{anni}
  \mathbf{Q}_{J,I}:\mathcal{P}_J(X_n) \to \mathcal{P}_I(X_n)\,,\,\,\,
P \mapsto \sum_{\alpha \in  I} c_\alpha(P) z^\alpha\,.  
\end{equation}
This is the projection which  annihilates those coefficients of a polynomial in $\mathcal{P}_J(X_n)$, which have indices in the complement of  $I$.

\begin{remark} \label{simple}
Given $I,J \subset  \mathbb{N}_0^n$ with $I \subset J$ and a  Banach space $X_n= (\mathbb{C}^n, \|\cdot\|)$, we have
\[
\boldsymbol{\lambda}\big(\mathcal{P}_{I}(X_n)
\big) \leq \|\mathbf{Q}_{J,I}\, \| \boldsymbol{\lambda}\big(\mathcal{P}_J(X_n)\big)\,.
\]
Indeed, factorize $\id_{\mathcal{P}_I(X_n)} = \mathbf{Q}_{J,I}\circ j_{I,J}$ \, through the\,canonical embedding $j_{I,J}: \mathcal{P}_I(X_n)\hookrightarrow \mathcal{P}_J(X_n)$ and \,the
projection $\mathbf{Q}_{J,I}: \mathcal{P}_J(X_n)\hookrightarrow \mathcal{P}_I(X_n)$, and \, use \eqref{gammainfty}.
\end{remark}

Let us see a first simple but important example.

\begin{proposition} \label{Cauchy}
  Let $J \subset \mathbb{N}_0^n$ be an index set of degree $m$. Then, for each $0 \leq k \leq m$ and each Banach space $X_n = (\mathbb{C}^n, \|\cdot\|)$,
  \[
  \big\|\mathbf{Q}_{J,J_k}\colon \mathcal{P}_J(X_n) \to \mathcal{P}_{J_k}(X_n)\big\| =1\,,
  \]
  so in particular
  \[
  \boldsymbol{\lambda}\big(\mathcal{P}_{J_k}(X_n)\big) \leq  \boldsymbol{\lambda}\big(\mathcal{P}_J(X_n)\big)\,.
  \]
\end{proposition}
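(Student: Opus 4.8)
The plan is to exhibit a norm-one averaging projection onto $\mathcal P_{J_k}(X_n)$ using the natural circle action, exactly in the spirit of the Cauchy integral formula, and then invoke Remark~\ref{simple} to transfer the bound to projection constants. The point is that $\mathbf{Q}_{J,J_k}$ extracts the $k$-homogeneous part of a polynomial, and this can be written as an average over the unit circle.

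First I would make the circle group $\mathbb T$ act on $\mathcal P_J(X_n)$ by $T_\theta P(z) := P(e^{i\theta} z)$ for $\theta \in [0,2\pi)$. Since $z \mapsto e^{i\theta} z$ maps $B_{X_n}$ onto itself, each $T_\theta$ is an isometry of $\mathcal P_J(X_n)$, and for a monomial we have $T_\theta(z^\alpha) = e^{i|\alpha|\theta} z^\alpha$; hence $T_\theta$ leaves $\mathcal P_J(X_n)$ invariant (the index set is unchanged) and also leaves $\mathcal P_{J_k}(X_n)$ invariant. Next I would define
\[
\mathbf{P}(P) := \frac{1}{2\pi}\int_0^{2\pi} e^{-ik\theta}\, T_\theta P \, d\theta, \qquad P \in \mathcal P_J(X_n)\,.
\]
Evaluating this on a monomial $z^\alpha$ with $\alpha \in J$ gives $\frac{1}{2\pi}\int_0^{2\pi} e^{i(|\alpha|-k)\theta}\,d\theta\; z^\alpha$, which equals $z^\alpha$ if $|\alpha|=k$ and $0$ otherwise. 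By linearity $\mathbf{P}$ agrees with $\mathbf{Q}_{J,J_k}$ on all of $\mathcal P_J(X_n)$. In particular $\mathbf{P}$ is well-defined as a map into $\mathcal P_{J_k}(X_n)$ and is a projection. The norm estimate is then immediate: for every $P$,
\[
\|\mathbf{Q}_{J,J_k}P\|_{B_{X_n}} = \|\mathbf{P}(P)\|_{B_{X_n}} \le \frac{1}{2\pi}\int_0^{2\pi} \|T_\theta P\|_{B_{X_n}}\, d\theta = \|P\|_{B_{X_n}}\,,
\]
using that each $T_\theta$ is an isometry. Since $\mathbf{Q}_{J,J_k}$ restricted to $\mathcal P_{J_k}(X_n)$ is the identity, its norm is exactly $1$.

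The second assertion then follows at once from Remark~\ref{simple} with $I = J_k \subset J$, giving
\[
\boldsymbol{\lambda}\big(\mathcal P_{J_k}(X_n)\big) \le \|\mathbf{Q}_{J,J_k}\|\,\boldsymbol{\lambda}\big(\mathcal P_J(X_n)\big) = \boldsymbol{\lambda}\big(\mathcal P_J(X_n)\big)\,.
\]
I do not anticipate a genuine obstacle here; the only mild point to be careful about is checking that the Bochner integral defining $\mathbf{P}$ makes sense (automatic, since $\mathcal P_J(X_n)$ is finite dimensional and $\theta \mapsto T_\theta P$ is continuous) and that evaluating the integral commutes with evaluating at a point $z$, which is clear because point evaluations are continuous linear functionals on the finite-dimensional space $\mathcal P_J(X_n)$. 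One could equivalently phrase the whole argument via Theorem~\ref{rudy} applied to $G = \mathbb T$ acting isometrically, but the direct computation above is shorter.
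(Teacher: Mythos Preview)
Your proof is correct and is essentially the same approach as the paper's: the paper simply invokes Cauchy's inequality (citing \cite[Proposition~15.33]{defant2019libro}) for the Taylor decomposition $P = \sum_k \mathbf{Q}_{J,J_k}P$, and what you have written is precisely the standard circle-averaging proof of that inequality, spelled out in full. The second assertion via Remark~\ref{simple} matches the paper as well.
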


\begin{proof}
  Given  $P \in \mathcal{P}_J(X_n)$,
  \[
   P = \sum_{k=0}^{m} \mathbf{Q}_{J,J_k}P 
  \]
   is the unique Taylor expansion of $P$, and by Cauchy's inequality we have that
  $\|\mathbf{Q}_{J,J_k}P\| \leq \|P\|$ for all $0 \leq k \leq m$ (see, e.g., \cite[Proposition 15.33]{defant2019libro}).
\end{proof}

Less standard is what we now prove for subsets of tetrahedral indices $\alpha$, i.e., $\alpha \in \Lambda_T( \leq m,n)$. The following tool is crucial for our purposes, and it is basically due to Ortega-Cerd\`a, Ouna\"{\i}es and Seip. Since it appears in the unpublished  manuscript \cite{ortega2009sidon}
and we need a slight improvement of this result, we include a detailed proof for completeness.
It will be useful to have the definition of the following constant present,
\begin{equation}\label{kappa}
    \kappa:=\bigg(\prod_{k=1}^\infty\
\sinc{\frac{\pi}{\mathfrak{p}_k}}\bigg)^{-1}=2.209\ldots\,,
\end{equation}
where  $\mathfrak{p}_1=2, \mathfrak{p}_2=3,\ldots$ stands for the  sequence of prime numbers and $\sinc x:=(\sin x)/x$.

\begin{theorem} \label{OrOuSe}
Let $X = (\CC^n, \| \cdot \|)$ be a Banach lattice  and  $ J \subset \mathbb{N}_0^n$ an  index set of degree $m$. Then
\[
\big\|\mathbf{Q}_{J,J_T}: \mathcal{P}_J(X_n) \to \mathcal{P}_{J_T}(X_n)\big\|
\leq \kappa^m\,,
\]
where $J_T = J \cap \Lambda_T(\le\! m,n)$. In particular,
\[
\boldsymbol{\lambda}\big(\mathcal{P}_{J_T }(X_n)\big) \leq \kappa^m \boldsymbol{\lambda}\big(\mathcal{P}_J(X_n)\big)\,.
\]
\end{theorem}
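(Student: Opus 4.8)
The plan is to realize the coefficient projection $\mathbf{Q}_{J,J_T}$ as a composition $D\circ S$, where $S$ is a norm‑one averaging operator that annihilates every monomial carrying a non‑tetrahedral exponent, and $D$ is an explicit dilation undoing the damage $S$ inflicts on the surviving tetrahedral monomials; the whole cost will sit in $D$ and amount to $\kappa^m$. \textbf{Step 1 (the lattice action).} Since $X_n=(\mathbb{C}^n,\|\cdot\|)$ is a complex Banach lattice with $1$‑unconditional unit vector basis, for every $\theta=(\theta_1,\dots,\theta_n)\in\mathbb{R}^n$ the rotation $z\mapsto(e^{i\theta_1}z_1,\dots,e^{i\theta_n}z_n)$ is a surjective isometry of $\overline{B}_{X_n}$, hence induces an isometry of $\mathcal{P}_J(X_n)$. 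Therefore, for any probability measure $\nu$ on $\mathbb{R}^n$ the operator $P\mapsto\int P\big((e^{i\theta_j}z_j)_j\big)\,d\nu(\theta)$ has norm $\le1$ on $\mathcal{P}_J(X_n)$ and multiplies $z^\alpha$ by $\widehat{\nu}(\alpha)=\int e^{i\langle\alpha,\theta\rangle}\,d\nu(\theta)$. This is the only structural input used about $X_n$.

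\textbf{Step 2 (killing the non‑tetrahedral part).} For a prime $\mathfrak{p}$ I take $A_{\mathfrak{p}}$ to be the averaging operator attached to the product of $n$ copies of the uniform probability measure on $[-\pi/\mathfrak{p},\pi/\mathfrak{p}]$, so that $A_{\mathfrak{p}}(z^\alpha)=\prod_{j=1}^n\sinc(\alpha_j\pi/\mathfrak{p})\,z^\alpha$, and I set $S:=\prod_{\mathfrak{p}_k\le m}A_{\mathfrak{p}_k}$, the composition over all primes $\le m$. Then $\|S\|\le1$ and $S(z^\alpha)=\prod_{j=1}^n\big(\prod_{\mathfrak{p}_k\le m}\sinc(\alpha_j\pi/\mathfrak{p}_k)\big)z^\alpha$. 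The decisive elementary observation is: if $\alpha$ has an entry $\alpha_{j_0}\ge2$ and $|\alpha|\le m$, then $2\le\alpha_{j_0}\le m$ admits a prime factor $\mathfrak{p}\le m$, whence $\sinc(\alpha_{j_0}\pi/\mathfrak{p})=0$ and $S(z^\alpha)=0$. Thus $S$ annihilates \emph{every} non‑tetrahedral monomial of degree $\le m$ — in particular every non‑tetrahedral monomial occurring in $\mathcal{P}_J(X_n)$, as $\deg J=m$ — while for a tetrahedral $\alpha$ it acts by $S(z^\alpha)=w^{|\alpha|}z^\alpha$ with $w:=\prod_{\mathfrak{p}_k\le m}\sinc(\pi/\mathfrak{p}_k)\in(0,1]$. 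Hence $S$ maps $\mathcal{P}_J(X_n)$ into $\mathcal{P}_{J_T}(X_n)$.

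\textbf{Step 3 (renormalization and the dilation estimate).} Let $D$ be the dilation $DP(z):=P(z/w)$, which on $\mathcal{P}_{J_T}(X_n)$ multiplies $z^\alpha$ by $w^{-|\alpha|}$; then $\mathbf{Q}_{J,J_T}=D\circ S$ on $\mathcal{P}_J(X_n)$, so $\|\mathbf{Q}_{J,J_T}\|\le\|D\|\,\|S\|\le\|D\|$. To bound $\|D\|$ I would use the degree‑sensitive dilation estimate: for every polynomial $P$ of degree $\le m$ on a complex normed space $E$ and every $\rho\ge1$, $\|P\|_{\rho B_E}\le\rho^m\|P\|_{B_E}$. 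Its proof is short: for fixed $x\in B_E$ the one‑variable polynomial $g_x(\zeta):=P(\zeta x)$ has degree $\le m$ and $\|g_x\|_{\overline{\mathbb{D}}}\le\|P\|_{B_E}$; passing to $\phi(\zeta):=\zeta^m\overline{g_x(1/\bar\zeta)}$ (the reversed–conjugate polynomial, still of degree $\le m$, with $|\phi|=|g_x|$ on $\mathbb{T}$) gives $|g_x(\rho\omega)|=\rho^m|\phi(\omega/\rho)|\le\rho^m\|\phi\|_{\overline{\mathbb{D}}}=\rho^m\|g_x\|_{\overline{\mathbb{D}}}$ for $|\omega|=1$, and one takes the supremum over $x$. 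Since $1/w=\prod_{\mathfrak{p}_k\le m}\sinc(\pi/\mathfrak{p}_k)^{-1}\ge1$, this yields $\|D\|\le(1/w)^m$; and because $\sinc(\pi/\mathfrak{p})^{-1}>1$ for every prime while the infinite product in \eqref{kappa} converges, $(1/w)^m=\big(\prod_{\mathfrak{p}_k\le m}\sinc(\pi/\mathfrak{p}_k)^{-1}\big)^m\le\big(\prod_{k\ge1}\sinc(\pi/\mathfrak{p}_k)^{-1}\big)^m=\kappa^m$. Altogether $\|\mathbf{Q}_{J,J_T}\colon\mathcal{P}_J(X_n)\to\mathcal{P}_{J_T}(X_n)\|\le\kappa^m$, and the stated inequality for projection constants follows at once from Remark~\ref{simple} with $I=J_T$ (factor $\mathrm{id}_{\mathcal{P}_{J_T}(X_n)}$ through the embedding $\mathcal{P}_{J_T}(X_n)\hookrightarrow\mathcal{P}_J(X_n)$ and $\mathbf{Q}_{J,J_T}$, then use \eqref{gammainfty}).

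I expect the only genuinely delicate points to be two. First, the averaging must be restricted to arcs of \emph{prime} length $\le m$: using all integer lengths would still kill the non‑tetrahedral part but would replace $\kappa$ by the larger constant $\prod_{d\ge2}\sinc(\pi/d)^{-1}$, so the arithmetic fact "every integer in $\{2,\dots,m\}$ has a prime factor $\le m$'' is what makes exactly $\kappa$ appear. Second, the renormalization $D$ must be estimated through the dilation inequality above rather than a term‑by‑term Cauchy bound on homogeneous components, which would only give $\sum_{k\le m}\kappa^k$ instead of $\kappa^m$. Neither difficulty is serious once isolated, and the point of the "slight improvement'' over \cite{ortega2009sidon} is precisely that this argument runs verbatim for an arbitrary complex Banach lattice $X_n$ and an arbitrary index set $J$ of degree $m$, not merely for $\ell_\infty^n$ or for homogeneous $J$.
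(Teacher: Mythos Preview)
Your proof is correct and follows essentially the same route as the paper's: both use the arithmetic fact that every $2\le a\le m$ has a prime factor $\mathfrak{p}\le m$, hence $\sinc(a\pi/\mathfrak{p})=0$, to build a coordinatewise multiplier annihilating all non-tetrahedral monomials, and both pay the cost $\kappa^m$ through the degree-$m$ dilation estimate. The only cosmetic difference is that the paper absorbs your dilation $D$ into the averaging by integrating against the single function $r_m(t)=c_m\exp\!\big(2\pi i\sum_{k\le\pi(m)}t_k/\mathfrak{p}_k\big)$ on $[0,1]^{\pi(m)}$, normalized so that $\int r_m=1$, $\int r_m^{a}=0$ for $2\le a\le m$, and $|r_m|\equiv|c_m|\le\kappa$; then $\int_{Q^n}P\big(z_1r_m(t^1),\dots,z_nr_m(t^n)\big)\,dt=\mathbf{Q}_{J,J_T}P(z)$ directly, and the pointwise bound on the integrand is exactly your dilation inequality.
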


\begin{proof}

As usual, we write $\pi(x)$ for the counting function of the prime numbers.  Now, given
$$t=(t_1,\ldots,t_{\pi(m)})  \in  Q :=[0,1]^{\pi(m)}\,,$$
  define
\[
r_m(t)=c_m \exp \left(2\pi i\Bigl(\frac {t_1}2 + \frac {t_2}3+\cdots+ \frac
{t_{\pi(m)}}{\mathfrak{p}_{\pi(m)}}\Bigr)\right),
\]
where
\[
c_m=\prod_{k=1}^{\pi(m)} \left(\frac {\mathfrak{p}_k}{2\pi i}
\Bigl(e^{\frac {2\pi i}{\mathfrak p_k}} -1\Bigr)\right)^{-1}.
\]
Note that the function $r_m:Q\to \mathbb{C}$ has the following properties:
\begin{enumerate}
\item[(i)] $\int_Q r_m(t)\,d\mu(t)=1$,
\item[(ii)] $\int_Q r_m^k(t)\, d\mu(t)=0$\ \ for each \, $2 \leq k \leq m$,
\item[(iii)] $|r_m(t)|\le \kappa$\ \ for all $t \in Q$,
\end{enumerate}
here $d\mu$ denotes the  Lebesgue measure on $Q$.
Indeed,  (i) and (ii)
are trivial and follow by the definition of the function, and (iii) holds because
$|r_m(t)| = |c_m|$ and
\[
|c_m|^{-2}= \prod_{k=1}^{\pi(m)} \frac{\mathfrak{p}_k^2}{(2\pi)^2}
\Bigl|e^{\frac{2\pi i}{\mathfrak{p}_k}}-1\Bigr|^2
 = \prod_{k=1}^{\pi(m)} \sinc^2\frac{\pi}{\mathfrak{p}_k}.
\]
Given a polynomial $P \in \mathcal P_J(X_n)$, note that by the properties (i) and (ii) we have the representation
\[
\mathbf{Q}_{J,J_T}P(z)=\int_{Q^n} P(z_1r_m(t^1),\ldots, z_n r_m(t^n))\,
d\mu(t^1)\cdots d\mu(t^n)\,,\,\,\, z \in X_n\,.
\]
Since $X_n$ is a Banach lattice   we by (iii) deduce that  $|P(z_1 r_m(t^1),\ldots, z_n
r_m(t^n))| \le \kappa^m \|P\|_{\mathcal{P}_{J}(X_n)}$ for every $z~\in~B_{X_n}$, and therefore
\[
\Vert \mathbf{Q}_{J,J_T}(P) \Vert_{\mathcal{P}_{\Lambda_T}(X_n)} \leq \kappa^m \,\Vert P \Vert_{\mathcal{P}_{J}(X_n)}.
\]
This proves the first statement, the second one is an immediate consequence of the observation from
Remark~\ref{simple}.
\end{proof}

\bigskip

\section{Homogeneous building blocks} \label{Homogeneous building blocks}
By the Kadets-Snobar theorem from \eqref{kadets1} we know that for any
 Banach space $X_n = (\mathbb{C}^n, \|\cdot\|) $ and any
finite index set
$J \subset \mathbb{N}_0^n $
we have
\begin{equation}\label{tincho}
    \boldsymbol{\lambda}\big(\mathcal{P}_{J}(X_n)\big)
 \leq \sqrt{|J|}
 \leq \sqrt{ m+1} \max_{0 \leq k \leq m}
 \sqrt{|J_k|}\,.
 \end{equation}
The following theorem improves this considerably -- it shows  that estimating the projection constant of $\mathcal{P}_{J}(X_n)$ from above or below reduces to estimating
    the projection constants of its 'homogeneous building blocks' $\mathcal{P}_{J_k}(X_n), \, 0 \leq k \leq m$.
\smallskip

    \begin{theorem} \label{degree-homo}
 Let $X_n = (\mathbb{C}^n, \|\cdot\|) $ be a Banach space, and
$J \subset \mathbb{N}_0^n$ an  index set of degree   $m$. Then
 \[
 \max_{0 \leq k \leq m} \boldsymbol{\lambda}\big(\mathcal{P}_{J_k}(X_n)\big) \,\,\leq \,\,
 \boldsymbol{\lambda}\big(\mathcal{P}_{J}(X_n)\big)
 \,\,\leq \,\,(m+1) \max_{0 \leq k \leq m} \boldsymbol{\lambda}\big(\mathcal{P}_{J_k}(X_n)\big)
 \,.
 \]
 In particular, for any  Banach sequence lattice $X$
 \[
  \lim_{m \to \infty}  \sup_{n\in \mathbb{\mathbb{N}}}
 \frac{\sqrt[m]{\boldsymbol{\lambda}\big(\mathcal{P}_{\leq m}(X_n)\big)}}{\sqrt[m]{\max_{0 \leq k \leq m} \boldsymbol{\lambda}\big(\mathcal{P}_{k}(X_n)\big)}}
 = 1
 \,.
 \]
\end{theorem}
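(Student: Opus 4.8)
The statement has two parts, and the second is a formal consequence of the first, so the heart of the matter is the two-sided estimate
\[
\max_{0 \leq k \leq m} \boldsymbol{\lambda}\big(\mathcal{P}_{J_k}(X_n)\big) \,\leq\, \boldsymbol{\lambda}\big(\mathcal{P}_{J}(X_n)\big) \,\leq\, (m+1)\max_{0 \leq k \leq m} \boldsymbol{\lambda}\big(\mathcal{P}_{J_k}(X_n)\big)\,.
\]
The lower bound is essentially already in hand: it is exactly Proposition~\ref{Cauchy}, which says that for each $0 \leq k \leq m$ the projection $\mathbf{Q}_{J,J_k}\colon \mathcal{P}_J(X_n) \to \mathcal{P}_{J_k}(X_n)$ annihilating all coefficients outside the $k$-homogeneous part has norm $1$ (by Cauchy's inequalities), whence $\boldsymbol{\lambda}(\mathcal{P}_{J_k}(X_n)) \leq \boldsymbol{\lambda}(\mathcal{P}_J(X_n))$ via Remark~\ref{simple}. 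Taking the maximum over $k$ gives the left inequality with no extra work.

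\textbf{The upper bound.} Here the plan is to realize the identity of $\mathcal{P}_J(X_n)$ as a sum of projections onto its homogeneous pieces. Concretely, write $\id_{\mathcal{P}_J(X_n)} = \sum_{k=0}^m j_{J_k,J}\circ \mathbf{Q}_{J,J_k}$, where $j_{J_k,J}\colon \mathcal{P}_{J_k}(X_n) \hookrightarrow \mathcal{P}_J(X_n)$ is the canonical inclusion; this is precisely the decomposition $P = \sum_{k=0}^m \mathbf{Q}_{J,J_k}P$ of the unique Taylor expansion used in Proposition~\ref{Cauchy}. Now embed $\mathcal{P}_J(X_n)$ isometrically into some $\ell_\infty(S)$ and, for each $k$, choose a near-optimal projection $R_k$ from $\ell_\infty(S)$ onto (the image of) $\mathcal{P}_{J_k}(X_n)$ with $\|R_k\| \leq (1+\varepsilon)\boldsymbol{\lambda}(\mathcal{P}_{J_k}(X_n))$; such projections exist by the definition of the projection constant together with the $1$-injectivity of $\ell_\infty(S)$. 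Composing each $R_k$ with the inclusion back into $\ell_\infty(S)$ and summing over $k$ produces a projection of $\ell_\infty(S)$ onto $\mathcal{P}_J(X_n)$ whose norm is at most $\sum_{k=0}^m (1+\varepsilon)\boldsymbol{\lambda}(\mathcal{P}_{J_k}(X_n)) \leq (m+1)(1+\varepsilon)\max_k \boldsymbol{\lambda}(\mathcal{P}_{J_k}(X_n))$. Letting $\varepsilon \to 0$ and invoking \eqref{gammainfty} gives the right inequality. One small point to check carefully is that the composite $\sum_k j_{J_k,J}\mathbf{Q}_{J,J_k} R_k$ really is a projection onto $\mathcal{P}_J(X_n)$: on an element $P \in \mathcal{P}_J(X_n) \subset \ell_\infty(S)$ each $R_k$ restricts to $\mathbf{Q}_{J,J_k}$ followed by the inclusion, so the sum restricts to $\sum_k \mathbf{Q}_{J,J_k} = \id$; and the range is clearly contained in $\mathcal{P}_J(X_n)$.

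\textbf{The limit formula.} For the $m$-th root asymptotics, specialize $J = \Lambda(\leq m, n)$, so $J_k = \Lambda(k,n)$ and $\mathcal{P}_{J_k}(X_n) = \mathcal{P}_k(X_n)$. Taking $m$-th roots in the two-sided estimate gives
\[
1 \,\leq\, \frac{\sqrt[m]{\boldsymbol{\lambda}(\mathcal{P}_{\leq m}(X_n))}}{\sqrt[m]{\max_{0\leq k\leq m}\boldsymbol{\lambda}(\mathcal{P}_k(X_n))}} \,\leq\, (m+1)^{1/m}
\]
uniformly in $n$, and since $(m+1)^{1/m} \to 1$ as $m \to \infty$, the squeeze theorem applied to the supremum over $n$ yields the claimed limit. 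I do not anticipate a genuine obstacle here — the only thing requiring a line of care is the construction of the summed projection in the upper bound, i.e. making sure the norm of a finite sum of operators is controlled by the sum of the norms (immediate) and that the result is genuinely a projection with the correct range (verified above). The argument is robust and uses only \eqref{gammainfty}, Remark~\ref{simple}, Proposition~\ref{Cauchy}, and $1$-injectivity of $\ell_\infty(S)$, all available in the excerpt.
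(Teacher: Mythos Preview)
Your lower bound and the limit deduction are fine and match the paper exactly. The gap is in the upper bound, specifically in the sentence ``on an element $P \in \mathcal{P}_J(X_n) \subset \ell_\infty(S)$ each $R_k$ restricts to $\mathbf{Q}_{J,J_k}$ followed by the inclusion.'' This is not true for an \emph{arbitrary} near-minimal projection $R_k\colon \ell_\infty(S)\to \mathcal{P}_{J_k}(X_n)$. All you know about $R_k$ is that it is the identity on $\mathcal{P}_{J_k}(X_n)$; there is no reason whatsoever for $R_k$ to vanish on $\mathcal{P}_{J_l}(X_n)$ when $l\neq k$. Consequently, for $P=\sum_l P_l\in\mathcal{P}_J(X_n)$ you get $\sum_k R_k(P)=P+\sum_k\sum_{l\neq k}R_k(P_l)$, and the cross terms need not cancel. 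So your summed operator is not a projection onto $\mathcal{P}_J(X_n)$, and the argument collapses.

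The fix is to insert the Cauchy projections \emph{before} passing to $\ell_\infty$, which is precisely what the paper does. Using $\boldsymbol{\lambda}=\gamma_\infty$, choose for each $k$ a factorization $\id_{\mathcal{P}_{J_k}(X_n)}=v_k\circ u_k$ through some $\ell_\infty^{M_k}$ with $\|u_k\|\le 1$ and $\|v_k\|\le(1+\varepsilon)\boldsymbol{\lambda}(\mathcal{P}_{J_k}(X_n))$. Now factor
\[
\id_{\mathcal{P}_J(X_n)}:\ \mathcal{P}_J(X_n)\xrightarrow{\ U\ }\bigoplus_\infty \ell_\infty^{M_k}\xrightarrow{\ V\ }\mathcal{P}_J(X_n),
\]
where $U(P)=\big(u_k(\mathbf{Q}_{J,J_k}P)\big)_{k=0}^m$ and $V\big((y_k)\big)=\sum_k v_k(y_k)$. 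Then $\|U\|\le 1$ by Proposition~\ref{Cauchy}, $\|V\|\le(m+1)\max_k\|v_k\|$, and the middle space is an $\ell_\infty$-space, so $\gamma_\infty(\id_{\mathcal{P}_J(X_n)})\le\|U\|\,\|V\|$ gives the upper bound. The point you missed is that the separation into homogeneous parts has to be done by the norm-one operators $\mathbf{Q}_{J,J_k}$ on the $\mathcal{P}_J(X_n)$ side, not left to the unconstrained projections $R_k$ on the $\ell_\infty(S)$ side.
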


It should be noted that, in general, there is a dependence on $m$ when we compare  $\mathcal{P}_{\leq m}(X_n)$ with $\mathcal{P}_{k}(X_n)$ if $ 0 \leq k \leq m$. For example, $\boldsymbol{\lambda}(\mathcal{P}_{k}(\mathbb{C}))=1$ for all $0 \leq k \leq m$ (since $\mathcal{P}_{k}(\mathbb{C})$  is one dimensional), and on the other hand by Corollary \ref{LoKha}, we have $\boldsymbol{\lambda}(\mathcal{P}_{\leq m}(\mathbb{C})) \asymp 1 + \log m$. The preceding theorem shows that this dependence in fact is subexponencial for any space $X_n$.

\begin{proof}
Note first that by Proposition~\ref{Cauchy}
for all $0 \leq k \leq m$
\[
 \boldsymbol{\lambda}\big(\mathcal{P}_{J_k}(X_n)\big) \,\,\leq \,\,
 \boldsymbol{\lambda}\big(\mathcal{P}_{J}(X_n)\big)\,,
 \]
so that it remains  to  check the second estimate.
We (as in the proof of Proposition~\ref{Cauchy}) use  that each $P \in \mathcal{P}_{J}(X_n)$ has a unique Taylor series expansion
$P = \sum_{k=0}^m P_k$ with $P_k \in \mathcal{P}_{J_k}(X_n)$, and from
the Cauchy inequality we  know that
$\|P_k\| \leq \|P\| $ for all $0 \leq k \leq m$. Consequently,  the two operators
\begin{align*}
&
U: \mathcal{P}_{J}(X_n) \to  \bigoplus_\infty \mathcal{P}_{J_k}(X_n)\,,\,\,\,\, P \mapsto (P_k)_{k=1}^m
\\&
V:  \bigoplus_1 \mathcal{P}_{J_k}(X_n) \to \mathcal{P}_{J}(X_n) \,,\,\,\,\, (Q_k)_{k=1}^m  \mapsto \sum_{k=1}^m Q_k\,,
\end{align*}
 both have norms $\leq 1$.
 Now fix some $\varepsilon >0$, and choose for each $1 \leq k \leq m$ an appropriate  factorization
\[
\xymatrix
{
  \mathcal{P}_{J_k}(X_n) \ar[r]^{\id} \ar[d]_{u_k}
 & \mathcal{P}_{J_k}(X_n)
 \\
 \ell_\infty^{M_k} \ar[ur]_{v_k}
 }
\]
such that $\|u_k\|\leq 1$ and $\|v_k\|\leq (1 + \varepsilon) \boldsymbol{\lambda} \big( \mathcal{P}_{J_k}(X_n) \big)$.
Then we arrive at  the following commutative diagram
\[
\xymatrix
{
  \mathcal{P}_{J}(X_n) \ar[r]^{\id} \ar[d]_{U}
 & \mathcal{P}_{J}(X_n)
 \\
\bigoplus_\infty \mathcal{P}_{J_k}(X_n)
\ar[d]_{\bigoplus u_k}
&
\bigoplus_1 \mathcal{P}_{J_k}(X_n) \ar[u]_{V}
\\
\bigoplus_\infty \ell_\infty^{M_k} \ar[r]^{\Phi}
&
\bigoplus_1 \ell_\infty^{M_k}  \ar[u]_{\bigoplus v_k}\,,
}
\]
where $\Phi$ stands for the identity map which here obviously has norm $\leq m+1$.
But
\[
\big\|\bigoplus u_k\big\| \leq \max{\|u_k\|} \leq 1\,,
\]
as well as
\[
\big\|\bigoplus v_k\big\| \leq \max{\|v_k\|} \leq (1+\varepsilon)  \max_{0 \leq k \leq m} \boldsymbol{\lambda}\big(\mathcal{P}_{J_k}(X_n)\big)\,.
\]
This finally gives
\begin{align*}
  \boldsymbol{\lambda}\big(\mathcal{P}_{J}(X_n)\big)
  \leq \|U\| \,\big\|\bigoplus u_k\big\| \,\,\boldsymbol{\lambda}\big(\bigoplus_\infty \ell_\infty^{M_k}\big)\,\,
\|\Phi\|\,\big\|\bigoplus v_k\big\| \, \|V\|
\leq (m+1)   \max_{0 \leq k \leq m} \boldsymbol{\lambda}\big(\mathcal{P}_{J_k}(X_n)\big)\,,
\end{align*}
the conclusion.
\end{proof}

For certain index sets of multi indices, as e.g., $J =  \Lambda(\leq\!\! m,n)$, the preceding inequality
 simplifies -- for the price of weaker constants.

\smallskip

\begin{corollary} \label{degree-homoA}
 Let $X_n = (\mathbb{C}^n, \|\cdot\|) $ be a Banach space. Then
 \[
  \boldsymbol{\lambda}\big(\mathcal{P}_{m}(X_n)\big) \,\,\leq \,\,
 \boldsymbol{\lambda}\big(\mathcal{P}_{\leq m}(X_n)\big)
 \,\,\leq \,\,  e^{m(m+2)} \boldsymbol{\lambda}\big(\mathcal{P}_{m}(X_n)\big)
 \,.
 \]
  \end{corollary}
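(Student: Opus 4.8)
The left-hand inequality is immediate: since $\mathcal{P}_{m}(X_n) = \mathcal{P}_{(\Lambda(\leq m,n))_m}(X_n)$ is the $m$-homogeneous building block of $\mathcal{P}_{\leq m}(X_n)$, Proposition~\ref{Cauchy} (applied with $J = \Lambda(\leq m,n)$ and $k = m$) gives $\boldsymbol{\lambda}(\mathcal{P}_{m}(X_n)) \leq \boldsymbol{\lambda}(\mathcal{P}_{\leq m}(X_n))$. The content is in the right-hand inequality, and here the natural route is to invoke Theorem~\ref{degree-homo} with $J = \Lambda(\leq m,n)$, so that $J_k = \Lambda(k,n)$ for $0 \leq k \leq m$, yielding
\[
\boldsymbol{\lambda}\big(\mathcal{P}_{\leq m}(X_n)\big) \leq (m+1)\max_{0 \leq k \leq m}\boldsymbol{\lambda}\big(\mathcal{P}_{k}(X_n)\big)\,.
\]
Thus it remains to bound each $\boldsymbol{\lambda}(\mathcal{P}_{k}(X_n))$ for $k < m$ in terms of $\boldsymbol{\lambda}(\mathcal{P}_{m}(X_n))$, absorbing everything into the factor $e^{m(m+2)}$.

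The key step is therefore a comparison of lower-degree homogeneous building blocks with the top one. First I would use the Kadets--Snobar bound from~\eqref{kadets1} together with the cardinality estimate~\eqref{cardi}: for each $0 \leq k \leq m$,
\[
\boldsymbol{\lambda}\big(\mathcal{P}_{k}(X_n)\big) \leq \sqrt{|\Lambda(k,n)|} \leq \sqrt{e^k\big(1 + \tfrac{n}{k}\big)^k} \leq e^{m/2}\big(1 + n\big)^{m/2}\,.
\]
On the other hand, I need a matching lower bound for $\boldsymbol{\lambda}(\mathcal{P}_{m}(X_n))$. Here one should exploit that $\mathcal{P}_{m}(X_n)$ contains an isometric copy of a one-dimensional space spanned by any monomial, which gives $\boldsymbol{\lambda}(\mathcal{P}_{m}(X_n)) \geq 1$; this trivial bound suffices if we are willing to put the entire discrepancy $\sqrt{|\Lambda(\leq m,n)|} \leq e^{m}(1 + n/m)^{m} \cdot (m+1)$ against $1$. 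Indeed, combining~\eqref{tincho}, which already gives $\boldsymbol{\lambda}(\mathcal{P}_{\leq m}(X_n)) \leq \sqrt{|\Lambda(\leq m,n)|} \leq \sqrt{m+1}\,e^{m/2}(1 + n)^{m/2}$, with $\boldsymbol{\lambda}(\mathcal{P}_{m}(X_n)) \geq 1$ is too weak because the bound depends on $n$. So the real point must be to get a lower bound for $\boldsymbol{\lambda}(\mathcal{P}_{m}(X_n))$ that grows with $n$.

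The natural dimension-dependent lower bound comes from comparing with the scalar case via restriction to a coordinate line combined with a more refined argument: by restricting an $m$-homogeneous polynomial on $X_n$ to the diagonal or by using that $\mathcal{P}_m(\ell_\infty^n)$ is a quotient (via the canonical surjection $X_n^* \to \ell_\infty^n$ dualized appropriately) of, or embeds into, $\mathcal{P}_m(X_n)$ up to controlled constants — but cleanest is to observe that for the model space $X_n = \ell_\infty^n$ one has by Corollary~\ref{LoKha}(i) and monotonicity $\boldsymbol{\lambda}(\mathcal{P}_m(\ell_\infty^n)) \geq \boldsymbol{\lambda}(\mathcal{P}_{\Lambda(m,n)}(\ell_\infty^n)) \gtrsim (1+n)^{m/2}/C^m$ from Corollary~\ref{manydimensionsB} (which gives $\boldsymbol{\lambda}(\mathcal{P}_J(\ell_\infty^n)) \geq \sqrt{|J|}/\sqrt{2^m}$ and $|\Lambda(m,n)| = \binom{n+m-1}{m} \gtrsim (n/m)^m$). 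For a general Banach space $X_n$, one uses the factorization/restriction machinery of Section~\ref{Annihilating coefficients}: $\mathcal{P}_{m}(X_n) \supseteq \mathcal{P}_{\Lambda_T(m,n)}(X_n)$ up to the constant $\kappa^m$ from Theorem~\ref{OrOuSe}, and then compares the tetrahedral block with a scalar-type object. The hard part will be pinning down a clean chain of inequalities of the form $\boldsymbol{\lambda}(\mathcal{P}_k(X_n)) \leq C_1^{k} \cdot \text{(something)} \leq C_2^{m} \boldsymbol{\lambda}(\mathcal{P}_m(X_n))$ uniformly in $n$, because the upper bound on $\boldsymbol{\lambda}(\mathcal{P}_k(X_n))$ via Kadets--Snobar is $\sim (1+n)^{k/2}$ while the lower bound on $\boldsymbol{\lambda}(\mathcal{P}_m(X_n))$ we can hope for is only $\sim (1+n)^{m/2}/C^m$, and since $k \leq m$ we have $(1+n)^{k/2} \leq (1+n)^{m/2}$, so the ratio is bounded by $C^m$ times a polynomial factor in $m$ — and collecting $(m+1) \cdot C^m \cdot e^{m/2} \cdot \kappa^m$ (with $C$ an absolute constant, say $C = \sqrt{2}e$) into $e^{m(m+2)}$ is then a comfortable overestimate since $e^{m(m+2)}$ grows superexponentially. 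I would write this out carefully, tracking that all constants are absolute (not depending on $X_n$ or $n$), and noting that the step requiring $\boldsymbol{\lambda}(\mathcal{P}_m(X_n)) \succ_{C^m} (1+n)^{m/2}$ uniformly over all Banach spaces $X_n$ may instead be routed through~\eqref{pi2lambda} and the estimate $\boldsymbol{\Delta}_n(\mathcal{P}_m(X_n)) \preceq C^m$, which is exactly the kind of statement the tensor-product chapter is built to supply.
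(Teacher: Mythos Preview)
Your plan correctly reduces the problem via Theorem~\ref{degree-homo} to the inequality
\[
\boldsymbol{\lambda}\big(\mathcal{P}_k(X_n)\big) \;\leq\; C(m)\,\boldsymbol{\lambda}\big(\mathcal{P}_m(X_n)\big)
\qquad (0\le k\le m),
\]
with $C(m)$ independent of $n$ and $X_n$. But your proposed route to this inequality has a genuine gap. You want to combine the Kadets--Snobar upper bound $\boldsymbol{\lambda}(\mathcal{P}_k(X_n))\le\sqrt{|\Lambda(k,n)|}\prec_{C^k}(1+n)^{k/2}$ with a matching lower bound $\boldsymbol{\lambda}(\mathcal{P}_m(X_n))\succ_{C^m}(1+n)^{m/2}$ valid for \emph{every} Banach space $X_n$. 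That lower bound is false: for $X_n=\ell_1^n$ one has $\boldsymbol{\lambda}(\mathcal{P}_m(\ell_1^n))\le e^m$ uniformly in $n$ (see~\eqref{start}), so $\boldsymbol{\lambda}(\mathcal{P}_m(\ell_1^n))$ stays bounded while your upper bound for $\boldsymbol{\lambda}(\mathcal{P}_k(\ell_1^n))$ blows up with $n$. The same obstruction kills the alternative via~\eqref{pi2lambda} that you mention at the end. Any argument that separately estimates $\boldsymbol{\lambda}(\mathcal{P}_k(X_n))$ from above and $\boldsymbol{\lambda}(\mathcal{P}_m(X_n))$ from below by $n$-dependent quantities cannot work uniformly over all $X_n$.

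The paper avoids this entirely by a direct structural comparison due to Aron and Schottenloher (Lemma~\ref{old}): one fixes $e\in X_n$ and $\gamma\in X_n^\ast$ with $\gamma(e)=1=\|\gamma\|$, and embeds $\mathcal{P}_k(X_n)$ into $\mathcal{P}_{k+1}(X_n)$ via $Q\mapsto \gamma\cdot Q$, constructing an explicit left inverse $\pi$ of norm $\le 2^{k+1}\,\ccc(k+1,X_n)$ from the polarization formula. Iterating gives $\boldsymbol{\lambda}(\mathcal{P}_k(X_n))\le e^{(m-k)(m+k+1)}\boldsymbol{\lambda}(\mathcal{P}_m(X_n))$, and then $(m+1)\max_{1\le k\le m}e^{(m-k)(m+k+1)}\le e^{m(m+2)}$ finishes. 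The point is that this is an intrinsic complementation argument inside the polynomial spaces themselves, with no reference to the dimension $n$.
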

  
The proof of this corollary  
follows immediately from Theorem~\ref{degree-homo} combined with the following independently interesting lemma,
which indicates  that  the projection constants $\boldsymbol{\lambda}\big(\mathcal{P}_{k}(X_n)\big)$
are   monotonous in $k$ (up to constants independent on $n$).
This  result is due to Aron and Schottenloher  from \cite[Proposition 5.3]{aron1976compact}, and we here for our
purpose isolate the constants obtained from their proof.

\begin{lemma} \label{old}
 Let $X_n = (\mathbb{C}^n, \|\cdot\|)$
be a  Banach space and $k,\ell \in \mathbb{N}$ with $k \leq \ell$. Then
\[
 \boldsymbol{\lambda}\big(\mathcal{P}_{k}(X_n), \mathcal{P}_{l}(X_n)\big)
\,\leq \, 2^{\frac{1}{2}(\ell-k)(\ell+k +1)} \prod_{j = k+1}^{\ell} \ccc(j,X_n) 
\,\leq e^{(\ell-k)(\ell+k +1)}\, \,.
\]
In particular,
\[
\boldsymbol{\lambda}\big(\mathcal{P}_{k}(X_n)\big) \,\leq \, e^{(\ell-k)(\ell+k +1)} \,\boldsymbol{\lambda}\big(\mathcal{P}_{\ell}(X_n)\big)\,.
\]
\end{lemma}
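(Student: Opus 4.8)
The idea is to build, for each pair $k\le\ell$, a bounded linear projection from $\mathcal{P}_\ell(X_n)$ onto (a copy of) $\mathcal{P}_k(X_n)$ with controlled norm, by iterating a single-step procedure that lowers the degree by one. The single step relates an $(j)$-homogeneous polynomial to an $(j-1)$-homogeneous one via polarization: if $Q\in\mathcal P_j(X_n)$ and $\overset{\vee}{Q}$ is its associated symmetric $j$-linear form, then fixing the last variable at some point $a\in B_{X_n}$ produces a $(j-1)$-homogeneous polynomial $z\mapsto \overset{\vee}{Q}(z,\dots,z,a)$ whose norm is at most $\ccc(j,X_n)\|Q\|$ by the definition of the polarization constant in \eqref{def k-constante}. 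Running this in reverse — i.e.\ viewing $\mathcal P_k(X_n)$ inside $\mathcal P_\ell(X_n)$ after multiplying by a suitable fixed linear functional, and then extracting the $k$-homogeneous part — gives the projection. The constant $2^{(\ell-k)(\ell+k+1)/2}$ will come from Cauchy-type estimates (factors of $2$ per degree drop, as in Weissler's inequality or the elementary $H_p$ estimates used earlier in the paper), accumulated over the $\ell-k$ steps, and the product $\prod_{j=k+1}^\ell \ccc(j,X_n)$ from the polarization constants at each step.

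First I would isolate the one-step lemma: for each $j\ge 1$ there is a projection $\mathcal P_{j-1}(X_n)\hookrightarrow\mathcal P_{j}(X_n)\to\mathcal P_{j-1}(X_n)$ whose norm is bounded by $2^{j}\,\ccc(j,X_n)$ (or a similar explicit bound). The embedding picks a norm-one functional $\varphi$ (e.g.\ a coordinate functional in the lattice structure, normalized) and sends $R\mapsto \varphi\cdot R$; the retraction takes $P\in\mathcal P_j(X_n)$, polarizes to $\overset{\vee}{P}$, and evaluates the last slot against the element realizing $\varphi$. One checks this composite is the identity on $\mathcal P_{j-1}(X_n)$ and estimates the norm of each factor using \eqref{ec polarizacion}/\eqref{def k-constante} and a Cauchy inequality for the coefficient extraction. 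Then I would compose these $\ell-k$ steps, so the projection $\mathcal P_k(X_n)\to\mathcal P_\ell(X_n)\to\mathcal P_k(X_n)$ has norm at most $\prod_{j=k+1}^{\ell}2^{j}\ccc(j,X_n) = 2^{\sum_{j=k+1}^{\ell}j}\prod_{j=k+1}^{\ell}\ccc(j,X_n) = 2^{\frac12(\ell-k)(\ell+k+1)}\prod_{j=k+1}^{\ell}\ccc(j,X_n)$, using $\sum_{j=k+1}^\ell j=\frac12(\ell-k)(\ell+k+1)$. The final bound $e^{(\ell-k)(\ell+k+1)}$ follows from $\ccc(j,X_n)<e^j$ in \eqref{projl1}, since then the product is at most $2^{\frac12(\ell-k)(\ell+k+1)}e^{\sum_{j=k+1}^\ell j}=(2e)^{\frac12(\ell-k)(\ell+k+1)}\le e^{(\ell-k)(\ell+k+1)}$ as $2e<e^2$.

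For the ``In particular'' clause I would use the standard factorization argument: an embedding $u:\mathcal P_k(X_n)\hookrightarrow\mathcal P_\ell(X_n)$ together with a projection $\mathbf Q:\mathcal P_\ell(X_n)\to\mathcal P_k(X_n)$ with $\mathbf Q u=\id$ gives, via \eqref{gammainfty} and the ideal property of $\gamma_\infty$, that $\boldsymbol{\lambda}(\mathcal P_k(X_n))=\gamma_\infty(\id_{\mathcal P_k(X_n)})\le\|u\|\,\gamma_\infty(\id_{\mathcal P_\ell(X_n)})\,\|\mathbf Q\|$; but one must be careful that $u$ here is not an isometry — rather one should route through the relative projection constant $\boldsymbol{\lambda}(\mathcal P_k(X_n),\mathcal P_\ell(X_n))$, which is exactly what the first displayed inequality bounds, and then invoke $\boldsymbol{\lambda}(Y)\le\boldsymbol{\lambda}(Y;Z)\,\boldsymbol{\lambda}(Z)$ for $Y\subset Z$ (a consequence of \eqref{gammainfty} and the ideal property). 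This is a routine lattice/tensor-norm bookkeeping step; the main obstacle is the first one — getting the single-step polarization-plus-Cauchy estimate with a clean explicit constant of the form $2^j\ccc(j,X_n)$, and in particular making sure the embedding $\mathcal P_{j-1}(X_n)\hookrightarrow\mathcal P_j(X_n)$ genuinely lands in the right space and that the retraction is well-defined on all of $\mathcal P_j(X_n)$ (not just on the image). Since the paper attributes the result to Aron–Schottenloher \cite{aron1976compact} and only asks to ``isolate the constants from their proof,'' I would follow their construction verbatim and simply track the numerical factors through each inequality.
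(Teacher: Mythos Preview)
Your overall architecture is right and matches the paper: a one-step embedding/retraction pair $\mathcal P_{j-1}(X_n)\hookrightarrow\mathcal P_j(X_n)\to\mathcal P_{j-1}(X_n)$ with norm $\le 2^{j}\ccc(j,X_n)$, then iterate and sum the exponents. The ``In particular'' clause via $\gamma_\infty$-factorization is also handled as you describe.

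The gap is in the one step itself. Your proposed retraction --- polarize $P$ and evaluate the last slot at the point $a$ dual to $\varphi$, i.e.\ $P\mapsto\big(z\mapsto\overset{\vee}{P}(z,\dots,z,a)\big)$ --- is \emph{not} a left inverse of $R\mapsto\varphi\cdot R$. Indeed, the symmetric $j$-linear form of $\varphi\cdot R$ is $\frac{1}{j}\sum_{i}\varphi(x_i)\overset{\vee}{R}(x_1,\dots,\widehat{x_i},\dots,x_j)$, so plugging $x_1=\cdots=x_{j-1}=z$, $x_j=a$ yields $\frac{1}{j}R(z)+\frac{j-1}{j}\varphi(z)\overset{\vee}{R}(z,\dots,z,a)$, not $R(z)$. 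The $2^j$ factor also does not come from Cauchy/Weissler-type estimates.

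The paper (following Aron--Schottenloher) uses a different retraction based on the algebraic identity
\[
P(x)-P\big(x-\gamma(x)e\big)=\sum_{i=1}^{j}\binom{j}{i}(-1)^{i+1}\gamma(x)^{i}\,\overset{\vee}{P}\big(e^{(i)},x^{(j-i)}\big)=:\gamma(x)\,\pi(P)(x),
\]
where $e\in X_n$, $\gamma\in X_n^*$ satisfy $\gamma(e)=1=\|\gamma\|$. The point is that if $P=\gamma\cdot R$, then $P(x-\gamma(x)e)=\gamma(x-\gamma(x)e)\,R(\cdots)=0$, so $\gamma(x)R(x)=\gamma(x)\pi(P)(x)$ and hence $\pi\circ\rho=\id$. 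The norm bound $\|\pi\|\le 2^{j}\ccc(j,X_n)$ now comes from $\sum_{i=1}^{j}\binom{j}{i}\le 2^{j}$ together with $\|\overset{\vee}{P}\|\le\ccc(j,X_n)\|P\|$. This is the missing idea you flagged as the ``main obstacle''; once you have it, the rest of your proposal goes through verbatim.
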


\begin{proof}
The proof follows by induction, and we start with  $\ell = k +1$. Choose $e \in X_n$ and
  $\gamma \in X_n^\ast$ such that $\gamma(e) = 1 = \| \gamma    \|$, and define
     \begin{align*}
  &
    \rho: \mathcal{P}_{k}(X_n) \to \mathcal{P}_{k+1}(X_n)\,, \quad \rho(Q)(x): = \gamma(x) Q(x)
    \\&
    \pi: \mathcal{P}_{k+1}(X_n) \to \mathcal{P}_{k}(X_n)\,, \quad \pi(P)(x): = \,\sum_{j=1}^{k+1} \binom{k+1}{j} (-1)^{j+1} \gamma(x)^{j-1} \overset{\vee}{P}(e^{(j)},x^{(k+1-j)})\,.
  \end{align*}
  Using that for  $P \in \mathcal{P}_{k+1}(X_n)$ and $x \in X_n$
  \[
  P(x) - P (x - \gamma(x) e) = \,\sum_{j=1}^{k+1} \binom{k+1}{j} (-1)^{j+1} \gamma(x)^{j} \overset{\vee}{P} (e^{(j)},x^{(k+1-j)})=\gamma(x)\pi(P)(x),
  \]
and that if $P=\rho(Q)$ then $P (x - \gamma(x) e)=0$ for every $x \in X_n$,  a simple calculation gives $\gamma Q=\rho(Q)=\gamma\pi(\rho(Q))$ and thus
  \[
 \pi \circ \rho = \text{id}_{\mathcal{P}_{k}(X_n)}\,.
  \]
  Hence
  \[
\boldsymbol{\lambda}\big(\mathcal{P}_{k}(X_n), \mathcal{P}_{k+1}(X_n)\big) \leq \|\pi\|  \|\rho \| = \|\pi\|\,.
  \]
  But  by \eqref{projl1} we obtain  $\|\pi\| \leq 2^{k+1} \ccc(k+1,X_n) \leq  2^{k+1} e^{k+1}$, which settles the case $\ell = k+1$. The general case
  follows by induction:
  \begin{equation*}
   \boldsymbol{\lambda}\big(\mathcal{P}_{k}(X_n), \mathcal{P}_{l}(X_n)\big)
    \,\leq \, 2^{(k+1)+ \ldots +   \ell}
    \prod_{j = k+1}^{\ell} \ccc(j,X_n)
   \,.  \qedhere
 \end{equation*}
      \end{proof}

    \smallskip  
      
\section{Fixing the degree}
\label{comparing}

Given a set $J \subset \Lambda(m,n)$ of $m$-homogeneous indices, the following theorem relates the projection constant of $\mathcal{P}_{J}(X_n)$ with the $m$th power of the projection constant of $X^\ast_n$ and the norm of the projection $\mathbf{Q}_{\Lambda(m,n),J}$ on $\mathcal{P}_{m}(X_n)$ annihilating the coefficients with indices in the complement of~$J$ (see again~\eqref{anni}).

\begin{theorem} \label{tensor}
Let $X_n = (\mathbb{C}^n, \|\cdot\|) $ be any Banach space, and
$J \subset \Lambda(m,n)$. Then
\[
\boldsymbol{\lambda}\big(\mathcal{P}_{J}(X_n)\big) \,\,\leq \,\,\ccc(m,X_n) \|\mathbf{Q}_{\Lambda(m,n),J}\| \, \boldsymbol{\lambda}(X^\ast_n)^{m}\,.
\]
Moreover, whenever $X_n$ has enough symmetries, it holds
\[
\frac{|J|}{ n^m\ccc(m,X_n)\|\mathbf{Q}_{\Lambda(m,n),J}\|}\,\,\boldsymbol{\lambda}(X^\ast_n)^{m}
\,\,\leq \,\,
\boldsymbol{\lambda}\big(\mathcal{P}_{J}(X_n)\big)\,.
\]
 \end{theorem}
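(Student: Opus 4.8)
The upper bound will follow from a factorization/tensor-product argument. The plan is to identify $\mathcal P_J(X_n)$ with a quotient (or suitable complemented piece) of the symmetric injective tensor product $\mathcal P_m(X_n) = \otimes^m_{\varepsilon_s} X_n^\ast$, via the projection $\mathbf Q_{\Lambda(m,n),J}$. First I would use the polarization constant: the polarization map sends $P\in\mathcal P_m(X_n)$ to its symmetric $m$-linear form $\overset{\vee}{P}\in\mathcal L_m(X_n)=\otimes^m_\varepsilon X_n^\ast$ with norm at most $\ccc(m,X_n)$, and this lets us pass from $m$-homogeneous polynomials to the full injective tensor power. Next, using \eqref{gammainfty} one has $\boldsymbol\lambda(X_n^\ast)=\gamma_\infty(\id_{X_n^\ast})$; the key metric-mapping property is that the $\gamma_\infty$-norm is multiplicative under the full injective tensor product, i.e.\ $\gamma_\infty(\id_{\otimes^m_\varepsilon X_n^\ast})\le \gamma_\infty(\id_{X_n^\ast})^m=\boldsymbol\lambda(X_n^\ast)^m$ (this is a standard fact about the $\varepsilon$-tensor product and the ideal $\Gamma_\infty$, obtained by tensoring the optimal factorizations $X_n^\ast\to\ell_\infty^{M}\to X_n^\ast$ and using $\ell_\infty^M\otimes_\varepsilon\cdots\otimes_\varepsilon\ell_\infty^M=\ell_\infty^{M^m}$). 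Composing: $\mathcal P_J(X_n)$ embeds isometrically in $\mathcal P_m(X_n)$, which via polarization maps with norm $\le\ccc(m,X_n)$ into $\mathcal L_m(X_n)=\otimes^m_\varepsilon X_n^\ast$, whose projection constant is $\le\boldsymbol\lambda(X_n^\ast)^m$; and the projection $\mathbf Q_{\Lambda(m,n),J}$ (of norm $\|\mathbf Q_{\Lambda(m,n),J}\|$) followed by the restriction back to $\mathcal P_J$ recovers $\id_{\mathcal P_J(X_n)}$. Multiplying the norms along this factorization and invoking \eqref{gammainfty} once more yields $\boldsymbol\lambda(\mathcal P_J(X_n))\le\ccc(m,X_n)\|\mathbf Q_{\Lambda(m,n),J}\|\,\boldsymbol\lambda(X_n^\ast)^m$.

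For the lower bound under the ``enough symmetries'' hypothesis, I would exploit \eqref{formulaA}: when $X_n$ has enough symmetries one has $\boldsymbol\lambda(X_n^\ast)\,\pi_1(X_n^\ast)=n$, so it suffices to produce a comparable upper bound for $\pi_1$ of the relevant identity on $\mathcal P_J(X_n)$ and then use $\boldsymbol\lambda(\mathcal P_J(X_n))\,\pi_1(\mathcal P_J(X_n))\ge|J|$ from \eqref{formulaB} (the dimension of $\mathcal P_J(X_n)$ being $|J|$). The point is that ``enough symmetries'' of $X_n$ should be inherited, in a suitable averaged sense, by $\mathcal P_J(X_n)$ when $J$ is a full (unitarily/lattice) invariant slice, or at least that the natural action of the symmetry group of $X_n$ on $\mathcal P_m(X_n)$ lets one compute $\pi_1(\mathcal P_J(X_n))$ in terms of $\pi_1$ of $X_n^\ast$ to the $m$th power. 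Concretely I expect $\pi_1(\mathcal P_J(X_n))\le \ccc(m,X_n)\,\|\mathbf Q_{\Lambda(m,n),J}\|\,\pi_1(X_n^\ast)^m$ by the same polarization-plus-tensor argument applied now to the ideal $\Pi_1$ (which is also multiplicative on $\otimes_\varepsilon$ in the needed direction, by the metric mapping property of $1$-summing norms under injective tensor products). Substituting $\pi_1(X_n^\ast)^m=(n/\boldsymbol\lambda(X_n^\ast))^m=n^m/\boldsymbol\lambda(X_n^\ast)^m$ into $|J|\le\boldsymbol\lambda(\mathcal P_J(X_n))\,\pi_1(\mathcal P_J(X_n))$ rearranges to exactly the claimed inequality
\[
\frac{|J|}{n^m\,\ccc(m,X_n)\,\|\mathbf Q_{\Lambda(m,n),J}\|}\,\boldsymbol\lambda(X_n^\ast)^m\;\le\;\boldsymbol\lambda\big(\mathcal P_J(X_n)\big).
\]

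The main obstacle, I anticipate, is making precise the multiplicativity of the operator ideal norms ($\gamma_\infty$ for the upper bound, $\pi_1$ for the lower) under the symmetric/injective tensor powers, together with the bookkeeping needed to route the identity of $\mathcal P_J(X_n)$ through $\otimes^m_\varepsilon X_n^\ast$ while keeping track of the polarization constant and of $\|\mathbf Q_{\Lambda(m,n),J}\|$ — in particular checking that the symmetrization (passing from $\otimes_\varepsilon$ to $\otimes_{\varepsilon_s}$) costs nothing extra beyond $\ccc(m,X_n)$. A secondary delicate point is verifying that ``enough symmetries'' of $X_n$ really forces the trace-duality identity \eqref{formulaA} to propagate (via averaging over the symmetry group acting diagonally on $\otimes^m X_n^\ast$) to the computation of $\pi_1$ on $\mathcal P_J(X_n)$; if the group action on $\mathcal P_J(X_n)$ is not irreducible enough one only gets an inequality, but that is precisely the inequality direction needed here, so I expect this to go through. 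All of the harmonic-analytic input is confined to the earlier chapters; here everything is soft tensor-product and operator-ideal theory.
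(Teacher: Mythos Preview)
Your proposal is correct and follows essentially the same route as the paper: factor $\id_{\mathcal P_J(X_n)}$ through $\mathcal L_m(X_n)=\otimes_\varepsilon^m X_n^\ast$ via polarization and $\mathbf Q_{\Lambda(m,n),J}$, then use the $\varepsilon$-tensor stability of $\gamma_\infty$ for the upper bound and of $\pi_1$ for the lower bound, combined with \eqref{formulaB} on $\mathcal P_J(X_n)$ and \eqref{formulaA} on $X_n^\ast$. Your worry about propagating ``enough symmetries'' to $\mathcal P_J(X_n)$ is unnecessary: the only place symmetry enters is to rewrite $\pi_1(X_n^\ast)=n/\boldsymbol\lambda(X_n^\ast)$ via \eqref{formulaA} (which holds for $X_n^\ast$ once $X_n$ has enough symmetries), while on $\mathcal P_J(X_n)$ you only need the universal inequality \eqref{formulaB}, exactly as you eventually note.
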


\begin{proof}
By Remark~\ref{simple} we have that
\[
\boldsymbol{\lambda}\big(\mathcal{P}_{J}(X_n)\big) \,\,\leq \,\,\|\mathbf{Q}_{\Lambda(m,n),J}\| \, \boldsymbol{\lambda}\big(\mathcal{P}_{m}(X_n)\big)\,,
\]
and hence it remains to show that $\boldsymbol{\lambda}\big(\mathcal{P}_{m}(X_n)\big) \,\,\leq \,\,\ccc(m,X_n) \boldsymbol{\lambda}(X^\ast_n)^{m}$. Recall that by polarization
\[
\big\|i_1: \mathcal{P}_{m}(X_n) \to \mathcal{L}_m(X_n)\,, \,\,\, P \mapsto \check{P} \big\| \leq \ccc(m,X_n) \,.
\]
Moreover, we clearly have that
\[
\big\|i_2:  \mathcal{L}_m(X_n)  \to \mathcal{P}_{m}(X_n)\,, \,\,\, L \mapsto [P:x \mapsto L(x, \cdots, x)] \big\| \leq 1\,.
\]
On the other hand,  we may  identify  $\mathcal{L}_m(X_n)$ isometrically with the
$m$th  injective tensor product of $X_n^\ast$ with itself,
\begin{equation} \label{eisa}
 \mathcal{L}_m(X_n) = \otimes_\varepsilon^m X_n^\ast\,,
\end{equation}
and it is well-known that
$\gamma_\infty\big(\id_{\otimes_{\varepsilon}^{m}X_n^\ast}\big)  =
 \gamma_\infty(\id_{X_n^\ast})^m$  (see, e.g., \cite[Section 34.6]{defant1992tensor})\,.  
But then by \eqref{gammainfty} we obtain that
\begin{align*}
\boldsymbol{\lambda}\big(\mathcal{P}_{m}(X_n)\big) = \gamma_\infty\big(\id_{\mathcal{P}_{m}(X_n)}\big) & \leq \|i_1\| \|i_2\| \gamma_\infty(\id_{\mathcal{L}_{m}(X_n)})
\\&
\leq \ccc(m,X_n) \gamma_\infty\big(\id_{\otimes_{\varepsilon}^{m}X_n^\ast}\big) = \ccc(m,X_n) \boldsymbol{\lambda}(X_n^\ast)^m\,.
\end{align*}
It remains to check the lower bound, so assume that $X_n = (\mathbb{C}^n, \|\cdot\|) $ has enough symmetries.
By \eqref{formulaB} we have
\[
\boldsymbol{\lambda}\big(\mathcal{P}_{J}(X_n)\big) \geq \frac{|J|}{ \pi_1(\id_{\mathcal{P}_J(X_n)})}\,.
\]
Now we follow a similar way as in the first part of the proof, and define $i_1$ and $i_2$ as above. Moreover, let
\[
j_1  \mathcal{P}_{J}(X_n)\hookrightarrow \mathcal{P}_m(X_n)
\]
be the canonical embedding. Then
\[
 \pi_1(\id_{\mathcal{P}_J(X_n)}) =
 \pi_1(\mathbf{Q}_{\Lambda(m,n), J}\circ i_2 \circ \id_{\mathcal{L}_m(X_n)} \circ i_1 \circ j_1)
 \leq \|\mathbf{Q}_{\Lambda(m,n), J}\|\, \ccc(m,X_n) \, \pi_1(\id_{\mathcal{L}_m(X_n)})\,.
\]
But the $1$-summing norm $\pi_1$  is $\varepsilon$-tensor stable (see, e.g.,
\cite[Section 34.9]{defant1992tensor})-- so, using \eqref{eisa}, this shows that
\[
\pi_1(\id_{\mathcal{L}_m(X_n)}) = \pi_1(\id_{X_n^\ast})^m\,.
\]
Finally, since $X_n$ has enough symmetries, we have that   $\pi_1(\id_{X_n^\ast}) = n/ \boldsymbol{\lambda}(\id_{X_n^\ast})$  by  \eqref{formulaA},
which all together gives the conclusion.
\end{proof}

\smallskip

\begin{corollary}\label{immediate}
Let $X_n = (\mathbb{C}^n, \|\cdot\|) $ be a
Banach space, and
$J \subset \Lambda(m,n)$.  Then
 \[
 \boldsymbol{\lambda}\big(\mathcal{P}_{J}(X_n)\big)
\,\,\leq \,\, e^m \, \|\mathbf{Q}_{\Lambda(m,n),J}\| \,\boldsymbol{\lambda}(X^\ast_n)^{m}
\,.
\]
Moreover, assume that  $X_n = (\mathbb{C}^n, \|\cdot\|) $  is a Banach lattice which has  enough symmetries. Then
 \[
 \frac{1}{e^m m^m}   \boldsymbol{\lambda}(X^\ast_n)^{m}
\,\,\leq \,\,
\boldsymbol{\lambda}\big(\mathcal{P}_{m}(X_n)\big)
\,\,\leq \,\, e^m \boldsymbol{\lambda}(X^\ast_n)^{m}\,,
 \]
 as well as
\[
 \frac{1}{(e\kappa)^m m^m}   \boldsymbol{\lambda}(X^\ast_n)^{m}
\,\,\leq \,\,
\boldsymbol{\lambda}\big(\Pp_{\Lambda_T(m,n)}(X_n)\big)
\,\,\leq \,\, (e\kappa)^m \boldsymbol{\lambda}(X^\ast_n)^{m}\,.
 \]
    \end{corollary}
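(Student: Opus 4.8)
The plan is to derive Corollary~\ref{immediate} directly from Theorem~\ref{tensor}, together with the $\ccc$-bound from \eqref{projl1} and the coefficient-annihilating estimate from Theorem~\ref{OrOuSe}. First I would prove the very first inequality: apply Theorem~\ref{tensor} to the given $J\subset\Lambda(m,n)$, obtaining
\[
\boldsymbol{\lambda}\big(\mathcal{P}_{J}(X_n)\big)\le \ccc(m,X_n)\,\|\mathbf{Q}_{\Lambda(m,n),J}\|\,\boldsymbol{\lambda}(X_n^\ast)^m,
\]
and then bound $\ccc(m,X_n)\le m^m/m!<e^m$ by \eqref{projl1}. This is immediate.

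Next I would handle the middle chain, assuming $X_n$ is a Banach lattice with enough symmetries. For the upper bound take $J=\Lambda(m,n)$ in the first inequality; then $\mathbf{Q}_{\Lambda(m,n),J}=\id$ has norm $1$, giving $\boldsymbol{\lambda}\big(\mathcal{P}_m(X_n)\big)\le e^m\boldsymbol{\lambda}(X_n^\ast)^m$. For the lower bound I would use the second (lower) estimate of Theorem~\ref{tensor} with $J=\Lambda(m,n)$: here $|J|=|\Lambda(m,n)|$ and $\|\mathbf{Q}_{\Lambda(m,n),\Lambda(m,n)}\|=1$, so
\[
\boldsymbol{\lambda}\big(\mathcal{P}_m(X_n)\big)\ge \frac{|\Lambda(m,n)|}{n^m\,\ccc(m,X_n)}\,\boldsymbol{\lambda}(X_n^\ast)^m.
\]
Since $|\Lambda(m,n)|=\binom{n+m-1}{m}\ge \binom{n}{m}\cdot$(adjustment)$\,\ge n^m/m!$ — more precisely $\binom{n+m-1}{m}\ge n^m/m!$ because each of the $m$ factors in $\binom{n+m-1}{m}=\frac{n(n+1)\cdots(n+m-1)}{m!}$ is at least $n$ — and $\ccc(m,X_n)\le m^m/m!$, we get $|\Lambda(m,n)|/(n^m\,\ccc(m,X_n))\ge (n^m/m!)/(n^m\, m^m/m!)=1/m^m$. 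Combining with $1/m^m\ge 1/(e^m m^m)$ trivially (and keeping the slightly weaker $1/(e^m m^m)$ as stated) yields the claimed lower bound.

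Finally, for the tetrahedral statement I would apply the same argument with $J=\Lambda_T(m,n)$. The upper bound follows from Theorem~\ref{tensor} once we estimate $\|\mathbf{Q}_{\Lambda(m,n),\Lambda_T(m,n)}\|$; by Theorem~\ref{OrOuSe} (applied with $J=\Lambda(m,n)$, so that $J_T=\Lambda_T(m,n)$, using that $X_n$ is a Banach lattice) this norm is at most $\kappa^m$, whence $\boldsymbol{\lambda}\big(\mathcal{P}_{\Lambda_T(m,n)}(X_n)\big)\le e^m\kappa^m\,\boldsymbol{\lambda}(X_n^\ast)^m=(e\kappa)^m\boldsymbol{\lambda}(X_n^\ast)^m$. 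For the lower bound I would plug $J=\Lambda_T(m,n)$ into the lower estimate of Theorem~\ref{tensor}, using $|\Lambda_T(m,n)|=\binom{n}{m}\ge n^m/m^m$ (valid for $m\le n$; for $m>n$ the set is empty and there is nothing to prove), $\ccc(m,X_n)\le m^m/m!\le m^m$, and $\|\mathbf{Q}_{\Lambda(m,n),\Lambda_T(m,n)}\|\le\kappa^m$, to obtain
\[
\boldsymbol{\lambda}\big(\mathcal{P}_{\Lambda_T(m,n)}(X_n)\big)\ge \frac{|\Lambda_T(m,n)|}{n^m\,\ccc(m,X_n)\,\kappa^m}\,\boldsymbol{\lambda}(X_n^\ast)^m\ge \frac{1}{(e\kappa)^m m^m}\,\boldsymbol{\lambda}(X_n^\ast)^m,
\]
where I am being deliberately wasteful in the last step to match the stated constant. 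The one point requiring a little care — the ``main obstacle,'' though it is minor — is checking that Theorem~\ref{OrOuSe}'s hypothesis (that $X_n$ be a Banach lattice) is exactly what the corollary assumes, and tracking the combinatorial inequalities $\binom{n+m-1}{m}\ge n^m/m!$ and $\binom{n}{m}\ge n^m/m^m$ so that the constants $e^m m^m$ and $(e\kappa)^m m^m$ come out as claimed; no new idea is needed beyond what is already in the excerpt.
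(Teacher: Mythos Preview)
Your proposal is correct and follows essentially the same route as the paper: both apply Theorem~\ref{tensor} directly, use $\ccc(m,X_n)<e^m$ from \eqref{projl1}, invoke Theorem~\ref{OrOuSe} for the $\kappa^m$ bound on $\mathbf{Q}_{\Lambda(m,n),\Lambda_T(m,n)}$, and plug in the standard lower bounds $\binom{n+m-1}{m}\ge(n/m)^m$ and $\binom{n}{m}\ge(n/m)^m$ for the cardinalities. The only cosmetic difference is that for the middle lower bound you pair the sharper estimates $|\Lambda(m,n)|\ge n^m/m!$ and $\ccc(m,X_n)\le m^m/m!$ to get $1/m^m$ (indeed stronger than the stated $1/(e^m m^m)$), whereas the paper uses $|\Lambda(m,n)|\ge(n/m)^m$ with $\ccc\le e^m$ to land exactly on $1/(e^m m^m)$; for the tetrahedral lower bound the cleanest way to reach $1/((e\kappa)^m m^m)$ is to use $\ccc\le e^m$ rather than the cruder $\ccc\le m^m$ you also mention.
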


\begin{proof}
For the three upper estimates we apply the first statement of Theorem~\ref{tensor}; indeed, we know that  $\ccc(m,X_n) \leq e^m$
as well as  $\|\mathbf{Q}_{J,\Lambda_T(m,n)}\| \leq~\kappa^m$ by Theorem~\ref{OrOuSe}. The proofs of the lower bounds are similar:
Assume first that $J = \Lambda(m,n)$,
then for each $m,n$
\[
|\Lambda(m,n)| = \binom{n+m-1}{m} \ge \Big(\frac{n+m-1}{m}\Big)^m \ge \Big(\frac{n}{m}\Big)^m\,.
\]
Using again that  $\ccc(m,X_n) \leq e^m$ and combining it with the second statement of Theorem~\ref{tensor}, gives the first lower bound.  Another application of Theorem~\ref{OrOuSe} and by the fact that  for each $m,n$
\[
|\Lambda_T(m,n)| = \binom{n}{m} \ge \Big(\frac{n}{m}\Big)^m\,,
\]
we in the same way  obtain the second one.
\end{proof}

\smallskip

We now use the results from Section~\ref{Homogeneous building blocks} to extend the previous corollary
to a large variety of (eventually non-homogeneous) index sets $J \subset \mathbb{N}_0^n$ of degree $m$. 

\smallskip

\begin{corollary}\label{immediateABC}
For every Banach space  $X_n = (\mathbb{C}^n, \|\cdot\|) $, and index set $J \subset \mathbb{N}_0^n$  of  degree $m$ the following statements hold{\rm:}
\begin{itemize}
\item[(i)]    
$\boldsymbol{\lambda}\big(\mathcal{P}_{J}(X_n)\big)
    \,\,\leq \,\,
    \left[ (m+1)e^m \dis\max_{0\leq k \leq m}\|\mathbf{Q}_{\Lambda(k,n),J_k}\| \right] \, \boldsymbol{\lambda}(X^\ast_n)^{m}\,.
     $
     \vspace{3mm}
     \item[(ii)] Assume that $X_n = (\mathbb{C}^n, \|\cdot\|) $ is a Banach lattice with   enough symmetries. Then
     \[
     \frac{1}{e^m m^m}   \boldsymbol{\lambda}(X^\ast_n)^{m}
\,\,\leq \,\,
    \boldsymbol{\lambda}\big(\mathcal{P}_{J}(X_n)\big)
    \,\,\,\,\,\,\,
    \text{provided  $\Lambda(m,n) \subset J$}\,,
    \,\,\,\,\,\,\,
     \]
           and
     \[
     \frac{1}{e^m\kappa^{2m} m^m}   \boldsymbol{\lambda}(X^\ast_n)^{m}
\,\,\leq \,\,
    \boldsymbol{\lambda}\big(\mathcal{P}_{J}(X_n)\big) 
    \,\,\,\,\,\,\,
    \text{provided $m \leq n$ and $\Lambda_T(m,n) \subset J$}\,.
    \,\,\,\,\,\,\,
     \]       
      \end{itemize}
         In particular, if $X_n = (\mathbb{C}^n, \|\cdot\|) $ is a Banach lattice with   enough symmetries, and $J$ is one of the index sets  $\Lambda_T (m,n)$,
          $\Lambda (m,n)$,
  $\Lambda_T (\leq m,n)$,  or  $\Lambda (\leq m,n)$, then
 \[
 \boldsymbol{\lambda}\big(\mathcal{P}_{J}(X_n)\big)
 \sim_{C(m)}
  \boldsymbol{\lambda}(X^\ast_n)^{m}\,.\]
  \end{corollary}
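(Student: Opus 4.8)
The plan is to derive the final assertion as a direct synthesis of the two displayed items together with the homogeneous-block comparison of Theorem~\ref{degree-homo}. For the upper bound one does not even need the special structure of the four listed index sets: for any index set $J$ of degree $m$, combine Theorem~\ref{degree-homo} with Corollary~\ref{immediate} applied to each homogeneous part $J_k \subset \Lambda(k,n)$. Concretely, Theorem~\ref{degree-homo} gives $\boldsymbol{\lambda}\big(\mathcal{P}_J(X_n)\big) \le (m+1)\max_{0\le k\le m}\boldsymbol{\lambda}\big(\mathcal{P}_{J_k}(X_n)\big)$, and each $\boldsymbol{\lambda}\big(\mathcal{P}_{J_k}(X_n)\big) \le e^k\,\|\mathbf{Q}_{\Lambda(k,n),J_k}\|\,\boldsymbol{\lambda}(X_n^\ast)^k$ by Corollary~\ref{immediate} (this is exactly item (i) of Corollary~\ref{immediateABC}). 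Since $\boldsymbol{\lambda}(X_n^\ast)\ge 1$, the worst term is $k=m$, and for each of the four sets $\Lambda_T(m,n)$, $\Lambda(m,n)$, $\Lambda_T(\le m,n)$, $\Lambda(\le m,n)$ the relevant truncation operators $\mathbf{Q}_{\Lambda(k,n),J_k}$ are either the identity (when $J_k=\Lambda(k,n)$, i.e.\ the non-tetrahedral cases) or the tetrahedral truncation $\mathbf{Q}_{\Lambda(k,n),\Lambda_T(k,n)}$, whose norm is at most $\kappa^k\le\kappa^m$ by Theorem~\ref{OrOuSe}. Hence in all four cases $\boldsymbol{\lambda}\big(\mathcal{P}_J(X_n)\big)\le C(m)\,\boldsymbol{\lambda}(X_n^\ast)^m$ with $C(m)=(m+1)(e\kappa)^m$, a constant depending only on $m$.

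For the lower bound one uses that $X_n$ is a Banach lattice with enough symmetries. If $\Lambda(m,n)\subset J$ then Proposition~\ref{Cauchy} (or Remark~\ref{simple}, via the $1$-complementation of $\mathcal{P}_{J_m}(X_n)$ in $\mathcal{P}_J(X_n)$) gives $\boldsymbol{\lambda}\big(\mathcal{P}_{\Lambda(m,n)}(X_n)\big)\le\boldsymbol{\lambda}\big(\mathcal{P}_J(X_n)\big)$, and then the lower estimate in Corollary~\ref{immediate} yields $\tfrac{1}{e^m m^m}\boldsymbol{\lambda}(X_n^\ast)^m\le\boldsymbol{\lambda}\big(\mathcal{P}_J(X_n)\big)$; this covers $\Lambda(m,n)$ and $\Lambda(\le m,n)$. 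If instead $\Lambda_T(m,n)\subset J$, one passes to the tetrahedral part: Theorem~\ref{OrOuSe} bounds $\|\mathbf{Q}_{J,J_T}\|$, so $\boldsymbol{\lambda}\big(\mathcal{P}_{J_T}(X_n)\big)\le\kappa^m\boldsymbol{\lambda}\big(\mathcal{P}_J(X_n)\big)$, and since $\Lambda_T(m,n)\subset J_T$ a further application of Proposition~\ref{Cauchy} plus the second lower bound in Corollary~\ref{immediate} gives $\tfrac{1}{(e\kappa)^m m^m}\boldsymbol{\lambda}(X_n^\ast)^m\le\boldsymbol{\lambda}\big(\mathcal{P}_J(X_n)\big)$; this covers $\Lambda_T(m,n)$ and $\Lambda_T(\le m,n)$. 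Note each of the four sets satisfies one of the two containment hypotheses (indeed $\Lambda_T(\le m,n)$ and $\Lambda(\le m,n)$ contain $\Lambda_T(m,n)$, respectively $\Lambda(m,n)$, as their degree-$m$ homogeneous part), so the dichotomy is exhaustive.

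Putting the matching upper and lower bounds together, for each of the four index sets there is a constant $C(m)>0$, depending only on $m$, with $C(m)^{-1}\boldsymbol{\lambda}(X_n^\ast)^m\le\boldsymbol{\lambda}\big(\mathcal{P}_J(X_n)\big)\le C(m)\boldsymbol{\lambda}(X_n^\ast)^m$, which is precisely the asserted equivalence $\boldsymbol{\lambda}\big(\mathcal{P}_J(X_n)\big)\sim_{C(m)}\boldsymbol{\lambda}(X_n^\ast)^m$. I expect no serious obstacle here: the whole argument is a bookkeeping assembly of results already proved in the chapter, and the only point demanding a little care is checking that the enough-symmetries hypothesis on $X_n$ is genuinely needed only for the lower bounds (the upper bounds hold for arbitrary Banach lattices, or even arbitrary Banach spaces in item (i)) and that each listed $J$ indeed falls under one of the two containment cases for the lower estimate.
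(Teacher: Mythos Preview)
Your proof is correct and follows essentially the same route as the paper: the upper bound via Theorem~\ref{degree-homo} and Corollary~\ref{immediate}, the lower bounds via Proposition~\ref{Cauchy} and Theorem~\ref{OrOuSe} feeding into the lower estimates of Corollary~\ref{immediate}. One small slip: in the tetrahedral lower bound you absorb the extra factor $\kappa^m$ coming from $\boldsymbol{\lambda}(\mathcal{P}_{J_T}(X_n))\le\kappa^m\boldsymbol{\lambda}(\mathcal{P}_J(X_n))$ and should obtain $\tfrac{1}{e^m\kappa^{2m}m^m}\boldsymbol{\lambda}(X_n^\ast)^m\le\boldsymbol{\lambda}(\mathcal{P}_J(X_n))$ rather than $\tfrac{1}{(e\kappa)^m m^m}$, matching the constant in item~(ii); this does not affect the $\sim_{C(m)}$ conclusion.
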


   \begin{proof} 
   For the proof of $(i)$ note that by Theorem~\ref{degree-homo} and Corollary~\ref{immediate} (first statement)
     \[
  \boldsymbol{\lambda}\big(\mathcal{P}_{J}(X_n)\big)
 \,\,\leq \,\,(m+1) \max_{0 \leq k \leq m} \boldsymbol{\lambda}\big(\mathcal{P}_{J_k}(X_n)\big)
 \,\,\leq \,\,
    \big[(m+1)e^m \max_{0\leq k \leq m}\|\mathbf{Q}_{\Lambda(k,n),J_k}\|\big] \, \boldsymbol{\lambda}(X^\ast_n)^{m}\,.
 \]
 To see the first claim of $(ii)$
 we use  Proposition~\ref{Cauchy} to get
 \[
 \boldsymbol{\lambda}\big(\mathcal{P}_{\Lambda(m,n)}(X_n)\big)
 =
    \boldsymbol{\lambda}\big(\mathcal{P}_{J_m}(X_n)\big)
    \,\,\leq \,\,
    \boldsymbol{\lambda}\big(\mathcal{P}_{J}(X_n)\big)\,,
   \]
  and then  (the second statement of)  Corollary~\ref{immediate}  applies.
   To prove the  second  statement of $(ii)$ we similarly deduce from Theorem~\ref{OrOuSe} and Proposition~\ref{Cauchy}   that
 \[
  \boldsymbol{\lambda}\big(\mathcal{P}_{\Lambda_T(m,n)}(X_n)\big)
  \,\,\leq \,\,
  \kappa^m
  \boldsymbol{\lambda}\big(\mathcal{P}_{J_m}(X_n)\big)
  \,\,\leq \,\,
  \kappa^m
  \boldsymbol{\lambda}\big(\mathcal{P}_{J}(X_n)\big)\,,
   \]
so here the conclusion is a consequence of Corollary~\ref{immediate} (third statement). Finally, in order to check  the very last claim,
 we again apply Theorem~\ref{OrOuSe} (for $\Lambda_T (\leq m,n)$) and Proposition~\ref{Cauchy} (for $\Lambda (\leq m,n)$)
 to the estimate proved in  $(i)$.
       \end{proof}

\bigskip

\chapter{Unconditionality}
\label{Unconditionality}

Given a    Banach sequence lattice $X_n=(\mathbb{C}^n, \|\cdot\|)$
and a finite  index set $J \subset \mathbb{N}_0^{(\mathbb{N})}$, our strategy here is   to relate the study of the projection constant of  the Banach space $\mathcal{P}_J(X_n)$  with various important invariants
of local Banach space theory -- among others the Gordon-Lewis constant,
the unconditional basis constant and the convexity/concavity constant.
In this way it is possible to use the deep knowledge on these constants for our purposes.

As discussed earlier, we are particularly interested in analyzing the projection  constant
of $\mathcal{P}_{J}(X_n)$
 viewed as a function of
the dimension $n$ of the Banach space $X_n$ and the degree~$m = \max_J |\alpha|$ of the index set $J$.

To achieve this goal we establish a
   deep link between the constant  $\boldsymbol{\lambda}\big(\mathcal{P}_{J}(X_n)\big)$
and the unconditional
basis constant  $\boldsymbol{\chimon}\big(\mathcal{P}_{J}(X_n)\big)$ (see the definition below). Among others, this    allows
to involve  probabilistic techniques in our analysis of $\boldsymbol{\lambda}\big(\mathcal{P}_{J}(X_n)\big)$. But, vice versa, this link also leads to  new independently interesting knowledge  on $\boldsymbol{\chimon}\big(\mathcal{P}_{J}(X_n)\big)$.

To make all this a bit more precise, we first recall a few more preliminaries, and second, we collect some  concrete examples to illustrate differences as well as similarities of unconditional basis and projection constants of spaces of multivariate polynomials.

\noindent {\bf A few more preliminaries.}
Recall that a family $(e_i)_{i \in I}$ of nonzero elements in a Banach space $X$ is said to be an unconditional basis
for $X$ if the span of all $e_i, \,i\in I$ is dense in $X$ and there exists a constant $K > 0$ such that for any choice of finitely supported families
$(\alpha_i)_{i \in I}$ and $(\beta_i)_{i \in I}$  of scalars with $|\beta_i| \leq |\alpha_i|$ for all $i\in I$, one has
\[
\Big\|\sum_{i \in I} \beta_i e_i\Big\|_X \leq K \Big\|\sum_{i \in I} \alpha_i e_i\Big\|_X\,.
\]
In this case, the unconditional basis constant
$$\boldsymbol{\chi}((e_i)_{i\in I})= \boldsymbol{\chi}((e_i)_{i\in I}; X)$$
of $(e_i)_{i\in I}$ is defined to be the smallest constant $K$ which satisfies the above inequality.
  We write  $\boldsymbol{\chi}((e_i)_{i \in I})= +\infty$, whenever $(e_i)_{i\in I}$ is not unconditional, and say
that $(e_i)_{i\in I}$ is a~$1$-unconditional basis, whenever $\boldsymbol{\chi}((e_i)_{i\in I}) =1$.
The unconditional basis constant
$$\boldsymbol{\chi}(X)$$
of $X$ is defined to be the infimum of $\boldsymbol{\chi}((e_i)_{i\in I})$ taken over all possible unconditional bases
$(e_i)_{i\in I}$ of~$X$.

Occasionally we need equivalent reformulations of  $\boldsymbol{\chi}((e_i)_{i\in I})$. It is easily proved that this constant in fact equals the infimum over all $K > 0$ such that for any finitely supported family $(\alpha_i)_{i \in I}$ of scalars and for any finitely supported family $(\varepsilon_i)_{i \in I}$ with  $|\varepsilon_i| =1, \,  i \in I$ we have
\begin{equation}\label{unconditionality}
\Big\Vert  \sum_{i \in I} \varepsilon_i \alpha_i e_i \Big\Vert \leq K \Big\Vert \sum_{i \in I} \alpha_i e_i \Big\Vert\,.
\end{equation}

For any finite index set $J \subset \mathbb{N}_0^n$ and  any Banach space $X_n = (\mathbb{C}^n,\|\cdot\|)$, the collection of all monomials
$z^\alpha,\, \alpha  \in J$ of course forms an unconditional basis of $\mathcal{P}_J(X_n)$, and the unconditional basis constant of this
so-called monomial basis is denoted by
\[
\boldsymbol{\chimon}\big(\mathcal{P}_{J}(X_n)\big)\,.
\]
A fundamental tool for the study of unconditionality in Banach spaces is the  Gordon-Lewis inequality from \cite{gordon1974absolutely}
(see also \cite[17.7]{diestel1995absolutely} or \cite[Proposition~21.13]{defant2019libro}): For every Banach space $X$ with an unconditional basis~$(e_i)_{i \in I}$ one has
\begin{equation} \label{gl-inequality}
{\mbox{gl}}(X)\leq  \boldsymbol{\chi}(X) \leq \boldsymbol{\chi}( (e_i)_{i \in I})\,,
\end{equation}
where we for the definition of ${\mbox{gl}}(X)$ again refer to \eqref{eq: def G-L}.
In contrast to the unconditional basis constant, the Gordon-Lewis constant has the nice property that
\begin{equation} \label{niceprop}
{\mbox{gl}}(X_0)\leq \boldsymbol{\lambda}(X_0,X)\,\, {\mbox{gl}}(X)\,,
\end{equation}
whenever $X_0$ is an isometric  subspace of $X$; this is a straight forward  consequence of the ideal properties
of the norms $\gamma_1$ and $\pi_1$.

\noindent {\bf Differences and similarities.} Given a finite dimensional Banach space $X$, the unconditional basis constant
with respect to a basis of this space and its projection constant are two quite different objects (compare for  example 
${\chi} (\ell_2^n) =1$ with $\boldsymbol{\lambda} (\ell_2^n) \sim \sqrt{n}$).
But in the case of Banach spaces of multivariate polynomials, we want to convince our reader that a better understanding of one of the two constants in many concrete situations  leads to a better understanding of the other constant.

To illustrate  this point of view, we start considering  analytic trigonometric polynomials in one variable. Recall that  $\text{Trig}_{ \{k\colon 1 \leq k\leq d\}}(\mathbb{T})$ stands for all analytic trigonometric polynomials of the form $$P(z) = \sum_{k=1}^{d} c_k z^k,\, \quad z~\in~\mathbb{T}\,,$$ so polynomials of degree $\leq d$ without a constant term $c_0$
(following our notation from Section~\ref{basicsbasics}).  Then
 Rudin~\cite{rudin1959some} and
Shapiro \cite{shapiro1952extremal} 
(see also \cite[Proposition~9.7]{defant2019libro}) proved that
\begin{equation}\label{exo1}
   \frac{1}{\sqrt{2}} \sqrt{d}\leq \boldsymbol{\chimon} \big(\text{Trig}_{ \{k\colon 1 \leq k\leq d\}}(\mathbb{T})\big) \leq \sqrt{d}\,.
\end{equation}
If we allow  constant terms, then the best known estimate is
\begin{equation}\label{exo2}
   \sqrt{d}  - O( \log d)^{\frac{2}{3}+ \varepsilon} \leq \boldsymbol{\chimon} \big(\text{Trig}_{ \{k\colon 0 \leq k\leq d\}}(\mathbb{T})\big)
\leq \sqrt{d}\,.
\end{equation}
This  is a~deep fact  proved by Bombieri and Bourgain \cite{bombieri2004remark},
and it shows that at least  from the technical point of view
a~seemingly small perturbation of the index set may change the situation drastically.

Let us compare these results with what  we in the preceding sections proved for projection constants:
\[
\boldsymbol{\lambda}\big(\text{Trig}_{ \{k\colon 1 \leq k\leq d\}}(\mathbb{T})\big) \sim
\boldsymbol{\lambda}\big(\text{Trig}_{ \{k\colon 0 \leq k\leq d\}}(\mathbb{T})\big) \sim 1 + \log d\,;
\]
this is a consequence of  Corollary~\ref{manydimensionsB} and  Corollary~\ref{LoKha}.  

If we turn to multivariate polynomials, in particular to  polynomials on the  $n$-dimensional torus $\mathbb{T}^n$,
the situation gets more complicated. Understanding $\boldsymbol{\chimon} (\text{Trig}_{\Lambda(m,n)}(\mathbb{T}^n))$,
so the unconditional basis constant of the monomial basis in the Banach space $\text{Trig}_{\Lambda(m,n)}(\mathbb{T}^n)$ of all $m$-homogeneous
analytic polynomials,  is rather involved since in this case we face a function in the degree of homogeneity $m$ and in the dimension $n$.

In \cite{defant2011bohnenblust} (improving earlier results from \cite{defant2006logarithmic} and \cite{defant2003bohr} )
it was proved that
\[
\boldsymbol{\chimon}\left( \text{Trig}_{\Lambda(m,n)}(\mathbb{T}^n)\right) \sim_{C^m} |\Lambda(m-1,n)|^{1/2},
\]
and from Corollary~\ref{manydimensionsB} we know that
\[
\boldsymbol{\lambda}\left( \text{Trig}_{\Lambda(m,n)}(\mathbb{T}^n)\right) \sim_{C^m} |\Lambda(m,n)|^{1/2}.
\]
Using \eqref{maxmod}, these results may be reformulated in terms of $\mathcal{P}_{m}(\ell_\infty^n)$ instead of
$\text{Trig}_{\Lambda(m,n)}(\mathbb{T}^n)$.

 Looking at the scale   $X_n = \ell_r^n, 1 \leq r \leq \infty$ instead of $X_n=\ell_\infty^n$ only, again increases the
 technical requirements. Extending the previous two results for $\text{Trig}_{\Lambda(m,n)}(\mathbb{T}^n)$, it was shown in
\cite[Theorem~1.2 and Proposition~4.4]{defant2011bohr} (see also \cite[Corollary~19.8]{defant2019libro}) that
\begin{equation} \label{leo}
\boldsymbol{\chimon}\big(\mathcal{P}_{m+1}(\ell_r^n)\big) \sim_{C^m} \boldsymbol{\lambda}\big(\mathcal{P}_m(\ell_r^n)\big)
\sim_{C^m} |\Lambda(m,n)|^{\min\big\{\frac{1}{r'},\frac{1}{2}\big\}}\,;
\end{equation}
for an extension to more general index sets than $\Lambda(m,n)$  see Theorem~\ref{start-poly2}.

The main intention of this chapter is to involve  techniques from local Banach space theory (Section~\ref{Unconditional basis vs projection constant}),
probability theory (Section~\ref{Probabilistic estimates}), and the theory of 
convexity and  concavity in  Banach function lattices (Section~\ref{conv/conc}) 
  to enlarge the scope of the preceding results
considerably.

To see a very first sample we mention Theorem~\ref{main3}, which is one of our major tools: For any index sets $J \subset \Lambda(m,n)$ and any Banach sequence lattice  $X_n = (\mathbb{C}^n,\|\cdot\|)$,  we have that
\begin{equation}\label{strategy}
  \frac{1}{e 2^{m+1} \, \|\mathbf{Q}_{\Lambda(m+1,n),I}\|}\,\boldsymbol{\chimon}\big( \mathcal{P}_{I}(X_n)\big)
 \,\le \,
 \boldsymbol{\lambda}\big( \mathcal{P}_{J}(X_n)\big)\,,
\end{equation}
where
 $$I = \big\{\alpha \in \Lambda(m+1,n)\colon \exists \beta \in \Lambda(1,n) \textrm{ such that }
\alpha-\beta\in J\big\}\,,$$
  and $$\mathbf{Q}_{\Lambda(m+1,n),I}: \mathcal{P}_{\Lambda(m+1,n)}(X_n) \to \mathcal{P}_{I}(X_n)$$
is the canonical projection annihilating coefficients with indices outside of $I$ (see  Section~\ref{Annihilating coefficients}). Note that, in the notation introduced in 
Section~\ref{index-sets}, $I^\flat =J$.
In the special case  $I = \Lambda(m+1,n)$ we have $J = \Lambda(m,n)$, and hence we deduce that
$\boldsymbol{\chimon}\big(\mathcal{P}_{m+1}(\ell_r^n)\big) \prec_{C^m} \boldsymbol{\lambda}\big(\mathcal{P}_m(\ell_r^n)\big)$
which is one of the major steps for the proof of~\eqref{leo}.

\bigskip

\section{Finite degree  vs  homogeneous case}

We  prove  in this section some analogues of Theorem~\ref{degree-homo}. 
Given a Banach lattice $X_n = (\mathbb{C}^n, \|\cdot\|) $ and a finite index set
$J \subset \mathbb{N}_0^n$  of degree  $m$, the task of estimating the unconditional basis constant of the monomial basis in 
$\mathcal{P}_{J}(X_n)$ may be translated into estimating the unconditional basis  constant  of the monomial basis for each of its `homogeneous building blocks' $\mathcal{P}_{J_k}(X_n), \, 0 \leq k \leq m$.

\begin{proposition} \label{degreelessm}
Let $X_n = (\mathbb{C}^n, \|\cdot\|) $ be a Banach lattice, and
$J \subset \mathbb{N}_0^n$ a finite index set of degree  $m$. Then
 \[
 \max_{0 \leq k \leq m} \boldsymbol{\chimon}\big(\mathcal{P}_{J_k}(X_n)\big) \,\,\leq \,\,
 \boldsymbol{\chimon}\big(\mathcal{P}_{J}(X_n)\big)
 \,\,\leq  \sum_{k=0}^m \boldsymbol{\chimon}\big(\mathcal{P}_{J_k}(X_n)\big) \leq \,\,(m+1)\,\,
 \max_{0 \leq k \leq m} \boldsymbol{\chimon}\big(\mathcal{P}_{J_k}(X_n)\big)
 \,.
 \]
 \end{proposition}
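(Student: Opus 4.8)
The statement is a direct analogue of Theorem~\ref{degree-homo}, with $\boldsymbol{\lambda}$ replaced by $\boldsymbol{\chimon}$, and the proof should follow the same two-sided scheme; the key structural fact is that every $P \in \mathcal{P}_J(X_n)$ decomposes uniquely as $P = \sum_{k=0}^m P_k$ with $P_k \in \mathcal{P}_{J_k}(X_n)$ its $k$-homogeneous Taylor part, and that the Cauchy inequalities give $\|P_k\|_{B_{X_n}} \le \|P\|_{B_{X_n}}$ for each $k$ (this is exactly Proposition~\ref{Cauchy} with $\|\mathbf{Q}_{J,J_k}\|=1$).

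\textbf{Lower bound ($\max_k \le \boldsymbol{\chimon}$).} First I would fix $k$ and show $\boldsymbol{\chimon}\big(\mathcal{P}_{J_k}(X_n)\big) \le \boldsymbol{\chimon}\big(\mathcal{P}_{J}(X_n)\big)$. Given a family of signs $(\varepsilon_\alpha)_{\alpha \in J_k}$ and scalars $(c_\alpha)_{\alpha \in J_k}$, extend both by zero to all of $J$. Then $\big\|\sum_{\alpha \in J_k} \varepsilon_\alpha c_\alpha z^\alpha\big\|_{B_{X_n}} \le \boldsymbol{\chimon}\big(\mathcal{P}_J(X_n)\big)\,\big\|\sum_{\alpha \in J_k} c_\alpha z^\alpha\big\|_{B_{X_n}}$ by definition \eqref{unconditionality} applied in $\mathcal{P}_J(X_n)$, since the extended polynomials have exactly the same coefficients. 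Taking the supremum over such families yields the claim; since $k$ was arbitrary, the maximum bound follows. (Here one uses that $\mathcal{P}_{J_k}(X_n)$ is isometrically the subspace of $\mathcal{P}_J(X_n)$ spanned by $z^\alpha$, $\alpha \in J_k$, which is clear from the definition of the norm as a supremum over $B_{X_n}$.)

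\textbf{Upper bound ($\boldsymbol{\chimon} \le \sum_k \boldsymbol{\chimon}(\mathcal{P}_{J_k})$).} Given signs $(\varepsilon_\alpha)_{\alpha \in J}$ and scalars $(c_\alpha)_{\alpha \in J}$, write $P = \sum_{\alpha \in J} c_\alpha z^\alpha = \sum_{k=0}^m P_k$ with $P_k = \sum_{\alpha \in J_k} c_\alpha z^\alpha$, and similarly $\widetilde{P} = \sum_{\alpha \in J} \varepsilon_\alpha c_\alpha z^\alpha = \sum_{k=0}^m \widetilde{P}_k$. By the triangle inequality, then by $1$-unconditionality block-by-block, then by Cauchy,
\begin{align*}
\Big\| \widetilde{P} \Big\|_{B_{X_n}}
&\le \sum_{k=0}^m \big\| \widetilde{P}_k \big\|_{B_{X_n}}
\le \sum_{k=0}^m \boldsymbol{\chimon}\big(\mathcal{P}_{J_k}(X_n)\big)\,\big\| P_k \big\|_{B_{X_n}} \\
&\le \Big( \sum_{k=0}^m \boldsymbol{\chimon}\big(\mathcal{P}_{J_k}(X_n)\big) \Big)\, \big\| P \big\|_{B_{X_n}}\,,
\end{align*}
where the last step uses $\|P_k\|_{B_{X_n}} \le \|P\|_{B_{X_n}}$ from Proposition~\ref{Cauchy}. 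This gives $\boldsymbol{\chimon}\big(\mathcal{P}_J(X_n)\big) \le \sum_{k=0}^m \boldsymbol{\chimon}\big(\mathcal{P}_{J_k}(X_n)\big) \le (m+1)\max_{0 \le k \le m}\boldsymbol{\chimon}\big(\mathcal{P}_{J_k}(X_n)\big)$, completing the chain of inequalities. There is no real obstacle here: the only point requiring a moment's care is that the Cauchy estimate $\|\mathbf{Q}_{J,J_k}\|=1$ genuinely needs $X_n$ to be a Banach lattice (so that the polydisc-type averaging in Proposition~\ref{Cauchy} applies), which is already part of the hypothesis, and that the definition of $\boldsymbol{\chimon}$ via \eqref{unconditionality} is insensitive to adjoining zero coefficients. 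I expect the write-up to be essentially the same length as the proof of Theorem~\ref{degree-homo}, minus the tensor-diagram bookkeeping, since unconditional constants behave additively under the direct-sum decomposition rather than requiring the $\bigoplus_\infty / \bigoplus_1$ interpolation argument.
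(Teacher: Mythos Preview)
Your proposal is correct and follows essentially the same approach as the paper: the lower bound is dismissed as ``immediate (by the definitions)'' there, which is precisely your extension-by-zero argument, and the upper bound proceeds by the identical chain (triangle inequality, blockwise unconditionality on each $\mathcal{P}_{J_k}$, then Cauchy's inequality $\|P_k\|\le\|P\|$ from Proposition~\ref{Cauchy}). One small inaccuracy in your commentary: Proposition~\ref{Cauchy} is stated for an arbitrary Banach space $X_n$, not just Banach lattices, so the Cauchy step does not actually require the lattice hypothesis.
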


\begin{proof}
The first  estimate is immediate (by the definitions).
For the second fix some polynomial $P(z) = \sum_{\alpha \in J} c_\alpha z^\alpha \in \mathcal{P}_{J}(X_n)$ and complex signs $(\varepsilon_{\alpha})_{\alpha \in J}$.
Then
\[
\Big\| \sum_{\alpha \in J} \varepsilon_{\alpha} c_\alpha  z^\alpha  \Big\|
=
\Big\|  \sum_{k=0}^m \sum_{\alpha \in J_k}\varepsilon_{\alpha} c_\alpha  z^\alpha  \Big\|
\leq
\sum_{k=0}^m \Big\|\sum_{\alpha \in J_k}\varepsilon_{\alpha} c_\alpha   z^\alpha  \Big\|
    \leq
  \sum_{k=0}^m \boldsymbol{\chimon}\big(\mathcal{P}_{J_k}(X_n)\big) \Big\|\sum_{\alpha \in J_k}c_\alpha  z^\alpha  \Big\|\,,
\]
and  hence the conclusion follows using Cauchy's inequality (see Proposition~\ref{Cauchy} or \cite[Proposition 15.33]{defant2019libro}).
\end{proof}

The following proposition compares the unconditional basis constant of the monomial basis of spaces of homogeneous polynomials of different degrees. See Proposition~\ref{Cauchy} for an analog result for projection constants. 

\begin{proposition}\label{comparision unconditional basis}
 Let $X_n = (\mathbb{C}^n, \|\cdot\|) $ be a Banach lattice. For each $1 \leq k \leq m-1$, we have
 \[
 \boldsymbol{\chimon}\big(\mathcal{P}_{k}(X_n)\big)
 \,\,\leq \frac{m^m}{k^k (m-k)^{m-k}} \,\,
 \boldsymbol{\chimon}\big(\mathcal{P}_{ m}(X_n)\big)
 \,.
 \]
In particular, \[
 \boldsymbol{\chimon}\big(\mathcal{P}_{k}(X_n)\big)
 \,\,\leq 2^m \,\,
 \boldsymbol{\chimon}\big(\mathcal{P}_{ m}(X_n)\big).
 \]
\end{proposition}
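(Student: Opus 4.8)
The plan is to fix one coordinate $i_0$, multiply a $k$-homogeneous polynomial $Q$ by $z_{i_0}^{m-k}$ to land inside $\mathcal{P}_m(X_n)$ (a \emph{monomial-compatible} embedding, so that sign changes transfer through it), and then recover the sign-flipped norm of $Q$ from the norm of its image by a one-variable subharmonicity argument whose free parameter is a rescaling of the ``$i_0$-slice'' of the domain. The extremal constant will come out of optimizing that parameter, since $\tfrac{m^m}{k^k(m-k)^{m-k}}=\big(\sup_{0<\mu<1}\mu^k(1-\mu)^{m-k}\big)^{-1}$, the supremum being attained at $\mu=k/m$; the ``in particular'' clause then follows because $k\mapsto k^k(m-k)^{m-k}$ is minimised at $k=m/2$, where the constant equals $2^m$.

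In detail: fix $1\le k\le m-1$, a coordinate $i_0$, a $k$-homogeneous $Q(z)=\sum_{|\alpha|=k}c_\alpha z^\alpha$ on $X_n$, and a sign pattern $(\delta_\alpha)$. Set $P:=z_{i_0}^{m-k}Q\in\mathcal{P}_m(X_n)$. Since $\alpha\mapsto\alpha+(m-k)e_{i_0}$ is injective, $P^\delta=z_{i_0}^{m-k}Q^\delta$ is obtained from $P$ by applying the signs $\delta$ to the monomial basis of $\mathcal{P}_m(X_n)$; hence $\|P^\delta\|\le\boldsymbol{\chimon}\big(\mathcal{P}_m(X_n)\big)\,\|P\|$, and $\|P\|\le\|Q\|$ because $|z_{i_0}|\le1$ on $B_{X_n}$ (normalised unit vectors, $\|e_{i_0}\|=1$). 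What remains is the inequality
\[
\|Q^\delta\|\ \le\ \frac{m^m}{k^k(m-k)^{m-k}}\,\|P^\delta\|\,.
\]
For this I would freeze all coordinates except the $i_0$-th: writing $z=z'+\zeta e_{i_0}$ with $\zeta\in\mathbb{C}$ and $z'$ supported off $i_0$, the Banach-lattice structure makes the set of admissible $\zeta$ a disc $\{|\zeta|\le r(z')\}$, with $r$ obeying $r(\mu z')\ge(1-\mu)+\mu\,r(z')$ for $0\le\mu\le1$ (convexity of the norm together with $\|e_{i_0}\|=1$). Putting $g_{z'}(\zeta):=Q^\delta(z'+\zeta e_{i_0})$, subharmonicity of $|\zeta|^{m-k}|g_{z'}(\zeta)|$ gives
\[
\|Q^\delta\|=\sup_{z'}\ \sup_{|\zeta|\le r(z')}|g_{z'}(\zeta)|\,,\qquad
\|P^\delta\|=\sup_{z'}\ r(z')^{m-k}\sup_{|\zeta|\le r(z')}|g_{z'}(\zeta)|\,.
\]
Replacing $z'$ by $\mu z'$ (still admissible) and using the homogeneity relation $g_{\mu z'}(\zeta)=\mu^{k}g_{z'}(\zeta/\mu)$ together with $r(\mu z')\ge\mu\,r(z')$ yields, for every admissible $z'$ and every $\mu\in[0,1]$,
\[
\|P^\delta\|\ \ge\ (1-\mu)^{m-k}\mu^{k}\,\sup_{|\zeta|\le r(z')}|g_{z'}(\zeta)|\,;
\]
choosing $\mu=k/m$ and taking the supremum over $z'$ gives the displayed estimate. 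Concatenating the three bounds and taking the supremum over $Q$ and $\delta$ produces $\boldsymbol{\chimon}\big(\mathcal{P}_k(X_n)\big)\le\frac{m^m}{k^k(m-k)^{m-k}}\,\boldsymbol{\chimon}\big(\mathcal{P}_m(X_n)\big)$.

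The main obstacle is the recovery step: one must identify the correct mediating quantity linking $\|Q^\delta\|$ to $\|z_{i_0}^{m-k}Q^\delta\|$, and the point is that \emph{scaling only the off-$i_0$ part} of the domain (rather than plugging in a single convenient point) is exactly what yields the sharp constant — a naive evaluation-point argument degrades by a dimension-dependent factor such as $\varphi_{X_n}(n)^{m-k}$. The two technical ingredients carrying the argument are the subharmonicity identity for $\|P^\delta\|$ and the concavity-type estimate $r(\mu z')\ge(1-\mu)+\mu\,r(z')$; I would also note that, phrased through multiplication by $\varphi^{m-k}$ for the norm-one coordinate functional $\varphi=e_{i_0}^{*}$, this scheme is the natural analogue of the multiplication/projection mechanism underlying Proposition~\ref{Cauchy} and Lemma~\ref{old}.
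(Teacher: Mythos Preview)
Your strategy is exactly the paper's: multiply by $z_{i_0}^{m-k}$ to land in $\mathcal{P}_m(X_n)$, apply unconditionality there, and compare $\|Q^\delta\|$ with $\|z_{i_0}^{m-k}Q^\delta\|$. The only difference is in this last comparison: the paper simply invokes the product inequality of Ben\'itez--Sarantopoulos--Tonge \cite{benitez1998lower} (namely $\|R\|\,\|S\|\le\frac{m^m}{k^k(m-k)^{m-k}}\|RS\|$ for polynomials of degrees $k$ and $m-k$), whereas you supply a self-contained proof of the needed special case via your rescaling/maximum-modulus argument. That argument is correct --- the two consequences $r(\mu z')\ge 1-\mu$ and $r(\mu z')\ge\mu\,r(z')$ of the convexity estimate are precisely what drive the factor $(1-\mu)^{m-k}\mu^k$ --- so your proof is complete and has the advantage of not relying on an external reference. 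One small point: you assume $\|e_{i_0}\|=1$, which is not part of the hypotheses; the paper avoids this by carrying the factor $\|z_1^{m-k}\|$ through the chain of inequalities, where it cancels at the end, and your argument adapts in the same way.
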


\begin{proof}
Fix polynomial $P(z) = \sum_{\alpha \in \Lambda(k,n)} c_\alpha z^\alpha \in \mathcal{P}_{k}(X_n)$ and signs $(\varepsilon_{\alpha})_{\alpha \in \Lambda(k,n)}$.
Then
\begin{align*}
\Big\| \sum_{\alpha \in \Lambda(k,n)} \varepsilon_{\alpha} c_\alpha  z^\alpha  \Big\|
& = \Big\| \sum_{\alpha \in \Lambda(k,n)} \varepsilon_{\alpha} c_\alpha  z^\alpha  \Big\| \cdot \frac{\big\| z_1^{m-k} \big\|}{\big\| z_1^{m-k} \big\|} \leq \frac{m^m}{k^k (m-k)^{m-k}} \Big\| \sum_{\alpha \in \Lambda(k,n)} \varepsilon_{\alpha} c_\alpha  z^\alpha z_1^{m-k} \Big\| \frac{1}{\big\| z_1^{m-k} \big\|} \\
& \leq \frac{m^m}{k^k (m-k)^{m-k}}   \boldsymbol{\chimon}\big(\mathcal{P}_{m}(X_n) \big) \Big\| \sum_{\alpha \in \Lambda(k,n)}  c_\alpha  z^\alpha z_1^{m-k} \Big\| \frac{1}{\Big\| z_1^{m-k} \Big\|} \\ & \leq \frac{m^m}{k^k (m-k)^{m-k}}   \boldsymbol{\chimon}\big(\mathcal{P}_{m}(X_n) \big) \Big\| \sum_{\alpha \in \Lambda(k,n)}  c_\alpha  z^\alpha \Big\|,
\end{align*}
where in the first inequality we  use  \cite[Theorem 3]{benitez1998lower}, a classical result  comparing  norms of products of polynomials.
Then, by definition of the constant $\boldsymbol{\chimon}\big(\mathcal{P}_{k}(X_n) \big)$, the result follows.
The general bound $2^m$ follows by a standard calculus argument: the function $k\mapsto \frac{m^m}{k^k (m-k)^{m-k}}$ attains its maximum at $k=\frac{m}{2}$, if we consider its domain in the real numbers between~$1$ and $m-1$,  and its value is $2^m$. 
\end{proof}

\smallskip

As a consequence of the previous two propositions we get the following corollary.

\smallskip

\begin{corollary} \label{degreelessmB}
For each $m \in \mathbb{N}$ and any Banach lattice $X_n = (\mathbb{C}^n, \|\cdot\|)$, we have
 \[
 \boldsymbol{\chimon}\big(\mathcal{P}_{m}(X_n)\big) \,\,\leq \,\,
 \boldsymbol{\chimon}\big(\mathcal{P}_{\leq m}(X_n)\big)
 \,\,\leq \,\,(m+1)\,  2^m \,\,
 \boldsymbol{\chimon}\big(\mathcal{P}_{ m}(X_n)\big)
 \,.
 \]
 \end{corollary}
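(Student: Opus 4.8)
The plan is to deduce Corollary~\ref{degreelessmB} from the two preceding propositions exactly as announced, with no new ideas required beyond bookkeeping of constants. The statement is the special case $J = \Lambda(\leq m,n)$ of the sandwich inequality in Proposition~\ref{degreelessm}, combined with the cross-degree comparison in Proposition~\ref{comparision unconditional basis} to collapse the maximum over homogeneous pieces into a single term $\boldsymbol{\chimon}\big(\mathcal{P}_m(X_n)\big)$.

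First I would observe that $\mathcal{P}_{\leq m}(X_n) = \mathcal{P}_J(X_n)$ for $J = \Lambda(\leq m,n)$, whose degree is $m$ and whose $k$-homogeneous part is $J_k = \Lambda(k,n)$ for each $0 \leq k \leq m$. The left-hand inequality $\boldsymbol{\chimon}\big(\mathcal{P}_m(X_n)\big) \leq \boldsymbol{\chimon}\big(\mathcal{P}_{\leq m}(X_n)\big)$ is then immediate from the first estimate of Proposition~\ref{degreelessm} (taking $k = m$ in the maximum). For the right-hand inequality, the second estimate of Proposition~\ref{degreelessm} gives
\[
\boldsymbol{\chimon}\big(\mathcal{P}_{\leq m}(X_n)\big) \leq (m+1)\max_{0 \leq k \leq m}\boldsymbol{\chimon}\big(\mathcal{P}_{k}(X_n)\big)\,.
\]
Now I would bound each term in the maximum: for $1 \leq k \leq m-1$ Proposition~\ref{comparision unconditional basis} gives $\boldsymbol{\chimon}\big(\mathcal{P}_k(X_n)\big) \leq 2^m\,\boldsymbol{\chimon}\big(\mathcal{P}_m(X_n)\big)$; for $k = m$ the factor is $1 \leq 2^m$; and for $k = 0$ the space $\mathcal{P}_0(X_n)$ is one-dimensional so $\boldsymbol{\chimon}\big(\mathcal{P}_0(X_n)\big) = 1 \leq 2^m\,\boldsymbol{\chimon}\big(\mathcal{P}_m(X_n)\big)$ (using $\boldsymbol{\chimon}\big(\mathcal{P}_m(X_n)\big) \geq 1$). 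Hence $\max_{0 \leq k \leq m}\boldsymbol{\chimon}\big(\mathcal{P}_k(X_n)\big) \leq 2^m\,\boldsymbol{\chimon}\big(\mathcal{P}_m(X_n)\big)$, and substituting back yields $\boldsymbol{\chimon}\big(\mathcal{P}_{\leq m}(X_n)\big) \leq (m+1)2^m\,\boldsymbol{\chimon}\big(\mathcal{P}_m(X_n)\big)$, as claimed.

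There is essentially no obstacle here: the whole content is in Propositions~\ref{degreelessm} and~\ref{comparision unconditional basis}, which are already proved above. The only point demanding a moment of care is the treatment of the degenerate homogeneous blocks $k = 0$ (constant polynomials) and $k = m$, which fall outside the range $1 \leq k \leq m-1$ of Proposition~\ref{comparision unconditional basis}; both are handled trivially by the remarks just made. If one wanted the sharper constant one could instead invoke the refined bound $\boldsymbol{\chimon}\big(\mathcal{P}_k(X_n)\big) \leq \frac{m^m}{k^k(m-k)^{m-k}}\boldsymbol{\chimon}\big(\mathcal{P}_m(X_n)\big)$ directly, but for the stated corollary the uniform factor $2^m$ suffices.
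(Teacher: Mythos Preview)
Your proof is correct and follows exactly the approach the paper intends: it is stated there as an immediate consequence of Propositions~\ref{degreelessm} and~\ref{comparision unconditional basis}, and your handling of the edge cases $k=0$ and $k=m$ is the appropriate way to fill in the bookkeeping.
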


\smallskip

\section{Unconditional basis vs projection constant}
\label{Unconditional basis vs projection constant}
We prove  that for spaces $ \mathcal{P}_{J}(X_n)$ the projection constant and the unconditional basis constant of the monomial basis are intimately related.

Recall 
from Section~\ref{index-sets}
the notion of  reduced index sets $J^\flat$,
and from Section~ \ref{Annihilating coefficients} the  definition 
of the projection
$
\mathbf{Q}_{J,I}:\mathcal{P}_J(X_n) \to \mathcal{P}_I(X_n)\,
$
annihilating coefficients,
whenever  $I \subset J$.

\begin{theorem} \label{main3}
Let $X_n = (\mathbb{C}^n,\|\cdot\|)$  be a Banach  lattice,
and $J\subset~\mathbb{N}_0^n$ a finite index set
of degree  $m$\,. Then
\[
   \boldsymbol{\chimon}\big( \mathcal{P}_{J}(X_n)\big)
 \,\le \,e (m+1) 2^m \,\max_{1 \leq  k \leq m}\|\mathbf{Q}_{\Lambda(k,n),J_k}\|\,\,
\max_{1 \leq  k \leq m} \boldsymbol{\lambda}\big( \mathcal{P}_{J_k^\flat}(X_n)\big)\,.
\]
In addition, 
\[
   \boldsymbol{\chimon}\big( \mathcal{P}_{J}(X_n)\big)
 \,\le \,e 2^m \, \|\mathbf{Q}_{\Lambda(m,n),J}\|\,\,
 \boldsymbol{\lambda}\big( \mathcal{P}_{J^\flat}(X_n)\big)\,,
\]
whenever  $J \subset \Lambda(m,n)$.
\end{theorem}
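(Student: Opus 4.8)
By the reformulation \eqref{unconditionality} of the unconditional basis constant, $\boldsymbol{\chimon}\big(\mathcal{P}_J(X_n)\big)$ equals the supremum over all families $\varepsilon=(\varepsilon_\alpha)_{\alpha\in J}$ of unimodular scalars of the operator norm of the multiplier $M_\varepsilon\colon\mathcal{P}_J(X_n)\to\mathcal{P}_J(X_n)$, $z^\alpha\mapsto\varepsilon_\alpha z^\alpha$. Hence it is enough to bound $\|M_\varepsilon\|$ for a single fixed $\varepsilon$, and we may assume $m\ge1$ and $J\neq\emptyset$, so that $J^\flat\neq\emptyset$ (the case $m=0$ being trivial). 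The plan is to factor $M_\varepsilon$ through the $(m-1)$-homogeneous space $\mathcal{P}_{J^\flat}(X_n)$ and then to insert the projection constant of the latter via \eqref{gammainfty}, i.e. via a factorization of $\id_{\mathcal{P}_{J^\flat}(X_n)}$ through some $\ell_\infty^N$ witnessing $\gamma_\infty(\id_{\mathcal{P}_{J^\flat}(X_n)})=\boldsymbol{\lambda}(\mathcal{P}_{J^\flat}(X_n))$. Concretely, it suffices to construct operators
\[
D\colon\mathcal{P}_J(X_n)\longrightarrow\mathcal{P}_{J^\flat}(X_n)\,,\qquad U\colon\mathcal{P}_{J^\flat}(X_n)\longrightarrow\mathcal{P}_{\Lambda(m,n)}(X_n)
\]
with $M_\varepsilon=\mathbf{Q}_{\Lambda(m,n),J}\circ U\circ D$ and $\|D\|\,\|U\|\le e\,2^m$; indeed, writing $\id_{\mathcal{P}_{J^\flat}(X_n)}=w\circ v$ with $v\colon\mathcal{P}_{J^\flat}(X_n)\to\ell_\infty^N$, $w\colon\ell_\infty^N\to\mathcal{P}_{J^\flat}(X_n)$ and $\|v\|\,\|w\|\le\boldsymbol{\lambda}(\mathcal{P}_{J^\flat}(X_n))+\delta$, the composition $M_\varepsilon=\mathbf{Q}_{\Lambda(m,n),J}\circ U\circ w\circ v\circ D$ gives $\|M_\varepsilon\|\le e\,2^m\,\|\mathbf{Q}_{\Lambda(m,n),J}\|\,\big(\boldsymbol{\lambda}(\mathcal{P}_{J^\flat}(X_n))+\delta\big)$, and we let $\delta\to0$ and take the supremum over $\varepsilon$.

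To build $D$ and $U$ I would use Euler's identity $P=\frac1m\sum_{k=1}^n z_k\,\partial_kP$. Each $\partial_kP$ is $(m-1)$-homogeneous and supported inside $J^\flat$ (because $\alpha-e_k\in J^\flat$ whenever $\alpha\in J$ and $\alpha_k\ge1$), it is controlled through the polarization identity $\partial_kP(z)=m\,\overset{\vee}{P}(z^{(m-1)},e_k)$ and the polarization inequality \eqref{ec polarizacion}--\eqref{projl1}, and applying $M_\varepsilon$ turns the identity into $M_\varepsilon P=\frac1m\sum_k z_k\,M_{\varepsilon^{(k)}}(\partial_kP)$ with $\varepsilon^{(k)}_\beta:=\varepsilon_{\beta+e_k}$. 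So $D$ should encode the $\varepsilon$-twisted version of all the $\partial_kP$, and $U$ should be multiplication by a linear form followed by the truncation onto $\Lambda(m,n)$ --- which is exactly where the factor $\|\mathbf{Q}_{\Lambda(m,n),J}\|$ enters. Three bookkeeping points have to be respected: \emph{(i)} the coordinate functionals $z\mapsto z_k$ occurring in $U$, whose $X_n^*$-norm is $1/\|e_k\|_{X_n}$, must be cancelled by compensating $\|e_k\|_{X_n}$-factors produced by the directional derivatives in $D$ (one has $e_k=\|e_k\|_{X_n}\cdot\big(e_k/\|e_k\|_{X_n}\big)$ with $e_k/\|e_k\|_{X_n}\in\overline B_{X_n}$), so that no norm of a single basis vector survives; \emph{(ii)} the degree-lowering step must be organised so that no factor depending on the number $n$ of variables is created when recombining the contributions of the various coordinates; and \emph{(iii)} the genuine numerical losses --- the polarization constant from \eqref{projl1}, Cauchy's inequalities (Proposition~\ref{Cauchy}), and a norm estimate for the product of a polynomial with a linear factor in the spirit of \cite[Theorem~3]{benitez1998lower} (the tool already exploited in Proposition~\ref{comparision unconditional basis}) --- must be combined in the most economical way so that together they contribute only $e\,2^m$.

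The main obstacle is precisely carrying out \emph{(ii)}--\emph{(iii)}: the first factorization that comes to mind reduces the estimate to the \emph{unconditional} constant of $\mathcal{P}_{J^\flat}(X_n)$, and such a recursion cannot close with dimension-free constants (for $X_n=\ell_r^n$ one checks from \eqref{leo} that $\boldsymbol{\chimon}(\mathcal{P}_m(X_n))/\boldsymbol{\chimon}(\mathcal{P}_{m-1}(X_n))$ grows with $n$), whereas $\boldsymbol{\chimon}(\mathcal{P}_J(X_n))/\boldsymbol{\lambda}(\mathcal{P}_{J^\flat}(X_n))$ does stay bounded by $e\,2^m\,\|\mathbf{Q}_{\Lambda(m,n),J}\|$; so the real content is to realise $M_\varepsilon$ through a factorization whose two legs are sign-stable (or whose $\varepsilon$-dependence is a bounded perturbation absorbed by the projection onto $\mathcal{P}_{J^\flat}(X_n)$), and not merely to push the problem one degree down. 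Once the homogeneous statement is established, the general-degree inequality displayed first in the theorem follows from it by splitting $J=\bigcup_{k=0}^{m}J_k$ and summing, via Proposition~\ref{degreelessm} and Cauchy's inequality (Proposition~\ref{Cauchy}), the extra factor $m+1$ being the number of homogeneous layers.
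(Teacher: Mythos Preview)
Your proposed factorization $M_\varepsilon=\mathbf{Q}_{\Lambda(m,n),J}\circ U\circ D$ with $D\colon\mathcal{P}_J(X_n)\to\mathcal{P}_{J^\flat}(X_n)$ cannot exist, for a rank reason: $M_\varepsilon$ is a bijection of $\mathcal{P}_J(X_n)$ and so has rank $|J|$, while any operator factoring through $\mathcal{P}_{J^\flat}(X_n)$ has rank at most $|J^\flat|$. For $J=\Lambda(m,n)$ with $n\ge 2$ one has $|J^\flat|=\binom{n+m-2}{m-1}<\binom{n+m-1}{m}=|J|$, so no such $D,U$ exist regardless of norm bounds. Your Euler-identity heuristic in fact confirms this: the full family $(\partial_1 P,\ldots,\partial_n P)$ recovers $P$, but no single element of $\mathcal{P}_{J^\flat}(X_n)$ can encode all of them.

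The paper's route is through the Gordon--Lewis constant. First (Theorem~\ref{gl-versus-unc}) one proves $\boldsymbol{\chimon}(\mathcal{P}_J(X_n))\le 2^m\,\mbox{gl}(\mathcal{P}_J(X_n))$ via a Pisier--Sch\"utt type criterion (Lemma~\ref{tool2}) fed by the $\pi_1$-bound \eqref{toolA}, itself a consequence of hypercontractivity \eqref{weissler:analyitic}. Second (Theorem~\ref{gl_versus_proj}) one factors $\id_{\mathcal{P}_J(X_n)}$ through the \emph{vector-valued} space $\mathcal{L}\big(X_n,\mathcal{P}_{J^\flat}(X_n)\big)$ via the polarization map $P\mapsto\big[x\mapsto\check P(\,\cdot\,^{m-1},x)\big]$ (norm $\le e$ by Harris), and then invokes the lattice inequality \eqref{supo}: $\mbox{gl}\big(\mathcal{L}(X_n,Y)\big)\le\boldsymbol{\lambda}(Y)$. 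The intermediate space now has dimension $n\cdot|J^\flat|\ge|J|$, so the rank obstruction disappears, and it is the \emph{Gordon--Lewis} constant of that space --- not its projection constant --- that collapses to $\boldsymbol{\lambda}(\mathcal{P}_{J^\flat}(X_n))$. The detour through $\mbox{gl}$ is precisely what absorbs the sign-dependence you correctly flagged as the main obstacle; your sketch identifies the obstacle but does not supply the mechanism that overcomes it.
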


The proof  follows  immediately from
independently interesting results presented in the following two subsections, namely    Theorem~\ref{gl-versus-unc} and Theorem~\ref{gl_versus_proj}.

\smallskip
Before we start let us mention an immediate  corollary  of
Theorem~\ref{main3} -- for the first estimate use Proposition~\ref{Cauchy} and for the second 
Theorem~\ref{OrOuSe}.

\smallskip

\begin{corollary} \label{main3A}
Let $m \in \mathbb{N}$ and $X_n = (\mathbb{C}^n,\|\cdot\|)$  be a Banach  lattice. Then
\[
\boldsymbol{\chimon}\big( \mathcal{P}_{ m}(X_n)\big)
 \,\le \,
   \boldsymbol{\chimon}\big( \mathcal{P}_{\leq m}(X_n)\big)
 \,\le \,e (m+1) 2^m  \,\,\max_{1 \leq  k \leq m-1} \boldsymbol{\lambda}\big( \mathcal{P}_{k}(X_n)\big)\,,
\]
and
\[
\boldsymbol{\chimon}\big( \mathcal{T}_{ m}(X_n)\big)
 \,\le \,
   \boldsymbol{\chimon}\big( \mathcal{T}_{\leq m}(X_n)\big)
 \,\le \,e (m+1) 2^m \kappa^m  \,\,\,\,\max_{1 \leq  k \leq m-1} \boldsymbol{\lambda}\big( \mathcal{T}_{k}(X_n)\big)\,.
\]
\end{corollary}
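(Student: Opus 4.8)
The plan is to obtain both chains of inequalities as direct specializations of Theorem~\ref{main3}: the first chain by applying it to the index set $J=\Lambda(\le m,n)$, the second by applying it to $J=\Lambda_T(\le m,n)$. Throughout I write $\mathcal{T}_k(X_n)=\mathcal{P}_{\Lambda_T(k,n)}(X_n)$ and $\mathcal{T}_{\le m}(X_n)=\mathcal{P}_{\Lambda_T(\le m,n)}(X_n)$, and in the tetrahedral chain I tacitly assume $m\le n$ (the case $m>n$ being vacuous, since then $\Lambda_T(m,n)=\emptyset$ so the left-hand side is the zero space). Both $\Lambda(\le m,n)$ and $\Lambda_T(\le m,n)$ are finite index sets of degree $m$, so Theorem~\ref{main3} applies.

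First I would settle the two left-hand inequalities. Since $\Lambda(m,n)$ is precisely the $m$-homogeneous part of $\Lambda(\le m,n)$, and $\Lambda_T(m,n)$ the $m$-homogeneous part of $\Lambda_T(\le m,n)$, Proposition~\ref{Cauchy} provides a monomial-coefficient projection $\mathbf{Q}_{J,J_m}$ of norm $1$ onto the top homogeneous layer; as this projection is monomial-preserving and the layer sits isometrically inside $\mathcal{P}_J(X_n)$, it can only decrease the unconditional basis constant of the monomial basis. (Equivalently, one may quote the first estimate of Proposition~\ref{degreelessm}.) This yields $\boldsymbol{\chimon}(\mathcal{P}_m(X_n))\le\boldsymbol{\chimon}(\mathcal{P}_{\le m}(X_n))$ and $\boldsymbol{\chimon}(\mathcal{T}_m(X_n))\le\boldsymbol{\chimon}(\mathcal{T}_{\le m}(X_n))$.

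For the right-hand inequalities I would first record two elementary identities for reduced index sets: $(\Lambda(k,n))^\flat=\Lambda(k-1,n)$, and $(\Lambda_T(k,n))^\flat=\Lambda_T(k-1,n)$ for $1\le k\le n$. Both are checked coordinate-wise: incrementing any coordinate of an $\alpha\in\Lambda(k-1,n)$ -- a coordinate equal to $0$ in the tetrahedral case, which exists since $k-1<n$ -- produces a witness in $\Lambda(k,n)$, resp.\ $\Lambda_T(k,n)$; conversely any element of a reduced set lies coordinate-wise below an index of the ambient homogeneous set and hence inherits tetrahedrality. Next I would estimate the annihilation norms occurring in the first bound of Theorem~\ref{main3}: for $J=\Lambda(\le m,n)$ one has $J_k=\Lambda(k,n)$, so $\mathbf{Q}_{\Lambda(k,n),J_k}$ is the identity and has norm $1$; for $J=\Lambda_T(\le m,n)$ one has $J_k=\Lambda_T(k,n)$, and Theorem~\ref{OrOuSe} applied to the $k$-homogeneous lattice space $\mathcal{P}_{\Lambda(k,n)}(X_n)$ (whose tetrahedral part is $\mathcal{P}_{\Lambda_T(k,n)}(X_n)$) gives $\|\mathbf{Q}_{\Lambda(k,n),\Lambda_T(k,n)}\|\le\kappa^k\le\kappa^m$. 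Feeding these into Theorem~\ref{main3} gives
\[
\boldsymbol{\chimon}\big(\mathcal{P}_{\le m}(X_n)\big)\le e(m+1)2^m\max_{1\le k\le m}\boldsymbol{\lambda}\big(\mathcal{P}_{k-1}(X_n)\big)
\]
and
\[
\boldsymbol{\chimon}\big(\mathcal{T}_{\le m}(X_n)\big)\le e(m+1)2^m\kappa^m\max_{1\le k\le m}\boldsymbol{\lambda}\big(\mathcal{T}_{k-1}(X_n)\big).
\]
Re-indexing $k\mapsto k-1$ turns the maxima into $\max_{0\le j\le m-1}$, and since $\boldsymbol{\lambda}(\mathcal{P}_0(X_n))=\boldsymbol{\lambda}(\mathcal{T}_0(X_n))=1$ (one-dimensional spaces) while every projection constant is at least $1$, the $j=0$ term is dominated, for $m\ge 2$, by the $j=1$ term; this leaves exactly the asserted ranges $\max_{1\le k\le m-1}$, the case $m=1$ being trivial.

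I do not expect a genuine obstacle here: the statement is pure bookkeeping sitting on top of Theorem~\ref{main3}. The only points requiring a little care are the identification of the reduced sets $J_k^\flat$, the observation that the relevant monomial-coefficient projections are either the identity (for $\Lambda(\le m,n)$) or controlled by Theorem~\ref{OrOuSe} (for $\Lambda_T(\le m,n)$), and the tidying of the degenerate instances ($m=1$, or $m>n$ in the tetrahedral case). All of the actual mathematical content lives upstream in Theorem~\ref{main3}, hence ultimately in Theorem~\ref{gl-versus-unc} and Theorem~\ref{gl_versus_proj}, which are available for use.
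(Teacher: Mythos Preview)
Your proposal is correct and follows essentially the same route as the paper: apply Theorem~\ref{main3} to $J=\Lambda(\le m,n)$ and $J=\Lambda_T(\le m,n)$, noting that the annihilation projections are the identity in the first case and controlled by Theorem~\ref{OrOuSe} in the second, and that $J_k^\flat=\Lambda(k-1,n)$ (respectively $\Lambda_T(k-1,n)$). Your treatment is in fact more explicit than the paper's one-line hint, including the reduced-set identifications, the re-indexing, and the handling of the degenerate endpoints.
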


\smallskip

\subsection{Unconditional basis  vs Gordon-Lewis constant} \label{GL}

We start the proof  of
Theorem~\ref{main3} with the following result, which relates the Gordon-Lewis constant of $\mathcal{P}_{J}(X_n)$ with the unconditional basis constant of the monomial basis $(z^\alpha)_{\alpha \in J}$.

\begin{theorem} \label{gl-versus-unc}
Let $X_n = (\mathbb{C}^n,\|\cdot\|)$  be a Banach  lattice,
and $J\subset~\mathbb{N}_0^n$ a finite index set of degree $m$\,.
Then
\[
 {\mbox{gl}}\big( \mathcal{P}_{J}(X_n)\big) \,\leq\,
    \boldsymbol{\chi}\big( \mathcal{P}_{J}(X_n) \big) 
    \,\leq\,
        \boldsymbol{\chimon}\big( \mathcal{P}_{J}(X_n) \big)
           \le 2^m  {\mbox{gl}}\big( \mathcal{P}_{J}(X_n) \big) \,.
\]
\end{theorem}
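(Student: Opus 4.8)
The plan is to establish the chain of three inequalities for Theorem~\ref{gl-versus-unc}. The left inequality ${\mbox{gl}}(\mathcal{P}_J(X_n)) \le \boldsymbol{\chi}(\mathcal{P}_J(X_n))$ is immediate from the Gordon-Lewis inequality \eqref{gl-inequality}, and the middle inequality $\boldsymbol{\chi}(\mathcal{P}_J(X_n)) \le \boldsymbol{\chimon}(\mathcal{P}_J(X_n))$ is just the definition of $\boldsymbol{\chi}$ as an infimum over all unconditional bases, applied to the monomial basis. So the entire content is the right-most estimate $\boldsymbol{\chimon}(\mathcal{P}_J(X_n)) \le 2^m\, {\mbox{gl}}(\mathcal{P}_J(X_n))$.

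For that estimate, the strategy is to bound the unconditionality of the monomial basis by a Gordon-Lewis-type quantity via a diagonal (Hadamard-type) operator argument, which is the classical route: for any choice of signs $(\varepsilon_\alpha)_{\alpha \in J}$ the diagonal map $D_\varepsilon \colon \mathcal{P}_J(X_n) \to \mathcal{P}_J(X_n)$, $z^\alpha \mapsto \varepsilon_\alpha z^\alpha$, should have norm controlled by ${\mbox{gl}}$. First I would factor the identity on $\mathcal{P}_J(X_n)$ (or rather a suitable $1$-summing companion operator) through $\ell_2$ using that $\mathcal{P}_J(X_n)$ is a subspace of some $C(K)$ (e.g.\ $K = \overline{B}_{X_n^*}$ or the relevant polydisc/sphere, via \eqref{gammainfty}) and exploit the Gordon-Lewis property to turn $1$-summing into $1$-factorable. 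The Banach-lattice hypothesis on $X_n$ is what makes the monomial basis $1$-unconditional on each homogeneous level after passing to moduli, and — this is where the factor $2^m$ enters — one pays a constant of $2$ per degree when comparing the lattice norm of $\sum |c_\alpha z^\alpha|$-type expressions across the homogeneous decomposition $J = \bigcup_{k=0}^m J_k$, together with the polarization/Cauchy-inequality bookkeeping already used in Proposition~\ref{Cauchy} and Proposition~\ref{comparision unconditional basis}. Concretely, I expect to run the argument homogeneous-block by homogeneous-block, bound $\boldsymbol{\chimon}(\mathcal{P}_{J_k}(X_n))$ by a multiple of ${\mbox{gl}}(\mathcal{P}_{J_k}(X_n))$ with an absolute constant on each block, then reassemble using Proposition~\ref{degreelessm} and absorb the resulting geometric-type factor into $2^m$; the constant $2^m$ (rather than $(m+1)2^m$) should survive because the $m+1$ blocks interleave with the per-block constant in a way that telescopes, exactly as in the proof of Proposition~\ref{comparision unconditional basis}.

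The main obstacle I anticipate is getting the constant exactly $2^m$ and not something like $C^m$ with a worse $C$: this forces a careful choice of the factorization and of how the lattice structure is invoked, since a naive application of \eqref{gl-inequality} blockwise followed by the triangle inequality would only give $(m+1)\cdot(\text{const})^m\,{\mbox{gl}}$. The sharp route is presumably to work directly with the diagonal operators $D_\varepsilon$ and the identity
\[
\boldsymbol{\chimon}\big(\mathcal{P}_J(X_n)\big) = \sup_{|\varepsilon_\alpha|=1} \|D_\varepsilon \colon \mathcal{P}_J(X_n) \to \mathcal{P}_J(X_n)\|,
\]
then estimate $\|D_\varepsilon\|$ by composing the polarization isomorphism $\mathcal{P}_m(X_n) \hookrightarrow \otimes_\varepsilon^m X_n^*$ (losing $\ccc(m,X_n)$, but here the Banach-lattice structure lets one instead lose only $2^m$ by a Khinchin/averaging trick over signs on the tensor factors) with the $\gamma_1$-factorization furnished by the Gordon-Lewis property of the enveloping $C(K)$-subspace. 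I would then check that each intermediate map has the claimed norm and that no hidden dependence on $n$ sneaks in — the lattice hypothesis and the fact that $\{e_k\}$ is a $1$-unconditional basis of $X_n$ (noted in Section~\ref{Banach spaces and (Quasi-)Banach lattices}) are precisely what guarantee $n$-independence. Finally I would record the three inequalities together as the statement of the theorem.
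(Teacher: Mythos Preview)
Your treatment of the first two inequalities is fine. The gap is in your plan for $\boldsymbol{\chimon}\le 2^m\,{\mbox{gl}}$: neither a homogeneous block decomposition via Proposition~\ref{degreelessm}, nor polarization into $\otimes_\varepsilon^m X_n^*$ with a Khinchin/averaging trick, is what produces the constant $2^m$. Both of those routes would give you at best $(m+1)\,C^m$ or $e^m$, exactly the obstacle you flag yourself. The paper works directly on the full index set $J$ --- there is no induction on degree and no tensor factorization.

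The actual source of $2^m=(\sqrt{2}^m)^2$ is Weissler's hypercontractivity on the torus, \eqref{weissler:analyitic}, which translates into the $1$-summing bound
\[
\pi_1\big(\id:\mathcal{P}_J(\ell_\infty^n)\to \ell_2(J)\big)\le \sqrt{2}^m\,.
\]
This is combined with the abstract Pisier--Sch\"utt lemma (Lemma~\ref{tool2}): if for every choice of scalars the diagonal maps $D_\lambda:\mathcal{P}_J(X_n)\to\ell_2(J)$ and $D_\mu:\mathcal{P}_J(X_n)^*\to\ell_2(J)$ satisfy $\pi_1(D_\lambda)\le K_1\|\sum\lambda_\alpha f_\alpha\|$ and $\pi_1(D_\mu)\le K_2\|\sum\mu_\alpha e_\alpha^*\|$, then $\boldsymbol{\chi}((e_\alpha^*))\le K_1K_2\,{\mbox{gl}}$. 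The lattice hypothesis on $X_n$ enters only to show that the diagonal maps $\mathcal{P}_J(X_n)\to\mathcal{P}_J(\ell_\infty^n)$ (and the dual version) sending $e_\alpha^*\mapsto\lambda_\alpha e_\alpha^*$ have operator norm bounded by the norm of the corresponding coefficient functional; this is done by composing with the contractions $D_z:X_n\to X_n$, $e_k\mapsto z_k e_k$, for $z\in B_{\ell_\infty^n}$ and using $e_\alpha^*\circ D_z=z^\alpha e_\alpha^*$. Composing these norm bounds with the $\pi_1$-bound above gives $K_1=K_2=\sqrt{2}^m$, hence $2^m$. So the missing idea in your plan is the hypercontractive $\pi_1$-estimate for the coefficient map on $\mathcal{P}_J(\ell_\infty^n)$, together with Lemma~\ref{tool2} as the mechanism that converts it into an unconditional-basis-constant bound via $\mbox{gl}$.
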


For $J= \Lambda(m,n)$ the result is proved in \cite[Proposition~3.1]{defant2011bohr}. Our extension to arbitrary index sets $J$
of degree at most $ m$ follows the proof given in \cite[Theorem~21.11]{defant2019libro}. For the sake of completeness, 
we concentrate on elaborating those steps which have to be improved.

Observe  that the first  estimate in Theorem~\ref{gl-versus-unc} is immediate  from the Gordon-Lewis inequality as formulated in  \eqref{gl-inequality}, and the second one is trivial.
The proof of the third estimate is  more involved and needs preparation. The first  lemma we need is taken from \cite[Proposition 21.14]{defant2019libro} (its roots have to  be traced back to  the works \cite{pisier1978some} and \cite{schutt1978projection}).

\begin{lemma} \label{tool2}
Let $X_n$  be an $n$-dimensional Banach space with a~basis  $(x_k)_{k=1}^n$, and  denote the coefficient functionals of this  basis by $(x_k^\ast)$. Suppose that there exist constants $K_1,K_2\ge 1 $ such that for every choice of $\lambda, \mu \in \mathbb{C}^n$ the two diagonal 
operators
\begin{align*}
 D_\lambda: & X_n \rightarrow \ell_2^n
\hspace{2cm}
D_\mu: X_n^{*} \rightarrow \ell_2^n
\\&
x_k \mapsto \lambda_k e_k
\hspace{2.3cm}
x^{*}_k \mapsto \mu_k e_k
\end{align*}
satisfy
\[
\pi_1(D_\lambda)  \le K_1 \Big\| \sum_{k=1}^n \lambda_k x_k^{*}  \Big\|_{X_n^{*}}
\,\,\,\,
\text{ and  }
\,\,\,\,
\pi_1(D_\mu)  \le K_2 \Big\| \sum_{k=1}^n \mu_k x_k  \Big\|_{X_n}\,.
\]
Then
\[
 \boldsymbol{\chi}\big((x_k),X_n\big) \le K_1  K_2  \,{\mbox{gl}}(X_n)\,.
\]
\end{lemma}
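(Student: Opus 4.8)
\textbf{Plan of proof for Lemma~\ref{tool2}.}

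The plan is to estimate, for a fixed choice of complex signs $(\varepsilon_k)_{k=1}^n$, the norm of the signed diagonal operator $T_\varepsilon\colon X_n\to X_n$ determined by $x_k\mapsto \varepsilon_k x_k$, and to show $\|T_\varepsilon\|\le K_1K_2\,\mathrm{gl}(X_n)$; by \eqref{unconditionality} this is exactly what is needed. The bridge between the two hypotheses is the Gordon--Lewis property: applied to a suitable $1$-summing operator into $\ell_2^n$, it produces a $1$-factorization, and a factorization through $\ell_2^n$ is precisely the kind of object that, combined with the trace duality \eqref{trace} between $\gamma_\infty$ and $\pi_1$, lets one bound a bilinear trace pairing.

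First I would reduce $\|T_\varepsilon\|$ to a trace estimate. Write $T_\varepsilon=\sum_k \varepsilon_k\, x_k^\ast\otimes x_k$, and note that for any $y\in X_n$, $y^\ast\in X_n^\ast$ one has $y^\ast(T_\varepsilon y)=\sum_k \varepsilon_k\, x_k^\ast(y)\, y^\ast(x_k)=\operatorname{tr}(D_\mu^{\,t} \Delta_\varepsilon D_\lambda)$ once we choose $\lambda_k:=x_k^\ast(y)$ and $\mu_k:=y^\ast(x_k)$, where $\Delta_\varepsilon$ is the diagonal operator on $\ell_2^n$ with entries $\varepsilon_k$ (norm $1$). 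Thus, after normalising $\|y\|_{X_n}\le 1$ and $\|y^\ast\|_{X_n^\ast}\le1$, I want to bound $|\operatorname{tr}(D_\mu^{\,t}\Delta_\varepsilon D_\lambda)|$. Here the hypothesis gives $\pi_1(D_\lambda\colon X_n\to\ell_2^n)\le K_1\|\sum_k\lambda_k x_k^\ast\|_{X_n^\ast}=K_1\|\kappa_{X_n^\ast}(\cdot)\|$-type bound, and in fact $\|\sum_k \lambda_k x_k^\ast\|_{X_n^\ast}=\sup_{\|y\|\le1}|\sum_k x_k^\ast(y) y^\ast\text{-pairing}|\le 1$ by the very choice $\lambda_k=x_k^\ast(y)$ and duality (this is where one must be slightly careful: $\sum_k \lambda_k x_k^\ast$ applied to $z$ is $\sum_k x_k^\ast(y)x_k^\ast(z)$, so one uses $\|\sum_k\lambda_k x_k^\ast\|\le\|y\|\le1$ via the reproducing property of the biorthogonal system only after symmetrising). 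Similarly $\pi_1(D_\mu\colon X_n^\ast\to\ell_2^n)\le K_2\|\sum_k\mu_k x_k\|_{X_n}\le K_2$.

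Next I would invoke the Gordon--Lewis property of $X_n$ (respectively $X_n^\ast$, using $\mathrm{gl}(X_n)=\mathrm{gl}(X_n^\ast)$ up to the same constant): the $1$-summing operator $D_\lambda\colon X_n\to\ell_2\hookrightarrow\ell_2^n$ factors as $D_\lambda=B\circ A$ through $L_1$ with $\gamma_1(D_\lambda)\le \mathrm{gl}(X_n)\,\pi_1(D_\lambda)$, by \eqref{eq: def G-L}. Now the core estimate is the chain
\[
|\operatorname{tr}(D_\mu^{\,t}\Delta_\varepsilon D_\lambda)|
\le \gamma_1(D_\lambda)\,\pi_1(D_\mu^{\,t}\Delta_\varepsilon)
\le \gamma_1(D_\lambda)\,\pi_1(D_\mu)
\le \mathrm{gl}(X_n)\,\pi_1(D_\lambda)\,\pi_1(D_\mu),
\]
where the first inequality is the trace-duality pairing \eqref{trace} between $\gamma_1$ and $\pi_\infty$ combined with the fact that $\pi_1$ dominates $\pi_\infty$ for operators with values in a Hilbert space (more precisely: the trace pairing of a $\gamma_1$-operator against a $\pi_1$-operator is bounded by the product of the norms, which is the standard $\gamma_1$--$\pi_1$ duality used e.g.\ in the proof of \eqref{formulaB}), and the ideal property kills the norm-one $\Delta_\varepsilon$. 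Plugging in the hypotheses $\pi_1(D_\lambda)\le K_1$ and $\pi_1(D_\mu)\le K_2$ (after the normalisations above) gives $|y^\ast(T_\varepsilon y)|\le K_1K_2\,\mathrm{gl}(X_n)$, hence $\|T_\varepsilon\|\le K_1K_2\,\mathrm{gl}(X_n)$, and taking the supremum over sign families yields $\boldsymbol{\chi}((x_k),X_n)\le K_1K_2\,\mathrm{gl}(X_n)$.

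\textbf{Main obstacle.} The delicate point is pinning down exactly which trace-duality statement is being used and with which constant: one must correctly identify the pairing $\langle u,v\rangle=\operatorname{tr}(uv)$ so that a $\gamma_1$-operator pairs against a $\pi_1$-operator (not the reverse), use that $\pi_1$ controls the relevant dual ideal norm on Hilbert-space-valued operators, and verify that composing with $\Delta_\varepsilon$ does not inflate the constant. Equally, one must justify the two normalisations $\|\sum\lambda_k x_k^\ast\|_{X_n^\ast}\le1$ and $\|\sum\mu_k x_k\|_{X_n}\le1$ coming from $\|y\|,\|y^\ast\|\le1$, which requires a small duality/biorthogonality argument rather than being immediate. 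Once these bookkeeping issues are settled the estimate is a direct three-line chain. I would keep this at the level of citing \cite[Section~34.6]{defant1992tensor} or \cite{pisier1986factorization} for the precise trace-duality normalisations rather than re-deriving them.
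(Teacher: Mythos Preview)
The paper does not supply its own proof of this lemma; it quotes it from \cite[Proposition~21.14]{defant2019libro}. Your overall architecture---reduce $\|T_\varepsilon\|$ to a trace, express that trace via $D_\lambda$, $D_\mu$ and a diagonal $\Delta_\varepsilon$ on $\ell_2^n$, then combine the Gordon--Lewis inequality with the $\gamma_\infty$--$\pi_1$ trace duality \eqref{trace}---is the standard one and is correct in outline.

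There is, however, a genuine error in your bookkeeping: you have $\lambda$ and $\mu$ swapped. With your choice $\lambda_k = x_k^\ast(y)$ the functional $\sum_k \lambda_k x_k^\ast$ acts on $z$ as $\sum_k x_k^\ast(y)\,x_k^\ast(z)$, and there is no reason this has norm $\le 1$ in $X_n^\ast$. Concretely, take $X_n=\ell_\infty^n$ with the standard basis and $y=(1,\dots,1)\in B_{\ell_\infty^n}$: then $\lambda_k=1$ for all $k$ and $\big\|\sum_k x_k^\ast\big\|_{(\ell_\infty^n)^\ast}=\big\|\sum_k e_k\big\|_{\ell_1^n}=n$, not $\le 1$. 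Your appeal to ``the reproducing property of the biorthogonal system only after symmetrising'' does not rescue this.

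The fix is immediate: set instead $\lambda_k:=y^\ast(x_k)$ and $\mu_k:=x_k^\ast(y)$. Then $\sum_k \lambda_k x_k^\ast = y^\ast$ (since $\sum_k y^\ast(x_k)\,x_k^\ast(z)=y^\ast(z)$ for all $z$) and $\sum_k \mu_k x_k = y$, so the hypotheses give $\pi_1(D_\lambda)\le K_1\|y^\ast\|\le K_1$ and $\pi_1(D_\mu)\le K_2\|y\|\le K_2$. The trace identity $y^\ast(T_\varepsilon y)=\sum_k\varepsilon_k\lambda_k\mu_k$ is symmetric in this swap and still holds. For the trace bound, the clean route is to use $\gamma_1(D_\lambda)=\gamma_\infty(D_\lambda^\ast)$ and apply \eqref{trace} directly:
\[
\big|\mathrm{tr}(D_\mu^\ast\Delta_\varepsilon D_\lambda)\big|
=\big|\mathrm{tr}\big((D_\lambda^\ast\Delta_{\bar\varepsilon})\,D_\mu\big)\big|
\le \gamma_\infty(D_\lambda^\ast\Delta_{\bar\varepsilon})\,\pi_1(D_\mu)
\le \gamma_1(D_\lambda)\,\pi_1(D_\mu)
\le \mathrm{gl}(X_n)\,\pi_1(D_\lambda)\,\pi_1(D_\mu),
\]
with no need to invoke ``$\pi_1$ dominates $\pi_\infty$'' or a $\gamma_1$--$\pi_1$ duality.
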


The second  crucial tool for the proof of Theorem~\ref{gl-versus-unc} is as follows: 
For any index set   $J\subset~\mathbb{N}_0^n$  of degree  $m$
\begin{equation}\label{toolA}
   \pi_1 \big( \id: \mathcal{P}_{J}(\ell_\infty^n ) \longrightarrow \ell_2(J)) \big)  \leq \sqrt{2}^m\,,
\end{equation}
where
$\,\,\,
\id \big(\sum_{ \alpha \in J} c_\alpha z^\alpha\big)  =
\big( c_\alpha \big)_{\alpha \in J}\,.$

The special case $J =\Lambda(m,n)$  was proved in 
\cite[Lemma~3.6]{defant2011bohr} (see also \cite[Proposition 21.15]{defant2019libro}).
As there, the proof under  our less restrictive assumption on $J$ is an easy consequence of the definition of the 
$1$-summing norm $\pi_1$ and the fact (see e.g \cite[Proposition 8.10]{defant2019libro}) that
for every polynomial $P \in \mathcal{P}_{\leq m}(\mathbb{C}^n)$ one has 
\begin{equation}\label{weissler:analyitic}
\Big( \int_{\mathbb{T}^n} |P(z)|^2 dz \Big)^{\frac{1}{2}} \leq \sqrt{2}^m  \int_{\mathbb{T}^n} |P(z)|\,dz\,.
\end{equation}

\smallskip

\begin{proof}[Proof of Theorem~\ref{gl-versus-unc}]
As explained we may concentrate on the third  estimate.
 The coefficient functionals of the canonical basis $(e_k)$ of $\mathbb{C}^n$ are denoted by $(e_k^\ast)$. Further, we write
 $$
 f_\alpha: \mathcal{P}_{J}(\mathbb{C}^n) \rightarrow \mathbb{C}, \,\,\,\alpha \in J
 $$  
 for the coefficient functionals of the  monomial basis  $(e^{*}_{\alpha})_{\alpha \in J}$  of $\mathcal{P}_{J}(\mathbb{C}^n)$
 defined  by 
 $$
 e^{*}_{\alpha}(z) =   z^\alpha,\,\,\,z\in X_n\,.
 $$ They
 form the orthogonal  basis of the (algebraic) dual $\mathcal{P}_{m}(\mathbb{C}^n)^{*}$, in  the sense that 
 $$\langle e^{*}_{\alpha}, f_\beta\rangle_{\mathcal{P}_{J}(\mathbb{C}^n),\mathcal{P}_{J}(\mathbb{C}^n)^{*}} = \delta_{\alpha,\beta}\,.
 $$
Given  two sequences  $\lambda= (\lambda_{\alpha})_{\alpha \in J}$ and $\mu=(\mu_{\alpha})_{\alpha \in J}$ of scalars,  we consider the two  diagonal operators
\begin{align*}
&
D_\lambda: \mathcal{P}_{J}(X_n)  \longrightarrow \mathcal{P}_{J}(\ell_\infty^n)\,,
\,\, D_\lambda(e^{*}_{\alpha}) =  \lambda_\alpha e^{*}_{\alpha}
\\&
D_\mu: \mathcal{P}_{J}(X_n)^{*}  \longrightarrow \mathcal{P}_{J}( \ell_\infty^n)\,,
\,\, D_\mu(f_\alpha) =  \mu_\alpha e^{*}_{\alpha}\,,
\end{align*}
and show that
\begin{gather}
\|D_\lambda \| \leq \Big\| \sum_{\alpha \in J} \lambda_\alpha f_\alpha \Big\|_{\mathcal{P}_{J}(X_n )^{*}} \label{tonelli} \\
\|D_\mu \| \leq \Big\| \sum_{\alpha \in J} \mu_\alpha e^{*}_{\alpha} \Big\|_{\mathcal{P}_{J}(X_n ) }  \,. \label{hobson}
\end{gather}
If we combine these estimates  with equation \eqref{toolA}, then Lemma~\ref{tool2} gives the conclusion.
Let us prove~\eqref{tonelli}. Note first that for $z \in B_{\ell_\infty^n}$ we have that the norm of the
diagonal operator 
$$
D_z: X_n \rightarrow X_n,\,\,\, e_k \mapsto z_k e_k
$$ is less or equal than one  (the $e_k$'s form a 1-unconditional basis), 
and moreover for every $\alpha \in J$
\[
e^{*}_{\alpha} \circ D_z = z^\alpha e^{*}_{\alpha}\,.
\]
Then for each  $z \in B_{\ell_\infty^n}$ we get
\begin{align*}
\Big| \Big[D_\lambda \big( \sum_{J}  c_\alpha  e^{*}_{\alpha} \big)\Big] (z) \Big| 
& = \Big| \sum_{J} \lambda_\alpha  c_\alpha  e^{*}_{\alpha}(z)\Big| = \Big| \sum_{J} \lambda_\alpha  c_\alpha  z^\alpha\Big| \\
& = \Big|  \langle \sum_{J}  c_\alpha z^\alpha e^{*}_{\alpha},  \sum_{J} \lambda_\alpha f_\alpha\rangle _{\mathcal{P}_{J}(X_n),\mathcal{P}_{J}(X_n)^{*}} \Big|\\
& = \Big|  \langle  \sum_{J}  c_\alpha e^{*}_{\alpha} \circ D_z ,  \sum_{J} \lambda_\alpha f_\alpha\rangle _{\mathcal{P}_{J}(X_n),\mathcal{P}_{J}(X_n)^{*}} \Big|
\\&
\leq
\Big\| \sum_{J} c_\alpha e^{*}_{\alpha} \circ D_z     \Big\|_{\mathcal{P}_{J}(X_n)}  \Big\| \sum_{J} \lambda_\alpha f_\alpha   \Big\|_{\mathcal{P}_{J}(X_n)^{*}}
\\&
\leq
\Big\| \sum_{J} c_\alpha e^{*}_{\alpha}    \Big\|_{\mathcal{P}_{J}(X_n)}  \Big\| \sum_{J} \lambda_\alpha f_\alpha   \Big\|_{\mathcal{P}_{J}(X_n)^{*}}\,.
\end{align*}
Obviously, this leads to the estimate from~\eqref{tonelli}. To see~\eqref{hobson} we repeat the argument:
\begin{align*}
\Big| \Big[D_\mu \big( \sum_{J}  c_\alpha  f_\alpha \big)\Big] (z)   \Big|
&
= \Big| \sum_{J} \mu_\alpha  c_\alpha  e^{*}_{\alpha} (z)\Big|
= \Big| \sum_{J} \mu_\alpha  c_\alpha  z^\alpha \Big|
\\&
=
\Big|  \langle  \sum_{J}  c_\alpha f_\alpha ,  \sum_{J} \mu_\alpha z^\alpha e^{*}_{\alpha} \rangle _{\mathcal{P}_{J}(X_n)^{*},\mathcal{P}_{J}(X_n)} \Big|
\\&
=
\Big|  \langle  \sum_{J}  c_\alpha f_\alpha,  \sum_{J} \mu_\alpha e^{*}_{\alpha} \circ D_z\rangle _{\mathcal{P}_{J}(X_n)^{*},\mathcal{P}_{J}(X_n)} \Big|
\\&
\leq
\Big\| \sum_{J} c_\alpha f_\alpha     \Big\|_{\mathcal{P}_{m}(X_n)^{*}}  \Big\| \sum_{J} \mu_\alpha  e^{*}_{\alpha} \circ D_z   \Big\|_{\mathcal{P}_{J}(X_n)}
\\&
\leq
\Big\| \sum_{J} c_\alpha f_\alpha    \Big\|_{\mathcal{P}_{J}(X_n)^{*}}  \Big\| \sum_{J} \mu_\alpha  e^{*}_{\alpha}   \Big\|_{\mathcal{P}_{J}(X_n)}\,.
\end{align*}
This completes the proof.
\end{proof}

\smallskip

\smallskip

As a very first application of the preceding results we obtain a basis-free version of
Proposition \ref{comparision unconditional basis}. 
It should be noted that the constant that appears in Proposition \ref{comparision unconditional basis} is better than the one we obtain here.

\begin{proposition} \label{ba-free}
Let $X_n = (\mathbb{C}^n, \|\cdot\|) $ be a Banach lattice. For each $1 \leq k \leq m$, we have
\[
\boldsymbol{\chi}\big(\mathcal{P}_{k}(X_n)\big)\,\,\le 2^m   e^{m(2m+1)}\,\,\boldsymbol{\chi}\big(\mathcal{P}_{m}(X_n)\big)\,.
\]
\end{proposition}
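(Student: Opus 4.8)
The statement is a basis-free (i.e. $\boldsymbol{\chi}$ rather than $\boldsymbol{\chimon}$) counterpart of Proposition~\ref{comparision unconditional basis}, so the natural route is to pass from $\boldsymbol{\chi}$ to the Gordon-Lewis constant, where we have the monotonicity-under-subspaces property \eqref{niceprop}, and then return. Concretely, the plan is to run the chain
\[
\boldsymbol{\chi}\big(\mathcal{P}_{k}(X_n)\big)
\;\le\;
\boldsymbol{\chimon}\big(\mathcal{P}_{k}(X_n)\big)
\;\le\;
2^m\,{\mbox{gl}}\big(\mathcal{P}_{k}(X_n)\big)
\;\le\;
2^m\,\boldsymbol{\lambda}\big(\mathcal{P}_{k}(X_n),\mathcal{P}_{m}(X_n)\big)\,{\mbox{gl}}\big(\mathcal{P}_{m}(X_n)\big)
\;\le\;
2^m\,\boldsymbol{\lambda}\big(\mathcal{P}_{k}(X_n),\mathcal{P}_{m}(X_n)\big)\,\boldsymbol{\chi}\big(\mathcal{P}_{m}(X_n)\big).
\]
The first inequality is trivial, the second is the third estimate in Theorem~\ref{gl-versus-unc} (applied to the $m$-homogeneous index set $J=\Lambda(k,n)$, whose degree is $k\le m$, so the factor is $2^k\le 2^m$), the third is exactly the $\gamma_1$-$\pi_1$ ideal-property inequality \eqref{niceprop} since $\mathcal{P}_{k}(X_n)$ sits $1$-complemented — in particular isometrically — inside $\mathcal{P}_{m}(X_n)$ via the map from Lemma~\ref{old}, and the fourth is the Gordon-Lewis inequality \eqref{gl-inequality} together with the trivial bound ${\mbox{gl}}(X)\le\boldsymbol{\chi}(X)$.

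It then remains to control $\boldsymbol{\lambda}\big(\mathcal{P}_{k}(X_n),\mathcal{P}_{m}(X_n)\big)$, and this is precisely what Lemma~\ref{old} supplies: with $\ell=m$ there, one gets
\[
\boldsymbol{\lambda}\big(\mathcal{P}_{k}(X_n),\mathcal{P}_{m}(X_n)\big)\;\le\;e^{(m-k)(m+k+1)}\;\le\;e^{m(2m+1)},
\]
the last step because $(m-k)(m+k+1)\le m\cdot(2m+1)$ when $0\le k\le m$. Substituting this into the chain above yields
\[
\boldsymbol{\chi}\big(\mathcal{P}_{k}(X_n)\big)\;\le\;2^m\,e^{m(2m+1)}\,\boldsymbol{\chi}\big(\mathcal{P}_{m}(X_n)\big),
\]
which is the claimed bound. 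One minor point to check is that Lemma~\ref{old}, as stated, produces a (norm-one, since $\|\rho\|=1$) \emph{isometric} embedding $\rho\colon\mathcal{P}_k(X_n)\to\mathcal{P}_m(X_n)$ with a norm-$\le e^{(m-k)(m+k+1)}$ left inverse — one reads off from its proof that $\rho$ is multiplication by $\gamma^{m-k}$, which is isometric because $X_n$ is a lattice and $\|e\|=\|\gamma\|=1$ — so that \eqref{niceprop} applies verbatim with $X_0=\rho(\mathcal{P}_k(X_n))\equiv\mathcal{P}_k(X_n)$ and $X=\mathcal{P}_m(X_n)$, and the relative projection constant there is exactly $\boldsymbol{\lambda}\big(\mathcal{P}_k(X_n),\mathcal{P}_m(X_n)\big)$.

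There is no serious obstacle here: every ingredient has already been assembled in the preceding sections, and the argument is a bookkeeping of constants. The only thing that requires a moment's care is making sure the invocation of Theorem~\ref{gl-versus-unc} is legitimate — it is stated for an arbitrary finite index set $J\subset\mathbb{N}_0^n$ of degree $m$, and $\Lambda(k,n)$ has degree $k$, so one should either apply it with the running degree equal to $k$ (getting the better factor $2^k$) or simply note $2^k\le 2^m$; I will do the latter to match the stated constant. As the remark preceding the proposition already indicates, this route is lossier than the direct computation in Proposition~\ref{comparision unconditional basis}, but it has the advantage of being genuinely basis-free.
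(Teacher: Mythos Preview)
Your approach is correct and is exactly the one the paper uses: pass from $\boldsymbol{\chi}$ to ${\mbox{gl}}$ via Theorem~\ref{gl-versus-unc}, transfer ${\mbox{gl}}$ from $\mathcal{P}_k(X_n)$ to $\mathcal{P}_m(X_n)$ using the factorization of Lemma~\ref{old} together with the ideal-property estimate \eqref{niceprop}, and return via the Gordon--Lewis inequality \eqref{gl-inequality}.

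One small correction: your claim that $\rho$ is an \emph{isometric} embedding is false in general (take $X_n=\ell_2^2$, $e=e_1$, $\gamma=e_1^*$, $Q(x)=x_2$: then $\|Q\|=1$ but $\|\gamma Q\|=\tfrac12$), and ``$1$-complemented'' is also a slip. What you actually have --- and what suffices --- is $\|\rho\|\le 1$ together with a left inverse $\pi$ of norm $\le e^{(m-k)(m+k+1)}$. From the factorization $\mathrm{id}_{\mathcal{P}_k(X_n)}=\pi\circ\rho$ and the ideal properties of $\gamma_1$ and $\pi_1$ one gets directly ${\mbox{gl}}(\mathcal{P}_k(X_n))\le\|\rho\|\,\|\pi\|\,{\mbox{gl}}(\mathcal{P}_m(X_n))$, which is precisely what \eqref{niceprop} encodes; no isometry is needed.
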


\begin{proof}
Applying Theorem~\ref{gl-versus-unc}, a combination of Lemma~\ref{old} with  \eqref{niceprop}, and  finally the
Gordon-Lewis inequality from \eqref{gl-inequality}, we see that for each $1 \leq k \leq m$
 \begin{align*}
\boldsymbol{\chi}\big( \mathcal{P}_{k}(X_n) \big) & \le 2^k  {\mbox{gl}}\big( \mathcal{P}_{ k}(X_n) \big) \\
& \le 2^m  e^{m(2m+1)} {\mbox{gl}}\big( \mathcal{P}_{ m}(X_n) \big)
\le 2^m  e^{m(2m+1)}
\boldsymbol{\chi}\big( \mathcal{P}_{m}(X_n)\big)\,.
\end{align*}
This concludes the proof.
  \end{proof}
  
As an immediate consequence of
Theorem~\ref{gl-versus-unc}
and Proposition~\ref{ba-free} we obtain also a  basis-free variant  of Corollary~\ref{degreelessmB}. 

\smallskip

\begin{corollary}
For each $m \in \mathbb{N}$ \,and any Banach lattice $X_n = (\mathbb{C}^n, \|\cdot\|)$, we have
\[
\boldsymbol{\chi}\big(\mathcal{P}_{m}(X_n)\big) \,\,\leq \,\,
2^m \boldsymbol{\chi}\big(\mathcal{P}_{\leq m}(X_n)\big) \,\,\leq \,\,(m+1)\,2^{2m}\,e^{m(2m+1)} \,\, \boldsymbol{\chi}\big(\mathcal{P}_{ m}(X_n)\big)\,.
\]
\end{corollary}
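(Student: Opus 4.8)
The plan is to read off both inequalities from the ``bridge'' $\mathrm{gl}(\cdot)\le\boldsymbol{\chi}(\cdot)\le\boldsymbol{\chimon}(\cdot)\le 2^m\,\mathrm{gl}(\cdot)$ supplied by Theorem~\ref{gl-versus-unc} for the spaces $\mathcal{P}_J(X_n)$ with $J$ of degree $m$ (we only need the two index sets $J=\Lambda(m,n)$ and $J=\Lambda(\leq m,n)$), combined with the Gordon--Lewis inequality \eqref{gl-inequality}, the monotonicity \eqref{niceprop} of $\mathrm{gl}$ under $1$-complementation, and Cauchy's inequality (Proposition~\ref{Cauchy}). First I would prove the left-hand estimate $\boldsymbol{\chi}\big(\mathcal{P}_m(X_n)\big)\le 2^m\boldsymbol{\chi}\big(\mathcal{P}_{\leq m}(X_n)\big)$. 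Applying Theorem~\ref{gl-versus-unc} with $J=\Lambda(m,n)$ gives $\boldsymbol{\chi}\big(\mathcal{P}_m(X_n)\big)\le\boldsymbol{\chimon}\big(\mathcal{P}_m(X_n)\big)\le 2^m\,\mathrm{gl}\big(\mathcal{P}_m(X_n)\big)$. By Proposition~\ref{Cauchy} the Cauchy projection $\mathbf{Q}_{\Lambda(\leq m,n),\Lambda(m,n)}$ has norm $1$, so $\mathcal{P}_m(X_n)$ sits isometrically and $1$-complementedly in $\mathcal{P}_{\leq m}(X_n)$, whence $\boldsymbol{\lambda}\big(\mathcal{P}_m(X_n),\mathcal{P}_{\leq m}(X_n)\big)=1$ and \eqref{niceprop} yields $\mathrm{gl}\big(\mathcal{P}_m(X_n)\big)\le\mathrm{gl}\big(\mathcal{P}_{\leq m}(X_n)\big)$; finally \eqref{gl-inequality} gives $\mathrm{gl}\big(\mathcal{P}_{\leq m}(X_n)\big)\le\boldsymbol{\chi}\big(\mathcal{P}_{\leq m}(X_n)\big)$. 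Chaining these four steps proves the first inequality.

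For the right-hand estimate I would traverse the bridge the other way. Trivially $\boldsymbol{\chi}\big(\mathcal{P}_{\leq m}(X_n)\big)\le\boldsymbol{\chimon}\big(\mathcal{P}_{\leq m}(X_n)\big)$, and then the already established Corollary~\ref{degreelessmB} gives $\boldsymbol{\chimon}\big(\mathcal{P}_{\leq m}(X_n)\big)\le(m+1)2^m\,\boldsymbol{\chimon}\big(\mathcal{P}_m(X_n)\big)$, while Theorem~\ref{gl-versus-unc} (with $J=\Lambda(m,n)$) together with \eqref{gl-inequality} gives $\boldsymbol{\chimon}\big(\mathcal{P}_m(X_n)\big)\le 2^m\,\mathrm{gl}\big(\mathcal{P}_m(X_n)\big)\le 2^m\,\boldsymbol{\chi}\big(\mathcal{P}_m(X_n)\big)$. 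Multiplying these out, $\boldsymbol{\chi}\big(\mathcal{P}_{\leq m}(X_n)\big)\le(m+1)2^{2m}\boldsymbol{\chi}\big(\mathcal{P}_m(X_n)\big)$, and multiplying by $2^m$ and using the crude bound $2^m\le e^{m(2m+1)}$ (valid for every $m\ge 1$, since $2m+1\ge 3$ forces $2<e\le e^{2m+1}$) we arrive at $2^m\boldsymbol{\chi}\big(\mathcal{P}_{\leq m}(X_n)\big)\le(m+1)2^{2m}e^{m(2m+1)}\boldsymbol{\chi}\big(\mathcal{P}_m(X_n)\big)$, which is the stated right-hand inequality. Alternatively one can replace the use of Corollary~\ref{degreelessmB} here by Proposition~\ref{ba-free}: gluing $1$-unconditionally chosen unconditional bases of the homogeneous blocks $\mathcal{P}_k(X_n)$ (each $1$-complemented in $\mathcal{P}_{\leq m}(X_n)$ by Proposition~\ref{Cauchy}), exactly as in the proof of Proposition~\ref{degreelessm}, gives $\boldsymbol{\chi}\big(\mathcal{P}_{\leq m}(X_n)\big)\le(m+1)\max_{0\le k\le m}\boldsymbol{\chi}\big(\mathcal{P}_k(X_n)\big)$, and Proposition~\ref{ba-free} bounds each term by $2^m e^{m(2m+1)}\boldsymbol{\chi}\big(\mathcal{P}_m(X_n)\big)$.

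I do not expect a genuine obstacle: the whole argument is a formal chaining of estimates already available in this chapter, and the only thing requiring care is bookkeeping of the $2^m$-factors so that the final constant stays below the (deliberately generous) $(m+1)2^{2m}e^{m(2m+1)}$. The one point that deserves an explicit line is the isometric $1$-complementation of $\mathcal{P}_m(X_n)$ inside $\mathcal{P}_{\leq m}(X_n)$, which is precisely Proposition~\ref{Cauchy}; everything else is immediate from Theorem~\ref{gl-versus-unc}, Proposition~\ref{ba-free} (or Corollary~\ref{degreelessmB}) and the Gordon--Lewis inequality.
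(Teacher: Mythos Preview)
Your proof is correct and essentially matches the paper's approach: the paper states the corollary as ``an immediate consequence of Theorem~\ref{gl-versus-unc} and Proposition~\ref{ba-free},'' which is precisely your alternative route (gluing unconditional bases of the homogeneous blocks as in Proposition~\ref{degreelessm}, then invoking Proposition~\ref{ba-free}). Your primary route via Corollary~\ref{degreelessmB} is also valid and in fact yields the sharper bound $2^m\boldsymbol{\chi}(\mathcal{P}_{\leq m}(X_n))\le(m+1)2^{3m}\boldsymbol{\chi}(\mathcal{P}_m(X_n))$, which you then needlessly weaken to match the stated constant; the factor $e^{m(2m+1)}$ in the paper's formulation is there only because Proposition~\ref{ba-free} carries it.
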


\smallskip

\subsection{Gordon-Lewis vs projection constant} \label{Gordon-Lewis vs projection constant}

The following result estimates the Gordon-Lewis constant of $\mathcal{P}_{J}(X_n)$, where $J$ is a finite  index set of degree  
$m$, in terms of  the projection constants of the spaces $\mathcal{P}_{J_k^\flat}(X_n),\, 1 \leq  k \leq m$.

\begin{theorem} \label{gl_versus_proj}
Let $X_n = (\mathbb{C}^n,\|\cdot\|)$  be a Banach  lattice and $J \subset \Lambda(m,n)$. Then 
\[
  {\mbox{gl}}\big( \mathcal{P}_{J}(X_n)\big)
 \,\le \,e  \|\mathbf{Q}_{\Lambda(m,n),J}\|\,\,
 \boldsymbol{\lambda}\big( \mathcal{P}_{J^\flat}(X_n)\big)\,.
\]
Moreover, if $J\subset~\mathbb{N}_0^n$ is an index set of degree  $m$, then 
\[
   {\mbox{gl}}\big( \mathcal{P}_{J}(X_n)\big)
 \le e (m+1) \,\max_{1 \leq  k \leq m}\|\mathbf{Q}_{\Lambda(k,n),J_k}\|\,\,
\max_{1 \leq  k \leq m} \boldsymbol{\lambda}\big( \mathcal{P}_{J_k^\flat}(X_n)\big)\,.
\]
\end{theorem}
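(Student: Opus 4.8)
The plan is to prove the two inequalities of Theorem~\ref{gl_versus_proj} by first establishing the homogeneous case ($J \subset \Lambda(m,n)$) and then bootstrapping to the general finite index set of degree $m$ via the homogeneous decomposition. The key object is the reduced set $J^\flat \subset \Lambda(m-1,n)$: for each $\beta \in \Lambda(1,n)$ (i.e.\ each standard basis index $e_k$) and each $\alpha \in J^\flat$, the shifted index $\alpha + e_k$ lies in $\Lambda(m,n)$, and the crucial structural fact is that multiplying a polynomial in $\mathcal{P}_{J^\flat}(X_n)$ by the coordinate linear form $z_k$ lands in $\mathcal{P}_{\Lambda(m,n)}(X_n)$, and after applying $\mathbf{Q}_{\Lambda(m,n),J}$ lands in $\mathcal{P}_J(X_n)$. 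This is exactly the mechanism that will let us factor the identity of $\mathcal{P}_J(X_n)$ through spaces whose projection constants we control.

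First I would recall the needed preliminaries: the Gordon-Lewis constant obeys the ideal-type behaviour ${\mbox{gl}}(X_0) \le \boldsymbol{\lambda}(X_0,X)\,{\mbox{gl}}(X)$ for isometric subspaces $X_0 \subset X$ (from \eqref{niceprop}), and if a space $Y$ is $1$-complemented in $Z$ then ${\mbox{gl}}(Y) \le {\mbox{gl}}(Z)$; also ${\mbox{gl}}$ is stable under $\ell_1$- and $\ell_\infty$-direct sums up to a factor equal to the number of summands. The heart of the homogeneous case is a factorization diagram. For $J \subset \Lambda(m,n)$, consider the map that sends a polynomial $P = \sum_{\alpha \in J} c_\alpha z^\alpha$ to the $n$-tuple of its ``partial primitives'': formally, using the polarization/symmetrization machinery (Lemma~\ref{old} and the polarization constant $\ccc(m,X_n) \le e^m$) one writes $P$ via its associated symmetric $m$-linear form and peels off one variable at a time. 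More concretely, I would use the standard identity expressing any $Q \in \mathcal{P}_{J^\flat}(X_n)$ paired against coordinate functionals: the operator $\rho_k(Q)(z) = z_k Q(z)$ from $\mathcal{P}_{J^\flat}(X_n)$ into $\mathcal{P}_{\Lambda(m,n)}(X_n)$ has norm $\le 1$ (since $X_n$ is a Banach lattice and $|z_k| \le \|z\|$ fails in general — so here one must be careful and instead use that the monomial multiplication is norm-nonincreasing on the relevant lattice, invoking the lattice structure as in the proof of Theorem~\ref{OrOuSe}), and summing over $k$ and applying $\mathbf{Q}_{\Lambda(m,n),J}$ recovers (a constant multiple of) the identity on $\mathcal{P}_J(X_n)$. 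Combining the norm bounds $\|\mathbf{Q}_{\Lambda(m,n),J}\|$, the polarization factor $e$ (not $e^m$ — only one variable is peeled), and the $\ell_1$-sum-of-$n$-copies bookkeeping (which is absorbed because the $\ell_1$ to $\ell_\infty$ identity between the direct sums has a bounded norm after renormalizing by the Cauchy estimates), one obtains ${\mbox{gl}}(\mathcal{P}_J(X_n)) \le e\,\|\mathbf{Q}_{\Lambda(m,n),J}\|\,\boldsymbol{\lambda}(\mathcal{P}_{J^\flat}(X_n))$. This uses $\gamma_\infty$–$\pi_1$ trace duality \eqref{trace} together with $\varepsilon$-tensor stability of $\pi_1$ exactly as in the proof of Theorem~\ref{tensor}, just one degree at a time.

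Second, for the general case $J$ of degree $m$, I would invoke the homogeneous decomposition $J = \bigcup_{k=0}^m J_k$ with $J_k = J \cap \Lambda(k,n)$, and Proposition~\ref{Cauchy} which gives that each $\mathcal{P}_{J_k}(X_n)$ is $1$-complemented in $\mathcal{P}_J(X_n)$ and that $\mathcal{P}_J(X_n)$ sits inside $\bigoplus_{k} \mathcal{P}_{J_k}(X_n)$ (in both $\ell_1$ and $\ell_\infty$ norms, up to a factor $m+1$, as in Theorem~\ref{degree-homo}). Since ${\mbox{gl}}$ of an $(m{+}1)$-fold direct sum is controlled by $(m{+}1)\max_k {\mbox{gl}}$, applying the homogeneous estimate to each $J_k$ yields ${\mbox{gl}}(\mathcal{P}_J(X_n)) \le e(m+1)\max_{1\le k\le m}\|\mathbf{Q}_{\Lambda(k,n),J_k}\|\max_{1\le k\le m}\boldsymbol{\lambda}(\mathcal{P}_{J_k^\flat}(X_n))$, with the $k=0$ term contributing nothing since $\mathcal{P}_{J_0}(X_n)$ is one-dimensional.

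The main obstacle I anticipate is the careful tracking of constants in the ``one-variable peeling'' step — ensuring the polarization cost is only a single factor of $\ccc(1,X_n)=1$ (or at worst $e$ after accounting for the symmetrization over which variable is removed) rather than $e^m$, and verifying that the $\ell_1/\ell_\infty$ direct-sum identity between the $n$ copies of $\mathcal{P}_{J^\flat}(X_n)$ does not introduce an $n$-dependent constant. The resolution is that the Cauchy/lattice estimates make the relevant direct-sum comparison map a contraction after the correct normalization, precisely because multiplication by a single coordinate form is norm-nonincreasing on the monomial basis of a Banach lattice — this is the same lattice trick underlying Theorem~\ref{OrOuSe}, and it is exactly why the hypothesis ``$X_n$ is a Banach lattice'' is essential here. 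Writing this out faithfully, mirroring \cite[Theorem~21.11]{defant2019libro} but degree-by-degree, is the technical core.
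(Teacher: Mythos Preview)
Your proposal has the right instinct---peel off one variable via polarization, incurring only a factor $e$---but the factorization you sketch has a genuine gap that your proposed fix does not close.

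You propose to factor the identity on $\mathcal{P}_J(X_n)$ through an $n$-fold direct sum $\bigoplus_{k=1}^n \mathcal{P}_{J^\flat}(X_n)$, sending $P$ to its ``partial primitives'' and then rebuilding via $\sum_k z_k Q_k$. You correctly worry that this introduces an $n$-dependent constant, and you claim that lattice estimates (in the spirit of Theorem~\ref{OrOuSe}) make the relevant $\ell_1/\ell_\infty$ comparison a contraction. They do not: Theorem~\ref{OrOuSe} is about annihilating non-tetrahedral coefficients and has nothing to do with controlling $\|\sum_k z_k Q_k\|$ in terms of $\max_k\|Q_k\|$ without an $n$. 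Likewise, the $\gamma_\infty$--$\pi_1$ trace duality and $\varepsilon$-tensor stability you invoke from Theorem~\ref{tensor} compute projection constants, not Gordon--Lewis constants; that machinery does not transfer here.

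What the paper actually does is factor through the \emph{operator space} $\mathcal{L}\big(X_n,\mathcal{P}_{J^\flat}(X_n)\big)$ rather than a coordinate-wise direct sum: the map $U_m(P)(x)(u)=\check{P}(u,\dots,u,x)$ has $\|U_m\|\le e$ by Harris polarization, and the evaluation map $V_m(T)(y)=(Ty)(y)$ has $\|V_m\|\le 1$. The decisive ingredient you are missing is the lemma (equation~\eqref{supo}, from \cite[Lemma~22.2]{defant2019libro}) that for any finite-dimensional Banach \emph{lattice} $X$ and any finite-dimensional $Y$,
\[
{\mbox{gl}}\big(\mathcal{L}(X,Y)\big)\le \boldsymbol{\lambda}(Y)\,.
\]
This is precisely where the lattice hypothesis enters, and it is what eliminates the $n$ you were worried about. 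With this in hand the homogeneous case is immediate from the ideal property of ${\mbox{gl}}$. For the degree-$m$ case your outline via homogeneous decomposition is correct in spirit, but the paper again routes the $(m{+}1)$-fold direct sum through a single operator space $\mathcal{L}\big(\ell_1^m(X_n),\bigoplus_\infty\mathcal{P}_{J_k^\flat}(X_n)\big)$ and reapplies~\eqref{supo}, rather than relying on a crude ``${\mbox{gl}}$ of a direct sum $\le (m{+}1)\max$'' bound.
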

\smallskip

Again, the case $J= \Lambda(m,n)$ was  proved earlier in \cite[Proposition~ 4.2]{defant2011bohr}, and an alternative 
detailed presentation of the proof of this homogeneous case was given in \cite[Proposition~22.1]{defant2019libro}. As above, we here only focus 
on the modification of those arguments, which are needed to cover the general situation.

We start with an  elementary  observation  taken from \cite[Lemma 22.2]{defant2019libro}, which
 `up to polarization' covers the case $J=\Lambda(2,n)$ of Theorem~\ref{gl_versus_proj} (take $Y= X^\ast$):
  For every finite dimensional Banach  lattice  $X$, and every finite dimensional Banach space $Y$ one has
  \begin{equation}\label{supo}
    {\mbox{gl}}\big( \mathcal{L}(X,Y)\big) \leq \boldsymbol{\lambda}(Y) \,.
  \end{equation}

\begin{proof}[Proof of Theorem~\ref{gl_versus_proj}]
To see the first statement, we  consider the following commutative diagram:
\begin{equation}\label{diagram}
  \xymatrix
{
  \mathcal{P}_{J}(X_n) \ar[d]^{\text{$U_m$}}
 & \mathcal{P}_{m}(X_n) \ar[l]_{\mathbf{Q}_{\Lambda(m,n),J}}
 \\
\mathcal{L}\big(X_n,\mathcal{P}_{J^\flat}(X_n)\big)
\ar[r]^{I_m}
&
\mathcal{L}\big(X_n,\mathcal{P}_{m-1}(X_n)\big) \ar[u]^{V_m}\,,
}
\end{equation}
where $I_m$ is the canonical inclusion map and
\begin{align*}
&
\big(U_m(P)x\big)(u) := \check{P}( u, \ldots, u
,x)
\,\,\, \text{ for  } \,\,\, P \in  \mathcal{P}_{J}(X_n) \,\,\, \text{ and  } \,\,\, x,u \in X_n\,,
\\[1ex]&
V_m(T)(y) := (Ty)y \,\,\, \text{ for  } \,\,\,  T \in \mathcal{L}\big(X_n,\mathcal{P}_{m-1}(X_n)\big)
\,\,\, \text{ and  } \,\,\, y \in X_n.
\end{align*}
We show that $U_m$, as an operator from $\mathcal{P}_{J}(X_n)$ into $\mathcal{L}\big(X_n, \mathcal{P}_{J^\flat}(X_n)\big)$,
is well-defined. Indeed,  define  
\[
a_\bi(\check{P})= \frac{c_\bj(P)}{|[\bj]|}\,\,\,\, \text{ for $\bj \in \mathcal{J}(m,n)$ and $\bi \in [\bj]$} \,.
\]
Then, given $P \in  \mathcal{P}_{J}(X)$ and $x,u \in X_n$, 
\begin{align*}
\check{P}( u, \ldots, u,x)
&
=\sum_{\bi \in \mathcal{M}(m,n)} a_\bi(\check{P})
u_{i_1}\ldots u_{i_{m-1}} x_{i_m}
\\&
=\sum_{\bi \in \mathcal{M}(m-1,n)} \sum_{\ell=1}^n a_{(\bi,\ell)}(\check{P})
u_\bi x_{\ell}
=
\sum_{\bj \in \mathcal{J}(m-1,n)} \sum_{\bi \in  [\bj]} \,\, \sum_{\ell=1}^n a_{(\bi,\ell)}(\check{P})
u_\bi x_{\ell}
\\&
=
\sum_{\bj \in \mathcal{J}(m-1,n)}  
\sum_{\ell=1}^n
\,\,
\sum_{\bi \in  [\bj]}
 a_{(\bi,\ell)}(\check{P})
 u_\bi x_{\ell}
   =
\sum_{\bj \in \mathcal{J}(m-1,n)}
\sum_{\ell=1}^n
\,\,
\sum_{\bi \in  [\bj]}
 \frac{c_{(\bi,\ell)_\ast}(P)}{|[(\bi,\ell)_\ast]|}
  u_\bi x_{\ell}
   \\&
  =
\sum_{\bj \in \mathcal{J}(m-1,n)}
\sum_{\ell=1}^n
\bigg[
 \frac{c_{(\bj,\ell)_\ast}(P)}{|[(\bj,\ell)_\ast]|}  u_\bj \bigg] |[\bj]|
 x_{\ell}
     =
\sum_{\bj \in \mathcal{J}(m-1,n)}
\bigg[
\sum_{\ell=1}^n
  \frac{c_{(\bj,\ell)_\ast}(P)}{|[(\bj,\ell)_\ast]|} |[\bj]|
  x_{\ell} \bigg]  u_\bj
  \\&
  =
\sum_{\bj \in \mathcal{J}(m-1,n)}
\bigg[\sum_{\substack{1 \leq \ell \leq n \\ (\bj,\ell)_\ast \in J}}
 \frac{c_{(\bj,\ell)_\ast}(P)}{|[(\bj,\ell)_\ast]|} |[\bj]|
  x_{\ell} \bigg]  u_\bj
    =
\sum_{\bj \in J^\flat}
\bigg[\sum_{\substack{1 \leq \ell \leq n \\ (\bj,\ell)_\ast \in J}}
 \frac{c_{(\bj,\ell)_\ast}(P)}{|[(\bj\grave{},\ell)_\ast]|} |[\bj]|
  x_{\ell} \bigg]  u_\bj\,,
  \end{align*}
  which shows that $U_m(P)x \in \mathcal{P}_{ J^\flat}(X_n)$ for every $x \in X_n$.
By the Harris polarization formula (see, e.g., \cite[Proposition 2.34]{defant2019libro}) we have $\|U_m\| \leq e$, and moreover trivially  $\|V_m\| \leq~1$. Hence by the ideal properties of the involved ideal norms we see that
\[
{\mbox{gl}}\big( \mathcal{P}_{J}(X_n)\big) \leq e \, \|\mathbf{Q}_{\Lambda(m,n),J}\| \,{\mbox{gl}}\big(\mathcal{L}\big(X_n,\mathcal{P}_{J^\flat}(X_n)\big)\big) \leq  e \, \|\mathbf{Q}_{\Lambda(m,n),J}\| \, \boldsymbol{\lambda}\big(\mathcal{P}_{J^\flat}(X_n)\big)\,,
\]
where for the last estimate we use  \eqref{supo}. This proves  the first claim.

For the second claim we have   to handle an  index set $J$ of degree $m$, and consider the following commutative diagram
\begin{equation} \label{greatpic}
    \xymatrix
{
  \mathcal{P}_{J}(X_n)
  \ar[d]^{\text{$\mathbf{O}\oplus\bigoplus\mathbf{Q}_{J,J_k}$}}
  \ar[r]^{\text{$ \id_{ \mathcal{P}_{J}(X_n)}$}}
 & \mathcal{P}_{J}(X_n)
 \\
  \mathbb{C} \oplus_\infty \bigoplus_\infty \mathcal{P}_{J_k}(X_n)
  \ar[d]^{\text{$ \id_\mathbb{C}\oplus
  \bigoplus U_k$}}
  \ar[r]^{\text{$\id_\mathbb{C}\oplus\bigoplus \id_{ \mathcal{P}_{J_k}(X_n)}$}}
 &
 \mathbb{C} \oplus_1\bigoplus_1 \mathcal{P}_{J_k}(X_n) \ar[u]^{\sum }  & \mathbb{C} \oplus_1\bigoplus_1 \mathcal{P}_{k}(X_n) \ar[l]_{\text{$\id_\mathbb{C}\oplus\bigoplus\mathbf{Q}_{\Lambda(k,n),J_k}$}}
 \\
 \mathbb{C} \oplus_\infty\bigoplus_\infty  \mathcal{L}\big(X_n,\mathcal{P}_{J_k^\flat}(X_n)\big)
\ar[r]^{\id_\mathbb{C}\oplus\bigoplus I_k}
&
 \mathbb{C} \oplus_\infty\bigoplus_\infty \mathcal{L}\big(X_n,\mathcal{P}_{k-1}(X_n)\big) \ar[r]^{\Phi}  & \mathbb{C} \oplus_1\bigoplus_1 \mathcal{L}\big(X_n,\mathcal{P}_{k-1}(X_n)\big)
 \ar[u]^{\text{$\id_\mathbb{C}\oplus\bigoplus V_k$}}\,.
}
\end{equation}
Let us explain our notation in this diagram:
Here  $U_k$, $V_k$ and $I_k$ for $1 \leq k \leq m$ are the operators from \eqref{diagram}. If $P = a_0 +\sum_{k=1}^m P_k$
is the Taylor decomposition of $P \in \mathcal{P}_{J}(X_n)$, then
$\mathbf{O}(P) = a_0$, and hence  $\big(\mathbf{O}\oplus\bigoplus\mathbf{Q}_{J,J_k}\big)(P) = \big(a_0, (P_k)_{k=1}^m\big)$.
Additionally,
$\sum$ is the mapping which assigns to every $\big(a_0, (P_k)_{k=1}^m\big)$ the polynomial
$P = a_0 +\sum_{k=1}^m P_k$,  and $\Phi$ stands for the identity map -- whereas the notation for the rest of the maps is self-explaining.  Obviously, this
gives that
\[
   {\mbox{gl}}\big( \mathcal{P}_{J}(X_n)\big)
 \le e (m+1) \,\max_{1 \leq  k \leq m}\|\mathbf{Q}_{\Lambda(k,n),J_k}\|\,\,
\max_{1 \leq  k \leq m} {\mbox{gl}}\big( \bigoplus_\infty  \mathcal{L}\big(X_n,\mathcal{P}_{J_k^\flat}(X_n)\big)\big)\,,
\]
and it remains to prove the following claim
\begin{equation}\label{claimA}
 {\mbox{gl}}\big( \bigoplus_\infty  \mathcal{L}\big(X_n,\mathcal{P}_{J_k^\flat}(X_n)\big)\big)
 \leq
 \max_{1 \leq  k \leq m} \boldsymbol{\lambda}\big( \mathcal{P}_{J_k^\flat}(X_n)\big)\,.
\end{equation}
Indeed, using standard properties of $\varepsilon$- and $\pi$-tensor products, we have
\begin{align*}
  \bigoplus_\infty  \mathcal{L}\big(X_n,\mathcal{P}_{J_k^\flat}(X_n)\big)
  &
  \hookrightarrow
  \bigoplus_\infty  \mathcal{L}\big(X_n,\bigoplus_\infty \mathcal{P}_{J_k^\flat}(X_n)\big)
  \\&
  = \ell_\infty^m \otimes_\varepsilon \big[ X_n^\ast \otimes_\varepsilon \bigoplus_\infty \mathcal{P}_{J_k^\flat}(X_n)\big]
  \\&
   = \big[\ell_\infty^m \otimes_\varepsilon  X_n^\ast\big] \otimes_\varepsilon \bigoplus_\infty \mathcal{P}_{J_k^\flat}(X_n)
    \\&
    =
    \big(\ell_1^m \otimes_\pi X_n\big)^\ast   \otimes_\varepsilon \bigoplus_\infty \mathcal{P}_{J_k^\flat}(X_n)
        =
    \mathcal{L}\big(  \ell_1^m(X_n), \bigoplus_\infty \mathcal{P}_{J_k^\flat}(X_n)\big)\,,
\end{align*}
where the first space in fact is $1$-complemented in the second one, and all other identifications are isometries. Then we deduce from \eqref{supo} that
\[
{\mbox{gl}}\big( \bigoplus_\infty  \mathcal{L}\big(X_n,\mathcal{P}_{J_k^\flat}(X_n)\big)\big)
 \leq
 {\mbox{gl}}\big(\mathcal{L}\big(  \ell_1^m(X_n), \bigoplus_\infty \mathcal{P}_{J_k^\flat}(X_n)\big)\big)
 \leq
 \boldsymbol{\lambda} \big(\bigoplus_\infty \mathcal{P}_{J_k^\flat}(X_n) \big)\,.
\]
Since obviously
\[
\boldsymbol{\lambda} \big(\bigoplus_\infty \mathcal{P}_{J_k^\flat}(X_n) \big)
=
\gamma_\infty \big(\id_{\bigoplus_\infty \mathcal{P}_{J_k^\flat}(X_n)} \big)
\leq \max_{1 \leq  k \leq m} \gamma_\infty \big(\id_{\mathcal{P}_{J_k^\flat}(X_n)} \big)
= \max_{1 \leq  k \leq m} \boldsymbol{\lambda} \big(\mathcal{P}_{J_k^\flat}(X_n) \big)\,,
\]
the proof is complete.
\end{proof}

\section{Probabilistic estimates}
\label{Probabilistic estimates}
For later use, we  isolate a couple of  probabilistic lower bounds for the unconditional basis constant of the monomial basis
$(z^\alpha)_{\alpha\in J}$ in spaces $\mathcal{P}_{J}(X_n)$ of multivariate polynomials supported on $J$,  under certain restrictions on the underlying Banach sequence
lattice $X_n$ and index set $J \subset \mathbb{N}_0^{n}$.

We closely follow methods from
\cite{bayart2012maximum}, 
\cite{boas2000majorant},
\cite{defant2004maximum}, 
\cite{defant2020subgaussian},
 and \cite{mastylo2017kahane},
 which were mainly invented to  cover the homogeneous  case $J = \Lambda(m,n)$ .

All  results presented are  consequences  of the following two theorems due to Bayart \cite{bayart2012maximum}. The proofs of both results use
quite different methods -- the 'covering method' and the 'entropy method'. Together with all preliminary tools needed,  these proofs are elaborated in \cite[Corollary~17.5, Corollary~17.22]{defant2019libro}.

\begin{theorem} \label{Anquetil alfa}
Given  $1 \leq r \leq 2$, there is a constant $C >0$ such that for  each $m \geq 2$, for every Banach space $X_n = (\mathbb{C}^n,\|\cdot\|)$,
and for every choice of scalars $(c_{\alpha})_{\alpha \in \Lambda(m,n)}$ there exists a choice of signs $\varepsilon_{\alpha} = \pm1, \, \alpha \in \Lambda(m,n)$ such that
\[
\sup_{z \in B_{X_n}} \Big\vert \sum_{\alpha \in \Lambda(m,n)} \varepsilon_{\alpha} c_{\alpha} z^{\alpha} \Big\vert
\leq C
(n \log m)^{\frac{1}{r'}}  \sup_{\alpha}  \Big(\vert c_{\alpha} \vert \Big( \frac{\alpha !}{m!}\Big)^{\frac{1}{r}} \Big)  \sup_{z \in B_{X_n}} \Big( \sum_{k=1}^{n} \vert z_{k} \vert^{r} \Big)^{\frac{m}{r}} \,.
\]
\end{theorem}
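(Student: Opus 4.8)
The plan is to run the classical probabilistic (Kahane--Salem--Zygmund type) argument, split into the two complementary methods according to the size of the dimension $n$ relative to the degree $m$: the ``covering method'' handles the regime where $n$ is small compared to (a power of) $m$, and the ``entropy method'' handles the regime where $n$ is large. More precisely, first I would fix $1 \leq r \leq 2$ and, following \cite{bayart2012maximum} and the presentation in \cite[Chapter~17]{defant2019libro}, reduce the statement to producing, for any prescribed coefficient vector $(c_\alpha)_{\alpha \in \Lambda(m,n)}$, a sign vector $(\varepsilon_\alpha)$ for which the random polynomial $P_\varepsilon(z) = \sum_{\alpha \in \Lambda(m,n)} \varepsilon_\alpha c_\alpha z^\alpha$ has small sup-norm on $B_{X_n}$. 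By homogeneity and the isometric identification $\sup_{z \in B_{X_n}}|P(z)| = \sup_{z \in S_{X_n}}|P(z)|$, one may normalise $\sup_\alpha \big(|c_\alpha| (\alpha!/m!)^{1/r}\big) = 1$, so that the target bound reads $\sup_{z \in B_{X_n}} |P_\varepsilon(z)| \leq C (n \log m)^{1/r'} \sup_{z \in B_{X_n}}\big(\sum_k |z_k|^r\big)^{m/r}$.

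\textbf{The two regimes.} In the covering regime one discretises $B_{X_n}$ by a suitable $\delta$-net; since polynomials of degree $m$ satisfy a Bernstein-type inequality, controlling $|P_\varepsilon|$ on a net of controlled cardinality controls it everywhere, at the cost of a factor polynomial in $m$ and $n$. One then estimates, for a fixed net point $z$, the probability that $|P_\varepsilon(z)|$ exceeds a threshold $t$; this is a sum of independent symmetric terms $\varepsilon_\alpha c_\alpha z^\alpha$, so a Bernstein/Hoeffding subgaussian estimate gives $\mathbb{P}(|P_\varepsilon(z)| > t) \leq 2\exp\big(-t^2 / (2\sigma^2)\big)$ with variance proxy $\sigma^2 = \sum_\alpha |c_\alpha z^\alpha|^2$. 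The crucial computation is to bound $\sigma^2$: using the normalisation and the multinomial identity $\sum_{\alpha \in \Lambda(m,n)} \frac{m!}{\alpha!}|z^\alpha|^2 = \big(\sum_k |z_k|^2\big)^m$ together with H\"older (to pass from the $\ell_2$-sum of $|z_k|^2$ to the $\ell_r$-sum $\sum_k |z_k|^r$, which introduces the exponent $m/r$), one gets $\sigma^2 \lesssim \big(\sum_k |z_k|^r\big)^{2m/r}$, uniformly over $z \in B_{X_n}$. A union bound over the net then forces the threshold $t$ to absorb $\log(\text{card of net}) \lesssim n \log m$ under the square root, yielding the factor $(n \log m)^{1/2}$ for $r = 2$ and, after the H\"older step is tracked carefully, $(n\log m)^{1/r'}$ in general. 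In the entropy regime (large $n$) one instead uses Dudley's entropy integral / the metric entropy of the set $\{z \mapsto z^\alpha : \alpha\}$ in an appropriate metric on $B_{X_n}$, bounding $\mathbb{E}\sup_{z}|P_\varepsilon(z)|$ directly by a chaining argument; here the entropy numbers of the relevant symmetric convex body produce exactly the $(n\log m)^{1/r'}$ scaling, and then one selects a single good sign vector achieving at most twice the expectation.

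\textbf{Main obstacle.} The main difficulty is not any single probabilistic estimate but the bookkeeping of the exponent $1/r'$ and the factor $\big(\sum_k |z_k|^r\big)^{m/r}$ simultaneously: one must run the H\"older interpolation between the $\ell_2$-variance computation and the $\ell_r$-quantity $\sum_k |z_k|^r$ so that the two regimes produce the \emph{same} bound, and verify that the net cardinality / entropy estimates really scale like $n \log m$ and not, say, $n^2$ or $n \log(mn)$ --- the $\log m$ (rather than $\log(mn)$) is the delicate point and is exactly where Bayart's refinement over the naive Kahane--Salem--Zygmund bound enters. I would therefore lean directly on \cite[Corollary~17.5 and Corollary~17.22]{defant2019libro}, which isolate precisely these two estimates with the correct constants, and spend the bulk of the argument checking that their hypotheses apply verbatim to an arbitrary Banach space $X_n = (\mathbb{C}^n, \|\cdot\|)$ (no lattice structure is needed here, since we only use the multinomial identity and the norm $\|\cdot\|$ through $B_{X_n}$) and that assembling the small-$n$ and large-$n$ cases yields the stated inequality with a single constant $C = C(r)$ independent of $m$ and $n$.
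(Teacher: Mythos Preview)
The paper does not prove this theorem itself: it quotes it as a result of Bayart \cite{bayart2012maximum}, with a detailed proof in \cite[Corollary~17.5]{defant2019libro}, and explicitly says that \emph{this} theorem is the ``covering method'' result while the companion Theorem~\ref{Hinault alfa} is the ``entropy method'' result. So there is no in-paper proof to compare against, only the attribution of method.

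Your proposal misreads that attribution. You propose splitting into two regimes (small $n$ via covering, large $n$ via entropy) and gluing them to obtain the single bound $(n\log m)^{1/r'}$. That is not how the two methods relate here: the covering method \emph{alone} gives the bound $(n\log m)^{1/r'}$ for all $m,n$, and the entropy method gives the \emph{different} bound $m(\log n)^{1+1/r'}$ of Theorem~\ref{Hinault alfa}. These are two distinct inequalities, each useful in its own regime when \emph{applied} later (see the subsections ``Consequences of the covering method'' and ``Consequences of the entropy method''), not two halves of one proof. In particular, your claim that ``the entropy numbers \ldots\ produce exactly the $(n\log m)^{1/r'}$ scaling'' is incorrect --- the entropy/chaining argument yields logarithms in $n$, not in $m$.

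Your covering-method sketch is the right shape for this theorem: discretise $B_{X_n}$ by a net, apply a subgaussian tail bound to $\sum_\alpha \varepsilon_\alpha c_\alpha z^\alpha$ at each net point, and take a union bound. But you should drop the entropy half entirely and instead check carefully that the net cardinality really contributes only $\log m$ (not $\log(mn)$) to the exponent --- this is exactly the refinement in \cite[Corollary~17.5]{defant2019libro} you cite, and it is the only nontrivial point.
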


\smallskip

\begin{theorem} \label{Hinault alfa}
Given  $1 \leq r \leq 2$, there is a constant $C >0$ such that for each  $m \geq 2$, for every Banach space $X_n = (\mathbb{C}^n,\|\cdot\|)$,
and for every choice of scalars $(c_{\alpha})_{\alpha \in \Lambda(m,n)}$ there exists a choice of signs $\varepsilon_{\alpha} = \pm1, \, \alpha \in \Lambda(m,n)$ such that
\begin{align*}
\sup_{z \in B_{X_n}} \Big\vert \sum_{\alpha \in \Lambda(m,n)} \varepsilon_{\alpha} c_{\alpha} z^{\alpha} \Big\vert
\leq C m (\log n)^{1+\frac{1}{r'}}   \sup_{\alpha}  \bigg( \vert c_{\alpha} \vert \Big( \frac{\alpha !}{m!} \Big)^{\frac{1}{r}} \bigg)
  \sup_{z \in B_{X_n}} \Big( \sum_{k=1}^{n} \vert z_{k} \vert^{r} \Big)^{\frac{m-1}{r}}  \sup_{z \in B_{X_n}} \sum_{k=1}^{n} \vert z_{k} \vert \,.
\end{align*}
\end{theorem}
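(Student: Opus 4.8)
The plan is to deduce Theorem~\ref{Hinault alfa} from a stronger probabilistic statement and then run the classical Dudley entropy bound for Gaussian processes, following Bayart's \emph{entropy method} \cite{bayart2012maximum} (see also \cite[Corollary~17.5, Corollary~17.22]{defant2019libro}). Put $A := \sup_{\alpha}\big(|c_\alpha|(\alpha!/m!)^{1/r}\big)$, $B := \sup_{z\in B_{X_n}}\big(\sum_{k}|z_k|^r\big)^{1/r}$ and $D := \sup_{z\in B_{X_n}}\sum_{k}|z_k|$. It suffices to prove, for a Rademacher vector $(\varepsilon_\alpha)$, the expectation estimate
\[
\mathbb{E}_\varepsilon\,\sup_{z\in B_{X_n}}\Big|\sum_{\alpha\in\Lambda(m,n)}\varepsilon_\alpha c_\alpha z^\alpha\Big|\;\le\; C\,m(\log n)^{1+\frac1{r'}}\,A\,B^{m-1}\,D\,,
\]
because then at least one concrete sign vector performs as well as the average. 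Rescaling the coefficients by $1/A$ and replacing $\|\cdot\|_{X_n}$ by $B\,\|\cdot\|_{X_n}$ multiplies both sides by $A^{-1}B^{-m}$, so we may assume $A=B=1$; then $\|z\|_{\ell_r^n}\le\|z\|_{X_n}$, $|c_\alpha|\le(m!/\alpha!)^{1/r}$, and the goal reduces to $\mathbb{E}_\varepsilon\|P_\varepsilon\|_{B_{X_n}}\le C\,m(\log n)^{1+1/r'}\,D$ for the random $m$-homogeneous polynomial $P_\varepsilon := \sum_{\alpha}\varepsilon_\alpha c_\alpha z^\alpha$.

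First I would peel off one variable, which is what forces the product form $B^{m-1}D$ of the bound. Using Euler's identity $m\,P=\sum_{k}z_k\,\partial_kP$ (or, equivalently, the ``smallest index in the support'' decomposition $z^\alpha = z_{k}\,z^{\alpha-e_{k}}$), write $P_\varepsilon=\tfrac1m\sum_{k=1}^n z_k\,\partial_k P_\varepsilon$ with each $\partial_k P_\varepsilon$ an $(m-1)$-homogeneous polynomial; bounding $\sum_k|z_k|\le D$ on $B_{X_n}$ reduces everything to controlling $\mathbb{E}_\varepsilon\max_{1\le k\le n}\sup_{z\in B_{X_n}}|\partial_k P_\varepsilon(z)|$. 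Here the $(m-1)$-homogeneity ultimately produces the factor $B^{m-1}$ ($=1$ after normalization), the differentiation step costs a power of $m$ through the blow-up of the coefficient constant of $\partial_k P_\varepsilon$, and the passage to the maximum over the $n$ directions will cost one power of $\log n$. Then symmetrize and apply the Gaussian comparison principle to replace $(\varepsilon_\alpha)$ by i.i.d.\ standard Gaussians $(g_\alpha)$, so it remains to bound $\mathbb{E}_g\sup_{z\in B_{X_n}}|G(z)|$ for Gaussian processes $G(z)=\sum_\alpha g_\alpha d_\alpha z^\alpha$ (the $P_\varepsilon$ itself and each $\partial_kP_\varepsilon$), with the $L^2$-pseudometric $\rho(z,w)^2=\sum_\alpha|d_\alpha|^2\,|z^\alpha-w^\alpha|^2$.

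The core is Dudley's inequality $\mathbb{E}_g\sup_{z}|G(z)|\lesssim\int_0^{\operatorname{diam}}\sqrt{\log N(B_{X_n},\rho,\delta)}\,d\delta$, evaluated by splitting the entropy integral at a carefully chosen scale and estimating $N(B_{X_n},\rho,\delta)$ in two regimes. At small scales a Bernstein--Markov (gradient) inequality for homogeneous polynomials on balls of $\ell_r^n$ converts $\rho$ into the $\|\cdot\|_{X_n}$-metric (this is where another power of $m$ is spent) and the volumetric estimate $N(B_{X_n},\|\cdot\|,\delta)\le(1+2/\delta)^{2n}$ yields $\log N\lesssim n\log(1/\delta)$; at large scales one discards the geometry of $B_{X_n}$ and instead covers the image of $B_{X_n}$ under $z\mapsto(d_\alpha z^\alpha)_\alpha$ inside $\ell_2(\Lambda(m,n))$, using the coefficient bound $|d_\alpha|\le(m!/\alpha!)^{1/r}$ together with a sparse, $\ell_r$-tailored covering whose log-cardinality is only polylogarithmic in $n$. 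Balancing the two contributions in the integral, and tracking $D$ through the one-variable reduction, produces exactly the exponent $1+\tfrac1{r'}$ on $\log n$ and the linear factor $m$.

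The main obstacle is precisely this large-scale entropy estimate: to obtain $(\log n)^{1+1/r'}$ rather than a genuine power of $n$ one must use the space $X_n$ only through the two scalar quantities $B$ and $D$, which is why the Euler peeling and the $\ell_r$-adapted Bernstein--Markov inequality are needed and why the crossover scale must be tuned to the interplay of $m$, $n$ and $r$. The remaining ingredients --- symmetrization, the Gaussian comparison principle, estimating an expected maximum of $n$ Gaussian suprema, and passing from the expectation back to an explicit sign vector --- are routine. The homogeneous prototype of the whole argument is carried out in \cite{bayart2012maximum} and \cite[Corollary~17.5, Corollary~17.22]{defant2019libro}, and the present statement is a direct reformulation of it.
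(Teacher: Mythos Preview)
The paper does not give its own proof of this theorem: it states the result as one of ``two theorems due to Bayart \cite{bayart2012maximum}'' and refers the reader to \cite[Corollary~17.5, Corollary~17.22]{defant2019libro} for the details, explicitly labeling this one as coming from the ``entropy method''. So there is nothing in the paper to compare your argument against beyond the citation.

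Your outline is the correct skeleton of that entropy-method proof: the reduction to an expectation bound, the one-variable peeling via Euler's identity (which is what produces the factor $D=\sup_{z}\sum_k|z_k|$ and leaves an $(m-1)$-homogeneous object responsible for $B^{m-1}$), symmetrization and Gaussian domination, Dudley's entropy integral with a small-scale volumetric bound and a large-scale $\ell_r$-adapted covering, and the final union over the $n$ partial derivatives costing a $\log n$. This matches the structure of Bayart's argument as presented in the cited references, so your proposal is consistent with what the paper invokes.
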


\smallskip

\subsection{Consequences of the covering method}
We present two lower bounds for $\boldsymbol{\chimon}\big(\mathcal{P}_J(X_n)\big)$, where $X_n$ is the $n$th section of a Banach sequence lattice $X$ and $J \subset \mathbb{N}_0^{(\mathbb{N})}$
contains all $m$-homogeneous, tetrahedral multi indices  of length $m\leq n$. The proofs of both estimates are based on
Theorem~\ref{Anquetil alfa} and  the following lemma.

\begin{lemma} \label{innichenA}
For each  $1 \leq r \leq 2$, there is a constant $C >0$ such that for every Banach sequence lattice $X_n = (\mathbb{C}^n, \|\cdot \|)$
and for each $m \le n$ one has
\[
\frac{1}{\|\id\colon X_n\to \ell_r^n\|^m}\frac{|\Lambda_T(m,n)|}{\varphi_{X_n}(n)^m n^{\frac{1}{r'}} m ^{-\frac{m}{r}}}
\,\,\leq\,\, C e^{\frac{m}{r}} (\log m)^{1/r'}
 \boldsymbol{\chimon}\big(\mathcal{P}_{\Lambda_T(m,n)}(X_n)\big)
 \,.
\]
\end{lemma}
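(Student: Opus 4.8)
The plan is to play off a lower bound for the sup-norm of the ``all ones'' polynomial against the randomized upper bound furnished by Theorem~\ref{Anquetil alfa}. Throughout we may assume $m\ge 2$, so that Theorem~\ref{Anquetil alfa} is applicable.

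\emph{Step 1 (lower bound for the unperturbed polynomial).} First I would test the monomial basis with the scalars $c_\alpha=1$, $\alpha\in\Lambda_T(m,n)$. Since $\|(1,\dots,1)\|_{X_n}=\varphi_{X_n}(n)$, the point $z_0=\varphi_{X_n}(n)^{-1}(1,\dots,1)$ lies in $\overline B_{X_n}$, and evaluating $P(z)=\sum_{\alpha\in\Lambda_T(m,n)}z^\alpha$ there — recalling that $\sup_{z\in B_{X_n}}|P(z)|=\sup_{z\in\overline B_{X_n}}|P(z)|$ from the Polynomials subsection of Chapter~\ref{chapter: general preliminaries} — gives
\[
\Big\|\sum_{\alpha\in\Lambda_T(m,n)}z^\alpha\Big\|_{\mathcal P_{\Lambda_T(m,n)}(X_n)}\ \ge\ P(z_0)\ =\ \frac{|\Lambda_T(m,n)|}{\varphi_{X_n}(n)^m}\,.
\]

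\emph{Step 2 (randomized upper bound).} Next I would apply Theorem~\ref{Anquetil alfa} to the family $(c_\alpha)_{\alpha\in\Lambda(m,n)}$ defined by $c_\alpha=1$ for $\alpha\in\Lambda_T(m,n)$ and $c_\alpha=0$ otherwise. Using that $\alpha!=1$ for tetrahedral $\alpha$, so that $\sup_\alpha\big(|c_\alpha|(\alpha!/m!)^{1/r}\big)=(m!)^{-1/r}\le e^{m/r}m^{-m/r}$ (because $m!\ge(m/e)^m$), and that $\sup_{z\in B_{X_n}}\big(\sum_k|z_k|^r\big)^{m/r}=\|\id\colon X_n\to\ell_r^n\|^m$, this yields signs $\varepsilon_\alpha=\pm1$ with
\[
\sup_{z\in B_{X_n}}\Big|\sum_{\alpha\in\Lambda_T(m,n)}\varepsilon_\alpha z^\alpha\Big|\ \le\ C\,n^{1/r'}(\log m)^{1/r'}\,e^{m/r}m^{-m/r}\,\|\id\colon X_n\to\ell_r^n\|^m\,.
\]

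\emph{Step 3 (conclusion).} Finally I would insert both bounds into the definition of the monomial unconditional basis constant. Applying the defining inequality of $\boldsymbol{\chimon}(\mathcal P_{\Lambda_T(m,n)}(X_n))$ with coefficients $\varepsilon_\alpha$ and unimodular multipliers $\varepsilon_\alpha$ (so that $\varepsilon_\alpha\cdot\varepsilon_\alpha=1$) gives $\big\|\sum_\alpha z^\alpha\big\|\le\boldsymbol{\chimon}(\mathcal P_{\Lambda_T(m,n)}(X_n))\,\big\|\sum_\alpha\varepsilon_\alpha z^\alpha\big\|$, hence by Steps~1 and~2
\[
\frac{|\Lambda_T(m,n)|}{\varphi_{X_n}(n)^m}\ \le\ \boldsymbol{\chimon}\big(\mathcal P_{\Lambda_T(m,n)}(X_n)\big)\,C\,n^{1/r'}(\log m)^{1/r'}\,e^{m/r}m^{-m/r}\,\|\id\colon X_n\to\ell_r^n\|^m\,.
\]
Dividing both sides by $n^{1/r'}m^{-m/r}\|\id\colon X_n\to\ell_r^n\|^m$ yields precisely the asserted estimate. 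The steps are routine once Theorem~\ref{Anquetil alfa} is granted; the only points deserving attention are the legitimacy of evaluating at a boundary point of $B_{X_n}$ (covered by the subharmonicity/maximum-modulus remarks in Chapter~\ref{chapter: general preliminaries}, or obtained by a trivial limiting argument with $z_0$ replaced by $(1-\delta)z_0$), the correct accounting of the exponents $1/r$, $1/r'$ and $m/r$, and the harmless reduction from $\Lambda(m,n)$ to $\Lambda_T(m,n)$ via vanishing coefficients. The genuinely substantial ingredient — the existence of signs that shrink the polynomial — is exactly the content of Theorem~\ref{Anquetil alfa} and is assumed.
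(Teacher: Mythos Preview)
Your proof is correct and follows essentially the same route as the paper: evaluate the all-ones polynomial at $z_0=\varphi_{X_n}(n)^{-1}(1,\dots,1)$ for the lower bound, apply Theorem~\ref{Anquetil alfa} (with coefficients supported on $\Lambda_T(m,n)$ and $\alpha!=1$ there) for the random-sign upper bound, and link the two via the defining inequality of $\boldsymbol{\chimon}$. Your handling of the factorial estimate $m!\ge (m/e)^m$ and the boundary-point evaluation matches the paper's argument.
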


\begin{proof}
Clearly, taking $z = (\varphi_{X_n}(n)^{-1}, \ldots,\varphi_{X_n}(n)^{-1}) \in B_{X_n}$, we get
\[
\frac{|\Lambda_T(m,n)|}{\varphi_{X_n}(n)^m} \leq \sup_{z \in B_{X_n}} \Big| \sum_{\alpha \in \Lambda_T(m,n)} z^\alpha\Big|\,.
\]
Then, it follows from Theorem~\ref{Anquetil alfa} that we  find signs $\varepsilon_\alpha = \pm 1, \alpha \in \Lambda_T(m,n)$
for which for all  $m \leq n$
\begin{align} \label{poly} 
\sup_{z \in B_{X_n}} \Big| \sum_{\alpha \in \Lambda_T(m,n)} \varepsilon_\alpha  z^\alpha\Big|
& \leq C \,\,  (n \log m)^{1/r'} \sup_{\alpha \in \Lambda_T(m,n)} \Big(\frac{\alpha!}{m!}\Big)^{1/r}
\sup_{z \in B_X} \Big(\sum_{k=1}^n |z_k|^r\Big)^{m/r} \\
& \leq C \,\, (\log m)^{1/r'} n ^{1/r'}m!^{-1/r} \sup_{z \in B_X} \Big(\sum_{k=1}^n |z_k|^r\Big)^{m/r}   \nonumber\\
& =  C \,\, (\log m)^{1/r'} n ^{1/r'}m!^{-1/r} \|\id \colon X_n\to \ell_r^n\|^m\,,  \nonumber
\end{align}
where $C >0$ is a constant only depending on $r$\,. Using  that $m^m \leq e^m m!$, we finally arrive at

\begin{align*}
\frac{|\Lambda_T(m,n)|}{\varphi_{X_n}(n)^m} & \leq
\sup_{z \in B_{X_n}} \Big| \sum_{\alpha \in  \Lambda_T(m,n)} \varepsilon_\alpha \varepsilon_\alpha z^\alpha \Big| \\
& \leq  \boldsymbol{\chimon} \big(\mathcal{P}_{ \Lambda_T(m,n)}\big)
\sup_{z \in B_{X_n}} \Big| \sum_{\alpha \in \Lambda_T(m,n)} \varepsilon_\alpha z^\alpha\Big| \\
& \leq \boldsymbol{\chimon} \big(\mathcal{P}_{\Lambda_T(m,n)}\big)
\,C\,(\log m)^{1/r'}n^{1/r'} (m!)^{-1/r}\|\id : X_n\to \ell_r^n\|^m \\
& \leq \boldsymbol{\chimon} \big(\mathcal{P}_{ \Lambda_T(m,n)}\big)
\,C\, e^{\frac{m}{r}}(\log m)^{1/r'} n^{1/r'} (m!)^{-m/r} \|\id \colon X_n \to \ell_r^n\|^{m} \,.
\end{align*}
\end{proof}

\smallskip

In the following we are going to need two consequences.

\smallskip

\begin{proposition} \label{innichen1}
For  $1 \leq r \leq 2$, let $X$ be a   Banach sequence lattice  such that
$\varphi_X(n)\prec n^{1/r}$ (up to a uniform constant
depending only on $X$).
Then for every index set $J \subset \mathbb{N}_0^{(\mathbb{N})}$ for which  $\Lambda_T(m,n) \subset J  $
for some $m\le n$, we have
\[
\frac{1}{\|\mathrm{id}:X_n\to \ell_r^n\|^m} \Big( \frac{n}{m} \Big)^{\frac{m-1}{r'}}
\prec_{C^m}
\boldsymbol{\chimon}\big(\mathcal{P}_J(X_n)
\big)\,,
\]
where $C \ge 1$ is a constant depending only on $X$.
\end{proposition}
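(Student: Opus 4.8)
The idea is to reduce everything to Lemma~\ref{innichenA} combined with the monotonicity result Proposition~\ref{Cauchy} (applied to unconditional basis constants via Proposition~\ref{degreelessm}, or more directly via the annihilating-coefficient projection). First I would observe that since $\Lambda_T(m,n) \subset J$, the space $\mathcal{P}_{\Lambda_T(m,n)}(X_n)$ sits inside $\mathcal{P}_J(X_n)$ as the range of the norm-one coordinate projection $\mathbf{Q}_{J, \Lambda_T(m,n)}$, and hence $\boldsymbol{\chimon}\big(\mathcal{P}_{\Lambda_T(m,n)}(X_n)\big) \le \boldsymbol{\chimon}\big(\mathcal{P}_J(X_n)\big)$ (the unconditional basis constant of a monomial sub-basis is dominated by that of the full monomial basis, which is immediate from the definition since we just set the extra coefficients to zero). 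So it suffices to prove the claimed lower bound for $\boldsymbol{\chimon}\big(\mathcal{P}_{\Lambda_T(m,n)}(X_n)\big)$ in place of $\boldsymbol{\chimon}\big(\mathcal{P}_J(X_n)\big)$.

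Next I would feed the hypothesis $\varphi_X(n) \prec n^{1/r}$ into Lemma~\ref{innichenA}. That lemma gives
\[
\frac{1}{\|\id\colon X_n\to \ell_r^n\|^m}\,\frac{|\Lambda_T(m,n)|}{\varphi_{X_n}(n)^m \, n^{1/r'}\, m^{-m/r}}
\;\le\; C\, e^{m/r} (\log m)^{1/r'}\, \boldsymbol{\chimon}\big(\mathcal{P}_{\Lambda_T(m,n)}(X_n)\big).
\]
Rearranging, and using $\varphi_{X_n}(n)^m \prec_{C^m} n^{m/r}$ (here the implicit constant is of the form $C^m$ with $C$ depending only on $X$), the left-hand side becomes, up to a factor $C^m$,
\[
\frac{1}{\|\id\colon X_n\to \ell_r^n\|^m}\;\frac{|\Lambda_T(m,n)|\, m^{m/r}}{n^{m/r}\, n^{1/r'}}.
\]
Now I would insert the combinatorial lower bound $|\Lambda_T(m,n)| = \binom{n}{m} \ge (n/m)^m$ from~\eqref{cardi}-type estimates (more precisely, $\binom{n}{m}\ge (n/m)^m$ as already used in the proof of Corollary~\ref{immediate}). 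This yields
\[
\frac{|\Lambda_T(m,n)|\, m^{m/r}}{n^{m/r}\, n^{1/r'}}
\;\ge\;\frac{(n/m)^m\, m^{m/r}}{n^{m/r}\, n^{1/r'}}
\;=\;\frac{n^{m}\, m^{m/r - m}}{n^{m/r}\, n^{1/r'}}
\;=\; n^{m - m/r - 1/r'}\, m^{-m/r'}
\;=\;\Big(\frac{n}{m}\Big)^{m/r'}\, n^{-1/r'}
\;=\;\Big(\frac{n}{m}\Big)^{(m-1)/r'}\Big(\frac{n}{m}\Big)^{1/r'}m^{-1/r'}n^{-1/r'},
\]
and after simplification the dominant term is $\big(n/m\big)^{(m-1)/r'}$ up to a bounded factor; one checks $\big(n/m\big)^{(m-1)/r'}\cdot (n/m)^{1/r'} n^{-1/r'} m^{-1/r'} = (n/m)^{(m-1)/r'} \cdot (n^{1/r'} n^{-1/r'}) \cdot m^{-2/r'}$ — I will do this bookkeeping carefully, absorbing the remaining $m$-powers and the subexponential factor $e^{m/r}(\log m)^{1/r'}(\log m)^{\dots}$ into a single $C^m$. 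Since $e^{m/r}$, $(\log m)^{1/r'}$ and any fixed power of $m$ are all $\prec_{C^m} 1$, all these nuisance factors collapse into the hypercontractive constant $C^m$, giving exactly
\[
\frac{1}{\|\id\colon X_n\to \ell_r^n\|^m}\Big(\frac{n}{m}\Big)^{(m-1)/r'}\;\prec_{C^m}\;\boldsymbol{\chimon}\big(\mathcal{P}_{\Lambda_T(m,n)}(X_n)\big)\;\le\;\boldsymbol{\chimon}\big(\mathcal{P}_J(X_n)\big).
\]

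\textbf{Main obstacle.} The only genuinely delicate point is the arithmetic of powers of $m$ and $n$: one must verify that after dividing by $\varphi_{X_n}(n)^m$ (estimated by $n^{m/r}$) and multiplying by $m^{m/r}$, the surviving exponent of $n/m$ is precisely $(m-1)/r'$ and not, say, $m/r'$ or $(m-1)/r' - 1/r'$; getting this exactly right is what makes the statement sharp, and a careless step would lose or gain a polynomial factor — harmless for $\prec_{C^m}$ but worth pinning down. A secondary subtlety is checking that the hypothesis $\varphi_X(n)\prec n^{1/r}$ is exactly what is needed (equality in order would give the clean $n^{m/r}$; the one-sided bound suffices because we only need an \emph{upper} bound on $\varphi_{X_n}(n)^m$ in the denominator after rearranging, i.e.\ a lower bound on the left side of Lemma~\ref{innichenA}), and that this matches the direction in which the constant $C$ is allowed to depend on $X$. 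Everything else — the reduction to the tetrahedral homogeneous subspace, the combinatorial bound on $\binom{n}{m}$, and the absorption of $e^{m/r}(\log m)^{1/r'}$ into $C^m$ — is routine.
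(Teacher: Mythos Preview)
Your approach is exactly the paper's: reduce to $J=\Lambda_T(m,n)$, apply Lemma~\ref{innichenA}, plug in $\varphi_X(n)\prec n^{1/r}$ and $\binom{n}{m}\ge (n/m)^m$, and absorb the leftover factors into $C^m$. The arithmetic you flag as the ``main obstacle'' is done more cleanly in one line in the paper as
\[
\frac{|\Lambda_T(m,n)|}{\varphi_{X_n}(n)^m\, n^{1/r'}\, m^{-m/r}}
\;\ge\;\frac{n^m}{C^m\, n^{m/r}\, n^{1/r'}\, m^{m/r'}}
\;=\;\Big(\frac{n}{m}\Big)^{(m-1)/r'}\frac{1}{C^m m^{1/r'}},
\]
which avoids the algebraic slip in your chain of equalities (your last rewriting introduces an extra $m^{-1/r'}$); but since any fixed power of $m$ is $\prec_{C^m}1$, your conclusion is unaffected.
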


\begin{proof}
We only have to consider the case $J = \Lambda_T(m,n)$. Note first that
\begin{equation} \label{binom}
\Big(\frac{n}{m}\Big)^m \leq \binom{n}{m} = |\Lambda_T(m,n)|\,.
\end{equation}
Combining this  with the assumption  $\varphi_X(n) \leq C n^{1/r}$, we get
\[
\frac{|\Lambda_T(m,n)|}{\varphi_{X_n}(n)^m n^{\frac{1}{r'}} m ^{-\frac{m}{r}}} \ge
\frac{n^m}{C^m n^{\frac{m}{r}} n^{\frac{1}{r'}}  m^{\frac{m}{r'}}} 
=  \Big( \frac{n}{m} \Big)^{\frac{m-1}{r'}}  \frac{1}{C^m m^\frac{1}{r'}},
\]
and hence  the claim is an immediate consequence of Lemma~\ref{innichenA}.
\end{proof}

\smallskip

\begin{proposition} \label{toblach}
Let $X$ be a  Banach sequence lattice  such that \,$\varphi_{X_n}(n)\,\varphi_{X_{n}'}(n) \prec n$ and 
\begin{equation} \label{M}
\|\id\colon X_n \to \ell_2^{n}\| \prec \frac{1}{\sqrt{n}}\,\|\id\colon X_n \to \ell_1^{n}\|\,,
\end{equation}
up to    constants  depending only on $X$.
Then for every index set $J \subset \mathbb{N}_0^{(\mathbb{N})}$ for which   $\Lambda_T(m,n) \subset~J  $ for some $m\le n$, we have
\[
\Big( \frac{n}{m} \Big)^{\frac{m-1}{2}}
\prec_{C^m}
\boldsymbol{\chimon}\big(\mathcal{P}_{J}(X_n)\big)\,,
\]
where $C \ge 1$ is a constant depending only on $X$.
\end{proposition}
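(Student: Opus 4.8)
The statement we must prove is a twin of Proposition~\ref{innichen1}: under the hypotheses $\varphi_{X_n}(n)\,\varphi_{X_n'}(n)\prec n$ together with the $\ell_2$-vs-$\ell_1$ comparison \eqref{M}, the monomial basis constant of $\mathcal P_J(X_n)$ for any $J$ containing $\Lambda_T(m,n)$ is bounded below by $(n/m)^{(m-1)/2}$ up to $C^m$. As in the proof of Proposition~\ref{innichen1}, it suffices to treat the case $J=\Lambda_T(m,n)$, because annihilating the coefficients outside $\Lambda_T(m,n)$ is a norm-one projection on the monomial basis side (the monomials are a $1$-unconditional basis of $\mathcal P_J(X_n)$ and restricting to a subset of indices does not increase the unconditionality constant). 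So the task reduces to a lower bound for $\boldsymbol{\chimon}\big(\mathcal P_{\Lambda_T(m,n)}(X_n)\big)$.

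\textbf{Key steps.} First I would invoke Lemma~\ref{innichenA}, but now with $r=2$: it gives
\[
\frac{1}{\|\id\colon X_n\to\ell_2^n\|^m}\cdot\frac{|\Lambda_T(m,n)|}{\varphi_{X_n}(n)^m\, n^{1/2}\, m^{-m/2}}
\;\prec_{C^m}\;\boldsymbol{\chimon}\big(\mathcal P_{\Lambda_T(m,n)}(X_n)\big),
\]
where the $e^{m/2}(\log m)^{1/2}$ factor is absorbed into $C^m$. Next I would bound the left-hand side from below. Use \eqref{binom}, i.e. $|\Lambda_T(m,n)|=\binom{n}{m}\ge (n/m)^m$, to replace the numerator. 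For the denominator, the plan is to bound $\varphi_{X_n}(n)^m\,\|\id\colon X_n\to\ell_2^n\|^m$ from above. Here I would use the duality $\|\id\colon X_n\to\ell_2^n\|=\|\id\colon \ell_2^n\to X_n'\|$ together with $\varphi_{X_n'}(n)=\|\id\colon\ell_1^n\to X_n'\|^{-1}$-type estimates: more precisely, since the unit vectors are $1$-unconditional, $\|\id\colon\ell_2^n\to X_n'\|\le \varphi_{X_n'}(n)/\sqrt n\cdot(\text{something})$ — this is exactly where hypothesis \eqref{M} enters, as it lets one trade the $\ell_2$-norm of the identity for $n^{-1/2}$ times the $\ell_1$-norm. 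Combining, $\varphi_{X_n}(n)\cdot\|\id\colon X_n\to\ell_2^n\|\prec \varphi_{X_n}(n)\cdot\varphi_{X_n'}(n)/\sqrt n\prec \sqrt n$ by the hypothesis $\varphi_{X_n}(n)\varphi_{X_n'}(n)\prec n$. Feeding this in:
\[
\frac{|\Lambda_T(m,n)|}{\varphi_{X_n}(n)^m\,\|\id\colon X_n\to\ell_2^n\|^m\, n^{1/2}\, m^{-m/2}}
\;\succ_{C^m}\;\frac{(n/m)^m\, m^{m/2}}{n^{m/2}\, n^{1/2}}
=\Big(\frac{n}{m}\Big)^{m/2}\frac{1}{n^{1/2}}\cdot\frac{m^{m/2}\,?}{\cdots},
\]
and after carefully collecting the powers of $n$ and $m$ one lands on $(n/m)^{(m-1)/2}$ up to a further $C^m$ (and up to polynomial factors in $m$, which are again absorbed).

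\textbf{Main obstacle.} The delicate point is the chain of estimates bounding $\varphi_{X_n}(n)\,\|\id\colon X_n\to\ell_2^n\|$ by $\sqrt n$: one has to be careful about which direction of the identity map appears, correctly relate $\|\id\colon X_n\to\ell_2^n\|$ to the fundamental function of the Köthe dual (this is standard for symmetric lattices but needs the monotonicity/ideal structure, not symmetry), and verify that \eqref{M} is precisely the missing ingredient that makes the bookkeeping close with the exponent $(m-1)/2$ rather than $m/2$. A secondary but routine worry is tracking that all the stray factors $e^{m/2}$, $(\log m)^{1/2}$, $m^{1/2}$, $n^{1/2}$ combine into a clean $\prec_{C^m}$ with $C$ depending only on $X$; since the statement only asks for $C^m$-equivalence this is harmless, but it should be stated explicitly. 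Once the two displayed inequalities are in place, the conclusion is immediate.
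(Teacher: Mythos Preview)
Your approach is correct and essentially identical to the paper's: apply Lemma~\ref{innichenA} with $r=2$, then use hypothesis~\eqref{M} together with the identity $\|\id\colon X_n\to\ell_1^n\|=\varphi_{X_n'}(n)$ to get $\|\id\colon X_n\to\ell_2^n\|\prec \varphi_{X_n'}(n)/\sqrt{n}$, and combine with $\varphi_{X_n}(n)\varphi_{X_n'}(n)\prec n$ and $|\Lambda_T(m,n)|\ge(n/m)^m$. The detour through duality you sketch is unnecessary --- the paper proceeds directly from \eqref{M} --- and the bookkeeping you left incomplete simply gives $\frac{(n/m)^m\,m^{m/2}}{n^{m/2}\,n^{1/2}}=\frac{1}{m^{1/2}}\big(\frac{n}{m}\big)^{(m-1)/2}$, with the stray $m^{1/2}$ absorbed into $C^m$.
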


\begin{proof}
Since for $x =\frac{e_1}{\|e_1\|_X}$ one has $\|x\|_X=1$, it follows that 
\[
\gamma:=\frac{1}{{\|e_1\|_X}} \leq \|\id\colon X_n \to \ell_2^{n}\| \prec \frac{1}{\sqrt{n}}\,\|\id\colon X_n \to \ell_1^{n}\|\,.
\]
Using the estimate $\varphi_{X_n}(n)\,\varphi_{X_{n}'}(n) \prec n$, we get 
\[
\gamma \prec \frac{1}{\sqrt{n}}\,\|\id\colon X_n \to \ell_1^{n}\| = \frac{1}{\sqrt{n}}\,\varphi_{X_{n}'}(n)
\prec \frac{\sqrt{n}}{\varphi_{X_n}(n)}\,,
\]
and hence $\varphi_{X_n}(n)\prec \sqrt{n}$. This combined with the above estimate \eqref{binom} yields 
\begin{align*}
\frac{1}{\|\id \colon X_n\to \ell_2^n\|^m}\frac{|\Lambda_T(m,n)|}{\varphi_{X_n}(n)^m n^{\frac{1}{2}} m ^{-\frac{m}{2}}}
& \succ \frac{n^\frac{m}{2}}{\varphi_{X_{n}'}(n)^m} \Big(\frac{n}{m}\Big)^m \frac{1}{n^\frac{1}{2}
\varphi_{X_n}(n)^m  m^{-\frac{m}{2}}} 
\sim \frac{n^{\frac{m-1}{2}}}{m^{\frac{m}{2}}} = \frac{1}{m^{\frac{1}{2}}}\Big( \frac{n}{m} \Big)^{\frac{m-1}{2}}\,,
\end{align*}
and hence  the conclusion follows (as in the preceding proof) from Lemma~\ref{innichenA}.
\end{proof}

\smallskip

\subsection{Consequences of the entropy method}
We need one more estimate -- this time based on Theorem~\ref{Hinault alfa}. See Theorem~\ref{limits} and
Theorem~\ref{limits++} for the applications we aim for.

\begin{proposition} \label{innichen++}
Let $1 \leq r \leq 2$ and $X$ a Banach sequence lattice. Then there is a constant
$C = C(r,X)$ such for all~$m,n$
\[
\frac{1}{Cm^{1+1/2r'} \,  (\log n)^{1+1/r'} e^{-\frac{m}{r'}}m^{\frac{m}{r'}}   }
\left(  \frac{\|\mathrm{id}:X_n\to \ell_1^n\|}{\|\id \colon X_n\to \ell_r^n\|}\right)^{m-1}
\,\leq\,
 \boldsymbol{\chimon}\big(\mathcal{P}_m(X_n)\big)\,.
\]
\end{proposition}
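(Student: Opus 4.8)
The strategy is to apply Theorem~\ref{Hinault alfa} (the entropy method estimate of Bayart) with the special index set $J = \Lambda(m,n)$, combined with a careful choice of test point on the unit ball of $X_n$, in direct analogy with the covering-method argument used in Lemma~\ref{innichenA} and Proposition~\ref{innichen1}. First I would fix the test point $z = \big(\varphi_{X_n}(n)^{-1}, \ldots, \varphi_{X_n}(n)^{-1}\big) \in B_{X_n}$ and use it to bound below the supremum of $\big|\sum_{\alpha \in \Lambda(m,n)} z^\alpha\big|$, getting $|\Lambda(m,n)|\,\varphi_{X_n}(n)^{-m}$ as a lower estimate for that unsigned supremum. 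Next I would invoke Theorem~\ref{Hinault alfa}: for the coefficients $c_\alpha \equiv 1$, there exist signs $\varepsilon_\alpha$ so that the signed supremum is controlled by $C\, m\, (\log n)^{1+1/r'} \sup_\alpha (\alpha!/m!)^{1/r}\, \|\id\colon X_n\to\ell_r^n\|^{m-1}\, \|\id\colon X_n\to\ell_1^n\|$; here I use that $\sup_{z\in B_{X_n}}(\sum_k|z_k|^r)^{1/r} = \|\id\colon X_n\to\ell_r^n\|$ and $\sup_{z\in B_{X_n}}\sum_k|z_k| = \|\id\colon X_n\to\ell_1^n\|$. The factor $\sup_\alpha(\alpha!/m!)^{1/r}$ is at most $(m!)^{-1/r}$ (attained by a multi-index with all mass in one coordinate, whose $\alpha!$ equals $m!$ — wait, that gives $1$; the relevant bound for the \emph{smallest} $\alpha!$ over $\Lambda(m,n)$ is $1$, so $\sup_\alpha(\alpha!/m!)^{1/r}$... actually one wants the bound $(\alpha!/m!)^{1/r}\le (m!)^{-1/r}$ which holds since $\alpha! \le 1$ is false — rather $\alpha!\ge1$ always; the correct reading, matching Lemma~\ref{innichenA}, is $\sup_\alpha(\alpha!/m!)^{1/r} \le (m!)^{-1/r}\sup_\alpha(\alpha!)^{1/r}$, and over tetrahedral indices $\alpha!=1$). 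To keep the argument parallel to Lemma~\ref{innichenA} and avoid this subtlety, I would restrict attention to the estimate as it appears there and simply carry the factor $(m!)^{-1/r}$, then apply $m^m \le e^m m!$ to rewrite $(m!)^{-1/r}$ in terms of $m^{-m/r}e^{m/r}$.

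Then, combining the two bounds via the definition of $\boldsymbol{\chimon}\big(\mathcal{P}_m(X_n)\big) = \boldsymbol{\chimon}\big(\mathcal{P}_{\Lambda(m,n)}(X_n)\big)$ — namely that the signed supremum is at least the unsigned supremum divided by the unconditional basis constant — yields
\[
\frac{|\Lambda(m,n)|}{\varphi_{X_n}(n)^m} \le \boldsymbol{\chimon}\big(\mathcal{P}_m(X_n)\big)\cdot C\, m\, (\log n)^{1+1/r'} (m!)^{-1/r}\, \|\id\colon X_n\to\ell_r^n\|^{m-1}\, \|\id\colon X_n\to\ell_1^n\|\,.
\]
Now I would clean up the left-hand side: use $|\Lambda(m,n)| = \binom{n+m-1}{m} \ge (n/m)^m$ from~\eqref{cardi}, and then the hypothesis implicit in the statement — that $X$ is a Banach sequence lattice so that $\varphi_{X_n}(n) \le \|\id\colon X_n\to\ell_1^n\|$ always holds (indeed $\varphi_{X_n}(n) = \|\sum_{j=1}^n e_j\|_{X_n}$ and $\|\id\colon X_n\to\ell_1^n\| = \|\id\colon \ell_\infty^n\to X_n'\|$-type duality, or more simply $\varphi_{X_n}(n) \le \|\id\colon \ell_1^n \to X_n\|\cdot n$... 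I need to be careful). The cleanest route is to divide through and absorb $\varphi_{X_n}(n)^{-m}$ against one power of $\|\id\colon X_n\to\ell_1^n\|$ so that exactly $m-1$ powers of the ratio $\|\id\colon X_n\to\ell_1^n\|/\|\id\colon X_n\to\ell_r^n\|$ survive. This requires the elementary identity $\varphi_{X_n}(n) = \|\id\colon \ell_1^n \to X_n\|^{-1}$... no: $\varphi_{X_n}(n)$ equals $\|(1,\dots,1)\|_{X_n}$, whereas $\|\id\colon X_n\to\ell_1^n\| = \sup_{\|x\|_{X_n}\le1}\sum|x_k|$. For a symmetric lattice these are related by $\varphi_{X_n}(n)\cdot\varphi_{X_n'}(n) = n$ and $\|\id\colon X_n\to\ell_1^n\| = \varphi_{X_n'}(n)$, hence $\varphi_{X_n}(n) = n/\|\id\colon X_n\to\ell_1^n\|$. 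For a general sequence lattice only $\varphi_{X_n}(n)\cdot\|\id\colon X_n'\to\ell_1^n\| \le$ something; I expect the intended reading of the statement treats $X$ as symmetric or at least uses the normalization $\|e_k\|_{X_n}=1$. Granting this, substitute $\varphi_{X_n}(n) \le \|\id\colon X_n\to\ell_1^n\|$ (which is immediate since $\sum_{j=1}^n e_j$ has $\ell_1$-norm $n$ and... again care needed).

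The honest approach, and the one I would commit to in the write-up, is to avoid guessing the hypothesis and instead prove the inequality exactly as Lemma~\ref{innichenA} is proved but with Theorem~\ref{Hinault alfa} in place of Theorem~\ref{Anquetil alfa}, keeping \emph{all} the lattice quantities explicit and only at the very end matching them to the form stated. Concretely: from Theorem~\ref{Hinault alfa} with $c_\alpha\equiv1$ over $\Lambda(m,n)$ I get signs with
\[
\sup_{z\in B_{X_n}}\Big|\sum_{\alpha\in\Lambda(m,n)}\varepsilon_\alpha z^\alpha\Big| \le C\,m\,(\log n)^{1+1/r'}(m!)^{-1/r}\,\|\id\colon X_n\to\ell_r^n\|^{m-1}\,\|\id\colon X_n\to\ell_1^n\|\,,
\]
and from the test point $\frac{|\Lambda(m,n)|}{\varphi_{X_n}(n)^m}\le \boldsymbol{\chimon}\big(\mathcal{P}_m(X_n)\big)$ times the right side above; using $|\Lambda(m,n)|\ge(n/m)^m$, $m^m\le e^m m!$ so $(m!)^{-1/r}\le e^{m/r}m^{-m/r}$, and the universal estimate $\varphi_{X_n}(n)\le\|\id\colon X_n\to\ell_1^n\|$... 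I'll instead note $(n/m)^m/\varphi_{X_n}(n)^m = \big(\tfrac{n}{m\,\varphi_{X_n}(n)}\big)^m$ and, since for a lattice with $\|e_k\|=1$ one has $\varphi_{X_n}(n)\le n$ hence this is $\ge(1/m)^m$ — too weak. The genuine content, and hence \textbf{the main obstacle}, is precisely this bookkeeping: reconciling the factor $|\Lambda(m,n)|/\varphi_{X_n}(n)^m$ coming out of the test-point bound with the target expression $(1/(m^{m/r'}))\big(\|\id\colon X_n\to\ell_1^n\|/\|\id\colon X_n\to\ell_r^n\|\big)^{m-1}$. This works cleanly once one observes (as in the proof of Proposition~\ref{innichen1} and Proposition~\ref{toblach}) that for a Banach sequence lattice $\varphi_{X_n}(n) = \|\id\colon \ell_\infty^n\to X_n\|^{-1}\cdot$(trivial) and, more usefully, $\|\id\colon X_n\to\ell_1^n\|/\varphi_{X_n}(n) \ge 1$ always (take $x=(1,\dots,1)/\varphi_{X_n}(n)\in B_{X_n}$, then $\sum|x_k| = n/\varphi_{X_n}(n)$, and $n/\varphi_{X_n}(n)\ge1$ iff $\varphi_{X_n}(n)\le n$, true since each $\|e_k\|_{X_n}\le$ norm-one normalization gives $\varphi_{X_n}(n)\le n$). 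So $\varphi_{X_n}(n)^{-m}\ge \big(\|\id\colon X_n\to\ell_1^n\|^{-1}\big)^m\cdot(\text{that ratio})^m/n^m$ — leading after substitution of $|\Lambda(m,n)|\ge (n/m)^m$ to a surviving factor $(n/m)^m\,\|\id\colon X_n\to\ell_1^n\|^{-m}\cdot(\dots)$; dividing the single extra power of $\|\id\colon X_n\to\ell_1^n\|$ on the right against one of these leaves exactly $m-1$ powers of the ratio, the factor $m^{-m/r'}$ emerging from combining $(n/m)^m\cdot n^{-?}$ with $e^{m/r}m^{-m/r}$ and $m$, and absorbing all $r$- and $X$-dependent constants into $C$. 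I expect this last step to be purely mechanical once the right elementary inequalities ($\varphi_{X_n}(n)\le n$, $m^m\le e^m m!$, $\binom{n+m-1}{m}\ge(n/m)^m$, $1/r+1/r'=1$) are lined up, so the write-up will be short: invoke Theorem~\ref{Hinault alfa}, plug in the diagonal test point, unwind the definition of $\boldsymbol{\chimon}$, and collect constants.
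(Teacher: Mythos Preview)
Your approach has a genuine gap, and you actually put your finger on it yourself when you wrote ``wait, that gives $1$'': with coefficients $c_\alpha\equiv 1$, the factor $\sup_\alpha |c_\alpha|(\alpha!/m!)^{1/r}$ in Theorem~\ref{Hinault alfa} equals $1$ (attained at a single-coordinate index where $\alpha!=m!$), not $(m!)^{-1/r}$. The claim that you can ``simply carry the factor $(m!)^{-1/r}$'' by analogy with Lemma~\ref{innichenA} is unjustified: in that lemma the restriction to \emph{tetrahedral} indices forces $\alpha!=1$, which is what makes the factor $(m!)^{-1/r}$ appear. Over the full index set $\Lambda(m,n)$ with $c_\alpha=1$ you lose this, and the resulting bound is too weak (you would get $m^m$ in the denominator rather than $m^{m/r'}$, even granting the symmetry relation $\varphi_{X_n}(n)\varphi_{X_n'}(n)=n$ you were reaching for).

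The paper's proof sidesteps all of your ``main obstacle'' bookkeeping by choosing the coefficients $c_\alpha = m!/\alpha!$ instead of $c_\alpha=1$. Two things then happen simultaneously. First, by the multinomial formula the unsigned polynomial is exactly $\big(\sum_k z_k\big)^m$, so its supremum over $B_{X_n}$ is $\|\id\colon X_n\to\ell_1^n\|^m$ on the nose --- no test point, no $\varphi_{X_n}(n)$, no symmetry hypothesis needed. Second, $\sup_\alpha |c_\alpha|(\alpha!/m!)^{1/r} = \sup_\alpha (m!/\alpha!)^{1/r'} = (m!)^{1/r'}$, which after Stirling gives precisely the $m^{m/r'}m^{1/(2r')}e^{-m/r'}$ appearing in the statement. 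Dividing one power of $\|\id\colon X_n\to\ell_1^n\|$ from each side leaves the ratio to the power $m-1$, and the proof is three lines.
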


This is a  result  (slightly modified for our purposes) taken from \cite[Proposition~5.2]{bayart2012maximum}, and for the sake of completeness
we add the short proof (see also \cite[Lemma~20.28]{defant2019libro}).

\begin{proof}
By the multinomial formula  for all $m,n$ and all  choices of signs $(\varepsilon_\alpha)_{\alpha \in \Lambda(m,n)}$
\begin{align*}
\bigg( \sup_{\|z\|_{X_n}\leq 1} \sum_{k=1}^n  |z_k| \bigg)^m
 =\sup_{\|z\|_{X_n}\leq 1}  \Big| \sum_{\alpha \in \Lambda(m,n)} \frac{m!}{\alpha!} z^\alpha\Big|
 &=\sup_{\|z\|_{X_n}\leq 1} \Big| \sum_{\alpha \in \Lambda(m,n)}   \frac{m!}{\alpha!} \varepsilon_\alpha \varepsilon_\alpha  z^\alpha\Big| \\
& \leq \boldsymbol{\chimon}\big(\mathcal{P}_{m}(X_n)\big) \sup_{\|z\|_{X_n}\leq 1} \Big| \sum_{\alpha \in \Lambda(m,n)} \varepsilon_\alpha \frac{m!}{\alpha!} z^\alpha\Big|\,.
\end{align*}
  Choosing  signs $\varepsilon_\alpha$, according to Theorem~\ref{Hinault alfa}, we get for all $m,n$
 \begin{align*}
   \bigg( \sup_{\|z\|_{X_n}\leq 1} \sum_{k=1}^n  |z_k| \bigg)^m
   \leq
   C_1 \boldsymbol{\chimon}\big(\mathcal{P}_{m}(X_n)\big) m\, (\log n)^{1+\frac{1}{r'}}   m!^{\frac{1}{r'}}
  \sup_{z \in B_{X_n}} \Big( \sum_{k=1}^{n} \vert z_{k} \vert^{r} \Big)^{\frac{m-1}{r}}  \sup_{z \in B_{X_n}} \sum_{k=1}^{n} \vert z_{k} \vert \,,
 \end{align*}
 where the constant $C_1 = C_1(r)$ only depends on $r$. Since  by Stirling's formula for each positive integer~$m$
 \[
 m! \,\,< \,\,\frac{12}{11}\sqrt{2 \pi} m^{m + \frac{1}{2}} e^{-m}\,,
 \]
 the argument completes.
    \end{proof}

\smallskip
Under an additional symmetry assumption on $X$, we obtain a  tetrahedral variant of the preceding result.
\smallskip

\begin{proposition} \label{innichen}
For every $1 \leq r \leq 2$ and every Banach sequence lattice $X$  such that $\varphi_{X_n}(n) \varphi_{X_{n}'}(n)~\prec~n$,
there is a~constant $C = C(r,X)$ such that for each $m\le n$
\[
\frac{
   \Big( \frac{n}{n-m} \Big)^{n-m} \sqrt{\frac{n}{m(n-m)}}
}{Cm\,  (\log n)^{1+1/r'} e^{\frac{m}{r}}     m^{m/r'}}
\,\,\,\,\left(  \frac{\|\id\colon X_n\to \ell_1^n\|}{\|\id:X_n\to \ell_r^n\|}\right)^{m-1}
\,\leq\,
 \boldsymbol{\chimon}\big(\mathcal{P}_{\Lambda_T(m,n)}(X_n)\big)\,.
\]
\end{proposition}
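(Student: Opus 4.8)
The plan is to imitate the proof of Proposition~\ref{innichen++} almost verbatim, but replace the full index set $\Lambda(m,n)$ by the tetrahedral index set $\Lambda_T(m,n)$, and replace the use of the multinomial identity by the corresponding elementary symmetric polynomial identity. Concretely, let $e_m(z) = \sum_{\alpha \in \Lambda_T(m,n)} z^\alpha$ be the $m$th elementary symmetric polynomial in $z_1,\dots,z_n$. The key observation is that for $z$ with non-negative real coordinates summing to $1$, Maclaurin's inequality gives a lower bound of $e_m(z)$ in terms of $\big(\sum_k z_k\big)^m = 1$; more precisely, evaluating at the particular point $z = (1/n,\dots,1/n)/\varphi_{X_n}(n)$-type choice (or better, using the homogeneity and an explicit extremal point) yields $\sup_{\|z\|_{X_n}\le 1} e_m(z) \succeq \binom{n}{m} / \varphi_{X_n}(n)^m$ together with a controlled comparison to $\big(\sup_{\|z\|_{X_n}\le 1}\sum_k |z_k|\big)^m$. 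In fact the cleanest route is: by symmetry of $X$ and the AM--GM / Maclaurin inequality applied at the equal-coordinate point, one has
\[
\binom{n}{m}\Big(\sup_{\|z\|_{X_n}\le 1}\sum_{k=1}^n |z_k|\Big)^m \big/ n^m \,\le\, \sup_{\|z\|_{X_n}\le1} e_m(z)\,,
\]
so that $\sup_{\|z\|_{X_n}\le1} e_m(z)$ is, up to the binomial factor $\binom{n}{m}/n^m$, as large as $\big(\|\mathrm{id}\colon X_n\to\ell_1^n\|\big)^m = \varphi_{X_{n}'}(n)^m$.

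Next I would apply Theorem~\ref{Hinault alfa} with this same sequence of coefficients $c_\alpha \equiv 1$ on $\Lambda_T(m,n)$ (and $c_\alpha = 0$ off the tetrahedral set, so that $(\alpha!/m!)^{1/r} \le 1$ trivially since $\alpha! = 1$ for tetrahedral $\alpha$): there is a choice of signs $\varepsilon_\alpha$ with
\[
\sup_{\|z\|_{X_n}\le1}\Big|\sum_{\alpha\in\Lambda_T(m,n)}\varepsilon_\alpha z^\alpha\Big|
\le C m(\log n)^{1+1/r'}\,\|\mathrm{id}\colon X_n\to\ell_r^n\|^{m-1}\,\|\mathrm{id}\colon X_n\to\ell_1^n\|\,.
\]
Combining with the lower bound for $e_m = \sum_{\alpha}\varepsilon_\alpha\varepsilon_\alpha z^\alpha$ and the definition of $\boldsymbol{\chimon}\big(\mathcal{P}_{\Lambda_T(m,n)}(X_n)\big)$ gives
\[
\frac{\binom{n}{m}}{n^m}\,\|\mathrm{id}\colon X_n\to\ell_1^n\|^m
\;\le\; \boldsymbol{\chimon}\big(\mathcal{P}_{\Lambda_T(m,n)}(X_n)\big)\cdot C m(\log n)^{1+1/r'}\,\|\mathrm{id}\colon X_n\to\ell_r^n\|^{m-1}\,\|\mathrm{id}\colon X_n\to\ell_1^n\|\,.
\]
Dividing through and using $\|\mathrm{id}\colon X_n\to\ell_1^n\|/n^{m} \cdot \binom{n}{m} \sim \binom{n}{m} n^{-m}$ and $\|\mathrm{id}\colon X_n\to\ell_1^n\|^{m-1} = \varphi_{X_{n}'}(n)^{m-1}$, together with the hypothesis $\varphi_{X_n}(n)\varphi_{X_{n}'}(n)\prec n$ (which, exactly as in the proof of Proposition~\ref{toblach}, lets us trade factors of $\varphi_{X_{n}'}(n)$ for $n/\varphi_{X_n}(n)$ where convenient), I would rearrange to isolate the factor $\big(\|\mathrm{id}\colon X_n\to\ell_1^n\|/\|\mathrm{id}\colon X_n\to\ell_r^n\|\big)^{m-1}$.

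The remaining work is purely the Stirling-type bookkeeping to turn $\binom{n}{m}/n^m$ into the stated expression $\big(\tfrac{n}{n-m}\big)^{n-m}\sqrt{\tfrac{n}{m(n-m)}}$ and to produce the denominator $Cm(\log n)^{1+1/r'}e^{m/r}m^{m/r'}$. For this I would write $\binom{n}{m} = \frac{n!}{m!(n-m)!}$ and apply Stirling's formula (in the two-sided form $m!< \frac{12}{11}\sqrt{2\pi}m^{m+1/2}e^{-m}$ used already in the proof of Proposition~\ref{innichen++}, plus the matching lower bound) to each of the three factorials; the $n!$ and $(n-m)!$ contributions combine into $\big(\tfrac{n}{n-m}\big)^{n-m}n^m e^{-m}\sqrt{n/(n-m)}$ up to constants, the $1/m!$ contributes $e^m m^{-m}m^{-1/2}$ up to constants, the leftover $n^m$ cancels against the $n^m$ in the denominator, and the residual $m^{m/r'}$ and $e^{m/r}$ come from regrouping $e^{-m}m^{-m}$ against the $m!^{1/r'}$ that appears (since $1 - 1/r' = 1/r$). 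I expect this last Stirling reorganization to be the only genuinely fiddly step — it is elementary but one must be careful to keep all constants absorbed into a single $C = C(r,X)$ and to verify that no hidden dependence on $m$ or $n$ escapes; the probabilistic input and the Maclaurin/symmetry lower bound are routine once Theorem~\ref{Hinault alfa} and the hypothesis $\varphi_{X_n}(n)\varphi_{X_{n}'}(n)\prec n$ are in hand.
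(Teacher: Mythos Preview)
Your approach is essentially the paper's: lower-bound $\sup_{\|z\|\le 1} e_m(z)$ by evaluating at the equal-coordinate point $z=(1/\varphi_{X_n}(n),\dots,1/\varphi_{X_n}(n))$ to get $\binom{n}{m}/\varphi_{X_n}(n)^m$, apply Theorem~\ref{Hinault alfa} with $c_\alpha\equiv 1$ on $\Lambda_T(m,n)$ (so $\alpha!=1$ and the sup-factor becomes $m!^{-1/r}$), use $\|\id\colon X_n\to\ell_1^n\|=\varphi_{X_n'}(n)$ together with the hypothesis $\varphi_{X_n}(n)\varphi_{X_n'}(n)\prec n$ to convert into the ratio $(\|\id\colon X_n\to\ell_1^n\|/\|\id\colon X_n\to\ell_r^n\|)^{m-1}$, and finish with Stirling on $\binom{n}{m}$ and $m!^{-1/r}\le e^{m/r}m^{-m/r}$.

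Two small corrections to your write-up that do not affect the argument. First, $X$ is \emph{not} assumed symmetric in the statement, and you do not need it: the equal-coordinate point lies in $B_{X_n}$ for any lattice, which is all the lower bound requires. Second, Maclaurin's inequality goes the \emph{opposite} direction, namely $e_m(z)\le\binom{n}{m}\big(\tfrac{1}{n}\sum_k z_k\big)^m$, so it cannot justify your displayed inequality; what actually justifies it (up to the constant you are allowed) is precisely the equal-coordinate evaluation combined with the hypothesis $\varphi_{X_n}(n)\varphi_{X_n'}(n)\prec n$, which you correctly invoke a few lines later. Drop the Maclaurin/symmetry remark and the proof is clean and matches the paper's.
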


\begin{proof}
Note first that for all $m,n$
\begin{equation} \label{puchner}
  \frac{|\Lambda_T(m,n)|}{C_1m \,  (\log n)^{1+1/r'} m!^{-1/r}\|\id \colon X_n\to \ell_r^n\|^{m-1}\varphi_{X_n}(n)^m \varphi_{X_{n}'}(n)  }
\,\leq\,
 \boldsymbol{\chimon}\big(\mathcal{P}_{\Lambda_T(m,n)}(X_n)\big)\,,
\end{equation}
where $C_1 = C_1(r)$ only depends on $r$. Indeed, replacing  the probabilistic tool used in \eqref{poly} by Theorem~\ref{Hinault alfa}, the proof
of this estimate is  absolutely parallel to that of Lemma~\ref{innichenA}:  Choose signs $\varepsilon_\alpha = \pm 1, \alpha \in \Lambda_T(m,n)$
such that
\begin{align*} \label{poly2}
\begin{split}
   \sup_{z \in B_{X_n}} \Big| \sum_{\alpha \in \Lambda_T(m,n)} \varepsilon_\alpha  z^\alpha\Big|
\leq
\,C_1 m \,  (\log n)^{1+1/r'} m!^{-1/r}  \|\id\colon X_n\to \ell_r^n\|^{m-1}  \|\id \colon X_n\to \ell_1^n\|\,,
\end{split}
\end{align*}
where $C_1 = C_1(r)$ is the constant from Theorem~\ref{Hinault alfa}. Since $\|\id\colon X_n\to \ell_1^n\| = \varphi_{X_{n}'}(n)$,
we proceed exactly as in the proof of Lemma~\ref{innichenA} to get \eqref{puchner}. Continuing, we in \eqref{puchner} estimate
$|\Lambda_T(m,n)| = \binom{n}{m}$ from below. Using Stirling's formula, 
\begin{equation*}
\sqrt{2 \pi}  n^{n + \frac{1}{2}} e^{-n} \leq n! \leq   \frac{12}{11}\sqrt{2 \pi} n^{n + \frac{1}{2}} e^{-n}, \quad\, n\in \mathbb{N}\,,
\end{equation*}
we conclude that there is a uniform constant $D \ge 1$ such that for each $m \leq n$
\begin{equation*}
  \binom{n}{m}  \ge D  \Big( \frac{n}{m} \Big)^{m}  \Big( \frac{n}{n-m} \Big)^{n-m} \sqrt{\frac{n}{m(n-m)}}\,.
\end{equation*}
Estimating now \eqref{puchner}, we get with $C = C_1/D$ that for all $m \leq n$
 \begin{multline*}
   \frac{|\Lambda_T(m,n)|}{C_1m \,  (\log n)^{1+1/r'}  m!^{-1/r}\|\id\colon X_n\to \ell_r^n\|^{m-1}\varphi_{X_n}(n)^m \varphi_{X_{n}'}(n)  }
 \\
\,\geq\,
\frac{   \Big( \frac{n}{n-m} \Big)^{n-m} \sqrt{\frac{n}{m(n-m)}}}{Cm   (\log n)^{1+1/r'}  m!^{-1/r}}
\, \, \,  \frac{n^m}{m^m\|\id
\colon X_n\to \ell_r^n\|^{m-1}\varphi_{X_n}(n)^m \varphi_{X_{n}'}(n)}\,,
\end{multline*}
and hence using the   symmetry assumption on  $X$
 \begin{multline*}
     \frac{|\Lambda_T(m,n)|}{Cm \,  (\log n)^{1+1/r'} m!^{-1/r}\|\id \colon X_n\to \ell_r^n\|^{m-1}\varphi_{X_n}(n)^m \varphi_{X_{n}'}(n)  }
     \\
\,\geq\,
\frac{   \Big( \frac{n}{n-m} \Big)^{n-m} \sqrt{\frac{n}{m(n-m)}}}{Cm   (\log n)^{1+1/r'} m^m m!^{-1/r}}
\, \, \,  \left(  \frac{\|\id\colon X_n\to \ell_1^n\|}{\|\id \colon X_n\to \ell_r^n\|}\right)^{m-1}\,,
   \end{multline*}
    where now $C \ge 1$ depends on $r$ and $X$. Since $m^m \leq e^m m!$, we have that
   $m!^{-\frac{1}{r}} \leq e^{\frac{m}{r}} m^{-\frac{m}{r}} $, and so the proof completes.
      \end{proof}

\smallskip

\section{Convexity and concavity}
\label{conv/conc}
Given two Banach sequence lattices  $X,Y$ and an index set $J \subset \mathbb{N}_0^{(\mathbb{N})}$, we study how the unconditional basis constants $\boldsymbol{\chimon}(\mathcal{P}_{J}(X_n))$
and $\boldsymbol{\chimon}(\mathcal{P}_{J}(Y_n))$ are related to each other -- provided $X$ and $Y$ satisfy certain convexity and concavity conditions.

For all here  needed notions 
on (quasi-) Banach function lattices over measure spaces we again refer to Section~
\ref{Banach spaces and (Quasi-)Banach lattices}.

Much of what we intend to do
in this section, is   based on  a~deep factorization  theorem of
Lozanovskii \cite{Loz} (see also~\cite{schutt1978projection}). Since we later on  need it again, we prefer to formulate it explicitly.

\begin{theorem} \label{Lozanovskii}
Let $X$ be  Banach function lattice
over $(\Omega, \mathcal{A}, \mu)$ with the Fatou property. Then   $$X\circ X' \equiv L_1(\mu)\,.$$ More precisely,
for every $f\in L_1(\mu)$, there exist $g\in X$ and $h\in X'$ such that
\[
\text{$f= gh$ \,\,\,and\,\,\, $\|f\|_{L_1(\mu)} = \|g\|_{X}\,\|h\|_{X'}$\,,}
\]
Additionally,  if $f$ is positive, then also $g$ and $h$ may be chosen positive.
\end{theorem}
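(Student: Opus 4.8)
The inclusion $X\circ X'\subseteq L_1(\mu)$ together with $\|f\|_{L_1(\mu)}\le\|f\|_{X\circ X'}$ is immediate from the very definition of the K\"othe dual norm, so the whole content of the statement is the existence of a factorization realising the reverse inequality together with the norm equality. This is Lozanovskii's classical theorem, and the plan is to present (a slightly streamlined version of) its standard proof; for readers willing to take it on faith one may simply refer to \cite{Loz} (see also \cite{schutt1978projection}). I would begin with the usual reductions. Writing $f=(\sign f)\,|f|$, it suffices to factor $|f|$; hence we may assume $f\ge0$ and, after scaling, $\|f\|_{L_1(\mu)}=1$. Restricting all spaces to the support of $f$ — which is $\sigma$-finite, since $f\in L_1(\mu)$ — we may also assume $f>0$ $\mu$-a.e.; and replacing $\mu$ by an equivalent finite measure $\rho\,d\mu$ (with $\rho$ strictly positive, bounded, $\int\rho\,d\mu<\infty$), which amounts only to a weighted representation of the lattice $X$, of its K\"othe dual, and of $f$, we may assume $\mu$ finite, and then — by a standard truncation argument justified by the Fatou property — that $f$ is bounded. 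Throughout, the Fatou property enters through the identity $X\equiv X''$, equivalently
\[
\overline{B}_X=\Big\{g\in L^0(\mu):\ \int_\Omega|g\,h|\,d\mu\le1\ \text{ for all }\ h\in\overline{B}_{X'}\Big\}\,,
\]
and through the closedness of $\overline{B}_X$ in $L^0(\mu)$.

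The heart of the argument is a minimax principle. For $g,p$ in the positive part $\overline{B}_X^{+}$ of the unit ball of $X$ set
\[
\Psi(g,p):=\int_\Omega\frac{p}{g}\,f\,d\mu\in[0,+\infty]\,.
\]
For fixed $p$ the function $g\mapsto\Psi(g,p)$ is convex (since $t\mapsto 1/t$ is convex) and lower semicontinuous for convergence in $\mu$-measure (Fatou's lemma); for fixed $g$ the function $p\mapsto\Psi(g,p)$ is linear. As $\overline{B}_X^{+}$ is convex and, by the Fatou property, compact in a suitable weak topology keeping $\Psi(\cdot,p)$ lower semicontinuous, Sion's minimax theorem gives
\[
\inf_{g\in\overline{B}_X^{+}}\ \sup_{p\in\overline{B}_X^{+}}\Psi(g,p)\ =\ \sup_{p\in\overline{B}_X^{+}}\ \inf_{g\in\overline{B}_X^{+}}\Psi(g,p)\,.
\]
The right-hand side is $\le 1$: for each fixed $p\in\overline{B}_X^{+}$ one bounds $\inf_g\Psi(g,p)$ from above by inserting a near-optimal $g$ proportional to $(pf)^{1/2}$ and using the Cauchy--Schwarz inequality together with $\|g_0h_0\|_{L_1(\mu)}\le\|g_0\|_X\|h_0\|_{X'}$ — this is the only genuinely computational step. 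Hence the common value is $\le 1$, and since $\overline{B}_X^{+}$ is compact and $g\mapsto\sup_p\Psi(g,p)$ is lower semicontinuous, this outer infimum is attained: there is $g_\ast\in\overline{B}_X^{+}$ with $\sup_{p\in\overline{B}_X^{+}}\int_\Omega\frac{p}{g_\ast}\,f\,d\mu\le1$.

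Finally one reads off the factorization. Finiteness of the last supremum forces $g_\ast>0$ $\mu$-a.e.\ on $\{f>0\}$, so $h_\ast:=f/g_\ast$ (set to $0$ where $f=0$) is well defined, $g_\ast h_\ast=f$, and $h_\ast\ge0$; by the description of the K\"othe dual norm,
\[
\|h_\ast\|_{X'}=\sup_{p\in\overline{B}_X^{+}}\int_\Omega p\,h_\ast\,d\mu=\sup_{p\in\overline{B}_X^{+}}\int_\Omega\frac{p}{g_\ast}\,f\,d\mu\le1\,.
\]
Thus $f=g_\ast h_\ast$ with $g_\ast\in\overline{B}_X$, $h_\ast\in\overline{B}_{X'}$, and then $\|g_\ast\|_X\|h_\ast\|_{X'}\le1=\|f\|_{L_1(\mu)}\le\|g_\ast\|_X\|h_\ast\|_{X'}$, so equality holds; replacing $g_\ast$ by $g_\ast/\|g_\ast\|_X$ and $h_\ast$ by $\|g_\ast\|_X\,h_\ast$ one may even arrange $\|g_\ast\|_X=\|h_\ast\|_{X'}=1$. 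Since $g_\ast$ and $h_\ast$ are nonnegative by construction, undoing the reductions yields the asserted factorization of a general $f$, and the positive case is built in. \textbf{The step I expect to be the main obstacle} is the compactness input: when $X$ or $X'$ has an order-continuous norm the relevant unit ball is genuinely weak-$\ast$ compact and the argument is clean, but in the general Fatou case one must either choose the weak topology with care or first settle the order-continuous case by a direct Hahn--Banach separation and then pass to the general case by a second-dual/limiting argument; the integrability subtleties hidden in ``$g_\ast>0$ a.e.'' and in the truncation passage to the limit also require attention. Should one wish to bypass all of this, the honest shortcut is to invoke \cite{Loz}.
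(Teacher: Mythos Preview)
The paper does not prove this theorem at all: it is stated as a known deep result and attributed to Lozanovskii \cite{Loz} (with a pointer also to \cite{schutt1978projection}), and then used as a black box throughout. So your ``honest shortcut'' at the end --- simply invoking \cite{Loz} --- is exactly the route the paper takes.

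Your sketched argument is a recognizable version of the Gillespie-type minimax proof of Lozanovskii's theorem, and the overall architecture is sound. Two remarks. First, the ``only genuinely computational step'' you flag is actually simpler than you suggest: for fixed $p\in\overline{B}_X^{+}$ one may just take $g=p$ (or a strictly positive perturbation of it), giving $\Psi(p,p)=\int f\,d\mu=1$ directly --- no Cauchy--Schwarz or $(pf)^{1/2}$ is needed for the bound $\sup_p\inf_g\Psi(g,p)\le 1$. Second, the obstacle you yourself single out --- producing a topology on $\overline{B}_X^{+}$ that is simultaneously compact and makes each $\Psi(\cdot,p)$ lower semicontinuous, in the general Fatou setting --- is indeed the genuine difficulty, and your sketch does not resolve it; the standard fixes (passing through the order-continuous case, or a careful $L^0$-compactness argument on a finite measure space followed by a limiting procedure) require real work. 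Since the paper is content to cite the result, your proposal already matches the paper once you exercise the shortcut.
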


\smallskip
The following lemma is our starting point.

\smallskip

\begin{lemma} \label{YM(Y,X)}
Let $X_n = (\mathbb{C}^n, \|\cdot\|)$ and $Y_n = (\mathbb{C}^n, \|\cdot\|)$ be two Banach lattices  such that 
\[
\|\id \colon X_n \to Y_n\circ M(Y_n, X_n)\| \leq 1\,.
\]
Then
\begin{equation*} 
\boldsymbol{\chimon}\big(\mathcal{P}_{J}(X_n)\big) \leq \boldsymbol{\chimon}\big(\mathcal{P}_{J}(Y_n)\big)\,.
\end{equation*}
\end{lemma}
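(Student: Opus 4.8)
The plan is to unwind the definitions of $\boldsymbol{\chimon}$ and of the pointwise product norm, and to use the factorization hypothesis $\|\id\colon X_n\to Y_n\circ M(Y_n,X_n)\|\le 1$ as a change of variables that moves a polynomial and its sign-twisted copy from $\mathcal{P}_{J}(X_n)$ to $\mathcal{P}_{J}(Y_n)$, where we may invoke unconditionality of the monomial basis.

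First I would fix an arbitrary polynomial $P(z)=\sum_{\alpha\in J}c_\alpha z^\alpha$ and an arbitrary choice of unimodular scalars $(\varepsilon_\alpha)_{\alpha\in J}$, and aim to prove
\[
\Big\|\sum_{\alpha\in J}\varepsilon_\alpha c_\alpha z^\alpha\Big\|_{\mathcal{P}_{J}(X_n)}\le \boldsymbol{\chimon}\big(\mathcal{P}_{J}(Y_n)\big)\,\|P\|_{\mathcal{P}_{J}(X_n)},
\]
which by definition of $\boldsymbol{\chimon}$ yields the lemma. Fix $z\in B_{X_n}$. Since $\|z\|_{Y_n\circ M(Y_n,X_n)}\le\|z\|_{X_n}<1$ by hypothesis, the definition of the pointwise product norm provides a coordinatewise factorization $z=y\cdot w$ with $y\in Y_n$, $w\in M(Y_n,X_n)$, $\|y\|_{Y_n}\le 1$ and $\|w\|_{M(Y_n,X_n)}\le 1$ (if one prefers not to rely on the relevant norm being strictly below $1$ on the open ball, one carries a factor $1+\delta$ and lets $\delta\downarrow 0$ at the end). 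Then $z^\alpha=y^\alpha w^\alpha$ for every $\alpha$, so with $Q(u):=\sum_{\alpha\in J}(c_\alpha w^\alpha)u^\alpha\in\mathcal{P}_{J}(\mathbb{C}^n)$ we get $\sum_{\alpha\in J}\varepsilon_\alpha c_\alpha z^\alpha=\sum_{\alpha\in J}\varepsilon_\alpha(c_\alpha w^\alpha)y^\alpha$.

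Next I would estimate this quantity in three moves. Since $\|y\|_{Y_n}\le 1$, it is at most the $\mathcal{P}_{J}(Y_n)$-norm of the polynomial with monomial coefficients $\varepsilon_\alpha(c_\alpha w^\alpha)$; by unconditionality of the monomial basis of $\mathcal{P}_{J}(Y_n)$ in the form \eqref{unconditionality}, valid for unimodular scalars, this is $\le\boldsymbol{\chimon}(\mathcal{P}_{J}(Y_n))\,\|Q\|_{\mathcal{P}_{J}(Y_n)}$; finally $\|Q\|_{\mathcal{P}_{J}(Y_n)}=\sup_{\|u\|_{Y_n}\le 1}|P(w\cdot u)|\le\|P\|_{\mathcal{P}_{J}(X_n)}$, because $\|w\cdot u\|_{X_n}\le\|w\|_{M(Y_n,X_n)}\|u\|_{Y_n}\le 1$ places $w\cdot u$ in $\overline B_{X_n}$ (recall that for polynomials the suprema over $B_{X_n}$ and over $\overline B_{X_n}$ coincide). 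Combining these and taking the supremum over $z\in B_{X_n}$, then over $P$ and $(\varepsilon_\alpha)$, finishes the argument. I do not expect a real obstacle: the only points needing care are the bookkeeping around the pointwise-product infimum (handled by the $1+\delta$ device) and the verification that the polynomials $Q$ and its sign-twist are genuinely supported on $J$, so that the monomial-basis unconditionality of $\mathcal{P}_{J}(Y_n)$ applies verbatim.
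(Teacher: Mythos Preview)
Your proof is correct and follows essentially the same approach as the paper's: the paper also fixes $z\in B_{X_n}$, factors $z=v\xi$ with $v\in \overline{B}_{Y_n}$ and $\xi$ in the unit ball of $M(Y_n,X_n)$, observes that $c_\alpha(P\circ D_\xi)=c_\alpha(P)\xi^\alpha$ (your $Q$ is exactly $P\circ D_\xi$), and then bounds using the unconditionality constant in $\mathcal{P}_J(Y_n)$ together with $\|P\circ D_\xi\|_{\mathcal{P}_J(Y_n)}\le\|P\|_{\mathcal{P}_J(X_n)}$. The only cosmetic difference is that the paper writes the estimate with $|c_\alpha(P)|$ rather than $\varepsilon_\alpha c_\alpha$, which amounts to the same thing in a Banach lattice.
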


\begin{proof}
Fix some $z\in B_{X_n}$. Then by assumption there exist $\xi, v\in \mathbb{C}^n$ such that  $z= v \xi$, $\|v\|_{Y_n} \leq 1$,
and  
\[
\|D_\xi \colon Y_n \to X_n\|\leq1\,.
\]
Hence, for every $P\in \mathcal{P}_J(X_n)$, we have
\begin{align*}
\Big| \sum_{\alpha \in J} |c_\alpha(P)|\,z^\alpha\Big|   =
\Big|\sum_{\alpha \in J} |c_\alpha(P\circ D_\xi)|\,v^\alpha\Big|
&
\leq \boldsymbol{\chimon}\big(\mathcal{P}_J(Y_n)\big)\,\|P\circ D_\xi\|_{\mathcal{P}_J(Y_n)} \\
& \leq \boldsymbol{\chimon}\big(\mathcal{P}_J(Y_n)\big)\,\|P\|_{\mathcal{P}_J(X_n)}\,,
\end{align*}
which completes the proof.
\end{proof}

\smallskip

\begin{corollary} \label{C1C2}
Let $X_n = (\mathbb{C}^n, \|\cdot\|)$  be a Banach lattice and $J \subset \mathbb{N}_0^n$.
Then
\[
\boldsymbol{\chimon}\big(\mathcal{P}_{J}(\ell^n_1)\big) \le
\boldsymbol{\chimon}\big(\mathcal{P}_{J}(X_n)\big) \le \boldsymbol{\chimon}\big(\mathcal{P}_{J}(\ell^n_\infty)\big)\,.
\]
\end{corollary}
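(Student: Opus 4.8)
The plan is to derive Corollary~\ref{C1C2} as two applications of Lemma~\ref{YM(Y,X)}, one for each inequality. The key observation is that for the extreme lattices $\ell_1^n$ and $\ell_\infty^n$ the pointwise-product and multiplier spaces degenerate in a way that makes the hypothesis of Lemma~\ref{YM(Y,X)} automatic for every intermediate Banach lattice $X_n=(\mathbb{C}^n,\|\cdot\|)$ with normalized unit vector basis.

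For the right-hand inequality $\boldsymbol{\chimon}\big(\mathcal{P}_{J}(X_n)\big)\le\boldsymbol{\chimon}\big(\mathcal{P}_{J}(\ell_\infty^n)\big)$, I would take $Y_n=\ell_\infty^n$ in Lemma~\ref{YM(Y,X)}. Then $M(Y_n,X_n)=M(\ell_\infty^n,X_n)$ is, on the finite-dimensional counting space, isometrically $X_n$ itself: a diagonal multiplier $\xi$ maps $\ell_\infty^n$ into $X_n$ with norm $\|\xi\|_{X_n}$, since $\|D_\xi\colon \ell_\infty^n\to X_n\|=\sup_{\|v\|_\infty\le 1}\|\xi v\|_{X_n}=\|\xi\|_{X_n}$ by the lattice property. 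Hence $\ell_\infty^n\circ M(\ell_\infty^n,X_n)=\ell_\infty^n\circ X_n$, and this pointwise product is $X_n$ with its own norm (writing $z=z\cdot(1,\ldots,1)$ realizes the infimum). So $\|\id\colon X_n\to \ell_\infty^n\circ M(\ell_\infty^n,X_n)\|\le 1$, and Lemma~\ref{YM(Y,X)} applies.

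For the left-hand inequality $\boldsymbol{\chimon}\big(\mathcal{P}_{J}(\ell_1^n)\big)\le\boldsymbol{\chimon}\big(\mathcal{P}_{J}(X_n)\big)$, I would take $Y_n=X_n$ and aim to verify $\|\id\colon \ell_1^n\to X_n\circ M(X_n,\ell_1^n)\|\le 1$. Here the natural tool is Lozanovskii's factorization (Theorem~\ref{Lozanovskii}): since $M(X_n,\ell_1^n)$ coincides with the K\"othe dual $X_n'$ (because $\ell_1^n$ is the counting $L_1$), the theorem gives $X_n\circ X_n'\equiv \ell_1^n$ isometrically, so any $z\in B_{\ell_1^n}$ factors as $z=v\xi$ with $\|v\|_{X_n}\|\xi\|_{X_n'}\le 1$; rescaling we may assume $\|v\|_{X_n}\le 1$ and $\|D_\xi\colon X_n\to\ell_1^n\|=\|\xi\|_{X_n'}\le 1$. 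This is exactly the hypothesis of Lemma~\ref{YM(Y,X)} with the roles $X\leadsto\ell_1^n$, $Y\leadsto X_n$, so the conclusion follows. The one point requiring a little care — and the main obstacle — is confirming that on the $n$-dimensional counting space the identifications $M(\ell_\infty^n,X_n)\equiv X_n$, $M(X_n,\ell_1^n)\equiv X_n'$, and $\ell_\infty^n\circ X_n\equiv X_n$ hold isometrically with the normalizations used in Section~\ref{Banach spaces and (Quasi-)Banach lattices}; once these are in place the corollary is immediate. (Alternatively, both inequalities can be seen directly without the multiplier formalism: for the upper bound use $\|D_z\colon X_n\to X_n\|\le\|z\|_\infty\le\|z\|_{X_n}$-type estimates together with the definition of $\boldsymbol{\chimon}$, and for the lower bound use Lozanovskii to split a point in $B_{\ell_1^n}$; I would present whichever is shortest.)
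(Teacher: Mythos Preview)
Your proposal is correct and follows essentially the same route as the paper: both inequalities come from Lemma~\ref{YM(Y,X)} via the identifications $\ell_\infty^n\circ M(\ell_\infty^n,X_n)\equiv X_n$ (which the paper states as ``obvious'') and $X_n\circ M(X_n,\ell_1^n)\equiv X_n\circ X_n'\equiv\ell_1^n$ by Lozanovskii's theorem. You supply more detail on these isometric identifications than the paper does, but the argument is the same.
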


\begin{proof}
We apply Lemma~\ref{YM(Y,X)}.
  Obviously,
  \[
  \ell^n_\infty \circ M(\ell^n_\infty, X_n ) \equiv X_n \,,
  \]
  which  gives the second inequality. On the other hand by Lozanovskii's Theorem~\ref{Lozanovskii} we get
  \[
   X_n  \circ M( X_n ,\ell^n_1 ) \equiv X_n  \circ  X_n'  \equiv\ell^n_1\,,
  \]
  which leads to  the first estimate.
\end{proof}

\smallskip
Under convexity and concavity assumptions of the underlying Banach lattices, the preceding ideas may be extended.

\smallskip

\begin{theorem}
\label{PropAppl}
Let $X_n = (\mathbb{C}^n, \|\cdot\|)$ and $Y_n = (\mathbb{C}^n, \|\cdot\|)$ be two Banach lattices
such that $M_{(r)}(X_n ) = M^{(r)}(Y_n) = 1$, where $1 <r <\infty$. Then, for every $J \subset \mathbb{N}_0^n$, we have
\begin{equation*} 
\boldsymbol{\chimon}\big(\mathcal{P}_{J}(X_n)\big) \leq \boldsymbol{\chimon}\big(\mathcal{P}_{J}(Y_n)\big)\,.
\end{equation*}
\end{theorem}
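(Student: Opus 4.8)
The plan is to reduce everything to Lemma~\ref{YM(Y,X)}: it suffices to check that
\[
\big\|\id\colon X_n \to Y_n\circ M(Y_n,X_n)\big\|\le 1\,,
\]
and in fact I will establish the isometric identity $Y_n\circ M(Y_n,X_n)\equiv X_n$. Every lattice below is a Banach function lattice over the counting measure on $\{1,\dots,n\}$, hence automatically maximal; so Theorem~\ref{Lozanovskii} (Lozanovskii's factorization $Z\circ Z'\equiv L_1$), K\"othe biduality $Z\equiv Z''$, and the identity $M(Z,L_1)\equiv Z'$ are all at our disposal with no extra hypothesis.

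The first step is to translate the constant-one (co)convexity assumptions into Calderón products. Recall that a Banach lattice $W$ is $p$-convex with constant one exactly when it is the $p$-convexification of a Banach lattice $W_0$, i.e.\ $\|w\|_W=\||w|^p\|_{W_0}^{1/p}$ with $W_0$ a genuine Banach lattice, and that this $p$-convexification is realized as the Calderón product $W\equiv W_0^{1/p}(L_\infty)^{1-1/p}$. Since $M^{(r)}(Y_n)=1$, and since $M_{(r)}(X_n)=1$ is equivalent to $M^{(r')}(X_n')=1$, there are Banach lattices $V$ and $U$ with
\[
Y_n\equiv V^{1/r}(L_\infty)^{1/r'}\,,\qquad X_n'\equiv U^{1/r'}(L_\infty)^{1/r}\,,
\]
where we used $1-1/r=1/r'$. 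It is precisely here that the constant-one hypothesis enters in an essential way: it is what guarantees that $U$ and $V$ are \emph{Banach} (not merely quasi-Banach) lattices, which in turn makes all the Calderón-product identities below isometric.

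Now I would simply compute, writing all Calderón products with the single exponent $\theta=1/r'$ (so $1-\theta=1/r$) and invoking the standard facts $Z\circ L_\infty\equiv Z$, the multiplicativity $(A^{1-\theta}B^{\theta})\circ(C^{1-\theta}D^{\theta})\equiv(A\circ C)^{1-\theta}(B\circ D)^{\theta}$, the Calderón--Lozanovskii duality $(A^{1-\theta}B^{\theta})'\equiv(A')^{1-\theta}(B')^{\theta}$, the identity $M(Y_n,X_n)\equiv(Y_n\circ X_n')'$, and Lozanovskii's $V\circ V'\equiv L_1$. This yields in turn
\[
Y_n\circ X_n'\equiv V^{1/r}U^{1/r'}\,,\qquad M(Y_n,X_n)\equiv (V')^{1/r}(U')^{1/r'}\,,
\]
\[
Y_n\circ M(Y_n,X_n)\equiv (V\circ V')^{1/r}(L_\infty\circ U')^{1/r'}\equiv L_1^{1/r}(U')^{1/r'}\,.
\]
On the other hand, dualizing the identity $X_n'\equiv U^{1/r'}(L_\infty)^{1/r}$ and using $X_n\equiv X_n''$ together with $(L_\infty)'\equiv L_1$ gives $X_n\equiv L_1^{1/r}(U')^{1/r'}$ as well, so $Y_n\circ M(Y_n,X_n)\equiv X_n$. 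Lemma~\ref{YM(Y,X)} then gives $\boldsymbol{\chimon}\big(\mathcal{P}_{J}(X_n)\big)\le\boldsymbol{\chimon}\big(\mathcal{P}_{J}(Y_n)\big)$.

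The only genuine obstacle is bookkeeping: one has to be sure that each of the (co)convexification, multiplier, Calderón-product and Lozanovskii-type identifications used above holds \emph{isometrically} rather than merely up to equivalence — this is exactly where $M_{(r)}(X_n)=M^{(r)}(Y_n)=1$ is indispensable (with general constants the same chain only produces $\boldsymbol{\chimon}(\mathcal{P}_{J}(X_n))\sim\boldsymbol{\chimon}(\mathcal{P}_{J}(Y_n))$ with a constant depending on $r$ and on the two (co)convexity constants). Everything else — the passage between $z$ and $|z|$ for complex $z$ inside Lemma~\ref{YM(Y,X)}, and the availability of Lozanovskii's theorem and of maximality — is routine in this finite-dimensional setting.
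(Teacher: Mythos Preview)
Your proposal is correct and follows the same route as the paper: both reduce to Lemma~\ref{YM(Y,X)} by establishing the isometric identity $X_n \equiv Y_n\circ M(Y_n,X_n)$ (this is the paper's Lemma~\ref{app5}). The only difference is in how that identity is proved: the paper shows directly that $X'\circ Y$ is a Banach space and then uses Lozanovskii's factorization $E\circ E'\equiv \ell_1$ together with a cancellation step, whereas you first represent $Y_n$ and $X_n'$ as $r$- and $r'$-convexifications (Calder\'on products with $L_\infty$) and then run the Calder\'on calculus (multiplicativity with $\circ$ and Lozanovskii duality). Both arguments are valid in the finite-dimensional setting; the paper's version is a bit more streamlined and is carried out for general Banach sequence lattices.
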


In view of Lemma~\ref{YM(Y,X)}, the proof is an immediate consequence of the following independently interesting
result, which was also   proved in \cite[Theorem 3.8,(i)]{schep2010products} under the additional assumption that $Y$ has the Fatou property. We here give an alternative approach.

\begin{lemma} \label{app5}
Let $X$ and $Y$ be Banach sequence lattices  such that  $X$ is $r$-concave and
$Y$ is $r$-convex for  some $1 <r <\infty$ with concavity and convexity constants equal to $1$. Then
\[
X \equiv Y\circ M(Y, X)\,.
\]
\end{lemma}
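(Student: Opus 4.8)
The inclusion $Y\circ M(Y,X)\hookrightarrow X$ with $\|\cdot\|_X\le\|\cdot\|_{Y\circ M(Y,X)}$ is immediate from the definition of the multiplier space: if $z=ym$ with $y\in Y$ and $m\in M(Y,X)$, then $z=my\in X$ with $\|z\|_X\le\|m\|_{M(Y,X)}\|y\|_Y$, and one passes to the infimum over all such factorizations. For the reverse contractive inclusion it suffices, by positivity, homogeneity, and (if $X$ or $Y$ lacks the Fatou property) after first passing to Köthe biduals and descending at the end, to show: for every $0\le x\in X$ with $\|x\|_X=1$ and $\operatorname{supp}x$ of full measure there is $0\le y\in Y$ with $\|y\|_Y=1$ such that $m:=x/y$ (pointwise quotient) lies in the closed unit ball of $M(Y,X)$. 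Here I would use the ``currying'' identity $M(A\circ B,C)\equiv M(A,M(B,C))$ together with $M(X',L_1(\mu))\equiv X''\equiv X$, which yields $M(Y,X)\equiv (Y\circ X')'$; thus the task is to find $0\le y\in B_Y$ with $\langle x,\zeta/y\rangle\le 1$ for every $0\le\zeta\in B_{Y\circ X'}$, since then $\|m\|_{M(Y,X)}=\|x/y\|_{(Y\circ X')'}=\sup_{\zeta}\langle x,\zeta/y\rangle\le 1$ and $x=ym$.

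The plan for producing such a $y$ is a minimax argument. The functional $(y,\zeta)\mapsto\langle x,\zeta/y\rangle$ is convex in $y$ (because $t\mapsto 1/t$ is convex) and linear in $\zeta$, and the sets $\{0\le y\in B_Y\}$ and $\{0\le\zeta\in B_{Y\circ X'}\}$ are convex; a Ky Fan / Sion type minimax theorem should then give
\[
\inf_{0\le y\in B_Y}\ \sup_{0\le\zeta\in B_{Y\circ X'}}\ \langle x,\zeta/y\rangle \ =\ \sup_{0\le\zeta\in B_{Y\circ X'}}\ \inf_{0\le y\in B_Y}\ \langle x,\zeta/y\rangle .
\]
The right‑hand side is at most $1$: for fixed $\zeta$ choose a near‑optimal factorization $\zeta=v\psi$ with $\|v\|_Y\|\psi\|_{X'}\le 1+\varepsilon$, $v,\psi\ge 0$, and take $y:=v/\|v\|_Y\in B_Y$; then $\zeta/y=\|v\|_Y\,\psi$ on $\operatorname{supp}v$, so $\langle x,\zeta/y\rangle\le\|v\|_Y\,\|x\|_X\,\|\psi\|_{X'}\le 1+\varepsilon$, whence $\inf_y\langle x,\zeta/y\rangle\le 1$. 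Consequently there is $0\le y\in B_Y$ with $\sup_{\zeta}\langle x,\zeta/y\rangle\le 1$ (the infimum being attained by a compactness/order‑continuity argument on the admissible $y$'s and a perturbation to restore full support), and $y,m:=x/y$ are the desired factors.

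The hard part is the legitimacy of the minimax exchange, and this is exactly where the hypotheses $M^{(r)}(Y)=1$ and $M_{(r)}(X)=1$ must be used: without an $r>1$ of convexity on $Y$ and concavity on $X$ (with constant $1$) the exchange fails and the statement itself is false — for instance $Y=\ell_1$, $X=\ell_\infty$ gives $Y\circ M(Y,X)=\ell_1\ne\ell_\infty$. The role of $r$-convexity of $Y$ with constant $1$ is that $Y^{(r)}$ is an honest \emph{Banach} function lattice, while $r$-concavity of $X$ with constant $1$ says precisely that $X^{(r)}$ satisfies the lower $1$-estimate with constant $1$; together these furnish the compactness and order continuity needed for the minimax, and they also control the pathology of $\langle x,\zeta/y\rangle$ being $+\infty$. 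Lozanovskii's Theorem~\ref{Lozanovskii}, applied to the Banach function lattice $Y\circ X'$, is what guarantees that the pointwise quotients occurring above are genuine elements of $(Y\circ X')'=M(Y,X)$ with the correct norm, so that the final estimate $\|y\|_Y\,\|m\|_{M(Y,X)}\le\|x\|_X$ is \emph{isometric} and not merely isomorphic. The surrounding steps — the trivial inclusion, the currying identity $M(A\circ B,C)\equiv M(A,M(B,C))$ with $M(X',L_1)\equiv X''$, the reduction to the Fatou case via biduals, and the last bookkeeping — are routine.
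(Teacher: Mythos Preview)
Your proposal has a genuine gap: you identify the minimax exchange as ``the hard part'' and then do not carry it out. Sion/Ky~Fan theorems require at least one of the two convex sets to be compact in a topology for which the functional is appropriately semicontinuous, and you never specify such a topology on $\{0\le y\in B_Y\}$ nor check semicontinuity for a functional that is $+\infty$ on large parts of the domain. The sentence ``together these furnish the compactness and order continuity needed for the minimax'' is not an argument; nothing in it uses $r$-concavity of $X$ or $r$-convexity of $Y$ in any concrete way. The attainment of the infimum is likewise waved away in a parenthetical.

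There is a second, related gap: you invoke Lozanovskii's theorem ``applied to the Banach function lattice $Y\circ X'$'', but you never show that $Y\circ X'$ \emph{is} a Banach lattice. This is not automatic --- a pointwise product of two Banach lattices is in general only a quasi-Banach space --- and it is precisely here that the hypotheses enter. The paper's proof proceeds exactly by establishing this missing step: since $X$ is $r$-concave with constant~$1$, its K\"othe dual $X'$ is $r'$-convex with constant~$1$; then, because $Y$ is $r$-convex with constant~$1$ and $1/r+1/r'=1$, a short computation with H\"older's inequality shows the triangle inequality in $X'\circ Y$. Once $X'\circ Y$ is a Banach space, two applications of Lozanovskii's factorization ($E\circ E'\equiv\ell_1$ applied to $E=X$ and to $E=X'\circ Y$) give $X\equiv X''\equiv Y\circ(X'\circ Y)'\equiv Y\circ M(Y,X)$ directly --- no minimax, no compactness, no pointwise quotients. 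The $r$-concavity of $X$ is also what gives $X$ the Fatou property (no order copy of $c_0$), so the passage to biduals you sketch is unnecessary. In short, the one inequality you assumed without proof ($Y\circ X'$ being a Banach lattice) is the entire content of the lemma.
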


\begin{proof}
We start collecting  three facts:

\noindent Fact 1.
 Assume that the product $E\circ F$ of two Banach sequences lattices $E$ and $F$
is a~Banach space. Then the equality
\[
E'\equiv F\circ(E\circ F)'
\]
holds isometrically.
Indeed, by Lozanovskii's factorization Theorem~\ref{Lozanovskii} (recall that the K\"othe duals of Banach function lattices have the Fatou property)
\[
E\circ E'\equiv \ell_1 \equiv (E\circ F)\circ (E\circ F)' \equiv E\circ (F\circ (E\circ F)')\,,
\]
and since there is some $x \in E$ with $\text{supp}\,x = \mathbb{N}$, this yields the required statement.

\noindent Fact 2.
If $X$ has the Fatou property (i.e., $X\equiv X''$)
and $X' \circ Y$ is a~Banach space, then by 
Fact 1 we have
\[
X\equiv (X')' \equiv  Y\circ (X'\circ Y)' \equiv Y \circ M(X', Y') \equiv Y \circ M(Y, X)\,.
\]

\noindent
Fact 3. Assume that $X$ be is $p$-convex and $Y$ is $q$-convex with convexity constants equal to $1$.
If $\frac{1}{p} + \frac{1}{q} \leq 1$, then $X \circ Y$ is a~Banach space.

\begin{proof}[Proof of Fact 3.]
To see this we let $Z:= X\circ Y$, and observe that if $\frac{1}{p} + \frac{1}{q} <1$, then there exist
$r \in (1, p)$ and $s\in (1, q)$ such that $\frac{1}{r} + \frac{1}{s} =1$. Since $r<p$ and $s<q$, $X$ is $r$-convex and
$Y$ is $s$-convex  with convexity constants equal to $1$. Thus to prove the statement we may assume without loss of
generality that $\frac{1}{p} + \frac{1}{q} =1$.
  We only need to show the triangle inequality in $Z$.  Given $z_1,z_2 \in Z$ and $\varepsilon >0$, we find  $x_j \in S_X$, $y_j\in S_Y$
and $c_j>0$ such that
\[
|z_j| = c_j |x_jy_j| \quad\, \text{and \,\, $c_j \leq (1+ \varepsilon) \|z_j\|_Z$}, \quad\, j=1, 2\,.
\]
Since $\frac{1}{p} + \frac{1}{q}=1$, it follows by H\"older's inequality that 
\[
|z_1+ z_2| \leq \big(c_1^{1/p} |x_1|\big)\,\big(c_1^{1/q}|y_1|\big) +  \big(c_2^{1/p}|x_2|\big)\,\big(c_2^{1/q} |y_2|\big)
\leq   x\cdot y\,,
\]
where $x:=\big(c_1 |x_1|^p + c_2 |x_2|^p\big)^{\frac{1}{p}}$ and $y:=  \big(c_1 |y_1|^q + c_2 |y_2|^q\big)^{\frac{1}{q}}$.  Due to the $p$-convexity of $X$ and the $q$-convexity of $Y$, we get
\[
\|x\|_X \leq (c_1 + c_2)^{\frac{1}{p}}\quad\, \text{and} \quad\, \|y\|_Y \leq (c_1 + c_2)^{\frac{1}{q}}\,.
\]
Combining, we deduce that $z_1 + z_2 \in Z$ and
\[
\|z_1 + z_2\|_Z \leq \|x\|_X\,\|y\|_Y \leq c_1 + c_2 \leq (1 + \varepsilon)(\|z_1\|_Z  + \|z_2\|_Z)\,.
\]
Since $\varepsilon>0$ is arbitrary, $Z$ is a Banach space.
\end{proof}

Finally, we turn to the proof of Lemma~\ref{app5}: Since $X$ is $r$-concave with $1<r<\infty$, $X$ does not contain an
order isomorphic copy of $c_0$ (clearly such an order isomorphic copy would preserve the concavity of $X$, but
$c_0$ has trivial concavity). In consequence, $X$ has the Fatou property, otherwise it would contain an order
copy $c_0$ by the well-known characterization of Banach function lattices without copies of $c_0$ (see, e.g.,
\cite[Theorem 14.13]{aliprantis2006positive}). Then by duality $X'$ is $r'$-convex with $1/r' = 1- 1/r$ and with
convexity constant  equal to $1$. Hence it  follows from Fact 3.  that $X'\circ Y$ is a~Banach space. In consequence
by Fact $2$, we get the required statement.
\end{proof}

\smallskip

\begin{corollary} \label{C1}
Let $X_n = (\mathbb{C}^n, \|\cdot\|)$ be a  Banach lattice
such that $M_{(r)}(X_n ) =  1$, where $1 <r <\infty$. Then, for every $J \subset \mathbb{N}_0^n$, we have
\[
\boldsymbol{\chimon}\big(\mathcal{P}_{J}(X_n)) \leq \boldsymbol{\chimon}\big(\mathcal{P}_{J}(\ell_r^n)\big)\,.
\]
\end{corollary}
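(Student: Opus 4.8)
The plan is to deduce Corollary~\ref{C1} from Theorem~\ref{PropAppl} by applying it with $Y_n = \ell_r^n$. The key observation is that $\ell_r^n$ is $r$-convex with convexity constant $M^{(r)}(\ell_r^n) = 1$, which is an elementary computation: for finitely many vectors $x_1, \ldots, x_N \in \ell_r^n$,
\[
\Big\|\Big(\sum_{k=1}^N |x_k|^r\Big)^{1/r}\Big\|_{\ell_r^n}^r = \sum_{j=1}^n \sum_{k=1}^N |x_k(j)|^r = \sum_{k=1}^N \|x_k\|_{\ell_r^n}^r\,,
\]
so in fact equality holds with constant $1$. First I would record this fact, noting that it is really just Fubini/Tonelli for the counting measure, combined with the definition of the $\ell_r$-norm.

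Next, the hypothesis of Theorem~\ref{PropAppl} requires $M_{(r)}(X_n) = M^{(r)}(Y_n) = 1$. With $Y_n = \ell_r^n$ the second equality is the fact just established, and the first equality $M_{(r)}(X_n) = 1$ is exactly the hypothesis of the corollary. Thus Theorem~\ref{PropAppl} applies verbatim and yields, for every index set $J \subset \mathbb{N}_0^n$,
\[
\boldsymbol{\chimon}\big(\mathcal{P}_{J}(X_n)\big) \leq \boldsymbol{\chimon}\big(\mathcal{P}_{J}(\ell_r^n)\big)\,,
\]
which is the assertion. The argument is short precisely because all the substantial work — the Lozanovskii factorization, Fact~1, Fact~2, Fact~3, and Lemma~\ref{app5} — has already been carried out in the proof of Theorem~\ref{PropAppl} and the surrounding lemmas.

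There is essentially no obstacle here: the corollary is a clean specialization. The only point to be slightly careful about is to confirm that $\ell_r^n$ genuinely satisfies $M^{(r)}(\ell_r^n) = 1$ (and not merely $M^{(r)}(\ell_r^n) < \infty$), since Theorem~\ref{PropAppl} insists on convexity constant exactly $1$; but as shown above this is immediate and in fact the convexity inequality for $\ell_r^n$ is an equality. One might also remark, for context, that combining this corollary with Corollary~\ref{C1C2} recovers the extreme cases $r = 1$ and $r = \infty$ in the limit, since $\ell_1^n$ is $1$-concave and $\ell_\infty^n$ is $\infty$-convex with constants $1$.

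\begin{proof}
Recall that the finite dimensional space $\ell_r^n$, $1 < r < \infty$, is $r$-convex with convexity constant $M^{(r)}(\ell_r^n) = 1$. Indeed, for any finite family $x_1, \ldots, x_N \in \ell_r^n$,
\[
\Big\|\Big(\sum_{k=1}^{N} |x_k|^r \Big)^{1/r}\Big\|_{\ell_r^n}^r
= \sum_{j=1}^{n} \sum_{k=1}^{N} |x_k(j)|^r
= \sum_{k=1}^{N} \|x_k\|_{\ell_r^n}^r\,,
\]
so the defining inequality of $r$-convexity holds with constant $1$ (in fact with equality). Hence, taking $Y_n = \ell_r^n$ in Theorem~\ref{PropAppl}, the hypotheses $M_{(r)}(X_n) = 1$ (assumed) and $M^{(r)}(Y_n) = M^{(r)}(\ell_r^n) = 1$ (just verified) are satisfied, and Theorem~\ref{PropAppl} yields for every $J \subset \mathbb{N}_0^n$ the estimate
\[
\boldsymbol{\chimon}\big(\mathcal{P}_{J}(X_n)\big) \leq \boldsymbol{\chimon}\big(\mathcal{P}_{J}(\ell_r^n)\big)\,,
\]
as claimed.
\end{proof}
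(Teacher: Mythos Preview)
Your proof is correct and matches the paper's approach: the corollary is stated immediately after Theorem~\ref{PropAppl} with no separate proof, so it is meant as the direct specialization you carry out, taking $Y_n = \ell_r^n$ and using $M^{(r)}(\ell_r^n)=1$.
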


As already shown in Corollary~\ref{C1C2}, this result also holds for $r = \infty$ (note that $M_{(\infty)}(X_n ) =  1$ for
any~$X_n$).

\begin{corollary} \label{C2}
Let $Y_n = (\mathbb{C}^n, \|\cdot\|)$ be a  Banach lattice
such that $M^{(r)}(Y_n ) =  1$, where $1 <r <\infty$. Then, for every $J \subset \mathbb{N}_0^n$, we have
\[
\boldsymbol{\chimon}\big(\mathcal{P}_{J}(\ell^n_r)\big) \leq \boldsymbol{\chimon}\big(\mathcal{P}_{J}(Y_n)\big)\,.
\]
\end{corollary}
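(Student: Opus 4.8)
Looking at this, Corollary~\ref{C2} is the dual counterpart of Corollary~\ref{C1}: we need $\boldsymbol{\chimon}(\mathcal P_J(\ell_r^n))\le\boldsymbol{\chimon}(\mathcal P_J(Y_n))$ whenever $Y_n$ is $r$-convex with convexity constant $1$. The plan is to invoke Theorem~\ref{PropAppl} with the two lattices $X_n = \ell_r^n$ and the given $Y_n$, after checking that the hypotheses match. Theorem~\ref{PropAppl} requires $M_{(r)}(X_n)=1$ and $M^{(r)}(Y_n)=1$; the second is exactly our assumption on $Y_n$, so the only thing to verify is that $\ell_r^n$ is $r$-concave with concavity constant $1$.

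First I would recall the elementary fact that $\ell_r$ (and hence each section $\ell_r^n$) is both $r$-convex and $r$-concave with constants equal to $1$: indeed, for finitely many vectors $x_1,\dots,x_N\in\ell_r^n$, writing $x_k=(x_k(j))_j$, one has by Fubini (interchanging the two $\ell_r$-sums, which is an equality since both are $r$-sums)
\[
\Big\|\big(\textstyle\sum_k |x_k|^r\big)^{1/r}\Big\|_{\ell_r^n}^r
= \sum_j \sum_k |x_k(j)|^r
= \sum_k \|x_k\|_{\ell_r^n}^r\,,
\]
so that $\big\|(\sum_k|x_k|^r)^{1/r}\big\|_{\ell_r^n} = (\sum_k\|x_k\|^r)^{1/r}$, giving $M^{(r)}(\ell_r^n)=M_{(r)}(\ell_r^n)=1$ simultaneously. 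In particular $M_{(r)}(\ell_r^n)=1$, which is the hypothesis on the ``concave side'' required by Theorem~\ref{PropAppl}.

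Then I would simply apply Theorem~\ref{PropAppl} with $X_n := \ell_r^n$ and $Y_n$ the given lattice: since $M_{(r)}(\ell_r^n)=1$ and $M^{(r)}(Y_n)=1$ by assumption, the theorem yields, for every $J\subset\mathbb N_0^n$,
\[
\boldsymbol{\chimon}\big(\mathcal{P}_{J}(\ell^n_r)\big)=\boldsymbol{\chimon}\big(\mathcal{P}_{J}(X_n)\big) \leq \boldsymbol{\chimon}\big(\mathcal{P}_{J}(Y_n)\big)\,,
\]
which is the claim. There is essentially no obstacle here — the only point that needs a line of care is the computation of the concavity constant of $\ell_r^n$, and the observation that Theorem~\ref{PropAppl} is stated in precisely the asymmetric form ($X_n$ on the concave side, $Y_n$ on the convex side) needed to get $\ell_r^n$ on the left. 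For completeness one could also remark, as the text does after Corollary~\ref{C1}, on the boundary behaviour: the analogous statement for $r=1$ is contained in Corollary~\ref{C1C2} (since $M^{(1)}(Y_n)=1$ for any $Y_n$, and $\boldsymbol{\chimon}(\mathcal P_J(\ell_1^n))\le\boldsymbol{\chimon}(\mathcal P_J(Y_n))$ is exactly the left inequality there), so Corollary~\ref{C2} together with Corollary~\ref{C1C2} covers the full range $1\le r<\infty$.
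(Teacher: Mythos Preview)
Your proof is correct and is exactly the intended argument: Corollary~\ref{C2} is an immediate special case of Theorem~\ref{PropAppl} with $X_n=\ell_r^n$, once one notes that $M_{(r)}(\ell_r^n)=1$. The paper states the corollary without proof for precisely this reason.
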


Since  $M^{(1)}(Y_n ) =  1$ holds for any $Y_n$, the case $r=1$ is again covered by Corollary~\ref{C1C2}.

\smallskip

Finally, we show that,
if in the above corrolaries  the concavity constant of $X_n$ and the  convexity constant of $Y_n$ differ from  $1$,
then at least in the  homogeneous case (i.e, $J \subset \Lambda(m,n)$ for some $m$) the estimates hold
up to a~constant $C^m$, where $C$ depends on the $r$-concavity and the $r$-convexity constants.

To see this we
recall  the so-called renorming theorem of Figiel and Johnson (see, e.g., \cite[Proposition~1.d.8]{LT}): If
a~Banach function lattice $X$ on $(\Omega, \Sigma, \mu)$ is $p$-convex and $q$-concave for some $1\leq p\leq q\leq \infty$,
then on $X$ there is an equivalent lattice norm $\|\cdot\|_0$ whose $p$-convexity and $q$-concavity constants are
both equal to $1$, and
\[
\|x\|_X \leq \|x\|_0 \leq M^{(p)}(X)M_{(q)}(X) \|x\|_{X}\,, \quad\, x\in X\,.
\]

\begin{corollary}
\label{appl}
Let $X$ and $Y$ be Banach sequence lattices such that  $X$ is $r$-concave and
$Y$ is $r$-convex for some $1 <r <\infty$. Then for any subset $J \subset \Lambda(m,n)$, we have
\[
\boldsymbol{\chimon}\big(\mathcal{P}_{J}(X_n)\big) \leq (M_{(r)}(X) M^{(r)}(Y))^m\,\,\boldsymbol{\chimon}\big(\mathcal{P}_{J}(Y_n)\big)\,.
\]
\end{corollary}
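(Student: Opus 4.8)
The plan is to reduce to Theorem~\ref{PropAppl} by means of the Figiel--Johnson renorming theorem recalled above, using crucially that every $P\in\mathcal{P}_J(\mathbb{C}^n)$ with $J\subset\Lambda(m,n)$ is $m$-homogeneous (and not merely of degree $\le m$).

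First I would isolate an elementary scaling principle. Suppose $\|\cdot\|_a$ and $\|\cdot\|_b$ are two lattice norms on $\mathbb{C}^n$ with $\|x\|_a\le\|x\|_b\le c\,\|x\|_a$ for all $x$. Then the (closed) unit balls satisfy $\overline{B}_b\subset\overline{B}_a\subset c\,\overline{B}_b$, and since $P(c\,x)=c^mP(x)$ for every $m$-homogeneous $P$, one obtains
\[
\|P\|_{\mathcal{P}_J(\mathbb{C}^n,\|\cdot\|_b)}\ \le\ \|P\|_{\mathcal{P}_J(\mathbb{C}^n,\|\cdot\|_a)}\ \le\ c^m\,\|P\|_{\mathcal{P}_J(\mathbb{C}^n,\|\cdot\|_b)}\,.
\]
Feeding this pair of equivalent norms on the finite dimensional space $\mathcal{P}_J(\mathbb{C}^n)$ into the definition of the unconditional basis constant of the monomial basis (testing with arbitrary coefficients and signs $|\varepsilon_\alpha|=1$) shows that $\boldsymbol{\chimon}$ for the two norms differs by at most the factor $c^m$ in \emph{both} directions; in particular
\[
\boldsymbol{\chimon}\big(\mathcal{P}_J(\mathbb{C}^n,\|\cdot\|_a)\big)\ \le\ c^m\,\boldsymbol{\chimon}\big(\mathcal{P}_J(\mathbb{C}^n,\|\cdot\|_b)\big)\,.
\]

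Next I would renorm. Since $X$ is $1$-convex (with constant $1$) and $r$-concave, and $Y$ is $r$-convex and $\infty$-concave (with constant $1$), the Figiel--Johnson theorem supplies equivalent lattice norms $\|\cdot\|_0^X$ on $X_n$ and $\|\cdot\|_0^Y$ on $Y_n$ for which the $r$-concavity constant of the first and the $r$-convexity constant of the second equal $1$, and such that
\[
\|x\|_{X_n}\le\|x\|_0^X\le M_{(r)}(X)\,\|x\|_{X_n}\,,\qquad \|y\|_{Y_n}\le\|y\|_0^Y\le M^{(r)}(Y)\,\|y\|_{Y_n}\,.
\]
Write $\widetilde{X}_n$ and $\widetilde{Y}_n$ for $\mathbb{C}^n$ endowed with these renormed lattice norms.

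Finally I would chain the estimates. The scaling principle gives $\boldsymbol{\chimon}(\mathcal{P}_J(X_n))\le M_{(r)}(X)^m\,\boldsymbol{\chimon}(\mathcal{P}_J(\widetilde{X}_n))$ and $\boldsymbol{\chimon}(\mathcal{P}_J(\widetilde{Y}_n))\le M^{(r)}(Y)^m\,\boldsymbol{\chimon}(\mathcal{P}_J(Y_n))$, while Theorem~\ref{PropAppl}, applied to $\widetilde{X}_n$ and $\widetilde{Y}_n$ (which have the required unit concavity, resp.\ convexity, constants), gives $\boldsymbol{\chimon}(\mathcal{P}_J(\widetilde{X}_n))\le\boldsymbol{\chimon}(\mathcal{P}_J(\widetilde{Y}_n))$. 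Composing,
\[
\boldsymbol{\chimon}\big(\mathcal{P}_J(X_n)\big)\ \le\ M_{(r)}(X)^m\,\boldsymbol{\chimon}\big(\mathcal{P}_J(\widetilde{X}_n)\big)\ \le\ M_{(r)}(X)^m\,\boldsymbol{\chimon}\big(\mathcal{P}_J(\widetilde{Y}_n)\big)\ \le\ {\big(M_{(r)}(X)\,M^{(r)}(Y)\big)}^{m}\,\boldsymbol{\chimon}\big(\mathcal{P}_J(Y_n)\big)\,,
\]
which is the assertion. There is no serious obstacle here; the two points to watch are that $m$-homogeneity is exactly what makes the rescaling cost only $c^m$ rather than an uncontrolled factor, and that one must check that the $n$th sections of the renormed lattices really do achieve unit $r$-concavity, resp.\ $r$-convexity, constants, so that Theorem~\ref{PropAppl} applies verbatim.
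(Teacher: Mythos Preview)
Your proof is correct and follows essentially the same approach as the paper: both arguments first isolate the elementary observation that an $m$-homogeneous polynomial norm scales by at most $c^m$ under a $c$-equivalent renorming (and hence so does $\boldsymbol{\chimon}$), then invoke the Figiel--Johnson renorming theorem to pass to lattices $\widetilde{X}_n$ and $\widetilde{Y}_n$ with unit $r$-concavity and $r$-convexity constants, and finally apply Theorem~\ref{PropAppl} to the renormed spaces and chain the resulting inequalities.
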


\smallskip

\begin{proof}
Observe first that if $E:= (\mathbb{C}^n, \|\cdot\|_E)$ and $F:=(\mathbb{C}^n, \|\cdot\|_F)$ are
 $n$-dimensional Banach lattices such that for some $\gamma \geq 1$
\[
\|z\|_E \leq \|z\|_F \leq \gamma \|z\|_E, \quad\, z\in \mathbb{C}^n\,,
\]
then for any polynomial $P\in \mathcal{P}_J(\mathbb{C}^n)$, we have
$\gamma^{-m} \|P\|_{\mathcal{P}_J(E)} \leq \|P\|_{\mathcal{P}_J(F)} \leq \|P\|_{\mathcal{P}_J(E)}$.
Clearly, this yields
\[
\gamma^{-m}\,\boldsymbol{\chimon}\big(\mathcal{P}_J(E)\big) \leq  \boldsymbol{\chimon}\big(\mathcal{P}_J(F)\big) \leq \gamma^m\,\boldsymbol{\chimon}
\big(\mathcal{P}_J(E)\big)\,.
\]
Applying the renorming theorem mentioned above, we conclude that there exist equivalent lattice norms
$\|\cdot\|_{\tilde X}$ and $\|\cdot\|_{\tilde Y}$ on $X$ and $Y$, respectively, such that
the Banach lattices $\widetilde{X}:= (X, \|\cdot\|_{\widetilde{X}})$ and $\widetilde{Y} :=
(Y, \|\cdot\|_{\widetilde{Y}})$ satisfy $M_{(r)}(\widetilde{X}) = M^{(r)}(\widetilde{Y})=1$ as well as
\begin{equation} \label{equi}
\|x\|_X \leq \|x\|_{\widetilde{X}} \leq M_{(r)}(X) \|x\|_X, \quad\, x\in X\,,\,\,\,\,\,\,
\text{and}\,\,\,\,\,\,\,
\|y\|_Y \leq \|y\|_{\widetilde{Y}} \leq M^{(r)}(Y) \|y\|_Y, \quad\, y\in Y\,.
\end{equation}
The above facts combined with Proposition \ref{PropAppl} yields
\begin{align*}
M_{(r)}(X)^{-m}\,\boldsymbol{\chimon}
\big(\mathcal{P}_J(X_n)\big) & \leq  \boldsymbol{\chimon}\big(\mathcal{P}_J(\widetilde{X}_n)\big)
\leq \boldsymbol{\chimon}\big(\mathcal{P}_J(\widetilde{Y}_n)\big) \leq M^{(r)}(Y)^{m}\,\boldsymbol{\chimon}\big (\mathcal{P}_J(Y_n)\big)
\end{align*}
and this completes the proof of the  claim.
\end{proof}

\smallskip

\section{Kadets-Snobar case} \label{Kadets-Snobar case}
 We start with two  general upper estimates for the  invariants
  $\boldsymbol{\lambda}\big(\mathcal{P}_{J}(X_n)\big)$ and $\boldsymbol{\chimon}\big(\mathcal{P}_{J}(X_n)\big)$.
  The first one (see \eqref{AAA}) is an immediate consequence of the Kadets-Snobar inequality from \eqref{kadets1}
  (already remarked in \eqref{tincho}), and the second 
  follows from recent improvements of the  Bohnenblust-Hille inequality. Analyzing these estimates further,
  we show  that these bounds are asymptotically optimal, whenever  we consider sections $X_n$ of  $2$-convex Banach sequence  lattices $X$ and index sets~$J$, which are  not too sparse.

\begin{theorem} \label{conny2}
Let $X_n= (\mathbb{C}^n,\|\cdot\|)$ be a Banach space. Then for any index set $J \subset \mathbb{N}_0^n$
\begin{align} \label{AAA}
\boldsymbol{\lambda}\big(\mathcal{P}_{J}(X_n)\big)  \,\,\leq \,\, |J|^{\frac{1}{2}}\,,
\end{align}
and if $X_n= (\mathbb{C}^n,\|\cdot\|)$ is a Banach lattice and  $J \subset \mathbb{N}_0^n$ has degree $m$, then 
\begin{align} \label{BBB}
\boldsymbol{\chimon}\big(\mathcal{P}_{J}(X_n)\big) \,\, \leq \, \, C^{\sqrt{m \log m}}
|J|^{\frac{m-1}{2m}}
\,,
\end{align}
where $C \ge 1 $ is a universal constant.
\end{theorem}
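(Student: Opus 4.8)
The first estimate \eqref{AAA} is immediate: the space $\mathcal{P}_J(X_n)$ has dimension $|J|$, so the Kadets--Snobar inequality \eqref{kadets1} gives $\boldsymbol{\lambda}\big(\mathcal{P}_J(X_n)\big)\le\sqrt{|J|}$; this was already recorded in \eqref{tincho}. All the work is in \eqref{BBB}. The plan is to use the chain of inequalities established in this chapter, namely Theorem~\ref{main3}, to reduce the unconditional basis constant of $\mathcal{P}_J(X_n)$ to projection constants of the homogeneous building blocks of its \emph{reduced} index sets, and then bound those projection constants by \eqref{AAA}.

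More precisely, I would first apply Theorem~\ref{main3} in the form
\[
\boldsymbol{\chimon}\big(\mathcal{P}_J(X_n)\big)\le e(m+1)2^m\,\max_{1\le k\le m}\|\mathbf{Q}_{\Lambda(k,n),J_k}\|\;\max_{1\le k\le m}\boldsymbol{\lambda}\big(\mathcal{P}_{J_k^\flat}(X_n)\big)\,.
\]
Since each $\mathbf{Q}_{\Lambda(k,n),J_k}$ is a restriction of the Taylor/Cauchy projection, the annihilation norms satisfy $\|\mathbf{Q}_{\Lambda(k,n),J_k}\|\le 1$ (Proposition~\ref{Cauchy} and the Cauchy inequalities), so those factors drop out. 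Next, by \eqref{AAA} applied to $J_k^\flat\subset\Lambda(k-1,n)$ I get $\boldsymbol{\lambda}\big(\mathcal{P}_{J_k^\flat}(X_n)\big)\le |J_k^\flat|^{1/2}$. The crucial combinatorial point is to bound $|J_k^\flat|$ in terms of $|J_k|\le|J|$: by the definition of the reduced set, each $\alpha\in J_k^\flat\subset\Lambda(k-1,n)$ arises from some $\beta\in J_k$ by lowering one coordinate, and conversely each $\beta\in\Lambda(k,n)$ has exactly $\#\{i:\beta_i>0\}\le k$ coordinates that can be lowered, so $|J_k^\flat|\le k\,|J_k|/1$; a cleaner estimate (used in the literature on Sidon/unconditionality constants) is $|J_k^\flat|\le |J_k|$ whenever one argues via the map $\beta\mapsto$ its reductions and counts multiplicities — but to be safe I would just use $|J_k^\flat|\le k|J|\le m|J|$. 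Combining,
\[
\boldsymbol{\chimon}\big(\mathcal{P}_J(X_n)\big)\le e(m+1)2^m\,(m|J|)^{1/2}\le C_1^{\,m}\,|J|^{1/2}
\]
for a universal $C_1$. This already gives a bound of the form $C^m|J|^{1/2}$, but \textbf{not} the sharper exponent $\frac{m-1}{2m}$ on $|J|$ nor the subexponential factor $C^{\sqrt{m\log m}}$; the $2^m$ coming from Theorem~\ref{main3} is too crude.

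To obtain the stated form I would instead invoke the modern hypercontractive Bohnenblust--Hille inequality: there is a universal $C\ge1$ such that for every $m$-homogeneous polynomial $P=\sum_{|\alpha|=m}c_\alpha z^\alpha$ on $\ell_\infty^n$ one has $\big(\sum_\alpha|c_\alpha|^{\frac{2m}{m+1}}\big)^{\frac{m+1}{2m}}\le C^{\sqrt{m\log m}}\,\|P\|_{\mathcal{P}_m(\ell_\infty^n)}$ (Bayart--Pellegrino--Seoane, in the form quoted in \cite{defant2019libro}); by Proposition~\ref{degreelessm}/Corollary~\ref{degreelessmB} the non-homogeneous case of degree $m$ costs only a polynomial factor in $m$, absorbed into $C^{\sqrt{m\log m}}$. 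Running the standard unconditionality argument — estimate $\big\|\sum_{\alpha\in J}\varepsilon_\alpha c_\alpha z^\alpha\big\|$ from above by $\big(\sum_{\alpha\in J}|c_\alpha|^{2m/(m+1)}\big)^{(m+1)/2m}\,|J|^{(m-1)/2m}$ via H\"older with exponents $\tfrac{2m}{m+1}$ and its conjugate $\tfrac{2m}{m-1}$, and then by $C^{\sqrt{m\log m}}|J|^{(m-1)/2m}\|P\|$ using the BH inequality for the unperturbed polynomial — together with Corollary~\ref{C1C2} (which reduces any Banach lattice $X_n$ to the case $X_n=\ell_\infty^n$ from above) yields exactly \eqref{BBB}. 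The one delicate point is to make sure the H\"older step is applied to the polynomial norm on $\ell_\infty^n$ and that the reduction $\boldsymbol{\chimon}(\mathcal{P}_J(X_n))\le\boldsymbol{\chimon}(\mathcal{P}_J(\ell_\infty^n))$ from Corollary~\ref{C1C2} is in force, so that the lattice hypothesis on $X_n$ is used only there.

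\textbf{Main obstacle.} The genuinely hard input is the hypercontractive Bohnenblust--Hille inequality with the $C^{\sqrt{m\log m}}$ dependence; everything else is bookkeeping (Proposition~\ref{Cauchy}, Corollary~\ref{C1C2}, Corollary~\ref{degreelessmB}, and H\"older). I would cite that inequality as known and spend the proof making the H\"older exponents and the lattice reduction explicit, checking that the constant collected from the degree-$m$ versus homogeneous passage and from the renorming is still of the form $C^{\sqrt{m\log m}}$ with $C$ universal.
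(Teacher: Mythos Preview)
Your final approach --- reduce to $\ell_\infty^n$ via Corollary~\ref{C1C2}, then apply H\"older with exponents $\tfrac{2m}{m+1}$ and $\tfrac{2m}{m-1}$ together with the hypercontractive Bohnenblust--Hille inequality --- is exactly the paper's proof. Two minor corrections: first, in your abandoned approach, the claim that $\|\mathbf{Q}_{\Lambda(k,n),J_k}\|\le 1$ follows from Proposition~\ref{Cauchy} is wrong, since that projection annihilates coefficients \emph{within} the $k$-homogeneous space rather than extracting a homogeneous part (this norm can be large for generic $J_k$, which is why the factor appears explicitly in Theorem~\ref{main3}); second, your worry about passing from homogeneous to degree-$m$ is unnecessary, because the Bohnenblust--Hille inequality \eqref{equa:BH} as quoted in the paper already holds for all $P\in\mathcal{P}_{\le m}(\ell_\infty^n)$, so no extra step through Proposition~\ref{degreelessm} is required.
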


\smallskip
As already mentioned, the  proof
of \eqref{BBB} is based on the so-called hypercontractive Bohnenblust-Hille inequality: For every $P \in \mathcal{P}_{\leq m }(\ell_\infty^n)$
with coefficients $(c_\alpha(P)_{\alpha \in \Lambda(\leq m,n)}$ we have 
\begin{equation}\label{equa:BH}
\Big(\sum_{\alpha \in \Lambda(\leq m,n)}|c_\alpha(P)|^{\frac{2m}{m+1}}\Big)^{\frac{m+1}{2m}}
\leq C^{\sqrt{m \log m}} \, \| f\|_{\infty}\,,
\end{equation}
where $C \ge 1$ is universal.

This result has a long history starting with Littlewood \cite{littlewood1930bounded} and Bohnenblust-Hille \cite{bohnenblust1931absolute}, who proved that the best constant
$\mathbf{BH}_m$ in \eqref{equa:BH} only depends on $m$ and not on $n$. Hypercontractivity of the Bohnenblust-Hille
inequality (i.e., $\mathbf{BH}_m \leq C^m$ for some universal $C \ge 1$) was first proved in
\cite{defant2011bohnenblust}, and the considerably  better  subexponential estimate $\mathbf{BH}_m \leq C^{\sqrt{m \log(m)}}$ is  due to
\cite{bayart2014bohr}. For more information we again refer to \cite{defant2019libro}.

\begin{proof}[Proof of Theorem~\ref{conny2}, \eqref{BBB}]

By  Corollary~\ref{C1C2} 
\[
 \boldsymbol{\chimon}\big(\mathcal{P}_{J}(X_n)\big)
\leq \boldsymbol {\chimon} \big(\mathcal{P}_{J}(\ell_{\infty}^n)\big)
\,,
\]
and hence we assume without loss of generality that $X_n=\ell^n_\infty$.
Fix some $P = \sum_{\alpha \in J} c_\alpha(P) z^\alpha \in \mathcal{P}_J(\ell_\infty^n)$, signs $(\varepsilon_{\alpha})_{\alpha \in J}$
and $z\in B_{\ell_\infty^n}$. Then by H\"older's inequality and the hypercontractive Bohnenblust-Hille inequality
we obtain 
\[
  \Big|\sum_{\alpha \in J} \varepsilon_{\alpha} c_\alpha(P) z^\alpha \Big|
  \leq
    \bigg(  \sum_{\alpha \in J}  |c_\alpha(P)|^{\frac{2m}{m+1}}\bigg)^{\frac{m+1}{2m}}   |J|^{\frac{m-1}{2m}}
  \leq
  C^{\sqrt{m\log m }} |J|^{\frac{m-1}{2m}} \|P\|_\infty\,,
  \]
  which immediately implies the desired estimate.
  \end{proof}

  \smallskip
  We  now turn our attention to the following question: When are the two estimates from  Theorem~\ref{conny2} asymptotically optimal?

  \smallskip

  \begin{remark} \label{cardo}
    Using \eqref{cardi} and  the  elementary estimate $ \Big(\frac{N}{\ell}\Big)^\ell \le \binom{N}{\ell} \le e^\ell\Big(\frac{N}{\ell}\Big)^\ell$ for each $1 \leq \ell \leq N$, a simple calculation shows that
    \begin{equation}
    |J| \sim_{C^m}\Big( 1+\frac{n}{m}\Big)^{m}\,,
  \end{equation}
  whenever $J$ is one of the following four index sets: 
    $\Lambda_T(m,n), \, \Lambda_T( \leq m,n), \,\Lambda(m,n),\,\Lambda(\leq m,n)$.
  \end{remark}

  \smallskip

  \begin{theorem} \label{conny3}
Let  $X$ be a Banach sequence lattice and  $J \subset \mathbb{N}_0^{(\mathbb{N})}$
an index set. Then
\begin{align} \label{mmm}
\boldsymbol{\lambda}\big(\mathcal{P}_{J_{\leq m}}(X_n)\big)  \prec_{C^m}\Big( 1+\frac{n}{m}\Big)^{\frac{m}{2}}
\end{align}
and
\begin{align} \label{MMM}
\boldsymbol{\chimon}\big(\mathcal{P}_{J_{\leq m}}(X_n)\big)  \prec_{C^m}\Big(1+ \frac{n}{m}\Big)^{\frac{m-1}{2}}\,,
\end{align}
where $C > 0$ is universal.

Moreover, if $X$ is $2$-convex,  then the preceding estimates are optimal for all $m$ for which   $\Lambda_T(m) \subset J_{\leq m}$, in the sense that under this assumption
$\prec_{C^m}$ may be replaced by $\sim_{C^m}$, where $C >0$ is  only depending on the $2$-convexity constant of $X$.
\end{theorem}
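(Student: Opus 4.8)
The plan is to split the statement into its two halves: the general upper bounds \eqref{mmm}, \eqref{MMM}, and the optimality under the $2$-convexity hypothesis. For the upper bounds, I would first invoke Corollary~\ref{C1C2} (or, more precisely, the reduction to $\ell_\infty^n$ available through it) to reduce to the case $X_n=\ell_\infty^n$, since both $\boldsymbol{\lambda}$ and $\boldsymbol{\chimon}$ of $\mathcal{P}_{J_{\le m}}(X_n)$ are dominated by the corresponding quantities for $\mathcal{P}_{J_{\le m}}(\ell_\infty^n)$ — for $\boldsymbol{\chimon}$ this is exactly Corollary~\ref{C1C2}, and for $\boldsymbol{\lambda}$ one uses that $\ell_\infty^n$ maps norm-one onto any Banach lattice section via a diagonal, hence $\boldsymbol{\lambda}(\mathcal{P}_{J}(X_n))\le\boldsymbol{\lambda}(\mathcal{P}_J(\ell_\infty^n))$ by the annihilation/factorization trick of Remark~\ref{simple} combined with the diagonal maps in the proof of Lemma~\ref{YM(Y,X)}. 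Once we are on $\ell_\infty^n$, the bound \eqref{mmm} follows from Theorem~\ref{conny2}\,\eqref{AAA} together with the cardinality estimate $|J_{\le m}|\le|\Lambda(\le m,n)|\le\sum_{k=0}^m e^k(1+n/k)^k$, which is $\prec_{C^m}(1+n/m)^m$ by \eqref{cardi} and a routine comparison of the summands (the last term dominates up to a factor $C^m$); and \eqref{MMM} follows the same way from Theorem~\ref{conny2}\,\eqref{BBB}, absorbing the factor $C^{\sqrt{m\log m}}$ into $C^m$ and using $|J_{\le m}|^{(m-1)/(2m)}\prec_{C^m}(1+n/m)^{(m-1)/2}$.

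For the optimality, I would use the lower bounds coming from the probabilistic machinery of Section~\ref{Probabilistic estimates}. Assume $X$ is $2$-convex and $\Lambda_T(m)\subset J_{\le m}$, so that in particular $\Lambda_T(m,n)\subset J_{\le m}^n=(J_{\le m})^n$ for every $n\ge m$. First I would pin down the projection-constant lower bound: by Corollary~\ref{immediateABC}\,(ii) (the tetrahedral case, which requires $X_n$ to be a Banach lattice with enough symmetries — here one should work with $\ell_{r}^n$-type sections or more carefully phrase the symmetry hypothesis) we get $\boldsymbol{\lambda}(X_n^\ast)^m\prec_{C^m}\boldsymbol{\lambda}(\mathcal{P}_{J_{\le m}}(X_n))$ up to $m^m$; but a cleaner route is to combine the key link \eqref{strategy} / Theorem~\ref{main3} with the unconditional-basis lower bound for $\boldsymbol{\chimon}$. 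Concretely, since $X$ is $2$-convex we have (up to a uniform constant) $\varphi_X(n)\prec\sqrt n$, hence $\|\mathrm{id}\colon X_n\to\ell_1^n\|=\varphi_{X_n'}(n)\succ\sqrt n$ and $\|\mathrm{id}\colon X_n\to\ell_2^n\|$ is bounded below by a constant; feeding this into Proposition~\ref{innichen1} with $r=2$ (whose hypothesis $\varphi_X(n)\prec n^{1/2}$ is precisely $2$-convexity) yields
\[
\Big(\frac{n}{m}\Big)^{\frac{m-1}{2}}\prec_{C^m}\boldsymbol{\chimon}\big(\mathcal{P}_{J_{\le m}}(X_n)\big)\,,
\]
which matches \eqref{MMM}. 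For \eqref{mmm} I would then use Theorem~\ref{main3} backwards: choosing the homogeneous layer, $\boldsymbol{\chimon}(\mathcal{P}_{\Lambda_T(m+1,n)}(X_n))\prec_{C^m}\|\mathbf{Q}\|\,\boldsymbol{\lambda}(\mathcal{P}_{\Lambda_T(m,n)}(X_n))$, and bound $\|\mathbf{Q}_{\Lambda(m+1,n),I}\|\le\kappa^{m+1}$ by Theorem~\ref{OrOuSe} (tetrahedral), so the already-established $\boldsymbol{\chimon}$ lower bound of order $(n/m)^{m/2}$ for degree $m+1$ transfers to a lower bound $\boldsymbol{\lambda}(\mathcal{P}_{\Lambda_T(m,n)}(X_n))\succ_{C^m}(n/m)^{m/2}$, hence via $\Lambda_T(m,n)\subset J_{\le m}$ and Proposition~\ref{Cauchy}/Theorem~\ref{OrOuSe} to $\boldsymbol{\lambda}(\mathcal{P}_{J_{\le m}}(X_n))$.

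\textbf{Main obstacle.} The delicate point is the bookkeeping of the constants of type $C^m$ versus $m^m$ and making sure the passage from the tetrahedral homogeneous part $\Lambda_T(m,n)$ back up to the (possibly non-homogeneous, non-tetrahedral) set $J_{\le m}$ does not cost more than a factor $C^m$: one must apply Proposition~\ref{Cauchy} to extract the homogeneous layer $(J_{\le m})_m\supset\Lambda_T(m,n)$ for free, then Theorem~\ref{OrOuSe} to descend to the tetrahedral subset at cost $\kappa^m$, and finally verify that the $2$-convexity constant of $X$ is the only space-dependent constant entering — which requires invoking the renorming theorem of Figiel--Johnson (as in Corollary~\ref{appl}) to reduce to convexity constant $1$, at the price of another $C^m$ with $C$ depending only on $M^{(2)}(X)$. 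A secondary subtlety is that Proposition~\ref{innichen1} and Corollary~\ref{immediateABC} carry slightly different structural hypotheses (``Banach sequence lattice'' vs.\ ``Banach lattice with enough symmetries''), so the cleanest writeup routes everything through the $\boldsymbol{\chimon}$-lower bound of Proposition~\ref{innichen1} and Theorem~\ref{main3}, avoiding the ``enough symmetries'' hypothesis entirely; I expect that to be where most of the care is needed.
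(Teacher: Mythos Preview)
Your upper-bound strategy is fine and matches the paper's (the detour through $\ell_\infty^n$ for $\boldsymbol{\lambda}$ is unnecessary, since Theorem~\ref{conny2}\,\eqref{AAA} is just Kadets--Snobar and holds for any $X_n$; the paper simply combines Theorem~\ref{conny2} with the cardinality bound of Remark~\ref{cardo}). Your passage from the $\boldsymbol{\chimon}$ lower bound to the $\boldsymbol{\lambda}$ lower bound via Theorem~\ref{main3}, Theorem~\ref{OrOuSe}, and Proposition~\ref{Cauchy} is also exactly what the paper does (see \eqref{verflucht}).

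The genuine gap is in your lower bound for $\boldsymbol{\chimon}$. You propose to apply Proposition~\ref{innichen1} with $r=2$ directly to the $2$-convex space $X$, but the conclusion of that proposition carries the factor $1/\|\mathrm{id}\colon X_n\to\ell_2^n\|^m$ on the \emph{left}, so you need this norm bounded \emph{above}, not below. Two-convexity does not give that: $\ell_\infty$ is $2$-convex with constant~$1$, yet $\|\mathrm{id}\colon\ell_\infty^n\to\ell_2^n\|=\sqrt{n}$, which kills the lower bound completely. Your parenthetical ``$\|\mathrm{id}\colon X_n\to\ell_2^n\|$ is bounded below by a constant'' is true but points the wrong way.

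The paper's fix is to use Corollary~\ref{appl} not merely to renorm $X$ to $2$-convexity constant~$1$, but to compare $X$ with $\ell_2$ itself: since $\ell_2$ is $2$-concave and $X$ is $2$-convex, Corollary~\ref{appl} gives
\[
\boldsymbol{\chimon}\big(\mathcal P_{\Lambda_T(m,n)}(\ell_2^n)\big)\le M^{(2)}(X)^m\,\boldsymbol{\chimon}\big(\mathcal P_{\Lambda_T(m,n)}(X_n)\big)\le M^{(2)}(X)^m\,\boldsymbol{\chimon}\big(\mathcal P_{J_{\le m}}(X_n)\big)\,.
\]
Now the probabilistic lower bound (Proposition~\ref{toblach}) is applied only to $\ell_2^n$, where all the auxiliary norms are explicit and the hypothesis is trivially satisfied, yielding $(n/m)^{(m-1)/2}\prec_{C^m}\boldsymbol{\chimon}(\mathcal P_{\Lambda_T(m,n)}(\ell_2^n))$. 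This is the step you are missing; once you route through $\ell_2$ the rest of your plan goes through.
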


\begin{proof}[Proof]
Both  upper estimates are immediate consequences of Theorem~\ref{conny2} and Remark~\ref{cardo}. To prove the lower bounds, we start with that of \eqref{MMM}.
By Corollary~\ref{appl}  we know that for $m\leq n$ with $\Lambda_T(m) \subset J_{\leq m}$
\[
\boldsymbol{\chimon}\big(\mathcal{P}_{\Lambda_T(m,n)}
(\ell_2^n)\big) \leq M^{(2)}(X_n)^m
\boldsymbol{\chimon}\big(\mathcal{P}_{\Lambda_T(m,n)}(X_n)
\big) \leq  M^{(2)}(X)^m\boldsymbol{\chimon}
\big(\mathcal{P}_{J_{\leq m}}(X_n)\big)\,.
\]
Then we deduce from Proposition~\ref{toblach} (in the Hilbert space case) that for $m\le n$
\[
\Big( 1+ \frac{n}{m}\Big)^{\frac{m-1}{2}}
\leq
2^{\frac{m-1}{2}}
\Big(\frac{n}{m}\Big)^{\frac{m-1}{2}}
\prec_{C^m}
\boldsymbol{\chimon}
\big(\mathcal{P}_{\Lambda_T(m,n)}(\ell_2^n)\big) \prec_{C^m} \boldsymbol{\chimon}\big(\mathcal{P}_{J_{\leq m}}(X_n)\big)\,,
\]
 and since we on the other hand for $m \ge n$ obviously have
 \begin{align} \label{obvious}
   \Big( 1+ \frac{n}{m}\Big)^{\frac{m-1}{2}} \leq 2^{\frac{m-1}{2}} \prec_{C^m} \boldsymbol{\chimon}\big(\mathcal{P}_{J_{\leq m}}(X_n)\big)\,,
 \end{align}
the  lower bound in  \eqref{MMM} is proved. Finally, it remains to check
the  lower bound in \eqref{mmm}.
We apply  Theorem~\ref{main3}, Theorem~\ref{OrOuSe} (twice), and  Proposition~\ref{Cauchy} which all together prove that for $m \leq n$
with $\Lambda_T(m) \subset J_{\leq m}$
\begin{align} \label{verflucht}
\begin{split}
  \boldsymbol{\chimon}\big( \mathcal{P}_{\Lambda_T(m+1,n)}(X_n)\big)
  &
  \, \leq \,e2^{m+1}
  \|\mathbf{Q}_{\Lambda(m+1,n),\Lambda_T(m+1,n)}\|\,\,
 \boldsymbol{\lambda}\big( \mathcal{P}_{\Lambda_T(m,n)}(X_n)\big)
 \\&
 \, \leq \,e2^{m+1}  \kappa^{m+1} \boldsymbol{\lambda}\big( \mathcal{P}_{\Lambda_T(m,n)}(X_n)\big)
 \\&
 \, \leq \,e2^{m+1}  \kappa^{2m+1} \boldsymbol{\lambda}\big( \mathcal{P}_{J_m}(X_n)\big)
 \, \leq \,e2^{m+1} \kappa^{2m+1} \boldsymbol{\lambda}\big( \mathcal{P}_{J_{\leq m}}(X_n)\big)\,.
 \end{split}
\end{align}
Then for $m \leq n$ with $\Lambda_T(m) \subset J_{\leq m}$ we obtain the lower bound in \eqref{mmm} from \eqref{MMM}, and for $m \ge n$ we use a simple modification of the straightforward argument in~\eqref{obvious}.
\end{proof}

We finish this section indicating an alternative approach to the upper bound in \eqref{MMM}. This approach has the advantage that it avoids the hypercontractivity of the
Bohnenblust-Hille inequality, but the disadvantage that it leads to  weaker constants. Note first that by assumption
\[
\boldsymbol{\chimon}\big(\mathcal{P}_{J}(X_n)\big)  \leq \boldsymbol{\chimon}\big(\mathcal{P}_{\leq m}(X_n)\big)\,.
\]
 By Proposition~\ref{degreelessm}
\[
\boldsymbol{\chimon}\big(\mathcal{P}_{\leq m}(X_n)\big)
 \,\,\leq \,\,(m+1) \max_{1 \leq k \leq m} \boldsymbol{\chimon}\big(\mathcal{P}_{k}(X_n)\big)\,,
\]
and  by Theorem~\ref{main3} for all $1 \leq k \leq m$
\[
   \boldsymbol{\chimon}\big( \mathcal{P}_{k}(X_n)\big)
 \,\le \,e 2^k \,
 \boldsymbol{\lambda}\big( \mathcal{P}_{k-1}(X_n)\big)\,.
\]
Using  \eqref{AAA}
and Remark~\ref{cardo}, we all together get
\begin{align*}
 \boldsymbol{\chimon}\big(\mathcal{P}_{J}(X_n)\big)
 &
\,\leq \,(m+1) \max_{1 \leq k \leq m} e 2^k \sqrt{\Lambda(k-1,n)}
\,= \,(m+1)e 2^m \sqrt{\Lambda(m-1,n)}
\\
&
\,\leq \, (m+1)2^m e^{m} \Big( 1+\frac{n}{m-1}\Big)^{\frac{m-1}{2}}
\,\leq \,
(m+1)2^m e^{m} 2^{^{\frac{m-1}{2}}}\Big( 1+\frac{n}{m}\Big)^{\frac{m-1}{2}}
\,,
\end{align*}
 which is the upper bound in \eqref{MMM}.

 \smallskip

\section{Miscellanea}
\label{Miscellanea}
We answer a couple of specific questions on projection and unconditional basis constants of spaces of multivariate polynomials -- all results in one way or the other are related with  polynomials on
$\ell_1^n$ or $\ell_2^n$.

\subsection{Remarks for polynomials on $\ell_1^n$}
Recall from Theorem~\ref{tensor} that for every $m,n$
\begin{equation} \label{q0}
    \boldsymbol{\lambda}\big(\mathcal{P}_{ m}(\ell_1^n)\big)
    \leq c(m,\ell_1^n )\,,
\end{equation}
and so in particular
by \eqref{projl1} and Theorem~\ref{degree-homo}
\begin{equation}\label{start}
  \boldsymbol{\lambda}\big(\mathcal{P}_{ m}(\ell_1^n)\big) \leq  e^m
    \quad
    \text{ and }
  \quad
    \boldsymbol{\lambda}\big(\mathcal{P}_{ \leq m}(\ell_1^n)\big) \leq  (m+1) e^{m}\,.
\end{equation}
What about lower bounds? A particular question in this direction is as follows:  Is it true that for each fixed $n$
 \begin{equation} \label{q1}
     \lim_{m \to \infty} \boldsymbol{\lambda}(\mathcal{P}_{\leq m}(\ell_1^n)) = \infty\,\,\, \pmb{?}
 \end{equation}
  The answer in fact is yes due to the result of  Lozinski-Kharshiladze from
  Corollary ~\ref{LoKha}, and 
  even more, we may   replace  $\ell_1$ by any  other Banach sequence lattice $X$.
  
  To see this we need the  following simple lemma.
\begin{lemma}
  Let $Y$ be a Banach space, and $X$ a $1$-complemented subspace of $Y$. Then
  \[
  \boldsymbol{\lambda}\big(\mathcal{P}_{\leq m}(X)\big) \leq  \boldsymbol{\lambda}\big(\mathcal{P}_{\leq m}(Y)\big).
  \]
  Moreover, we may replace the space of all polynomials of degree $\leq m$
  by all $m$-homogeneous polynomials.
\end{lemma}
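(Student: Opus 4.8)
The plan is to exploit the functoriality of the polynomial construction under $1$-complemented inclusions, together with the characterization of the projection constant via $\gamma_\infty$ of the identity (see~\eqref{gammainfty}). Let $P \colon Y \to Y$ be a norm-one projection with range $X$, and let $j \colon X \hookrightarrow Y$ be the inclusion, so $P j = \mathrm{id}_X$. First I would observe that a linear map $T \colon Z_1 \to Z_2$ between Banach spaces induces, by composition, a bounded linear map $T^{\sharp} \colon \mathcal{P}_{\leq m}(Z_2) \to \mathcal{P}_{\leq m}(Z_1)$, $Q \mapsto Q \circ T$, with $\|T^{\sharp}\| \leq \max\{1, \|T\|^{m}\} = \|T\|^m$ when $\|T\| \geq 1$ (and $\leq 1$ when $\|T\| \leq 1$); the same holds with $\mathcal{P}_{\leq m}$ replaced by $\mathcal{P}_m$, since $Q \circ T$ is again $m$-homogeneous when $Q$ is. Applying this to $j$ and to $P$ gives bounded maps $j^{\sharp} \colon \mathcal{P}_{\leq m}(Y) \to \mathcal{P}_{\leq m}(X)$ with $\|j^{\sharp}\| \leq 1$ and $P^{\sharp} \colon \mathcal{P}_{\leq m}(X) \to \mathcal{P}_{\leq m}(Y)$ with $\|P^{\sharp}\| \leq 1$ (here $\|j\| \leq 1$ because $X$ is a subspace and $\|P\| = 1$).

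Next I would check that these two maps realize $\mathcal{P}_{\leq m}(X)$ as a $1$-complemented subspace of $\mathcal{P}_{\leq m}(Y)$. Indeed, $j^{\sharp} \circ P^{\sharp}$ is the map $Q \mapsto Q \circ P \circ j = Q \circ \mathrm{id}_X = Q$, i.e. $j^{\sharp} P^{\sharp} = \mathrm{id}_{\mathcal{P}_{\leq m}(X)}$. Therefore $P^{\sharp} j^{\sharp} \colon \mathcal{P}_{\leq m}(Y) \to \mathcal{P}_{\leq m}(Y)$ is a projection of norm $\leq 1$ whose range is (isometric to) $\mathcal{P}_{\leq m}(X)$ via $P^{\sharp}$; note $P^{\sharp}$ is an isometric embedding since $j^{\sharp} P^{\sharp} = \mathrm{id}$ forces $\|Q\| \leq \|P^{\sharp} Q\| \leq \|Q\|$.

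The conclusion then follows from the elementary monotonicity of the projection constant under isometric $1$-complemented inclusions: if $Z_0$ is a $1$-complemented isometric subspace of $Z$, then $\boldsymbol{\lambda}(Z_0) \leq \boldsymbol{\lambda}(Z)$. This is the content of Remark~\ref{simple} (with the annihilating projection replaced by $P^{\sharp} j^{\sharp}$, whose norm is $\leq 1$), or alternatively one factors $\mathrm{id}_{\mathcal{P}_{\leq m}(X)}$ as $j^{\sharp} \circ P^{\sharp}$ through $\mathcal{P}_{\leq m}(Y)$ and invokes the ideal property of $\gamma_\infty$ via~\eqref{gammainfty}. The homogeneous case is verbatim the same, replacing every occurrence of $\mathcal{P}_{\leq m}$ by $\mathcal{P}_m$, since all the maps $j^{\sharp}$, $P^{\sharp}$ preserve homogeneity of polynomials. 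No real obstacle is anticipated here; the only point requiring a line of care is the norm estimate $\|T^{\sharp}\| \leq \|T\|^m$ for non-homogeneous polynomials of degree $\leq m$, which follows by splitting $Q$ into its homogeneous components and using Cauchy's inequality (Proposition~\ref{Cauchy}) to control each component by $\|Q\|$.
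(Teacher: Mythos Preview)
Your proposal is correct and follows essentially the same route as the paper: factor $\id_{\mathcal{P}_{\leq m}(X)}$ through $\mathcal{P}_{\leq m}(Y)$ via the composition maps $Q \mapsto Q \circ P$ and $Q \mapsto Q \circ j$, both of norm $\leq 1$, and conclude by the ideal property of $\gamma_\infty$ (equation~\eqref{gammainfty}). The extra verification that $P^{\sharp}$ is an isometric embedding is correct but not needed, and your final caveat about invoking Cauchy's inequality for $\|T^{\sharp}\| \leq \|T\|^m$ is unnecessary here, since the only maps used satisfy $\|T\| \leq 1$, so $T(B_{Z_1}) \subset \overline{B}_{Z_2}$ gives $\|T^{\sharp}\| \leq 1$ directly.
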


\begin{proof}
Take a factorization $\id_X = v\circ u$, where $u: X \to Y$ and $v: Y \to X$, both with norms $\leq 1$. Then
\[
\id_{\mathcal{P}_{\leq m}(X)} = V\circ U\,,
\]
where
\begin{align*}
&
  U:\mathcal{P}_{\leq m}(X) \to \mathcal{P}_{\leq m}(Y)\,,\,\,\,\, P \mapsto P\circ v
  \quad \text{ and } \quad
  V:\mathcal{P}_{\leq m}(Y) \to \mathcal{P}_{\leq m}(X)\,,\,\,\,\, Q \mapsto Q\circ u\,.
\end{align*}
Since $\|U\|\leq \|u\|$ and $\|V\|\leq \|v\|$, the claim follows.
  \end{proof}
  
Here is the announced result, which in the  particular case $X = \ell_1$ gives a positive answers to the  question from~\eqref{q1}.

 \begin{proposition}
 Let $X$ be a Banach sequence lattice. Then for every $n$
 \[
  \lim_{m \to \infty} \boldsymbol{\lambda}\big(\mathcal{P}_{\leq m}(X_n)\big) = \infty\,.
  \]
 More precisely,
 there is some $C >0$ such that for all $n$ and $m$
     \[
   \log(1+ m) \,\, \leq  \,\,C\,\boldsymbol{\lambda}\big(\mathcal{P}_{\leq m}(X_n)\big)\,.
  \]
    \end{proposition}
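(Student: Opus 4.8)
The statement asks to prove that for any Banach sequence lattice $X$ and every $n$, $\lim_{m\to\infty}\boldsymbol{\lambda}(\mathcal P_{\le m}(X_n))=\infty$, with the quantitative bound $\log(1+m)\le C\,\boldsymbol{\lambda}(\mathcal P_{\le m}(X_n))$ for a universal $C>0$.

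The plan is to reduce everything to the one-dimensional case, where the Lozinski--Kharshiladze result (Corollary~\ref{LoKha}(iv), or rather Corollary~\ref{manydimensionsB} combined with it) gives $\boldsymbol{\lambda}(\mathcal P_{\le m}(\mathbb C))=\boldsymbol{\lambda}(\text{Trig}_{\{0\le k\le m\}}(\mathbb T))\asymp 1+\log m$. More precisely, using the notation of the excerpt, $\mathcal P_{\le m}(\mathbb C)=\mathcal P_{\Lambda(\le m,1)}(\ell_\infty^1)$, so Corollary~\ref{LoKha}(iv) applied with $n=1$ yields a constant $c>0$ with $\boldsymbol{\lambda}(\mathcal P_{\le m}(\mathbb C))\ge c\,(1+\log m)$ for all $m$. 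The first step is therefore to record this lower bound in the one-variable case.

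The second step is to pass from $\mathbb C$ to $X_n$ by the lemma just proved in the excerpt (the one stating $\boldsymbol{\lambda}(\mathcal P_{\le m}(X))\le\boldsymbol{\lambda}(\mathcal P_{\le m}(Y))$ whenever $X$ is $1$-complemented in $Y$). For any Banach sequence lattice $X$ and any $n\ge 1$, the one-dimensional lattice $X_1=(\mathbb C,\|\cdot\|)$ is isometric to $\mathbb C$ with its modulus norm (up to the harmless scaling $\|e_1\|_X$, which does not affect the space of scalar polynomials in one variable since $\mathcal P_{\le m}(\lambda\mathbb C)=\mathcal P_{\le m}(\mathbb C)$ isometrically as the unit ball is just rescaled and $\sup$ over a ball of radius $\rho$ of a polynomial differs from the unit-ball sup only through the coefficients, but the projection constant is an isometric invariant of the Banach space $\mathcal P_{\le m}$, which is unchanged). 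Moreover $X_1$ is $1$-complemented in $X_n$ via the coordinate inclusion $e_1\mapsto e_1$ and the coordinate projection, both of norm $1$ because $(e_k)_{k=1}^n$ is a $1$-unconditional basis of $X_n$. Hence by the lemma,
\[
\boldsymbol{\lambda}\big(\mathcal P_{\le m}(\mathbb C)\big)=\boldsymbol{\lambda}\big(\mathcal P_{\le m}(X_1)\big)\le \boldsymbol{\lambda}\big(\mathcal P_{\le m}(X_n)\big)\,.
\]
Combining with Step~1 gives $\log(1+m)\le C\,\boldsymbol{\lambda}(\mathcal P_{\le m}(X_n))$ with $C=1/c$ independent of $n$ and $m$, and in particular the stated limit is $\infty$.

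The only mild subtlety, and the one worth being careful about, is the identification $\boldsymbol{\lambda}(\mathcal P_{\le m}(X_1))=\boldsymbol{\lambda}(\mathcal P_{\le m}(\mathbb C))$: one must observe that for a one-dimensional normed space $X_1=(\mathbb C,|\cdot|/\|e_1\|)$ the Banach space of polynomials of degree $\le m$ with the sup norm over $B_{X_1}$ is isometrically isomorphic (via rescaling of the coefficients $c_k\mapsto c_k\|e_1\|^{-k}$) to $\mathcal P_{\le m}(\mathbb C)$, so the projection constants agree. There is no genuine obstacle here; everything else is a direct appeal to Corollary~\ref{LoKha}(iv) and to the $1$-complementation lemma.
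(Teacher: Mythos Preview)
Your proof is correct and follows exactly the same approach as the paper: use Corollary~\ref{LoKha} to get $\log(1+m)\prec\boldsymbol{\lambda}(\mathcal P_{\le m}(\mathbb C))$, then apply the preceding lemma with the fact that $\mathbb C$ is $1$-complemented in $X_n$. The paper's proof is two lines and simply asserts that $\mathbb C$ is $1$-complemented in $X_n$ without commenting on the normalization of $e_1$; your extra care about the isometry $\mathcal P_{\le m}(X_1)\cong\mathcal P_{\le m}(\mathbb C)$ is correct but not something the paper dwells on.
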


    \begin{proof}
      From Corollary ~\ref{LoKha}  we know that
      $$ \log(1+ m) \,\, \prec  \,\,\boldsymbol{\lambda}\big(\mathcal{P}_{\leq m}(\mathbb{C})\big)\,.$$
      Since $\mathbb{C}$ is $1$-complemented in $X_n$, the proof follows from the preceding lemma.
    \end{proof}

    \smallskip

 Note that for the unconditional basis constant of $\mathcal{P}_{\leq m}(\ell_1^n)$  a question similar to \eqref{q1} might be asked, namely
   \[
  \lim_{m \to \infty} \boldsymbol{\chimon}\big(\mathcal{P}_{\leq m}(\ell_1^n)\big) = \infty\,\,\, \pmb{?}
  \]
Due to a non-trivial result of Gordon and Reisner from 
\cite{gordon1982some}
 the answer again is affirmative. In fact,   $\ell_1$ again 
 may be replaced by any Banach sequence lattice
 $X$.

\smallskip
 \begin{proposition}
 Let $X$ be a Banach sequence lattice. Then for every $n$
 \[
  \lim_{m \to \infty} \boldsymbol{\chimon}\big(\mathcal{P}_{\leq m}(X_n)\big) = \infty\,.
  \]
 More precisely,
 there is some $C >0$ such that for all $n$ and $m$
     \[
   \,\,C \sqrt{\log(1+ m)} \,\, \leq  \,\boldsymbol{\chimon}\big(\mathcal{P}_{\leq m}(X_n)\big)\,.
  \]
    \end{proposition}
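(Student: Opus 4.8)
The plan is to follow the same two–step pattern as in the preceding proposition for $\boldsymbol{\lambda}$: first reduce, by monotonicity, to the one–variable space $\mathcal{P}_{\leq m}(\mathbb{C})$, and then quote a logarithmic lower bound for the unconditional basis constant of its monomial basis. For the reduction I would begin with the left–hand inequality in Corollary~\ref{C1C2} applied with $J=\Lambda(\leq m,n)$, which gives $\boldsymbol{\chimon}\bigl(\mathcal{P}_{\leq m}(\ell_1^n)\bigr)\leq\boldsymbol{\chimon}\bigl(\mathcal{P}_{\leq m}(X_n)\bigr)$; so it suffices to prove the estimate for $\ell_1^n$, and more generally it suffices to bound $\boldsymbol{\chimon}\bigl(\mathcal{P}_{\leq m}(X_n)\bigr)$ from below for an arbitrary Banach sequence lattice.

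The key observation for the one–variable reduction is that the monomials $z_1^0,z_1^1,\dots,z_1^m$ form a sub--collection of the monomial basis $(z^\alpha)_{\alpha\in\Lambda(\leq m,n)}$ of $\mathcal{P}_{\leq m}(X_n)$, and restricting an unconditional basis to a sub--collection cannot increase its unconditional basis constant (complete any admissible choice of signs and coefficients on the sub--collection by $1$ and $0$ on the remaining indices). Hence $\boldsymbol{\chimon}\bigl(\mathcal{P}_{\leq m}(X_n)\bigr)$ dominates the unconditional basis constant of $(z_1^k)_{k=0}^m$ inside its own linear span equipped with the norm of $\mathcal{P}_{\leq m}(X_n)$. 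Since $X_n$ is a lattice with $1$--unconditional unit vector basis, $|z_1 e_1|\leq|z|$ coordinatewise, so $\{z_1:z\in B_{X_n}\}$ is precisely the open disc of radius $\rho:=\|e_1\|_{X_n}^{-1}$; consequently the map $\sum_k a_k z^k\mapsto\sum_k \rho^{-k}a_k z_1^k$ is a linear isometry of $\mathcal{P}_{\leq m}(\mathbb{C})$ onto that span which carries the monomial basis onto a diagonal rescaling of $(z_1^k)_k$. As diagonal rescalings of a basis leave the unconditional basis constant unchanged, this yields
\[
\boldsymbol{\chimon}\bigl(\mathcal{P}_{\leq m}(X_n)\bigr)\;\geq\;\boldsymbol{\chimon}\bigl(\mathcal{P}_{\leq m}(\mathbb{C})\bigr)\qquad\text{for all }n,m.
\]

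It then remains to recall the nontrivial result of Gordon and Reisner \cite{gordon1982some}: there is a universal constant $C>0$ such that $C\sqrt{\log(1+m)}\leq\boldsymbol{\chimon}\bigl(\mathcal{P}_{\leq m}(\mathbb{C})\bigr)$ for all $m$ (equivalently, the monomials $1,z,\dots,z^m$, viewed as a basis of the analytic polynomials of degree $\leq m$ on the disc, have unconditional basis constant of order at least $\sqrt{\log(1+m)}$). Combining this with the displayed inequality gives $C\sqrt{\log(1+m)}\leq\boldsymbol{\chimon}\bigl(\mathcal{P}_{\leq m}(X_n)\bigr)$ for all $n$ and $m$, and in particular $\lim_{m\to\infty}\boldsymbol{\chimon}\bigl(\mathcal{P}_{\leq m}(X_n)\bigr)=\infty$. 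The only genuinely hard ingredient is the Gordon--Reisner lower bound; everything else is an elementary restriction and rescaling argument, the one point requiring a little care being the scale invariance of the unconditional basis constant used in the one--variable reduction.
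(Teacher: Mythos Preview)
Your proof is correct and follows essentially the same route as the paper: reduce to the one-variable space by restricting to the sub-basis $(z_1^k)_{k=0}^m$ (the paper writes this simply as $\boldsymbol{\chimon}(\mathcal{P}_{\leq m}(\mathbb{C}))=\boldsymbol{\chimon}(\mathcal{P}_{\{ke_1:0\le k\le m\}}(X_n))\le\boldsymbol{\chimon}(\mathcal{P}_{\leq m}(X_n))$), then invoke Gordon--Reisner. Two small remarks: the initial detour through $\ell_1^n$ via Corollary~\ref{C1C2} is unnecessary, and the paper states Gordon--Reisner's bound for the Gordon--Lewis constant $\mathrm{gl}(\mathcal{P}_{\leq m}(\mathbb{C}))$ and then passes to $\boldsymbol{\chimon}$ via $\mathrm{gl}\le\boldsymbol{\chimon}$, rather than citing it directly for $\boldsymbol{\chimon}$.
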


    \begin{proof}
    By \cite[Proposition 2.1]{gordon1982some} there is $C >0$ such that for all $m$
  \[
  C \sqrt{\log m} \leq {\mbox{gl}}\big(\mathcal{P}_{\leq m}(\mathbb{C})\big)\,.
   \]
   Hence, we deduce from  the Gordon-Lewis inequality 
   \eqref{gl-inequality}
   that indeed
   \[
   C \sqrt{\log m} \leq  \boldsymbol{\chimon}\big(\mathcal{P}_{\leq m}(\mathbb{C})\big)
   =   \boldsymbol{\chimon}\big(\mathcal{P}_{\{ ke_1 \colon 0 \leq k \leq m\}}(X_n)\big)
   \leq  \boldsymbol{\chimon}\big(\mathcal{P}_{ \leq m}(X_n)\big)\,. \qedhere
   \]
  \end{proof}

Let us come to another question, which as the question in \eqref{q1}, is  motivated by~\eqref{start}:
Let $X$ be a Banach sequence lattice such that
\begin{equation*} \label{q2}
    \sup_{m,n} \boldsymbol{\lambda}\big(\mathcal{P}_{m}(X_n)\big)^\frac{1}{m} < \infty.
\end{equation*}
By~\eqref{start} this definitely holds for $X = \ell_1$, but does this condition conversely imply that $X = \ell_1$? Indeed, this is actually the case as the following theorem shows.

  \begin{theorem}
  \label{Xl1}
    Let $X$ be a Banach sequence lattice. Then the following are equivalent:

    \begin{itemize}
      \item[(a)]
      $\sup_{m,n} \boldsymbol{\lambda}\big(\mathcal{P}_{ \leq m}(X_n)\big)^\frac{1}{m} < \infty$,
      \item[(b)]
      $\sup_{m,n} \boldsymbol{\lambda}\big(\mathcal{P}_{ m}(X_n)\big)^\frac{1}{m} < \infty$,
      \item[(c)]
      $X_n = \ell_1^n$ uniformly,
      \item[(d)]
      $X=\ell_1$ whenever $X$ is separable.
    \end{itemize}
     \end{theorem}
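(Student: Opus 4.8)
The plan is to establish the chain of implications $(a) \Rightarrow (b) \Rightarrow (c) \Rightarrow (d) \Rightarrow (a)$, which will prove the equivalence. Three of the four arrows are comparatively soft: $(a) \Rightarrow (b)$ is immediate from Proposition~\ref{Cauchy} (or Theorem~\ref{degree-homo}), since $\boldsymbol{\lambda}(\mathcal{P}_m(X_n)) \le \boldsymbol{\lambda}(\mathcal{P}_{\le m}(X_n))$. The implication $(c) \Rightarrow (d)$ is a routine fact about separable Banach sequence lattices: if $X_n = \ell_1^n$ uniformly (i.e.\ $\sup_n d(X_n,\ell_1^n) < \infty$), then the fundamental functions satisfy $\varphi_X(n) \sim n$, and for a separable lattice with $1$-unconditional basis this forces $X = \ell_1$ up to equivalence of norms (one checks that the unit vectors span $\ell_1$ isomorphically via the uniform bound and density). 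For $(d) \Rightarrow (a)$, I would invoke~\eqref{start}: once $X = \ell_1$, we have $X_n = \ell_1^n$ isometrically, and $\boldsymbol{\lambda}(\mathcal{P}_{\le m}(\ell_1^n)) \le (m+1)e^m$ via $\ccc(m,\ell_1) \le m^m/m! < e^m$ together with Theorem~\ref{degree-homo}, so $\sup_{m,n}\boldsymbol{\lambda}(\mathcal{P}_{\le m}(X_n))^{1/m} \le e < \infty$.

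\textbf{The hard part: $(b) \Rightarrow (c)$.} This is where the real work lies. Suppose $\sup_{m,n}\boldsymbol{\lambda}(\mathcal{P}_m(X_n))^{1/m} =: A < \infty$. I want to deduce that $X$ is uniformly $\ell_1$, equivalently (by the lattice structure and a duality argument) that $X'$ is uniformly $\ell_\infty$, equivalently $\varphi_{X_n}(n) \sim n$ with uniform constants together with control of $d(X_n,\ell_1^n)$. The plan is to run the lower bounds through the machinery of Chapter~\ref{tensor-pro-me} and Chapter~\ref{Unconditionality}. First, restricting to $m$-homogeneous tetrahedral polynomials and using that $X_n$ is a lattice (hence has enough symmetries after symmetrization, or applying the estimates directly to the symmetric renorming), Theorem~\ref{tensor} or Corollary~\ref{immediate} gives a lower bound of the form $\boldsymbol{\lambda}(\mathcal{P}_m(X_n)) \succeq (e\kappa m)^{-m}\,\boldsymbol{\lambda}(X_n^\ast)^m$ when $X_n$ has enough symmetries; but we cannot assume that, so instead I would combine Theorem~\ref{main3} (linking $\boldsymbol{\chimon}$ to $\boldsymbol{\lambda}$) with the probabilistic lower bounds from Section~\ref{Probabilistic estimates}. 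Concretely, apply Proposition~\ref{innichen1} or Proposition~\ref{toblach} with $J = \Lambda_T(m,n)$ and $r$ chosen appropriately: if $X$ fails to be uniformly $\ell_1$, then either $\varphi_{X_n}(n)/n \to 0$ along some subsequence, or the norm $\|\id:X_n \to \ell_r^n\|$ grows strictly slower than the ``$\ell_1$-rate'' for some $r > 1$, and in either case the tetrahedral unconditional basis constant $\boldsymbol{\chimon}(\mathcal{P}_{\Lambda_T(m,n)}(X_n))$ — and hence, via~\eqref{strategy} / Theorem~\ref{main3} applied with $I = \Lambda_T(m+1,n)$ so that $I^\flat = \Lambda_T(m,n)$, together with $\|\mathbf{Q}_{\Lambda(m+1,n),\Lambda_T(m+1,n)}\| \le \kappa^{m+1}$ from Theorem~\ref{OrOuSe} — forces $\boldsymbol{\lambda}(\mathcal{P}_m(X_n))^{1/m}$ to be unbounded in $n$ (for suitable $m = m(n)$), contradicting $(b)$.

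\textbf{Making the contradiction quantitative.} To turn this into a clean argument I would proceed contrapositively: assume $X$ is \emph{not} uniformly $\ell_1$. Passing to $n$th sections and using that every Banach sequence lattice $X_n$ with $1$-unconditional basis satisfies $\ell_1^n \hookrightarrow X_n \hookrightarrow \ell_\infty^n$ after normalization of the basis, non-equivalence to $\ell_1$ uniformly means $\sup_n d(X_n,\ell_1^n) = \infty$. By Corollary~\ref{C1} and Corollary~\ref{C2} (the convexity/concavity comparison results), and using that any lattice is $1$-convex, one can interpolate to find, for each large $n$, an exponent $r_n \in (1,2]$ with $\|\id:X_n \to \ell_{r_n}^n\|$ small relative to $n^{1/r_n}$ while $\varphi_{X_n}(n)$ is not too large; applying Proposition~\ref{innichen1} (whose hypothesis $\varphi_X(n) \prec n^{1/r}$ must be arranged, perhaps after a further reduction to the $2$-concave or $2$-convex envelope and using Corollary~\ref{appl}) yields $\boldsymbol{\chimon}(\mathcal{P}_{\Lambda_T(m,n)}(X_n)) \succeq_{C^m}\|\id:X_n\to\ell_{r_n}^n\|^{-m}(n/m)^{(m-1)/r_n'}$, which grows super-exponentially in $m$ for an appropriate choice $m = m(n) \to \infty$. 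Feeding this back through the second displayed inequality of Theorem~\ref{main3} (with $\|\mathbf{Q}_{\Lambda(m,n),J}\| \le \kappa^m$) produces $\boldsymbol{\lambda}(\mathcal{P}_{(\Lambda_T(m,n))^\flat}(X_n)) = \boldsymbol{\lambda}(\mathcal{P}_{\Lambda_T(m-1,n)}(X_n)) \succeq (e2^m\kappa^m)^{-1}\boldsymbol{\chimon}(\mathcal{P}_{\Lambda_T(m,n)}(X_n))$, and dividing by the exponential factors still leaves a quantity whose $m$th root is unbounded in $n$, contradicting $(b)$. I expect the main technical obstacle to be bookkeeping: choosing the parameters $r_n$ and $m(n)$ so that all hypotheses of the probabilistic propositions are simultaneously met (in particular the symmetry-type assumptions $\varphi_{X_n}(n)\varphi_{X_n'}(n) \prec n$ needed for Proposition~\ref{toblach}/\ref{innichen}), and verifying that the factor $(e\kappa)^m$-type losses from Theorem~\ref{OrOuSe} and polarization do not swamp the gain — they do not, because the gain $(n/m)^{(m-1)/r'}$ beats any fixed-base exponential once $n \gg m$, but this needs to be checked carefully along the chosen sequence.
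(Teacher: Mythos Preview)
Your handling of the soft implications is fine, but your attack on $(b)\Rightarrow(c)$ has a genuine gap and misses the paper's key simplification.

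\textbf{The missed observation.} You never exploit the case $m=1$. Since $\mathcal{P}_1(X_n)=X_n^\ast$, condition $(b)$ already gives
\[
\sup_n \boldsymbol{\lambda}(X_n^\ast)\;=\;\sup_n \gamma_\infty(\id_{X_n^\ast})\;=\;\sup_n \gamma_1(\id_{X_n})\;\le\;C<\infty.
\]
So each $X_n$ factors through an $L_1$-space with uniformly bounded constants. This single line replaces all of your probabilistic machinery. From here the paper applies Grothendieck's theorem: writing $\id_{X_n}=v\circ u$ with $u\colon X_n\to L_1(\mu)$, $v\colon L_1(\mu)\to X_n$ and $\|u\|\|v\|\le 2C$, every diagonal operator $D_\lambda\colon X_n\to\ell_2^n$ satisfies
\[
\pi_1(D_\lambda)=\pi_1(D_\lambda v u)\le \pi_1(D_\lambda v)\|u\|\le K_G\|D_\lambda\|\|v\|\|u\|\le 2K_GC\,\|D_\lambda\|.
\]
A classical result of Lindenstrauss--Pe{\l}czy\'nski (recorded as \cite[Proposition~33.6]{tomczak1989banach}) then yields $\|\id\colon X_n\to\ell_1^n\|\le (2K_GC)^2$ uniformly, which is exactly $(c)$.

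\textbf{Why your route is problematic.} The probabilistic lower bounds you invoke (Propositions~\ref{innichen1}, \ref{toblach}, \ref{innichen}) all carry structural hypotheses on $X$ --- e.g.\ $\varphi_X(n)\prec n^{1/r}$, or $\varphi_{X_n}(n)\varphi_{X_n'}(n)\prec n$, or the norm comparison in~\eqref{M} --- that an \emph{arbitrary} Banach sequence lattice need not satisfy. You acknowledge this as ``bookkeeping,'' but it is not: there is no mechanism in your sketch that produces such an $r$ (or a sequence $r_n$) from the mere failure of uniform $\ell_1$-equivalence, and the interpolation/convexity comparisons you mention (Corollaries~\ref{C1}, \ref{C2}, \ref{appl}) go in the wrong direction for this purpose. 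So as written the argument does not close, and even if it could be repaired it would be far more elaborate than the two-line $m=1$ reduction to classical Banach space theory that the paper uses.
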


  \begin{proof}
 By Theorem~\ref{degree-homo} the first two statements are equivalent.
  According to \eqref{start}, we only have to show that $(b)$ implies $(c)$. Under the assumption of $(b)$ we have that
  \[
  \sup_{n} \gamma_1(X_n)  =
  \sup_{n} \gamma_\infty(X_n^\ast) =\sup_{n} \boldsymbol{\lambda}(X_n^\ast) \leq C:= \sup_{m,n} \boldsymbol{\lambda}\big(\mathcal{P}_{  m}(X_n)\big)^\frac{1}{m} < \infty\,.
  \]
 We show that there is $K >0$ such that for any diagonal operator $D_\lambda: X_n  \to \ell_n^2, \,(e_i) \mapsto (\lambda_i e_i) $ we have
 \[
 \sup_{n} \pi_1(D_\lambda: X_n\to \ell^n_2) \leq  K \|D_\lambda\|\,.
 \]
  Indeed, take a factorization $\id = uv$, where $u: X_n \to L_1(\mu)$ and $v:  L_1(\mu) \to X_n $ for some measure $\mu$ and
 $\|u\| \|v\| \leq  2  \gamma_1(X_n)$. From Grothendieck's theorem we know that
 \[
 \pi_1 (D_\lambda \circ v) \leq K_G  \|D_\lambda \circ v\|\,,
 \]
 and hence
 \begin{align*}
   \pi_1(D_\lambda: X_n\to \ell^n_2)
   &
   =  \pi_1(D_\lambda\circ v \circ u)
 \leq \pi_1(D_\lambda\circ v) \|u\|
 \\&
 \leq K_G  \|D_\lambda\| \|v\| \|u\|\leq  K_G  \|D_\lambda\| 2  \gamma_1(X_n)
 \leq  2 K_G C  \|D_\lambda\|\,.
 \end{align*}
 Then we deduce from \cite[Proposition 33.6]{tomczak1989banach} (a result which essentially goes  back to Lindenstrauss and
 Pelczynski) that
 \[
 \|\id\colon  X_n \to \ell_1^n \| \leq K=(2 K_G C)^2\,,
 \]
 the conclusion.
 \end{proof}

\smallskip

In what follows we assume that $(\Omega, \mathcal{A}, \mu)$ is a~$\sigma$-finite separable and atomless measure space,
or it is purely atomic with the counting measure $\mu$ on $\mathcal{A} := 2^{\mathbb{N}}$. See  \cite{KM2000}
for a~complete description of all symmetric  Banach lattices over $(\Omega, \mathcal{A}, \mu)$ with the Dunford-Pettis
property.  As an application of this result it is shown in \cite{KM2000} that, the only symmetric Banach lattices with
both the Dunford-Pettis and the Grothendieck property are the $L_\infty(\mu)$-spaces. In consequence, we may conclude that
for every symmetric Banach lattice  $X\neq L_\infty(\mu)$, we have $\lambda(X)=\infty$.
We again refer to  the discussion in
the introduction  (in particular to the part 'Some history').

The following result characterizes those symmetric Banach lattices over $(\Omega, \mathcal{A}, \mu)$ for which
the projection constant of $\mathcal{P}_m(X)$ or $\mathcal{P}_{\leq m}(X)$ is finite. Note that this an extended
version of Theorem \ref{Xl1} with a~completely different proof.

\begin{theorem}
Let $X$ be  a~real or complex symmetric Banach lattice over $(\Omega, \mathcal{A}, \mu)$ and $m \in \mathbb{N}$. Then the following
statements are equivalent.
\begin{itemize}
\item[{\rm(i)}] $\boldsymbol{\lambda}\big(\mathcal{P}_{\leq m}(X)\big)<\infty$,
\item[{\rm(ii)}] $\boldsymbol{\lambda}
\big(\mathcal{P}_m(X)\big)<\infty$,
\item[{\rm(iii)}] $X=L_1(\mu)$,
\item[{\rm(iv)}] $\boldsymbol{\lambda} \big(\mathcal{P}_k(X)\big)<\infty$, for each $k\in \mathbb{N}$.
\end{itemize}
\end{theorem}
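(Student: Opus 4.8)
The plan is to prove the chain of implications $(i)\Rightarrow(ii)\Rightarrow(iii)\Rightarrow(iv)\Rightarrow(i)$, using as the backbone the fact that a symmetric Banach lattice with finite projection constant must essentially be an $L_\infty$-type space (as recalled just before the statement), together with the tensor-product description $\mathcal{P}_m(X)$ via $\otimes_{\varepsilon_s}^m X^\ast$ and the identity $\boldsymbol{\lambda}(X)=\gamma_\infty(\id_X)$ from \eqref{gammainfty}.

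First I would dispose of the easy implications. The implication $(i)\Rightarrow(ii)$ is Proposition~\ref{Cauchy}: $\mathcal{P}_m(X)$ is $1$-complemented in $\mathcal{P}_{\leq m}(X)$ via the Taylor projection $\mathbf{Q}$ onto the $m$-homogeneous part, so $\boldsymbol{\lambda}(\mathcal{P}_m(X))\le\boldsymbol{\lambda}(\mathcal{P}_{\leq m}(X))$ (the cited Proposition is stated for finite-dimensional lattices, but the Cauchy-integral argument works verbatim on any Banach space, and one can also reduce to finite sections). The implication $(iv)\Rightarrow(i)$ follows from Theorem~\ref{degree-homo}: for any Banach space $X_n=(\mathbb{C}^n,\|\cdot\|)$ and any index set of degree $m$, $\boldsymbol{\lambda}(\mathcal{P}_{\leq m}(X_n))\le (m+1)\max_{0\le k\le m}\boldsymbol{\lambda}(\mathcal{P}_k(X_n))$; taking a supremum over $n$ (and noting $\mathcal{P}_{\leq m}(X)=\varinjlim_n\mathcal{P}_{\leq m}(X_n)$ isometrically, hence $\boldsymbol{\lambda}(\mathcal{P}_{\leq m}(X))=\sup_n\boldsymbol{\lambda}(\mathcal{P}_{\leq m}(X_n))$ because $\gamma_\infty$ is a ``maximal'' ideal norm that is computed on finite-dimensional subspaces) gives $(i)$ from the finiteness of each $\boldsymbol{\lambda}(\mathcal{P}_k(X))$. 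The implication $(iii)\Rightarrow(iv)$ is the observation that for $X=L_1(\mu)$ one has $X_n=\ell_1^n$ uniformly, and by Corollary~\ref{immediate} (or directly Theorem~\ref{tensor} with $\ccc(m,\ell_1^n)\le e^m$ from \eqref{projl1}) $\boldsymbol{\lambda}(\mathcal{P}_k(\ell_1^n))\le e^k$ uniformly in $n$, whence $\boldsymbol{\lambda}(\mathcal{P}_k(X))\le e^k<\infty$; alternatively cite \eqref{start} together with a direct-limit argument.

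The heart of the proof is $(ii)\Rightarrow(iii)$. Here I would argue by passing to duals. Using \eqref{gammainfty} and the isometric identity $\mathcal{P}_m(X_n)=\otimes_{\varepsilon_s}^m X_n^\ast$, together with the polarization bound \eqref{ec polarizacion}--\eqref{projl1} giving $\ccc(m,X_n)\le e^m$ and the symmetric-to-full tensor comparison, finiteness of $\sup_n\boldsymbol{\lambda}(\mathcal{P}_m(X_n))$ forces $\sup_n\gamma_\infty(\id_{\otimes_{\varepsilon}^m X_n^\ast})=\sup_n\gamma_\infty(\id_{X_n^\ast})^m<\infty$, i.e.\ $\sup_n\boldsymbol{\lambda}(X_n^\ast)<\infty$. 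Equivalently, $\sup_n\gamma_1(\id_{X_n})<\infty$ by trace duality of $\gamma_1$ and $\gamma_\infty$. Now I would follow the argument already used in the proof of Theorem~\ref{Xl1}: factor $\id_{X_n}=uv$ through $L_1(\mu)$ with $\|u\|\|v\|\le 2\gamma_1(X_n)$, apply Grothendieck's theorem to bound $\pi_1(D_\lambda v)\le K_G\|D_\lambda v\|$ for every diagonal operator, conclude $\sup_n\pi_1(D_\lambda\colon X_n\to\ell_2^n)\le 2K_GC\|D_\lambda\|$, and then invoke \cite[Proposition 33.6]{tomczak1989banach} to obtain $\|\id\colon X_n\to\ell_1^n\|\le (2K_GC)^2$ uniformly, so $X_n=\ell_1^n$ uniformly. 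Since $X$ is symmetric over a separable atomless or purely atomic $\sigma$-finite measure space, ``$X_n=\ell_1^n$ uniformly'' upgrades to $X\equiv L_1(\mu)$ by the classical fact that a symmetric space all of whose finite sections are uniformly $\ell_1^n$ must be (isometric, after the natural renorming) $L_1$; concretely, the fundamental function must be $\varphi_X(k)\sim k$ and the uniform cotype-$2$/$\ell_1$-estimate rules out any other symmetric space, so one reads off $X=L_1(\mu)$.

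The main obstacle I anticipate is the last upgrade, namely making precise that uniform finite-dimensional $\ell_1^n$-structure of a \emph{symmetric} Banach lattice genuinely pins down $X=L_1(\mu)$ (as opposed to merely being isomorphic to it) --- this is where the symmetry hypothesis and the structure theory of symmetric Banach lattices over $(\Omega,\mathcal{A},\mu)$ must be used carefully, and where one should cite the relevant classification (e.g.\ via \cite{KM2000} or the books \cite{BS,LT}); a secondary technical point is the (routine but not entirely trivial) verification that $\boldsymbol{\lambda}(\mathcal{P}_m(X))=\sup_n\boldsymbol{\lambda}(\mathcal{P}_m(X_n))$ and likewise for $\mathcal{P}_{\leq m}$, which follows from the maximality of $\gamma_\infty$ and the density of polynomials with finitely many variables.
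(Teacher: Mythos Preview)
Your reduction to finite sections has a genuine gap. The identity you rely on,
$\boldsymbol{\lambda}\big(\mathcal{P}_{\leq m}(X)\big)=\sup_n\boldsymbol{\lambda}\big(\mathcal{P}_{\leq m}(X_n)\big)$
(and its homogeneous analog), is \emph{false} in general: the example $Y=c_0$, $Y_n=\ell_\infty^n$ already shows that
$\sup_n\boldsymbol{\lambda}(Y_n)=1$ while $\boldsymbol{\lambda}(Y)=\infty$. Maximality of $\gamma_\infty$ gives trace duality
with $\pi_1$, not that $\gamma_\infty(\id_Y)$ is computed on an increasing net of $1$-complemented finite-dimensional
subspaces. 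This breaks your $(iii)\Rightarrow(iv)$ and $(iv)\Rightarrow(i)$: from the uniform bound
$\boldsymbol{\lambda}(\mathcal{P}_k(\ell_1^n))\le e^k$ you cannot conclude
$\boldsymbol{\lambda}(\mathcal{P}_k(\ell_1))<\infty$ by a limit argument. (The conclusion happens to be true, but for a
nontrivial structural reason.) A second problem is that in the non-atomic case there are no ``finite sections'' $X_n$ in
the sense you use, so the whole finite-dimensional route does not even start. A third, minor point: in your
$(ii)\Rightarrow(iii)$ the tensor comparison runs the wrong way --- one has
$\gamma_\infty(\id_{\mathcal{P}_m(X_n)})\le\ccc(m,X_n)\,\gamma_\infty(\id_{\otimes_\varepsilon^m X_n^\ast})$, not the
reverse --- though this is easily repaired by invoking Lemma~\ref{old} (Aron--Schottenloher) to get
$\boldsymbol{\lambda}(X_n^\ast)=\boldsymbol{\lambda}(\mathcal{P}_1(X_n))\le C(m)\,\boldsymbol{\lambda}(\mathcal{P}_m(X_n))$.

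The paper avoids all of this by working directly in infinite dimensions. The implication $(iv)\Rightarrow(i)$ is immediate
from the topological decomposition $\mathcal{P}_{\leq m}(X)=\mathbb{R}\oplus\mathcal{P}_1(X)\oplus\cdots\oplus\mathcal{P}_m(X)$.
For $(ii)\Rightarrow(iii)$ the paper uses the homogenisation isomorphism
$\mathcal{P}_m(X)\simeq\mathcal{P}_{\leq m}(X\oplus\mathbb{R})$ (from \cite{HR2022}) to obtain
$\boldsymbol{\lambda}(X^\ast)<\infty$, hence $\boldsymbol{\lambda}(X')<\infty$; then $X'$ is complemented in an
$\ell_\infty(S)$, so it has both the Dunford--Pettis and the Grothendieck property, and the classification from
\cite{KM2000} forces $X'=L_\infty(\mu)$, i.e.\ $X=L_1(\mu)$. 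For $(iii)\Rightarrow(iv)$ the paper invokes the deep result
from \cite{HR2022} that $\mathcal{P}_k(X)\simeq\ell_\infty$ for separable $\mathcal{L}_1$-spaces $X$. These are genuinely
infinite-dimensional ingredients that your finite-section approach cannot replace.
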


\begin{proof} Without loss of generality we may assume that $X$ is a~real space. (i) $\Rightarrow$ (ii). Since for any Banach space $E$, we have the natural topological decomposition
\[
\mathcal{P}_{\leq m}(E) =  \mathbb{R} \oplus \mathcal{P}_1(E) \oplus \cdots \oplus \mathcal{P}_m(E)\,,
\]
 we conclude that $\boldsymbol{\lambda} \big(\mathcal{P}_k(X)\big)<\infty$ for each $1\leq k \leq m$.

(ii) $\Rightarrow$ (iii). Recall that there is a~canonical isomorphism between $\mathcal{P}_m(X)$ and
$\mathcal{P}_{\leq m}\big(X\oplus \mathbb{R}\big)$ given by homogenisation (see, e.g.,  \cite{HR2022}). This implies that $\boldsymbol{\lambda}
\big(\mathcal{P}_1(X\oplus \mathbb{R})\big) <\infty$, and so $\lambda(X^{*})<\infty$. Since the K\"othe dual space $X'$ is isometricaly isomorphic to a~complemented subspace of $X^{*}$, $\lambda(X') <\infty$.
Hence the symmetric Banach lattice $X'$ over $(\Omega, \mathcal{A}, \mu)$ is isomorphic to a~complemented
subspace of $\ell_\infty(S)$ for some nonempty set $S$. In consequence, $X'$ has both the Dunford--Pettis
and the Grothendieck property.  We can then apply the result from \cite{KM2000} mentioned above to deduce that
$X'=L_\infty(\mu)$, which clearly yields $X=L_1(\mu)$ as required.

(iii) $\Rightarrow$ (iv). Note that based on the previously mentioned
 Lindenstrauss characterization of complemented subspaces in $\ell_{\infty}$ (see again the introduction), in \cite{HR2022} it was proved that if $X$ is a~separable 
$\mathcal{L}_1$-space, then for each $k\in \mathbb{N}$ the space $\mathcal{P}_k(X)$, so also $\mathcal{P}_{\leq m}(X)$, is isomorphic to $\ell_\infty$. Clearly, this yields $\boldsymbol{\lambda}\big(\mathcal{P}_{\leq m}(X)\big)<\infty$. 

The implication (iv) $\Rightarrow $ (i) is obvious, and this completes the proof. 
\end{proof}

Whether we in Theorem \ref{Xl1} may replace the projection constant of $\mathcal{P}_{ \leq m}(X_n)$ by the  unconditional basis constant of the monomial basis, is an open  problem  (which in  \cite{defant2019libro} is called 'Lempert's problem'). We recall the following far reaching partial solution
from \cite{bayart2012maximum} (see also \cite[Theorem 20.26]{defant2019libro}).

\begin{theorem}
Let $X$ be a Banach sequence lattice such that
\[
\sup_{m,n} \boldsymbol{\chimon}\big(\mathcal{P}_{ m}(X_n)\big)^\frac{1}{m} < \infty.
\]
Then  for every $\varepsilon >0$ there is $D >0$ such for $n$ large enough
\[
\|\id\colon X_n \to \ell_1^n \| \,\,\leq D \,\,(\log \log n)^\varepsilon\,.
\]
 Moreover, here  $\mathcal{P}_{ m}(X_n)$ may be replaced by  $\mathcal{P}_{\leq  m}(X_n)$.
  \end{theorem}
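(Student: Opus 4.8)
Rescaling the lattice $X$ changes only the final constant $D$, so I may assume that the unit vector basis of $X$ is normalised, i.e. $\|e_k\|_{X_n}=1$ for all $k,n$. Then $\|x\|_\infty\le\|x\|_{X_n}$ by the lattice property, hence $\|\mathrm{id}\colon X_n\to\ell_\infty^n\|=1$ and $\|\mathrm{id}\colon\ell_1^n\to X_n\|=1$. Put $A_n:=\|\mathrm{id}\colon X_n\to\ell_1^n\|$; since $\|\mathrm{id}\colon\ell_1^n\to X_n\|=1$ this equals $d(X_n,\ell_1^n)$. By Corollary~\ref{degreelessmB} one has $\boldsymbol{\chimon}(\mathcal{P}_m(X_n))\le\boldsymbol{\chimon}(\mathcal{P}_{\le m}(X_n))\le(m+1)2^m\,\boldsymbol{\chimon}(\mathcal{P}_m(X_n))$, so the $\mathcal{P}_m$-- and $\mathcal{P}_{\le m}$--versions of the hypothesis are equivalent; it therefore suffices to treat the former, and I fix $C\ge1$ with $\boldsymbol{\chimon}(\mathcal{P}_m(X_n))\le C^{m}$ for all $m,n$.

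\textbf{A master inequality.} For $1\le r\le2$ the elementary interpolation bound $\|x\|_r\le\|x\|_\infty^{1/r'}\|x\|_1^{1/r}$ together with the normalisation gives $\|\mathrm{id}\colon X_n\to\ell_r^n\|\le A_n^{1/r}$, whence $\big(\|\mathrm{id}\colon X_n\to\ell_1^n\|/\|\mathrm{id}\colon X_n\to\ell_r^n\|\big)^{m-1}\ge A_n^{(m-1)/r'}$. Feeding this and the hypothesis $\boldsymbol{\chimon}(\mathcal{P}_m(X_n))\le C^{m}$ into Proposition~\ref{innichen++} (whose constant I denote by $C_1$), and tidying the elementary factors by Stirling's formula applied to $e^{-m/r'}m^{m/r'}\le(m!)^{1/r'}$, yields for all $m\ge2$ and all $n$
\[
A_n^{\frac{m-1}{r'}}\ \le\ C_1\,C^{m}\,m^{2}\,(\log n)^{1+\frac1{r'}}\,(m!)^{\frac1{r'}}.
\]
Taking $\tfrac{r'}{m-1}$--th roots and passing to logarithms,
\[
\log A_n\ \le\ \frac{r'}{m-1}\bigl(m\log C+2\log m+\log C_1\bigr)\ +\ \frac{(r'+1)\log\log n}{m-1}\ +\ \frac{\log m!}{m-1}\,.
\]
Taking $r=2$ and $m\asymp\log\log n$ makes the first two terms $O(1)$, while $\log m!/(m-1)\asymp\log m\asymp\log\log\log n$; this already produces the crude bound $A_n\le c\,(\log\log n)^{O(1)}$ for $n$ large.

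\textbf{Bootstrap, and the main obstacle.} The bound just obtained is not yet $(\log\log n)^{\varepsilon}$, and no single choice of the parameters $m,r$ in the master inequality does better than $(\log\log n)^{O(1)}$: the terms $\log m!/(m-1)$ and $(r'+1)\log\log n/(m-1)$ pull the optimal $m$ in opposite directions, forcing $m$ of order $\log\log n$ and hence $\log A_n$ of order at least $\log m$. To reach the sharp exponent one bootstraps. Once it is known that $A_n\le\Phi(n)$ --- so that $X_n$ is $\Phi(n)$--isomorphic to $\ell_1^n$ via the identity --- the random--polynomial construction underlying Theorem~\ref{Hinault alfa} can be carried out on a net of $B_{X_n}$ adapted to this near--$\ell_1^n$ structure, which in effect replaces the factor $\log n$ governing the entropy estimate by a quantity controlled by $\log\Phi(n)$; re--optimising the master inequality with this sharper input lowers the exponent, and finitely many rounds of the iteration bring it below any prescribed $\varepsilon>0$ for $n$ large, the accumulated constants staying bounded because at each stage $C$, $C_1$ and the combinatorial factors enter only through powers tending to $1$. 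Finally, by the first paragraph the assertion for $\mathcal{P}_{\le m}(X_n)$ follows from that for $\mathcal{P}_m(X_n)$. I expect this bootstrap --- in particular making rigorous the substitution of $\log\Phi(n)$ for $\log n$ in the probabilistic estimate and controlling the propagation of constants through the iteration --- to be the main difficulty; the remainder is the routine combination of Proposition~\ref{innichen++}, the interpolation inequality and Stirling's formula assembled above.
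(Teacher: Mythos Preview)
The paper does not prove this theorem. It is stated there as a citation of a result of Bayart (\cite{bayart2012maximum}; see also \cite[Theorem~20.26]{defant2019libro}), so there is no ``paper's own proof'' to compare against.

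That said, your proposal is structurally aligned with the known proof. Your first stage is correct: Proposition~\ref{innichen++}, combined with the elementary interpolation $\|\mathrm{id}\colon X_n\to\ell_r^n\|\le A_n^{1/r}$ and the optimisation $m\asymp\log\log n$, does yield $A_n\le(\log\log n)^{c}$ for some fixed exponent $c$ depending on the hypothesis constant. The reduction between $\mathcal P_m$ and $\mathcal P_{\le m}$ via Corollary~\ref{degreelessmB} is also fine.

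The genuine gap is exactly where you place it: the bootstrap. As you observe, a single pass through Proposition~\ref{innichen++} cannot reach exponent $\varepsilon$, because the $\log m!/(m-1)$ term forces $\log A_n\gtrsim\log\log\log n$. Your stated plan --- to rerun the entropy argument behind Theorem~\ref{Hinault alfa} on a net of $B_{X_n}$ adapted to the near-$\ell_1^n$ structure, thereby replacing the $(\log n)^{1+1/r'}$ factor by a power of $\log\Phi(n)$ --- is the right idea, and it is essentially what Bayart does. But this replacement is not automatic: one needs a covering-number estimate for $B_{X_n}$ (in the metric induced by the random polynomial process) that is controlled by $A_n$ rather than by the dimension $n$, and then a careful recursion showing that the exponents contract geometrically while the constants stay bounded. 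You have identified this as ``the main difficulty'' but have not carried it out; as written, the proposal is an accurate outline rather than a proof, with the substantive analytic work still to be done.
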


We  finish this discussion for polynomials on $\ell_1^n$ with the following remark, which may be seen as a~polynomial version of \eqref{formulaX}.
\begin{remark}\label{rem:dist to P(l_1)}
Let $X_n = (\mathbb{C}^n, \|\cdot\|)$ be a Banach space. Then for any index set 
$\Lambda_T(m,n) \subset J\subset \Lambda(m,n)$,
\begin{align}\label{eq:dist to P(l_1)}
\boldsymbol{\lambda}\big(\mathcal{P}_{J}(X_n)\big)\prec_{C^m} d\big(\mathcal{P}_{J}(X_n),\mathcal{P}_{J}(\ell_1^n)\big)\,.
\end{align}
\end{remark}

\begin{proof}
By either \eqref{start}  in the case $J= \Lambda(m,n)$ or by Theorem \ref{start-poly2} for general $J$, we know that $\boldsymbol{\lambda}(\mathcal{P}_{J}(\ell_1^n))\prec_{C^m}1.$
Thus from \eqref{gammainfty} we deduce that  there is a factorization 
$$
 \mathcal{P}_{J}(\ell_1^n)\rightarrow\ell_\infty\rightarrow\mathcal{P}_{J}(\ell_1^n)
$$
of the identity  on $\mathcal{P}_{J}(\ell_1^n)$ with norm $\prec_{C^m}1.$ 
This implies, given a factorization 
$X_n \to \ell^n_1 \to X_n$ of the identity on $X_n$,   the existence of a factorization of the identity on $\mathcal{P}_{J}(\ell_1^n)$ of the form 
$$
\mathcal{P}_{J}(X)\rightarrow \mathcal{P}_{J}(\ell_1^n)\rightarrow\ell_\infty\rightarrow\mathcal{P}_{J}(\ell_1^n)\rightarrow \mathcal{P}_{J}(X).
$$
Another application of \eqref{gammainfty} allows us to conclude \eqref{eq:dist to P(l_1)}.
\end{proof}

\smallskip

\subsection{Remarks for polynomials on $\ell_2^n$}
From Theorem~\ref{conny2}, \eqref{BBB} we see that
\begin{equation}\label{glzero}
\boldsymbol{\chimon}\big(\mathcal{P}_m(\ell_2^n)\big) \leq e 2^m  \sqrt{\Lambda(m-1,n)} = e 2^m \,\sqrt{ {n-2 + m}\choose {m-1}}\,.
\end{equation}
The following proposition in fact  complements the  preceding estimate.
Let us point out that the proof of \eqref{glzero},  based on the Gordon--Lewis approach, is rather involved;
in particular, the Kadets-Snobar theorem and the following two non-trivial estimates are used
\begin{equation*}  \label{glA}
\boldsymbol{\chimon}\big(\mathcal{P}_m(\ell_2^n)\big) \leq 2^m {\mbox{gl}}\big(\mathcal{P}_m(\ell_2^n)\big)
\,\,\,\,\,\,\,\, \text{and} \,\,\,\,\,\,\,\,
{\mbox{gl}}\big(\mathcal{P}_m\big) \leq e \boldsymbol{\lambda}\big(\mathcal{P}_{m-1}(\ell_2^n)\big)
\end{equation*}
(see Theorem~\ref{gl-versus-unc} and Theorem~\ref{gl_versus_proj} for the particular case in which $J = \Lambda(m,n)$).
We show next  an alternative bound by a more direct approach, which is also better for some values of $m$. 
\begin{proposition}
\label{monhilbert}
For each $m, n\in \mathbb{N}$ one has
\[
\chi_{\mon}\big(\mathcal{P}_m(\ell_2^n)
\big) \leq
\begin{cases}
\sqrt{{n-1 + m}\choose{m}}\, \quad\, & n \leq m\,(e^2 4^m + 1) + 1 \\
e\,2^m\, \sqrt{{n-2 + m}\choose{ m-1}}\, \quad\, & n > m\,(e^2 4^m + 1) + 1\,. \\
\end{cases}
\]
In particular, it follows that
\[
\chi_{\mon}\big(\mathcal{P}_m(\ell_2^2)\big)\leq \sqrt{m+1}\,.
\]
\end{proposition}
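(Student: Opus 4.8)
The goal is to bound $\chi_{\mon}\big(\mathcal{P}_m(\ell_2^n)\big)$ by the minimum of two competing estimates: the trivial Kadets--Snobar type bound $\sqrt{|\Lambda(m,n)|} = \sqrt{\binom{n-1+m}{m}}$ coming from \eqref{AAA} in Theorem~\ref{conny2} together with the Gordon--Lewis inequality \eqref{gl-inequality}, and the sharper-in-$n$ estimate \eqref{glzero} (equivalently \eqref{verflucht} / Corollary~\ref{main3A}) which reads $\chi_{\mon}\big(\mathcal{P}_m(\ell_2^n)\big)\le e\,2^m\sqrt{\binom{n-2+m}{m-1}}$. Each bound is valid for all $m,n$, so $\chi_{\mon}\big(\mathcal{P}_m(\ell_2^n)\big)$ is at most the smaller of the two, and the claimed piecewise formula is just a matter of determining for which pairs $(m,n)$ the first bound beats the second.

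First I would record the two inequalities explicitly. The bound $\chi_{\mon}\big(\mathcal{P}_m(\ell_2^n)\big)\le\sqrt{\binom{n-1+m}{m}}$ follows since the monomial basis has $|\Lambda(m,n)|=\binom{n-1+m}{m}$ elements and any unconditional basis constant is bounded by the square root of the dimension (this is \eqref{AAA} applied to the unconditionality setting, or directly the Gordon--Lewis/Kadets--Snobar route). The bound $\chi_{\mon}\big(\mathcal{P}_m(\ell_2^n)\big)\le e\,2^m\sqrt{\binom{n-2+m}{m-1}}$ is precisely \eqref{glzero}. Then I would set up the comparison: the crossover happens exactly when
\[
\binom{n-1+m}{m}\ \le\ e^2\,4^m\,\binom{n-2+m}{m-1}.
\]
Using the identity $\binom{n-1+m}{m}=\frac{n-1+m}{m}\binom{n-2+m}{m-1}$ (valid since $\binom{n-1+m}{m}$ and $\binom{n-2+m}{m-1}$ differ by the factor $\tfrac{(n-1+m)}{m}$), this inequality simplifies to
\[
\frac{n-1+m}{m}\ \le\ e^2\,4^m,\qquad\text{i.e.}\qquad n\ \le\ m\,(e^2 4^m+1)+1.
\]
So in the regime $n\le m(e^2 4^m+1)+1$ the first bound is the smaller one, giving the top case; in the regime $n> m(e^2 4^m+1)+1$ the second bound wins, giving the bottom case. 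This is a short and essentially bookkeeping argument once the two ingredient inequalities are in hand.

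For the last assertion, specializing to $n=2$: here $\binom{n-1+m}{m}=\binom{m+1}{m}=m+1$, so the first bound gives $\chi_{\mon}\big(\mathcal{P}_m(\ell_2^2)\big)\le\sqrt{m+1}$, provided $n=2$ falls in the first regime, which it does since $2\le m(e^2 4^m+1)+1$ holds for every $m\ge1$ (the right side is already at least $e^2\cdot 4+1+1>2$ at $m=1$). Hence $\chi_{\mon}\big(\mathcal{P}_m(\ell_2^2)\big)\le\sqrt{m+1}$, as claimed.

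\textbf{Main obstacle.} There is no real obstacle here: the substance lives entirely in the two cited inequalities, \eqref{AAA}/Kadets--Snobar for the dimension bound and \eqref{glzero} (which itself rests on Theorem~\ref{gl-versus-unc}, Theorem~\ref{gl_versus_proj} and Corollary~\ref{RW}) for the $n$-sensitive bound. The only point requiring a moment's care is the algebraic manipulation of the binomial ratio and confirming that the stated threshold $n=m(e^2 4^m+1)+1$ is exactly the crossover, rather than an off-by-one neighbor; I would double-check this by writing $\binom{n-1+m}{m}/\binom{n-2+m}{m-1}=(n+m-1)/m$ and solving the resulting linear inequality in $n$ carefully.
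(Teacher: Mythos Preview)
There is a genuine gap in your derivation of the first bound. You assert that $\chi_{\mon}\big(\mathcal{P}_m(\ell_2^n)\big)\le\sqrt{\binom{n-1+m}{m}}$ because ``any unconditional basis constant is bounded by the square root of the dimension,'' citing \eqref{AAA} and \eqref{gl-inequality}. This general claim is false: already in a two-dimensional space one can choose a basis with arbitrarily large unconditional constant (e.g.\ $e_1=(1,0)$, $e_2=(1,1)$ in $\ell_\infty^2$ gives $\chi\ge 2>\sqrt{2}$). The Kadets--Snobar estimate \eqref{AAA} bounds the \emph{projection} constant $\boldsymbol{\lambda}$, not $\chi_{\mon}$, and the Gordon--Lewis inequality \eqref{gl-inequality} goes the wrong way ($\mbox{gl}\le\chi$). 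If you chain Theorem~\ref{gl-versus-unc}, Theorem~\ref{gl_versus_proj} and Kadets--Snobar, you land exactly on the second bound $e\,2^m\sqrt{\binom{n-2+m}{m-1}}$, never on the first.

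The paper obtains the first bound by an argument specific to the Hilbert-space geometry, via Lemma~\ref{estL2}: using the explicit $L_2(\mathbb{S}_n)$ norms of monomials from \eqref{normalized} together with Cauchy--Schwarz and the multinomial identity, one gets $\|P\|_{\mathcal{P}_m(\ell_2^n)}\le\sqrt{|\Lambda(m,n)|}\,\|P\|_{L_2(\mathbb{S}_n)}$. The crucial point is then that the monomials are orthogonal in $L_2(\mathbb{S}_n)$, so any sign change $c_\alpha\mapsto\varepsilon_\alpha c_\alpha$ preserves the $L_2$ norm, whence
\[
\Big\|\sum_{|\alpha|=m}\varepsilon_\alpha c_\alpha z^\alpha\Big\|_{\mathcal{P}_m(\ell_2^n)}\le\sqrt{|\Lambda(m,n)|}\,\|P\|_{L_2(\mathbb{S}_n)}\le\sqrt{|\Lambda(m,n)|}\,\|P\|_{\mathcal{P}_m(\ell_2^n)}\,.
\]
This is the missing ingredient; once it is in place, your comparison of the two bounds is exactly how the paper finishes. (A small arithmetic slip: solving $\tfrac{n-1+m}{m}\le e^2 4^m$ actually gives $n\le m(e^2 4^m-1)+1$, not $+1$; since both bounds hold for all $n$, this does not affect the truth of the stated proposition, only the sharpness of the displayed threshold.)
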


The proof needs the following lemma.

\begin{lemma} \label{estL2}
For each $m, n\in \mathbb{N}$ and for every polynomial $P\in \mathcal{P}_m(\mathbb{C}^n)$ one has
\[
\|P\|_{\mathcal{P}_m(\ell_2^{n})} \leq \sqrt{\Lambda(m,n)}\,\|P\|_{L_2(\mathbb{S}_n)}=\sqrt{n-1 + m \choose m}\,\|P\|_{L_2(\mathbb{S}_n)}\,.
\]
\end{lemma}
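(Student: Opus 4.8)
The plan is to bound the supremum norm of a polynomial $P\in\mathcal P_m(\mathbb C^n)$ by its $L_2$-norm on the sphere $\mathbb S_n$ via a duality/reproducing-kernel argument. First I would recall from Section~\ref{Orthogonal projection} that the normalized monomials $f_\alpha=e_\alpha/\|e_\alpha\|_2$, $\alpha\in\Lambda(m,n)$, form an orthonormal basis of $\mathcal P_m^2(\mathbb S_n)$, and that for $\mathcal U_n$-invariant index sets the reproducing kernel of $\mathcal P_m^2(\mathbb S_n)$ is explicitly $[z,\xi]_{\Lambda(m,n),m}=\langle z,\xi\rangle^m$ together with the constant $c_m(n)=\binom{n-1+m}{m}$; more precisely, the orthogonal projection $\mathbf P_{\Lambda(m,n)}$ onto $\mathcal P_m^2(\mathbb S_n)$ satisfies $\mathbf P_{\Lambda(m,n)}f(z)=c_m(n)\int_{\mathbb S_n}f(\xi)\langle z,\xi\rangle^m\,d\sigma(\xi)$.

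For the actual estimate, fix $z\in\mathbb S_n$ and a polynomial $P\in\mathcal P_m(\mathbb C^n)$. Since $P$ is already in $\mathcal P_m^2(\mathbb S_n)$ we have $P=\mathbf P_{\Lambda(m,n)}P$, hence
\[
P(z)=c_m(n)\int_{\mathbb S_n}P(\xi)\,\langle z,\xi\rangle^m\,d\sigma(\xi).
\]
By the Cauchy–Schwarz inequality in $L_2(\mathbb S_n)$,
\[
|P(z)|\le c_m(n)\,\|P\|_{L_2(\mathbb S_n)}\,\Big(\int_{\mathbb S_n}|\langle z,\xi\rangle|^{2m}\,d\sigma(\xi)\Big)^{1/2}.
\]
The remaining integral $\int_{\mathbb S_n}|\langle z,\xi\rangle|^{2m}\,d\sigma(\xi)$ is a rotation-invariant quantity (independent of $z\in\mathbb S_n$), and by the standard formula $\int_{\mathbb S_n}|\xi_1|^{2m}\,d\sigma(\xi)=\|e_{me_1}\|_2^2=(n-1)!\,m!/(n-1+m)!$ from~\eqref{normalized}, together with unitary invariance that sends $z$ to $e_1$, it equals $(n-1)!\,m!/(n-1+m)!=1/c_m(n)$. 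Plugging this in gives $|P(z)|\le c_m(n)^{1/2}\|P\|_{L_2(\mathbb S_n)}=\sqrt{\binom{n-1+m}{m}}\,\|P\|_{L_2(\mathbb S_n)}$, and taking the supremum over $z\in\mathbb S_n$ yields the claim, since $\binom{n-1+m}{m}=|\Lambda(m,n)|$.

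The argument is essentially a soft reproducing-kernel computation, so there is no serious obstacle; the only point requiring a little care is the identification of the scalar $\int_{\mathbb S_n}|\langle z,\xi\rangle|^{2m}\,d\sigma(\xi)$ with $1/c_m(n)$. An alternative, completely self-contained route avoiding kernels would be to expand $P=\sum_{\alpha}c_\alpha e_\alpha$, write $\|P\|_{L_2}^2=\sum_\alpha|c_\alpha|^2\|e_\alpha\|_2^2$, and estimate $|P(z)|\le\sum_\alpha|c_\alpha|\,|z^\alpha|\le(\sum_\alpha|c_\alpha|^2\|e_\alpha\|_2^2)^{1/2}(\sum_\alpha|z^\alpha|^2/\|e_\alpha\|_2^2)^{1/2}$ for $z\in\mathbb S_n$, and then check via the multinomial theorem and~\eqref{normalized} that $\sum_{\alpha\in\Lambda(m,n)}|z^\alpha|^2/\|e_\alpha\|_2^2=c_m(n)\sum_\alpha\binom{m}{\alpha}|z^\alpha|^2=c_m(n)(|z_1|^2+\dots+|z_n|^2)^m=c_m(n)$; this reproduces the same bound and makes the constant $\sqrt{|\Lambda(m,n)|}$ transparent.
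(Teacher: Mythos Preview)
Both of your approaches are correct. Your second ``alternative'' route---applying Cauchy--Schwarz to the coefficient expansion and evaluating $\sum_{\alpha}|z^\alpha|^2/\|e_\alpha\|_2^2=c_m(n)$ via the multinomial identity---is exactly the paper's own proof (the paper applies Cauchy--Schwarz directly to $\sum_\alpha c_\alpha z^\alpha$ rather than first taking absolute values, but the splitting of each term and the final computation are identical).

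Your primary reproducing-kernel argument is a genuinely different route: instead of working coefficient-wise, you represent $P(z)$ as the $L_2$-inner product of $P$ against the kernel $\xi\mapsto c_m(n)\langle \xi,z\rangle^m$ and bound $|P(z)|\le\|P\|_{L_2}\,\|K_z\|_{L_2}$, then identify $\|K_z\|_{L_2}^2=c_m(n)$ via the one-variable moment $\int_{\mathbb S_n}|\xi_1|^{2m}\,d\sigma$. This is the standard RKHS inequality $\|\mathrm{ev}_z\|=\sqrt{K(z,z)}$ and has the mild advantage of never unpacking the monomial expansion of $P$; the paper's approach, by contrast, is more self-contained in that it needs only Cauchy--Schwarz for finite sums and the multinomial theorem, without invoking the integral representation~\eqref{integral} of the orthogonal projection.
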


\begin{proof}
As indicated in Section~\ref{Orthogonal projection}
the  collection $(e_\alpha)_{\alpha \in \mathbb{N}_0^n}$ of all monomials
$
e_\alpha (z) := z^\alpha
$
forms an orthogonal family in $L_2(\mathbb{S}_n)$, and by \eqref{normalized} we have that $\|e_\alpha\|_2^2= \frac{(n-1)!\,\alpha!}{(n-1 + m)!}$.
Thus for every polynomial
$P = \sum_{|\alpha|=m} c_\alpha e_\alpha \in \mathcal{P}_m(\mathbb{S}_n)$ one has
\begin{equation} \label{L2}
\|P\|_{L_2(\mathbb{S}_n)} = \bigg(\sum_{|\alpha| =m} |c_\alpha|^2 \,\|e_\alpha\|_2^2\bigg)^{\frac{1}{2}}
= \bigg(\sum_{|\alpha| =m} |c_\alpha|^2\,\frac{(n-1)!\,\alpha!}{(n-1 + m)!}\bigg)^{\frac{1}{2}}\,.
\end{equation}
Fix now $z= (z_1, \ldots, z_n) \in S_{\ell_2^n}$ and observe that the multinomial formula gives
\[
\bigg(\sum_{|\alpha|=m} \frac{m!}{\alpha !}\,|z_\alpha|^2\bigg)^{\frac{1}{2}} =
\big(|z_1|^2 + \cdots + |z_n|^2\big)^{\frac{m}{2}} = 1\,.
\]
Since $|\Lambda(m, n)|= \frac{(n-1 + m)!}{(n-1)!\,m!}$,  Cauchy's inequality yields as desired
\begin{align*}
\bigg |\sum_{|\alpha|=m} c_\alpha z^\alpha\bigg| & = \sqrt{|\Lambda(m, n)|}\,\bigg|\sum_{|\alpha|=m} c_\alpha\,
\Big(\frac{(n-1)!\,\alpha!}{(n - 1 + m)!}\Big)^{\frac{1}{2}}\,\Big(\frac{m!}{\alpha!}\Big)^{\frac{1}{2}} z^\alpha \bigg| \\
& \leq \sqrt{|\Lambda(m, n)|}\,\bigg(\sum_{|\alpha|=m} |c_\alpha|^2\,\frac{(n-1)!\,\alpha!}{(n - 1 + m)!}\bigg)^{\frac{1}{2}}\,
\bigg(\sum_{|\alpha|=m} \frac{m!}{\alpha!}\,|z_\alpha|^2\bigg)^{\frac{1}{2}} \\
& = \sqrt{|\Lambda(m, n)|}\,\|P\|_{L_2(\mathbb{S}_n)}\,. \qedhere
\end{align*}
\end{proof}

\smallskip

\begin{proof}[Proof of Proposition~\ref{monhilbert}]
We fix some   $P\in \mathcal{P}_m(\mathbb{C}^n)$ with coefficients
$c_\alpha,  \alpha \in \Lambda(m,n)$.
Hence  by  Proposition \ref{estL2} and \eqref{L2}, for all $(\varepsilon_\alpha)_{\alpha \in \Lambda(m, n)}$
with $|\varepsilon_\alpha| \leq 1$ for each  $\alpha \in \Lambda(m, n)$, we get
\begin{align*}
\Big\|\sum_{|\alpha|=m} \varepsilon_\alpha c_\alpha z^\alpha \Big\|_{\mathcal{P}_m(\ell_2^{n})}
& \leq \sqrt{|\Lambda(m,n)|}\, \bigg(\sum_{|\alpha| =m} |\varepsilon_\alpha|^2\,|c_\alpha|^2\,
\,\frac{(n-1)!\,\alpha!}{(n-1 + m)!}\bigg)^{\frac{1}{2}} \\
& = \sqrt{|\Lambda(m,n)|}\, \|P\|_{L_2(\mathbb{S}_n)}
\leq \sqrt{|\Lambda(m,n)|}\, \Big\|\sum_{|\alpha|=m} c_\alpha z^\alpha \Big\|_{\mathcal{P}_m(\ell_2^{n})}\,.
\end{align*}
Recall that by the estimate mentioned in \eqref{glzero}, we have
\[
\boldsymbol{\chimon}(\mathcal{P}_m(\ell_2^n)) \leq e\,2^m \,|\Lambda(m-1, n)|^{\frac{1}{2}}\,.
\]
Thus to finish, it is enough to observe that
\[
{n-1 + m \choose m}^{\frac{1}{2}} \leq e\,2^m\, {n-2 + m \choose m-1}^{\frac{1}{2}}
\]
if and only if $n \leq m\,(e^2 4^m + 1) + 1$.
\end{proof}

\bigskip

\chapter{Dirichlet polynomials} \label{Part: Dirichlet polynomials and polynomials on the Boolean cube}

A frequency  $\lambda = (\lambda_n)$  is a strictly increasing, non-negative real sequence tending to $+\infty$. Given a finite index set  $J \subset \mathbb{N}$
and    complex numbers $(a_n)_{n\in J}$,  we say that
\[
D(s) := \sum_{n \in J} a_n e^{-\lambda_n s}, \quad\, s \in \mathbb{C}
\]
is a $\lambda$-Dirichlet polynomial supported on  $J$.
 For the frequency $\lambda = (n)_{n \in \mathbb{N}}$ we obtain (after the substitution $z = e^{-s}$)
polynomials $\sum_{n \in J} a_n z^n$ in one complex variable (supported on $J$), and in the case $\lambda = (\log n)_{n \in \mathbb{N}}$ all ordinary Dirichlet
polynomials $\sum_{n \in J} a_n n^{-s}$ (supported on $J$).

Denote by $\mathcal{H}_\infty^{J}(\lambda)$ the finite dimensional Banach space of all
$\lambda$-Dirichlet polynomials supported
on the finite index subset  $J \subset \mathbb{N}$, endowed with the norm
\begin{equation}\label{norm-di}
\|D\|_\infty :=  \sup_{t \in \mathbb{R}} \Big|\sum_{n \in J} a_n e^{-i\lambda_n t}\Big|
=
   \sup_{\re s >0 } \big|\sum_{n \in J} a_n e^{-\lambda_n s}\big|\,,
\end{equation}
where the last equality is a simple consequence of the maximum modulus principle.
The main goal of this chapter is to study the projection constant $$\boldsymbol{\lambda}\big(\mathcal{H}_\infty^{J}(\lambda)\big)$$
for various 'natural' frequencies $\lambda$ and 
 various 'natural'
finite index sets  $J$ of $\mathbb{N}$. Given $x \in \mathbb{N}$, we are particularly interested  in the projection constant of the  Banach space
\[
\mathcal{H}_\infty^{\leq x}(\lambda) = \mathcal{H}_\infty^{\{n\in \mathbb{N}\colon n \leq x\}}(\lambda)\,,
\]
so  all $\lambda$-Dirichlet polynomials $D(s) = \sum_{n \leq x} a_n e^{-\lambda_n s}$ of length $x$.

Let us pause for a moment to say a few words about the modern theory of Dirichlet series.
Within the last two decades, the theory of ordinary Dirichlet series $\sum a_{n} n^{-s}$ experienced a kind of renaissance.
The study of these series in fact  was   one of the hot topics in mathematics at the beginning of the 20th century. Among others,
H. Bohr, Besicovitch, Bohnenblust, Hardy,  Hille, Landau, Perron, and M.~Riesz
were  the leading mathematicians in this issue.

However, this research took place before the modern interplay
between function theory and functional analysis, as well as the
advent of the field of several complex variables, and the area was
in many ways dormant until the late 1990s. One of the main goals of
the 1997 paper of Hedenmalm, Lindqvist, and Seip \cite{hedenmalm1997hilbert} was to
initiate a systematic study of Dirichlet series from the point of
view of modern operator-related function theory and harmonic
analysis. Independently, at the same time, a paper of Boas and
Khavinson \cite{bohnenblust1931absolute} attracted renewed attention, in the context of
several complex variables, to the original work of Bohr.

A new field emerged intertwining the classical work in novel ways with modern functional analysis, infinite dimensional holomorphy, probability theory as well as analytic number theory. As a consequence, a number of  challenging research problems crystallized and were solved over the last decades. We refer to the monographs\cite{defant2019libro}, \cite{HelsonBook},  and \cite{queffelec2013diophantine},
where many   of the   key elements of this new developments
 for ordinary Dirichlet series are described in detail.

Contemporary research in this field owes much to the following
fundamental observation of H. Bohr \cite{bohr1913ueber}: By the
transformation $z_j=\mathfrak{p}_j^{-s}$  and the fundamental theorem of arithmetics, an ordinary
Dirichlet series $\sum a_{n} n^{-s}$  may be thought of as a function
$\sum_{\alpha \in \mathbb{N}_0^{(\mathbb{N})}} a_{\mathfrak{p}^\alpha} z^{\alpha}$ of infinitely many
complex variables $z_1, z_2, ...$, where $\mathfrak{p} =(\mathfrak{p}_j)$ stands for the sequence of prime
numbers. By a classical approximation
theorem of Kronecker, this is more than just a formal
transformation: If, say, only a finite number of the coefficients
$a_n$ are nonzero (so that questions about convergence of the series
are avoided), the supremum of the Dirichlet polynomial $\sum a_n
n^{-s}$ in the half-plane $\operatorname{Re} s>0$ equals the
supremum of the corresponding polynomial on the infinite-dimensional
polydisc $\mathbb{D}^\infty$.

In \cite{bohr1913ueber} Bohr  considered the number $S$
that gives the maximal width of the band on which a Dirichlet series  converges uniformly but not absolutely.
Bohr showed that $S \leq 1/2$ . The problem of whether or not this was the correct value remained open
for some 15 years, until in their groundbreaking work from \cite{bohnenblust1931absolute}  Bohnenblust and
Hille showed that homogeneous
polynomials - the basic building blocks of functions analytic on
polydiscs - may, via the method of polarization, be transformed into
symmetric multilinear forms. They  used this
revolutionary insight  to prove that Bohr's upper estimate indeed is  optimal: $S= 1/2$.
In retrospect, one may in the work of Bohr and Bohnenblust-Hille
see the seeds of the modern  theory of  Dirichlet series.

Mainly inspired by the classical  monograph \cite{HardyRiesz} of Hardy and  M. Riesz from 1915,  recent research
  suggest a modern study of general Dirichlet series
$$D=\sum a_{n}e^{-\lambda_{n}s}\,$$
for arbitrary frequency $\lambda=(\lambda_{n})$
(see, e.g.,
\cite{bayart2021convergence}, 
\cite{DefantSchoolmann2018},
\cite{defantschoolmann2019Hptheory}, \cite{defant2020riesz}, \cite{defant2020variants} and 
\cite{HelsonBook}).
 Making the jump from the ordinary case $\lambda=(\log n)$ to arbitrary frequencies reveals serious difficulties.
  Whereas modern Fourier analysis and infinite dimensional holomorphy enrich the theory of ordinary Dirichlet series considerably, facing  general Dirichlet series   many of these powerful bridges
  unfortunately seem to collapse. New questions arise which make the theory of general Dirichlet series quite  challenging.
 Whereas the dominant  tool of the early days of this theory was complex analysis,
the idea now is to implement  modern techniques like  functional analysis,  probability theory, Fourier analysis, and harmonic analysis
on compact abelian groups.

\smallskip

\section{Integral formula}
We start with two important examples following easily from what we already know.
 For $x \in \mathbb{N}$ and the frequency $\lambda_0 = 0, \lambda_1 = 1,  \lambda_2 = 2, \ldots $ the identification
\[
\mathcal{H}_\infty^{\leq x}(\lambda) \,\, = \,\, \text{Trig}_{\{n\in \mathbb{N}_0\colon n \leq x\}}(\mathbb{T})
\,, \,\,\,\,\,\, \sum_{n=0}^x a_{n} e^{-\lambda_n s} \mapsto  \sum_{n=0}^{x} a_{n} z^{n}
\]
is obviously isometric, and hence the  integral formula
\begin{align} \label{inte1}
\boldsymbol{\lambda}\big(\mathcal{H}_\infty^{\leq x}(\lambda)\big) =
\int_{\mathbb{T}} \Big|\sum_{n=0}^{x}  z^k\Big|\,dz =  \frac{4}{\pi^2} \log(x+1) + o(1)
\end{align}
is an immediate consequence of the Lozinski-Kharshiladze
theorem from \eqref{LoKa} (see also  Corollary~\ref{LoKha}).

The  second   example considers the  frequency $\lambda = (\log \mathfrak{p}_n)$,
where $\mathfrak{p}_n$ is the $n$th prime number. This is the  canonical example of a $\mathbb{Q}$-linearly
independent frequency.
 It is well-known (see, e.g., \cite[Theorem 9.2]{katznelson2004introduction})
that, if
$\lambda_1, \ldots, \lambda_n$ are real numbers,  which are linearly independent over $\mathbb{Q}$, and $\lambda_0=0$, then for any choice of complex numbers $a_0, a_1, \ldots , a_x$ we have
\begin{equation*}\label{bohr-in}
\sum_{n=0}^x |a_n|  =  \sup_{ t \in \mathbb{R}} \Big|\sum_{n=0}^x a_n e^{-i \lambda_n t}\Big|\,.
\end{equation*}
In other terms, for any frequency $\lambda$, which is linearly independent over $\mathbb{Q}$ and for any finite subset $J \subset \mathbb{N}$ the identification
\[
\mathcal{H}_\infty^{J}(\lambda) \,\, = \,\, \ell_1^{|J|}\,, \,\,\quad \sum_{n \in J} a_n e^{-\lambda_n s} \mapsto (a_n)_{n \in J}
\]
is isometric. In this case, the following integral and limit formula are then  immediate consequences of \eqref{grunbuschC-B}
and \eqref{koenigschuetttomczak}.

\begin{remark} \label{logp}
  Let $\lambda $ be a $\mathbb{Q}$-linearly independent frequency and  $J \subset \mathbb{N}$ a finite subset. Then
\begin{align*}
\boldsymbol{\lambda}\big(\mathcal{H}_\infty^{J}(\lambda)\big) =    \int_{\mathbb{T}^{|J|}} \Big|\sum_{n \in J}  z_k\Big| dz\,.
\end{align*}
Moreover,
\begin{equation*}
\lim_{x \to \infty}  \frac{\boldsymbol{\lambda}\big(\mathcal{H}_\infty^{\leq x}(\lambda)\big)}{\sqrt{x}}
= \frac{\sqrt{\pi}}{2}\,.
\end{equation*}
\end{remark}

As already mentioned, $\lambda = (\log \mathfrak{p}_n)$ is the canonical example.
As a sort of highlight of this chapter,   we in  Theorem~\ref{harpo}  are going to see an analog of the preceding
remark for the  ordinary frequency $\lambda = (\log n)$. One of the  main ingredient of its proof is a recent deep result of Harper~\cite{Harper}
from probability related analytic number theory.

The first step towards all of this  is the following counterpart of Theorem~\ref{C(G)proj}
for $\lambda$-Dirichlet polynomials, which extends  the integral formulas from \eqref{inte1} and Remark~\ref{logp}
(to understand that this is in fact an extension see  Theorem~\ref{main-dirB} and combine it with  Example~\ref{examples2}).

\smallskip

\begin{theorem} \label{main-dir}
Let  $J \subset \mathbb{N}$ be a finite subset and   $\lambda= (\lambda_n)$  a~frequency. Then
\begin{align*}
\boldsymbol{\lambda}\big(\mathcal{H}_\infty^{J}(\lambda)\big) =
\lim_{T \to \infty} \frac{1}{2T} \int_{-T}^T \Big|\sum_{n \in J} e^{-i\lambda_n t}\Big|\,dt\,.
\end{align*}
\end{theorem}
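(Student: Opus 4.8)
The natural strategy is to realize $\mathcal{H}_\infty^{J}(\lambda)$ as a space of trigonometric polynomials $\text{Trig}_E(G)$ on a suitable compact abelian group $G$, and then invoke Theorem~\ref{C(G)proj}. Concretely, one works with the so-called $\lambda$-Dirichlet group: the Bohr compactification attached to the frequency $\lambda$, or more precisely the dual group $G$ of the discrete group generated by $\{\lambda_n\}$ inside $\mathbb{R}$. Equivalently, take $G = \overline{\mathbb{R}}$ to be the Bohr compactification of $\mathbb{R}$ (or the smaller quotient carrying only the characters $t \mapsto e^{-i\lambda_n t}$); each real number $\lambda_n$ defines a character $h_{\lambda_n} \in \widehat{G}$, and since $\lambda$ is strictly increasing these characters are pairwise distinct. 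Under the correspondence $e^{-\lambda_n s} \leftrightarrow h_{\lambda_n}$, a Dirichlet polynomial $D(s) = \sum_{n\in J} a_n e^{-\lambda_n s}$ corresponds to the trigonometric polynomial $\sum_{n \in J} a_n h_{\lambda_n} \in \text{Trig}_{E}(G)$, where $E = \{h_{\lambda_n} : n \in J\}$.

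First I would check that this correspondence is an \emph{isometry} between $\mathcal{H}_\infty^{J}(\lambda)$, equipped with the sup-norm over $t\in\mathbb{R}$ from \eqref{norm-di}, and $\text{Trig}_E(G)$ with its uniform norm on $G$. The key point is Kronecker-type density: the image of $\mathbb{R}$ under $t \mapsto (e^{-i\lambda_n t})_n$ is dense in the appropriate subgroup of the torus, so that
\[
\sup_{t\in\mathbb{R}}\Big|\sum_{n\in J} a_n e^{-i\lambda_n t}\Big| = \sup_{x\in G}\Big|\sum_{n\in J} a_n h_{\lambda_n}(x)\Big|\,.
\]
This is standard in the theory of $\lambda$-Dirichlet groups (and is exactly the content alluded to around Theorem~\ref{main-dirB} in the paper); I would cite the relevant references (e.g. the monograph setup for general Dirichlet series). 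With the isometry in hand, Theorem~\ref{C(G)proj} applied to the compact abelian group $G$ and the finite character set $E$ gives immediately
\[
\boldsymbol{\lambda}\big(\mathcal{H}_\infty^{J}(\lambda)\big) = \boldsymbol{\lambda}\big(\text{Trig}_E(G)\big) = \int_G \Big|\sum_{n\in J} h_{\lambda_n}(x)\Big|\,d\mathrm{m}(x)\,,
\]
where $\mathrm{m}$ is the Haar measure on $G$.

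The last step is to convert the Haar integral over $G$ into the limit of Birkhoff-type averages along $\mathbb{R}$. For this I would use the unique ergodicity of the flow $t \mapsto t$ on the $\lambda$-Dirichlet group (equivalently, the Weyl/Wiener equidistribution theorem: the one-parameter subgroup $\{t \bmod G : t\in\mathbb{R}\}$ is dense and the family $\frac{1}{2T}\int_{-T}^{T}(\cdot)\,dt$ of measures converges weak-$*$ to $\mathrm{m}$). Applying this to the continuous function $x \mapsto \big|\sum_{n\in J} h_{\lambda_n}(x)\big|$ on $G$ yields
\[
\int_G \Big|\sum_{n\in J} h_{\lambda_n}(x)\Big|\,d\mathrm{m}(x) = \lim_{T\to\infty}\frac{1}{2T}\int_{-T}^{T}\Big|\sum_{n\in J} e^{-i\lambda_n t}\Big|\,dt\,,
\]
which is the claimed formula. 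The main obstacle is not any single estimate but rather setting up the $\lambda$-Dirichlet group and its basic properties (existence, the Kronecker density giving the isometry, and the ergodic averaging) with enough care that Theorem~\ref{C(G)proj} and the limit identity both apply cleanly; once those foundational facts are quoted, the proof is a short concatenation. A minor secondary point to verify is that the characters $h_{\lambda_n}$, $n\in J$, are genuinely distinct in $\widehat{G}$ — immediate here since $\lambda$ is strictly increasing — so that Theorem~\ref{C(G)proj} applies to the set $E$ without multiplicity issues.
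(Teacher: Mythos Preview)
Your proposal is correct and matches the paper's proof essentially step for step: the paper chooses a $\lambda$-Dirichlet group (Example~\ref{examples1}), uses the Bohr map isometry (Proposition~\ref{H=H}) together with Theorem~\ref{C(G)proj} to get the Haar-integral formula, and then invokes the averaging result Proposition~\ref{basic} to pass from the Haar integral to the limit of time averages. The only cosmetic difference is that the paper packages the density/isometry and the ergodic averaging as separate labeled propositions rather than citing Kronecker density and unique ergodicity directly.
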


\smallskip

For the proof (see the end of this section) we need to adapt a few ideas from  \cite{defantschoolmann2019Hptheory}.
Our first aim is to describe the finite dimensional Banach space $\mathcal{H}_\infty^{J}(\lambda)$ in terms of a~Hardy
type space of functions on certain compact abelian groups.

In what follows, a~pair $(G, \beta)$ is said to be a~Dirichlet group if $G$ is a~compact abelian group and
$\beta \colon \mathbb{R} \to G$ is a~continuous homomorphism with dense range; as before we in the following denote the Haar measure on $G$ by $m$.
Then, by density,  the  dual map
\[
\widehat{\beta}\colon \widehat{G} \to \widehat{\mathbb{R}}
\]
is injective. Moreover, if we write  $\mathbb{R}$ for the group $(\mathbb{R},+)$ endowed with its natural topology, then the homomorphism $ \mathbb{R}  = \widehat{\mathbb{R}} \,, \,\,\,x \mapsto e^{ix\,\bullet}$
identifies  $\mathbb{R}$ and $\widehat{\mathbb{R}}$ as topological groups, and hence $\widehat{\beta}(\widehat{G} )$
may be interpreted as a  subset of $\mathbb{R}$.

We in particular observe  that if $(G, \beta)$ is a~Dirichlet
group and $\gamma \colon G\to \mathbb{T}$  a~character, then $\gamma \circ \beta \colon \mathbb{R} \to \mathbb{T}$ is
a~character on  $\mathbb{R}$. Thus there exists a~unique $x\in \mathbb{R}$ such that $\gamma \circ \beta(t)
=e^{ixt}$ for all $t\in \mathbb{R}$.
The following  result from \cite[Proposion~3.10]{defantschoolmann2019Hptheory} is crucial - we include a~proof for
the sake of completeness.

\begin{proposition} \label{basic}
For any Dirichlet group $(G, \beta)$ one has
\[
\int_G f(g)\,d\mathrm{m}(g) = \lim_{T \to \infty}\, \frac{1}{2T} \int_{-T}^T f\circ \beta(t)\,dt, \quad\, f\in C(G)\,.
\]
\end{proposition}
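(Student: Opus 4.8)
\textbf{Proof plan for Proposition~\ref{basic}.}

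The plan is to prove the identity first for characters $\gamma\in\widehat G$, then extend by linearity to $\mathrm{Trig}(G)=\spa\widehat G$, and finally to all of $C(G)$ by density (using the Peter-Weyl density theorem, which the excerpt has recalled). So the only genuine computation is the case $f=\gamma$ a single character.

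First I would fix a character $\gamma\in\widehat G$ and let $x=x_\gamma\in\RR$ be the unique real number with $\gamma\circ\beta(t)=e^{ixt}$ for all $t\in\RR$, which exists by the discussion preceding the statement (since $\gamma\circ\beta$ is a character of $\RR$). There are two cases. If $\gamma=1$ (the trivial character), then $x=0$, both sides equal $1$, and there is nothing to do. If $\gamma\neq 1$, then on the one hand $\int_G\gamma\,d\mathrm m=0$ by orthogonality of distinct characters (Peter-Weyl), and on the other hand I must show
\begin{equation*}
\lim_{T\to\infty}\frac{1}{2T}\int_{-T}^T e^{ixt}\,dt=0 .
\end{equation*}
The key point is that $x\neq 0$: indeed, if $x=0$ then $\gamma\circ\beta\equiv 1$ on $\RR$, and since $\beta$ has dense range and $\gamma$ is continuous, $\gamma\equiv 1$ on $G$, contradicting $\gamma\neq1$. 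Given $x\neq0$, the elementary estimate $\bigl|\frac{1}{2T}\int_{-T}^T e^{ixt}\,dt\bigr|=\bigl|\frac{\sin(xT)}{xT}\bigr|\le\frac{1}{|x|T}\to0$ finishes this case. This establishes the identity for $f=\gamma$, hence by linearity for every trigonometric polynomial $f\in\mathrm{Trig}(G)$.

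Next I would pass to general $f\in C(G)$ by a standard $3\veps$-argument. Let $L(f):=\lim_{T\to\infty}\frac{1}{2T}\int_{-T}^T f\circ\beta(t)\,dt$ — one first notes that for each fixed $f\in C(G)$ the averaging functionals $L_T(f):=\frac{1}{2T}\int_{-T}^T f\circ\beta(t)\,dt$ satisfy $|L_T(f)|\le\|f\|_\infty$ uniformly in $T$, so they form an equicontinuous family on $C(G)$. Given $\veps>0$, choose by density a trigonometric polynomial $P$ with $\|f-P\|_\infty<\veps$; then $\bigl|L_T(f)-\int_G f\,d\mathrm m\bigr|\le|L_T(f-P)|+\bigl|L_T(P)-\int_G P\,d\mathrm m\bigr|+\bigl|\int_G(P-f)\,d\mathrm m\bigr|<2\veps+\bigl|L_T(P)-\int_G P\,d\mathrm m\bigr|$, and the middle term tends to $0$ as $T\to\infty$ by the trigonometric-polynomial case. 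Hence $\limsup_{T\to\infty}\bigl|L_T(f)-\int_G f\,d\mathrm m\bigr|\le2\veps$ for every $\veps>0$, which gives both existence of the limit and the claimed value.

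The only mild obstacle is the bookkeeping around equicontinuity/uniform boundedness of the functionals $L_T$ needed to make the density argument rigorous, but this is routine since $\|L_T\|\le1$ on $C(G)$. Everything else reduces to the one-line computation $\frac{1}{2T}\int_{-T}^T e^{ixt}\,dt=\frac{\sin(xT)}{xT}$ and the observation that $x_\gamma\neq0$ for $\gamma\neq1$, which uses exactly the density of $\beta(\RR)$ in $G$.
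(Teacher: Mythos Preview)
Your proof is correct and follows essentially the same approach as the paper: verify the identity for characters using $\gamma\circ\beta(t)=e^{ixt}$ with $x\neq0$ iff $\gamma\neq1$, extend by linearity to trigonometric polynomials, and then pass to $C(G)$ via density and the uniform bound $\|L_T\|\le1$. The paper phrases the final step as an appeal to the Banach--Steinhaus theorem rather than writing out the $3\veps$ argument, but the content is identical (and your explicit justification that $x_\gamma\neq0$ uses the density of $\beta(\RR)$ is a detail the paper leaves implicit).
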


\begin{proof} Note first that for all  $\gamma\in \widehat{G}$ we have
\[
\int_G \gamma(g)\,d\mathrm{m}(g) = \lim_{T \to \infty}\, \frac{1}{2T} \int_{-T}^T \gamma\circ \beta(t)\,dt
\,.
\]
Indeed, as explained above  by the injectivity of the dual map $\widehat{\beta}$, we know that for every  $\gamma \in \widehat{G}$
there is a unique $x = x(\gamma) \in \mathbb{R}$ such that $\gamma \circ \beta(t) = e^{ixt}$ for all $t\in \mathbb{R}$. Now recall
that $\int_G \gamma\,d\mathrm{m}= 0$ \big(resp., $\int_G \gamma\,d\mathrm{m}= 1$\big) whenever $\gamma \neq 1$ (resp., $\gamma =1$). Clearly,
$\gamma \neq 1$ (resp., $\gamma = 1$) yields $x \neq 0$ (resp., $x =0$), so the above equality is obvious in this case. As a~consequence,
the claim also holds for all trigonometric polynomials  $f \in C(G)$. But since all trigonometric polynomials are dense in
$C(G)$, and moreover the set $\{\Phi_T \colon T >0\}$ of all linear functionals
\[
\Phi_T\colon  C(G) \to \mathbb{C}\,, \quad\, \Phi_T(f):= \frac{1}{2T} \int_{-T}^T f \circ \beta(t)dt\,
\]
is uniformly bounded, the conclusion is a~consequence of the Banach-Steinhaus theorem.
\end{proof}

Given a~frequency $\lambda=(\lambda_n)$,  we need Dirichlet groups which are adapted to $\lambda$. We call the pair $(G,\beta)$
a~$\lambda$-Dirichlet group, whenever $\{\lambda_n \colon n \in \mathbb{N}\}\subset \widehat{\beta}(\widehat{G}) $, i.e., for every
character $e^{-i\lambda_n \bullet}: \mathbb{R}\to \mathbb{T}$ there is a~character $h_{\lambda_n} \in \widehat{G}$ (which then is unique)
such  that the following diagram commutes:
\begin{equation*}
\begin{tikzpicture}[scale = 0.8]
        \node (G) at (0,0) {$G$};
        \node (T) at (3,0) {$\mathbb{T}$};
        \node (R) at (0,-2) {$\mathbb{R}$};

        \draw[-latex] (G) -- node[above] {$h_{\lambda_n}$} (T);
        \draw[-latex] (R) -- node[below right] {$e^{-i\lambda_n \bullet}$} (T);
        \draw[-latex] (R) -- node[left] {$\beta$} (G);
    \end{tikzpicture}
    \end{equation*}
For all needed information on $\lambda$-Dirichlet groups see \cite{defantschoolmann2019Hptheory}.

Let us collect some examples.
The very first is the  frequency $\lambda = (n)$ for which the pair $(\mathbb{T},\beta_{\mathbb{T}} )$ with
\[
\beta_{\mathbb{T}} : \mathbb{R} \rightarrow \mathbb{T} \,, \,\, t \mapsto e^{-it}
\]
obviously forms a $\lambda$-Dirichlet group. Identifying  $\widehat{\mathbb{T}} = \mathbb{Z}$ we get
that $h_n(z) = z^n$ for $z \in \mathbb{T}, n \in \mathbb{Z}$.

The second example  shows that $\lambda$-Dirichlet groups for any possible frequency $\lambda$ always exist.
In fact, given a frequency $\lambda$, the so-called Bohr compactification of the reals  forms a $\lambda$-Dirichlet group.

\smallskip

\begin{example}
\label{examples1}
Denote by $\overline{\mathbb{R}}:=\widehat{(\mathbb{R},+,d)}$  the   Bohr compactification of $\mathbb{R}$, where $d$ stands
for the discrete topology. This is a compact abelian group,  which together with the mapping
\begin{equation*}
\beta_{\overline{\mathbb{R}}}\colon \mathbb{R} \hookrightarrow \overline{\mathbb{R}},\,\, x \mapsto \left[ t \mapsto e^{-ixt}\right]
\end{equation*}
forms a $\lambda$-Dirichlet group for every frequency $\lambda$.
\end{example}

But for  concrete  frequencies $\lambda$, there often are $\lambda$-Dirichlet groups which in a~sense reflect their structure more
naturally than the Bohr compactification.

\begin{example} \label{examples2}
Let $\lambda = (\log n)$. Then the infinite dimensional torus
$\mathbb{T}^\infty$ together with the so-called Kronecker flow 
\[
\beta_{\mathbb{T}^\infty}\colon  \mathbb{R}  \rightarrow \mathbb{T}^\infty\,, \, \quad t \mapsto (\mathfrak{p}_k^{-it})\,,
\]
where $\mathfrak{p}_k$ again denotes the $k$th prime, forms a $\lambda$-Dirichlet group.  This is basically a~consequence of  Kronecker's approximation
theorem (for an alternative 'harmonic analysis' argument see \cite[Example 3.7]{defantschoolmann2019Hptheory}). Note that, identifying 
$$\mathbb{Z}^{(\mathbb{N})}
=
\widehat{\mathbb{T}^\infty}\,,
\quad \alpha \mapsto [z \mapsto z^\alpha] $$ 
($\mathbb{Z}^{(\mathbb{N})}$ all finite sequences of integers), we for every $z \in \mathbb{T}^\infty$
have
\begin{equation}\label{bohrtrafo}
h_{ \log n}(z)=h_{\sum \alpha_j \log \mathfrak{p}_j}(z) = z^\alpha\,,
\end{equation}
where $n=\mathfrak{p}^\alpha$ with $\alpha \in \mathbb{Z}^{(\mathbb{N})}$.
\end{example}

As announced, we now reformulate the Banach space $\mathcal{H}_\infty^{J}(\lambda)$  of all $\lambda$-Dirichlet polynomials
supported on the finite index set $J \subset \mathbb{N}$ as a Hardy space of functions on a $\lambda$-Dirichlet group.
Given a~finite set $J \subset \mathbb{N}$ and a~$\lambda$-Dirichlet group $(G,\beta)$, we
write
\[
H_\infty^{\lambda, J}(G)
\]
 for the subspace of all $\lambda$-Dirichlet polynomials
$f= \sum_{n \in J} \hat{f}(h_{\lambda_n}) h_{\lambda_n}$ in $L_\infty(G)$ (with respect to the Haar measure $\mathrm{m}$ on $G$).

The following equivalent description of $\mathcal{H}_\infty^{J}(\lambda)$ 
is now obvious. 

\begin{proposition} \label{H=H}
Let  $J \subset \mathbb{N}$ be a~finite subset, and $(G,\beta)$ a~$\lambda$-Dirichlet group. Then the Bohr map
\[
\mathcal{B}: \mathcal{H}_\infty^{J}(\lambda) \rightarrow  H_\infty^{\lambda, J}(G), \quad\,
\sum_{n \in J} a_n e^{-\lambda_n s} \mapsto \sum_{n \in J} a_n h_{\lambda_n}
\]
defines an isometric linear bijection which preserves Bohr and Fourier coefficients.
\end{proposition}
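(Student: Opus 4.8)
\textbf{Proof proposal for Proposition~\ref{H=H}.}
The plan is to verify that the Bohr map $\mathcal{B}$ is a well-defined linear bijection which is isometric for the norms in play, and that it identifies Bohr coefficients $a_n$ of a $\lambda$-Dirichlet polynomial with the Fourier coefficients $\widehat{f}(h_{\lambda_n})$ of its image. The algebraic part is essentially bookkeeping: sending $e^{-\lambda_n s}\mapsto h_{\lambda_n}$ extends linearly to a bijection between $\mathcal{H}_\infty^{J}(\lambda)$ and the (finite-dimensional) span $H_\infty^{\lambda,J}(G)$ of the characters $\{h_{\lambda_n}\colon n\in J\}$, because the $h_{\lambda_n}$ are pairwise distinct characters of $G$ (distinctness follows from the injectivity of the dual map $\widehat{\beta}$, since the $\lambda_n$ are distinct), hence linearly independent in $L_\infty(G)$. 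Under this correspondence the coefficient $a_n$ becomes precisely $\widehat{f}(h_{\lambda_n})$ by orthonormality of characters in $L^2(G,\mathrm{m})$, so coefficients are preserved by construction.

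The heart of the matter is the isometry, i.e.
\[
\Big\|\sum_{n\in J} a_n e^{-\lambda_n s}\Big\|_\infty
=
\sup_{t\in\mathbb{R}}\Big|\sum_{n\in J} a_n e^{-i\lambda_n t}\Big|
=
\Big\|\sum_{n\in J} a_n h_{\lambda_n}\Big\|_{L_\infty(G)}\,,
\]
where the first equality is \eqref{norm-di}. For the second equality I would argue as follows. Writing $f=\sum_{n\in J} a_n h_{\lambda_n}\in C(G)$, the commuting diagram defining a $\lambda$-Dirichlet group gives $h_{\lambda_n}\circ\beta(t)=e^{-i\lambda_n t}$, hence $f\circ\beta(t)=\sum_{n\in J} a_n e^{-i\lambda_n t}$ for every $t\in\mathbb{R}$. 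Since $\beta$ has dense range and $f$ is continuous, the range of $f\circ\beta$ is dense in the range of $f$, so
\[
\sup_{t\in\mathbb{R}}|f\circ\beta(t)| = \sup_{g\in G}|f(g)| = \|f\|_{C(G)}\,.
\]
Because the Haar measure $\mathrm{m}$ has full support, $\|f\|_{C(G)}=\|f\|_{L_\infty(G)}$ for this continuous $f$, which yields the claimed identity. (Alternatively, one can invoke Proposition~\ref{basic} together with a standard argument that the $L^\infty$-norm of a trigonometric polynomial on $G$ equals the limsup of its sup-norms along $\beta$, but the density argument above is cleaner and suffices since $f$ is a finite sum of characters.)

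The main obstacle — which is really the only non-formal point — is making rigorous the passage from "sup over $\beta(\mathbb{R})$" to "sup over $G$" and from the sup-norm to the $L_\infty$-norm; both are immediate here only because $f$ is continuous and $\beta(\mathbb{R})$ is dense, so there is nothing deep, but one must be careful that $H_\infty^{\lambda,J}(G)$ was defined as a subspace of $L_\infty(G)$ rather than of $C(G)$, and identify the two norms on this finite-dimensional space of trigonometric polynomials. Once this is in place, linearity and injectivity of $\mathcal{B}$ are clear, surjectivity holds because $H_\infty^{\lambda,J}(G)=\operatorname{span}\{h_{\lambda_n}\colon n\in J\}$ by definition, and coefficient preservation has already been noted. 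This completes the proof.
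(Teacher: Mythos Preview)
Your proof is correct and follows essentially the same approach as the paper: linear independence of the characters $h_{\lambda_n}$ plus the identity $h_{\lambda_n}\circ\beta = e^{-i\lambda_n\bullet}$ together with density of $\beta(\mathbb{R})$ in $G$ to obtain the isometry. You are more explicit than the paper about the $C(G)$ versus $L_\infty(G)$ identification and the coefficient preservation, but the core argument is the same.
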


\begin{proof}
The collections $(e^{-i\lambda_n \bullet})_{n\in J}$ and $(h_{\lambda_n})_{n\in J}$ are both linearly independent. Hence it  suffices to check that for every collection $(a_n)_{n\in J}$ of complex scalars, we have 
\[
\sup_{t \in \mathbb{R}} \Big|\sum_{n \in J} a_n e^{-i\lambda_n t}\Big|
= \sup_{g \in G} \Big|\sum_{n \in J} a_n h_{\lambda_n}(g)\Big|
\]
(see again \eqref{norm-di}). But this is clear by $e^{-i\lambda_n \bullet} =
h_{\lambda_n} \circ \beta$ for each $n \in J$ and $\beta$ has dense range.
\end{proof}

Finally, we are in the position to prove Theorem~\ref{main-dir}.

\begin{proof}[Proof of Theorem~\ref{main-dir}]
Choose some $\lambda$-Dirichlet group $(G, \beta)$ (this is possible by  Example~\ref{examples1}). Then by
Proposition~\ref{H=H} and Theorem~\ref{C(G)proj} one has
\[
\boldsymbol{\lambda}\big(\mathcal{H}_\infty^{J}(\lambda)\big) = \boldsymbol{\lambda}\big( H_\infty^{\lambda, J}(G)\big)
= \int_G \Big|  \sum_{n \in J} h_{\lambda_n}\Big|\,d\mathrm{m}\,.
\]
Applying now Proposition~\ref{basic}, we get
\[
\int_G \Big|  \sum_{n \in J} h_{\lambda_n}\Big|\,d\mathrm{m}
= \lim_{T \to \infty} \frac{1}{2T} \int_{-T}^T \Big|\sum_{n \in J} e^{-i\lambda_n t}\Big|\,dt\,,
\]
 so this completes the proof.
\end{proof}

\smallskip

We add a 'group version' of Theorem~\ref{main-dir}, which in view of Proposition~\ref{basic} is immediate.

\begin{theorem} \label{main-dirB}
Let  $J \subset \mathbb{N}$ be a finite subset, $\lambda$ a~frequency, and $(G,\beta)$ a~$\lambda$-Dirichlet group
with Haar measure $m$. Then
\begin{align*}
\boldsymbol{\lambda}\big(\mathcal{H}_\infty^{J}(\lambda)\big) =  \int_G \Big|  \sum_{n \in J} h_{\lambda_n}\Big|\,d\mathrm{m}\,.
\end{align*}
\end{theorem}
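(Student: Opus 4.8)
The plan is to deduce Theorem~\ref{main-dirB} directly from Theorem~\ref{main-dir} together with Proposition~\ref{basic}, essentially by noticing that the right-hand side of Theorem~\ref{main-dir} is already an averaging functional along the flow $\beta$, which Proposition~\ref{basic} identifies with the Haar integral on $G$. No new ideas are needed; this is a bookkeeping argument combining two results already at our disposal.

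First I would fix a frequency $\lambda = (\lambda_n)$, a finite subset $J \subset \mathbb{N}$, and a $\lambda$-Dirichlet group $(G, \beta)$ with Haar measure $\mathrm{m}$ (existence of such a group for arbitrary $\lambda$ is guaranteed by Example~\ref{examples1}, but here it is given). By definition of a $\lambda$-Dirichlet group, for each $n \in J$ there is a unique character $h_{\lambda_n} \in \widehat{G}$ with $h_{\lambda_n} \circ \beta(t) = e^{-i\lambda_n t}$ for all $t \in \mathbb{R}$. Hence the continuous function $F := \big|\sum_{n \in J} h_{\lambda_n}\big|$ on $G$ satisfies
\[
F \circ \beta(t) = \Big|\sum_{n \in J} h_{\lambda_n}(\beta(t))\Big| = \Big|\sum_{n \in J} e^{-i\lambda_n t}\Big|, \quad t \in \mathbb{R}\,.
\]
Since $F \in C(G)$, Proposition~\ref{basic} applies and yields
\[
\int_G \Big|\sum_{n \in J} h_{\lambda_n}\Big|\,d\mathrm{m} = \int_G F\,d\mathrm{m} = \lim_{T \to \infty} \frac{1}{2T}\int_{-T}^T F\circ\beta(t)\,dt = \lim_{T \to \infty} \frac{1}{2T}\int_{-T}^T \Big|\sum_{n \in J} e^{-i\lambda_n t}\Big|\,dt\,.
\]
Combining this chain of equalities with Theorem~\ref{main-dir}, which states that $\boldsymbol{\lambda}\big(\mathcal{H}_\infty^{J}(\lambda)\big)$ equals exactly the limit on the far right, gives the asserted identity
\[
\boldsymbol{\lambda}\big(\mathcal{H}_\infty^{J}(\lambda)\big) = \int_G \Big|\sum_{n \in J} h_{\lambda_n}\Big|\,d\mathrm{m}\,.
\]

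There is essentially no obstacle here; the only point requiring a sentence of care is that $F = |\sum_n h_{\lambda_n}|$ is genuinely continuous on $G$ (a finite sum of characters is continuous, and the modulus is continuous), so that Proposition~\ref{basic} is legitimately applicable — this is the single hypothesis that needs checking. One could equally well bypass Theorem~\ref{main-dir} and argue directly from Proposition~\ref{H=H} and Theorem~\ref{C(G)proj}: the Bohr map identifies $\mathcal{H}_\infty^{J}(\lambda)$ isometrically with $H_\infty^{\lambda,J}(G) = \mathrm{Trig}_E(G)$ for $E = \{h_{\lambda_n} : n \in J\}$, whence Theorem~\ref{C(G)proj} immediately gives $\boldsymbol{\lambda}\big(\mathcal{H}_\infty^{J}(\lambda)\big) = \int_G |\sum_{n\in J} h_{\lambda_n}|\,d\mathrm{m}$ without any reference to the flow at all. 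I would mention this as the cleanest route and present the short deduction above (via Proposition~\ref{basic}) as the ``immediate'' verification the statement already advertises.
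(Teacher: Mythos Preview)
Your proposal is correct and matches the paper's approach exactly: the paper states that Theorem~\ref{main-dirB} ``in view of Proposition~\ref{basic} is immediate,'' and the alternative route you mention via Proposition~\ref{H=H} and Theorem~\ref{C(G)proj} is precisely the first step in the paper's proof of Theorem~\ref{main-dir} itself. Both arguments you give are the ones the authors had in mind.
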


\smallskip
Within the setting of ordinary Dirichlet series $\sum a_n n^{-s}$ the coming two sections  collect applications  applying
some analytic number theory.

Here we finish with an application to $\mathbb{Q}$-linearly independent frequencies, or more generally to
frequencies which satisfy the so-called $\textbf{B}_2$-condition.
A frequency  $\lambda$ satisfies the  $\textbf{B}_2$-condition, whenever  the distances between two elements of $\lambda$
are always different, i.e., if $\lambda_j - \lambda_r = \lambda_s - \lambda_k$, then $j=r$ and $s=k$ or $j=s$ and $r=k$.
An example is $\lambda=(\log \mathfrak{p}_n)$ or more generally every $\mathbb{Q}$-linearly independent frequency.

In Section~\ref{An integral formula} we defined when a~set of characters on  a compact abelian group satisfies the
$\textbf{B}_2$-condition, and how this notion is related with the $\pmb{\Lambda}(2)$-condition.

\begin{remark}
Let $\lambda$ be a frequency and $(G,\beta)$ a~$\lambda$-Dirichlet group. Then $\lambda$  satisfies the $\textbf{B}_2$-condition
if and only if  $\big\{  h_{\lambda_n} \colon n \in \mathbb{N} \big\} \subset \widehat{G}$ satisfies the $\textbf{B}_2$-condition.
Indeed, this is immediate from the facts that $h_{\lambda_n} = \widehat{\beta}^{-1} \lambda_n$ and $h_{\lambda_j} h_{\lambda_k}
= h_{\lambda_j +\lambda_k}$ for all $j,k,n \in \mathbb{N}$.
\end{remark}

Recall from Lemma~\ref{b2} that every $\textbf{B}_2$-set of characters on a compact abelian group  is a $\pmb{\Lambda}(2)$-set
with $\pmb{\Lambda}(2)$-constant $\leq \sqrt{2}$. Hence Theorem~\ref{main-dirB} and Corollary~\ref{corB2} imply the following
corollary.

\begin{corollary}\label{b2dirichelet}
Let $\lambda$ be a frequency and $(G,\beta)$ a $\lambda$-Dirichlet group with the property that all characters
$h_{\lambda_n} \in \widehat{G}$ form a  $\pmb{\Lambda}(2)$-set with constant $C_2(\lambda)$.  Then
\begin{equation*}
\frac{1}{C_2(\lambda)}\sqrt{x} \,\, \leq \,\,
\boldsymbol{\lambda}\big(\mathcal{H}_\infty^{\leq x}(\lambda)\big)   \,\, \leq \,\,  \sqrt{x} \,.
\end{equation*}
In particular, if $\lambda$  satisfies the $B_2$-condition, then
\begin{equation*}
\frac{1}{\sqrt{2}} \sqrt{x}
\,\, \leq \,\,
\boldsymbol{\lambda}\big(\mathcal{H}_\infty^{\leq x}(\lambda)\big) \,\, \leq \,\,  \sqrt{x} \,.
\end{equation*}
\end{corollary}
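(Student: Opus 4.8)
The plan is to derive Corollary~\ref{b2dirichelet} as an almost immediate consequence of the integral formula in Theorem~\ref{main-dirB} together with the $\pmb{\Lambda}(2)$-machinery already set up in Corollary~\ref{corB2} and Lemma~\ref{b2}. First I would fix a $\lambda$-Dirichlet group $(G,\beta)$ with Haar measure $\mathrm{m}$ (such a group exists by Example~\ref{examples1}), and apply Theorem~\ref{main-dirB} to the index set $J = \{n \in \mathbb{N}\colon n \leq x\}$ to get
\[
\boldsymbol{\lambda}\big(\mathcal{H}_\infty^{\leq x}(\lambda)\big)
= \int_G \Big| \sum_{n=1}^x h_{\lambda_n} \Big|\,d\mathrm{m}\,.
\]
Now observe that $\mathcal{H}_\infty^{\leq x}(\lambda)$ is isometric to $\text{Trig}_E(G)$ for $E := \{h_{\lambda_1},\ldots,h_{\lambda_x}\}\subset\widehat{G}$ (this is exactly Proposition~\ref{H=H}), so I may invoke Corollary~\ref{corB2} with this $E$. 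Since by hypothesis the characters $h_{\lambda_n}$, $n=1,\ldots,x$, form a $\pmb{\Lambda}(2)$-set with constant $C_2(\lambda)$, and since these $x$ characters are pairwise different (the $\lambda_n$ are strictly increasing, hence distinct, and $\widehat{\beta}$ is injective on $\widehat{G}$, so the $h_{\lambda_n}$ are distinct), Corollary~\ref{corB2} gives precisely
\[
C_2(\lambda)^{-1}\sqrt{x}\,\le\,\boldsymbol{\lambda}\big(\text{Trig}_E(G)\big)\,\le\,\sqrt{x}\,,
\]
which is the first displayed estimate after using the isometry from Proposition~\ref{H=H}.

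For the second displayed estimate I would invoke the $B_2$-condition. First I would record the remark preceding the corollary: $\lambda$ satisfies the $\textbf{B}_2$-condition if and only if $\{h_{\lambda_n}\colon n\in\mathbb{N}\}\subset\widehat{G}$ is a $\textbf{B}_2$-set (this follows since $h_{\lambda_j}h_{\lambda_k}=h_{\lambda_j+\lambda_k}$ and $h_{\lambda_n}=\widehat{\beta}^{-1}\lambda_n$, so the equation $h_{\lambda_{i_1}}h_{\lambda_{i_2}}=h_{\lambda_{i_3}}h_{\lambda_{i_4}}$ is equivalent to $\lambda_{i_1}+\lambda_{i_2}=\lambda_{i_3}+\lambda_{i_4}$, i.e.\ to $\lambda_{i_1}-\lambda_{i_3}=\lambda_{i_4}-\lambda_{i_2}$). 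Then Lemma~\ref{b2} tells us that every $\textbf{B}_2$-set of characters on a compact abelian group is a $\pmb{\Lambda}(2)$-set with $C_2\le\sqrt{2}$; applied to the finite subset $E=\{h_{\lambda_1},\ldots,h_{\lambda_x}\}$ (which inherits the $B_2$-property) this gives $C_2(\lambda)\le\sqrt{2}$ uniformly in $x$, and plugging this into the first estimate yields
\[
\tfrac{1}{\sqrt{2}}\sqrt{x}\,\le\,\boldsymbol{\lambda}\big(\mathcal{H}_\infty^{\leq x}(\lambda)\big)\,\le\,\sqrt{x}\,.
\]

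I do not anticipate any serious obstacle here: the corollary is genuinely a dictionary translation, combining Theorem~\ref{main-dirB} (or its corollary Proposition~\ref{H=H} plus Corollary~\ref{corB2}) with Lemma~\ref{b2}. The only points that need a sentence of care are (i) checking that the $x$ characters $h_{\lambda_1},\ldots,h_{\lambda_x}$ are distinct, so that Corollary~\ref{corB2} applies with $N=x$, and (ii) noting that a finite subset of a $\textbf{B}_2$-set is again a $\textbf{B}_2$-set, so that the $\pmb{\Lambda}(2)$-constant bound $\sqrt{2}$ is uniform in $x$. Both are trivial. If one wishes to avoid explicitly passing through $\text{Trig}_E(G)$, an alternative is to run the argument of Corollary~\ref{corB2} directly: the integral formula from Theorem~\ref{main-dirB} gives the $L_1(G)$-norm of $\sum_{n\le x}h_{\lambda_n}$, the orthonormality of $\widehat{G}$ in $L_2(G,\mathrm{m})$ gives that its $L_2$-norm equals $\sqrt{x}$, whence the upper bound $\le\sqrt{x}$ by Cauchy--Schwarz, and the $\pmb{\Lambda}(2)$-estimate $\|P\|_{L_2}\le C_2\|P\|_{L_1}$ applied to $P=\sum_{n\le x}h_{\lambda_n}$ gives the lower bound $C_2^{-1}\sqrt{x}$. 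Either route finishes the proof.
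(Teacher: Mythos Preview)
Your proposal is correct and follows exactly the approach the paper intends: the paper states just before the corollary that it is implied by Theorem~\ref{main-dirB} and Corollary~\ref{corB2}, with Lemma~\ref{b2} handling the $B_2$ case, and you carry this out precisely, including the small checks (distinctness of the $h_{\lambda_n}$, inheritance of the $B_2$-property by finite subsets) that the paper leaves implicit.
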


As mentioned the frequency $\lambda = (\log \mathfrak{p}_n)$ is an important example -- already treated
in Remark~\ref{logp}.

\smallskip

\section{Applying number theory}

Let us turn to the ordinary frequency $\lambda = (\log n)$, which generates ordinary Dirichlet series
$\sum_n a_n n^{-s}$, and fix some finite set   $J \subset \mathbb{N}$ of indices. If there is no risk for confusion
with other frequencies, then  we use the simplified notation
\[
\mathcal{H}_\infty^{J} = \mathcal{H}_\infty^{J}\big((\log n) \big)\,.
\]
Moreover, we define the two numbers
\begin{equation*}
\text{ $\pi(J) = \max_{n \in J} \pi(n)$ \,\,\,\,\,\,and \,\,\,\,\,\, $\Omega(J) = \max_{n \in J} \Omega(n)$\,,}
\end{equation*}
and the index set
\begin{equation*}
\Delta(J) = \big\{  \alpha \in \mathbb{N}_0^{\pi(\mathbb{N})}\colon  n \in J\,, \,\, \mathfrak{p}^\alpha = n\big\}\,,
\end{equation*}
where $\pi(n)$ counts all primes $\leq n$ and $\Omega(n)$ is the number of prime divisors of $n$
counted according to their multiplicities.

\begin{proposition} \label{H=Hord}
Let $J \subset \mathbb{N}$ be a finite subset. Then the so-called Bohr map
\[
\mathcal{B} \colon  \mathcal{H}_\infty^{J} \to \mathcal{P}_{\Delta(J)}\big(\ell_{\infty}^{\pi(J)}\big)\,,\,\,\,\,\,\,
\sum_{n \in J} a_n e^{-\lambda_n s} \mapsto \sum_{\alpha  \in \Delta(J)} a_{\mathfrak{p}^\alpha} z^\alpha
\]
defines an isometric linear bijection which preserves Bohr and Fourier coefficients.
\end{proposition}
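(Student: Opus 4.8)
The plan is to use the general correspondence already established in Theorem~\ref{main-dirB}, specialised to the ordinary frequency $\lambda=(\log n)$ and the Kronecker-flow $\lambda$-Dirichlet group $(\mathbb{T}^\infty,\beta_{\mathbb{T}^\infty})$ from Example~\ref{examples2}. Recall from \eqref{bohrtrafo} that under the identification $\mathbb{Z}^{(\mathbb{N})}=\widehat{\mathbb{T}^\infty}$ one has $h_{\log n}(z)=z^\alpha$ whenever $n=\mathfrak{p}^\alpha$; by the fundamental theorem of arithmetic the assignment $n\mapsto\alpha$ is a bijection of $\mathbb{N}$ onto $\mathbb{N}_0^{(\mathbb{N})}$, and it carries the finite index set $J\subset\mathbb{N}$ onto the finite index set $\Delta(J)\subset\mathbb{N}_0^{\pi(J)}$ (here $\pi(J)$ suffices since every $n\le\max J$ has all its prime factors among $\mathfrak{p}_1,\dots,\mathfrak{p}_{\pi(J)}$, so the relevant multi-indices live in $\mathbb{N}_0^{\pi(J)}$). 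Thus $\{h_{\log n}\colon n\in J\}$ is exactly the set of monomials $\{z^\alpha\colon\alpha\in\Delta(J)\}$ on $\mathbb{T}^\infty$, viewed as depending only on the first $\pi(J)$ coordinates.

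First I would invoke Proposition~\ref{H=H} with $G=\mathbb{T}^\infty$ to identify $\mathcal{H}_\infty^{J}=\mathcal{H}_\infty^{J}((\log n))$ isometrically with $H_\infty^{\lambda,J}(\mathbb{T}^\infty)$, the space of Dirichlet polynomials $\sum_{n\in J}a_n h_{\log n}$ in $L_\infty(\mathbb{T}^\infty)$; the Bohr map preserves Fourier/Bohr coefficients by that proposition. Next I would observe that via \eqref{bohrtrafo} this space is literally the span in $L_\infty(\mathbb{T}^\infty)$ of the monomials $z^\alpha$, $\alpha\in\Delta(J)$, and since all these monomials involve only the coordinates $z_1,\dots,z_{\pi(J)}$, the sup-norm over $\mathbb{T}^\infty$ coincides with the sup-norm over $\mathbb{T}^{\pi(J)}$ (a polynomial in finitely many variables attains the same supremum whether we let the extra coordinates vary or not). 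Finally, by the maximum modulus identification \eqref{maxmod}, $\text{Trig}_{\Delta(J)}(\mathbb{T}^{\pi(J)})=\mathcal{P}_{\Delta(J)}(\ell_\infty^{\pi(J)})$ isometrically and coefficient-preservingly. Composing these isometric, coefficient-preserving identifications gives precisely the claimed Bohr map $\mathcal{B}\colon\mathcal{H}_\infty^{J}\to\mathcal{P}_{\Delta(J)}(\ell_\infty^{\pi(J)})$, $\sum_{n\in J}a_n e^{-\lambda_n s}\mapsto\sum_{\alpha\in\Delta(J)}a_{\mathfrak{p}^\alpha}z^\alpha$, as an isometric linear bijection.

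There is essentially no hard step here: the statement is "obvious" exactly as the paper says, because every ingredient has been assembled. The only point requiring a line of care is the reduction of the supremum over $\mathbb{T}^\infty$ to a supremum over the finitely many coordinates occurring in $\Delta(J)$ — this follows since $\beta_{\mathbb{T}^\infty}$ has dense range (so sup over $t\in\mathbb{R}$ equals sup over $g\in\mathbb{T}^\infty$, which is already in Proposition~\ref{H=H}), together with the elementary fact that a continuous function on $\mathbb{T}^\infty$ depending on finitely many coordinates has its supremum determined by those coordinates. Bijectivity and linearity of $\mathcal{B}$ are immediate from the bijection $n\leftrightarrow\alpha=\alpha(n)$ between $J$ and $\Delta(J)$, and coefficient preservation is inherited from Proposition~\ref{H=H} and \eqref{maxmod}. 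I would write this up in a short paragraph, with the chain $\mathcal{H}_\infty^{J}\equiv H_\infty^{\lambda,J}(\mathbb{T}^\infty)\equiv\text{Trig}_{\Delta(J)}(\mathbb{T}^{\pi(J)})\equiv\mathcal{P}_{\Delta(J)}(\ell_\infty^{\pi(J)})$ spelled out explicitly.
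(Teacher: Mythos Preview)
Your proposal is correct and follows essentially the same route as the paper: the paper's proof is a one-sentence appeal to Example~\ref{examples2} with \eqref{bohrtrafo}, Proposition~\ref{H=H}, and the maximum modulus principle (i.e., \eqref{maxmod}), which is precisely your chain $\mathcal{H}_\infty^{J}\equiv H_\infty^{\lambda,J}(\mathbb{T}^\infty)\equiv\text{Trig}_{\Delta(J)}(\mathbb{T}^{\pi(J)})\equiv\mathcal{P}_{\Delta(J)}(\ell_\infty^{\pi(J)})$. The only minor quibble is that your opening reference to Theorem~\ref{main-dirB} is slightly off (that result concerns the projection constant, not the isometric identification), but since you immediately invoke the correct Proposition~\ref{H=H} this does not affect the argument.
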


\begin{proof} This is an  immediate consequence of Example~\ref{examples2} together with  \eqref{bohrtrafo},
Proposition~\ref{H=H}, and the maximum modulus principle (more precisely, the fact that the modulus of a~polynomial on
$\ell_{\infty}^{\pi(J)}$
attains its maximum on the distinguished boundary ~$\mathbb{T}^{\pi(J)}$).
\end{proof}

In view of the preceding result, the study of the projection constant $\boldsymbol{\lambda}\big(\mathcal{H}_\infty^{J}\big)$
reduces to the study of the projection constant of the Banach space of all polynomials on   $\ell_{\infty}^{\pi(J)}$ (or equivalently, all analytic trigonometric polynomials on $\mathbb{T}^{\pi(J)}$) supported on the index set $\Delta(J)$.
The following theorem is a first example illustrating this general strategy.

\begin{theorem} \label{abstract}
Let  $J \subset \mathbb{N}$ be a finite subset. Then
\begin{align*}
\boldsymbol{\lambda}\big(\mathcal{H}_\infty^{J}\big)
= \int_{\mathbb{T}^{\pi(J)}} \Big| \sum_{\alpha \in \Delta(J)}z^\alpha\Big|\,d\mathrm{m}\,.
\end{align*}
Moreover,
\begin{equation*}
\frac{1}{\sqrt{2^{\Omega(J)}}}\sqrt{|J|} \,\,\, \leq \,\,\,
\boldsymbol{\lambda}\big(\mathcal{H}_\infty^{J}\big) \leq  \sqrt{|J|} .
\end{equation*}
\end{theorem}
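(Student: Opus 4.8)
The plan is to combine the just-established identification of $\mathcal{H}_\infty^{J}$ with a space of analytic polynomials on the polydisc (Proposition~\ref{H=Hord}) with the $\Lambda(2)$-machinery from Chapter~\ref{trigo}. First I would invoke Proposition~\ref{H=Hord} to identify $\mathcal{H}_\infty^{J}$ isometrically with $\mathcal{P}_{\Delta(J)}(\ell_\infty^{\pi(J)})$, so that $\boldsymbol{\lambda}\big(\mathcal{H}_\infty^{J}\big) = \boldsymbol{\lambda}\big(\mathcal{P}_{\Delta(J)}(\ell_\infty^{\pi(J)})\big)$. The integral formula is then precisely the content of the first displayed equation in Corollary~\ref{manydimensionsB} applied to $J = \Delta(J) \subset \mathbb{N}_0^{\pi(J)}$, giving
\[
\boldsymbol{\lambda}\big(\mathcal{H}_\infty^{J}\big) = \boldsymbol{\lambda}\big(\mathcal{P}_{\Delta(J)}(\ell_\infty^{\pi(J)})\big) = \int_{\mathbb{T}^{\pi(J)}} \Big|\sum_{\alpha \in \Delta(J)} z^\alpha\Big|\,d\mathrm{m}\,.
\]
(Alternatively one can route this through Theorem~\ref{main-dirB} with the $\lambda$-Dirichlet group $(\mathbb{T}^\infty,\beta_{\mathbb{T}^\infty})$ from Example~\ref{examples2}, using \eqref{bohrtrafo} to rewrite $h_{\log n}(z) = z^\alpha$ where $n = \mathfrak{p}^\alpha$; both give the same identity.)

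For the two-sided estimate, the upper bound $\boldsymbol{\lambda}\big(\mathcal{H}_\infty^{J}\big) \leq \sqrt{|J|}$ is immediate from the Kadets-Snobar inequality \eqref{kadets1}, since $\dim \mathcal{H}_\infty^{J} = |J|$; it also follows from the right-hand inequality in Corollary~\ref{manydimensionsB} once one notes $|\Delta(J)| = |J|$. For the lower bound I would use that $\Delta(J) \subset \mathbb{N}_0^{\pi(J)}$ has degree $\deg \Delta(J) = \max_{\alpha \in \Delta(J)} |\alpha| = \max_{n\in J}\Omega(n) = \Omega(J)$, because the order $|\alpha|$ of the exponent multi index of $n = \mathfrak{p}^\alpha$ is exactly the number of prime factors of $n$ counted with multiplicity. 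The Weissler inequality recalled just before Corollary~\ref{manydimensionsB} shows that the set $\{z^\alpha : |\alpha|\le m\}$ of characters of $\mathbb{T}^\infty$ is a $\Lambda(2)$-set with constant $C_2 \le \sqrt{2^m}$; applying this with $m = \Omega(J)$ and then the left-hand inequality of Corollary~\ref{manydimensionsB} (itself a consequence of Corollary~\ref{corB2}) yields
\[
\frac{1}{\sqrt{2^{\Omega(J)}}}\sqrt{|J|} \;\le\; \boldsymbol{\lambda}\big(\mathcal{P}_{\Delta(J)}(\ell_\infty^{\pi(J)})\big) \;=\; \boldsymbol{\lambda}\big(\mathcal{H}_\infty^{J}\big)\,.
\]

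There is essentially no hard step here: the entire argument is an orchestration of the isometric identification in Proposition~\ref{H=Hord} and the already-proved Corollary~\ref{manydimensionsB}. The only point requiring a line of care is the bookkeeping identification $\deg \Delta(J) = \Omega(J)$ and $|\Delta(J)| = |J|$, which I would state explicitly since the whole sharpness of the lower bound rests on getting the degree right — conflating $\Omega(J)$ with $\pi(J)$ or with $\max J$ would give a far weaker (or simply wrong) constant. Once that is pinned down, both displays above follow by direct substitution, and the proof is complete.
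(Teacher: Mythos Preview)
Your proof is correct and follows exactly the same route as the paper: invoke Proposition~\ref{H=Hord} to pass to $\mathcal{P}_{\Delta(J)}(\ell_\infty^{\pi(J)})$, then apply Corollary~\ref{manydimensionsB}, using that $|\Delta(J)|=|J|$ and that $\Delta(J)$ has degree $\Omega(J)$. The paper's own proof is a two-line reference to precisely these two results.
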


\noindent
Note that we here may also integrate over the infinite dimensional $\mathbb{T}^\infty$ instead of $\mathbb{T}^{\pi(J)}$.

\begin{proof}
The proof is an immediate consequence of Proposition~\ref{H=Hord} and Corollary~\ref{manydimensionsB}.
Indeed, the degree of the index set $\Delta(J)$ is at most $\Omega(J)$, and the cardinalities of $J$ and $\Delta(J)$ are the same.
\end{proof}

\smallskip

We illustrate Theorem~\ref{abstract} with a few more concrete examples. Using  for $m,x \in \mathbb{N}$ the  notation
\[
N_m(x) = \Big\{ 1 \leq  n \leq x \colon n = \mathfrak{p}^\alpha \,\, \,\,\, \text{with} \, \, \,\,\,
|\alpha| = m\Big\}\,,
\]
we write
\[
\mathcal{H}_\infty^{\leq x, m} := \mathcal{H}_\infty^{N_m(x)}\,,
\]
for the Banach space  of all so-called $m$-homogeneous Dirichet polynomials $\sum_{n=1}^{x} a_n\frac{1}{n^{s}}$ of length $x$, i.e.,
$a_n \neq 0$ implies that $n = \mathfrak{p}^\alpha$ with $|\alpha| =m$ (or equivalently, the index $n$ of every non-zero coefficient $a_n$
has up to multiplicities $m$ prime factors).

Again by Proposition~\ref{H=Hord},
\[
\mathcal{H}_\infty^{\leq x,m} = \mathcal{P}_{\Delta(N_m(x))}\big(\ell_{\infty}^{\pi(N_m(x))}\big)\,,
\]
where the identification, given by the Bohr map, is isometric and coefficient preserving.
For $m=1$ we immediately deduce from Remark~\ref{logp} and the prime number theorem that
\begin{align} \label{m=1}
\lim_{x \to \infty}  \frac{\boldsymbol{\lambda}\big(\mathcal{H}_\infty^{\leq x,1}\big)}{\sqrt{\pi{(x)}}}
=\lim_{x \to \infty}  \frac{\boldsymbol{\lambda}\big(\mathcal{H}_\infty^{\leq x,1}\big)}{\sqrt{\frac{x}{\log x}}}
= \frac{\sqrt{\pi}}{2}\,.
\end{align}
The following theorem partly extends the preceding  formula.

\smallskip

\begin{theorem} \label{projhom}
For $m \ge 1$
\begin{equation*}\label{m>1}
\boldsymbol{\lambda}\big(\mathcal{H}_\infty^{\leq x,m}\big) \,\,\,\sim_{C(m)}\,\,\,
\sqrt{\frac{x}{\log x}}\,\,\,\sqrt{\frac{(\log\log x)^{m-1}}{(m-1)!}}\,.
\end{equation*}
Moreover,  the constant $C(m)$  is independent of $m$, whenever $m\le \frac{\log\log x}{2e}$.
\end{theorem}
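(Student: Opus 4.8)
The plan is to combine the abstract integral/combinatorial formula for $\boldsymbol{\lambda}\big(\mathcal{H}_\infty^{\leq x,m}\big)$ coming from Theorem~\ref{abstract} with sharp counting estimates from analytic number theory. First, recall from Proposition~\ref{H=Hord} and Theorem~\ref{abstract} that
\[
\boldsymbol{\lambda}\big(\mathcal{H}_\infty^{\leq x,m}\big) = \int_{\mathbb{T}^{\pi(N_m(x))}} \Big|\sum_{\alpha \in \Delta(N_m(x))} z^\alpha\Big|\,d\mathrm m\,,
\]
and moreover Theorem~\ref{abstract} already gives the two-sided bound
\[
\frac{1}{\sqrt{2^{m}}}\sqrt{|N_m(x)|} \,\le\, \boldsymbol{\lambda}\big(\mathcal{H}_\infty^{\leq x,m}\big) \,\le\, \sqrt{|N_m(x)|}\,,
\]
since $\Omega(n)=m$ for every $n \in N_m(x)$. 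So the whole problem reduces to an asymptotic evaluation of the counting function $|N_m(x)| = \#\{n \le x : \Omega(n)=m\}$, together with a control of the exponential factor $2^{m/2}$ relative to the target expression.

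The key number-theoretic input is the classical Landau--Sathe--Selberg asymptotic for integers with a prescribed number of prime factors: uniformly for $m$ in a suitable range (say $m \le (2-\varepsilon)\log\log x$, which comfortably covers $m \le \frac{\log\log x}{2e}$), one has
\[
\pi_m(x) := \#\{n\le x : \Omega(n)=m\} \,=\, \big(1+o(1)\big)\,\frac{x}{\log x}\,\frac{(\log\log x)^{m-1}}{(m-1)!}\,,
\]
and in fact there is an effective version giving $\pi_m(x) \asymp \frac{x}{\log x}\frac{(\log\log x)^{m-1}}{(m-1)!}$ with absolute implied constants throughout the stated uniformity range (this is the Sathe--Selberg theorem; see for instance Tenenbaum's book). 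Plugging $|N_m(x)| = \pi_m(x)$ into the bound above immediately yields
\[
\frac{1}{\sqrt{2^m}}\,\sqrt{\frac{x}{\log x}}\sqrt{\frac{(\log\log x)^{m-1}}{(m-1)!}} \,\prec\, \boldsymbol{\lambda}\big(\mathcal{H}_\infty^{\leq x,m}\big) \,\prec\, \sqrt{\frac{x}{\log x}}\sqrt{\frac{(\log\log x)^{m-1}}{(m-1)!}}\,,
\]
which is exactly $\sim_{C(m)}$ with $C(m)$ absorbing the $2^{m/2}$; for the last sentence of the theorem, when $m \le \frac{\log\log x}{2e}$ one wants the implied constant to be genuinely independent of $m$, so one has to improve the lower bound $2^{-m/2}$ to something $m$-free.

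The main obstacle is precisely this: removing the $2^{m/2}$ loss in the lower estimate. The crude bound from Theorem~\ref{abstract} comes from the $\Lambda(2)$-constant $\sqrt{2^{\deg}}$ (Weissler's inequality), which is wasteful here because $\Delta(N_m(x))$ is a very special, \emph{tetrahedral-like and highly "spread"} index set: almost all $n\le x$ with $\Omega(n)=m$ are squarefree (the non-squarefree ones contribute a lower-order term, by a standard sieve argument), so $\Delta(N_m(x))$ essentially consists of multi-indices $\alpha$ with entries in $\{0,1\}$, supported on $\pi(N_m(x)) = \pi(x)$ primes, with $|\alpha|=m$ and $m$ tiny compared to $\pi(x)$. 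For such sets the $L^1$--$L^2$ ratio of $\sum_{\alpha}z^\alpha$ is much closer to $1$ than $2^{-m/2}$: one can either invoke a Khinchine-type / hypercontractivity estimate on $\mathbb{T}^\infty$ restricted to low-degree tetrahedral monomials of bounded degree relative to the number of variables, or argue directly that for a random $z\in\mathbb{T}^{\pi(x)}$ the sum $\sum_{\alpha\in\Delta(N_m(x))}z^\alpha$ is, after normalization, approximately Gaussian (central limit behavior for sums of nearly-independent unimodular terms), whence $\int |\cdot| \,d\mathrm m \ge c\,\big(\int|\cdot|^2\,d\mathrm m\big)^{1/2} = c\sqrt{|N_m(x)|}$ with an absolute $c>0$ uniformly in the range $m\le\frac{\log\log x}{2e}$. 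Making this precise — choosing the right probabilistic/harmonic-analytic lemma and verifying uniformity in $m$ — is where the real work lies; everything else is bookkeeping between the Bohr transform, Theorem~\ref{abstract}, and Sathe--Selberg.
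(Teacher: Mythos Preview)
Your approach is essentially the paper's. The reduction via Theorem~\ref{abstract} to $\sqrt{|N_m(x)|}$ up to the factor $2^{m/2}$, followed by the Landau/Sathe--Selberg asymptotic for $|N_m(x)|$, is exactly what the paper does for the first claim.

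For the second claim (constant independent of $m$ when $m \le \frac{\log\log x}{2e}$), you correctly isolate the obstacle --- killing the $2^{m/2}$ loss by showing $\int_{\mathbb{T}^\infty}\big|\sum_{\alpha\in\Delta(N_m(x))} z^\alpha\big|\,dz \sim_C \sqrt{|N_m(x)|}$ with an absolute constant --- but you leave its resolution open-ended (``Khinchine-type\ldots or CLT\ldots is where the real work lies''). The paper does not carry this out from scratch: it simply quotes this $L^1$--$L^2$ comparison as an established inequality, namely \cite[Equation~(13), p.~107]{bondarenkoseip2016}, valid precisely in the range $m \le \frac{\log\log x}{2e}$. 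So rather than developing a CLT or hypercontractivity argument yourself, you can invoke that reference directly; then Sathe--Selberg (with its uniformity in $m \le (2-\delta)\log\log x$) finishes the argument exactly as you outline. Your heuristics about tetrahedral structure and near-Gaussianity are morally what underlies the Bondarenko--Seip bound, but the paper treats it as a black box.
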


\begin{proof}
Since $\pi(N_m(x)) \leq \pi(x)$ and $\Omega(N_m(x) )) = m$,  by Theorem~\ref{abstract}
\begin{equation*}
\frac{1}{\sqrt{2^m}}\sqrt{|N_m(x) |} \,\,\, \leq \,\,\,
\boldsymbol{\lambda}\big(\mathcal{H}_\infty^{\leq x, m } \big)\leq  \sqrt{|N_m(x)|}\,.
\end{equation*}
Then the   first claim follows from a well-known result of Landau (see, e.g., \cite[p.~200]{tenenbaum1995introduction})
showing that
\begin{equation*}\label{landau}
|N_m(x)| \,\,\,\sim_{C(m)}\,\,\, \frac{x}{\log x} \frac{(\log \log x)^{m-1}}{(m-1)!} \,\,\,\,\,\,\,\,\,\,\,\,\,\,
\text{as $x \to \infty$}\,.
\end{equation*}
 For the proof of the second claim we use again that by Theorem~\ref{abstract}
\[
\boldsymbol{\lambda}\big(\mathcal{H}_\infty^{\leq x,m}\big) = \int_{\mathbb{T}^\infty} \Big| \sum_{\alpha \in \Delta(N_m(x))} z^\alpha\Big|\,dz\,.
\]
But in \cite[Equation (13),  p.107]{bondarenkoseip2016} it is shown that there is
a~universal constant $C \ge 1$ such that for any $x, m$ with $m\le \frac{\log\log x}{2e}$
\[
\int_{\mathbb{T}^\infty} \Big| \sum_{\alpha \in \Delta(N_m(x))} z^\alpha\Big|\,dz \,\,\,\sim_C\,\,\,
\Bigg(\int_{\mathbb{T}^\infty} \Big| \sum_{\alpha \in \Delta(N_m(x))} z^\alpha\Big|^2 dz\Bigg)^{\frac12} \, = \, \sqrt{|N_m(x)|}\,.
\]
On the other hand, the so-called  Sathe-Selberg formula (see, e.g., \cite{ErdoesSarkozy})
asserts that for every $\delta>0$ there is $c(\delta) > 0$ such that for $m\le (2-\delta)\log\log x$
\[
|N_m(x)| \,\,\,\sim_C\,\,\, \frac{x}{\log x} \frac{(\log \log x)^{m-1}}{(m-1)!} \,\,\,\,\,\,\,\, \text{as $x \to \infty$}\,.
\]
This completes the argument.
\end{proof}

\smallskip

Recall that, given $x \in \mathbb{N}$, we by
$$
\mathcal{H}_\infty^{\leq x}= \mathcal{H}_\infty^{\leq x}\big((\log n) \big)
$$
denote  the Banach space of all ordinary Dirichlet polynomials $D=\sum_{n=1}^{x} a_n n^{-s}$ of length $x$  endowed with
the sup norm on the imaginary line -- in other terms,
\[
\mathcal{H}_\infty^{\leq x} =\mathcal{H}_\infty^{N(x)}\big((\log n) \big)\,,
\]
where $ N(x) = \{n \colon 1 \leq n \leq x\}$.

A~recent deep theorem of Harper from \cite{Harper} shows that
\begin{equation}\label{harperA}
\lim_{T \to \infty }\frac{1}{2T} \int_{-T}^T \Big|\sum_{n=1}^x \frac{1}{n^{it}}\Big| dt \,\,\,\,\,\,\sim_C\,\,\,\,\,\,
\frac{\sqrt{x}}{(\log \log x)^{\frac{1}{4}}}\,,
\end{equation}
which in  view of Proposition~\ref{basic} and Example \ref{examples2} is equivalent to
\begin{equation}\label{harperB}
  \int_{\mathbb{T}^\infty} \Big| \sum_{\alpha \in \mathbb{N}^{(\mathbb{N})}_0: 1 \leq n \leq x} z^\alpha\Bigg|\,dz
\,\,\,\,\,\,\sim_C\,\,\,\,\,\,
\frac{\sqrt{x}}{(\log \log x)^{\frac{1}{4}}}\,.
\end{equation}
Combining Theorem~\ref{abstract} and Harper's result from \eqref{harperA}, we obtain the precise asymptotic order of
the projection constant of the ($x$-dimensional) Banach space $\mathcal{H}_\infty^{\leq x}$.

\smallskip

\begin{theorem}\label{harpo}
\begin{align*}
\boldsymbol{\lambda}\big(\mathcal{H}_\infty^{\leq x}\big) \,\,\,\sim_C\,\,\,
\frac{\sqrt{x}}{(\log \log x)^{\frac{1}{4}}}\,
\end{align*}
\end{theorem}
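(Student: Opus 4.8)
The statement $\boldsymbol{\lambda}\big(\mathcal{H}_\infty^{\leq x}\big) \sim_C \sqrt{x}/(\log\log x)^{1/4}$ is essentially a direct consequence of two ingredients that have already been assembled in the excerpt: the exact integral formula for the projection constant of a space of Dirichlet polynomials, and Harper's asymptotic estimate for the $L^1$-norm of the partial-sum kernel of the Riemann zeta function. So the proof is short and amounts to citing these correctly.

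First I would recall that $\mathcal{H}_\infty^{\leq x} = \mathcal{H}_\infty^{N(x)}$ with $N(x) = \{n : 1 \le n \le x\}$ and frequency $\lambda = (\log n)$. Applying Theorem~\ref{abstract} (which itself rests on the Bohr correspondence of Proposition~\ref{H=Hord} together with Corollary~\ref{manydimensionsB}) gives the exact identity
\[
\boldsymbol{\lambda}\big(\mathcal{H}_\infty^{\leq x}\big)
= \int_{\mathbb{T}^\infty} \Big| \sum_{\alpha \in \mathbb{N}_0^{(\mathbb{N})}:\, 1 \le \mathfrak{p}^\alpha \le x} z^\alpha \Big|\, dz\,,
\]
where I integrate over $\mathbb{T}^\infty$ (the remark after Theorem~\ref{abstract} permits this, since the index set only involves primes $\le x$). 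Alternatively, one may invoke Theorem~\ref{main-dir} directly and write $\boldsymbol{\lambda}\big(\mathcal{H}_\infty^{\leq x}\big) = \lim_{T\to\infty} \frac{1}{2T}\int_{-T}^T |\sum_{n\le x} n^{-it}|\,dt$.

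Second, I would quote Harper's theorem from \cite{Harper} in the form recorded as \eqref{harperA} (equivalently \eqref{harperB} after Proposition~\ref{basic} and Example~\ref{examples2}):
\[
\int_{\mathbb{T}^\infty} \Big| \sum_{\alpha \in \mathbb{N}_0^{(\mathbb{N})}:\, 1 \le \mathfrak{p}^\alpha \le x} z^\alpha \Big|\, dz
\,\,\sim_C\,\, \frac{\sqrt{x}}{(\log\log x)^{1/4}}\,.
\]
Chaining this equivalence with the identity from the previous paragraph yields $\boldsymbol{\lambda}\big(\mathcal{H}_\infty^{\leq x}\big) \sim_C \sqrt{x}/(\log\log x)^{1/4}$, which is exactly the claim. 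The bulk of the genuine difficulty has been externalized: the upper bound in Harper's estimate is an application of a sharp Girsanov-type / random-multiplicative-function argument, and the matching lower bound is the hard low-moment estimate (the ``better than square-root cancellation'' phenomenon). Neither of these needs to be reproved here; one only needs to be careful that the constants in $\sim_C$ are absolute (independent of $x$), which is indeed how \eqref{harperA} is stated, so no uniformity issue arises. The only step requiring a line of care is noting that the index set $\{\alpha : 1 \le \mathfrak{p}^\alpha \le x\}$ in Theorem~\ref{abstract} coincides with the one appearing in \eqref{harperB}, which is immediate from the fundamental theorem of arithmetic.
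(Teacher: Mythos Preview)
Your proposal is correct and follows essentially the same approach as the paper: both combine the integral formula from Theorem~\ref{abstract} with Harper's asymptotic \eqref{harperA}/\eqref{harperB}, and the paper's own proof is in fact just the one-line statement ``Combining Theorem~\ref{abstract} and Harper's result from \eqref{harperA}'' preceding the theorem.
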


\smallskip

It seems interesting to rephrase this result again in terms of multivariate polynomials.
Recall from Section~\ref{index-sets} that $\Delta(x) = \{  \alpha \in \mathbb{N}_0^{\pi(x)}\colon 1 \leq \mathfrak{p}^\alpha \leq x\}$,
where $x \geq 1$ and $\pi(x)$  as usual counts the primes $\mathfrak{p} \leq x$. Then by Proposition~\ref{H=Hord} we have that
\[
\mathcal{H}_\infty^{\leq x} = \mathcal{P}_{\Delta(x)}\big(\ell_{\infty}^{\pi(x)}\big)\,,
\]
where the identification is given by the Bohr transform -- being isometric and coefficient preserving. Hence the following result
is a~consequence of Theorem~\ref{harpo} and Theorem~\ref{C(G)proj}.

\begin{corollary}
\[
\boldsymbol{\lambda}\big(\mathcal{P}_{\Delta(x)}\big(\ell_{\infty}^{\pi(x)}\big)\big) \,\,\,\sim_C\,\,\, \frac{\sqrt{x}}{(\log\log x)^{1/4}}\,.
\]
Moreover,
\begin{align*}
\boldsymbol{\lambda}\big(\mathcal{P}_{\Delta(x)}(\ell_{\infty}^{\pi(x)})\big)
\,= \,\big\|\mathbf{P}_{\Delta(x)}: C(\mathbb{T}^{\pi(x)}) \rightarrow \mathcal{P}_{\Delta(x)} (\ell_\infty^{\pi(x)})\big\|
\,= \,\int_{\mathbb{T}^{\pi(x)}} \big| \sum_{\alpha \in \Delta(x)} z^\alpha \big| d z\,,
\end{align*}
where $\mathbf{P}_{\Delta(x)}$ stands for the restriction of the orthogonal projection on $L_2(\mathbb{T}^{\pi(x)})$
onto $\mathcal{P}_{\Delta(x)} (\ell_\infty^{\pi(x)})$\,.
\end{corollary}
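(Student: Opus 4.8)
The plan is to read this final corollary as an immediate translation of Theorem~\ref{harpo} through the Bohr correspondence, combined with Theorem~\ref{C(G)proj} applied to the torus. First I would invoke Proposition~\ref{H=Hord}, which identifies $\mathcal{H}_\infty^{\leq x}$ isometrically and coefficient-preservingly with $\mathcal{P}_{\Delta(x)}(\ell_\infty^{\pi(x)})$ via the Bohr map; since isometrically isomorphic Banach spaces share the same projection constant, Theorem~\ref{harpo} gives at once
\[
\boldsymbol{\lambda}\big(\mathcal{P}_{\Delta(x)}(\ell_\infty^{\pi(x)})\big)
= \boldsymbol{\lambda}\big(\mathcal{H}_\infty^{\leq x}\big)
\sim_C \frac{\sqrt{x}}{(\log\log x)^{1/4}}\,,
\]
which is the first displayed assertion.

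For the second (multi)displayed assertion, I would apply Corollary~\ref{manydimensionsB} (equivalently, Corollary~\ref{manydimensionsA}) to the finite index set $\Delta(x)\subset\mathbb{N}_0^{\pi(x)}$ on the group $G=\mathbb{T}^{\pi(x)}$. That corollary already yields
\[
\boldsymbol{\lambda}\big(\mathcal{P}_{\Delta(x)}(\ell_\infty^{\pi(x)})\big)
=\int_{\mathbb{T}^{\pi(x)}}\Big|\sum_{\alpha\in\Delta(x)}z^\alpha\Big|\,dz\,.
\]
The identification of this quantity with $\|\mathbf{P}_{\Delta(x)}\colon C(\mathbb{T}^{\pi(x)})\to\mathcal{P}_{\Delta(x)}(\ell_\infty^{\pi(x)})\|$ is exactly the content of Theorem~\ref{C(G)proj} (with $G=\mathbb{T}^{\pi(x)}$ and $E=\Delta(x)$ viewed as a subset of $\widehat{\mathbb{T}^{\pi(x)}}=\mathbb{Z}^{\pi(x)}$), since the Fourier projection there, restricted to polynomials, is precisely the orthogonal projection $\mathbf{P}_{\Delta(x)}$ of $L_2(\mathbb{T}^{\pi(x)})$ onto $\mathcal{P}_{\Delta(x)}(\ell_\infty^{\pi(x)})$. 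Chaining these two equalities with the asymptotic order from the first part completes the corollary.

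I do not expect any genuine obstacle here: all the analytic depth — Harper's theorem from probabilistic number theory and Weissler's hypercontractivity — is already absorbed into Theorem~\ref{harpo} and into Corollary~\ref{manydimensionsB}. The only points requiring a line of care are (i) checking that the Bohr map of Proposition~\ref{H=Hord} really is an \emph{isometry} (not merely an isomorphism), so that projection constants transfer verbatim, which follows from the maximum modulus principle already stated there; and (ii) noting that $\mathbf{P}_{\Delta(x)}$ agrees with the $C(\mathbb{T}^{\pi(x)})$-operator appearing in Theorem~\ref{C(G)proj}, which is immediate because both act on a character $z^\alpha$ by keeping it when $\alpha\in\Delta(x)$ and killing it otherwise. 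As the remark after the corollary indicates, one may equally well integrate over $\mathbb{T}^\infty$ in place of $\mathbb{T}^{\pi(x)}$, since every $\alpha\in\Delta(x)$ has support in the first $\pi(x)$ coordinates; this is a harmless reformulation and I would mention it without further proof.
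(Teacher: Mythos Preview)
Your proposal is correct and follows essentially the same route as the paper, which simply records the corollary as a consequence of Theorem~\ref{harpo} together with Theorem~\ref{C(G)proj} and the isometric identification $\mathcal{H}_\infty^{\leq x}=\mathcal{P}_{\Delta(x)}(\ell_\infty^{\pi(x)})$ from Proposition~\ref{H=Hord}. Your invocation of Corollary~\ref{manydimensionsB} is just the torus specialization of Theorem~\ref{C(G)proj}, so there is no real difference in approach.
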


\smallskip

\section{Comparing  
Sidon constants
}
From Chapter~\ref{Unconditionality} we know  that the unconditional basis constant  and the projection  constant for spaces
$\mathcal{P}_J(X_n)$ of polynomials on Banach spaces $X_n = (\mathbb{C}^n, \|\cdot\|)$, which are supported on finite index sets
$J \subset \mathbb{N}_0^{(\mathbb{N})}$, are intimately linked. Here we want to compare some of the preceding estimates on projection
constants for spaces of ordinary Dirichlet polynomials with their (recently published) analogs for unconditional basis constants.

Given a finite subset $J$ of $\mathbb{N}$, it is obvious that the 'monomials' $n^{-s}, \, n \in J$ form a basis of  the Banach space
$\mathcal{H}_\infty^{J}$, and so we denote by
\[
\boldsymbol{\chimon}  \big(\mathcal{H}_\infty^{J} \big)
\]
the unconditional basis constant of all $n^{-s}, \, n \in J$. Since by Proposition~\ref{H=Hord} there is a~coefficient preserving
isometric  identification
\[
\mathcal{H}_\infty^{J} = \mathcal{P}_{\Delta(J)}\big(\ell_{\infty}^{\pi(J)}\big)\,,
\]
we have
\[
\boldsymbol{\chimon}  \big(\mathcal{H}_\infty^{J} \big)\,=\,\boldsymbol{\chimon}
\big( \mathcal{P}_{\Delta(J)}\big(\ell_{\infty}^{\pi(J)}\big)\big)\,.
\]
On the other hand, we also have the identity
\[
\mathcal{P}_{\Delta(J)}\big(\ell_{\infty}^{\pi(J)}\big) = \text{Trig}_{\Delta(J)}\big(\mathbb{T}^{\pi(J)}\big)\,,
\]
which shows that $\boldsymbol{\chimon}  \big(\mathcal{H}_\infty^{J} \big)$ may be interpreted as the Sidon constant of the
characters $z^\alpha, \, \alpha \in \Delta(J)$  on the compact abelian group  $\mathbb{T}^{\pi(J)}$.

Recall that given
a~topological group $G$, the Sidon constant of a finite set $\Gamma$ of characters in the dual group $\widehat{G}$ is the best
constant $c\ge 0$ such that for every trigonometric polynomial $f = \sum_{\gamma \in \Gamma} c_\gamma \gamma$ on $G$
\begin{equation}\label{sidony}
\sum_{\gamma \in \Gamma} |c_\gamma| \leq c \|f\|_\infty\,.
\end{equation}

As defined above  $\mathcal{H}_\infty^{\leq x} $ stands for the Banach space of all Dirichlet polynomials
$D(s) = \sum_{n=1}^{x}a_n n^{-s}$ of length $x$ and $\Delta(x) = \big\{  \alpha \in \mathbb{N}_0^{\pi(\mathbb{N})}\colon  n \leq x\,,
\,\, \mathfrak{p}^\alpha = n\big\}\,.$
Then
\begin{equation} \label{annals}
\boldsymbol{\chimon} \big( \mathcal{H}_\infty^{\leq x} \big) \sim_C \frac{\sqrt{x}}{e^{\big(\frac{1}{\sqrt{2}} + o(1)\big)
\sqrt{\log x \log \log x}}}\,,
\end{equation}
and equivalently
\begin{equation*}
\boldsymbol{\chimon} \big(\mathcal{P}_{\Delta(x)}(\ell_\infty^{\pi(x)}) \big) \sim_C \frac{\sqrt{x}}{e^{\big(\frac{1}{\sqrt{2}}
+ o(1)\big) \sqrt{\log x \log \log x}}}\,\,;
\vspace{2mm}
\end{equation*}
this result is taken from \cite[Theorem~3]{defant2011bohnenblust} (see also \cite[Theorem~9.1]{defant2019libro}), and it is
the final outcome of a~long lasting  research project started by  Queff\'elec, Konyagin, and de la Bret\`eche
(see \cite[Section~9.3]{defant2019libro} for more details on its history).

In the preceding section we proved  the  counterpart for projection constants. From Theorem~\ref{harpo} (based among others
on Harper's asymptotic from \eqref{harperA}) we know that
\[
\boldsymbol{\lambda}\big( \mathcal{H}_\infty^{\leq x} \big) = \boldsymbol{\lambda} \big(\mathcal{P}_{\Delta(x)}(\ell_\infty^{\pi(x)})\big)
\sim_C \frac{\sqrt{x}}{(\log \log x)^{\frac{1}{4}}}\,.
\]

Analog results are known in the $m$-homogeneous case. By Balasubramanian-Calado-Queff\'elec \cite{balasubramanian2006bohr} (a~result
elaborated in \cite[Theorem~9.4]{defant2019libro}) one has
\[
\boldsymbol{{\chimon}} \big(\mathcal{H}_\infty^{\leq x,m} \big) = \boldsymbol{\chimon}\big(\mathcal{P}_{\Delta(x,m) }(\ell_\infty^{\pi(x)}) \big)
\sim_{C(m)} \frac{x^{\frac{m-1}{2m}}}{( \log x)^{\frac{m-1}{2}}}\,,
\]
whereas the corresponding result for projection constants from Theorem~\ref{projhom} reads
\[
\boldsymbol{\lambda}\big(\mathcal{H}_\infty^{\leq x,m}\big) =
\boldsymbol{\lambda} \big(\mathcal{P}_{\Delta(x,m) }(\ell_\infty^{\pi(x)}) \big) \,\,\,\sim_{C(m)}\,\,\,
\sqrt{\frac{x}{\log x}}\sqrt{\frac{(\log\log x)^{m-1}}{(m-1)!}}\,.
\]

\bigskip

\chapter{Polynomials on the Boolean cube}\label{Polynomials on the Boolean cube}

In this chapter we  focus on some  local Banach space theory 
of
spaces of real valued functions  defined on  Boolean cubes. For $N \in \mathbb{N}$ we call $\{\pm 1\}^{N}:=\{-1, +1\}^N$ the $N$-dimensional Boolean cube. A Boolean function is any function $f : \{ \pm 1\}^{N} \rightarrow \{ \pm 1\}$, and more generally,  functions  $f : \{ \pm 1\}^{N} \rightarrow \mathbb{R}$ are said to be  'real valued functions on the  $N$-dimensional Boolean cube'. This kind of functions are essential in  theoretical computer science, and they also play a key role in combinatorics,
random graph theory, statistical physics, Gaussian geometry, the theories of metric spaces/ Banach spaces, or  social choice theory (see \cite{o2008some} and \cite{o2014analysis}). Moreover, the last decades show  growing interest for applications of Boolean functions in the context of quantum algorithms complexity and  quantum information (see \cite{beals2001quantum, defant2019fourier}).  

The study of  real valued functions on Boolean cubes is deeply influenced by Fourier analysis. Considering the Boolean cube $\{\pm 1\}^{N}$ as a~compact abelian group endowed with the coordinatewise product and the  discrete topology (so the Haar measure is given by the normalized counting measure), we may apply the machinery given by abstract harmonic analysis. In particular, the integral or expectation of  each function $f:\{ \pm 1\}^{N} \rightarrow \mathbb{R}$ is given by
\[
\mathbb{E}\big[ f \big] := \frac{1}{2^{N}} \sum_{x \in \{ \pm 1\}^{N}}{f(x)}\,.
\]
The characters on $\{ \pm 1\}^{N}$  are the so-called Walsh functions defined as
\[
\chi_{S}:\{ \pm 1\}^{N} \rightarrow \{ \pm 1\} \, ,
\hspace{3mm} \chi_{S}(x) = x^{S}:= \prod_{k \in S}{x_{k}} \,\, \,\,\,\, \text{for} \,\,\,\,\,\,x \in \{\pm 1\}^{N},
\]
where $S \subset [N]:=\{ 1, \ldots, N\}$ and  $\chi_{\emptyset}(x):= 1$ for each $x\in \{\pm 1\}^N$.
This allows to associate to each function $f:\{\pm 1\}^{N} \rightarrow \R$ its Fourier-Walsh expansion
\begin{equation*}\label{equa:FourierExpansionShort}
f(x) = \sum_{S \subset [N]}{\widehat{f}(S) \, x^S}\, , \hspace{3mm} x \in \{\pm 1\}^{N}\,,
\end{equation*}
where  $\widehat{f}(S) = \mathbb{E}\big[ f \cdot \chi_{S} \big]$ are the Fourier coefficients.
Given $d \in \mathbb{N}$, we say that  $f$ has degree $d = \deg f$ whenever  $\widehat{f}(S) = 0$ for all $|S| > d$, and $f$ is $d$-homogeneous
whenever additionally  $\widehat{f}(S) = 0$ provided  $|S| \neq d$.

For  every `index set'  $\mathcal{S}$ of subsets $S \subset [N] $, we define
the linear space
$$\mathcal{B}^{N}_{\mathcal{S}}$$
of all  functions $f:\{ \pm 1\}^{N} \rightarrow \mathbb{R}$, which have Fourier-Walsh expansions supported on $\mathcal{S}$. Endowed  with the supremum norm $\|\cdot\|_\infty$ on the $N$-dimensional Boolean cube, this space  turns into a  Banach space.

Our aim then is to estimate  the projection constant $\boldsymbol{\lambda}(\mathcal{B}_{\mathcal{S}}^{N})$,
as well as the unconditional basis constant $\boldsymbol{\chimon}(\mathcal{B}_{\mathcal{S}}^{N})$ of the Fourier-Walsh basis $(\chi_{S})_{S \in \mathcal{S}}$
of $\mathcal{B}_{\mathcal{S}}^{N}$.

Let us indicate how  this study  embeds, in a natural way, into  our setting of
spaces $\mathcal{P}_J(X_N)$ of polynomials in $N$ variables supported on certain index sets $J$.

Obviously, for each  $f:\{ \pm 1\}^{N} \rightarrow \mathbb{R}$
  there is a unique  tetrahedral real polynomial $P_f:\mathbb{R}^N\rightarrow \mathbb{R}$ for which the  following diagram commutes:
    \begin{equation*} \label{Bild}
\begin{tikzcd}[column sep=small]
\{ \pm 1\}^{N} \arrow[hookrightarrow]{rr}{} \arrow[swap]{dr}{f}& & \,\,\mathbb{R}^{N} \,\,, \arrow{dl}{P_{f}}\\
& \mathbb{R} &
\end{tikzcd}
\end{equation*}
and in this case
\begin{equation*} \label{affine}
\| f\|_{\infty} := \sup_{x \in \{ \pm 1\}^{N}}{|f(x)|} = \sup_{x \in \{ \pm 1\}^{N}}{|P_{f}(x)|} = \sup_{x \in [-1,1]^{N}}{|P_{f}(x)|} =: \| P_{f}\|_{\infty}.
\end{equation*}
Moreover,  each  $S \subset [N]$  may be identified with a tetrahedral multi index
$\alpha^S \in  \mathbb{N}_0^N$ given by $\alpha^S(k) = 1, k \in S$ and $\alpha^S(k) = 0, k \notin S$, and conversely every
tetrahedral multi index
$\alpha \in  \mathbb{N}_0^N$ defines the subset $S = \text{supp} \,\alpha \subset [N] $.
 All together this shows that, given an  `index set' $\mathcal{S}$ of subsets $S \subset [N]$,   we have   the following isometric identity
 \begin{equation}\label{identities}
   \mathcal{B}^{N}_{\mathcal{S}} = \mathcal{P}_{J(\mathcal{S})}(\ell_\infty^N (\mathbb{R}))\,, \,\,\,\,\,\,\,\,f \mapsto P_f\,\,,
 \end{equation}
where   $J(\mathcal{S})$ collects all tetrahedral indices in $\mathbb{N}_0^N$ associated to sets $S \in \mathcal{S}$, and $\ell_\infty^N (\mathbb{R})= (\mathbb{R}^N,\|\cdot\|_\infty)$
(in contrast to the complex $\ell^N_\infty = \ell_\infty^N (\mathbb{C})$ we usually look at).

We mainly concentrate  on the following special classes of functions on the $N$-dimensional Boolean cube:
\begin{itemize}
\setlength\itemsep{0.8em}
\item $\mathcal{B}^{N}$ :=  all functions $f:\{ \pm 1\}^{N} \rightarrow \mathbb{R}$,
\item $\mathcal{B}^{N}_{=d}$  :=  all  $d$-homogeneous $f:\{ \pm 1\}^{N} \rightarrow \mathbb{R}$,
\item $\mathcal{B}^{N}_{\leq d}$ :=  all $f:\{ \pm 1\}^{N} \rightarrow \mathbb{R}$ with   $\deg f \leq d$\,.
\end{itemize}
Obviously, we have the isometric identity
\begin{align} \label{identitiesI}
\mathcal{B}^N = \ell_\infty\big(\{ \pm 1 \}^N\big)\,, \,\,\,\,\,\,\,\,f \mapsto (f(x))_{x \in \{ \pm 1 \}^N}\,,
  \end{align}
and by \eqref{identities}  the isometric identities
\begin{align*}
\mathcal{B}^{N}_{=d} = \mathcal{P}_{\Lambda_T(d,N)}(\ell_\infty^N(\mathbb{R}))
\,\,\, \,\,\,\,\,\,\text{and}\,\,\,\,\,\,\,\,\,
\mathcal{B}^{N}_{\leq d} = \mathcal{P}_{\Lambda_T(\leq d,N)}(\ell_\infty^N(\mathbb{R}))=\mathcal{T}_{\leq d}(\ell_\infty^N(\mathbb{R}))\,.
  \end{align*}

\smallskip

\section{Projection constants}

The identity from \eqref{identitiesI} gives the following trivial fact:
\[
\boldsymbol{\lambda}\big(\mathcal{B}^{N}
\big) = 1\,.
\]
More generally, from Theorem \ref{C(G)proj} we immediately obtain the following  integral formula
for projection constants of  families of functions on the $N$-dimensional Boolean cubes.

\begin{theorem} \label{bool-int}
For each $\mathcal{S}  \subset [N]$
\[
\boldsymbol{\lambda}\big(\mathcal{B}_\mathcal{S} ^{N}\big) = \mathbb{E}\big[    \big| \sum_{S \in \mathcal{S} } \chi_S \big| \big]\,.
\]
\end{theorem}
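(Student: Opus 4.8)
The plan is to recognize Theorem~\ref{bool-int} as a direct instance of the general integral formula for trigonometric polynomials on compact abelian groups, namely Theorem~\ref{C(G)proj}. The Boolean cube $\{\pm 1\}^N$ is a compact abelian group when equipped with the coordinatewise product and the discrete topology; its Haar measure is the normalized counting measure, so integration against it is exactly the expectation $\mathbb{E}[\cdot]$ defined in the chapter introduction. The dual group $\widehat{\{\pm 1\}^N}$ consists precisely of the Walsh characters $\chi_S$, $S \subset [N]$, and for any index set $\mathcal{S}$ of subsets of $[N]$ the space $\mathcal{B}^N_{\mathcal{S}}$ is by definition the span of $\{\chi_S : S \in \mathcal{S}\}$, i.e.\ $\mathcal{B}^N_{\mathcal{S}} = \text{Trig}_E(\{\pm 1\}^N)$ with $E = \{\chi_S : S \in \mathcal{S}\}$.

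First I would spell out this identification carefully: verify that the coordinatewise product makes $\{\pm 1\}^N$ a compact topological group (trivial since it is finite and discrete), that the normalized counting measure is its Haar measure (again immediate from finiteness and translation invariance), and that the Walsh functions $\chi_S$ exhaust the characters and are pairwise distinct (this is the standard Fourier--Walsh basis fact, which also identifies $\widehat{\{\pm 1\}^N}$ with the power set of $[N]$). Then $E := \{\chi_S : S \in \mathcal{S}\}$ is a finite set of distinct characters, and $\text{Trig}_E(\{\pm 1\}^N) = \mathcal{B}^N_{\mathcal{S}}$ with equality of norms, since both carry the supremum norm over $\{\pm 1\}^N$.

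Next I would simply invoke Theorem~\ref{C(G)proj} with $G = \{\pm 1\}^N$ and this $E$. That theorem gives
\[
\boldsymbol{\lambda}\big(\text{Trig}_E(\{\pm 1\}^N)\big) = \int_{\{\pm 1\}^N} \Big| \sum_{\gamma \in E} \gamma(x) \Big|\, d\mathrm{m}(x),
\]
and rewriting the right-hand side with $\gamma = \chi_S$, $\mathrm{m} = $ normalized counting measure $= \mathbb{E}$, and $E = \{\chi_S : S \in \mathcal{S}\}$ yields exactly $\mathbb{E}\big[\,|\sum_{S \in \mathcal{S}} \chi_S|\,\big]$. This completes the proof.

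There is no real obstacle here: the entire content is the translation of the Boolean-cube language into the abstract harmonic-analysis language of Chapter~\ref{trigo}. The only point requiring a sentence of care is the identification of the dual group and the observation that the characters $\chi_S$ for $S \in \mathcal{S}$ are pairwise distinct (so that $|E| = |\mathcal{S}|$ and the hypothesis of Theorem~\ref{C(G)proj} that $E$ be a finite set of characters is literally met); both are standard properties of the Fourier--Walsh system. One may also remark, as the text does for $\mathcal{B}^N$ itself, that the trivial case $\mathcal{S} = 2^{[N]}$ recovers $\boldsymbol{\lambda}(\mathcal{B}^N) = 1$ since then $\sum_{S} \chi_S$ is, up to normalization, the indicator of the identity element, whose $L^1$-norm against the normalized counting measure is $1$.
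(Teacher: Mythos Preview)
Your proposal is correct and matches the paper's approach exactly: the paper states that Theorem~\ref{bool-int} follows ``immediately'' from Theorem~\ref{C(G)proj}, and your write-up spells out precisely the identification of $\{\pm 1\}^N$ as a compact abelian group with Haar measure the normalized counting measure, dual group the Walsh characters, and $\mathcal{B}^N_{\mathcal{S}} = \text{Trig}_E(\{\pm 1\}^N)$ for $E = \{\chi_S : S \in \mathcal{S}\}$.
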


\noindent
Clearly, by the Kadets-Snobar theorem (see, e.g., \eqref{kadets1}) one has
\begin{equation}\label{KS}
\boldsymbol{\lambda}\big(\mathcal{B}^N_\mathcal{S} \big) \leq \sqrt{|\mathcal{S} |}\,.
\end{equation}
Note that this estimate also follows easily from Proposition \ref{bool-int} and  the orthogonality of the Fourier-Walsh functions $\chi_S$
implying
\[
\boldsymbol{\lambda}\big(\mathcal{B}_\mathcal{S} ^{N}\big) =\mathbb{E}\big[    \big| \sum_{S \in \mathcal{S} } \chi_S \big| \big]\le
\Big( \mathbb{E}\big[\big| \sum_{S \in \mathcal{S} } \chi_S \big|^2 \big]\Big)^{1/2} =  \sqrt{|\mathcal{S} |}\,.
\]

We collect a couple of simple corollaries of Theorem~\ref{bool-int}. For the first recall from
\cite[Theorem 9.22]{o2014analysis}
the following important hypercontractivity  estimate for functions on Boolen cubes: for every $f \in \mathcal{B}_{\leq d}^{N}$
    \begin{equation}\label{weissler}
    \|f\|_2 \leq e^d \|f\|_1\,.
  \end{equation}
  In other terms (see Section~\ref{An integral formula}), for all $1 \leq d \leq N$ the set of all Walsh functions $\chi_S$ with $S \subset [N]$ and $|S|\leq d$ forms a $\Lambda(2)$-set with constant $\leq e^d$. Then the following result is a Boolean counterpart of
  Corollary~\ref{corB2}.

\begin{corollary}\label{kiel}
For each $\mathcal{S}  \subset [N]$ with  $|S|\leq d$ for all $S \in \mathcal{S}  $
\[
\frac{1}{e^d} \sqrt{|\mathcal{S} |}\,\,\leq\,\,\boldsymbol{\lambda}\big(\mathcal{B}_{\mathcal{S}}^{N}\big) \,\,\leq\,\, \,\sqrt{|\mathcal{S}|}.
\]
\end{corollary}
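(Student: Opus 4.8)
The plan is to deduce Corollary~\ref{kiel} as a direct combination of the integral formula in Theorem~\ref{bool-int}, the orthonormality of the Walsh characters, and the hypercontractive estimate~\eqref{weissler}. The upper bound is already recorded in~\eqref{KS}, so only the lower bound genuinely requires an argument, and even that is short.

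First I would set $f := \sum_{S \in \mathcal{S}} \chi_S$. By Theorem~\ref{bool-int} we have $\boldsymbol{\lambda}(\mathcal{B}_{\mathcal{S}}^{N}) = \mathbb{E}\big[\,|f|\,\big] = \|f\|_1$. Since every $S \in \mathcal{S}$ satisfies $|S| \leq d$, the function $f$ belongs to $\mathcal{B}_{\leq d}^{N}$, and hence the hypercontractivity inequality~\eqref{weissler} applies, giving $\|f\|_2 \leq e^d \|f\|_1$. On the other hand, the Walsh functions $(\chi_S)_{S \subset [N]}$ form an orthonormal system in $L_2(\{\pm1\}^N)$ with respect to the normalized counting measure, so $\|f\|_2 = \big(\sum_{S \in \mathcal{S}} 1\big)^{1/2} = \sqrt{|\mathcal{S}|}$. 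Chaining these together yields
\[
\sqrt{|\mathcal{S}|} = \|f\|_2 \leq e^d \|f\|_1 = e^d\, \boldsymbol{\lambda}\big(\mathcal{B}_{\mathcal{S}}^{N}\big)\,,
\]
which after dividing by $e^d$ is precisely the claimed lower bound. Combined with~\eqref{KS} (or equivalently the Cauchy--Schwarz estimate $\|f\|_1 \leq \|f\|_2 = \sqrt{|\mathcal{S}|}$ already noted after the statement of Theorem~\ref{bool-int}), this completes the proof.

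There is no real obstacle here: the corollary is stated as an ``immediate'' consequence and indeed the only ingredients are Theorem~\ref{bool-int}, which is granted, and the fact that sets of Walsh characters of bounded degree are $\Lambda(2)$-sets with constant $\leq e^d$ (recorded just before the statement, as a restatement of~\eqref{weissler}). The one point worth a sentence of care is to make explicit that $f = \sum_{S\in\mathcal S}\chi_S$ has degree $\leq d$, so that~\eqref{weissler} is legitimately applicable, and that the $L_2$-norm computation uses the normalization of the Haar (counting) measure on $\{\pm1\}^N$ so that $\|\chi_S\|_2 = 1$. Everything else is routine.
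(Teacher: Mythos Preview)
Your proof is correct and follows essentially the same approach as the paper: the corollary is presented there as the Boolean counterpart of Corollary~\ref{corB2}, which combines the integral formula with orthonormality of characters and the $\Lambda(2)$-constant bound, exactly as you do by invoking Theorem~\ref{bool-int}, the orthonormality of the Walsh system, and the hypercontractive estimate~\eqref{weissler}.
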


\smallskip

Calculating cardinalities, yields another immediate consequence.

\smallskip

\begin{corollary}
\label{peskow}
For all integers $1 \leq d \leq N$ one has
\[
\frac{1}{e^d}  \,  \binom{N}{d}^{\frac{1}{2}}
\, \, \leq \,\, \boldsymbol{\lambda}\big(\mathcal{B}_{=d}^{N}\big) \,\,\leq\,\,
\binom{N}{d}^{\frac{1}{2}}
\]
and
\[
\frac{1}{e^d}\,  \left( \sum_{k=0}^d \binom{N}{k}\right)^{\frac{1}{2}}
\, \, \leq \,\, \boldsymbol{\lambda}\big(\mathcal{B}_{\le d}^N\big) \,\,\leq\,\, \left( \sum_{k=0}^d \binom{N}{k}\right)^{\frac{1}{2}}\,.
\]
\end{corollary}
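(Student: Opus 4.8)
\textbf{Proof plan for Corollary~\ref{peskow}.}
The strategy is to deduce this statement directly from Corollary~\ref{kiel} by a plain cardinality computation; there is essentially no hard analytic content left once the integral formula of Theorem~\ref{bool-int} and the hypercontractivity-based bounds of Corollary~\ref{kiel} are in hand. First I would observe that the $d$-homogeneous space $\mathcal{B}_{=d}^{N}$ is exactly $\mathcal{B}_{\mathcal{S}}^{N}$ with $\mathcal{S} = \{ S \subset [N] \colon |S| = d\}$, and that the space $\mathcal{B}_{\leq d}^{N}$ is $\mathcal{B}_{\mathcal{S}}^{N}$ with $\mathcal{S} = \{ S \subset [N] \colon |S| \leq d\}$. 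In both cases every $S \in \mathcal{S}$ satisfies $|S| \leq d$, so Corollary~\ref{kiel} applies and gives
\[
\frac{1}{e^d}\sqrt{|\mathcal{S}|} \;\leq\; \boldsymbol{\lambda}\big(\mathcal{B}_{\mathcal{S}}^{N}\big) \;\leq\; \sqrt{|\mathcal{S}|}\,.
\]
It then only remains to insert the correct cardinalities.

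For the first chain of inequalities, the set $\mathcal{S}$ of all $d$-element subsets of $[N]$ has cardinality $\binom{N}{d}$, so the claimed bounds
\[
\frac{1}{e^d}\binom{N}{d}^{\frac{1}{2}} \;\leq\; \boldsymbol{\lambda}\big(\mathcal{B}_{=d}^{N}\big) \;\leq\; \binom{N}{d}^{\frac{1}{2}}
\]
are immediate. For the second chain, the set $\mathcal{S}$ of all subsets of $[N]$ of size at most $d$ has cardinality $\sum_{k=0}^{d}\binom{N}{k}$, and substituting this into the displayed two-sided estimate yields
\[
\frac{1}{e^d}\Big(\sum_{k=0}^{d}\binom{N}{k}\Big)^{\frac{1}{2}} \;\leq\; \boldsymbol{\lambda}\big(\mathcal{B}_{\leq d}^{N}\big) \;\leq\; \Big(\sum_{k=0}^{d}\binom{N}{k}\Big)^{\frac{1}{2}}\,.
\]

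The only point deserving a word of care — and the closest thing to an ``obstacle'' here — is making sure the hypothesis of Corollary~\ref{kiel} is genuinely met, i.e.\ that the degree bound in that corollary is precisely ``$|S| \leq d$ for all $S \in \mathcal{S}$'' and not ``$|S| = d$''; in the homogeneous case one has the stronger property $|S| = d$, which of course implies $|S| \leq d$, so there is no gap. Everything else is bookkeeping: counting subsets and quoting Corollary~\ref{kiel}. I would therefore write the proof in two short sentences, one for $\mathcal{B}_{=d}^{N}$ and one for $\mathcal{B}_{\leq d}^{N}$, each of the form ``apply Corollary~\ref{kiel} with $\mathcal{S}$ as above and compute $|\mathcal{S}|$''.
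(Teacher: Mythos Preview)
Your proposal is correct and matches the paper's approach exactly: the paper introduces Corollary~\ref{peskow} with the sentence ``Calculating cardinalities, yields another immediate consequence'' and gives no further proof, so it is indeed just Corollary~\ref{kiel} combined with $|\{S \subset [N] : |S|=d\}| = \binom{N}{d}$ and $|\{S \subset [N] : |S|\le d\}| = \sum_{k=0}^{d}\binom{N}{k}$.
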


\smallskip

Comparing with what we  proved for tetrahedral $d$-homogeneous or degree $d$ polynomials on $\ell_\infty^n(\mathbb{C})$ (see, e.g., Theorem~\ref{conny3}), we also add the following result.

\smallskip

\begin{corollary} \label{mariupol}
For all integers $1 \leq d \leq N$ one has
\[
\frac{1}{e^{d}}
\bigg(\frac{N}{d}\bigg)^{\frac{d}{2}}\,\leq\,\boldsymbol{\lambda}\big(\mathcal{B}^N_{= d}\big) \,\leq\,
e^{\frac{d}{2}}\bigg(\frac{N}{d}\bigg)^{\frac{d}{2}}\,,
\]
and we here may replace $\boldsymbol{\lambda}(\mathcal{B}^N_{= d})$ by $\boldsymbol{\lambda}(\mathcal{B}^N_{\leq d})$.
\end{corollary}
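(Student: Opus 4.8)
The plan is to deduce Corollary~\ref{mariupol} directly from the already-established Corollary~\ref{peskow} together with elementary estimates on the binomial coefficients $\binom{N}{d}$ and on partial sums $\sum_{k=0}^d \binom{N}{k}$. First I would recall the two-sided bound $\big(\frac{N}{d}\big)^d \le \binom{N}{d} \le e^d\big(\frac{N}{d}\big)^d$ for $1 \le d \le N$, which is exactly the elementary estimate quoted in \eqref{cardi} (applied with the roles of the parameters as in Remark~\ref{cardo}). Taking square roots gives
\[
\Big(\frac{N}{d}\Big)^{\frac{d}{2}} \le \binom{N}{d}^{\frac{1}{2}} \le e^{\frac{d}{2}}\Big(\frac{N}{d}\Big)^{\frac{d}{2}}\,.
\]
Combining the left-hand inequality here with the upper bound in the first display of Corollary~\ref{peskow}, namely $\boldsymbol{\lambda}(\mathcal{B}^N_{=d}) \le \binom{N}{d}^{1/2}$, yields the upper bound $\boldsymbol{\lambda}(\mathcal{B}^N_{=d}) \le e^{d/2}\big(\frac{N}{d}\big)^{d/2}$; and combining the right-hand inequality with the lower bound $\frac{1}{e^d}\binom{N}{d}^{1/2} \le \boldsymbol{\lambda}(\mathcal{B}^N_{=d})$ yields $\frac{1}{e^d}\big(\frac{N}{d}\big)^{d/2} \le \boldsymbol{\lambda}(\mathcal{B}^N_{=d})$. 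This settles the $d$-homogeneous case.

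For the degree-$\le d$ case I would use the trivial sandwich $\binom{N}{d} \le \sum_{k=0}^d \binom{N}{k} \le (d+1)\binom{N}{d}$, valid because $\binom{N}{k}$ is increasing in $k$ for $k \le d \le N/2$ — and for $d > N/2$ one can argue directly or note that the claimed bound is only interesting in a regime where these estimates are anyway harmless after absorbing constants of the form $e^{d}$. Taking square roots, $\sum_{k=0}^d \binom{N}{k}$ is comparable to $\binom{N}{d}$ up to a factor $\sqrt{d+1} \le e^{d/2}$ (for $d \ge 1$ one has $d+1 \le e^d$, so $\sqrt{d+1}\le e^{d/2}$). Feeding this into the second display of Corollary~\ref{peskow} and then re-applying the binomial estimate above shows
\[
\frac{1}{e^d}\,\frac{1}{e^{d/2}}\Big(\frac{N}{d}\Big)^{d/2} \le \boldsymbol{\lambda}\big(\mathcal{B}^N_{\le d}\big) \le e^{d/2}\cdot e^{d/2}\Big(\frac{N}{d}\Big)^{d/2}\,,
\]
and after absorbing the extra $e^{d/2}$ factors into the constants (replacing, say, $e^d$ by a slightly larger exponential base, which the statement as written already tolerates since only the form $C^d$ matters) one obtains exactly the claimed inequalities with the same constants $e^{-d}$ and $e^{d/2}$ as in the homogeneous case. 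A small amount of care is needed here to check that the constants really come out as stated rather than merely up to $C^d$; if the stated constants turn out to be slightly optimistic I would instead phrase the conclusion as $\boldsymbol{\lambda}(\mathcal{B}^N_{\le d}) \sim_{C^d} (N/d)^{d/2}$, which is all that is used later.

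The main obstacle, such as it is, is purely bookkeeping: tracking the exact exponential constants through the chain Corollary~\ref{peskow} $\to$ binomial estimate $\to$ partial-sum estimate, and deciding whether the $d$-dependence of the factor $\sqrt{d+1}$ should be absorbed into an $e^{d/2}$ or left explicit. There is no genuine analytic difficulty here — everything reduces to the Stirling-type bounds on $\binom{N}{d}$ already recorded in the preliminaries and to Corollary~\ref{peskow}, which in turn rests on Theorem~\ref{bool-int} and the hypercontractive inequality \eqref{weissler}. I would close by remarking, as the corollary's statement already does, that the argument is insensitive to whether one works with $\mathcal{B}^N_{=d}$ or $\mathcal{B}^N_{\le d}$, since the cardinalities of $\Lambda_T(d,N)$ and $\Lambda_T(\le d,N)$ differ by at most the harmless factor $d+1$.
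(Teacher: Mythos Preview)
Your treatment of the homogeneous case $\mathcal{B}^N_{=d}$ is exactly the paper's: combine Corollary~\ref{peskow} with the elementary two-sided bound $(N/d)^d \le \binom{N}{d} \le e^d(N/d)^d$.

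For the degree-$\le d$ case your route diverges from the paper's, and this is where your hedging (``if the stated constants turn out to be slightly optimistic\ldots'') comes in. The paper does \emph{not} pass through the sandwich $\sum_{k\le d}\binom{N}{k}\le (d+1)\binom{N}{d}$. That estimate is problematic for two reasons you partly flag: it fails outright once $d>N/2$ (e.g.\ at $d=N$ the left side is $2^N$ and the right is $N+1$), and even where it holds it costs you an extra $\sqrt{d+1}$ that spoils the claimed constant $e^{d/2}$. Instead the paper bounds the partial sum directly:
\[
\sum_{k=0}^d \binom{N}{k}\;\le\;\sum_{k=0}^d \frac{N^k}{k!}\;=\;\sum_{k=0}^d \frac{d^k}{k!}\Big(\frac{N}{d}\Big)^k\;\le\;e^d\Big(\frac{N}{d}\Big)^d,
\]
valid for all $1\le d\le N$ since $N/d\ge 1$. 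Taking square roots gives $\big(\sum_{k\le d}\binom{N}{k}\big)^{1/2}\le e^{d/2}(N/d)^{d/2}$ with no case split and no extra factor, so the upper bound for $\boldsymbol{\lambda}(\mathcal{B}^N_{\le d})$ comes out exactly as stated. The lower bound is immediate from $\sum_{k\le d}\binom{N}{k}\ge\binom{N}{d}\ge(N/d)^d$. So the paper obtains the precise constants $e^{-d}$ and $e^{d/2}$ for both cases, whereas your approach only yields the weaker $\sim_{C^d}$ conclusion for $\mathcal{B}^N_{\le d}$.
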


\begin{proof}[Proof]
  The claim follows immediately from Corollary~\ref{peskow} combined with the two  elementary estimates
\begin{align} \label{ukraineA}
\Big(\frac{N}{d}\Big)^d \leq \binom{N}{d}\,,
\end{align}
and
\begin{equation} \label{ukraineAA}
\binom{N}{d}
\leq
\sum_{k=0}^d \binom{N}{k} \leq \sum_{k=0}^d  \frac{N^k}{k!} = \sum_{k=0}^d  \frac{d^k}{k!} \Big(\frac{N}{d}\Big)^k
\leq e^d \Big(\frac{N}{d}\Big)^d\,.\qedhere
\end{equation}
\end{proof}

\smallskip

In this context the following remark seems of interest  (compare also with Proposition~\ref{Cauchy}).

\begin{remark}
We have 
\[
\boldsymbol{\lambda}
\big(\mathcal{B}_{=d}^{N}\big) \le (1+\sqrt{2})^d  \boldsymbol{\lambda}
\big(\mathcal{B}_{\le d}^{N}\big)\,.
\]
This follows   from an  important  fact proved by Klimek in the article  \cite{klimek1995metrics} (see also  \cite[Lemma1,(iv)]{defant2019fourier}): If $f \in \mathcal{B}_{\leq d}^{N} $ and
$f_k = \sum_{|S|=k} \hat{f}(S) \chi_S$ is the $k$-homogeneous part of $f$  for $0 \leq k \leq d$, then
\begin{equation}\label{klimek}
\|f_k\|_\infty \leq (1+\sqrt{2})^d \|f\|_\infty \,.
\end{equation}
\end{remark}

\smallskip

Note that applying a~remarkable formula due to ~McKay \cite{mckay1989littlewood} we have (see also \cite[Lemma 5.7]{defant2018bohrBoolean}):
For each $N \in \N$ and each $0 \leq \alpha <N$
with $N - \alpha$ being an odd integer, there exists a~real number $0 \leq c_{\alpha,N} \leq \sqrt{\pi/2}$ such that
\[
\sum_{k \leq \frac{N - \alpha - 1}{2}}{\binom{N}{k}}
= \sqrt{N}\,\binom{N-1}{\frac{N - \alpha -1}{2}}\,Y\left( \frac{\alpha + 1}{\sqrt{N}} \right)
\,\exp{\left(  \frac{c_{\alpha, N}}{\sqrt{N}} \right)}\,,
\]
where $Y$ is given by
\[
Y(x) = e^{\frac{x^{2}}{2}} \int_{x}^{\infty}e^{-\frac{t^{2}}{2}}\,dt, \quad\, x\geq 0\,.
\]
In particular,  taking $\alpha =0$, we obtain a~nice asymptotic formula for $\sum_{k=0}^{d}\binom{N}{k}$, whenever $N$  is odd and $d = \frac{N-1}{2}$.

It worth noting that the following estimates hold (see \cite[Proposition 3]{szarek1999nonsymmetric})
\[
\frac{2}{x + (x^{2} + 4)^{1/2}} \leq Y(x) \leq \frac{4}{3x + (x^{2} + 8)^{1/2}}, \quad\, x\geq 0\,.
\]
We conclude with an observation showing  that the upper bound in \eqref{ukraineAA} can be improved for
$2d-1 < N$ as follows:
\begin{equation} \label{Desigforo}
\sum_{k=0}^d \binom{N}{k} \le {N \choose d} \frac{N-(d-1)}{N-(2d-1)}\,.
\end{equation}
Indeed, we follow  the argument that appears in \cite{17236}. Note that
\[
\frac{{N \choose d} + {N \choose d-1} +  {N \choose d-2} +\,\cdots\, + 1} {{N \choose d}}
= 1 + \frac{d}{N-d+1} + \frac{d(d-1)}{ (N-d+1)(N-d+2)} + \,\cdots\,\,.
\]
Bounding  the right-hand side from above by the geometric series 
\[1 + \frac{d}{N-d+1} + \left( \frac{d} {N-d+1} \right)^2
+ \, \cdots\,=\, \frac{N-(d-1)}{N - (2d-1)}\,,
\] 
we get \eqref{Desigforo}.

\section{The limit case}
It is easy to see that $\mathcal{B}_{=1}^N$ and $\ell_1^N(\mathbb{R})$ identify as Banach spaces
whenever we interpret the $N$-tuple $\sum_{k=1}^N x_k e_k$ as the function $\sum_{k=1}^N x_k \chi_{\{k\}}$
on the $N$-dimensional Boolean cube.
Then by the result of Gr\"unbaum mentioned in~\eqref{grunbuschR} we know that
\[
\lim_{N \to \infty} \frac{\boldsymbol{\lambda}\big(\mathcal{B}_{=1}^N\big)}{\sqrt{N}}  =
\lim_{N \to \infty} \frac{\boldsymbol{\lambda}\big(\ell_1^N(\mathbb{R})\big)}{\sqrt{N}}
= \sqrt{\frac{2}{\pi }}\,.
 \]
  In the following we show a procedure that allows to extend this result to  $\mathcal{B}_{=d}^N$ and $\mathcal{B}_{\leq d}^N$,
  where the degree $d$ is arbitrary.
  
  Note that our results are going to improve the outcome of  Corollary~\ref{mariupol} considerably. In fact this corollary shows that  for each $d$
\[
\frac{1}{e^{d} d^{d/2}}
\,\,\leq\,\,
\liminf_{d \leq N \to \infty} \frac{\boldsymbol{\lambda}(\mathcal{B}^N_{= d})}{N^{d/2}}
\,\,\leq\,\,
\limsup_{d \leq N \to \infty} \frac{\boldsymbol{\lambda}(\mathcal{B}^N_{= d})}{N^{d/2}}
\,\,\leq\,\,
\frac{e^{d/2}}{d^{d/2}}\,,
\]
where we may also replace $\boldsymbol{\lambda}(\mathcal{B}^N_{= d})$ by $\boldsymbol{\lambda}(\mathcal{B}^N_{\leq d})$.

 To do all this, we  use a probabilistic point of view, treating  the coordinate functions $(\chi_{\{i\}})_{1 \le i \le N}$ on the Boolean cube  as independent Bernoulli random variables (taking the values $\pm 1$ with equal probability $1/2$). From this perspective, any Walsh function $\chi_S$  is itself a random variable, being a certain  product of coordinate functions. Consequently any function on the Boolean cube may  be seen as a random variable.

 By Theorem~\ref{bool-int} the projection constant of  $\mathcal{B}_{=d}^N$ can be computed as the expectation
 \begin{equation*}
   \mathbb{E} \dis \bigg| \sum_{ |S| = d, S \subset [N]  } \chi_S \bigg|\,.
 \end{equation*}
     Based on the Central Limit Theorem  and convergence in distribution,
the main idea of our procedure is to rewrite the random variable
$  \sum_{ |S| = d, S \subset [N]  } \chi_S $
in a suitable way into another
random variable, for which we manage to control its mean.

Recall that we use the notation $Y_n \overset{D}{\longrightarrow}Y$, whenever a sequence $(Y_n)$ converges in distribution to a random variable $Y$. Additionally to the notion of convergence in distribution, it will be necessary to consider convergence in probability.
We  write $Y_n \overset{P}{\longrightarrow}Y$  if the sequence $(Y_n)$ converges in probability to a random variable $Y$.  Of course, convergence in probability  implies convergence in distribution  but, in general, the converse is not true. We recall that these two notions of convergence coincide, provided  the limit is a constant.

Moreover, we frequently  need a classical theorem of Slutsky. It states that, given two sequences  $(X_n)_{n }$ and $(Y_n)_{n}$ of random variables  such that $X_n \overset{D}{\longrightarrow} X$ and $Y_n \overset{P}{\longrightarrow} c$ (where $X$ is a random variable and $c \in \mathbb{R}$ a constant), then
\begin{itemize}
    \item[a.] $X_n + Y_n \overset{D}{\longrightarrow} X + c$\,,
    \item[b.] $X_n  Y_n \overset{D}{\longrightarrow} c X$\,.
\end{itemize}

To keep our later  calculations more transparent, we deal in detail first with the 2-homogeneous case.    
\smallskip

\begin{theorem}\label{thm: cte proy boole 2 hom}
$\dis\lim_{N \to \infty} \frac{\boldsymbol{\lambda}(\Bb_{=2}^N) }{N} = \sqrt{\frac{2}{\pi e}}$
\end{theorem}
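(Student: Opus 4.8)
The plan is to combine the integral formula from Theorem~\ref{bool-int} with a simple algebraic identity and the Central Limit Theorem. First, by Theorem~\ref{bool-int} applied to $\mathcal{S} = \{S \subset [N] : |S| = 2\}$, we have $\boldsymbol{\lambda}(\mathcal{B}_{=2}^N) = \mathbb{E}\big|\sum_{1 \le i < j \le N} x_i x_j\big|$, where the coordinate functions $x_i = \chi_{\{i\}}$ are treated as independent Bernoulli random variables taking the values $\pm 1$ with equal probability. Writing $S_N := \sum_{i=1}^N x_i$ and using $S_N^2 = \sum_i x_i^2 + 2\sum_{i<j} x_i x_j = N + 2\sum_{i<j} x_i x_j$, we get $\sum_{i<j} x_i x_j = \tfrac12(S_N^2 - N)$, and hence
\[
\frac{\boldsymbol{\lambda}(\mathcal{B}_{=2}^N)}{N} = \frac12\, \mathbb{E}\Big| \frac{S_N^2}{N} - 1 \Big|\,.
\]
This reduces the problem to the asymptotics of a single scalar statistic.

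Next I would pass to the limit. By the Central Limit Theorem, $S_N/\sqrt{N} \overset{D}{\longrightarrow} \gamma$, where $\gamma$ is a standard real Gaussian; hence, by the stability of convergence in distribution under continuous maps recalled earlier, $\big|S_N^2/N - 1\big| \overset{D}{\longrightarrow} |\gamma^2 - 1|$. To upgrade this to convergence of the expectations, I would verify uniform integrability of $(S_N^2/N)_N$ through a bounded-moment argument in the spirit of Remark~\ref{rem: bounded moment implies unif int}: since the $x_i$ are independent with $\mathbb{E}[x_i] = 0$ and $\mathbb{E}[x_i^4] = 1$, a direct count gives $\mathbb{E}[S_N^4] = 3N^2 - 2N$, so $\sup_N \mathbb{E}\big[(S_N^2/N)^2\big] \le 3$. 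As $|S_N^2/N - 1| \le S_N^2/N + 1$, the family $\big(|S_N^2/N - 1|\big)_N$ is uniformly integrable too, and \eqref{thm: unif int + conv in dist implies conv in mean} yields $\mathbb{E}\big|S_N^2/N - 1\big| \to \mathbb{E}|\gamma^2 - 1|$.

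It remains to evaluate $\mathbb{E}|\gamma^2 - 1|$. Since $\mathbb{E}[\gamma^2 - 1] = 0$, one has $\mathbb{E}|\gamma^2 - 1| = 2\,\mathbb{E}\big[(1 - \gamma^2)^+\big] = \frac{4}{\sqrt{2\pi}} \int_0^1 (1 - t^2) e^{-t^2/2}\, dt$. Integrating by parts, $\int_0^1 t^2 e^{-t^2/2}\, dt = -e^{-1/2} + \int_0^1 e^{-t^2/2}\, dt$, so $\int_0^1 (1 - t^2) e^{-t^2/2}\, dt = e^{-1/2}$ and therefore $\mathbb{E}|\gamma^2 - 1| = \frac{4}{\sqrt{2\pi}} e^{-1/2} = 2\sqrt{\tfrac{2}{\pi e}}$. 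Plugging this into the displayed identity gives $\lim_{N\to\infty} \boldsymbol{\lambda}(\mathcal{B}_{=2}^N)/N = \tfrac12 \cdot 2\sqrt{2/(\pi e)} = \sqrt{2/(\pi e)}$, as claimed.

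As for difficulties: there is no genuinely hard step. The crux is noticing the identity $\sum_{i<j} x_i x_j = \tfrac12(S_N^2 - N)$, which collapses the $2$-homogeneous Walsh sum to a function of the single scalar $S_N$; once this is in place the argument is a textbook CLT-plus-uniform-integrability computation. The only places that require a little care are the fourth-moment bound guaranteeing uniform integrability and the elementary Gaussian integral, both entirely routine. The same scheme should extend to $\mathcal{B}_{=d}^N$ and $\mathcal{B}_{\le d}^N$ by replacing $S_N^2$ with a suitable normalized polynomial in $S_N$ and invoking a multidimensional CLT together with Slutsky's theorem, which is presumably the content of the following sections.
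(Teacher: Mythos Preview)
Your proof is correct and follows essentially the same route as the paper: the key identity $\sum_{i<j}x_ix_j=\tfrac12(S_N^2-N)$, the CLT plus continuous mapping, and a moment bound for uniform integrability leading to the Gaussian integral $\tfrac12\,\mathbb{E}|\gamma^2-1|=\sqrt{2/(\pi e)}$. The only cosmetic difference is that the paper verifies uniform integrability via the orthogonality bound $\mathbb{E}\big[(\tfrac1N\sum_{i<j}x_ix_j)^2\big]=\binom{N}{2}/N^2\le 1$, whereas you use the fourth moment $\mathbb{E}[S_N^4]=3N^2-2N$; both are equally routine.
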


\begin{proof}
If we expand the square of the Boolean function $x \mapsto \sum_{i=1}^N x_i$ and rearrange the terms using $x_i^2 = 1$, then  we get the equality
\[
\left|\dis\sum_{1 \le i < j \le N}  x_i x_j \right| = \left|\dis\sum_{1 \le i < j \le N}  x_{\{i,j\}} \right| = \frac{1}{2} \left|\left( \sum_{i=1}^N x_i \right)^2 - N \right|\,.
\]
By the Central Limit Theorem the sequence of random variables $(Z_N)$ given by
$$
Z_N := \frac{1}{\sqrt{N}}\dis\sum_{i =1}^N x_i $$
converges in distribution to a normal random variable $Z$ with mean $0$ and variance $1$. Since the function $f(x) = \frac{|x^2-1|}{2}$ is continuous, we have
\[
\frac{1}{N} \left|
\sum_{1 \le i < j \le N}  x_i x_j \right| = \frac{|Z_N^2 -1 |}{2} \overset{D}{\longrightarrow} \frac{|Z^2 -1 |}{2}.
\]
Now note that by the orthogonality of the Fourier-Walsh basis we have
\[
\mathbb{E} \left[    \left| \frac{1}{N} \sum_{1 \le i < j \le N}  x_i x_j  \right|^2 \right] = \frac{|\Lambda_T(2,N)|}{N^2} \le 1\,.
\]
This gives the uniform integrability of the random variable sequence $\left(\left|\sum_{1 \le i < j \le N}  x_i x_j \right| \right)_{N \ge 1}$ (see  Remark~\ref{rem: bounded moment implies unif int}).
Finally, thanks to Theorem~\ref{bool-int} and \cite[Theorem 3.5]{billingsley2013convergence}, we have
\[
\lim_{N \to \infty} \frac{\boldsymbol{\lambda}(\Bb_{=2}^N) }{N} = \lim_{N \to \infty} \frac{1}{N} \; \mathbb{E} \left( \left| \sum_{1 \le i < j \le N}  x_i x_j \right| \right) = \lim_{N \to \infty} \mathbb E \left( \frac{|Z_N^2 -1 |}{2} \right) =  \mathbb E \left( \frac{|Z^2 -1 |}{2} \right)\,.
\]
Doing the computations, we finally arrive at
\[
\lim_{N \to \infty} \frac{\boldsymbol{\lambda}(\Bb_{=2}^N) }{N} = \mathbb E \left( \frac{|Z^2 -1 |}{2} \right) = \frac{1}{\sqrt{2 \pi}} \int_{-\infty}^{\infty} \frac{|t^2-1|}{2}e^{\frac{-t^2}{2}} dt = \sqrt{\frac{2}{\pi e}}\,. \qedhere
\]
\end{proof}

\smallskip

The general case for arbitrary  degrees $d \in \NN$ is  technically more involved.
In the previous proof for $d=2$, the key step is to rewrite $$\frac{1}{N} \dis\sum_{1 \le i < j \le N}  x_i x_j $$ as a polynomial in one variable.
For  arbitrary $d$ we  require an adequate decomposition of the random variable
\begin{equation}\label{AvariableA}
  Y_N(x) = \frac{1}{N^{d/2}}\dis  \sum_{ \alpha \in \Lambda_T(d,N)} x^\alpha\,.
\end{equation}
To manage the technical difficulties that arise, it will be necessary to use a certain decomposition of the indices in the set $\Lambda(d,N)$: the so-called factorization decomposition. This particular decomposition was introduced in \cite{galicer2021monomial} and \cite{mansilla2019thesis}, where it was successfully used to understand sets of monomial convergence of spaces of holomorphic functions.

Given $k,N \in \NN$, an index $\alpha \in \Lambda(2k,N)$ is even, whenever $\alpha = 2 \beta$ for some $\beta \in \Lambda(k,N) $ (i.e., all its coordinates are even).

Note that for any index $\alpha~\in~\Lambda(d,N)$ and for any  $k \leq d/2$, there is a unique  decomposition of $\alpha$
into the   sum of a  tetrahedral index $\alpha_T \in \Lambda_T(d-2k,N)$ and an even index $\alpha_E \in \Lambda_E(2k,N)$, i.e., 
$$
\alpha = \alpha_T+ \alpha_E\,.
$$
This in particular implies that
\[
\text{
$x^\alpha = x^{\alpha_T} \cdot \underbrace{x^{\alpha_E}}_{=1} = x^{\alpha_T} $ \quad
for every \quad  $x \in \{ \pm 1 \}^N$.}
\]
 The way to find such a decomposition of a  given index $\alpha \in \Lambda(d,N)$ is rather simple: Given $1 \leq j \leq N$, the $j$-th coordinate of the tetrahedral part $\alpha_T$ is equal to 1,
whenever $\alpha_j$ is odd, and 0 else. The even part is defined as $\alpha_E:= \alpha - \alpha_T$. It is clear that all the coordinates of $\alpha_E$ are even, and thus there exists $\beta$ such that $\alpha_E = 2 \beta$.
This decomposition allows the following set identifications:
\begin{equation} \label{trick}
    \Lambda(d,N)	\leftrightsquigarrow \bigcup_{k=0}^{\lfloor d/2 \rfloor} \Lambda_T(d-2k,N) \times \Lambda_E(2k,N) \leftrightsquigarrow \bigcup_{k=0}^{\lfloor d/2 \rfloor} \Lambda_T(d-2k,N) \times \Lambda(k,N)\,,
\end{equation}
where the second identification comes from the fact that there is a canonical correspondence between  $\Lambda_E(2k,N)$ and $\Lambda(k,N)$.

To avoid miss-understandings, if not indicated differently, an element $x$ will always be in the Boolean cube $\{-1,1\}^N$.
Moreover, in view of our purpose it is  convenient  to use the usual monomial notation, that is, for $S\subset [N]$ we identify the Boolean function $\chi_S$ with $x^{\alpha}$, where $\alpha \in \Lambda_T(|S|,N)$ with  $\alpha_i = 1$, whenever $i \in S$, and $0$ otherwise.

We say that two indices $\alpha \in \Lambda(d_1,N)$ and $\beta \in \Lambda(d_2,N)$ do not share variables whenever  they have disjoint support.

\begin{lemma}\label{rem: separate variables}
Let  $\alpha \in \Lambda(d,N) $ and  $k \leq d/2$. Assume that the tetrahedral part $\alpha_T~\in~\Lambda_T(d-2k,N)$ and the  even part $\alpha_E \in \Lambda(2k,N)$ of $\alpha$ do not share variables, and that $ \alpha_E = 2 \beta $ with $\beta \in \Lambda_T(k,N)$. Then 
\[
|[\alpha]| =  \frac{d!}{2^k}\,.
\]
\end{lemma}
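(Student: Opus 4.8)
The quantity $|[\alpha]|$ is the size of the equivalence class of any $\bi \in \mathcal{M}(d,N)$ associated to $\alpha$; by \eqref{scholz} it equals $d!/\alpha!$, so the task is purely a factorial computation. The plan is to compute $\alpha!$ under the stated hypotheses. Write $\alpha = \alpha_T + \alpha_E$ where $\alpha_T \in \Lambda_T(d-2k,N)$ is tetrahedral, $\alpha_E = 2\beta$ with $\beta \in \Lambda_T(k,N)$, and the supports of $\alpha_T$ and $\alpha_E$ are disjoint. Since the supports are disjoint, for each coordinate $i$ exactly one of the following holds: $\alpha_i = 0$ (contributing $0! = 1$ to $\alpha!$), $\alpha_i = 1$ because $i$ lies in the support of $\alpha_T$ (contributing $1! = 1$), or $\alpha_i = 2$ because $i$ lies in the support of $\alpha_E = 2\beta$ with $\beta$ tetrahedral (contributing $2! = 2$). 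Hence $\alpha! = \prod_i \alpha_i! = 2^{|\operatorname{supp}\beta|} = 2^{|\beta|} = 2^k$, using that $\beta \in \Lambda_T(k,N)$ has exactly $k$ entries equal to $1$.

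Putting this together with \eqref{scholz}, I would conclude
\[
|[\alpha]| = \frac{d!}{\alpha!} = \frac{d!}{2^k}\,,
\]
which is the claim.

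I expect no real obstacle here: the only thing to be careful about is that the hypothesis "$\alpha_T$ and $\alpha_E$ do not share variables" is exactly what forces each coordinate of $\alpha$ to be $0$, $1$, or $2$ (with value $2$ precisely on $\operatorname{supp}\beta$, since $\beta$ is tetrahedral so $\beta_i \in \{0,1\}$), and hence what pins down $\alpha! = 2^k$. Without the disjointness hypothesis a coordinate could take a larger value and the product would differ; with it, the computation is immediate. One should also make explicit that $d = (d-2k) + 2k$ so that $\alpha$ indeed has order $d$, but this is built into the statement.
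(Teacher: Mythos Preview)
Your proof is correct and follows essentially the same approach as the paper: both invoke \eqref{scholz} to write $|[\alpha]| = d!/\alpha!$ and then compute $\alpha! = 2^k$ from the disjoint-support hypothesis, with the paper phrasing this as the chain $\alpha! = (\alpha_T + \alpha_E)! = \alpha_T!\,\alpha_E! = (2\beta)! = 2^k$ and you spelling out the coordinate-by-coordinate reasoning.
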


\begin{proof}
  We deduce  from~\eqref{scholz} that
  \begin{equation*}
    |[\alpha]| = \frac{d!}{\alpha!} = \frac{d!}{(\alpha_T+ \alpha_E)!}=\frac{d!}{\alpha_T! \alpha_E!} =\frac{d!}{(2\beta)!} = \frac{d!}{2^k}. \qedhere
  \end{equation*}
\end{proof}

\smallskip

Let us begin analyzing the idea to rewrite the random variable from~\eqref{AvariableA} for general degrees $d$.
Taking $ \sum_{i = 1}^N x_i $ for $x \in \{ \pm 1 \}^N$ to the power $d$
and using the fact that $x_i^2 = 1$, we get
\[
\left( \sum_{i = 1}^N x_i \right)^d = \sum_{\alpha \in \Lambda(d,N)} |[\alpha]| x^{\alpha}\,,
\]
and  decomposing each $\alpha \in \Lambda(d,N)$
according to \eqref{trick},
we have
\begin{equation}\label{eq: sum power d}
\left( \sum_{i = 1}^N x_i \right)^d = \sum_{k = 0}^{\floor{d/2}} \left(  \sum_{ \alpha_E \in \Lambda_E(2k,N)} \; \; \sum_{ \alpha_T \in \Lambda_T(d - 2k,N)} |[\alpha_T + \alpha_E]| x^{\alpha_T} \right)\,.
\end{equation}
Rearranging terms,
gives
\begin{equation}\label{eq: sum power d (2)}
\sum_{ \alpha \in \Lambda_T(d ,N)} x^{\alpha} = \frac{1}{d!} \left[ \left( \sum_{i = 1}^N x_i \right)^d - \sum_{k = 1}^{\floor{d/2}} \left(  \sum_{ \alpha_T \in \Lambda_T(d - 2k,N)}  \; \;
\sum_{ \alpha_E \in \Lambda_E(2k,N)} \; \;  |[\alpha_T + \alpha_E]|x^{\alpha_T} \right) \right].
\end{equation}
To illustrate this, note that for $d = 2,3$  we get
\begin{itemize}
\item $
\dis\sum_{1 \le i < j \le N}  x_i x_j = \frac{1}{2} \left[ \left( \sum_{i=1}^N x_i \right)^2 - N \right]$\,,
\item $
\dis\sum_{1 \le i < j < k \le N}  x_i x_j x_k = \frac{1}{6} \left[ \left( \sum_{i=1}^N x_i \right)^3 - (3N-2)\left( \sum_{i=1}^N x_i \right) \right]$\,.
\end{itemize}

\bigskip

The following technical lemma is crucial  for our purposes.

\begin{lemma}\label{lem: lim coef homog bool}
Given $d,N \in \NN$, we for every $x \in \{ \pm 1 \}^N$ have
\[
\sum_{ \alpha \in \Lambda_T(d ,N)} x^{\alpha} = \frac{1}{d!} \left[ \left( \sum_{i = 1}^N x_i \right)^d - \sum_{k = 1}^{\floor{d/2}} \pmb{C_{d,k,N}} \sum_{ \alpha_T \in \Lambda_T(d - 2k,N)} x^{\alpha_T}  \right]\,,
\]
where for $1 \le k \le \floor{d/2}$
\[
\text{$\pmb{C_{d,k,N}} = \binom{N-d+2k}{k} \frac{d!}{2^k} + \pmb{D_{d,k,N}} $ \,\,\,\,and\,\,\,\, $0 \leq \pmb{D_{d,k,N}} \le N^{k-1} 2 d d!$}\,.
\]
 In particular,
\[
\lim_{N \to \infty} \frac{\pmb{C_{d,k,N}}}{N^k} = \frac{d!}{k! \;2^k}.
\]
\end{lemma}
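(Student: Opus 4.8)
\textbf{Proof strategy for Lemma~\ref{lem: lim coef homog bool}.}
The plan is to start from the identity in \eqref{eq: sum power d (2)}, which already isolates the $k=0$ term as $\big(\sum_{i=1}^N x_i\big)^d$ and writes the remaining contribution as a sum over $1\le k\le\lfloor d/2\rfloor$ of
\[
\sum_{\alpha_T\in\Lambda_T(d-2k,N)}\ \sum_{\alpha_E\in\Lambda_E(2k,N)}|[\alpha_T+\alpha_E]|\,x^{\alpha_T}\,.
\]
The key point is to identify, for each fixed tetrahedral index $\alpha_T$, the coefficient $\pmb{C_{d,k,N}}$ of $x^{\alpha_T}$, i.e. to evaluate $\sum_{\alpha_E\in\Lambda_E(2k,N)}|[\alpha_T+\alpha_E]|$ and show it does not depend on $\alpha_T$ (only on $d,k,N$). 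First I would split the inner sum over even indices $\alpha_E=2\beta$, $\beta\in\Lambda(k,N)$, into two parts according to whether the support of $\beta$ is disjoint from the support of $\alpha_T$ or not.

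For the \emph{disjoint} part, Lemma~\ref{rem: separate variables} gives $|[\alpha_T+\alpha_E]|=d!/2^k$ exactly, and the number of $\beta\in\Lambda_T(k,N)$ (equivalently $\alpha_E\in\Lambda_E(2k,N)$ with $\alpha_E=2\beta$ and $\beta$ tetrahedral) whose support is disjoint from the $(d-2k)$-element support of $\alpha_T$ is $\binom{N-d+2k}{k}$; so this contributes exactly $\binom{N-d+2k}{k}\frac{d!}{2^k}$, the main term. (One should be careful here: I restrict $\beta$ to tetrahedral indices because a non-tetrahedral $\beta$ whose square has disjoint support from $\alpha_T$ still produces a valid decomposition, but then $\alpha_E$ has a coordinate $\ge4$; such terms go into the error term, so I would instead define the ``disjoint main term'' precisely as tetrahedral $\beta$ with support disjoint from $\alpha_T$, and sweep everything else into $\pmb{D_{d,k,N}}$.) For the \emph{remaining} part — indices $\beta$ either sharing a variable with $\alpha_T$, or having some coordinate $\ge 2$ — I would bound the number of such $\beta$ crudely: choosing the support of $\beta$ involves at most $k$ slots, at least one of which is constrained (either forced into the $(d-2k)$-element support of $\alpha_T$, giving a factor $\le d$, or repeated), so the count is $\le N^{k-1}\cdot d\cdot(\text{bounded combinatorial factor})$; combined with the trivial bound $|[\gamma]|=d!/\gamma!\le d!$ on each multinomial coefficient, this yields $0\le\pmb{D_{d,k,N}}\le N^{k-1}\,2d\,d!$ after absorbing constants. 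The nonnegativity is immediate since every term $|[\alpha_T+\alpha_E]|\ge 0$.

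Once $\pmb{C_{d,k,N}}=\binom{N-d+2k}{k}\frac{d!}{2^k}+\pmb{D_{d,k,N}}$ is established with the stated bound on $\pmb{D_{d,k,N}}$, the limit formula follows routinely: $\binom{N-d+2k}{k}\sim N^k/k!$ as $N\to\infty$ with $d,k$ fixed, so $\binom{N-d+2k}{k}\frac{d!}{2^k}/N^k\to \frac{d!}{k!\,2^k}$, while $\pmb{D_{d,k,N}}/N^k\le N^{-1}2d\,d!\to 0$; hence $\lim_{N\to\infty}\pmb{C_{d,k,N}}/N^k=\frac{d!}{k!\,2^k}$.

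I expect the main obstacle to be the bookkeeping in the ``remaining part'' estimate: one has to be precise about which $(\alpha_T,\alpha_E)$ pairs have already been counted in the main term and which are genuine error terms, and to check that the decomposition $\alpha=\alpha_T+\alpha_E$ used in \eqref{eq: sum power d (2)} (tetrahedral part $+$ even part coming from the parity rule) is consistent with the re-parametrization by $\beta$. The combinatorics is elementary but it is easy to double count; a clean way is to fix $\alpha_T$, range over \emph{all} $\alpha\in\Lambda(d,N)$ with that tetrahedral part, note there is a bijection between such $\alpha$ and pairs $(\alpha_T,\beta)$ with $\beta\in\Lambda(k,N)$ via $\alpha_E=2\beta$, and then partition these $\beta$'s as described — this makes the ``no double counting'' claim transparent and the uniform (in $\alpha_T$) count of the disjoint-support tetrahedral $\beta$'s equal to $\binom{N-d+2k}{k}$ manifest.
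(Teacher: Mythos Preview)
Your proposal is correct and follows essentially the same approach as the paper. The paper organizes the split slightly differently---first partitioning $\Lambda_E(2k,N)$ by whether $\alpha_E$ shares support with $\alpha_T$ (into $\pmb A$ and $\pmb B$), then further splitting the disjoint part $\pmb B$ according to whether $\beta=\alpha_E/2$ is tetrahedral (into $\pmb{B^{=k}}$ and $\pmb{B^{<k}}$)---but the resulting main term $\pmb{B^{=k}}=\binom{N-d+2k}{k}\frac{d!}{2^k}$ and error term $\pmb{D_{d,k,N}}=\pmb A+\pmb{B^{<k}}$ are exactly the sets you describe, with the same bounds obtained the same way.
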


\begin{proof}
We fix some  $1 \le k \le \floor{d/2}$, and note that in view of
 equation \eqref{eq: sum power d (2)}  we need to study
\[
\sum_{ \alpha_T \in \Lambda_T(d - 2k,N)}
 \; \;
\sum_{ \alpha_E \in \Lambda_E(2k,N)}|[\alpha_T + \alpha_E]| x^{\alpha_T}\,,
\]
in order to be able to define and to control the factor $\pmb{C_{d,k,N}}$.
We fix some $\alpha_T \in \Lambda_T(d-2k,N)$, and start to decompose
\[
\sum_{ \alpha_E \in \Lambda_E(2k,N)}|[\alpha_T + \alpha_E]| x^{\alpha_T}\,.
\]
Let us denote the set of even indices which do not share variables with $\alpha_T$, by $\Lambda_E(\alpha_T) \subset \Lambda_E(2k,N)$, and use $\Lambda_E(\alpha_T)^c \subset \Lambda_E(2k,N)$ for its complement in $\Lambda_E(2k,N)$. Then
\[
\sum_{ \alpha_E \in \Lambda_E(2k,N)}|[\alpha_T + \alpha_E]| x^{\alpha_T}
=
\bigg[
\underbrace{\sum_{ \alpha_E \in \Lambda_E(\alpha_T)^c}|[\alpha_T + \alpha_E]|}_{= :\pmb{A}} + \underbrace{\sum_{ \alpha_E \in \Lambda_E(\alpha_T)}|[\alpha_T + \alpha_E]| }_{= :\pmb{B}}\bigg]  x^{\alpha_T}\,,
\]
and we handle both sums separately.

Before the estimations, note that $\pmb{A + B}$, does not depend on $\alpha_T$, i.e., for every $\alpha_T$ the sum 
$
\sum_{ \alpha_E \in \Lambda_E(2k,N)}|[\alpha_T + \alpha_E]|$ is the same.
Indeed, given two different tetrahedral multi indices  $\alpha_T, \alpha_T' \in \Lambda_T(d-2k,N)$, the natural bijection between index sets that maps $\alpha_T$ to $\alpha_T'$, given by a suitable permutation of coordinates, also lets the sums invariant. This allows us to define $\pmb{C_{d,k,N}} := \pmb{A} + \pmb{B}$.

\noindent {\bf Estimating $\pmb{A}:$} In order to estimate the cardinality of $\Lambda_E(\alpha_T)^c$,
observe  that any multi index in $\Lambda_E(\alpha_T)^c$ needs to share at least one of the $d-2k$ possible variables of $\alpha_T$, and therefore 
\begin{equation} \label{bound-a}
  |\Lambda_E(\alpha_T)^c| \le (d-2k) |\Lambda_E(2k-2,N)| = (d-2k) |\Lambda(k-1,N)| \le (d-2k) N^{k-1}\,.
\end{equation}
Clearly, $\alpha_T + \alpha_E \in \Lambda(d,N)$ for any $\alpha_E \in \Lambda_E(\alpha_T)^c$, and hence
by~\eqref{scholz}
\[
\textbf{A} = \sum_{ \alpha_E \in \Lambda_E(\alpha_T)^c}  |[\alpha_T + \alpha_E]|
= \sum_{ \alpha_E \in \Lambda_E(\alpha_T)^c} \frac{d!}{(\alpha_T + \alpha_E)!} \leq (d-2k) N^{k-1} d!\,.
\]

\noindent {\bf Estimating $\pmb{B}:$}
We have
\begin{align*}
\textbf{B}=\sum_{ \alpha_E \in \Lambda_E(\alpha_T)}  |[\alpha_T +\alpha_E]|
 =  \sum_{ \alpha_E \in \Lambda_E(2k,N-d+2k)}  |[\alpha_T +\alpha_E]|.
\end{align*}
We then may  decompose the index set $\Lambda_E(2k,N-d+2k)$ into the set of those indices which use $k$ variables, denoted by $\Lambda_E(2k,N-d+2k)^k$, and the set that contains all even indices with less than $k$ variables, denoted by $\Lambda_E(2k,N-d+2k)^{<k}$,
so
\begin{align*}
\textbf{B}
 =  \underbrace{\sum_{ \alpha_E \in \Lambda_E(2k,N-d+2k)^{k}}  |[\alpha_T +\alpha_E]|}_{= :\pmb{B^{=k}}}  \,+\,
 \underbrace{\sum_{ \alpha_E \in \Lambda_E(2k,N-d+2k)^{< k}}  |[\alpha_T +\alpha_E]|}_{= :\pmb{B^{< k}}}\,.
\end{align*}
 Observe that given a multi index in $\Lambda_E(2k,N-d+2k)^{<k}$, it is mandatory that some variable appears to at least the $4$th power (since all the indices in the set $\Lambda_E(2k,N-d+2k)^{<k}$ are even). Going through all the possible $N-d +2k$ variables, we get the bound
\begin{align*}
|\Lambda_E(2k,N-d+2k)^{<k}| & \le (N-d+2k) |\Lambda_E(2k-4,N-d+2k)| \\
& = (N-d+2k) |\Lambda(k-2,N-d+2k)| \le (N-d+2k)^{k-1}\,,
\end{align*}
and  then (as above with \eqref{scholz})
\[
\pmb{B^{<k}} = \sum_{ \alpha_E \in \Lambda_E(2k,N-d+2k)^{<k}} |[\alpha_T + \alpha_E]| \le \left| \Lambda_E(2k,N-d+2k)^{<k} \right| \; d! \le N^{k-1} d!\,.
\]
Moreover,
$$|\Lambda_E(2k,N-d+2k)^{<k}| = |\Lambda_T(k,N-d+2k)| = \binom{N-d+2k}{k}\,,$$
as for every $ \alpha \in \Lambda_E(2k,N-d+2k)^{k}$  there is $\beta \in \Lambda_T(k,N-d+2k)$ such that $\alpha = 2 \beta$ and this defines a one to one mapping. Then by   Lemma~\ref{rem: separate variables} 
\begin{align*}
\pmb{B^{=k}} = \sum_{ \alpha_E \in \Lambda_E(2k,N-d+2k)^{k}}  |[\alpha_T+\alpha_E] |
=  \binom{N-d+2k}{k} \frac{d!}{2^k}\,.
\end{align*}

\noindent {\bf Combining:}
We define
$\pmb{D} := \pmb{A} + \pmb{B^{<k}} $
(note that $\pmb{D} = \pmb{D_{d,k,N}}$ in fact depends on $d,k$ and $N$).
Then $ \pmb{D}\leq  N^{k-1}  2d d!$, and  all in all we obtain
\begin{align*}
 \pmb{C_{d,k,N}} = \sum_{ \alpha_E \in \Lambda_E(2k,N)}   |[\alpha_T + \alpha_E]|
    =  
    \pmb{A} + \pmb{B}
       =
    \pmb{B^{=k}} + \pmb{D} =  \binom{N-d+2k}{k}\frac{d!}{2^k}+ \pmb{D}\,.
\end{align*}
Finally, using Stirling's formula we get that $\dis\lim_{N \to \infty} \frac{\pmb{C_{d,k,N}}}{N^k} = \frac{d!}{k! \;2^k}.$
\end{proof}

The following lemma goes one  step further -   namely, rewriting the  random variable from~\eqref{AvariableA}
in a way that later allows us to calculate the limit of its mean.
Notice that for $d = 0$ this random variable equals the constant function of value $1$.

\begin{lemma}\label{lem: boolean polynomial rewriting}
Given $d \in \NN_0$ and $N \in \NN$, there are $P_d \in \Pp_{\le d}(\RR)$ and $\varphi_{d,N} \in C(\RR)$ such that for all
$x \in \{ \pm 1 \}^N$
\[
\frac{1}{N^{d/2}}\dis  \sum_{ \alpha \in \Lambda_T(d,N)} x^\alpha = P_d \left( \frac{\sum_{i=1}^N x_i}{\sqrt{N}} \right) + \varphi_{d,N} \left(\frac{\sum_{i=1}^N x_i}{\sqrt{N}} \right)\,,
\]
where  $P_d(t) = \frac{t^d}{d!} - \dis\sum_{k = 1}^{\floor{d/2}} \frac{1}{k!2^k} P_{d-2k}(t) $
with $P_0(t) = 1$, $P_1(t) = t$,
and
$\varphi_{0,N} = \varphi_{1,N} =0$.
Moreover,
\begin{equation} \label{limit1}
    \varphi_{d,N} \left(\frac{\sum_{i=1}^N x_i}{\sqrt{N}} \right) 
    {\overset{P}{\longrightarrow}}  0
\end{equation}
and
\begin{equation} \label{limit2}
    \frac{1}{N^{d/2}}\dis  \sum_{ \alpha \in \Lambda_T(d,N)} x^\alpha  
    {\overset{D}{\longrightarrow}} P_d(Z) \quad\, \text{as \,\, $N\to \infty$\,,}
\end{equation}
where $Z$ is a normal distribution with mean $0$ and variance $1$.
\end{lemma}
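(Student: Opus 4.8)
The plan is to argue by strong induction on $d$, with Lemma~\ref{lem: lim coef homog bool} as the engine and the Central Limit Theorem plus Slutsky's theorem as the probabilistic input. For $d=0$ and $d=1$ there are no terms in the sum defining $P_d$, so $P_0(t)=1$, $P_1(t)=t$, $\varphi_{0,N}=\varphi_{1,N}=0$; the identity is trivial and \eqref{limit1}, \eqref{limit2} hold, the case $d=1$ being exactly the CLT applied to $Z_N:=\bigl(\sum_{i=1}^N x_i\bigr)/\sqrt N$.

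For the inductive step I would fix $d\geq 2$, divide the identity of Lemma~\ref{lem: lim coef homog bool} by $N^{d/2}$, and regroup the power $N^k$ with the inner sum to obtain
\[
\frac{1}{N^{d/2}}\sum_{\alpha\in\Lambda_T(d,N)}x^{\alpha}=\frac{Z_N^{\,d}}{d!}-\frac{1}{d!}\sum_{k=1}^{\lfloor d/2\rfloor}\frac{C_{d,k,N}}{N^{k}}\;\frac{1}{N^{(d-2k)/2}}\sum_{\alpha_T\in\Lambda_T(d-2k,N)}x^{\alpha_T}.
\]
Writing $\tfrac{C_{d,k,N}}{N^{k}}=\tfrac{d!}{k!\,2^{k}}+\varepsilon_{d,k,N}$ with $\varepsilon_{d,k,N}\to 0$ (the last assertion of Lemma~\ref{lem: lim coef homog bool}), and substituting the inductive expansion $\frac{1}{N^{(d-2k)/2}}\sum_{\alpha_T}x^{\alpha_T}=P_{d-2k}(Z_N)+\varphi_{d-2k,N}(Z_N)$, the coefficients $\tfrac{1}{k!\,2^{k}}$ assemble into precisely the claimed recursion for $P_d$, and what remains defines
\[
\varphi_{d,N}(t):=-\sum_{k=1}^{\lfloor d/2\rfloor}\Bigl[\tfrac{\varepsilon_{d,k,N}}{d!}\,P_{d-2k}(t)+\bigl(\tfrac{1}{k!\,2^{k}}+\tfrac{\varepsilon_{d,k,N}}{d!}\bigr)\,\varphi_{d-2k,N}(t)\Bigr],
\]
which is continuous because each $P_{d-2k}$ is a polynomial and each $\varphi_{d-2k,N}$ is continuous by induction; the stated identity then holds pointwise on $\{\pm1\}^{N}$.

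To establish \eqref{limit1} I would first observe that $\varphi_{d,N}\to 0$ uniformly on every compact interval $[-M,M]$: there $P_{d-2k}$ is bounded, $\varepsilon_{d,k,N}\to 0$, and $\varphi_{d-2k,N}\to 0$ uniformly on $[-M,M]$ by the inductive hypothesis, so the displayed formula forces $\sup_{[-M,M]}|\varphi_{d,N}|\to 0$. Since $Z_N\overset{D}{\longrightarrow}Z$, the family $(Z_N)$ is tight; given $\epsilon,\delta>0$ pick $M$ with $\mathbb{P}(|Z_N|>M)<\delta$ for all $N$, and then $N_0$ with $\sup_{[-M,M]}|\varphi_{d,N}|<\epsilon$ for $N\geq N_0$, so that $\mathbb{P}(|\varphi_{d,N}(Z_N)|>\epsilon)\leq\mathbb{P}(|Z_N|>M)<\delta$, i.e. $\varphi_{d,N}(Z_N)\overset{P}{\longrightarrow}0$. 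Finally, for \eqref{limit2}: by the CLT $Z_N\overset{D}{\longrightarrow}Z$ with $Z$ standard normal, by the continuous mapping property $P_d(Z_N)\overset{D}{\longrightarrow}P_d(Z)$, and combining this with \eqref{limit1} via Slutsky's theorem (part a.) gives $\frac{1}{N^{d/2}}\sum_{\alpha\in\Lambda_T(d,N)}x^{\alpha}=P_d(Z_N)+\varphi_{d,N}(Z_N)\overset{D}{\longrightarrow}P_d(Z)$.

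The bookkeeping in assembling $P_d$ and $\varphi_{d,N}$ is routine once Lemma~\ref{lem: lim coef homog bool} is in hand. The one point requiring care is that $\varphi_{d,N}$ does \emph{not} converge to $0$ uniformly on all of $\mathbb{R}$ — the polynomials $P_{d-2k}$ appearing in it are unbounded — so the passage from uniform-on-compacts convergence to convergence in probability of $\varphi_{d,N}(Z_N)$ genuinely relies on tightness of $(Z_N)$, i.e. on $Z_N$ concentrating on a compact set, which is exactly what the CLT supplies.
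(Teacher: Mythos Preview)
Your proof is correct and follows the same inductive skeleton as the paper's: both use Lemma~\ref{lem: lim coef homog bool} to rewrite the normalized tetrahedral sum, split off the limiting coefficients $\frac{1}{k!\,2^k}$ to assemble $P_d$ recursively, and collect the remainder into $\varphi_{d,N}$. The only genuine difference is in how you establish \eqref{limit1}. The paper argues directly via Slutsky: each summand in $\varphi_{d,N}(Z_N)$ is a product of a deterministic scalar tending to $0$ (or a scalar tending to a constant) with a sequence converging in distribution (namely $P_{d-2k}(Z_N)$, by continuous mapping) or in probability (namely $\varphi_{d-2k,N}(Z_N)$, by the inductive hypothesis on \eqref{limit1} itself), so each product, and hence the finite sum, converges to $0$ in probability. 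You instead prove the stronger fact that $\varphi_{d,N}\to 0$ uniformly on compact intervals and then invoke tightness of $(Z_N)$. This works, but note that your ``inductive hypothesis'' here is really the uniform-on-compacts convergence of $\varphi_{d-2k,N}$, which is not part of the lemma's statement; you should make explicit that you are carrying this strengthened assertion through the induction (the base cases $\varphi_{0,N}=\varphi_{1,N}=0$ trivially satisfy it). The paper's route is a bit more economical since it uses only the probabilistic convergence already on record; your route makes the pointwise structure of $\varphi_{d,N}$ more transparent.
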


\begin{proof}
The proof will be by induction on $d$. The base cases $d=0,1$ are tautological. Let us assume that the result is true for $ 1 \le k \le d -1  $, and prove that  it is true for $d$. Using Lemma \ref{lem: lim coef homog bool} and our inductive hypothesis, we have
\begin{align*}
    \frac{1}{N^{d/2}} \sum_{ \alpha \in \Lambda_T(d ,N)} x^{\alpha} & = \frac{1}{d!} \left( \frac{\sum_{i = 1}^N x_i}{\sqrt{N}} \right)^d - \sum_{k = 1}^{\floor{d/2}}  \frac{C_{d,k,n} }{d! N^k} \left(\frac{1}{N^{(d-2k)/2}} \sum_{ \alpha_T \in \Lambda_T(d - 2k,N)} x^{\alpha_T} \right)\\[1ex]&
     =  \frac{1}{d!} \left( \frac{\sum_{i = 1}^N x_i}{\sqrt{N}} \right)^d  - \sum_{k = 1}^{\floor{d/2}}  \frac{C_{d,k,n} }{d! N^k} \left(P_{d-2k}\left( \frac{\sum_{i = 1}^N x_i}{\sqrt{N}} \right) + \varphi_{d-2k,N}\left( \frac{\sum_{i = 1}^N x_i}{\sqrt{N}} \right) \right).
\end{align*}
Defining for $t \in \mathbb{R}$
\[
\varphi_{d,N}(t) := \sum_{k = 1}^{\floor{d/2}}  \left(\frac{C_{d,k,n} }{d! N^k} - \frac{1}{k! 2^k} \right)\left(P_{d-2k}(t) + \varphi_{d-2k,N}\left( t \right) \right)
\]
and
\[
P_d(t) = \frac{t^d}{d!} - \dis\sum_{k = 1}^{\floor{d/2}} \frac{1}{k!2^k} P_{d-2k}(t)\,,
\]
we see  that the desired equality holds.
It remains to show the two limit
formulas from~\eqref{limit1} and~\eqref{limit2}.
By the Central Limit Theorem we know that $ \frac{\sum_{i = 1}^N x_i}{\sqrt{N}} \overset{D}{\longrightarrow} Z$, where $Z$ is a normal random variable with mean  $0$ and variance~$1$.
Moreover, by
 Lemma \ref{lem: lim coef homog bool} we know that
  \[
 \dis\lim_{N \to \infty} \frac{C_{d,k,n} }{d! N^k} - \frac{1}{k! 2^k} = 0\,.
 \]
 Since convergence in distribution
 is preserved under continuous
 functions, and
 convergence in distribution and convergence in probability are equivalent when the limit is a constant, we see that the limit
 from~\eqref{limit1}
 holds true.
 With similar arguments we first
conclude that
\[ P_{d-2k}\left(\frac{\sum_{i=1}^N x_i}{\sqrt{N}} \right) 
{\overset{D}{\longrightarrow}} P_{d-2k}(Z) \quad\, \text{as \,\, $N\to \infty$\,,}
\]
and then we obtain~\eqref{limit2} combining  Slutsky's Theorem with~\eqref{limit1}.
\end{proof}

Finally, we come to the main contribution of this section,
which extends  Theorem~\ref{thm: cte proy boole 2 hom} to  all possible degrees.

\begin{theorem}\label{thm: limit bool homog}
For every $d \in \NN$
\[
\dis\lim_{N \to \infty} \frac{\boldsymbol{\lambda}(\mathcal{B}_{=d}^N)}{N^{d/2}} = \frac{1}{\sqrt{2 \pi}} \int_{-\infty}^{\infty} |P_d(t)|e^{-\frac{t^2}{2}} dt\,,
\]
where $P_d$ is the polynomial described in Lemma \ref{lem: boolean polynomial rewriting}. Moreover,
$$
\dis\lim_{N \to \infty} \frac{\boldsymbol{\lambda}(\mathcal{B}_{=d}^N)}{N^{d/2}} = \dis\lim_{N \to \infty} \frac{\boldsymbol{\lambda}(\mathcal{B}_{\le d}^N)}{N^{d/2}}\,.
$$
\end{theorem}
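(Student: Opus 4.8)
The plan is to prove the limit formula for $\boldsymbol{\lambda}(\mathcal{B}_{=d}^N)$ first, and then deduce the corresponding statement for $\boldsymbol{\lambda}(\mathcal{B}_{\le d}^N)$ as a consequence. For the homogeneous part, I would combine Theorem~\ref{bool-int} with the rewriting of the random variable carried out in Lemma~\ref{lem: boolean polynomial rewriting}, exactly following the template of the proof of Theorem~\ref{thm: cte proy boole 2 hom}. Concretely, by Theorem~\ref{bool-int} we have
\[
\frac{\boldsymbol{\lambda}(\mathcal{B}_{=d}^N)}{N^{d/2}} = \mathbb{E}\bigg[\,\bigg|\frac{1}{N^{d/2}}\sum_{\alpha \in \Lambda_T(d,N)} x^\alpha\,\bigg|\,\bigg]\,.
\]
By~\eqref{limit2} of Lemma~\ref{lem: boolean polynomial rewriting}, the random variables $Y_N := \frac{1}{N^{d/2}}\sum_{\alpha \in \Lambda_T(d,N)} x^\alpha$ converge in distribution to $P_d(Z)$ with $Z$ standard normal, and since $t \mapsto |t|$ is continuous, $|Y_N| \overset{D}{\longrightarrow} |P_d(Z)|$. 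To pass from convergence in distribution to convergence of expectations I would establish uniform integrability of $(|Y_N|)_{N}$ exactly as in the $d=2$ case: by orthogonality of the Fourier--Walsh basis,
\[
\mathbb{E}\big[|Y_N|^2\big] = \frac{|\Lambda_T(d,N)|}{N^d} = \frac{1}{N^d}\binom{N}{d} \le 1\,,
\]
so Remark~\ref{rem: bounded moment implies unif int} (with $\varepsilon=1$) gives uniform integrability, and then~\eqref{thm: unif int + conv in dist implies conv in mean} yields
\[
\lim_{N \to \infty} \frac{\boldsymbol{\lambda}(\mathcal{B}_{=d}^N)}{N^{d/2}} = \mathbb{E}\big[|P_d(Z)|\big] = \frac{1}{\sqrt{2\pi}} \int_{-\infty}^{\infty} |P_d(t)| e^{-t^2/2}\,dt\,.
\]

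For the second assertion, comparing $\mathcal{B}_{\le d}^N$ with $\mathcal{B}_{=d}^N$, the key point is that the additional lower-degree homogeneous parts contribute negligibly on the scale $N^{d/2}$. I would write $\mathcal{B}_{\le d}^N$ in terms of the identification $\mathcal{B}_{\le d}^N = \mathcal{P}_{\Lambda_T(\le d,N)}(\ell_\infty^N(\mathbb{R}))$ and apply Theorem~\ref{bool-int} to get
\[
\frac{\boldsymbol{\lambda}(\mathcal{B}_{\le d}^N)}{N^{d/2}} = \mathbb{E}\bigg[\,\bigg|\frac{1}{N^{d/2}}\sum_{k=0}^{d}\ \sum_{\alpha \in \Lambda_T(k,N)} x^\alpha\,\bigg|\,\bigg]\,.
\]
By Lemma~\ref{lem: boolean polynomial rewriting} applied to each degree $k$, the inner sum rescaled by $N^{d/2}$ equals $N^{(k-d)/2}\big(P_k(\tfrac{1}{\sqrt N}\sum x_i) + \varphi_{k,N}(\tfrac{1}{\sqrt N}\sum x_i)\big)$. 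For $k < d$ the prefactor $N^{(k-d)/2}$ tends to $0$, while $P_k(\tfrac{1}{\sqrt N}\sum x_i) + \varphi_{k,N}(\tfrac{1}{\sqrt N}\sum x_i)$ converges in distribution to $P_k(Z)$ and thus stays stochastically bounded; hence those terms converge to $0$ in probability. Together with the degree-$d$ term converging in distribution to $P_d(Z)$, Slutsky's theorem gives that the total rescaled random variable converges in distribution to $P_d(Z)$, and taking absolute values, $|P_d(Z)|$. Uniform integrability again follows from the $L^2$-bound $\mathbb{E}\big[|Y_N^{(\le d)}|^2\big] = N^{-d}\sum_{k=0}^{d}\binom{N}{k} \le N^{-d}\binom{N}{d}\,\frac{N-(d-1)}{N-(2d-1)}$ by~\eqref{Desigforo}, which is bounded in $N$; so~\eqref{thm: unif int + conv in dist implies conv in mean} applies and
\[
\lim_{N \to \infty} \frac{\boldsymbol{\lambda}(\mathcal{B}_{\le d}^N)}{N^{d/2}} = \mathbb{E}\big[|P_d(Z)|\big] = \lim_{N \to \infty} \frac{\boldsymbol{\lambda}(\mathcal{B}_{=d}^N)}{N^{d/2}}\,.
\]

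The main obstacle I anticipate is not the distributional convergence — that is handed to us by Lemma~\ref{lem: boolean polynomial rewriting} — but rather making the uniform integrability argument clean in the inhomogeneous case, where one must control the $L^2$-norm of the full sum; this is exactly where the elementary binomial estimate~\eqref{Desigforo} (and the cruder~\eqref{ukraineAA}) does the job. A secondary, purely bookkeeping point is to double-check the normalization of the orthogonality relation $\mathbb{E}[\chi_S \chi_{S'}] = \delta_{S,S'}$ on the Boolean cube so that $\mathbb{E}[|Y_N|^2] = |\Lambda_T(d,N)|/N^d$ holds with constant exactly $1$; this is routine but is the step most prone to an off-by-a-constant slip. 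No genuinely new idea beyond the one already deployed for $d=2$ is needed — the proof is a careful induction-free reassembly of Lemma~\ref{lem: lim coef homog bool}, Lemma~\ref{lem: boolean polynomial rewriting}, Slutsky's theorem, and the uniform-integrability criterion.
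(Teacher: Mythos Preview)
Your proposal is correct and follows essentially the same route as the paper: Theorem~\ref{bool-int} plus the distributional convergence from Lemma~\ref{lem: boolean polynomial rewriting}, uniform integrability via the $L^2$-bound from orthogonality, and Slutsky's theorem to kill the lower-degree terms in the inhomogeneous case. The only cosmetic difference is that you invoke~\eqref{Desigforo} for the uniform $L^2$-bound on the full sum, whereas the paper simply notes $|\Lambda_T(\le d,N)|/N^d$ is bounded (which follows already from~\eqref{ukraineAA}); either suffices.
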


\begin{proof} As in~\eqref{AvariableA} we for $N \in \mathbb{N}$ define the random variable  
\begin{equation*}
  Y_N(x) = \frac{1}{N^{d/2}}\dis  \sum_{ \alpha \in \Lambda_T(d,N)} x^\alpha\,,
\end{equation*}
and  start, using that for $d=1$ by the Central Limit Theorem
for some
  normal random variable $Z$ with mean~$0$ and variance~$1$, we have
$$ \frac{\sum_{i = 1}^N x_i}{\sqrt{N}} \overset{D}{\longrightarrow} Z\,.
$$
Applying Lemma \ref{lem: boolean polynomial rewriting}, the fact that convergence in distribution is preserved under  continuous functions, and Slutky's Theorem, we get
\[
|Y_N(x)|  = \left| P_d \left( \frac{\sum_{i=1}^N x_i}{\sqrt{N}} \right) + \varphi_{d,N} \left(\frac{\sum_{i=1}^N x_i}{\sqrt{N}} \right) \right| 
{\overset{D}{\longrightarrow}} |P_d(Z)| \quad\, \text{as \,\, $N\to \infty$}\,.
\]

Now orthogonality of the Fourier-Walsh basis assures that for all $N$
\[
 \mathbb{E}|Y_N|^2 = \frac{|\Lambda_T(d,N)|}{N^d} \le 1 \,,
 \]
 which gives the uniform integrability of all
 $
  Y_N
    $
  (see also Remark~\ref{rem: bounded moment implies unif int}). Using  \cite[Theorem 3.5]{billingsley2013convergence}, this implies
\[
\lim_{N \to \infty}   \mathbb E |Y_N| = \mathbb E [|P_d(Z)|] = \frac{1}{\sqrt{2 \pi}} \int_{-\infty}^{\infty} |P_d(t)|e^{-\frac{t^2}{2}} dt\,,
\]
which by  Theorem~\ref{bool-int}  finishes the proof of the first integral formula.

The second claim follows with similar arguments. Observe first that by another application of Theorem~\ref{bool-int} we have
\[
\boldsymbol{\lambda}(\mathcal{B}_{\le d}^N)  = \mathbb E \left[ \left| \dis\sum_{  \alpha \in \Lambda_{T}(\le d,N) } x^\alpha \right| \right].
\]
Also by Lemma \ref{lem: boolean polynomial rewriting},
\eqref{limit2}, for every $0 \le k \le d$
\[
\frac{1}{N^{k/2}} \dis\sum_{\alpha \in \Lambda_{T}(k,N) } x^\alpha 
{\overset{D}{\longrightarrow}} P_k(Z) \quad\, \text{as \,\, $N\to \infty$\,,} 
\]
where $Z$ is as above. Hence we as before  use  Slutsky's Theorem and the fact that
 a sequence of random variables converges  in probability
whenever it converges in distribution to a constant, to see that for every $0 \le k < d $
\[
\frac{1}{N^{d/2}} \dis\sum_{  \alpha \in \Lambda_{T}(k,N) } x^\alpha 
{\overset{P}{\longrightarrow}} 0, \quad\, \text{as \,\, $N\to \infty$\,.}
\]
Now one more  application of  Slutsky's Theorem shows that
\[
\lim_{N \to \infty} \frac{1}{N^{d/2}} \dis\sum_{  \alpha \in \Lambda_{T}(\le d,N) } x^\alpha
=
\lim_{N \to \infty} \frac{1}{N^{d/2}} \dis\sum_{k=1 }^d\dis\sum_{  \alpha \in \Lambda_{T}(k,N) } x^\alpha
=\lim_{N \to \infty} \frac{1}{N^{d/2}} \dis\sum_{  \alpha \in \Lambda_{T}( d,N) } x^\alpha
=
P_d(Z),
\]
where the limit is taken in the sense of  distribution. Again orthogonality gives
\[
 \mathbb{E} \left[    \left| \frac{1}{N^{d/2}} \sum_{ \alpha \in \Lambda_T(\leq d,N)} x^\alpha \right|^2 \right] = \frac{|\Lambda_T(\leq d,N)|}{N^d} < \infty \,,
 \]
implying that the random variables  $\frac{1}{N^{d/2}} \dis\sum_{  \alpha \in \Lambda_{T}(\leq d,N) } x^\alpha,\,\, N \in \mathbb{N}$
are uniformly integrable, and this is enough to conclude  that
\[
\dis\lim_{N \to \infty} \frac{\boldsymbol{\lambda}(\mathcal{B}_{\le d}^N)}{N^{d/2}} = \EE \left(|P_d(Z)| \right)
\]
(see again \cite[Theorem 3.5]{billingsley2013convergence}), which together with the first claim finishes the proof.
\end{proof}

\smallskip

With the explicit expressions for $P_d$ (which are easily obtained recursively) and using 
a computational platform we get the following limits:
\begin{align*}
    \dis\lim_{N \to \infty} \frac{\boldsymbol{\lambda}(\Bb_{=2}^N) }{N} & = \dis\lim_{N \to \infty} \frac{\boldsymbol{\lambda}(\Bb_{\le 2}^N) }{N} = \frac{1}{\sqrt{2 \pi}} \int_{-\infty}^{\infty} \left| \frac{t^2-1}{2} \right| e^{-\frac{t^2}{2}} dt = \sqrt{\frac{2}{\pi e}} \\[1ex]
    \dis\lim_{N \to \infty} \frac{\boldsymbol{\lambda}(\Bb_{=3}^N) }{N^{3/2}} & = \dis\lim_{N \to \infty} \frac{\boldsymbol{\lambda}(\Bb_{\le 3}^N) }{N^{3/2}} = \frac{1}{\sqrt{2 \pi}} \int_{-\infty}^{\infty} \left| \frac{t^3-3t}{6} \right| e^{-\frac{t^2}{2}} dt =\frac{1}{3\sqrt{2 \pi} } \left(1 + \frac{4}{e^{3/2}}\right)  \\[1ex]
        \dis\lim_{N \to \infty} \frac{\boldsymbol{\lambda}(\Bb_{=4}^N) }{N^2} & = \dis\lim_{N \to \infty} \frac{\boldsymbol{\lambda}(\Bb_{\le 4}^N) }{N^2} = \frac{1}{\sqrt{2 \pi}} \int_{-\infty}^{\infty} \left| \frac{t^4-6t^2+6}{24} \right| e^{-\frac{t^2}{2}} dt \approx\frac{0.400228}{\sqrt{2 \pi} }  \\[1ex]
        \dis\lim_{N \to \infty} \frac{\boldsymbol{\lambda}(\Bb_{=5}^N) }{N^{5/2}} & = \dis\lim_{N \to \infty} \frac{\boldsymbol{\lambda}(\Bb_{\le 5}^N) }{N^{5/2}} = \frac{1}{\sqrt{2 \pi}} \int_{-\infty}^{\infty} \left| \frac{t^5-10t^3+30t}{120} \right| e^{-\frac{t^2}{2}} dt =\frac{3}{10\sqrt{2 \pi} }   \\[1ex]
        \dis\lim_{N \to \infty} \frac{\boldsymbol{\lambda}(\Bb_{=6}^N) }{N^{3}} & = \dis\lim_{N \to \infty} \frac{\boldsymbol{\lambda}(\Bb_{\le 6}^N) }{N^{3}} = \frac{1}{\sqrt{2 \pi}} \int_{-\infty}^{\infty} \left| \frac{t^6-15t^4-15}{720} \right| e^{-\frac{t^2}{2}} dt \approx\frac{0.157166}{\sqrt{2 \pi} }
\end{align*}

\smallskip

\section{Sidon constants}
We start by observing that given $\mathcal{S} \subset [N]$, the constant $\boldsymbol{\chimon}(\mathcal{B}^N_{\mathcal{S} })$
is nothing else then the Sidon constant of the characters $(\chi_S)_{S \in \mathcal{S} }$ in the group $\{ \pm 1\}^{N}$, so the best constant $C >0$ such that for all $f \in \mathcal{B}^N_{\mathcal{S} }$, 
\begin{equation}\label{sidon}
\sum_{S \in \mathcal{S} } |\hat{f}(S)| \le C \| f \|_\infty\,.
\end{equation}

\begin{proposition} \label{sid}
Given integers $1 \leq d \leq N$, let $\mathcal{S} \subset [N]$ be such  that $|S| \leq d$
for all $S \in \mathcal{S}$ and $(N/d)^{d/2} \leq  |\mathcal{S}|$. Then there are constants
$C_1$, $C_2 \ge 1$ (independent of $N,d,\mathcal{S}$) such that
\[
C_1 \, \, \frac{1}{\sqrt{N}}|\mathcal{S}|^{\frac{1}{2}}
\, \, \leq \,\, \boldsymbol{\chimon}\big(\mathcal{B}^N_{\mathcal{S}}\big) \,\,\leq\,\,
C_2^{\sqrt{d \log d}} \frac{1}{\sqrt{N}}|\mathcal{S}|^{\frac{1}{2}}\,.
\]
\end{proposition}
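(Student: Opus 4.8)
The plan is to reduce this statement about Sidon constants of Fourier--Walsh systems on the Boolean cube to the corresponding statements for projection constants and unconditional basis constants that were already established, together with the hypercontractive Bohnenblust--Hille inequality. The starting point is the identification $\mathcal{B}^N_{\mathcal{S}} = \mathcal{P}_{J(\mathcal{S})}(\ell_\infty^N(\mathbb{R}))$ from \eqref{identities}, under which $\boldsymbol{\chimon}(\mathcal{B}^N_{\mathcal{S}})$ becomes the unconditional basis constant of the monomial basis $(z^\alpha)_{\alpha \in J(\mathcal{S})}$, i.e.\ the Sidon constant of the Walsh characters $(\chi_S)_{S \in \mathcal{S}}$ as in \eqref{sidon}. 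Since $J(\mathcal{S}) \subset \Lambda_T(\le d, N)$, this puts us exactly in the tetrahedral, degree-$\le d$ regime treated in Chapter~\ref{Unconditionality}.

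For the upper bound, I would argue as in the proof of Theorem~\ref{conny2},~\eqref{BBB}: fix $f = \sum_{S \in \mathcal{S}} \widehat{f}(S)\chi_S$ with $\|f\|_\infty \le 1$; apply H\"older's inequality with exponents $\tfrac{2d}{d+1}$ and $2d/(d-1)$ to get
\[
\sum_{S \in \mathcal{S}} |\widehat{f}(S)|
\le \Big(\sum_{S \in \mathcal{S}} |\widehat{f}(S)|^{\frac{2d}{d+1}}\Big)^{\frac{d+1}{2d}} |\mathcal{S}|^{\frac{d-1}{2d}},
\]
and then bound the first factor by $C^{\sqrt{d\log d}}$ using the hypercontractive Bohnenblust--Hille inequality \eqref{equa:BH} (valid on $\ell_\infty^N$, hence on the Boolean cube via \eqref{identities}). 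This gives $\boldsymbol{\chimon}(\mathcal{B}^N_{\mathcal{S}}) \le C^{\sqrt{d\log d}} |\mathcal{S}|^{(d-1)/(2d)}$. It then remains to convert the exponent $(d-1)/(2d)$ on $|\mathcal{S}|$ into the claimed form $|\mathcal{S}|^{1/2}/\sqrt{N}$: write $|\mathcal{S}|^{\frac{d-1}{2d}} = |\mathcal{S}|^{1/2} \cdot |\mathcal{S}|^{-\frac{1}{2d}}$ and use the hypothesis $|\mathcal{S}| \ge (N/d)^{d/2}$ to estimate $|\mathcal{S}|^{-1/(2d)} \le (d/N)^{1/4} \le d^{1/4} N^{-1/4}$; absorbing the factor $d^{1/4}$ into a term of the form $C_2^{\sqrt{d\log d}}$ (note $d^{1/4} \le C^{\sqrt{d\log d}}$ for a universal $C$) yields the upper estimate, at the cost of replacing $N^{-1/4}$ by $N^{-1/2}$ — so I will need to double-check the exact power of $N$ the statement wants and match it, possibly by iterating the trick or by being slightly more careful with the interplay between $|\mathcal{S}|$, $N$ and $d$; this bookkeeping is the part most likely to need adjustment.

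For the lower bound, the natural route is through the Gordon--Lewis circle of ideas from Section~\ref{Unconditional basis vs projection constant}. By Theorem~\ref{main3} (the second, homogeneous version, applied to the index set $I$ with $I^\flat = J(\mathcal{S})$) combined with Theorem~\ref{OrOuSe} to handle the passage to tetrahedral indices, $\boldsymbol{\chimon}$ of a suitable degree-$(d+1)$ enlargement is controlled above by (a constant times) $\boldsymbol{\lambda}(\mathcal{B}^N_{\mathcal{S}})$ — but here I want the reverse inequality, bounding $\boldsymbol{\chimon}$ from \emph{below}. The cleaner approach is to use Corollary~\ref{C1C2}, which gives $\boldsymbol{\chimon}(\mathcal{P}_{J(\mathcal{S})}(\ell_1^N(\mathbb{R}))) \le \boldsymbol{\chimon}(\mathcal{B}^N_{\mathcal{S}})$, together with a direct lower estimate on the $\ell_1$-side: evaluating at the diagonal point $z = (1/\varphi_{\ell_1^N}(N), \dots) = (1/N, \dots, 1/N)$ and choosing random signs via Theorem~\ref{Anquetil alfa} (the covering method, exactly as in Lemma~\ref{innichenA} and Proposition~\ref{innichen1}), one extracts a lower bound of order $|\mathcal{S}|/(\varphi_{\ell_\infty}(\dots) \cdots)$; carried out in the style of Proposition~\ref{toblach} in the Hilbert-type normalization, this produces $\boldsymbol{\chimon}(\mathcal{B}^N_{\mathcal{S}}) \succ_{C^d} \tfrac{1}{\sqrt N}|\mathcal{S}|^{1/2}$ under the cardinality hypothesis $|\mathcal{S}| \ge (N/d)^{d/2}$, which is precisely what guarantees the probabilistic estimate is in its effective range. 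I would then merge the two sides, tracking that both constants $C_1, C_2$ depend only on the universal constants in Bohnenblust--Hille and in Bayart's theorems (and not on $N$, $d$, or $\mathcal{S}$), as claimed. The main obstacle I anticipate is precisely this last point — ensuring in the lower-bound argument that the implied constant is genuinely of the stated sub-exponential (indeed polynomial-in-the-exponent) shape rather than $C^d$, which may require replacing the covering method by the entropy method of Theorem~\ref{Hinault alfa} or invoking the sharpened constants, and carefully reconciling the normalizations $\ell_1^N$ versus $\ell_2^N$ versus $\ell_\infty^N$ used in the various auxiliary lemmas.
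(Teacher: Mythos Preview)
Your upper bound is exactly the paper's argument: H\"older with exponent $\tfrac{2d}{d+1}$ followed by the Boolean Bohnenblust--Hille inequality \eqref{equa:BHBoolean}, giving $\boldsymbol{\chimon}(\mathcal{B}^N_{\mathcal{S}}) \le C^{\sqrt{d\log d}}|\mathcal{S}|^{(d-1)/(2d)}$, and then using the hypothesis $(N/d)^{d/2}\le |\mathcal{S}|$ (equivalently $\sqrt N \le \sqrt d\,|\mathcal{S}|^{1/d}$) to pass to $|\mathcal{S}|^{1/2}/\sqrt N$. Your unease about the exponents is well placed, but the route is the intended one.

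Your lower bound, however, does not work. Reducing to $\ell_1^N$ via Corollary~\ref{C1C2} is the wrong direction: on $\ell_1^N$ the unconditional basis constant of the monomials is uniformly bounded (see \eqref{leo} or Theorem~\ref{start-poly2} with $r=1$, where the exponent is $1/r'=0$), so a lower bound there gives nothing. The probabilistic lemmas you cite (Lemma~\ref{innichenA}, Propositions~\ref{innichen1} and~\ref{toblach}) are tailored to $J=\Lambda_T(m,n)$, not to an arbitrary $\mathcal{S}$, and in any case produce constants of order $C^d$, not the universal $C_1$ the statement requires. The paper's argument is far more direct: take the Kahane--Salem--Zygmund inequality on the Boolean cube, which supplies signs $(\varepsilon_S)_{S\in\mathcal{S}}$ with
\[
\Big\|\sum_{S\in\mathcal{S}}\varepsilon_S\chi_S\Big\|_\infty \le 6\sqrt{\log 2}\,\sqrt N\,|\mathcal{S}|^{1/2},
\]
and since $\sum_{S}|\widehat f(S)|=|\mathcal{S}|$ for this $f$, one gets $\boldsymbol{\chimon}(\mathcal{B}^N_{\mathcal{S}})\ge (6\sqrt{\log 2})^{-1}|\mathcal{S}|^{1/2}/\sqrt N$ with a genuinely universal constant, and without using the cardinality hypothesis at all.
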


For the proof of the upper bound we will need  the so-called  subexponential Bohnen\-blust-Hille
inequality from \cite[Theorem~1]{defant2018bohr}. That is, there is a~constant $C \ge 1$ such that for each $1 \leq d \leq N$
and every $f \in \mathcal{B}_{\leq d}^{N}$ one has
\begin{equation}\label{equa:BHBoolean}
\Big(\sum_{\substack{S \subset [N]\\ |S|\leq d}}{|\widehat{f}(S)|^{\frac{2d}{d+1}}}\Big)^{\frac{d+1}{2d}}
\leq C^{\sqrt{d \log d}} \, \| f\|_{\infty}.
\end{equation}

\begin{proof}[Proof of Proposition~$\ref{sid}$]
The upper bound  follows from H\"older's inequality and \eqref{equa:BHBoolean}. That is,  for all functions
$f \in \mathcal{B}^N_{\mathcal{S}}$,
\[
\sum_{|S|\leq d} |\hat{f}(S)| \leq \bigg(  \sum_{|S|\leq d} |\hat{f}(S)|^{\frac{2d}{d+1}} \bigg)^{\frac{2d}{d+1}}
|\mathcal{S}|^{\frac{d-1}{2d}} \leq C^{\sqrt{d\log d }} \frac{1}{|\mathcal{S}|^{\frac{1}{d}}} |\mathcal{S}|^{\frac{1}{2}}\|f\|_\infty
\,.
\]
But by assumption
$
\sqrt{N} \leq \sqrt{d} |\mathcal{S}|^{\frac{1}{d}}\,,
$
and hence the claim follows from \eqref{sidon}. The proof of the lower estimate is  probabilistic. Indeed, by the Kahane-Salem-Zygmund
inequality for the Boolean cube (see, e.g., \cite[Lemma~3.1]{defant2018bohr}) there is a family $(\varepsilon_S)_{S \in \mathcal{S}}$
of signs such that for $f =  \sum_{S \in \mathcal{S}}  \varepsilon_S \chi_S$ we have
\[
\|f\|_\infty \leq 6 \sqrt{\log 2} \,\,  \sqrt{N} \Big( \sum_{S \in \mathcal{S}}  |\varepsilon_S|^2  \Big)^{\frac{1}{2}}\,,
\]
and hence
\[
|\mathcal{S}| = \sum_{S \in \mathcal{S}}|\widehat{f}(S)|\leq  \boldsymbol{\chimon}\big(\mathcal{B}^N_{\mathcal{S}}\big) \,\,  6 \sqrt{\log 2} \,\,\sqrt{N} |\mathcal{S}|^{\frac{1}{2}}\,.
\]
This completes the argument.
\end{proof}

\smallskip

\begin{corollary}
There are constants $C_1, C_2 > 0$ such that for each integer $1 \leq d \leq N$ one has
\[
C_1 \, \, \frac{1}{\sqrt{N}} \binom{N}{d}^{\frac{1}{2}}
\, \, \leq \,\, \boldsymbol{\chimon}\big(\mathcal{B}^N_{=d}\big) \,\,\leq\,\,
C_2^{\sqrt{d \log d}} \frac{1}{\sqrt{N}} \binom{N}{d}^{\frac{1}{2}}
\]
and
\[
C_1 \, \, \frac{1}{\sqrt{N}} \left( \sum_{k=0}^d \binom{N}{k}\right)^{\frac{1}{2}}
\, \, \leq \,\, \boldsymbol{\chimon}\big(\mathcal{B}^N_{\leq d}\big) \,\,\leq\,\,
C_2^{\sqrt{d \log d}} \frac{1}{\sqrt{N}} \left( \sum_{k=0}^d \binom{N}{k}\right)^{\frac{1}{2}}\,.
\]
\end{corollary}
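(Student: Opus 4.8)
The plan is to derive the final corollary immediately from Proposition~\ref{sid} by choosing the appropriate index family $\mathcal{S}$ and then carefully checking the cardinality hypothesis required to invoke that proposition. Recall that Proposition~\ref{sid} asserts, for integers $1\le d\le N$ and any $\mathcal{S}\subset[N]$ with $|S|\le d$ for all $S\in\mathcal{S}$ and with $(N/d)^{d/2}\le|\mathcal{S}|$, the two-sided bound
\[
C_1\,\frac{1}{\sqrt{N}}|\mathcal{S}|^{\frac12}\le\boldsymbol{\chimon}\big(\mathcal{B}^N_{\mathcal{S}}\big)\le C_2^{\sqrt{d\log d}}\frac{1}{\sqrt{N}}|\mathcal{S}|^{\frac12}.
\]
For the first claimed estimate I would take $\mathcal{S}=\{S\subset[N]\colon|S|=d\}$, so that $\mathcal{B}^N_{\mathcal{S}}=\mathcal{B}^N_{=d}$ and $|\mathcal{S}|=\binom{N}{d}$. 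For the second I would take $\mathcal{S}=\{S\subset[N]\colon|S|\le d\}$, so that $\mathcal{B}^N_{\mathcal{S}}=\mathcal{B}^N_{\le d}$ and $|\mathcal{S}|=\sum_{k=0}^d\binom{N}{k}$. In both cases the degree bound $|S|\le d$ for all $S\in\mathcal{S}$ holds trivially.

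The one point that genuinely needs verification is the hypothesis $(N/d)^{d/2}\le|\mathcal{S}|$, since Proposition~\ref{sid} is only stated under that sparsity condition. For the homogeneous case this is exactly the elementary estimate $(N/d)^d\le\binom{N}{d}$ recorded in \eqref{ukraineA}, which upon taking square roots gives $(N/d)^{d/2}\le\binom{N}{d}^{1/2}=|\mathcal{S}|^{1/2}$; a fortiori $(N/d)^{d/2}\le\binom{N}{d}=|\mathcal{S}|$ since $\binom{N}{d}\ge 1$. For the degree-$\le d$ case one has $\binom{N}{d}\le\sum_{k=0}^d\binom{N}{k}$, so the same lower bound $(N/d)^d\le\binom{N}{d}\le|\mathcal{S}|$ applies. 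Hence in both instances the hypothesis of Proposition~\ref{sid} is met, and the conclusion is precisely the corresponding displayed inequality in the corollary, with the universal constants $C_1,C_2>0$ inherited from Proposition~\ref{sid}.

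Thus the proof is essentially a bookkeeping exercise: substitute the two natural choices of $\mathcal{S}$ into Proposition~\ref{sid} and invoke \eqref{ukraineA} to clear the cardinality hypothesis. There is no real obstacle here; the only thing to be mildly careful about is that the constants $C_1,C_2$ in the corollary are the same across the two displays, which is automatic because they are the fixed absolute constants of Proposition~\ref{sid} and do not depend on $N$, $d$, or $\mathcal{S}$. One could also remark that the upper bounds alternatively follow directly from H\"older's inequality together with the subexponential Bohnenblust--Hille inequality \eqref{equa:BHBoolean} applied to $\mathcal{B}^N_{=d}$ and $\mathcal{B}^N_{\le d}$ respectively, and the lower bounds from the Kahane--Salem--Zygmund inequality for the Boolean cube, exactly as in the proof of Proposition~\ref{sid}; but routing everything through Proposition~\ref{sid} is the cleanest presentation.
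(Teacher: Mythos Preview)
Your proposal is correct and follows exactly the paper's approach: apply Proposition~\ref{sid} to the two natural families $\mathcal{S}=\{S:|S|=d\}$ and $\mathcal{S}=\{S:|S|\le d\}$, and verify the cardinality hypothesis $(N/d)^{d/2}\le|\mathcal{S}|$ via the elementary estimate $(N/d)^d\le\binom{N}{d}$ from~\eqref{ukraineA}. The paper's proof is a one-line version of what you wrote.
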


\begin{proof}
Since
$
\Big(\frac{N}{d}\Big)^d \leq \binom{N}{d} \leq  \sum_{k=0}^d \binom{N}{k}
$
(see again  \eqref{ukraineA}), both  sets $\mathcal{S} = \{ S  \colon |S| =d \}$  and $\mathcal{S} = \{ S  \colon |S| \leq d \}$
satisfy the assumptions of Proposition~\ref{sid}.
\end{proof}

\smallskip

\begin{corollary}
There are constants $C_1, C_2 >0$ such that for all $1 \leq d \leq N$
\[
C_1 \frac{1}{\sqrt{d}}\,\bigg( \frac{N}{d}  \bigg)^{\frac{d-1}{2}} \leq
\boldsymbol{\chimon}\big(\mathcal{B}^N_{= d}\big) \leq \boldsymbol{\chimon}\big(\mathcal{B}^N_{\leq d}\big) \leq
C_2^{\sqrt{d \log d}} e^{\frac{d}{2}} \,\frac{1}{\sqrt{d}}\,\bigg( \frac{N}{d}  \bigg)^{\frac{d-1}{2}}\,.
\]
In particular, 
\[
\boldsymbol{\chimon}\big(\mathcal{B}^N_{= d}\big) \sim_{C^d } 
\bigg( \frac{N}{d}  \bigg)^{\frac{d-1}{2}}
\quad \text{ and } \quad
 \boldsymbol{\chimon}\big(\mathcal{B}^N_{\leq d}\big) \sim_{C^d }
\bigg( \frac{N}{d}  \bigg)^{\frac{d-1}{2}}
\,.
\]
\end{corollary}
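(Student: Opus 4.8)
The plan is to obtain the statement as a purely arithmetic consequence of the preceding corollary together with the two elementary binomial estimates recorded in \eqref{ukraineA} and \eqref{ukraineAA}. First I would recall that the preceding corollary already gives, with absolute constants $C_1,C_2\geq 1$,
\[
C_1\,\frac{1}{\sqrt N}\binom{N}{d}^{1/2}\ \leq\ \boldsymbol{\chimon}\big(\mathcal{B}^N_{=d}\big)\ \leq\ \boldsymbol{\chimon}\big(\mathcal{B}^N_{\leq d}\big)\ \leq\ C_2^{\sqrt{d\log d}}\,\frac{1}{\sqrt N}\Big(\sum_{k=0}^d\binom{N}{k}\Big)^{1/2},
\]
where the middle inequality is just the monotonicity of the Sidon constant under inclusion of the index set $\{S:|S|=d\}\subset\{S:|S|\leq d\}$, legitimate by the Sidon-constant description of $\boldsymbol{\chimon}(\mathcal{B}^N_{\mathcal{S}})$ given at the start of the section. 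So everything reduces to rewriting the two outer quantities.

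For the lower bound I would use $\binom{N}{d}\geq (N/d)^d$ from \eqref{ukraineA}, so that
\[
\frac{1}{\sqrt N}\binom{N}{d}^{1/2}\ \geq\ \frac{1}{\sqrt N}\Big(\frac{N}{d}\Big)^{d/2}\ =\ \frac{N^{(d-1)/2}}{d^{d/2}}\ =\ \frac{1}{\sqrt d}\Big(\frac{N}{d}\Big)^{(d-1)/2},
\]
the last two equalities being one-line exponent bookkeeping. For the upper bound I would use $\sum_{k=0}^d\binom{N}{k}\leq e^d(N/d)^d$ from \eqref{ukraineAA}, which yields in the same way
\[
\frac{1}{\sqrt N}\Big(\sum_{k=0}^d\binom{N}{k}\Big)^{1/2}\ \leq\ e^{d/2}\,\frac{N^{(d-1)/2}}{d^{d/2}}\ =\ e^{d/2}\,\frac{1}{\sqrt d}\Big(\frac{N}{d}\Big)^{(d-1)/2}.
\]
Combining the two displays with the chain of inequalities above gives exactly the asserted two-sided estimate.

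Finally, for the ``in particular'' part I would simply absorb the polynomial and subexponential prefactors into a term of the form $C^d$: since $d^{-1/2}\geq 2^{-d}$, $e^{d/2}\leq e^d$, and $C_2^{\sqrt{d\log d}}\leq C_2^{d}$ (using $C_2\geq 1$ and $\sqrt{d\log d}\leq d$), both $\boldsymbol{\chimon}(\mathcal{B}^N_{=d})$ and $\boldsymbol{\chimon}(\mathcal{B}^N_{\leq d})$ are squeezed between $c^{-d}(N/d)^{(d-1)/2}$ and $c^{d}(N/d)^{(d-1)/2}$ for a suitable absolute $c$, which is precisely the meaning of $\sim_{C^d}$. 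There is no genuine obstacle here; the only points requiring minimal care are the exponent identity $N^{-1/2}(N/d)^{d/2}=d^{-1/2}(N/d)^{(d-1)/2}$ and making sure the inherited middle inequality is stated with an absolute (not $m$-dependent) constant, which it is.
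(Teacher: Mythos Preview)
Your proof is correct and follows essentially the same approach as the paper: both use the preceding corollary's binomial-sum bounds for $\boldsymbol{\chimon}(\mathcal{B}^N_{=d})$ and $\boldsymbol{\chimon}(\mathcal{B}^N_{\leq d})$, and then reduce everything to the elementary estimates \eqref{ukraineA} and \eqref{ukraineAA} via the exponent identity $N^{-1/2}(N/d)^{d/2}=d^{-1/2}(N/d)^{(d-1)/2}$. Your explicit justification of the middle inequality via monotonicity of the Sidon constant under inclusion is correct and makes explicit what the paper takes as immediate.
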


\begin{proof}
Both estimates follow from the  preceding corollary. For the lower one  use again \eqref{ukraineA}, and for the upper 
note that it suffices to check that
\[
\frac{1}{\sqrt{N}} \bigg( \sum_{k=0}^d \binom{N}{k}\bigg)^{\frac{1}{2}}
\leq e^\frac{d}{2} \frac{1}{\sqrt{d}}\,\bigg( \frac{N}{d}  \bigg)^{\frac{d-1}{2}}\,;
\]
indeed, this is another consequence of  \eqref{ukraineAA}.
\end{proof}

\smallskip

\section{Gordon-Lewis cycle}
We sketch that the Gordon-Lewis cycle of ideas from Section~\ref{GL} and Section~\ref{Gordon-Lewis vs projection constant}
may be repeated for functions on the $N$-dimensional  Boolean cube.

The first  result is a Boolean  analog of Theorem~\ref{gl-versus-unc}.

\begin{theorem} \label{BGL1}
Let $0 \leq d \leq N$ and $\mathcal{S} \subset \big\{ S \subset [N] \colon |S| \leq d \big\}$. Then
  \[
  {\mbox{gl}}\big(\mathcal{B}^N_{\mathcal{S}}\big)\,\,
  \leq\,\,
   \boldsymbol{\chi}\big(\mathcal{B}^N_{\mathcal{S}}\big)
    \,\,\leq\,\,
   \boldsymbol{\chimon}\big(\mathcal{B}^N_{\mathcal{S}}\big) \,\,\leq\,\,
  e^{2d} {\mbox{gl}}\big(\mathcal{B}^N_{\mathcal{S}}\big)\,.
  \]
\end{theorem}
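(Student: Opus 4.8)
The plan is to follow the template of Theorem~\ref{gl-versus-unc}, simply replacing the analytic ingredients (Weissler's inequality, the torus) by their Boolean counterparts. The first inequality ${\mbox{gl}}(\mathcal{B}^N_{\mathcal{S}})\le\boldsymbol{\chi}(\mathcal{B}^N_{\mathcal{S}})$ is again just the Gordon-Lewis inequality \eqref{gl-inequality}, and the middle inequality $\boldsymbol{\chi}(\mathcal{B}^N_{\mathcal{S}})\le\boldsymbol{\chimon}(\mathcal{B}^N_{\mathcal{S}})$ is trivial, since the Fourier-Walsh monomials form an unconditional basis of $\mathcal{B}^N_{\mathcal{S}}$. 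So the whole content is in the last estimate $\boldsymbol{\chimon}(\mathcal{B}^N_{\mathcal{S}})\le e^{2d}{\mbox{gl}}(\mathcal{B}^N_{\mathcal{S}})$.

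For that estimate I would invoke Lemma~\ref{tool2} with the Banach lattice $X_n=\mathcal{B}^N_{\mathcal{S}}$ (viewed, via \eqref{identities}, as $\mathcal{P}_{J(\mathcal{S})}(\ell_\infty^N(\mathbb{R}))$, which is a Banach lattice with the Fourier-Walsh monomial basis as a $1$-unconditional lattice basis when restricted to the right coordinates — more precisely one works with the diagonal operators $D_z$ for $z\in[-1,1]^N$ as in the proof of Theorem~\ref{gl-versus-unc}). Concretely: writing $f_S$ for the coefficient functionals of the monomial basis $(\chi_S)_{S\in\mathcal{S}}$, for any scalar families $(\lambda_S),(\mu_S)$ one considers the two diagonal operators $D_\lambda\colon\mathcal{B}^N_{\mathcal{S}}\to\mathcal{B}^N_{\mathcal{S},\infty}$ and $D_\mu\colon(\mathcal{B}^N_{\mathcal{S}})^\ast\to\mathcal{B}^N_{\mathcal{S},\infty}$ (where the target carries the sup-norm of $\ell_\infty(\{\pm1\}^N)$, i.e.\ $\ell_\infty^N(\mathbb{R})$-polynomials), and shows $\|D_\lambda\|\le\|\sum_S\lambda_S f_S\|_{(\mathcal{B}^N_{\mathcal{S}})^\ast}$ and $\|D_\mu\|\le\|\sum_S\mu_S\chi_S\|_{\mathcal{B}^N_{\mathcal{S}}}$, by exactly the duality computation carried out for~\eqref{tonelli} and~\eqref{hobson}, using that composition with the diagonal $D_z$, $z\in[-1,1]^N$, has norm $\le1$ and multiplies the monomial $\chi_S$ by $z^S$. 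The one remaining ingredient is the Boolean replacement of \eqref{toolA}: one needs
\[
\pi_1\big(\id\colon\mathcal{B}^N_{\leq d}\longrightarrow\ell_2(\{S\colon|S|\le d\})\big)\le e^d,
\]
where $\id(\sum\widehat f(S)\chi_S)=(\widehat f(S))_S$. This follows from the definition of the $1$-summing norm together with the hypercontractivity estimate \eqref{weissler}, $\|g\|_2\le e^d\|g\|_1$ for $g\in\mathcal{B}^N_{\leq d}$, in precisely the way \eqref{toolA} follows from \eqref{weissler:analyitic} — restricting to $\mathcal{S}\subset\{S\colon|S|\le d\}$ only shrinks the map. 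Feeding $K_1\le e^d$ and $K_2\le e^d$ into Lemma~\ref{tool2} gives $\boldsymbol{\chimon}(\mathcal{B}^N_{\mathcal{S}})\le e^{2d}{\mbox{gl}}(\mathcal{B}^N_{\mathcal{S}})$, as claimed.

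\textbf{Main obstacle.} There is no deep obstacle; the work is bookkeeping. The one point that needs a little care is making sure that the lattice structure of $\mathcal{B}^N_{\mathcal{S}}=\mathcal{P}_{J(\mathcal{S})}(\ell_\infty^N(\mathbb{R}))$ and the action of the diagonal operators $D_z$, $z\in[-1,1]^N$, are set up so that Lemma~\ref{tool2} applies verbatim — in particular that $\|D_z\colon\ell_\infty^N(\mathbb{R})\to\ell_\infty^N(\mathbb{R})\|\le1$ and $\chi_S\circ D_z=z^S\chi_S$, which is immediate but should be stated. The only genuinely Boolean input is the hypercontractivity inequality \eqref{weissler}, which is quoted from \cite{o2014analysis}, so the exponent $e^{2d}$ (rather than $\sqrt2^{\,2d}$ as in the torus case) is exactly the price of passing from Weissler's torus inequality to its Boolean analogue.
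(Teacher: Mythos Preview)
Your proposal is correct and follows essentially the same route as the paper: reduce to the last inequality, invoke Lemma~\ref{tool2}, establish the Boolean analogs of \eqref{tonelli} and \eqref{hobson} via the diagonal action $D_z$ for $z\in[-1,1]^N$, and replace \eqref{toolA} by the $\pi_1$-bound coming from the Boolean hypercontractivity \eqref{weissler}, yielding $K_1=K_2=e^d$. The only cosmetic point is that your target space ``$\mathcal{B}^N_{\mathcal{S},\infty}$'' is just $\mathcal{B}^N_{\mathcal{S}}$ itself (since $X_n=\ell_\infty^N(\mathbb{R})$ already, the paper notes the diagonal step is actually simpler here than in Theorem~\ref{gl-versus-unc}), and the paper flags that one must check Lemma~\ref{tool2} carries over to the real setting, which it does.
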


\begin{proof}
As in Section~\ref{GL} we only have to check the last estimate.
   The proof follows from an analysis of the arguments given in Theorem~\ref{gl-versus-unc} -- but one has  to be careful since we
    in Section~\ref{GL}  always worked within  a complex setting.
 Identifying\begin{equation}\label{asinA}
              \mathcal{B}^{N}_{\mathcal{S}} \,= \,\mathcal{P}_{J(\mathcal{S})}(\ell_\infty^N(\mathbb{R}))
            \end{equation}
 as in \eqref{identities}, we  in a first step  show the analogs of \eqref{tonelli} and \eqref{hobson} (which now is even simpler
since $X_n = \ell_\infty^N(\mathbb{R})$). Then, in a first step, we check that
\begin{equation}\label{toolA Boole}
   \pi_1 \big( \id: \mathcal{P}_{J(\mathcal{S})}(\ell_\infty^N(\mathbb{R})) \to  \ell_2(J(\mathcal{S})) \big)  \leq e^d\,;
\end{equation}
this follows exactly as in the proof for \eqref{toolA Boole}, whenever we replace the 'hypercontractivity of   analytic polynomials' from \eqref{weissler:analyitic}
by the 'hypercontractivity of functions on Boolean cubes' as described in \eqref{weissler}.
Finally, using Lemma~\ref{tool2} (a result independent from choosing a real or complex
setting), the proof of Theorem~\ref{BGL1}   completes (exactly like in Section~\ref{GL}).
\end{proof}

\smallskip
We go on   with the analog of Theorem~\ref{gl_versus_proj}, which again follows from a careful inspection of the proofs given in
Section~\ref{Gordon-Lewis vs projection constant}. For any $\mathcal{S} \subset \big\{ S \subset [N] \colon |S| = d \big\}$
we define
$$\mathcal{S}^\flat = \big\{S \setminus \{i\} \colon S \in \mathcal{S}, \, i \in S  \big\}\,.$$
If $J(\mathcal{S}) \subset \Lambda_T(d, N)$ is the index set of multi indices associated to $\mathcal{S}$, then
we have that $\mathcal{S}^\flat$ is associated to  $J(\mathcal{S})^\flat \subset \Lambda_T(d-1, N)$ (see Section~\ref{index-sets} for the definition of reduced index sets).

\smallskip

\begin{theorem} \label{BGL2}
Let $0 \leq d \leq N$ and $\mathcal{S} \subset \big\{ S \subset [N] \colon |S| = d \big\}$. Then
\[
{\mbox{gl}}\big(\mathcal{B}^N_{\mathcal{S}}\big) \,\,\leq\,\,
C(d)  \,\,\|\mathbf{Q}: \mathcal{B}^N_{=d} \to \mathcal{B}^N_{\mathcal{S}}\|
\,\, \boldsymbol{\lambda}\big(\mathcal{B}^N_{\mathcal{S}^\flat}\big)\,,
\]
where $C(d) = \kappa^d e2^{3(d-1)}$, $\mathbf{Q}$ denotes the projection annihilating Fourier coefficients with indices $S$ not in~$\mathcal{S}$, and $\kappa >0$ is as defined in \eqref{kappa}.

Moreover, if $\mathcal{S} \subset \big\{ S \subset [N] \colon |S| \leq d \big\}$, then
\[
{\mbox{gl}}\big(\mathcal{B}^N_{\mathcal{S}}\big) \,\,\leq\,\,
C(d) \,\,\max_{1 \leq  k \leq m}\|\mathbf{Q}: \mathcal{B}^N_{=k} \to \mathcal{B}^N_{\mathcal{S}_{=k}}\|
\max_{1 \leq  k \leq d}  \boldsymbol{\lambda}(\mathcal{B}^N_{(\mathcal{S}_{=k})^\flat})\,,
\]
where $C(d) =  \kappa^d e\,2^{3(d-1)}(1+\sqrt{2})^\kappa$.
\end{theorem}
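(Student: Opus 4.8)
The plan is to transcribe, with the necessary care, the proof of Theorem~\ref{gl_versus_proj} into the real, tetrahedral setting, using throughout the isometric identifications from~\eqref{identities},
\[
\mathcal{B}^N_{\mathcal{S}} \equiv \mathcal{P}_{J(\mathcal{S})}\big(\ell_\infty^N(\mathbb{R})\big)\,,
\qquad
\mathcal{B}^N_{\mathcal{S}^\flat} \equiv \mathcal{P}_{J(\mathcal{S})^\flat}\big(\ell_\infty^N(\mathbb{R})\big)\,,
\]
together with the observation, recorded just before the statement, that the reduced set $J(\mathcal{S})^\flat$ of a tetrahedral index set is again tetrahedral and corresponds to $\mathcal{S}^\flat$. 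First I would settle the homogeneous case $\mathcal{S}\subset\{S\subset[N]\colon|S|=d\}$ by building the analogue of the commutative diagram~\eqref{diagram} with $X_n=\ell_\infty^N(\mathbb{R})$ and $J=J(\mathcal{S})$: the partial polarization $U_d(P)x(u):=\check{P}(u,\dots,u,x)$ maps $\mathcal{B}^N_{\mathcal{S}}$ into $\mathcal{L}\big(\ell_\infty^N(\mathbb{R}),\mathcal{B}^N_{\mathcal{S}^\flat}\big)$, the restitution $V_d(T)(y):=(Ty)(y)$ and the inclusion $I_d$ run back up, and the Fourier-coefficient projection $\mathbf{Q}$ lands us in $\mathcal{B}^N_{\mathcal{S}}$, the whole composite being $\id_{\mathcal{B}^N_{\mathcal{S}}}$. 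Since $\ell_\infty^N(\mathbb{R})$ is a Banach lattice, \eqref{supo} applies and yields ${\mbox{gl}}\big(\mathcal{L}(\ell_\infty^N(\mathbb{R}),\mathcal{B}^N_{\mathcal{S}^\flat})\big)\le\boldsymbol{\lambda}(\mathcal{B}^N_{\mathcal{S}^\flat})$; the ideal properties of $\gamma_1$ and $\pi_1$ then give ${\mbox{gl}}(\mathcal{B}^N_{\mathcal{S}})\le\|U_d\|\,\|\mathbf{Q}\|\,\|V_d\circ I_d\|\,\boldsymbol{\lambda}(\mathcal{B}^N_{\mathcal{S}^\flat})$, exactly as in the proof of Theorem~\ref{gl_versus_proj}.

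The constant $C(d)=\kappa^d e\,2^{3(d-1)}$ records the two places where the real, multi-affine situation costs more than the complex polydisc one treated in Theorem~\ref{gl_versus_proj}. First, the restitution $V_d$, read through~\eqref{identities} as producing a polynomial on $\mathbb{R}^N$, in general leaves the tetrahedral category — the monomials $y_i\,y^{T'}$ with $i\in T'$ generate squares $y_i^2$ — so it must be corrected by a projection onto the tetrahedral subspace, which by Theorem~\ref{OrOuSe} costs a factor $\kappa^d$. Secondly, the bound $\|U_m\|\le e$ used in Theorem~\ref{gl_versus_proj} is the (complex) Harris polarization estimate, not directly available over $\mathbb{R}$; the remedy is to pass to the complexification $\widetilde P$ of the multi-affine polynomial $P$, controlling the change of sup norm — the Boolean sup $\sup_{\{\pm1\}^N}|P|$, which equals $\sup_{[-1,1]^N}|P|$ because multi-affine functions attain their maximum modulus at a vertex, against $\sup_{\mathbb{D}^N}|\widetilde P|$ — by a factor of order $2^{3(d-1)}$, obtained by iterating a crude one-variable comparison, while Harris's inequality for $\widetilde P$ supplies the residual $e$. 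Together with $\|V_d\circ I_d\|\le1$, this proves the first displayed inequality.

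For the general case $\mathcal{S}\subset\{S\subset[N]\colon|S|\le d\}$ I would assemble the block-diagonal version of the diagram~\eqref{greatpic}: split $f\in\mathcal{B}^N_{\mathcal{S}}$ into its homogeneous parts $f_k$, run the homogeneous argument on each building block $\mathcal{B}^N_{\mathcal{S}_{=k}}$, and reassemble through $\ell_\infty$- and $\ell_1$-sums. The one genuinely new ingredient is that on the Boolean cube the projection $f\mapsto f_k$ onto the $k$-homogeneous part is not a contraction — there is no analogue of Cauchy's inequality (Proposition~\ref{Cauchy}) here — but it is bounded by $(1+\sqrt{2})^d$ by Klimek's inequality~\eqref{klimek}; this, together with the reassembly of the $\ell_\infty$-sum of the intermediate operator spaces done exactly as in~\eqref{claimA} (using the $\varepsilon$- and $\pi$-tensor identities and \eqref{supo} once more), accounts for the constant $C(d)=\kappa^d e\,2^{3(d-1)}(1+\sqrt{2})^\kappa$ and for the appearance of $\max_{1\le k\le d}\|\mathbf{Q}\colon\mathcal{B}^N_{=k}\to\mathcal{B}^N_{\mathcal{S}_{=k}}\|$ and $\max_{1\le k\le d}\boldsymbol{\lambda}\big(\mathcal{B}^N_{(\mathcal{S}_{=k})^\flat}\big)$.

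The main obstacle I anticipate is the second point of the middle paragraph: proving, with a constant depending only on the degree $d$ and not on the number of variables $N$, that passing from a tetrahedral real polynomial to its complexification distorts the sup norm by at most $C^d$, and hence that $U_d$ has the claimed norm bound. Once this dimension-free polydisc-versus-cube comparison for multi-affine polynomials is in hand, the remainder — the diagram chases, the use of \eqref{supo}, \eqref{claimA}, Theorem~\ref{OrOuSe} and Klimek's~\eqref{klimek} — is a careful but routine adaptation of the arguments of Section~\ref{Gordon-Lewis vs projection constant}.
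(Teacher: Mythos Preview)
Your approach is the same as the paper's, and your diagram and use of \eqref{supo}, Theorem~\ref{OrOuSe}, and Klimek's inequality~\eqref{klimek} are exactly the ingredients used there. The ``main obstacle'' you flag --- a dimension-free comparison between the sup of a real $d$-homogeneous polynomial on $[-1,1]^N$ and that of its complexification on $\mathbb{D}^N$ --- is not something you need to invent: it is Visser's classical inequality \cite{visser1946generalization}, which gives $\|Q_{\mathbb{C}}\|\le 2^{d-1}\|Q\|$ for every $d$-homogeneous polynomial $Q\colon\ell_\infty^N(\mathbb{R})\to\mathbb{R}$, with a constant independent of $N$. The paper invokes it explicitly.

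Your bookkeeping of the constant is slightly off, though the total is right. A single pass to the complexification costs only $2^{d-1}$, not $2^{3(d-1)}$; so $\|U_d\|\le e\,2^{d-1}$ (Harris on $P_{\mathbb{C}}$, then one Visser back). The remaining $\kappa^d 2^{2(d-1)}$ is absorbed not by $U_d$ but by the projection step: Theorem~\ref{OrOuSe} is stated over $\mathbb{C}$, so to bound $\|\mathbf{Q}_{\Lambda(d,N),J(\mathcal{S})}\|$ on $\mathcal{P}_d(\ell_\infty^N(\mathbb{R}))$ by $\kappa^d\|\mathbf{Q}\colon\mathcal{B}^N_{=d}\to\mathcal{B}^N_{\mathcal{S}}\|$ one passes to $\mathbb{C}$ (one Visser), applies Theorem~\ref{OrOuSe} to peel off the tetrahedral projection ($\kappa^d$), and returns to $\mathbb{R}$ (another Visser). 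That is where the remaining two factors of $2^{d-1}$ live.
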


\begin{proof}
   As in  \eqref{asinA} we identify $\mathcal{B}^N_{\mathcal{S}}$ isometrically with $\mathcal{P}_{J(\mathcal{S})}(\ell_\infty^N(\mathbb{R}))$, and obtain
as in the proof  for Theorem~\ref{gl_versus_proj} that
\begin{equation}\label{back}
  {\mbox{gl}}\big( \mathcal{P}_{J(\mathcal{S})}(\ell_\infty^N(\mathbb{R}))\big)
 \,\le \, e2^{d-1} \big\|\mathbf{Q}_
 {\Lambda(d,N),J(\mathcal{S}))}:\mathcal{P}_{d}(\ell_\infty^N(\mathbb{R})) \to \mathcal{P}_{J(\mathcal{S})}(\ell_\infty^N(\mathbb{R}))\big\|
 \,\,
 \boldsymbol{\lambda}\big(\mathcal{T}_{d-1}(\ell_\infty^N(\mathbb{R}))\big)\,,
\end{equation}
with the only difference that  we have to modify  the use of the  Harris' polarization formula (which only holds in the complex case).
Indeed, using the notation from the proof of  Theorem~\ref{gl_versus_proj}, we only derive that   $\|U_d\|\le  e2^{d-1} $ instead of $\|U_d\|\le e$: For $P \in \mathcal{P}_{J(\mathcal{S})}(\ell_\infty^N(\mathbb{R}))$ and $x,u \in \ell_\infty^N(\mathbb{R})$ we have that
\begin{align*}
  |\check{P}(u, \ldots, u,x)| = |\check{P}_{\mathbb{C}}(u, \ldots, u,x)|\leq e \|P_{\mathbb{C}}\|
  \leq e2^{d-1} \|P\|\,.
  \end{align*}
  where the second estimate follows by   Harris' polarization and the  last estimate  is a consequence  of  a classical result of Visser \cite{visser1946generalization} showing that
  \begin{equation}\label{visser}
   \text{$\|Q_{\mathbb{C}}\|
  \leq 2^{d-1} \|Q\|$  \,\,\, for every  $d$-homogeneous polynomial $Q :\ell_\infty^N(\mathbb{R}) \to \mathbb{R} $\,.}
  \end{equation}
  On the other hand,  by Theorem~\ref{OrOuSe}  and another application of this result of Visser we get
  \begin{align*}
  &
  \|\mathbf{Q}_
 {\Lambda(d,N),J(\mathcal{S})}:\mathcal{P}_{d}(\ell_\infty^N(\mathbb{R})) \to \mathcal{P}_{J(\mathcal{S})}(\ell_\infty^N(\mathbb{R}))\|
 \\
 &
 \leq
  2^{d-1}
 \|\mathbf{Q}_
 {\Lambda(d,N),J(\mathcal{S})}:\mathcal{P}_{d}(\ell_\infty^N(\mathbb{C})) \to \mathcal{P}_{J(\mathcal{S})}(\ell_\infty^N(\mathbb{C}))\|
 \\&
 \leq
 \kappa^d 2^{d-1}
  \|\mathbf{Q}_
 {\Lambda_T(d,N), J(\mathcal{S})}:\mathcal{P}_{\Lambda_T(d,N)}(\ell_\infty^N(\mathbb{C})) \to \mathcal{P}_{J(\mathcal{S})}(\ell_\infty^N(\mathbb{C}))\|
 \\&
  \leq
 \kappa^d 2^{2(d-1)}
 \|\mathbf{Q}_
 {\Lambda(d,N),\Lambda_T(d,N)}:\mathcal{P}_{\Lambda_T(d,N)}(\ell_\infty^N(\mathbb{R})) \to \mathcal{P}_{J(\mathcal{S})}(\ell_\infty^N(\mathbb{R}))\|
 \\&
  =
 \kappa^d 2^{2(d-1)}
 \|\mathbf{Q}: \mathcal{B}^N_{=d} \to \mathcal{B}^N_{\mathcal{S}}\|
    \end{align*}
Coming back to \eqref{back}, we see that  the argument for the  first part of Theorem~\ref{BGL2} is complete. The second part then follows through a slight modification of  the proof of the second part of Theorem~\ref{gl_versus_proj}. We for
   $\mathcal{P}_{J(\mathcal{S})}(\ell_\infty^N(\mathbb{R}))$ (instead of $\mathcal{P}_J(X_n)$) consider a diagram like in  \eqref{greatpic}.
   Basically the only change lies in a different estimation of the norm of
   \[
   \text{$\mathbf{O}\oplus\bigoplus\mathbf{Q}_{J(\mathcal{S}),J(\mathcal{S})_k}$}: \mathcal{P}_{J(\mathcal{S})}(\ell_\infty^N(\mathbb{R}))
   \to \mathbb{C} \oplus_1\bigoplus_1 \mathcal{P}_{J(\mathcal{S})_k}(\ell_\infty^N(\mathbb{R}))\,,
   \,\,\,\,\,\, P \mapsto   \big(a_0, (P_k)_{k=1}^m\big)\,,
      \,
   \]
   where $P = a_0 +\sum_{k=1}^m P_k$ is the unique decomposition of $P$ into the sum of its constant term and  $k$-homogeneous parts.
   Using again \eqref{klimek}, we have that
   \[
 \|\text{$\mathbf{O}\oplus\bigoplus\mathbf{Q}_{J(\mathcal{S}),J(\mathcal{S})_k}$}\| \,\,\leq \,\, (1+\sqrt{2})^d\,.
   \]
   Then, finally, an application of the first part of the theorem and copying the arguments used for (the second part of)  Theorem~\ref{BGL2}, leads to the claim.
   \end{proof}

   We are mainly interested in the following special cases -- both being immediate consequences of  Theorem \ref{BGL2}.

\begin{corollary} \label{BGL3}
  Let $0 \leq d \leq N$. Then
  \[
     {\mbox{gl}}(\mathcal{B}^N_{=d}) \,\,\leq\,\,
     C(d)  \,
          \boldsymbol{\lambda}(\mathcal{B}^N_{=d-1})\,,
  \]
  and
  \[
  {\mbox{gl}}(\mathcal{B}^N_{\leq d}) \,\,\leq\,\,
    C(d) \, \max_{1 \leq  k \leq d}  \boldsymbol{\lambda}(\mathcal{B}^N_{= k-1})\,.
  \]
  where $C(d)=\kappa^d e2^{3(d-1)}$ in the first  and $C(d)=  \kappa^d e2^{3(d-1)}(1+\sqrt{2})^\kappa$ in the second estimate.
  \end{corollary}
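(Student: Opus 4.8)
The plan is to derive Corollary~\ref{BGL3} directly from Theorem~\ref{BGL2} by specializing the index family $\mathcal{S}$ to the full collections of subsets of a fixed size (or of size at most $d$), and by controlling the annihilating projections $\mathbf{Q}$ that appear in the general bounds. The key observation is that when $\mathcal{S}$ is the \emph{entire} family of $d$-element subsets of $[N]$, the projection $\mathbf{Q}\colon \mathcal{B}^N_{=d}\to\mathcal{B}^N_{\mathcal{S}}$ is simply the identity, and the reduced family $\mathcal{S}^\flat$ is exactly the full family of $(d-1)$-element subsets; analogously for the $\le d$ case.

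\textbf{First step.} For the first estimate, take $\mathcal{S}=\{S\subset[N]\colon|S|=d\}$ in the first part of Theorem~\ref{BGL2}. Then $\mathcal{B}^N_{\mathcal{S}}=\mathcal{B}^N_{=d}$, so the projection $\mathbf{Q}\colon\mathcal{B}^N_{=d}\to\mathcal{B}^N_{\mathcal{S}}$ is the identity and $\|\mathbf{Q}\|=1$. Moreover $\mathcal{S}^\flat=\{S\setminus\{i\}\colon |S|=d,\ i\in S\}=\{T\subset[N]\colon|T|=d-1\}$, since every $(d-1)$-subset is obtained from some $d$-subset by deleting an element (here $d\le N$ guarantees the relevant sets are nonempty). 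Hence $\mathcal{B}^N_{\mathcal{S}^\flat}=\mathcal{B}^N_{=d-1}$, and Theorem~\ref{BGL2} gives
\[
{\mbox{gl}}(\mathcal{B}^N_{=d})\le C(d)\,\boldsymbol{\lambda}(\mathcal{B}^N_{=d-1}),\qquad C(d)=\kappa^d e\,2^{3(d-1)}\,,
\]
which is the first claim.

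\textbf{Second step.} For the second estimate, take $\mathcal{S}=\{S\subset[N]\colon|S|\le d\}$ in the second part of Theorem~\ref{BGL2}. Here $\mathcal{B}^N_{\mathcal{S}}=\mathcal{B}^N_{\le d}$. For each $1\le k\le d$ the homogeneous slice $\mathcal{S}_{=k}=\{S\colon|S|=k\}$ is the full $k$-element family, so $\mathbf{Q}\colon\mathcal{B}^N_{=k}\to\mathcal{B}^N_{\mathcal{S}_{=k}}$ is again the identity with norm $1$, and $(\mathcal{S}_{=k})^\flat=\{T\colon|T|=k-1\}$, i.e. $\mathcal{B}^N_{(\mathcal{S}_{=k})^\flat}=\mathcal{B}^N_{=k-1}$. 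Plugging these into Theorem~\ref{BGL2} yields
\[
{\mbox{gl}}(\mathcal{B}^N_{\le d})\le C(d)\max_{1\le k\le d}\boldsymbol{\lambda}(\mathcal{B}^N_{=k-1}),\qquad C(d)=\kappa^d e\,2^{3(d-1)}(1+\sqrt{2})^\kappa\,,
\]
which is the second claim. I do not anticipate a genuine obstacle here: the entire content is the bookkeeping of identifying $\mathcal{S}^\flat$ and $\mathcal{S}_{=k}$, $(\mathcal{S}_{=k})^\flat$ with full families and observing that the annihilating projections become identities; the only point requiring a line of care is the verification that $\{S\colon|S|=d\}^\flat$ really equals $\{T\colon|T|=d-1\}$ (inclusion $\subseteq$ is definitional, and $\supseteq$ uses $d-1<d\le N$ so that each $(d-1)$-subset extends to a $d$-subset), together with reading off the constants $C(d)$ verbatim from Theorem~\ref{BGL2}.
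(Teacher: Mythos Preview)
Your proof is correct and follows exactly the approach the paper takes: the paper simply states that both estimates are ``immediate consequences of Theorem~\ref{BGL2}'', and you have spelled out precisely the specialization (taking $\mathcal{S}$ to be the full family of $d$-subsets, respectively all subsets of size $\le d$, so that the annihilating projections become identities and $\mathcal{S}^\flat$, $(\mathcal{S}_{=k})^\flat$ are the full families one degree lower).
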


\smallskip

Finishing, we mention that Theorem~\ref{BGL1} and Corollary~\ref{BGL3} may be used to indicate an alternative approach to Proposition~\ref{sid}
(with weaker constants),
which does not need  the hypercontractive Bohnenblust-Hille inequality~\eqref{equa:BHBoolean}.

Indeed, both results show that
  \[
        \boldsymbol{\chimon}(\mathcal{B}^N_{= d})
   \,\,\prec_{C^d}\,\,
   {\mbox{gl}}(\mathcal{B}^N_{= d}) \,\,\prec_{C^d}\,\,
   \boldsymbol{\lambda}(\mathcal{B}^N_{= d-1}) \leq \binom{N}{d-1}^{\frac{1}{2}} \,.
  \]
But since $ \Big(\frac{N}{\ell}\Big)^\ell \le \binom{N}{\ell} \le e^\ell\Big(\frac{N}{\ell}\Big)^\ell$ for all $1 \leq \ell \leq N$
(see again \eqref{ukraineA} and \eqref{ukraineAA}),
an elementary computation gives
\[
\binom{N}{d-1}^{\frac{1}{2}}
\prec_{C^d}
\frac{1}{\sqrt{N}} \binom{N}{d}^{\frac{1}{2}} \leq \binom{N}{d-1}^{\frac{1}{2}}\,\,\, \,\,\,\,\,\,\text{for all $1 \leq d \leq N$}\,.
\]
As announced this reproves the upper bound in Proposition~\ref{sid}.

\bigskip

\chapter{Polynomial projection constants} \label{Part: Polynomial projection constants}

In this chapter we present a  useful new technique to estimate  the projection constant of  spaces $\mathcal P_J(X_n)$ of multivariate polynomials on finite dimensional Banach lattices $X_n$ supported on a finite index set $J$. The results complement those studied in the previous sections.

Let us first sketch the main idea. Given a~Banach lattice $X_n = (\mathbb{C}^n,\|\cdot\|)$ with canonical  basis vectors $e_k$ of norm $\leq 1$, we first look  at the $1$-homogeneous case $\mathcal P_1(X_n) = X_n^\ast$, and then,  at the general case $\mathcal P_J(X_n)$, where $J \subset \mathbb{N}_0^{(\mathbb{N})}$ is an arbitrary index set.

Consider the restriction map
\begin{equation}\label{asin}
X_n^\ast \hookrightarrow \ell_\infty(B_{X_n})\,, \,\,\,\,\,\, x_n^\ast \mapsto  x_n^\ast|_{B_{X_n}} \,,
\end{equation}
which of course is an isometric embedding. Then, using the Hahn-Banach theorem, each $e_k \in X_n = X_n^{\ast \ast}$ has a~norm-preserving extension $\widetilde{e_k}\in \ell_\infty(B_{X_n})^\ast$. Consequently, denoting by $e_k^\ast \in X_n^\ast$ the
$k$-th coefficient functional of $e_k$, we get that
\begin{equation}\label{proQ}
q\colon  \ell_\infty(B_{X_n}) \to \ell_\infty(B_{X_n})\,,  \,\,\, \,\,\,f \mapsto \sum_{k=1}^n \widetilde{e_k}(f)e_k^\ast|_{B_{X_n}}
\end{equation}
is a projection onto $\mathcal P_1(X_n) = X_n^\ast$, and therefore
\[
\boldsymbol{\lambda}\big(\mathcal{P}_1(X_n)\big) \leq \big\|q\colon \ell_\infty(B_{X_n}) \to \ell_\infty(B_{X_n})\big\|\,.
\]
 But
\[
\big\|q\colon \ell_\infty(B_{X_n}) \to \ell_\infty(B_{X_n})\big\|
 = \sup_{\|f\|_\infty \leq 1} \sup_{\|z\|_{X_n}
\leq 1}\big| \sum_{k=1}^n \widetilde{e_k}(f)e_k^\ast(z) \big|
\leq \sup_{\|z\|_{X_n} \leq 1} \sum_{k=1}^n |z_k |
 = \varphi_{X_n'}(n)\,,
\]
implying the estimate
\begin{equation}\label{mimic}
\boldsymbol(\mathcal{P}_1(X_n)) \leq  \varphi_{X_{n}'}(n)
\end{equation}
(which has been already mentioned in \eqref{Carsten1}).
Vice versa,  Sch\"{u}tt (see again \eqref{schuett}) proved  that
\begin{equation} \label{scholz2}
\varphi_{X_n'}(n) \leq \sqrt{2} \boldsymbol{\lambda}(X_n')\,,
\end{equation}
provided  $\|\id\colon  X_n \to \ell^n_2\| \leq 1$. Then, under this  restriction  on $X_n$, we
 get
\begin{equation}
\label{schuetty}
\frac{1}{\sqrt{2}} \varphi_{X_{n}'}(n)  \,\leq \,  \boldsymbol{\lambda}\big(\mathcal P_1(X_n)\big)  \,\leq \, \varphi_{X_{n}'}(n)\,.
\end{equation}
Let us see a very first example. Applying Sch\"{u}tt's  estimate to $X_n = \ell_r^n, \, 1 \le r \le 2$, we
immediately get
\[
\boldsymbol{\lambda}\big(\mathcal P_1(\ell_r^n)\big) = \boldsymbol{\lambda}(\ell_{r'}^n) \sim_C n^{\frac{1}{r'}}, \quad\, 1 \leq r \leq 2\,.
\]
Unfortunately,  this result is not covered by the outcome of the 'polynomial' Kadets-Snobar Theorem~\ref{conny3}. There we for arbitrary $m$ get
\begin{align}
\label{intro-poly}
\boldsymbol{\lambda}\big(\mathcal{P}_{m}(\ell_r^n) \big)
\,\,\sim_{C^m} \,\,
\Big( 1+\frac{n}{m}\Big)^{\frac{m}{2} }\,,
\end{align}
 but only for  the  restricted range $\,2 \leq r \leq \infty$
 (recall that 
 only in this case
 $\ell_r$ is   $2$-convex).

\smallskip

In the following section we  mimic the above 'one homogeneous idea' proving \eqref{mimic}, in order to show, among others,  an extension of
\eqref{intro-poly} that  holds for the full scale $\,1 \leq r \leq \infty$.
The main idea follows the $1$-homogeneous case just explained.
As in \eqref{asin}, given an index set  $J \subset \mathbb{N}_0^n$ and a (quasi) Banach space $X_n = (\mathbb{C}^n,\|\cdot\|)$,
we  identify the space $\mathcal{P}_{J}(X_n)$ with a~subspace of $\ell_\infty(B_{X_n})$ via the natural isometrical embedding
\[
I\colon \mathcal{P}_{J}(X_n) \to \ell_\infty(B_{X_n})\,,\,\,\,\,\,\,I(P) := P|_{B_{X_n}}\,.
\]
For every multi index  $\alpha \in J$ let 
$$c_{\alpha} \colon \mathcal{P}_J(X_n) \to \mathbb{C}
\,, \quad  P \mapsto c_{\alpha}(P)
$$
 be the  linear
functional which assigns to every polynomial  $P$ its unique monomial
coefficient $c_{\alpha}(P)$. Again using the Hahn-Banach extension theorem, we know that  each of these coefficient functionals
$c_\alpha$ has a norm-preserving extension $\widetilde{c_{\alpha}}\in \ell_\infty(B_{X_n})^{*}$, that is,
$\|\widetilde{c_{\alpha}}\| = \|c_{\alpha}\|$ and
$
\widetilde{c_{\alpha}}(P)= c_\alpha(P)$ for each $P\in \mathcal{P}_{J}(X_n)\,.
$
If we now (as in \eqref{proQ}) consider the~projection $$Q\colon \ell_\infty(B_{X_n}) \to \ell_\infty(B_{X_n})$$
onto $\mathcal{P}_{J}(X_n)$, which for each  $f \in \ell_\infty(B_{X_n})$ is given by the formula
\begin{equation}\label{theprojection}
Qf(z) := \sum_{\alpha \in J} \widetilde{c_{\alpha}}(f) z^\alpha, \quad\, z\in B_{X_n}\,,
\end{equation}
then clearly
\[
\boldsymbol{\lambda}\big(\mathcal{P}_J(X_n)\big) \leq \big\|Q\colon \ell_\infty(B_{X_n}) \to\ell_\infty(B_{X_n})\big\|\,,
\]
an analog of \eqref{mimic}.
Although the determination  of the norm of this projection $Q$   in general is a difficult task, we find upper estimates which under
certain restrictive assumptions on the index set  $J$ and the Banach space $X_n$ turn out to lead to   asymptotically  optimal bounds for $\boldsymbol{\lambda}(\mathcal{P}_J(X_n))$.
More precisely, we  introduce  what we call the polynomial projection constant
\[
\widehat{\boldsymbol{\lambda}}\big(\mathcal P_J(X_n)\big):= \sup_{z \in B_{X_n}} \sum_{\alpha \in J} c_{X_n}(\alpha) \vert z^{\alpha}\vert\,,
\]
where $c_{X_n}(\alpha)$, the characteristic of the multi index $\alpha$,
is the reciprocal of the norm of the monomial $z^{\alpha}$ in $\mathcal P_J(X_n)$ .
The interesting fact is that (based on \eqref{theprojection}) we can actually bound  the projection constant of the space $\mathcal P_{J}(X_n)$ by this quantity, that is,
\begin{equation} \label{key}
\boldsymbol{\lambda}\big(\mathcal P_{J}(X_n)\big) \leq \widehat{\boldsymbol{\lambda}}\big(\mathcal P_J(X_n)\big)
\end{equation}
(see Theorem~\ref{lambda-dash}). 
In many concrete situations,  this estimate is good enough -- the gain here is that this new parameter is more manageable compared with $\boldsymbol{\lambda}(\mathcal P_J(X_n))$.
Therefore having accurate upper bounds for $c_{X_n}(\alpha)$ will  be crucial for our purposes.
In fact it turns out that the polynomial projection constant $\widehat{\boldsymbol{\lambda}}(\mathcal P_J(X_n))$
as a~substitute of the projection constant of $\mathcal{P}_J(X_n)$  allows to extend estimates like \eqref{intro-poly}
in a~systematic and comfortable way to wider ranges of index sets $J$ and spaces $X_n$ -- but at the same time it satisfies
a~reasonable abstract theory which in many ways mimics the role  the value $\varphi_{X'_n}(n)$ plays in the 'one homogeneous
case'~\eqref{schuetty}.

In Section~\ref{Characteristics}
we define and prove the basic properties of characteristics $c_{X_n}(\alpha)$ together  with the key estimate from
\eqref{key}, and in Section~\ref{hermi} we use all this  to study the polynomial projection 
constant of spaces of polynomials on $\ell_r$.
Multiplying  Banach sequence lattices we  in 
Section~\ref{characteristic}
get interpolation estimates for the polynomial projection constant,
and in 
Section~\ref{Characteristics via Lozanovskii's theorem}
we use a deep result of Lozanovskii  on the K\"othe duality of  Calder\'on product spaces is used to prove the  relation
$$
c_{X_n}(\alpha)\,c_{X_{n}'}(\alpha) =
\frac{\vert \alpha \vert^{\vert \alpha \vert}}{\alpha^{\alpha}}\,,
$$
which plays a key role in our study.
This in particular gives a quite complete picture 
for tetrahedral indices 
in Section~\ref{Tetrahedral polynomials}.
 Section~\ref{bounds}   provides asymptotically optimal  bounds  for $c_{X_n}(\alpha)$ in the case when $X_n$ is the classical space $\ell_r^n$, a  Nakano space $\ell_{\vec{r}}^n$, a mixed  space $\ell_r^n(\ell_s^m)$, or the Marcinkiewicz/Lorentz sequence space $\ell^n_{r,s}$.
 Section~\ref{Polynomial projection constants vs  fundamental functions} 
relates the polynomial projection constant $\widehat{\boldsymbol{\lambda}}(\mathcal P_J(X_n))$ with the fundamental function
$\varphi_{X_n'}$
of the K\"othe dual of the underlying sequence lattice $X_n$, and we point out that in many cases, both quantities share similar
properties.
Finally, 
replacing 
 the unconditional basis
  by
 the 
 polynomial projection constant,
Section~\ref{compa}
repeats the program of Section~\ref{conv/conc}.

\smallskip

\section{Characteristics} \label{Characteristics}
Given a (quasi-)Banach space $X_n = (\mathbb{C}^n,\|\cdot\|)$, an index set  $J \subset \mathbb{N}_0^n$, and
$\alpha \in J$. Let $c_\alpha^{*} \colon \mathcal{P}_J(X_n) \to \mathbb{C}$ be the coefficient
functional given by
\[
c_\alpha^{*}(P): = c_\alpha(P), \quad\, P = \sum_{\beta\in J} c_{\beta}(P) z^\beta\,.
\]
In Lemma \ref{sup} a~simple argument using the Cauchy integral representation of the monomial coefficient $c_{\alpha}(P)$,
shows that the norm
\[
\|c_\alpha^{*}\|:= \sup_{\|P\|_{\mathcal{P}_J(X_n) \leq 1}} |c_\alpha(P)|
\]
is bounded by the reciprocal of the norm of the monomial $z^{\alpha}$
in $\mathcal P_m(X_n)$, i.e., by
\[
c_{X_n}(\alpha):= \frac{1}{\sup_{z \in B_{X_n}}|z^\alpha|}\,.
\]
We call this number the characteristic of the multi index $\alpha$ (with respect to the Banach space $X_n$). Moreover,  we  define the number
\begin{equation} \label{lambda-dash-def}
\widehat{\boldsymbol{\lambda}}\big(\mathcal P_J(X_n)\big):=\sup_{z\in B_{X_n}}\,\,\sum_{\alpha \in J}  c_{X_n}(\alpha) |z^\alpha |\,,
\end{equation}
and refer to this quantity as the  polynomial projection constant of $\mathcal P_J(X_n)$.

\smallskip

\begin{remark} \label{m1}
If $X_n=(\mathbb{C}^n, \|\cdot\|)$ is a~Banach space, then
$
\mathcal P_1(X_n) = \mathcal P_{J}(X_n)\,,
$
where $J = \{e_k\colon 1 \leq k \leq n\}$, and hence
\begin{equation*}
\widehat{\boldsymbol{\lambda}}\big(\mathcal P_1(X_n)\big)  = \sup_{z\in B_{X_n}} \sum_{k=1}^n |z_k| = \varphi_{X_n'}(n)\,.
\end{equation*}
\end{remark}

As mentioned above, we are going to see that the polynomial projection constant of $\mathcal P_J(X_n)$ bounds the projection constant of
$\mathcal P_J(X_n)$. Consequently, accurate estimates for the polynomial projection constant automatically lead to good
estimates for the projection constant. In fact it will turn out that many of   our concrete upper estimates for the  projection
constants of the spaces  $\mathcal{P}_{J}(X_n)$ are either derived from the Kadets-Snobar estimate \eqref{kadets1} or from estimates
for $\widehat{\boldsymbol{\lambda}}(\mathcal P_J(X_n))$.

The following lemma shows that the norm of the functional $c_{\alpha}^*$ can be bounded by the characteristic of the index
$\alpha$.

\begin{lemma} \label{sup}
Let $X_n = (\mathbb{C}^n, \|\cdot\|)$ be a Banach lattice. Then for every $P \in \mathcal{P}_J(X_n)$ and $\alpha \in J \subset
\mathbb{N}_0^{(\mathbb{N})}$ we have that
\[
|c_\alpha(P)| \leq c_{X_n}(\alpha) \Vert P \Vert_{\mathcal{P}_J(X_n)}\,;
\]
in other terms, $\,\big\|c_\alpha^{*}\big\|_{\mathcal{P}_J(X_n)^\ast} \leq  c_{X_n}(\alpha)$\,.
\end{lemma}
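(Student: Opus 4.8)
The plan is to extract the monomial coefficient $c_\alpha(P)$ by an averaging argument over torus variables, exactly mimicking the Cauchy integral representation of a single Taylor coefficient. First I would fix $P \in \mathcal{P}_J(X_n)$ and $\alpha \in J$, and observe that for any $z = (z_1,\dots,z_n) \in \mathbb{C}^n$ and any $w = (w_1,\dots,w_n) \in \mathbb{T}^n$, the polynomial $P(w_1 z_1, \dots, w_n z_n)$ has, as a function of $w \in \mathbb{T}^n$, the Fourier-Walsh/trigonometric expansion $\sum_{\beta \in J} c_\beta(P) z^\beta w^\beta$. Integrating against $\overline{w}^\alpha$ over $\mathbb{T}^n$ and using orthogonality of the characters $w \mapsto w^\beta$, one gets the reproducing formula
\[
c_\alpha(P)\, z^\alpha = \int_{\mathbb{T}^n} P(w_1 z_1, \dots, w_n z_n)\, \overline{w}^\alpha \, dw\,.
\]

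Next I would bound the right-hand side. Since $X_n$ is a Banach lattice, for each fixed $z \in B_{X_n}$ and each $w \in \mathbb{T}^n$ the point $(w_1 z_1, \dots, w_n z_n)$ still lies in $B_{X_n}$ (the coordinatewise modulus is unchanged, so its lattice norm is the same). Hence $|P(w_1 z_1, \dots, w_n z_n)| \leq \|P\|_{\mathcal{P}_J(X_n)}$ for all $w \in \mathbb{T}^n$, and since $\mathbb{T}^n$ carries a probability measure, the integral formula gives
\[
|c_\alpha(P)|\, |z^\alpha| \leq \|P\|_{\mathcal{P}_J(X_n)}\,, \qquad z \in B_{X_n}\,.
\]
Taking the supremum over $z \in B_{X_n}$ of $|z^\alpha|$ on the left and recalling the definition $c_{X_n}(\alpha) = \big(\sup_{z \in B_{X_n}} |z^\alpha|\big)^{-1}$, I obtain $|c_\alpha(P)| \leq c_{X_n}(\alpha)\, \|P\|_{\mathcal{P}_J(X_n)}$, which is exactly the claim; restated, $\|c_\alpha^\ast\|_{\mathcal{P}_J(X_n)^\ast} \leq c_{X_n}(\alpha)$.

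One technical point deserving care is the case where $\sup_{z \in B_{X_n}} |z^\alpha| = 0$, i.e. $z^\alpha \equiv 0$ on $B_{X_n}$; then $c_{X_n}(\alpha) = \infty$ and the inequality is trivially true, so this degenerate situation can simply be noted and set aside. I do not expect a genuine obstacle here: the only substantive ingredient is the lattice invariance of $B_{X_n}$ under the torus action, which replaces the usual open-disc Cauchy estimate, and this is immediate from the ideal property of the lattice norm. If one wants to avoid writing $n$-fold integrals, an equivalent and equally short route is induction on the variables, integrating one torus variable at a time; either presentation works, and I would pick the single $n$-dimensional integral for brevity.
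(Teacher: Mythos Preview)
Your proof is correct and is essentially the same as the paper's: both extract $c_\alpha(P)$ via a Cauchy-type integral and bound it using that the lattice ball $B_{X_n}$ is invariant under coordinatewise unimodular multiplication. The only cosmetic difference is that the paper writes the Cauchy integral over the distinguished boundary of the polydisc $\mathbb{D}(0,z)$ (and therefore restricts to $z$ with all coordinates nonzero, using approximation), whereas you first scale by $z$ and then integrate over the standard torus $\mathbb{T}^n$; after the substitution $w_j \mapsto z_j w_j$ these are the same integral, and your formulation has the minor advantage of avoiding the approximation step.
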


\begin{proof}
Note that (using an approximation argument) it is enough to prove the inequality
\[
\vert c_{\alpha}(P) \vert \leq  \frac{1}{\vert z^{\alpha} \vert}\,\Vert P \Vert_{\mathcal{P}_J(X_n)}
\]
for all $z \in B_{X_n}$ for which  $z_i\neq 0$ for all  $1 \leq i \leq n$. To see this inequality, fix such $z \in B_{X_n}$  and consider the set
$$\mathbb{D}(0,z):=\big\{w \in \mathbb{C}^n : \vert w_i \vert  \leq \vert z_i \vert \,\, \text{for each\, $1 \leq i \leq n$}\big\}\,.$$
Then by the Cauchy integral formula and
using that $X_n$ is a Banach lattice, we get
\[\vert c_{\alpha}(P) \vert  
= \Big|\frac{1}{(2\pi i)^n} \int_{\partial \mathbb{D}(0, z)} \frac{P(w)}{w_1^{\alpha_1+1}
 \cdots w_n^{\alpha_n+1}} dw_1 \cdots dw_n \Big|
\leq 
\frac{1}{\vert z^{\alpha} \vert}\,\Vert P \Vert_{\mathcal{P}_J(X_n)}\,,
\]
which proves the inequality.
\end{proof}

Coming back to the discussion from the introduction of  this chapter, we get an estimate of the projection constant by the polynomial projection
constant.

\begin{theorem} \label{lambda-dash}
Let $X_n = (\mathbb{C}^n,\|\cdot\|)$ be a~Banach lattice and  $J \subset \mathbb{N}_0^n$. Then
\[
\boldsymbol{\lambda}\big(\mathcal{P}_{J}
(X_n)\big) \leq \widehat{\boldsymbol{\lambda}}\big(\mathcal{P}_{J}(X_n)\big)\,.
\]
\end{theorem}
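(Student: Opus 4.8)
The plan is to follow verbatim the construction sketched in the introduction to this chapter, turning the informal argument into a clean proof. Concretely, I would fix the index set $J \subset \mathbb{N}_0^n$ and the Banach lattice $X_n = (\mathbb{C}^n,\|\cdot\|)$, and consider the isometric embedding $I\colon \mathcal{P}_{J}(X_n) \to \ell_\infty(B_{X_n})$, $I(P) := P|_{B_{X_n}}$ (this is an isometry precisely by the definition of the norm $\|\cdot\|_{B_{X_n}}$, together with the admissibility of $B_{X_n}$ recorded in the preliminaries). Since $\ell_\infty(B_{X_n})$ is $1$-injective, we have $\boldsymbol{\lambda}\big(\mathcal{P}_{J}(X_n)\big) = \boldsymbol{\lambda}\big(I(\mathcal{P}_{J}(X_n)), \ell_\infty(B_{X_n})\big)$, so it suffices to exhibit one projection $Q\colon \ell_\infty(B_{X_n}) \to \ell_\infty(B_{X_n})$ onto $I(\mathcal{P}_{J}(X_n))$ whose norm is at most $\widehat{\boldsymbol{\lambda}}\big(\mathcal{P}_{J}(X_n)\big)$.

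First I would construct $Q$. For each $\alpha \in J$, let $c_\alpha^{*}\in \mathcal{P}_{J}(X_n)^\ast$ be the monomial coefficient functional. By Lemma~\ref{sup} we have $\|c_\alpha^{*}\|_{\mathcal{P}_{J}(X_n)^\ast} \leq c_{X_n}(\alpha)$. Using the Hahn--Banach theorem, extend each $c_\alpha^{*}$ to a functional $\widetilde{c_\alpha} \in \ell_\infty(B_{X_n})^\ast$ with $\|\widetilde{c_\alpha}\| = \|c_\alpha^{*}\| \leq c_{X_n}(\alpha)$ and $\widetilde{c_\alpha}(I(P)) = c_\alpha(P)$ for all $P \in \mathcal{P}_{J}(X_n)$. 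Then define, for $f \in \ell_\infty(B_{X_n})$,
\[
Qf(z) := \sum_{\alpha \in J} \widetilde{c_\alpha}(f)\, z^\alpha, \quad\, z \in B_{X_n}\,,
\]
which is a (finite) linear combination of the monomials $z^\alpha|_{B_{X_n}}$, hence lies in $I(\mathcal{P}_{J}(X_n))$. That $Q$ is a projection onto $I(\mathcal{P}_{J}(X_n))$ is immediate: if $f = I(P)$ with $P = \sum_{\alpha \in J} c_\alpha(P) z^\alpha$, then $\widetilde{c_\alpha}(f) = c_\alpha(P)$, so $Qf = I(P) = f$; and the range of $Q$ is contained in $I(\mathcal{P}_{J}(X_n))$ by the previous remark.

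Finally I would estimate $\|Q\|$. For $\|f\|_{\ell_\infty(B_{X_n})} \leq 1$ and $z \in B_{X_n}$,
\[
|Qf(z)| = \Big| \sum_{\alpha \in J} \widetilde{c_\alpha}(f)\, z^\alpha \Big|
\leq \sum_{\alpha \in J} \|\widetilde{c_\alpha}\|\, |z^\alpha|
\leq \sum_{\alpha \in J} c_{X_n}(\alpha)\, |z^\alpha|
\leq \sup_{w \in B_{X_n}} \sum_{\alpha \in J} c_{X_n}(\alpha)\, |w^\alpha|
= \widehat{\boldsymbol{\lambda}}\big(\mathcal{P}_{J}(X_n)\big)\,.
\]
Taking the supremum over such $f$ and $z$ gives $\|Q\| \leq \widehat{\boldsymbol{\lambda}}\big(\mathcal{P}_{J}(X_n)\big)$, and hence $\boldsymbol{\lambda}\big(\mathcal{P}_{J}(X_n)\big) \leq \|Q\| \leq \widehat{\boldsymbol{\lambda}}\big(\mathcal{P}_{J}(X_n)\big)$, as claimed. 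There is essentially no serious obstacle here: the only point that deserves a line of care is that $Q$ is well-defined and bounded (which follows because $J$ is finite and each $\widetilde{c_\alpha}$ is a bounded functional), and that the lattice structure of $X_n$ is genuinely used — it enters through Lemma~\ref{sup}, whose proof invokes the Cauchy integral formula and the lattice property of $X_n$ to bound $|c_\alpha(P)|$ by $c_{X_n}(\alpha)\|P\|$. Everything else is a direct transcription of the heuristic already given in the text.
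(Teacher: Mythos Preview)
Your proof is correct and follows essentially the same approach as the paper: embed $\mathcal{P}_J(X_n)$ isometrically into $\ell_\infty(B_{X_n})$, extend the coefficient functionals via Hahn--Banach, define the projection $Q$ exactly as in~\eqref{theprojection}, and bound $\|Q\|$ using Lemma~\ref{sup}. The paper's version is slightly terser (it relies on the construction already spelled out in the chapter introduction), but the argument is identical.
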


\begin{proof}
As in \eqref{theprojection} we define the projection $Q: \ell_\infty(B_{X_n}) \to \mathcal{P}_{J}(X_n)$ by
\[
Qf(z) := \sum_{\alpha \in J} \widetilde{c_{\alpha}^{*}}(f) z^\alpha, \quad\, f \in \ell_\infty(B_{X_n}), \, \,\, z\in B_{X_n}\,.
\]
Then by Lemma \ref{sup}
\begin{align*}
\|Q\| &=  \sup_{\|f\|_{\ell_{\infty}(B_{X_n})} \leq 1} \Big\|
\sum_{\alpha  \in J} \widetilde{c^\ast_{\alpha}} (f) z^\alpha\Big\|_{\mathcal{P}_J(X_n)} \\
& \leq \sup_{z \in B_{X_n}} \sum_{\alpha\in J} \big\|\widetilde{c^\ast_{\alpha}} \big\|_{\ell_{\infty}(B_{X_n})^{*}}\,|z^\alpha|
\\& 
= \sup_{z \in B_{X_n}} \sum_{\alpha\in J}  \big\|c^\ast_\alpha\big\|_{\mathcal P_J(X_n)^\ast}|z^\alpha| \leq \sup_{z \in B_{X_n}} \sum_{\alpha\in J}  c_{X_n}(\alpha)|z^\alpha|
 = \widehat{\boldsymbol{\lambda}}\big(\mathcal{P}_J(X_n)\big)\,,
\end{align*}
and since  $\boldsymbol{\lambda}\big(\mathcal{P}_J
(X_n)\big) \leq \|Q\|$, the desired estimate follows.
\end{proof}

\smallskip

\section{Polynomials on $\ell_r$}
\label{hermi}

It is well-known that for every $\alpha \in \Lambda(m,n)$ and $1 \leq r \leq \infty$ we have
\begin{equation} \label{dineen}
c_{\ell_{r}^n}(\alpha) = \Big(\frac{m^m}{\alpha^\alpha}\Big)^{1/r}\,;
\end{equation}
for an  elementary proof see \cite[Lemma 1.38]{dineen1999complex}, and for a proof within a~much more general setting  Theorem \ref{cE} and 
Remark~\ref{caso ell_r}. Observe as an immediate consequence that for  tetrahedral indices $\alpha \in \Lambda_T(m,n)$ with $m \leq n$, we get
\begin{equation} \label{dineen2}
c_{\ell_{r}^n}(\alpha) = m^{\frac{m}{r}}\,.
\end{equation}
By Theorem~\ref{lambda-dash} accurate upper bounds for $c_{X_n}(\alpha)$  provide good upper bounds for the
projection constant of $\mathcal{P}_J(X_n)$. The following two results illustrate this in the case of $X =\ell_r$.

\begin{proposition} \label{start-poly1}
Let $1 \leq r \leq \infty$ and $J \subset \mathbb{N}_0^{(\mathbb{N})}$ an index set. Then
\begin{align} \label{AAA4}
\widehat{\boldsymbol{\lambda}}\big(\mathcal{P}_{J_{\leq m}}(\ell_r^n)\big) \,\,\prec_{C^m} \,\, \Big( 1+\frac{n}{m}\Big)^{\frac{m}{r'}}\,,
\end{align}
where $C >0$ is universal.

Moreover,  the preceding estimates are optimal for all $m$ for which   $\Lambda_T(m) \subset J_{\leq m}$, in the sense that under this assumption
$\prec_{C^m}$ may be replaced by $\sim_{C^m}$, where $C >0$ is  only depending on $r$.

\end{proposition}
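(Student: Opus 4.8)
The plan is to establish the two directions of the statement separately, both by direct computation with the characteristics $c_{\ell_r^n}(\alpha)$, using the explicit values $c_{\ell_r^n}(\alpha)=(k^k/\alpha^\alpha)^{1/r}$ for $\alpha\in\Lambda(k,n)$ from \eqref{dineen} and $c_{\ell_r^n}(\alpha)=m^{m/r}$ for tetrahedral $\alpha$ of order $m\le n$ from \eqref{dineen2}.

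For the upper bound I would first note that, since all summands in the definition of $\widehat{\boldsymbol{\lambda}}$ are non-negative and $c_{\ell_r^n}(\alpha)$ does not depend on the ambient index set, $\widehat{\boldsymbol{\lambda}}(\mathcal{P}_{J_{\le m}}(\ell_r^n))\le\widehat{\boldsymbol{\lambda}}(\mathcal{P}_{\Lambda(\le m,n)}(\ell_r^n))\le\sum_{k=0}^m T_k$, where
\[
T_k:=\sup_{z\in B_{\ell_r^n}}\ \sum_{\alpha\in\Lambda(k,n)}\Big(\frac{k^k}{\alpha^\alpha}\Big)^{1/r}|z^\alpha|,
\]
so it suffices to bound each $T_k$. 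For $r<\infty$ the substitution $u_j:=|z_j|^r$ (so $\sum_j u_j\le1$) turns each summand into $\big((k^k/\alpha^\alpha)u^\alpha\big)^{1/r}$; H\"older's inequality with exponents $r,r'$ (equivalently, concavity of $t\mapsto t^{1/r}$) then gives $T_k\le|\Lambda(k,n)|^{1/r'}\big(\sup_{\sum_j u_j\le1}\sum_{\alpha\in\Lambda(k,n)}(k^k/\alpha^\alpha)u^\alpha\big)^{1/r}$. Using $k^k\le e^k k!$ and $\alpha^\alpha\ge\alpha!$ together with the multinomial theorem, the inner supremum is $\le e^k$, while $|\Lambda(k,n)|\le e^k(1+n/k)^k$ by \eqref{cardi}; hence $T_k\le e^k(1+n/k)^{k/r'}$. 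The case $r=\infty$ is immediate, since then $c_{\ell_\infty^n}(\alpha)=1$ and $\widehat{\boldsymbol{\lambda}}(\mathcal{P}_{\Lambda(\le m,n)}(\ell_\infty^n))\le|\Lambda(\le m,n)|=\binom{n+m}{m}\le e^m(1+n/m)^m$. Finally $k\mapsto(1+n/k)^k$ is non-decreasing (a one-line consequence of $\log(1+t)\ge t/(1+t)$), so $(1+n/k)^{k/r'}\le(1+n/m)^{m/r'}$ for $0\le k\le m$, and summing over $k$ yields $\widehat{\boldsymbol{\lambda}}(\mathcal{P}_{J_{\le m}}(\ell_r^n))\le(m+1)e^m(1+n/m)^{m/r'}\le C^m(1+n/m)^{m/r'}$ with universal $C$.

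For the lower bound, assuming $\Lambda_T(m,n)\subset J_{\le m}$, I would first treat the main range $m\le n$: restricting the supremum defining $\widehat{\boldsymbol{\lambda}}$ to the subset $\Lambda_T(m,n)$ and evaluating it at the equal-weight point $z=n^{-1/r}(1,\dots,1)\in\overline{B}_{\ell_r^n}$ (legitimate by continuity), each of the $\binom{n}{m}$ tetrahedral monomials of order $m$ contributes $c_{\ell_r^n}(\alpha)|z^\alpha|=m^{m/r}n^{-m/r}$ by \eqref{dineen2}. Hence, using $\binom{n}{m}\ge(n/m)^m$,
\[
\widehat{\boldsymbol{\lambda}}\big(\mathcal{P}_{J_{\le m}}(\ell_r^n)\big)\ \ge\ \binom{n}{m}\Big(\frac{m}{n}\Big)^{m/r}\ \ge\ \Big(\frac{n}{m}\Big)^{m}\Big(\frac{m}{n}\Big)^{m/r}\ =\ \Big(\frac{n}{m}\Big)^{m/r'}\ \ge\ 2^{-m}\Big(1+\frac{n}{m}\Big)^{m/r'},
\]
the last step using $1+n/m\le2n/m$; for $r=\infty$ one argues identically with $z=(1-\varepsilon)(1,\dots,1)$, $\varepsilon\downarrow0$. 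The complementary range $m>n$ needs only a routine modification (as in the proof of Theorem~\ref{conny3}), since then $(1+n/m)^{m/r'}\le2^m$. Combining both bounds gives $\widehat{\boldsymbol{\lambda}}(\mathcal{P}_{J_{\le m}}(\ell_r^n))\sim_{C^m}(1+n/m)^{m/r'}$ under the stated hypothesis, and by Theorem~\ref{lambda-dash} the same upper estimate transfers to $\boldsymbol{\lambda}(\mathcal{P}_{J_{\le m}}(\ell_r^n))$.

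The computations are elementary once the exact characteristics are available; the only delicate point is to extract the sharp dimensional growth $(1+n/m)^{m/r'}$ rather than a cruder power of $n$. This is why in the upper bound H\"older must be applied \emph{after} the substitution $u_j=|z_j|^r$ — the naive split $|z^\alpha|=|z^\alpha|^{1/r'}|z^\alpha|^{1/r}$ loses too much — and why one needs both $\binom{n+k-1}{k}\le e^k(1+n/k)^k$ and the monotonicity of $(1+n/k)^k$ in $k$. I expect this bookkeeping, rather than any conceptual difficulty, to be the main obstacle.
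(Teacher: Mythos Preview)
Your proposal is correct and follows essentially the same argument as the paper. For the upper bound, the paper also decomposes into $k$-homogeneous pieces, replaces $(k^k/\alpha^\alpha)^{1/r}$ by $e^{k/r}(k!/\alpha!)^{1/r}$, applies H\"older with exponents $r,r'$ to get $|\Lambda(k,n)|^{1/r'}\big(\sum_\alpha (k!/\alpha!)|z^\alpha|^r\big)^{1/r}$, and then uses the multinomial formula on $\sum_j|z_j|^r$; this is exactly your computation after the substitution $u_j=|z_j|^r$, so your remark that ``the naive split $|z^\alpha|=|z^\alpha|^{1/r'}|z^\alpha|^{1/r}$ loses too much'' is a bit off --- that split is precisely what works, provided the multinomial coefficient stays with the $r$-th power part. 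The only cosmetic difference is that the paper bounds $|\Lambda(k,n)|\le|\Lambda(m,n)|$ directly (monotonicity of the binomial coefficients) rather than passing through $(1+n/k)^k$ and invoking its monotonicity in $k$. Your lower bound via the equal-weight point and $\binom{n}{m}\ge(n/m)^m$ is identical to the paper's.
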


\begin{proof}  From \eqref{dineen}, H\"older's inequality,
and the binomial formula we deduce
\begin{align}\label{eq: proof of lambda hat lr}
\begin{split}
\widehat{\boldsymbol{\lambda}}
\big(\mathcal{P}_{J_{\leq m}}(\ell_r^n)\big)
& \leq \widehat{\boldsymbol{\lambda}}\big(\mathcal{P}_{\Lambda(\leq m,n)}(\ell_r^n )\big)
\\
&
\leq
\sum_{k=0}^m \widehat{\boldsymbol{\lambda}}\big(\mathcal{P}_{\Lambda(k,n)}(\ell_r^n )\big)
\\
&
\leq \sum_{k=0}^m \sup_{z \in B_{\ell_r^n}}  \sum_{\alpha \in \Lambda(k,n)}
\bigg(\frac{k^k}{\alpha^\alpha} \bigg)^\frac{1}{r} |z^\alpha|
\\
&
\leq  \sum_{k=0}^m  e^{\frac{k}{r}}\sup_{z \in B_{\ell_r^n}} \sum_{\alpha \in\Lambda(k,n)} \bigg(  \frac{k!}{\alpha!} \bigg)^\frac{1}{r} |z^\alpha| \\
& \leq e^{\frac{m}{r}} \sum_{k=0}^m  |\Lambda(k,n)|^\frac{1}{r'}\sup_{z \in B_{\ell_r^n}} \bigg(\sum_{\alpha \in\Lambda(k,n)}   \frac{k!}{\alpha!} |z^\alpha|^r\bigg)^\frac{1}{r} \\
& \leq e^{\frac{m}{r}}  \sum_{k=0}^m|\Lambda(k,n)|^\frac{1}{r'}\sup_{z \in B_{\ell_r^n}} \bigg(\sum_{k=1}^n   |z_k|^r\bigg)^\frac{k}{r}
\leq e^{\frac{m}{r}}(m+1) |\Lambda(m,n)|^\frac{1}{r'}\,.
\end{split}
\end{align}
To prove the lower bound it suffices to check that for every $m \leq n$ with $\Lambda_T(m) \subset J_{\leq m}$
\[
\Big( 1+\frac{n}{m}\Big)^{\frac{m}{r'}} \prec_{C^m} \widehat{\boldsymbol{\lambda}}\big(\mathcal{P}_{\Lambda_T(m,n)}\big)
\,.
\]
Taking $u:=\big(n^{-\frac{1}{r}}, \ldots, n^{-\frac{1}{r}}\big) \in \mathbb{C}^n$, we get by \eqref{dineen2}
\begin{align*}
\frac{m^{\frac{m}{r}}}{n^{\frac{m}{r}}} |\Lambda_T(m,n)|  & = m^{\frac{m}{r}}  \sum_{\alpha \in \Lambda_T(m,n)} |u^\alpha|
\leq \sup_{z \in B_{\ell_r^n}} \sum_{\alpha \in \Lambda_T(m,n)} c_{X_n}(\alpha) |z^\alpha|
 = \widehat{\boldsymbol{\lambda}}\big(\mathcal{P}_{\Lambda_T(m,n)}(\ell_r^n )\big)\,.
\end{align*}
Then
\[
\Big(\frac{n}{m}\Big)^{\frac{m}{r'}} =  \frac{m^{\frac{m}{r}}}{n^{\frac{m}{r}}} \Big(\frac{n}{m}\Big)^m\leq
\frac{m^{\frac{m}{r}}}{n^{\frac{m}{r}}} 
{n \choose m}
=  \frac{m^{\frac{m}{r}}}{n^{\frac{m}{r}}} |\Lambda_T(m,n)|
\leq \widehat{\boldsymbol{\lambda}}
\big(\mathcal{P}_{\Lambda_T(m,n)} (\ell_r^n )\big)\,,
\]
and this easily yields the required estimate.
\end{proof}

For the Banach sequence lattice $X=\ell_r$ the following result complements Theorem~\ref{conny3}
in the case $1\leq r \leq 2$. For $J\Lambda(m,n)$ this result was implicitly proved in \cite{defant2011bohr}(see also \cite{defant2019libro}).

\begin{theorem} \label{start-poly2}
Let $1 \leq r \leq \infty$ and $J \subset \mathbb{N}_0^{(\mathbb{N})}$ an index set. Then
\begin{align} \label{AAA3}
\boldsymbol{\lambda}\big(\mathcal{P}_{J_{\leq m}}(\ell_r^n) \big)
\,\,\prec_{C^m} \,\,
\Big( 1+\frac{n}{m}\Big)^{m \min \big\{\frac{1}{2}, \frac{1}{r'}\big\}}
\end{align}
and
\begin{align} \label{BBB3}
\boldsymbol{\chimon}\big(\mathcal{P}_{J_{\leq m}}(\ell_r^n) \big)
\,\,\prec_{C^m} \,\,
\Big( 1+\frac{n}{m}\Big)^{(m-1) \min \big\{\frac{1}{2}, \frac{1}{r'}\big\}}\,,
\end{align}
where $C >0$ only depends on $r$.

Moreover,  the preceding estimates are optimal for all $m$ for which   $\Lambda_T(m) \subset J_{\leq m}$, in the sense that under this assumption
$\prec_{C^m}$ may be replaced by $\sim_{C^m}$, where $C >0$ is  only depending on $r$.

\end{theorem}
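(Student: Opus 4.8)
\textbf{Proof strategy for Theorem~\ref{start-poly2}.}
The plan is to split the argument into the case $1\le r\le 2$ and the case $2\le r\le\infty$, and within each case to prove the upper bounds first and then to verify optimality. For $2\le r\le\infty$ the space $\ell_r$ is $2$-convex with constant $1$, so $\min\{\frac12,\frac1{r'}\}=\frac12$ and both upper bounds in~\eqref{AAA3} and~\eqref{BBB3} are exactly the statements of Theorem~\ref{conny3}; there is nothing new to do except to note that $\ell_r$ is $2$-convex and that the optimality clause of Theorem~\ref{conny3} applies verbatim. So the real work is the range $1\le r\le 2$, where $\min\{\frac12,\frac1{r'}\}=\frac1{r'}$ since $r'\ge 2$. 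For the upper bound in~\eqref{AAA3} I would simply combine Theorem~\ref{lambda-dash}, which gives $\boldsymbol{\lambda}(\mathcal{P}_{J_{\le m}}(\ell_r^n))\le\widehat{\boldsymbol{\lambda}}(\mathcal{P}_{J_{\le m}}(\ell_r^n))$, with Proposition~\ref{start-poly1}, which bounds the right-hand side by $\prec_{C^m}(1+\tfrac nm)^{m/r'}$; this is the clean payoff of introducing the polynomial projection constant.

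For the upper bound in~\eqref{BBB3} in the range $1\le r\le 2$, I would first reduce to the homogeneous case via Proposition~\ref{degreelessm} (or Corollary~\ref{degreelessmB}), paying the harmless polynomial factor in $m$ which is absorbed into $C^m$, and then invoke Theorem~\ref{main3} in the form $\boldsymbol{\chimon}(\mathcal{P}_{k}(\ell_r^n))\le e\,2^k\,\boldsymbol{\lambda}(\mathcal{P}_{k-1}(\ell_r^n))$ (since $\mathbf{Q}_{\Lambda(k,n),\Lambda(k,n)}=\id$ and $\Lambda(k,n)^\flat=\Lambda(k-1,n)$). Feeding in the bound on $\boldsymbol{\lambda}(\mathcal{P}_{k-1}(\ell_r^n))\prec_{C^k}(1+\tfrac n{k-1})^{(k-1)/r'}$ just proved, and then using the elementary comparison $(1+\tfrac n{k-1})^{(k-1)/r'}\le 2^{(k-1)/r'}(1+\tfrac nk)^{(k-1)/r'}$ together with $k\le m$, yields~\eqref{BBB3}. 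One should double-check that $J_{\le m}$ containing $J_k$ for each $k\le m$ makes this chain legitimate, i.e.\ that $\boldsymbol{\chimon}(\mathcal{P}_{J_{\le m}}(\ell_r^n))$ is controlled by the maximum over $k\le m$ of $\boldsymbol{\chimon}(\mathcal{P}_{J_k}(\ell_r^n))\le\boldsymbol{\chimon}(\mathcal{P}_{\Lambda(k,n)}(\ell_r^n))$, which follows from Proposition~\ref{degreelessm} and Corollary~\ref{C1C2}.

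For optimality under the hypothesis $\Lambda_T(m)\subset J_{\le m}$, I would again treat the two cases together. In the range $1\le r\le 2$, the lower bound for $\boldsymbol{\chimon}$ in~\eqref{BBB3} comes from Proposition~\ref{innichen1} applied with the given $r$: since $\varphi_{\ell_r}(n)=n^{1/r}$ and $\|\id\colon\ell_r^n\to\ell_r^n\|=1$, that proposition gives $(n/m)^{(m-1)/r'}\prec_{C^m}\boldsymbol{\chimon}(\mathcal{P}_J(\ell_r^n))$ for any $J$ with $\Lambda_T(m,n)\subset J$, which after the standard comparison $(1+\tfrac nm)^{(m-1)/r'}\le 2^{(m-1)/r'}(n/m)^{(m-1)/r'}$ (valid for $m\le n$; for $m\ge n$ one uses the trivial bound as in~\eqref{obvious}) settles~\eqref{BBB3}. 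To lift this to the lower bound for $\boldsymbol{\lambda}$ in~\eqref{AAA3}, I would run the same argument as in the proof of Theorem~\ref{conny3}, \eqref{mmm}: apply Theorem~\ref{main3} with $J=\Lambda_T(m+1,n)$, whose reduced set is $\Lambda_T(m,n)\subset\Lambda_T(m)\subset J_{\le m}$, together with Theorem~\ref{OrOuSe} and Proposition~\ref{Cauchy} to get $\boldsymbol{\chimon}(\mathcal{P}_{\Lambda_T(m+1,n)}(\ell_r^n))\prec_{C^m}\boldsymbol{\lambda}(\mathcal{P}_{J_{\le m}}(\ell_r^n))$, and then bound the left side below by the just-established lower bound for $\boldsymbol{\chimon}$ with $m+1$ in place of $m$, noting $(1+\tfrac n{m+1})^{m/r'}\sim_{C^m}(1+\tfrac nm)^{m/r'}$. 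For $2\le r\le\infty$ the optimality is simply the optimality clause of Theorem~\ref{conny3}. The main obstacle I anticipate is purely bookkeeping: keeping the various $C^m$-type constants honest across the reductions (homogeneous vs.\ non-homogeneous, the shift $m\mapsto m\pm 1$, and the $(1+\tfrac nm)$ vs.\ $(n/m)$ comparisons), and making sure the index-set hypothesis $\Lambda_T(m)\subset J_{\le m}$ is used correctly at each place where a reduced set or a homogeneous part is invoked — no single step is deep, but the chain is long.
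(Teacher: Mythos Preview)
Your proposal is correct and follows essentially the same route as the paper. The only cosmetic difference is that for the upper bound in~\eqref{BBB3} (case $1\le r\le 2$) the paper packages your reduction ``Proposition~\ref{degreelessm} $+$ Theorem~\ref{main3}'' into a single citation of Corollary~\ref{main3A}, and for the lower bound in~\eqref{AAA3} the paper makes the intermediate step $\boldsymbol{\lambda}(\mathcal{P}_{\Lambda_T(m,n)})\le\kappa^m\boldsymbol{\lambda}(\mathcal{P}_{J_m})\le\kappa^m\boldsymbol{\lambda}(\mathcal{P}_{J_{\le m}})$ explicit before invoking Theorem~\ref{main3} on $\Lambda_T(m+1,n)$, exactly as you outlined.
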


\begin{proof}
Assume first that  $2 \leq r \leq \infty $. Then $\ell_r$  is $2$-convex, and the claims in \eqref{AAA3} and \eqref{BBB3} are consequences
of Theorem~\ref{conny3}. So let us  assume that $1 \leq r \leq 2$, and start with the upper bound for
\eqref{AAA3}. By Theorem~\ref{lambda-dash} and Proposition~\ref{start-poly1} we have
\[
\boldsymbol{\lambda}\big(\mathcal{P}_{J_{\leq m}}(\ell_r^n) \big)
\leq
\widehat{\boldsymbol{\lambda}}\big(\mathcal{P}_{J_{\leq m}}(\ell_r^n )\big) \,\,\prec_{C^m} \,\, \Big( 1+\frac{n}{m}\Big)^{\frac{m}{r'}}\,.
\]
 For the rest of
the proof we follow a similar strategy like for Theorem~\ref{conny3}. To prove the upper bound in \eqref{BBB3}, we conclude from
Corollary~\ref{main3A} and 
 the estimate from \eqref{AAA3}  that
\begin{align*}
\boldsymbol{\chimon}\big(\mathcal{P}_{J_{\leq m}}(\ell_r^n) \big) & \,\le \, \boldsymbol{\chimon}\big( \mathcal{P}_{\leq m}(\ell_r^n)\big)
\\& \,\prec_{C^m} \, \max_{1 \leq  k \leq m-1} \boldsymbol{\lambda}\big( \mathcal{P}_{k}(\ell_r^n)\big)
\,\prec_{C^m} \, \Big( 1+\frac{n}{m}\Big)^{(m-1) \min \big\{\frac{1}{2}, \frac{1}{r'}\big\}}\,.
\end{align*}
As concerning the proofs of the lower bounds, we begin with that
of \eqref{BBB3}. Note that for $m\leq n$ with $\Lambda_T(m) \subset J_{\leq m}$
\[
\boldsymbol{\chimon}\big(\mathcal{P}_{\Lambda_T(m,n)}(\ell_r^n) \big) \leq \boldsymbol{\chimon}\big(\mathcal{P}_{J_{\leq m}}(\ell_r^n) \big)\,,
\]
so that the conclusion in this case is an immediate consequence of  Proposition~\ref{innichen1}, whereas for
$m\ge n$ it is trivial (see again \eqref{obvious}). Finally, for the lower bound in \eqref{AAA3} observe  first that by Theorem~\ref{OrOuSe}
and Proposition~\ref{Cauchy} for $m \leq n$ with $\Lambda_T(m) \subset J_{\leq m}$
\[
\boldsymbol{\lambda}\big(\mathcal{P}_{\Lambda_T(m,n)}(\ell_r^n) \big) \leq
\kappa^m \boldsymbol{\lambda}\big(\mathcal{P}_{J_{m}}(\ell_r^n )\big) \leq
\kappa^m \boldsymbol{\lambda}\big(\mathcal{P}_{J_{\leq m}}(\ell_r^n) \big)\,,
\]
and by Theorem~\ref{OrOuSe} and Theorem~\ref{main3}
\[
\boldsymbol{\chimon}\big(\mathcal{P}_{\Lambda_T(m+1,n)}(\ell_r^n )\big)
\leq
e2^{m+1}\boldsymbol{\lambda}\big(\mathcal{P}_{\Lambda_T(m,n)}(\ell_r^n) \big)\,.
\]
Then the conclusion for $m \leq n$ with $\Lambda_T(m) \subset J_{\leq m }$ follows from the lower bound in  \eqref{BBB3}. Since this estimate for $m \ge n$
anyway is trivial, the proof completes.
\end{proof}

\section{Multiplication and interpolation}  \label{characteristic}
In order to calculate or estimate the polynomial projection constant we need to find precise formulas or at least   asymptotic estimates
for characteristics $c_\alpha(X_n)$. Actually this seems to be a quite  subtle problem -- even in the case of specific multi 
indices and Banach lattices. This section provides a~ few preliminary tools which later help to achieve this goal  (see, e.g., Section~\ref{bounds}).

We start with the following result.

\begin{lemma}
\label{cproduct}
Let $X_n := (\mathbb{C}^n, \|\cdot\|_X)$ and $Y_n:=(\mathbb{C}^n, \|\cdot\|_Y)$ be (quasi-)Banach lattices.
Then, for each $\alpha\in \mathbb{N}_0^n$, we have
\[
c_{X_n\circ Y_n}(\alpha) = c_{X_n}(\alpha)\,c_{Y_n}(\alpha)\,.
\]
\end{lemma}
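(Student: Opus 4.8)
The key observation is that the characteristic $c_{Z_n}(\alpha)$ of a multi index $\alpha$ with respect to a (quasi-)Banach lattice $Z_n=(\mathbb C^n,\|\cdot\|_Z)$ is, by definition, the reciprocal of
\[
\sup_{z\in B_{Z_n}}|z^\alpha| \,=\, \sup_{\|z\|_Z\le 1}\prod_{k=1}^n |z_k|^{\alpha_k}\,.
\]
So the claim amounts to showing that
\[
\sup_{\|z\|_{X\circ Y}\le 1}\prod_k|z_k|^{\alpha_k}
\;=\;
\Big(\sup_{\|x\|_X\le 1}\prod_k|x_k|^{\alpha_k}\Big)\Big(\sup_{\|y\|_Y\le 1}\prod_k|y_k|^{\alpha_k}\Big)\,.
\]
First I would recall the definition of the pointwise product $X_n\circ Y_n$ from Section~\ref{Banach spaces and (Quasi-)Banach lattices}: an element $z\in\mathbb C^n$ lies in $X_n\circ Y_n$ with $\|z\|_{X\circ Y}\le 1$ precisely when $z=xy$ (coordinatewise) for some $x\in X_n$, $y\in Y_n$ with $\|x\|_X\|y\|_Y\le 1$ — here since $\mathbb C^n$ is finite dimensional the infimum defining the product quasi-norm is attained. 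Passing to moduli and using that all the lattices involved are lattices (so we may freely replace $x,y,z$ by $|x|,|y|,|z|$), the problem reduces to a statement about nonnegative vectors.

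For the inequality $c_{X_n\circ Y_n}(\alpha)\ge c_{X_n}(\alpha)c_{Y_n}(\alpha)$, equivalently $\sup_{B_{X\circ Y}}|z^\alpha|\le \sup_{B_X}|x^\alpha|\cdot\sup_{B_Y}|y^\alpha|$: take any $z\in B_{X_n\circ Y_n}$ and write $z=xy$ with $\|x\|_X\|y\|_Y\le 1$; normalizing we may assume $\|x\|_X\le 1$ and $\|y\|_Y\le 1$ (replace $x$ by $x/\|x\|_X$ and $y$ by $y\|x\|_X$, which only increases nothing and keeps the product). Then $z^\alpha=x^\alpha y^\alpha$, so $|z^\alpha|\le \sup_{B_{X_n}}|x^\alpha|\cdot\sup_{B_{Y_n}}|y^\alpha|$, and taking the supremum over $z$ gives this direction. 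For the reverse inequality, choose $x^\ast\in B_{X_n}$ and $y^\ast\in B_{Y_n}$ (nonnegative, by the lattice property) attaining the respective suprema $\sup_{B_{X_n}}|x^\alpha|$ and $\sup_{B_{Y_n}}|y^\alpha|$ — these are attained by compactness of the unit balls in finite dimensions. Then $z^\ast:=x^\ast y^\ast\in X_n\circ Y_n$ has $\|z^\ast\|_{X\circ Y}\le\|x^\ast\|_X\|y^\ast\|_Y\le 1$, and $(z^\ast)^\alpha=(x^\ast)^\alpha(y^\ast)^\alpha$ realizes the product of the two suprema, hence $\sup_{B_{X_n\circ Y_n}}|z^\alpha|\ge \sup_{B_{X_n}}|x^\alpha|\cdot\sup_{B_{Y_n}}|y^\alpha|$. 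Taking reciprocals in both inequalities yields $c_{X_n\circ Y_n}(\alpha)=c_{X_n}(\alpha)c_{Y_n}(\alpha)$.

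I do not anticipate a serious obstacle here; the only points requiring a little care are (i) making sure the product quasi-norm infimum is attained (clear in finite dimensions, or one can work with arbitrarily small $\varepsilon$ and pass to the limit), and (ii) justifying the reduction to nonnegative vectors via the lattice structure — both are routine. One should also note that the degenerate case $\sup_{B_{X_n}}|x^\alpha|=0$ (i.e.\ some coordinate $z_k$ with $\alpha_k>0$ is forced to vanish on $B_{X_n}$, so $c_{X_n}(\alpha)=\infty$) is consistent: then the left-hand side is also $0$, and the identity holds in the extended sense $\infty=\infty\cdot(\text{something})$; typically one simply restricts attention to indices for which all characteristics are finite, which is the relevant situation for the applications in Section~\ref{bounds}.
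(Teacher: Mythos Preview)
Your proof is correct and follows essentially the same approach as the paper: both directions are obtained by factoring $z=xy$ and using $(xy)^\alpha=x^\alpha y^\alpha$, with compactness to attain the suprema on the $X_n$ and $Y_n$ sides. The only cosmetic difference is that the paper does not claim the infimum in the product quasi-norm is attained and instead runs the $\varepsilon$-argument you mention as an alternative, obtaining $|z^\alpha|\le (1+\varepsilon)^{|\alpha|}c_{X_n}(\alpha)^{-1}c_{Y_n}(\alpha)^{-1}$ and letting $\varepsilon\to 0$.
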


\begin{proof}
Fix $\alpha \in \mathbb{N}_0^n$. By a compactness argument, there exists $z\in \overline{B}_{X_n\circ Y_n}$ such that
\[
c_{X_n\circ Y_n}(\alpha)^{-1} = |z^\alpha|\,.
\]
Given $\varepsilon >0$, we can find $x\in X$ and $y\in Y$ such that $z = x\cdot y$ with $\|x\|_{X_n}\,\|y\|_{Y_n}
< (1 + \varepsilon)$. Thus, there exists $u\in \overline{B}_{X_n}$ and $v\in \overline{B}_{Y_n}$ ($u=x/\|x\|_{X_n}$
and $v= y/\|y\|_{Y_n}$ work) such that $|z| \leq (1 + \varepsilon) |u|\,|v|$. This yields
\[
|z^\alpha| \leq (1 + \varepsilon)^{|\alpha|}\,|u^\alpha|\,|v^\alpha| \leq (1+ \varepsilon)^{|\alpha|}\,
\frac{1}{c_{X_n}(\alpha)}\,\frac{1}{c_{Y_n}(\alpha)}\,.
\]
Since $\varepsilon>0$ is arbitrary, we get
\[
c_{X_n}(\alpha)\,c_{Y_n}(\alpha) \leq c_{X_n\circ Y_n}(\alpha)\,.
\]
To show the reverse inequality, we choose $x\in \overline{B}_{X_n}$ and $y\in \overline{B}_{Y_n}$ such that
$$\text{$c_{X_n}(\alpha)^{-1} = |x^\alpha|$ and $c_{Y_n}(\alpha)^{-1} = |y^\alpha|$\,.}$$
 Clearly, $z:= x\cdot y\in X_n\circ Y_n$
with $z\in \overline{B}_{X_n\circ Y_n}$, so
\[
c_{X_n\circ Y_n}(\alpha)^{-1} \geq |z^\alpha| = |x^\alpha|\,|y^\alpha| = c_{X_n}(\alpha)^{-1}\,c_{Y_n}(\alpha)^{-1}
\]
as required.
\end{proof}

Further we establish  the formula for the characteristics of multi indices with respect to (quasi-)Banach lattices 
generated by the famous Calder\'on product.

Consider a~couple  $(X^0, X^1)$ of (quasi-)Banach function lattices over a~$\sigma$-finite complete measure space 
$(\Omega, \mathcal{A}, \mu)$ and let $\theta \in (0, 1)$. Following \cite{calderon1964intermediate} (in the setting of Banach 
lattices), we define
\[
X(\theta):=(X^0)^{1-\theta} (X^1)^{\theta}
\]
to be the space of all $x\in L^0(\mu)$ such that
\begin{align*}
|x| \leq \gamma \,|x_0|^{1-\theta}|x_1|^{\theta} \quad \text{$\mu$-a.e.}\,,
\end{align*}
for some $\gamma >0$ and $x_j\in \bar{B}_{X_j}$, $j\in \{0, 1\}$. The space $X(\theta)$ is equipped with the (quasi-)norm
$\|x\|_{X(\theta)} := \inf \,\gamma$, where the  infimum is taken over all such representations. Note that $X(\theta)$ is
a~(quasi-)Banach lattice (resp. Banach function lattice if $(X^0, X^1)$ is a~Banach couple) over $(\Omega, \mathcal{A}, \mu)$.

It is easy to check that for any (quasi-)Banach lattice $X$ over a~measure space $(\Omega, \mathcal{A}, \mu)$
and for all $\theta \in (0, 1)$, we have with $p=1/(1-\theta)$
\[
X^{1-\theta}L_{\infty}^{\theta} \equiv X^{(p)}\,,
\]
where
\[
X^{(p)}:= \big\{x\in L^0(\mu): \, |x|^p \in X \big\}
\]
is the~$p$-convexification of $X$ equipped with the (quasi-)norm
\[
\|x\|_{X^{(p)}}:= \big\||x|^{p}\big\|_{X}^{1/p}\,.
\]
Finally, we note that if $(X^0, X^1)$ is a~couple of complex Banach lattices with $X^0$ or $X^1$ order continuous,
then the isometric formula
\[
[X^0, X^1]_{\theta} \equiv (X^0)^{1-\theta} (X^1)^{\theta}
\]
holds, where $[\,\cdot\,]_{\theta}$ is the lower complex method of interpolation defined by Calder\'on
(see \cite{calderon1964intermediate}).

\smallskip

The following interpolation formula for the characteristics of multi indices with respect to Calder\'on
products of (quasi-) Banach lattices is going to be a crucial tool.

\smallskip
\begin{lemma}
\label{theta}
For $j\in \{0,1\}$ let $X^j_n := (\mathbb{C}^n, \|\cdot\|_{X^j})$ be a~couple of $($quasi$)$-Banach
lattices and $\alpha \in \mathbb{N}_0^n$. Then, for every $\theta \in (0, 1)$, we have
\[
c_{X_n(\theta)}(\alpha) = c_{X^0_n}(\alpha)^{1-\theta} c_{X^1_n}(\alpha)^{\theta}\,.
\]
\end{lemma}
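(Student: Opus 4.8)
The plan is to compute $c_{X_n(\theta)}(\alpha)$ directly from the definition $c_{X_n(\theta)}(\alpha)^{-1} = \sup_{z \in \overline B_{X_n(\theta)}} |z^\alpha|$, by relating the unit ball of the Calder\'on product $X_n(\theta) = (X^0_n)^{1-\theta}(X^1_n)^\theta$ to those of $X^0_n$ and $X^1_n$. The argument runs exactly parallel to the proof of Lemma~\ref{cproduct}: it is a two-sided estimate, and each side uses a compactness argument to pick an extremal vector together with the (near-)factorization available in the Calder\'on product.

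First I would prove $c_{X^0_n}(\alpha)^{1-\theta} c_{X^1_n}(\alpha)^{\theta} \leq c_{X_n(\theta)}(\alpha)$. Pick, by compactness, $z \in \overline B_{X_n(\theta)}$ with $c_{X_n(\theta)}(\alpha)^{-1} = |z^\alpha|$. Given $\varepsilon > 0$, the definition of the Calder\'on product provides $x_0 \in \overline B_{X^0_n}$ and $x_1 \in \overline B_{X^1_n}$ with $|z| \leq (1+\varepsilon)|x_0|^{1-\theta}|x_1|^\theta$ pointwise. Raising to the multi-exponent $\alpha$ gives $|z^\alpha| \leq (1+\varepsilon)^{|\alpha|} |x_0^\alpha|^{1-\theta}|x_1^\alpha|^\theta \leq (1+\varepsilon)^{|\alpha|} c_{X^0_n}(\alpha)^{-(1-\theta)} c_{X^1_n}(\alpha)^{-\theta}$, using $|x_j^\alpha| \leq c_{X^j_n}(\alpha)^{-1}$. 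Letting $\varepsilon \to 0$ yields the inequality. For the reverse inequality $c_{X_n(\theta)}(\alpha) \leq c_{X^0_n}(\alpha)^{1-\theta} c_{X^1_n}(\alpha)^{\theta}$, choose $x_j \in \overline B_{X^j_n}$ realizing $c_{X^j_n}(\alpha)^{-1} = |x_j^\alpha|$ for $j = 0,1$, and set $z := |x_0|^{1-\theta}|x_1|^\theta$. Then $z \in \overline B_{X_n(\theta)}$ by definition (with $\gamma = 1$), and $|z^\alpha| = |x_0^\alpha|^{1-\theta}|x_1^\alpha|^\theta = c_{X^0_n}(\alpha)^{-(1-\theta)} c_{X^1_n}(\alpha)^{-\theta}$, so $c_{X_n(\theta)}(\alpha)^{-1} \geq c_{X^0_n}(\alpha)^{-(1-\theta)} c_{X^1_n}(\alpha)^{-\theta}$, which is what we want.

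The only point requiring a little care is the pointwise inequality $|z^\alpha| \leq |x_0^\alpha|^{1-\theta}|x_1^\alpha|^\theta$ when $|z| \leq |x_0|^{1-\theta}|x_1|^\theta$ pointwise: this follows because $z^\alpha = \prod_k z_k^{\alpha_k}$, so $|z^\alpha| = \prod_k |z_k|^{\alpha_k} \leq \prod_k (|x_{0,k}|^{1-\theta}|x_{1,k}|^\theta)^{\alpha_k} = \bigl(\prod_k |x_{0,k}|^{\alpha_k}\bigr)^{1-\theta}\bigl(\prod_k |x_{1,k}|^{\alpha_k}\bigr)^\theta$. I do not anticipate a genuine obstacle here; the main subtlety is simply making sure the compactness argument is legitimate (the sup defining $c_{X_n(\theta)}(\alpha)$ is attained on the closed unit ball of a finite-dimensional space, and a near-optimal factorization with $\gamma$ close to $\|z\|_{X_n(\theta)} \leq 1$ is available by definition of the quasi-norm). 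Note this lemma immediately specializes: taking $X^1_n = \ell_\infty^n$ gives the characteristics of $p$-convexifications, and it is the engine behind the concrete computations of $c_{\ell_r^n}(\alpha)$ and the Lorentz-space estimates in the later sections.
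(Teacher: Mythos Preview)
Your proof is correct and is exactly the approach the paper takes: it states the norm formula $\|x\|_{X_n(\theta)} = \inf\{\|x_0\|_{X^0_n}^{1-\theta}\|x_1\|_{X^1_n}^{\theta} : |x| = |x_0|^{1-\theta}|x_1|^{\theta}\}$ and then says the result follows with arguments very similar to those of Lemma~\ref{cproduct}, omitting the details you have filled in.
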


\begin{proof}
It can be easily verified that the norm in $X_n(\theta)$ is given by the formula
\[
\|x\|_{X_n(\theta)} = \inf\big\{\|x_0\|_{X^0_n}^{1-\theta} \|x_1\|_{X^1_n}^{\theta}: \,\,
|x| = |x_0|^{1-\theta}|x_1|^{\theta}, \,\, x_0\in X^0_n,\,\, x_1 \in X^1_n\big\}\,.
\]
Then the result is proved with arguments very similar to those of Lemma \ref{cproduct}, so
we omit details.
\end{proof}

\smallskip

As an almost immediate consequence we get an  estimate for the polynomial projection constant
of spaces of polynomials defined on  Calder\'on products of (quasi-)Banach lattices.

\smallskip

\begin{theorem}
\label{lambdatheta}
For $j\in \{0,1\}$ let $X^j_n := (\mathbb{C}^n, \|\cdot\|_{X^j_n})$ be a couple of (quasi-)Banach lattices. Then for
every $\theta \in (0, 1)$ and every finite index set  $J\subset \mathbb{N}_0^n$
\[
\widehat{\boldsymbol{\lambda}}\big(\mathcal{P}_J(X_n(\theta)\big) \leq  \widehat{\boldsymbol{\lambda}}\big(\mathcal{P}_J(X^0_n)\big)^{1-\theta}
\,\widehat{\boldsymbol{\lambda}}\big(\mathcal{P}_J(X_n^1)\big)^{\theta}\,.
\]
In particular,
\[
\boldsymbol{\lambda}\big(\mathcal{P}_J(X_n(\theta)\big) \leq \widehat{\boldsymbol{\lambda}}\big(\mathcal{P}_J(X^0_n)\big)^{1-\theta}\,\widehat{\boldsymbol{\lambda}}\big(\mathcal{P}_J(X_n^1)\big)^{\theta}\,.
\]
\end{theorem}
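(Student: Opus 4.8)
The plan is to reduce everything to the interpolation identity for characteristics from Lemma~\ref{theta}, an application of H\"older's inequality, and finally the basic bound of Theorem~\ref{lambda-dash}. First I would fix $z \in B_{X_n(\theta)}$ and $\varepsilon > 0$. By the description of the norm of $X_n(\theta)$ recalled in the proof of Lemma~\ref{theta}, one may choose $x_0 \in \overline{B}_{X^0_n}$ and $x_1 \in \overline{B}_{X^1_n}$ with $|z| \le (1+\varepsilon)\,|x_0|^{1-\theta}\,|x_1|^{\theta}$ pointwise; here a routine rescaling $x_0 \mapsto c\,x_0$, $x_1 \mapsto c^{-(1-\theta)/\theta}x_1$ (which preserves the Calder\'on representation) lets us assume both factors lie in the respective closed unit balls, precisely because $\|x_0\|_{X^0_n}^{1-\theta}\|x_1\|_{X^1_n}^{\theta}$ can be taken $\le 1$. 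Consequently $|z^\alpha| \le (1+\varepsilon)^{|\alpha|}\,|x_0^\alpha|^{1-\theta}\,|x_1^\alpha|^{\theta}$ for each $\alpha \in J$, while Lemma~\ref{theta} gives $c_{X_n(\theta)}(\alpha) = c_{X^0_n}(\alpha)^{1-\theta}\,c_{X^1_n}(\alpha)^{\theta}$.

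Next, set $d := \max_{\alpha \in J}|\alpha|$ (finite, since $J$ is finite) and apply H\"older's inequality with exponents $\tfrac{1}{1-\theta}$ and $\tfrac{1}{\theta}$ to the finite sum defining $\widehat{\boldsymbol{\lambda}}(\mathcal{P}_J(X_n(\theta)))$ in \eqref{lambda-dash-def}:
\[
\sum_{\alpha \in J} c_{X_n(\theta)}(\alpha)\,|z^\alpha|
\,\le\, (1+\varepsilon)^{d}\sum_{\alpha \in J}\big(c_{X^0_n}(\alpha)\,|x_0^\alpha|\big)^{1-\theta}\big(c_{X^1_n}(\alpha)\,|x_1^\alpha|\big)^{\theta}
\,\le\, (1+\varepsilon)^{d}\Big(\sum_{\alpha \in J} c_{X^0_n}(\alpha)\,|x_0^\alpha|\Big)^{1-\theta}\Big(\sum_{\alpha \in J} c_{X^1_n}(\alpha)\,|x_1^\alpha|\Big)^{\theta}.
\]
Since $x_0 \in \overline{B}_{X^0_n}$ and the supremum defining $\widehat{\boldsymbol{\lambda}}(\mathcal{P}_J(X^0_n))$ over the open unit ball coincides with the one over the closed unit ball (the functions $z \mapsto |z^\alpha|$ being subharmonic, so that no interior maximum occurs), we obtain $\sum_{\alpha \in J} c_{X^0_n}(\alpha)|x_0^\alpha| \le \widehat{\boldsymbol{\lambda}}(\mathcal{P}_J(X^0_n))$, and likewise $\sum_{\alpha \in J} c_{X^1_n}(\alpha)|x_1^\alpha| \le \widehat{\boldsymbol{\lambda}}(\mathcal{P}_J(X^1_n))$. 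Letting $\varepsilon \to 0$ and taking the supremum over $z \in B_{X_n(\theta)}$ proves the first inequality. The ``in particular'' statement then follows at once from Theorem~\ref{lambda-dash} applied to the (quasi-)Banach lattice $X_n(\theta)$.

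The computation above is essentially mechanical; the only delicate points are of a foundational nature in the quasi-Banach case, namely that $X_n(\theta)$ is indeed a (quasi-)Banach lattice with the stated infimum representation of its norm (granted in the discussion preceding Lemma~\ref{theta}) and that the rescaling step places both Calder\'on factors in their closed unit balls simultaneously. I expect no further obstacle, since the finiteness of $J$ renders the factor $(1+\varepsilon)^{d}$ harmless.
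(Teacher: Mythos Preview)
Your proof is correct and follows essentially the same route as the paper: invoke Lemma~\ref{theta} for the characteristics, factor a point of $B_{X_n(\theta)}$ via the Calder\'on representation, and split the sum by H\"older's inequality (the paper phrases this last step as ``concavity of $(s,t)\mapsto s^{1-\theta}t^{\theta}$'', which is the same inequality). Your $(1+\varepsilon)^d$ device and rescaling step are a careful version of what the paper does in one line by taking $z$ strictly inside the ball; the closed/open unit ball issue you raise is handled simply by continuity of the finite sum, so the subharmonicity remark is unnecessary but harmless.
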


\begin{proof}
From Lemma \ref{theta} we know that
\[
c_{X_n(\theta)}(\alpha) = c_{X_n^0}(\alpha)^{1-\theta} c_{X_n^1}(\alpha)^{\theta}\,.
\]
Fix $z\in B_{X_n(\theta)}$. Then, we can find  $u\in B_{X_n^0}$ and $v\in B_{X_n^1}$ such that
\[
|z|\leq |u|^{1-\theta} |v|^{\theta}\,.
\]
Combining with the concavity of the function $\mathbb{R}^{2}_{+} \ni (s, t)\mapsto s^{1-\theta} t^{\theta}$ yields 
\begin{align*}
\sum_{\alpha\in J} c_{X_n(\theta)}(\alpha)|z^\alpha| & =
\sum_{\alpha\in J}  (c_{X_n^0}(\alpha) |u^\alpha|)^{1-\theta}\,(c_{X_n^1}(\alpha) |v^{\alpha}|)^{\theta} \\
& \leq \bigg(\sum_{\alpha\in J} c_{X_n^0}(\alpha) |u^\alpha|\bigg)^{1-\theta}
\bigg(\sum_{\alpha\in J} c_{X_n^1}(\alpha) |v^\alpha|\bigg)^{\theta} \\
& \leq \widehat{\boldsymbol{\lambda}}\big(\mathcal{P}_J(X^0)\big)^{1-\theta}\,
\widehat{\boldsymbol{\lambda}}\big(\mathcal{P}_J(X^1)\big)^{\theta}\,.
\end{align*}
Since $z\in B_{X_n(\theta)}$ is arbitrary, the first estimate follows. The second estimate follows from Theorem~\ref{lambda-dash}.
\end{proof}

\smallskip

\section{Characteristics via Lozanovskii's theorem}\label{Characteristics via Lozanovskii's theorem}
In the following we reprove \eqref{dineen} within a much more general framework. We in particular show that
$\sup_{z \in B_{\ell_r^n}} \vert z^{\alpha} \vert$ is only attained  at the vector (up to signs)
\[
\bigg( \Big(\frac{\alpha_1}{m}\Big)^{1/r}, \dots, \Big(\frac{\alpha_n}{m}\Big)^{1/r}\bigg)\in S_{\ell_r^n} \,,
\]
for which then the formula from  \eqref{dineen} easily follows. Moreover, note that we as an immediate consequence of
\eqref{dineen} get a~sort of duality theorem for characteristics of multi indices  $\alpha \in \mathbb{N}_0^{(\mathbb{N})}$
with respect to  $\ell_r^n$:
\begin{equation*}
c_{\ell_{r}^n}(\alpha) c_{\ell_{r'}^n}(\alpha) = \frac{m^m}{\alpha^\alpha}\,, \,\,\,\,\,\,\,1 \leq r \leq \infty\,.
\end{equation*}
In fact the following result shows that this  is not at a  lucky incidence which only holds in $\ell_r^n$'s.

\smallskip

\begin{theorem}
\label{cE}
The following statements are true for any Banach lattice $X_n=(\mathbb{C}^n, \|\cdot\|_{X_n})$.
\begin{itemize}
\item[{\mbox(i)}] For each $\alpha \in \Lambda(m,n)$ one has
\[
c_{X_n}(\alpha)\,c_{X_{n}'}(\alpha) = \frac{m^m}{\alpha^{\alpha}}\,.
\]
\item[{\mbox(ii)}] For each $\alpha\in \Lambda(m, n)$ there exist $0 \leq u=(u_1,\ldots, u_n)\in X_n$
and \,$0\leq v = (v_1, \ldots, v_n) \in X_{n}'$ with
\begin{align}\label{eq:loza}
\|u\|_{X_n}~=~\|v\|_{X_{n}'}=1 \quad\textrm{and}\quad u_jv_j = \frac{\alpha_j}{m}, \quad\, j\in \{1, \ldots, n\}
\end{align}
and
\[
c_{X_n}(\alpha) = \frac{1}{u^\alpha} = \frac{m^m}{\alpha^{\alpha}} v^\alpha\ ,
\quad\,\, c_{X_{n}'}(\alpha) = \frac{1}{v^{\alpha}} = \frac{m^m}{\alpha^{\alpha}} u^\alpha\,.
\]
Moreover, for every $u,v$ satisfying \eqref{eq:loza}, we have that
\[
c_{X_n}(\alpha) = \frac{1}{u^\alpha} \quad\textrm{and}\quad c_{X_{n}'}(\alpha) = \frac{1}{v^{\alpha}}\,.
\]
\item[{\mbox(iii)}] If $X_n$ $($resp. $X_{n}')$ is a strictly convex Banach space, then $u$ $($resp. $v)$ in (ii) is  unique.
\end{itemize}
\end{theorem}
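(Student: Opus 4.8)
The key idea is that the characteristic $c_{X_n}(\alpha)$ is computed via an optimization problem on the unit ball of $X_n$, and the optimality conditions for this problem are precisely what Lozanovskii's factorization theorem provides. First I would reduce to the case where all coordinates $\alpha_j$ are strictly positive; indices $j$ with $\alpha_j = 0$ contribute a factor $1$ to $z^\alpha$ and can be handled by a straightforward projection/restriction argument, so without loss of generality $\alpha_j \geq 1$ for all $j \leq n$ (and we may shrink $n$ accordingly).

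For part (ii), the plan is to start from the function $f = \sum_{j=1}^n \frac{\alpha_j}{m} e_j \in L_1(\mu)$ (here $\mu$ is the counting measure on $\{1,\ldots,n\}$), which is positive and has $\|f\|_{L_1} = 1$. Lozanovskii's Theorem~\ref{Lozanovskii} then yields positive $u \in X_n$ and $v \in X_n'$ with $f = u v$, i.e. $u_j v_j = \alpha_j/m$ for each $j$, and $\|u\|_{X_n}\|v\|_{X_n'} = \|f\|_{L_1} = 1$; after normalizing we may assume $\|u\|_{X_n} = \|v\|_{X_n'} = 1$. The crucial step is then to verify that such a $u$ realizes the supremum defining $c_{X_n}(\alpha)$, i.e. that $\sup_{z \in B_{X_n}} |z^\alpha| = u^\alpha$. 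For the inequality $\geq$ this is immediate since $u \in B_{X_n}$. For the reverse inequality, given any $z \in B_{X_n}$, I would estimate $|z^\alpha| = \prod_j |z_j|^{\alpha_j}$ using the weighted AM--GM inequality with weights $\alpha_j/m = u_j v_j$: namely $\prod_j |z_j|^{\alpha_j} = \prod_j (|z_j| v_j)^{\alpha_j} \prod_j v_j^{-\alpha_j}$, and $\prod_j (|z_j| v_j)^{\alpha_j/m} \leq \frac{1}{m}\sum_j \alpha_j \cdot \frac{|z_j| v_j}{\alpha_j/m} = \sum_j |z_j| v_j = \langle |z|, v\rangle \leq \|z\|_{X_n}\|v\|_{X_n'} \leq 1$ by Hölder's inequality for Köthe duality. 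Hence $|z^\alpha| \leq \prod_j v_j^{-\alpha_j} = v^{-\alpha}$; applying this with $z = u$ shows $u^\alpha = v^{-\alpha}$ (equality forces AM--GM to be tight, which holds since $|u_j| v_j = \alpha_j/m$ is constant in the relevant sense). This gives $c_{X_n}(\alpha) = 1/u^\alpha = v^\alpha \cdot (u^\alpha v^\alpha)^{-1} = v^\alpha \cdot \prod_j (u_j v_j)^{-\alpha_j} = v^\alpha \cdot \frac{m^m}{\alpha^\alpha}$, and symmetrically $c_{X_n'}(\alpha) = 1/v^\alpha = \frac{m^m}{\alpha^\alpha} u^\alpha$. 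The ``moreover'' clause follows because the above argument shows that \emph{any} pair $u,v$ satisfying \eqref{eq:loza} achieves the supremum: the AM--GM estimate only used $u_j v_j = \alpha_j/m$ and the norm conditions.

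Part (i) is then an immediate corollary: multiplying the two formulas from (ii),
\[
c_{X_n}(\alpha)\, c_{X_n'}(\alpha) = \frac{1}{u^\alpha}\cdot\frac{1}{v^\alpha} = \prod_{j=1}^n \frac{1}{(u_j v_j)^{\alpha_j}} = \prod_{j=1}^n \Big(\frac{m}{\alpha_j}\Big)^{\alpha_j} = \frac{m^m}{\alpha^\alpha}\,.
\]
For part (iii), suppose $X_n$ is strictly convex and let $u, u'$ both satisfy the conditions of (ii) with the same companion structure. Since $u^\alpha = (u')^\alpha = c_{X_n}(\alpha)^{-1}$ is the maximum of $|z^\alpha|$ over $B_{X_n}$, and the map $z \mapsto |z^\alpha|$ (composed with passing to $\frac{u+u'}{2}$ versus the geometric-mean point) together with strict convexity of the norm forces $u = u'$: indeed $\frac{u+u'}{2} \in B_{X_n}$ and by concavity of $\log$ (strict unless $u_j = u_j'$ for all $j$), $\log\big((\tfrac{u+u'}{2})^\alpha\big) \geq \tfrac12\log u^\alpha + \tfrac12 \log (u')^\alpha = \log c_{X_n}(\alpha)^{-1}$, so $\frac{u+u'}{2}$ also attains the maximum; but then $\|\frac{u+u'}{2}\|_{X_n} = 1 = \|u\|_{X_n} = \|u'\|_{X_n}$, and strict convexity gives $u = u'$ unless the log-concavity was already an equality, which itself gives $u = u'$.

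\textbf{Main obstacle.} The delicate point is making the AM--GM/Hölder chain airtight, in particular the equality analysis showing $u^\alpha = v^{-\alpha}$ (equivalently that the $u$ produced by Lozanovskii is genuinely extremal rather than merely feasible) and the clean handling of zero coordinates and of the Köthe-dual pairing when $X_n$ is only a complexification of a real lattice. Everything else is bookkeeping built directly on Theorem~\ref{Lozanovskii}.
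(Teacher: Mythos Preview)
Your overall strategy is sound, but the AM--GM step as written contains an algebra slip that yields a false intermediate claim. You assert $\prod_j (|z_j| v_j)^{\alpha_j/m} \leq \sum_j |z_j| v_j$ and conclude $|z^\alpha| \leq v^{-\alpha}$, hence $u^\alpha = v^{-\alpha}$. This is wrong: weighted AM--GM with weights $\alpha_j/m$ applied to the numbers $|z_j| v_j$ gives only $\prod_j (|z_j| v_j)^{\alpha_j/m} \leq \sum_j \tfrac{\alpha_j}{m}\,|z_j| v_j$, which is not the H\"older pairing. The clean way to run your idea is to apply AM--GM to $a_j := |z_j|/u_j$ (using $u_j > 0$ after your reduction to $\alpha_j \geq 1$, since $u_j v_j = \alpha_j/m$):
\[
\Big(\frac{|z^\alpha|}{u^\alpha}\Big)^{1/m} = \prod_j a_j^{\alpha_j/m} \,\le\, \sum_j \frac{\alpha_j}{m}\,\frac{|z_j|}{u_j} \,=\, \sum_j |z_j|\,v_j \,\le\, \|z\|_{X_n}\,\|v\|_{X_n'} \,\le\, 1\,,
\]
so $|z^\alpha| \leq u^\alpha$ directly, with equality at $z = u$. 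The correct relation between $u$ and $v$ is $u^\alpha v^\alpha = \prod_j (\alpha_j/m)^{\alpha_j} = \alpha^\alpha/m^m$, from which your formulas for $c_{X_n}(\alpha)$, $c_{X_n'}(\alpha)$ and part~(i) follow as you indicate.

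Once corrected, your route differs from the paper's in a mildly interesting way. The paper proves~(i) first, by combining Lozanovskii's identity $X_n \circ X_n' \equiv \ell_1^n$ with the product formula $c_{X_n \circ Y_n}(\alpha) = c_{X_n}(\alpha)\,c_{Y_n}(\alpha)$ (Lemma~\ref{cproduct}) and the known value $c_{\ell_1^n}(\alpha) = m^m/\alpha^\alpha$; then~(ii) follows from~(i) by the squeeze $c_{X_n}(\alpha)\,c_{X_n'}(\alpha) \leq \tfrac{1}{u^\alpha}\cdot\tfrac{1}{v^\alpha} = m^m/\alpha^\alpha = c_{X_n}(\alpha)\,c_{X_n'}(\alpha)$. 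Your approach is more self-contained --- it does not need Lemma~\ref{cproduct} or the $\ell_1$ formula as separate inputs --- while the paper's is shorter once those tools are available. For~(iii), the paper simply invokes that strict convexity forces uniqueness of the positive Lozanovskii factor; your midpoint/log-concavity variant also works (the maximizer must have unit norm, else rescaling would contradict maximality, and then strict convexity applies).
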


We note that the proof is heavily based on Lozanovskii's factorization Theorem~\ref{Lozanovskii}.

\begin{proof}
(i) Clearly, $X_n$ has the Fatou property by the fact that it is finite dimensional. Lozanovskii's factorization
Theorem~\ref{Lozanovskii} combined with \eqref{dineen} yields
\[
c_{X_n}(\alpha)\,c_{X_{n}'}(\alpha) = c_{\ell_1^n}(\alpha) = \frac{m^m}{\alpha^\alpha}\,.
\]

(ii) Again by  Lozanovskii's factorization Theorem~\ref{Lozanovskii},  we deduce that, given  $\alpha\in \Lambda(m, n)$,
there exist vectors $0 \leq u=(u_1,\ldots, u_n)\in X_n$ and $0\leq v = (v_1, \ldots, v_n) \in X_{n}'$ with
\[
\|u\|_{X_n} = \|v\|_{X_{n}'}=1 \quad\textrm{and}\quad u_jv_j = \frac{\alpha_j}{m}, \quad\, j\in \{1, \ldots, n\}\,.
\]
This implies by (i) that
\[
\frac{m^m}{\alpha^\alpha} = c_{X_n}(\alpha)\,c_{X_{n}'}(\alpha)
\leq \frac{1}{u^\alpha}\,\frac{1}{v^\alpha} = \frac{m^m}{\alpha^\alpha}\,,
\]
and hence
\[
\frac{1}{u^\alpha} = v^\alpha c_{X_n}(\alpha)\,c_{X_{n}'}(\alpha)
\ge
v^\alpha c_{X_n}(\alpha)\,\frac{1}{v^\alpha} = c_{X_n}(\alpha) \ge \frac{1}{u^\alpha}\,.
\]

(iii) It is easy to verify that by the strict convexity of $X_n$ (resp. $X_{n}')$, for every positive $x\in S_{\ell_1^n}$ there is a~unique positive vector $y \in S_{X_n}$ (resp. $z \in S_{X_{n}'}$) such that $|x| = y\,z$.
\end{proof}

Let us once again come back to the case $X_n = \ell^n_r$, \,$1<r<\infty$. The following remark gives more information on the
equality from \eqref{dineen}.

\smallskip

\begin{remark}\label{caso ell_r} Let $\alpha \in \Lambda(m,n)$ and $1<r<\infty$. Then for
\[
\text{$u =\bigg( \Big(\frac{\alpha_1}{m}\Big)^{1/r}, \dots, \Big(\frac{\alpha_n}{m}\Big)^{1/r}\bigg) \in S_{\ell_r^n}$ \,\,\,and\,\,\,
$v =  \bigg( \Big(\frac{\alpha_1}{m}\Big)^{1/r'}, \dots, \Big(\frac{\alpha_n}{m}\Big)^{1/r}\bigg) \in S_{\ell_{r'}^n}$}
\]
we have $u_jv_j = \frac{\alpha_j}{m}, \,\, j \in \{1, \ldots, m\}$\,. Since $\ell_r^n$ is strictly convex, it follows from
Proposition~\ref{cE} (iii) that $u$ is the~unique element in $S_{\ell_r^n}$
for which
$
c_{\ell_r^n}(\alpha) = \frac{1}{|u^\alpha|} = \frac{m^m}{\alpha^\alpha}\,.
$
\end{remark}

\smallskip

We now list a number of immediate consequences of the above results. Since for $X_n:= \ell_1^n$ we have $\ell_r^n =X_n^{(r)}$, the following corollary extends Proposition~\ref{start-poly1}.

\smallskip

\begin{corollary}
\label{pconv}
Let   $1< r <\infty$,  $J\subset \mathbb{N}_0^n$  be a finite index set and 
 $X_n := (\mathbb{C}^n, \|\cdot\|)$ be a Banach lattice. Then
\[
\widehat{\boldsymbol{\lambda}}\big(\mathcal{P}_J\big) \leq \widehat{\boldsymbol{\lambda}}\big(\mathcal{P}_J(X_n)\big)^{\frac{1}{r}}\,|J|^{\frac{1}{r'}}\,.
\]
\end{corollary}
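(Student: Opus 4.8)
The plan is to exhibit the $r$-convexification $X_n^{(r)}$ as the Calder\'on product of $X_n$ with $\ell_\infty^n$ and then invoke the interpolation estimate for polynomial projection constants from Theorem~\ref{lambdatheta}. Set $\theta := 1/r'$, so that $1-\theta = 1/r$ and $p := 1/(1-\theta) = r$. Using the identity $X^{1-\theta}L_\infty^\theta \equiv X^{(p)}$ recorded above (valid for every quasi-Banach lattice $X$ and every $\theta\in(0,1)$), one has $X_n^{(r)} = X_n^{1-\theta}(\ell_\infty^n)^\theta = X_n(\theta)$ formed from the couple $(X_n^0,X_n^1) = (X_n,\ell_\infty^n)$; in finite dimensions there are no order-continuity obstructions to this identification. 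Here the left-hand side $\widehat{\boldsymbol{\lambda}}(\mathcal{P}_J)$ of the corollary is understood as $\widehat{\boldsymbol{\lambda}}(\mathcal{P}_J(X_n^{(r)}))$.

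First I would apply Theorem~\ref{lambdatheta} to the couple $(X_n,\ell_\infty^n)$ with this $\theta$, which yields
\[
\widehat{\boldsymbol{\lambda}}\big(\mathcal{P}_J(X_n^{(r)})\big)
\,\leq\,
\widehat{\boldsymbol{\lambda}}\big(\mathcal{P}_J(X_n)\big)^{1/r}\,
\widehat{\boldsymbol{\lambda}}\big(\mathcal{P}_J(\ell_\infty^n)\big)^{1/r'}.
\]
Next I would compute the $\ell_\infty^n$-factor. Since $\sup_{z\in \overline{B}_{\ell_\infty^n}}|z^\alpha| = 1$ for every $\alpha\in\mathbb{N}_0^n$ (attained at $z=(1,\dots,1)$), every characteristic with respect to $\ell_\infty^n$ equals $1$, so from the definition \eqref{lambda-dash-def}
\[
\widehat{\boldsymbol{\lambda}}\big(\mathcal{P}_J(\ell_\infty^n)\big)
= \sup_{z\in B_{\ell_\infty^n}}\sum_{\alpha\in J}|z^\alpha|
= |J|\,,
\]
the upper bound $\le |J|$ being immediate (each summand is $\le 1$) and the value $|J|$ being approached along $z\to(1,\dots,1)$. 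Substituting this into the preceding display gives exactly the asserted inequality.

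I do not expect a genuine obstacle here: the entire content sits inside Theorem~\ref{lambdatheta} (which itself rests on the multiplicativity of characteristics under Calder\'on products, Lemma~\ref{theta}), combined with the bookkeeping $X_n^{(r)} = X_n^{1-1/r}(\ell_\infty^n)^{1/r}$ and the trivial evaluation $c_{\ell_\infty^n}(\alpha)=1$. The one point requiring care is the choice of $\theta$: the convexification exponent is $1/(1-\theta)$, not $1/\theta$, so taking $\theta=1/r'$ (rather than $\theta=1/r$) is what produces the power $|J|^{1/r'}$ and not $|J|^{1/r}$. Finally, to justify the remark that the corollary extends Proposition~\ref{start-poly1}, I would specialize $X_n=\ell_1^n$, where $X_n^{(r)}=\ell_r^n$ and $c_{\ell_1^n}(\alpha)=|\alpha|^{|\alpha|}/\alpha^\alpha$, and check that the bound so obtained matches the estimate there.
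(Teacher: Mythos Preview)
Your proposal is correct and follows essentially the same route as the paper: identify $X_n^{(r)}\equiv X_n^{1-\theta}(\ell_\infty^n)^\theta$ with $\theta=1-1/r=1/r'$, apply Theorem~\ref{lambdatheta}, and bound the $\ell_\infty^n$-factor using $c_{\ell_\infty^n}(\alpha)=1$ so that $\widehat{\boldsymbol{\lambda}}(\mathcal{P}_J(\ell_\infty^n))\le |J|$. Your reading of the left-hand side as $\widehat{\boldsymbol{\lambda}}(\mathcal{P}_J(X_n^{(r)}))$ is the intended one, and your care about the correct choice of $\theta$ is exactly right.
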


\begin{proof}
It is easy to see that $c_{\ell_\infty^n}(\alpha) =1$ for each $\alpha\in \mathbb{N}_0^n$ (see also \eqref{dineen}), and clearly this yields
\[
\widehat{\boldsymbol{\lambda}}\big(\mathcal{P}_J(\ell_\infty^n)\big) \leq |J|\,.
\]
Since $X_n^{(r)} \equiv X_n^{1-\theta} (\ell_\infty^n)^{\theta}$ with $\theta = 1- \frac{1}{r}$, the first claim follows by Theorem~\ref{lambdatheta}. 
\end{proof}
Before we state the next consequence of Theorem~\ref{lambdatheta}, we note that in the case $m=1$ by Remark \ref{m1}, it follows that
\[
\widehat{\boldsymbol{\lambda}}\big(\mathcal{P}_1(\ell_2^n)\big)^2 =\varphi_{\ell_2^n}(n)^2 = n \leq \varphi_{X_n}(n) \varphi_{X_{n}'}(n) = \widehat{\boldsymbol{\lambda}}\big(\mathcal{P}_1(X_n)\big)
\widehat{\boldsymbol{\lambda}}\big(\mathcal{P}_1(X_{n}')\big)\,.
\]
The following extension  may be interpreted as  a sort of 'trace duality' for polynomial projection constants.

\begin{corollary} \label{duality}
Let $X_n = (\mathbb{C}^n,\|\cdot\|)$ be a~Banach lattice  and $J \subset \mathbb{N}_0^n$ a finite index set. Then
\[
\widehat{\boldsymbol{\lambda}}\big(\mathcal{P}_J(\ell_2^n)\big)^2 \,\,\leq \,\, \widehat{\boldsymbol{\lambda}}\big(\mathcal{P}_J(X_n)\big)\,\widehat{\boldsymbol{\lambda}}\big(\mathcal{P}_J(X_{n}')
\big)\,,
\]
with equality for $X_n = \ell^n_2$. Moreover,  there is $C>0$ such that for any Banach lattice $X_n = (\mathbb{C}^n,\|\cdot\|)$
and any index set $J \subset \mathbb{N}_0^n$, such that  $\Lambda_T(m,n) \subset J$ whenever $m\leq n$, we have
\[
\frac{1}{C^m}\Big(\frac{n}{m}\Big)^{m} \,\,\leq \,\, \widehat{\boldsymbol{\lambda}}\big(\mathcal{P}_J(X_n)\big)\,\widehat{\boldsymbol{\lambda}}\big(\mathcal{P}_J(X_{n}')
\big)\,.
\]
\end{corollary}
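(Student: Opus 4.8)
The upper bound should follow directly from the pointwise multiplicativity of characteristics under the Calder\'on product. By Lozanovskii's factorization (Theorem \ref{Lozanovskii}) combined with the self-duality of $\ell_2$, one has $\ell_2^n \equiv (X_n)^{1/2}(X_{n}')^{1/2}$ isometrically, i.e. $\ell_2^n = X_n(\theta)$ with $\theta = 1/2$ for the couple $(X_n, X_{n}')$. Then I would simply invoke Theorem \ref{lambdatheta} with $\theta = 1/2$, $X^0_n = X_n$, $X^1_n = X_{n}'$, which gives
\[
\widehat{\boldsymbol{\lambda}}\big(\mathcal{P}_J(\ell_2^n)\big) \leq \widehat{\boldsymbol{\lambda}}\big(\mathcal{P}_J(X_n)\big)^{1/2}\,\widehat{\boldsymbol{\lambda}}\big(\mathcal{P}_J(X_{n}')\big)^{1/2}\,,
\]
and squaring yields the claimed inequality. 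Equality for $X_n = \ell_2^n$ is immediate since then $X_{n}' = \ell_2^n$ as well. I should double-check that the isometric identity $\ell_2^n \equiv (X_n)^{1/2}(X_{n}')^{1/2}$ is exactly what Lozanovskii's theorem delivers (it is the Calder\'on–Lozanovskii form: $X\circ X' \equiv L_1$ can be rewritten as $(X)^{1/2}(X')^{1/2} \equiv (X\circ X')^{1/2}(L_\infty)^{1/2}$-type manipulation, but the cleanest route is the standard fact $X^{1/2}(X')^{1/2} \equiv L_2$ for a Banach function lattice with the Fatou property, which finite-dimensional $X_n$ automatically has). This is the step I would be most careful about citing precisely, though it is entirely standard.

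For the lower bound I would combine Theorem \ref{lambda-dash} (which gives $\boldsymbol{\lambda}(\mathcal{P}_J(X_n)) \leq \widehat{\boldsymbol{\lambda}}(\mathcal{P}_J(X_n))$, hence $\widehat{\boldsymbol{\lambda}}$ dominates $\boldsymbol{\lambda}$) with the already-established lower bound on $\widehat{\boldsymbol{\lambda}}(\mathcal{P}_J(\ell_2^n))$. Specifically, the proof of Proposition \ref{start-poly1} (the lower-bound half, with $r = 2$) shows that whenever $\Lambda_T(m,n) \subset J$ with $m \leq n$, one has $(n/m)^{m/2} \prec_{C^m} \widehat{\boldsymbol{\lambda}}\big(\mathcal{P}_{\Lambda_T(m,n)}(\ell_2^n)\big) \leq \widehat{\boldsymbol{\lambda}}\big(\mathcal{P}_J(\ell_2^n)\big)$, using the test vector $u = (n^{-1/2},\dots,n^{-1/2})$ and the formula $c_{\ell_2^n}(\alpha) = m^{m/2}$ for tetrahedral $\alpha$ from \eqref{dineen2}, together with $\binom{n}{m} \geq (n/m)^m$. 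Feeding this into the upper-bound inequality gives
\[
\Big(\frac{n}{m}\Big)^{m} \prec_{C^m} \widehat{\boldsymbol{\lambda}}\big(\mathcal{P}_J(\ell_2^n)\big)^2 \leq \widehat{\boldsymbol{\lambda}}\big(\mathcal{P}_J(X_n)\big)\,\widehat{\boldsymbol{\lambda}}\big(\mathcal{P}_J(X_{n}')\big)\,,
\]
which is exactly the asserted lower bound with the constant $C$ absorbing the universal constant from Proposition \ref{start-poly1}.

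The argument is essentially a bookkeeping exercise: both halves reduce to results already in hand (Theorem \ref{lambdatheta}, Theorem \ref{lambda-dash}, Proposition \ref{start-poly1}, and Lozanovskii's theorem). The only genuine subtlety — and the step I would spend the most care on — is making sure the Calder\'on product identification $\ell_2^n \equiv (X_n)^{1/2}(X_{n}')^{1/2}$ is stated and applied in the isometric form needed so that Theorem \ref{lambdatheta} applies verbatim; everything else is routine. I would also remark briefly that the $m \geq n$ range of the lower bound, if one wants it, is trivial since $(n/m)^m \leq 1$ while $\widehat{\boldsymbol{\lambda}} \geq 1$, though the statement as given only concerns $m \leq n$.
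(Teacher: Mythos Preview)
Your proof is correct and follows essentially the same route as the paper: the first inequality via Lozanovskii's identity $X_n^{1/2}(X_n')^{1/2}\equiv\ell_2^n$ together with Theorem~\ref{lambdatheta} at $\theta=1/2$, and the second by feeding a lower bound for $\widehat{\boldsymbol{\lambda}}(\mathcal{P}_J(\ell_2^n))$ back into the first. The only cosmetic difference is that the paper obtains the $(n/m)^{m/2}$ lower bound by combining Theorem~\ref{lambda-dash} with the $\boldsymbol{\lambda}$-estimate from Theorem~\ref{conny3}, whereas you invoke Proposition~\ref{start-poly1} directly for $\widehat{\boldsymbol{\lambda}}$; both are equivalent and your route is in fact slightly more direct, making the mention of Theorem~\ref{lambda-dash} in your argument unnecessary.
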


\begin{proof}
By Lozanovskii's factorization Theorem~\ref{Lozanovskii} one has
\[
X_n^{1/2} (X_n')^{1/2} \equiv \ell^n_2\,.
\]
Using the interpolation estimate of $\widehat{\boldsymbol{\lambda}}\big(\mathcal{P}_J(X_n(\theta)\big)$ for the Calder\'on product space
$X(\theta) := (X_0)^{1-\theta} (X_n^1)^{\theta}$ with $\theta \in (0, 1)$, Theorem~\ref{lambdatheta} gives  the first estimate.
The second assertion then is a consequence of Theorem~\ref{lambda-dash} 
and the fact that
$\boldsymbol{\lambda}\big(\mathcal{P}_{J}(\ell_2^n)\big) \sim_{C^m} \big(\frac{n}{m}\big)^{\frac{m}{2}}$ proved in Theorem~\ref{conny3}.
\end{proof}

We conclude with the following observation that Theorem~\ref{lambda-dash} combined with Corollary~\ref{RW} gives interesting estimate
true for any Banach lattice $X_n = (\mathbb{C}^n,~\|\cdot~\|)$ and $J = \Lambda(m,n)$
\[
\left(\frac{\Gamma(n+m) \Gamma(1+\frac{m}{2})}{\Gamma(1+m)\Gamma(n+\frac{m}{2})} \right)^2
\,\,\leq \,\, \widehat{\boldsymbol{\lambda}}\big(\mathcal{P}_m(X_n)\big)\,\widehat{\boldsymbol{\lambda}}\big(\mathcal{P}_m(X_{n}')
\big)\,.
\]

\smallskip

\section{Tetrahedral polynomials} \label{Tetrahedral polynomials}

We begin by estimating the characteristics $c_{X_n}(\alpha)$ for tetrahedral indices $\alpha$, i.e,  $\alpha \in \Lambda_T(m,n)$,
and symmetric Banach lattices $X_n$. Extending \eqref{dineen2}, we show that they are given by the $m$th power of the fundamental
function $\varphi_{X_n}$ in $m$.

\begin{proposition} \label{mietek}
Let $X_n = (\mathbb{C}^n, \|\cdot\|)$ be a symmetric Banach lattice, and  $1\leq m \leq n$. Then for each $\alpha~\in~\Lambda_T(m,n)$,
one has
\begin{equation*}
c_{X_n}(\alpha)= \varphi_{X_n}(m)^m.
\end{equation*}
\end{proposition}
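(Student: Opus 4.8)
The plan is to reduce the computation of $c_{X_n}(\alpha)$ for a tetrahedral multi index $\alpha \in \Lambda_T(m,n)$ to a concrete optimization problem over the unit ball of $X_n$, and then to exploit the symmetry of $X_n$ to identify the optimal point explicitly. Fix $\alpha \in \Lambda_T(m,n)$; after a permutation of coordinates (which is an isometry of $X_n$ since $X_n$ is symmetric) we may assume $\alpha = e_1 + \dots + e_m$, so that $z^\alpha = z_1 z_2 \cdots z_m$ and
\[
c_{X_n}(\alpha)^{-1} = \sup_{z \in B_{X_n}} |z_1 z_2 \cdots z_m|\,.
\]
First I would argue that the supremum is attained at a vector $z$ with nonnegative entries and with $z_{m+1} = \dots = z_n = 0$: killing the tail coordinates can only decrease the $X_n$-norm (monotonicity of the lattice norm) while leaving the product untouched, and replacing each $z_j$ by $|z_j|$ does the same. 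So the problem becomes $\sup \{ z_1 \cdots z_m : z_j \ge 0, \ \|z_1 e_1 + \dots + z_m e_m\|_{X_n} \le 1\}$.

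Next I would use the symmetry of $X_n$ together with the concavity of $t \mapsto \log t$ to show that the optimum is the ``flat'' vector $z_1 = \dots = z_m = 1/\varphi_{X_n}(m)$. Indeed, given any feasible $z = (z_1,\dots,z_m,0,\dots,0)$ with $z_j \ge 0$, average it over all $m!$ permutations of the first $m$ coordinates: by symmetry each permuted vector has $X_n$-norm $\le 1$, and by the triangle inequality the average $\bar z := \frac{1}{m!}\sum_\sigma z_\sigma$, which is the constant vector with entries $c := \frac1m\sum_{j=1}^m z_j$ in the first $m$ slots, also satisfies $\|\bar z\|_{X_n} \le 1$. By AM--GM, $z_1\cdots z_m \le c^m = \prod \bar z_j$, so the product at $\bar z$ is at least as large. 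Hence the supremum is attained among constant vectors $c(e_1+\dots+e_m)$ with $c\,\varphi_{X_n}(m) = c\,\|e_1+\dots+e_m\|_{X_n} \le 1$, i.e. $c \le 1/\varphi_{X_n}(m)$, and the largest such product is $\big(1/\varphi_{X_n}(m)\big)^m$. Therefore
\[
c_{X_n}(\alpha)^{-1} = \sup_{z \in B_{X_n}} |z^\alpha| = \varphi_{X_n}(m)^{-m}\,,
\]
which is exactly the claimed formula $c_{X_n}(\alpha) = \varphi_{X_n}(m)^m$.

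I do not expect a serious obstacle here; the statement is essentially a symmetrization argument. The one point deserving care is the reduction to nonnegative, tail-zero vectors and the justification that the averaged vector stays in the ball — both rely on $X_n$ being a \emph{symmetric} Banach lattice (monotonicity of the norm plus invariance under coordinate permutations), so I would state these lattice properties cleanly at the outset. As a sanity check one recovers \eqref{dineen2}: for $X_n = \ell_r^n$ one has $\varphi_{\ell_r^n}(m) = m^{1/r}$, giving $c_{\ell_r^n}(\alpha) = m^{m/r}$, in agreement with the earlier formula.
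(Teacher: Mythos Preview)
Your proof is correct and follows the same overall strategy as the paper: reduce to the first $m$ coordinates, then use AM--GM together with the symmetry of $X_n$ to identify the flat vector $\varphi_{X_n}(m)^{-1}(e_1+\dots+e_m)$ as the maximizer. The execution differs slightly in one place. For the upper bound on $|z_1\cdots z_m|$, the paper applies AM--GM and then estimates $\tfrac{1}{m}\sum_{j}|z_j|$ via the duality pairing $\langle (1,\dots,1),(|z_j|)\rangle \le \varphi_{X_n'}(m)\,\|z\|_{X_n}$, invoking the identity $\varphi_{X_n}(m)\,\varphi_{X_n'}(m)=m$ that holds in symmetric spaces. You instead average $z$ over permutations of the first $m$ slots and use the triangle inequality to keep the averaged vector in the unit ball, then apply AM--GM to compare products. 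Your route is a bit more self-contained, since it avoids any appeal to $X_n'$ and the fundamental-function duality; the paper's route is shorter once that identity is in hand. Either way the argument is the same in spirit.
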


\begin{proof}
Let us show that $\Vert z^{\alpha} \Vert_{\mathcal P_m (X_n)} = \frac{1}{\varphi_{X_n}(m)^m}$.
If we identify $\alpha \in \Lambda_T(m,n)$ with $\jj = (j_1, \ldots, j_m)  \in \mathcal{J}_T(m,n)$, we evaluate
the monomial $z^\alpha =z_\jj$ at
\[
z = \frac{1}{\varphi_{X_n}(m)} (\underbrace{1, \ldots, 1}_{m}, 0, \ldots) \in  B_{X_n}\,,
\]
which clearly gives one of the desired estimates. To see  the other, observe that for
 $(z_1, \ldots, z_n) \in B_{X_n}$
\begin{align*}
&
|z_{j_1}\ldots z_{j_m}|^{1/m} \leq \frac{1}{m} (|z_{j_1}| + \ldots + |z_{j_m}|)
\\&\leq \frac{1}{m} \|(\underbrace{1, \ldots, 1}_{m},0,\ldots)\|_{X_{n}'}
\|(z_{j_1}, \ldots, z_{j_m},0  \ldots)\|_{X_n}
\\&
\leq \frac{1}{m} \|(\underbrace{1, \ldots, 1}_{m},0,\ldots)\|_{X_{n}'} \|(z_{1}, \ldots, z_{n},0  \ldots)\|_{X_n}
\leq \frac{1}{m} \frac{m}{\varphi_{X_n}(m)} = \frac{1}{\varphi_{X_n}(m)}\,.
\end{align*}
Taking powers completes the argument.
\end{proof}

We now exhibit   upper and lower bounds for the polynomial projection constant of spaces of tetrahedral
polynomials acting on symmetric Banach lattices -- for the case  $m=1$ compare with  Remark~\ref{m1}.

\begin{proposition}\label{lambda}
Let $X_n= (\mathbb{C}^n, \|\cdot\|)$~be a symmetric Banach lattice. Then for each $ m \leq n$
\[
\bigg(\frac{\varphi_{X_{n}'}(n)}{\varphi_{X_{n}'}(m)}\bigg)^m
\leq \widehat{\boldsymbol{\lambda}}\big(\mathcal{P}_{\Lambda_T(m,n)}(X_n)\big)
\leq e^m \bigg(\frac{\varphi_{X_{n}'}(n)}{\varphi_{X_{n}'}(m)}\bigg)^m \,.
\]
\end{proposition}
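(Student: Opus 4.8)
The plan is to combine the definition of the polynomial projection constant from \eqref{lambda-dash-def} with the formula for characteristics of tetrahedral indices on symmetric Banach lattices (Proposition~\ref{mietek}) and then reduce the supremum over $B_{X_n}$ to an elementary counting estimate. By Proposition~\ref{mietek} we know that $c_{X_n}(\alpha) = \varphi_{X_n}(m)^m$ is the same constant for \emph{all} $\alpha \in \Lambda_T(m,n)$, so
\[
\widehat{\boldsymbol{\lambda}}\big(\mathcal{P}_{\Lambda_T(m,n)}(X_n)\big)
= \varphi_{X_n}(m)^m \, \sup_{z \in B_{X_n}} \sum_{\alpha \in \Lambda_T(m,n)} |z^\alpha|\,.
\]
The task then becomes to pin down $\sup_{z \in B_{X_n}} \sum_{\alpha \in \Lambda_T(m,n)} |z^\alpha|$ up to a factor $e^m$. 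Writing $\jj \in \mathcal{J}_T(m,n)$ for the tetrahedral multi indices, the sum $\sum_{\alpha} |z^\alpha| = \sum_{j_1 < \dots < j_m} |z_{j_1}| \cdots |z_{j_m}| = e_m(|z_1|, \dots, |z_n|)$ is exactly the $m$th elementary symmetric polynomial evaluated at the absolute values of the coordinates, and we want to maximize it over the unit ball of a symmetric lattice.

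The upper bound follows from the classical Maclaurin-type inequality $e_m(t_1, \dots, t_n) \le \frac{1}{m!}\big(\sum_k t_k\big)^m \le \frac{m^m}{m!} \big(\frac{1}{m}\sum_k t_k\big)^m \le e^m \big(\frac{1}{m}\sum_k t_k\big)^m$ for nonnegative $t_k$ (using $m^m \le e^m m!$), combined with the fact that for $z \in B_{X_n}$ one has $\sum_{k=1}^n |z_k| \le \varphi_{X_n'}(n) \|z\|_{X_n} \le \varphi_{X_n'}(n)$ by the lattice/symmetry pairing (this is precisely the computation behind Remark~\ref{m1}, i.e.\ $\widehat{\boldsymbol{\lambda}}(\mathcal{P}_1(X_n)) = \varphi_{X_n'}(n)$). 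Hence $\sup_{z} e_m(|z|) \le e^m (\varphi_{X_n'}(n)/m)^m$, and multiplying by $\varphi_{X_n}(m)^m$ and using $\varphi_{X_n}(m)\,\varphi_{X_n'}(m) = m$ (valid in any symmetric lattice, or alternatively Theorem~\ref{cE}(i) applied to a tetrahedral $\alpha$) gives $\varphi_{X_n}(m)^m e^m (\varphi_{X_n'}(n)/m)^m = e^m (\varphi_{X_n'}(n)/\varphi_{X_n'}(m))^m$, the claimed upper estimate.

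For the lower bound I would simply exhibit a good test vector. Take $z = \frac{1}{\varphi_{X_n'}(n)}(1,\dots,1) \in B_{X_n}$; here $z \in B_{X_n}$ because by Lozanovskii/duality $\varphi_{X_n}(n)\varphi_{X_n'}(n) \le n$ is not quite what is needed — instead use that the constant vector has $X_n$-norm $\varphi_{X_n}(n)$, and $\varphi_{X_n}(n)\,\varphi_{X_n'}(n)\geq n \geq \varphi_{X_n'}(n)$ is the wrong direction. So more carefully: scale the all-ones vector $(\underbrace{1,\dots,1}_{n})$ by $1/\varphi_{X_n}(n)$ to land on the sphere, giving $e_m(|z|) = \binom{n}{m}/\varphi_{X_n}(n)^m \ge (n/m)^m/\varphi_{X_n}(n)^m$; then $\widehat{\boldsymbol{\lambda}} \ge \varphi_{X_n}(m)^m (n/m)^m /\varphi_{X_n}(n)^m$. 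Rewriting $\varphi_{X_n}(m)/\varphi_{X_n}(n) = (m/\varphi_{X_n'}(m))/(n/\varphi_{X_n'}(n)) = (m\,\varphi_{X_n'}(n))/(n\,\varphi_{X_n'}(m))$ via $\varphi_{X_n}(k)\varphi_{X_n'}(k)=k$, the product becomes exactly $(\varphi_{X_n'}(n)/\varphi_{X_n'}(m))^m$, which is the lower bound. (This also matches the alternative test vector $\frac{1}{\varphi_{X_n}(m)}(\underbrace{1,\dots,1}_{m},0,\dots)$ used in the proof of Proposition~\ref{mietek}, which would only give the single-term estimate $c_{X_n}(\alpha)$; the all-ones vector is the right choice to capture all $\binom{n}{m}$ monomials simultaneously.)

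The main obstacle is bookkeeping with the fundamental functions: one must consistently use the identity $\varphi_{X_n}(k)\,\varphi_{X_n'}(k) = k$ (which holds for a symmetric lattice $X_n$ with normalized unit basis, and can be derived from Lozanovskii's Theorem~\ref{Lozanovskii} applied to $\sum_{j=1}^k e_j$, or read off from Theorem~\ref{cE}(i)) to convert between the two bounds and to verify that the test vectors lie in $B_{X_n}$. Once that identity is in hand, both inequalities are short. I should double-check the direction of the pairing inequality $\sum_{k}|z_k| \le \varphi_{X_n'}(n)\|z\|_{X_n}$ — this is the standard Hölder-type estimate for Köthe duals, $\sum |z_k| = \sum |z_k|\cdot 1 \le \|z\|_{X_n}\|(\underbrace{1,\dots,1}_n)\|_{X_n'} = \varphi_{X_n'}(n)\|z\|_{X_n}$ — and that it is attained (up to the $e^m$ slack we already allow) at the constant vector, which it is.
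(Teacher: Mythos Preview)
Your proof is correct and matches the paper's argument closely. The lower bound is identical: both you and the paper use Proposition~\ref{mietek} to pull out $\varphi_{X_n}(m)^m$, test at $z = \varphi_{X_n}(n)^{-1}(1,\dots,1)$, apply $\binom{n}{m}\ge (n/m)^m$, and convert via $\varphi_{X_n}(k)\varphi_{X_n'}(k)=k$. For the upper bound the paper routes the computation through the general Lemma~\ref{lambdaP_J} (which it needs elsewhere), but once specialized to tetrahedral indices that lemma yields exactly your inequality $e_m(|z|)\le \frac{1}{m!}\big(\sum_k |z_k|\big)^m$ followed by $\sum_k |z_k|\le \varphi_{X_n'}(n)$ and $m^m\le e^m m!$; so the two arguments are the same up to packaging. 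Your direct route via the elementary symmetric polynomial is slightly cleaner for this specific statement, while the paper's detour has the advantage of isolating a lemma reusable for non-tetrahedral $J$.
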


 The following independently interesting lemma (later also used in Proposition~\ref{lambdaII}) prepares the proof.

\smallskip

\begin{lemma}
\label{lambdaP_J}
Let $X_n = (\mathbb{C}^n,\|\cdot\|)$ be a~Banach lattice. Then for  $J\subset \Lambda(m, n)$ with $m \leq n$, we have
\[
\widehat{\boldsymbol{\lambda}}\big(\mathcal{P}_J(X_n)\big) \leq e^{m} \sup_{z\in B_{X_n}} \sum_{\alpha\in J}
\frac{m!}{\alpha!}\,\Delta_{X_n}(\alpha)\,|z^\alpha|\,,
\]
where $\Delta_{X_n}(\alpha) := \big\|\big(\frac{\alpha_1}{m}, \ldots, \frac{\alpha_n}{m}\big)\big\|_{X_n}^{m}$.
\end{lemma}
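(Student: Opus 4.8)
\textbf{Plan for the proof of Lemma \ref{lambdaP_J}.}
The starting point is the defining formula \eqref{lambda-dash-def} for the polynomial projection constant, namely
\[
\widehat{\boldsymbol{\lambda}}\big(\mathcal{P}_J(X_n)\big)=\sup_{z\in B_{X_n}}\sum_{\alpha\in J}c_{X_n}(\alpha)\,|z^\alpha|\,,
\]
so the task reduces entirely to bounding the characteristic $c_{X_n}(\alpha)$ for each $\alpha\in J\subset\Lambda(m,n)$. The plan is to compare $c_{X_n}(\alpha)$ with $\tfrac{m!}{\alpha!}\Delta_{X_n}(\alpha)$ using a weighted arithmetic--geometric mean inequality together with the lattice property of $X_n$. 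Concretely, I would first recall from Theorem~\ref{cE}(ii) that there is a positive $u=(u_1,\dots,u_n)\in B_{X_n}$ realizing the supremum, i.e. $c_{X_n}(\alpha)=1/u^\alpha$, and that one may take $u$ with $u_jv_j=\alpha_j/m$ for a suitable $v\in B_{X_n'}$; however, a cleaner route is to bound $c_{X_n}(\alpha)^{-1}=\sup_{z\in B_{X_n}}|z^\alpha|$ directly.

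\textbf{Key step: the AM--GM estimate.}
Fix $\alpha\in\Lambda(m,n)$ and $z\in B_{X_n}$ with $z_i\ge 0$ (we may assume this by the lattice property). Writing $|z^\alpha|=\prod_{i}z_i^{\alpha_i}=\prod_i\big(z_i^{m/\alpha_i}\big)^{\alpha_i/m}$ over the indices with $\alpha_i>0$, the weighted geometric--arithmetic mean inequality with weights $\alpha_i/m$ gives
\[
|z^\alpha|=\prod_{i:\alpha_i>0}\Big(z_i\,\tfrac{m}{\alpha_i}\Big)^{\alpha_i/m}\prod_{i:\alpha_i>0}\Big(\tfrac{\alpha_i}{m}\Big)^{\alpha_i/m}
\le\Big(\sum_{i}z_i\tfrac{\alpha_i}{m}\cdot\tfrac{m}{\alpha_i}\Big)^{?}\cdots
\]
This first attempt does not quite land, so instead I would use the sharper bookkeeping: by AM--GM, $z^\alpha\le\Big(\tfrac{1}{m}\sum_i\alpha_i z_i\cdot\tfrac{1}{w_i}\Big)^{m}\prod_i w_i^{\alpha_i}$ for any positive weights $w_i$ — but the efficient choice here is to invoke the known formula $c_{\ell_1^n}(\alpha)=m^m/\alpha^\alpha$ from \eqref{dineen} together with Lozanovskii factorization. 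Precisely: by Theorem~\ref{Lozanovskii} applied to the vector $\big(\tfrac{\alpha_1}{m},\dots,\tfrac{\alpha_n}{m}\big)\in B_{\ell_1^n}$ (it has $\ell_1$-norm $1$), there exist $g\in B_{X_n}$ and $h\in B_{X_n'}$ with $g_ih_i=\alpha_i/m$; then for any $z\in B_{X_n}$,
\[
|z^\alpha|=\prod_i|z_i|^{\alpha_i}\le\Big(\tfrac{1}{m}\sum_i\alpha_i\tfrac{|z_i|}{g_i}\cdot g_i\Big)^{?}
\]
— again, the clean way is: write $|z_i|^{\alpha_i}=(|z_i|/g_i)^{\alpha_i}\,g_i^{\alpha_i}$ and apply weighted AM--GM to the first factor with weights $\alpha_i/m$, obtaining $\prod_i(|z_i|/g_i)^{\alpha_i/m}\le\tfrac{1}{m}\sum_i\alpha_i(|z_i|/g_i)\le\tfrac{1}{m}\|(\alpha_i/g_i)_i\|_{X_n'}\|z\|_{X_n}$, and $(\alpha_i/g_i)_i=(m h_i)_i$ has $X_n'$-norm $\le m$. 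Raising to the $m$th power yields $\prod_i(|z_i|/g_i)^{\alpha_i}\le 1$, hence $|z^\alpha|\le g^\alpha$. Combining with $g^\alpha=\prod_i(\alpha_i/h_i)^{\alpha_i}/m^m$ and the elementary bound $\alpha^\alpha/\alpha!\le e^m$ (a consequence of $m^m/m!\le e^m$ and the multinomial expansion, as used repeatedly in the paper) gives
\[
c_{X_n}(\alpha)=\frac{1}{\sup_{z\in B_{X_n}}|z^\alpha|}=\frac{1}{g^\alpha}\le e^m\,\frac{m!}{\alpha!}\,\Big\|\big(\tfrac{\alpha_1}{m},\dots,\tfrac{\alpha_n}{m}\big)\Big\|_{X_n}^m=e^m\,\frac{m!}{\alpha!}\,\Delta_{X_n}(\alpha)\,,
\]
where in the middle I use Theorem~\ref{cE}(ii) to identify $g=(g_i)_i$ with the optimal vector, so that $g^\alpha\ge e^{-m}\tfrac{\alpha!}{m!}\,\Delta_{X_n}(\alpha)^{-1}$ after invoking $c_{X_n}(\alpha)\,c_{X_n'}(\alpha)=m^m/\alpha^\alpha$ and $\varphi$-type estimates; the precise identification to track is that $\Delta_{X_n}(\alpha)=\|(\alpha_i/m)_i\|_{X_n}^m$ and $c_{X_n}(\alpha)\le e^m\tfrac{m!}{\alpha!}\Delta_{X_n}(\alpha)$.

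\textbf{Conclusion.}
Once the pointwise bound $c_{X_n}(\alpha)\le e^m\,\tfrac{m!}{\alpha!}\,\Delta_{X_n}(\alpha)$ is established for every $\alpha\in J$, summing against $|z^\alpha|$ and taking the supremum over $z\in B_{X_n}$ gives immediately
\[
\widehat{\boldsymbol{\lambda}}\big(\mathcal{P}_J(X_n)\big)=\sup_{z\in B_{X_n}}\sum_{\alpha\in J}c_{X_n}(\alpha)|z^\alpha|\le e^m\sup_{z\in B_{X_n}}\sum_{\alpha\in J}\frac{m!}{\alpha!}\,\Delta_{X_n}(\alpha)\,|z^\alpha|\,,
\]
which is the claim. \textbf{The main obstacle} I anticipate is getting the constant $e^m$ exactly right: the cleanest derivation couples the AM--GM step with the duality formula $c_{X_n}(\alpha)c_{X_n'}(\alpha)=m^m/\alpha^\alpha$ from Theorem~\ref{cE}(i) and the bound $\alpha^\alpha/\alpha!<e^m$, and one must be careful that the Lozanovskii-factorization vector $(g_i)$ is genuinely the norm-attaining vector of Theorem~\ref{cE}(ii) — for strictly convex $X_n$ this is Theorem~\ref{cE}(iii), and for the general case one passes to a compactness/limiting argument exactly as in the proof of Lemma~\ref{cproduct}. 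Everything else is routine.
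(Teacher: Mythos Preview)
Your overall strategy --- bound $c_{X_n}(\alpha)$ pointwise by $e^m\tfrac{m!}{\alpha!}\Delta_{X_n}(\alpha)$ and then sum --- is exactly the paper's approach, and you correctly identify the ingredients (Theorem~\ref{cE}(ii), AM--GM, the inequality $m^m\le e^m m!$). However, your execution is tangled and the decisive step is asserted rather than proved.

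The paper's argument is much shorter because it works on the \emph{dual} side from the start. By Theorem~\ref{cE}(ii) one has directly $c_{X_n}(\alpha)=\tfrac{m^m}{\alpha^\alpha}\,v^\alpha$ for some $v\in S_{X_n'}$, so there is no need to re-derive this via Lozanovskii or to bound $|z^\alpha|$ from above. The single clean step you are missing is then weighted AM--GM applied to $v$ (not to $z$):
\[
(v^\alpha)^{1/m}=\prod_i v_i^{\alpha_i/m}\le\tfrac{1}{m}\sum_i\alpha_i v_i\le\tfrac{1}{m}\|(\alpha_i)\|_{X_n}\|v\|_{X_n'}=\|\tfrac{\alpha}{m}\|_{X_n},
\]
whence $v^\alpha\le\Delta_{X_n}(\alpha)$. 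Combined with $\tfrac{m^m}{\alpha^\alpha}\le e^m\tfrac{m!}{\alpha!}$ (since $\tfrac{m^m}{m!}\le e^m$ and $\alpha!\le\alpha^\alpha$) this gives the pointwise bound immediately. Your detour through $g\in B_{X_n}$ and the primal AM--GM on $|z_i|/g_i$ merely re-establishes $c_{X_n}(\alpha)=1/g^\alpha=\tfrac{m^m}{\alpha^\alpha}h^\alpha$, after which you still need precisely the dual AM--GM step above on $h$ --- but you never write it down; you only say ``the precise identification to track is\ldots'' and state the conclusion. That is the gap. No strict convexity, compactness, or limiting argument is needed anywhere.
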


\begin{proof}
By Theorem \ref{cE}, (ii) for each $\alpha = (\alpha_1, \ldots, \alpha_n) \in \Lambda(m, n)$, we  find $v=v(\alpha) \geq 0$ in $X_{n}'$ such that $\|v\|_{X_{n}'}=1$ and
\[
c_{X_n}(\alpha)  \leq \frac{m^m}{\alpha^{\alpha}} v^{\alpha}\,.
\]
Now observe that 
\[
(v^{\alpha})^{1/m}= \big(v_1^{\alpha_1} \cdots v_n^{\alpha_n}\big)^{1/m}
\leq \frac{1}{m}(\alpha_1 v_1 + \ldots + \alpha_n v_n)\,.
\]
This implies that
\[
(v^{\alpha})^{1/m} \leq \frac{1}{m} \|(\alpha_1, \ldots, \alpha_n)\|_{X_n}\,\|(v_1, \ldots, v_n)\|_{X_{n}'}
= \frac{1}{m}\,\|(\alpha_1, \ldots, \alpha_n)\|_{X_n}\,,
\]
and so
\[
v^{\alpha} \leq  \Big\|\Big(\frac{\alpha_1}{m}, \ldots, \frac{\alpha_n}{m}\Big)\Big\|_{X_n}^{m} = \Delta_{X_n}(\alpha)\,.
\]
This estimate combined with the obvious inequality $m^{m} \leq e^{m} m!$, gives as desired
\begin{align*} \label{tag}
\widehat{\boldsymbol{\lambda}}\big(\mathcal{P}_J(X_n)\big) & = \sup_{z\in B_{X_n}} \sum_{\alpha\in J} c_{X_n}(\alpha) \,|z^\alpha|
\leq \sup_{z\in B_{X_n}} \sum_{\alpha\in J} \frac{m^m}{\alpha^{\alpha}} v^{\alpha} \,|z^\alpha| \\
& \leq e^{m} \sup_{z\in B_{X_n}} \sum_{\alpha\in J} \frac{m!}{\alpha!}\,\Delta_{X_n}(\alpha)\,|z^\alpha|
\,.\qedhere
\end{align*}
\end{proof}

\bigskip

\begin{proof}[Proof of Proposition~\ref{lambda}]
We start with the proof of the lower bound (which is very similar to an argument used for Theorem~\ref{start-poly1}):
By Proposition~\ref{mietek}
\begin{align*}
\varphi_{X_n}(m)^m  \sum_{\alpha \in \Lambda_T(m,n)} \Big| \Big(\frac{1}{\varphi_{X_n}(n)}, \ldots, \frac{1}{\varphi_{X_n}(n)}\Big)^\alpha\Big|
\leq \sup_{z \in B_{X_n}} \sum_{\alpha \in \Lambda_T(m,n)} c_{X_n}(\alpha) |z^\alpha|
= \widehat{\boldsymbol{\lambda}}\big(\mathcal{P}_{\Lambda_T(m,n)}(X_n )\big)\,,
\end{align*}
and hence
\[
\bigg(\frac{\varphi_{X_n}(m)}{\varphi_{X_n}(n)}\bigg)^m \Big(\frac{n}{m}\Big)^m
\leq \bigg(\frac{\varphi_{X_n}(m)}{\varphi_{X_n}(n)}\bigg)^m  \binom{n}{m}
= \bigg(\frac{\varphi_{X_n}(m)}{\varphi_{X_n}(n)}\bigg)^m |\Lambda_T(m,n)|
\leq \widehat{\boldsymbol{\lambda}}\big(\mathcal{P}_{\Lambda_T(m,n)}(\ell_r^n )\big)\,.
\]
Since by the assumed  symmetry of $X_n$ we have $\varphi_{X_n}(k) \varphi_{X_{n}'}(k) = k$ for each $1 \leq k \leq m$, the conclusion follows.

For the proof of the upper bound, note first that for any $\alpha =(\alpha_1, \ldots, \alpha_n) \in \Lambda (m, n)$
with $m\leq n$, we have $\alpha_{j}^{*} = 0$ for $m<j\leq n$. This implies (recall again that $X_n$ is symmetric) that
\begin{align*}
\Delta_{X_n}(\alpha) & = \Big\|\Big(\frac{\alpha_1}{m}, \ldots, \frac{\alpha_n}{m}\Big)\Big\|_{X_n}^m =
\Big\|\Big(\frac{\alpha_1^{*}}{m}, \ldots, \frac{\alpha_m^{*}}{m}\Big)\Big\|_{X_m}^{m}\,.
\end{align*}
Thus for $\alpha \in \Lambda_T(m,n)$ we have $\Delta_{X_n}(\alpha) = (\varphi_X(m)/m)^m = (1/\varphi_{X_{n}'}(m))^m$.
Moreover, by the multinomial formula
\begin{equation}\label{multinomial}
\sup_{z\in B_{X_n}} \sum_{\alpha\in \Lambda(m, n)} \frac{m!}{\alpha!}\,|z^\alpha| =
\sup_{z \in B_{X_n}} (|z_1| + \ldots + |z_n|)^m = \varphi_{X_{n}'}(n)^m\,.
\end{equation}
All together by Lemma~\ref{lambdaP_J}
\begin{align*}
\widehat{\boldsymbol{\lambda}}\big(\mathcal{P}_{\Lambda_T(m,n)}(X_n)\big) & 
\leq e^{m} \sup_{z\in B_{X_n}} \sum_{\alpha\in \Lambda_T(m,n)}
\frac{m!}{\alpha^{\alpha}}\,\Delta_{X_n}(\alpha)\,|z^\alpha| \\
& \leq \frac{ e^{m}}{\varphi_{X_{n}'}(m)^m}\sup_{z\in B_{X_n}} \sum_{\alpha\in \Lambda(m,n)}
\frac{m!}{\alpha!}\,|z^\alpha| = e^m \bigg(\frac{\varphi_{X_{n}'}(n)}{\varphi_{X_{n}'}(m)}\bigg)^m \,.
\end{align*}
This completes the proof.
\end{proof}

\smallskip

\section{Concrete bounds for characteristics}  \label{bounds}

It was mentioned that, given a multi index $\alpha$ and some
finite-dimensional Banach lattices $X_n$,  the task to find a precise formula or only  an asymptotic estimate for $c_{X_n}(\alpha)$  seems a quite
delicate problem. Nevertheless we here  provide a few  concrete formulas in this direction.

\subsection{Nakano spaces}

Given $\vec{p}:=(p_1, \ldots, p_n) \in [1, \infty)^n$. The absolutely convex, closed and bounded subset
\[
\big\{z=(z_1, \ldots, z_n) \in \mathbb{C}^n :\,\, |z_1|^{p_1}+ \ldots + |z_n|^{p_n} \le 1\big\}
\]
defines via its Minkowski functional a~lattice norm on  $\mathbb{C}^n$ . The space $\mathbb{C}^n$ equipped with this norm is denoted by $\ell_{\vec{p}}^n$
and is called the Nakano space. Note that in the case $p_1= \ldots = p_n =p$, we recover the classical $\ell_p^n$-space.

We  apply  Theorem \ref{cE} to prove precise formulas for the characteristics of multi indices with respect to  strictly convex Nakano spaces and their
K\"othe duals. A crucial tool for our proof  is (a special case of) the well-known Young inequality for convex $N$-functions. Recall that a~function
$\varphi\colon [0, \infty) \to [0, \infty)$, which is nondecreasing, continuous,  convex with $\varphi(0)=0$ is called $N$-function, if
\[
\lim_{t\to 0+} \frac{\varphi(t)}{t} = \lim_{t \to \infty}\,\frac{t}{\varphi(t)}=0\,.
\]
A  well-known and easily verified fact is that $\varphi$ is an
$N$-function if and only if it has a~representation
\[
\varphi(t) = \int_0^t p(s)\,ds, \quad\, t\geq 0\,,
\]
where $p\colon [0, \infty) \to [0, \infty)$ is non-decreasing, right-continuous function 
(the right derivative of $\varphi$) and $p(0)=0$, and $\lim_{t\to \infty}p(t)=0$ 
(see \cite{krasnosel1960convex}).

For any $N$-function $\varphi$, given by the above integral formula, we define  the right inverse
\[
q\colon [0, \infty) \to [0, \infty)\,, \quad q(t):= \sup\{s>0: \, p(s) \leq t\}
\]
 of the function $p$.
 The $N$-function $\varphi^{*}$ defined by
\[
\varphi^{*}(t) = \int_0^t q(s)\,ds, \quad\, t\geq 0
\]
is called  complementary to $\varphi$. The mentioned Young inequality states that
\[
uv \leq \varphi(u) + \varphi^{*}(v), \quad\, u, v\geq 0\,,
\]
and here equality  holds if and only if $v=p(u)$ or $u=q(v)$ (see \cite{krasnosel1960convex}).

We are ready to state the following result.

\smallskip

\begin{proposition}
\label{nakano}
Given $\vec{p}:=(p_1, \ldots, p_n) \in (1, \infty)^n$ and $\alpha \in \Lambda(m, n)$, let
$\gamma := m \big(\frac{\alpha_1}{p_1} + \ldots + \frac{\alpha_n}{p_n}\big)^{-1}$. Then
$u =(u_1,\ldots, u_n)$ and $v= (v_1, \ldots, v_n)$, which  for each $j \in \{1, \ldots, n\}$ are defined by
\begin{align*}
u_j := x_j^{1/p_j} \quad\, \text{and \,\,\, $v_j:= \frac{1}{\gamma} p_j \,x_j^{1/p_{j}'}$}
\end{align*}
with $x_j = \frac{\alpha_j}{p_j} \Big(\frac{\alpha_1}{p_1} + \ldots + \frac{\alpha_n}{p_n}\Big)^{-1}$,
satisfy the following two properties
\begin{align*}
uv = \Big(\frac{\alpha_1}{m}, \ldots, \frac{\alpha_n}{m}\Big) \quad\, \text{and \,\,\,
$\|u\|_{\ell_{\vec{p}}^n} = \|v\|_{(\ell_{\vec{p}}^n)'} = 1$}\,,
\end{align*}
where $\vec{p}' :=(p_{1}', \ldots, p_{n}')$. Moreover, $u$ and $v$ are the only non-negative elements with these properties.
\end{proposition}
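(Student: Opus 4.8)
The plan is to verify by direct computation that the explicit vectors $u$ and $v$ satisfy the two required identities, and then to deduce uniqueness from the strict convexity (and smoothness) of $\ell_{\vec{p}}^n$ together with Theorem~\ref{cE}.

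Write $\sigma := \sum_{k=1}^n \frac{\alpha_k}{p_k}$, so that $\gamma = m/\sigma$ and $x_j = \frac{\alpha_j}{p_j\sigma}$. First I would record the two elementary identities $\sum_j x_j = 1$ and $\sum_j p_j x_j = \gamma$, both immediate since $p_j x_j = \alpha_j/\sigma$ and $\sum_j \alpha_j = m$. The product identity then follows at once: $u_j v_j = x_j^{1/p_j}\cdot\tfrac{p_j}{\gamma}x_j^{1/p_j'} = \tfrac{p_j}{\gamma}x_j = \tfrac{\alpha_j}{m}$ for every $j$, hence $uv = (\alpha_1/m,\dots,\alpha_n/m)$. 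For the norm of $u$, note that $\ell_{\vec{p}}^n$ is a Banach lattice whose unit sphere is precisely $\{z\neq 0 : \sum_j|z_j|^{p_j}=1\}$ (the Minkowski functional of the strictly convex body $\{\sum_j|z_j|^{p_j}\le1\}$); since $\sum_j|u_j|^{p_j} = \sum_j x_j = 1$ and $u\neq 0$ (as $m\ge 1$), we get $\|u\|_{\ell_{\vec{p}}^n}=1$.

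The main computational step is $\|v\|_{(\ell_{\vec{p}}^n)'}=1$, which I would obtain from Young's inequality. The lower bound is easy: $\sum_j u_j v_j = \sum_j \alpha_j/m = 1$ and $\|u\|_{\ell_{\vec{p}}^n}=1$ force $\|v\|_{(\ell_{\vec{p}}^n)'}\ge 1$ by the definition of the K\"othe dual norm. For the upper bound, take any $w$ with $\|w\|_{\ell_{\vec{p}}^n}\le 1$, i.e.\ $\sum_j|w_j|^{p_j}\le 1$, and apply Young's inequality $ab\le \tfrac{a^{p_j}}{p_j}+\tfrac{b^{p_j'}}{p_j'}$ with $a=|w_j|$ and $b=x_j^{1/p_j'}$, noting $b^{p_j'}=x_j$. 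This gives $|v_jw_j| = \tfrac{p_j}{\gamma}x_j^{1/p_j'}|w_j| \le \tfrac{1}{\gamma}|w_j|^{p_j}+\tfrac{p_j-1}{\gamma}x_j$ (using $p_j/p_j'=p_j-1$); summing over $j$ and invoking $\sum_j(p_j-1)x_j = \sum_j p_jx_j-\sum_j x_j = \gamma-1$ yields $\sum_j|v_jw_j|\le \tfrac{1}{\gamma}+\tfrac{\gamma-1}{\gamma}=1$. Hence $\|v\|_{(\ell_{\vec{p}}^n)'}\le1$, so equality holds.

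For the uniqueness of a non-negative pair $(u',v')$ with $u'v'=(\alpha_j/m)_j$ and $\|u'\|_{\ell_{\vec{p}}^n}=\|v'\|_{(\ell_{\vec{p}}^n)'}=1$, I would use that for $\vec{p}\in(1,\infty)^n$ the map $z\mapsto\sum_j|z_j|^{p_j}$ is $C^1$ and strictly convex on $\mathbb{C}^n\cong\mathbb{R}^{2n}$, so that $\ell_{\vec{p}}^n$ is both strictly convex and smooth, and therefore $(\ell_{\vec{p}}^n)'$ is as well. Conditions $(1)$ and $(2)$ are exactly \eqref{eq:loza}, so by Theorem~\ref{cE}(ii)--(iii) the pair $(u,v)$ is uniquely determined on the support $S:=\{j:\alpha_j>0\}$, where the target vector $(\alpha_j/m)_{j\in S}$ is strictly positive. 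To rule out $u'_j>0$ or $v'_j>0$ for some $j\notin S$, I would use smoothness: from $\sum_j u'_jv'_j=1=\|u'\|\,\|v'\|$, $v'$ is a norming functional of $u'$, and the unique such functional in $\ell_{\vec{p}}^n$ is proportional to $(p_j|u'_j|^{p_j-1})_j$, so $u'_j>0\Rightarrow v'_j>0$; together with $u'_jv'_j=0$ off $S$ this forces $u'_j=0$ off $S$, and symmetrically $v'_j=0$ off $S$. Restricting to the coordinates in $S$ and applying Theorem~\ref{cE}(iii) then gives $u'=u$ and $v'=v$. The hardest part will be this last bookkeeping around the zero coordinates of $\alpha$, since Theorem~\ref{cE}(iii) is stated only for strictly positive targets; the norm computations themselves are routine once the identity $\sum_j(p_j-1)x_j=\gamma-1$ is noticed.
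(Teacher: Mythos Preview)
Your proof is correct and follows essentially the same route as the paper: both verify $uv=(\alpha_j/m)_j$ and $\|u\|_{\ell_{\vec p}^n}=1$ directly, then obtain $\|v\|_{(\ell_{\vec p}^n)'}=1$ via Young's inequality (the paper phrases this through $N$-functions and the equality case, you use the elementary $ab\le a^{p_j}/p_j+b^{p_j'}/p_j'$ together with the identity $\sum_j(p_j-1)x_j=\gamma-1$, which is the same computation). For uniqueness the paper simply invokes strict convexity of $\ell_{\vec p}^n$; your extra bookkeeping with smoothness and norming functionals to force $u'_j=v'_j=0$ off the support of $\alpha$ is a legitimate refinement, since Theorem~\ref{cE}(iii) is indeed phrased for positive targets, but it does not change the underlying idea.
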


\begin{proof}
It is obvious that $uv = \big(\frac{\alpha_1}{m}, \ldots, \frac{\alpha_n}{m}\big)$ and $\|u\|_{\ell_{\vec{p}}^n} =1$.
We claim that also $\|v\|_{(\ell_{\vec{p}}^n)'} = 1$. To prove this observe first that
\[
1 = \sum_{j=1}^n u_j v_j \leq \|u\|_{\ell_{\vec{p}}^n} \|v\|_{(\ell_{\vec{p}}^n)'} =  \|v\|_{(\ell_{\vec{p}}^n)'}\,.
\]
Thus it is enough to show that $\|v\|_{(\ell_{\vec{p}}^n)'}
\leq 1$. To do this, we consider the sequence $\{\varphi_j\}_{j = 1}^n$ of $N$-functions given by
$\varphi_{j}(t) := t^{p_j}$ for all $t\geq 0$. Since $\varphi_{j}'(t) = p_jt^{p_{j}-1}$, it follows that in Young's
inequalities the following equalities hold:
\begin{align*}
u_{j} \big(p_ju_{j}^{p_{j}-1}\big) = u_{j}^{p_j} + \varphi_j^{*}\big(p_j u_j^{p_{j} - 1}\big),
\quad\, j\in \{1, \ldots, n\}\,.
\end{align*}
Note that $\|(z_j)\|_{\ell_{\vec{p}}^n} = 1$ is equivalent to $\sum_{j=1}^n |z_j|^{p_j} = 1$. Observe also that
$\sum_{j=1}^n p_j u_j^{p_j} = \gamma$ and
\[
v_j= \frac{1}{\gamma} p_j \,x_j^{1/p_{j}'} = \frac{1}{\gamma} p_j u_{j}^{p_j-1}, \quad\, j\in \{1, \ldots, n\}\,.
\]
Combining these equalities with Young's inequality for $\varphi_j$ and $\varphi_{j}^{*}$ yields
\begin{align*}
\|v\|_{(\ell_{\vec{p}}^n)'} & = \sup_{\|(z_j)\|_{\ell_{\vec{p}}^n} = 1} \,\sum_{j=1}^n |z_j| v_j
\\
&
\leq \frac{1}{\gamma}\,
\sup_{\|(z_j)\|_{\ell_{\vec{p}}^n} = 1} \,\sum_{j=1}^n \big(|z_j|^{p_j} + \varphi_{j}^{*}\big(p_ju_j^{p_{j}-1}\big)\big) \\
& \leq \frac{1}{\gamma} \sum_{j=1}^{n} \big(u_j^{p_j} + \varphi_j^{*}\big(p_j u_j^{p_j - 1}\big)\big)
= \frac{1}{\gamma} \sum_{j=1}^n p_j u_j^{p_j} = 1\,.
\end{align*}
This proves the claim. It is easy to verify that $\ell_{\vec{p}}^n$ is strictly convex, and so this completes
the proof.
\end{proof}

\smallskip

The following formulas for characteristics are  immediate consequence of Theorem~\ref{cE} and Proposition~\ref{nakano}.

\smallskip

\begin{proposition}
\label{nakano-ch}
For every $\vec{p} := (p_1, \ldots, p_n) \in (1, \infty)^n$ and every $\alpha=(\alpha_1, \ldots, \alpha_n)\in \Lambda(m, n)$
one has
\[
c_{\ell_{\vec{p}}^n}(\alpha)
= \frac{\big(\frac{\alpha_1}{p_1} + \ldots + \frac{\alpha_n}{p_n}\big)^{\frac{\alpha_1}{p_1} + \ldots + \frac{\alpha_n}{p_n}}}
{\big(\frac{\alpha_1}{p_1}\big)^{\frac{\alpha_1}{p_1}}\cdots \big(\frac{\alpha_n}{p_n}\big)^{\frac{\alpha_n}{p_n}}}, \quad\, \,\,\,
c_{(\ell_{\vec{p}}^n)'}(\alpha) = \frac{m^m}{\alpha^{\alpha}}\cdot \frac{\big(\frac{\alpha_1}{p_1}\big)^{\frac{\alpha_1}{p_1}}\cdots \big(\frac{\alpha_n}{p_n}\big)^{\frac{\alpha_n}{p_n}}}{\big(\frac{\alpha_1}{p_1} + \ldots + \frac{\alpha_n}{p_n}\big)^{\frac{\alpha_1}{p_1} + \ldots + \frac{\alpha_n}{p_n}}}\,.
\]
\end{proposition}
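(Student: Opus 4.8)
The plan is to deduce the two formulas directly from the abstract duality result Theorem~\ref{cE} together with the explicit factorization provided by Proposition~\ref{nakano}. Since $\vec{p}=(p_1,\dots,p_n)\in(1,\infty)^n$, the Nakano space $\ell_{\vec p}^n$ is strictly convex, and hence part (iii) of Theorem~\ref{cE} guarantees that the optimal vectors $u\in S_{\ell_{\vec p}^n}$ and $v\in S_{(\ell_{\vec p}^n)'}$ realizing the characteristics are unique; by part~(ii), once we exhibit \emph{any} non-negative pair $u,v$ with $\|u\|_{\ell_{\vec p}^n}=\|v\|_{(\ell_{\vec p}^n)'}=1$ and $u_jv_j=\alpha_j/m$, we automatically get $c_{\ell_{\vec p}^n}(\alpha)=1/u^{\alpha}$ and $c_{(\ell_{\vec p}^n)'}(\alpha)=1/v^{\alpha}$. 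Proposition~\ref{nakano} supplies exactly such a pair, so the whole proof reduces to a computation of $u^{\alpha}$ and $v^{\alpha}$.

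First I would simply substitute. Writing $\sigma:=\frac{\alpha_1}{p_1}+\dots+\frac{\alpha_n}{p_n}$ and $x_j=\frac{\alpha_j}{p_j}\sigma^{-1}$ as in Proposition~\ref{nakano}, we have $u_j=x_j^{1/p_j}$, so
\[
u^{\alpha}=\prod_{j=1}^n x_j^{\alpha_j/p_j}
=\prod_{j=1}^n\Big(\tfrac{\alpha_j}{p_j}\Big)^{\alpha_j/p_j}\Big/\sigma^{\sum_j \alpha_j/p_j}
=\frac{\big(\frac{\alpha_1}{p_1}\big)^{\alpha_1/p_1}\cdots\big(\frac{\alpha_n}{p_n}\big)^{\alpha_n/p_n}}{\sigma^{\sigma}}\,,
\]
which gives $c_{\ell_{\vec p}^n}(\alpha)=1/u^{\alpha}=\sigma^{\sigma}\big/\prod_j(\alpha_j/p_j)^{\alpha_j/p_j}$, the first claimed formula. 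For the second, one option is the analogous direct computation with $v_j=\frac1\gamma p_j x_j^{1/p_j'}$, but it is cleaner to invoke part~(i) of Theorem~\ref{cE}, namely $c_{\ell_{\vec p}^n}(\alpha)\,c_{(\ell_{\vec p}^n)'}(\alpha)=\frac{m^m}{\alpha^{\alpha}}$. Dividing, we obtain
\[
c_{(\ell_{\vec p}^n)'}(\alpha)=\frac{m^m}{\alpha^{\alpha}}\cdot\frac{1}{c_{\ell_{\vec p}^n}(\alpha)}
=\frac{m^m}{\alpha^{\alpha}}\cdot\frac{\big(\frac{\alpha_1}{p_1}\big)^{\alpha_1/p_1}\cdots\big(\frac{\alpha_n}{p_n}\big)^{\alpha_n/p_n}}{\sigma^{\sigma}}\,,
\]
which is precisely the stated expression. (As a sanity check, one can also verify $v^{\alpha}=\alpha^{\alpha}\sigma^{\sigma}\big/\big(m^m\prod_j(\alpha_j/p_j)^{\alpha_j/p_j}\big)$ using the identity $u_jv_j=\alpha_j/m$, so that $u^{\alpha}v^{\alpha}=\alpha^{\alpha}/m^m$ is consistent with part~(i).)

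There is essentially no hard obstacle here: the substantive work — establishing the factorization via the Young inequality and the uniqueness via strict convexity — has already been done in Proposition~\ref{nakano} and Theorem~\ref{cE}. The only point requiring a sentence of care is the convention for indices with $\alpha_j=0$: then $(\alpha_j/p_j)^{\alpha_j/p_j}=0^0$ should be read as $1$, which is consistent with that coordinate contributing trivially to the product $u^{\alpha}$, and with the corresponding $x_j=0$, $u_j=0$ not appearing in the monomial. With that understood, the proof is a two-line calculation followed by an appeal to part~(i) of Theorem~\ref{cE}.
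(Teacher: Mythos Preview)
Your proposal is correct and matches the paper's approach exactly: the paper simply states that the formulas are immediate consequences of Theorem~\ref{cE} and Proposition~\ref{nakano}, and you have supplied the short computation of $u^{\alpha}$ that makes this explicit. Note that you don't even need to invoke strict convexity or uniqueness from part~(iii) --- the ``moreover'' clause in part~(ii) of Theorem~\ref{cE} already guarantees $c_{X_n}(\alpha)=1/u^{\alpha}$ for \emph{any} pair $(u,v)$ satisfying the Lozanovskii factorization, so Proposition~\ref{nakano} plus the two-line calculation suffices.
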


We conclude with the remark that the first  formula is a consequence of \cite[Lemma~3.5]{defant2004estimates} proved in a~different way.

\smallskip

\subsection{Mixed $\pmb{\ell_p}$-spaces}

We  give precise formulas for the characteristics of multi indices with respect to so-called mixed $ \ell_p $-spaces.
Recall that, given positive integers $n,k>1$ and  $p, q\in [1, \infty]$, the mixed space $\ell_p^{n}(\ell_q^k)$ is
the~complex Banach lattice of all $n\times k$ matrices $x=(x_{ij})$ equipped with the norm
\[
\|x\|_{\ell_p^n(\ell_q^k)} = \Big\|\big(\big\|(x_{ij})_{j=1}^k\big\|_{\ell_q^k}\big)_{i=1}^n\Big\|_{\ell_p^n}
=\Big\|\big(\big\|x_{i\bullet}\big\|_{\ell_q^k}\big)_{i=1}^n\Big\|_{\ell_p^n}
\,;
\]
where $x_{i\bullet}$ denotes the vector determined by the $i$-th row of $x$ (similarly, $x_{\bullet j}$ denotes the $j$-th column of $x$). In what follows, for notational convenience,  we adopt the  convention $\frac{1}{\infty}:=0$.

We start with the following result.

\begin{lemma}\label{mixed}
Let $p, q \in [1, \infty]$ and  $0\leq \gamma =(\gamma_{ij}) \in \ell_1^n(\ell_1^k)$ with $\|\gamma\|_{\ell_1^n(\ell_1^k)}=1$.~Then
$x=(x_{ij})$ and $y=(y_{ij})$ given for each $1\leq i\leq n$ and $1\leq j\leq k$ by
\[
x_{ij}:= \gamma_{ij}^{\frac{1}{q}}\,\Big(\sum_{l=1}^k \gamma_{il}\Big)^{\frac{1}{p}- \frac{1}{q}}  = \gamma_{ij}^{\frac{1}{q}}\|\gamma_{i\bullet}\|_{\ell_1^k}^{\frac{1}{p}- \frac{1}{q}} ,\quad\,
y_{ij}:= \gamma_{ij}^{\frac{1}{q'}}\,\Big(\sum_{l=1}^k \gamma_{il}\Big)^{\frac{1}{p'}- \frac{1}{q'}}
 = \gamma_{ij}^{\frac{1}{q'}}\|\gamma_{i\bullet}\|_{\ell_1^k}^{\frac{1}{p'}- \frac{1}{q'}}
\]
satisfy the following properties{\rm:}
\[
x y = (x_{ij} y_{ij}) = \gamma, \quad\, \|x\|_{\ell_p^n(\ell_q^k)} =  \|y\|_{\ell_{p'}^n(\ell_{q'}^k)} = 1\,.
\]
If $p, q \in (1, \infty)$, then $x$ and $y$ are the only elements with these properties.
\end{lemma}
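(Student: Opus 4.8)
The strategy is to mimic exactly the proof of Proposition~\ref{nakano}, replacing the single Young inequality for the $N$-functions $\varphi_j(t)=t^{p_j}$ by a two-level argument that first handles the inner $\ell_q^k$-norms row by row and then the outer $\ell_p^n$-norm. The guiding principle is the same: for a fixed nonnegative matrix $\gamma$ with $\|\gamma\|_{\ell_1^n(\ell_1^k)}=1$, we want to split it as a product $xy$ with $x$ on the unit sphere of $\ell_p^n(\ell_q^k)$ and $y$ on the unit sphere of its K\"othe dual, realizing equality in every H\"older-type inequality used along the way. The explicit candidates $x_{ij}=\gamma_{ij}^{1/q}\|\gamma_{i\bullet}\|_{\ell_1^k}^{1/p-1/q}$ and $y_{ij}=\gamma_{ij}^{1/q'}\|\gamma_{i\bullet}\|_{\ell_1^k}^{1/p'-1/q'}$ are designed so that $x_{ij}y_{ij}=\gamma_{ij}$ (the exponents $1/q+1/q'=1$ and $(1/p-1/q)+(1/p'-1/q')=0$ cancel exactly), and so that the inner norms behave well: one computes $\|x_{i\bullet}\|_{\ell_q^k}^q=\sum_j\gamma_{ij}\,\|\gamma_{i\bullet}\|_{\ell_1^k}^{q/p-1}=\|\gamma_{i\bullet}\|_{\ell_1^k}^{q/p}$, hence $\|x_{i\bullet}\|_{\ell_q^k}=\|\gamma_{i\bullet}\|_{\ell_1^k}^{1/p}$, and similarly $\|y_{i\bullet}\|_{\ell_{q'}^k}=\|\gamma_{i\bullet}\|_{\ell_1^k}^{1/p'}$. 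Then $\|x\|_{\ell_p^n(\ell_q^k)}^p=\sum_i\|x_{i\bullet}\|_{\ell_q^k}^p=\sum_i\|\gamma_{i\bullet}\|_{\ell_1^k}=\|\gamma\|_{\ell_1^n(\ell_1^k)}=1$, and likewise $\|y\|_{\ell_{p'}^n(\ell_{q'}^k)}=1$. These are all routine calculations once the exponents are verified; the endpoint cases $p$ or $q\in\{1,\infty\}$ should be checked separately but are immediate from the conventions $1/\infty=0$ and the monotonicity of the norms.

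\textbf{The duality identity.} For the formula $\|y\|_{\ell_{p'}^n(\ell_{q'}^k)}=1$ I would actually prefer the cleaner route via the known description of the K\"othe dual $(\ell_p^n(\ell_q^k))'=\ell_{p'}^n(\ell_{q'}^k)$ isometrically (standard; see e.g.\ the references on mixed spaces in the paper), combined with the elementary fact that for nonnegative $x,y$ with $xy=\gamma$ and $\sum_{ij}\gamma_{ij}=1$ one always has $\|x\|_{\ell_p^n(\ell_q^k)}\,\|y\|_{(\ell_p^n(\ell_q^k))'}\ge\sum_{ij}x_{ij}y_{ij}=1$; hence once $\|x\|_{\ell_p^n(\ell_q^k)}=1$ is established, it suffices to prove $\|y\|_{\ell_{p'}^n(\ell_{q'}^k)}\le 1$, and the explicit computation above gives exactly $\|y\|_{\ell_{p'}^n(\ell_{q'}^k)}^{p'}=\sum_i\|\gamma_{i\bullet}\|_{\ell_1^k}=1$. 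This mirrors the structure of the proof of Proposition~\ref{nakano}, where the lower bound $\|v\|_{(\ell^n_{\vec p})'}\ge 1$ came for free and only the upper bound needed work.

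\textbf{Uniqueness.} For the last claim, when $p,q\in(1,\infty)$ the spaces $\ell_p^n(\ell_q^k)$ and $\ell_{p'}^n(\ell_{q'}^k)$ are strictly convex (a product of strictly convex norms over a strictly convex outer norm is strictly convex; one can invoke the standard uniform convexity of mixed $\ell_p(\ell_q)$ spaces for $1<p,q<\infty$). Then the argument from Theorem~\ref{cE}(iii) applies verbatim: for a positive $\gamma$ on the unit sphere of $\ell_1^n(\ell_1^k)$ there is a unique positive $x$ on the unit sphere of $\ell_p^n(\ell_q^k)$ and a unique positive $y$ on the unit sphere of its dual with $|\gamma|=xy$, simply because the factorization $\ell_1^n(\ell_1^k)\equiv(\ell_p^n(\ell_q^k))^{1/?}\cdot\ldots$ coming from Lozanovskii, specialized to these strictly convex spaces, forces uniqueness of the factors. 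I do not expect any real obstacle here; the only point demanding care is to phrase the strict-convexity statement correctly at the endpoints (which are excluded) and to make sure the uniqueness is asserted only among \emph{nonnegative} matrices, exactly as in the statement.

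\textbf{Main obstacle.} The genuinely delicate point is bookkeeping the fractional exponents so that \emph{all} the H\"older/Young equalities become equalities simultaneously — i.e.\ checking that the particular $x,y$ above are not just \emph{some} factorization but the \emph{extremal} one (the one realizing the norm). Concretely, one must verify that $\sum_j x_{i\bullet}$-mass and $\sum_j y_{i\bullet}$-mass are distributed proportionally to the constraint, which is what the exponents $1/q$ vs.\ $1/q'$ encode at the inner level and $1/p$ vs.\ $1/p'$ at the outer level. Once one trusts Theorem~\ref{cE}(ii) (any $u,v$ with $uv=(\alpha_j/m)$ and unit norms give the characteristics), the verification that our $x,y$ satisfy $xy=\gamma$, $\|x\|=\|y'\|=1$ is the whole content, and it reduces to the three one-line computations of inner and outer norms indicated above, plus the endpoint cases.
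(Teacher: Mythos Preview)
Your proposal is correct and follows essentially the same approach as the paper: verify $x_{ij}y_{ij}=\gamma_{ij}$ by exponent arithmetic, compute the mixed norms directly (the paper just writes ``Clearly, $\|x\|_{\ell_p^n(\ell_q^k)}=1$ and $\|y\|_{\ell_{p'}^n(\ell_{q'}^k)}=1$'', whereas you actually spell out the row-wise computation $\|x_{i\bullet}\|_{\ell_q^k}=\|\gamma_{i\bullet}\|_{\ell_1^k}^{1/p}$), and deduce uniqueness from strict convexity of $\ell_p^n(\ell_q^k)$ together with the K\"othe duality $(\ell_p^n(\ell_q^k))'\equiv\ell_{p'}^n(\ell_{q'}^k)$. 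Your detour through ``$\|y\|\ge 1$ for free, then show $\le 1$'' is unnecessary since the direct computation already gives equality, but it is not wrong.
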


\begin{proof}
Since $\frac{1}{p} + \frac{1}{p'} = 1 = \frac{1}{q} + \frac{1}{q'}$, so for each $1\leq i\leq n$, $1\leq j\leq k$,
\[
x_{ij} y_{ij} =  \gamma_{ij}^{\frac{1}{q} + \frac{1}{q'}}\Big(\sum_{l=1}^k \gamma_{il}\Big)^{\frac{1}{p}- \frac{1}{q}
+ \frac{1}{p'} - \frac{1}{q'}}
= \gamma_{ij}\,.
\]
Clearly,  $\|x\|_{\ell_p^n(\ell_q^k)}=1$ and $\|y\|_{\ell_{p'}^n(\ell_{q'}^k)}=1$.~The uniqueness follows
by the K\"othe duality formula
\[
\ell_p^n(\ell_q^k)^{\prime} \equiv  \ell_{p'}^n(\ell_{q'}^k)\,,
\]
and the fact that $\ell_p^n(\ell_q^k)$ is a~strictly convex Banach space.
\end{proof}

  A monomial on the mixed space $\ell_p^n (\ell_q^k)$ is determined by a multi-index which is an $n\times k$ matrix of non-negative integers. We denote $\Lambda(m,n\times k)$ to the set of $n\times k$ multi-indices of degree $m$, that is $\alpha \in\Lambda(m,n\times k)$ if $\displaystyle\sum_{i,j}\alpha_{ij}=m$.

\medskip

We are ready to state the following result about characteristics of multi indices  with respect to the  mixed space $\ell_p^n(\ell_q^k)$.

\begin{proposition}
For every $p, q\in [1, \infty]$ and for each $\alpha \in \Lambda(m,n\times k)$ one has
\[
c_{\ell_p^n(\ell_q^k)}(\alpha)
= m^{\frac{m}{p}} \prod_{i=1}^n\,\prod_{j=1}^k \bigg(\frac{\|\alpha_{i\bullet}\|_{\ell_1}^{1/q - 1/p}}{\alpha_{ij}^{1/q}}\bigg)^{\alpha_{ij}}
= \frac{m^{\frac{m}{p}}}{\alpha^{\frac{\alpha}{q}}}  \Bigg(\prod_{i=1}^n\|\alpha_{i\bullet}\|_{\ell_1}^{\|\alpha_{i\bullet}\|_{\ell_1}}\Bigg)^{\frac1{q}-\frac{1}{p}}\,.
\]
\end{proposition}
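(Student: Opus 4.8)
The statement asks for a formula for $c_{\ell_p^n(\ell_q^k)}(\alpha)$ when $\alpha$ is an $n\times k$ multi index of degree $m$, and the natural route is the same one used above for $\ell_r^n$ and for Nakano spaces: combine Theorem~\ref{cE} (which reduces computing the characteristic to producing an explicit Lozanovskii factorization) with the explicit factorization of a positive matrix in $\ell_1^n(\ell_1^k)$ furnished by Lemma~\ref{mixed}. The key point is that $c_{X_n}(\alpha) = 1/u^\alpha$ for \emph{any} positive $u \in S_{X_n}$ and positive $v \in S_{X_n'}$ with $u_{ij}v_{ij} = \alpha_{ij}/m$ (Theorem~\ref{cE}(ii)), and since $\ell_p^n(\ell_q^k)$ is strictly convex for $p,q\in(1,\infty)$ the factorization is even unique; the endpoint cases $p$ or $q$ in $\{1,\infty\}$ are handled by the same explicit formulas (which still satisfy the two defining properties even if uniqueness fails).

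\textbf{First step.} Set $\gamma := \big(\alpha_{ij}/m\big)_{i,j}$, which is a nonnegative matrix with $\|\gamma\|_{\ell_1^n(\ell_1^k)} = \frac1m\sum_{i,j}\alpha_{ij} = 1$, so Lemma~\ref{mixed} applies. It yields positive matrices $x \in S_{\ell_p^n(\ell_q^k)}$ and $y \in S_{\ell_{p'}^n(\ell_{q'}^k)}$ with $x_{ij}y_{ij} = \gamma_{ij} = \alpha_{ij}/m$ and
\[
x_{ij} = \gamma_{ij}^{1/q}\,\|\gamma_{i\bullet}\|_{\ell_1^k}^{\,1/p - 1/q}\,.
\]
Recalling that $(\ell_p^n(\ell_q^k))' \equiv \ell_{p'}^n(\ell_{q'}^k)$ isometrically, Theorem~\ref{cE}(ii) gives $c_{\ell_p^n(\ell_q^k)}(\alpha) = 1/x^\alpha = \prod_{i,j} x_{ij}^{-\alpha_{ij}}$.

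\textbf{Second step: the computation.} It remains to simplify $\prod_{i,j} x_{ij}^{-\alpha_{ij}}$. Substitute $\gamma_{ij} = \alpha_{ij}/m$ and $\|\gamma_{i\bullet}\|_{\ell_1^k} = \|\alpha_{i\bullet}\|_{\ell_1}/m$, so that
\[
x_{ij}^{-\alpha_{ij}} = \Big(\tfrac{\alpha_{ij}}{m}\Big)^{-\alpha_{ij}/q}\Big(\tfrac{\|\alpha_{i\bullet}\|_{\ell_1}}{m}\Big)^{\alpha_{ij}(1/q-1/p)}\,.
\]
Taking the product over $i,j$, one collects the powers of $m$: the total exponent of $m$ coming from the first factor is $\frac1q\sum_{i,j}\alpha_{ij} = \frac mq$, and from the second factor is $(\frac1p-\frac1q)\sum_{i,j}\alpha_{ij} = m(\frac1p-\frac1q)$; together these give $m^{m/p}$. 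The remaining factors assemble into $\prod_{i,j}\big(\|\alpha_{i\bullet}\|_{\ell_1}^{1/q-1/p}/\alpha_{ij}^{1/q}\big)^{\alpha_{ij}}$, which is the first displayed expression in the statement. For the second displayed form, use $\prod_{i,j}\alpha_{ij}^{-\alpha_{ij}/q} = \alpha^{-\alpha/q}$ and $\prod_{i,j}\|\alpha_{i\bullet}\|_{\ell_1}^{\alpha_{ij}(1/q-1/p)} = \prod_i \|\alpha_{i\bullet}\|_{\ell_1}^{\|\alpha_{i\bullet}\|_{\ell_1}(1/q-1/p)}$, since $\sum_j \alpha_{ij} = \|\alpha_{i\bullet}\|_{\ell_1}$. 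There is no serious obstacle here; the only care needed is bookkeeping with the exponents and confirming that the convention $1/\infty = 0$ makes the endpoint cases $p$ or $q \in \{1,\infty\}$ consistent (for instance $p = q$ recovers $c_{\ell_p^n(\ell_p^k)}(\alpha) = m^{m/p}/\alpha^{\alpha/p}$, matching \eqref{dineen} after reindexing the matrix as a vector). I would close by remarking, as the authors do in the Nakano case, that the strict-convexity hypothesis is only needed for the uniqueness assertion and not for the formula itself, which holds for all $p,q \in [1,\infty]$ by Theorem~\ref{cE}(ii).
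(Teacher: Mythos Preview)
Your proposal is correct and follows essentially the same approach as the paper: set $\gamma = (\alpha_{ij}/m)$, apply Lemma~\ref{mixed} to obtain the explicit Lozanovskii factorization $x,y$, invoke Theorem~\ref{cE}(ii) to get $c_{\ell_p^n(\ell_q^k)}(\alpha) = 1/x^\alpha$, and then simplify the product. The paper's proof is slightly terser but identical in substance.
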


\begin{proof}
We let $\gamma:= \big(\frac{\alpha_{ij}}{m}\big)$. Since $\gamma \geq 0$ with $\|\gamma\|_{\ell_1^n(\ell_1^k)} =1$,
it follows from Lemma \ref{mixed} that the elements $x=(x_{ij})\geq 0$ and $y=(y_{ij})\geq 0$ given by
\begin{align*}
x_{ij}:= \gamma_{ij}^{\frac{1}{q}}\,\Big(\sum_{l=1}^k
\gamma_{il}\Big)^{\frac{1}{p}- \frac{1}{q}}& =
m^{-\frac{1}{p}}\,\alpha_{ij}^{\frac{1}{q}}\,\Big(\sum_{l=1}^k \alpha_{il}\Big)^{\frac{1}{p}- \frac{1}{q}}
= m^{-\frac{1}{p}}\,\alpha_{ij}^{\frac{1}{q}}\,\|\alpha_{i\bullet}\|_{\ell_1}^{\frac{1}{q}- \frac{1}{p}}\,,
\end{align*}
\[
y_{ij}:= \gamma_{ij}^{\frac{1}{q'}}\,\Big(\sum_{l=1}^k \gamma_{il}\Big)^{\frac{1}{p'}- \frac{1}{q'}}
= m^{-\frac{1}{p'}}\,\alpha_{ij}^{\frac{1}{q'}}\,\|\alpha_{i\bullet}\|_{\ell_1}^{\frac{1}{q'}- \frac{1}{p'}}
\]
satisfy
\[
(x_{ij} y_{ij}) = \gamma, \quad\, \|x\|_{\ell_p^n(\ell_q^k)} =  \|y\|_{\ell_{p'}^n(\ell_{q'}^k)} = 1\,.
\]
Thus applying Theorem \ref{cE} we get
\begin{align*}
c_{\ell_p^n(\ell_q^k)}(\alpha) & :=
\bigg(\sup_{z \in B_{\ell_p^n(\ell_q^k)}}\,\prod_{i=1}^n \prod_{j=1}^k |z_{ij}|^{\alpha_{ij}}\bigg)^{-1} 
\,=\,
\prod_{i=1}^n \prod_{j=1}^k x_{ij}^{-\alpha_{ij}} = m^{\frac{m}{p}} \prod_{i=1}^n\,\prod_{j=1}^k \bigg(\frac{\|\alpha_{i\bullet}\|_{\ell_1}^{1/q - 1/p}}{\alpha_{ij}^{1/q}}\bigg)^{\alpha_{ij}}\,.
\end{align*}
This completes the proof.
\end{proof}

\smallskip

\subsection{Lorentz and Marcinkiewicz \,spaces} 
\label{Marcinkiewicz spaces}

In our research of multivariate polynomials equipped with uniform norms on unit spheres of Banach spaces, we will focus on
an important class of symmetric sequence spaces, namely Lorentz and Marcinkiewicz spaces. We include some essential definitions and properties which we are going to  use freely and without further reference.  For the study of Lorentz sequence spaces we refer to \cite{LT1}, \cite{creekmore1981type},  \cite{Reisner}, \cite{Jameson}.
   
Given $1\leq p<\infty$ and a~positive non-increasing (resp. non-decreasing) sequence $w:=(w_k)_{k\in \mathbb{N}}$.
Following \cite{LT1}, the Lorentz space $d(w, p)$ is defined to be the symmetric Banach (resp. quasi-Banach) sequence 
space on $\mathbb{N}$ of all scalar sequences $x=(x_k)$ equipped with the norm (resp. quasi-norm) given by
\begin{align*} \label{d(wp)}
\|x\|_{d(w,p)} := \Big(\sum_{k=1}^\infty (x_k^{*})^p w_k\Big)^{\frac{1}{p}}\,,
\end{align*}
where $x^{*}:= (x^{*}_k)$ denotes the decreasing rearrangement of $|x|$. In what follows, for each $n\in \mathbb{N}$, we write
$d^n(w, p)$ instead  of $d(w,p)^n$. Note that the fundamental function of $d(w, p)$ is given by
\[
\varphi_{d(w,p)}(n) = (w_1 + \ldots + w_n)^{\frac{1}{p}}, \quad\, n\in \mathbb{N}\,.
\]

In what follows we use the well known fact that when $(w_k)$ is a non-increasing sequence, the K\"othe dual space $d(w, 1)'$ of $d(w,1)$ coincides isometrically with the Marcinkiewicz space $m_w$ of all sequences $(x_k)$ equipped with the norm
\begin{equation*} \label{mw}
\|x\|_{m_w} := \sup_{n\geq 1} \frac{\sum_{k=1}^n x_{k}^{*}}{w_1 + \ldots + w_n}\,.
\end{equation*}
Observe that if $1<p<\infty$\, and $p'$ denotes the conjugate exponent of $p$, then the Marcinkiewicz 
space $m_{w}$ generated by a~decreasing sequence $w$ given by $w:=\big(k^{1/p'} - (k-1)^{1/p'}\big)_{k\in \mathbb{N}}$ is usually 
denoted by $\ell_{p, \infty}$, and it is equipped with the norm
\[
\|x\|_{\ell_{p, \infty}}: = \sup_{n\geq 1} \frac{\sum_{k=1}^n\,x_k^{*}}{n^{1-1/p}}\,.
\]

We place special emphasis on the case of Lorentz spaces $d(w, p)$ with $w=\big(k^{\frac{q}{p} -1}\big)_{k\in \mathbb{N}}$,
where $p\in (1, \infty)$ and $q\in [1, \infty)$. As usual, this space is denoted by $\ell_{p,q}$ and its fundamental function 
satisfies the equivalence
\[
\varphi_{\ell_{p,q}}(n) \sim n^{\frac{1}{p}}\,.
\]
Note that if $1\leq q<p$, then $\ell_{p, q}$ is a~symmetric Banach  space. It is worth noting that $\ell_{p, q} = d(v, q)$ up 
to equivalence of norms, where $v:= \big(k^{\frac{q}{p}} - (k-1)^{\frac{q}{p}}\big)_{k\in \mathbb{N}}$. This renorming of
$\ell_{p, q}$ is useful because the fundamental function $d(v, q)$ is expressed by the precise formula
\[
\varphi_{d(v, q)}(n) = n^{\frac{1}{p}}, \quad\, n\in \mathbb{N}\,.
\]
Note also that in the case when $q>p$ the Lorentz space
$\ell_{p, q}$ forms a~symmetric Banach sequence space  whenever it is equipped with the norm
\[
\|x\|_{\ell_{p,q}}^{*}:= \Big(\sum_{n=1}^\infty n^{\frac{q}{p} -1} \Big( \frac{1}{n} \sum_{k=1}^n x_k^* \Big)^q \Big)^{1/q}\,,
\]
which satisfies
\begin{align*}
\|x\|_{\ell_{p,q}} \leq \Vert x \Vert_{\ell_{p,q}}^* \leq p'\, \|x\|_{\ell_{p,q}}, \quad\, x\in \ell_{p,q}\,.
\end{align*}

We heavily use the fact that the spaces $\ell_{p,q}$ are ordered lexicographically:
\begin{align*}
&
\ell_{p,q} \hookrightarrow \ell_{r, s},  \quad\,\,\,\,\, \text{for \, $p < r$} \\
&
\ell_{p, q} \hookrightarrow \ell_{r, s}, \quad\,\,\,\,\, \text{for \, $r = s$ \, and \, $q < s$\,.}
&
\end{align*}

After these preliminaries let us start providing estimates for the characteristics of finite dimensional Marcinkiewicz $m_w$ and Lorentz spaces $d(w,1)$. Below we use a~well known inequality from the classical book by Hardy, Littlewood and P\'olya \cite{hardy1952inequalities} involving finite sequences with terms rearranged. This
inequality states that for any $(a_1,\ldots, a_n)  \in \mathbb{R}^{n}_{+}$ and $(b_1, \ldots, b_n)\in \mathbb{R}^{n}_{+}$ one has
\begin{equation}\label{HLP}
a_1 b_1 + \ldots + a_n b_n \geq a_{1}^{*}\,(b_1)_{*} + \ldots + a_{n}^{*}\,(b_n)_{*}\,,
\end{equation}
where $(a_{j}^{*})_{j=1}^n$ is the decreasing rearrangement of $(a_j)_{j=1}^n$ and $((b_j)_{*})_{j=1}^n$ is a~non-decreasing rearrangement of $(b_j)_{j=1}^n$.

\smallskip

\begin{proposition}
\label{cEE}
Let $X_n=(\mathbb{C}^n, \|\cdot\|)$ be a symmetric space with normalized standard basis. Then, for each
$\alpha \in \Lambda(m, n)$, there exists $u\in S_{X_n}^{+}:= \{u\in \mathbb{C}^{n}\,:\, \|u\|_{X_n}
= 1,\,\, u\in \mathbb{R}^{n}_{+}\,\}$ such that
\[
c_{X_n}(\alpha) = c_{X_n}(\alpha^{*}) = \frac{1}{(u^{*})^{\alpha^{*}}}\,.
\]
In particular, this yields that for each $m\leq n$,
\[
c_{X_n}(\alpha) \geq \varphi_{X_n}(1)^{\alpha^{*}_1} \cdots \varphi_{X_n}(m)^{\alpha^{*}_m}\,.
\]
\end{proposition}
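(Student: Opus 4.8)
The plan is to prove Proposition~\ref{cEE} by reducing everything to the definition of the characteristic $c_{X_n}(\alpha)$ and exploiting the symmetry of $X_n$ together with the rearrangement inequality~\eqref{HLP}. First I would observe that $c_{X_n}(\alpha)^{-1} = \sup\{|z^\alpha| : z \in \overline B_{X_n}\}$, and by a standard compactness argument this supremum is attained at some $w \in \overline B_{X_n}$; replacing $w$ by $(|w_1|, \dots, |w_n|)$ (which does not decrease $|w^\alpha|$ and does not increase the norm, since $X_n$ is a lattice with a $1$-unconditional basis) we may assume $w \in \mathbb{R}^n_{+}$. So there is $u \in S_{X_n}^{+}$ realizing the supremum.

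Next I would establish the equality $c_{X_n}(\alpha) = c_{X_n}(\alpha^{*})$ and the formula $c_{X_n}(\alpha) = \frac{1}{(u^{*})^{\alpha^{*}}}$. Here the key point is that $X_n$ is symmetric: for any permutation $\sigma$ of $\{1,\dots,n\}$ the map $z \mapsto (z_{\sigma(i)})$ is an isometry of $X_n$, hence $\sup_{z \in \overline B_{X_n}} |z^\alpha|$ depends only on the multiset of exponents, i.e. on $\alpha^{*}$; this gives $c_{X_n}(\alpha) = c_{X_n}(\alpha^{*})$. For the second identity, I would argue that among all $z \in S_{X_n}^{+}$ maximizing $|z^{\alpha^{*}}| = \prod_j z_j^{\alpha^{*}_j}$, one may assume the coordinates of $z$ are arranged in the same order as those of $\alpha^{*}$, i.e. $z$ is itself nonincreasing so that $z = z^{*}$: indeed, if $\alpha^{*}_i \ge \alpha^{*}_j$ but $z_i < z_j$, swapping $z_i$ and $z_j$ keeps the norm fixed (symmetry) and, by the elementary inequality $a^p b^q \le a^q b^p$ for $a \le b$, $p \le q$, does not decrease the product. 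Equivalently, one invokes~\eqref{HLP} with $a_j = \log z_j$ (or directly the rearrangement inequality for products) to see that the maximizing $z$ can be taken to be $z^{*}$. Therefore $c_{X_n}(\alpha) = c_{X_n}(\alpha^{*}) = \bigl((u^{*})^{\alpha^{*}}\bigr)^{-1}$ for the appropriate maximizer $u$.

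Finally, for the lower bound I would plug in a concrete competitor: for $m \le n$, take
\[
u_0 = \frac{1}{\varphi_{X_n}(m)}(\underbrace{1,\dots,1}_{m}, 0, \dots, 0) \in S_{X_n}^{+}
\]
(using $\varphi_{X_n}(m) = \|\sum_{k=1}^m e_k\|_{X_n}$), which already appears in the proof of Proposition~\ref{mietek}. Then $c_{X_n}(\alpha^{*})^{-1} = \sup |z^{\alpha^{*}}| \ge u_0^{\alpha^{*}} = \varphi_{X_n}(m)^{-m}$, which recovers the tetrahedral case but is too crude in general. To get the sharper bound $c_{X_n}(\alpha) \ge \varphi_{X_n}(1)^{\alpha^{*}_1}\cdots\varphi_{X_n}(m)^{\alpha^{*}_m}$, I would instead estimate the maximizer coordinatewise: writing the maximizing $u = u^{*}$ as above, one has for each $1 \le k \le m$ the bound $u_k \le 1/\varphi_{X_{n}'}(k) = \varphi_{X_n}(k)/k$ coming from the argument in Proposition~\ref{mietek} (if $u_k > \varphi_{X_n}(k)/k$ then $\|(u_1,\dots,u_k,0,\dots)\|_{X_n} \ge u_k \varphi_{X_n}(k) > 1$ by monotonicity, contradicting $\|u\|_{X_n}=1$; more precisely use $k u_k \le \|(1,\dots,1,0,\dots)\|_{X_{n}'}\|u\|_{X_n}$). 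Hence
\[
c_{X_n}(\alpha)^{-1} = (u^{*})^{\alpha^{*}} = \prod_{k\ge 1} u_k^{\alpha^{*}_k} \le \prod_{k=1}^{m} \Bigl(\frac{\varphi_{X_n}(k)}{k}\Bigr)^{\alpha^{*}_k} \le \prod_{k=1}^m \varphi_{X_n}(k)^{-\alpha^{*}_k}\cdot\prod_{k=1}^m k^{\alpha^{*}_k}\cdot\Bigl(\prod_{k=1}^m k^{\alpha^{*}_k}\Bigr)^{-1},
\]
wait—I must be careful here; the clean statement to prove is just $c_{X_n}(\alpha) \ge \prod_{k=1}^m \varphi_{X_n}(k)^{\alpha^{*}_k}$, so I only need $u_k \le 1/\varphi_{X_n}(k)$, which follows immediately from $u_k \varphi_{X_n}(k) \le \|(u_1,\dots,u_k,0,\dots)\|_{X_n} \le 1$ by the lattice property. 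Then $c_{X_n}(\alpha)^{-1} = \prod_k u_k^{\alpha^{*}_k} \le \prod_{k=1}^m \varphi_{X_n}(k)^{-\alpha^{*}_k}$, which inverts to the claim. The main obstacle I anticipate is the bookkeeping in the second paragraph: justifying rigorously that the maximizing vector may simultaneously be taken nonnegative, nonincreasing, and supported on the first $m$ coordinates where $\alpha^{*}$ is supported — the nonnegativity and rearrangement parts are routine given symmetry, but one must be slightly careful that an optimal $u$ need not be unique, so the statement is "there exists $u$" rather than "for all $u$", exactly as phrased.
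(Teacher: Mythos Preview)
Your approach is correct and essentially the same as the paper's: compactness gives an extremizer $u\in S_{X_n}^+$, symmetry together with the rearrangement inequality~\eqref{HLP} yields $c_{X_n}(\alpha)=c_{X_n}(\alpha^*)=1/(u^*)^{\alpha^*}$, and the coordinate bound $u_k^*\le 1/\varphi_{X_n}(k)$ (equivalently $x_k^*\le\|x\|_{X_n}/\varphi_{X_n}(k)$) gives the final estimate. One slip to fix: your elementary inequality ``$a^pb^q\le a^qb^p$ for $a\le b$, $p\le q$'' is stated in the wrong direction (check $a=1,b=2,p=1,q=2$), though the conclusion you draw from the swap is correct.
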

\begin{proof}
Since $\bar{B}_{X_n}$ is compact, there exists $u = (u_1, \ldots, u_n) \in S_{X_n}^{+}$ such that
\[
c_{X_n}(\alpha) = \frac{1}{u_{1}^{\alpha_1} \cdots u_{n}^{\alpha_n}}\,.
\]
Note that $u_j=0$ for some $j\in [n] = \{i\colon 1 \leq i \leq n\}$ implies that $\alpha_j = 0$, and so without loss of generality
we may assume $u_j>0$ for each $j\in [n]$ (otherwise we will consider in the above formula only $\alpha \in \text{supp} \alpha:= \{j\in [n];\, \alpha_j \neq 0\}$). Since $\|\id\colon X_n \to \ell_{\infty}^n\|\leq 1$, it follows that for each $j\in [n]$, we have $u_j\leq 1$ and hence  $\log\frac{1}{u_j}\geq 0$. Thus applying the above inequality \eqref{HLP}, we get that (where the last equality is by
$\text{supp}(\alpha^{*}) \subset [m]$)
\begin{align*}
\log c_{X_n}(\alpha) & = \alpha_1 \log \frac{1}{u_1} + \ldots +  \alpha_{n} \log \frac{1}{u_n}
 \\
&
\geq \alpha^{*}_{1} \Big(\log \frac{1}{u_1}\Big)_{*} + \ldots +  \alpha^{*}_{n} \Big(\log \frac{1}{u_n}\Big)_{*}
 \\
&
=  \alpha^{*}_{1} \log \frac{1}{u^{*}_1} + \ldots +  \alpha^{*}_{n} \log \frac{1}{u^{*}_n}
= \log \frac{1}{(u^{*})^{\alpha^{*}}}
\,,
\end{align*}
and so
\[
c_{X_n}(\alpha) \geq \frac{1}{(u^{*})^{\alpha^{*}}}\,.
\]
Since $u^{*} \in S_{X_n}$, it follows that
\[
c_{X_n}(\alpha^{*}) \leq \frac{1}{(u^{*})^{\alpha^{*}}} \leq c_{X_n}(\alpha)\,.
\]
Conversely, we claim  that $c_{X_n}(\alpha) \leq c_{X_n}(\alpha^{*})$. Indeed, let $v\in S_{X_n}^{+}$  be such that
\[
c_{X_n}(\alpha^{*})= \frac{1}{v_{1}^{\alpha_{1}^{*}} \cdots  v_{n}^{\alpha_n^{*}}}\,.
\]
Now let $\sigma \colon [n] \to [n]$ be a permutation such that that
$v_{\sigma}^{\alpha} = v^{\alpha^{*}}$, where
 $v_{\sigma} := (v_{\sigma(1)}, \ldots,v_{\sigma(n)})$.
Since $E$ is symmetric,
$\|v_{\sigma}\|_{X_n} = \|v\|_{X_n} = 1$. Thus
\[
c_{X_n}(\alpha) \leq \frac{1}{v_{\sigma}^{\alpha}} = \frac{1}{v^{\alpha^{*}}} = c_{X_n}(\alpha^{*})
\]
shows the claim and proves the first statement.
To finish it is enough to observe that for any $x=(x_1, \ldots, x_n) \in X_n$ we have
\begin{equation*}
x_{k}^{*} \leq \frac{1}{\varphi_{X_n}(k)}\, \|x\|_{X_n}, \quad\, k\in [n]\,. \qedhere
\end{equation*}
\end{proof}

\smallskip

Under some mild assumptions on the weight $w$, the following result provides a concrete expression of the
characteristic $c_{m^n_w}(\alpha)$ of $\alpha \in \Lambda(m,n)$.
\begin{corollary}
\label{marc}
Let $m_w$ be a Marcinkiewcz space with a positive non-increasing weight $w=(w_k)~\in~\omega(\mathbb{N})$ such that $w_1 =1$
and for some $\gamma>1$, we have
\[
w_1 + \ldots + w_k \leq k\gamma w_k, \quad\, k\in \mathbb{N}\,.
\]
Then, for each $\alpha\in \Lambda(m, n)$ with $m\leq n$, we have
\[
\prod_{k=1}^m \varphi(k)^{\alpha_{k}^{*}}\,\,
\leq \,\,c_{m_{w}^n}(\alpha)= c_{m_{w}^n}(\alpha^{*}) \,\,\leq\,\,  \gamma^m\,\prod_{k=1}^m \varphi(k)^{\alpha_{k}^{*}}\,,
\]
where $\varphi(k):= \frac{k}{w_1 + \ldots + w_k}\,,\, \,k\in \mathbb{N}$ is the fundamental function
of $m_w$.
\end{corollary}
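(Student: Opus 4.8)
The plan is to combine the general lower bound for characteristics in symmetric spaces from Proposition~\ref{cEE} with a matching upper bound obtained from the Young-inequality machinery of Theorem~\ref{cE}(ii). By Proposition~\ref{cEE} applied to $X_n = m_w^n$ (which is a symmetric space with normalized unit vectors, since $w_1=1$), we already get both $c_{m_w^n}(\alpha) = c_{m_w^n}(\alpha^{*})$ and the lower estimate $c_{m_w^n}(\alpha) \ge \prod_{k=1}^{m} \varphi_{m_w^n}(k)^{\alpha^{*}_k}$, where $\varphi_{m_w^n}(k) = \varphi(k) = k/(w_1+\cdots+w_k)$. Since $\alpha^{*}$ is supported on $[m]$ and $m\le n$, the fundamental function values that appear are exactly $\varphi(1),\ldots,\varphi(m)$, so the left-hand inequality of the corollary is immediate. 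It therefore remains only to prove the upper bound $c_{m_w^n}(\alpha) \le \gamma^m \prod_{k=1}^{m}\varphi(k)^{\alpha^{*}_k}$.

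For the upper bound I would exploit the identification $m_w = d(w,1)'$ together with Theorem~\ref{cE}(ii): choosing $u\in d^n(w,1)$ and $v\in m_w^n$ with $\|u\|_{d^n(w,1)} = \|v\|_{m_w^n} = 1$ and $u_jv_j = \alpha_j/m$, one has $c_{m_w^n}(\alpha) = 1/v^{\alpha}$. Equivalently, by Proposition~\ref{cEE} there is $u \in S_{m_w^n}^{+}$ with $c_{m_w^n}(\alpha) = 1/(u^{*})^{\alpha^{*}}$, and since $u^{*}$ is a decreasing unit vector of $m_w^n$ one has the pointwise bound $u^{*}_k \le 1/\varphi_{m_w^n}(k) = 1/\varphi(k)$ for all $k\in[n]$; but this only reproves the lower bound, so this direction is not quite enough. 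Instead I would estimate $c_{m_w^n}(\alpha)$ from above by constructing an explicit near-optimal vector $z\in B_{m_w^n}$: take $z$ to have $k$-th largest coordinate equal to $\tfrac1\gamma \cdot \tfrac{\alpha^{*}_k/m}{\varphi_{d(w,1)^n}(k) - \varphi_{d(w,1)^n}(k-1)}$-type weights mirroring the Young-inequality extremal pair for the couple $(d(w,1)^n, m_w^n)$, check using the hypothesis $w_1+\cdots+w_k \le k\gamma w_k$ that $\|z\|_{m_w^n} \le 1$, and then read off $|z^{\alpha}| = (z^{*})^{\alpha^{*}} \ge \gamma^{-m}\prod_{k=1}^m \varphi(k)^{-\alpha^{*}_k}$, which gives $c_{m_w^n}(\alpha) = 1/\sup_z|z^{\alpha}| \le \gamma^m \prod_{k=1}^m \varphi(k)^{\alpha^{*}_k}$.

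More concretely, the cleanest route is probably via the factorization $m_w = d(w,1)'$ and Lemma~\ref{theta}/Lemma~\ref{lambdaP_J}-style arguments: since $m_w$ has the Fatou property and is symmetric, Theorem~\ref{cE}(i) gives $c_{m_w^n}(\alpha) \, c_{d(w,1)^n}(\alpha) = m^m/\alpha^{\alpha}$, so it suffices to bound $c_{d(w,1)^n}(\alpha)$ from below by $\gamma^{-m}\tfrac{m^m}{\alpha^\alpha}\prod_{k=1}^m\varphi(k)^{-\alpha^{*}_k}$. For the Lorentz space $d(w,1)^n$ one has the explicit formula $\|x\|_{d^n(w,1)} = \sum_k x^{*}_k w_k$, and one evaluates the monomial $x^{\alpha}$ at the decreasing vector $x$ with $x^{*}_k = \tfrac{\alpha^{*}_k/m}{w_k}$ for $k\le m$ and $x^{*}_k=0$ otherwise; then $\|x\|_{d^n(w,1)} = \sum_{k\le m}\alpha^{*}_k/m = 1$ and $x^{\alpha} = (x^{*})^{\alpha^{*}} = \prod_{k\le m}(\alpha^{*}_k/(m w_k))^{\alpha^{*}_k}$, and using $w_1+\cdots+w_k \le k\gamma w_k$, i.e. $1/w_k \ge \tfrac{1}{\gamma}\cdot\tfrac{k}{w_1+\cdots+w_k} = \varphi(k)/\gamma$, this yields $c_{d(w,1)^n}(\alpha) \le 1/x^{\alpha}$ with the claimed factor $\gamma^m$ appearing since there are at most $m$ factors with exponents summing to $m$. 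Multiplying back by $c_{d(w,1)^n}$ in the identity $c_{m_w^n}(\alpha) = \tfrac{m^m}{\alpha^\alpha} x^{\alpha} \cdot(\ldots)$ and simplifying $\tfrac{m^m}{\alpha^\alpha}\prod_k(\alpha^{*}_k/m)^{\alpha^{*}_k} = \prod_k(\alpha^{*}_k)^{\alpha^{*}_k-\alpha_k}\cdots$ (which collapses because $\alpha^{*}$ is a permutation of $\alpha$) gives exactly $c_{m_w^n}(\alpha) \le \gamma^m \prod_{k=1}^m \varphi(k)^{\alpha^{*}_k}$.

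The main obstacle I anticipate is the bookkeeping in the last simplification step: one must carefully track that the terms $\alpha^{\alpha}$ and the monomial evaluations built from $\alpha^{*}$ recombine cleanly (using that $\alpha\mapsto\alpha^{*}$ is a coordinate permutation, so $\alpha^{\alpha} = (\alpha^{*})^{\alpha^{*}}$ and $\prod_k(\alpha^{*}_k/m)^{\alpha^{*}_k} = \alpha^{*\,\alpha^{*}}/m^m$), and that the hypothesis $w_1+\cdots+w_k\le k\gamma w_k$ is applied exactly $\mathrm{supp}$-many times so that the accumulated constant is $\gamma^{|\alpha^{*}|} = \gamma^m$ and not something larger like $\gamma^n$. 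The rest — verifying the test vector lies in the unit ball, and invoking Proposition~\ref{cEE} for the lower bound and the rearrangement identity $c_{m_w^n}(\alpha)=c_{m_w^n}(\alpha^{*})$ — is routine.
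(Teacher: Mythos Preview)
Your lower bound and the rearrangement identity $c_{m_w^n}(\alpha)=c_{m_w^n}(\alpha^{*})$ are fine: both follow directly from Proposition~\ref{cEE}, exactly as the paper does it.

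The upper bound, however, has a genuine direction error. You correctly observe that via Theorem~\ref{cE}(i) it suffices to bound $c_{d(w,1)^n}(\alpha)$ \emph{from below}. But then you produce a test vector $x\in S_{d^n(w,1)}$ and conclude $c_{d(w,1)^n}(\alpha)\le 1/x^{\alpha}$ --- which is an \emph{upper} bound on $c_{d(w,1)^n}(\alpha)$, hence a \emph{lower} bound on $c_{m_w^n}(\alpha)$, the wrong direction. (There is also a secondary issue: your vector $x_k=\alpha_k^{*}/(mw_k)$ need not be decreasing, so $\|x\|_{d(w,1)}=\sum_k x_k^{*}w_k\ge\sum_k x_k w_k=1$ by the rearrangement inequality, with strict inequality possible; but the main problem is the direction.)

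The fix is to evaluate a test vector directly in $B_{m_w^n}$, and the paper's choice is drastically simpler than anything you wrote down: take $z=(w_1,\dots,w_n)$. Since $w$ is decreasing, $z^{*}=z$ and
\[
\|z\|_{m_w^n}=\sup_{j\le n}\frac{w_1+\cdots+w_j}{w_1+\cdots+w_j}=1,
\]
so $c_{m_w^n}(\alpha^{*})\le 1/z^{\alpha^{*}}=\prod_{k=1}^m w_k^{-\alpha_k^{*}}$. Now the hypothesis $w_1+\cdots+w_k\le k\gamma w_k$ reads $1/w_k\le\gamma\varphi(k)$, and since $\sum_k\alpha_k^{*}=m$ one gets the upper bound $\gamma^m\prod_{k=1}^m\varphi(k)^{\alpha_k^{*}}$ in one line. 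No duality, no Young-inequality extremal pairs, no dependence of the test vector on $\alpha$.
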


\begin{proof} Given $n\in \mathbb{N}$,  let $E:= m_{w}^n$. We have
\[
\varphi(k)= \frac{k}{w_1 + \ldots + w_k} \geq  \frac{1}{\gamma w_k}, \quad\, k\in [n]\,.
\]
Since $w_1=1$, we have that $\|e_k\|_E= 1$ for each $k\in [n]$. Now we are in position to apply Proposition~\ref{cEE}
to get that
\[
\prod_{k=1}^n \varphi(k)^{\alpha_{k}^{*}} \leq c_E(\alpha)= c_E(\alpha^{*})\,.
\]
Clearly, $\|(w_1, \ldots, w_n)\|_E = 1$ and whence
\[
c_E(\alpha^{*})\leq \prod_{k=1}^n \frac{1}{w_{k}^{\alpha_{k}^{*}}} \leq \gamma^{m}\,
\prod_{k=1}^n \varphi(k)^{\alpha_{k}^{*}}\,.
\]
This completes the proof.
\end{proof}

\noindent
In the next corollary we apply the preceding result in the special case of Marcinkiwicz spaces
$\ell^n_{r, \infty}$.

\smallskip

\begin{corollary}
\label{cMar}
Let  $1<r<\infty$. Then for each $\alpha\in \Lambda(m, n)$
with $m\leq n$, we have
\[
\prod_{k=1}^m k^{\frac{\alpha_{k}^{*}}{r}}
\leq c_{\ell_{r,\infty}^n}(\alpha) \leq  \Big(\frac{r}{r-1}\Big)^m \prod_{k=1}^m k^{\frac{\alpha_{k}^{*}}{r}}\,.
\]
\end{corollary}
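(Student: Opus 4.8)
The statement is exactly the specialization of Corollary~\ref{marc} to the Marcinkiewicz space $\ell_{r,\infty}^n = m_w^n$, where (as recalled in Section~\ref{Marcinkiewicz spaces}) the weight is $w = (w_k)_{k\in\mathbb{N}}$ with $w_k = k^{1/r'} - (k-1)^{1/r'}$ and $r'=r/(r-1)$ the conjugate exponent of $r$. So the plan is simply to check that this weight $w$ satisfies all the hypotheses of Corollary~\ref{marc}, to identify the correct constant $\gamma$, and to compute the fundamental function $\varphi$.

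First I would verify the elementary structural properties of $w$: it is positive and non-increasing because $t\mapsto t^{1/r'}$ is strictly concave for $1<r<\infty$ (here $0<1/r'<1$), so its successive increments $w_k$ are positive and decreasing; and $w_1 = 1^{1/r'} - 0 = 1$, as required. Next, by telescoping, $w_1 + \cdots + w_k = k^{1/r'}$ for every $k$, which gives at once the fundamental function
\[
\varphi(k) = \frac{k}{w_1+\cdots+w_k} = \frac{k}{k^{1/r'}} = k^{1-1/r'} = k^{1/r}\,.
\]
The only nontrivial point is the growth condition $w_1 + \cdots + w_k \le k\gamma w_k$, i.e.\ $k^{1/r'} \le k\gamma\,(k^{1/r'}-(k-1)^{1/r'})$. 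By the mean value theorem applied to $t\mapsto t^{1/r'}$ on $[k-1,k]$ we have $w_k = \tfrac{1}{r'}\,\xi^{1/r'-1}$ for some $\xi\in(k-1,k)$, and since $1/r'-1<0$ this yields $w_k \ge \tfrac{1}{r'}k^{1/r'-1}$. Hence $k\gamma w_k \ge \tfrac{\gamma}{r'}k^{1/r'}$, so the required inequality holds with $\gamma = r' = r/(r-1) > 1$.

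With these checks in place, Corollary~\ref{marc} applies verbatim and gives, for every $\alpha\in\Lambda(m,n)$ with $m\le n$,
\[
\prod_{k=1}^m \varphi(k)^{\alpha_k^{*}} \;\le\; c_{\ell_{r,\infty}^n}(\alpha) = c_{\ell_{r,\infty}^n}(\alpha^{*}) \;\le\; \gamma^m \prod_{k=1}^m \varphi(k)^{\alpha_k^{*}}\,,
\]
which upon substituting $\varphi(k)=k^{1/r}$ and $\gamma = r/(r-1)$ is precisely the asserted chain of inequalities. There is essentially no serious obstacle here: the entire content is the concavity estimate for $w_k$ needed to pin down $\gamma=r/(r-1)$, and everything else is a direct substitution into the already-established Corollary~\ref{marc}.
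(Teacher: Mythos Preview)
Your proof is correct and follows essentially the same route as the paper: identify $\ell_{r,\infty}^n = m_w^n$ with $w_k = k^{1/r'} - (k-1)^{1/r'}$, verify the hypotheses of Corollary~\ref{marc}, and compute $\varphi(k)=k^{1/r}$ and $\gamma = r' = r/(r-1)$. The only cosmetic difference is that the paper obtains the key estimate $w_k \ge \tfrac{1}{r'}k^{-1/r}$ via the integral representation $w_k = \tfrac{1}{r'}\int_{k-1}^{k} t^{-1/r}\,dt$ rather than the mean value theorem, but this is the same monotonicity observation.
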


\begin{proof}
We have $\ell_{r, \infty} \equiv m_{w}$, where $w:=(w_k)= \big(k^{1/r'} - (k-1)^{1/r'}\big)$ and
$1/r' = 1- 1/r$. Since $w_k = \frac{1}{r'} \int_{k-1}^k t^{-1/r}\,dt$, it follows that $k^{-1/r} \leq r' w_k$
for each $k\geq 1$. In consequence,
\[
w_1 + \ldots + w_k = k^{1/r'} \leq  \frac{r}{r-1} k w_{k}\,, \quad\, k\in \mathbb{N}\,.
\]
Thus Corollary \ref{marc} applies with $\gamma:=\frac{r}{r-1}$ and $\varphi(k)= k^{1/r}$ for each $k\in \mathbb{N}$.
\end{proof}

\smallskip

\subsubsection{Lorentz spaces}
Finally, we are in position to handle Lorentz spaces $d(w, 1)$.

\begin{corollary}
\label{cLor}
Let $d(w, 1)$ be the Lorentz space, where $w:= (k^{1/r} - (k-1)^{1/r})$. Then for each
$\alpha\in \Lambda(m, n)$ with $m\leq n$ one has
\[
\frac{1}{r^m} \Big(\frac{m^m}{\alpha^\alpha}\Big)\,\prod_{k=1}^m k^{-\frac{\alpha_{k}^{*}}{r'}}
\leq c_{d(w,1)^n}(\alpha) \leq \Big(\frac{m^m}{\alpha^\alpha}\Big) \prod_{k=1}^m k^{-\frac{\alpha_{k}^{*}}{r'}}\,.
\]
\end{corollary}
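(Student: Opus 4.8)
The idea is to realize the Lorentz space $d(w,1)$ with weight $w=(k^{1/r}-(k-1)^{1/r})$ as the pointwise (Calder\'on) product of a Marcinkiewicz-type space and $\ell_1$, and then invoke the multiplicativity of characteristics from Lemma~\ref{cproduct} together with the concrete bounds already obtained for the two factors. More precisely, one has the fundamental function $\varphi_{d(w,1)}(n) = w_1+\cdots+w_n = n^{1/r}$, and the K\"othe dual of $d(w,1)$ is (isometrically) the Marcinkiewicz space $m_w$, whose fundamental function is $\varphi_{m_w}(k) = k/(w_1+\cdots+w_k) = k^{1/r'}$. The plan is to use the duality formula from Theorem~\ref{cE}(i), namely
\[
c_{d(w,1)^n}(\alpha)\, c_{m_{w}^n}(\alpha) = \frac{m^m}{\alpha^\alpha}\,,
\]
which is valid since $d(w,1)^n$ is a finite-dimensional Banach lattice (hence has the Fatou property) with $X'_n = m_w^n$. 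This immediately reduces the problem to estimating $c_{m_w^n}(\alpha)$.

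\textbf{Key steps.} First I would verify that the weight $w=(k^{1/r}-(k-1)^{1/r})$ is positive, non-increasing, and satisfies $w_1 = 1$ together with the hypothesis $w_1+\cdots+w_k \le k\gamma w_k$ of Corollary~\ref{marc}. Indeed, writing $w_k = \frac{1}{r}\int_{k-1}^{k} t^{1/r-1}\,dt$, the integrand $t^{1/r-1}$ is decreasing (since $r>1$), so $w_k \ge \frac{1}{r}k^{1/r-1}$, i.e. $k^{1/r-1}\le r w_k$; summing gives $w_1+\cdots+w_k = k^{1/r} \le r k w_k$, so one may take $\gamma = r$. (This is exactly the computation done for $\ell_{r,\infty}$ in Corollary~\ref{cMar}, with the roles of $r$ and $r'$ exchanged.) The fundamental function of $m_w$ appearing in Corollary~\ref{marc} is then $\varphi(k) = k/(w_1+\cdots+w_k) = k^{1-1/r} = k^{1/r'}$. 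Applying Corollary~\ref{marc} with $\gamma = r$ yields, for every $\alpha\in\Lambda(m,n)$ with $m\le n$,
\[
\prod_{k=1}^{m} k^{\alpha^*_k/r'} \;\le\; c_{m_w^n}(\alpha) \;\le\; r^m \prod_{k=1}^{m} k^{\alpha^*_k/r'}\,.
\]
Second, I would substitute this two-sided bound into the duality identity $c_{d(w,1)^n}(\alpha) = \frac{m^m}{\alpha^\alpha}\,/\,c_{m_w^n}(\alpha)$. Inverting the inequalities reverses them, giving
\[
\frac{1}{r^m}\,\frac{m^m}{\alpha^\alpha}\prod_{k=1}^{m} k^{-\alpha^*_k/r'} \;\le\; c_{d(w,1)^n}(\alpha) \;\le\; \frac{m^m}{\alpha^\alpha}\prod_{k=1}^{m} k^{-\alpha^*_k/r'}\,,
\]
which is precisely the claimed statement. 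One should also remark that $c_{d(w,1)^n}(\alpha) = c_{d(w,1)^n}(\alpha^*)$ (since $d(w,1)$ is symmetric and Proposition~\ref{cEE} applies), which makes the appearance of the rearrangement $\alpha^*$ in the product natural.

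\textbf{Anticipated obstacle.} The argument is essentially a bookkeeping assembly of three facts already in place (multiplicativity/duality of characteristics, the Marcinkiewicz estimate, the identification $d(w,1)' = m_w$), so there is no deep obstacle. The one point that needs genuine care is checking that the hypotheses of Corollary~\ref{marc} are met with the correct constant $\gamma$, and in particular that $w_1 = 1$ exactly (true here since $w_1 = 1^{1/r} - 0 = 1$) and that the asserted constant on the left-hand side is $1/r^m$ rather than $1/\gamma^m$ for some other $\gamma$; the computation above shows $\gamma = r$ works, and it is sharp enough for the statement. A secondary subtlety is that $d(w,1)$ with this particular $w$ is genuinely a Banach (not merely quasi-Banach) lattice, which is needed so that Theorem~\ref{cE}(i) applies; this holds because $w$ non-increasing makes $\|\cdot\|_{d(w,1)}$ a norm, and finite-dimensionality gives the Fatou property for free.
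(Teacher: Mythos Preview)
Your proof is correct and follows essentially the same route as the paper: use the duality identity $c_{d(w,1)^n}(\alpha)\,c_{(d(w,1)')^n}(\alpha)=m^m/\alpha^\alpha$ from Theorem~\ref{cE}(i), identify the K\"othe dual $d(w,1)'$ with the Marcinkiewicz space $m_w \equiv \ell_{r',\infty}$, and then invert the two-sided estimate for $c_{m_w^n}(\alpha)$. The only cosmetic difference is that the paper quotes the ready-made bound from Corollary~\ref{cMar} (applied with $r'$ in place of $r$, noting $r'/(r'-1)=r$), whereas you re-derive that bound by verifying the hypothesis of Corollary~\ref{marc} directly for the weight $w_k=k^{1/r}-(k-1)^{1/r}$; the computations are the same. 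Your opening remark about realizing $d(w,1)$ as a Calder\'on product with $\ell_1$ is not actually used and is a slight misdescription of what you do, but the ``Key steps'' carry the argument completely.
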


\begin{proof}
We let $E:=d(w, 1)$ and apply  the formula proved in Proposition~\ref{cE}, (i):
\[
c_{E_n}(\alpha)\,c_{(E')_n}(\alpha) = \frac{m^m}{\alpha^\alpha}\,.
\]
Combining the K\"othe duality formula $E' \equiv  \ell_{r', \infty}$ with Corollary \ref{cMar}, then yields the required
estimates.
\end{proof}

Note that the Lorentz space $d(w,1)$  shown in the above corollary, coincides up to equivalence of norm (depending
only on $r$) with the classical Lorentz space $\ell_{r, 1}$. Thus, we conclude that for $\alpha \in \Lambda(m,n)$
with $m\leq n$,
\begin{equation} \label{car-lor}
c_{\ell_{r,1}^n}(\alpha) \sim_{C^m} \Big(\frac{m^m}{\alpha^\alpha}\Big) \prod_{k=1}^m k^{-\frac{\alpha_{k}^{*}}{r'}}\,.
\end{equation}
Let us finish this section by stressing out that, while $c_{\ell_{r,1}^n}(\alpha)=c_{\ell_{r}^n}(\alpha)$ for any tetrahedral multi index $\alpha$ (see Proposition \ref{mietek}), 
for some $\alpha \in \Lambda(m,n)$ the characteristic $c_{\ell_{r,1}^n}(\alpha)$ differs substantially from $c_{\ell_{r}^n}(\alpha)$, as the following example shows.
This difference is one of the key reasons why some of the known techniques to bound the projection constant of the space $\mathcal P_m(\ell_r^n)$ are no longer useful to provide something tight about $\boldsymbol{\lambda}(\mathcal P_m(\ell_{r,s}^n))$. Using subtler techniques, we in Chapter \ref{Part: Polynomials on Lorentz sequences spaces} will obtain  bounds for $\boldsymbol{\lambda}(\mathcal P_m(\ell_{r,s}^n))$ which in some cases very much resemble to the $\ell_r^n$ case.

\smallskip

\begin{example}\label{ejemplo}
Let $1 < r < \infty$ and $k\in\mathbb N$ be fixed, and define
$\alpha=\big(\frac{k!}{1},\frac{k!}{2},\frac{k!}{3},\dots,\frac{k!}{k}\big) \in \Lambda(m,n)$, where
$m =|\alpha|= \dis\sum_{j=1}^k \frac{k!}{j}$. Then $$c_{\ell^n_{r,1}}(\alpha) \geq \left(\frac{r}{r-1} \right)^m (\log\log(m+1))^\frac{m}{r'}c_{\ell_{r}^n}(\alpha).$$
\end{example}

\begin{proof}
By \eqref{car-lor} we have
\begin{align*}
c_{\ell_{r,1}^n}(\alpha) & \sim_{C^m} \frac{m^m}{\alpha^\alpha}  \frac{1}{\left(1^{k!} \cdot 2^{k!/2} \cdots k^{k!/k}\right)^{1/r'}}, \\
\end{align*}
hence,
\begin{align*}
\left(\frac{\alpha^\alpha}{m^m}\right)^{1/r}c_{\ell_{r,1}}(\alpha) & \sim_{C^m} \left(\frac{m^m}{\alpha^\alpha}  \frac{1}{1^{k!} \cdot 2^{k!/2}
\cdots k^{k!/k}}\right)^{1/r'} \\
& = \left(\frac{m^m}{k!^{k!} \cdot (k!/2)^{k!/2} \cdots (k!/k)^{k!/k} } \cdot \frac{1}{1^{k!} \cdot 2^{k!/2} \cdots k^{k!/k}}\right)^{1/r'} \\
& = \left(\frac{m^m}{k!^{k! \sum_{j=1}^k 1/j}  }\right)^{1/r'}
= \left( \frac{m}{k!} \right)^{m/r'} \succ_{C^m} \left( \log(k+1) \right)^{m/r'} \succ_{C^m} \left(\log \log(m+1)\right)^{m/r'}.  \\
\end{align*}
Then by  \eqref{dineen} this concludes the proof.
\end{proof}

\smallskip

\section{Polynomial projection constants vs  fundamental functions}
\label{Polynomial projection constants vs  fundamental functions}
Our main motivation for this section is to show that for some class of norms on $\mathbb{C}^n$ the introduced polynomial projection constant
is  intimately connected with the notion of fundamental function from the setting of Banach sequence lattices. Indeed, we
establish
variants of  \eqref{schuetty} and of Proposition~\ref{lambda}.

\smallskip

\begin{proposition}
\label{lambdaII}
Let $X_n = (\mathbb{C}^n,\|\cdot\|)$  be a Banach lattice and $J \subset \Lambda(m,n)$. Then
\[
\widehat{\boldsymbol{\lambda}}\big(\mathcal{P}_{J}(X_n)\big) \leq e^{m} \varphi_{X_n'}(n)^{m}\,.
\]
\end{proposition}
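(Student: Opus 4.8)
The plan is to derive the estimate directly from Lemma~\ref{lambdaP_J}. Since $J \subset \Lambda(m,n)$, that lemma gives
\[
\widehat{\boldsymbol{\lambda}}\big(\mathcal{P}_{J}(X_n)\big) \leq e^{m} \sup_{z\in B_{X_n}} \sum_{\alpha\in J} \frac{m!}{\alpha!}\,\Delta_{X_n}(\alpha)\,|z^\alpha|\,,
\]
where $\Delta_{X_n}(\alpha) = \big\|\big(\frac{\alpha_1}{m}, \ldots, \frac{\alpha_n}{m}\big)\big\|_{X_n}^{m}$, so it will suffice to bound the supremum on the right-hand side by $\varphi_{X_{n}'}(n)^{m}$.

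The key (and only non-formal) step will be the elementary observation that $\Delta_{X_n}(\alpha) \le 1$ for every $\alpha \in \Lambda(m,n)$. Indeed, $|\alpha| = m$ forces $\sum_{j=1}^{n} \frac{\alpha_j}{m} = 1$; writing $\big(\frac{\alpha_1}{m}, \ldots, \frac{\alpha_n}{m}\big) = \sum_{j=1}^{n} \frac{\alpha_j}{m}\,e_j$ and using the triangle inequality in the lattice $X_n$ together with the standing hypothesis $\|e_j\|_{X_n} \le 1$, one gets $\big\|\big(\frac{\alpha_1}{m}, \ldots, \frac{\alpha_n}{m}\big)\big\|_{X_n} \le \sum_{j=1}^{n} \frac{\alpha_j}{m}\,\|e_j\|_{X_n} \le 1$, and raising to the $m$-th power yields the claim.

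Plugging $\Delta_{X_n}(\alpha) \le 1$ into the bound above, enlarging the index set from $J$ to all of $\Lambda(m,n)$, and applying the multinomial formula, I would obtain for every $z \in B_{X_n}$
\[
\sum_{\alpha\in J} \frac{m!}{\alpha!}\,\Delta_{X_n}(\alpha)\,|z^\alpha| \;\le\; \sum_{\alpha\in \Lambda(m,n)} \frac{m!}{\alpha!}\,|z^\alpha| \;=\; \Big(\sum_{j=1}^{n} |z_j|\Big)^{m}\,.
\]
Finally, by the definition of the K\"othe dual norm and of the fundamental function, $\sum_{j=1}^{n} |z_j| \le \|z\|_{X_n}\,\big\|\sum_{j=1}^{n} e_j\big\|_{X_{n}'} \le \varphi_{X_{n}'}(n)$ whenever $z \in B_{X_n}$; taking $m$-th powers and the supremum over $z$ completes the proof. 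I do not expect any real obstacle here: Lemma~\ref{lambdaP_J} (which itself rests on the Lozanovskii-type factorization in Theorem~\ref{cE}) already performs the structural work of producing the multinomial coefficients, so what remains is a streamlined, non-symmetric variant of the upper estimate carried out in the proof of Proposition~\ref{lambda}.
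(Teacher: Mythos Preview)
Your proof is correct and follows essentially the same route as the paper: invoke Lemma~\ref{lambdaP_J}, bound $\Delta_{X_n}(\alpha)\le 1$ via the triangle inequality and $\|e_j\|_{X_n}\le 1$, then collapse the remaining sum with the multinomial formula to $\varphi_{X_n'}(n)^m$. The only cosmetic difference is that the paper first reduces to $J=\Lambda(m,n)$ and quotes \eqref{multinomial} directly, whereas you keep $J$ general and enlarge at the end.
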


\begin{proof}
It suffices to show the result for $J = \Lambda(m, n)$.  Recall once again from \eqref{multinomial} that
\[
\sup_{z\in B_{X_n}} \sum_{\alpha\in \Lambda(m, n)} \frac{m!}{\alpha!}\,|z^\alpha| = \varphi_{X_n'}(n)^m\,.
\]
Since for each $\alpha \in \Lambda(m, n)$,
\begin{align*}
\Delta_{X_n}(\alpha) & = \Big\|\Big(\frac{\alpha_1}{m}, \ldots, \frac{\alpha_n}{m}\Big)\Big\|_{X_n}^{m}
\leq \Big(\frac{1}{m}\big(\alpha_1 \|e_1\|_X + \ldots + \alpha_n \|e_n\|_X\big) \Big)^{m} \leq 1\,,
\end{align*}
Lemma~\ref{lambdaP_J} yields
\begin{equation*}
\widehat{\boldsymbol{\lambda}}\big(\mathcal{P}_m(X_n)\big) \leq e^{m} \sup_{z\in B_{X_n}} \sum_{\alpha\in \Lambda(m, n)} \frac{m!}{\alpha!}\,|z^\alpha|
=  e^m \varphi_{X_n'}(n)^m\,. \qedhere
\end{equation*}
\end{proof}

\smallskip
If $X_n$ is the $n$th section of a symmetric Banach sequence lattice and
  $J$ contains the tetrahedral $m$-homogeneous indices, then the preceding proposition is asymptotically
correct in $n$ whenever $m$ is fixed.
More generally, we consider  Banach sequence lattices $X$  for which
\[
n \sim \varphi_{X}(n) \varphi_{X'}(n)
\]
(for $X$ symmetric this holds with = instead of $\sim$). For this class of  $X$, which is not necessarily symmetric, we  have the  following variant of Theorem~\ref{tensor}; for the trivial
case $m=1$ recall Remark~\ref{m1}.

\begin{proposition} \label{equivalence}
Let $X$ be a Banach sequence lattice such that
$
n \sim \varphi_{X_n}(n) \varphi_{X_{n}'}(n)\,.
$
Then there is $C>0$, only depending on $X$, such that for each $m\leq n$ and any index set $J \subset \mathbb{N}_0^n$ with  $\Lambda_T(m,n) \subset J$ we have
\[
\frac{1}{C^{m}m^m} \varphi_{X'}(n)^m \,\,\leq \,\,\widehat{\boldsymbol{\lambda}}\big(\mathcal{P}_J(X_n)\big)\,.
\]
\end{proposition}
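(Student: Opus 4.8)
The goal is a lower bound for $\widehat{\boldsymbol{\lambda}}\big(\mathcal{P}_J(X_n)\big)$ valid for any index set $J$ containing the tetrahedral $m$-homogeneous indices $\Lambda_T(m,n)$, under the hypothesis $n \sim \varphi_{X_n}(n)\varphi_{X_n'}(n)$. The first reduction is the trivial monotonicity observation: since $\Lambda_T(m,n) \subset J$, the definition of $\widehat{\boldsymbol{\lambda}}$ in \eqref{lambda-dash-def} immediately gives
\[
\widehat{\boldsymbol{\lambda}}\big(\mathcal{P}_{\Lambda_T(m,n)}(X_n)\big) \leq \widehat{\boldsymbol{\lambda}}\big(\mathcal{P}_J(X_n)\big)\,,
\]
so it suffices to bound the tetrahedral polynomial projection constant from below.

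The next step is to exhibit a good test vector. Following the argument already used for the lower bound in Proposition~\ref{start-poly1} and Proposition~\ref{lambda}, one evaluates the defining supremum at the normalized all-ones vector
\[
u := \frac{1}{\varphi_{X_n}(n)}(\underbrace{1,\ldots,1}_{n}) \in B_{X_n}\,.
\]
For each tetrahedral $\alpha \in \Lambda_T(m,n)$ one has $|u^\alpha| = \varphi_{X_n}(n)^{-m}$, while the characteristic $c_{X_n}(\alpha)$ needs a lower estimate. Here the structural hypothesis does not immediately provide Proposition~\ref{mietek} (which requires $X_n$ symmetric), so instead I would argue directly: since the $e_k$ have norm $\le 1$ (or more precisely, using $\varphi_{X_n}$), the monomial $z^\alpha$ for a tetrahedral $\alpha$ supported on $m$ coordinates satisfies $\sup_{z\in B_{X_n}}|z^\alpha| \le \big(\varphi_{X_n}(m)/m\big)^m$ by the AM--GM step exactly as in the proof of Proposition~\ref{mietek} (this direction of that proof only uses the lattice structure and the fundamental function, not symmetry of the whole space — one needs $\|(1,\dots,1,0,\dots)\|_{X_n'} \le \varphi_{X_n'}(m)$ which is the monotone version). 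Hence $c_{X_n}(\alpha) \ge (m/\varphi_{X_n}(m))^m = \varphi_{X_n'}(m)^m \cdot (\text{comparison constant})$, using $\varphi_{X_n}(m)\varphi_{X_n'}(m) \sim m$ — but since the hypothesis is only on $n$, I would more safely just use the crude bound $c_{X_n}(\alpha) \ge 1$, combined with a careful choice; let me instead follow the cleaner route below.

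Concretely, combining the test vector with $|\Lambda_T(m,n)| = \binom{n}{m} \ge (n/m)^m$ (from \eqref{cardi}/\eqref{ukraineA}) and $c_{X_n}(\alpha) \ge \varphi_{X_n}(m)^m / m^m \cdot m^m/\varphi_{X_n}(m)^m$... — the honest path is: bound $c_{X_n}(\alpha)$ below by $1$ is too weak; instead use that the supremum over $B_{X_n}$ of a tetrahedral degree-$m$ monomial is at most $\varphi_{X_n}(n)^{-m}\cdot\varphi_{X_n}(n)^m$... To get the stated $\varphi_{X'}(n)^m$ I would take the test vector $u$ above and write
\[
\widehat{\boldsymbol{\lambda}}\big(\mathcal{P}_{\Lambda_T(m,n)}(X_n)\big)
\ge \sum_{\alpha\in\Lambda_T(m,n)} c_{X_n}(\alpha)\,|u^\alpha|
\ge \binom{n}{m}\,\frac{1}{\varphi_{X_n}(n)^m}
\ge \Big(\frac{n}{m\,\varphi_{X_n}(n)}\Big)^m\,,
\]
using $c_{X_n}(\alpha)\ge 1$ (which holds since $\|z\|_{X_n}\le 1$ forces $|z_k|\le\|e_k^*\|^{-1}$, and for the all-ones test we only need $c_{X_n}(\alpha)\ge \varphi_{X_n}(m)^m m^{-m}$ anyway — but $\ge 1$ suffices here because $\varphi_{X_n}$ is increasing so the monomial sup is $\le 1$, giving $c\ge 1$). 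Finally, invoking the hypothesis $n \sim \varphi_{X_n}(n)\varphi_{X_n'}(n)$, i.e.\ $n/\varphi_{X_n}(n) \sim \varphi_{X_n'}(n)$ with a constant $C_0$ depending only on $X$, we obtain
\[
\widehat{\boldsymbol{\lambda}}\big(\mathcal{P}_J(X_n)\big) \ge \Big(\frac{n}{m\,\varphi_{X_n}(n)}\Big)^m \ge \frac{1}{C_0^m\, m^m}\,\varphi_{X_n'}(n)^m\,,
\]
which is the claim with $C = C_0$.

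\textbf{Main obstacle.} The delicate point is the lower estimate $c_{X_n}(\alpha) \ge 1$ for tetrahedral $\alpha$ and, more importantly, making sure the AM--GM / fundamental-function computation used in the direction ``$\sup_{z\in B_{X_n}}|z^\alpha| \le \big(\varphi_{X_n}(m)/m\big)^m$'' goes through without assuming $X$ is symmetric — it only needs the monotone bound $\|(1,\dots,1,0,\dots,0)\|_{X_n'} \le \varphi_{X_n'}(m)$ (which follows since $X_n'$ is a lattice and $\varphi_{X_n'}$ is its fundamental function on the first $m$ coordinates, together with the fact that $B_{X_n}$-vectors have coordinates bounded in terms of $\varphi_{X_n}$). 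Once that is secured, everything else is the routine binomial estimate $\binom{n}{m}\ge(n/m)^m$ from \eqref{cardi} and a direct substitution of the hypothesis; so the only real work is verifying the non-symmetric version of the monomial-norm estimate, which I would isolate as a short lemma modeled on the second half of the proof of Proposition~\ref{mietek}.
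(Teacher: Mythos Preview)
Your direct test-vector argument is correct and more elementary than the paper's route. The paper argues by duality: Corollary~\ref{duality} (via Lozanovskii) gives $(n/m)^m \prec_{C^m} \widehat{\boldsymbol{\lambda}}\big(\mathcal{P}_{\Lambda_T(m,n)}(X_n)\big)\,\widehat{\boldsymbol{\lambda}}\big(\mathcal{P}_{\Lambda_T(m,n)}(X_n')\big)$, then Proposition~\ref{lambdaII} applied to $X_n'$ bounds the second factor by $e^m\varphi_{X_n}(n)^m$, and the hypothesis converts $n/\varphi_{X_n}(n)$ into $\varphi_{X_n'}(n)$. You bypass this machinery entirely: plug in $u=\varphi_{X_n}(n)^{-1}(1,\dots,1)$, use $c_{X_n}(\alpha)\ge 1$, count $\binom{n}{m}\ge (n/m)^m$, and substitute the hypothesis. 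This is cleaner and avoids the Calder\'on-product / Lozanovskii input.

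That said, your justification of $c_{X_n}(\alpha)\ge 1$ is wrong. You write that $\|z\|_{X_n}\le 1$ forces $|z_k|\le \|e_k^*\|^{-1}$; in fact it forces $|z_k|\le \|e_k^*\| = \|e_k\|_{X_n'}$. The bound $c_{X_n}(\alpha)\ge 1$ for tetrahedral $\alpha$ is equivalent to $\|e_k\|_{X_n'}\le 1$ for all $k$, i.e.\ $\|e_k\|_{X_n}\ge 1$ --- a normalization condition on $X$, not a consequence of monotonicity of $\varphi_{X_n}$ or anything else you cite. Your proposed AM--GM lemma has the same defect: for non-symmetric $X_n$ the quantity $\big\|\sum_{j\in S}e_j\big\|_{X_n'}$ depends on the particular support $S$, not only on $|S|=m$, so it cannot be replaced by $\varphi_{X_n'}(m)$. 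Note, however, that the paper's argument carries the identical implicit assumption, since the step $\Delta_{X_n'}(\alpha)\le 1$ inside Proposition~\ref{lambdaII} (applied to $X_n'$) also requires $\|e_k\|_{X_n'}\le 1$. So both proofs are valid under this standard normalization; you should simply state it as a hypothesis rather than argue around it incorrectly.
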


\begin{proof}
Without loss of generality we assume that $J = \Lambda_T(m,n)$.
Then we know from Corollary~\ref{duality} that
\begin{equation*} \label{star}
\Big(\frac{n}{m}\Big)^m \prec_{C^m} \widehat{\boldsymbol{\lambda}}\big(\mathcal{P}_{\Lambda_T(m,n)}(X_n)\big) \widehat{\boldsymbol{\lambda}}\big(\mathcal{P}_{\Lambda_T(m,n)}(X_{n}')\big).
\end{equation*}
Moreover by  Proposition~\ref{lambdaII} (for $X_{n}'$ instead of $X_n$) we have
\[
\widehat{\boldsymbol{\lambda}}(\mathcal{P}_{\Lambda_T(m,n)}(X_{n}')) \leq e^m \varphi_{X_{n}''}(n)^m = e^m \varphi_{X_n}(n)^m.
\]
The the conclusion follows, combining both estimates and using again Proposition~\ref{lambda} together with the assumption.
\end{proof}

\smallskip

Under symmetry assumptions we may unify Corollary~\ref{immediate} and Proposition~\ref{equivalence}.

\begin{proposition} \label{mistery}
Let $X_n= (\mathbb{C}^n, \|\cdot\|)$ be a~Banach lattice with enough symmetries. Then for each $m$ and $n$ with $m \leq n$
\[
\frac{1}{e^{m} m^m} \,\,\boldsymbol{\lambda}(X_{n}')^m \leq \,\, \boldsymbol{\lambda}\big(\mathcal{P}_m(X_n)
\big) \leq  \widehat{\boldsymbol{\lambda}}
\big(\mathcal{P}_m(X_n)\big) \leq e^m\,\varphi_{X'}(n)^m\,.
\]
If additionally $\|\id\colon X_n \hookrightarrow  \ell^n_2 \| \leq 1$, then
\[
\frac{1}{e^{m}\sqrt{2}^m m^m} \,\,\varphi_{X_{n}'}(n)^m \,\,\leq\,\, \boldsymbol{\lambda}\big(\mathcal{P}_m(X_n)
\big)
\,\,\leq\,\, \widehat{\boldsymbol{\lambda}}\big(\mathcal{P}_m(X_n)\big) \,\,\leq \,\,e^m\,\varphi_{X_{n}'}(n)^m\,.
\]
\end{proposition}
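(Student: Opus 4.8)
The statement is Proposition~\ref{mistery}, and it splits into two chains of inequalities. The middle inequality $\boldsymbol{\lambda}(\mathcal{P}_m(X_n)) \le \widehat{\boldsymbol{\lambda}}(\mathcal{P}_m(X_n))$ is exactly Theorem~\ref{lambda-dash}, so nothing new is needed there. The rightmost inequality $\widehat{\boldsymbol{\lambda}}(\mathcal{P}_m(X_n)) \le e^m \varphi_{X_n'}(n)^m$ is precisely Proposition~\ref{lambdaII} (applied with $J = \Lambda(m,n)$, which contains $\mathcal{P}_m(X_n)$); since we only assume $m \le n$ and this holds without restriction, it transfers verbatim. So the whole task reduces to establishing the two leftmost inequalities.

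\textbf{First chain: the lower bound via enough symmetries.} For the estimate $\frac{1}{e^m m^m}\,\boldsymbol{\lambda}(X_n')^m \le \boldsymbol{\lambda}(\mathcal{P}_m(X_n))$, I would simply invoke the second statement of Corollary~\ref{immediate}. That corollary states that if $X_n$ is a Banach lattice with enough symmetries, then $\frac{1}{e^m m^m}\boldsymbol{\lambda}(X_n^\ast)^m \le \boldsymbol{\lambda}(\mathcal{P}_m(X_n)) \le e^m \boldsymbol{\lambda}(X_n^\ast)^m$; the lower half is exactly what we want (note $X_n' \equiv X_n^\ast$ for finite-dimensional Banach sequence lattices, as recalled in Section~\ref{Banach spaces and (Quasi-)Banach lattices}). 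Chaining this with Theorem~\ref{lambda-dash} and Proposition~\ref{lambdaII} completes the first displayed inequality. This part is essentially bookkeeping: collect the three cited bounds and concatenate.

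\textbf{Second chain: replacing $\boldsymbol{\lambda}(X_n')$ by the fundamental function.} Under the extra hypothesis $\|\id\colon X_n \hookrightarrow \ell_2^n\| \le 1$, I want $\frac{1}{e^m \sqrt{2}^m m^m}\,\varphi_{X_n'}(n)^m \le \boldsymbol{\lambda}(\mathcal{P}_m(X_n))$. Here the key input is Schütt's inequality \eqref{schuett}, which under the assumption $\|\id\colon \ell_2 \to X_n\| \le 1$ gives $\varphi_{X_n}(n) \le \sqrt{2}\,\|\id\colon \ell_2 \to X_n\|\,\boldsymbol{\lambda}(X_n) \le \sqrt{2}\,\boldsymbol{\lambda}(X_n)$. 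Applied to $X_n'$ in place of $X_n$: if $\|\id\colon \ell_2^n \to X_n'\| \le 1$, then $\varphi_{X_n'}(n) \le \sqrt{2}\,\boldsymbol{\lambda}(X_n')$. The point I must check is that the stated hypothesis $\|\id\colon X_n \hookrightarrow \ell_2^n\|\le 1$ is equivalent (by duality of diagonal/identity norms, since the unit vectors are $1$-unconditional in a lattice) to $\|\id\colon \ell_2^n \to X_n'\| \le 1$ — this is the little verification step, and it follows because for a symmetric/lattice norm $\|\id\colon X_n \to \ell_2^n\| = \|\id\colon \ell_2^n \to X_n'\|$ by taking adjoints. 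Granting this, substitute $\varphi_{X_n'}(n) \le \sqrt{2}\,\boldsymbol{\lambda}(X_n')$ into the first chain's lower bound to pick up the extra $\sqrt{2}^m$ factor, and the rightmost inequality is again Proposition~\ref{lambdaII}.

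\textbf{Main obstacle.} There is no serious analytic difficulty — every ingredient is already proved in the excerpt. The one genuine point requiring care is the duality identity relating $\|\id\colon X_n \to \ell_2^n\|$ and $\|\id\colon \ell_2^n \to X_n'\|$, i.e., making sure Schütt's estimate can legitimately be applied with $X_n'$ substituted for $X_n$ under the hypothesis as stated. Once that is dispatched, the proof is a two-line concatenation of Theorem~\ref{lambda-dash}, Corollary~\ref{immediate}, Proposition~\ref{lambdaII}, and \eqref{schuett}; I would present it exactly in that order.
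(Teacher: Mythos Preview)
Your proposal is correct and matches the paper's own proof essentially line for line: the paper cites Corollary~\ref{immediate} for the lower bound, Theorem~\ref{lambda-dash} for the middle inequality, Proposition~\ref{lambdaII} for the upper bound, and \eqref{schuetty} (which is precisely the Sch\"utt estimate $\varphi_{X_n'}(n)\le\sqrt{2}\,\boldsymbol{\lambda}(X_n')$ under $\|\id\colon X_n\to\ell_2^n\|\le1$ that you re-derive via duality) for the second chain. The duality step you flag as the ``main obstacle'' is already packaged into \eqref{scholz2}--\eqref{schuetty} earlier in the chapter, so there is nothing further to verify.
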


\begin{proof}
For the first estimate see Corollary~\ref{immediate}, the second one follows from Theorem~\ref{lambda-dash},
and the third one from Proposition~\ref{lambdaII}. The last asymptotic then is a consequence of \eqref{schuetty}.
\end{proof}

\smallskip

\section{Comparison}
\label{compa}
Our aim here is to compare $\widehat{\boldsymbol{\lambda}}\big(\mathcal{P}_J(X_n)\big)$ with  $\widehat{\boldsymbol{\lambda}}\big(\mathcal{P}_J(Y_n)\big)$,
whenever $J\subset \mathbb{N}_0^n$, and $X_n := (\mathbb{C}^n, \|\cdot\|_{X_n})$ and $Y_n:=(\mathbb{C}^n, \|\cdot\|_{Y_n})$
are both (quasi-)Banach lattices. Much of the material is based on what we explained in Section~\ref{conv/conc}.

\smallskip
The main result here is as follows.

\begin{theorem} \label{min}
Let $X_n := (\mathbb{C}^n, \|\cdot\|)$ and $Y_n:=(\mathbb{C}^n, \|\cdot\|)$ be (quasi-)Banach lattices.
Then, for every finite index set  $J\subset \mathbb{N}_0^n$ the following estimate holds{\mbox:}
\[
\widehat{\boldsymbol{\lambda}}\big(\mathcal{P}_J(X_n\circ Y_n)\big) \leq \min \big\{\widehat{\boldsymbol{\lambda}}\big(\mathcal{P}_J(X_n)),\,
\widehat{\boldsymbol{\lambda}}\big(\mathcal{P}_J(Y_n)\big)\big\}\,.
\]
\end{theorem}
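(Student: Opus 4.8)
The plan is to reduce the statement to the behavior of characteristics under pointwise products, which is exactly Lemma~\ref{cproduct}, and then push that multiplicativity through the definition of the polynomial projection constant. By symmetry of the claim in $X_n$ and $Y_n$, it suffices to prove
\[
\widehat{\boldsymbol{\lambda}}\big(\mathcal{P}_J(X_n\circ Y_n)\big) \leq \widehat{\boldsymbol{\lambda}}\big(\mathcal{P}_J(Y_n)\big)\,.
\]

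First I would recall from Lemma~\ref{cproduct} that for every $\alpha \in \mathbb{N}_0^n$ one has $c_{X_n\circ Y_n}(\alpha) = c_{X_n}(\alpha)\,c_{Y_n}(\alpha)$. Next I would exploit the elementary fact that the closed unit ball of the pointwise product $X_n\circ Y_n$ is contained in the set of products $\overline{B}_{X_n}\cdot \overline{B}_{Y_n}$; concretely, given $z \in B_{X_n\circ Y_n}$ and $\varepsilon>0$, one may factor $z = x\cdot y$ with $\|x\|_{X_n}\,\|y\|_{Y_n} < 1+\varepsilon$, so after normalizing we obtain $u\in \overline{B}_{X_n}$ and $v\in \overline{B}_{Y_n}$ with $|z| \leq (1+\varepsilon)\,|u|\,|v|$ coordinatewise. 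Then, using the multiplicativity of characteristics and the trivial bound $|u^\alpha| \le 1/c_{X_n}(\alpha)$ valid for $u\in\overline{B}_{X_n}$, I would estimate
\begin{align*}
\sum_{\alpha\in J} c_{X_n\circ Y_n}(\alpha)\,|z^\alpha|
&\leq (1+\varepsilon)^{\deg J}\sum_{\alpha\in J} c_{X_n}(\alpha)\,c_{Y_n}(\alpha)\,|u^\alpha|\,|v^\alpha|\\
&\leq (1+\varepsilon)^{\deg J}\sum_{\alpha\in J} c_{Y_n}(\alpha)\,|v^\alpha|
\leq (1+\varepsilon)^{\deg J}\,\widehat{\boldsymbol{\lambda}}\big(\mathcal{P}_J(Y_n)\big)\,,
\end{align*}
where in the middle inequality I used $c_{X_n}(\alpha)\,|u^\alpha| \le 1$. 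Taking the supremum over $z\in B_{X_n\circ Y_n}$ and then letting $\varepsilon\to 0$ gives $\widehat{\boldsymbol{\lambda}}\big(\mathcal{P}_J(X_n\circ Y_n)\big) \leq \widehat{\boldsymbol{\lambda}}\big(\mathcal{P}_J(Y_n)\big)$, as desired; the symmetric argument yields the bound with $X_n$ in place of $Y_n$, and combining the two gives the minimum.

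I do not expect a genuine obstacle here, since all the ingredients are already in place: the multiplicativity $c_{X_n\circ Y_n}=c_{X_n}c_{Y_n}$ is Lemma~\ref{cproduct}, and the factorization of unit ball elements of a Calder\'on-type pointwise product is the same elementary compactness/normalization argument used in the proofs of Lemma~\ref{cproduct} and Theorem~\ref{lambdatheta}. The only minor bookkeeping point is the finiteness of $J$ (so that $\deg J<\infty$ and the factor $(1+\varepsilon)^{\deg J}$ tends to $1$), which is assumed in the statement; if one prefers to avoid even that factor, one can absorb the $\varepsilon$ directly by choosing the factorization with $\|x\|_{X_n}\|y\|_{Y_n}$ arbitrarily close to $1$ and noting the suprema are attained on the compact balls. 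Once $\widehat{\boldsymbol{\lambda}}$ is controlled, Theorem~\ref{lambda-dash} of course also gives the corresponding bound on the genuine projection constant $\boldsymbol{\lambda}\big(\mathcal{P}_J(X_n\circ Y_n)\big)$, though that is not part of the stated claim.
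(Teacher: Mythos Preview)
Your proof is correct and follows essentially the same approach as the paper: both use Lemma~\ref{cproduct} for the multiplicativity of characteristics, factor an element of the unit ball of $X_n\circ Y_n$ as $|z|\le (1+\varepsilon)|u||v|$ with $u\in\overline{B}_{X_n}$, $v\in\overline{B}_{Y_n}$, apply the trivial bound $c_{X_n}(\alpha)|u^\alpha|\le 1$ (respectively $c_{Y_n}(\alpha)|v^\alpha|\le 1$), and let $\varepsilon\to 0$. The only cosmetic difference is that the paper first fixes a maximizing $z$ by compactness, whereas you take the supremum at the end; the arguments are otherwise identical.
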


\begin{proof}
Combining  Lemma \ref{cproduct} and a compactness argument, we conclude that there exists $z\in \bar{B}_{X_n \circ Y_n}$ such that
\[
\widehat{\boldsymbol{\lambda}}(\mathcal{P}_J(X_n\circ Y_n)) = \sum_{\alpha \in J} c_{X_n}(\alpha)\,c_{Y_n}(\alpha)\,|z^\alpha|\,.
\]
Given $\varepsilon >0$, we can find $u\in \bar{B}_{X_n}$ and $v\in \bar{B}_{Y_n}$ such that $|z| \leq (1 + \varepsilon) |u|\,|v|$.
Clearly, for each $\alpha\in J$, we have $c_{Y_n}(\alpha)\,|v^\alpha| \leq 1$. Hence we get with
$m:= \max_{\alpha \in J} |\alpha|$,
\begin{align*}
\widehat{\boldsymbol{\lambda}}\big(\mathcal{P}_J(X_n\circ Y_n)\big)  \leq (1 + \varepsilon)^m \sum_{\alpha \in J} (c_{X_n}(\alpha)\,|u^\alpha|)
\,(c_{Y_n}(\alpha)\,|v^\alpha|)
\leq (1 + \varepsilon)^{m}\,\widehat{\boldsymbol{\lambda}}\big(\mathcal{P}_J(X_n)\big)\,.
\end{align*}
In consequence,
\[
\widehat{\boldsymbol{\lambda}}\big(\mathcal{P}_J(X_n\circ Y_n)\big) \leq (1 + \varepsilon)^m\, \text{min} \big\{\widehat{\boldsymbol{\lambda}}\big(\mathcal{P}_J(X_n)\big),\,
\widehat{\boldsymbol{\lambda}}\big(\mathcal{P}_J(Y)\big)\big\}\,.
\]
Letting $\varepsilon \to 0$, we arrive to the desired inequality.
\end{proof}

\smallskip

Obviously, we have that $X_n \equiv X_n \circ \ell_\infty^n$ for every (quasi-)Banach lattice $X_n$. This implies the following
consequence of the preceding theorem.

\begin{corollary}
Let $X_n := (\mathbb{C}^n, \|\cdot\|)$ be a~$($quasi-$)$ Banach lattices. Then, for every finite index set $J\subset \mathbb{N}_0^n$ we have
\[
\widehat{\boldsymbol{\lambda}}\big(\mathcal{P}_J(X_n)\big) \leq \widehat{\boldsymbol{\lambda}}\big(\mathcal{P}_J(\ell_\infty^n)\big)\,.
\]
\end{corollary}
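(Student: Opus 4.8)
The statement to prove is the final corollary: for every (quasi-)Banach lattice $X_n = (\mathbb{C}^n, \|\cdot\|)$ and every finite index set $J \subset \mathbb{N}_0^n$, one has $\widehat{\boldsymbol{\lambda}}\big(\mathcal{P}_J(X_n)\big) \leq \widehat{\boldsymbol{\lambda}}\big(\mathcal{P}_J(\ell_\infty^n)\big)$.

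The plan is to derive this directly from the preceding Theorem~\ref{min} together with the elementary identification $X_n \equiv X_n \circ \ell_\infty^n$. First I would recall why $X_n \equiv X_n \circ \ell_\infty^n$ holds isometrically: for $z \in X_n$ one can write $z = z \cdot \mathbf{1}$ where $\mathbf{1} = (1,\dots,1) \in B_{\ell_\infty^n}$, so $\|z\|_{X_n \circ \ell_\infty^n} \leq \|z\|_{X_n}\|\mathbf{1}\|_{\ell_\infty^n} = \|z\|_{X_n}$; conversely, any factorization $z = x \cdot y$ with $y \in \overline{B}_{\ell_\infty^n}$ satisfies $|z| \leq |x|$ pointwise (since $|y_k| \leq 1$ for all $k$), hence $\|z\|_{X_n} \leq \|x\|_{X_n} \leq \|x\|_{X_n}\|y\|_{\ell_\infty^n}$, and taking the infimum over factorizations gives $\|z\|_{X_n} \leq \|z\|_{X_n \circ \ell_\infty^n}$. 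This uses only that $X_n$ is a lattice, so it is valid in the quasi-Banach setting as well.

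Next I would apply Theorem~\ref{min} with the pair $(X_n, \ell_\infty^n)$ in place of $(X_n, Y_n)$: for every finite index set $J \subset \mathbb{N}_0^n$,
\[
\widehat{\boldsymbol{\lambda}}\big(\mathcal{P}_J(X_n \circ \ell_\infty^n)\big) \leq \min\big\{\widehat{\boldsymbol{\lambda}}\big(\mathcal{P}_J(X_n)\big),\, \widehat{\boldsymbol{\lambda}}\big(\mathcal{P}_J(\ell_\infty^n)\big)\big\} \leq \widehat{\boldsymbol{\lambda}}\big(\mathcal{P}_J(\ell_\infty^n)\big).
\]
Then, since $X_n \equiv X_n \circ \ell_\infty^n$ as (quasi-)Banach lattices, the characteristics and hence the polynomial projection constants coincide: $\widehat{\boldsymbol{\lambda}}\big(\mathcal{P}_J(X_n)\big) = \widehat{\boldsymbol{\lambda}}\big(\mathcal{P}_J(X_n \circ \ell_\infty^n)\big)$. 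Combining the two displays yields $\widehat{\boldsymbol{\lambda}}\big(\mathcal{P}_J(X_n)\big) \leq \widehat{\boldsymbol{\lambda}}\big(\mathcal{P}_J(\ell_\infty^n)\big)$, which is the claim.

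There is essentially no obstacle here: the corollary is a one-line consequence of Theorem~\ref{min}. The only point requiring a moment of care is verifying the isometric identity $X_n \equiv X_n \circ \ell_\infty^n$ cleanly in both directions (and checking that the argument does not secretly use the triangle inequality, so that it genuinely covers the quasi-Banach case); alternatively one may simply invoke Lemma~\ref{cproduct}, which already gives $c_{X_n \circ \ell_\infty^n}(\alpha) = c_{X_n}(\alpha)\, c_{\ell_\infty^n}(\alpha) = c_{X_n}(\alpha)$ using $c_{\ell_\infty^n}(\alpha) = 1$ (from \eqref{dineen}), together with the observation that the unit balls of $X_n$ and $X_n \circ \ell_\infty^n$ coincide, to conclude equality of the defining suprema in \eqref{lambda-dash-def} directly. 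Either route is routine, so the proof is short.
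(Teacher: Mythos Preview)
Your proof is correct and follows exactly the paper's approach: the paper simply notes that $X_n \equiv X_n \circ \ell_\infty^n$ and states that the corollary is then an immediate consequence of Theorem~\ref{min}. Your write-up actually supplies more detail than the paper does (the two-sided verification of the isometric identity and the alternative via Lemma~\ref{cproduct}), but the underlying argument is identical.
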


Now observe that by Lozanovskii's factorization Theorem~\ref{Lozanovskii} we know that $\ell_1^n \equiv X_n \circ X_{n}'$, whenever $X_n := (\mathbb{C}^n, \|\cdot\|)$ is a~Banach lattice. This gives the following dual version of the preceding corollary.

\begin{corollary}
Let $X_n := (\mathbb{C}^n, \|\cdot\|)$  be a Banach lattice. Then, for every finite index set $J\subset \mathbb{N}_0^n$ we have
\[
\widehat{\boldsymbol{\lambda}}\big(\mathcal{P}_J(\ell_1^n)\big) \leq \widehat{\boldsymbol{\lambda}}\big(\mathcal{P}_J(X_n)\big)\,.
\]
\end{corollary}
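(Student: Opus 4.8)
The final statement to prove is that for any Banach lattice $X_n = (\mathbb{C}^n, \|\cdot\|)$ and any finite index set $J \subset \mathbb{N}_0^n$, we have $\widehat{\boldsymbol{\lambda}}\big(\mathcal{P}_J(\ell_1^n)\big) \leq \widehat{\boldsymbol{\lambda}}\big(\mathcal{P}_J(X_n)\big)$.

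\medskip

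The plan is to exploit Lozanovskii's factorization theorem together with Theorem~\ref{min}. First I would recall that for any Banach lattice $X_n = (\mathbb{C}^n, \|\cdot\|)$, since $X_n$ is finite dimensional it automatically has the Fatou property, so Lozanovskii's Theorem~\ref{Lozanovskii} applies and yields the isometric identity $X_n \circ X_n' \equiv \ell_1^n$ (this is exactly the pointwise-product identity $X \circ X' \equiv L_1(\mu)$ specialized to the counting measure on $\{1, \dots, n\}$; note $\ell_1^n$ is the counting-measure $L_1$ space and $X_n'$ is the K\"othe dual). Next I would invoke Theorem~\ref{min} with $Y_n := X_n'$, which gives
\[
\widehat{\boldsymbol{\lambda}}\big(\mathcal{P}_J(X_n \circ X_n')\big) \leq \min\big\{\widehat{\boldsymbol{\lambda}}\big(\mathcal{P}_J(X_n)\big),\, \widehat{\boldsymbol{\lambda}}\big(\mathcal{P}_J(X_n')\big)\big\} \leq \widehat{\boldsymbol{\lambda}}\big(\mathcal{P}_J(X_n)\big).
\]
Combining this with the isometric identification $X_n \circ X_n' \equiv \ell_1^n$ — which in particular means the characteristics $c_{X_n \circ X_n'}(\alpha) = c_{\ell_1^n}(\alpha)$ agree for every $\alpha$, hence the polynomial projection constants coincide — yields the desired inequality $\widehat{\boldsymbol{\lambda}}\big(\mathcal{P}_J(\ell_1^n)\big) = \widehat{\boldsymbol{\lambda}}\big(\mathcal{P}_J(X_n \circ X_n')\big) \leq \widehat{\boldsymbol{\lambda}}\big(\mathcal{P}_J(X_n)\big)$.

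\medskip

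There is essentially no obstacle here, since all the heavy lifting (the factorization theorem and the product inequality for polynomial projection constants) has already been established; this corollary is a direct substitution, entirely parallel to the preceding corollary where $\ell_\infty^n \equiv X_n \circ \ell_\infty^n$ was used instead. The only minor point to be careful about is that the isometric identity $X_n \circ X_n' \equiv \ell_1^n$ gives equality of the defining norms, which by the formula $c_{Z_n}(\alpha) = (\sup_{z \in B_{Z_n}} |z^\alpha|)^{-1}$ from \eqref{lambda-dash-def} immediately transfers to equality of characteristics and hence of $\widehat{\boldsymbol{\lambda}}$. With that observed, the proof is complete in one line of display mathematics.
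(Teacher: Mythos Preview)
Your proposal is correct and follows exactly the same approach as the paper: apply Lozanovskii's factorization $\ell_1^n \equiv X_n \circ X_n'$ (which the paper states just before the corollary) and then invoke Theorem~\ref{min} with $Y_n = X_n'$. The paper's proof is in fact just that one-line observation preceding the corollary, so your write-up is, if anything, more detailed than the original.
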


\smallskip

The following results are now straight forward  consequences of the work done in Section~\ref{conv/conc}. The first
and most important one is an immediate consequence of Lemma~\ref{app5} and Theorem~\ref{min}.

\smallskip

\begin{theorem}
\label{PropApplnew}
Let $X_n = (\mathbb{C}^n, \|\cdot\|)$ and $Y_n = (\mathbb{C}^n, \|\cdot\|)$ be two Banach lattices
such that $M_{(r)}(X_n ) = M^{(r)}(Y_n) = 1$, where $1 <r <\infty$. Then, for every finite $J \subset \mathbb{N}_0^n$, we have
\begin{equation*} 
\widehat{\boldsymbol{\lambda}}\big(\mathcal{P}_{J}(X_n)\big) \leq \widehat{\boldsymbol{\lambda}}\big(\mathcal{P}_{J}(Y_n)\big)\,.
\end{equation*}
\end{theorem}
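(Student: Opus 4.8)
\textbf{Proof plan for Theorem~\ref{PropApplnew}.}
The plan is to deduce this theorem in exactly the same way that Theorem~\ref{PropAppl} was deduced from Lemma~\ref{YM(Y,X)}, only now replacing the unconditional basis constant $\boldsymbol{\chimon}$ by the polynomial projection constant $\widehat{\boldsymbol{\lambda}}$ and using Theorem~\ref{min} in place of Lemma~\ref{YM(Y,X)}. The whole argument is purely about Calder\'on/pointwise products of Banach lattices, so no new machinery beyond Section~\ref{conv/conc} and Section~\ref{compa} is needed.

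First I would recall Lemma~\ref{app5}: if $X$ is $r$-concave and $Y$ is $r$-convex, both with constant $1$, for some $1<r<\infty$, then
\[
X \equiv Y \circ M(Y,X)\,.
\]
Applying this at the level of the $n$th sections $X_n$ and $Y_n$ (whose concavity/convexity constants are inherited from $X$ and $Y$ respectively), we obtain
\[
X_n \equiv Y_n \circ M(Y_n, X_n)\,,
\]
the key structural identity. The only mild point to check here is that Lemma~\ref{app5} applies verbatim to the finite-dimensional sections; this is immediate since $M_{(r)}(X_n)\le M_{(r)}(X)=1$ and $M^{(r)}(Y_n)\le M^{(r)}(Y)=1$, and all the facts used in the proof of Lemma~\ref{app5} (Lozanovskii factorization, the Fatou property in finite dimensions, duality of convexity/concavity) hold for $X_n, Y_n$.

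Then I would invoke Theorem~\ref{min} with the two factors $Y_n$ and $M(Y_n,X_n)$: for every finite index set $J\subset \mathbb{N}_0^n$,
\[
\widehat{\boldsymbol{\lambda}}\big(\mathcal{P}_J(Y_n\circ M(Y_n,X_n))\big)
\;\le\;
\min\Big\{\widehat{\boldsymbol{\lambda}}\big(\mathcal{P}_J(Y_n)\big),\,\widehat{\boldsymbol{\lambda}}\big(\mathcal{P}_J(M(Y_n,X_n))\big)\Big\}
\;\le\;
\widehat{\boldsymbol{\lambda}}\big(\mathcal{P}_J(Y_n)\big)\,.
\]
Combining this with the isometric identity $X_n \equiv Y_n\circ M(Y_n,X_n)$ — and noting that $\widehat{\boldsymbol{\lambda}}(\mathcal{P}_J(\cdot))$ depends only on the isometric type of the lattice, since the characteristics $c_{X_n}(\alpha)$ are defined purely in terms of the norm — yields
\[
\widehat{\boldsymbol{\lambda}}\big(\mathcal{P}_J(X_n)\big)\;\le\;\widehat{\boldsymbol{\lambda}}\big(\mathcal{P}_J(Y_n)\big)\,,
\]
which is the assertion. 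I do not expect a genuine obstacle here; the only thing requiring a sentence of care is the reduction of Lemma~\ref{app5} to the finite-dimensional sections and the observation that $M(Y_n,X_n)$ is a bona fide Banach function lattice over $\{1,\dots,n\}$ (so that Theorem~\ref{min}, stated for quasi-Banach lattices, applies). Everything else is a direct transcription of the proof of Theorem~\ref{PropAppl}.
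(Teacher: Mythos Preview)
Your proposal is correct and follows exactly the paper's approach: the paper's proof is the single sentence ``an immediate consequence of Lemma~\ref{app5} and Theorem~\ref{min}'', and you have simply spelled out that sentence. One small cosmetic point: the theorem is stated directly for Banach lattices $X_n,Y_n$ on $\mathbb{C}^n$ (not as sections of infinite-dimensional $X,Y$), so your remark about constants being inherited from $X$ and $Y$ is unnecessary --- Lemma~\ref{app5}'s argument applies verbatim in finite dimensions.
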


\smallskip

As in Section~\ref{conv/conc} we obtain the following  interesting corollary - analogs of Corollary~\ref{C1} and
Corollary~\ref{C2}.

\smallskip
\begin{corollary}
Let $X_n = (\mathbb{C}^n, \|\cdot\|)$ be a  Banach lattice, $1 <r <\infty$ and $J \subset \mathbb{N}_0^n$ a finite index set.
If $M_{(r)}(X_n ) =  1$ (resp. $M^{(r)}(X_n ) =  1$), then
$$
\widehat{\boldsymbol{\lambda}}\big(\mathcal{P}_{J}(X_n)\big) \leq \widehat{\boldsymbol{\lambda}}\big(\mathcal{P}_{J}(\ell_r^n)\big)
\;\;\;\;\;\;
\big(\mbox{resp. \;} \widehat{\boldsymbol{\lambda}}\big(\mathcal{P}_{J}(\ell^n_r)\big) \leq \widehat{\boldsymbol{\lambda}}\big(\mathcal{P}_{J}(X_n)\big)\,\big).
$$
\end{corollary}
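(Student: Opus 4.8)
The statement to prove is the final Corollary, which claims that for a Banach lattice $X_n$ with $M_{(r)}(X_n) = 1$ one has $\widehat{\boldsymbol{\lambda}}(\mathcal{P}_J(X_n)) \leq \widehat{\boldsymbol{\lambda}}(\mathcal{P}_J(\ell_r^n))$, and dually for $M^{(r)}(X_n) = 1$ the reverse inequality with the roles of $X_n$ and $\ell_r^n$ swapped.

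The plan is to derive both halves directly from Theorem~\ref{PropApplnew}, which asserts precisely that if $X_n$ and $Y_n$ are Banach lattices with $M_{(r)}(X_n) = M^{(r)}(Y_n) = 1$, then $\widehat{\boldsymbol{\lambda}}(\mathcal{P}_J(X_n)) \leq \widehat{\boldsymbol{\lambda}}(\mathcal{P}_J(Y_n))$. So the only thing to check is that $\ell_r^n$ has the right one-sided lattice-geometric constant equal to $1$. For the first assertion, I would take $Y_n = \ell_r^n$: since $\|\cdot\|_r$ is literally an $\ell_r$-norm, the classical Minkowski-type inequality $\big\|(\sum_k |x_k|^r)^{1/r}\big\|_r = (\sum_k \|x_k\|_r^r)^{1/r}$ holds with constant $1$, i.e.\ $M^{(r)}(\ell_r^n) = 1$. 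Combined with the hypothesis $M_{(r)}(X_n) = 1$, Theorem~\ref{PropApplnew} applies verbatim and yields $\widehat{\boldsymbol{\lambda}}(\mathcal{P}_J(X_n)) \leq \widehat{\boldsymbol{\lambda}}(\mathcal{P}_J(\ell_r^n))$.

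For the dual assertion, I would instead take $X_n = \ell_r^n$ in the role of the $r$-concave space: here one uses $M_{(r)}(\ell_r^n) = 1$, which is again just the equality case of the $r$-concavity inequality for an honest $\ell_r$-norm (the reverse Minkowski-type identity). Then with $Y_n$ the given lattice satisfying $M^{(r)}(Y_n) = 1$, Theorem~\ref{PropApplnew} gives $\widehat{\boldsymbol{\lambda}}(\mathcal{P}_J(\ell_r^n)) \leq \widehat{\boldsymbol{\lambda}}(\mathcal{P}_J(Y_n))$, which is the claimed inequality after renaming $Y_n$ as $X_n$. In both cases the statement follows with no further work.

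There is essentially no obstacle here; the whole content has already been absorbed into Theorem~\ref{PropApplnew} (which in turn rests on Lemma~\ref{app5} and Theorem~\ref{min}). The only point deserving a sentence of care is the explicit verification that the model space $\ell_r^n$ realizes the extremal values $M^{(r)}(\ell_r^n) = 1$ and $M_{(r)}(\ell_r^n) = 1$; this is standard (it is the equality case in the Minkowski inequality for the $\ell_r$-norm and its reverse), but it is the hinge that lets us plug $\ell_r^n$ into Theorem~\ref{PropApplnew} on the appropriate side. I would also remark, as the paper does for Corollaries~\ref{C1} and \ref{C2}, that the boundary cases $r = \infty$ (respectively $r = 1$) are automatic since $M_{(\infty)}(X_n) = 1$ (respectively $M^{(1)}(X_n) = 1$) for every lattice, so they are already covered by the corollary preceding this one.
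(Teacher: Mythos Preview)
Your proposal is correct and matches the paper's approach exactly: the paper states this corollary without proof, noting only that it follows from Theorem~\ref{PropApplnew} as an analog of Corollaries~\ref{C1} and~\ref{C2}, which is precisely what you do by specializing $Y_n = \ell_r^n$ (respectively the theorem's $X_n = \ell_r^n$) and using $M^{(r)}(\ell_r^n) = M_{(r)}(\ell_r^n) = 1$.
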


\smallskip

\smallskip
Similarly to Corollary~\ref{appl}, we also have the following for the homogeneous case.
\begin{corollary}
Let $X$ and $Y$ be Banach sequence lattices such that $X$ is $r$-concave and
$Y$ is $r$-convex  for some $1 <r <\infty$. Then for any index set $J \subset \Lambda(m,n)$, we have
\[
\widehat{\boldsymbol{\lambda}}\big(\mathcal{P}_{J}(X_n)\big) \leq \big(M_{(r)}(X) M^{(r)}(Y)\big)^m\,\,\widehat{\boldsymbol{\lambda}}\big(\mathcal{P}_{J}(Y_n)\big)\,.
\]
\end{corollary}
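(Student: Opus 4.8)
The plan is to mimic the proof of Corollary~\ref{appl}, replacing the unconditional basis constant $\boldsymbol{\chimon}$ by the polynomial projection constant $\widehat{\boldsymbol{\lambda}}$ throughout, and using the homogeneous-case renorming trick. The essential inputs are: Theorem~\ref{PropApplnew} (which gives the comparison with constant $1$ under $r$-concavity/convexity constants equal to $1$), the Figiel--Johnson renorming theorem (recalled just before Corollary~\ref{appl}), and the elementary observation that a factor $\gamma$ between two lattice norms on $\mathbb{C}^n$ produces a factor $\gamma^m$ between the corresponding norms on $m$-homogeneous polynomials, hence a factor $\gamma^m$ between polynomial projection constants.

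First I would record the scaling lemma: if $E = (\mathbb{C}^n, \|\cdot\|_E)$ and $F = (\mathbb{C}^n, \|\cdot\|_F)$ satisfy $\|z\|_E \le \|z\|_F \le \gamma \|z\|_E$ for all $z \in \mathbb{C}^n$, then for every $\alpha \in \Lambda(m,n)$ one has $\gamma^{-m} c_E(\alpha) \le c_F(\alpha) \le c_E(\alpha)$, simply because $\sup_{B_E}|z^\alpha|$ and $\sup_{B_F}|z^\alpha|$ differ by a factor controlled by $\gamma^m$ (the monomial $z^\alpha$ is $m$-homogeneous, and $B_F \subset B_E \subset \gamma B_F$). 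Summing over $\alpha \in J$ against $|z^\alpha|$ and taking suprema over the respective unit balls — again using the $m$-homogeneity to absorb the $\gamma^m$ — yields
\[
\gamma^{-m}\, \widehat{\boldsymbol{\lambda}}\big(\mathcal{P}_J(E)\big) \le \widehat{\boldsymbol{\lambda}}\big(\mathcal{P}_J(F)\big) \le \gamma^m\, \widehat{\boldsymbol{\lambda}}\big(\mathcal{P}_J(E)\big).
\]

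Next I would apply the Figiel--Johnson renorming theorem to $X$ and to $Y$: since $X$ is $r$-concave (hence $1$-convex and $r$-concave) there is an equivalent lattice norm $\|\cdot\|_{\widetilde X}$ with $M_{(r)}(\widetilde X) = 1$ and $\|x\|_X \le \|x\|_{\widetilde X} \le M_{(r)}(X)\|x\|_X$; symmetrically for $Y$ one gets $\|\cdot\|_{\widetilde Y}$ with $M^{(r)}(\widetilde Y) = 1$ and $\|y\|_Y \le \|y\|_{\widetilde Y} \le M^{(r)}(Y)\|y\|_Y$. Combining the scaling lemma (with $\gamma = M_{(r)}(X)$ for the pair $X_n, \widetilde X_n$ and $\gamma = M^{(r)}(Y)$ for the pair $Y_n, \widetilde Y_n$) with Theorem~\ref{PropApplnew} applied to $\widetilde X_n$ and $\widetilde Y_n$ gives
\[
M_{(r)}(X)^{-m}\, \widehat{\boldsymbol{\lambda}}\big(\mathcal{P}_J(X_n)\big) \le \widehat{\boldsymbol{\lambda}}\big(\mathcal{P}_J(\widetilde X_n)\big) \le \widehat{\boldsymbol{\lambda}}\big(\mathcal{P}_J(\widetilde Y_n)\big) \le M^{(r)}(Y)^m\, \widehat{\boldsymbol{\lambda}}\big(\mathcal{P}_J(Y_n)\big),
\]
which rearranges to the claimed inequality.

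I do not expect a serious obstacle here; this is essentially a transcription of the argument for Corollary~\ref{appl} with $\boldsymbol{\chimon}$ replaced by $\widehat{\boldsymbol{\lambda}}$, and the only point that needs a moment's care is verifying that the scaling lemma for $\widehat{\boldsymbol{\lambda}}$ on $\mathcal{P}_J(X_n)$ only uses $m$-homogeneity of the indices in $J \subset \Lambda(m,n)$ (this is exactly why the statement is restricted to homogeneous $J$), and that Theorem~\ref{PropApplnew} indeed requires the convexity/concavity constants to be $1$, which is supplied by the Figiel--Johnson renorming. One should also note that $r$-concavity of $X$ forces $X$ to have the Fatou property and to be a genuine Banach lattice (as already used in the proof of Lemma~\ref{app5}), so all the hypotheses of Theorem~\ref{PropApplnew} are met for $\widetilde X_n$ and $\widetilde Y_n$.
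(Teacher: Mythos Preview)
Your proposal is correct and follows exactly the approach the paper intends: the paper does not give an explicit proof of this corollary but introduces it with ``Similarly to Corollary~\ref{appl}'', indicating precisely the transcription you carry out (Figiel--Johnson renorming plus Theorem~\ref{PropApplnew}, together with the $\gamma^m$-scaling observation for $m$-homogeneous $J$). One minor slip: the intermediate inequality for characteristics should read $c_E(\alpha) \le c_F(\alpha) \le \gamma^m c_E(\alpha)$ (since $B_F \subset B_E$ gives $\sup_{B_F}|z^\alpha| \le \sup_{B_E}|z^\alpha|$), not the reversed version you wrote; this does not affect your two-sided bound for $\widehat{\boldsymbol{\lambda}}$, which is all you actually use.
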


\medskip

\section{Unconditionalizing the norm}

Finally, we show that for a wide class of Banach lattices $X_n = (\mathbb{C}^n,\|\cdot~\|)$ it is possible to 'unconditionalize'
the norm in $\mathcal{P}_m(X_n)$ such that for the new space $\mathcal{P}^{uc}_m(X_n)$ the classical projection
constant $\boldsymbol{\lambda}\big(\mathcal{P}^{uc}_m(X_n)\big)$ and the polynomial projection constant
$\widehat{\boldsymbol{\lambda}}\big(\mathcal{P}_m(X_n)\big)$ are equivalent up to  constants just depending on $m$.

\smallskip

Fix some finite index set $J \subset \mathbb{N}_0^n$ and a~Banach space $X_n = (\mathbb{C}^n,\|\cdot\|)$. We denote by
$\mathcal{P}^{uc}_J(X_n)$ the Banach space of all polynomials $P(z) = \sum_{\alpha \in J} c_\alpha z^\alpha\,,\,\, z \in X_n$
endowed with the norm
\[
\|P\|_{\text{uc}}:= \sup_{(\xi_{\alpha})_{\alpha \in J} \subset \mathbb{T}}
\Big\|\sum_{\alpha \in J} \xi_\alpha c_\alpha e_\alpha \Big\|_{\mathcal{P}_J(X_n)}
=
\sup_{z \in B_{X_n}}\,\sum_{\alpha \in J}   |c_\alpha| |z^\alpha|\,;
\]
here  again  $e_\alpha(z) := z^\alpha,\,\,z \in X_n$. Clearly, the family $(\tilde{e}_\alpha)$ given by
\[
\tilde{e}_\alpha := c_{X_n}(\alpha) e_\alpha\,,\,\, \,\,\alpha \in J
\]
forms a~normalized $1$-unconditional basis of the space $\mathcal{P}^{uc}_J(X_n)$.

The following result relates  the projection  constant of $\mathcal{P}^{uc}_J(X_n)$ and the fundamental function of
$\mathcal{P}^{uc}_J(X_n)$ with  the polynomial projection constant of $\mathcal{P}_J(X_n)$. 

\begin{proposition}
\label{lPuc}
Let $J \subset \mathbb{N}_0^n$ be a finite index set and $X_n = (\mathbb{C}^n,\|\cdot\|)$ a~Banach space. Then
\begin{align*}
\widehat{\boldsymbol{\lambda}}\big(\mathcal{P}_J(X_n)\big)  \,\,=\,\,
\varphi_{\mathcal{P}^{uc}_J(X_n)}(|J|)\,,
\end{align*}
and
\begin{align*}
\boldsymbol{\lambda}\big(\mathcal{P}^{uc}_J(X_n)\big)
\,\,\leq \,\, d\big(\mathcal{P}^{uc}_J(X_n),  \ell_\infty(J)\big)
\,\,\leq \,\,
\varphi_{\mathcal{P}^{uc}_J(X_n)}(|J|)
 \,\,\leq \,\,
\sqrt{2}\,\gamma(X_n)\,  \boldsymbol{\lambda}\big(\mathcal{P}^{uc}_J(X_n)\big)\,,
\end{align*}
where \,$\gamma(X_n):= \sup_{z\in B_{X_n}} \Big( \sum_{\alpha \in J} |c_{X_n}(\alpha) z^\alpha|^2\Big)^{\frac{1}{2}}$.
\end{proposition}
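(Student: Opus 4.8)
The statement of Proposition~\ref{lPuc} bundles together four assertions; I would prove them in the order in which their ingredients become available. First, the identity $\widehat{\boldsymbol{\lambda}}(\mathcal P_J(X_n)) = \varphi_{\mathcal P^{uc}_J(X_n)}(|J|)$ is essentially a reformulation of the two definitions. Indeed, the fundamental function of $\mathcal P^{uc}_J(X_n)$ with respect to its normalized $1$-unconditional monomial basis $(\tilde e_\alpha)_{\alpha\in J}$, $\tilde e_\alpha = c_{X_n}(\alpha)e_\alpha$, evaluated at $k=|J|$, is by definition $\big\|\sum_{\alpha\in J}\tilde e_\alpha\big\|_{uc} = \big\|\sum_{\alpha\in J}c_{X_n}(\alpha)e_\alpha\big\|_{uc}$, and unwinding the $\|\cdot\|_{uc}$-norm this is exactly $\sup_{z\in B_{X_n}}\sum_{\alpha\in J}c_{X_n}(\alpha)|z^\alpha|$, which is $\widehat{\boldsymbol{\lambda}}(\mathcal P_J(X_n))$ by \eqref{lambda-dash-def}. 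So this first step is immediate once the definitions are placed side by side.

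The middle inequality $\boldsymbol{\lambda}(\mathcal P^{uc}_J(X_n)) \le d(\mathcal P^{uc}_J(X_n),\ell_\infty(J)) \le \varphi_{\mathcal P^{uc}_J(X_n)}(|J|)$ I would handle using \eqref{formulaX} for the first part, and for the second part the general bound \eqref{Carsten1}: since $(\tilde e_\alpha)$ is a normalized $1$-unconditional basis of $\mathcal P^{uc}_J(X_n)$, we have $\|\mathrm{id}\colon \mathcal P^{uc}_J(X_n)\to \ell_\infty(J)\|\le 1$, and then $d(\ell_\infty(J),\mathcal P^{uc}_J(X_n))\le \varphi_{\mathcal P^{uc}_J(X_n)}(|J|)$ follows from the lattice estimate in \eqref{Carsten1} applied to the Banach lattice $\mathcal P^{uc}_J(X_n)$ (with its unconditional structure). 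The remaining and genuinely substantive inequality is the lower bound
\[
\varphi_{\mathcal P^{uc}_J(X_n)}(|J|) \le \sqrt{2}\,\gamma(X_n)\, \boldsymbol{\lambda}\big(\mathcal P^{uc}_J(X_n)\big)\,,
\]
and here the natural tool is Sch\"utt's estimate \eqref{schuett}, which states $\varphi_{Y}(N) \le \sqrt 2\,\|\mathrm{id}\colon \ell_2\to Y\|\,\boldsymbol{\lambda}(Y)$ for a finite-dimensional Banach lattice $Y$ with normalized symmetric-type basis; applied to $Y=\mathcal P^{uc}_J(X_n)$ and $N=|J|$ this reads $\varphi_{\mathcal P^{uc}_J(X_n)}(|J|)\le \sqrt 2\,\|\mathrm{id}\colon \ell_2(J)\to \mathcal P^{uc}_J(X_n)\|\,\boldsymbol{\lambda}(\mathcal P^{uc}_J(X_n))$. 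It therefore remains to identify $\|\mathrm{id}\colon \ell_2(J)\to \mathcal P^{uc}_J(X_n)\|$ with $\gamma(X_n)$.

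This last identification is a direct computation. For a scalar family $(a_\alpha)_{\alpha\in J}$ with $\sum_\alpha|a_\alpha|^2\le 1$, the element $\sum_\alpha a_\alpha\tilde e_\alpha = \sum_\alpha a_\alpha c_{X_n}(\alpha)e_\alpha$ has $\mathcal P^{uc}_J(X_n)$-norm equal to $\sup_{z\in B_{X_n}}\sum_\alpha |a_\alpha|\,c_{X_n}(\alpha)\,|z^\alpha|$, and by Cauchy--Schwarz in the $\alpha$-sum this is $\le \sup_{z\in B_{X_n}}\big(\sum_\alpha c_{X_n}(\alpha)^2|z^\alpha|^2\big)^{1/2} = \gamma(X_n)$, with equality attained by choosing $a_\alpha$ proportional to $c_{X_n}(\alpha)|z^\alpha|$ at the optimizing $z$ (using a compactness argument on $\overline B_{X_n}$). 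Hence $\|\mathrm{id}\colon \ell_2(J)\to \mathcal P^{uc}_J(X_n)\| = \gamma(X_n)$, and combining with Sch\"utt's estimate finishes the proof. The main obstacle is not a conceptual one but a bookkeeping one: I must be careful that the hypotheses of \eqref{schuett} (Sch\"utt's theorem as recalled in the excerpt) genuinely apply to $\mathcal P^{uc}_J(X_n)$ equipped with the $1$-unconditional monomial basis — the cited version is stated for lattices and for a normalized basis, so one should phrase $\mathcal P^{uc}_J(X_n)$ as a Banach lattice on the index set $J$ (via its coordinates in the monomial basis, which is $1$-unconditional) and check that the $\|\mathrm{id}\colon \ell_2\to Y\|$ appearing there is indeed the $\ell_2\to\mathcal P^{uc}_J(X_n)$ norm, which is exactly $\gamma(X_n)$ above. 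Once this matching is done the rest is routine.
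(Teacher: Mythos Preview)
Your proof is correct and follows essentially the same route as the paper: the first equality by unwinding the definitions of $\widehat{\boldsymbol{\lambda}}$ and the fundamental function, the middle chain via \eqref{Carsten1}, and the final inequality via Sch\"utt's estimate \eqref{schuett} combined with the Cauchy--Schwarz bound $\|I\colon \ell_2(J)\to \mathcal P^{uc}_J(X_n)\|\le \gamma(X_n)$. The paper only proves the inequality $\|I\|\le \gamma(X_n)$ (which is all that is needed), whereas you additionally observe that equality holds; this is a harmless extra.
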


Note that, by the 1-unconditionality and the fact that $J$ is finite, we can take any order of the basis of the space $\mathcal{P}^{uc}_J(X_n)$, since we are only dealing with  $\varphi_{\mathcal{P}^{uc}_J(X_n)}(|J|)$.

\begin{proof}

Since the $\tilde{e}_\alpha$ form a normalized $1$-unconditional basis in $\mathcal{P}^{uc}_J(X_n)$, we have
\begin{align*}
  \widehat{\boldsymbol{\lambda}}\big(\mathcal{P}_J(X_n)\big)
  =
\sup_{z \in B_{X_n}}\,\sum_{\alpha \in J} c_{X_n}(\alpha)|z^\alpha|
=
\sup_{(\xi_{\alpha})_{\alpha \in J} \subset \mathbb T}
\Big\|\sum_{\alpha \in J} \xi_\alpha \tilde{e}_\alpha  \Big\|_{\mathcal{P}_J(X_n)}
=
 \varphi_{\mathcal{P}^{uc}_J(X_n)}(|J|) \,.
\end{align*}
Moreover, we  conclude  from  \eqref{Carsten1} that
\begin{align*}
\boldsymbol{\lambda}\big(\mathcal{P}^{uc}_J(X_n)\big)
  \leq \boldsymbol{\lambda}\big(\mathcal{P}^{uc}_J(X_n), & \ell_\infty(J)\big)
\leq
\varphi_{\mathcal{P}^{uc}_J(X_n)}(|J|) \,.
\end{align*}
Finally, consider the  linear map
\[
I:  \ell_2(J) \to \mathcal{P}^{uc}_J(X_n) \,, \,\, \,\, \,\,
(c_\alpha)  \mapsto \sum_{\alpha \in J} c_\alpha \tilde{e}_\alpha \,,
\]
and observe  that
$
\|I\| \leq \gamma(X_n)\,,
$
since by the Cauchy-Schwarz inequality  for every $(c_\alpha) \in \ell_2(J)$
\[
\|I(c_\alpha)\|_{\text{uc}} =
\Big\|\sum_{\alpha \in J} c_\alpha \tilde{e}_\alpha\Big\|_{\text{uc}}
= \sup_{z \in B_{X_n}}\,\sum_{\alpha \in J} |c_\alpha | |c_{X_n}(\alpha)z^\alpha|
\leq \|(c_\alpha)\|_{\ell_2(J)} \,\gamma(X_n)\,.
\]
Then by Sch\"utt's result from \eqref{schuett}
\begin{align*}
   \varphi_{\mathcal{P}^{uc}_J(X_n)}(|J|) \leq \sqrt{2}\,\, \gamma(X_n)\,\,
\boldsymbol{\lambda}\big(\mathcal{P}^{uc}_J(X_n)\big)\,,
\end{align*}
which completes the proof\,.
\end{proof}

We will show that in the homogeneous  case $J=\Lambda(m, n)$ for a wide class of Banach lattices
$X_n = (\mathbb{C}^n, \|\cdot\|)$  the constant
$\gamma(X_n) $ does not depend on $n$.

\begin{corollary} \label{unccor}
Let $X_n= (\mathbb{C}^n, \|\cdot\|)$ be a~Banach lattice such that $X_n\equiv \ell_q^{n}\circ Y_n$
for some $1\leq q\leq 2$ and some Banach lattice $Y_n= (\mathbb{C}^n, \|\cdot\|)$. Then
\[
\gamma(X_n)  \,\,\leq  \,\,e^{\frac{m}{q}}\,.
\]
In particular,
for each $m\in \mathbb{N}$
\[
\boldsymbol{\lambda}\big(\mathcal{P}^{uc}_m(X_n)\big)  \,\,\leq \,\,\widehat{\boldsymbol{\lambda}}\big(\mathcal{P}_m(X_n)\big) \,\,\leq \,\,
\sqrt{2}\,e^{\frac{m}{q}}\, \boldsymbol{\lambda}\big(\mathcal{P}^{uc}_m(X_n)\big)\,.
\]
\end{corollary}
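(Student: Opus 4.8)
The plan is to reduce the statement for $X_n \equiv \ell_q^n \circ Y_n$ to the key identity $c_{X_n}(\alpha) = c_{\ell_q^n}(\alpha)\, c_{Y_n}(\alpha)$ from Lemma~\ref{cproduct}, and then control the quantity $\gamma(X_n)$ using the explicit formula for $c_{\ell_q^n}(\alpha)$ in \eqref{dineen}. First I would observe that since $c_{Y_n}(\alpha) |v^\alpha| \le 1$ for every $v \in B_{Y_n}$ and every $\alpha$, the factorization of a point $z \in B_{X_n}$ as $z = u\cdot v$ with $u \in B_{\ell_q^n}$ and $v \in B_{Y_n}$ (available up to an arbitrarily small multiplicative loss by definition of the pointwise product norm) gives
\[
\sum_{\alpha \in \Lambda(m,n)} \big| c_{X_n}(\alpha) z^\alpha \big|^2
\;\le\; (1+\varepsilon)^{2m}\sum_{\alpha \in \Lambda(m,n)} \big| c_{\ell_q^n}(\alpha) u^\alpha \big|^2 .
\]
So it suffices to bound $\gamma(\ell_q^n)$ by $e^{m/q}$.

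For that, I would plug in \eqref{dineen}, i.e. $c_{\ell_q^n}(\alpha) = (m^m/\alpha^\alpha)^{1/q}$, and use $m^m \le e^m m!$ together with $\alpha^\alpha \ge \alpha!$ to get $c_{\ell_q^n}(\alpha) \le e^{m/q} (m!/\alpha!)^{1/q}$. Then for $u \in B_{\ell_q^n}$,
\[
\sum_{\alpha \in \Lambda(m,n)} \big| c_{\ell_q^n}(\alpha) u^\alpha \big|^2
\;\le\; e^{2m/q} \sum_{\alpha \in \Lambda(m,n)} \Big(\frac{m!}{\alpha!}\Big)^{2/q} |u^\alpha|^2 .
\]
Since $q \le 2$ gives $2/q \ge 1$, and since for fixed $\alpha$ one has $(m!/\alpha!)^{2/q} \le (m!/\alpha!)\cdot (m!/\alpha!)^{2/q - 1} \le (m!/\alpha!) \, m^{m(2/q-1)}$ — this crude bound is wasteful; a cleaner route is to bound $(m!/\alpha!)^{2/q} |u^\alpha|^2 \le (m!/\alpha!)|u^\alpha|^{2} \cdot$ (something $\le 1$). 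The tidiest argument: write $|u^\alpha|^{2} = (|u^\alpha|^q)^{2/q}$ and apply the multinomial formula $\sum_{\alpha}\frac{m!}{\alpha!}|u_k|^{q\cdot(\alpha/ \ \cdot)} = (\sum_k |u_k|^q)^m \le 1$ after noting the $\ell^{2/q}$–$\ell^1$ norm inequality on the finite sum with weights $\frac{m!}{\alpha!}$. Carrying this through yields $\gamma(\ell_q^n) \le e^{m/q}$, hence $\gamma(X_n) \le e^{m/q}$.

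The "in particular" statement is then immediate: the left inequality $\boldsymbol{\lambda}(\mathcal{P}^{uc}_m(X_n)) \le \widehat{\boldsymbol{\lambda}}(\mathcal{P}_m(X_n))$ comes from the chain in Proposition~\ref{lPuc} (the polynomial projection constant equals the fundamental function $\varphi_{\mathcal{P}^{uc}_m(X_n)}(|\Lambda(m,n)|)$, which dominates the projection constant of $\mathcal{P}^{uc}_m(X_n)$ by \eqref{Carsten1}), and the right inequality is exactly the last estimate of Proposition~\ref{lPuc} with $\gamma(X_n) \le e^{m/q}$ substituted in. The main obstacle I anticipate is getting the constant in the $\gamma(\ell_q^n)$ estimate to be exactly $e^{m/q}$ rather than something slightly larger: the interplay between $(m!/\alpha!)^{2/q}$ for $q < 2$ and the multinomial identity (which naturally produces $\sum \frac{m!}{\alpha!}|u^\alpha|^q$, not the square) requires choosing the right convexity/Hölder step, and one has to be careful that no extra dimension-dependent factor sneaks in. Everything else is a direct assembly of Lemma~\ref{cproduct}, \eqref{dineen}, and Proposition~\ref{lPuc}.
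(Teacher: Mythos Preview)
Your approach is correct and matches the paper's: factor via Lemma~\ref{cproduct}, use $c_{Y_n}(\alpha)|v^\alpha|\le 1$, bound $c_{\ell_q^n}(\alpha)\le e^{m/q}(m!/\alpha!)^{1/q}$, and reduce to the multinomial identity. The obstacle you flag dissolves if you apply the trivial inequality $\big(\sum_\alpha a_\alpha^2\big)^{1/2}\le \big(\sum_\alpha a_\alpha^q\big)^{1/q}$ (valid since $q\le 2$) to $a_\alpha=c_{X_n}(\alpha)|z^\alpha|$ \emph{before} factoring, which is exactly what the paper does: one then lands on $e^{m/q}\big(\sum_\alpha \frac{m!}{\alpha!}|u^{q\alpha}|\big)^{1/q}=e^{m/q}\big(\sum_k|u_k|^q\big)^{m/q}\le e^{m/q}$ directly, with no further H\"older or convexity step needed. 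Your ``tidiest argument'' is the same inequality applied one line later, so it also works; just note that $\sum_\alpha x_\alpha^{2/q}\le(\sum_\alpha x_\alpha)^{2/q}$ for nonnegative $x_\alpha$ and $2/q\ge 1$ is precisely $\|\cdot\|_{2/q}\le\|\cdot\|_1$.
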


\begin{proof}
Recall that for each $\alpha \in \Lambda(m, n)$, we have 
\[
c_{\ell_q^n}(\alpha) = \big(\frac{m^m}{\alpha^\alpha}\big)^{1/q}
\leq e^{\frac{m}{q}} \big(\frac{m!}{\alpha!}\big)^{1/q}
\]
(see again \eqref{dineen}). Then by Lemma~\ref{cproduct}, we get
\begin{align*}
\gamma(X_n)
&
\leq \Big(\sup_{z\in B_{X_n}} \sum_{\alpha \in \Lambda(m, n)} |c_{X_n}(\alpha) z^\alpha|^q\Big)^{\frac{1}{q}}
 \leq \Big(\sup_{z\in B_{\ell_q^n}}\sup_{y\in Y_n} \sum_{\alpha \in \Lambda(m, n)} |c_{\ell_q^n}(\alpha)c_{Y_n}(\alpha)y^\alpha z^\alpha|^q\Big)^{\frac{1}{q}}
 \\
&
\leq e^{\frac{m}{q}} \Big(\sup_{z\in B_{\ell_q^n}} \sum_{\alpha \in \Lambda(m, n)} \frac{m!}{\alpha!} |z^{q\alpha}|\Big)^{\frac{1}{q}} = e^{\frac{m}{q}} \sup_{z\in B_{\ell_q^n}} (|z_1|^q + \ldots + |z_n|^q)^{\frac{m}{q}}= e^{\frac{m}{q}}\,.
\qedhere
\end{align*}
\end{proof}

\smallskip

Finally, we apply this result to Lorentz spaces. We  use a well-known  multiplication formula for such spaces, 
which states that for all $p_0$, $p_1$, $s_0$, $s_1\in (0, \infty)$ and $r$, $s$ given by $1/r_0 + 1/r_1 = 1/r$ 
and $1/s_0+ 1/s_1 = 1/s$ one has (see, e.g., \cite[Proposition 2.1.13]{pietsch1986eigenvalues}):
\[
\ell_{r, s}^n= \ell_{r_0, s_0}^{n}\circ \ell_{r_1, s_1}^{n}, \quad\, n\in \mathbb{N}\,.
\]
Let us note the obvious but essential fact that equivalence constants of (quasi-) 
norms in the above formula do not depend on $n$. 

\smallskip

\begin{corollary}
\label{special}
For every $1<r<s <q\leq 2$ there is a~constant $C>0$ such that, for each $m, n\in \mathbb{N}$, we have
\[
\boldsymbol{\lambda}\big(\mathcal{P}^{uc}_m(\ell_{r, s}^n)\big)  \,\,\leq  \,\,C^m e^{\frac{m}{q}} \,
\widehat{\boldsymbol{\lambda}}\big(\mathcal{P}_m(\ell_{r, s}^n)\big)  \,\,\leq  \,\,\sqrt{2}\,C^m \,e^{\frac{m}{q}}\,
\boldsymbol{\lambda}\big(\mathcal{P}^{uc}_m((\ell_{r, s}^n)\big)\,.
\]
\end{corollary}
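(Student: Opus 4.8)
\textbf{Proof plan for Corollary~\ref{special}.}
The plan is to derive this directly from Corollary~\ref{unccor} (the case $q$-convexifiable lattices) together with the multiplication/factorization formula for Lorentz spaces and the equivalence constants being independent of $n$. First I would fix $1<r<s<q\leq 2$. The point is to factor $\ell_{r,s}^n$ as a pointwise product $\ell_q^n \circ Y_n$ for a suitable Banach lattice $Y_n$ for which the factorization constants are uniform in $n$. To this end, choose an auxiliary pair of Lorentz parameters $(r_1,s_1)$ defined by $\tfrac{1}{r} = \tfrac{1}{q} + \tfrac{1}{r_1}$ and $\tfrac{1}{s} = \tfrac{1}{q} + \tfrac{1}{s_1}$; since $r<q$ and $s<q$ (indeed $q\le 2$ and $r<s<q$), both $r_1$ and $s_1$ are finite and positive, so by the multiplication formula for Lorentz spaces recalled before the corollary (see \cite[Proposition 2.1.13]{pietsch1986eigenvalues}),
\[
\ell_{r,s}^n \,=\, \ell_{q,q}^n \circ \ell_{r_1,s_1}^n \,=\, \ell_q^n \circ \ell_{r_1,s_1}^n\,,
\]
with equivalence of (quasi-)norms up to a multiplicative constant $C = C(r,s,q) \ge 1$ that does \emph{not} depend on $n$. (Here I use $\ell_{q,q}^n = \ell_q^n$ with equivalence constants independent of $n$.)

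Next I would pass this uniform equivalence of norms to the polynomial spaces. Since passing from a Banach lattice $(\mathbb{C}^n,\|\cdot\|_E)$ to an $C$-equivalent lattice norm $\|\cdot\|_F$ changes the norm of a degree-$m$ polynomial by a factor at most $C^m$ in each direction (the argument already used for instance in the proof of Corollary~\ref{appl}: if $\|z\|_E \le \|z\|_F \le C\|z\|_E$ then $C^{-m}\|P\|_{\mathcal P_J(E)} \le \|P\|_{\mathcal P_J(F)} \le \|P\|_{\mathcal P_J(E)}$ for $P\in\mathcal P_J(\mathbb{C}^n)$), we obtain that for $X_n := \ell_q^n \circ \ell_{r_1,s_1}^n$ (with its product norm) one has
\[
\frac{1}{C^m}\,\boldsymbol{\lambda}\big(\mathcal{P}^{uc}_m(X_n)\big) \le \boldsymbol{\lambda}\big(\mathcal{P}^{uc}_m(\ell_{r,s}^n)\big) \le C^m\,\boldsymbol{\lambda}\big(\mathcal{P}^{uc}_m(X_n)\big)\,,
\]
and the analogous two-sided $C^m$-comparison for $\widehat{\boldsymbol{\lambda}}(\mathcal{P}_m(\cdot))$ — the latter because the characteristics $c_{\cdot}(\alpha)$ and hence the quantity $\widehat{\boldsymbol{\lambda}}(\mathcal P_m)$ scale by at most $C^m$ under a $C$-equivalent lattice norm (this is immediate from the definition \eqref{lambda-dash-def} of $\widehat{\boldsymbol{\lambda}}$, since $\sup_{z}|z^\alpha|$ changes by a factor between $C^{-|\alpha|}$ and $1$). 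Writing $Y_n := \ell_{r_1,s_1}^n$, the lattice $X_n = \ell_q^n \circ Y_n$ is precisely of the form required by Corollary~\ref{unccor} with $1\le q\le 2$, and hence
\[
\boldsymbol{\lambda}\big(\mathcal{P}^{uc}_m(X_n)\big) \le \widehat{\boldsymbol{\lambda}}\big(\mathcal{P}_m(X_n)\big) \le \sqrt{2}\,e^{\frac{m}{q}}\,\boldsymbol{\lambda}\big(\mathcal{P}^{uc}_m(X_n)\big)\,.
\]

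Finally I would chain the three displayed inequalities. Combining the $C^m$-comparison of $\boldsymbol{\lambda}(\mathcal{P}^{uc}_m(\cdot))$, the $C^m$-comparison of $\widehat{\boldsymbol{\lambda}}(\mathcal{P}_m(\cdot))$, and the intrinsic inequalities of Corollary~\ref{unccor} for $X_n$, one gets
\[
\boldsymbol{\lambda}\big(\mathcal{P}^{uc}_m(\ell_{r,s}^n)\big)
\,\le\, C^m\,\boldsymbol{\lambda}\big(\mathcal{P}^{uc}_m(X_n)\big)
\,\le\, C^m\,\widehat{\boldsymbol{\lambda}}\big(\mathcal{P}_m(X_n)\big)
\,\le\, C^{2m}\,\widehat{\boldsymbol{\lambda}}\big(\mathcal{P}_m(\ell_{r,s}^n)\big)\,,
\]
and similarly
\[
\widehat{\boldsymbol{\lambda}}\big(\mathcal{P}_m(\ell_{r,s}^n)\big)
\,\le\, C^m\,\widehat{\boldsymbol{\lambda}}\big(\mathcal{P}_m(X_n)\big)
\,\le\, C^m\,\sqrt{2}\,e^{\frac{m}{q}}\,\boldsymbol{\lambda}\big(\mathcal{P}^{uc}_m(X_n)\big)
\,\le\, \sqrt{2}\,C^{2m}\,e^{\frac{m}{q}}\,\boldsymbol{\lambda}\big(\mathcal{P}^{uc}_m(\ell_{r,s}^n)\big)\,.
\]
Renaming $C^2$ as the constant called $C$ in the statement yields exactly the claimed chain of inequalities. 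The only genuinely delicate point is ensuring that the Lorentz-product factorization holds with equivalence constants independent of $n$; this is exactly the content of the remark preceding the corollary (that the equivalence constants in $\ell_{r,s}^n = \ell_{r_0,s_0}^n \circ \ell_{r_1,s_1}^n$ do not depend on $n$), and once that is granted everything else is the routine $C^m$-bookkeeping for degree-$m$ polynomials. I do not expect any real obstacle beyond checking that the chosen parameters $r_1,s_1$ are admissible (finite and positive), which follows from $r<s<q\le 2$.
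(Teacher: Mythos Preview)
Your proposal is correct and follows exactly the route the paper intends: the paper places the multiplication formula $\ell_{r,s}^n = \ell_{r_0,s_0}^n \circ \ell_{r_1,s_1}^n$ (with $n$-independent constants) immediately before the corollary precisely so that one can take $(r_0,s_0)=(q,q)$, write $\ell_{r,s}^n$ as $\ell_q^n \circ \ell_{r_1,s_1}^n$ up to a uniform constant $C$, and then apply Corollary~\ref{unccor}. The only remark is that $Y_n=\ell_{r_1,s_1}^n$ may a priori be only a quasi-Banach lattice, but the proof of Corollary~\ref{unccor} relies only on Lemma~\ref{cproduct} and the inequality $c_{Y_n}(\alpha)|y^\alpha|\le 1$ for $y\in \bar B_{Y_n}$, both of which hold in the quasi-Banach setting; so no obstacle arises.
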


\chapter{Polynomials on Lorentz and mixed spaces} 
\label{Part: Polynomials on Lorentz sequences spaces}

In this chapter we mainly focus on  polynomials defined on $n$-dimensional Lorentz sequence spaces $\ell_{r,s}^n $, and in an appendix on similar results for polynomials on mixed spaces $\ell_r^n (\ell_s^k)$ (with a lower claim to completeness). Our aim is to obtain estimates  of  the projection constant and unconditional basis constant of  spaces of polynomials on these spaces, which depend both, on the dimension of the spaces and on the degree of the polynomials.

In our study, we  use  well-known results on $p$-convexity and $q$-concavity of these spaces. For the sake completeness, we hence include the following result for Lorentz spaces, which contains  also  information about  upper and lower lattice estimates for these spaces. As a consequence of the  relationships  between the discussed geometrical concepts (see Section~\ref{Banach spaces and (Quasi-)Banach lattices} and \cite[Theorem 3.4]{creekmore1981type}) we have:

\noindent
(a)\, \emph{If \,$1\leq q<p$, then} 
\par {\rm(i)} \emph{$\ell_{p,q}$ is not $p$-concave, but satisfies a lower $p$-estimate and so it is $r$-concave for all $r>p$}.
\par {\rm(ii)} \emph{$\ell_{p, q}$ is $q$-convex}.

\noindent
(b)\, \emph{If \,$1<p<q$, then}
\par {\rm(i)}  \emph{$\ell_{p,q}$ is not $p$-convex, but satisfies an upper $p$-estimate and so it is $r$-convex for all $r<p$}.
\par {\rm(ii)} \emph{$\ell_{p,q}$ is $q$-concave}.

Of special interest for our purposes  is that $\ell_{p, q}$ is $2$-convex whenever $2 <p < \infty$ \,and \, $2 \leq q < \infty$, and  $\ell_{p, q}$ is $2$-concave whenever 
$1 \leq p < 2$ \,and \,$1 \leq q \leq 2$.

We refer to \cite{Jameson}, where explicit formulae for the the $q$-concavity constant of the Lorentz space $d(w, p)$ as well as $d^n(w, p)$ are given in terms of the sequence $w$.

Recall that, for a~given positive integer $m$, we in Section \ref{comparing}  related  the asymptotic growth of  $\boldsymbol{\lambda}\big(\mathcal{P}_{J_{\leq m}}(X_n)\big)$  in the dimension $n$, where  $X_n = (\mathbb{C}^n, \|\cdot\|) $ is the $n$th section of  a Banach sequence lattice  and $J \subset \mathbb{N}_0^{(\mathbb{N})}$, with the asymptotic growth  
of  the $m$th power of $\boldsymbol{\lambda}(X^\ast_n)$ in the dimension $n$.

Since we here concentrate on the  Banach sequence spaces $\ell_{r,s}$\,, it is in a first step  of course crucial to control the case $m=1$, that is,  the projection constant of the scale of finite dimensional  Lorentz spaces itself. The following result collects our knowledge.

\begin{proposition} \label{lor}
For all $1 < r < \infty$ \,and \, $1 \leq s \leq \infty$ one has 
\[
\boldsymbol{\lambda}(\ell^n_{r,s})\sim
\begin{cases}   
n^{\min\big\{\frac{1}{2}, \frac{1}{r} \big\}},  & r \neq 2\\[2mm]
n^{\frac{1}{2}},&  r = 2, \, \, \, 2 \leq s \leq \infty \\[2mm]
\Big({\frac{n}{\log \,(e + \log n)}}\Big)^{\frac12},&  r = 2, \, \,\,s=1.
\end{cases}
\]
\end{proposition}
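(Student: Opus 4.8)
The strategy is to split the statement according to the three regimes and in each case combine a known upper bound for projection constants of finite dimensional Banach lattices with a matching lower bound coming from $\pi_1$-estimates and the fundamental function. For the generic case $r\neq 2$ the plan is to show $\boldsymbol{\lambda}(\ell^n_{r,s})\sim n^{\min\{1/2,1/r\}}$ by interpreting $\ell^n_{r,s}$ through the convexity/concavity facts collected just above the statement: if $1<r<2$ and $1\le s\le 2$ the space $\ell^n_{r,s}$ is $2$-concave, hence by the equivalence \eqref{conv-conc} (with constants independent of $n$) one gets $\boldsymbol{\lambda}(\ell^n_{r,s})\sim n^{1/2}=n^{\min\{1/2,1/r\}}$; and if $2<r<\infty$ with $2\le s<\infty$ the space is $2$-convex, so again by \eqref{conv-conc} we have $\boldsymbol{\lambda}(\ell^n_{r,s})\sim \varphi_{\ell_{r,s}}(n)\sim n^{1/r}=n^{\min\{1/2,1/r\}}$. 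The remaining sub-ranges of $(r,s)$ with $r\neq 2$ (where $\ell_{r,s}$ is neither $2$-convex nor $2$-concave) I would handle by interpolating between two Lorentz spaces that are, using the multiplication/Calder\'on-product formula $\ell^n_{r,s}=\ell^n_{r_0,s_0}\circ\ell^n_{r_1,s_1}$ together with the factorization $\boldsymbol{\lambda}$-estimates, or more directly by sandwiching $\ell^n_{r,s}$ between $\ell^n_{r}$ and nearby Lorentz spaces via the lexicographic embeddings $\ell_{p,q}\hookrightarrow\ell_{r',s'}$ and using \eqref{gordongarling} $\boldsymbol{\lambda}(\ell^n_p)\sim n^{\min\{1/2,1/p\}}$; the upper bound then comes from \eqref{formulaX} (distance to $\ell^n_\infty$) or from \eqref{formulaC}, and the lower bound from \eqref{formulaB} combined with Grothendieck's inequality and the fundamental function via \eqref{formulaD}.

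\textbf{The case $r=2$, $2\le s\le\infty$.} Here $\ell^n_{2,s}$ is still $2$-convex (by fact (b)(i) above, since $2<s$ means $r<s$, actually one needs $r$-convexity for all $r<p=2$, and $2$-convexity holds because for $s\ge 2$ the space satisfies an upper $2$-estimate and hence is $2$-convex). Then \eqref{conv-conc} again gives $\boldsymbol{\lambda}(\ell^n_{2,s})\sim\varphi_{\ell_{2,s}}(n)\sim n^{1/2}$, which is the claimed value. For $s=\infty$ one argues the same way with $\ell_{2,\infty}$, using that its fundamental function is $\sim n^{1/2}$ and that it is $2$-convex.

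\textbf{The delicate case $r=2$, $s=1$.} This is where the main obstacle lies, since $\ell^n_{2,1}$ is neither $2$-convex (it is only $q$-convex for $q\le 1$, i.e.\ $1$-convex which is trivial) nor $2$-concave in a way that would yield the clean $n^{1/2}$ rate; the logarithmic correction $\log(e+\log n)$ must be produced exactly. The plan is: for the lower bound, use \eqref{formulaD}, namely $\boldsymbol{\lambda}(\ell^n_{2,1})\gtrsim \varphi_{\ell^n_{2,1}}(n)/d(\ell^n_{2,1},\ell^n_2)$; one has $\varphi_{\ell_{2,1}}(n)\sim n^{1/2}$, and the Banach--Mazur distance $d(\ell^n_{2,1},\ell^n_2)$ is known (or can be computed from the norm formula $\|x\|_{\ell_{2,1}}\sim\sum_k k^{-1/2}x_k^*$ tested against $\ell^n_2$) to be $\sim(\log(e+\log n))^{1/2}$ — this is the crucial arithmetic input and is the step I expect to be the hardest to pin down with the right constant. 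For the upper bound, the cleanest route is via \eqref{formulaC}: $\boldsymbol{\lambda}(\ell^n_{2,1})\ge \frac{1}{K_G}\frac{n}{d(\ell^n_{2,1},\ell^n_1)d(\ell^n_{2,1},\ell^n_2)}$ gives a lower bound, so for the upper bound I instead use Sch\"utt's estimate \eqref{schuett} in the form $\boldsymbol{\lambda}(X_n)\gtrsim\varphi_{X_n}(n)/\|\id:\ell_2\to X_n\|$ run in reverse together with \eqref{Carsten1} $\boldsymbol{\lambda}(X_n)\le\varphi_{X_n}(n)$ applied to a renormed copy, or more robustly: since $\boldsymbol{\lambda}(\ell^n_{2,1})=\boldsymbol{\lambda}((\ell^n_{2,1})')=\boldsymbol{\lambda}(\ell^n_{2,\infty})$ up to constants (trace duality \eqref{trace} together with $\boldsymbol\lambda(X)=\boldsymbol\lambda(X^*)$ for finite dimensional lattices — wait, this requires care as $\boldsymbol\lambda$ is not self-dual in general), the safest plan is to bound $\pi_1(\ell^n_{2,1})$ from below by $n/\boldsymbol\lambda$ via \eqref{formulaB} and from above by $K_G\,d(\ell^n_{2,1},\ell^n_1)\,d(\ell^n_{2,1},\ell^n_2)$, compute both Banach--Mazur distances explicitly ($d(\ell^n_{2,1},\ell^n_1)\sim n^{1/2}(\log(e+\log n))^{-1/2}$ and $d(\ell^n_{2,1},\ell^n_2)\sim(\log(e+\log n))^{1/2}$), and conclude $\boldsymbol{\lambda}(\ell^n_{2,1})\ge n/(K_G\cdot n^{1/2})=n^{1/2}/K_G$ for the lower bound while the matching upper bound $\boldsymbol{\lambda}(\ell^n_{2,1})\lesssim (n/\log(e+\log n))^{1/2}$ follows from $\boldsymbol\lambda(X_n)\le d(X_n,\ell^n_\infty)$ in \eqref{formulaX} after identifying $(\ell^n_{2,1})'=\ell^n_{2,\infty}$ and computing $d(\ell^n_{2,\infty},\ell^n_\infty)$. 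I expect the bookkeeping of these Banach--Mazur distances (which ultimately reduces to estimating $\sum_{k\le n}k^{-1/2}$ against $\sqrt n$ and iterating a $\log\log$ for the Lorentz weight) to be the genuine technical heart; everything else is assembly from \eqref{formulaX}, \eqref{formulaB}, \eqref{formulaC}, \eqref{formulaD}, \eqref{conv-conc}, and Grothendieck's theorem.
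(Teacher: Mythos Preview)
Your plan is broadly sound for the bulk of cases but contains two genuine gaps.

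\textbf{The case $r=2$, $2\le s\le\infty$.} Your claim that $\ell_{2,s}$ is $2$-convex is false: by fact~(b)(i) listed just above the proposition, for $p<q$ the space $\ell_{p,q}$ is \emph{not} $p$-convex (it only satisfies an upper $p$-estimate, which by the remark in Section~\ref{Banach spaces and (Quasi-)Banach lattices} yields $t$-convexity only for $t<p$). So you cannot invoke \eqref{conv-conc}. The paper instead uses \eqref{Carsten1} for the upper bound $\boldsymbol{\lambda}(\ell^n_{2,s})\le\varphi_{\ell_{2,s}}(n)\sim n^{1/2}$, and \eqref{schuett} for the lower bound, which applies because $\|\id\colon\ell^n_2\to\ell^n_{2,s}\|$ is uniformly bounded for $s\ge 2$ (lexicographic embedding). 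The same pair of estimates handles all of $r>2$, any $s$, more cleanly than your interpolation plan: since $r>2$ gives $\ell_2\hookrightarrow\ell_{r,s}$ uniformly, \eqref{schuett} delivers $\boldsymbol{\lambda}\gtrsim\varphi_{\ell_{r,s}}(n)\sim n^{1/r}$ directly.

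\textbf{The case $r=2$, $s=1$.} This is where your approach fails outright. The Banach--Mazur distance $d(\ell^n_{2,1},\ell^n_2)$ is of order $(\log n)^{1/2}$, not $(\log\log n)^{1/2}$ as you write; plugging this into \eqref{formulaD} yields only $\boldsymbol{\lambda}(\ell^n_{2,1})\gtrsim (n/\log n)^{1/2}$, and \eqref{formulaX} gives only $\boldsymbol{\lambda}\le\varphi_{\ell_{2,1}}(n)\sim n^{1/2}$. Neither bound captures the $\log\log n$ factor. The estimate $\boldsymbol{\lambda}(\ell^n_{2,1})\sim(n/\log\log n)^{1/2}$ is a genuinely deep result of Kwapie\'n and Sch\"utt \cite{schuettkwapien}; its proof uses averaging over permutations and combinatorial/probabilistic inequalities that are not reducible to the distance and trace-duality tools you propose. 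The paper simply cites this result. Your aside about $\boldsymbol{\lambda}(X)=\boldsymbol{\lambda}(X^*)$ is also incorrect in general and should be discarded.

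For the remaining sub-range $r<2$, $2\le s\le\infty$ (where $\ell_{r,s}$ is neither $2$-convex nor $2$-concave), the paper forward-references Theorem~\ref{t-final}; your interpolation sketch is plausible but you should be aware that the details are not entirely routine.
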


\begin{proof}
As explained above,  $\ell_{r,s}$ 
is $2$-concave for $r < 2$ and $1 \leq s \leq 2$. Thus the statement follows from
\eqref{conv-conc}. For the proof of the case  $r < 2$ and $2 \leq s \leq \infty$ see Theorem~\ref{t-final}. The case $r=2$, $s\ge2$ and the case $r>2$ follow from the equations \eqref{Carsten1} and \eqref{schuett}. The last equivalence is  a~remarkable result due to Kwapie\'n and Sch\"{u}tt \cite{schuettkwapien}, which states that 
$
\boldsymbol{\lambda}(\ell^n_{2,1})\sim
\big(n/\log\,\log n\big)^{\frac12}, \, n\geq 5\,. 
$
\end{proof}

As far as we know, for the case $r = 2$ and $1 <s <2$ (not present in Proposition \ref{lor}) there is no known asymptotic 
of $\boldsymbol{\lambda}(\ell_{2,s}^n)$ as a~function of the dimension $n$. By combining standard estimates that take into account the Banach-Mazur distances, we get the following.

\begin{lemma} For every $1<s<2$
\[
\boldsymbol{\lambda}(\ell^n_{2,s})\succ
\begin{cases} \
\frac{1}{(1 + \log n)^{1 - \frac{1}{s}}}\,
\Big({\frac{n}{\log \,(e + \log n)}}\Big)^{\frac12},& 1<s <\frac{4}{3} \\[2mm]
\frac{\sqrt{n}}{(1+\log n)^{\frac{1}{s}-\frac{1}{2}}}, & \frac{4}{3} \leq s <2\,. 
\end{cases}
\]
\end{lemma}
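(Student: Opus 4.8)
The plan is to interpolate the desired lower bounds for $\boldsymbol{\lambda}(\ell_{2,s}^n)$ with $1<s<2$ from the two endpoint cases $s=1$ (where Kwapień–Schütt gives $\boldsymbol{\lambda}(\ell_{2,1}^n)\sim\big(n/\log\log n\big)^{1/2}$, see Proposition~\ref{lor}) and $s=2$ (where $\boldsymbol{\lambda}(\ell_{2,2}^n)=\boldsymbol{\lambda}(\ell_2^n)\sim\sqrt n$), combined with standard Banach–Mazur distance estimates. Concretely, I would first record two facts. Fact (a): for any $n$-dimensional Banach space $X$, by~\eqref{formulaC} one has $\boldsymbol{\lambda}(X)\succ \frac{n}{K_G\, d(X,\ell_1^n)\,d(X,\ell_2^n)}$, and dually there is an upper estimate $\boldsymbol{\lambda}(X)\le d(X,\ell_\infty^{\dim X})$ from~\eqref{formulaX}. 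Fact (b): the Banach–Mazur distances between the Lorentz sections $\ell_{2,s}^n$ and $\ell_2^n$ are controlled by logarithmic factors; more precisely, $d(\ell_{2,s}^n,\ell_2^n)\prec (1+\log n)^{|1/s-1/2|}$ for $1\le s\le\infty$, which follows by comparing the Lorentz (quasi-)norm $\|x\|_{\ell_{2,s}^n}$ pointwise with $\|x^*\|_{\ell_2^n}$ using $\sum_{k=1}^n k^{s/2-1}(x_k^*)^s$ versus $\sum_{k=1}^n (x_k^*)^2$ and summation by parts, losing at most a factor $(\log n)^{|1/s-1/2|}$ from the slowly varying weight.

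Next, for the range $\tfrac43\le s<2$ I would argue as follows. Since $\ell_{2,s}^n$ is $2$-concave for such $s$ (using $s<2$ together with the $2$-concavity statement recalled in the chapter introduction), one has $\boldsymbol{\lambda}(\ell_{2,s}^n)\sim \sqrt n / d(\ell_{2,s}^n,\ell_2^n)^{?}$—more safely, I would use $\boldsymbol{\lambda}(\ell_{2,s}^n)\ge \boldsymbol{\lambda}(\ell_2^n)/d(\ell_{2,s}^n,\ell_2^n)$, which is immediate from the definition of the projection constant and the Banach–Mazur distance (a $\delta$-isomorphism transports projections with a loss of $\delta$). Combining with Fact (b) gives
\[
\boldsymbol{\lambda}(\ell_{2,s}^n)\ \succ\ \frac{\sqrt n}{(1+\log n)^{1/s-1/2}},
\]
which is exactly the claimed bound for $\tfrac43\le s<2$.

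For the more delicate range $1<s<\tfrac43$, the endpoint $s=2$ is too far to give a good bound, so I would interpolate from $s=1$ instead. The key point is that the scale $s\mapsto \ell_{2,s}$ behaves well under the Calderón product / complex interpolation: writing $\frac1s=\frac{1-\theta}{1}+\frac{\theta}{2}$, i.e. $\theta=2(1-\tfrac1s)\in(0,1)$ for $s\in(1,2)$, one has $\ell_{2,s}^n\equiv (\ell_{2,1}^n)^{1-\theta}(\ell_2^n)^{\theta}$ up to constants independent of $n$ (this is the standard Lorentz multiplication/interpolation formula, cf. the multiplication formula for Lorentz spaces recalled in Section~\ref{Marcinkiewicz spaces}). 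Then I would use the (easy) fact that the Banach–Mazur distance of an interpolation space to its endpoints obeys $d(\ell_{2,s}^n,\ell_2^n)\le d(\ell_{2,1}^n,\ell_2^n)^{1-\theta}$, and the Kwapień–Schütt estimate at the endpoint, to propagate the lower bound: since $\boldsymbol{\lambda}(\ell_{2,1}^n)\sim (n/\log\log n)^{1/2}$ and $d(\ell_{2,1}^n,\ell_2^n)\sim (\log n)^{1/2}$, applying $\boldsymbol{\lambda}(\ell_{2,s}^n)\ge \boldsymbol{\lambda}(\ell_{2,1}^n)/d(\ell_{2,1}^n,\ell_{2,s}^n)$ together with $d(\ell_{2,1}^n,\ell_{2,s}^n)\prec (1+\log n)^{1-1/s}$ (again Fact (b), or interpolation of distances) yields
\[
\boldsymbol{\lambda}(\ell_{2,s}^n)\ \succ\ \frac{1}{(1+\log n)^{1-1/s}}\Big(\frac{n}{\log(e+\log n)}\Big)^{1/2},
\]
as asserted.

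The main obstacle I anticipate is making Fact (b)—the precise logarithmic order of $d(\ell_{2,s}^n,\ell_2^n)$—fully rigorous with the correct exponent $|1/s-1/2|$ uniformly in $n$, and similarly verifying that the interpolation-of-distances inequality $d(\ell_{2,s}^n,\ell_2^n)\le d(\ell_{2,1}^n,\ell_2^n)^{1-\theta}$ holds with a dimension-free constant in this Calderón-product setting (rather than just $d(\ell_{2,s}^n,\ell_2^n)\le C^n\cdots$). Once the distance estimates are pinned down, the rest is a routine chain of inequalities. An alternative, perhaps cleaner route to the same distance bounds is to compute $d(\ell_{2,s}^n,\ell_{r}^n)$ directly via the fundamental function $\varphi_{\ell_{2,s}^n}(k)\sim k^{1/2}$ and the identity $d(X_n,\ell_1^n)=d(X_n',\ell_\infty^n)\le n/\varphi_{X_n}(n)$, feeding this into~\eqref{formulaC}; I would present whichever of the two gives the cleanest constants.
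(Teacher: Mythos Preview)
Your approach is essentially the same as the paper's: use $\boldsymbol{\lambda}(\ell_{2,s}^n)\ge \boldsymbol{\lambda}(E)/d(\ell_{2,s}^n,E)$ with $E=\ell_2^n$ and $E=\ell_{2,1}^n$, insert the known values $\boldsymbol{\lambda}(\ell_2^n)\sim\sqrt n$ and $\boldsymbol{\lambda}(\ell_{2,1}^n)\sim(n/\log\log n)^{1/2}$, and bound the Banach--Mazur distances by $(1+\log n)^{1/s-1/2}$ and $(1+\log n)^{1-1/s}$ respectively. The only difference is that you overcomplicate the distance bounds with interpolation and Calder\'on products; the paper simply observes that $\|\id\colon \ell_2^n\to\ell_{2,s}^n\|\prec(1+\log n)^{1/s-1/2}$ and $\|\id\colon \ell_{2,s}^n\to\ell_{2,1}^n\|\prec(1+\log n)^{1-1/s}$ follow from an elementary comparison of the weights $k^{s/2-1}$ (the reverse inclusions being uniformly bounded), so none of your ``main obstacles'' actually arise.
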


\begin{proof}
Recall that for any pair of  isomorphic Banach spaces $E$ and $F$ one has
\[
\boldsymbol{\lambda}(E) \leq d(E, F)\, \boldsymbol{\lambda}(F)\,.
\]
Thus, for each $n\in \mathbb{N}$, we have the estimates
\[
\boldsymbol{\lambda}({\ell_2^n)} \leq d(\ell_2^n,\, \ell_{2, s}^n)\,\boldsymbol{\lambda}(\ell_{2, s}^n), \quad \,\,\,\,    
\boldsymbol{\lambda}{(\ell_{2, 1}^n)} \leq d(\ell_{2, 1}^n, \, \ell_{2, s}^n)\, \boldsymbol{\lambda}(\ell_{2, s}^n)\,,
\]
and hence
\[
\boldsymbol{\lambda}(\ell^n_{2,s}) \geq 
\max
\Bigg\{
\frac{\boldsymbol{\lambda}(\ell_2^n)}{d(\ell_2^n,\, \ell_{2, s}^n)}, \,\,\, \frac{\boldsymbol{\lambda}(\ell_{2, 1}^n)}{d(\ell_{2, 1}^n, \, \ell_{2, s}^n)}
\Bigg\}\,.
\]

\smallskip

\noindent
It is easy to check that there is a~positive constant $C=C(s)>0$ such that 
\begin{equation*} 
\big\|{\id} \colon \ell_{2}^n \to \ell_{2,s}^n\big\| \leq  C\, (1 + \log n)^{\frac{1}{s}-\frac{1}{2}}, \quad\,\,\,\, \big\|{\id} \colon \ell_{2,s}^n \to \ell_{2, 1}^n\big\| \leq  C\, (1 + \log n)^{1-\frac{1}{s}}\,.
\end{equation*}
Since $\sup_{n\geq 1} \big\|{\id}\colon \ell_{2, s}^n \to \ell_{2}^n\big\|<\infty$ \, and \,   
$\sup_{n\geq 1} \big\|{\id}\colon \ell_{2, 1}^n \to \ell_{2, s}^n\big\|<\infty$, it follows that 
\[
d(\ell_2^n,\,\ell_{2, s}^n) \prec (1 + \log n)^{\frac{1}{s}-\frac{1}{2}}, \quad\, 
d(\ell_{2, 1}^n,\,\ell_{2, s}^n )\prec (1 + \log n)^{1-\frac{1}{s}}\,.
\]
The above inequalities combined with $\boldsymbol{\lambda} {(\ell_2^n)} \sim \sqrt{n}$ \, and
the asymptotic for $\boldsymbol{\lambda}{(\ell_{2, 1}^n)}$ shown in Proposition \ref{lor} give the required estimates.
\end{proof}

\smallskip
Combining  Corollary~\ref{immediateABC} and  Proposition~\ref{lor}, immediately leads to the following result.

\begin{proposition} \label{lor2}
For $1 < r < \infty$ \, and \, $1 \leq s \leq \infty$
\[
\boldsymbol{\lambda}\big(\mathcal{P}_{m}(\ell^n_{r,s})\big)
\sim_{C(m)}
\begin{cases}   \
n^{ m\min\big\{\frac{m}{2}, \frac{1}{r'} \big\}},  & r \neq 2\\[2mm]
n^{\frac{m}{2}},&  2 = r,\, \,1 \leq s \leq 2 \\[2mm]
\Big(\frac{n}{\log \,(e + \log n)}\Big)^{\frac{m}{2}},&  2 = r,\, \,\,s=\infty\,,  \\[2mm]
\end{cases}
\]
where $C(m)>0$ is a constant which only depends on $m$. Moreover, the same asymptotic holds true for
$\boldsymbol{\lambda}(\mathcal{P}_{\leq  m}(\ell^n_{r,s}))$, $\boldsymbol{\lambda}(\mathcal{P}_{\Lambda_T( m)}(\ell^n_{r,s}))$,
and $\boldsymbol{\lambda}(\mathcal{P}_{\Lambda_T( \leq m)}(\ell^n_{r,s}))$.
\end{proposition}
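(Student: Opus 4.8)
The goal is to derive the asymptotic behaviour of $\boldsymbol{\lambda}\big(\mathcal{P}_{m}(\ell^n_{r,s})\big)$ (and its three index-set variants) purely by combining the general transfer principle from Chapter~\ref{tensor-pro-me} with the one-homogeneous data from Proposition~\ref{lor}. The key observation is that $\ell^n_{r,s}$, with its standard unit vector basis, is a symmetric Banach lattice, hence has enough symmetries in the sense required by Corollary~\ref{immediateABC}; moreover the unit vectors are normalized, so $\|e_k\|_{\ell^n_{r,s}} = 1$. Therefore Corollary~\ref{immediateABC} applies verbatim to each of the four index sets $\Lambda_T(m,n)$, $\Lambda(m,n)$, $\Lambda_T(\leq m,n)$, $\Lambda(\leq m,n)$, yielding
\[
\boldsymbol{\lambda}\big(\mathcal{P}_{J}(\ell^n_{r,s})\big) \sim_{C(m)} \boldsymbol{\lambda}\big((\ell^n_{r,s})^\ast\big)^{m}
\]
for each such $J$, with a constant $C(m)$ depending only on $m$ and (through $\kappa$ and the convexity constants that enter Corollary~\ref{immediateABC}) on $r,s$, but not on $n$.

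\textbf{Key steps.} First I would verify that the dual $(\ell^n_{r,s})^\ast$ is isometrically (or at least isomorphically with constants independent of $n$) the Marcinkiewicz/Lorentz sequence space identified in Section~\ref{Marcinkiewicz spaces}: for $1 < r < \infty$ and $1 \le s \le \infty$ one has $(\ell_{r,s})' \equiv \ell_{r',s'}$ up to constants depending only on $r,s$ (this is the Köthe duality recalled there, using $X^\ast \equiv X'$ for separable $X$ and the maximality of $\ell_{r,s}$). Then I would feed the formulas of Proposition~\ref{lor}, applied to $\ell^n_{r',s'}$ in place of $\ell^n_{r,s}$, into the displayed equivalence. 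Concretely: for $r \neq 2$, Proposition~\ref{lor} gives $\boldsymbol{\lambda}(\ell^n_{r',s'}) \sim n^{\min\{1/2,1/r'\}}$, so the $m$th power is $n^{m\min\{1/2,1/r'\}}$; here one should double-check that $\min\{1/2,1/r'\}$ for the dual index equals $\min\{1/2, 1/r'\}$ in the stated form — indeed $r' \neq 2$ iff $r \neq 2$, and $\min\{1/2, 1/r'\}$ is exactly the exponent written. For $r = 2$ and $1 \le s \le 2$ we have $s' \ge 2$, and the dual space is $\ell^n_{2,s'}$ with $2 \le s' \le \infty$, whose projection constant is $\sim n^{1/2}$ by the second line of Proposition~\ref{lor}; raising to the $m$th power gives $n^{m/2}$. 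For $r = 2$, $s = \infty$, the dual is $\ell^n_{2,1}$, whose projection constant by the Kwapień–Schütt result (third line of Proposition~\ref{lor}) is $\sim (n/\log(e+\log n))^{1/2}$, and the $m$th power is $(n/\log(e+\log n))^{m/2}$. This matches the three cases in the statement. Finally, since Corollary~\ref{immediateABC} delivers the same two-sided estimate for all four index sets $\Lambda_T(m,n)$, $\Lambda(m,n)$, $\Lambda_T(\leq m,n)$, $\Lambda(\leq m,n)$ simultaneously, the ``moreover'' clause is automatic.

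\textbf{Main obstacle.} The one genuinely delicate point is the case $r < 2$, $2 \le s \le \infty$ of Proposition~\ref{lor} itself, which the statement defers to ``Theorem~\ref{t-final}'' — but for the present corollary that input is taken as given, so the only care needed in the proof is bookkeeping: confirming that the duality $(\ell_{r,s})' \equiv \ell_{r',s'}$ holds with $n$-independent constants across the full parameter range (including the endpoint $s = \infty$, where one works with the closure of the finitely supported sequences or passes to the bidual, but in finite dimensions this is harmless), and tracking that the constant $C(m)$ produced by the chain (polarization constant $\ccc(m,\ell^n_{r,s}) \le e^m$, the Ortega-Cerdà–Ounaïes–Seip constant $\kappa^m$, the ``enough symmetries'' lower bound, and the $(m+1)$-type factors from Theorem~\ref{degree-homo}) indeed depends only on $m$ and on $r,s$ and not on $n$. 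No step requires a new idea; the work is entirely in assembling Corollary~\ref{immediateABC}, the Köthe duality of Lorentz spaces, and Proposition~\ref{lor}.
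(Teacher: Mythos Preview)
Your proposal is correct and follows exactly the paper's approach: the paper's entire proof is the single sentence ``Combining Corollary~\ref{immediateABC} and Proposition~\ref{lor}, immediately leads to the following result.'' You have simply spelled out the verification that $\ell^n_{r,s}$ has enough symmetries (being a symmetric Banach lattice), identified the dual as $\ell^n_{r',s'}$, and matched the three cases of Proposition~\ref{lor} for the dual parameters to the three cases in the statement --- all of which the paper leaves implicit.
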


Recall from Theorem~\ref{start-poly2} that, given   $1 \leq r \leq \infty$ and an index set $J \subset \mathbb{N}_0^{(\mathbb{N})}$, we have  for all $m$ for which   $\Lambda_T(m) \subset J_{\leq m}$ that
\[
\boldsymbol{\lambda}\big(\mathcal{P}_{J_{\leq m}}(\ell_r^n) \big)
\,\,\sim_{C^m} \,\,
\Big( 1+\frac{n}{m}\Big)^{m \min \big\{\frac{1}{2}, \frac{1}{r'}\big\}}
\quad 
and
\quad 
\boldsymbol{\chimon}\big(\mathcal{P}_{J_{\leq m}}(\ell_r^n) \big)
\,\,\sim_{C^m} \,\,
\Big( 1+\frac{n}{m}\Big)^{(m-1) \min \big\{\frac{1}{2}, \frac{1}{r'}\big\}}\,,
\]
where $C >0$ is  only depending on $r$. We see that this result in  case $r=s$ shows a much more precise description
of $\boldsymbol{\lambda}(\mathcal{P}_{J_{\leq m}}(\ell^n_{r,s}))$
than Proposition~\ref{lor2}. In fact, it describes 
$\boldsymbol{\lambda}(\mathcal{P}_{J_{\leq m}}(\ell^n_{r,s}))$
 as a function in the two variables $n$ and in $m$,
 and it moreover allows a much larger variety of index sets $J$.
See also  Theorem~\ref{conny3} or  Corollary~\ref{RW}, where even  a precise formula in the Hilbert space case $r=s=2$ is proved.

 The task, which asks for  similar results for the full scale of finite dimensional Lorentz spaces, is laborious.
We start concentrating  on the tetrahedral case, which is easier than the general case, since  the characteristics of  tetrahedral multi indices with respect to Lorentz spaces $\ell_{r,s}^n$ behave exactly as those  of arbitrary multi indices  in the $\ell_r^n$-case (Lemma \ref{mietek}). For  tetrahedral indices  we in fact control  all cases  with the exception of  $r\le2$, $r<s$.
In a second attempt  we then deal with the (more involved) case of  polynomials supported on arbitrary index sets of degree at most $m$.

\bigskip
\section{The tetrahedral  case}
  The understanding of  unconditional basis constants and projection constants
for spaces $\mathcal{P}_{J}(\ell_{r,s}^n)$, where $J$ is a tetrahedral index set,  is in fact crucial in order to reach similar  results for more general index sets -- in particular,  index sets like $\Lambda(m,n)$  or $\Lambda(\leq m,n)$. 

\smallskip

We start with the upper estimates in the tetrahedral case, which are mainly based on our results from
Section~\ref{Kadets-Snobar case} (in particular, the 'polynomial' consequences of the  Kadets-Snobar theorem and the subexponential Bohnenblust-Hille inequality) as well as  Section~\ref{Characteristics} and Section~\ref{Tetrahedral polynomials} (dealing with the polynomial projection constant for spaces of polynomials supported on tetrahedral  indices).

\begin{theorem} \label{t-final}
Let $1 < r < \infty$,  $1 \leq s \leq \infty$,
and  let $J \subset \mathbb{N}_0^{(\mathbb{N})}$
be a~tetrahedral index set.
Then
\begin{align*} \label{putin}
 \boldsymbol{\chimon}\big(\mathcal{P}_{J_{\leq m}}(\ell_{r,s}^n)\big)  \prec_{C^m}\Big( \frac{n}{m}\Big)^{(m-1)\min\big\{\frac{1}{2},\frac{1}{r'}\big\}}
\,\,\,\, \text{ and } \,\,\,\,\,\,\,
\boldsymbol{\lambda}\big(\mathcal{P}_{J_{\leq m}}(\ell_{r,s}^n)\big)  \prec_{C^m}\Big( \frac{n}{m}\Big)^{m\min\big\{\frac{1}{2},\frac{1}{r'}\big\}}\,.
\end{align*}
\end{theorem}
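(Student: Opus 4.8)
\textbf{Proof plan for Theorem~\ref{t-final}.}
The plan is to split into the two ranges $r\ge 2$ and $1<r<2$, and in each of them to deal with $\boldsymbol{\lambda}$ and $\boldsymbol{\chimon}$ together via the machinery already developed in Chapter~\ref{Unconditionality} and Chapter~\ref{Part: Polynomial projection constants}. Since $J$ is tetrahedral, $J_{\leq m}\subset\Lambda_T(\leq m,n)$, so by Proposition~\ref{Cauchy} (to pass to a fixed homogeneity $k\le m$) and Theorem~\ref{degree-homo} (to reduce degree-$\le m$ to homogeneous blocks, losing only a factor $m+1\prec_{C^m}1$), and likewise Proposition~\ref{degreelessm} on the unconditional side, it suffices to bound $\boldsymbol{\lambda}(\mathcal{P}_{J_k}(\ell_{r,s}^n))$ and $\boldsymbol{\chimon}(\mathcal{P}_{J_k}(\ell_{r,s}^n))$ for $J_k\subset\Lambda_T(k,n)$, $k\le m\le n$, by $(n/m)^{k\min\{1/2,1/r'\}}$ resp.\ $(n/m)^{(k-1)\min\{1/2,1/r'\}}$ up to $C^m$; passing from $k$ to $m$ in the exponent is harmless since $(n/m)^{(m-k)\min\{1/2,1/r'\}}\le$ (trivial factor) when $m\le n$, or else is absorbed in $C^m$ via~\eqref{obvious}-type arguments.

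\textbf{Case $r\ge 2$ (equivalently, $\ell_{r,s}$ is $2$-convex when additionally $s\ge 2$, with the general $r\ge 2$ case handled by the $2$-convexification).} Here $\min\{1/2,1/r'\}=1/r'$ when $2\le r$ means $1/r'\le 1/2$; for $r\ge 2$ one has $1/r'\le 1/2$, so the target exponent is $k/r'$. For $2\le r$ and $2\le s$, $\ell_{r,s}$ is $2$-convex with some constant depending only on $r,s$, so by Corollary~\ref{C1} (and the renorming Corollary~\ref{appl}, costing only $C^m$) it is enough to bound things on $\ell_r^n$, where Theorem~\ref{start-poly2} gives exactly $\boldsymbol{\lambda}(\mathcal{P}_{J_{\leq m}}(\ell_r^n))\prec_{C^m}(1+n/m)^{m/r'}$ and the matching bound for $\boldsymbol{\chimon}$. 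For $2\le r$ but $1\le s<2$, I would instead use the polynomial projection constant: by Theorem~\ref{lambda-dash}, $\boldsymbol{\lambda}(\mathcal{P}_{J_k}(\ell_{r,s}^n))\le\widehat{\boldsymbol{\lambda}}(\mathcal{P}_{J_k}(\ell_{r,s}^n))$, and by Proposition~\ref{mietek} the tetrahedral characteristic is $c_{\ell_{r,s}^n}(\alpha)=\varphi_{\ell_{r,s}^n}(k)^k\sim k^{k/r}$ (using $\varphi_{\ell_{r,s}}(k)\sim k^{1/r}$); then Proposition~\ref{lambda} gives $\widehat{\boldsymbol{\lambda}}(\mathcal{P}_{\Lambda_T(k,n)}(\ell_{r,s}^n))\le e^k(\varphi_{X_n'}(n)/\varphi_{X_n'}(k))^k\sim_{C^m}(n/k)^{k/r'}$, since $\varphi_{(\ell_{r,s})'}(j)\sim j^{1/r'}$. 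For $\boldsymbol{\chimon}$ in this sub-case I would invoke Corollary~\ref{main3A} (or Theorem~\ref{main3}) together with Theorem~\ref{OrOuSe} to write $\boldsymbol{\chimon}(\mathcal{P}_{J_k}(\ell_{r,s}^n))\prec_{C^m}\max_{j<k}\boldsymbol{\lambda}(\mathcal{P}_j(\ell_{r,s}^n))\prec_{C^m}(n/m)^{(k-1)/r'}$, using the projection-constant bound just obtained, which costs the expected single power loss in the exponent.

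\textbf{Case $1<r<2$.} Now $\min\{1/2,1/r'\}$ may be either $1/2$ or $1/r'$ depending on whether $r'\ge 2$, i.e.\ it equals $1/2$ when $r\le 2$; so for $1<r<2$ the target exponent is $k/2$ for $\boldsymbol{\lambda}$ and $(k-1)/2$ for $\boldsymbol{\chimon}$ whenever $r'\ge 2$, i.e.\ always in this range. The strategy is again to bound via $\widehat{\boldsymbol{\lambda}}$: by Proposition~\ref{mietek}, $c_{\ell_{r,s}^n}(\alpha)=\varphi_{\ell_{r,s}^n}(k)^k\sim k^{k/r}$ for tetrahedral $\alpha$, and by Proposition~\ref{lambda}, $\widehat{\boldsymbol{\lambda}}(\mathcal{P}_{\Lambda_T(k,n)}(\ell_{r,s}^n))\le e^k(\varphi_{(\ell_{r,s})'}(n)/\varphi_{(\ell_{r,s})'}(k))^k\sim_{C^m}(n/k)^{k/r'}$. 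But $1/r'>1/2$ when $r<2$, so this is too large; instead I would exploit $2$-concavity of $\ell_{r,s}$ for $1\le s\le 2$ (Corollary~\ref{C1C2}, Corollary~\ref{appl}) to reduce to $\ell_2^n$, on which Theorem~\ref{start-poly2} (the $2$-convex range) gives the exponent $k/2$. The remaining genuinely hard sub-case is $1<r<2$ with $s>2$: here $\ell_{r,s}$ is neither $2$-convex nor $2$-concave, so neither reduction applies directly. For this I would factor $\ell_{r,s}^n=\ell_{r_0,s_0}^n\circ\ell_{r_1,s_1}^n$ (the Lorentz multiplication formula used in the excerpt before Corollary~\ref{special}) choosing the second factor $2$-concave, apply Theorem~\ref{min} (minimum over factors for $\widehat{\boldsymbol{\lambda}}$), and thereby bound $\widehat{\boldsymbol{\lambda}}(\mathcal{P}_{J_k}(\ell_{r,s}^n))$ by $\widehat{\boldsymbol{\lambda}}(\mathcal{P}_{J_k}(\ell_{r_1,s_1}^n))$ with $\ell_{r_1,s_1}$ $2$-concave and $\varphi_{(\ell_{r_1,s_1})'}(n)\sim n^{1/r_1'}$, picking $r_1$ with $1/r_1'=1/2$, i.e.\ $r_1=2$ and $s_1\le 2$, which is available by splitting $1/r=1/r_0+1/2$ with $r_0>2$. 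Then Proposition~\ref{lambda} on the factor gives $(n/m)^{k/2}$, and Theorem~\ref{lambda-dash} transfers this to $\boldsymbol{\lambda}$; the $\boldsymbol{\chimon}$ bound again follows from Corollary~\ref{main3A} plus Theorem~\ref{OrOuSe}.

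\textbf{Main obstacle.} The crux is precisely the mixed range $1<r<2<s$, where neither a convexity nor a concavity reduction to an $\ell_p$-space is available and one must run everything through the polynomial projection constant $\widehat{\boldsymbol{\lambda}}$ combined with the Calder\'on-product / pointwise-product factorization of $\ell_{r,s}$ and Theorem~\ref{min}; one has to check that the splitting $1/r=1/r_0+1/r_1$ and $1/s=1/s_0+1/s_1$ can be arranged so that one factor is $2$-concave (so Corollary~\ref{C1C2}/\ref{appl} applies to it, or directly its fundamental-function dual has the right $n^{1/2}$ growth) while keeping all equivalence constants independent of $n$ and controlled by $C^m$ in the degree. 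The other, more bookkeeping-style obstacle is making sure that every reduction—degree-to-homogeneous (Theorem~\ref{degree-homo}, Proposition~\ref{degreelessm}), tetrahedral-to-$\Lambda_T(k,n)$, renorming (Corollary~\ref{appl}), polarization constants hidden in Corollary~\ref{immediate}—contributes only a factor of the form $C^m$, so that the final bound retains the clean form $(n/m)^{m\min\{1/2,1/r'\}}$ up to $\prec_{C^m}$; this is routine but must be tracked carefully because several of these steps are individually exponential in $m$.
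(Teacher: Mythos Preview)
Your proof plan has a critical arithmetic error that inverts the two cases. For $r\ge 2$ one has $r'\le 2$, hence $1/r'\ge 1/2$ and $\min\{1/2,1/r'\}=1/2$; for $1<r<2$ one has $r'>2$, hence $1/r'<1/2$ and $\min\{1/2,1/r'\}=1/r'$. You have this reversed throughout, which leads you to chase the wrong target exponent in both cases and to invent a ``genuinely hard sub-case'' that does not exist.

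Once this is corrected, the argument simplifies dramatically and becomes essentially the paper's proof. In Case~A ($r\ge 2$, any $s$) the target exponent is $1/2$, and Theorem~\ref{conny3} gives exactly $\boldsymbol{\lambda}\prec_{C^m}(1+n/m)^{m/2}$ and $\boldsymbol{\chimon}\prec_{C^m}(1+n/m)^{(m-1)/2}$ for \emph{any} Banach sequence lattice---no $2$-convexity, no reduction to $\ell_r^n$, no sub-cases in $s$. In Case~B ($1<r\le 2$, any $s$) the target exponent is $1/r'$, and the computation you yourself carried out via Proposition~\ref{lambda} gives $\widehat{\boldsymbol{\lambda}}(\mathcal{P}_{\Lambda_T(k,n)}(\ell_{r,s}^n))\le e^k(\varphi_{(\ell_{r,s})'}(n)/\varphi_{(\ell_{r,s})'}(k))^k\sim_{C^m}(n/k)^{k/r'}$, which is precisely the desired bound---you wrongly rejected it as ``too large'' because of the sign error. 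Summing over $k\le m$ and using $\varphi_{\ell_{r',s'}}(j)\sim j^{1/r'}$ (which holds for all $s$, so there is no hard sub-case $s>2$), together with Theorem~\ref{lambda-dash} for $\boldsymbol{\lambda}$ and Corollary~\ref{main3A} for $\boldsymbol{\chimon}$, finishes the argument. The factorization machinery via Theorem~\ref{min} and the Lorentz multiplication formula is not needed here.
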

\begin{proof}

\noindent  We distinguish the two cases $\mathbf{A}:$ $2 \leq r < \infty,  \,1 \leq s \leq \infty$
and $\mathbf{B}:$ $1 < r \leq 2 ,\, 1 \leq s \leq \infty$. In case $\mathbf{A}$ both claims  follow from Theorem \ref{conny3}.
Let us turn to the case $\mathbf{B}$, and start with the upper bound for the projection constant. By  Theorem~\ref{lambda-dash} and Proposition~\ref{lambda} we have
\begin{align*}
  \boldsymbol{\lambda}\big(\mathcal{P}_{\leq m}(\ell_{r,s}^n)\big) & \leq \boldsymbol{\widehat{\lambda}}\big(\mathcal{P}_{\leq m}(\ell_{r,s}^n)\big)
  \\
  &
      \leq  \boldsymbol{\widehat{\lambda}}\big(\mathcal{P}_{\Lambda_T(\leq m,n)}(\ell_{r,s}^n)\big)
    \leq
  \sum_{k=0}^m \boldsymbol{\widehat{\lambda}}\big(\mathcal{P}_{\Lambda_T( k,n)}(\ell_{r,s}^n)\big)
    \leq
  \sum_{k=0}^m e^k
\left(  \frac{\varphi_{\ell_{r',s'}}(n)}{\varphi_{\ell_{r',s'}}(k)}  \right)^{k}\,.
 \end{align*}
 But since $\varphi_{\ell_{r',s'}}(n) \sim n^{1/r'}$ and
 \begin{equation} \label{wuerzburg}
 \sum_{0\leq k \leq m} \Big( \frac{n}{k}\Big)^{\frac{k}{r'}}
    \leq
    (m+1)
  \left(\max_{0\leq k \leq m}  \Big(\frac{n}{k}\Big)^k\right)^{\frac{1}{r'}}
    \leq    (m+1) \left(\sum_{0\leq k \leq m} \binom{n}{k}\right)^{\frac{1}{r'}}
    \leq   (m+1)e^{\frac{m}{r'}} \Big( \frac{n}{m}\Big)^{\frac{m}{r'}}\,,
\end{equation}
the claim follows.
In order to prove the upper bound for the unconditional basis constant, observe that by Corollary~\ref{main3A} (and by what we have already proved), it holds
\begin{align*}
  \boldsymbol{\chimon}\big(\mathcal{P}_{J_{\leq m}}(\ell_{r,s}^n)\big)
  &
  \leq
       \boldsymbol{\chimon}\big(\mathcal{P}_{\Lambda_T(\leq m,n)}(\ell_{r,s}^n)\big)
         \\
  &
    \prec_{C^m}  \max_{1 \leq  k \leq m-1} \boldsymbol{\lambda}\big( \mathcal{P}_{\Lambda_T(k,n)}(X_n)\big)
  \prec_{C^m}       \max_{1 \leq  k \leq m-1} \Big( \frac{n}{k}\Big)^{\frac{k}{r'}}\,,
 \end{align*}
hence the claim is again a consequence of the elementary estimate from \eqref{wuerzburg}.
\end{proof}

\smallskip

Let us turn to the lower estimates. Proposition~\ref{lor2} shows that these estimates, at least for $r=2$ and $s= \infty$,
do not give the correct asymptotics. See Corollary~\ref{thm: proy + uncon l2,s} for the collection 
of all results we know in the case $r=2$.

\smallskip

\begin{theorem} \label{t-final2}
Assume that  $\mathbf{A}: 2 < r < \infty,  \,2\leq s \leq \infty$ or 
$\mathbf{B}:  2 \leq r < \infty, \, 1 \leq s \leq 2$ or $\mathbf{C}: 1 < r < 2 ,\, r \leq s$.
Then for every tetrahedral index set $J \subset \mathbb{N}_0^{(\mathbb{N})}$
\begin{align*} \label{putin}
\Big( \frac{n}{m}\Big)^{(m-1)\min\big\{\frac{1}{2},\frac{1}{r'}\big\}}
\prec_{C^m}
 \boldsymbol{\chimon}\big(\mathcal{P}_{J_{\leq m}}(\ell_{r,s}^n)\big)
\,\,\,\, \text{ and } \,\,\,\,\,\,\,
 \Big( \frac{n}{m}\Big)^{m\min\big\{\frac{1}{2},\frac{1}{r'}\big\}}
 \prec_{C^m}
\boldsymbol{\lambda}\big(\mathcal{P}_{J_{\leq m}}(\ell_{r,s}^n)\big) \,.
\end{align*}
Moreover, in the missing case $2 \leq  r < \infty$ and $r \leq  s$ we only know that 
\[
\frac{1}{(\log n)^{\frac{m}{r}-\frac{m}{s} }}\Big( \frac{n}{m} \Big)^{\frac{m-1}{r'}}
\prec_{C^m}
\boldsymbol{\chimon}\big(\mathcal{P}_{J_{\leq m}}(\ell_{r,s}^n)\big)
\]
and
\[
\frac{1}{(\log n)^{\frac{m}{r}-\frac{m}{s} }}\Big( \frac{n}{m} \Big)^{\frac{m}{r'}}
 \prec_{C^m}
 \boldsymbol{\lambda}\big(\mathcal{P}_{J_{\leq m}}(\ell_{r,s}^n)\big)\,.
 \]
\end{theorem}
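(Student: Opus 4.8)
\textbf{Proof plan for Theorem~\ref{t-final2}.}

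The plan is to treat the four regimes separately, following the two-step strategy that already succeeded for $\ell_r^n$ in Theorem~\ref{start-poly2} and for the $2$-convex case in Theorem~\ref{conny3}. The key observation is that by Corollary~\ref{C1}, Corollary~\ref{C2} and Corollary~\ref{appl}, the unconditional basis constant $\boldsymbol{\chimon}(\mathcal{P}_{J_{\leq m}}(\ell_{r,s}^n))$ is comparable (up to $C^m$) to $\boldsymbol{\chimon}(\mathcal{P}_{J_{\leq m}}(\ell_t^n))$ for a suitable $t$: in case $\mathbf{A}$ one has $\ell_{r,s}$ $2$-convex, so $\boldsymbol{\chimon}(\mathcal{P}_{\Lambda_T(m,n)}(\ell_2^n)) \prec_{C^m} \boldsymbol{\chimon}(\mathcal{P}_{\Lambda_T(m,n)}(\ell_{r,s}^n))$; in case $\mathbf{B}$ it is $2$-concave, so $\ell_2$ is dominated the other way; and in case $\mathbf{C}$ the relation $\ell_{r,s}\hookrightarrow \ell_{s,s}=\ell_s$ and the lexicographic ordering give $\ell_{r,s}$ is $r$-convex, hence $\boldsymbol{\chimon}(\mathcal{P}_{\Lambda_T(m,n)}(\ell_r^n)) \prec_{C^m}\boldsymbol{\chimon}(\mathcal{P}_{\Lambda_T(m,n)}(\ell_{r,s}^n))$. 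In each case the lower bound for $\boldsymbol{\chimon}$ then follows from the probabilistic estimates of Section~\ref{Probabilistic estimates}: Proposition~\ref{toblach} in the Hilbert case (for $\mathbf{A}$, $\mathbf{B}$ since $\varphi_{\ell_2^n}(n)\varphi_{\ell_2^n}(n) = n$ and \eqref{M} holds trivially), and Proposition~\ref{innichen1} for $\ell_r^n$ with $1<r\le 2$ (for $\mathbf{C}$, using $\varphi_{\ell_r}(n)\prec n^{1/r}$ and $\|\mathrm{id}\colon \ell_r^n\to\ell_r^n\|=1$), both applied with $\Lambda_T(m,n)\subset J_{\leq m}$.

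The second step passes from $\boldsymbol{\chimon}$ to $\boldsymbol{\lambda}$. Here I would invoke the chain already used in the proof of Theorem~\ref{conny3} and Theorem~\ref{start-poly2}: by Theorem~\ref{main3} (applied to $\Lambda_T(m+1,n)$), Theorem~\ref{OrOuSe} (twice) and Proposition~\ref{Cauchy}, exactly as in \eqref{verflucht}, one gets
\[
\boldsymbol{\chimon}\big(\mathcal{P}_{\Lambda_T(m+1,n)}(\ell_{r,s}^n)\big) \,\leq\, e\,2^{m+1}\kappa^{2m+1}\,\boldsymbol{\lambda}\big(\mathcal{P}_{(J_{\leq m})_m}(\ell_{r,s}^n)\big) \,\leq\, e\,2^{m+1}\kappa^{2m+1}\,\boldsymbol{\lambda}\big(\mathcal{P}_{J_{\leq m}}(\ell_{r,s}^n)\big)\,,
\]
valid whenever $\Lambda_T(m)\subset J_{\leq m}$ and $m+1\le n$. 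Combining this with the lower bound for $\boldsymbol{\chimon}\big(\mathcal{P}_{\Lambda_T(m+1,n)}(\ell_{r,s}^n)\big)$ obtained in the first step (note the shift $m\mapsto m+1$ converts the exponent $(m-1)\min\{\tfrac12,\tfrac1{r'}\}$ into $m\min\{\tfrac12,\tfrac1{r'}\}$) yields the lower bound for $\boldsymbol{\lambda}$. For the range $m\ge n$ the estimates are trivial (as in \eqref{obvious}), so the restriction $m\le n$ is harmless.

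For the missing regime $2\le r<\infty$, $r\le s$, the difficulty is that $\ell_{r,s}$ is no longer $2$-convex (only $r$-convex with $r\ge 2$, which is weaker), so neither Proposition~\ref{toblach} nor the clean comparison with $\ell_2^n$ applies. Here I would instead use the entropy-method bound, Proposition~\ref{innichen} (with the symmetry hypothesis $\varphi_{\ell_{r,s}^n}(n)\varphi_{\ell_{r',s'}^n}(n)\prec n$, which holds), together with the estimates $\|\mathrm{id}\colon (\ell_{r,s})_n\to\ell_1^n\| = \varphi_{\ell_{r',s'}^n}(n)\sim n^{1/r'}$ and $\|\mathrm{id}\colon (\ell_{r,s})_n\to\ell_t^n\|\prec (\log n)^{1/t-1/s}\,n^{1/t}$ for $t\le s$ coming from the lexicographic embeddings; optimizing the choice of $t$ produces the loss factor $(\log n)^{-m(1/r-1/s)}$. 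The main obstacle, and the reason this case stays incomplete, is precisely that the logarithmic gap between the norms $\|\mathrm{id}\colon (\ell_{r,s})_n\to\ell_r^n\|$ and its reciprocal direction cannot be closed without either a two-sided convexity/concavity estimate matching the exponent $r'$ or a sharper Kahane--Salem--Zygmund-type random polynomial bound tailored to $\ell_{r,s}^n$; supplying such a bound is exactly the subtle technical step flagged in the Outlook as "far more involved than those used in the case $r=s$." I would present the four good regimes in full and record the fifth with the logarithmic loss, exactly as stated.
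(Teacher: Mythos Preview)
Your handling of case $\mathbf{A}$ and the passage from $\boldsymbol{\chimon}$ to $\boldsymbol{\lambda}$ via the chain in \eqref{verflucht} are correct and match the paper. Cases $\mathbf{B}$ and $\mathbf{C}$, however, contain genuine gaps.

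In case $\mathbf{B}$ ($2\le r<\infty$, $1\le s\le 2$) you assert that $\ell_{r,s}$ is $2$-concave. This is false: by the facts recorded at the start of the chapter (item (a) with $q=s<p=r$), the space $\ell_{r,s}$ is $t$-concave only for $t>r\ge 2$. Even if it were $2$-concave, Corollary~\ref{C1} would give $\boldsymbol{\chimon}(\mathcal P_J(\ell_{r,s}^n))\le\boldsymbol{\chimon}(\mathcal P_J(\ell_2^n))$, an \emph{upper} bound --- the wrong direction for what you need. The paper bypasses comparison entirely and applies Proposition~\ref{toblach} \emph{directly} to $X=\ell_{r,s}$: one checks $\varphi_{\ell_{r,s}}(n)\,\varphi_{\ell_{r',s'}}(n)\sim n$, and then uses the inclusion $\ell_{r,s}\hookrightarrow\ell_r$ (valid since $s\le 2\le r$) to get $\|\id\colon\ell_{r,s}^n\to\ell_2^n\|\prec\|\id\colon\ell_r^n\to\ell_2^n\|=n^{1/2-1/r}\sim n^{-1/2}\|\id\colon\ell_{r,s}^n\to\ell_1^n\|$, which is exactly hypothesis~\eqref{M}.

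In case $\mathbf{C}$ you claim that $\ell_{r,s}$ is $r$-convex, so that Corollary~\ref{C2} would yield $\boldsymbol{\chimon}(\mathcal P_J(\ell_r^n))\prec_{C^m}\boldsymbol{\chimon}(\mathcal P_J(\ell_{r,s}^n))$. But for $r<s$, item (b)(i) says explicitly that $\ell_{r,s}$ is \emph{not} $r$-convex --- it only satisfies an upper $r$-estimate, and Corollary~\ref{C2} requires genuine convexity. (For $s<r$, item (a)(ii) gives only $s$-convexity, again too weak.) The paper's route is once more to apply Proposition~\ref{innichen1} directly to $X=\ell_{r,s}$ and control the factor $\|\id\colon\ell_{r,s}^n\to\ell_r^n\|$ explicitly.

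For the missing regime your entropy-method plan (Proposition~\ref{innichen} with optimization over an auxiliary index $t$) is more elaborate than necessary. The paper simply reuses Proposition~\ref{innichen1} on $X=\ell_{r,s}$, inserting the bound $\|\id\colon\ell_{r,s}^n\to\ell_r^n\|\prec(\log n)^{1/r-1/s}$, which produces the stated logarithmic loss in one line.
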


\begin{proof} 
In case $\mathbf{A}$ we know that $\ell_{r,s}$ is $2$-convex (see the introduction of this section),
and hence both claims are consequences of Theorem~\ref{conny3}.

In  case $\mathbf{B}$ we first deal with the 
unconditional basis constant:
Recall that  $\varphi_{\ell_{r,s}^n}(n) \sim n^{\frac{1}{r}}$
and $\varphi_{(\ell_{r,s}^n)'}(n) = \varphi_{\ell_{r',s'}^n}(n) \sim n^{\frac{1}{r'}}$, hence the first assumptions of Proposition~\ref{toblach} is satisfied. For the second assumption of this proposition note that
\begin{equation*}
\|\id: \ell_{r,s}^n \to \ell_2^n\| \leq \|\id: \ell_{r}^n \to \ell_2^n\| = n^{\frac{1}{2}-\frac{1}{r} } = \frac{n^{1-\frac{1}{r} } }{n^{\frac{1}{2} } } =\frac{\|\id: \ell_{r,s}^n \to \ell_1^n\|}{\sqrt{n}}\,.
\end{equation*}

Consequently, we deduce from Proposition~\ref{toblach} that for $m \leq n$
\[
\Big(\frac{n}{m}\Big)^{\frac{m-1}{2}}
\prec_{C^m}
\boldsymbol{\chimon}\big( \mathcal{P}_{\Lambda_T(m,n)}(\ell_{r,s}^n)\big)
\,.
\]
Since the case $n \leq m$ is anyway obvious, the claim for the lower bound of the unconditional basis constant is proved.

Let us consider  case $\mathbf{C}$:
For the unconditional basis constant this is an immediate  consequence of Proposition~\ref{innichen1}:
\[
\Big( \frac{n}{m} \Big)^{\frac{m-1}{r'}}
=
\frac{1}{\|\id:\ell_{r,s}^n\to \ell_r^n\|^m} \Big( \frac{n}{m} \Big)^{\frac{m-1}{r'}}
\prec_{C^m}
 \boldsymbol{\chimon}\big(\mathcal{P}_{\Lambda_T(m,n)}(\ell_{r,s}^n)\big) \,.
 \]
 It remains to show the claims for the projection constant in the cases $\mathbf{B}$ and $\mathbf{C}$: 
We use  Theorem~\ref{main3} together with Theorem~\ref{OrOuSe} which show
\begin{align}\label{end}
\boldsymbol{\chimon}\big(\mathcal{P}_{\Lambda_T(m+1,n)}(\ell_{r,s}^n)\big)  \prec_{C^m} \boldsymbol{\lambda}\big(\mathcal{P}_{\Lambda_T(m,n)}(\ell_{r,s}^n)\big) \,,
\end{align}
and this gives the lower bound for the projection constant, using the bounds we found for the unconditional basis constant (of one degree more).

Finally, it remains to prove the very last statement of the theorem. The  estimate for the unconditional basis constant
follows from    Proposition~\ref{innichen1}, and for the estimate on the projection constant we again repeat the argument from~\eqref{end}.
\end{proof}

\medskip

\section{The general case}

After the tetrahedral case we turn to more general index sets.
As indicated, a natural question one may ask  is whether the estimates from Theorem~\ref{start-poly2} for the  spaces $\ell_r^n$ remain valid for
$\ell_{r,s}^n$ regardless of the parameter $s$. We see in Theorem~\ref{t-final} and Theorem~\ref{t-final2} that if we restrict ourselves to tetrahedral indices, then this is in fact true --  at least for $r\ne 2$.
But in more complex  situations  subtle differences occur. It turns that the case $r \ge2$ essentially may be handled  with the 
methods which worked in the tetrahedral case. But different and technically much more involved is the  case  when $1 \leq r \leq 2$.
We later divide this study  into two subcases $s\leq r$ and $s \geq r$.

\subsection{The general case for  $2 \leq r < \infty$ and $1 \leq s \leq \infty$}
Theorem~\ref{t-final} and Theorem~\ref{t-final2} are concerned with the  tetrahedral index set.
The following theorem  extends these results (under further restrictions on $r$ and $s$) to more general situations.

\begin{theorem} \label{t-finalII}

Let $2 \leq r < \infty$  and $1 \leq s \leq \infty$,
and  $J \subset \mathbb{N}_0^{(\mathbb{N})}$
an index set.
Then
\begin{align*} \label{putin}
 \boldsymbol{\chimon}\big(\mathcal{P}_{J_{\leq m}}(\ell_{r,s}^n) \big)  \prec_{C^m}\Big( 1+\frac{n}{m}\Big)^{\frac{m-1}{2}}
\,\,\,\, \text{ and } \,\,\,\,\,\,\,
\boldsymbol{\lambda}\big(\mathcal{P}_{J_{\leq m}}(\ell_{r,s}^n) \big)  \prec_{C^m}
\Big(1+ \frac{n}{m}\Big)^{\frac{m}{2}}\,.
\end{align*}
Moreover, in each of the cases 
$\mathbf{A}: 2 < r < \infty, \,1 \leq s \leq \infty $  
or 
$\mathbf{B}: r= 2, \,1 \leq s \leq  2 $
the preceding estimates are optimal for all $m$ for which   $\Lambda_T(m) \subset J_{\leq m}$, in the sense that under this assumption
$\prec_{C^m}$ may be replaced by $\sim_{C^m}$, where $C >0$ is  only depending on $r,s$.

 \end{theorem}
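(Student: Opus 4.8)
The plan is to deduce Theorem~\ref{t-finalII} directly from the tools already developed, treating the upper and lower bounds separately. The upper bounds should follow the same scheme as the tetrahedral Theorem~\ref{t-final}, but now using the full-power Kadets--Snobar estimate \eqref{AAA} and Remark~\ref{cardo} rather than the sharper polynomial projection constant machinery, since for general (non-tetrahedral) index sets the characteristics of $\ell_{r,s}^n$ are not as well-behaved (cf.\ Example~\ref{ejemplo}). The lower bounds are where the case split on $r$ and $s$ becomes essential, and they will be extracted from the probabilistic estimates of Section~\ref{Probabilistic estimates} together with Theorem~\ref{main3} and Theorem~\ref{OrOuSe}.

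\textbf{Upper bounds.} First I would observe that the upper bound for $\boldsymbol{\lambda}$ is immediate: by the Kadets--Snobar estimate \eqref{AAA} we have $\boldsymbol{\lambda}(\mathcal{P}_{J_{\leq m}}(\ell_{r,s}^n))\leq |J_{\leq m}|^{1/2}\leq |\Lambda(\leq m,n)|^{1/2}$, and by \eqref{cardi} (or Remark~\ref{cardo}) this is $\prec_{C^m}(1+n/m)^{m/2}$. For the unconditional basis constant I would invoke Corollary~\ref{main3A}: since $J_{\leq m}\subset\Lambda(\leq m,n)$ gives $\boldsymbol{\chimon}(\mathcal{P}_{J_{\leq m}}(\ell_{r,s}^n))\leq\boldsymbol{\chimon}(\mathcal{P}_{\leq m}(\ell_{r,s}^n))$ (monotonicity in the index set is immediate from the definition of $\boldsymbol{\chimon}$), and then Corollary~\ref{main3A} bounds the latter by $e(m+1)2^m\max_{1\le k\le m-1}\boldsymbol{\lambda}(\mathcal{P}_k(\ell_{r,s}^n))$. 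Combining with the $\boldsymbol{\lambda}$ upper bound just proved (applied at degree $k\le m-1$) and an elementary estimate of the form $\max_{1\le k\le m-1}(1+n/k)^{k/2}\prec_{C^m}(1+n/m)^{(m-1)/2}$ (essentially \eqref{ukraineAA}, as used in the last display of Section~\ref{Kadets-Snobar case}), this yields $\boldsymbol{\chimon}(\mathcal{P}_{J_{\leq m}}(\ell_{r,s}^n))\prec_{C^m}(1+n/m)^{(m-1)/2}$. Note these upper bounds hold for the whole range $2\le r<\infty$, $1\le s\le\infty$, which matches the first assertion of the theorem (the exponent $\min\{1/2,1/r'\}=1/2$ there since $r\ge 2$).

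\textbf{Lower bounds.} Here the restriction $\Lambda_T(m)\subset J_{\leq m}$ comes in, and I would argue exactly as in Theorem~\ref{conny3}, \eqref{MMM}--\eqref{mmm}. For the unconditional basis constant in case $\mathbf{A}$ ($r>2$, any $s$): since $\ell_{r,s}$ is $2$-convex (see the introduction of this chapter, part (a)(i) applied with the observation that $r>2$ forces $2$-convexity in all the listed subcases, or directly the stated fact ``$\ell_{p,q}$ is $2$-convex whenever $2<p<\infty$ and $2\le q<\infty$'' combined with the $s<2$ case via a lexicographic embedding $\ell_{r,s}\hookrightarrow\ell_{r,2}$), Corollary~\ref{appl} gives $\boldsymbol{\chimon}(\mathcal{P}_{\Lambda_T(m,n)}(\ell_2^n))\le M^{(2)}(\ell_{r,s})^m\boldsymbol{\chimon}(\mathcal{P}_{\Lambda_T(m,n)}(\ell_{r,s}^n))\le M^{(2)}(\ell_{r,s})^m\boldsymbol{\chimon}(\mathcal{P}_{J_{\leq m}}(\ell_{r,s}^n))$; then Proposition~\ref{toblach} in the Hilbert space case yields $(1+n/m)^{(m-1)/2}\prec_{C^m}\boldsymbol{\chimon}(\mathcal{P}_{\Lambda_T(m,n)}(\ell_2^n))$ for $m\le n$, and the case $m\ge n$ is handled by the trivial bound as in \eqref{obvious}. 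In case $\mathbf{B}$ ($r=2$, $1\le s\le 2$): one can no longer use $2$-convexity (indeed $\ell_{2,s}$ is $2$-concave here), so instead I would apply Proposition~\ref{toblach} directly to $X=\ell_{2,s}$ --- its hypotheses $\varphi_{X_n}(n)\varphi_{X_n'}(n)\prec n$ and \eqref{M} hold because $\varphi_{\ell_{2,s}^n}(n)\sim n^{1/2}$, $\varphi_{(\ell_{2,s}^n)'}(n)=\varphi_{\ell_{2,s'}^n}(n)\sim n^{1/2}$, and $\|\id:\ell_{2,s}^n\to\ell_2^n\|\le\|\id:\ell_2^n\to\ell_2^n\|=1\prec n^{-1/2}\|\id:\ell_{2,s}^n\to\ell_1^n\|$ since $\|\id:\ell_{2,s}^n\to\ell_1^n\|=\varphi_{\ell_{2,s'}^n}(n)\sim n^{1/2}$. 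This gives $(1+n/m)^{(m-1)/2}\prec_{C^m}\boldsymbol{\chimon}(\mathcal{P}_{\Lambda_T(m,n)}(\ell_{2,s}^n))\le\boldsymbol{\chimon}(\mathcal{P}_{J_{\leq m}}(\ell_{2,s}^n))$ when $\Lambda_T(m)\subset J_{\leq m}$. Finally, the lower bound for $\boldsymbol{\lambda}$ in both cases follows from the unconditional basis lower bound at degree $m+1$ via the chain used in \eqref{verflucht}: Theorem~\ref{main3} gives $\boldsymbol{\chimon}(\mathcal{P}_{\Lambda_T(m+1,n)}(X_n))\prec_{C^m}\|\mathbf{Q}_{\Lambda(m+1,n),\Lambda_T(m+1,n)}\|\,\boldsymbol{\lambda}(\mathcal{P}_{\Lambda_T(m,n)}(X_n))$, Theorem~\ref{OrOuSe} controls the annihilation-projection norm by $\kappa^{m+1}$ (and a further $\kappa$ passing from $\Lambda_T(m,n)$ to $J_m$ via Proposition~\ref{Cauchy}), so that $\boldsymbol{\lambda}(\mathcal{P}_{J_{\leq m}}(\ell_{r,s}^n))\succ_{C^m}\boldsymbol{\chimon}(\mathcal{P}_{\Lambda_T(m+1,n)}(\ell_{r,s}^n))\succ_{C^m}(1+n/m)^{m/2}$, again with the obvious modification for $m\ge n$.

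\textbf{Main obstacle.} The genuinely delicate point is the lower bound for $\boldsymbol{\chimon}$ in case $\mathbf{B}$ ($r=2$, $1\le s\le 2$): one must verify all hypotheses of Proposition~\ref{toblach} for $\ell_{2,s}$ with constants uniform in $n$, in particular the subtle estimate \eqref{M} comparing $\|\id:\ell_{2,s}^n\to\ell_2^n\|$ with $n^{-1/2}\|\id:\ell_{2,s}^n\to\ell_1^n\|$; since $\ell_{2,s}$ is $2$-concave here rather than $2$-convex, the convexity shortcut of Corollary~\ref{appl} is unavailable and one is forced through this more hands-on route. Everything else is a routine assembly of the cited results, modulo keeping careful track of the $C^m$-type constants and the $m\le n$ versus $m\ge n$ dichotomy.
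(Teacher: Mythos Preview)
Your proof is largely correct and follows the paper's route (the paper simply cites Theorem~\ref{conny3} for the upper estimates and Theorem~\ref{t-final2} for the lower ones, with \eqref{verflucht} for the passage from $\boldsymbol{\chimon}$ to $\boldsymbol{\lambda}$; you spell out essentially the same ingredients). However, there is one genuine gap in your treatment of Case~$\mathbf{A}$ when $1\le s<2$: you assert that $\ell_{r,s}$ is $2$-convex for all $r>2$, but this is false. For instance $\ell_{r,1}$ with $r>2$ has type~$1$ (by Creekmore's characterisation, item (a)(ii) in the introduction to this chapter only gives $s$-convexity), and a $2$-convex Banach lattice with nontrivial concavity has type~$2$. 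Your hedge via the lexicographic embedding $\ell_{r,s}\hookrightarrow\ell_{r,2}$ does not help either, since $2$-convexity does not pass to the smaller space in the direction you need.

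The paper therefore does \emph{not} treat all of Case~$\mathbf{A}$ via $2$-convexity: for $r>2$, $s\ge 2$ it uses $2$-convexity (Theorem~\ref{t-final2}, case~$\mathbf{A}$), while for $r>2$, $1\le s\le 2$ it falls under Theorem~\ref{t-final2}, case~$\mathbf{B}$, which uses Proposition~\ref{toblach} directly. The fix is immediate: your own verification of the hypotheses of Proposition~\ref{toblach} for $r=2$ extends verbatim to all $r\ge 2$, $1\le s\le 2$, since $\|\id\colon\ell_{r,s}^n\to\ell_2^n\|\prec\|\id\colon\ell_r^n\to\ell_2^n\|=n^{1/2-1/r}$ (using $\ell_{r,s}\hookrightarrow\ell_r$ for $s\le r$) and $\|\id\colon\ell_{r,s}^n\to\ell_1^n\|=\varphi_{(\ell_{r,s}^n)'}(n)\sim n^{1/r'}$.
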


\begin{proof}
The upper estimates are special cases  from Theorem~\ref{conny3}. The lower one for the unconditional basis constants  follow from
Theorem~\ref{t-final2}, since for all $m \leq n$ with $\Lambda_T(m) \subset J_{\leq m}$ clearly
\[
\boldsymbol{\chimon}\big( \mathcal{P}_{\Lambda_T(m,n)}(\ell_{r,s}^n)\big)
\leq
  \boldsymbol{\chimon} \big(\mathcal{P}_{J_{\leq m}} (\ell_{r,s}^n) \big)\,,
\]
and the claim for $m \ge n$ is anyway clear.
It remains to show the lower bound for the projection constant: By what was proved in \eqref{verflucht} we 
then for all $m \leq n$ with $\Lambda_T(m) \subset J_{\leq m}$ have 
\begin{align*}
  \boldsymbol{\chimon}\big( \mathcal{P}_{\Lambda_T(m+1,n)}(\ell_{r,s}^n)\big)
 \, \prec_{C^m}  \boldsymbol{\lambda}\big( \mathcal{P}_{J_{\leq m}}(\ell_{r,s}^n)\big)\,,
\end{align*}
hence the claim  follows from the result on the unconditional basis constant.
\end{proof}

\smallskip
We already indicated that the case $r=2$ is somewhat special -- at least for $s = \infty$ (see again Proposition~\ref{lor2}).
For a better overview, we collect our knowledge for $r=2$ in the following corollary.
\smallskip

\begin{corollary}\label{thm: proy + uncon l2,s}
Let  $1 \leq s \leq \infty$ and  $J \subset \mathbb{N}_0^{(\mathbb{N})}$
be an index set. Then
\[
\boldsymbol{\chimon}\big(\mathcal{P}_{J_{\leq m}}(\ell_{2,s}^n)\big)  \prec_{C^m}\Big(1+ \frac{n}{m}\Big)^{\frac{m-1}{2}}
\quad and \quad
\boldsymbol{\lambda}\big(\mathcal{P}_{J_{\leq m}}(\ell_{2,s}^n)\big)  \prec_{C^m}\Big( 1+\frac{n}{m}\Big)^{\frac{m}{2}}\,,
\]
and for $1 \leq s \leq 2$ these estimates  are optimal, provided $\Lambda_T(m) \subset J_{\leq m}$, whereas for 
$2 \leq s \leq \infty$ and $\Lambda_T(m) \subset J_{\leq m}$ we only know that 
\[
\frac{1}{(\log n)^{\frac{m-1}{2}-\frac{m-1}{s} }}
\Big( 1+\frac{n}{m} \Big)^{\frac{m-1}{2}}
\prec_{C^m}
\boldsymbol{\chimon}\big(\mathcal{P}_{J_{\leq m}}(\ell^n_{2,s})\big)
\]
and
\begin{equation} \label{april7} \frac{1}{(\log n)^{\frac{m}{2}-\frac{m}{s} }} \Big( 1+\frac{n}{m} \Big)^{\frac{m}{2}}
\prec_{C^m} \boldsymbol{\lambda}\big(\mathcal{P}_{J_{\leq m}}(\ell^n_{2,s})\big)\,.
  \end{equation}
  Moreover, if   $J$ is one of the index sets  $\Lambda_T (m,n)$,
          $\Lambda (m,n)$,
  $\Lambda_T (\leq m,n)$,  or  $\Lambda (\leq m,n)$, then
\[
\frac{1}{m^{\frac{m-1}{2}}} \Big(\frac{n}{\log n} \Big)^{\frac{m-1}{2}}
\prec_{C^m}
\boldsymbol{\chimon}\big(
\mathcal{P}_{J}(\ell^n_{2,\infty}) \big) \prec_{C^m}
\Big(\frac{n}{\log \log n}\Big)^{\frac{m-1}{2}}
 \]
 and
 \[
\frac{1}{m^{m}}  \Big(\frac{n}{\log \log n}\Big)^{\frac{m}{2}}
 \prec_{C^m}
\boldsymbol{\lambda}\big(
\mathcal{P}_{J}(\ell^n_{2,\infty}) \big) \prec_{C^m}
\Big(\frac{n}{\log \log n}\Big)^{\frac{m}{2}}\,.
\]
\end{corollary}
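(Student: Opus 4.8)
Looking at the final statement of Corollary~\ref{thm: proy + uncon l2,s}, I need to prove the estimates for $\mathcal{P}_{J_{\leq m}}(\ell_{2,s}^n)$ together with the sharper $r=s=\infty$ bounds.

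\medskip

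\textbf{Plan of proof.} The first block of estimates (the two $\prec_{C^m}$ upper bounds) is an immediate instance of Theorem~\ref{conny3}: since $\ell_{2,s}$ is a Banach sequence lattice, one has $\boldsymbol{\chimon}(\mathcal{P}_{J_{\leq m}}(\ell_{2,s}^n)) \prec_{C^m} (1+n/m)^{(m-1)/2}$ and $\boldsymbol{\lambda}(\mathcal{P}_{J_{\leq m}}(\ell_{2,s}^n)) \prec_{C^m} (1+n/m)^{m/2}$, with $C$ universal. For the optimality when $1 \leq s \leq 2$: in this range $\ell_{2,s}$ is $2$-convex (as recalled in the introduction of this chapter, $\ell_{p,q}$ is $2$-convex whenever $2 \leq p < \infty$, $2 \leq q < \infty$ — wait, here $p=2$, $q=s\le 2$; in fact $\ell_{2,s}$ with $s\le 2$ is $2$-concave, not $2$-convex). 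So instead I would invoke Corollary~\ref{thm: proy + uncon l2,s}'s sibling result Theorem~\ref{t-finalII}, case $\mathbf{B}$ ($r=2$, $1 \leq s \leq 2$), which already gives the two-sided asymptotics; thus the optimality statement for $1\le s\le 2$ follows verbatim from Theorem~\ref{t-finalII}.

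\medskip

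\textbf{The range $2 \leq s \leq \infty$.} Here only lower bounds with a logarithmic loss are claimed. First I would establish the unconditional basis constant bound. Using $\|\id\colon \ell_{2,s}^n \to \ell_1^n\| = \varphi_{\ell_{2,s}'^n}(n) = \varphi_{\ell_{2,s'}^n}(n) \sim n^{1/2}$ and $\|\id\colon \ell_{2,s}^n \to \ell_2^n\| \prec (\log n)^{1/2 - 1/s}$ (a standard Lorentz-space estimate, since moving from $\ell_{2,s}$ to $\ell_2$ for $s\ge 2$ costs a power of $\log n$), I would apply Proposition~\ref{innichen++} with $r=2$: this gives
\[
\boldsymbol{\chimon}\big(\mathcal{P}_m(\ell_{2,s}^n)\big) \succ_{C^m} \frac{1}{m^{1+1/2}(\log n)^{3/2} e^{-m/2} m^{m/2}} \left(\frac{\|\id\colon \ell_{2,s}^n\to \ell_1^n\|}{\|\id\colon \ell_{2,s}^n\to \ell_2^n\|}\right)^{m-1}\,,
\]
and the ratio is $\succ n^{(m-1)/2}/(\log n)^{(m-1)(1/2-1/s)}$. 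Absorbing $m$-dependent polynomial factors into $C^m$ and invoking Theorem~\ref{OrOuSe}/Proposition~\ref{Cauchy} to pass from $\Lambda_T(m)\subset J_{\leq m}$ down to $J_{\leq m}$, one gets the stated $(\log n)^{-(m-1)/2+(m-1)/s}(1+n/m)^{(m-1)/2}$ lower bound. The projection constant estimate \eqref{april7} then follows from Theorem~\ref{main3} combined with Theorem~\ref{OrOuSe}: $\boldsymbol{\chimon}(\mathcal{P}_{\Lambda_T(m+1,n)}(\ell_{2,s}^n)) \prec_{C^m} \boldsymbol{\lambda}(\mathcal{P}_{J_{\leq m}}(\ell_{2,s}^n))$, plugging in the unconditional bound for degree $m+1$.

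\medskip

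\textbf{The sharp $s=\infty$ block.} For the final two displays I would use the Kwapie\'n–Sch\"utt asymptotic $\boldsymbol{\lambda}(\ell_{2,1}^n) \sim (n/\log\log n)^{1/2}$ (Proposition~\ref{lor}), together with the duality $\ell_{2,\infty} = (\ell_{2,1})'$. Since $\ell_{2,\infty}^n$ has enough symmetries, Corollary~\ref{immediateABC} gives $\boldsymbol{\lambda}(\mathcal{P}_{J}(\ell_{2,\infty}^n)) \sim_{C(m)} \boldsymbol{\lambda}((\ell_{2,\infty}^n)^\ast)^m = \boldsymbol{\lambda}(\ell_{2,1}^n)^m \sim_{C(m)} (n/\log\log n)^{m/2}$ for the four special index sets — but the constant there is $C(m)$, not $C^m$; to get $C^m$ with the explicit $1/m^m$ on the lower side I would instead combine the lower bound from Corollary~\ref{immediateABC}(ii) ($\frac{1}{e^m m^m}\boldsymbol{\lambda}(X_n^\ast)^m \leq \boldsymbol{\lambda}(\mathcal{P}_{J}(X_n))$) with the Kadets-Snobar-type upper bound. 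For $\boldsymbol{\chimon}$, Theorem~\ref{main3} gives the upper bound $\prec_{C^m}(n/\log\log n)^{(m-1)/2}$ from the projection constant, while Proposition~\ref{toblach} — whose hypotheses $\varphi_{X_n}(n)\varphi_{X_n'}(n) \prec n$ and $\|\id\colon X_n\to\ell_2^n\|\prec n^{-1/2}\|\id\colon X_n\to\ell_1^n\|$ hold for $X=\ell_{2,\infty}$ up to a $(\log n)$-factor — delivers the matching lower bound $\frac{1}{m^{(m-1)/2}}(n/\log n)^{(m-1)/2}$. The main obstacle will be tracking where the $\log n$ versus $\log\log n$ asymmetry enters: the projection-constant side can exploit the exact Kwapie\'n–Sch\"utt $\log\log n$ via duality and ``enough symmetries'', but the unconditional-basis lower bound only has the cruder $\log n$ available through Proposition~\ref{toblach}, so these two bounds genuinely differ and must be proved by different routes rather than transferred through Theorem~\ref{main3}. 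Careful bookkeeping of the $m$-dependent constants (keeping them $\prec C^m$ and isolating the displayed $1/m^m$, $1/m^{(m-1)/2}$ factors) is the other delicate point.
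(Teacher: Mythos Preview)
Your overall architecture matches the paper's: the two upper bounds and the $1\le s\le 2$ optimality come from Theorem~\ref{t-finalII} (equivalently Theorem~\ref{conny3}), the $s=\infty$ projection-constant block comes from Kwapie\'n--Sch\"utt (Proposition~\ref{lor}) plus Corollary~\ref{immediateABC}, and the $s=\infty$ upper bound for $\boldsymbol{\chimon}$ comes from \eqref{verflucht}/Theorem~\ref{main3}. Two points, however, need correction.

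\textbf{The log-loss lower bounds via entropy.} For the range $2\le s\le\infty$ you invoke Proposition~\ref{innichen++}. This has two problems. First, Proposition~\ref{innichen++} bounds $\boldsymbol{\chimon}(\mathcal P_m(X_n))$ for the \emph{full} homogeneous space $\Lambda(m,n)$, not for $\Lambda_T(m,n)$; under the hypothesis $\Lambda_T(m)\subset J_{\le m}$ alone you cannot transfer this to $\boldsymbol{\chimon}(\mathcal P_{J_{\le m}})$, since $\Lambda(m,n)$ need not lie in $J_{\le m}$. Second, even if you switch to the tetrahedral entropy variant Proposition~\ref{innichen}, both entropy bounds carry a prefactor $(\log n)^{3/2}$ which depends on $n$ and therefore \emph{cannot} be ``absorbed into $C^m$'' as you claim. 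The paper proceeds differently: it cites Theorem~\ref{t-final2}, whose proof of the missing case uses the \emph{covering} estimate Proposition~\ref{innichen1}. That proposition works directly for $\Lambda_T(m,n)$ and its only extraneous factor is $(\log m)^{1/2}$, which \emph{is} absorbable into $C^m$.

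\textbf{The $s=\infty$ lower bound for $\boldsymbol{\chimon}$.} You propose Proposition~\ref{toblach}, but its second hypothesis fails for $X=\ell_{2,\infty}$: one has $\|\id\colon\ell_{2,\infty}^n\to\ell_2^n\|\sim(\log n)^{1/2}$ while $n^{-1/2}\|\id\colon\ell_{2,\infty}^n\to\ell_1^n\|\sim 1$. Saying the hypothesis holds ``up to a $(\log n)$-factor'' does not give you the proposition's conclusion without redoing its proof. The paper's route is much simpler: the lower bound for $\boldsymbol{\chimon}$ in the $s=\infty$ display is literally the already-proven log-loss lower bound for $\boldsymbol{\chimon}$ specialised to $s=\infty$, which gives exactly $\frac{1}{m^{(m-1)/2}}(n/\log n)^{(m-1)/2}$. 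No new proposition is needed.
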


\begin{proof}The first four statements are all covered by Theorem~\ref{t-finalII}  and Theorem~\ref{t-final2}. In order  to prove the last two statements, we for the projection constant use 
Proposition~\ref{lor} and 
Corollary~\ref{immediateABC} (last statement).
For the upper estimate of  the unconditional basis 
constant we use 
\eqref{verflucht} together with the upper estimate for the projection constant just proved, whereas the lower estimate
for the unconditional basis 
constant is nothing else than  the lower estimate from \eqref{april7}
for $s= \infty$.
\end{proof}

\medskip

 \subsection{The general case for $1 < r  \le 2$ and $s\leq r$}
  For $1 \leq s \leq r \leq 2$ and an index set $J \subset  \mathbb{N}_0^{(\mathbb{N})}$, we take advantage of how permeable the definition of the polynomial projection constant $\widehat{\boldsymbol{\lambda}}(\mathcal P_{J_{\leq m}}(\ell_{r,s}^n))$ is. That is, we break the sum $\sum_{\alpha \in J} c_{\ell_{r,s}^n}(\alpha) \vert z^{\alpha}\vert$
into smaller pieces.

This splitting is related to the number of variables involved in each of the different monomials.
The idea behind all this is that, if an index $\alpha$ concerns a large amount of variables, then $c_{\ell_{r,s}^n}(\alpha)$ is essentially similar to $c_{\ell_{r}^n}(\alpha)$, so we are in the classical case.
If the number of variables involved in an index $\alpha$ is not so large, then we have  rather annoying logarithmic term  in $c_{\ell_{r,s}^n}(\alpha)$ (compared to $c_{\ell_{r}^n}(\alpha)$).

To deal with this we  carefully analyze the number of monomials which are related with a fixed amount of variables. The philosophy is that those indices that produce bad estimates are not so many. So there is a kind of compensation. The problem is that dealing with all of this simultaneously requires a lot of subtle technicalities. The main difficulty lies in properly handle the balance between the number of indices and the corresponding estimates one obtains,  and in the fact that finally  all the pieces  have to be glued together.

We believe that this new technique is interesting in its own right and could be very useful for other types of problems, even not related with the projection constant.
As a consequence we show that, in many cases, for $1 \leq s \leq r \leq 2$ the bound for the projection constant is the same as in the classical $\ell_r^n$-case.

\begin{theorem}\label{bound_similar_ell_r}
	Let  $1\le s \le r\le 2$ and  $J\subset \mathbb N_0^n$ an index set  of degree $m$ such that $\Lambda_T(m,n)\subset J$. Then 
	$$
	\boldsymbol{\chimon}\big(\mathcal P_J( \ell_{r,s}^n)\big) \sim_{C^m}  \left( \frac{n}{m} \right)^{\frac{m-1}{r'}}
\,\,\quad \text{and}\,\,\quad 
 \boldsymbol{\lambda}\big(\mathcal P_J(\ell_{r,s}^n)\big)  \sim_{C^m}   \left( \frac{n}{m} \right)^{\frac{m}{r'}}\,,
	$$
	provided
	\[
		\log m + \frac{r'}{r}(\log m)^{r(\frac1{s}-\frac1{r})}\prec_c  \log n\,.
	\]
In particular, these estimates hold if $m^{1+\delta}\prec_c n$ for some $\delta>0$ and $(r,s)\ne(2,1)$, and if $m^{2}\prec_c n$ for $(r,s)=(2,1)$.
\end{theorem}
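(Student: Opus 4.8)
The plan is to prove the two equivalences separately for the upper and lower bounds, and to reduce both of them to already established machinery. For the \emph{upper bound} of the projection constant I would invoke Theorem~\ref{lambda-dash}, which says $\boldsymbol{\lambda}(\mathcal P_J(\ell_{r,s}^n)) \leq \widehat{\boldsymbol{\lambda}}(\mathcal P_J(\ell_{r,s}^n))$, and then estimate the polynomial projection constant directly from the definition
\[
\widehat{\boldsymbol{\lambda}}\big(\mathcal P_J(\ell_{r,s}^n)\big) = \sup_{z\in B_{\ell_{r,s}^n}} \sum_{\alpha\in J} c_{\ell_{r,s}^n}(\alpha)\,|z^\alpha|\,.
\]
The key input here is the asymptotic formula \eqref{car-lor}, namely $c_{\ell_{r,1}^n}(\alpha)\sim_{C^m} \big(\tfrac{m^m}{\alpha^\alpha}\big)\prod_{k=1}^m k^{-\alpha^*_k/r'}$, together with the interpolation/multiplication tool for characteristics (Lemma~\ref{cproduct} and Lemma~\ref{theta}, using $\ell_{r,s} = \ell_{r,1}^{1-\theta}\circ(\text{something})$ in the appropriate range, or more directly Corollary~\ref{cLor}-type bounds for general $s\le r$). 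The heart of the argument is the splitting of the index set $J_{\le m} = \bigcup_{\ell} J^{(\ell)}$ according to the number $\ell$ of variables actually appearing in $\alpha$ (i.e.\ $|\operatorname{supp}\alpha|=\ell$); for indices with many variables the logarithmic correction $\prod k^{-\alpha^*_k/r'}$ is harmless and $c_{\ell_{r,s}^n}(\alpha)\sim_{C^m}c_{\ell_r^n}(\alpha)$, so Proposition~\ref{start-poly1} applies, while for indices with few variables the number of such indices is small enough to absorb the loss. The hypothesis $\log m + \frac{r'}{r}(\log m)^{r(1/s-1/r)}\prec_c \log n$ is precisely what is needed to make this trade-off close.

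\textbf{The splitting argument in detail.} For a multi-index $\alpha$ with support of size $\ell\le m$, write $\alpha^*=(\alpha_1^*,\dots,\alpha_\ell^*,0,\dots)$ and note $\prod_{k=1}^\ell (k^*)^{-\alpha_k^*/r'}\ge (\ell!)^{-?}$-type bounds must be controlled. I would estimate $\prod_{k=1}^m k^{-\alpha^*_k/r'}\le \ell^{-?}$ using that the $\alpha^*_k$ sum to $m$; the worst case is when mass is concentrated, giving a factor like $(\log \ell)^{-m/r'}$ or $(\ell)^{-(m-\ell)/r'}$ depending on how one distributes. Meanwhile $|\{\alpha\in\Lambda(\le m,n)\colon |\operatorname{supp}\alpha|=\ell\}|\le \binom{n}{\ell}\binom{m-1}{\ell-1}\le \binom{n}{\ell}2^m$. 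Evaluating the characteristic $c_{\ell_{r,s}^n}(\alpha)$ at the optimal vector $z$ (which by Theorem~\ref{cE}-type reasoning one can take comparable to the $\ell_r$-optimal vector rescaled) and summing over the pieces, one gets a bound of the form
\[
\widehat{\boldsymbol{\lambda}}\big(\mathcal P_{J_{\le m}}(\ell_{r,s}^n)\big)\prec_{C^m} \sum_{\ell=1}^{m} \binom{n}{\ell}^{1/r'}\,(\text{correction}_\ell)\,,
\]
and the claim reduces to showing that every term is $\prec_{C^m}(n/m)^{m/r'}$, which is where the quantitative hypothesis enters: one needs $n^{1/r'}$ (from $\ell=m$) to dominate $n^{\ell/r'}\cdot(\text{correction}_\ell)^{1/?}$ for all $\ell<m$, and the $(\log m)^{r(1/s-1/r)}$ term is exactly the exponent appearing in the correction when $s<r$ (the case $s=r$ has no correction). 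For $(r,s)=(2,1)$ one has $r(1/s-1/r)=2\cdot\tfrac12=1$, so the condition becomes $\log m + (\log m)\prec_c\log n$, i.e.\ $m^2\prec_c n$ after exponentiating with the extra $r'/r = 1$ factor — matching the stated special case. For $(r,s)\ne(2,1)$ the exponent $r(1/s-1/r)<1$, so $\log m$ dominates and $m^{1+\delta}\prec_c n$ suffices.

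\textbf{The lower bounds.} For the lower bounds I would use that $\Lambda_T(m,n)\subset J$, so that by Remark~\ref{simple} (or Proposition~\ref{Cauchy} combined with Theorem~\ref{OrOuSe}) the spaces $\mathcal P_{\Lambda_T(m,n)}(\ell_{r,s}^n)$ are $1$-complemented-up-to-$\kappa^m$ inside $\mathcal P_J(\ell_{r,s}^n)$; hence it is enough to lower-bound the tetrahedral case. For the unconditional basis constant this is exactly Proposition~\ref{innichen1} (with $X=\ell_{r,s}$, using $\varphi_{\ell_{r,s}}(n)\sim n^{1/r}\prec n^{1/r}$ and $\|\operatorname{id}\colon\ell_{r,s}^n\to\ell_r^n\|\sim 1$), which gives $(n/m)^{(m-1)/r'}\prec_{C^m}\boldsymbol{\chimon}(\mathcal P_{\Lambda_T(m,n)}(\ell_{r,s}^n))$, and hence the claimed lower bound on $\boldsymbol{\chimon}(\mathcal P_J(\ell_{r,s}^n))$. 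For the projection constant, I would apply Theorem~\ref{main3} together with Theorem~\ref{OrOuSe} to get $\boldsymbol{\chimon}(\mathcal P_{\Lambda_T(m+1,n)}(\ell_{r,s}^n))\prec_{C^m}\boldsymbol{\lambda}(\mathcal P_{\Lambda_T(m,n)}(\ell_{r,s}^n))\prec_{C^m}\kappa^m\boldsymbol{\lambda}(\mathcal P_J(\ell_{r,s}^n))$, exactly as in the proof of Theorem~\ref{t-final2}, equation~\eqref{end}; combining with the unconditional basis lower bound (in degree $m+1$) yields $(n/m)^{m/r'}\prec_{C^m}\boldsymbol{\lambda}(\mathcal P_J(\ell_{r,s}^n))$. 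The upper bound on $\boldsymbol{\chimon}$ then follows from Corollary~\ref{main3A} applied to $\mathcal P_{\le m}$ together with the upper bound on $\boldsymbol{\lambda}(\mathcal P_{k}(\ell_{r,s}^n))$ just obtained, using the elementary summation estimate $\sum_{0\le k\le m}(n/k)^{k/r'}\le (m+1)e^{m/r'}(n/m)^{m/r'}$ from \eqref{wuerzburg}. \textbf{The main obstacle} I anticipate is the bookkeeping in the splitting-by-number-of-variables argument for the upper bound: getting the correction factors sharp enough (in terms of $\ell$, $m$, $s$, $r$) so that the sum over $\ell$ telescopes into the clean $(n/m)^{m/r'}$ under exactly the stated hypothesis, and verifying that the threshold $\log m + \frac{r'}{r}(\log m)^{r(1/s-1/r)}\prec_c\log n$ is both necessary and sufficient for the worst intermediate term, is genuinely delicate and is the technical core the authors describe as "far more involved."
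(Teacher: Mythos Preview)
Your overall architecture matches the paper's: split $\Lambda(m,n)$ by the number $L=|\operatorname{supp}\alpha|$ of variables, bound $\widehat{\boldsymbol{\lambda}}(\mathcal P_{m,L}(\ell_{r,s}^n))$ in each range and sum, then invoke Theorem~\ref{lambda-dash} for the projection-constant upper bound; the lower bounds via Proposition~\ref{innichen1} and the chain $\boldsymbol{\chimon}(\mathcal P_{\Lambda_T(m+1,n)})\prec_{C^m}\boldsymbol{\lambda}(\mathcal P_{\Lambda_T(m,n)})\prec_{C^m}\boldsymbol{\lambda}(\mathcal P_J)$, and the $\boldsymbol{\chimon}$-upper bound via Corollary~\ref{main3A}, are exactly what the paper does.

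There are two places where your sketch diverges from the paper's execution, and one of them is the crux. First, the paper does \emph{not} go through the explicit characteristic formula \eqref{car-lor} for $\ell_{r,1}$ plus interpolation. Instead it uses the cruder but more flexible test-vector bound
\[
c_{\ell_{r,s}^n}(\alpha)\ \le\ \bigl\|m^{-1/r}\alpha^{1/r}\bigr\|_{\ell_{r,s}}^{\,m}\,\Bigl(\frac{m^m}{\alpha^\alpha}\Bigr)^{1/r},
\]
obtained by plugging $u=m^{-1/r}\alpha^{1/r}/\|m^{-1/r}\alpha^{1/r}\|_{\ell_{r,s}}$ into the definition. All the work then goes into estimating the norm $\|m^{-1/r}\alpha^{1/r}\|_{\ell_{r,s}}$ in terms of $L$. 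This avoids tracking the product $\prod_k k^{-\alpha_k^*/r'}$ (about which you write several question marks) and replaces it by a single norm that splits cleanly under $\alpha^*=(\alpha^*_1,\dots,\alpha^*_{m-L},1,\dots,1,0,\dots)$.

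Second, and more importantly, the paper uses \emph{three} ranges of $L$, not two. Your ``many variables / few variables'' dichotomy corresponds to the paper's outer ranges $L\ge m\bigl(1-(\log m)^{-r(1/s-1/r)}\bigr)$ (Lemma~\ref{lemma1 L<m/2}: the norm above is $\prec 1$, so you are back to the $\ell_r$-case) and $L\le m/2$ (Lemma~\ref{lemma L>m/2}: the cardinality $|\Lambda^L(m,n)|\prec_{C^m}(n/L)^L$ is small enough). The genuinely delicate range is the middle one, $m/2<L<m\bigl(1-(\log m)^{-r(1/s-1/r)}\bigr)$, handled in Lemma~\ref{lemma3 L<m/2}. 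There one parametrizes $m-L=tm$ and is led to maximize
\[
f(t)=\tfrac{1}{r}\log t-\tfrac{t}{r'}\log\tfrac{n}{m}
\]
over $t\in\bigl[(\log m)^{-r(1/s-1/r)},\tfrac12\bigr]$. The global maximizer is $t_0=\bigl(\tfrac{r}{r'}\log\tfrac{n}{m}\bigr)^{-1}$, and the hypothesis $\log m+\tfrac{r'}{r}(\log m)^{r(1/s-1/r)}\prec_c\log n$ is precisely the condition $t_0<(\log m)^{-r(1/s-1/r)}$, forcing the maximum to sit at the left endpoint and yielding the clean $(n/m)^{m/r'}$ bound. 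Your reading of the special case $(r,s)=(2,1)$ is correct, but without isolating this middle range and the one-variable optimization inside it, the ``bookkeeping'' you anticipate will not close.
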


We prepare the proof with some preliminary lemmas.
To do so, we need to consider some special sets of multi indices, which determine subspaces of the polynomials defined in terms of how many variables are involved in each monomial.

We define, for $1\le L\le m,$ the set of indices
$$\Lambda^L(m,n):=\left\{ \alpha \in \Lambda(m,n) : \;\; \vert\{ i : \alpha_i \neq 0 \} \vert = L \right\}. $$
In other words,  $\alpha\in \Lambda^L(m,n)$ whenever the monomial $z^\alpha$ involves exactly $L$ variables.
	Given a Banach space $X_n=(\mathbb C^n,\|\cdot\|)$, we denote
	$$\mathcal{P}_{m,L}(X_n):=  \mathcal{ P}_{\Lambda^L(m,n)}(X_n)\,.$$
		Observe that with this notation the set of tetrahedral $m$-homogenous polynomials may be denoted by $
	\mathcal P_{m,m}(X_n).$
	Our plan is to give upper  estimates for  the projection constant of   $\mathcal P_{m,L}(\ell_{r,s}^n)$, and for this we  need an upper bound for the cardinality of $\Lambda^{L}(m,n).$

\begin{lemma}\label{cardinal J^L} For  $n,m \in \NN$ and $ 1 \le L \le m$ 
$$\displaystyle
|\Lambda^{L}(m,n)|\sim_{C^m} \Big(\frac{n}{L}\Big)^{L} \sim_{C^m} \binom{n}{L}^{L}\,.
$$
\end{lemma}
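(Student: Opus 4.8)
The plan is to reduce the statement to the exact combinatorial identity
\[
|\Lambda^{L}(m,n)| \,=\, \binom{n}{L}\binom{m-1}{L-1}\,,
\]
and then absorb the second factor into a purely $m$-dependent constant. The identity itself I would prove by the standard two-step count: a multi index $\alpha\in\Lambda^{L}(m,n)$ is uniquely determined by its support $S=\{i\colon\alpha_i\ne 0\}$, which is one of the $\binom{n}{L}$ subsets of $[n]$ of size $L$, together with the values $(\alpha_i)_{i\in S}$, which form a composition of $m$ into exactly $L$ positive parts; the number of such compositions is $\binom{m-1}{L-1}$ by the classical stars-and-bars argument. This already yields the identity and turns everything else into an elementary estimate of binomial coefficients.

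For the asymptotic equivalences I would proceed in two steps. First, using $1\le\binom{m-1}{L-1}\le 2^{m-1}$ (valid since $1\le L\le m$), one immediately gets
\[
\binom{n}{L} \,\le\, |\Lambda^{L}(m,n)| \,\le\, 2^{m}\,\binom{n}{L}\,,
\]
so $|\Lambda^{L}(m,n)|\sim_{C^m}\binom{n}{L}$ with $C=2$. Second, from the estimate recorded in \eqref{cardi} — equivalently the elementary inequality $(N/\ell)^{\ell}\le\binom{N}{\ell}\le e^{\ell}(N/\ell)^{\ell}$ for $1\le\ell\le N$, used later e.g. in Remark~\ref{cardo} — applied with $N=n$, $\ell=L$, and using $L\le m$ to replace $e^{L}$ by $e^{m}$, one obtains $\big(\frac{n}{L}\big)^{L}\le\binom{n}{L}\le e^{m}\big(\frac{n}{L}\big)^{L}$, hence $\binom{n}{L}\sim_{C^m}\big(\frac{n}{L}\big)^{L}$. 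Chaining the two equivalences gives $|\Lambda^{L}(m,n)|\sim_{C^m}\big(\frac{n}{L}\big)^{L}$, say with $C=2e$, uniformly in $n$ and $L$, which is the claim.

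There is no serious obstacle here; the only point requiring a line of care is to check that all comparison constants depend on $m$ alone — this is exactly where the bounds $\binom{m-1}{L-1}\le 2^{m-1}$ and $e^{L}\le e^{m}$ enter, turning the naturally $L$-dependent quantity $e^{L}$ into the admissible $e^{m}$ and absorbing the composition count into $2^{m}$. I would close by remarking that, since the identity $|\Lambda^{L}(m,n)|=\binom{n}{L}\binom{m-1}{L-1}$ is exact, it can be invoked directly whenever a precise count is needed in the subsequent decomposition of $\widehat{\boldsymbol{\lambda}}\big(\mathcal P_{J_{\le m}}(\ell_{r,s}^n)\big)$ over the subspaces $\mathcal P_{m,L}(\ell_{r,s}^n)$.
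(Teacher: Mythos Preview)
Your proof is correct and cleaner than the paper's. You establish the exact identity $|\Lambda^{L}(m,n)|=\binom{n}{L}\binom{m-1}{L-1}$ via the standard support-then-composition count, and then absorb $\binom{m-1}{L-1}\le 2^{m-1}$ and $e^{L}\le e^{m}$ into $C^{m}$. The paper instead argues the two directions separately and more loosely: for the upper bound it decomposes each $\alpha\in\Lambda^{L}(m,n)$ as a tetrahedral index $\beta\in\Lambda_T(L,n)$ plus a residual index supported in the same $L$ coordinates, giving the cruder bound $|\Lambda^{L}(m,n)|\le\binom{n}{L}\cdot|\Lambda(m-L,m-L)|\le\binom{n}{L}\cdot 2^{m-L}$; for the lower bound it exhibits the injection $\Lambda_T(L,n)\hookrightarrow\Lambda^{L}(m,n)$, $\alpha\mapsto(\alpha_1+m-L,\alpha_2,\ldots,\alpha_n)$. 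Your exact formula subsumes both steps at once and would be the preferable reference if a sharper count were ever needed in the later decomposition over the spaces $\mathcal P_{m,L}$; the paper's argument has the minor advantage of reusing the tetrahedral/residual decomposition that drives the rest of that section, but nothing is gained in precision. (The displayed $\binom{n}{L}^{L}$ in the statement is evidently a typo for $\binom{n}{L}$, which is what both your proof and the paper's proof actually establish.)
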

\begin{proof}
Any multi index $\alpha\in\Lambda^L(m,n)$ can be written as a sum of a tetrahedral index $\beta\in \Lambda_T(L,n)$ and another index whose support is contained in the support of $\beta$. Moreover, if $\alpha$ has $L-k$ coordinates with value $1$,  the remaining $k$ non-zero coordinates of $\alpha$ are $\ge2$, and then
\[
L-k+2k\le m, \quad or,\quad k\le m-L\,.
\]
Thus the decreasing reordering of $\alpha$ may be written as 
\[
\alpha^*=\beta^* \, + \,(\alpha^*-\beta^*)
=(1,\dots,1,0,\dots)+(\alpha_1^*-1,
\dots,\alpha_k^*-1,0\dots)\,.
\]
Therefore, as any $\alpha^*$ can be decomposed as a sum of $\beta^* \in \Lambda_T(L,n)$ and $\alpha^* - \beta^* \in \Lambda(m-L,m-L)$, we have
\begin{align}
|\Lambda^{L}(m,n)|\le |\Lambda_T(L,n)|\cdot|\Lambda(m-L,m-L)|\le \binom{n}{L}\cdot\Big(1+\frac{m-L}{m-L}\Big)^{m-L}
\prec_{C^m} \Big(\frac{n}{L}\Big)^{L}\,. 
\end{align}
The lower bound follows from the fact that we may define an injection from $\Lambda_T(L,n)$ to $\Lambda^{L}(m,n)$, given by
$
\alpha\, \mapsto \, (\alpha_1+m-L,\alpha_2,\dots,\alpha_n).
$
Therefore 
\[
|\Lambda^{L}(m,n)|\ge |\Lambda_T(L,n)|
\sim_{C^m} \Big(\frac{n}{L}\Big)^{L}\,,
\]
which completes the argument.
\end{proof}

\begin{remark}
From the above lemma we obtain a simple bound for the polynomial projection constant:
\[
\widehat{\boldsymbol{\lambda}}\big(\mathcal P_{m,L}(\ell_{r,s}^n)\big) \prec_{C^m} \Big(\frac{n}{L}\Big)^{\frac{L}{r'}}(\log m)^{m(\frac1{s}-\frac1{r})}\,.
\]
Indeed,
\[
{c_{\ell_{r,s}^n} (\alpha)}= \frac1{\sup_{v \in B_{\ell{r,s}^n}} |v_\alpha|}\le {\|m^{-1/r}(\alpha_1^{1/r},\dots,\alpha_n^{1/r})\|_{r,s}^{m}} \left( \frac{\alpha^\alpha}{m^{m}} \right)^{1/r}\le  (\log m)^{m(\frac1{s}-\frac1{r})}|[\alpha]|^{1/r},
\]
and thus
\begin{align*}
\sum_{\alpha \in \Lambda^{L}(m,n)}  c_{\ell_{r,s}^n}(\alpha) |z^\alpha| & \le \log(m)^{m(\frac1{s}-\frac1{r})} \sum_{\alpha \in \Lambda^{L}(m,n)} |[\alpha]|^{1/r}|z^\alpha|
 \nonumber\\
& \le
\log(m)^{m(\frac1{s}-\frac1{r})} \left(\sum_{\alpha \in \Lambda^{L}(m,n)} |[\alpha]||z^{\alpha r}| \right)^\frac{1}{r}\left| \Lambda^{L}(m,n) \right|^{1/r'}  \nonumber\\
& \prec_{C^m} \Big(\frac{n}{L}\Big)^{\frac{L}{r'}}(\log m)^{m(\frac1{s}-\frac1{r})} \|z\|_{\ell_r}^{m}.
\end{align*}
\end{remark}
Unfortunately the  bound of the preceding remark is not enough to prove Theorem \ref{bound_similar_ell_r}. We  need to obtain  finer estimates that are provided by  the following lemma.
\begin{lemma} \label{proj lorentz s<r - ell_r bound}
Let  $1\le s \le r\le 2$. Then, for
\begin{itemize}
\item[(i)] $L\ge m\left(1-\frac{1}{(\log m)^{r(\frac1{s}-\frac1{r})}}\right)$ and $1<m\le n,$
\item[(ii)] $\frac{m}{2}<L<m\left(1-\frac{1}{(\log m)^{r(\frac1{s}-\frac1{r})}}\right)$ and $\log m + \frac{r'}{r}(\log m)^{r(\frac1{s}-\frac1{r})}\le  \log n$,
\item[(iii)] $L\le\frac{m}{2}$ and $ m \log(m)^{2r'(\frac{1}{s}-\frac{1}{r})}\le n$,
\end{itemize}
we have 
\[
\widehat{\boldsymbol{\lambda}}\big(\mathcal P_{m,L}(\ell_{r,s}^n)\big) \prec_{C^m} \Big(\frac{n}{m}\Big)^{\frac{m}{r'}}.
\]
\end{lemma}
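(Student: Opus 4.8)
\textbf{Plan of proof for Lemma~\ref{proj lorentz s<r - ell_r bound}.}
The strategy is to estimate $\widehat{\boldsymbol{\lambda}}\big(\mathcal P_{m,L}(\ell_{r,s}^n)\big) = \sup_{z\in B_{\ell_{r,s}^n}} \sum_{\alpha\in\Lambda^L(m,n)} c_{\ell_{r,s}^n}(\alpha)|z^\alpha|$ by carefully interpolating between the ``$\ell_r$-like'' behaviour of the characteristic $c_{\ell_{r,s}^n}(\alpha)$ and the cardinality bound $|\Lambda^L(m,n)|\sim_{C^m}(n/L)^L$ from Lemma~\ref{cardinal J^L}. First I would record the two basic inputs: on one hand, from the factorization $\ell_{r,s}^n = \ell_r^n\circ \ell_{t}^n$ (for a suitable $t$ coming from the multiplication formula for Lorentz spaces used in Section~\ref{bounds}) together with Lemma~\ref{cproduct}, one gets $c_{\ell_{r,s}^n}(\alpha) = c_{\ell_r^n}(\alpha)\,c_{\ell_t^n}(\alpha)$; on the other hand, for any index $\alpha$ the characteristic $c_{\ell_t^n}(\alpha) = (m^m/\alpha^\alpha)^{1/t}$ and hence the ``excess factor'' $c_{\ell_{r,s}^n}(\alpha)/c_{\ell_r^n}(\alpha)$ is bounded by $(m^m/\alpha^\alpha)^{1/s-1/r}$ up to a constant $C^m$. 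The key quantitative observation is that for $\alpha\in\Lambda^L(m,n)$, when $L$ is close to $m$ the ratio $m^m/\alpha^\alpha$ is at most a power of $m$ (indeed $\alpha^\alpha\geq (m-L)!\,$-type lower bounds force $m^m/\alpha^\alpha\leq m^{O(m-L)}$), so the excess factor is at most $m^{O((m-L)(1/s-1/r))}$, and this is $\prec_{C^m} 1$ precisely in regime (i).

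For regime (i), I would therefore bound, for $z\in B_{\ell_{r,s}^n}\subset B_{\ell_r^n}$ (up to a constant independent of $n$, since $\|\id:\ell_{r,s}^n\to\ell_r^n\|$ is uniformly bounded for $s\le r$),
\[
\sum_{\alpha\in\Lambda^L(m,n)} c_{\ell_{r,s}^n}(\alpha)|z^\alpha|
\;\prec_{C^m}\; \sum_{\alpha\in\Lambda^L(m,n)} c_{\ell_r^n}(\alpha)|z^\alpha|
\;\leq\; \widehat{\boldsymbol{\lambda}}\big(\mathcal P_{\Lambda(m,n)}(\ell_r^n)\big)
\;\prec_{C^m}\; \Big(\frac{n}{m}\Big)^{\frac{m}{r'}}\,,
\]
the last step being exactly the content of Proposition~\ref{start-poly1}. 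For regimes (ii) and (iii), the excess factor is no longer $O(1)$, so I would instead split the sum via Hölder as in the proof of Theorem~\ref{start-poly1}: writing $c_{\ell_{r,s}^n}(\alpha)\leq (\log m)^{m(1/s-1/r)}|[\alpha]|^{1/r}$ (the crude bound from the preceding remark, but now applied only inside $\Lambda^L(m,n)$), one gets
\[
\sum_{\alpha\in\Lambda^L(m,n)} c_{\ell_{r,s}^n}(\alpha)|z^\alpha|
\;\leq\; (\log m)^{m(\frac1s-\frac1r)}\Big(\sum_{\alpha\in\Lambda^L(m,n)}|[\alpha]|\,|z^{\alpha r}|\Big)^{1/r}|\Lambda^L(m,n)|^{1/r'}
\;\prec_{C^m}\; (\log m)^{m(\frac1s-\frac1r)}\Big(\frac{n}{L}\Big)^{L/r'}\,,
\]
using $\sum_{\alpha}|[\alpha]||z^{\alpha r}|\leq (\sum|z_k|^r)^m\leq 1$ and Lemma~\ref{cardinal J^L}. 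Now the arithmetic task is purely elementary: I must check that under the hypothesis of (ii), respectively (iii), one has $(\log m)^{m(1/s-1/r)}(n/L)^{L/r'}\prec_{C^m}(n/m)^{m/r'}$. This reduces, after taking logarithms, to comparing $m(1/s-1/r)\log\log m + \frac{L}{r'}\log(n/L)$ with $\frac{m}{r'}\log(n/m)$; the difference $\frac{m-L}{r'}\log n$ (roughly) must dominate the logarithmic correction, and the stated hypotheses $\log m + \frac{r'}{r}(\log m)^{r(1/s-1/r)}\le\log n$ (in regime (ii), where $m-L\geq m/(\log m)^{r(1/s-1/r)}$) and $m\log(m)^{2r'(1/s-1/r)}\le n$ (in regime (iii), where $m-L\geq m/2$) are exactly calibrated so that this holds. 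I would carry out these two estimates as separate short computations.

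The main obstacle is regime (ii): there the gap $m-L$ can be as small as $m/(\log m)^{r(1/s-1/r)}$, so the ``savings'' $\frac{m-L}{r'}\log n$ is only of order $\frac{m}{r'}(\log n)/(\log m)^{r(1/s-1/r)}$, and this must still beat $m(1/s-1/r)\log\log m$; balancing these two is precisely why the hypothesis takes the peculiar form $\log m + \frac{r'}{r}(\log m)^{r(1/s-1/r)}\le\log n$, and getting the exponents to match requires being careful with the chain of inequalities $L/r'\cdot\log(n/L)\le L/r'\cdot\log(n/m) + L/r'\cdot\log(m/L)$ and controlling the spurious term $\log(m/L)$, which is itself $O((\log m)^{-r(1/s-1/r)})$ in this regime. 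Once all three regimes are settled, the final sentence of the lemma (the ``in particular'' clause) follows by a short check: if $m^{1+\delta}\prec_c n$ and $(r,s)\ne(2,1)$ then $1/s-1/r<1/2$ so $(\log m)^{r(1/s-1/r)}=o(\log n)$ and all three hypotheses hold for $m$ large; the borderline case $(r,s)=(2,1)$ has $r(1/s-1/r)=1$, forcing the stronger requirement $m^2\prec_c n$. I would close by noting that Lemma~\ref{proj lorentz s<r - ell_r bound}, summed over the $m$ values of $L$ and combined with Theorem~\ref{lambda-dash} and the monomial-basis transfer (Theorem~\ref{main3}, Theorem~\ref{OrOuSe}), yields Theorem~\ref{bound_similar_ell_r}.
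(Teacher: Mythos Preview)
Your approach for regime (i) has a genuine gap. The bound you propose, $c_{\ell_{r,s}^n}(\alpha)/c_{\ell_r^n}(\alpha)\le (m^m/\alpha^\alpha)^{1/s-1/r}$ (which does follow from $c_{\ell_{r,s}^n}\le C^m c_{\ell_s^n}$), is far too crude: for a \emph{tetrahedral} $\alpha$ one has $\alpha^\alpha=1$, so $m^m/\alpha^\alpha=m^m$ and your excess factor is $m^{m(1/s-1/r)}$, not $\prec_{C^m}1$. Your parenthetical claim ``$\alpha^\alpha\ge (m-L)!$-type lower bounds force $m^m/\alpha^\alpha\le m^{O(m-L)}$'' is simply false in this test case ($m-L=0$, yet $m^m/\alpha^\alpha=m^m$). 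The paper's argument in Lemma~\ref{lemma1 L<m/2} is different and sharper: it estimates the Lorentz norm $\|m^{-1/r}\alpha^{1/r}\|_{r,s}$ \emph{directly}, splitting $\alpha^*$ into its ``large'' part (at most $m-L$ entries, carrying total mass $\le 2(m-L)$) and its tetrahedral tail, and shows the norm is bounded by $(2(m-L)/m)^{1/r}\log(m-L)^{1/s-1/r}+1$. In regime (i) the first term is $\le 2^{1/r}$, so the norm is a \emph{constant}, and then $c_{\ell_{r,s}^n}(\alpha)\le C^m|[\alpha]|^{1/r}$ follows.

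The same refined norm estimate is also needed in regime (ii), and your argument there fails for the same reason. Your crude bound gives $(\log m)^{m(1/s-1/r)}(n/L)^{L/r'}$, and to dominate this by $(n/m)^{m/r'}$ you would need $(\log m)^{1/s-1/r}\prec_C (n/m)^{(m-L)/(mr')}$; but in regime (ii) the gap $m-L$ can be as small as $m/(\log m)^{r(1/s-1/r)}$, and then the right-hand side becomes $(n/m)^{1/(r'(\log m)^{r(1/s-1/r)})}$, so you would need $\log(n/m)\succ r'(1/s-1/r)(\log m)^{r(1/s-1/r)}\log\log m$, which is \emph{strictly stronger} than the stated hypothesis $\log(n/m)\ge\frac{r'}{r}(\log m)^{r(1/s-1/r)}$. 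The paper's Lemma~\ref{lemma2 L<m/2} carries the extra factor $((m-L)/m)^{m/r}$ from the refined norm bound, and this factor (of size $(\log m)^{-m(1/s-1/r)}$ at the boundary of regime (ii)) is exactly what kills the $\log\log m$ and makes the arithmetic close under the stated hypothesis; see the optimization in Lemma~\ref{lemma3 L<m/2}. Your regime (iii) outline is essentially correct and matches the paper's Lemma~\ref{lemma L>m/2}.
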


\medskip

Before we prove this lemma we show that it indeed allows to prove as desired Theorem \ref{bound_similar_ell_r}.

\begin{proof}[Proof of Theorem \ref{bound_similar_ell_r}]
The lower bounds hold for any $n\ge m$:
 For the unconditional basis constant this  follows directly from Proposition \ref{innichen1}.
The lower estimate for the projection constant uses that by Theorem~\ref{OrOuSe}, Theorem \ref{degree-homo}, and Corollary~\ref{main3A}  we have
\begin{align*}
\Big(\frac{n}{m}\Big)^{\frac{m}{r'}} &
\prec_{C^m} \boldsymbol{\chimon}\big( \mathcal{P}_{\Lambda_T(m+1,n)}(X_n)\big) \\
& \,\prec_{C^m}\max_{0 \leq k \leq m}\boldsymbol{\lambda}\big( \mathcal{P}_{\Lambda_T(k,n)}(X_n)\big)
\,\le{\kappa^m} \,\max_{0 \leq k \leq m} \boldsymbol{\lambda}\big(\mathcal{P}_{J_k}(X_n)\big) \,\le{\kappa^m} \,\boldsymbol{\lambda}\big(\mathcal{P}_{J}(X_n)\big).
\end{align*}
The upper bound for the projection constant is 
a~consequence of  Lemma \ref{proj lorentz s<r - ell_r bound}, the fact that 
\[
\widehat{\boldsymbol{\lambda}}\big(\mathcal P_{J}(\ell_{r,s}^n))\le \sum_{L=0}^m\widehat{\boldsymbol{\lambda}}(\mathcal P_{m,L}(\ell_{r,s}^n)\big)\,,
\]
and Theorem \ref{lambda-dash}.
For the upper bound of the unconditional basis constant we may apply Corollary~\ref{main3A} to conclude that
 \begin{equation*}
     \boldsymbol{\chimon}\big(\mathcal{P}_{J}(X_n)\big)\prec_{C^m}
 \boldsymbol{\chimon}\big(\mathcal{P}_{\le m}(X_n)\big)\prec_{C^m}\max_{k\le m-1} \boldsymbol{\lambda}\big(\mathcal{P}_{k}(X_n)\big)\prec_{C^m}  \Big(\frac{n}{m}\Big)^{\frac{m-1}{r'}}\,. \qedhere
 \end{equation*}
 \end{proof}
 
 We turn to the proof of Lemma \ref{proj lorentz s<r - ell_r bound}, which needs further preparation. In fact, this lemma is an immediate consequence of
Lemma~\ref{lemma1 L<m/2}, Lemma~\ref{lemma3 L<m/2} and Lemma~\ref{lemma L>m/2} below.
We start with the case when $L$ is big -- so  the case when the polynomials are 'almost tetrahedral'.

\begin{lemma}\label{lemma1 L<m/2}
Let $1 \leq s \leq r \leq 2$ and $L\ge  m\left(1-\frac{1}{(\log m)^{r(\frac1{s}-\frac1{r})}}\right)$. Then
\begin{equation}\label{eq: lema1 s<r}
\widehat{\boldsymbol{\lambda}}\big(\mathcal P_{m,L}(\ell_{r,s}^n)\big)\prec_{C^m}  \Big(\frac{n}{L}\Big)^{\frac{L}{r'}} \prec_{C^m} \Big(\frac{n}{m}\Big)^{\frac{m}{r'}}\,,
\end{equation}
for every $3\le m\le n.$ Moreover, equation \eqref{eq: lema1 s<r} also holds for $1\le L\le m\le2.$
\end{lemma}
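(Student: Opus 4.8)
\textbf{Proof plan for Lemma~\ref{lemma1 L<m/2}.}
The plan is to reduce the estimate for $\widehat{\boldsymbol{\lambda}}\big(\mathcal P_{m,L}(\ell_{r,s}^n)\big)$ to a combination of the cardinality bound for $\Lambda^L(m,n)$ from Lemma~\ref{cardinal J^L} and a careful control of the characteristics $c_{\ell_{r,s}^n}(\alpha)$ for $\alpha \in \Lambda^L(m,n)$. First I would recall the basic description $\widehat{\boldsymbol{\lambda}}\big(\mathcal P_{m,L}(\ell_{r,s}^n)\big)=\sup_{z\in B_{\ell_{r,s}^n}}\sum_{\alpha \in \Lambda^L(m,n)} c_{\ell_{r,s}^n}(\alpha)|z^\alpha|$, and then use that $\ell_{r,s}$ is $r$-concave in the relevant range (we are in $1\le s\le r\le 2$, but we only need an upper estimate, so it suffices to control the characteristic pointwise). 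The key point is the pointwise bound
\[
c_{\ell_{r,s}^n}(\alpha)\le (\log m)^{m\left(\frac1s-\frac1r\right)}\,|[\alpha]|^{1/r}\,,
\]
which follows, as in the remark preceding Lemma~\ref{proj lorentz s<r - ell_r bound}, from $c_{\ell_{r,s}^n}(\alpha)=\big(\sup_{v\in B_{\ell_{r,s}^n}}|v^\alpha|\big)^{-1}$, the explicit extremal vector $v=m^{-1/r}(\alpha_1^{1/r},\dots,\alpha_n^{1/r})$ for $c_{\ell_r^n}$ (Theorem~\ref{cE}, Remark~\ref{caso ell_r}), and the elementary estimate $\|v\|_{\ell_{r,s}^n}\le (\log m)^{1/s-1/r}$ valid since $v$ has at most $m$ nonzero coordinates (here one uses the comparison of $\ell_{r,s}^n$ and $\ell_r^n$ norms on vectors supported on $\le m$ coordinates).

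Next I would estimate the sum. Writing $D:=(\log m)^{m(1/s-1/r)}$, Hölder's inequality together with the multinomial identity $\sum_{\alpha\in\Lambda(k,n)}|[\alpha]|\,|z^\alpha|^{r}=\big(\sum_k |z_k|^{r}\big)^{m}$ gives
\[
\sum_{\alpha\in\Lambda^L(m,n)} c_{\ell_{r,s}^n}(\alpha)|z^\alpha|\le D\Big(\sum_{\alpha\in\Lambda^L(m,n)}|[\alpha]|\,|z^\alpha|^{r}\Big)^{1/r}|\Lambda^L(m,n)|^{1/r'}\le D\,|\Lambda^L(m,n)|^{1/r'}\|z\|_{\ell_r^n}^{m}\,,
\]
and since $\|z\|_{\ell_r^n}\le \|z\|_{\ell_{r,s}^n}\le 1$ (again using $s\le r$), we obtain $\widehat{\boldsymbol{\lambda}}\big(\mathcal P_{m,L}(\ell_{r,s}^n)\big)\prec_{C^m} D\,|\Lambda^L(m,n)|^{1/r'}$. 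By Lemma~\ref{cardinal J^L}, $|\Lambda^L(m,n)|\sim_{C^m}(n/L)^L$, so it remains to show
\[
D\,\Big(\frac{n}{L}\Big)^{L/r'}\prec_{C^m}\Big(\frac{n}{m}\Big)^{m/r'}\,.
\]
This is where the assumption $L\ge m\big(1-(\log m)^{-r(1/s-1/r)}\big)$ enters: I would split this as $(n/L)^{L/r'}\le (n/m)^{m/r'}\cdot (n/L)^{(L-m)/r'}\cdot (m/L)^{m/r'}$ — wait, more carefully, I would bound $(n/L)^{L/r'}=(n/m)^{L/r'}(m/L)^{L/r'}$ and then compare $(n/m)^{L/r'}$ with $(n/m)^{m/r'}$. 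The exponent deficit is $(m-L)/r'\le \frac{m}{r'}(\log m)^{-r(1/s-1/r)}$, and $D=(\log m)^{m(1/s-1/r)}$; the needed inequality $D\cdot(n/m)^{-(m-L)/r'}\prec_{C^m}1$ would follow provided $(n/m)^{(m-L)/r'}$ dominates $D$, i.e. $\frac{m-L}{r'}\log(n/m)\ge m(1/s-1/r)\log\log m$ up to an additive $O(m)$ — which is exactly the hypothesis $\log m+\frac{r'}{r}(\log m)^{r(1/s-1/r)}\prec_c\log n$ of Lemma~\ref{proj lorentz s<r - ell_r bound} feeding into this range. The factor $(m/L)^{L/r'}\le 1$ causes no harm. (Here I am using $(\log m)^{r(1/s-1/r)}\ge 1$ so that $m-L\ge m(\log m)^{-r(1/s-1/r)}$ is a genuine lower bound for the deficit.) Finally, for $1\le L\le m\le 2$ the statement is trivial since the index set $\Lambda^L(m,n)$ has at most $O(n)$ elements and $D=O(1)$, so the Kadets–Snobar bound $\widehat{\boldsymbol{\lambda}}\prec |\Lambda^L(m,n)|^{1/r'}\prec (n/m)^{m/r'}$ suffices.

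\textbf{Main obstacle.} The delicate step is the exponent bookkeeping in the last display: one must track simultaneously the logarithmic loss $D$ coming from the Lorentz-vs-$\ell_r$ norm comparison and the gain $(n/m)^{(m-L)/r'}$ coming from having strictly fewer than $m$ variables, and verify that the hypothesis on $\log n/\log m$ is precisely what makes the gain beat the loss. I would carry this out by taking logarithms and reducing to the inequality $m(1/s-1/r)\log\log m\le \frac{m-L}{r'}\log(n/m)+Cm$, then substituting the lower bound on $m-L$. The rest (Hölder, multinomial, monotonicity of $\ell_{r,s}$-norms) is routine.
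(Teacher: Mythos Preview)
There is a genuine gap. Your crude pointwise bound $c_{\ell_{r,s}^n}(\alpha)\le D\,|[\alpha]|^{1/r}$ with $D=(\log m)^{m(1/s-1/r)}$ is correct but far too weak here, and the way you try to absorb $D$ cannot work. First, you have the direction of the constraint on $m-L$ backwards: the hypothesis $L\ge m\big(1-(\log m)^{-r(1/s-1/r)}\big)$ gives the \emph{upper} bound $m-L\le m(\log m)^{-r(1/s-1/r)}$, not a lower bound; in particular $L=m$ is allowed. Second, the lemma is stated for \emph{every} $3\le m\le n$, with no extra hypothesis relating $\log n$ and $\log m$ --- you are importing the condition from Lemma~\ref{proj lorentz s<r - ell_r bound}, which is a \emph{consequence} of the present lemma, not an assumption of it. With only $n\ge m$ and $m-L$ possibly zero, the ``gain'' $(n/m)^{(m-L)/r'}$ can be as small as $1$, while $D=(\log m)^{m(1/s-1/r)}$ is \emph{not} $\prec_{C^m}1$ for any fixed $C$ when $s<r$. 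So the last display simply fails.

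The missing idea is that the hypothesis on $L$ should be used to \emph{eliminate} the logarithmic loss in the characteristic bound, not to offset it afterwards. For $\alpha\in\Lambda^L(m,n)$ with $2L\ge m$, at most $m-L$ coordinates of $\alpha$ exceed~$1$; splitting the vector $m^{-1/r}\alpha^{1/r}$ into its ``large'' part (supported on $\le m-L$ coordinates, of total mass $\le 2(m-L)$) and its ``tetrahedral'' part (at least $2L-m$ coordinates equal to $m^{-1/r}$), the triangle inequality in $\ell_{r,s}$ gives
\[
\|m^{-1/r}\alpha^{1/r}\|_{r,s}\;\le\;\Big(\tfrac{2(m-L)}{m}\Big)^{1/r}(\log(m-L))^{1/s-1/r}+1.
\]
Precisely when $m-L\le m(\log m)^{-r(1/s-1/r)}$ the first term is $\le 2^{1/r}$, so $\|m^{-1/r}\alpha^{1/r}\|_{r,s}\le 2^{1/r}+1$ is a \emph{uniform constant}, and hence $c_{\ell_{r,s}^n}(\alpha)\prec_{C^m}|[\alpha]|^{1/r}$ with no factor $D$. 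After that, your H\"older/multinomial/cardinality argument goes through verbatim and yields $(n/L)^{L/r'}$, which is trivially $\prec_{C^m}(n/m)^{m/r'}$ since $L\le m$ and $n\ge m$.
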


\begin{proof}
	We start with some remarks on the norm of the vector $m^{-1/r}\alpha^{1/r}$.
    Note that if $\alpha$ is tetrahedral, then
 	\begin{equation}\label{norm for alpha tetrahedral}
  \|m^{-1/r}\alpha^{1/r}\|_{r,s} = \|m^{-1/r}(\underbrace{1,\dots,1}_{m},0,\dots)\|_{r,s}\sim 1.
	\end{equation}
We show next that for $\alpha$ almost tetrahedral a similar bound holds.	
		Suppose that  $\alpha  \in \Lambda_{L}(m,n)$, and suppose also that $2L\ge m$. Then there are at most $m-L$ coordinates greater than 1 (in particular there are at least $2L-m$ coordinates which are $1$), thus
$$
\alpha^*=(\alpha_1^*,\dots,\alpha_{m-L}^*,\underbrace{1,\dots,1}_{2L-m},0,\dots).
$$
    Recall that  $\|M^{-1/r}\beta^{1/r}\|_{\ell_r}=1$ for any $\beta\in\Lambda(M,N)$. Then, since the degree of $(\alpha_1^*,\dots,\alpha_{m-L}^*)$ equals $2(m-L)$, using \eqref{norm for alpha tetrahedral},
\begin{align}\label{norma}
\nonumber \|m^{-1/r}(\alpha_1^{1/r},\dots,\alpha_n^{1/r})\|_{r,s}
 &\le  \|m^{-1/r}((\alpha_1^*)^{1/r},\dots,(\alpha_{m-L}^*)^{1/r})\|_{r,s}+
 \|m^{-1/r}(\underbrace{1,\dots,1}_{2L-m},0,\dots)\|_{r,s}\\
 \nonumber
 &\le \|(2(m-L))^{-1/r}((\alpha_1^*)^{1/r},\dots,(\alpha_{m-L}^*)^{1/r})\|_{r}\Big(\frac{2(m-L)}{m}\Big)^{1/r}\log(m-L)^{\frac1{s}-\frac1{r}}\\
 &+\|(2L-m)^{-1/r}(\underbrace{1,\dots,1}_{2L-m},0,\dots)\|_{r,s} \nonumber \\
 &\le \Big(\frac{2(m-L)}{m}\Big)^{1/r}\log(m-L)^{\frac1{s}-\frac1{r}}+1.
\end{align}
Thus, if
$m-L\le \frac{m}{(\log m)^{r(\frac1{s}-\frac1{r})}}$ for $3\le m\le n$, then 
we have
\begin{align}\label{bound norm in lrs many variables}
    \|m^{-1/r}\alpha^{1/r}\|_{r,s}\le 2^{1/r}+1.
\end{align}
Moreover, note that the estimate from  \eqref{bound norm in lrs many variables} also holds for $1\le L\le m\le2.$
Then for $\alpha\in\Lambda^L(m,n)$ such that
\[
\text{
$\displaystyle L\ge m\left(1-\frac{1}{(\log m)^{r(\frac1{s}-\frac1{r})}}\right)$, with either  $3\le m\le n$
 or  $1\le L\le m\le2$\,,}
\]
we have
$$
{c_{\ell_{r,s}^n} (\alpha)}= \sup_{v \in B_{\ell_{r,s}^n}}\frac1{|v_\alpha|}\le {\|m^{-1/r}(\alpha_1^{1/r},\dots,\alpha_n^{1/r})\|_{r,s}^{m}} \left( \frac{\alpha^\alpha}{m^{m}} \right)^{1/r}
\le  \big((2^{1/r}+1)e^{1/r}\big)^{m}|[\alpha]|^{1/r}\,,
$$
and therefore
\begin{align}
\sum_{\alpha \in \Lambda^{L}(m,n)} c_{\ell_{r,s}^n}(\alpha) |z^\alpha| & \le \big((2^{1/r}+1)e^{1/r}\big)^{m}
 \sum_{\alpha \in \Lambda^L  (m,n)} |z^\alpha| |[\alpha]|^{1/r}  \prec_{C^m}  \Big(\frac{n}{L}\Big)^{\frac{L}{r'}} \prec_{C^m}  \Big(\frac{n}{m}\Big)^{\frac{m}{r'}}.\label{final bound for L>m-sqrt(m)}
 \qedhere
\end{align}
\end{proof}

\smallskip

\begin{lemma}\label{lemma2 L<m/2}
Let $1 \leq s \leq r \leq 2$ and $1<\frac{m}{2}<L\le m\left(1-\frac{1}{(\log m)^{r(\frac1{s}-\frac1{r})}}\right)$.
 Then
\begin{equation}
\widehat{\boldsymbol{\lambda}} (\mathcal P_{m,L}(\ell_{r,s}^n))\prec_{C^m} 
\, \|z\|_{\ell_r}^{m}\Big(\frac{n}{m}\Big)^{m/r'}
   \log(m-L)^{m(\frac1{s}-\frac1{r})}
   \Big(\frac{m}{n}\Big)^{\frac{m-L}{r'}}\left(\frac{m-L}{m}\right)^{\frac{m}{r}}\,.
\end{equation}
\end{lemma}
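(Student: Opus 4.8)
\textbf{Proof plan for Lemma~\ref{lemma2 L<m/2}.} The strategy is the same splitting philosophy as in Lemma~\ref{lemma1 L<m/2}, but now the indices in $\Lambda^L(m,n)$ have a substantial tetrahedral ``tail'' of roughly $2L-m$ ones and a non-tetrahedral ``head'' of at most $m-L$ coordinates all $\geq 2$. The key is to control $c_{\ell^n_{r,s}}(\alpha)=\|m^{-1/r}\alpha^{1/r}\|_{r,s}^{m}\,(\alpha^\alpha/m^m)^{1/r}$ by estimating the Lorentz norm of $m^{-1/r}\alpha^{1/r}$ via the triangle inequality applied to its decomposition into head and tail, exactly as in the chain of inequalities leading to \eqref{norma}. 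The head vector $m^{-1/r}((\alpha_1^*)^{1/r},\dots,(\alpha_{m-L}^*)^{1/r})$ has degree $2(m-L)$, so its $\ell_{r,s}$-norm is bounded (up to the logarithmic distortion $\log(m-L)^{1/s-1/r}$ coming from $\|\id:\ell_r^{m-L}\to\ell_{r,s}^{m-L}\|$) by $\big(2(m-L)/m\big)^{1/r}\log(m-L)^{1/s-1/r}$, while the tail contributes a bounded term. Since now $m-L$ is allowed to be comparatively large, we keep track of the factor $\log(m-L)^{1/s-1/r}$ rather than absorbing it, which is why it reappears (raised to the $m$-th power) in the final bound.

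First I would fix $z\in B_{\ell^n_{r,s}}$ and, using $c_{\ell^n_{r,s}}(\alpha)\leq C^m\log(m-L)^{m(1/s-1/r)}|[\alpha]|^{1/r}$ for $\alpha\in\Lambda^L(m,n)$ with $m/2<L$, together with $m^m\leq e^m m!$ so that $\alpha^\alpha/m^m\leq |[\alpha]|=m!/\alpha!$ up to $e^m$, bound
\[
\sum_{\alpha\in\Lambda^L(m,n)} c_{\ell^n_{r,s}}(\alpha)|z^\alpha|
\ \leq\ C^m\log(m-L)^{m(1/s-1/r)}\sum_{\alpha\in\Lambda^L(m,n)}|[\alpha]|^{1/r}|z^\alpha|.
\]
Then Hölder's inequality with exponents $r,r'$ gives
\[
\sum_{\alpha\in\Lambda^L(m,n)}|[\alpha]|^{1/r}|z^\alpha|
\ \leq\ \Big(\sum_{\alpha\in\Lambda^L(m,n)}|[\alpha]|\,|z^{\alpha r}|\Big)^{1/r}\,|\Lambda^L(m,n)|^{1/r'}.
\]
By the multinomial theorem the first factor is at most $\|z\|_{\ell_r}^{m}\leq 1$, and by Lemma~\ref{cardinal J^L} we have $|\Lambda^L(m,n)|\prec_{C^m}(n/L)^L$. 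Since $L>m/2$ implies $L$ and $m$ are comparable, $(n/L)^{L/r'}\sim_{C^m}(n/m)^{L/r'}=(n/m)^{m/r'}\,(m/n)^{(m-L)/r'}$, which already yields a bound of the form $C^m\log(m-L)^{m(1/s-1/r)}(n/m)^{m/r'}(m/n)^{(m-L)/r'}$.

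The remaining point — and the only genuinely delicate bookkeeping — is to produce the extra gain $\big((m-L)/m\big)^{m/r}$ claimed in the statement. This must come from a sharper estimate of $\|m^{-1/r}\alpha^{1/r}\|_{r,s}^m$ than the crude ``$\leq C^m$''; namely, from \eqref{norma} one gets $\|m^{-1/r}\alpha^{1/r}\|_{r,s}\leq \big(2(m-L)/m\big)^{1/r}\log(m-L)^{1/s-1/r}+1$, and when $m-L$ is not too small relative to $m$ the first term dominates, so the $m$-th power carries a factor comparable to $\big((m-L)/m\big)^{m/r}\log(m-L)^{m(1/s-1/r)}$. Plugging this refined bound for $c_{\ell^n_{r,s}}(\alpha)$ into the Hölder argument above (replacing the factor $|[\alpha]|^{1/r}$ argument accordingly, being careful that one still has $\alpha^\alpha/m^m\leq |[\alpha]|$ and not a smaller quantity), and then taking the supremum over $z\in B_{\ell^n_{r,s}}\supset B_{\ell_r^n}$, gives exactly
\[
\widehat{\boldsymbol{\lambda}}\big(\mathcal P_{m,L}(\ell_{r,s}^n)\big)
\ \prec_{C^m}\ \|z\|_{\ell_r}^{m}\Big(\frac{n}{m}\Big)^{m/r'}\log(m-L)^{m(1/s-1/r)}\Big(\frac{m}{n}\Big)^{\frac{m-L}{r'}}\Big(\frac{m-L}{m}\Big)^{m/r}.
\]
I expect the main obstacle to be precisely this matching of the power $\big((m-L)/m\big)^{m/r}$: one has to split the estimate for $\|m^{-1/r}\alpha^{1/r}\|_{r,s}$ into the regime where the head dominates the tail and the regime where it does not, and verify that in both regimes the stated product bound holds uniformly — this is where the interplay between $L$, $m-L$, and the Lorentz distortion $\log(m-L)^{1/s-1/r}$ has to be handled with care, but no single step is more than an elementary (if fiddly) computation.
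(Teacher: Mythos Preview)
Your approach is correct and matches the paper's proof: refined bound on $\|m^{-1/r}\alpha^{1/r}\|_{r,s}$ from \eqref{norma}, then H\"older with exponents $r,r'$, the multinomial formula, and Lemma~\ref{cardinal J^L} for $|\Lambda^L(m,n)|$.

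Your worry at the end is unnecessary, though. There is no need to split into a regime where the head dominates the tail and one where it does not: the hypothesis $L\le m\bigl(1-(\log m)^{-r(1/s-1/r)}\bigr)$ is \emph{exactly} what guarantees the head always dominates. Indeed, it gives $m-L\ge m/(\log m)^{r(1/s-1/r)}$, whence $\bigl(2(m-L)/m\bigr)^{1/r}\ge 2^{1/r}(\log m)^{-(1/s-1/r)}$; and since $m-L\ge m/(\log m)^{r(1/s-1/r)}$ also forces $\log(m-L)\sim\log m$, the head term $\bigl(2(m-L)/m\bigr)^{1/r}\log(m-L)^{1/s-1/r}$ is bounded below by a constant depending only on $r,s$. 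So the ``$+1$'' in \eqref{norma} can simply be absorbed by a fixed multiplicative constant (the paper writes $3$), and you get directly
\[
\|m^{-1/r}\alpha^{1/r}\|_{r,s}\le 3\Bigl(\tfrac{2(m-L)}{m}\Bigr)^{1/r}\log(m-L)^{1/s-1/r},
\]
hence $c_{\ell_{r,s}^n}(\alpha)\prec_{C^m}\bigl((m-L)/m\bigr)^{m/r}\log(m-L)^{m(1/s-1/r)}|[\alpha]|^{1/r}$ uniformly for all $\alpha\in\Lambda^L(m,n)$. From there your H\"older argument finishes cleanly with no case distinction. Your two-stage presentation (crude bound first, then refinement) is thus more roundabout than needed: just use the refined bound from the start.
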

\begin{proof}
Note that, by \eqref{norma}, for any $\alpha\in\Lambda^L(m,n)$
\begin{align}\label{bound norm in l_rs few variables}
    \|m^{-1/r}\alpha^{1/r}\|_{r,s}\le 3\Big(\frac{2(m-L)}{m}\Big)^{1/r}\log(m-L)^{\frac1{s}-\frac1{r}}.
\end{align}
Thus, by Lemma \ref{cardinal J^L},
\begin{align*}
\sum_{\alpha \in \Lambda^{L}(m,n)}  c_{\ell_{r,s}^n}(\alpha) |z^\alpha| & \prec_{C^m} 3^m\Big(\frac{2(m-L)}{m}\Big)^{m/r}\log(m-L)^{m(\frac1{s}-\frac1{r})} \sum_{\alpha \in \Lambda^{L}(m,n)} |[\alpha]|^{1/r}|z^\alpha|
 \nonumber\\
& \prec_{C^m}
 \left(\Big(\frac{m-L}{m}\Big)^{m/r}\log(m-L)^{m(\frac1{s}-\frac1{r})} \left| \Lambda^{L}(m,n) \right|^{1/r'} \right) \|z\|_{\ell_r}^{m}\nonumber\\
& \prec_{C^m}  \|z\|_{\ell_r}^{m}\Big(\frac{m-L}{m}\Big)^{m/r}\log(m-L)^{m(\frac1{s}-\frac1{r})}  \left( \frac{n}{L} \right)^{L/r'}  \nonumber\\
& \prec_{C^m}  \|z\|_{\ell_r}^{m}\Big(\frac{n}{m}\Big)^{m/r'}\log(m-L)^{m(\frac1{s}-\frac1{r})}
\Big(\frac{m^{\frac{m-L}{r'}-\frac{m}{r}}}{n^{\frac{m-L}{r'}}}\Big)(m-L)^{\frac{m}{r}}\,,
\end{align*}
as desired.
\end{proof}

\smallskip

The next lemma bounds the polynomial projection constant of $\mathcal P_{m,L}$ for the cases $\frac{m}{2}<L$ not included in Lemma~\ref{lemma1 L<m/2}.

\smallskip

\begin{lemma}\label{lemma3 L<m/2}
Let $1 \leq s \leq r \leq 2$ and $1<\frac{m}{2}<L<m\left(1-\frac{1}{(\log m)^{r(\frac1{s}-\frac1{r})}}\right)$. Then
\begin{itemize}
\item[(i)] for $e^{-\frac{2r}{r'}}n\le m\le n$,
\begin{align*}
\widehat{\boldsymbol{\lambda}}\big(\mathcal P_{m,L}(\ell_{r,s}^n)\big) &\prec_{C^m} (\log m)^{m(\frac1{s}-\frac1{r})}.
\end{align*}

\item[(ii)] for $m\le e^{-\frac{2r}{r'}}n$ and $ \log n \le \log m + \frac{r'}{r}(\log m)^{r(\frac1{s}-\frac1{r})}$,
\[
\widehat{\boldsymbol{\lambda}}\big(\mathcal P_{m,L}(\ell_{r,s}^n)\big)\prec_{C^m} \Big(\frac{n}{m}\Big)^{\frac{m}{r'}} \frac{\left(\log m\right)^{m(\frac1{s}-\frac1{r})}}{\left(\log \frac{n}{m}\right)^{\frac{m}{r}}}.
\]

\item[(iii)] for $ \log n \ge \log m + \frac{r'}{r}(\log m)^{r(\frac1{s}-\frac1{r})}$,
\begin{align*}
\widehat{\boldsymbol{\lambda}}\big(\mathcal P_{m,L}(\ell_{r,s}^n)\big) &\prec_{C^m} \Big(\frac{n}{m}\Big)^{\frac{m}{r'}\left(1-\frac1{(\log m)^{r(\frac1{s}-\frac1{r})}}\right)}.
\end{align*}
In particular, this holds for $n\ge m^{1+\delta},$
provided $\delta\ge \frac{r'}{r(\log m)^{2-\frac{r}{s}}}$
\end{itemize}
\end{lemma}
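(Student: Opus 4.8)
The goal is to bound $\widehat{\boldsymbol{\lambda}}\big(\mathcal P_{m,L}(\ell_{r,s}^n)\big)$ for the remaining regime $\frac{m}{2}<L<m\big(1-\frac{1}{(\log m)^{r(1/s-1/r)}}\big)$, splitting into the subcases according to the relative size of $n$ and $m$. In all three subcases the starting point is the estimate proved in Lemma~\ref{lemma2 L<m/2}, namely
\[
\widehat{\boldsymbol{\lambda}} (\mathcal P_{m,L}(\ell_{r,s}^n))\prec_{C^m}
\Big(\tfrac{n}{m}\Big)^{m/r'}
   \log(m-L)^{m(\frac1{s}-\frac1{r})}
   \Big(\tfrac{m}{n}\Big)^{\frac{m-L}{r'}}\Big(\tfrac{m-L}{m}\Big)^{\frac{m}{r}}\,,
\]
where for the last step I use $\|z\|_{\ell_r}\le 1$ on the unit ball. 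The whole proof is then an exercise in estimating the ``correction factor''
\[
F(L):=\log(m-L)^{m(\frac1{s}-\frac1{r})}
   \Big(\tfrac{m}{n}\Big)^{\frac{m-L}{r'}}\Big(\tfrac{m-L}{m}\Big)^{\frac{m}{r}}
\]
against the target. Since $\log(m-L)\le\log m$, the first factor is always $\le(\log m)^{m(1/s-1/r)}$, and this is the only place the logarithm enters; I would fix this bound immediately and concentrate on the product $G(L):=\big(\tfrac{m}{n}\big)^{\frac{m-L}{r'}}\big(\tfrac{m-L}{m}\big)^{\frac{m}{r}}$.

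\textbf{Carrying out the three subcases.} For (i), $e^{-2r/r'}n\le m\le n$, the factor $(m/n)^{(m-L)/r'}$ is bounded below and above by constants to the power $m$ (since $m-L<m$ and $m/n\in[e^{-2r/r'},1]$), and $\big(\tfrac{m-L}{m}\big)^{m/r}\le 1$; also $(n/m)^{m/r'}\le(e^{2r/r'})^{m/r'}$ is just a constant to the power $m$. Hence the whole bound collapses to $(\log m)^{m(1/s-1/r)}$ up to $C^m$, which is exactly the claimed estimate. For (ii), $m\le e^{-2r/r'}n$ and $\log n\le\log m+\frac{r'}{r}(\log m)^{r(1/s-1/r)}$: here I want to extract the factor $(n/m)^{m/r'}$ and show the rest is $\prec_{C^m}(\log m)^{m(1/s-1/r)}/(\log(n/m))^{m/r}$. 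The key elementary step is to maximize $G(L)$ over the allowed range of $L$. Writing $t:=(m-L)/m\in(0,\tfrac12)$, $G=\big(\tfrac{m}{n}\big)^{mt/r'}t^{m/r}=\exp\!\big(\tfrac{m}{r'}t\log\tfrac{m}{n}+\tfrac{m}{r}\log t\big)$, and differentiating in $t$ gives the interior critical point $t^\ast=\tfrac{r'}{r}\big(\log\tfrac{n}{m}\big)^{-1}$, provided this lies in the feasible interval — which it does precisely because $L<m\big(1-(\log m)^{-r(1/s-1/r)}\big)$ forces $t>(\log m)^{-r(1/s-1/r)}$, and the hypothesis $\log(n/m)\le\frac{r'}{r}(\log m)^{r(1/s-1/r)}$ ensures $t^\ast\ge(\log m)^{-r(1/s-1/r)}$. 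Substituting $t^\ast$ yields $G(L)\le G(t^\ast)=\exp\!\big(\tfrac{m}{r}(\log t^\ast-1)\big)\prec_{C^m}\big(\log\tfrac{n}{m}\big)^{-m/r}$, and combining with $(\log m)^{m(1/s-1/r)}$ gives (ii). For (iii), $\log n\ge\log m+\frac{r'}{r}(\log m)^{r(1/s-1/r)}$: now $t^\ast$ would fall below the feasible window, so $G(L)$ is decreasing on the feasible interval and is maximized at the left endpoint $t_0=(\log m)^{-r(1/s-1/r)}$, i.e.\ $L$ close to $m\big(1-(\log m)^{-r(1/s-1/r)}\big)$. Plugging $t=t_0$ into $F(L)$ the logarithmic prefactor $(\log m)^{m(1/s-1/r)}$ is exactly cancelled by $t_0^{m/r}=(\log m)^{-m(1/s-1/r)}$ (here I use $r(1/s-1/r)\cdot\tfrac{m}{r}=m(1/s-1/r)$), leaving $(m/n)^{mt_0/r'}$, so that $\widehat{\boldsymbol{\lambda}}\prec_{C^m}(n/m)^{m/r'}\cdot(m/n)^{mt_0/r'}=(n/m)^{\frac{m}{r'}(1-(\log m)^{-r(1/s-1/r)})}$, as claimed. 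The final ``In particular'' assertion is then routine: if $n\ge m^{1+\delta}$ then $\log n\ge(1+\delta)\log m$, and one checks $\delta\ge\frac{r'}{r}(\log m)^{-(2-r/s)}$ is exactly what makes $\delta\log m\ge\frac{r'}{r}(\log m)^{r(1/s-1/r)}$, which is the hypothesis of (iii).

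\textbf{Main obstacle.} The genuinely delicate point is the optimization over $L$ in subcase (ii) and the matching of endpoint behaviour in (iii): one has to verify that the critical point $t^\ast$ of the exponent lands inside the feasible window precisely on the range of $n$ described by the hypotheses, and that the three regimes (interior max, right-endpoint max via $m\sim n$, left-endpoint max) are exactly the complementary cases cut out by the two inequalities comparing $\log n$ with $\log m$ and with $\log m+\frac{r'}{r}(\log m)^{r(1/s-1/r)}$. Getting the constant $e^{-2r/r'}$ in the threshold $m\le e^{-2r/r'}n$ to be consistent across (i) and (ii), and tracking that all the $C^m$-factors absorbed along the way (from $m^m\le e^m m!$, from Lemma~\ref{cardinal J^L}, from $\big(\tfrac{m-L}{m}\big)^{m/r}$ at the critical point) really are bounded by a constant depending only on $r,s$ raised to $m$, is the kind of bookkeeping that has to be done carefully but presents no conceptual difficulty. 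I would also double-check the harmless but easy-to-slip edge cases where $m-L=1$ (so $\log(m-L)=0$ and one must interpret $0^0$ correctly) and where $L$ is an integer near the non-integer boundary $m\big(1-(\log m)^{-r(1/s-1/r)}\big)$, which only costs another universal factor.
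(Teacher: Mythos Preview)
Your approach is correct and essentially identical to the paper's proof: both start from Lemma~\ref{lemma2 L<m/2}, substitute $t=(m-L)/m$, and optimize the function $t^{1/r}(m/n)^{t/r'}$ over the interval $[(\log m)^{-r(1/s-1/r)},\tfrac12]$, splitting into three cases according to whether the critical point $t_0=\frac{r'}{r}(\log\tfrac{n}{m})^{-1}$ lies to the right of, inside, or to the left of this interval. Your treatment of the endpoint cancellation in (iii) and the verification of the ``In particular'' clause are slightly more explicit than the paper's, but the argument is the same.
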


\begin{proof}
By Lemma~\ref{lemma2 L<m/2},
\begin{align*}
\widehat{\boldsymbol{\lambda}}\big(\mathcal P_{m,L}(\ell_{r,s}^n)\big) & \prec_{C^m} \Big(\frac{n}{m}\Big)^{m/r'}\log(m-L)^{m(\frac1{s}-\frac1{r})}\Big(\frac{m^{\frac{m-L}{r'}-\frac{m}{r}}}{n^{\frac{m-L}{r'}}}\Big)(m-L)^{\frac{m}{r}}.
\end{align*}
Assume now that $m-L=tm$, with $(\log m)^{-r(\frac1{s}-\frac1{r})}\le t\le \frac12.$ Then, taking the $m$-th root and rearranging,
\begin{align}
\widehat{\boldsymbol{\lambda}}\big(\mathcal  P_{m,L}(\ell_{r,s}^n)\big)^{1/m}\Big(\frac{m}{n}\Big)^{1/r'}\log(m)^{\frac1{r}-\frac1{s}}
\prec_C t^{\frac1{r}}\left(\frac{m}{n}\right)^{t/r'}.
\label{L as a function of t}
\end{align}
Let $f(t):=\frac1{r}\log(t)-\frac{t}{r'}\log\frac{n}{m}$ for all $t>0$. Then, $f$ has a global maximum at $t_0=\left(\frac{r}{r'}\log\frac{n}{m}\right)^{-1}
$
Now, since we are looking for the maximum of $f$ for  $(\log m)^{-r(\frac1{s}-\frac1{r})}\le t\le \frac12$,  we have three possibilities:
\begin{enumerate}
	\item[$(i)$] \label{case1} $t_0>\frac12$, in which case we  consider $t^*:=0$. This is the case when $n<e^{\frac{2r'}{r}}m$.
	\item[$(ii)$] \label{case3} $0\le t_0\le r(\frac1{s}-\frac1{r})$, in which case we  consider $t^*:=t_0$. This is the case when $e^{\frac{2r'}{r}}\le n$  and $\log n\le \log m +\frac{r'}{r}(\log m)^{r(\frac1{s}-\frac1{r})}$.
	\item[$(iii)$] \label{case2} $t_0<(\log m)^{-r(\frac1{s}-\frac1{r})}$, in which case we  consider $t^*:=(\log m)^{-r(\frac1{s}-\frac1{r})}$. This is the case when $\log n\le \log m +\frac{r'}{r}(\log m)^{r(\frac1{s}-\frac1{r})}$.
\end{enumerate}
The proof finishes replacing $t$ by $t^*$ in \eqref{L as a function of t} in each case.
\end{proof}

It remains to prove the case $L<\frac{m}{2}.$ The key point in this case is that although we don't have a good bound for $c_{\ell_{r,s}^n}$, the cardinality of $\Lambda^L(m,n)$ is small enough.
\begin{lemma}\label{lemma L>m/2}
Let $1 \leq s \leq r \leq 2$,  $L\le \frac{m}{2}$. Then for every
$n \geq m \log(m)^{2r'(\frac{1}{s}-\frac{1}{r})}$,
\begin{equation} \label{good bound for L>m/2}
\widehat{\boldsymbol{\lambda}}\big(\mathcal P_{m,L}(\ell_{r,s}^n)\big)\prec_{C^m} \left(\frac{n}{m} \right)^{\frac{m}{r'}}.
\end{equation}
\end{lemma}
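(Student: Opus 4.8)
\textbf{Proof plan for Lemma~\ref{lemma L>m/2}.}
The strategy is to exploit the trade-off between the size of the characteristic $c_{\ell_{r,s}^n}(\alpha)$ and the (now very small) cardinality of $\Lambda^L(m,n)$ when $L\le m/2$. First I would record the crude but universal bound on the characteristic: for any $\alpha\in\Lambda^L(m,n)$ we have
\[
c_{\ell_{r,s}^n}(\alpha)\le \|m^{-1/r}\alpha^{1/r}\|_{r,s}^m\Big(\frac{\alpha^\alpha}{m^m}\Big)^{1/r}\le \big(e^{1/r}\big)^m\,(\log m)^{m(\frac1s-\frac1r)}\,|[\alpha]|^{1/r}\,,
\]
where the logarithmic factor comes from comparing the Lorentz norm $\|\cdot\|_{r,s}$ with the $\ell_r$-norm on a vector supported on at most $m$ coordinates (this is exactly the estimate already used in the remark following Lemma~\ref{cardinal J^L}). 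Then, applying H\"older's inequality exactly as in that remark,
\begin{align*}
\sum_{\alpha\in\Lambda^L(m,n)}c_{\ell_{r,s}^n}(\alpha)|z^\alpha|
&\prec_{C^m}(\log m)^{m(\frac1s-\frac1r)}\Big(\sum_{\alpha\in\Lambda^L(m,n)}|[\alpha]|\,|z^{\alpha r}|\Big)^{1/r}\,|\Lambda^L(m,n)|^{1/r'}\\
&\prec_{C^m}(\log m)^{m(\frac1s-\frac1r)}\,\|z\|_{\ell_r}^m\,|\Lambda^L(m,n)|^{1/r'}\,.
\end{align*}

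Next I would plug in the cardinality bound from Lemma~\ref{cardinal J^L}, namely $|\Lambda^L(m,n)|\prec_{C^m}(n/L)^{L}$, to obtain
\[
\widehat{\boldsymbol{\lambda}}\big(\mathcal P_{m,L}(\ell_{r,s}^n)\big)\prec_{C^m}(\log m)^{m(\frac1s-\frac1r)}\Big(\frac{n}{L}\Big)^{L/r'}\,.
\]
The point is now purely arithmetic: I must show that under the hypothesis $n\ge m(\log m)^{2r'(\frac1s-\frac1r)}$, for all $1\le L\le m/2$,
\[
(\log m)^{m(\frac1s-\frac1r)}\Big(\frac{n}{L}\Big)^{L/r'}\prec_{C^m}\Big(\frac{n}{m}\Big)^{m/r'}\,,
\]
i.e.\ $(\log m)^{m(\frac1s-\frac1r)}\prec_{C^m}(n/m)^{m/r'}(L/n)^{L/r'}=(n/m)^{(m-L)/r'}(L/m)^{L/r'}$. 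Since $L\le m/2$, the term $(L/m)^{L/r'}$ is bounded below by $(1/2)^{m/(2r')}$, which is harmless as it is absorbed into $C^m$; so it suffices that $(\log m)^{m(\frac1s-\frac1r)}\prec_{C^m}(n/m)^{(m-L)/r'}$. Because $L\le m/2$ forces $m-L\ge m/2$, the right-hand side is at least $(n/m)^{m/(2r')}$, and the desired inequality reduces (after taking $m$-th roots) to $(\log m)^{\frac1s-\frac1r}\prec_c (n/m)^{1/(2r')}$, which is precisely the hypothesis $n/m\ge(\log m)^{2r'(\frac1s-\frac1r)}$. Finally, an application of Theorem~\ref{lambda-dash} transfers the bound on $\widehat{\boldsymbol{\lambda}}$ to $\boldsymbol{\lambda}$, although the statement is about $\widehat{\boldsymbol{\lambda}}$ itself, so that step is only needed when assembling Lemma~\ref{proj lorentz s<r - ell_r bound}.

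The only mildly delicate point I anticipate is keeping track of which quantities legitimately get swallowed into a constant of the form $C^m$ (with $C$ depending only on $r,s$) and which do not: the factors $e^{m/r}$, $(1/2)^{m/(2r')}$, and the hidden constants in Lemma~\ref{cardinal J^L} and in H\"older all behave exponentially in $m$, hence are fine, whereas the logarithmic factor $(\log m)^{m(\frac1s-\frac1r)}$ genuinely grows faster than any $C^m$ and must be beaten by the geometric gain $(n/m)^{(m-L)/r'}$ coming from the smallness of $|\Lambda^L(m,n)|$ relative to $|\Lambda_T(m,n)|$. This is exactly where the lower bound $L\le m/2$ is essential — it guarantees $m-L\ge m/2$, giving enough room in the exponent. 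I do not expect any real obstacle beyond this bookkeeping, since all the analytic ingredients (the crude characteristic bound, H\"older, and the cardinality estimate) are already in place in the excerpt.
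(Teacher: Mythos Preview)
Your proposal is correct and follows essentially the same route as the paper: bound $c_{\ell_{r,s}^n}(\alpha)\prec_{C^m}(\log m)^{m(\frac1s-\frac1r)}|[\alpha]|^{1/r}$, apply H\"older against the cardinality estimate $|\Lambda^L(m,n)|\prec_{C^m}(n/L)^L$, and then do the arithmetic using $m-L\ge m/2$ together with the hypothesis on $n$. One small slip: your claim that $(L/m)^{L/r'}\ge(1/2)^{m/(2r')}$ for all $L\le m/2$ is not quite right, since the minimum of $L\mapsto(L/m)^{L}$ on $[1,m/2]$ is attained at $L=m/e$ and equals $e^{-m/e}<2^{-m/2}$; but as you yourself note, any bound of the form $c^m$ is absorbed into the constant, so replace $(1/2)^{m/(2r')}$ by $e^{-m/(er')}$ and the argument goes through unchanged.
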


\begin{proof}
Let $\alpha \in \Lambda^{L}(m,n)$.
Since $\alpha$ involves exactly $L$ variables, we have 
\[ 
\Big\Vert \frac{\alpha^{1/r}}{m^{1/r}}\Big\Vert_{\ell_{r,s}} \prec_C \Big\Vert \frac{\alpha^{1/r}}{m^{1/r}}\Big\Vert_{\ell_{r}} \log(L)^{\frac{1}{r}-\frac{1}{s}}\prec_C  \log(L)^{\frac{1}{r}-\frac{1}{s}}\,,
\]
Thus,
\begin{align}\label{eq: bound c_alpha J^L}
c_{\ell_{r,s}}(\alpha) \prec_{C^m} \log(L)^{m(\frac{1}{r}-\frac{1}{s})} |[\alpha]|^{\frac1{r}}.
\end{align}
Therefore, using Lemma \ref{cardinal J^L},
\begin{align}
\sum_{\alpha \in \Lambda^{L}(m,n)}  c_{\ell_{r,s}^n}(\alpha) |z^\alpha| & \prec_{C^m} \log(L)^{m(\frac1{s}-\frac1{r})} \sum_{\alpha \in \Lambda^{L}(m,n)} |[\alpha]|^{1/r}|z^\alpha|
 \nonumber\\
& \prec_{C^m}
 \left(\log(L)^{m(\frac1{s}-\frac1{r})} \left| \Lambda^{L}(m,n) \right|^{1/r'} \right) \|z\|_{\ell_r}^{m}\nonumber\\
& \prec_{C^m}  \|z\|_{\ell_r}^{m}\log(L)^{m(\frac1{s}-\frac1{r})}  \left( \frac{n}{L} \right)^{L/r'} \label{bound depending on L for L>m/2} \\
&= \|z\|_{\ell_r}^{m}\,\Big(\frac{n}{m}\Big)^{m/r'}\log(L)^{m(\frac1{s}-\frac1{r})}\Big(\frac{m^m}{n^{m-L}L^L}\Big)^{\frac{1}{r'}}.\nonumber
\end{align}
For $n \geq m \log(m)^{2r'(\frac{1}{s}-\frac{1}{r})}$, and $L<\frac{m}{2},$ (and hence $2(m-L)>m$),
\begin{align*}
\widehat{\boldsymbol{\lambda}}\big(\mathcal P_{m,L}(\ell_{r,s}^n)\big)
&
\le \Big(\frac{n}{m}\Big)^{m/r'}\log(m)^{m(\frac1{s}-\frac1{r})}\Big(\frac{m^m}{m^{m-L}L^L}\Big)^{\frac{1}{r'}}\log(m)^{-2(m-L)(\frac1{s}-\frac1{r})}\\
&
\le \Big(\frac{n}{m}\Big)^{m/r'}\Big(\frac{m}{L}\Big)^{\frac{L}{r'}}\prec_{C^m} \Big(\frac{n}{m}\Big)^{m/r'}.
\end{align*}
This proves \eqref{good bound for L>m/2}.
\end{proof}

As explained above, this completes the proof of  Theorem~\ref{bound_similar_ell_r}.
We finish this subsection stating an upper bound which holds for any $n,m$, which we in Section \ref{Section: bohr on lorentz} need to obtain asymptotically optimal bounds for the Bohr radius on Lorentz spaces.

\begin{theorem} \label{proj lorentz s<r}
	Let  $1\le s \le r\le 2$, $J\subset \mathbb N_0^n$ of degree $m$
	and  $0<\kappa<1$. Then for every $n\ge m$,
\begin{equation}\label{proj lorentz s<r - bound}
   {\boldsymbol{\lambda}}\big(\mathcal P_{J}(\ell_{r,s}^n)\big)\prec_{C^m}\Big(\frac{n}{m}\Big)^{\frac{m}{r'}}\max\left\{\frac{\log(m)^{m(\frac1{s}-\frac1{r})}}{n^{\frac{m^\kappa}{2r'}}};\,1\right\}.
\end{equation}
\end{theorem}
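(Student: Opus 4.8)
The plan is to run the block‑by‑block (``peeling'') argument that underlies Lemma~\ref{proj lorentz s<r - ell_r bound}, but to bound the logarithmic losses in a way that is \emph{uniform in $n$}; this uniformity is exactly what the free parameter $\kappa\in(0,1)$ buys. First I would dispose of the trivial cases. If $s=r$ then $\ell^n_{r,s}=\ell^n_r$ with uniformly equivalent norms, $(\log m)^{m(1/s-1/r)}=1$, the $\max$ equals $1$, and the assertion is Theorem~\ref{start-poly2}; so assume $s<r$. For each fixed $m$ the estimate follows from Proposition~\ref{lor2} together with $\mathrm{RHS}\ge(n/m)^{m/r'}$, so, absorbing the finitely many small $m$ into the constant, I may assume $m$ is as large as I like (depending on $r,s,\kappa$). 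By Theorem~\ref{lambda-dash} it suffices to bound $\widehat{\boldsymbol{\lambda}}$, and since $J$ has degree $m$ the definition of the polynomial projection constant gives
\[
\widehat{\boldsymbol{\lambda}}\big(\mathcal P_{J}(\ell^n_{r,s})\big)\le\widehat{\boldsymbol{\lambda}}\big(\mathcal P_{\Lambda(\le m,n)}(\ell^n_{r,s})\big)\le\sum_{L=0}^{m}\widehat{\boldsymbol{\lambda}}\big(\mathcal P_{\Lambda^L(\le m,n)}(\ell^n_{r,s})\big),
\]
where $\Lambda^L(\le m,n)=\bigcup_{k=L}^{m}\Lambda^L(k,n)$ collects the multi‑indices of degree $\le m$ supported on exactly $L$ coordinates. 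The factor $m+1$ will be absorbed into $C^m$, so it is enough to bound each summand by $(n/m)^{m/r'}\max\{(\log m)^{m(1/s-1/r)}n^{-m^\kappa/(2r')},\,1\}$, which I abbreviate $\mathrm{RHS}$.

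The key input is a \emph{universal} estimate for each block, $\widehat{\boldsymbol{\lambda}}(\mathcal P_{\Lambda^L(\le m,n)}(\ell^n_{r,s}))\prec_{C^m}(\log m)^{m(1/s-1/r)}(n/L)^{L/r'}$, obtained exactly as the bound recorded in the remark following Lemma~\ref{cardinal J^L}: starting from the estimate $c_{\ell^n_{r,s}}(\alpha)\le\|k^{-1/r}\alpha^{1/r}\|_{\ell_{r,s}}^{k}(k^k/\alpha^{\alpha})^{1/r}$ (choosing the $\ell_r$‑extremal direction for a degree‑$k$ index $\alpha$), then using $\|x\|_{\ell_{r,s}}\lesssim(\log L)^{1/s-1/r}\|x\|_{\ell_r}$ for $L$‑supported vectors, the bound $k^k\le e^k k!$, Hölder's inequality, the multinomial identity, the embedding $\ell^n_{r,s}\hookrightarrow\ell^n_r$ (valid since $s<r$), and the cardinality bound $|\Lambda^L(\le m,n)|=\sum_{k=L}^{m}|\Lambda^L(k,n)|\prec_{C^m}(n/L)^{L}$ from Lemma~\ref{cardinal J^L}; the blocks $L\le 2$, where the norm bound degenerates, are estimated directly and are harmless. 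For the ``almost tetrahedral'' blocks $L\ge m\bigl(1-(\log m)^{-r(1/s-1/r)}\bigr)$ I would instead split by degree and apply Lemma~\ref{lemma1 L<m/2} to each $\mathcal P_{k,L}(\ell^n_{r,s})$, $L\le k\le m$ (its hypothesis $L\ge k(1-(\log k)^{-r(1/s-1/r)})$ holds because $L\ge m(1-(\log m)^{-r(1/s-1/r)})\ge k(1-(\log k)^{-r(1/s-1/r)})$), which yields $\widehat{\boldsymbol{\lambda}}(\mathcal P_{\Lambda^L(\le m,n)}(\ell^n_{r,s}))\prec_{C^m}(n/L)^{L/r'}\prec_{C^m}(n/m)^{m/r'}$, already $\le\mathrm{RHS}$.

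For the remaining blocks $L<m\bigl(1-(\log m)^{-r(1/s-1/r)}\bigr)$, equivalently $m-L>m(\log m)^{-r(1/s-1/r)}$, I combine the universal estimate with the arithmetic identity
\[
\Big(\frac nL\Big)^{L/r'}=\Big[\Big(\frac mn\Big)^{m-L}\Big(\frac mL\Big)^{L}\Big]^{1/r'}\Big(\frac nm\Big)^{m/r'}
\]
and the claim, \emph{valid uniformly over $n\ge m$ and over $L$ in this range}, that
\[
\Big(\frac mn\Big)^{m-L}\Big(\frac mL\Big)^{L}\,n^{m^\kappa/2}\;\prec_{C^m}\;1 .
\]
This is where $\kappa<1$ enters: for $m$ large one has $m-L>m(\log m)^{-r(1/s-1/r)}\ge m^{\kappa}/2$, hence $(m/n)^{m-L}n^{m^\kappa/2}\le(m/n)^{m^\kappa/2}n^{m^\kappa/2}=m^{m^\kappa/2}=e^{o(m)}$, while $(m/L)^L\le e^{m/e}$, so the product is $\le e^{O(m)}$ with a constant not depending on $n$. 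Plugging this back and using $(\log L)^{m(1/s-1/r)}\le(\log m)^{m(1/s-1/r)}$, the universal estimate becomes
\[
\widehat{\boldsymbol{\lambda}}\big(\mathcal P_{\Lambda^L(\le m,n)}(\ell^n_{r,s})\big)\;\prec_{C^m}\;\Big(\frac nm\Big)^{m/r'}\frac{(\log m)^{m(1/s-1/r)}}{n^{m^\kappa/(2r')}}\;\le\;\mathrm{RHS},
\]
and summing the two ranges of $L$ completes the argument.

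I expect the main obstacle to be not any single inequality but keeping the whole estimate \emph{uniform in $n$}: the entire point of the theorem is that $C$ in $\prec_{C^m}$ does not depend on $n$, and this rests precisely on the one displayed claim $(m/n)^{m-L}(m/L)^{L}n^{m^\kappa/2}\prec_{C^m}1$, which in turn forces the choice of the ``almost tetrahedral'' threshold $m(1-(\log m)^{-r(1/s-1/r)})$ (so that $m-L\ge m^{\kappa}/2$ below it) and the use of $m^\kappa=o(m)$. A secondary, more clerical difficulty is setting up the universal block estimate for $\Lambda^L(\le m,n)$ rather than for the homogeneous $\Lambda^L(m,n)$ — this needs only the summed cardinality bound $\sum_{k=L}^m|\Lambda^L(k,n)|\prec_{C^m}(n/L)^L$ and the observation that all per‑degree constants are $\le C^m$ — together with checking the degenerate ranges (fixed $m$, the blocks $L\in\{0,1,2\}$) against $\mathrm{RHS}\ge(n/m)^{m/r'}\ge 1$.
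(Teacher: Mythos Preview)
Your proposal is correct and follows the same block-by-$L$ decomposition as the paper, but with a cleaner organization of the non-tetrahedral range. The paper splits into \emph{three} $L$-ranges: the near-tetrahedral blocks via Lemma~\ref{lemma1 L<m/2}, the middle range $\tfrac{m}{2}<L<m\bigl(1-(\log m)^{-r(1/s-1/r)}\bigr)$ via Lemma~\ref{lemma3 L<m/2} (which itself performs a case analysis on the size of $n$), and the small range $L\le m/2$ via a direct binomial bound $\binom{n}{L}\le\binom{2n}{[m-m^\kappa/2]}$. You collapse the latter two into a single argument built on the universal block estimate and the arithmetic claim $(m/n)^{m-L}(m/L)^{L}n^{m^\kappa/2}\prec_{C^m}1$, valid uniformly once $m-L\ge m^\kappa/2$; this is a neat shortcut that avoids the three-way case split on $n$ in Lemma~\ref{lemma3 L<m/2}. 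What you give up is the sharper characteristic bound (the factor $((m-L)/m)^{m/r}$ from Lemma~\ref{lemma2 L<m/2}) in the middle range, but since the target is only $\mathrm{RHS}$ this loss is immaterial. You are also more careful than the paper in one respect: by working with $\Lambda^L(\le m,n)$ and summing degrees explicitly you genuinely cover arbitrary $J$ of degree $m$, whereas the paper's displayed inequality $\widehat{\boldsymbol\lambda}(\mathcal P_J)\le\sum_L\widehat{\boldsymbol\lambda}(\mathcal P_{m,L})$ tacitly treats only the $m$-homogeneous part (a harmless slip, fixed exactly as you do by an extra sum over degrees absorbed into $C^m$).
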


\begin{proof}
Since  by Theorem~ \ref{lambda-dash}
\[
\boldsymbol{\lambda}\big(\mathcal P_{J}(\ell_{r,s}^n)\big)\le \widehat{\boldsymbol{\lambda}}\big(\mathcal P_{J}(\ell_{r,s}^n)\big)\le \sum_{L=0}^m\widehat{\boldsymbol{\lambda}}\big(\mathcal P_{m,L}(\ell_{r,s}^n)\big)\,,
\]
it suffices to show that  $\widehat{\boldsymbol{\lambda}}\big(\mathcal P_{m,L}(\ell_{r,s}^n)\big)$
for each $L\le m$ satisfies the bound in \eqref{proj lorentz s<r - bound}. Lemma~\ref{lemma1 L<m/2} implies the bound for $L\ge m\Big(1-\frac{1}{(\log m)^{r(\frac1{s}-\frac1{r})}}\Big)$.

For $\frac{m}{2}\le L\le m\Big(1-\frac{1}{(\log m)^{r(\frac1{s}-\frac1{r})}}\Big)$ we use Lemma~\ref{lemma3 L<m/2}. Part~$(iii)$ shows the desired bound for $n$ big enough.

In the cases $(i)$ and $(ii)$ of Lemma~\ref{lemma3 L<m/2} we have that $n\prec_C m^2$, and then $n^{\frac{m^\kappa}{2r'}}\prec_{C^m} 1$. Therefore Lemma~\ref{lemma3 L<m/2} $(i)$ and $(ii)$ also imply the bound in \eqref{proj lorentz s<r - bound}
for $\widehat{\boldsymbol{\lambda}}(\mathcal P_{m,L}(\ell_{r,s}^n))$.

For  $L\le \frac{m}{2}$ we use \eqref{eq: bound c_alpha J^L} from Lemma~\ref{lemma L>m/2}.
 Note that since $L\le \frac{m}{2}\le m-\frac{m^\kappa}{2}\le n$,
$$
\binom{n}{L} \le \binom{2n}{L} \le \binom{2n}{[m-\frac{m^\kappa}{2}]}.
$$
Thus, by \eqref{eq: bound c_alpha J^L} and Lemma~\ref{cardinal J^L}, for every $z \in \mathbb{C}^n$
\begin{align*}
\sum_{\alpha \in \Lambda^{L}(m,n)}  c_{\ell_{r,s}^n}(\alpha) |z^\alpha| & \le
 \left(\log(L)^{m(\frac1{s}-\frac1{r})} \left| \Lambda^{L}(m,n) \right|^{1/r'} \right) \|z\|_{\ell_r}^{m}\nonumber\\
& {\prec_{C^m}} \log(m)^{m(\frac1{s}-\frac1{r})}  \binom{2n}{[m-m^\kappa/2]}^{1/r'}  \|z\|_{\ell_r}^{m}\nonumber\\
& {\prec_{C^m}} \log(m)^{m(\frac1{s}-\frac1{r})} \left( \Big(\frac{n}{m-m^\kappa/2}\Big)^{m-m^\kappa/2} \right)^{1/r'}  \|z\|_{\ell_r}^{m}\nonumber\\
& = \|z\|_{\ell_r}^{m}\Big(\frac{n}{m}\Big)^{m/r'}\log(m)^{m(\frac1{s}-\frac1{r})}\Big(\frac{m-m^\kappa/2}{n}\Big)^{\frac{cm^\kappa}{r'}}\Big(\frac{m}{m-m^\kappa/2}\Big)^{\frac{m}{r'}}\nonumber\\
& \prec_{C^m} \|z\|_{\ell_r}^{m}\Big(\frac{n}{m}\Big)^{m/r'}\Big(\frac{\log(m)^{m(\frac1{s}-\frac1{r})}}{n^{\frac{m^\kappa}{2r'}}}\Big)\,,\nonumber
\end{align*}
again implying \eqref{proj lorentz s<r - bound}
for $\widehat{\boldsymbol{\lambda}}(\mathcal P_{m,L}(\ell_{r,s}^n))$.
\end{proof}

\subsection{The general case for $1 < r \le 2$ and $r \leq s$}
For $1 \leq r \leq 2 $ and $s \geq r$ we follow a different strategy. We do not partition the Banach spaces $\mathcal P_J(\ell_{r,s}^n)$ into small pieces as before; instead, we use a certain decomposition of multi indices
in order  to factorize the sum  defining the polynomial projection constant $\widehat{\boldsymbol{\lambda}}(\mathcal P_J(\ell_{r,s}^n))$ as a product of certain terms
involving monomials of lower degrees, for which we are able to find proper bounds.

\begin{theorem} \label{proj lorentz s>r}
Let $1 < r \leq 2$ and $r \leq s$ and $J\subset \mathbb N_0^n$ an index set of degree $m$. Then, for  $\frac{n}{e(\log n)^{r'(\frac1{r}-\frac1{s})}}\ge~m$, we have
\[
\widehat{\boldsymbol{\lambda}}\big(\mathcal P_J(\ell_{r,s}^n)\big) \prec_{C^m} \Big(\frac{n}{m} \Big)^{\frac{m}{r'}}.
\]
In particular,
\[
\boldsymbol{\chimon}\big(\mathcal P_{J}(\ell_{r,s}^n)\big) \prec_{C^m}  \Big(\frac{n}{m} \Big)^{\frac{m-1}{r'}} \quad \textrm{and}\quad
{\boldsymbol{\lambda}}\big(\mathcal P_J(\ell_{r,s}^n)\big) \prec_{C^m} \Big(\frac{n}{m} \Big)^{\frac{m}{r'}}.
\]
\end{theorem}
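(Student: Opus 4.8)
\textbf{Proof strategy for Theorem~\ref{proj lorentz s>r}.}
The plan is to reduce everything to a bound for the polynomial projection constant $\widehat{\boldsymbol{\lambda}}\big(\mathcal P_J(\ell_{r,s}^n)\big)$: once that is in place, $\boldsymbol{\lambda}\big(\mathcal P_J(\ell_{r,s}^n)\big) \le \widehat{\boldsymbol{\lambda}}\big(\mathcal P_J(\ell_{r,s}^n)\big)$ follows from Theorem~\ref{lambda-dash}, and the unconditional basis bound $\boldsymbol{\chimon}\big(\mathcal P_{J}(\ell_{r,s}^n)\big) \prec_{C^m} (n/m)^{(m-1)/r'}$ follows by feeding the projection-constant estimate into Corollary~\ref{main3A} (which expresses $\boldsymbol{\chimon}$ of a degree $\le m$ space in terms of the projection constants of the homogeneous building blocks of degree $\le m-1$). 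So the whole weight of the theorem rests on the inequality $\widehat{\boldsymbol{\lambda}}\big(\mathcal P_J(\ell_{r,s}^n)\big) \prec_{C^m} (n/m)^{m/r'}$ under the hypothesis $\tfrac{n}{e(\log n)^{r'(1/r-1/s)}}\ge m$.

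First I would reduce to $J$ homogeneous. Since $\widehat{\boldsymbol{\lambda}}$ of a degree-$m$ index set is at most the sum over $k\le m$ of $\widehat{\boldsymbol{\lambda}}$ of its $k$-homogeneous parts (directly from the definition~\eqref{lambda-dash-def}), and there are only $m+1$ terms, it suffices to bound $\widehat{\boldsymbol{\lambda}}\big(\mathcal P_{\Lambda(k,n)}(\ell_{r,s}^n)\big)$ for each $k\le m$ by $C^m (n/m)^{k/r'}$ — indeed by $C^k (n/k)^{k/r'}$, since $k\mapsto (n/k)^{k/r'}$ is increasing for $k$ in the relevant range. Thus it is enough to treat $J=\Lambda(m,n)$. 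Now comes the key device, analogous to the factorization decomposition used for $\ell_{r,s}^n$ in the Bohr-radius literature: write each monomial $z^\alpha$, $\alpha\in\Lambda(m,n)$, as a product $z^{\beta}z^{\gamma}$ where $\beta$ collects the "bulk" of the exponents (say, split off a tetrahedral layer and an even layer as in Section~\ref{Polynomials on the Boolean cube}, or iterate a one-step factorization), and use the characteristic formula for Lorentz spaces from Corollary~\ref{cLor}/\eqref{car-lor}, namely $c_{\ell_{r,s}^n}(\alpha) \sim_{C^m} \big(\tfrac{m^m}{\alpha^\alpha}\big)\prod_{k=1}^m k^{-\alpha^*_k/s'}$ when $s$ is the second parameter (writing the dual exponent correctly), which is exactly $c_{\ell_r^n}(\alpha)$ times a "logarithmic defect" $\prod_k k^{-\alpha^*_k(1/r'-1/s')}$. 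Here I would use Theorem~\ref{cE}(ii): for $\alpha\in\Lambda(m,n)$ there is $v\in S_{(\ell_{r,s}^n)'}^{+}$ with $c_{\ell_{r,s}^n}(\alpha) = \tfrac{m^m}{\alpha^\alpha}v^\alpha$ and $v_jv_j'=\alpha_j/m$-type relations; bounding $v^\alpha \le \|(\alpha_1/m,\dots,\alpha_n/m)\|_{(\ell_{r,s}^n)'}^{\,m}$ via the $\ell_{r',s'}$ norm of the normalized exponent vector, and estimating that Lorentz norm of a vector supported on $L\le m$ coordinates by its $\ell_{r'}$ norm times $(\log L)^{1/s'-1/r'}\le (\log m)^{1/r-1/s}$, converts the problem into: $\sup_{z\in B_{\ell_{r,s}^n}}\sum_{\alpha\in\Lambda(m,n)}\tfrac{m!}{\alpha!}\Delta_{\ell_{r,s}^n}(\alpha)|z^\alpha|$ with $\Delta_{\ell_{r,s}^n}(\alpha)\le (\log m)^{m(1/r-1/s)}$, à la Lemma~\ref{lambdaP_J}. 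Passing from $B_{\ell_{r,s}^n}$ to $B_{\ell_r^n}$ costs another factor $\|\id:\ell_{r,s}^n\to\ell_r^n\|^m\sim_{C^m}1$ (these spaces are lexicographically adjacent), and then by the multinomial formula $\sup_{z\in B_{\ell_{r'}^{n}}\text{-type}}\sum \tfrac{m!}{\alpha!}|z^\alpha| = (\text{something})^m$ collapses. The upshot is a clean bound of the shape $\widehat{\boldsymbol{\lambda}}\big(\mathcal P_{\Lambda(m,n)}(\ell_{r,s}^n)\big)\prec_{C^m} |\Lambda(m,n)|^{1/r'}(\log m)^{m(1/r-1/s)}\prec_{C^m}(n/m)^{m/r'}(\log m)^{m(1/r-1/s)}$.

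The remaining step is to absorb the logarithmic factor $(\log m)^{m(1/r-1/s)}$ into the main term, and this is exactly where the hypothesis $\tfrac{n}{e(\log n)^{r'(1/r-1/s)}}\ge m$ enters. Rewriting $|\Lambda(m,n)|^{1/r'}\le e^{m/r'}(n/m)^{m/r'}$ via \eqref{cardi}, I want $(n/m)^{m/r'}\cdot(\log m)^{m(1/r-1/s)}\prec_{C^m}(n/m)^{m/r'}$ up to an extra $C^m$, which after taking $m$th roots amounts to $(\log m)^{1/r-1/s}\prec_C$ a fixed multiple of itself — that is false as stated, so in fact the logarithmic factor must be traded against slack in $n/m$: under $m\le n/(e(\log n)^{r'(1/r-1/s)})$ we have $\log m\le\log n$, hence $(\log m)^{m(1/r-1/s)}\le (\log n)^{m(1/r-1/s)}\le \big(n/(em)\big)^{m/r'}$, which is dominated by $(n/m)^{m/r'}$ up to the constant $e^{-m/r'}$. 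Plugging this back gives $\widehat{\boldsymbol{\lambda}}\big(\mathcal P_{\Lambda(m,n)}(\ell_{r,s}^n)\big)\prec_{C^m}(n/m)^{m/r'}$, completing the homogeneous case and hence the theorem. The main obstacle I anticipate is bookkeeping the factorization-decomposition estimate cleanly enough that the $(\log m)$ exponent comes out exactly $m(1/r-1/s)$ (not, say, $m/r$ times a cruder bound) — one needs the Lorentz-norm estimate on vectors with at most $m$ nonzero coordinates to be sharp in the exponent, using the fundamental-function asymptotics $\varphi_{\ell_{r',s'}}(L)\sim L^{1/r'}$ and the structure of $\alpha^*$, together with the $1\le s$ and $r\le s$ conditions to control which Lorentz embeddings are available. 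A secondary technical point is making the reduction from general $J$ of degree $m$ to $\Lambda(m,n)$ uniform, i.e. checking that the harmless $(m+1)$ factor and the monotonicity of $(n/k)^{k/r'}$ really do hold for all $k\le m\le n$; both are routine from \eqref{cardi}.
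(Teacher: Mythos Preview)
Your reductions at the beginning and end are fine: passing from degree $\le m$ to homogeneous costs only a factor $m+1$, and once $\widehat{\boldsymbol{\lambda}}$ is bounded, Theorem~\ref{lambda-dash} and Corollary~\ref{main3A} finish the job exactly as you say. The gap is in the middle, and it is a real one.

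First, two incorrect ingredients. Corollary~\ref{cLor} and \eqref{car-lor} are about $\ell_{r,1}$ only; there is no such closed formula for $c_{\ell_{r,s}^n}(\alpha)$ with general $s$ in the paper. More seriously, for $r\le s$ the inclusion goes $\ell_r\hookrightarrow\ell_{r,s}$, not the other way, so $\|\id:\ell_{r,s}^n\to\ell_r^n\|\sim(\log n)^{1/r-1/s}$ is \emph{not} $\sim_C 1$. The correct elementary comparison in this regime is $c_{\ell_{r,s}^n}(\alpha)\le c_{\ell_r^n}(\alpha)\sim_{C^m}|[\alpha]|^{1/r}$ (since $B_{\ell_r^n}$ sits inside a constant multiple of $B_{\ell_{r,s}^n}$); the paper in fact starts from exactly this.

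Second, and this is the decisive point, your absorption step is a non-sequitur. Suppose you really had
\[
\widehat{\boldsymbol{\lambda}}\big(\mathcal P_{\Lambda(m,n)}(\ell_{r,s}^n)\big)\prec_{C^m}\Big(\frac{n}{m}\Big)^{m/r'}(\log m)^{m(1/r-1/s)}.
\]
From the hypothesis you correctly extract $(\log m)^{m(1/r-1/s)}\le(n/(em))^{m/r'}$. Multiplying these gives $(n/m)^{2m/r'}$, which is the square of what you want, not $(n/m)^{m/r'}$. There is no way to cancel a global factor $(\log m)^{m(1/r-1/s)}$ against a $C^m$.

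The paper avoids this by arranging that the logarithm never appears with full exponent $m$. It bounds $\sum_{\alpha\in\Lambda(m,n)}|[\alpha]|^{1/r}|z^\alpha|$ via the tetrahedral--even split $\alpha=\alpha_T+\alpha_E$, $|\alpha_T|=k$: on the tetrahedral piece one gets $n^{k/r'}/k!^{1/r'}$ with \emph{no} log (Lemma~\ref{lem: tetra 1 < r < 2}, first estimate), while the even piece contributes $(\log n)^{(m-k)(1/r-1/s)}$ (same lemma, second estimate). Summing over $k$ yields
\[
\sum_{\alpha}|[\alpha]|^{1/r}|z^\alpha|\prec_{C^m}\|z\|_{\ell_{r,s}}^m\max_{k\le m}\;\frac{n^{k/r'}}{k^{k/r'}}(\log n)^{(m-k)(1/r-1/s)}
=\|z\|_{\ell_{r,s}}^m(\log n)^{m(1/r-1/s)}\max_{k\le m}\Big(\frac{a}{k}\Big)^{k/r'},
\]
with $a=n/(\log n)^{r'(1/r-1/s)}$. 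The function $k\mapsto(a/k)^k$ is increasing on $[1,a/e]$, so under the hypothesis $m\le a/e$ the maximum is at $k=m$, where the two factors cancel exactly and leave $(n/m)^{m/r'}$ (Lemma~\ref{lem: general}). The hypothesis is used to locate the argmax at $k=m$, not to bound a fixed log factor by a constant. Your sketch mentions the tetrahedral--even decomposition in passing but then discards it in favour of a global $(\log m)^m$ bound; that is precisely the step that cannot be repaired.
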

\bigskip

Since for $s\geq r$, we have
$c_{\ell_{r,s}^n}(\alpha) \leq c_{\ell_{r}^n}(\alpha) \sim_{C^m} |[\alpha]|^{1/r},$ to prove Theorem~\ref{proj lorentz s>r} it will be important to obtain good bounds for the sum \begin{equation} \label{pontes}
\sum_{\alpha \in \Lambda (m,n)} \vert z \vert^{\alpha} \vert [\alpha] \vert^{1/r}.
\end{equation}
 The strategy is to analyze smaller pieces of the sum: the \emph{tetrahedral} and the \emph{even} part, and use the bounds obtained for each of these parts to conclude something about sums which involve general monomials. As mentioned, this technique was introduced in \cite{galicer2021monomial} (see also \cite{mansilla2019thesis}) to study sets of monomial convergence. Note also that this decomposition method  was already used in Chapter~\ref{Polynomials on the Boolean cube}.

 Recall that for each $m,n$ the set of even multi indices is given by
\[
\Lambda_E(m,n) = \big\lbrace \alpha \in \Lambda(m,n) :  \alpha_i \text{ is even for every } i=1,\ldots,n \big\rbrace.
\]
Observe that for every $\alpha \in \Lambda_E(m,n)$ there is a unique $\beta \in \Lambda(m/2,n)$ such that $\alpha = 2 \beta$.
Given $\alpha \in \Lambda (M,N)$, the tetrahedral part and the even part are  defined as,
\[
\big(\alpha_{T} \big)_{i} = \begin{cases}
1 & \text{ if } \alpha_{i} \text{ is odd} \\
0 & \text{ if } \alpha_{i} \text{ is even}
\end{cases}
\quad \text{ and } \quad
\alpha_E=\alpha-\alpha_T
\,.
\]
If $0 \leq k \leq M$ is the number of odd entries in $\alpha$, then $\alpha_{T} \in \Lambda_T(k,N)$  and $\alpha_E \in \Lambda_E(M-k,N)$.
As $(\alpha_E)_i \le \alpha_i$ for every $i$ then $\alpha_E! \le \alpha!$. On the other hand since $\alpha_T! = 1$, then $\alpha_T! \alpha_E! \le \alpha!$, and thus
\begin{align}\label{eq: cardinality decomposition even-tetra}
|[\alpha]| = \frac{M!}{\alpha!} \le \frac{M!}{\alpha_T! \alpha_E!} = \frac{M!}{(M-k)! k!} \frac{k!}{\alpha_T!} \frac{(M-k)!}{\alpha_E!} =  \binom{M}{k} |[\alpha_T]||[\alpha_E]|
\le 2^M |[\alpha_T]||[\alpha_E]|.
\end{align}

\smallskip

\begin{lemma}\label{lem: tetra 1 < r < 2}
Fixed $1 < r \le 2$, $r \leq s$ and $m,n \in \NN$, for every $z \in \mathbb{C}^{n}$ we have
\[
\sum_{\alpha \in \Lambda_T(m,n)} |z^\alpha| |[\alpha]|^{\frac{1}{r}} \le n^{m/r'} \| z \|_{\ell_{r,s}}^m \frac{1}{m!^{\frac{1}{r'}}}.
\]
and
\[
\sum_{\alpha \in \Lambda_E(m,n)} |z^\alpha| |[\alpha]|^{\frac{1}{r}}
\le \| z \|_{\ell_r}^m \le \| z \|_{\ell_{r,s}}^m \log(n)^{(\frac{1}{r}-\frac{1}{s})m}.
\]
\end{lemma}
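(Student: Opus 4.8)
\textbf{Proof plan for Lemma~\ref{lem: tetra 1 < r < 2}.}

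The plan is to treat the two estimates separately, starting with the tetrahedral one. First I would recall that for $\alpha \in \Lambda_T(m,n)$ one has $|[\alpha]| = m!/\alpha! = m!$, since $\alpha! = 1$ for tetrahedral indices; so the sum becomes $m!^{1/r}\sum_{\alpha \in \Lambda_T(m,n)}|z^\alpha|$. Identifying $\Lambda_T(m,n)$ with the subsets $S \subset [n]$ of size $m$, the sum is $m!^{1/r}\sum_{|S|=m}\prod_{k\in S}|z_k|$, which is bounded by $m!^{1/r}\frac{1}{m!}\big(\sum_{k=1}^n |z_k|\big)^m = m!^{1/r-1}\big(\sum_k |z_k|\big)^m$. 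Now I would use H\"older's inequality for Lorentz norms to bound $\sum_{k=1}^n |z_k|$: since $\ell_{r,s}' = \ell_{r',s'}$ with the appropriate duality, and the fundamental function of $\ell_{r',s'}^n$ is $\varphi_{\ell_{r',s'}^n}(n)\sim n^{1/r'}$, one gets $\sum_{k=1}^n|z_k| \le \varphi_{\ell_{r',s'}^n}(n)\,\|z\|_{\ell_{r,s}^n} \le c\, n^{1/r'}\|z\|_{\ell_{r,s}}$. Putting this together gives $m!^{1/r-1}(n^{1/r'}\|z\|_{\ell_{r,s}})^m = m!^{-1/r'} n^{m/r'}\|z\|_{\ell_{r,s}}^m$, which is exactly the claimed bound (modulo absorbing the duality constant — I would check that the duality in the excerpt's normalization is isometric, or else keep an explicit $\sim$; the statement as written suggests isometry is available here). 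Since $1/r - 1 = -1/r'$, the exponent matches.

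For the even-index estimate, I would use the bijection $\alpha = 2\beta$ between $\Lambda_E(m,n)$ and $\Lambda(m/2,n)$ (so $m$ must be even; for $m$ odd the set $\Lambda_E(m,n)$ is empty and there is nothing to prove). Then $|z^\alpha| = |z^{2\beta}| = |z^\beta|^2 = |(z^2)^\beta|$ where $z^2 = (z_k^2)_k$, and $|[\alpha]| = m!/\alpha! = m!/(2\beta)!$. A direct computation comparing $m!/(2\beta)!$ with $(m/2)!/\beta!$ shows $|[\alpha]|^{1/r} \le C^m |[\beta]|^{1/r} \cdot(\text{something manageable})$; more cleanly, I expect the cleanest route is: $\sum_{\alpha\in\Lambda_E(m,n)}|z^\alpha||[\alpha]|^{1/r} \le \big(\sum_{\alpha\in\Lambda_E(m,n)}|z^\alpha||[\alpha]|\big)^{1/r}\,|\Lambda_E(m,n)|^{1/r'}$ by H\"older, but this introduces $|\Lambda_E(m,n)|^{1/r'}$ which is too large. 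Instead I would bound $|[\alpha]|^{1/r}\le |[\alpha]|^{1/2}$ (valid since $r\le 2$ and $|[\alpha]|\ge 1$), rewrite $\sum_{\alpha\in\Lambda_E(m,n)}|z^\alpha||[\alpha]|^{1/2}$, and use the multinomial-type identity $\sum_{\beta\in\Lambda(m/2,n)}\frac{(m/2)!}{\beta!}|(z^2)^\beta| = \big(\sum_k|z_k|^2\big)^{m/2} = \|z\|_{\ell_2}^m$ together with a comparison of $m!/(2\beta)!$ against $\big((m/2)!/\beta!\big)^2$ or $(m/2)!/\beta!$. The honest statement $|[\alpha]| = \frac{m!}{(2\beta)!} \le \frac{m!}{2^{m/2}} \cdot \frac{1}{\beta!} \cdot \frac{1}{\beta!}\cdot\beta!$ — I would work out the exact elementary inequality so that $\sum_{\alpha\in\Lambda_E}|z^\alpha||[\alpha]|^{1/r}\le \|z\|_{\ell_2}^m \le \|z\|_{\ell_r}^m$ (the last since $r\le 2$ gives $\ell_r \hookrightarrow \ell_2$ with norm-one inclusion on $\mathbb{C}^n$ in the direction $\|z\|_{\ell_2}\le\|z\|_{\ell_r}$). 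Finally $\|z\|_{\ell_r} \le \|z\|_{\ell_{r,s}}\,\|\mathrm{id}:\ell_{r,s}^n\to\ell_r^n\|$, and by lexicographic ordering of Lorentz spaces together with the standard logarithmic bound on the Banach–Mazur-type distance one has $\|\mathrm{id}:\ell_{r,s}^n\to\ell_r^n\| \prec (\log n)^{1/r - 1/s}$ when $s\ge r$; this yields the stated $\|z\|_{\ell_{r,s}}^m\log(n)^{(1/r-1/s)m}$.

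The main obstacle I anticipate is pinning down the precise elementary inequality relating $|[\alpha]| = m!/(2\beta)!$ to $|[\beta]| = (m/2)!/\beta!$ so that the even-part sum collapses exactly into a power of $\|z\|_{\ell_2}$ without picking up a factor worse than $C^m$ — and in fact the statement claims \emph{no} extra constant at all for the even part ($\le \|z\|_{\ell_r}^m$), so one must be careful that $\big(\frac{m!}{(2\beta)!}\big)^{1/r}\le \frac{(m/2)!}{\beta!}$ or some inequality of that exact shape holds pointwise in $\beta$. I would verify this via Stirling or via the identity $\frac{m!}{(2\beta)!} = \binom{m}{2\beta_1,\dots,2\beta_n}$ and a comparison of double factorials; if a harmless constant does creep in, I would note that it is absorbed into the $\prec_{C^m}$ conventions used throughout, but I expect the clean bound to hold because $r\ge 1$ makes the $1/r$-power only help. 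The tetrahedral estimate, by contrast, should be essentially routine once the duality $\ell_{r,s}'=\ell_{r',s'}$ and $\varphi_{\ell_{r',s'}^n}(n)\sim n^{1/r'}$ are invoked.
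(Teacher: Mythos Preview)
Your tetrahedral argument is correct and matches the paper: both observe $|[\alpha]|=m!$, factor out $m!^{1/r}$, bound $\sum_{\alpha\in\Lambda_T}|z^\alpha|\le \frac{1}{m!}(\sum_k|z_k|)^m$, and finish with $\sum_k|z_k|\le n^{1/r'}\|z\|_{\ell_{r,s}}$.

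The even-index argument has a genuine error. Your proposed bound $|[\alpha]|^{1/r}\le |[\alpha]|^{1/2}$ is backwards: for $r\le 2$ one has $1/r\ge 1/2$, and since $|[\alpha]|\ge 1$ this gives $|[\alpha]|^{1/r}\ge |[\alpha]|^{1/2}$. So the detour through $\|z\|_{\ell_2}^m$ cannot work as written. The combinatorial inequality you were circling is indeed the key: writing $\alpha=2\beta$ with $\beta\in\Lambda(m/2,n)$, one has
\[
|[\alpha]|=\frac{m!}{(2\beta)!}=\Big(\frac{(m/2)!}{\beta!}\Big)^2\cdot\frac{m!}{(m/2)!^2}\cdot\prod_i\frac{(\beta_i!)^2}{(2\beta_i)!}\le |[\beta]|^2,
\]
since $\binom{m}{m/2}\le 2^m$ while $\binom{2\beta_i}{\beta_i}\ge 2^{\beta_i}$ and $\sum_i\beta_i=m/2$. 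But from $|[\alpha]|\le|[\beta]|^2$ you get $|[\alpha]|^{1/r}\le|[\beta]|^{2/r}$, not $\le|[\beta]|$, and $2/r\ge 1$ again goes the wrong way for a pointwise comparison with $|[\beta]|$.

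The paper's fix is to avoid reducing the exponent altogether. Observe the algebraic identity $|(z^2)^\beta|\,|[\beta]|^{2/r}=\big(|(z^r)^\beta|\,|[\beta]|\big)^{2/r}$, so
\[
\sum_{\alpha\in\Lambda_E}|z^\alpha||[\alpha]|^{1/r}\le\sum_{\beta\in\Lambda(m/2,n)}\big(|(z^r)^\beta||[\beta]|\big)^{2/r}\le\Big(\sum_{\beta}|(z^r)^\beta||[\beta]|\Big)^{2/r},
\]
the last step being $\|\cdot\|_{\ell_{2/r}}\le\|\cdot\|_{\ell_1}$ for $2/r\ge 1$. Now the multinomial formula applied to $z^r$ (not $z^2$) gives $\sum_\beta|[\beta]||(z^r)^\beta|=(\sum_k|z_k|^r)^{m/2}$, whence the whole sum is $\le\|z\|_{\ell_r}^m$ with no constant. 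The final $\log(n)^{(1/r-1/s)m}$ factor comes exactly as you said, from $\|\mathrm{id}:\ell_{r,s}^n\to\ell_r^n\|$.
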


\begin{proof}
We begin with the first inequality, observing that it is obvious if $n=1$. We may then assume that $n \geq 2$. Given $\alpha \in \Lambda_T(m,n)$, note that $\alpha !=1$ and $|[\alpha]|$ is exactly $m!$.
Then,
\begin{align*}
\sum_{\alpha \in \Lambda_T(m,n)} |z^\alpha| |[\alpha]|^{\frac{1}{r}}
	 = \sum_{\alpha \in \Lambda_T(m,n)} | z^\alpha| |[\alpha]| \frac{1}{|[\alpha]|^{\frac{1}{r'}}}
     \le \Big( \sum_{k=1}^n |z_k| \Big)^{m} \frac{1}{m!^{\frac{1}{r'}}}
     \le n^{m/r'
    } \| z \|_{\ell_{r,s}}^m \frac{1}{m!^{\frac{1}{r'}}}.
\end{align*}
For the proof of the second inequality let us recall first that for each $\alpha \in \Lambda_E(m,n)$ there is a unique $\beta \in {\Lambda}(m/2,n)$ such that $\alpha = 2 \beta$ and, moreover,
\[
|[\alpha]| = \frac{m!}{\alpha_1 ! \cdots \alpha_n!} = \Big(\frac{(m/2)!}{\beta_1 ! \cdots \beta_n!}\Big)^2 \frac{m!}{(m/2)!(m/2)!} \prod_{i=1}^n \frac{\beta_i! \beta_i!}{(2\beta_i)!} \le |[\beta]|^2,
\]
where last inequality holds because $2^k \le \frac{(2k)!}{k!^2} \le 2^{2k}$, and then
\[
\frac{m!}{(m/2)!(m/2)!} \prod_{i=1}^n \frac{\beta_i! \beta_i!}{(2\beta_i)!} \le 2^m \prod_{i=1}^n \frac{1}{2^{\beta_i}} = 1.
\]
Consequently (note that, since $2/r \ge 1$, the $\ell_1$-norm bounds the $\ell_{2/r}$-norm),
\begin{multline*}
\sum_{\alpha \in \Lambda_E(m,n)} |z^\alpha| |[\alpha]|^{\frac{1}{r}}
	 \le \dis\sum_{\beta \in {\Lambda}(m/2,n)} |(z^2)^\beta| |[\beta]|^{2/r}
     =  \dis\sum_{\beta \in {\Lambda}(m/2,n)} \Big( |(z^r)^\beta| |[\beta]| \Big)^{2/r}\\
     \le \Big( \dis\sum_{\beta \in {\Lambda}(m/2,n)} |(z^r)^\beta| |[\beta]| \Big)^{2/r}
     =  \Big( \sum_{l=1}^n |z_l|^r \Big)^{m/r}
     \le \| z \|_{\ell_{r,s}}^m \log(n)^{(\frac{1}{r}-\frac{1}{s})m}.
\end{multline*}
This concludes the proof.
\end{proof}
\begin{remark}\label{rem: bound for n/klog(n)}
Fixing $a > e$, the maximum of $f(t) = \left( \frac{a}{t} \right)^t$ for $t \ge 1$ is attained at $t = a/e$ and $f(t) \le 1$. Also $f$ is increasing for $ 1< t < a/e$.
\end{remark}

\begin{lemma}\label{lem: general}
Given $1 < r \le 2$ and $r<s$, for every  $m, n \in \NN$ and every  $z \in \mathbb{C}^{n}$ we have
\[
\sum_{\alpha \in \Lambda(m,n)} |z^\alpha| |[\alpha]|^{\frac{1}{r}}
\prec_{C^m}  \| z \|_{\ell_{r,s}}^m  \begin{cases}
\left(\frac{n}{m}\right)^{m/r'} & \text{ for } m \le \frac{n}{e(\log n)^{r'(\frac1{r}-\frac1{s})}} \\
\log(n)^{m(\frac1{r}-\frac1{s})}  & \text{ else.}
\end{cases}
\]
\end{lemma}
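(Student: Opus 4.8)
The plan is to prove Lemma~\ref{lem: general} by splitting each $\alpha \in \Lambda(m,n)$ into its tetrahedral part $\alpha_T \in \Lambda_T(k,n)$ and its even part $\alpha_E \in \Lambda_E(m-k,n)$, where $k$ is the number of odd coordinates of $\alpha$, and then invoking the two estimates collected in Lemma~\ref{lem: tetra 1 < r < 2} together with the cardinality bound \eqref{eq: cardinality decomposition even-tetra}. First I would record that for $s \ge r$ one has $c_{\ell_{r,s}^n}(\alpha) \le c_{\ell_r^n}(\alpha) \sim_{C^m} |[\alpha]|^{1/r}$, so that the quantity to be controlled is exactly the sum \eqref{pontes}. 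Then, using \eqref{eq: cardinality decomposition even-tetra} and the fact that the map $\alpha \mapsto (\alpha_T,\alpha_E)$ is a bijection onto $\bigcup_{k=0}^m \Lambda_T(k,n)\times\Lambda_E(m-k,n)$, I would write
\[
\sum_{\alpha \in \Lambda(m,n)} |z^\alpha| |[\alpha]|^{\frac{1}{r}}
\le 2^{\frac{m}{r}} \sum_{k=0}^m \Big( \sum_{\beta \in \Lambda_T(k,n)} |z^\beta| |[\beta]|^{\frac{1}{r}} \Big)\Big( \sum_{\gamma \in \Lambda_E(m-k,n)} |z^\gamma| |[\gamma]|^{\frac{1}{r}} \Big).
\]
Applying Lemma~\ref{lem: tetra 1 < r < 2} to each factor, each summand is bounded (up to $C^m$) by $n^{k/r'} k!^{-1/r'} \|z\|_{\ell_{r,s}}^k$ times $\|z\|_{\ell_{r,s}}^{m-k} \log(n)^{(m-k)(1/r-1/s)}$; since $\log(n)^{(m-k)(\frac1r-\frac1s)} \le \log(n)^{m(\frac1r-\frac1s)}$ always, and there are only $m+1$ values of $k$, the whole sum is $\prec_{C^m} \|z\|_{\ell_{r,s}}^m \log(n)^{m(\frac1r-\frac1s)} \max_{0\le k \le m} n^{k/r'} k!^{-1/r'}$. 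This already yields the second (``else'') branch of the claimed bound, since $n^{k/r'} k!^{-1/r'} \le (en/k)^{k/r'}$ and for $k \le m$ this is at most a quantity comparable (up to $C^m$) to something harmless when $n \prec m (\log n)^{r'(1/r-1/s)}$; in any case the crude bound $\max_k n^{k/r'}k!^{-1/r'} \le \sum_k n^{k/r'}k!^{-1/r'} \prec_{C^m} $ (a term absorbed by the stated right-hand side) closes that range after a short estimate.

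For the first (main) branch, where $m \le \frac{n}{e(\log n)^{r'(1/r-1/s)}}$, the point is to show that the maximum over $k$ of $n^{k/r'}k!^{-1/r'} \log(n)^{(m-k)(1/r-1/s)}$ is attained at $k=m$ (up to constants), giving $n^{m/r'}/m!^{1/r'} \sim_{C^m} (n/m)^{m/r'}$. Writing $n^{k/r'}k!^{-1/r'} \log(n)^{(m-k)(1/r-1/s)} = \log(n)^{m(1/r-1/s)}\cdot \big( n^{1/r'}k!^{-1/(kr')}\log(n)^{-(1/r-1/s)}\big)^k \cdot (\text{lower order})$, and using $k!^{1/k} \sim k/e$, the ratio of consecutive terms is essentially $\big(\frac{n}{e k \log(n)^{r'(1/r-1/s)}}\big)^{1/r'}$, which is $\ge 1$ precisely when $k \le \frac{n}{e \log(n)^{r'(1/r-1/s)}}$; by hypothesis this holds for all $k \le m$, so the sequence is increasing and the maximum is at $k=m$. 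Plugging in and using $m!^{-1/r'} \le (e/m)^{m/r'}$ and $\log(n)^{0}=1$ at $k=m$ finishes this case, after observing that the leftover factor $\log(n)^{(m-m)(1/r-1/s)} = 1$ and the $2^{m/r}$ and $(m+1)$ factors are absorbed in $C^m$.

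I expect the main obstacle to be the bookkeeping in the monotonicity-in-$k$ argument: one has to track the interplay between the factorial term $k!^{-1/r'}$, the power $n^{k/r'}$, and the logarithmic penalty $\log(n)^{(m-k)(1/r-1/s)}$ carefully enough to see that the threshold for $k$ is exactly the one appearing in the hypothesis $m \le n/(e(\log n)^{r'(1/r-1/s)})$, and Remark~\ref{rem: bound for n/klog(n)} is designed precisely for this. The ``else'' range and the sub-case $r=s$ (where the logarithmic factors disappear and everything reduces to the known $\ell_r$ estimate from Proposition~\ref{start-poly1}) are routine once the decomposition lemmas are in place. Finally, the ``In particular'' statements of Theorem~\ref{proj lorentz s>r} follow by combining this bound with Theorem~\ref{lambda-dash} (for the projection constant) and with Corollary~\ref{main3A} together with Theorem~\ref{degree-homo} (to descend from the projection constant of degree $k\le m-1$ blocks to the unconditional basis constant), exactly as in the proof of Theorem~\ref{bound_similar_ell_r}.
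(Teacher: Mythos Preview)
Your proposal is correct and follows essentially the same route as the paper: decompose each $\alpha$ into its tetrahedral and even parts, invoke the two bounds of Lemma~\ref{lem: tetra 1 < r < 2} via \eqref{eq: cardinality decomposition even-tetra}, and then maximize over $k$ using the monotonicity of $k \mapsto (a/k)^k$ encoded in Remark~\ref{rem: bound for n/klog(n)} with $a$ essentially $n/(\log n)^{r'(1/r-1/s)}$. Your ratio-of-consecutive-terms argument is just a rephrasing of that remark; the only place where you are slightly looser than the paper is the ``else'' branch, where the paper uses directly that $\max_{k\ge 1}(a/k)^k = e^{a/e} \le e^m$ whenever $m > a/e$, which cleanly absorbs the tetrahedral factor into $C^m$ and leaves only $\log(n)^{m(1/r-1/s)}$.
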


\begin{proof}
Using \eqref{eq: cardinality decomposition even-tetra} and Lemma~\ref{lem: tetra 1 < r < 2}, we get
\begin{align*}
\sum_{\alpha \in \Lambda(m,n)} |z^\alpha| |[\alpha]|^{\frac{1}{r}}
& =  \sum_{k = 0}^m \sum_{\alpha_{T} \in \Lambda_T(k,n)} \sum_{\alpha_{E} \in \Lambda_E(m-k,n)}|z^{(\alpha_{T} + \alpha_{E})}| |[\alpha_{T} + \alpha_{E}]|^\frac{1}{r}\\
& \leq 2^{\frac{m}{r}} \sum_{k = 0}^m
\left( \sum_{\alpha_{T} \in \Lambda_T(k,n)} |z^\alpha_{T} | | [\alpha_{T}]|^{\frac{1}{r}} \right)
\left( \sum_{\alpha_{E} \in \Lambda_E(m-k,n)}|z^\alpha_{E}| |[\alpha_{E}]|^\frac{1}{r} \right) \\
& \leq 2^{\frac{m}{r}} \sum_{k = 0}^m
\left(n^{k/r'} \| z \|_{\ell_{r,s}}^k \frac{1}{k!^{\frac{1}{r'}}}  \right)
\left(  \log(n)^{(m-k)(\frac{1}{r} - \frac{1}{s})} \| z \|_{\ell_{r,s}}^{m-k} \right) \\
& \prec_{C^m} 2^{\frac{m}{r}} \| z \|_{\ell_{r,s}}^m m \dis\max_{k = 1, \ldots, m}
n^{k/r'} \frac{1}{k^{\frac{k}{r'}}}
 \log(n)^{(m-k)(\frac{1}{r} - \frac{1}{s})}.
\end{align*}
Using Remark \ref{rem: bound for n/klog(n)} with $a = \frac{n}{e(\log n)^{r'(\frac1{r}-\frac1{s})}}$, we have,
for $m \le a/e $, that the maximum above is attained at $k=m$, then
\begin{align*}
\sum_{\alpha \in \Lambda(m,n)} |z^\alpha| |[\alpha]|^{\frac{1}{r}}
& \prec_{C^m}  \| z \|_{\ell_{r,s}}^m
n^{m/r'} \frac{1}{m^{\frac{m}{r'}}}.
\end{align*}
For $m >a/e $ we can bound
$
\dis\max_{ k = 1, \ldots, m} \left( \frac{a}{k}  \right)^k \le  \left( \frac{a}{a/e} \right)^{a/e} \le e^{a/e} \le e^m$, then
\begin{align*}
\sum_{\alpha \in \Lambda(m,n)} |z^\alpha| |[\alpha]|^{\frac{1}{r}}
& \prec_{C^m} 2^{\frac{m}{r}} \| z \|_{\ell_{r,s}}^m m \dis\max_{k = 1, \ldots, m}
n^{k/r'} \frac{1}{k^{\frac{k}{r'}}}
 \log(n)^{(m-k)(\frac{1}{r} - \frac{1}{s})}\\
& \prec_{C^m} \| z \|_{\ell_{r,s}^m} \log(n)^{m(\frac{1}{r} - \frac{1}{s})} \left( \dis\max_{ k = 1, \ldots, m} \left( \frac{n}{k \log(n)^{r'(\frac{1}{r} - \frac{1}{s})}}  \right)^k \right)^{1/r'} \\
& \prec_{C^m} \| z \|_{\ell_{r,s}^m} \log(n)^{m(\frac{1}{r} - \frac{1}{s})}.
\end{align*}
 This completes the proof.
\end{proof}

We are now ready to prove Theorem \ref{proj lorentz s>r}.
\begin{proof}[Proof of Theorem \ref{proj lorentz s>r}]
Note that, since $s\geq r$, we have
$c_{\ell_{r,s}^n}(\alpha) \leq c_{\ell_{r}^n}(\alpha) \sim_{C^m} |[\alpha]|^{1/r}.$
Then,
\begin{equation}\label{desig 1er lemma 1 < r < 2}
\begin{split}
\sum_{\alpha \in J}  c_{\ell_{r,s}^n}(\alpha)|z_\alpha |
     \prec_{C^m}	\sum_{\alpha \in \Lambda(m,n)}  c_{\ell_{r,s}^n}(\alpha)|z_\alpha |
     & \prec_{C^m} \sum_{\alpha \in \Lambda(m,n)}  |z_\alpha | |[\alpha]|^{1/r}\\
	& =   \sum_{j_{m}=1}^n |z_{j_{m}}| \sum_{\ii \in \Jj(m-1,j_{m})} |z_\ii| |[(\ii,j_{m})]|^{\frac{1}{r}}  \\
    &  \prec_{C^m}  \sum_{j_{m}=1}^n |z_{j_{m}}|   \sum_{\ii \in \Jj(m-1,j_{m})} |z_\ii| |[\ii]|^\frac{1}{r},
    \end{split}
\end{equation}
where the last inequality is due to the fact that $|[(\ii,j_{m})]| \le m |[\ii]|$ for every $ \ii \in \Jj(m-1, j_{m})$.
By Lemma~\ref{lem: general}, in the range $\frac{n}{e(\log n)^{r'(\frac1{r}-\frac1{s})}}\ge m$,  we have
$$\sum_{j_{m}=1}^n |z_{j_{m}}|   \sum_{\ii \in \Jj(m-1,j_{m})} |z_\ii| |[\ii]|^\frac{1}{r} \prec_{C^m} \Vert z \Vert_{\ell_{r,s}}^{m-1} \sum_{j_{m}=1}^n |z_{j_{m}}|  \left(\frac{n}{m}\right)^{(m-1)/r'} \prec_{C^m} \Vert z \Vert_{\ell_{r,s}}^{m-1} \left(\frac{n}{m}\right)^{m/r'}.$$
The last assertions follow from  Theorem~\ref{lambda-dash} in the case of  the projection constant and  Corollary~\ref{main3A} for  the unconditional basis constant. \end{proof}

In a similar way we have the following.
\begin{corollary}
\label{proj lorentz s>r, big m}
	Let $1 < r \leq 2$ and $r \leq s$ and $J\subset \mathbb N_0^n$ of degree $m$. Then, if  $\frac{n}{e(\log n)^{r'(\frac1{r}-\frac1{s})}}< m$, we have
	$$
	\widehat{\boldsymbol{\lambda}}(\mathcal P_J(\ell_{r,s}^n)) \prec_{C^m} \log (n) ^{m(\frac{1}{r}-\frac1{s})}.
	$$
In particular,
$$
\boldsymbol{\chimon}(\mathcal P_{J}(\ell_{r,s}^n)) \prec_{C^m}  \log (n) ^{(m-1)(\frac{1}{r}-\frac1{s})} \quad \textrm{and}\quad
 {\boldsymbol{\lambda}}(\mathcal P_J(\ell_{r,s}^n)) \prec_{C^m} \log (n) ^{m(\frac{1}{r}-\frac1{s})}.$$	\end{corollary}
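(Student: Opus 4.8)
The plan is to run essentially the same argument as in the proof of Theorem~\ref{proj lorentz s>r}, but now landing in the second (``else'') branch of Lemma~\ref{lem: general}. As there, the starting point is that for $s\ge r$ one has the continuous inclusion $\ell_r\hookrightarrow\ell_{r,s}$, so $\|z\|_{\ell_{r,s}}\le\|z\|_{\ell_r}$ and hence $c_{\ell_{r,s}^n}(\alpha)\le c_{\ell_r^n}(\alpha)$; combining this with \eqref{dineen} and $k^k\le e^k k!$ gives $c_{\ell_{r,s}^n}(\alpha)\le e^{m/r}\,|[\alpha]|^{1/r}$ for every multi index $\alpha$ of order $\le m$. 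Splitting $J=\bigcup_{k=0}^m\big(J\cap\Lambda(k,n)\big)$ and passing to the full homogeneous layers, this yields at once
\[
\widehat{\boldsymbol{\lambda}}\big(\mathcal P_J(\ell_{r,s}^n)\big)\;\le\;\sum_{k=0}^{m}\widehat{\boldsymbol{\lambda}}\big(\mathcal P_k(\ell_{r,s}^n)\big)\;\prec_{C^m}\;\sum_{k=0}^{m}\;\sup_{z\in B_{\ell_{r,s}^n}}\;\sum_{\alpha\in\Lambda(k,n)}|[\alpha]|^{1/r}\,|z^\alpha|\,.
\]

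\textbf{Estimating the layers.} Next I would feed each inner sum into Lemma~\ref{lem: general}. The hypothesis $\tfrac{n}{e(\log n)^{r'(1/r-1/s)}}<m$ puts degree $k=m$ — and in general every degree $k$ with $k>\tfrac{n}{e(\log n)^{r'(1/r-1/s)}}=:a$ — into the second case, with bound $\|z\|_{\ell_{r,s}}^{k}\log(n)^{k(1/r-1/s)}\le\log(n)^{m(1/r-1/s)}$ for $z\in B_{\ell_{r,s}^n}$. For the remaining layers $k\le a$ one instead gets the first-case bound $(n/k)^{k/r'}$; rewriting $n=e\,a\,(\log n)^{r'(1/r-1/s)}$ gives $(n/k)^{k}=(ea/k)^{k}(\log n)^{r'(1/r-1/s)k}$, and since $\max_{t\ge 1}(ea/t)^{t}=e^{a}\le e^{m}$ — Remark~\ref{rem: bound for n/klog(n)}, applicable precisely because $a<m$ in our regime — this layer too is $\prec_{C^m}\log(n)^{(1/r-1/s)k}\le\log(n)^{m(1/r-1/s)}$. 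Summing the $m+1$ layers and absorbing $m+1$ into $C^m$ gives $\widehat{\boldsymbol{\lambda}}\big(\mathcal P_J(\ell_{r,s}^n)\big)\prec_{C^m}\log(n)^{m(1/r-1/s)}$, and then $\boldsymbol{\lambda}\big(\mathcal P_J(\ell_{r,s}^n)\big)\le\widehat{\boldsymbol{\lambda}}\big(\mathcal P_J(\ell_{r,s}^n)\big)\prec_{C^m}\log(n)^{m(1/r-1/s)}$ by Theorem~\ref{lambda-dash}.

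\textbf{The unconditional basis constant.} Here I would pass through the homogeneous building blocks. Since the monomials $(z^\alpha)_{\alpha\in J}$ form a sub-collection of the monomial basis of $\mathcal P_{\le m}(\ell_{r,s}^n)$ and the two sup-norms coincide, $\boldsymbol{\chimon}\big(\mathcal P_J(\ell_{r,s}^n)\big)\le\boldsymbol{\chimon}\big(\mathcal P_{\le m}(\ell_{r,s}^n)\big)$, and Corollary~\ref{main3A} bounds the right-hand side by $e(m+1)2^{m}\max_{1\le k\le m-1}\boldsymbol{\lambda}\big(\mathcal P_k(\ell_{r,s}^n)\big)$. For each $k\le m-1$ one has $\boldsymbol{\lambda}\big(\mathcal P_k(\ell_{r,s}^n)\big)\le\widehat{\boldsymbol{\lambda}}\big(\mathcal P_k(\ell_{r,s}^n)\big)$, and this single-layer quantity is estimated by Lemma~\ref{lem: general} in degree $k$, which may itself be in either branch: in the second branch it is $\prec_{C^m}\log(n)^{k(1/r-1/s)}\le\log(n)^{(m-1)(1/r-1/s)}$, while in the first branch the bound $(n/k)^{k/r'}$ is again $\prec_{C^m}\log(n)^{(1/r-1/s)k}\le\log(n)^{(m-1)(1/r-1/s)}$ by the same rewriting (for $k\le a<m$ the factor $(ea/k)^{k}\le e^{a}\le e^{m}$). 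Absorbing $e(m+1)2^{m}$ into $C^m$ yields $\boldsymbol{\chimon}\big(\mathcal P_J(\ell_{r,s}^n)\big)\prec_{C^m}\log(n)^{(m-1)(1/r-1/s)}$.

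\textbf{Where the work sits.} Everything above is routine except the bookkeeping in the ``first case'' layers, i.e. checking that $(n/k)^{k/r'}$ never exceeds $\log(n)^{m(1/r-1/s)}$ up to a factor $C^m$. This is exactly the place where the hypothesis $\tfrac{n}{e(\log n)^{r'(1/r-1/s)}}<m$ is consumed, through the elementary fact that the extremum $e^{a/e}$ of $t\mapsto(ea/t)^{t}$ is $\le e^{a}\le e^{m}$ once $a<m$. Two degenerate situations are absorbed by $\prec_{C^m}$: if $r=s$ then $1/r-1/s=0$ and the statement reduces, via Proposition~\ref{start-poly1} (here $\ell_{r,s}^n=\ell_r^n$), to $\widehat{\boldsymbol{\lambda}}(\mathcal P_J(\ell_r^n))\prec_{C^m}1$, valid since $m>n/e$ — so one may assume $r<s$ and apply Lemma~\ref{lem: general} verbatim; and for the few small $n$ for which $\log n<1$ one simply uses $\widehat{\boldsymbol{\lambda}}(\mathcal P_J(\ell_{r,s}^n))\le|J|$, which under the hypothesis forces $n$ bounded in terms of $m$, so that this crude bound is itself $\prec_{C^m}\log(n)^{m(1/r-1/s)}$.
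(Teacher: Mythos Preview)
Your proof is correct and follows exactly the route the paper intends. The paper gives no explicit argument here (it just says ``In a similar way'' referring to Theorem~\ref{proj lorentz s>r}), and your write-up is precisely a careful elaboration of that: bound $c_{\ell_{r,s}^n}(\alpha)$ by $|[\alpha]|^{1/r}$, feed the homogeneous layers into Lemma~\ref{lem: general}, and then pass to $\boldsymbol{\lambda}$ and $\boldsymbol{\chimon}$ via Theorem~\ref{lambda-dash} and Corollary~\ref{main3A}. Your extra bookkeeping for the layers $k\le a$ falling in the first branch of Lemma~\ref{lem: general} is genuinely needed (since $J$ is only of degree $m$, not $m$-homogeneous) and is handled correctly via the rewriting $(n/k)^{k}=(ea/k)^{k}(\log n)^{r'(1/r-1/s)k}$ together with $\max_{t}(ea/t)^{t}=e^{a}<e^{m}$.
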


\medskip

\section{An attempt for an approach by interpolation}
\noindent Recall that in  the Sections \ref{proj lorentz s<r}
and \ref{proj lorentz s>r} we proved  that for $1 < r <  2$ and $1 \leq s \leq \infty$
\[
\boldsymbol{\lambda}\big( \mathcal{P}_m(\ell_{r,s}^n)\big)
\prec_{C^m} \varphi(n,m) \,|\Lambda(m,n)|^{\frac{1}{r'}}\,,
\]
where the concrete function $\varphi(n,m)$ seems an artifact of our proofs. At least for our applications on Bohr radii in Chapter \ref{Part: Bohr radii}, $\varphi(n,m)$ can be controlled whenever $m$ is small with respect to $n$ .
In fact the results from the previous Section suggest that  for all $1 < r < 2$ and $1 \leq s \leq \infty$
we have
\begin{equation*}
\boldsymbol{\lambda} \big( \mathcal{P}_m(\ell_{r,s}^n)\big)
\prec_{C^m}  \,|\Lambda(m,n)|^{\frac{1}{r'}}\,,
\end{equation*}
or in  view of   Theorem~\ref{lambda-dash} the even stronger estimate
\begin{equation} \label{want}
\widehat{\boldsymbol{\lambda}}\big( \mathcal{P}_m(\ell_{r,s}^n)\big)
\prec_{C^m}  \,|\Lambda(m,n)|^{\frac{1}{r'}}\,.
\end{equation}
Unfortunately, the answer whether this holds or not remains open. 

What we can show, is that a positive solution for two special cases, namely the proper estimates for for  $\widehat{\boldsymbol{\lambda}}\big( \mathcal{P}_m(\ell_{2,s}^n)\big)$, $s<r$
and $\widehat{\boldsymbol{\lambda}}\big( \mathcal{P}_m(\ell_{r,\infty}^n)\big)$, lead to a positive answer for the  full scale  of estimates indicated in~\eqref{want}. Observe that the analogue to the first estimate involving the projection constant instead of the polynomial projection constant $\widehat{\boldsymbol{\lambda}}$ follows directly from an application of the Kadets-Snobar
theorem from~\eqref{kadets1}.

\begin{theorem}
\label{llorentz} 
For each $m, n\in mathbb{N}$ the following statements hold true:
\begin{itemize}
\item[{\mbox(i)}] For every $1\leq s <r\leq 2$ there is a constant $\gamma>0$ such that 
\[
\widehat{\boldsymbol{\lambda}}\big(\mathcal{P}_m(\ell_{r, s}^n)\big) \leq \gamma^m \big(e^{\frac{m}{s}}
|\Lambda(m,n)|^{\frac{1}{s'}}\big)^{1-\theta}\,\widehat{\boldsymbol{\lambda}}\big(\mathcal{P}_m(\ell_{2,s}^n)\big)^{\theta}\,,
\]
where $\theta$ is given by $\frac{1}{r} = \frac{1 -\theta}{s} + \frac{\theta}{2}$.
\item[{\mbox(ii)}] For every $1 <r\leq 2$  and $r < s < \infty$ there is a~constant
$\gamma >0$ such that
\[
\widehat{\boldsymbol{\lambda}}\big(\mathcal{P}_m(\ell_{r, s}^n)\big) \leq \gamma^m \big(e^{{m}{r}}
|\Lambda(m,n)|^{\frac{1}{r'}}\big)^{1-\theta}\,
\widehat{\boldsymbol{\lambda}}\big(\mathcal{P}_m(\ell_{r, \infty}^n)\big)^{\theta}\,,
\]
where $\theta = 1 - \frac{r}{s}$.
\item[{\mbox(iii)}] For every $ 2 < r < \infty$ there is a constant $\gamma >0$ such that  
\[
\widehat{\boldsymbol{\lambda}}\big(\mathcal{P}_m(\ell_{r, \infty}^n)\big) \leq
\widehat{\boldsymbol{\lambda}}\big(\mathcal{P}_m(\ell_{2, \infty}^n)\big)^{\frac{2}{r}}\,|\Lambda(m,n)|^{1 - \frac{2}{r}}\,.
\]
\end{itemize}
\end{theorem}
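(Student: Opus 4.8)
All three statements share the same mechanism: they are concrete instances of the interpolation estimate for polynomial projection constants on Calder\'on product spaces, namely Theorem~\ref{lambdatheta}, combined with the standard multiplication/interpolation identity for Lorentz spaces and with the trivial upper bound for $\widehat{\boldsymbol{\lambda}}$ on an $\ell_\infty^n$-factor. Recall from Section~\ref{Marcinkiewicz spaces} that, up to equivalence constants independent of the dimension $n$, one has the Calder\'on product identities
\[
\ell_{r,s}^n \;=\; \big(\ell_{s,s}^n\big)^{1-\theta}\,\big(\ell_{2,s}^n\big)^{\theta}\,, \qquad \tfrac{1}{r}=\tfrac{1-\theta}{s}+\tfrac{\theta}{2}\,,
\]
for case (i); $\ell_{r,s}^n = \big(\ell_{r,r}^n\big)^{1-\theta}\,\big(\ell_{r,\infty}^n\big)^{\theta}$ with $\theta = 1-\tfrac{r}{s}$ for case (ii); and $\ell_{r,\infty}^n = \big(\ell_{2,\infty}^n\big)^{2/r}\,\big(\ell_{\infty}^n\big)^{1-2/r}$ for case (iii) (here the second factor in (iii) is simply $\ell_\infty^n$, since $\ell_{\infty,\infty}^n = \ell_\infty^n$). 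These identities follow from the multiplication formula for Lorentz spaces quoted just before Corollary~\ref{special}, together with the elementary observation that $\ell_{s,s}^n \equiv \ell_s^n$ and $\ell_{r,r}^n \equiv \ell_r^n$.

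\textbf{Step 1: apply Theorem~\ref{lambdatheta}.} For $J = \Lambda(m,n)$, Theorem~\ref{lambdatheta} with the two factors above gives directly
\[
\widehat{\boldsymbol{\lambda}}\big(\mathcal{P}_m(\ell_{r,s}^n)\big) \;\le\; \widehat{\boldsymbol{\lambda}}\big(\mathcal{P}_m(\ell_s^n)\big)^{1-\theta}\;\widehat{\boldsymbol{\lambda}}\big(\mathcal{P}_m(\ell_{2,s}^n)\big)^{\theta}
\]
in case (i), and similarly for the other two. The only subtlety is that the Lorentz identities hold up to an $n$-independent constant $c=c(r,s)$ rather than isometrically; since the characteristics $c_{X_n}(\alpha)$ scale multiplicatively under scaling of the norm by $c$ and $|\alpha|=m$ throughout, renorming one factor by $c$ changes $\widehat{\boldsymbol{\lambda}}$ by at most $c^m$, which is absorbed into the $\gamma^m$ (resp. $C^m$) in the statement. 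Concretely, one first notes $\widehat{\boldsymbol{\lambda}}(\mathcal{P}_m(X_n)) \le \gamma_0^m\,\widehat{\boldsymbol{\lambda}}(\mathcal{P}_m(Y_n))$ whenever $c^{-1}\|\cdot\|_{Y_n}\le\|\cdot\|_{X_n}\le c\,\|\cdot\|_{Y_n}$, with $\gamma_0 = c$.

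\textbf{Step 2: bound the $\ell_r^n$- and $\ell_\infty^n$-factors.} For the $\ell_s^n$-factor in (i) and the $\ell_r^n$-factor in (ii) we invoke Proposition~\ref{start-poly1} (more precisely the chain of inequalities in its proof, specialized to $J=\Lambda(m,n)$): there it is shown that $\widehat{\boldsymbol{\lambda}}\big(\mathcal{P}_{\Lambda(m,n)}(\ell_p^n)\big) \le e^{m/p}\,|\Lambda(m,n)|^{1/p'}$ for every $1\le p\le\infty$. Taking $p=s$ in (i) gives the factor $e^{m/s}|\Lambda(m,n)|^{1/s'}$; taking $p=r$ in (ii) gives $e^{m/r}|\Lambda(m,n)|^{1/r'}$. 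For case (iii), the second factor is $\ell_\infty^n$, for which $c_{\ell_\infty^n}(\alpha)=1$ for all $\alpha$ (see the remark after \eqref{dineen}), so $\widehat{\boldsymbol{\lambda}}(\mathcal{P}_m(\ell_\infty^n)) = \sup_{z\in B_{\ell_\infty^n}}\sum_{\alpha\in\Lambda(m,n)}|z^\alpha| = |\Lambda(m,n)|$, giving the factor $|\Lambda(m,n)|^{1-2/r}$ with no exponential loss. Substituting $\theta=1-r/s$ in (ii) turns $(1/r')(1-\theta)$ into $(1/r')(r/s)$, which after the usual bookkeeping matches the exponent claimed; similarly the arithmetic $\tfrac1r=\tfrac{1-\theta}s+\tfrac\theta2$ in (i) pins down $\theta$, and no simplification of the exponents beyond what is written is needed.

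\textbf{Main obstacle.} There is no deep obstruction here — the content is entirely in already-established machinery. The one point that needs care is the \emph{bookkeeping of constants}: verifying that the non-isometric nature of the Lorentz multiplication identities contributes only a factor $c^m$ (and not, say, $c^{m^2}$ or a dimension-dependent factor), which relies on the homogeneity of the characteristics in $|\alpha|$; and, in (i) and (ii), checking that the stated value of $\theta$ is exactly the one produced by the Calder\'on identity (a short computation with the relation $\tfrac1r = \tfrac{1-\theta}{p_0}+\tfrac{\theta}{p_1}$ for the appropriate endpoints $p_0,p_1$). In (iii) one must additionally recall that Theorem~\ref{lambdatheta} applies to quasi-Banach lattices as well, so that $\ell_{2,\infty}^n$ (which is only a quasi-Banach space when it is not renormed, but here we use its equivalent Banach norm, or simply the quasi-norm — Theorem~\ref{lambdatheta} is stated for quasi-Banach lattices) is an admissible factor; this is precisely why (iii) is stated with a clean constant $1$ rather than $\gamma^m$. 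Once these routine verifications are in place, the three displayed inequalities drop out immediately.
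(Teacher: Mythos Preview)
Your proof is correct and follows essentially the same route as the paper: apply Theorem~\ref{lambdatheta} to the appropriate Calder\'on-product decomposition of $\ell_{r,s}^n$, then bound the $\ell_p^n$-factor (or $\ell_\infty^n$-factor) by $e^{m/p}|\Lambda(m,n)|^{1/p'}$ (resp.\ $|\Lambda(m,n)|$). One small point: the identities you need are the Calder\'on-product formulas~\eqref{intformulaB}, not the pointwise multiplication formula quoted before Corollary~\ref{special}; these are different objects (though related via $X^{1-\theta}Y^{\theta}\equiv X^{(1/(1-\theta))}\circ Y^{(1/\theta)}$), and the paper derives~\eqref{intformulaB} from the reiteration theorem rather than from pointwise products. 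For (iii) the paper phrases the step as ``$\ell_{r,\infty}^n=(\ell_{2,\infty}^n)^{(r/2)}$ up to constants, then apply Corollary~\ref{pconv}'', which unwinds to exactly your argument with the $\ell_\infty^n$-factor; note that this equivalence of norms also contributes a constant depending on $r$, so your remark about a ``clean constant $1$'' is not quite right---a factor $\gamma^m$ is implicit there as well (the paper's statement is slightly sloppy on this).
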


We once again remark that (i) and (ii) then shows that an  affirmative answer to  the preceding conjecture
automatically leads to the desired estimates in \eqref{want} for all $1 < r < 2$ and $1 \leq s \leq \infty$ .

\smallskip

Our proof of Theorem~\ref{llorentz} is by interpolation. From \cite[Theorem 5.2.1, p.109]{BerLof76} we obtain that
for all $\theta \in (0, 1)$ and $s\in [1, \infty]$, we have
\begin{equation} \label{intfromulaA}
\big(\ell_1^n, \ell_\infty^n\big)_{\theta, s} = \ell_{r, s}^n, \quad\, n\in \mathbb{N}\,,
\end{equation}
where $\frac{1}{r} = 1- \theta$. Note that the (quasi)-norms on both sides of the preceding formula are equivalent with constants that do not depend on $n$.

Now we combine this with the so-called reiteration
formula (see \cite[Theorem 4.7.2, p.~103]{BerLof76}): for any compatible couple of Banach
spaces $(A_0, A_1)$, the formula
\begin{align} \label{Rformula}
\big[(A_0, A_1)_{\theta_0, p_0}, (A_0, A_1)_{\theta_1, p_1}\big]_{\alpha} = (A_0, A_1)_{\beta, p}
\end{align}
holds for all indices $\alpha, \theta_0, \theta_1 \in (0, 1)$ and all $p_0, p_1 \in [1, \infty]$,
except for the cases $p_0=p_1 = \infty$, where $\beta$ and $p$ are given by $\beta= (1-\alpha)\theta_0
+ \alpha \theta_1$ and $\frac{1}{p}= \frac{1-\alpha}{p_0} + \frac{\alpha}{p_1}$. Again it is important to note
that here the constants of equivalence of norms  do not depend on the  Banach
couple $(A_0, A_1)$.

Thus combining \eqref{intfromulaA} and  \eqref{Rformula} for  $(A_0, A_1) = (\ell_1^n, \ell_\infty^n)$, we get,
up to equivalences of (quasi)-norms independent of $n$, the formula
\begin{equation} \label{intformulaB}
(\ell_{r_0, s_0}^n)^{1-\theta}(\ell_{r_1, s_1}^n)^{\theta} = \ell_{r, s}^n, \quad\, n\in \mathbb{N}
\end{equation}
for all indices $\theta \in (0, 1)$, all $r_0, r_1 \in (1, \infty)$ and all $s_0, s_1 \in [1, \infty]$
except $s_0 = s_1=\infty$, where $r$ and $s$ are given by $\frac{1}{r}= \frac{1-\theta}{r_0} +
\frac{\theta}{r_1}$ and $\frac{1}{s}= \frac{1-\theta}{s_0} + \frac{\theta}{s_1}$.

\begin{proof}[Proof of Theorem~\ref{llorentz}]
(i) Given $1\leq s< r\leq 2$, we choose $\theta \in (0, 1)$ such that $\frac{1}{r} =
\frac{1-\theta}{s} + \frac{\theta}{2}$. By \eqref{intformulaB} we have the interpolation formula
\[
(\ell_s^n)^{1-\theta}(\ell_{2, s}^n)^{\theta} = \ell_{r, s}^n\,,
\]
which by Theorem \ref{lambdatheta} implies that there is a constant $C_1>0$ so that
\[
\widehat{\boldsymbol{\lambda}}\big(\mathcal{P}_m(\ell_{r,s}^n)\big) \leq C^m\,\widehat{\boldsymbol{\lambda}}\big(\mathcal{P}_m(\ell_s^n)\big)^{1-\theta}
\,\widehat{\boldsymbol{\lambda}}\big(\mathcal{P}_m(\ell_{2,s}^n)\big)^{\theta}\,.
\]
Since by Corollary \ref{pconv} and \eqref{eq: proof of lambda hat lr} for each $u\in [1, \infty)$,
\[
\widehat{\boldsymbol{\lambda}}\big(\mathcal{P}_m(\ell_u^n)\big) \leq e^{\frac{m}{u}} |\Lambda(m, n)|^{\frac{1}{u'}}
\]
the required estimate follows.

(ii) For  $r \in (1, 2]$ and  $s\in (r, \infty)$ define $\theta := 1 - \frac{r}{s} \in (0, 1)$.
By the interpolation formula \eqref{intformulaB}, we get
\[
(\ell_r^n)^{1-\theta}(\ell_{r, \infty}^n)^{\theta} = \ell_{r, s}^n\,.
\]
Combining all together, we obtain the required estimate similarly as in the case (i).

(iii) It is easy to see that for any $r\in (2, \infty)$, we have that, up to equivalence of norms depending only on $r$,
\[
\ell_{r, \infty}^n = (\ell_{2, \infty}^n)^{(\frac{2}{r})}\,.
\]
Then the desired estimate follows from Corollary \ref{pconv} applied to $X=\ell_{2, \infty}$.
\end{proof}

\smallskip

\section{Appendix: Mixed spaces}
In this section we will apply the techniques developed throughout the article to estimate the projection constants of  mixed $\ell_p$-spaces. That is, for $1 \leq r,s \leq \infty$ and $n,k \in \mathbb{N}$, we  study the asymptotics of
\[
\boldsymbol{\lambda}\big(\mathcal{P}_J(\ell_r^n (\ell_s^k)\big)\,,
\]
where $J \subset \mathbb{N}_0^n$ is an index set of degree at most $m$. Recall that $\ell_r^n (\ell_s^k))$ stands for 
the linear space of all  $n \times k$-matrices $z = (z_{ij})_{1\leq i \leq n,1\leq j \leq k}$
with entries from $\mathbb{C}$, which together with the norm 
\[
\|z\|_{\ell_r^n (\ell_s^k))} = \Big(\sum_{i=1}^{n} \Big(\sum_{j=1}^{k} |z_{ij}|^s\Big)^{\frac{r}{s}}\Big)^{\frac{1}{r}}
\]
forms a non-symmetric  Banach sequence space. We once again remark that in this section our claim on  completeness is much lower.

\subsection{Projection constants on mixed spaces}
 K\"onig, Sch\"utt and Tomczak-Jagermann \cite{konig1999projection}, studied the projection constant for the non-symmetric spaces $\ell_1^n(\ell_2^k)$ and $\ell_2^n(\ell_1^k)$. It is shown there that in the real case
\begin{align}\label{l1(l2) konig}
    \lim_{k,n\to\infty}\frac{\boldsymbol{\lambda}(\ell_1^n(\ell_2^k))}{\sqrt{nk}}=\sqrt{\frac{2}{\pi}}, \quad\textrm{while,}\quad \lim_{k,n\to\infty}\frac{\boldsymbol{\lambda}(\ell_2^n(\ell_1^k))}{\sqrt{nk}}=\frac{2}{\pi}.
\end{align}
For the complex case, Lewicki and  Masty{\l}o \cite[Corollary 5.3]{lewickimastylo} showed that for $1\le p,q\le 2 $  with $pq < 4$
\begin{align}\label{l1(l2) mietek}
\lim_{k,n\to\infty}\frac{\boldsymbol{\lambda}(\ell_p^n(\ell_q^k))}{\sqrt{nk}}=\frac{\sqrt{\pi}}{2}.
\end{align}
For the general case, and up to  absolute constants, the next proposition gives us the correct asymptotic growth of the projection constant on mixed spaces.

\begin{proposition}\label{prop:proj const mixed m=1}
Let $1 \leq p,q \leq \infty$ and $k,n \in \mathbb{N}$. Then
\[
\boldsymbol{\lambda}(\ell_p^n (\ell_q^k)) \,\,\sim_{C(p,q)} \,\, n^{\min(\frac{1}{2},\frac{1}{p})}\,\, k^{\min(\frac{1}{2},\frac{1}{q})},
\]
whenever
\begin{itemize}
\item[(i)]
$1 \leq p,q \leq 2$ and
$2 \leq p,q \leq \infty$
\item[(ii)]
$1 \leq q \leq 2 \leq p \leq \infty$
\end{itemize}
\end{proposition}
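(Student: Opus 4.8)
The plan is to combine the tensor/duality machinery from Chapter~\ref{tensor-pro-me} with the interpolation tools from Section~\ref{characteristic} and the $2$-convexity/$2$-concavity facts collected for mixed spaces. First I would reduce everything to the identity formula \eqref{gammainfty} and the trace-duality estimates \eqref{formulaA}--\eqref{formulaC}, noting that $\ell_p^n(\ell_q^k)$ is a Banach lattice with enough symmetries (the group of permutations of the $n$ ``blocks'' together with permutations inside each block, combined with diagonal modulus-one multipliers, acts transitively enough to force any commuting operator to be a multiple of the identity). Hence $\boldsymbol{\lambda}(\ell_p^n(\ell_q^k)) \, \pi_1(\ell_p^n(\ell_q^k)) = nk$, so the problem is equivalent to computing the asymptotic order of $\pi_1(\id_{\ell_p^n(\ell_q^k)})$, or equivalently (by K\"othe duality $\ell_p^n(\ell_q^k)' \equiv \ell_{p'}^n(\ell_{q'}^k)$ and \eqref{gammainfty}) of $\gamma_\infty(\id_{\ell_{p'}^n(\ell_{q'}^k)})$.

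\textbf{Upper bound.} For the upper bound in case (i) and (ii), the cleanest route is via \eqref{formulaC}, i.e. the Kadets--Snobar-type estimate $\boldsymbol{\lambda}(X) \le \pi_2(X) \le d(X,\ell_2^{\dim X})$ together with Grothendieck's factorization $\pi_1(X) \le K_G\, d(X,\ell_1^{\dim X})\, d(X,\ell_2^{\dim X})$. One computes the Banach--Mazur distances $d(\ell_p^n(\ell_q^k), \ell_2^{nk})$ and $d(\ell_p^n(\ell_q^k),\ell_1^{nk})$ by the standard block-wise argument: $d(\ell_p^n(\ell_q^k),\ell_2^{nk}) \sim n^{|1/p-1/2|} k^{|1/q-1/2|}$ in the relevant ranges. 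Plugging these into \eqref{formulaC} (after dualizing, so that the exponents $\min\{1/2,1/p\}$ and $\min\{1/2,1/q\}$ appear) yields $\boldsymbol{\lambda}(\ell_p^n(\ell_q^k)) \succ_{C(p,q)} n^{\min\{1/2,1/p\}} k^{\min\{1/2,1/q\}}$, which is the lower bound, while the Kadets--Snobar side $\boldsymbol{\lambda}(X) \le \pi_2(X) \le d(X,\ell_2^{\dim X})$ does \emph{not} immediately give a matching upper bound. For the upper bound itself I would instead use that in case (i) the space $\ell_p^n(\ell_q^k)$ is $2$-convex when $p,q\ge 2$ (so by \eqref{conv-conc} $\boldsymbol{\lambda} \sim \varphi(nk) = n^{1/p}k^{1/q}$) and $2$-concave when $p,q\le 2$ (so $\boldsymbol{\lambda} \sim \sqrt{nk}$), both with constants depending only on $p,q$. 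For case (ii), $1\le q\le 2\le p\le \infty$, one factorizes $\ell_p^n(\ell_q^k)$ through an interpolation/Calder\'on-product decomposition: writing $\ell_q^k$ as a pointwise product landing in the $2$-concave regime and $\ell_p^n$ as the outer $2$-convex-or-better space, one gets $\boldsymbol{\lambda}(\ell_p^n(\ell_q^k)) \prec_{C(p,q)} \boldsymbol{\lambda}(\ell_p^n(\ell_2^k))$ and then $\boldsymbol{\lambda}(\ell_p^n(\ell_2^k)) \sim n^{\min\{1/2,1/p\}} k^{1/2}$ by combining $2$-convexity in the $\ell_p$-direction with the Hilbert-space behaviour in the inner coordinate; here $\min\{1/2,1/q\}=1/2$ since $q\le 2$, matching the claim.

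\textbf{Main obstacle.} The hard part will be the \emph{sharp upper bound} in the genuinely mixed case (ii): neither pure $2$-convexity nor pure $2$-concavity is available, so one cannot directly quote \eqref{conv-conc}, and the naive Kadets--Snobar bound $d(X,\ell_2^{\dim X})$ is too large (it would give $n^{|1/p-1/2|}k^{1/2}$ rather than $n^{\min\{1/2,1/p\}}k^{1/2}$ when $p<2$ --- but in (ii) $p\ge 2$, so actually $n^{1/2-1/p}\cdot(\text{something})$; the subtlety is getting the $\ell_p$-exponent down to $\min\{1/2,1/p\}=1/p$). I expect the right tool is a direct construction of a good projection: build the minimal (Rudin-averaged, Theorem~\ref{rudy}) projection from $C(K)$ onto the isometric copy of $\ell_p^n(\ell_q^k)$ using the transitive symmetry group, and estimate its norm blockwise --- the inner projection onto each $\ell_q^k$-block costs $\boldsymbol{\lambda}(\ell_q^k) \sim k^{1/2}$ (since $q\le 2$) and the outer projection onto $\ell_p^n$ costs $\boldsymbol{\lambda}(\ell_p^n) \sim n^{1/p}$ (since $p\ge 2$, by \eqref{gordongarling}), and these multiply because $\gamma_\infty$ is multiplicative on the natural tensor decomposition $\ell_p^n(\ell_q^k) = \ell_p^n \otimes_? \ell_q^k$. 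Making this last multiplicativity precise --- identifying $\ell_p^n(\ell_q^k)$ with an appropriate (mixed-norm) tensor product and invoking an analogue of the identity $\gamma_\infty(\id_{X\otimes_\varepsilon Y}) = \gamma_\infty(\id_X)\gamma_\infty(\id_Y)$ used in the proof of Theorem~\ref{tensor} --- is the delicate point, and is where I would spend most of the effort; once it is in place, both the upper and lower bounds match and the asymptotic $\boldsymbol{\lambda}(\ell_p^n(\ell_q^k)) \sim_{C(p,q)} n^{\min\{1/2,1/p\}} k^{\min\{1/2,1/q\}}$ follows in all the listed ranges.
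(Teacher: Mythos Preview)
Your treatment of case~(i) is correct and matches the paper exactly: use that $\ell_p^n(\ell_q^k)$ is $\min(p,q)$-convex and $\max(p,q)$-concave, then apply~\eqref{conv-conc} with $\varphi_{\ell_p^n(\ell_q^k)}(nk)=n^{1/p}k^{1/q}$.

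For case~(ii) your plan diverges from the paper, and the route you identify as ``the delicate point'' is in fact a dead end. The multiplicativity $\gamma_\infty(\id_{X\otimes_\varepsilon Y})=\gamma_\infty(\id_X)\gamma_\infty(\id_Y)$ is specific to the \emph{injective} tensor product, and $\ell_p^n(\ell_q^k)$ is not $\ell_p^n\otimes_\varepsilon\ell_q^k$ unless $p=\infty$. There is no analogue of this multiplicativity for the mixed-norm tensor product, so the ``blockwise multiplication of projection costs'' argument cannot be made precise along these lines. The Rudin-averaging idea does not help either: it would give you the minimal projection, but you still need an independent upper bound on its norm.

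The paper's approach to~(ii) is far more elementary and avoids all of this. For the upper bound, instead of tensor products it directly estimates the Banach--Mazur distance $d(\ell_p^n(\ell_q^k),\ell_\infty^{nk})$: take an isomorphism $T\colon\ell_q^k\to\ell_\infty^k$ realizing $d(\ell_q^k,\ell_\infty^k)\sim k^{1/2}$ and apply it row by row to get $\tilde T\colon\ell_p^n(\ell_q^k)\to\ell_\infty^{nk}$. A two-line computation shows $\|\tilde T\|\le\|T\|$ and $\|\tilde T^{-1}\|\le n^{1/p}\|T^{-1}\|$, so $d(\ell_p^n(\ell_q^k),\ell_\infty^{nk})\le n^{1/p}k^{1/2}$ and~\eqref{formulaX} finishes. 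For the lower bound, rather than invoking~\eqref{formulaC} (which would require controlling $d(\ell_p^n(\ell_q^k),\ell_1^{nk})$), the paper simply factorizes $\id_{\ell_2^n(\ell_q^k)}$ through $\ell_p^n(\ell_q^k)$ using $\|\id\colon\ell_2^n(\ell_q^k)\to\ell_p^n(\ell_q^k)\|\le 1$ and $\|\id\colon\ell_p^n(\ell_q^k)\to\ell_2^n(\ell_q^k)\|\le n^{1/2-1/p}$, then quotes case~(i) for $\boldsymbol{\lambda}(\ell_2^n(\ell_q^k))\sim n^{1/2}k^{1/2}$. Both halves are two or three lines; no interpolation, no ``enough symmetries'', no trace duality is needed.
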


We note that the full answer for the scale $1 \leq p \leq 2 \leq q \leq \infty$ remains open.

\begin{proof}[Proof of case $(i)$]
	We use that
	\[
	\varphi_{\ell_p^n (\ell_q^k)}(nk) = n^{\frac{1}{p}}k^{\frac{1}{q}}\,,
	\]
	and that $\boldsymbol{\lambda}(\ell_p^n (\ell_q^k))$ is $\min(p,q)$-convex and $\max(p,q)$-concave. Then the result follows from \eqref{conv-conc}.
\end{proof}

\begin{proof}[Proof of case $(ii)$]
	
	For the lower estimate, note  that
	\begin{align*}
	\|\id: \ell_2^n (\ell_q^k) \to \ell_p^n (\ell_q^k)\| \leq 1
	\,\,\,\,\,\text{ and }  \,\,\,\,\,
	\|\id: \ell_p^n (\ell_q^k) \to \ell_2^n (\ell_q^k)\| \leq  n^{\frac{1}{2}-\frac{1}{p}}\,.
	\end{align*}
	Then by $(i)$ we get
	\begin{equation*}
	n^{\frac{1}{2}}k^{\frac{1}{2}} \sim \boldsymbol{\lambda}(\ell_2^n (\ell_q^k))
	= \gamma_\infty(\id_{\ell_2^n (\ell_q^k)})
	\leq
	\gamma_\infty(\id_{\ell_p^n (\ell_q^k)})
	n^{\frac{1}{2}-\frac{1}{p}}
	= \boldsymbol{\lambda}(\ell_p^n (\ell_q^k))n^{\frac{1}{2}-\frac{1}{p}}\,.
	\end{equation*}
		For the upper estimate note that
	\begin{align*}
 	d(\ell_p^n (\ell_q^k),\ell_\infty^{nk}) \le  n^{\frac{1}{p}} k^{\frac{1}{2}}.
    	\end{align*}
  Indeed, suppose that $T:\ell_q^k\to \ell_\infty^k$ is an isomorphism that attains the distance $(\|T\|\|T^{-1}\|\sim k^{1/2}).$ Define 
  $$\tilde T:\ell_p^n(\ell_q^k)\to \ell_\infty^{nk}$$  applying $T$ to each row, i.e.,
$\tilde T(a)_{i\bullet}:= T(a_{i\bullet})$ (here $x_{i\bullet}$ denotes the vector determined by the $i$-th row of a matrix $(x_{ij})$).
Then,
\begin{align*}
\|\tilde T(a)\|_\infty = \max_i\|T(a_{i\bullet})\|_\infty \le \|T\| \max_i\|a_{i\bullet}\|_q\le \|T\| \Big(\sum_i\|a_{i\bullet}\|_q^p\Big)^{1/p}. 
\end{align*}
and
\begin{align*}
\|\tilde T^{-1}(a)\|_{\ell_p^n(\ell_q^k)} =  \, \Big(\sum_i\|T^{-1}(a_{i\bullet})\|_q^p\Big)^{1/p} \le \|T^{-1}\| \Big(\sum_i\|a_{i\bullet}\|_\infty^p\Big)^{1/p} \le \|T^{-1}\| n^{1/p}\|a\|_\infty.
    	\end{align*}
Thus,
\begin{align*}
 	d(\ell_p^n (\ell_q^k),\ell_\infty^{nk}) \le \|\tilde T^{-1}\|\|\tilde T\| \le \|T^{-1}\|\|T\|  n^{1/p} \le n^{\frac{1}{p}} k^{\frac{1}{2}}.
    	\end{align*}
	Then  by \eqref{formulaX} we get
	\begin{equation*}
	\boldsymbol{\lambda}(\ell_p^n (\ell_q^k))\le d(\ell_p^n (\ell_q^k),\ell_\infty^{nk})
\sim   n^{\frac{1}{p}} k^{\frac{1}{2}}\,.\qedhere
	\end{equation*}
\end{proof}

\subsection{Polynomials on mixed spaces: first estimates}

The multi indices that define monomials on the mixed space $\ell_r^n (\ell_s^k)$ are $n\times k$ matrices of non-negative integers. We denote by $\Lambda(m,n\times k)$  the set of $n\times k$ multi indices of degree $m$, that is $\alpha \in\Lambda(m,n\times k)$ whenever  $\displaystyle\sum_{i,j}\alpha_{ij}=m$. The set of tetrahedral multi indices $\Lambda_T(m,n\times k)$ consists on all $\alpha \in\Lambda(m,n\times k)$ such that $\max_{i,j} \alpha_{ij}=1.$

If we allow constants depending arbitrarily on $m,$ then a combination  of the case $m=1$
(Proposition \ref{prop:proj const mixed m=1}) with the tools developed in the previous chapters gives us the correct asymptotic order of the projection constant for homogeneous polynomials on mixed spaces for a wide range of $1\le r,s\le \infty$.

\begin{proposition}
\label{prop: proj const mixed fixed degree}	Let $m \in \mathbb{N}$ and let $J \subset \mathbb{N}_0^n$ be such that and $\Lambda_T(m,n\times k) \subset J\subset \Lambda( \leq m,n\times k)$. Let $r,s$ such that $1\leq r \le2$ and $1\leq s \le\infty$ or such that $2\le r,s\le \infty$.Then for all  $nk\ge m$, with constants depending on $m$, we have
\[
\boldsymbol{\lambda}\big(\mathcal{P}_J(\ell_r^n (\ell_s^k))\big) \,\,\sim_{C(r,s,m)} \,\,  n^{m\min(\frac{1}{2},\frac{1}{r'})}
\,\, k^{m\min(\frac{1}{2},\frac{1}{s'})}\,.
\]
\end{proposition}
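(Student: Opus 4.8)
The strategy is to combine the degree-one result for mixed spaces (Proposition~\ref{prop:proj const mixed m=1}) with the general tensor-product machinery from Chapter~\ref{tensor-pro-me}, exactly in the spirit of Corollary~\ref{immediateABC}. First I would verify that the hypotheses of Corollary~\ref{immediateABC}(ii) are met: the mixed space $X_{nk} := \ell_r^n(\ell_s^k)$ is a Banach lattice, and it has enough symmetries, since the natural action of the product of permutation-and-sign groups on rows and on columns leaves the norm invariant and any operator commuting with all these isometries must be a multiple of the identity (this is standard for mixed $\ell_p$-spaces and is essentially the same argument used in \cite{konig1999projection,lewickimastylo}). With this in hand, Corollary~\ref{immediateABC} applied to $X_{nk}$ and to the index set $J$ (which by assumption satisfies $\Lambda_T(m,n\times k)\subset J\subset\Lambda(\le m,n\times k)$, hence in particular contains a tetrahedral $m$-homogeneous block and has degree $\le m$) yields
\[
\boldsymbol{\lambda}\big(\mathcal{P}_J(X_{nk})\big)\ \sim_{C(m)}\ \boldsymbol{\lambda}\big(X_{nk}^{\ast}\big)^{m}\,.
\]
So the whole problem reduces to identifying $\boldsymbol{\lambda}(X_{nk}^\ast)$ up to a universal constant.

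\textbf{Computing the dual's projection constant.} The second step is to recognise that $(\ell_r^n(\ell_s^k))^\ast = \ell_{r'}^n(\ell_{s'}^k)$ isometrically (this is the K\"othe duality formula already recalled before Lemma~\ref{mixed}), and then to apply Proposition~\ref{prop:proj const mixed m=1} to the space $\ell_{r'}^n(\ell_{s'}^k)$. Here one must check that the pair $(r',s')$ falls into one of the admissible cases of that proposition. If $1\le r\le 2$, then $2\le r'\le\infty$; if $2\le r\le\infty$, then $1\le r'\le 2$; similarly for $s$. Going through the two hypotheses on $(r,s)$ in the present statement: in the case $2\le r,s\le\infty$ we get $1\le r',s'\le 2$, which is case (i) of Proposition~\ref{prop:proj const mixed m=1}; in the case $1\le r\le 2$, $1\le s\le 2$ we get $2\le r'\le\infty$, $2\le s'\le\infty$, again case (i); and in the case $1\le r\le 2$, $2\le s\le\infty$ we get $2\le r'\le\infty$, $1\le s'\le 2$, which is exactly case (ii) ($1\le s'\le 2\le r'\le\infty$). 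Thus in all situations covered by the proposition we obtain
\[
\boldsymbol{\lambda}\big(\ell_{r'}^n(\ell_{s'}^k)\big)\ \sim_{C(r,s)}\ n^{\min(\frac12,\frac1{r'})}\,k^{\min(\frac12,\frac1{s'})}\,.
\]

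\textbf{Conclusion and the main obstacle.} Raising the last display to the $m$-th power and substituting into the reduction from the first paragraph gives
\[
\boldsymbol{\lambda}\big(\mathcal{P}_J(\ell_r^n(\ell_s^k))\big)\ \sim_{C(r,s,m)}\ n^{m\min(\frac12,\frac1{r'})}\,k^{m\min(\frac12,\frac1{s'})}\,,
\]
which is the claim; note that the constant genuinely depends on $m$ because the $\sim_{C(m)}$ in Corollary~\ref{immediateABC} does. A cosmetic point is to double-check that the exponent presentation matches: $\min(\frac12,\frac1{r'})$ equals $\min(\frac12,1-\frac1r)$, and this is exactly what appears in the statement once one writes $r'$ for the conjugate index, so no rewriting is needed. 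The only genuinely non-routine step is the verification that $\ell_r^n(\ell_s^k)$ has \emph{enough symmetries} in the precise sense required by Corollary~\ref{immediateABC} (every isometry-commuting operator is scalar); I expect this to be the main obstacle, although it is classical and can be done by a direct averaging argument over the row/column permutation groups together with the diagonal sign-change group, exploiting that these groups act transitively enough on the matrix entries. Everything else is a bookkeeping assembly of results already proved in the excerpt.
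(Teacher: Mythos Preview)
Your lower bound matches the paper's exactly: both invoke Corollary~\ref{immediateABC}(ii), using that $\ell_r^n(\ell_s^k)$ is a Banach lattice with enough symmetries and $\Lambda_T(m,n\times k)\subset J$. Your dual identification $(\ell_r^n(\ell_s^k))^\ast=\ell_{r'}^n(\ell_{s'}^k)$ and the case-check that $(r',s')$ lands in one of the admissible ranges of Proposition~\ref{prop:proj const mixed m=1} are also correct.

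The gap is in the upper bound. You assert that Corollary~\ref{immediateABC} yields the full equivalence $\boldsymbol{\lambda}(\mathcal{P}_J(X_{nk}))\sim_{C(m)}\boldsymbol{\lambda}(X_{nk}^\ast)^m$ for every $J$ with $\Lambda_T(m)\subset J\subset\Lambda(\le m)$, but the corollary does not say this. Part~(i) carries the unavoidable factor $\max_{0\le k\le m}\|\mathbf{Q}_{\Lambda(k,n),J_k}\|$, and there is no general bound on these annihilation-projection norms in terms of $m$ alone for an arbitrary $J_k\subset\Lambda(k)$. The ``in particular'' clause of the corollary --- the only place where the two-sided $\sim_{C(m)}$ is asserted --- is explicitly restricted to the four specific index sets $\Lambda_T(m,n)$, $\Lambda(m,n)$, $\Lambda_T(\le m,n)$, $\Lambda(\le m,n)$. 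So your argument proves the lower bound in general, but the upper bound only for those four choices of $J$.

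The paper does not route the upper bound through $\boldsymbol{\lambda}(X^\ast)^m$ at all. For $2\le r,s\le\infty$ it defers to the next proposition (Proposition~\ref{proj const pols on mixed case 2-convex}), which exploits the $2$-convexity of $\ell_r(\ell_s)$ and Theorem~\ref{conny3} to get $(nk/m)^{m/2}$ directly. For $1\le r\le 2$ it uses Remark~\ref{rem:dist to P(l_1)}: one bounds $\boldsymbol{\lambda}(\mathcal{P}_J(\ell_r^n(\ell_s^k)))$ by $C^m\cdot d(\mathcal{P}_J(\ell_r^n(\ell_s^k)),\mathcal{P}_J(\ell_1^{nk}))$ and then estimates this Banach--Mazur distance by $d(\ell_r^n(\ell_s^k),\ell_1^{nk})^m\le(n^{1/r'}k^{\min(1/2,1/s')})^m$. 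This bypasses the $\mathbf{Q}$-norms entirely. Your dual-space viewpoint is conceptually neat and would close cleanly if $J$ were one of the four canonical sets; for the general sandwiched $J$ in the statement you need the paper's route, or an independent bound on the annihilation projections that is not supplied by the results in the text.
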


\begin{proof}
The lower bound follows from $(ii)$ of Corollary~\ref{immediateABC}. The upper bound for the case $r<2$ follows from Remark \ref{rem:dist to P(l_1)}, using that 
\[
d\big(\mathcal{P}_{J}(\ell_r^n (\ell_s^k),\mathcal{P}_{J}(\ell_1^{nk})\big)\le \big(d(\ell_r^n (\ell_s^k)),\ell_1^{nk})\big)^m\le (n^{\frac1{r'}}k^{\min\{\frac12,\frac1{s'}\}})^m \,.
\]
The case $r,s\ge 2$ is a contained in the next result.
\end{proof}

\medskip
For $2\leq r,s \leq \infty$ the space $\ell_r(\ell_s)$  is $2$-convex -- therefore we may apply  Theorem \ref{conny3} to obtain hypercontractive constants for the upper bound.

\begin{proposition}\label{proj const pols on mixed case 2-convex}
For $2\leq r,s \le\infty$, $m \in \mathbb{N}$ let $J \subset \mathbb{N}_0^n$ be such that
$\Lambda_T(m,n\times k) \subset J\subset \Lambda( \leq m,n\times k)$. Then for all  $nk\ge m$, we have	
\[
\boldsymbol{\lambda}\big(\mathcal{P}_J(\ell_r^n (\ell_s^k))\big) \,\,\sim_{C(r,s)^m} \,\, \Big(\frac{nk}{m}\Big)^{\frac{m}{2}}\,\, .
\]
\end{proposition}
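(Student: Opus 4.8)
The strategy is to reduce the claimed two–sided bound for $\boldsymbol{\lambda}(\mathcal P_J(\ell_r^n(\ell_s^k)))$ to the already–known value of $\boldsymbol{\lambda}(\ell_r^n(\ell_s^k))$ from Proposition~\ref{prop:proj const mixed m=1}, case $(i)$, via the structural dichotomy provided in Chapter~\ref{tensor-pro-me} and the convexity reductions in Chapter~\ref{Unconditionality}. Concretely, since $2\le r,s\le\infty$, the mixed space $X_{nk}:=\ell_r^n(\ell_s^k)$ is $2$-convex with $2$-convexity constant bounded only in terms of $r$ and $s$ (this follows since both $\ell_r^n$ and $\ell_s^k$ are $2$-convex with constant $1$ and the $2$-convexity constant of $\ell_r^n(\ell_s^k)$ is dominated by the product). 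We may therefore run the homogeneous Kadets–Snobar program of Theorem~\ref{conny3}: for the upper bound, the hypercontractive Bohnenblust–Hille estimate \eqref{BBB} together with Remark~\ref{cardo} (applied now to the $n\times k$ grid — one checks $|J|\sim_{C^m}(1+\tfrac{nk}{m})^m$ since $\Lambda_T(m,n\times k)\subset J\subset\Lambda(\le m,n\times k)$) gives
\[
\boldsymbol{\lambda}\big(\mathcal P_J(X_{nk})\big)\le |J|^{1/2}\prec_{C^m}\Big(1+\frac{nk}{m}\Big)^{m/2}\prec_{C^m}\Big(\frac{nk}{m}\Big)^{m/2},
\]
using $nk\ge m$ in the last step.

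\textbf{Lower bound.} For the matching lower bound I would invoke the second part of Theorem~\ref{conny3}, whose hypothesis is precisely that $X$ be $2$-convex and $\Lambda_T(m)\subset J_{\le m}$ — both hold here. That theorem yields $\boldsymbol{\lambda}(\mathcal P_{J_{\le m}}(X_{nk}))\succ_{C^m}(1+\tfrac{nk}{m})^{m/2}$. Alternatively, and perhaps cleaner for bookkeeping, one can go through Corollary~\ref{immediateABC}: the mixed space $X_{nk}$ is a Banach lattice with enough symmetries in a suitable sense only if we are careful — here it is safer to use Theorem~\ref{tensor} directly. Since $2\le r,s$, the dual $X_{nk}^\ast=\ell_{r'}^n(\ell_{s'}^k)$ with $1\le r',s'\le 2$, and Proposition~\ref{prop:proj const mixed m=1} case $(i)$ gives $\boldsymbol{\lambda}(X_{nk}^\ast)\sim_{C(r,s)} n^{1/r'}k^{1/s'}=n^{\min\{1/2,1/r'\}}k^{\min\{1/2,1/s'\}}$, which for $r,s\ge2$ equals $n^{1/r'}k^{1/s'}$; note this is comparable to $(nk/m)^{1/2}$ only up to the degree-one case, so I prefer the route through Theorem~\ref{tensor} with $J=\Lambda_T(m,n\times k)$: the characteristic $\mathbf c(m,X_{nk})\le e^m$ (by \eqref{projl1}) and $\|\mathbf Q_{\Lambda(m,n\times k),\Lambda_T(m,n\times k)}\|\le\kappa^m$ by Theorem~\ref{OrOuSe}, while the lower estimate in Theorem~\ref{tensor} (valid when $X_{nk}$ has enough symmetries — which the mixed lattice does, its isometry group acting transitively on the grid up to sign changes forces scalar multiples of the identity) produces
\[
\boldsymbol{\lambda}\big(\mathcal P_{\Lambda_T(m,n\times k)}(X_{nk})\big)\succ_{C^m}\frac{|\Lambda_T(m,n\times k)|}{(nk)^m}\,\boldsymbol{\lambda}(X_{nk}^\ast)^m.
\]
Combining $|\Lambda_T(m,n\times k)|=\binom{nk}{m}\ge(nk/m)^m$, $\boldsymbol{\lambda}(X_{nk}^\ast)\succ n^{1/r'}k^{1/s'}$ and then $n^{1/r'}k^{1/s'}\ge (nk)^{1/2}$ (since $r',s'\le2$) yields the claimed $(nk/m)^{m/2}$ from below. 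Finally Proposition~\ref{Cauchy} and Proposition~\ref{degree-homo} let one pass from the homogeneous building block $J=\Lambda_T(m,n\times k)$ to any $J$ with $\Lambda_T(m,n\times k)\subset J\subset\Lambda(\le m,n\times k)$, at the cost of a factor $\sim_{C^m}1$.

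\textbf{Expected obstacle.} The delicate point is verifying "enough symmetries" for $\ell_r^n(\ell_s^k)$ so that the lower bound in Theorem~\ref{tensor} applies: one must check that any operator on the mixed space commuting with all coordinate sign-changes and with the permutations of rows and of columns is a scalar multiple of the identity. This is true — the sign-changes force the operator to be diagonal in the natural basis, and the row/column permutation groups act on the diagonal entries with a single orbit — but it requires a short argument rather than being immediate. If one wishes to avoid it entirely, the cleaner alternative is to lean only on Theorem~\ref{conny3} (whose lower bound needs only $2$-convexity plus $\Lambda_T(m)\subset J_{\le m}$, not enough symmetries), at the price of constants depending on the $2$-convexity constant of $\ell_r^n(\ell_s^k)$, which is exactly what $\sim_{C(r,s)^m}$ in the statement allows. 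I would write the proof that way: upper bound via \eqref{BBB} and $|J|\sim_{C^m}(1+nk/m)^m$, lower bound via the $2$-convex case of Theorem~\ref{conny3} applied to the grid $\{1,\dots,nk\}$, and then Remark~\ref{cardo}-type cardinality estimates to identify $(1+nk/m)^{m/2}$ with $(nk/m)^{m/2}$ under $nk\ge m$.
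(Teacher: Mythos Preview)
Your final recommended route---upper bound via Kadets--Snobar ($\boldsymbol{\lambda}\le|J|^{1/2}$, not \eqref{BBB}, which is for $\boldsymbol{\chimon}$) and lower bound via the $2$-convex case of Theorem~\ref{conny3}---is exactly the paper's proof: the text preceding the proposition simply says that $\ell_r(\ell_s)$ is $2$-convex for $2\le r,s\le\infty$, so Theorem~\ref{conny3} applies to the $nk$-dimensional lattice.

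One remark on the detour through Theorem~\ref{tensor}: even granting enough symmetries, that route does not give hypercontractive constants. Your computation
\[
\boldsymbol{\lambda}\big(\mathcal P_{\Lambda_T(m,n\times k)}(X_{nk})\big)\succ_{C^m}\frac{(nk/m)^m}{(nk)^m}\,\big((nk)^{1/2}\big)^m=\frac{(nk)^{m/2}}{m^m}
\]
is a factor $m^{m/2}$ below the target $(nk/m)^{m/2}$, and $m^{m/2}$ is not bounded by $C^m$. So the obstacle is not only verifying enough symmetries---the bound itself is too weak for $\sim_{C(r,s)^m}$. This is another reason (beyond the one you flagged) why the direct appeal to Theorem~\ref{conny3} is the right move.
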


\medskip
The following result gives us hypercontractive estimates for the lower bound for every $r,s$.
\begin{proposition}\label{lower bound mixed}
Let  $X=\ell_{r}^n(\ell_{s}^k)$ for  $1\le s,r\le \infty$ and $nk\ge m$. Then
\begin{align}\label{eq:lower bound mixed}
\Big(\frac{n^{\min\{\frac{1}{r'};\frac{1}{2}\}}k^{\min\{\frac{1}{s'};\frac{1}{2}\}}}{m^{\max\big\{\min\{\frac{1}{r'};\frac{1}{2}\};\min\{\frac{1}{s'};\frac{1}{2}\}\big\}}}\Big)^m
	\prec_{C^m} {\boldsymbol{\lambda}} \big(\mathcal P_{m}(\ell_{r}^n(\ell_{s}^k))\big).
	\end{align}
\end{proposition}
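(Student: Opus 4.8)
The strategy is to pass, as usual, from the projection constant to the unconditional basis constant of the monomial basis in a space of one degree higher, then apply a probabilistic lower bound for the latter, and finally reduce to tetrahedral indices where the characteristics behave like those in $\ell_r^n(\ell_s^k)$ without Lorentz/Marcinkiewicz phenomena (here there are none anyway, since the underlying space is a mixed $\ell_p$-space). Concretely, by Theorem~\ref{main3} (applied with $X_n = \ell_{r}^n(\ell_{s}^k)$, which is a Banach lattice), together with Theorem~\ref{OrOuSe} to pass to tetrahedral indices, we obtain
\[
\boldsymbol{\chimon}\big(\mathcal{P}_{\Lambda_T(m+1,n\times k)}(\ell_{r}^n(\ell_{s}^k))\big)
\prec_{C^m}
\boldsymbol{\lambda}\big(\mathcal{P}_{\Lambda_T(m,n\times k)}(\ell_{r}^n(\ell_{s}^k))\big)
\le
\kappa^m\,\boldsymbol{\lambda}\big(\mathcal{P}_{m}(\ell_{r}^n(\ell_{s}^k))\big)\,,
\]
where the last inequality uses Theorem~\ref{OrOuSe} and Proposition~\ref{Cauchy}. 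Thus it suffices to prove the matching lower bound for $\boldsymbol{\chimon}\big(\mathcal{P}_{\Lambda_T(m,n\times k)}(\ell_{r}^n(\ell_{s}^k))\big)$, i.e. that the left-hand side of \eqref{eq:lower bound mixed} (with $m$ replaced by $m-1$ up to constants absorbed into $C^m$, which is harmless) bounds it from below.

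\textbf{Key steps.} First I would record the characteristics of tetrahedral indices. For $\alpha \in \Lambda_T(m,n\times k)$ one computes directly from the definition of $c_{X}(\alpha)$ that, with $X=\ell_r^n(\ell_s^k)$, the supremum $\sup_{z\in B_X}\prod z_{ij}^{\alpha_{ij}}$ is attained at a vector supported on the same $m$ cells as $\alpha$, and an elementary Hölder/multinomial argument (exactly as in Proposition~\ref{mietek}, now in two layers) gives $c_X(\alpha)\sim_{C^m}\varphi_{X}(\text{number of cells})^{m}$ with the appropriate split according to how the $m$ cells of $\alpha$ distribute among the $n$ rows; in particular $c_X(\alpha)$ is comparable to $|[\alpha]|^{0}$ times a power of $n$ and $k$ depending only on the layout, and is bounded below by a universal multiple of $c_{\ell_2^{n}(\ell_2^{k})}(\alpha)=\sqrt{m}^{\,m}$-type quantities when $r,s\ge 2$, resp. by the corresponding $\ell_{r'}$, $\ell_{s'}$ fundamental-function powers when $r\le 2$ or $s\le 2$. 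Second, I would feed these into the probabilistic machinery: the covering-method bound of Theorem~\ref{Anquetil alfa} (or its tetrahedral consequences Proposition~\ref{innichen1}, Proposition~\ref{toblach}) applied to the Banach space $X_n=\ell_r^n(\ell_s^k)$, testing at the constant vector $z=(\varphi_X(nk)^{-1})_{i,j}\in B_X$ to get $|\Lambda_T(m,n\times k)|/\varphi_X(nk)^m$ on one side; since $|\Lambda_T(m,n\times k)|=\binom{nk}{m}\ge (nk/m)^m$ and $\varphi_X(nk)=n^{1/r}k^{1/s}$, rearranging yields precisely the exponents $\min\{1/r',1/2\}$ on $n$ and $\min\{1/s',1/2\}$ on $k$, with the loss $m^{\max\{\cdots\}}$ coming from the $\|\mathrm{id}:X_n\to\ell_r^n(\ell_s^k)\|$-type factor and the $m$-dependent normalizations in those lemmas. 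The two regimes $r,s\ge 2$ and ``$r\le 2$ or $s\le 2$'' are handled by choosing which of Proposition~\ref{toblach} / Proposition~\ref{innichen1} to invoke, after checking the hypotheses $\varphi_{X_n}(nk)\varphi_{X_n'}(nk)\sim nk$ and the comparison $\|\mathrm{id}:X_n\to\ell_2^{nk}\|\prec (nk)^{-1/2}\|\mathrm{id}:X_n\to\ell_1^{nk}\|$, both of which are elementary for mixed spaces.

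\textbf{Main obstacle.} The delicate point is the bookkeeping of the characteristics $c_X(\alpha)$ for tetrahedral $\alpha$ in the \emph{mixed} space: unlike $\ell_r^n$, the value $c_{\ell_r^n(\ell_s^k)}(\alpha)$ genuinely depends on how the $m$ occupied cells are spread across rows, so the naive estimate $c_X(\alpha)\sim \varphi_X(m)^m$ is not literally correct, and one must instead show that $\sum_{\alpha\in\Lambda_T(m,n\times k)}c_X(\alpha)|z^\alpha|$ is still controlled below by the ``worst layout'' contribution (all cells in distinct rows, or all in one row, whichever is extremal) times a factor $C^{-m}$. I expect this to require the two-layer Hölder argument together with the crude but sufficient bound $c_X(\alpha)\ge c_{\ell_{\max\{r,2\}}^n(\ell_{\max\{s,2\}}^k)}(\alpha)$ (monotonicity of characteristics under $\ell_p\hookrightarrow\ell_q$), after which the sum factorizes into a product over rows and one reduces to the one-layer estimates already established for $\ell_p^n$. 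Everything else — the passage from $\boldsymbol{\lambda}$ to $\boldsymbol{\chimon}$, the invocation of the probabilistic lemmas, and the arithmetic with $\binom{nk}{m}$ — is routine given the earlier chapters.
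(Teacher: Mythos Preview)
Your approach via unconditionality and probabilistic lower bounds is considerably more elaborate than what the paper does, and in the asymmetric cases it does not close cleanly. The paper's proof is a simple factorization through spaces whose projection constant is already known: for each regime of $(r,s)$ one writes
\[
\boldsymbol{\lambda}\big(\mathcal P_m(Y)\big)
\le
\big\|\id:\mathcal P_m(Y)\to\mathcal P_m(X)\big\|
\,\boldsymbol{\lambda}\big(\mathcal P_m(X)\big)\,
\big\|\id:\mathcal P_m(X)\to\mathcal P_m(Y)\big\|,
\]
choosing $Y$ so that $\boldsymbol{\lambda}(\mathcal P_m(Y))$ is known. For $2\le r,s$ one quotes Proposition~\ref{proj const pols on mixed case 2-convex} (2-convexity); for $1\le s<r\le 2$ one takes $Y=\ell_r^{nk}$ and uses $\|\id:\ell_r^{nk}\to\ell_r^n(\ell_s^k)\|\le k^{1/s-1/r}$ together with Theorem~\ref{start-poly2}; for $1\le r<s\le 2$ symmetrically with $Y=\ell_s^{nk}$; for $1\le s\le 2<r$ one takes $Y=\ell_r^n(\ell_2^k)$ and uses $\|\id:X\to\ell_r^n(\ell_2^k)\|\le k^{1/s-1/2}$ together with Proposition~\ref{proj const pols on mixed case 2-convex}; the case $1\le r\le 2<s$ is analogous with $Y=\ell_2^n(\ell_s^k)$. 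Each case is two lines.

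Your route runs into a genuine difficulty in the asymmetric cases. Propositions~\ref{innichen1} and~\ref{toblach} are calibrated to spaces with $\varphi_X(N)\sim N^{1/p}$ for a \emph{single} $p$; for $X=\ell_r^n(\ell_s^k)$ with $r\neq s$ one has $\varphi_X(nk)=n^{1/r}k^{1/s}$, so the hypothesis of Proposition~\ref{innichen1} forces you to pick $p=\min\{r,s\}$, and then the factor $\|\id:X\to\ell_p^{nk}\|^m$ costs exactly the power of the ``other'' variable that you need to gain. For instance, when $1\le s<r\le 2$ with $p=s$ one gets $\|\id:\ell_r^n(\ell_s^k)\to\ell_s^{nk}\|^m=n^{m(1/s-1/r)}$, and tracking exponents yields a lower bound with $n$-exponent strictly smaller than the target $m/r'$. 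The obstacle you identify about layout-dependent characteristics for tetrahedral $\alpha$ is real, but the fix you sketch (monotonicity of characteristics plus row-wise factorization) does not produce the correct mixed exponents without essentially redoing a two-parameter version of Lemma~\ref{innichenA}. The paper sidesteps all of this by factorizing directly at the level of $\boldsymbol{\lambda}$ and never touching $\boldsymbol{\chimon}$ or the probabilistic machinery for these lower bounds.
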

\begin{proof}[Proof of the case $2\le s,r\le \infty$]
This case is contained in Proposition \ref{proj const pols on mixed case 2-convex}.
\end{proof}
\begin{proof}[Proof of the case  $1 \le s< r  \le 2$]
We know that
\[
\boldsymbol{\lambda}\big(\mathcal P_{m}(\ell_{r}^{nk})\big)\le \|\id\colon \mathcal P_{m}(\ell_{r}^{nk})\to \mathcal P_{m}(X)\|\id: \boldsymbol{\lambda}\big(\mathcal P_{m}(X)\big)\cdot\|\id:\mathcal P_{m}(X)\to \mathcal P_{m}(\ell_{r}^{nk})\|.
\]
For $1\le s<r\le 2$, we have 
\[
\|\id: \mathcal P_{m}(X)\to \mathcal P_{m}(\ell_{r}^{nk})\|\le \|\id: \ell_{r}^{nk}\to \ell_{r}^n(\ell_{s}^k)\|^m\le k^{m(\frac{1}{s}-\frac{1}{r})}
\]
and
\[
\|\id:\mathcal P_{m}(\ell_{r}^{nk})\to \mathcal P_{m}(X)\|\le \|\id:\ell_{r}^n(\ell_{s}^k)\to \ell_{r}^{nk}\|^m\le1.
\]
Thus we conclude,
\begin{equation*}
{\boldsymbol{\lambda}}\big(\mathcal P_{m}(\ell_{r}^n(\ell_{s}^k))\big)\ge
\boldsymbol{\lambda}\big(\mathcal P_{m}(\ell_{r}^{nk})\big)k^{m(\frac1{r}-\frac{1}{s})}\sim_{C^m}
\Big(\frac{n}{m}\Big)^{\frac{m}{r'}}k^{\frac{m}{s'}}. \qedhere
\end{equation*}
\end{proof}	
	
\begin{proof}[Proof of the case $1\le r<s\le 2$]
Follows as the case  $1 \le s< r  \le 2$, but factorizing through  $\mathcal P_{m}(\ell_{s}^{nk})$, that is, using that we know that
\[
\boldsymbol{\lambda}(\mathcal P_{m}(\ell_{s}^{nk}))\le \|\id:\mathcal P_{m}(\ell_{s}^{nk})\to \mathcal P_{m}(X)\|\,
\boldsymbol{\lambda}\big(\mathcal P_{m}(X)\big)\,\|\id:\mathcal P_{m}(X)\to \mathcal P_{m}(\ell_{s}^{nk})\|\,,
\]
we conclude that
\begin{equation*}
n^{\frac{m}{r'}}\Big(\frac{k}{m}\Big)^{\frac{m}{s'}}
\prec_{C^m} {\boldsymbol{\lambda}}\big(\mathcal P_{m}(\ell_{r}^n(\ell_{s}^k))\big)\,. 
\qedhere
\end{equation*} 
\end{proof}

\begin{proof}[Proof of the case  $1 \le s\le 2< r$] For $1\le s\le 2<r$, we have 
\[
\|\id: \mathcal P_{m}(X)\to \mathcal P_{m}(\ell_{r}^n(\ell_{2}^k))\|\le k^{m(\frac1{s}-\frac1{2})}
\]
and
\[
\|\id:\mathcal P_{m}(\ell_{r}^n(\ell_{2}^k))\to \mathcal P_{m}(X)\|\le 1.
\]
Then by Proposition \ref{proj const pols on mixed case 2-convex},
\begin{align*}
\Big(\frac{nk}{m}\Big)^{\frac{m}{2}} &
\sim_{C^m}\boldsymbol{\lambda}\big(\mathcal P_{m}(\ell_{r}^n(\ell_{2}^k))\big) \\
& \le \big\|\id:\mathcal P_{m}\big(\mathcal P_{m}(\ell_{r}^n(\ell_{2}^k))\big)\to \mathcal P_{m}(X)\big\|\,
\boldsymbol{\lambda}\big(\mathcal P_{m}(X)\big)\,\|\id:\mathcal P_{m}(X)\to \mathcal P_{m}(\ell_{r}^n(\ell_{2}^k))\|\\
& \le k^{m(\frac1{s}-\frac1{2})}\boldsymbol{\lambda}\big(\mathcal{P}_m(X)\big)\,.
\end{align*}
Therefore we have,
\begin{equation*}
\Big(\frac{n}{m}\Big)^{\frac{m}{2}}k^{\frac{m}{s'}} \prec_{C^m} {\boldsymbol{\lambda}}\big(\mathcal P_{m}(\ell_{r}^n(\ell_{s}^k))\big). 
\qedhere
\end{equation*}
\end{proof}	

\begin{proof}[Proof of the case  $1 \le r\le 2< s$]
    The proof is similar to the case $1\le s\le 2<r$:  considering that  
\[
\|\id: \mathcal P_{m}(X)\to \mathcal P_{m}(\ell_{2}^n(\ell_{s}^k))\|\le n^{m(\frac1{r}-\frac1{2})}
\]
and
\[
\|\id:\mathcal P_{m}(\ell_{2}^n(\ell_{s}^k))\to \mathcal P_{m}(X)\|\le 1\,,
\]
we use Proposition \ref{proj const pols on mixed case 2-convex} to obtain
\begin{equation*}
\Big(\frac{k}{m}\Big)^{\frac{m}{2}}n^{\frac{m}{r'}} \prec_{C^m}
{\boldsymbol{\lambda}}\big(\mathcal P_{m}(\ell_{r}^n(\ell_{s}^k))\big). \qedhere
\end{equation*}
\end{proof}

\smallskip

\subsection{The $m$-homogeneous case: The upper bound for $1 \le s< r  \le 2$.}

Looking at the results  of the preceding section it is natural to ask whether the  converse to inequality \eqref{eq:lower bound mixed} hold.
	Recall that	Proposition \ref{proj const pols on mixed case 2-convex} provides a positive answer in the case $2\le r,s\le\infty.$ In the present section we obtain a positive answer in the  range  $1\le s\le r\le 2$. This may be seen  as another notable  example of how the techniques involving the polynomial projection constant allow us to obtain satisfying  estimates for
the projection constant of spaces of polynomial (this time on non-symmetric Banach sequence spaces).

\begin{theorem}\label{theo:mixed s<r<2}
	Let  
$1\le s<r\le 2$ and $n\ge m^2$. Then	\begin{align}
\boldsymbol{\lambda}\big(\mathcal{P}_m(\ell_r^{n}(\ell_s^{k}))\big)\sim_{C^m}  \Big(\frac{n}{m}\Big)^{\frac{m}{r'}}k^{\frac{m}{s'}}\,.
\end{align}
\end{theorem}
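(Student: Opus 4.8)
The plan is to establish the two matching bounds. The lower bound is already in hand: it is exactly the case $1\le s<r\le 2$ of Proposition~\ref{lower bound mixed}, which gives $\big(\tfrac{n}{m}\big)^{m/r'}k^{m/s'}\prec_{C^m}\boldsymbol{\lambda}\big(\mathcal P_m(\ell_r^n(\ell_s^k))\big)$ for all $nk\ge m$ (in this subrange the relevant exponent $\max\{1/r',1/2\}=1/r'$, so the stated power is $n^{m/r'}k^{m/s'}$). So the whole task reduces to the upper bound, and by Theorem~\ref{lambda-dash} it suffices to bound the polynomial projection constant $\widehat{\boldsymbol{\lambda}}\big(\mathcal P_{\Lambda(m,n\times k)}(\ell_r^n(\ell_s^k))\big)$. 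First I would recall from Section~\ref{bounds} the explicit characteristic formula
\[
c_{\ell_p^n(\ell_q^k)}(\alpha)=\frac{m^{m/p}}{\alpha^{\alpha/q}}\Bigg(\prod_{i=1}^n\|\alpha_{i\bullet}\|_{\ell_1}^{\|\alpha_{i\bullet}\|_{\ell_1}}\Bigg)^{\frac1q-\frac1p}\,,
\]
valid for every multi index $\alpha$ of degree $m$. The key structural point is that each row $\alpha_{i\bullet}$ carries an "internal" degree $m_i:=\|\alpha_{i\bullet}\|_{\ell_1}$, with $\sum_i m_i=m$, and that the characteristic factorizes through the rows, so that the sum defining $\widehat{\boldsymbol{\lambda}}$ can be organized as an outer sum over the row-degree pattern $(m_i)_i$ and inner sums over each row.

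The technical heart would then mirror the one-dimensional Lorentz analysis of Section~\ref{bound_similar_ell_r}: for a fixed row of internal degree $m_i\le m$, the inner sum over the Lorentz space $\ell_s^k$ looks like a $c_{\ell_{r,s}}$-type sum of degree $m_i$, except that here we are genuinely in a mixed $\ell_r$--$\ell_s$ situation rather than a Lorentz one. Since $s<r\le 2$, the exponent loss coming from replacing $\ell_s$ norms by $\ell_r$ norms is what must be controlled; I expect to split according to the number $L_i$ of variables a row actually involves (exactly as in Lemmas~\ref{lemma1 L<m/2}, \ref{lemma3 L<m/2} and~\ref{lemma L>m/2}), using that when a row uses many variables the $\ell_s^k$--$\ell_r^k$ discrepancy is benign, and when it uses few variables the cardinality $|\Lambda^{L_i}(m_i,k)|$ is small enough to compensate. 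The hypothesis $n\ge m^2$ is precisely what guarantees (via the same arithmetic as in the "in particular" clause of Theorem~\ref{bound_similar_ell_r}) that in the relevant ranges the logarithmic correction factors $(\log m)^{m(1/s-1/r)}$ are dominated by the $n$-power savings, so that each inner row-sum is bounded by $C^{m_i}\big(\tfrac{n}{m}\big)^{m_i/r'}$-type quantities times $\ell$-norms of the corresponding row of $z$. Multiplying these over the rows and then summing over all row patterns $(m_i)_i$ — which costs only a factor subexponential in $m$, by the cardinality estimate \eqref{cardi} applied to the composition — one arrives, after taking a supremum over $z\in B_{\ell_r^n(\ell_s^k)}$, at
\[
\widehat{\boldsymbol{\lambda}}\big(\mathcal P_m(\ell_r^n(\ell_s^k))\big)\prec_{C^m}\Big(\frac{n}{m}\Big)^{m/r'}k^{m/s'}\,.
\]

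The main obstacle I anticipate is bookkeeping: one must simultaneously handle the row-degree decomposition, the within-row variable-count decomposition, and the interplay between the $\ell_r^n$-outer structure and the $\ell_s^k$-inner Lorentz-like structure, and then glue everything back together while keeping all constants of the form $C^m$. In particular, the step where the inner row-estimates (which depend delicately on $m_i$, $L_i$ and on the size of $k$ relative to $m_i$) are assembled into a single supremum over the unit ball of the mixed space — using Hölder across rows with exponents governed by $r$ — is where the hypothesis $n\ge m^2$ really has to be exploited, and where a careless estimate would produce spurious logarithmic or dimension-dependent losses. A secondary, more routine obstacle is verifying that the case distinctions on $L_i$ cover all $m_i\le m$ and that the borderline ranges (e.g. $L_i$ close to $m_i$, or $m_i$ close to $m$) do not require the stronger hypothesis that some of the earlier Lorentz lemmas needed; here one should be able to quote Lemma~\ref{proj lorentz s<r} for the uniform-in-$n,m$ bound and then use $n\ge m^2$ to kill its $\max\{\cdot,1\}$ correction term.
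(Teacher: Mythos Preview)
Your lower bound matches the paper's. For the upper bound you have the right instinct---mimic the Lorentz variable-count analysis---but you place it at the wrong level of the mixed-space hierarchy. Within a single row the inner space is the ordinary $\ell_s^k$, not a Lorentz space, so the row-sum $\sum_{\beta\in\Lambda(m_i,k)}c_{\ell_s^k}(\beta)\,|z_{i\bullet}^\beta|$ needs no $L_i$-splitting at all; it is handled directly by H\"older as in \eqref{eq: proof of lambda hat lr}. The genuine Lorentz-type difficulty (many variables benign, few variables compensated by small cardinality) lives one level up, in how the total degree $m$ is spread across the $n$ rows.

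The paper therefore coarsens your full row-degree pattern $(m_i)_i$ to the single scalar $L=\#\{i:m_i>0\}$ and decomposes $\Lambda(m,n\times k)=\bigcup_L\Lambda^{L\times k}(m,n\times k)$. For $L$ large ($L\ge m-m^{s/r}$, ``row-almost-tetrahedral'') one checks directly that $\|m^{-1/r}\alpha^{1/r}\|_{\ell_r^n(\ell_s^k)}\prec 1$, hence $c_X(\alpha)\prec_{C^m}|[\alpha]|^{1/r}$, and the multinomial identity yields the target $(n/m)^{m/r'}k^{m/s'}$. For $L$ small ($L\le m-m^a$) one uses the crude bound $c_X(\alpha)\le c_{\ell_s^{nk}}(\alpha)$ together with the cardinality estimate $|\Lambda^{L\times k}(m,n\times k)|\prec_{C^m}(n/L)^Lk^m$ and a one-dimensional optimization in $L$; the hypothesis $n\ge m^2$ is exactly what kills the resulting correction factor. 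With $a=s/r$ the two ranges overlap, and summing over the $m+1$ values of $L$ finishes. This sidesteps entirely the double bookkeeping over patterns $(m_i)$ and within-row counts $(L_i)$ that you correctly identify as your main obstacle.
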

To prove this theorem we are going to decompose $\Lambda(n\times k,m)$  into certain subsets of  multi indices which we now define. 
Given $(\alpha_{ij})=\alpha\in\Lambda(m,n\times k)$, let $\alpha_{i\bullet}$ denotes its $i$-th row and   $\alpha_{\bullet j}$ its $j$-th column. Thus, in particular,  $|\alpha_{i\bullet }|:=\sum_j \alpha_{ij}$.
Moreover, we define, for $1\le L\le m,$ the index sets
$$\Lambda^{L\times k}(m,n\times k):=\left\{ \alpha \in \Lambda(m,n\times k) : \;\; \vert\{ i : |\alpha_{i\bullet}| > 0 \} \vert = L \right\}; $$
in other terms,  $\alpha\in \Lambda^{L\times k}(m,n\times k)$ whenever it has exactly $L$ non-zero rows.
Given a Banach space $X=(\mathbb C^{n\times k},\|\cdot\|)$, we write
$$\mathcal{P}_{m,L\times k}(X):=  \mathcal{ P}_{\Lambda^{L\times k}(m,n\times k)}(X)\,,$$
 i.e., all  polynomials of the form 
 $$P(z)= \displaystyle\sum_{\alpha\in\Lambda^{L\times k}(m,n\times k)}c_\alpha z^\alpha\,, \quad z \in X.$$
In the following we in several separate lemmas 
 bound the polynomial projection constants of the Banach space   $\mathcal P_{m,L\times k}(\ell_{r,s}^n)$ when either $L$ is similar to $m$ or  $L$ is small. We start estimating the cardinality of $\Lambda^{L\times k}(m,n\times k).$
\begin{lemma}\label{cardinal J^L rows}
	For $nk\ge m$,
	\begin{align*}
	|\Lambda^{L\times k}(m,n\times k)|\prec_{C^m} \Big(\frac{n}{L}\Big)^{L}k^{m}. \end{align*}
\end{lemma}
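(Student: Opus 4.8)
Looking at the statement to prove, we need to bound $|\Lambda^{L\times k}(m,n\times k)|$, the number of $n\times k$ multi-indices of degree $m$ having exactly $L$ non-zero rows, showing it is $\prec_{C^m} \left(\frac{n}{L}\right)^{L}k^{m}$ whenever $nk\ge m$.

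\medskip

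\textbf{Approach.} The plan is to mimic the proof of Lemma~\ref{cardinal J^L}, but now bookkeeping rows rather than individual coordinates. The natural decomposition is: first choose which $L$ of the $n$ rows are non-zero, which contributes a factor $\binom{n}{L}$; then count the number of ways to place a degree-$m$ multi-index on the resulting $L\times k$ block in which every one of the $L$ chosen rows is actually non-zero. The latter quantity is bounded above by the total number of degree-$m$ multi-indices on an $L\times k$ grid, namely $|\Lambda(m, L\times k)| = \binom{Lk+m-1}{m}$.

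\medskip

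\textbf{Key steps.} First I would write
\[
|\Lambda^{L\times k}(m,n\times k)| \le \binom{n}{L}\,\binom{Lk+m-1}{m}\,,
\]
the first factor choosing the support of non-zero rows, the second over-counting the placements of mass $m$ on the $L\times k$ sub-block. Next, using \eqref{cardi} and the elementary estimate $\binom{N}{\ell}\le e^\ell\left(\frac{N}{\ell}\right)^\ell$ recalled in the excerpt, I would bound $\binom{n}{L}\le e^L\left(\frac{n}{L}\right)^L$ and $\binom{Lk+m-1}{m}\le e^m\left(1+\frac{Lk}{m}\right)^m$. Then I must absorb the factor $\left(1+\frac{Lk}{m}\right)^m$ into the desired form. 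Since $L\le m$, we have $1+\frac{Lk}{m}\le 1+k\le 2k$ when $k\ge 1$ (and the case $k=1$ with the convention $1+\frac{Lk}{m}\le 2$ is handled the same way, using $nk\ge m$ to keep things sensible), so $\left(1+\frac{Lk}{m}\right)^m\le (2k)^m = 2^m k^m$. Collecting the constants, $|\Lambda^{L\times k}(m,n\times k)| \le e^L e^m 2^m \left(\frac{n}{L}\right)^L k^m \le (2e^2)^m \left(\frac{n}{L}\right)^L k^m$, using $L\le m$ to merge $e^L\le e^m$. This is exactly a bound of the form $\prec_{C^m}\left(\frac{n}{L}\right)^L k^m$ with $C = 2e^2$.

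\medskip

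\textbf{Main obstacle.} The only subtle point is the treatment of the factor $\left(1+\frac{Lk}{m}\right)^m$: one must be careful that the bound $(2k)^m$ is the right shape, i.e. that no spurious $\left(\frac{n}{L}\right)$-type factor is lost and that $k^m$ (not $k^L$ or $k^{Lk}$) is what appears. Since every degree-$m$ multi-index on an $L\times k$ block has all its mass $m$ distributed among $Lk$ slots, and $L\le m$ forces $Lk \le mk$, the bound $\binom{Lk+m-1}{m}\le e^m(1+k)^m$ is clean. One might also worry whether a lower bound (as in Lemma~\ref{cardinal J^L}) is needed, but the statement only asserts the upper estimate $\prec_{C^m}$, so the injection argument used there is not required here.
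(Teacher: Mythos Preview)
Your proof is correct and, if anything, cleaner than the paper's. The paper proceeds by a ``row-tetrahedral plus remainder'' decomposition analogous to Lemma~\ref{cardinal J^L}: it writes each $\alpha\in\Lambda^{L\times k}(m,n\times k)$ as $\beta+\gamma$, where $\beta$ has exactly one unit in each of the $L$ non-zero rows and $\gamma$ carries the remaining mass $m-L$ on those same rows, obtaining
\[
|\Lambda^{L\times k}(m,n\times k)|\le |\Lambda(L,n\times k)|\cdot|\Lambda(m-L,L\times k)|\prec_{C^m}\Big(1+\frac{nk}{L}\Big)^{L}\Big(1+\frac{Lk}{m-L}\Big)^{m-L},
\]
and then extracts $(n/L)^L k^m$ after bounding $(1+L/(m-L))^{m-L}\le e^L$. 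Your approach collapses these two steps into one: choose the $L$ rows ($\binom{n}{L}$) and then place all the mass $m$ on the $L\times k$ block ($\binom{Lk+m-1}{m}$). This avoids the split into $\beta$ and $\gamma$ and the extra factor $(1+L/(m-L))^{m-L}$ altogether, at the cost of nothing — the final constant $(2e^2)^m$ is of the same quality. Both arguments use $L\le m$ (which holds since each non-zero row contributes at least $1$ to the degree) and implicitly $L\le n$ (otherwise the set is empty).
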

\begin{proof}
	Any  $\alpha\in\Lambda^{L\times k}(m,n\times k)$ may be written as a sum of a tetrahedral index $\beta\in \Lambda^{L\times k}(L,n\times k)$ (that is, $|\beta|=L$ and for each row $|\beta_{i\bullet}|\le 1$) and another index $\gamma\in\Lambda^{L\times k}(m-L,L\times k) $ whose support is contained in the rows such that $|\beta_{i\bullet}|=1$, i.e., $\gamma$ uses (at most) the same $L$ rows as $\beta$.
		Thus,
	\begin{align*}
	|\Lambda^{L\times k}(m,n\times k)|&
\le |\Lambda^{L\times k}(L,n\times k)|\,|\Lambda^{L\times k}(m-L,L\times k)|
\\
	&
\prec_{C^m} \Big(1+\frac{nk}{L}\Big)^{L}\,\Big(1+\frac{Lk}{m-L}\Big)^{m-L}
\\
	&
\prec_{C^m}\Big(\frac{n}{L}\Big)^{L}k^{m}\left(1+\frac{L}{m-L}\right)^{m-L}\prec_{C^m}\Big(\frac{n}{L}\Big)^{L}k^{m}.
\qedhere
		\end{align*}
	
\end{proof}

\begin{lemma}\label{lemma: not tetra mixed s<r<2}
	Let  $X=\ell_{r}^n(\ell_{s}^k)$ for $1\le s<r\le 2$. Let $J\subset \Lambda^{L\times k}(m,n)$  and suppose that $L\le m-m^a$ 
for some $0<a<1$. Then
	\begin{align}
	\widehat{\lambda}\big(\mathcal P_{J}(\ell_{r}^n(\ell_{s}^k))\big)\prec_{C^m} \Big(\frac{n}{m}\Big)^{\frac{m}{r'}}k^{\frac{m}{s'}}\,\,\,	\max\left\{\Big(\frac{m^2}{n}\Big)^{\frac{m}{r'}}\left(\frac{1}{m^{\frac{m}{s'}}}
\right),\Big(\frac{m^2}{n}\Big)^{m(\frac1{s}-\frac{1}{r})}\left(\frac{1}{n^{\frac{m^a}{s'}}}\right)\right\}.
	\end{align}
	In particular, for $n\ge m^2$,
	$$
	\widehat{\lambda} \big(\mathcal P_{\Lambda^{L\times k}(m,n\times k)}(\ell_{r}^n(\ell_{s}^k))\big)\prec_{C^m} \Big(\frac{n}{m}\Big)^{\frac{m}{r'}}k^{\frac{m}{s'}}.
	$$
\end{lemma}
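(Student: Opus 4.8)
The goal is to bound $\widehat{\boldsymbol{\lambda}}\big(\mathcal P_{J}(\ell_{r}^n(\ell_{s}^k))\big)$ for $J\subset\Lambda^{L\times k}(m,n\times k)$ when $L\le m-m^a$. By definition of the polynomial projection constant, this amounts to bounding
\[
\sup_{z\in B_{\ell_r^n(\ell_s^k)}}\sum_{\alpha\in J}c_{\ell_r^n(\ell_s^k)}(\alpha)\,|z^\alpha|\,.
\]
The first step is to recall the explicit formula for the characteristic proved earlier,
\[
c_{\ell_p^n(\ell_q^k)}(\alpha) = \frac{m^{\frac{m}{p}}}{\alpha^{\frac{\alpha}{q}}}\Bigg(\prod_{i=1}^n\|\alpha_{i\bullet}\|_{\ell_1}^{\|\alpha_{i\bullet}\|_{\ell_1}}\Bigg)^{\frac1q-\frac1p}\,,
\]
with $(p,q)=(r,s)$. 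Since $1\le s<r\le2$ we have $\frac1s-\frac1r>0$, so the extra product factor $\prod_i\|\alpha_{i\bullet}\|_{\ell_1}^{\|\alpha_{i\bullet}\|_{\ell_1}}$ enters with a positive exponent. Using that $\alpha$ has exactly $L$ non-zero rows, $\sum_i\|\alpha_{i\bullet}\|_{\ell_1}=m$, and convexity of $t\mapsto t\log t$, one bounds $\prod_i\|\alpha_{i\bullet}\|_{\ell_1}^{\|\alpha_{i\bullet}\|_{\ell_1}}\le m^m/(m/L)^{\,? }$-type expressions; more precisely the right estimate is $\prod_i\|\alpha_{i\bullet}\|_{\ell_1}^{\|\alpha_{i\bullet}\|_{\ell_1}}\le m^{m-L}\big(\tfrac{m}{L}\big)^{L}$ (coordinates $\ge1$, so each factor is at most $m^{\|\alpha_{i\bullet}\|_{\ell_1}-1}\cdot$something, summing the ``$-1$'' over $L$ rows). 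Combined with $m^{m/r}\le e^m m!^{1/r}$ and $c_{\ell_r^n}(\alpha)\le e^{m/r}|[\alpha]|^{1/r}$, this reduces $c_{\ell_r^n(\ell_s^k)}(\alpha)$ to $C^m\,|[\alpha]|^{1/r}$ times a power of $(m/L)$ and of $m$.

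The second step is to split and estimate the sum. Using H\"older's inequality exactly as in the chain \eqref{eq: proof of lambda hat lr} and in Lemma~\ref{lemma L>m/2},
\[
\sum_{\alpha\in J}|[\alpha]|^{1/r}|z^\alpha|\le\Big(\sum_{\alpha\in\Lambda^{L\times k}(m,n\times k)}|[\alpha]|\,|z|^{r\alpha}\Big)^{1/r}\,|\Lambda^{L\times k}(m,n\times k)|^{1/r'}\le\|z\|_{\ell_r^n(\ell_s^k)}^m\cdot\text{(cardinality)}^{1/r'}\,,
\]
where one also absorbs the gap between $\|\cdot\|_{\ell_r^n(\ell_s^k)}$ and $\|\cdot\|_{\ell_r^{nk}}$ — here one must be careful, since for $s<r$ one has $\|z\|_{\ell_r^{nk}}\le\|z\|_{\ell_r^n(\ell_s^k)}$, so this direction is harmless. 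Then insert Lemma~\ref{cardinal J^L rows}, $|\Lambda^{L\times k}(m,n\times k)|\prec_{C^m}(n/L)^L k^m$, giving a bound of order $C^m\,(m/L)^{\#}\,m^{\#}\,(n/L)^{L/r'}\,k^{m/s'}$ for appropriate exponents. The third step is the calculus optimization: one writes $L=m-j$ with $m^a\le j\le m$, takes the $m$-th root, and maximizes the resulting function of $j$ (as in Lemma~\ref{lemma3 L<m/2}). The point is to show that the maximum is attained either at $j$ of order $m$ (yielding the first term in the stated $\max$, with the $(m^2/n)^{m/r'}/m^{m/s'}$ factor coming from comparing $(n/L)^{L/r'}$ with $(n/m)^{m/r'}$) or at the boundary $j=m^a$ (yielding the second term, with the $n^{-m^a/s'}$ penalty). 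One needs the elementary inequalities $(n/L)^L\le e^{j}(n/m)^{m}(m/n)^{j}$ for $L=m-j$, of exactly the flavour of \eqref{ukraineA}–\eqref{ukraineAA}, to make the two regimes explicit.

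\textbf{Main obstacle.} The delicate point is step one: controlling the row-product factor $\prod_i\|\alpha_{i\bullet}\|_{\ell_1}^{\|\alpha_{i\bullet}\|_{\ell_1}}$ raised to the positive power $\frac1s-\frac1r$, and tracking how its size depends jointly on $L$ (the number of non-zero rows) and on the multinomial coefficient $|[\alpha]|$, so that after H\"older nothing is lost. This is the mixed-space analogue of the balance between cardinality and characteristic bound that made Lemmas~\ref{lemma3 L<m/2} and \ref{lemma L>m/2} technical; the extra subtlety here is that the characteristic ``remembers'' the row structure of $\alpha$, not just its degree, so one cannot simply quote the $\ell_{r,s}$ estimates but must redo the bookkeeping with the explicit mixed-space formula. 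Once that bound is in hand, the final ``in particular'' for $n\ge m^2$ is immediate: then $m^2/n\le1$, both terms in the $\max$ are $\prec_{C^m}1$ after the relevant powers, and one is left with $(n/m)^{m/r'}k^{m/s'}$.
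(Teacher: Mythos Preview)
Your approach diverges from the paper at the H\"older step, and the exponent you choose cannot deliver the stated bound. After reducing $c_{\ell_r^n(\ell_s^k)}(\alpha)$ to $C^m|[\alpha]|^{1/r}$ times a row-structure factor, you apply H\"older with exponents $(r,r')$, which produces the cardinality factor $|\Lambda^{L\times k}(m,n\times k)|^{1/r'}\prec_{C^m}(n/L)^{L/r'}k^{m/r'}$. But the lemma requires $k^{m/s'}$, and since $s<r$ gives $\tfrac{1}{r'}>\tfrac{1}{s'}$, your bound carries an extra factor $k^{m(1/s-1/r)}$ that is not $\prec_{C^m}1$. The $k$-exponent enters only through the cardinality term, so no later optimization in $L$ can recover it.

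The paper takes a different route. It does not use the exact mixed-space characteristic at all; instead it invokes the inclusion $B_{\ell_s^{nk}}\subset B_{\ell_r^n(\ell_s^k)}$ to get the cruder bound $c_X(\alpha)\le c_{\ell_s^{nk}}(\alpha)=(m^m/\alpha^\alpha)^{1/s}$, and then applies H\"older with exponents $(s,s')$. This puts the cardinality to the power $1/s'$, hence the correct $k^{m/s'}$. The price is that the remaining factor $\big(\sum_{\alpha\in\Lambda^{L\times k}}\tfrac{m!}{\alpha!}|z|^{s\alpha}\big)^{1/s}$ is no longer immediately $\le\|z\|_X^m$; the paper handles it by decomposing $\Lambda^{L\times k}$ over the $\binom{n}{L}$ choices of the $L$ active rows and using the row-wise comparison $\|\cdot\|_{\ell_s^{Lk}}\le L^{1/s-1/r}\|\cdot\|_{\ell_r^L(\ell_s^k)}$, which injects the factor $L^{m(1/s-1/r)}$. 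The final calculus step is a one-variable optimization of $f(L)=\log\big((n/L)^{L/s'}L^{m(1/s-1/r)}\big)$ on $[1,m-m^a]$: either $f$ is increasing (so the endpoint $L=m-m^a$ gives the second term in the $\max$, with the $n^{-m^a/s'}$ penalty), or $f$ has an interior maximum at some $L_0$ satisfying $(n/L_0)^{L_0}\prec_{C^m}1$ (giving the first term). Your bound $\prod_i\|\alpha_{i\bullet}\|_{\ell_1}^{\|\alpha_{i\bullet}\|_{\ell_1}}\le m^{m-L}(m/L)^L$ is correct but plays no role in the paper's argument; the essential repair to your plan is to switch the H\"older exponent from $r$ to $s$ and replace the exact characteristic by the simpler $\ell_s^{nk}$ bound.
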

\begin{proof}
	Note that by \eqref{dineen} we have $c_X(\alpha)\le c_{\ell_s^{nk}}(\alpha)=\left(\frac{m^m}{\alpha^\alpha}\right)^{1/s}$.
		Then, given $z \in \mathbb{C}^{n \times k}$,
		\begin{align*}
\sum_{\alpha \in J}  c_{X}(\alpha) |z^\alpha| & \le	\sum_{\alpha \in \Lambda^{L\times k}(m,n\times k)}  c_{X}(\alpha) |z^\alpha| \\
&\prec_{C^m}
	\sum_{\alpha \in \Lambda^{L\times k}(m,n\times k)}  \frac{m^{m/s}}{\alpha^{\alpha/s}} |z^\alpha| \prec_{C^m} |\Lambda^{L\times k}(m,n\times k)|^{1/s'}
	\left(\sum_{\alpha \in \Lambda^{L\times k}(m,n\times k)}  \frac{m!}{\alpha!} |z^{\alpha s}|\right)^{1/s}.
	\end{align*}
	Note that 
	$$
	\Lambda^{L\times k}(m,n\times k)\subset \bigcup_{A\subset \{1,\dots,n\}, |A|=L} \Lambda(m,A\times \{1,\dots,k\}),
	$$
	thus
	\begin{align*}
	\left(\sum_{\alpha \in \Lambda^{L\times k}(m,n\times k)}  \frac{m!}{\alpha!} |z^{\alpha s}|\right)^{1/s}
&
\le \left(\sum_{|A|=L}\;\sum_{\alpha\in \Lambda(m,A\times \{1,\dots,k\})}\frac{m!}{\alpha!} |z^{\alpha s}|\right)^{1/s} 
\\&
=\left(\sum_{|A|=L}\;\|(z_{ij})_{i\in A,1\le j\le k}\|_{\ell_s^{Lk}}^{sm}\right)^{1/s}
\\
	&\le
	 L^{m(\frac1{s}-\frac1{r})}
	\left(\sum_{|A|=L}\;\|(z_{ij})_{i\in A,1\le j\le k}\|_{\ell_r^L(\ell_s^{k})}^{sm}\right)^{1/s}
\\&
=L^{m(\frac1{s}-\frac1{r})}
\left(\sum_{|A|=L}\left(\sum_{i\in A} \Big(\sum_{j\le k}|z_{ij}|^s\Big)^{r/s}\right)^{\frac{sm}{r}}\right)^{1/s} \\
&\le L^{m(\frac1{s}-\frac1{r})}
\left(\sum_{|A|=L}\sum_{i\in A} \Big(\sum_{j\le k}|z_{ij}|^s\Big)^{r/s}\right)^{m/r}
\le L^{m(\frac1{s}-\frac1{r})}\|z\|_{\ell_r^n(\ell_s^{k})}^m.
	\end{align*}
Therefore, by Lemma \ref{cardinal J^L rows},
	\begin{align*}
	\sum_{\alpha \in \Lambda^{L\times k}(m,n\times k)}  c_{X}(\alpha) |z^\alpha| & \prec_{C^m} \left(\frac{n}{L}\right)^{\frac{L}{s'}}k^{\frac{m}{s'}}L^{m(\frac1{s}-\frac1{r})}\|z\|_{\ell_r^n(\ell_s^{k})}^m.
	\end{align*}
	Let $f(L):=\log\Big(\left(\frac{n}{L}\right)^{\frac{L}{s'}}L^{m(\frac1{s}-\frac1{r})}\Big)$ then either:
	\begin{itemize}
		\item[(i)] $f$ is increasing in $(1,m-m^a)$, or,
		\item[(ii)] $f$ has a global maximum which is in $(1,m-m^a)$.
	\end{itemize}
	In case (i), replacing $L$ by $m-m^a$,
	\begin{align*}
		\sum_{\alpha \in \Lambda^{L\times k}(m,n\times k)}  c_{X}(\alpha) |z^\alpha| \prec_{C^m} \Big(\frac{n}{m}\Big)^{m/r'}k^{m/s'}
		\Big(\frac{m^2}{n}\Big)^{m(\frac1{s}-\frac{1}{r})}\left(\frac{1}{n^{\frac{m^a}{s'}}}\right).
	\end{align*}
In case (ii), the maximum of $f$ is attained at $L_0\in (1,m-m^a)$ satisfying $\frac1{s'}\log(\frac{n}{eL_0})+\frac{1}{L_0}=0$. Note that
$$
\left(\frac{n}{L_0}\right)^{L_0}=e^{L_0}e^{-s'}\prec_{C^m} 1,
$$
and thus, for $L\in[1,m-m^a]$,
\begin{align*}
\left(\frac{n}{L}\right)^{\frac{L}{s'}}k^{\frac{m}{s'}}L^{m(\frac1{s}-\frac1{r})} &\le
\left(\frac{n}{L_0}\right)^{\frac{L_0}{s'}}L_0^{m(\frac1{s}-\frac1{r})}k^{\frac{m}{s'}}
\prec_{C^m}k^{\frac{m}{s'}}L_0^{m(\frac1{s}-\frac1{r})}\\
&\le k^{\frac{m}{s'}}m^{m(\frac1{s}-\frac1{r})} = \Big(\frac{n}{m}\Big)^{\frac{m}{r'}}k^{\frac{m}{s'}}
\Big(\frac{m^2}{n}\Big)^{\frac{m}{r'}}\left(\frac{1}{m^{\frac{m}{s'}}}\right). \qedhere
\end{align*}
\end{proof}

\begin{lemma}\label{lemma: big L, mixed}
Let  $X=\ell_{r}^n(\ell_{s}^k)$ for $1\le s<r\le 2$. Suppose that $J\subset \Lambda^{L\times k}(m,n\times k)$, $n\ge m$  and let $L\ge m-m^{s/r}$. Then
\[
\widehat{\lambda}\big(\mathcal{P}_{J}
(\ell_{r}^n(\ell_{s}^k))\big)\prec_{C^m} \Big(\frac{n}{m}\Big)^{\frac{m}{r'}}\,.
\]
\end{lemma}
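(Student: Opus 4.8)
The plan is to prove this bound by a direct estimate of the polynomial projection constant $\widehat{\boldsymbol{\lambda}}(\mathcal P_J(X))$ for $X=\ell_r^n(\ell_s^k)$, where $J\subset \Lambda^{L\times k}(m,n\times k)$ and $L$ is close to $m$, and then invoke Theorem~\ref{lambda-dash} to pass to $\boldsymbol{\lambda}$. The guiding idea, exactly as in the $L\le m-m^a$ case of Lemma~\ref{lemma: not tetra mixed s<r<2}, is that when $L$ is large the multi index $\alpha$ is ``almost tetrahedral in its rows'', so the characteristic $c_X(\alpha)$ should behave like the tetrahedral characteristic rather than picking up the unfavourable Lorentz/$\ell_s$ logarithm; the number of such $\alpha$ is small, and the two effects compensate. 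Concretely, for $\alpha\in\Lambda^{L\times k}(m,n\times k)$ with $L\ge m-m^{s/r}$, each non-zero row $\alpha_{i\bullet}$ has $|\alpha_{i\bullet}|\ge 1$, and since at most $m-L\le m^{s/r}$ rows can have $|\alpha_{i\bullet}|\ge 2$ (there being $L$ non-zero rows summing to $m$), the decreasing rearrangement $\alpha^*$ has only a controlled number of entries exceeding $1$.

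First I would record, following the computation that produced \eqref{norma}--\eqref{bound norm in lrs many variables} in Lemma~\ref{lemma1 L<m/2} but adapted to the two-index (mixed) setting, a uniform bound of the form $\|m^{-1/r}\alpha^{1/r}\|_{\ell_s^{nk}}\le C^m \|m^{-1/r}\alpha^{1/r}\|_{\ell_r^{nk}}$ valid for these ``almost row-tetrahedral'' $\alpha$; splitting $\alpha^*$ into its (at most $\sim m^{s/r}$ many) large part of degree $\le 2m^{s/r}$ and its purely tetrahedral part, the $\ell_s$ versus $\ell_r$ discrepancy on the large part contributes a factor $(2m^{s/r})^{1/s-1/r}\le C$ per ``unit'', giving an overall constant of the form $C^m$. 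Combining this with $c_{\ell_r^{nk}}(\alpha)=(m^m/\alpha^\alpha)^{1/r}\le e^{m/r}|[\alpha]|^{1/r}$ (from \eqref{dineen}) yields $c_X(\alpha)\le C^m|[\alpha]|^{1/r}$ for all $\alpha\in J$. Then, exactly as in Lemma~\ref{lemma: not tetra mixed s<r<2}, Hölder's inequality over $\Lambda^{L\times k}(m,n\times k)\subset\bigcup_{|A|=L}\Lambda(m,A\times\{1,\dots,k\})$ together with the multinomial identity $\sum_{\alpha\in\Lambda(m,A\times\{1,\dots,k\})}\frac{m!}{\alpha!}|z^{\alpha r}| = \|(z_{ij})_{i\in A}\|_{\ell_r^L(\ell_r^k)}^{rm}\le \|(z_{ij})_{i\in A}\|_{\ell_r^L(\ell_s^k)}^{rm}$ (here $s\le r$, so the inner $\ell_s$ norm dominates $\ell_r$, which is the favourable direction), and Lemma~\ref{cardinal J^L rows}, gives
\[
\sum_{\alpha\in J} c_X(\alpha)|z^\alpha|\;\prec_{C^m}\;\Big(\frac{n}{L}\Big)^{\frac{L}{r'}}k^{\frac{m}{r'}}\,\|z\|_{\ell_r^n(\ell_s^k)}^m\,.
\]
Here I would use that for $L\ge m-m^{s/r}$ and $n\ge m$ one has $(n/L)^{L/r'}\le (n/L)^{m/r'}\le C^m (n/m)^{m/r'}$, because $L\ge m/2$ and the function $t\mapsto (n/t)^t$ is increasing up to $t=n/e\ge m/e$; a short elementary estimate absorbs the gap $m-L\le m^{s/r}$ into a $C^m$ factor. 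Since $r'\le s'$ need not hold, I should be slightly careful with the $k$-exponent: here it comes out as $k^{m/r'}$, which is $\ge k^{m/s'}$ is false in general, so I actually want the cleaner route of bounding $c_X(\alpha)$ by the tetrahedral-in-rows characteristic directly, so that the inner norm is the genuine $\ell_s^k$ norm to the power $m/s'$; redoing the Hölder step with the inner sum grouped over rows and columns separately produces the sharp $k^{m/s'}$, matching the lower bound of Proposition~\ref{lower bound mixed}.

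The main obstacle I anticipate is precisely this bookkeeping of the $k$-exponent and the constant: one must choose the splitting of the sum defining $\widehat{\boldsymbol\lambda}$ so that the $\ell_s^k$ norm appears in the beneficial direction and the power of $k$ is exactly $m/s'$ rather than $m/r'$, while simultaneously keeping the ambient constant hypercontractive ($C^m$) despite the $m-L\le m^{s/r}$ ``defect'' rows. Once the upper bound $\widehat{\boldsymbol\lambda}(\mathcal P_J(X))\prec_{C^m}(n/m)^{m/r'}k^{m/s'}$ is in hand for both regimes $L\le m-m^a$ (Lemma~\ref{lemma: not tetra mixed s<r<2}, applicable when $n\ge m^2$) and $L\ge m-m^{s/r}$ (this Lemma), one notes $m^{s/r}\le m^a$ for suitable $a<1$ so the two ranges cover all $L\le m$, sums over $L$ at the cost of a factor $m+1\le C^m$, and applies Theorem~\ref{lambda-dash} to get $\boldsymbol\lambda(\mathcal P_m(X))\prec_{C^m}(n/m)^{m/r'}k^{m/s'}$; combined with the matching lower bound from Proposition~\ref{lower bound mixed} this yields the claimed $\sim_{C^m}$ of Theorem~\ref{theo:mixed s<r<2}. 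The statement of the present Lemma itself is then just the large-$L$ half of this argument, so the proof I would write is: establish the norm comparison for almost row-tetrahedral $\alpha$, deduce $c_X(\alpha)\prec_{C^m}|[\alpha]|^{1/r}$ (or better, the row-tetrahedral characteristic), run the Hölder/cardinality estimate, and simplify $(n/L)^{L/r'}\prec_{C^m}(n/m)^{m/r'}$.
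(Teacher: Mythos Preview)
Your first step is on target: bounding $c_X(\alpha)$ via the test vector $\tilde z = m^{-1/r}\alpha^{1/r}$ and showing $\|\tilde z\|_X \prec 1$ for almost row-tetrahedral $\alpha$ is exactly what the paper does. (Small correction: the relevant norm is $\|\cdot\|_{\ell_r^n(\ell_s^k)}$, not $\|\cdot\|_{\ell_s^{nk}}$; the paper splits the sum over \emph{rows}, bounding the inner $\ell_s^k$-norm of each of the at most $m-L$ ``large'' rows by $(2(m-L))^{1/s-1/r}$ times its $\ell_r^k$-norm, and summing gives $\|\tilde z\|_X \prec (m-L)^{1/s}/m^{1/r}\le 1$.) This yields $c_X(\alpha)\prec_{C^m}(m^m/\alpha^\alpha)^{1/r}$.

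The genuine gap is in your summation step. Running H\"older with exponents $r,r'$ against the cardinality bound $|\Lambda^{L\times k}(m,n\times k)|\prec_{C^m}(n/L)^L k^m$ produces a factor $k^{m/r'}$, not $k^{m/s'}$; since $r>s$ gives $r'<s'$, this is strictly worse and cannot be absorbed into $C^m$ for large $k$. You correctly flag this, but ``redoing H\"older with rows and columns grouped separately'' does not fix it: the factor $k^m$ in the cardinality is intrinsic, and any H\"older split with exponent $r'$ on that side will return $k^{m/r'}$. (Using instead $c_X(\alpha)\le c_{\ell_s^{nk}}(\alpha)$ and H\"older with $s,s'$ as in Lemma~\ref{lemma: not tetra mixed s<r<2} does give $k^{m/s'}$, but then the $L^{m(1/s-1/r)}$ loss forces $n\ge m^2$, not merely $n\ge m$.)

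The paper avoids H\"older entirely by exploiting a second, crucial consequence of the almost-row-tetrahedral structure that you miss: since each nonzero $\alpha_{ij}\le 2(m-L)\le 2m^{s/r}$ and the total degree of the ``large'' part is $\le 2(m-L)$, one has $\alpha^\alpha\le (2m^{s/r})^{2m^{s/r}}\prec_{C^m}1$. Therefore $\alpha^{\alpha/r'}\prec_{C^m}1$, so
\[
c_X(\alpha)\;\prec_{C^m}\;\Big(\frac{m^m}{\alpha^\alpha}\Big)^{1/r}
= \frac{1}{m^{m/r'}}\cdot\frac{m^m}{\alpha^\alpha}\cdot\alpha^{\alpha/r'}
\;\prec_{C^m}\;\frac{1}{m^{m/r'}}\cdot\frac{m!}{\alpha!}\,.
\]
Now the full multinomial sum $\sum_\alpha \frac{m!}{\alpha!}|z^\alpha|\le\|z\|_{\ell_1^{nk}}^m$ applies directly, and $\|z\|_{\ell_1^{nk}}\le n^{1/r'}k^{1/s'}\|z\|_X$ delivers the correct factor $(n/m)^{m/r'}k^{m/s'}$ with no H\"older and no cardinality estimate. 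The observation $\alpha^\alpha\prec_{C^m}1$ is the missing idea.
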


\begin{proof}
	 If $\card\{i:|\alpha_{i\bullet}|=1\}=L-l$, then  the remaining rows satisfy that $|\alpha_{i\bullet}|\ge 2$, and then
\begin{align}
\label{number of 1's mixed}
L-l+2l\le m, \quad or,\quad l\le m-L.
\end{align}	
	Thus $\card\{i:|\alpha_{i\bullet}|=1\}\ge 2L-m$.
	
	By $\alpha^*$ we denote the multi index which is obtained by the decreasing rearrangement of the rows of $\alpha$, i.e., there is a permutation $\sigma$ of $\{1,\dots,n\}$ such that $(\alpha^*)_{i\bullet}=\alpha_{\sigma(i)\bullet}$ and $|(\alpha^*)_{i\bullet}|\ge |(\alpha^*)_{(i+1)\bullet}|$ for every $i=1,\dots,n$. Moreover, if $|\alpha_{i\bullet}|=|\alpha_{i'\bullet}|$ and $i<i'$ then $\sigma(i)<\sigma(i')$.
		Note then that
	\begin{itemize}
		\item $|(\alpha^*)_{i\bullet}|=0$ for $i>L$,
		\item $|(\alpha^*)_{i\bullet}|=1$ for $m-L<i\le L$,
		\item if $i<m-L$ then by \eqref{number of 1's mixed},  $|(\alpha^*)_{i\bullet}|\le m- (2L-m)=2(m-L)$.
	\end{itemize}	
		Take now $\tilde z=\frac{\alpha^{1/r}}{m^{1/r}}$. Then
	\begin{align*}
	\|\tilde z\|_{X}&=\frac1{m^{1/r}}\left(\sum_{i=1}^n\|(\alpha_{i\bullet})^{1/r}\|_{\ell_s^k}^r\right)^{1/r} \le \frac1{m^{1/r}}\left(\sum_{i=1}^{m-L}\|((\alpha^*)_{i\bullet})^{1/r}\|_{\ell_s^k}^r + \sum_{i=m-L+1}^L 1\right)^{1/r}, \\
	&	\le \frac1{m^{1/r}}\left(\sum_{i=1}^{m-L}(2(m-L))^{r(\frac1{s}-\frac1{r})}\|((\alpha^*)_{i\bullet})^{1/r}\|_{\ell_r^k}^r + (2L-m)\right)^{1/r} \\
	&= \frac1{m^{1/r}}\left((2(m-L))^{r(\frac1{s}-\frac1{r})}\left(\sum_{i=1}^{m-L}\sum_{j=1}^k(\alpha^*)_{ij}\right) + (2L-m)\right)^{1/r} \\
	& 	= \frac1{m^{1/r}}\left((2(m-L))^{r(\frac1{s}-\frac1{r})}\left(m-(2L-m)\right) + (2L-m)\right)^{1/r} \\
	&= \left(2^{\frac{r}{s}}\frac{(m-L)^{\frac{r}{s}}}{m} + \frac{(2L-m)}{m}\right)^{1/r}   \prec \frac{(m-L)^{\frac1{s}}}{m^{\frac1{r}}}.
	\end{align*}
	Hence, for $m-L\le m^{s/r},$ we have $\|\tilde z\|_X\prec 1$.
	This implies that,
	$$
	c_X(\alpha)\prec_{C^m} \frac1{\tilde z^\alpha} =\left(\frac{m^m}{\alpha^\alpha}\right)^{1/r}\prec_{C^m} |[\alpha]|^{1/r}.
	$$
	Moreover, note that
	 	$$
	 \alpha^\alpha=(\alpha^*)^{\alpha^*}=\prod_{i\le m-L}\prod_{j\le k}((\alpha^*)_{ij})^{(\alpha^*)_{ij}} \le (2(m-L))^{2(m-L)} 
	 \le (2m^{s/r})^{2m^{s/r}} \prec_{C^m} 1.
	 	$$
		Therefore,
	\begin{align*}
	\sum_{\alpha \in J}  c_{X}(\alpha) |z^\alpha|
&
\le
\sum_{\alpha \in \Lambda^{L\times k}(m,n\times k)}  c_{X}(\alpha) |z^\alpha|  
\\&
\prec_{C^m}
\sum_{\alpha \in \Lambda^{L\times k}(m,n\times k)} \frac1{m^{m/r'}} \frac{m^m}{\alpha^\alpha}\alpha^{\alpha/r'} |z^\alpha| 
\\
& 
\prec_{C^m} \frac1{m^{m/r'}}
\sum_{\alpha \in \Lambda^{L\times k}(m,n\times k)}  \frac{m!}{\alpha!} |z^\alpha|  
\\
& 
\prec_{C^m} \frac1{m^{m/r'}} (\|\id:\ell_1^n\to\ell_r^n\|\cdot \|\id:\ell_1^k\to\ell_s^k\|)^m \le \frac{n^{m/r'}k^{m/s'}}{m^{m/r'}}.
\qedhere
	\end{align*}
\end{proof}

\smallskip

\begin{proof}[Proof of Theorem \ref{theo:mixed s<r<2}]
  The lower bound follows from Proposition \ref{lower bound mixed}.
  The upper bound for the polynomial projection constant follows from Lemma \ref{lemma: not tetra mixed s<r<2}, Lemma \ref{lemma: big L, mixed} and the fact that
  $$\widehat{\boldsymbol{\lambda}}\big(\mathcal P_{J}(\ell_{r}^n(\ell_{s}^k))\big)\le \sum_{L=0}^m\widehat{\boldsymbol{\lambda}}
  \big(\mathcal P_{m,L\times k}(\ell_{r}^n(\ell_{s}^k))\big)\,.
  $$
  Finally, by Theorem \ref{lambda-dash}
  the polynomial projection constant bounds the projection constant.
\end{proof}

\medskip


\chapter{Bohr radii} \label{Part: Bohr radii}

In this final chapter we apply our  previous results to the study of multivariate  Bohr radii.
 Given a (complex) $n$-dimensional Banach  lattice  $X_n = (\CC^n, \| \cdot \|)$, the Bohr radius $K(B_{X_n})$ of the open unit ball $B_{X_n}$ of $X_n$ is the  supremum over all possible $0 < r <1$ such that
\[
\sup_{z \in r B_{X_n}}  \sum_{\alpha \in \mathbb{N}_0^{(\mathbb{N})}} \Big|\frac{\partial^\alpha f(0)}{\alpha!} z^\alpha\Big| \le \sup_{z \in B_{X_n}} |f(z)|\,,
\]
for every holomorphic $f: B_{X_n} \to \mathbb{C}$.  As usual, the Banach space of all bounded holomorphic functions $f: B_{X_n} \to \mathbb{C}$ is denoted by $H_\infty(B_{X_n})$, and the unique monomial coefficients
$\partial^\alpha f(0)/\alpha!$ of such functions are often  abbreviated by $c_\alpha(f)$.

Recall from Bohr's famous power series theorem that for the unit ball $\mathbb{D}$ of the Banach space $\mathbb{C}$ we have
\begin{equation}\label{orgB}
  K(\mathbb{D}) = \frac{1}{3}\,.
\end{equation}
In fact, we here are going to focus on a more general setting. Fixing  an $n$-dimensional  Banach lattice  $X_n = (\CC^n, \| \cdot \|)$
together with an index set $J \subset \NN_0^{n}$, we consider
the closed subspace
\[
H_{\infty}^{J}(B_{X_n}) :=  \left\lbrace f \in H^\infty(B_{X_n}) : c_\alpha(f) = 0 \text{ for } \alpha \notin J \right\rbrace
\]
of
$H_\infty(B_{X_n})$, and define
\begin{equation}\label{definitionB}
    K(B_{X_n},J) := \sup \Big\{ 0 < r < 1 \colon \sup_{z \in r  B_{X_n}}  \sum_{\alpha \in J} \Big|\frac{\partial^\alpha f(0)}{\alpha!} z^\alpha\Big| \le \|f\|_\infty \,\,\, \text{for all $f \in H^J_\infty (B_{X_n})$} \Big\}\,,
       \end{equation}
the Bohr radius of $B_{X_n}$ with respect to the index set $J$. Clearly,
\[
K(B_{X_n}) = K(B_{X_n},  \mathbb{N}_0^n) \,.
\]
For two index sets $J,J' \subset \mathbb{N}_0^n$ satisfying $J \subset J'$ we obviously have
  \begin{equation}\label{rem: bohr radii monotony}
   K(B_{X_n},J') \le K(B_{X_n},J)\,.
 \end{equation}
  Note also that, if $X$ is a Banach sequence space and $J \subset \mathbb{N}_0^{(\mathbb{N})}$, then
it is immediate that  for any $n$
\[
 K(B_{X_n},J) =  K(B_{X_n},J^n)\,,
\]
where    $J^n=J \cap \mathbb{N}_0^n$ collects all indices in $J$ of length $n$.

In order to see  the proof of the following result one only has to copy the argument used for the special case $X_n = \ell_\infty^n$ in \cite[Proposition 19.8]{defant2019libro}: For each  Banach lattice  ${X_n} = (\CC^n, \| \cdot \|)$ and index set  $J \subset \NN_0^n$
    \begin{equation}\label{lowinfty}
      K(B_{\ell_\infty^n},J) \leq K(B_{X_n},J)\,.
    \end{equation}
We write
$$
K_m(B_{X_n}):= K(B_{X_n},\Lambda(m,n))\,,
$$
and call this number  the $m$-homogeneous Bohr radius of $B_{X_n}$, since in this case $J=\Lambda(m,n)$, in \eqref{definitionB} we only consider
$m$-homogeneous  polynomials $f \in \mathcal{P}_m( X_n) = H^{\Lambda(m,n)}_\infty (B_{X_n})\,.$

Fixing 
an infinite dimensional  Banach sequence lattice $X$ and an index set    $J \subset \mathbb{N}_0^{(\mathbb{N})}$, 
  we in  this chapter   are mainly  interested in studying the asymptotic behaviour of $K(B_{X_n},J)$,  whenever  the dimension $n$ tends to infinity - as before,   $X_n$ denotes the $n^{\text{th}}$ section of $X$ and we  define
  $K(B_{X_n},J) = K(B_{X_n},J^n) $. For  abstract Banach sequence spaces $X$ with  a certain geometrical structure (e.g., $2$-convex spaces), or concrete spaces $X$ (e.g., Lorentz spaces
$\ell_{r,s}$) we focus on various index sets $J$ in $\mathbb{N}_0^{(\mathbb{N})}$ with a particular  structure (e.g., index sets
which consist of indices  of degree at most   $m$, or index sets  formed by tetrahedral indices, or sets of  multi indices generated by the prime number decompositions $n = \mathfrak{p}^\alpha$ of certain subsets of natural numbers).

In order to illustrate  what we aim for, we recall two results which might be seen as a motivation for what is coming. From \cite{defant2011bohr} (see also \cite[Theorem 19.1]{defant2019libro}) we know that
for each $1  \leq r \leq \infty$
\[
K(B_{\ell_r^n}) \sim \Big(\frac{\log n}{n}\Big)^{\min \big\{ \frac{1}{2},\frac{1}{r'} \big\}}\,.
\]
This result is intimately connected with the central topic of this text -- the asymptotic determination of
$\boldsymbol{\lambda}(\mathcal{P}_J(X_n))$ and $\boldsymbol{\chimon}(\mathcal{P}_J(X_n))$. And in fact, in Theorem~\ref{thm: main bohr radii} we are going to combine
many of the results produced so far, in order to characterize the asymptotic decay of $K(B_{\ell_{r,s}^n}, J)$ (as $n$ tends to $\infty$) for (almost) all possible $r,s$ and a rich class of index sets $J\subset \mathbb{N}_0^{(\mathbb{N})}$.

To see another result, intimately connected with what we have seen in Chapter \ref{Part: Dirichlet polynomials and polynomials on the Boolean cube} (dealing with  the   projection constant
of spaces of Dirichlet polynomials), we mention that
\[
K(B_{\ell_\infty^n}, \Delta(x)) \sim \frac{(\log x)^\frac{1}{4}}{x^\frac{1}{8}}\,,
\]
where  $\Delta(x) = \{  \alpha \in \mathbb{N}_0^{\pi(x)}\colon 1 \leq \mathfrak{p}^\alpha \leq x\}$ for $x \in \mathbb{N}$  and $\pi(x)$  counts the number of  primes $\mathfrak{p} \leq x$
(see again Chapter~\ref{chapter: general preliminaries}). In view of what we intend to explain, this result from
\cite[Theorem 2.1]{carando2014Dirichlet}
should 
be seen as a sort of counterpart of Theorem~\ref{harpo} and
\eqref{annals}.

Let us finally note that our results add to the (really) vast recent literature on various aspects of one dimensional and multidimensional
Bohr radii (for the one dimensional case see, e.g., \cite{alkhaleefah2019bohr,
beneteau2004remarks,
bhowmik2018bohr,
bombieri2004remark,
ismagilov2020sharp,
kayumov2017bohr,
kayumov2018bohr,
paulsen2002bohr,
paulsen2004bohr}
, and for the multivariate case 
\cite{
bayart2012maximum,
bayart2014bohr,
boas2000majorant,
defant2011bohr,
defant2011bohnenblust,
defant2003bohr,
defant2018bohr,
khavinson1997bohr}).

   \smallskip

\section{Bohr radii vs unconditionality}

The following result shows that the study of multidimensional Bohr radii in  Banach spaces is intimately linked with the study of unconditionality of  spaces of multivariate polynomials on Banach spaces.

\begin{proposition}\label{prop: Km vs uncond in Pm}
Let ${X_n} = (\CC^n, \| \cdot \|)$ be a  Banach lattice and  $J \subset \NN_0^n$ an index set. Then for every $m \in \mathbb{N}$
\[
K_m(B_{X_n},J) = \frac{1}{\boldsymbol{\chimon}(\Pp_{J_m}( X_n))^{1/m}}\,.
\]
\end{proposition}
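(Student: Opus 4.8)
The plan is to prove the two inequalities $K_m(B_{X_n},J) \le \boldsymbol{\chimon}(\mathcal P_{J_m}(X_n))^{-1/m}$ and $K_m(B_{X_n},J) \ge \boldsymbol{\chimon}(\mathcal P_{J_m}(X_n))^{-1/m}$ separately, exploiting the homogeneity of the objects involved. Observe first that, since $J_m = J \cap \Lambda(m,n)$ and in the definition of $K_m(B_{X_n},J)$ we only look at $m$-homogeneous polynomials $f = \sum_{\alpha \in J_m} c_\alpha(f) z^\alpha \in \mathcal P_{J_m}(X_n)$, the condition $\sup_{z \in rB_{X_n}} \sum_{\alpha \in J_m} |c_\alpha(f) z^\alpha| \le \|f\|_{B_{X_n}}$ is, by $m$-homogeneity, equivalent to $r^m \sup_{z \in B_{X_n}} \sum_{\alpha \in J_m} |c_\alpha(f) z^\alpha| \le \|f\|_{B_{X_n}}$. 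Thus $K_m(B_{X_n},J)$ is the supremum of all $0 < r < 1$ such that
\[
r^m \,\sup_{z \in B_{X_n}} \sum_{\alpha \in J_m} |c_\alpha(f)\, z^\alpha| \;\le\; \|f\|_{\mathcal P_{J_m}(X_n)} \qquad \text{for all } f \in \mathcal P_{J_m}(X_n)\,,
\]
equivalently, $K_m(B_{X_n},J)^m$ is the reciprocal of the smallest constant $C$ with $\sup_{z \in B_{X_n}} \sum_{\alpha \in J_m} |c_\alpha(f) z^\alpha| \le C \|f\|_{\mathcal P_{J_m}(X_n)}$ for all such $f$ (one has to check that this constant is indeed $>1$ unless $J_m$ has at most one element, so that the supremum defining $K_m$ is a genuine supremum over $(0,1)$; this is routine via a Kahane–Salem–Zygmund-type lower bound or simply by evaluating at a rank-one point, and I would relegate it to a short remark).

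Next I would identify $\sup_{z \in B_{X_n}} \sum_{\alpha \in J_m} |c_\alpha(f) z^\alpha|$ with the unconditional norm. Indeed, for $f = \sum_{\alpha \in J_m} c_\alpha(f) z^\alpha$ we have, using the notation $e_\alpha(z) = z^\alpha$ of Section~\ref{index-sets}, the elementary identity
\[
\sup_{z \in B_{X_n}} \sum_{\alpha \in J_m} |c_\alpha(f)|\,|z^\alpha| \;=\; \sup_{(\xi_\alpha)_{\alpha \in J_m} \subset \mathbb{T}} \Big\| \sum_{\alpha \in J_m} \xi_\alpha c_\alpha(f)\, e_\alpha \Big\|_{\mathcal P_{J_m}(X_n)}\,,
\]
which holds because for each fixed $z \in B_{X_n}$ one may choose unimodular $\xi_\alpha$ aligning the phases of $\xi_\alpha c_\alpha(f) z^\alpha$, and conversely every choice of signs only rotates coefficients and hence does not increase the supremum over $B_{X_n}$. (Here $X_n$ being a Banach lattice is used, exactly as in \eqref{unconditionality}: the monomial basis is $1$-unconditional in a suitable sense, and the reformulation \eqref{unconditionality} of $\boldsymbol{\chimon}$ applies to the monomial basis $(z^\alpha)_{\alpha \in J_m}$ of $\mathcal P_{J_m}(X_n)$.) Taking the supremum over all $f$ with $\|f\|_{\mathcal P_{J_m}(X_n)} \le 1$ of the left-hand side gives precisely the best constant $C$ in the displayed inequality above, while the right-hand side — again after taking the supremum over normalized $f$ — is exactly $\boldsymbol{\chimon}(\mathcal P_{J_m}(X_n))$ by the reformulation~\eqref{unconditionality}. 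Combining, $C = \boldsymbol{\chimon}(\mathcal P_{J_m}(X_n))$, so $K_m(B_{X_n},J)^m = 1/\boldsymbol{\chimon}(\mathcal P_{J_m}(X_n))$, which is the claim.

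The main obstacle here is not a deep one — it is making sure the phase-alignment identity and the passage from the $\sup_{z}$-formulation to the basis-constant formulation are stated cleanly in the complex setting, and that the supremum over $r \in (0,1)$ in the definition of $K_m(B_{X_n},J)$ is actually attained as the reciprocal $m$-th root (i.e.\ that $\boldsymbol{\chimon}(\mathcal P_{J_m}(X_n)) \ge 1$ always, with equality only in trivial cases, so the candidate value lies in $(0,1]$ and the sup is consistent with the constraint $r<1$). I would handle the first by citing the reformulation \eqref{unconditionality} verbatim and the second by a one-line observation that $\boldsymbol{\chimon} \ge 1$ (take all $\xi_\alpha = 1$) together with the convention that when $|J_m| \le 1$ both sides equal $1$. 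Beyond that, the proof is essentially a bookkeeping exercise in homogeneity, and no step should require more than a few lines.
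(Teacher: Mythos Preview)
Your proof is correct and takes essentially the same approach as the paper: both hinge on the identity $\sup_{z\in B_{X_n}}\sum_{\alpha\in J_m}|c_\alpha(f)z^\alpha|=\sup_{(\xi_\alpha)\subset\mathbb{T}}\bigl\|\sum_\alpha\xi_\alpha c_\alpha(f)e_\alpha\bigr\|_{\mathcal P_{J_m}(X_n)}$ (the paper splits it into two inequalities, you state it as one identity) combined with $m$-homogeneity. Your aside about needing a Kahane--Salem--Zygmund bound to ensure the constant exceeds $1$ is unnecessary---as you note later yourself, $\boldsymbol{\chimon}\ge 1$ is trivial by taking all $\xi_\alpha=1$, and when $\boldsymbol{\chimon}=1$ every $r<1$ works so the supremum is $1$, consistent with the formula.
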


\begin{proof}
    		Let $P \in \Pp_{J_m}( X_n)$ with monomial coefficients $c_\alpha(P), \,\alpha \in J_m$. Then for any set $( \theta_\alpha )_{\alpha \in J_m}$ of signs, we have
		\begin{align*}
		\big\| \dis\sum_{\alpha \in J_m} \theta_\alpha c_\alpha(P) z^\alpha \big\|_{\Pp_m( {X_n})}
		& \le \big\| \dis\sum_{\alpha \in J_m} |c_\alpha(P) z^\alpha| \big\|_{\Pp_m( {X_n})} \\
		& = \frac{1}{K_m(B_{X_n},J)^m} \big\| \dis\sum_{\alpha \in J_m} |c_\alpha(P)  (K_m(B_{X_n},J)z)^\alpha| \big\|_{\Pp_m( {X_n})}   \\
		& \le \frac{1}{K_m(B_{X_n},J)^m} \| P \|_{\PP_m( {X_n})},
		\end{align*}
		which leads to $ \boldsymbol{\chimon}(\Pp_{J_m}( {X_n}))^{1/m} \le \frac{1}{K_m(B_{X_n},J)}$. Conversely, take $\theta_\alpha = \frac{\overline{c_\alpha(P)}}{|c_\alpha(P)|}$. Then we have
		\begin{align*}
		\big\| \dis\sum_{\alpha \in J_m} |c_\alpha(P) z^\alpha| \big\|_{\PP_m( {X_n})}
		 = \big\| \dis\sum_{\alpha \in J_m} \theta_\alpha c_\alpha(P) z^\alpha \big\|_{\PP_m( X)}
		 \le \boldsymbol{\chimon}(\Pp_{J_m}( {X_n})) \| P \|_{\PP_m( {X_n})},
		\end{align*}
		or equivalently,
		\[
		\dis\sup_{z \in B_{X_n}} \dis\sum_{\alpha \in J_m} \Big|c_{\alpha} \left( \frac{1}{\boldsymbol{\chimon}(\Pp_{J_m}( {X_n}))^{1/m}} z\right) ^\alpha\Big|  \le \| P \|_{\PP_{J_m}( {X_n})},
		\]
		which implies $ \frac{1}{ \boldsymbol{\chimon}(\Pp_{J_m}( {X_n}))^{1/m} } \le K_m(B_{X_n},J) $.
	\end{proof}

\smallskip

\smallskip

For the full index set $J \subset \NN_0^n$ the following result  was first observed in \cite{defant2003bohr} (see also \cite[Proposition~19.4]{defant2019libro}). Since this (almost obvious)
extension is crucial for our purposes, we for the sake of completeness include its proof.

	\begin{theorem}\label{thm: Bohr vs unc}
Let ${X_n} = (\CC^n, \| \cdot \|)$ be a  Banach lattice and  $J \subset \NN_0^n$ an index set. Then
		\[
		\frac{1}{3} \inf_{ m \in \mathbb{N} } K_m(B_{X_n},J)  \le K(B_{X_n},J)  \le \inf_{m \in \mathbb{N}} K_m(B_{X_n},J).
		\]
	\end{theorem}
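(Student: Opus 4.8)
The statement relates the ``full'' Bohr radius $K(B_{X_n},J)$ to the infimum over $m$ of the $m$-homogeneous Bohr radii $K_m(B_{X_n},J)$, and the strategy is entirely parallel to the classical case $J=\mathbb{N}_0^n$ treated in \cite[Proposition~19.4]{defant2019libro}. The upper estimate is immediate: since every $m$-homogeneous polynomial in $\mathcal{P}_{J_m}(X_n)$ is in particular an element of $H^J_\infty(B_{X_n})$ (with all non-$J_m$ coefficients vanishing), any radius $r$ that works for the whole space $H^J_\infty(B_{X_n})$ also works for each homogeneous piece; hence $K(B_{X_n},J)\le K_m(B_{X_n},J)$ for every $m$, and taking the infimum over $m$ gives the right-hand inequality.

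\textbf{Lower estimate.} For the left-hand inequality I would argue as follows. Set $K:=\inf_m K_m(B_{X_n},J)$ and fix $f\in H^J_\infty(B_{X_n})$ with $\|f\|_\infty\le 1$; write its monomial expansion $f=\sum_{m\ge 0} P_m$ where $P_m=\sum_{\alpha\in J_m} c_\alpha(f)z^\alpha\in\mathcal{P}_{J_m}(X_n)$ is the $m$-homogeneous part. By the Cauchy estimates (Proposition~\ref{Cauchy}, i.e.\ $\|\mathbf{Q}_{J,J_m}\colon \mathcal{P}_J(X_n)\to\mathcal{P}_{J_m}(X_n)\|=1$ applied here after truncation) we have $\|P_m\|_{\mathcal{P}_m(X_n)}\le\|f\|_\infty\le 1$ for all $m$. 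Now, by Proposition~\ref{prop: Km vs uncond in Pm}, $K_m(B_{X_n},J)=\boldsymbol{\chimon}(\mathcal{P}_{J_m}(X_n))^{-1/m}$, and hence for every $z\in B_{X_n}$ and every $m$,
\[
\sum_{\alpha\in J_m}|c_\alpha(f)\,z^\alpha|\le \boldsymbol{\chimon}(\mathcal{P}_{J_m}(X_n))\,\|P_m\|_{\mathcal{P}_m(X_n)}\,\sup_{w\in B_{X_n}}\Big|\sum_{\alpha\in J_m}\tfrac{c_\alpha(f)}{\|P_m\|}w^\alpha\Big|\le \frac{1}{K_m(B_{X_n},J)^m}\le\frac{1}{K^m}\,.
\]
Replacing $z$ by $rz$ with $0<r<K$ (or rather: by a point in $rB_{X_n}$, using homogeneity of $P_m$), the $m$-th block contributes at most $(r/K)^m$. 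Summing the geometric series over $m\ge 0$ yields
\[
\sum_{\alpha\in J}|c_\alpha(f)(rz)^\alpha|=\sum_{m\ge 0}\sum_{\alpha\in J_m}|c_\alpha(f)(rz)^\alpha|\le\sum_{m\ge 0}\Big(\frac{r}{K}\Big)^m=\frac{1}{1-r/K}\,,
\]
which is $\le 1$ exactly when $r/K\le 1/2$, i.e.\ $r\le K/2$. This already gives $K(B_{X_n},J)\ge\tfrac12 K$, which is slightly better than $\tfrac13$; to recover the stated constant $\tfrac13$ one uses the standard sharper bookkeeping (peeling off the constant term $P_0$, for which $|c_0(f)|\le 1$, and noting that if $|c_0(f)|=t$ then $\|P_m\|\le 1-t^2$ for $m\ge 1$ by a subordination/Schwarz-type argument as in the one-variable Bohr theorem and its multivariate version), which tightens the geometric-series estimate and yields $r\le K/3$. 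Either way, one concludes $K(B_{X_n},J)\ge\tfrac13\inf_m K_m(B_{X_n},J)$.

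\textbf{Main obstacle.} The only genuinely delicate point is the passage from the crude constant $\tfrac12$ to the sharp-looking $\tfrac13$: this requires the Schwarz--Pick type estimate $\|P_m\|\le 1-|c_0(f)|^2$ for the higher homogeneous parts of a function bounded by $1$, which in the several-variables/Banach-space setting follows by restricting $f$ to one-dimensional complex slices through the origin and invoking the classical coefficient inequality for bounded holomorphic functions on $\mathbb{D}$, then taking suprema over slices and over the unit ball of $X_n$. Everything else is a direct combination of Proposition~\ref{prop: Km vs uncond in Pm}, the Cauchy inequalities from Proposition~\ref{Cauchy}, and the summation of a geometric series; I would present the $\tfrac13$-refinement exactly as in the cited reference, since the argument transfers verbatim once the homogeneous decomposition and the constant $K=\inf_m K_m$ are in place.
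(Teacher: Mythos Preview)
Your overall approach---monotonicity for the upper bound, and for the lower bound decomposing $f$ into homogeneous pieces, bounding each via the definition of $K_m$, and summing a geometric series---matches the paper's exactly. The paper states the Wiener-type estimate $\|P_m\|\le 1-|c_0|^2$ as a separate lemma (Lemma~\ref{lem: Wiener general}, proved by restriction to one-dimensional complex slices, just as you describe) and then computes
\[
|c_0|+\sum_{m\ge 1}\frac{1}{3^m}\bigl(1-|c_0|^2\bigr)=|c_0|+\frac{1-|c_0|^2}{2}\le 1.
\]

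However, your intermediate claim that the \emph{crude} Cauchy estimate $\|P_m\|\le 1$ alone already yields $K(B_{X_n},J)\ge K/2$ is incorrect. Your displayed sum $\sum_{m\ge 0}(r/K)^m=1/(1-r/K)$ is never $\le 1$ for any $r>0$; and even if you separate the constant term and write $|c_0|+\sum_{m\ge 1}(r/K)^m$, this fails to be $\le 1$ for any fixed $r>0$ once $|c_0|$ is sufficiently close to $1$. Wiener's lemma is not a refinement that sharpens $1/2$ to $1/3$---it is the only reason a positive lower bound holds at all (this is already the situation in the classical one-variable Bohr theorem). So your proof as written has a gap at precisely the point you flag as ``delicate,'' but for a different reason than you suggest: once you discard the spurious $K/2$ step and go directly to the Wiener-based computation, your argument is correct and coincides with the paper's.
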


The proof needs a lemma which, for holomorphic functions defined on the unit disc, is due to F.~Wiener (in \cite{bohr1914theorem} Bohr includes a proof of this result for which he credits Wiener, see also \cite[Lemma 8.4]{defant2019libro}).

\begin{lemma}[Wiener's Lemma]\label{lem: Wiener}
Given $f $ an holomorphic function on the open unit disc $\DD$ with $\| f \|_{\DD} \le 1 $ and Taylor expansion $ f(z)= \sum_{n \ge 0} c_n z^n$ it holds $ |c_n | \le 1 - | c_0|^2 $ for every $n \ge 1$.
\end{lemma}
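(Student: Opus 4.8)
The plan is to prove Wiener's Lemma by a two-step reduction: first reduce the general coefficient $c_n$ to the first coefficient via a root-of-unity averaging ("filtering") argument, and then settle the case $n=1$ by composing $f$ with a disc automorphism and applying the Schwarz lemma.

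\textbf{Reduction to $n=1$.} Fix $n\ge1$ and put $\omega=e^{2\pi i/n}$. Consider
\[
F(z):=\frac1n\sum_{j=0}^{n-1}f(\omega^j z),\qquad z\in\DD .
\]
Since $|\omega^j z|=|z|<1$, the triangle inequality gives $\|F\|_{\DD}\le1$; and since $\frac1n\sum_{j=0}^{n-1}\omega^{jm}$ equals $1$ when $n\mid m$ and $0$ otherwise, the Taylor expansion of $F$ is $F(z)=\sum_{\ell\ge0}c_{\ell n}z^{\ell n}$. Hence the power series $\widetilde f(w):=\sum_{\ell\ge0}c_{\ell n}w^{\ell}$ converges for $|w|<1$ (as $F$ converges absolutely on $\DD$ and the $n$-th power map sends $\DD$ onto $\DD$), defines a holomorphic function on $\DD$ with $F(z)=\widetilde f(z^n)$, and satisfies $\|\widetilde f\|_{\DD}=\|F\|_{\DD}\le1$, $\widetilde f(0)=c_0$, and first Taylor coefficient exactly $c_n$. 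Thus it suffices to prove the claim for $n=1$: if $g$ is holomorphic on $\DD$ with $\|g\|_{\DD}\le1$, then its first Taylor coefficient is bounded in modulus by $1-|g(0)|^2$.

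\textbf{The case $n=1$.} Write $a:=f(0)=c_0$ (relabelling $\widetilde f$ as $f$). If $f$ is constant, then $c_1=0$ and there is nothing to prove; otherwise the maximum principle forces $|f(z)|<1$ for all $z\in\DD$, so in particular $|a|<1$ and $f$ maps $\DD$ into $\DD$. Let
\[
\varphi(w)=\frac{a-w}{1-\overline{a}\,w}
\]
be the associated automorphism of $\DD$; then $g:=\varphi\circ f$ is holomorphic, maps $\DD$ into $\DD$, and $g(0)=\varphi(a)=0$, so the Schwarz lemma yields $|g'(0)|\le1$. A direct computation gives $\varphi'(w)=\dfrac{|a|^2-1}{(1-\overline a\,w)^2}$, hence $\varphi'(a)=-\dfrac{1}{1-|a|^2}$ and therefore $g'(0)=\varphi'(a)f'(0)=-\dfrac{c_1}{1-|a|^2}$. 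Consequently $|c_1|=(1-|a|^2)|g'(0)|\le1-|a|^2$, as required.

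The argument is elementary; the only points needing a little care are verifying that $\widetilde f$ is genuinely well defined and holomorphic on all of $\DD$ with the same sup-norm bound as $F$ (this rests on the surjectivity of $z\mapsto z^n$ on $\DD$), and dealing with the degenerate case where $f$ is constant, which is dispatched by the maximum principle. There is no substantial obstacle: the root-of-unity filtering reduces everything to the classical one-variable Schwarz lemma, and for that case the sharp inequality $|c_1|\le1-|c_0|^2$ is precisely what Schwarz's lemma applied to $\varphi\circ f$ delivers.
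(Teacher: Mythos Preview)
Your proof is correct. The paper does not actually supply a proof of this lemma; it merely states it and refers the reader to Bohr's 1914 paper and to \cite[Lemma~8.4]{defant2019libro}. The argument you give---root-of-unity averaging to reduce $c_n$ to the first coefficient of an auxiliary function $\widetilde f$, followed by the Schwarz lemma applied to $\varphi\circ f$ with $\varphi$ the disc automorphism sending $c_0$ to $0$---is precisely the classical proof that those references contain, so there is nothing to compare.
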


The following extension for holomorphic functions on unit balls of finite dimensional lattices is almost immediate and follows the path given in \cite[Chapter 8]{defant2019libro}.

\begin{lemma}[Generalized Wiener's Lemma]\label{lem: Wiener general}
  Let ${X_n} = (\CC^n, \| \cdot \|)$ be a  Banach lattice, and $f: B_{X_n} \to \mathbb{C}$ a holomorphic function such that $\| f \|_{B_{X_n}} \le 1$ and with Taylor series  $f = c_0 + \dis\sum_{m=1}^\infty P_m $. Then for every $m \in \zN$ it holds
  \begin{equation*}
      	\| P_m \|_{\PP_m( X)} \le 1 - |c_0|^2\,.
	\end{equation*}
		\end{lemma}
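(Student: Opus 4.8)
The plan is to reduce the statement directly to the one‑variable \emph{Wiener's Lemma} (the Lemma stated immediately above) by slicing $f$ along complex lines through the origin. Fix $m \in \mathbb{N}$ and a point $z \in B_{X_n}$, and consider the function $g \colon \mathbb{D} \to \mathbb{C}$ defined by $g(\lambda) := f(\lambda z)$. First I would check that $g$ is well defined and holomorphic on the open unit disc: for $|\lambda| < 1$ one has $\|\lambda z\| = |\lambda|\,\|z\| \le |\lambda| < 1$, so $\lambda z \in B_{X_n}$, and $g$ is the composition of $f$ with the (entire) affine map $\lambda \mapsto \lambda z$, hence holomorphic. Moreover
\[
\|g\|_{\mathbb{D}} = \sup_{|\lambda|<1} |f(\lambda z)| \le \sup_{w \in B_{X_n}} |f(w)| = \|f\|_{B_{X_n}} \le 1 .
\]

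Next I would identify the Taylor coefficients of $g$. Writing $f = c_0 + \sum_{k\ge 1} P_k$ with $P_k \in \mathcal{P}_k(X_n)$, and using that $P_k$ is $k$-homogeneous so that $P_k(\lambda z) = \lambda^k P_k(z)$, the expansion of $g$ reads $g(\lambda) = c_0 + \sum_{k \ge 1} P_k(z)\,\lambda^k$. Here one should invoke the standard fact from (finite‑dimensional) holomorphy that the monomial/Taylor series of a bounded holomorphic function on a ball converges uniformly on every smaller ball, which legitimises rearranging it into a genuine power series in $\lambda$ with the homogeneous parts $P_k(z)$ as coefficients; this is the only point requiring a small justification, and it is routine. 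In particular the constant term of $g$ equals $c_0$ and its $m$‑th Taylor coefficient equals $P_m(z)$.

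Then I would apply the one‑variable Wiener's Lemma to $g$ (which satisfies $\|g\|_{\mathbb{D}} \le 1$), obtaining
\[
|P_m(z)| \le 1 - |c_0|^2 .
\]
Since $z \in B_{X_n}$ was arbitrary, taking the supremum over $z \in B_{X_n}$ yields $\|P_m\|_{\mathcal{P}_m(X_n)} \le 1 - |c_0|^2$, as claimed. I do not expect any genuine obstacle here: the argument is a direct "slicing" reduction, and the only mild technical care goes into the uniform convergence of the homogeneous expansion along the line $\lambda \mapsto \lambda z$, which already underlies the identification $\mathcal{P}_m(X_n) = H^{\Lambda(m,n)}_\infty(B_{X_n})$ used elsewhere in the text.
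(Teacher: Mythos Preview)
Your proposal is correct and follows essentially the same approach as the paper: fix $z \in B_{X_n}$, slice to the one-variable function $g(\lambda) = f(\lambda z)$, read off that its Taylor coefficients are $c_0$ and $P_k(z)$, apply the classical Wiener Lemma, and take the supremum over $z$. The paper's proof is slightly terser (it does not dwell on the convergence of the homogeneous expansion), but the argument is identical.
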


\begin{proof}
Fix $z \in B_{X_n}$ and define $ g : \DD \to \CC$ the holomorphic function given by
\[
g(u) = f(u \cdot z) =  c_0 + \sum_{m \ge 1} P_m(z) u^m.
\]
Notice that $\| g \|_\DD \le 1 $ as $\| f \|_{B_{X_n}} \le 1$. Then, given $m \in \NN$, by Wiener's Lemma in \ref{lem: Wiener} it follows
\[
|P_m(z)| \le 1 - |c _0|^2, 
\]
and taking supremum over $z \in B_{X_n}$ the proof is complete.
  \end{proof}

  \smallskip
	
	\begin{proof}[Proof of Theorem~\ref{thm: Bohr vs unc}]
		From inequality \eqref{rem: bohr radii monotony} we know $K(B_{X_n},J) \le \inf_{m \in \mathbb{N} } K_m(B_{X_n},J)$.
		In order to check  the left hand side inequality take some  $f \in H_\infty^J(B_{X_n})$. We assume without loss of generality that $ \| f \|_{B_{X_n}} = \dis\sup_{z \in B_{X_n}}|f(z)| = 1$, and consider its  Taylor series decomposition  $f = \dis\sum_{m=0}^\infty P_m$
with  $m$-homogeneous polynomials $P_m$. For every $m \in \zN_0$ we have $ P_m(z) = \dis\sum_{\alpha \in J_m} c_\alpha(f) z^\alpha$, thus taking $\rho = \inf_{m \in \mathbb{N} } K_m(B_{X_n}, J) $, it follows
		\begin{equation*}\label{desig hom}
		\Big\| \dis\sum_{\alpha \in J_m} |c_\alpha(f)| (\rho z)^\alpha \Big\|_{\Pp_m( {X_n})} \le  \Big\| \dis\sum_{\alpha \in J_m} c_\alpha(f) z^\alpha \Big\|_{\Pp_m( {X_n})}.
		\end{equation*}
				Using Lemma \ref{lem: Wiener general} we have that
		\begin{align*}
		\dis\sum_{m=0}^\infty \dis\sum_{\alpha \in J_m} |c_\alpha(f)| \left( \frac{\rho w}{3 } \right)^\alpha
		& \le |c_0(f)| + \dis\sum_{m=1}^\infty  \frac{1}{3^m} \Big\| \dis\sum_{\alpha \in J_m} c_\alpha(f) z^\alpha \Big\|_{\PP_m( {X_n})} \\
		& \le |c_0(f)| + \dis\sum_{m=1}^\infty  \frac{1}{3^m}(1 - |c_0(f)|^2)
\le |c_0(f)| + \frac{1 - |c_0(f)|^2}{2}
 \le 1 ,
		\end{align*}
		where last inequality holds as $ |c_0(f)| \le \sup_{z \in B_{X_n}} |f(z)| = 1$. This shows that $\frac{\rho}{3} \le K(B_{X_n},J) $,  completing the proof.
	\end{proof}

\smallskip

As an application we add two  interesting examples. For  $X_n = \ell_\infty^n$ the first example  goes back to \cite{aron1989analytic} (see  also \cite[Proposition~9.7]{defant2019libro}), and the second one is an  extension of a result from \cite[Proposition~3.1 and Example~3.4]{carando2014Dirichlet} from $X_n = \ell_\infty^n$ to an arbitrary Banach sequence lattice $X_n$.

\smallskip

\begin{example}
  Let $X$ be a Banach sequence lattice and $J = \big\{ me_k\colon  k,m \in \mathbb{N}\big \}$. Then for all $m,n$
  \[
  \text{$K_m(B_{X_n},J)= 1$\,\,\,\,\,\,and \,\,\,\,\,\,$K(B_{X_n},J)= \frac{1}{3}$\,.}
  \]
  \end{example}

\begin{proof} We start using  that for each $m$
\[
\boldsymbol{\chimon} (\mathcal{P}_{J_m}(\ell_\infty^n)) =1\,,
\]
and hence we obtain from Proposition~\ref{prop: Km vs uncond in Pm} and the monotonicity of Bohr radii from \eqref{rem: bohr radii monotony} that
  \[
  1 = K(B_{\ell_\infty^n},J_m)
  \leq
  K(B_{X_n},J_m) \leq 1\,,
  \]
  the first claim.  Then the  second claim is a simple consequence of Theorem~\ref{thm: Bohr vs unc}:
  \begin{equation*}
    \frac{1}{3}
    =
   \frac{1}{3}  \inf_{ m \in \mathbb{N} } K_m(B_{X_n},J) \leq K(B_{X_n},J) \leq \frac{1}{3} \,. \qedhere
  \end{equation*}
\end{proof}

\smallskip

\section{Homogenizing sequences} The aim of this section is to emphasize the importance of the $m$-homogeneous Bohr radius  for  the understanding of the Bohr radius itself. Given a Banach sequence lattice   $X$ and 
an index set $J \subset \mathbb{N}_0^{(\mathbb{N})}$,
the decay of $K(B_{X_n},J)$ in the dimension $n$ equals asymptotically the decay of  the infimum in $m$ of all $m$-homogeneous Bohr radii $K_m(B_{X_n},J)$.
This is the outcome of  Theorem~\ref{thm: Bohr vs unc}.
Here we intend to go one step further. We show that $K(B_{X_n},J)$,  
when the number of variables $n$ goes to infinity, behaves asymptotically 
like $K_{\pmb{\mathfrak{m}}(n)}(B_{X_n},J)$, where 
$(\pmb{\mathfrak{m}}(n))$ is a sequence in $n$ depending on $X$
and $J$.
Understanding how this homogeneity degree depends on $n$ has been the key to unravel the understating of the Bohr radius in most of the known examples.

Let $X$ be a Banach sequence lattice and $J \subset \mathbb{N}_0^{(\mathbb{N})}$ an index set. We say that a  sequence
$\pmb{\mathfrak{m}} =(\pmb{\mathfrak{m}}(n))_{n \in \mathbb{N}}$ of natural numbers homogenizes
the sequence $(K(B_{X_n},J))_{n \in \mathbb{N}}$ of Bohr radii whenever
\begin{equation}\label{breakpoint}
    K(B_{X_n},J)
\sim K_{\pmb{\mathfrak{m}}(n)}(B_{X_n},J).
  \end{equation}
    Given $X$ and $J$, let us first show existence of such  sequences.

\begin{proposition}\label{cor: exists breaking p.}
Let $X$ be a Banach sequence lattice and $J \subset \mathbb{N}_0^{(\mathbb{N})}$ an index set.
 Then the sequence
  \begin{align*}\label{break}
\pmb{\mathfrak{m}}(n)=\min \big\{ k :  \inf_{m \in \mathbb{N}} K_m(B_{X_n},J) = K_{k}(B_{X_n}, J) \big\}\,,\,\,\, n \in \mathbb{N}
    \end{align*}
   is a homogenizing sequence for $(K(B_{X_n},J))_{n \in \mathbb{N}}$.
\end{proposition}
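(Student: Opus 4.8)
The statement is almost immediate once we combine Theorem~\ref{thm: Bohr vs unc} with the very definition of $\pmb{\mathfrak{m}}(n)$, so the ``proof'' is really just an unwinding of notation; the subtlety is making sure the equivalence $\sim$ is meant with constants that are genuinely independent of $n$. First I would fix $n$ and observe that by construction
\[
K_{\pmb{\mathfrak{m}}(n)}(B_{X_n},J) = \inf_{m \in \mathbb{N}} K_m(B_{X_n},J)\,,
\]
since $\pmb{\mathfrak{m}}(n)$ is defined to be (the smallest) index at which the infimum over $m$ is attained. One should briefly justify that this infimum is in fact attained: the Bohr radii $K_m(B_{X_n},J)$ all lie in the interval $(0,1]$, and for the index sets $J$ under consideration the polynomials of degree $>$ some bound contribute nothing to $J^n$ when $X_n$ is finite dimensional, so effectively the infimum is over finitely many positive numbers for each fixed $n$; hence the minimum defining $\pmb{\mathfrak{m}}(n)$ makes sense.

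Next I would simply quote Theorem~\ref{thm: Bohr vs unc}, which gives for every $n$
\[
\tfrac{1}{3}\,\inf_{m \in \mathbb{N}} K_m(B_{X_n},J) \;\le\; K(B_{X_n},J) \;\le\; \inf_{m \in \mathbb{N}} K_m(B_{X_n},J)\,.
\]
Substituting the identity from the previous step, this reads
\[
\tfrac{1}{3}\,K_{\pmb{\mathfrak{m}}(n)}(B_{X_n},J) \;\le\; K(B_{X_n},J) \;\le\; K_{\pmb{\mathfrak{m}}(n)}(B_{X_n},J)\,,
\]
and since the constants $\tfrac13$ and $1$ do not depend on $n$, this is exactly the claimed asymptotic equivalence
\[
K(B_{X_n},J) \;\sim\; K_{\pmb{\mathfrak{m}}(n)}(B_{X_n},J)\,,
\]
i.e.\ $\pmb{\mathfrak{m}}$ is a homogenizing sequence in the sense of \eqref{breakpoint}.

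The only step where any care is needed is the first one: one must check that Theorem~\ref{thm: Bohr vs unc} applies (it requires $X_n$ to be a Banach lattice, which it is as the $n$th section of a Banach sequence lattice, and an index set $J^n \subset \mathbb{N}_0^n$) and that the infimum over $m$ is a minimum so that $\pmb{\mathfrak{m}}(n)$ is well defined. I do not anticipate any genuine obstacle here — the content of the result is entirely packaged inside Theorem~\ref{thm: Bohr vs unc}, and Proposition~\ref{cor: exists breaking p.} is essentially a reformulation designed to make later concrete computations (choosing an explicit candidate sequence $\pmb{\mathfrak{m}}$, e.g.\ of the form $\pmb{\mathfrak{m}}(n) \asymp \log n$ for various $X$ and $J$) conceptually cleaner.
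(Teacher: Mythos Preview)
Your overall structure is right and matches the paper: show the infimum over $m$ is actually a minimum (so that $\pmb{\mathfrak{m}}(n)$ is well defined), then plug into Theorem~\ref{thm: Bohr vs unc}. But your justification that the infimum is attained is wrong. You write that ``polynomials of degree $>$ some bound contribute nothing to $J^n$ when $X_n$ is finite dimensional, so effectively the infimum is over finitely many positive numbers.'' That confuses the number of variables with the degree: even for fixed $n$, the set $J^n = J\cap\mathbb N_0^n$ can contain multi-indices of arbitrarily large order (take $J=\mathbb N_0^{(\mathbb N)}$, so $J^n=\mathbb N_0^n$), and then $J_m^n\neq\emptyset$ for every $m$. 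So the infimum is genuinely over infinitely many nontrivial values and nothing you said forces it to be attained.

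The paper fills this gap with Lemma~\ref{prop: m homoge radii}, which shows $\lim_{m\to\infty}K_m(B_{X_n},J)=1$ for each fixed $n$; this follows from the crude bound $\boldsymbol{\chimon}(\mathcal P_{J_m^n}(X_n))\le|\Lambda(m,n)|\le c(n)\,m^{(n-1)/2}$ and Proposition~\ref{prop: Km vs uncond in Pm}. Once you know the sequence tends to $1$, either the infimum equals $1$ (and then every $K_m=1$, so any $k$ works) or it is $<1$, in which case only finitely many terms can lie below, say, $(1+\inf)/2$, and the minimum is attained among those. With that correction your proof is complete and coincides with the paper's.
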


In view of Theorem~\ref{thm: Bohr vs unc}, all we need to show is the existence of a natural number $k$ for which
$\inf_{m \in \mathbb{N}} K_m(B_{X_n},J) = K_{k}(B_{X_n})$.
To prove this we start with the following simple lemma.

\begin{lemma}\label{prop: m homoge radii}
Let $X_n = (\CC^n, \| \cdot \|)$ be a Banach sequence lattice and $J \subset \NN_0^n$ an index set. Then for every~$n$
\[ \lim_{m \to \infty} K_m(B_{X_n},J) = 1.
\]
\end{lemma}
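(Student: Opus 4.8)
\textbf{Plan of proof for Lemma~\ref{prop: m homoge radii}.} The idea is to use the identification $K_m(B_{X_n},J)=\boldsymbol{\chimon}(\mathcal{P}_{J_m}(X_n))^{-1/m}$ from Proposition~\ref{prop: Km vs uncond in Pm}, which reduces the claim to showing that $\boldsymbol{\chimon}(\mathcal{P}_{J_m}(X_n))^{1/m}\to 1$ as $m\to\infty$, for $n$ fixed. Since $J_m=J\cap\Lambda(m,n)$ may eventually be empty (in which case $\mathcal{P}_{J_m}(X_n)=\{0\}$ and the convention makes $K_m=1$ trivially), the interesting case is when $J_m\ne\emptyset$ for infinitely many $m$, and there it suffices to bound $\boldsymbol{\chimon}(\mathcal{P}_{J_m}(X_n))$ subexponentially in $m$, i.e.\ by $C^{o(m)}$ or even $C(n)^m$ with $C(n)$ independent of $m$ — actually we need the $m$-th root to tend to $1$, so an upper bound of the form $\boldsymbol{\chimon}(\mathcal{P}_{J_m}(X_n))\le a_m$ with $a_m^{1/m}\to 1$ suffices, together with the trivial lower bound $\boldsymbol{\chimon}\ge 1$.

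First I would invoke the Kadets–Snobar-type bound from Theorem~\ref{conny2}, namely $\boldsymbol{\chimon}(\mathcal{P}_{J}(X_n))\le C^{\sqrt{m\log m}}\,|J|^{\frac{m-1}{2m}}$ for any index set $J$ of degree $m$ in a Banach lattice $X_n$; applied with $J=J_m\subset\Lambda(m,n)$ this gives $\boldsymbol{\chimon}(\mathcal{P}_{J_m}(X_n))\le C^{\sqrt{m\log m}}\,|\Lambda(m,n)|^{\frac{m-1}{2m}}$. Then I would use the cardinality estimate \eqref{cardi}, $|\Lambda(m,n)|=\binom{n+m-1}{m}\le e^m(1+\tfrac{n}{m})^m$, but more usefully for fixed $n$ the bound $\binom{n+m-1}{m}=\binom{n+m-1}{n-1}\le (m+n)^{n-1}$, which is polynomial in $m$ of fixed degree $n-1$. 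Hence $|\Lambda(m,n)|^{\frac{m-1}{2m}}\le (m+n)^{\frac{(n-1)(m-1)}{2m}}\le (m+n)^{\frac{n-1}{2}}$, a quantity that grows only polynomially in $m$ (degree $(n-1)/2$), so its $m$-th root tends to $1$. Combining, $\boldsymbol{\chimon}(\mathcal{P}_{J_m}(X_n))^{1/m}\le C^{\sqrt{(\log m)/m}}\,(m+n)^{\frac{n-1}{2m}}\longrightarrow 1$ as $m\to\infty$, while $\boldsymbol{\chimon}(\mathcal{P}_{J_m}(X_n))^{1/m}\ge 1$ always; the squeeze gives $\boldsymbol{\chimon}(\mathcal{P}_{J_m}(X_n))^{1/m}\to 1$, hence $K_m(B_{X_n},J)\to 1$.

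There are essentially no serious obstacles here: the only points needing care are (i) handling the trivial/degenerate case where $\mathcal{P}_{J_m}(X_n)$ is one-dimensional or zero-dimensional (then $\boldsymbol{\chimon}=1$ directly), and (ii) making sure the hypercontractive Bohnenblust–Hille constant $C^{\sqrt{m\log m}}$ from Theorem~\ref{conny2} indeed has $m$-th root tending to $1$, which it does since $\sqrt{(\log m)/m}\to 0$. One could alternatively avoid the Bohnenblust–Hille machinery entirely and use the plain Kadets–Snobar bound $\boldsymbol{\chimon}(\mathcal{P}_{J_m}(X_n))\le\sqrt{|J_m|}\le\sqrt{|\Lambda(m,n)|}$ from \eqref{tincho}, whose $m$-th root is again $\big(\binom{n+m-1}{m}\big)^{1/(2m)}\le(m+n)^{(n-1)/(2m)}\to 1$; this is the cleanest route and I would present it this way, keeping Theorem~\ref{conny2} only as a remark. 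The main ``obstacle'', if any, is purely bookkeeping: ensuring $n$ is held fixed throughout so that the polynomial-in-$m$ growth of $|\Lambda(m,n)|$ (of degree $n-1$) is genuinely subexponential.
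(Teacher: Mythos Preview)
Your primary argument is correct and follows the same strategy as the paper: reduce via Proposition~\ref{prop: Km vs uncond in Pm} to showing $\boldsymbol{\chimon}(\mathcal{P}_{J_m}(X_n))^{1/m}\to 1$, then bound $\boldsymbol{\chimon}$ by something growing only polynomially in $m$ (for fixed $n$) and take $m$-th roots. The paper, however, avoids the Bohnenblust--Hille machinery of Theorem~\ref{conny2} entirely and uses the truly trivial bound $\boldsymbol{\chimon}(\mathcal{P}_{J_m}(X_n))\le |\Lambda(m,n)|$: since each monomial $z^\alpha$ and its coefficient functional satisfy $\|c_\alpha^*\|\,\|z^\alpha\|\le 1$ (Lemma~\ref{sup}), for $\|P\|\le 1$ one has $|c_\alpha(P)|\,\|z^\alpha\|\le 1$ for every $\alpha$, whence $\|\sum_\alpha \varepsilon_\alpha c_\alpha z^\alpha\|\le |J_m|$. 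This is cruder than your bound but more than sufficient, and it requires nothing beyond the definition.

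Your proposed ``cleanest route'', though, contains a genuine slip: the estimate \eqref{tincho} is $\boldsymbol{\lambda}(\mathcal{P}_J(X_n))\le\sqrt{|J|}$, a bound on the \emph{projection} constant coming from Kadets--Snobar, not on $\boldsymbol{\chimon}$. There is no general inequality $\boldsymbol{\chimon}\big((e_i);X\big)\le\sqrt{\dim X}$ for a prescribed basis (the summing basis of $\ell_\infty^N$ has unconditional constant of order $N$), so you cannot simply cite Kadets--Snobar here. Either keep your argument via \eqref{BBB}, or replace the alternative by the dimension bound $\boldsymbol{\chimon}\le |\Lambda(m,n)|$ as the paper does.
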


\begin{proof}
By Proposition \ref{prop: Km vs uncond in Pm} we have to prove that $\lim_{m \to \infty}\boldsymbol{\chimon}(\mathcal P_{J_m^n}(X_n))^{1/m} = 1$.  But using the  fact that all coefficient functionals on $\mathcal{P}_m(X_n)$  are normalized, we have the trivial estimate
\[
\boldsymbol{\chimon}(\mathcal P_{J_m^n}(X_n))  \leq | \Lambda(m,n) | = {n+m-1 \choose m} \leq c(n) m^{\frac{n-1}{2}}\,.
\]
which, taking $m$th roots, leads to the conclusion.
\end{proof}

\begin{proof}[Proof of Proposition~\ref{cor: exists breaking p.}]
Fixing a natural number    $n$, we distinguish two cases,
\[
\text
{
$A=\inf_{m \in \mathbb{N}} K_m(B_{X_n},J) = 1$ \,\,\,and \,\,\,$A = \inf_{m \in \mathbb{N}} K_m(B_{X_n},J)<1$\,.
}
\]
The case $A=1$ is the trivial one, since then all $K_m(B_{X_n},J) =1$.  If $A < 1$, then by Lemma~\ref{prop: m homoge radii},
there is
$m_0=m_0(n)$ such that
\[
A= \inf_{m \leq m_0} K_m(B_{X_n},J)\,,
\]
which completes the argument.
  \end{proof}

We finish with the main result of this section.

\begin{theorem} \label{fini}
Let $X$ be a Banach sequence lattice, $J \subset \mathbb{N}_0^{(\mathbb{N})}$ an index set, and
$(\pmb{\mathfrak{m}}(n))_n$ some  sequence which  homogenizes   $(K(B_{X_n},J))_n$. Then, uniformly in  $n$ and $d$,
    \begin{equation*}
    K(B_{X_n},J_{\leq d})
\sim
\begin{cases}
 K_{\pmb{\mathfrak{m}}(n)}(B_{X_n},J)
 &  \,\,\, \,\pmb{\mathfrak{m}}(n) \leq d\,\\[1mm]
  \inf_{m \leq d} K_{m}(B_{X_n},J)&    \,\,\, \,d <  \pmb{\mathfrak{m}}(n)\,.
 \end{cases}
\end{equation*}
\end{theorem}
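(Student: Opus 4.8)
The strategy is to combine the two general facts already at our disposal: Theorem~\ref{thm: Bohr vs unc}, which says $K(B_{X_n},J')\sim \inf_{m} K_m(B_{X_n},J')$ for any index set $J'$, applied here to $J'=J_{\leq d}$; and the hypothesis that $(\pmb{\mathfrak{m}}(n))_n$ homogenizes $(K(B_{X_n},J))_n$, i.e.\ $K(B_{X_n},J)\sim K_{\pmb{\mathfrak{m}}(n)}(B_{X_n},J)$, which combined with Theorem~\ref{thm: Bohr vs unc} for the full set $J$ also gives $K_{\pmb{\mathfrak{m}}(n)}(B_{X_n},J)\sim \inf_{m\in\mathbb N} K_m(B_{X_n},J)$. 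The key observation is that for $J'=J_{\leq d}$ we have $(J_{\leq d})_m = J_m$ whenever $m\le d$ and $(J_{\leq d})_m=\emptyset$ (so $K_m=1$) whenever $m>d$. Hence by Theorem~\ref{thm: Bohr vs unc},
\[
K(B_{X_n},J_{\leq d})\;\sim\;\inf_{m\in\mathbb N} K_m(B_{X_n},J_{\leq d})\;=\;\inf_{1\le m\le d} K_m(B_{X_n},J)\,,
\]
uniformly in $n$ and $d$ (the constant in Theorem~\ref{thm: Bohr vs unc} is the absolute $\tfrac13$, so there is genuinely no dependence on the parameters).

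First I would record the elementary index-set identities $(J_{\leq d})_m = J_m$ for $m\le d$ and $(J_{\leq d})_m=\emptyset$ for $m>d$, which are immediate from the definitions in Section~\ref{index-sets}, together with the trivial fact $K_m(B_{X_n},\emptyset)=1$ (an empty sum imposes no constraint). This already settles the display above and gives the case $d<\pmb{\mathfrak m}(n)$ directly: there the right-hand side of the claimed formula is $\inf_{m\le d}K_m(B_{X_n},J)$, which is exactly what we just obtained. So in that regime the statement is proved with no further work, and in fact with the explicit constant coming from Theorem~\ref{thm: Bohr vs unc}.

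Second, for the case $\pmb{\mathfrak m}(n)\le d$ I would argue that $\inf_{1\le m\le d}K_m(B_{X_n},J)\sim K_{\pmb{\mathfrak m}(n)}(B_{X_n},J)$, uniformly in $n$ and $d$. One inequality, $\inf_{1\le m\le d}K_m(B_{X_n},J)\le K_{\pmb{\mathfrak m}(n)}(B_{X_n},J)$, holds trivially since $\pmb{\mathfrak m}(n)\le d$ puts $\pmb{\mathfrak m}(n)$ in the range of the infimum. For the reverse, note $\inf_{1\le m\le d}K_m(B_{X_n},J)\ge \inf_{m\in\mathbb N}K_m(B_{X_n},J)$, and by Theorem~\ref{thm: Bohr vs unc} applied to $J$ together with the homogenizing hypothesis, $\inf_{m\in\mathbb N}K_m(B_{X_n},J)\gtrsim K(B_{X_n},J)\sim K_{\pmb{\mathfrak m}(n)}(B_{X_n},J)$, all with absolute constants. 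Chaining these gives $K(B_{X_n},J_{\leq d})\sim \inf_{1\le m\le d}K_m(B_{X_n},J)\sim K_{\pmb{\mathfrak m}(n)}(B_{X_n},J)$ in this regime, which is the first alternative of the claimed formula.

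The argument is essentially a bookkeeping exercise once Theorem~\ref{thm: Bohr vs unc} is in hand, so there is no serious obstacle; the only point requiring mild care is ensuring that all equivalence constants are genuinely uniform in both $n$ and $d$. This is automatic because every invocation of Theorem~\ref{thm: Bohr vs unc} contributes only the absolute factor $\tfrac13$, the index-set identities are exact, and the homogenizing hypothesis $K(B_{X_n},J)\sim K_{\pmb{\mathfrak m}(n)}(B_{X_n},J)$ is, by its very statement, uniform in $n$ (and carries no $d$ at all). I would close by remarking that the two cases can be merged into the single statement of the theorem since the right-hand side is precisely $\inf_{1\le m\le d}K_m(B_{X_n},J)$ in both regimes — it equals $K_{\pmb{\mathfrak m}(n)}(B_{X_n},J)$ up to constants exactly when $\pmb{\mathfrak m}(n)\le d$.
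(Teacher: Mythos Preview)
Your proof is correct and follows essentially the same approach as the paper: apply Theorem~\ref{thm: Bohr vs unc} to $J_{\leq d}$, use the identity $(J_{\leq d})_m=J_m$ for $m\le d$ (and $=\emptyset$ otherwise) to reduce to $\inf_{m\le d}K_m(B_{X_n},J)$, and then in the regime $\pmb{\mathfrak m}(n)\le d$ sandwich this infimum between $K_{\pmb{\mathfrak m}(n)}(B_{X_n},J)$ and $\inf_{m\in\mathbb N}K_m(B_{X_n},J)\sim K_{\pmb{\mathfrak m}(n)}(B_{X_n},J)$ via the homogenizing hypothesis. Your version is slightly more explicit about the empty-index-set step and the uniformity of the constants, but the argument is the same.
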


\begin{proof}
  By Theorem~\ref{thm: Bohr vs unc} we have that
  \begin{equation}\label{knapp}
     K(B_{X_n},J_{\leq d}) \sim \inf_{m \leq d} K_m(B_{X_n},J_{\leq d})=\inf_{m \leq d} K_m(B_{X_n},J)\,,
  \end{equation}
    uniformly in $d,n$. So the second equivalence is true for any $d$. For $\pmb{\mathfrak{m}}(n) \leq d$ note that by the definition  for homogenizing sequences and  Theorem~\ref{thm: Bohr vs unc} we have
    \[
    K_{\pmb{\mathfrak{m}}(n)}(B_{X_n},J)
    \sim
     \inf_{ m \in \mathbb{N} } K_m(B_{X_n},J)
     \leq
     \inf_{m \leq d} K_m(B_{X_n},J) \leq K_{\pmb{\mathfrak{m}}(n)}(B_{X_n},J),
         \]
   which,  again using \eqref{knapp}, gives what we want.
\end{proof}

\smallskip

\section{Bohr radii vs concavity and convexity}

We  relate the geometric concepts  of concavity and convexity of Banach sequence spaces with the notion of  Bohr radii. For $2$-convex spaces we give an explicit characterization of the asymptotic growth of the Bohr radius for a huge class of index sets.

    \begin{proposition}
   Let $X$ and $Y$ be  Banach sequence lattices and $J \subset \mathbb{N}_0^{(\NN)}$. Assume that  $X$ is $r$-~convex and that $Y$ is $r$-concave, where  $1 \leq r \leq \infty$.  Then for each $m,n \in \NN$
   \[
   K_m(B_{Y_n},J)   \,\,\leq \,\, M_{(r)}(X_n) M^{(r)}(Y_n)\,\, K_m(B_{X_n},J)
   \]
   and
   \[
   K(B_{Y_n},J)   \,\,\leq \,\,3 M_{(r)}(X_n) M^{(r)}(Y_n)\,\, K(B_{X_n},J)  \,.
   \]
    \end{proposition}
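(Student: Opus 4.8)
The strategy is to reduce both inequalities to a single basic comparison: if the identity $\id\colon X_n \to Y_n$ and its inverse are bounded with explicit norms in terms of convexity/concavity constants, then the Bohr radii (and the $m$-homogeneous Bohr radii) transfer with the product of those norms. First I would recall the elementary fact that for $n$-dimensional lattices $X_n,Y_n$ on the same underlying $\mathbb{C}^n$ with $\gamma^{-1}\|z\|_{X_n} \le \|z\|_{Y_n} \le \gamma\|z\|_{X_n}$ for all $z$, and any polynomial $P \in \mathcal{P}_J(\mathbb{C}^n)$, one has $\gamma^{-m}\|P\|_{\mathcal{P}_J(X_n)} \le \|P\|_{\mathcal{P}_J(Y_n)} \le \gamma^m\|P\|_{\mathcal{P}_J(X_n)}$ — exactly the observation used at the start of the proof of Corollary~\ref{appl}. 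Consequently $\gamma^{-m}\,\boldsymbol{\chimon}(\mathcal{P}_{J_m}(X_n)) \le \boldsymbol{\chimon}(\mathcal{P}_{J_m}(Y_n)) \le \gamma^m\,\boldsymbol{\chimon}(\mathcal{P}_{J_m}(X_n))$.

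Next I would identify the right value of $\gamma$. Since $X$ is $r$-convex and $Y$ is $r$-concave, I would invoke the multiplicative comparison of an $r$-concave with an $r$-convex lattice: an $r$-convex space satisfies an upper $r$-estimate up to $M^{(r)}$, an $r$-concave space a lower $r$-estimate up to $M_{(r)}$, and on $\mathbb{C}^n$ the standard unit basis is $1$-unconditional in both $X_n$ and $Y_n$ with $\|e_k\| = 1$ normalized appropriately; comparing the norm of a vector $z = \sum z_k e_k$ in $X_n$ and in $Y_n$ via these estimates against the common $\ell_r^n$-quantity $(\sum |z_k|^r)^{1/r}$ yields $M^{(r)}(X_n)^{-1}\|z\|_{\ell_r^n} \le \|z\|_{X_n}$ and $\|z\|_{Y_n} \le M_{(r)}(Y_n)\|z\|_{\ell_r^n}$ (and symmetric inequalities in the other direction), so that $\|z\|_{Y_n} \le M_{(r)}(Y_n) M^{(r)}(X_n)\|z\|_{X_n}$. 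Hence one may take $\gamma = M_{(r)}(X_n)M^{(r)}(Y_n)$; note that this is exactly the combination appearing in Corollary~\ref{appl} and Corollary~\ref{PropApplnew}, so I expect the argument to be essentially a repackaging of that reasoning together with the renorming philosophy of Section~\ref{conv/conc}.

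With $\gamma = M_{(r)}(X_n)M^{(r)}(Y_n)$ in hand, the $m$-homogeneous statement follows immediately from Proposition~\ref{prop: Km vs uncond in Pm}:
\[
K_m(B_{Y_n},J) = \frac{1}{\boldsymbol{\chimon}(\mathcal{P}_{J_m}(Y_n))^{1/m}} \le \frac{\gamma^{m/m}}{\boldsymbol{\chimon}(\mathcal{P}_{J_m}(X_n))^{1/m}} = \gamma\, K_m(B_{X_n},J)\,.
\]
For the second inequality I would combine this with Theorem~\ref{thm: Bohr vs unc}:
\[
K(B_{Y_n},J) \le \inf_{m} K_m(B_{Y_n},J) \le \gamma \inf_m K_m(B_{X_n},J) \le 3\gamma\, K(B_{X_n},J)\,,
\]
using the left-hand bound $\frac{1}{3}\inf_m K_m(B_{X_n},J) \le K(B_{X_n},J)$ from that theorem.

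\textbf{Main obstacle.} The only genuinely delicate point is the determination of $\gamma$, i.e.\ the clean passage from "$X$ is $r$-convex, $Y$ is $r$-concave" to the pointwise norm domination $\|z\|_{Y_n} \le M_{(r)}(X_n)M^{(r)}(Y_n)\|z\|_{X_n}$ with precisely that constant (and not, say, a renormed version only valid up to further constants). I would handle this by testing the relevant estimates on vectors supported coordinatewise and comparing both $X_n$ and $Y_n$ to $\ell_r^n$ through the upper/lower $r$-estimates, being careful that no $n$-dependent loss creeps in beyond the factors $M_{(r)}(X_n)$, $M^{(r)}(Y_n)$; alternatively, one could route through $\mathcal{P}_{J_m}$ directly as in Corollary~\ref{appl} and never name $\gamma$ explicitly, simply transcribing that corollary's chain of inequalities. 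Everything else — Proposition~\ref{prop: Km vs uncond in Pm}, Theorem~\ref{thm: Bohr vs unc} — is quoted verbatim and the bookkeeping is routine.
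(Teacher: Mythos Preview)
Your fallback route --- quote Corollary~\ref{appl}, then Proposition~\ref{prop: Km vs uncond in Pm}, then Theorem~\ref{thm: Bohr vs unc} --- is exactly the paper's proof, and your deductions from $\boldsymbol{\chimon}$ to $K_m$ and from $K_m$ to $K$ are correct.

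The primary route, however, has a real gap. The inequalities you write down are backward: $r$-convexity of $X$ gives the \emph{upper} estimate $\|z\|_{X_n}\le M^{(r)}(X_n)\|z\|_{\ell_r^n}$ (not $M^{(r)}(X_n)^{-1}\|z\|_{\ell_r^n}\le\|z\|_{X_n}$), and $r$-concavity of $Y$ gives the \emph{lower} estimate $\|z\|_{\ell_r^n}\le M_{(r)}(Y_n)\|z\|_{Y_n}$. Combining these yields only the one-sided comparison $\|z\|_{X_n}\le M^{(r)}(X_n)M_{(r)}(Y_n)\|z\|_{Y_n}$; there are no ``symmetric inequalities in the other direction'' available from the hypothesis. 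A one-sided pointwise domination does not control $\boldsymbol{\chimon}$ (take e.g.\ $X_n=\ell_1^n$, $Y_n=\ell_\infty^n$). This is precisely why Corollary~\ref{appl} does \emph{not} argue via a pointwise norm comparison: after renorming so that $M_{(r)}=M^{(r)}=1$, it invokes Theorem~\ref{PropAppl}, which in turn rests on the Lozanovskii-type factorization $X\equiv Y\circ M(Y,X)$ of Lemma~\ref{app5} and then Lemma~\ref{YM(Y,X)}. That factorization, not a two-sided embedding into $\ell_r^n$, is the mechanism doing the work. So drop the $\gamma$-comparison attempt and go straight to your alternative.
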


    \begin{proof}
     By Corollary \ref{appl}, it follows that, for each $m,n \in \NN$, we have 
     \[
\boldsymbol{\chimon}\big(\mathcal{P}_{J_m}(X_n)\big) \leq \big(M_{(r)}(X) M^{(r)}(Y)\big)^m\,\,\boldsymbol{\chimon}\big(\mathcal{P}_{J_m}(Y_n)\big)\,,
\]
hence, taking the $m$-th root, the first claim follows from Proposition~\ref{prop: Km vs uncond in Pm}.
The second claim is then a consequence of Theorem \ref{thm: Bohr vs unc}.
\end{proof}

    \smallskip

\begin{theorem}\label{thm: 2 convex bohr radiiA}
Let $X$ be a $2$-convex Banach sequence space and  $J \subset \mathbb{N}_0^{(\NN)}$  an index set
such  that $\Lambda_{T}(m,n) \subset~J $
 for any $m\le n$.  Then ,   with a constant $C \geq 1$ only depending on $X$,
\[
K_m(B_{X_n},J) \sim_C \Big(\frac{m}{n+m}\Big)^{\frac{m-1}{2m}}\,.
\]
Moreover,   with constants $C \geq 1$ only depending on $X$,
\[
K(B_{X_n},J) \sim_C \sqrt{\frac{\log n}{n}}
\]
and
    \begin{equation*}
K\big(B_{X_n}, J_{\leq d}\big)
\sim_C
\begin{cases}
\Big(\frac{d}{n}\Big)^{\frac{d-1}{2d}},  &   d \leq  \log n\\[1mm]
\sqrt{\frac{\log n}{n}}  &   \log n \leq d\,.
\end{cases}
\end{equation*}
\end{theorem}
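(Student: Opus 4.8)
The statement has three parts, all following from the same circle of ideas: Proposition~\ref{prop: Km vs uncond in Pm} reduces $m$-homogeneous Bohr radii to unconditional basis constants of monomial bases, Theorem~\ref{thm: Bohr vs unc} (together with Theorem~\ref{fini}) reduces general Bohr radii to infima of $m$-homogeneous ones, and Chapter~\ref{Unconditionality} — in particular Theorem~\ref{conny3} (the Kadets--Snobar case) — gives the sharp two-sided estimate of $\boldsymbol{\chimon}(\mathcal{P}_{J_{\le m}}(X_n))$ for $2$-convex $X$. The plan is to first nail down the $m$-homogeneous Bohr radius, then glue the homogeneous pieces together.

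\textbf{Step 1: the $m$-homogeneous estimate.} By Proposition~\ref{prop: Km vs uncond in Pm} we have $K_m(B_{X_n},J) = \boldsymbol{\chimon}(\mathcal{P}_{J_m}(X_n))^{-1/m}$. Since $X$ is $2$-convex and $\Lambda_T(m,n)\subset J$, I would invoke Theorem~\ref{conny3} (equivalently the sharp form in Theorem~\ref{t-finalII} for the Hilbert-type case, or directly the lower bound via Proposition~\ref{toblach} in the Hilbert space case combined with the $2$-convexity descent of Corollary~\ref{appl}) to get $\boldsymbol{\chimon}(\mathcal{P}_{J_m}(X_n)) \sim_{C^m} \big(1+\tfrac{n}{m}\big)^{(m-1)/2}$, with $C$ only depending on the $2$-convexity constant of $X$. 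Taking $m$th roots — and here the point is that $C^m$ raised to the $1/m$ is just the constant $C$, so the hypercontractive constant collapses to an absolute constant — yields
\[
K_m(B_{X_n},J) \sim_C \Big(1+\frac{n}{m}\Big)^{-\frac{m-1}{2m}} = \Big(\frac{m}{n+m}\Big)^{\frac{m-1}{2m}}.
\]
This is exactly the first displayed equivalence; the key subtlety to check is that the implied constant is uniform in both $m$ and $n$, which is precisely what the $\sim_{C^m}$ notation from Section~\ref{index-sets} guarantees after extracting the root.

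\textbf{Step 2: optimizing over $m$.} For the full Bohr radius $K(B_{X_n},J)$, Theorem~\ref{thm: Bohr vs unc} gives $K(B_{X_n},J) \sim \inf_{m\in\mathbb{N}} K_m(B_{X_n},J)$ up to the factor $1/3$, so I must minimize $f(m):=\big(\tfrac{m}{n+m}\big)^{(m-1)/(2m)}$ over $m\ge 1$. Writing $\log f(m) = \tfrac{m-1}{2m}\big(\log m - \log(n+m)\big)$ and treating $m$ as a continuous variable, a routine calculus argument (not to be ground out here) shows the minimum is attained near $m\asymp \log n$, where $f(m)\asymp \sqrt{\tfrac{\log n}{n}}$; indeed for $m = \lceil \log n\rceil$ one gets $\big(\tfrac{\log n}{n+\log n}\big)^{1/2 - o(1)} \asymp \sqrt{\log n / n}$, and for all other $m$ the value is at least of this order (for small $m$ the exponent $(m-1)/(2m)$ is bounded below while $m/(n+m)$ is small; for large $m$ the base approaches $1$ too slowly). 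This gives $K(B_{X_n},J)\sim_C \sqrt{\log n/n}$. For the truncated index sets, apply Theorem~\ref{fini} with a homogenizing sequence $\pmb{\mathfrak{m}}(n)\asymp\log n$ (which by Proposition~\ref{cor: exists breaking p.} and Step~1 is indeed homogenizing here, since the infimum over all $m$ of the expression in Step~1 is realized in the range $m\asymp\log n$): when $d\ge\log n$ we are past the break point and recover $\sqrt{\log n/n}$, while when $d<\log n$ we have $\inf_{m\le d}K_m(B_{X_n},J) = K_d(B_{X_n},J)\sim_C (d/n)^{(d-1)/(2d)}$ because on the relevant range $m\mapsto(\tfrac{m}{n+m})^{(m-1)/(2m)}$ is still decreasing (the break point has not yet been reached), so the infimum over $m\le d$ is attained at $m=d$.

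\textbf{Main obstacle.} The genuine work is concentrated in Step~1: verifying that Theorem~\ref{conny3}/Theorem~\ref{t-finalII} indeed applies here with the lower bound requiring $\Lambda_T(m,n)\subset J$ (so that Proposition~\ref{toblach}, applied in the Hilbert space case, together with the descent of $2$-convexity via Corollary~\ref{appl}, produces the matching lower estimate on $\boldsymbol{\chimon}$), and — more delicately — tracking that the constant really depends only on the $2$-convexity constant of $X$ and survives the extraction of the $m$th root as a fixed constant. The calculus optimization in Step~2 is elementary but must be done carefully enough to see that the break point sits at $d\asymp\log n$ with the right constant, and to confirm monotonicity of $f$ on $[1,\log n]$ so that $\inf_{m\le d}$ is attained at the endpoint for $d<\log n$.
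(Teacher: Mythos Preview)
Your proposal is correct and follows essentially the same approach as the paper: Step~1 is identical (Theorem~\ref{conny3} plus Proposition~\ref{prop: Km vs uncond in Pm}, with the $C^m \to C$ collapse under the $m$th root), and Step~2 is the same optimization via Theorem~\ref{thm: Bohr vs unc} and Theorem~\ref{fini}, the only cosmetic difference being that the paper rewrites $\big(\tfrac{m}{n}\big)^{(m-1)/(2m)} \sim_C \big(\tfrac{m n^{1/m}}{n}\big)^{1/2}$ and then invokes Remark~\ref{rem: maximum of f} (the maximum of $t\mapsto 1/(tn^{1/t})$ at $t=\log n$) rather than differentiating $f(m)$ directly. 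Your identification of the monotonicity check on $[1,\log n]$ for the truncated case is exactly what the paper does as well.
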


\smallskip

\begin{remark}\label{rem: maximum of f}
In the following we several times need the fact that, fixing a natural number $n$, the maximum of $f_n(t) = \frac{1}{t n^{1/t}}$ for $t > 0$ is attained at $t = \log n$. In particular,
\[
\dis \max_{m \in \mathbb{N}}\,f_n(m) \sim_C \frac{1}{\log n}
\]
\end{remark}

\smallskip

\begin{proof}[Proof of Theorem \ref{thm: 2 convex bohr radiiA}]
 By Theorem \ref{conny3} for all $m, n$
 \[
  \left( 1+\frac{n}{m}\right)^{\frac{m-1}{2}}\sim_{C^m}
\boldsymbol{\chimon}\big(\Pp_{J_m}( X_n)\big) \,,
\]
and hence by Proposition~\ref{prop: Km vs uncond in Pm} the first claim follows. Note that this in particular
means that
\[
\text{
$K_m(B_{X_n},J) \sim_C \Big(\frac{m}{n}\Big)^{\frac{m-1}{2m}}$ \,  for $m \leq n$ \quad and \quad
$K_m(B_{X_n},J) \sim_C 1$  \, for $m \geq n$.
}
\]
As a consequence, we deduce from Theorem~\ref{thm: Bohr vs unc}  that for some constant $C \ge 1$
\begin{align*}
\frac{1}{3} \min \left\{ \frac{1}{C} ,\inf_{m \leq n}\,\Big(\frac{m}{n}\Big)^{\frac{m-1}{2m}} \right\}
&
\leq
\frac{1}{3} \min \left\{  \inf_{n \leq m } K_m(B_{X_n},J),\inf_{m \leq n} K_m(B_{X_n},J) \right\}
\\&
=
\frac{1}{3} \inf_{ m \in \mathbb{N} } K_m(B_{X_n},J)
\\&
\le K(B_{X_n},J)
\le \inf_{m \in \mathbb{N}} K_m(B_{X_n},J) \le C \inf_{m \leq n}\Big(\frac{m}{n}\Big)^{\frac{m-1}{2m}} .
  \end{align*}
Since
\begin{equation}\label{manno}
  \Big(\frac{m}{n}\Big)^{\frac{m-1}{2m}} \sim_C \Big( \frac{m n^{\frac{1}{m}} }{n}\Big)^{\frac{1}{2}} \,,
\end{equation}
we conclude from  Remark \ref{rem: maximum of f}  and the first statement of the theorem ( already proved) that
\[
K(B_{X_n},J) \sim_C \sqrt{\frac{\log n}{n}} \sim_C K_{[\log n]}(B_{X_n},J)\,.
\]
The   first equivalence is our second statement, and together with the second equivalence this   shows that the sequence  $\pmb{\mathfrak{m}}~=~([\log n])_n$
is a homogenizing sequence for $(K(B_{X_n},J))_n$. Then Theorem~\ref{fini} implies 
\begin{equation*}
    K(B_{X_n},J_{\leq d})
\sim_C
\begin{cases}
   \inf_{m \leq d} K_{m}(B_{X_n},J)&    \,\,\, \,d \leq   [\log n]
   \\[1mm]
   K_{[\log n]}(B_{X_n},J)
 &  \,\,\, \,[\log n] \leq d\,\,.
 \end{cases}
\end{equation*}
This with the first claim of our theorem gives the conclusion, since looking at  \eqref{manno} (and Remark~\ref{rem: maximum of f}), we see that
for each fixed $n$ the function $\left( \frac{m}{n}\right)^{\frac{m-1}{2m}}$  is up to constants
equivalent to a decreasing function  on $[0, \log n]$.
\end{proof}

\smallskip

\section{Bohr radii vs projection constants}
Here we present a direct relation between Bohr radii and  projection constants of spaces of homogeneous polynomials.

    \begin{theorem} \label{applyproj}
       Let $X$ be a Banach sequence lattice. Then  for each $n$ and each  index set $J \subset \mathbb{N}_0^{(\mathbb{N})}$
   \[
   K(B_{X_n},J)   \ge  \frac{1}{6} \,\,\,\,\inf_{ m \in \mathbb{N} }
   \bigg( \frac{1}{\sqrt[m]{e\|\mathbf{Q}\|_{\Lambda(m,n), J^n_m}}}
   \frac{1}{\sqrt[m]{\boldsymbol{\lambda}(\mathcal{P}_{(J_m^n)^\flat}(X_n))}}
   \bigg)\,.
   \]
    \end{theorem}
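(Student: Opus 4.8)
The strategy is to combine Theorem~\ref{thm: Bohr vs unc} (relating the Bohr radius to the infimum of $m$-homogeneous Bohr radii) with Proposition~\ref{prop: Km vs uncond in Pm} (which identifies $K_m(B_{X_n},J)$ with $\boldsymbol{\chimon}(\mathcal P_{J_m}(X_n))^{-1/m}$), and then feed in Theorem~\ref{main3} to replace the unconditional basis constant by the projection constant of the reduced space. First I would recall from Proposition~\ref{prop: Km vs uncond in Pm} that for every $m$,
\[
K_m(B_{X_n},J) = \frac{1}{\boldsymbol{\chimon}\big(\mathcal P_{J_m^n}(X_n)\big)^{1/m}}\,,
\]
so that $\inf_{m} K_m(B_{X_n},J) = \inf_{m}\boldsymbol{\chimon}(\mathcal P_{J_m^n}(X_n))^{-1/m}$. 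Then Theorem~\ref{thm: Bohr vs unc} gives
\[
K(B_{X_n},J) \ge \frac{1}{3}\inf_{m\in\mathbb N} K_m(B_{X_n},J) = \frac{1}{3}\inf_{m\in\mathbb N}\frac{1}{\boldsymbol{\chimon}(\mathcal P_{J_m^n}(X_n))^{1/m}}\,.
\]

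Next I would apply the second estimate of Theorem~\ref{main3} with the index set $J_m^n \subset \Lambda(m,n)$ (which has degree exactly $m$), yielding
\[
\boldsymbol{\chimon}\big(\mathcal P_{J_m^n}(X_n)\big) \le e\,2^m\,\|\mathbf Q_{\Lambda(m,n),J_m^n}\|\,\boldsymbol{\lambda}\big(\mathcal P_{(J_m^n)^\flat}(X_n)\big)\,.
\]
Taking $m$-th roots and noting $2^{m/m}=2$, this gives
\[
\boldsymbol{\chimon}\big(\mathcal P_{J_m^n}(X_n)\big)^{1/m} \le 2\,\big(e\,\|\mathbf Q_{\Lambda(m,n),J_m^n}\|\big)^{1/m}\,\boldsymbol{\lambda}\big(\mathcal P_{(J_m^n)^\flat}(X_n)\big)^{1/m}\,,
\]
where I have absorbed $e^{1/m}$ into the factor $(e\,\|\mathbf Q\|)^{1/m}$ (alternatively keeping $e^{1/m}\le e$ would only worsen constants, but writing $(e\|\mathbf Q\|)^{1/m}$ matches the statement). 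Substituting into the previous display and pulling the constants $\tfrac13$ and $\tfrac12$ together into $\tfrac16$ gives
\[
K(B_{X_n},J) \ge \frac16\,\inf_{m\in\mathbb N}\bigg(\frac{1}{\sqrt[m]{e\,\|\mathbf Q_{\Lambda(m,n),J_m^n}\|}}\,\frac{1}{\sqrt[m]{\boldsymbol{\lambda}\big(\mathcal P_{(J_m^n)^\flat}(X_n)\big)}}\bigg)\,,
\]
which is exactly the claimed inequality (writing $J_m^n$ as $J_m$ in the statement, since $J^n_m = J \cap \Lambda(m,n)$).

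\textbf{Main obstacle.} There is essentially no deep obstacle here: the result is a bookkeeping combination of three earlier statements. The only mildly delicate point is making sure the hypotheses of Theorem~\ref{main3} are met for the specific index set $J_m^n$ — namely that it is a subset of $\Lambda(m,n)$, so that the second (sharper) estimate of Theorem~\ref{main3} with the reduced set $(J_m^n)^\flat$ applies, rather than the first (which would bring in an extra factor $(m+1)$ and the maximum over homogeneous parts). Since $J_m^n = J^n \cap \Lambda(m,n)$ is by construction $m$-homogeneous and contained in $\Lambda(m,n)$, this is immediate. One should also take care that the infimum over $m$ is genuinely attained or at least well-behaved: if $J_m^n = \varnothing$ for some $m$ the corresponding factor can simply be omitted (or interpreted as giving no constraint), and for large $m$ the quantities are controlled as in Lemma~\ref{prop: m homoge radii}, so the infimum is finite and the argument is unaffected.
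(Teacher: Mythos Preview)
Your proposal is correct and follows essentially the same approach as the paper: both combine Theorem~\ref{thm: Bohr vs unc} and Proposition~\ref{prop: Km vs uncond in Pm} with the second (homogeneous) estimate of Theorem~\ref{main3}, take $m$-th roots, and collect the constants $\tfrac13\cdot\tfrac12=\tfrac16$.
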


    Observe that on the right side of this inequality we take the $m$-root, although $(J_m^n)^\flat$ by definition is contained in $\Lambda(m-1,n)$.

    \begin{proof}
     From Proposition~\ref{thm: Bohr vs unc} we deduce that for every $n$
     \[
     \frac{1}{3} \inf_{ m \in \mathbb{N} } K_m(B_{X_n},J)  \le K(B_{X_n},J) \,.
     \]
   On the other hand, by Theorem \ref{main3}  for every $m,n$
                 \[
   \boldsymbol{\chimon}\big( \mathcal{P}_{J_m^n}(X_n) \big)\,\,
 \le\,\, e 2^{m}
\|\mathbf{Q}_{\Lambda(m,n),J_{m}^n}\| \boldsymbol{\lambda}\big( \mathcal{P}_{(J_m^n)^\flat}(X_n) \big)\,.
\]
Then the proof completes, taking the $m$-th root and using  Proposition \ref{prop: Km vs uncond in Pm}.
\end{proof}

    \smallskip

 As an application we prove  upper and lower estimates for multivariate Bohr radii in the tetrahedral case.
    \smallskip

    \begin{theorem} \label{applyhedral}
       Let $X$ be a symmetric Banach sequence lattice. Then  for each $n\in \mathbb{N}$
\[
K(B_{X_n},\Lambda_T) 
\prec \inf_{m\leq n} 
\bigg(\frac{\varphi_{X'}(m-1)}{\varphi_{X'}(n)}\bigg)^{\frac{m-1}{m}}\,,
\]
and for  $1 \leq r \leq 2$
\[
K(B_{X_n},\Lambda_T) \prec
\frac{(\log n)^{\frac{1}{r'}}}{\varphi_{X'}(n)}  \|\id: X_n \to \ell_r^n\|\,.
\]
\end{theorem}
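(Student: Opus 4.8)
\textbf{Proof plan for Theorem~\ref{applyhedral}.}
The starting point is Theorem~\ref{applyproj}: for any index set $J$ and any $n$,
\[
K(B_{X_n},J)   \ge  \frac{1}{6} \inf_{ m \in \mathbb{N} }
   \Big( \frac{1}{\sqrt[m]{e\|\mathbf{Q}\|_{\Lambda(m,n), J^n_m}}}
   \frac{1}{\sqrt[m]{\boldsymbol{\lambda}(\mathcal{P}_{(J_m^n)^\flat}(X_n))}}
   \Big)\,;
\]
but here we need the \emph{upper} estimates, so I would instead invoke the trivial half of Theorem~\ref{thm: Bohr vs unc}, namely $K(B_{X_n},J)\le \inf_m K_m(B_{X_n},J)$, together with Proposition~\ref{prop: Km vs uncond in Pm}, which identifies $K_m(B_{X_n},J)=\boldsymbol{\chimon}(\mathcal{P}_{J_m}(X_n))^{-1/m}$. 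Thus it suffices to produce, for each $m\le n$, a lower bound for $\boldsymbol{\chimon}(\mathcal{P}_{\Lambda_T(m,n)}(X_n))$. For the first inequality I would apply the lower bound in Proposition~\ref{lambda}: since $X$ is symmetric, $\widehat{\boldsymbol{\lambda}}(\mathcal{P}_{\Lambda_T(m,n)}(X_n))\ge (\varphi_{X_n'}(n)/\varphi_{X_n'}(m))^m$. The missing link is then to pass from the polynomial projection constant $\widehat{\boldsymbol{\lambda}}$ to the unconditional basis constant $\boldsymbol{\chimon}$; I expect this comes from combining Theorem~\ref{lambda-dash} with a matching \emph{lower} bound — concretely, one shows $\widehat{\boldsymbol{\lambda}}(\mathcal{P}_{\Lambda_T(m,n)}(X_n))\prec_{C^m}\boldsymbol{\chimon}(\mathcal{P}_{\Lambda_T(m,n)}(X_n))$ (up to replacing $m$ by $m-1$ in the fundamental functions via Theorem~\ref{main3} and Proposition~\ref{mietek}), since the polynomial projection constant differs from the unconditional basis constant only in that it keeps one extra degree of freedom. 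This yields $\boldsymbol{\chimon}(\mathcal{P}_{\Lambda_T(m,n)}(X_n))\succ_{C^m}(\varphi_{X'}(n)/\varphi_{X'}(m-1))^{m-1}$, whence taking $m$-th roots and the infimum over $m\le n$ gives the stated bound.

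For the second inequality, the relevant tool is Proposition~\ref{innichen1}: if $\varphi_X(n)\prec n^{1/r}$ — which holds for the range $1\le r\le 2$ after renorming, using that $X'$ then satisfies $\varphi_{X'}(n)\succ n^{1/r'}$ — then for $\Lambda_T(m,n)\subset J$ and $m\le n$,
\[
\frac{1}{\|\mathrm{id}:X_n\to \ell_r^n\|^m}\Big( \frac{n}{m} \Big)^{\frac{m-1}{r'}}
\prec_{C^m}
\boldsymbol{\chimon}\big(\mathcal{P}_{\Lambda_T(m,n)}(X_n)\big)\,.
\]
Actually the cleaner route is to use Lemma~\ref{innichenA} directly, which already involves $\|\mathrm{id}:X_n\to\ell_r^n\|$ and $\varphi_{X_n}(n)$ explicitly, and then rewrite $|\Lambda_T(m,n)|=\binom{n}{m}$ with the elementary estimate $\binom{n}{m}\ge (n/m)^m$ and absorb $(\log m)^{1/r'}$, $e^{m/r}$ into the $C^m$. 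Taking $m$-th roots produces, for each $m\le n$, a lower bound for $\boldsymbol{\chimon}(\mathcal{P}_{\Lambda_T(m,n)}(X_n))^{1/m}$ of the form $\big(c\cdot\varphi_{X'}(n)^{-1}\|\mathrm{id}:X_n\to\ell_r^n\|^{-1}\big)(n/m)^{(m-1)/(mr')}$ up to a uniform constant; optimizing over $m$ — the function $m\mapsto (n/m)^{(m-1)/(mr')}$ behaves, after the usual manipulation $(n/m)^{(m-1)/m}\sim (mn^{1/m}/n)^{-?}$, like $(\log n)^{1/r'}$ at $m\approx\log n$ (cf.\ Remark~\ref{rem: maximum of f}) — gives $K(B_{X_n},\Lambda_T)\le \inf_m K_m \prec (\log n)^{1/r'}\varphi_{X'}(n)^{-1}\|\mathrm{id}:X_n\to\ell_r^n\|$.

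The main obstacle I anticipate is the bookkeeping in the optimization over $m$: one must check that the infimum over $1\le m\le n$ of the $m$-th roots really is realized (up to constants) near $m\sim\log n$ for the second bound, and that in the first bound the quantity $(\varphi_{X'}(m-1)/\varphi_{X'}(n))^{(m-1)/m}$ cannot be simplified further without extra hypotheses on $X$ — so the statement is genuinely an infimum, not a closed form. A secondary technical point is justifying the passage $\widehat{\boldsymbol{\lambda}}\to\boldsymbol{\chimon}$ with constants only of the type $C^m$ (not worse), which requires chaining Theorem~\ref{main3}, Theorem~\ref{OrOuSe} and Proposition~\ref{mietek} carefully; since all these appear earlier in the excerpt this is routine but must be done with attention to which degree ($m$ versus $m-1$) each fundamental function is evaluated at. Everything else — the renorming to get $\varphi_X(n)\prec n^{1/r}$ for $1\le r\le 2$, the elementary binomial estimates, and the final $\inf$ — is standard and can be carried out directly.
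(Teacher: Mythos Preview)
There is a genuine direction error in your treatment of the first inequality. In the paper this inequality is a \emph{lower} bound for $K(B_{X_n},\Lambda_T)$ (the proof labels it the ``lower estimate'' and derives it from Theorem~\ref{applyproj}; the $\prec$ in the displayed statement is evidently a misprint for $\succ$, as confirmed by its use in part~\textbf{C} of the proof of Theorem~\ref{thm: main bohr radii}). You are trying to prove it as an upper bound, and the ``missing link'' you propose, $\widehat{\boldsymbol{\lambda}}(\mathcal{P}_{\Lambda_T(m,n)}(X_n))\prec_{C^m}\boldsymbol{\chimon}(\mathcal{P}_{\Lambda_T(m,n)}(X_n))$, is not available: Theorem~\ref{main3} yields $\boldsymbol{\chimon}(\mathcal{P}_J)\le e2^m\|\mathbf{Q}\|\,\boldsymbol{\lambda}(\mathcal{P}_{J^\flat})\le e2^m\|\mathbf{Q}\|\,\widehat{\boldsymbol{\lambda}}(\mathcal{P}_{J^\flat})$, which is the \emph{opposite} direction, and nothing in the paper provides a converse of this type. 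The correct argument feeds \emph{upper} bounds for projection constants into Theorem~\ref{applyproj}: for $J=\Lambda_T$ one has $(\Lambda_T(m,n))^\flat=\Lambda_T(m-1,n)$; bound $\|\mathbf{Q}_{\Lambda(m,n),\Lambda_T(m,n)}\|\le\kappa^m$ via Theorem~\ref{OrOuSe}, and bound $\boldsymbol{\lambda}(\mathcal{P}_{\Lambda_T(m-1,n)}(X_n))\le\widehat{\boldsymbol{\lambda}}(\mathcal{P}_{\Lambda_T(m-1,n)}(X_n))\le e^{m-1}\big(\varphi_{X'}(n)/\varphi_{X'}(m-1)\big)^{m-1}$ via Theorem~\ref{lambda-dash} and Proposition~\ref{lambda}. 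Taking $m$-th roots of the reciprocal and passing to the infimum over $m\le n$ gives the stated lower bound for $K$.

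Your approach to the second inequality is correct and matches the paper: use $K\le\inf_m K_m$ together with Proposition~\ref{prop: Km vs uncond in Pm}, then the lower bound for $\boldsymbol{\chimon}(\mathcal{P}_{\Lambda_T(m,n)}(X_n))$ from Lemma~\ref{innichenA} (with $|\Lambda_T(m,n)|\ge(n/m)^m$ and, by symmetry, $\varphi_X(n)\varphi_{X'}(n)=n$), take $m$-th roots, and optimize over $m$ with Remark~\ref{rem: maximum of f}. No renorming to force $\varphi_X(n)\prec n^{1/r}$ is required here; the symmetry hypothesis alone suffices.
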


\begin{proof}
Let us start with the proof of the lower estimate.
From Theorem~\ref{OrOuSe} we know that for all $m,n$
\[
\big\|\mathbf{Q}_{\Lambda(m,n),\Lambda_T(m,n)}:
\mathcal{P}_{\Lambda(m,n)}(X_n) \to \mathcal{P}_{\Lambda_T(m,n)}(X_n)\big\|
\leq \kappa^m\,.
\]
Moreover, by Theorem \ref{lambda-dash} and Proposition~\ref{lambda} 
we have
\begin{align*}
\boldsymbol{\lambda}(\mathcal{P}_{(\Lambda_T(m,n))^\flat}(X_n))
 \leq e^{m-1}
 \Big(\frac{\varphi_{X_n'}(n)}{\varphi_{X_n'}(m-1)}\Big)^{m-1} = e^{m-1}
 \Big(\frac{\varphi_{X'}(n)}{\varphi_{X'}(m-1)}\Big)^{m-1}
\,,
\end{align*}
where last equality holds as $m \le n$ (Recall that $\Lambda_T(m,n) = \emptyset$ when $m > n$).
Implementing the preceding two inequalities into Theorem~\ref{applyproj}, yields the assertion.

For the proof of the upper estimate, note first that by Theorem \ref{thm: Bohr vs unc}
     \[
      K(B_{X_n},\Lambda_T) \leq  \inf_{ m \in \mathbb{N} } K(B_{X_n},\Lambda_T(m,n))
      =
      \frac{1}{\sup_{ m \in \mathbb{N} } \boldsymbol{\chimon}(\mathcal{P}_{\Lambda_T(m,n)}(X_n))^{1/m} }
       \,.
     \]
     But by Lemma~\ref{innichenA}, the symmetry of $X$, and \eqref{binom} we see that
     \begin{align*}
      \sup_{ m \in \mathbb{N} } \boldsymbol{\chimon}(\mathcal{P}_{\Lambda_T(m,n)}(X_n))^{1/m}
      &
    \leq \frac{1}{\|\id:X_n\to \ell_r^n\|}\sup_{ m \in \mathbb{N} }\frac{|\Lambda_T(m,n)|^{1/m}}{\varphi_X(n) n^{\frac{1}{mr'}} m ^{-\frac{1}{r}}}
        \\&
        \leq \frac{\varphi_{X'}(n) }{ n \|\id:X_n\to \ell_r^n\|}\sup_{ m \in \mathbb{N} }\frac{  n}{n^{\frac{1}{mr'}} m ^{\frac{1}{r'}}}
                \leq \frac{\varphi_{X'}(n) }{ \|\id:X_n\to \ell_r^n\|}\sup_{ m \in \mathbb{N} }\frac{  1}{n^{\frac{1}{mr'}} m ^{\frac{1}{r'}}}\,,
     \end{align*}
     so that the conclusion again follows using Remark~\ref{rem: maximum of f}.
               \end{proof}
               
               If we in the preceding lemma replace the 'covering approach' from Lemma~\ref{innichenA} by the 'entropy approach'
               from Lemma~\ref{innichen}, the upper bound for $K(B_{X_n},\Lambda_T)$ does not improve.

\smallskip

\section{Lorentz spaces}\label{Section: bohr on lorentz}
Now we focus on  Lorentz spaces, i.e., $X_n = \ell_{r,s}^n$. The following theorem is the main result.

\begin{theorem}\label{thm: main bohr radii}
Let $1<r<\infty$, $1 \le s \le \infty $, and  $J \subset \NN_{0}^{(\NN)}$ some index set for which  $\Lambda_{T}(m,n) \subset J$
for all $m\leq n$. Then
\begin{itemize}
    \item[(i)]  $2 < r < \infty$ and $1 \le s \le \infty$: \;\;
    $K(B_{\ell_{r,s}^n}, J) \sim \sqrt{\frac{\log n}{n}}$.
    \item[(ii)] $1 < r \le 2$ and  $1 \le s \le r$: \;\;
    $
   K(B_{\ell_{r,s}^n},J)\sim \left( \frac{\log n}{n} \right)^{1-\frac{1}{r}}.
    $
    \item[(iii)] $1<r\le2$ and $r < s $: \;\;
        $\left( \frac{\log n}{n} \right)^{1-\frac{1}{r}} \prec K(B_{\ell_{r,s}^n},J) \prec\frac{(\log n)^{1-\frac{1}{s}}}{n^{1-\frac{1}{r}}}$.
\end{itemize}
Moreover, with a constant only depending on $r$ and $s$,
 \begin{equation*}
K\big(B_{\ell_{r,s}^n}, J_{\leq d}\big)
\succ
\begin{cases}
\Big(\frac{d}{n}\Big)^{\frac{d-1}{d}\min\big\{\frac{1}{r'},\frac{1}{2}\big\}}  &   d \leq  \log n\\[1mm]
\Big(\frac{\log n}{n}\Big)^{\min\big\{\frac{1}{r'},\frac{1}{2}\big\}}  &   \log n \leq d\,;
\end{cases}
\end{equation*}
 in the cases (i) and (ii) we even have equivalence, whereas in  (iii) we only know the weaker upper bound $(\log n)^{1-\frac{1}{s}}/n^{1-\frac{1}{r}}$.
\end{theorem}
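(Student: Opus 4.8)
The plan is to deduce Theorem~\ref{thm: main bohr radii} by feeding the sharp (or semi-sharp) bounds for the unconditional basis constant of tetrahedral polynomials on Lorentz spaces into the machinery relating Bohr radii and unconditionality, exactly as was done for $2$-convex lattices in Theorem~\ref{thm: 2 convex bohr radiiA}. The two pillars are: (1) Proposition~\ref{prop: Km vs uncond in Pm}, which identifies $K_m(B_{\ell_{r,s}^n},J) = \boldsymbol{\chimon}(\mathcal P_{J_m}(\ell_{r,s}^n))^{-1/m}$, so that statements about the $m$-homogeneous Bohr radius are literally statements about $\boldsymbol{\chimon}(\mathcal P_{\Lambda_T(m,n)}(\ell_{r,s}^n))$; and (2) Theorem~\ref{thm: Bohr vs unc}, which says $K(B_{X_n},J) \sim \inf_{m} K_m(B_{X_n},J)$, reducing the non-homogeneous question to minimizing the $m$-homogeneous radii over $m$, together with Theorem~\ref{fini} for the degree-$\leq d$ version.

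\textbf{Key steps.} First I would assemble the relevant $\boldsymbol{\chimon}$-estimates for tetrahedral index sets already proved in the Lorentz chapter. For $2<r<\infty$ (case (i)) and for $r=2,\ 1\le s\le 2$ the space is $2$-convex (or covered by the $2$-convex theory via Theorem~\ref{t-finalII}), so $\boldsymbol{\chimon}(\mathcal P_{\Lambda_T(m,n)}(\ell_{r,s}^n)) \sim_{C^m} (n/m)^{(m-1)/2}$ by Theorem~\ref{conny3}/Theorem~\ref{t-final2}, which after taking $m$th roots gives $K_m \sim_C (m/n)^{(m-1)/(2m)}$ for $m\le n$ and $K_m\sim_C 1$ for $m>n$. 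For $1<r\le 2,\ 1\le s\le r$ (case (ii)) the sharp two-sided bound $\boldsymbol{\chimon}(\mathcal P_{\Lambda_T(m,n)}(\ell_{r,s}^n))\sim_{C^m}(n/m)^{(m-1)/r'}$ comes from combining Theorem~\ref{t-final} (upper) and Theorem~\ref{t-final2}, case $\mathbf C$ (lower), so $K_m\sim_C (m/n)^{(m-1)/(mr')}$ for $m\le n$. For $1<r\le 2,\ r<s$ (case (iii)) only the semi-sharp bounds of Theorem~\ref{t-final}, Theorem~\ref{t-final2} (and Corollary~\ref{proj lorentz s>r, big m}) are available, producing matching powers of $n/m$ up to an extra $(\log n)^{m(1/r-1/s)}$ factor, hence a gap of that size in the $m$th root. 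Second, for each case I would compute $\inf_{m} K_m(B_{\ell_{r,s}^n},J)$: writing $(m/n)^{(m-1)/(m\beta)} \sim_C (m\, n^{1/m}/n)^{1/\beta}$ with $\beta = r'$ or $\beta=2$, Remark~\ref{rem: maximum of f} shows $\inf_m m\,n^{1/m}/n \sim_C (\log n)/n$, giving the asserted $\big((\log n)/n\big)^{\min\{1/r',1/2\}}$ and simultaneously identifying $\pmb{\mathfrak m}(n) = [\log n]$ as a homogenizing sequence. Third, for the $J_{\le d}$ statement I would invoke Theorem~\ref{fini} with this homogenizing sequence: for $d\le\log n$ the radius is $\sim \inf_{m\le d}K_m$, and since $m\mapsto (m\,n^{1/m}/n)^{1/\beta}$ is (up to constants) decreasing on $[1,\log n]$, this infimum is attained near $m=d$, yielding $(d/n)^{\frac{d-1}{d}\min\{1/r',1/2\}}$; for $d\ge\log n$ it equals $K_{[\log n]}$, i.e. $\big((\log n)/n\big)^{\min\{1/r',1/2\}}$. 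In case (iii) the lower bounds go through verbatim from the lower $\boldsymbol{\chimon}$-estimates, and the upper bound $(\log n)^{1-1/s}/n^{1-1/r}$ comes from the semi-sharp upper $\boldsymbol{\chimon}$-bound (note $(\log n)^{m(1/r-1/s)}$ to the $1/m$ power contributes $(\log n)^{1/r-1/s}$, which combined with $(\log n)^{1/r'}$ gives $(\log n)^{1-1/s}$).

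\textbf{Main obstacle.} The genuinely delicate point is \emph{uniformity in $m$} of the hypercontractive estimates $\sim_{C^m}$: the constant $C$ must not depend on $m$ (only on $r,s$) for the infimum over all $m$ to behave, and one must be careful that the lower bounds $\boldsymbol{\chimon}(\mathcal P_{\Lambda_T(m,n)}(\ell_{r,s}^n))\succ_{C^m}(n/m)^{(m-1)/\beta}$ remain valid in the regime $m$ close to $n$ (where $\Lambda_T(m,n)$ degenerates) and combine correctly with the $m>n$ range where $K_m\sim 1$. I expect the case-(iii) upper bound to require the most care, since there one is chaining Theorem~\ref{main3}-type inequalities (via Corollary~\ref{main3A}) with the Lorentz-specific polynomial-projection-constant bounds of Theorem~\ref{proj lorentz s>r} and Corollary~\ref{proj lorentz s>r, big m}, and one must track the $(\log n)$-powers through the $m$th root and the infimum without losing or gaining a power; verifying that the resulting exponent of $\log n$ is exactly $1-1/s$ (and not, say, $1/r'+1/r-1/s$ in disguise) is the bookkeeping step most likely to trip one up.
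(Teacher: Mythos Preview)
Your overall architecture---Proposition~\ref{prop: Km vs uncond in Pm} to pass to $\boldsymbol{\chimon}$, Theorem~\ref{thm: Bohr vs unc} to reduce to $\inf_m K_m$, Remark~\ref{rem: maximum of f} to locate the infimum at $m\sim\log n$, and Theorem~\ref{fini} for the $J_{\le d}$ part---is exactly the paper's framework. However, there is a genuine gap in how you handle the monotonicity.

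You write that ``statements about the $m$-homogeneous Bohr radius are literally statements about $\boldsymbol{\chimon}(\mathcal P_{\Lambda_T(m,n)}(\ell_{r,s}^n))$''. This is only true when $J=\Lambda_T$; for general $J$ one has $K_m(B_{X_n},J)=\boldsymbol{\chimon}(\mathcal P_{J_m}(X_n))^{-1/m}$ with $\Lambda_T(m,n)\subset J_m\subset\Lambda(m,n)$. By monotonicity, the \emph{upper} bound on $K(B_{X_n},J)$ reduces to the tetrahedral case (a lower bound on $\boldsymbol{\chimon}(\mathcal P_{\Lambda_T(m,n)})$ suffices), and your approach via Theorems~\ref{t-final} and~\ref{t-final2} handles this correctly. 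But the \emph{lower} bound on $K(B_{X_n},J)$ must hold even for $J=\mathbb{N}_0^{(\mathbb{N})}$, so requires an upper bound on the \emph{full} $\boldsymbol{\chimon}(\mathcal P_m(\ell_{r,s}^n))$---and tetrahedral upper bounds (Theorem~\ref{t-final}) do not control this, since $\boldsymbol{\chimon}(\mathcal P_{\Lambda_T(m,n)})\le\boldsymbol{\chimon}(\mathcal P_m)$. The paper treats this separately in its part~D, using Theorem~\ref{proj lorentz s<r} (with $\kappa=1/2$) together with Theorem~\ref{main3} for case~(ii), and Theorem~\ref{proj lorentz s>r} plus Corollary~\ref{proj lorentz s>r, big m} for case~(iii). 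You mention the latter tools in your ``Main obstacle'' paragraph, but assign them to the wrong direction: an upper bound on $\widehat{\boldsymbol{\lambda}}$ (hence on $\boldsymbol{\chimon}$ via Corollary~\ref{main3A}) yields a \emph{lower} bound on $K$, not an upper one. For case~(ii) you never invoke the needed non-tetrahedral input at all. Also note that Theorem~\ref{t-final2}, case~$\mathbf C$, is stated for $r\le s$ (your case~(iii)), not for $s\le r$ (your case~(ii)).
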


 By monotonicity as in \eqref{rem: bohr radii monotony} it suffices to prove the upper bounds  in the 'tetrahedral case',
and the lower bounds in the  'full case'. But the  lower estimate in the tetrahedral case seems technically  less involved than  the lower estimate in the full case. This motivates us to give (up to some point) two separate proof for both situations.

\begin{proof}
\text{}\\
\noindent {\bf A}:
Note first that $\ell_{r,s}$ in  case $(i)$ is $2$-convex whenever $2 < r < \infty, \,  2 \leq s \leq \infty$  (see the introduction of 
Chapter~\ref{Part: Polynomials on Lorentz sequences spaces}). Hence in this
situation the result is a special case of Theorem~\ref{thm: 2 convex bohr radiiA}. But in the case $2 < r < \infty,\, 1 \leq s \leq 2$
we also know from Theorem \ref{conny3} that
 $
  \big( 1+\frac{n}{m}\big)^{\frac{m-1}{2}}\sim_{C^m}
\boldsymbol{\chimon}(\Pp_{J_m}( X_n)) \,,
$
for all  $m, n$, and so exactly the same arguments as in the proof of  Theorem~\ref{thm: 2 convex bohr radiiA} work.

\noindent {\bf B}:
Let us check the upper bounds in the both cases $(ii)$ and $(iii)$ in the tetrahedral case only, since then the general case follows by monotonicity from \eqref{rem: bohr radii monotony}.  Both cases  then are immediate from Theorem~\ref{applyhedral},
since under the assumption of $(ii)$ we have
\[
 \|\id: \ell_{r,s}^n \to \ell_r^n\| = 1\,,
\]
and under assumption of $(iii)$
\[
 \|\id: \ell_{r,s}^n \to \ell_r^n\| \prec (\log n)^{\frac{1}{r}-\frac{1}{s}} \,.
\]

\noindent {\bf C}: Let us check the lower bounds in the tetrahedral case under the assumptions of $(ii)$ and $(iii)$.
 By Theorem~\ref{applyhedral} we have that
\begin{align*}
K\big(B_{\ell_{r,s}^n},\Lambda_T\big) &
\prec \inf_{m\leq n} \bigg(
\frac{ \varphi_{\ell_{r',s'}^n(m-1)}}{\varphi_{\ell_{r',s'}^n}(n)}\bigg)^{\frac{m-1}{m}}\\
& =\inf_{m\leq n}
\frac{ (m-1)^{\frac{1}{r'}\frac{m-1}{m}}}{n^{\frac{1}{r'}\frac{m-1}{m}}}
\prec \inf_{m\leq n}
\Big(\frac{m n^{\frac{1}{m}} }{n}\Big) ^{\frac{1}{r'}} =\frac{1}{n^{\frac{1}{r'}}}\inf_{m\leq n}
\Big(m n^{\frac{1}{m}}\Big)^{\frac{1}{r'}}
\,,
\end{align*}
hence minimizing the last infimum (again using Remark~\ref{rem: maximum of f}) leads to the  claim.

\noindent {\bf D}: Finally, given $r,s$ like in $(ii)$ or $(iii)$, it remains  to prove the lower bounds for $ K(B_{\ell_{r,s}^n}, J)$,
for any index set
$J \subset \NN_{0}^{(\NN)}$  for which  $\Lambda_{T}(m,n) \subset J$
for all $m\leq n$. But, again looking at  \eqref{rem: bohr radii monotony}, we may assume that  $J = \NN_0^{(\NN)}$. Let us repeat that this in fact  covers what we did in {\bf C}. There we give an independent proof for the tetrahedral case, since  the one for the full case seems technically  more involved.

To achieve the bounds we are looking for, it is central to use Theorem \ref{thm: Bohr vs unc} and good bounds for the unconditional basis constant for the space of $m$-homogeneous polynomials in each of these cases.

\noindent
$1 < r \le 2$ and $1 \le s \le r$: By  Theorem \ref{proj lorentz s<r} with $\kappa = 1/2$ and Theorem \ref{main3} we know, since $\frac{1}{s}-\frac{1}{r}\leq\frac{1}{r'}$, that uniformly in $n,m$
\begin{align*}
\boldsymbol{\chimon}\big(\mathcal P_m( \ell_{r,s}^n)\big)^{1/m} & \prec \left( \Big(\frac{n}{m} \Big)^{(m-1)/r'} \max \left\{ \Big( \frac{  \log(m)^{m}}{n^{\sqrt{m}-1}}\Big)^{1/r'},1 \right\} \right)^{1/m}\\
&\prec n^{1/r'}\Big(\frac{1}{mn^{1/m}} \Big)^{1/r'}\max \left\{ \Big(\frac{\log(m)}{n^{(\sqrt{m}-1)/m}}\Big)^{1/r'},1 \right\} \\
&=c n^{1/r'}\max \left\{\Big(\frac{\log(m)}{mn^{1/\sqrt{m}}} \Big)^{1/r'},\Big(\frac{1}{mn^{1/m}} \Big)^{1/r'}\right\}\\
&\prec n^{1/r'}\max\left\{\Big(\frac{1}{\sqrt{m}n^{1/\sqrt{m}}} \Big)^{1/r'},\Big(\frac{1}{mn^{1/m}} \Big)^{1/r'}\right\}.
\end{align*}
Optimizing with  Remark \ref{rem: maximum of f}  shows
$
\sup_{m}  \boldsymbol{\chimon}(\mathcal P_m( \ell_{r,s}^n))^{1/m} \prec \Big( \frac{n}{\log n} \Big)^{1/r'}\,,
$
and hence Theorem \ref{thm: Bohr vs unc} as desired gives
\[
\Big( \frac{n}{\log n} \Big)^{1/r'} \prec K(B_{X_n})\,.
\]

\smallskip

\noindent $1 < r \le 2$ and $ r<s$: \,By Theorem \ref{proj lorentz s>r} and Corollary \ref{proj lorentz s>r, big m} we have, uniformly for all  $m,n$ with $m \leq n$,
\[
\boldsymbol{\chimon}\big(\Pp_m(\ell_{r,s})\big)^{1/m} \prec \begin{cases}
n^{1/r'} \Big( \frac{n}{m n^{1/m}} \Big)^{1/r'} & m\le  \frac{n}{e (\log n)^{r'(\frac{1}{r} - \frac{1}{s})}},\\
(\log n)^{\frac1{r}-\frac1{s}} & \textrm{ otherwise.}
\end{cases}
\]
Choose now some  $n_0 = n_0(r,s)$  such that $\log n \le \frac{n}{e (\log n)^{r'(\frac{1}{r} - \frac{1}{s})}}
$ for all $n \ge n_0$.
Then by Remark \ref{rem: maximum of f} for every
$n \ge n_0$ and $ m \le \frac{n}{e (\log n)^{r'(\frac{1}{r} - \frac{1}{s})}}$ we have
\[
  \sup_{m \le \frac{n}{e (\log n)^{r'(\frac{1}{r} - \frac{1}{s})}}} \boldsymbol{\chimon}\big(\Pp_m(\ell_{r,s})\big)^{1/m} \prec \left( \frac{n}{\log n} \right)^{1 - \frac{1}{r}}\,.
\]
On the other hand, for all $n$
\[
 \sup_{m \ge \frac{n}{e (\log n)^{r'(\frac{1}{r} - \frac{1}{s})}}} \boldsymbol{\chimon}\big(\Pp_m(\ell_{r,s})\big)^{1/m} \prec (\log n)^{\frac{1}{r} - \frac{1}{s}}.
\]
Combining both estimates, Proposition~\ref{prop: Km vs uncond in Pm} and Theorem \ref{thm: Bohr vs unc} complete the proof.

\smallskip

\noindent Finally, we remark that the 'moreover-part' is another consequence of  Theorem~\ref{fini} (compare with the proof
of Theorem~\ref{thm: 2 convex bohr radiiA}).
\end{proof}

For the full index set $J= \N_{0}^{(\N)}$ statement $(i)$  of Theorem \ref{thm: main bohr radii} was first given in  \cite[Corollary 10, (i)]{defant2018bohr}. The other two cases of  Theorem \ref{thm: main bohr radii} improve both those of \cite[Corollary 10, (ii),(iii)]{defant2018bohr} (in particular, $(ii)$  of Theorem \ref{thm: main bohr radii} is asymptotically optimal
in contrast to \cite[Corollary 10, (ii)]{defant2018bohr}).

\smallskip

\section{Limits}
In \cite{bayart2014bohr} Bayart, Pellegrino, and Seoane  proved the following remarkable result:
\begin{equation}\label{BPS}
  \lim_{n \to \infty} \frac{K(B_{\ell_\infty^n})}{\sqrt{\frac{\log n}{n}}}  = 1\,.
\end{equation}
Based on this limit, we establish a far reaching extension, which in many concrete situations leads to analogs of~\eqref{BPS}
in more general situations. More precisely, the aim is to consider here instead of  $\ell_\infty$ some other classes of
 Banach sequence lattices $X$, and in instead of the  full index set $J = \mathbb{N}_0^{(\mathbb{N})}$
 more general index sets -- including the index set of all tetrahedral indices. In particular, we will show that an analog of \eqref{BPS} holds for the Lorentz sequence spaces appearing in $(i)$ of Theorem \ref{thm: main bohr radii}.

\smallskip

\begin{theorem}\label{limits}
Let $X$ be a  Banach sequence lattice such that for each $n$ 
 \[
\|\id\colon X_n \to \ell_2^{n}\| \leq \frac{1}{\sqrt{n}}\,\|\id\colon X_n \to \ell_1^{n}\|\,.
\]
 Then
\[
\lim_{n \to \infty} \frac{K(B_{X_n})}{\sqrt{\frac{\log n}{n}}}  = 1\,.
\]
\end{theorem}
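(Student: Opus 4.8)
The plan is to split the claim into the two one-sided estimates $\liminf_n \frac{K(B_{X_n})}{\sqrt{\log n/n}} \ge 1$ and $\limsup_n \frac{K(B_{X_n})}{\sqrt{\log n/n}} \le 1$. The lower bound is immediate: each section $X_n$ of a Banach sequence lattice is a Banach lattice, so \eqref{lowinfty} gives $K(B_{\ell_\infty^n}) \le K(B_{X_n})$, and dividing by $\sqrt{\log n/n}$ and invoking the Bayart--Pellegrino--Seoane limit \eqref{BPS} yields $\liminf_n \frac{K(B_{X_n})}{\sqrt{\log n/n}} \ge \lim_n \frac{K(B_{\ell_\infty^n})}{\sqrt{\log n/n}} = 1$. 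So the whole content is the matching upper bound, and this is where the hypothesis $\|\id\colon X_n\to\ell_2^n\| \le \frac1{\sqrt n}\|\id\colon X_n\to\ell_1^n\|$ is used.

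For the upper bound I would, for each $n$, fix an integer $m=m_n$ (eventually $m_n=\lceil\log n\rceil$) and combine Theorem~\ref{thm: Bohr vs unc} with Proposition~\ref{prop: Km vs uncond in Pm} to obtain $K(B_{X_n}) \le K_{m}(B_{X_n}) = \boldsymbol{\chimon}\big(\mathcal{P}_{m}(X_n)\big)^{-1/m}$. Everything then reduces to a \emph{sharp} lower bound for the unconditional basis constant of the monomial basis. The key estimate I would establish is
\[
\boldsymbol{\chimon}\big(\mathcal{P}_{m}(X_n)\big) \;\ge\; \frac{\|\id\colon X_n\to\ell_1^n\|^{m}}{C\,(n\log m)^{1/2}\,(m!)^{1/2}\,\|\id\colon X_n\to\ell_2^n\|^{m}} \;\ge\; \frac{n^{m/2}}{C\,(n\log m)^{1/2}\,(m!)^{1/2}}\,,
\]
where $C$ is the \emph{absolute} constant of Theorem~\ref{Anquetil alfa} for $r=2$ and the second inequality is precisely the hypothesis. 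The first inequality I would prove by testing with the polynomial $P(z)=(z_1+\dots+z_n)^m=\sum_{\alpha\in\Lambda(m,n)}\frac{m!}{\alpha!}z^\alpha$, whose norm in $\mathcal{P}_{m}(X_n)$ equals $\|\id\colon X_n\to\ell_1^n\|^m$ by a short lattice computation, then choosing the signs $\varepsilon_\alpha$ supplied by Theorem~\ref{Anquetil alfa} applied to $(c_\alpha)=(m!/\alpha!)$, using $\sup_{\alpha}\big(\frac{m!}{\alpha!}\big)^{1/2}\le (m!)^{1/2}$ and $\sup_{z\in B_{X_n}}\big(\sum_k|z_k|^2\big)^{1/2}=\|\id\colon X_n\to\ell_2^n\|$, and finally invoking unconditionality in the form $\|P\|=\big\|\sum_\alpha\varepsilon_\alpha\big(\varepsilon_\alpha\frac{m!}{\alpha!}\big)z^\alpha\big\|\le \boldsymbol{\chimon}\big(\mathcal{P}_m(X_n)\big)\,\big\|\sum_\alpha\varepsilon_\alpha\frac{m!}{\alpha!}z^\alpha\big\|$.

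It then remains to set $m=m_n=\lceil\log n\rceil$ and take $m_n$-th roots in the displayed estimate. Using Stirling in the form $(m_n!)^{1/(2m_n)}\sim\sqrt{m_n/e}$ and observing that $C^{1/m_n}\to1$, $(\log m_n)^{1/(2m_n)}\to1$ while $n^{1/(2m_n)}\to e^{1/2}$, one finds $\boldsymbol{\chimon}\big(\mathcal{P}_{m_n}(X_n)\big)^{1/m_n}\ge (1-o(1))\sqrt{n/\log n}$ — the factor $n^{1/(2m_n)}\to e^{1/2}$ cancelling the $e^{-1/2}$ hidden in $(m_n!)^{1/(2m_n)}\sim\sqrt{m_n/e}$ and leaving exactly $\sqrt{n/\log n}$ — and hence $K(B_{X_n})\le K_{m_n}(B_{X_n})\le (1+o(1))\sqrt{\log n/n}$, which is the desired $\limsup$ bound.

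The main obstacle, and the reason the argument cannot merely quote Proposition~\ref{toblach} or \eqref{leo} as a black box, is that those ready-made probabilistic bounds carry a loss of the shape $\prec_{C^m}$; such a factor survives the $1/m_n$-th root and would only yield $\limsup_n \frac{K(B_{X_n})}{\sqrt{\log n/n}}\le C$. To pin the constant down to $1$ one must return to the raw inequality of Theorem~\ref{Anquetil alfa}, which has an absolute constant, track the exact factorial and power constants via Stirling, and choose the homogeneity degree $m_n\asymp\log n$ at the optimal balance point — the same balance that underlies \eqref{BPS}, whose case $X_n=\ell_\infty^n$ (where $\|\id\colon\ell_\infty^n\to\ell_1^n\|=n$ and $\|\id\colon\ell_\infty^n\to\ell_2^n\|=\sqrt n$ turn the hypothesis into an equality) is recovered as a special instance.
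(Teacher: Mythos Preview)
Your proof is correct and follows the same overall architecture as the paper's: obtain the $\liminf$ from \eqref{lowinfty} together with \eqref{BPS}, and for the $\limsup$ test with the multinomial polynomial $P(z)=(z_1+\cdots+z_n)^m$, bound the norm of a random-sign companion via a Bayart-type probabilistic inequality, insert the hypothesis, set $m=\lceil\log n\rceil$, and verify via Stirling that all auxiliary factors tend to~$1$.

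The one substantive difference is the probabilistic tool you invoke. The paper appeals to Proposition~\ref{innichen++}, which packages the \emph{entropy} estimate Theorem~\ref{Hinault alfa} (with its factor $m(\log n)^{3/2}$ for $r=2$); you instead apply the \emph{covering} estimate Theorem~\ref{Anquetil alfa} directly (with its factor $(n\log m)^{1/2}$). Both choices give a lower bound of the shape $\boldsymbol{\chimon}(\mathcal{P}_m(X_n))\ge n^{(m-1)/2}/\big(\text{poly}(m,\log n)\cdot (m!)^{1/2}\big)$, and in either case the polynomial-size prefactor disappears under the $1/m_n$-th root. Your route is slightly more self-contained (you avoid the intermediate Proposition~\ref{innichen++}) and yields a marginally stronger numerical bound---$(\log m)^{1/2}$ versus $m(\log n)^{3/2}$ in the denominator---though this makes no difference to the limit. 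The paper's route, on the other hand, has the advantage that Proposition~\ref{innichen++} is already stated in ready-to-use form, so the proof of Theorem~\ref{limits} itself is shorter.
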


\begin{proof}
  Note first that by \eqref{lowinfty} and \eqref{BPS} we have
  \begin{align*}
    1 \leq \lim_{n \to \infty}  \frac{K(B_{\ell_\infty^n})}{\sqrt{\frac{\log n}{n}}}
        \leq
     \liminf_{n \to \infty} \frac{K(B_{X_n})}{\sqrt{\frac{\log n}{n}}}
     \leq
     \limsup_{n \to \infty} \frac{K(B_{X_n})}{\sqrt{\frac{\log n}{n}}}
        \,,
\end{align*}
hence it remains to show that  the last term is less than or equal to $1$. To do so we apply Proposition~\ref{innichen++} with $r=2$, and get for each $m\leq n$
  \begin{align*}
    \frac{1}{\boldsymbol{\chimon}\big(\mathcal{P}_m(X_n)\big)^\frac{1}{m}}
    &
    \leq  \left(  \frac{\|\id\colon X_n\to \ell_2^n\|}{\|\id:X_n\to \ell_1^n\|}\right)^{\frac{m-1}{m}}
    \left(Cm^{\frac{5}{4}} \,  (\log n)^{\frac{3}{2}} e^{-\frac{m}{2}}m^{\frac{m}{2}}   \right)^{\frac{1}{m}}
    \\&
    \leq
    n^{-\frac{m-1}{2m}} \left(Cm^{\frac{5}{4}} \,  (\log n)^{\frac{3}{2}} e^{-\frac{m}{2}}m^{\frac{m}{2}}   \right)^{\frac{1}{m}}
    \\&
            =
    \Big(  \frac{m}{n} \Big)^\frac{1}{2} n^\frac{1}{2m} m^{-\frac{1}{2}}
    \left(Cm^{\frac{5}{4}} \,  (\log n)^{\frac{3}{2}} e^{-\frac{m}{2}}m^{\frac{m}{2}}   \right)^{\frac{1}{m}}
  \\&
 =
    \Big(  \frac{m}{n} \Big)^\frac{1}{2} n^\frac{1}{2m} \frac{1}{\sqrt{e}}
    \left(
 Cm^{\frac{5}{4}}(\log n)^{\frac{3}{2}}      \right)^{\frac{1}{m}}
 \,.
  \end{align*}
  From Theorem~\ref{thm: Bohr vs unc} and Proposition~\ref{prop: Km vs uncond in Pm} we then deduce
  \begin{align*}
     \limsup_{n \to \infty}\frac{K(B_{X_n})}{\sqrt{\frac{\log n}{n}}}
     &
    =   \limsup_{n \to \infty} \frac{\sqrt{\frac{n}{\log n}}}{\sup_{m\in \mathbb{N}}  \boldsymbol{\chimon}\big(\mathcal{P}_m(X_n)\big)^\frac{1}{m}}
    \\&
    \leq   \limsup_{n \to \infty} \frac{\sqrt{\frac{n}{\log n}}}{ \boldsymbol{\chimon}\big(\mathcal{P}_{\lceil\log n\rceil}(X_n)\big)^\frac{1}{\lceil\log n\rceil}}
        \\&
    \leq   \limsup_{n \to \infty} \Big(  \frac{n}{\log n} \Big)^\frac{1}{2}
        \Big(  \frac{\lceil\log n\rceil}{n} \Big)^\frac{1}{2} n^\frac{1}{2\lceil\log n\rceil} \frac{1}{\sqrt{e}}
    \left(
    C \lceil\log n\rceil^{\frac{5}{4}} (\log n)^{\frac{3}{2}}      \right)^{\frac{1}{\lceil\log n\rceil}} =1\,,
  \end{align*}
  which completes the proof.
\end{proof}

\smallskip

Under an additional symmetry condition on the underlying Banach sequence lattice $X$, it is possible to incorporate
a much larger class of  index sets -- in particular, the index set of tetrahedral indices.

\smallskip

\begin{theorem}\label{limits++}
Let $X$ be a  Banach sequence lattice such that
\[
\varphi_X(n)\,\varphi_{X'}(n) \prec n\,,
\]
and 
\[
\|\id\colon X_n \to \ell_2^{n}\| \leq \frac{1}{\sqrt{n}}\,\|\id\colon X_n \to \ell_1^{n}\|, \quad\, n\in \mathbb{N}\,.
\]
Then for every index set  $J \subset \mathbb{N}_0^{(\NN)}$ for which $\Lambda_{T}(m,n) \subset~J$ 
for all $m \leq n$ one has
\[
\lim_{n \to \infty} \frac{K(B_{X_n},J)}{\sqrt{\frac{\log n}{n}}}  = 1\,.
\]
\end{theorem}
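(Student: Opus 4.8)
The statement to prove is the limit $\lim_{n\to\infty} K(B_{X_n},J)/\sqrt{\log n/n}=1$ under the hypotheses that $X$ satisfies $\varphi_X(n)\varphi_{X'}(n)\prec n$ and $\|\id\colon X_n\to\ell_2^n\|\leq n^{-1/2}\|\id\colon X_n\to\ell_1^n\|$, and $J\supset\Lambda_T(m,n)$ for all $m\leq n$. The overall structure mirrors the proof of Theorem~\ref{limits}: establish the lower bound for free and then push down the upper bound using a sharp probabilistic estimate for $\boldsymbol{\chimon}$. First I would note that, by \eqref{lowinfty} and the Bayart--Pellegrino--Seoane limit \eqref{BPS}, we have
\[
1 \leq \lim_{n\to\infty}\frac{K(B_{\ell_\infty^n})}{\sqrt{\log n/n}} \leq \liminf_{n\to\infty}\frac{K(B_{X_n},J)}{\sqrt{\log n/n}},
\]
where the second inequality also uses the monotonicity \eqref{rem: bohr radii monotony} (if $J\neq\mathbb{N}_0^{(\mathbb{N})}$, then $K(B_{X_n},J)\geq K(B_{X_n},\mathbb{N}_0^{(\mathbb{N})})=K(B_{X_n})$). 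So everything reduces to showing $\limsup_{n\to\infty} K(B_{X_n},J)/\sqrt{\log n/n}\leq 1$.

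\textbf{Key steps.} For the upper bound I would use Theorem~\ref{thm: Bohr vs unc} together with Proposition~\ref{prop: Km vs uncond in Pm} to write
\[
K(B_{X_n},J) \leq \inf_{m\in\mathbb{N}} K_m(B_{X_n},J) = \frac{1}{\sup_{m\in\mathbb{N}} \boldsymbol{\chimon}\big(\mathcal{P}_{J_m}(X_n)\big)^{1/m}},
\]
and then I would lower-bound $\boldsymbol{\chimon}(\mathcal{P}_{J_m}(X_n))$ for a well-chosen degree $m=m(n)$, namely $m=\lceil\log n\rceil$. Since $\Lambda_T(m,n)\subset J$ (so $\Lambda_T(m,n)\subset J_m$), monotonicity of the unconditional basis constant in the index set gives $\boldsymbol{\chimon}(\mathcal{P}_{\Lambda_T(m,n)}(X_n))\leq\boldsymbol{\chimon}(\mathcal{P}_{J_m}(X_n))$. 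Now I would invoke Proposition~\ref{innichen} with $r=2$: the symmetry-type hypothesis $\varphi_{X_n}(n)\varphi_{X_n'}(n)\prec n$ is exactly what is required there, and it yields, for all $m\leq n$,
\[
\frac{\big(\tfrac{n}{n-m}\big)^{n-m}\sqrt{\tfrac{n}{m(n-m)}}}{Cm(\log n)^{3/2}e^{m/2}m^{m/2}}\left(\frac{\|\id\colon X_n\to\ell_1^n\|}{\|\id\colon X_n\to\ell_2^n\|}\right)^{m-1} \leq \boldsymbol{\chimon}\big(\mathcal{P}_{\Lambda_T(m,n)}(X_n)\big).
\]
Using the second hypothesis $\|\id\colon X_n\to\ell_1^n\|/\|\id\colon X_n\to\ell_2^n\|\geq\sqrt{n}$, taking $m$-th roots, and specializing to $m=\lceil\log n\rceil$ (for which $\big(\tfrac{n}{n-m}\big)^{n-m}\to 1$ and all the polynomial-in-$\log n$ factors raised to the power $1/\lceil\log n\rceil$ tend to $1$), I would obtain
\[
\boldsymbol{\chimon}\big(\mathcal{P}_{\Lambda_T(\lceil\log n\rceil,n)}(X_n)\big)^{1/\lceil\log n\rceil} \geq (1-o(1))\sqrt{\frac{n}{e\lceil\log n\rceil}}\cdot\sqrt{e}\cdot(1+o(1)) = (1-o(1))\sqrt{\frac{n}{\log n}}.
\]
Combining this with the displayed upper bound for $K(B_{X_n},J)$ gives $\limsup_{n\to\infty}K(B_{X_n},J)/\sqrt{\log n/n}\leq 1$, which closes the argument. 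I would double-check that $n^{m/2}/m^{m/2}=(n/m)^{m/2}$ is the dominant term and that the various factors $(Cm(\log n)^{3/2})^{1/m}$, $m^{1/(2m)}$, etc.\ indeed converge to $1$ along $m=\lceil\log n\rceil$; this is the analogue of the computation at the end of the proof of Theorem~\ref{limits}.

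\textbf{Main obstacle.} The technical heart is the careful bookkeeping of the many lower-order factors appearing in Proposition~\ref{innichen}---the Stirling-type term $\big(\tfrac{n}{n-m}\big)^{n-m}\sqrt{\tfrac{n}{m(n-m)}}$, the factor $e^{m/2}m^{m/2}$ absorbing $m!^{-1/2}$, and the $C\,m\,(\log n)^{3/2}$ prefactor---all of which must be shown, after taking the $m$-th root with $m=\lceil\log n\rceil$, to contribute only a $1+o(1)$ multiplicative factor, so that no constant larger than $1$ survives in the limit. A secondary point to verify is that the hypothesis $\varphi_X(n)\varphi_{X'}(n)\prec n$ combined with $\|\id\colon X_n\to\ell_2^n\|\leq n^{-1/2}\|\id\colon X_n\to\ell_1^n\|$ is consistent and actually makes Proposition~\ref{innichen} applicable (recall Proposition~\ref{innichen} requires precisely $\varphi_{X_n}(n)\varphi_{X_n'}(n)\prec n$, and its conclusion becomes nontrivial exactly when the ratio $\|\id\colon X_n\to\ell_1^n\|/\|\id\colon X_n\to\ell_r^n\|$ is large, which the second hypothesis guarantees for $r=2$). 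Once these verifications are in place, the result follows by sandwiching between the $\ell_\infty$-lower bound \eqref{BPS} and the $\boldsymbol{\chimon}$-based upper bound, exactly as in Theorem~\ref{limits}.
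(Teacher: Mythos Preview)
Your proposal is correct and follows essentially the same route as the paper: reduce the lower bound to \eqref{BPS} via \eqref{lowinfty} and monotonicity, and for the upper bound apply Proposition~\ref{innichen} with $r=2$ to $\boldsymbol{\chimon}(\mathcal{P}_{\Lambda_T(m,n)}(X_n))$ at $m=\lceil\log n\rceil$, then verify the subexponential factors vanish in the $m$-th root. One small slip: the factor $\big(\tfrac{n}{n-m}\big)^{n-m}$ does not tend to $1$ but rather its $m$-th root tends to $e$, and this extra $e$ is precisely what cancels the $e^{-1/2}\cdot e^{-1/2}$ coming from $n^{-1/(2m)}$ and $e^{-m/2}$---the paper makes this cancellation explicit via $n^{1/(2\lceil\log n\rceil)}\,e^{1/2}\,(1-\lceil\log n\rceil/n)^{n/\lceil\log n\rceil}\to e^{1/2}e^{1/2}e^{-1}=1$.
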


\begin{proof}
Observe  again first that by \eqref{lowinfty}, the monotonicity of Bohr radii in the index set, and \eqref{BPS} one has
\begin{align*}
1 = \lim_{n \to \infty}  \frac{K(B_{\ell_\infty^n})}{\sqrt{\frac{\log n}{n}}}
\leq \liminf_{n \to \infty}  \frac{K(B_{X_n})}{\sqrt{\frac{\log n}{n}}} \leq 
\liminf_{n \to \infty} \frac{K(B_{X_n},J)}{\sqrt{\frac{\log n}{n}}} \leq
\limsup_{n \to \infty} \frac{K(B_{X_n},J)}{\sqrt{\frac{\log n}{n}}} \leq
\limsup_{n \to \infty} \frac{K(B_{X_n},\Lambda_T)}{\sqrt{\frac{\log n}{n}}}\,,
\end{align*}
and hence as above  it remains to check that  the last term is  $\leq 1$. To do so we apply Proposition~\ref{innichen} 
with $r=2$, and get for each $m\leq n$
\begin{align*}
\frac{1}{\boldsymbol{\chimon}\big(\mathcal{P}_{\Lambda_T(m,n)}(X_n)\big)^\frac{1}{m}} & \leq
\left(  \frac{\|\id:X_n\to \ell_2^n\|}{\|\id:X_n\to \ell_1^n\|}\right)^{\frac{m-1}{m}}
\frac{\Big( Cm\,  (\log n)^{\frac{3}{2}} \Big)^{\frac{1}{m}}    \Big(e^{\frac{m}{2}} m^{m/2}  \Big)^{\frac{1}{m}}}
{\Big(\Big( \frac{n}{n-m} \Big)^{n-m}\Big)^{\frac{1}{m}} \Big(  \sqrt{\frac{n}{m(n-m)}}\Big)^{\frac{1}{m}}} \\
& \leq n^{-\frac{m-1}{2m}} \frac{\Big( Cm\,  (\log n)^{\frac{3}{2}} \Big)^{\frac{1}{m}} \Big(e^{\frac{m}{2}}     
m^{m/2}  \Big)^{\frac{1}{m}}}{\Big(\Big( \frac{n}{n-m} \Big)^{n-m}\Big)^{\frac{1}{m}}
\Big(\sqrt{\frac{n}{m(n-m)}}\Big)^{\frac{1}{m}}} \\
& = \Big(  \frac{m}{n} \Big)^\frac{1}{2} n^\frac{1}{2m} m^{-\frac{1}{2}}\frac{\Big( Cm\,(\log n)^{\frac{3}{2}} \Big)^{\frac{1}{m}}\Big(e^{\frac{m}{2}}m^{m/2}  \Big)^{\frac{1}{m}}}
{\Big(\Big( \frac{n}{n-m} \Big)^{n-m}\Big)^{\frac{1}{m}} \Big(  \sqrt{\frac{n}{m(n-m)}}\Big)^{\frac{1}{m}}}\\
& = \Big(  \frac{m}{n} \Big)^\frac{1}{2} n^\frac{1}{2m}  e^{\frac{1}{2}}    \Big( \frac{n-m}{n} \Big)^{\frac{n-m}{m}}
\frac{\Big( Cm\,  (\log n)^{\frac{3}{2}} \Big)^{\frac{1}{m}}}
{\Big(\sqrt{\frac{n}{m(n-m)}}\Big)^{\frac{1}{m}}} \\ 
& =\Big(  \frac{m}{n} \Big)^\frac{1}{2} n^\frac{1}{2m}  e^{\frac{1}{2}}    \Big( 1-\frac{m}{n} \Big)^{\frac{n}{m}}
\frac{n}{n-m}\frac{\Big( Cm\,  (\log n)^{\frac{3}{2}} \Big)^{\frac{1}{m}}}
{\Big(\sqrt{\frac{n}{m(n-m)}}\Big)^{\frac{1}{m}}}\,.
\end{align*}
Since
\[
\lim_{n \to \infty}
n^\frac{1}{2\lceil\log n\rceil}  e^{\frac{1}{2}}\Big( 1-\frac{\lceil\log n\rceil}{n}\Big)^{\frac{n}{\lceil\log n\rceil}} 
= e^{\frac{1}{2}}e^{\frac{1}{2}} e^{-1}= 1\,,
\]
the argument completes precisely like in proof of Theorem~\ref{limits}.
\end{proof}

\smallskip

We conclude with a class of Banach sequence lattices $X$ for which \eqref{BPS} holds, if we there
replace $\ell_\infty$ by $X$ and the full index set by an arbitrary index set including all tetrahedral
indices.

To do this we show how to generate Banach sequence lattices  which satisfy the  assumptions of
Theorem~\ref{limits}. First we observe that the $2$-convexification $X:=E^{(2)}$ of any Banach sequence lattice $E$ is $2$-convex with $M^{(2)}(X)=1$. If in addition $X$ satisfies
\begin{equation} \label{migra}
\varphi_X(n)\,\varphi_{X'}(n) \prec n\,,
\end{equation}
then in fact $X$ is an example we are looking for.

Clearly,  \eqref{migra} holds true whenever $E$ is symmetric. But it is worth noting that this estimate for $X:=E^{(2)}$ is valid whenever $E$ is an arbitrary Banach sequence lattice satisfying 
$\varphi_E(n)\,\varphi_{E'}(n) \prec n$ (so also for non-symmetric spaces $E$). Indeed, to see this observe that $E^{(2)} \equiv E^{\frac{1}{2}}\,(\ell_{\infty})^{\frac{1}{2}}$. Thus by the Lozanovskii formula for the K\"othe dual of the Calder\'on product of Banach lattices one has
\[
X' \equiv (E')^{\frac{1}{2}}\,(\ell_{\infty}')^{\frac{1}{2}}\equiv (E')^{\frac{1}{2}}\,(\ell_1)^{\frac{1}{2}}\,.
\]
This implies that for each $n\in \mathbb{N}$, we have $\varphi_{X'}(n) \leq \varphi_{E'}(n)^{\frac{1}{2}} n^{\frac{1}{2}}$. Combining with the above estimate for $E$ and $\varphi_X(n) = \varphi_E(n)^{\frac{1}{2}}$ for each $n$, we get the required estimate \eqref{migra} for $X$.

Let us collect a~few concrete examples of $2$-convex Banach sequence lattices $X$ with $M^{(2)}(X)=1$, which are generated by the above procedure. These are the symmetric Banach sequence lattices $\ell_r$ with $2 \leq r \leq \infty$  and $\ell_{r,s}$ with $2 < r < \infty$ and $1 \leq s \leq \infty$. Moreover, $\ell_{r}(\ell_{s})$ with $2\leq r, s\leq \infty$ is a~non-symmetric example of this type.


\vspace{1.5cm}

\noindent
Andreas~Defant\\
Institut f\"{u}r Mathematik, Carl von Ossietzky Universit\"at, 26111 Oldenburg, Germany\\
E-mail:
\texttt{andreas.defant$@$uni-oldenburg.de}

\smallskip 

\noindent Daniel~Galicer\\
Departamento de Matem\'{a}tica,
Facultad de Cs. Exactas y Naturales, Universidad de Buenos Aires and IMAS-CONICET. Ciudad Universitaria, Pabell\'on I (C1428EGA) C.A.B.A., Argentina\\ 
E-mail:
\texttt{dgalicer$@$dm.uba.ar}

\smallskip

\noindent Mart\'in~Mansilla\\
Departamento de Matem\'{a}tica,
Facultad de Cs. Exactas y Naturales, Universidad de Buenos Aires and IMAS-CONICET. Ciudad Universitaria, Pabell\'on I (C1428EGA) C.A.B.A., Argentina\\ 
E-mail:
\texttt{mmansilla$@$dm.uba.ar}

\smallskip 

\noindent Mieczys{\l}aw~Masty{\l}o\\
Faculty of Mathematics and Computer Science, Adam Mickiewicz University, Pozna{\'n}, Uniwersytetu Pozna{\'n}skiego 4,\linebreak 61-614 Pozna{\'n}, Poland\\ 
E-mail:
\texttt{mieczyslaw.mastylo$@$amu.edu.pl}

\smallskip 

\noindent Santiago~Muro\\
FCEIA, Universidad Nacional de Rosario and CIFASIS, CONICET, Ocampo $\&$ Esmeralda, S2000 Rosario, Argentina\\
E-mail:
\texttt{muro$@$cifasis-conicet.gov.ar}

\end{document}